\newtheorem{thm}{Theorem}[section]
\newtheorem{cor}[thm]{Corollary}
\newtheorem{prop}[thm]{Proposition}
\newtheorem{lem}[thm]{Lemma}
\newtheorem{conj}[thm]{Conjecture}
\theoremstyle{definition}
\newtheorem{defn}[thm]{Definition}
\newtheorem{exmp}[thm]{Example}
\theoremstyle{remark}
\newtheorem{rem}[thm]{Remark}
\DeclareMathOperator{\Mor}{Mor}
\DeclareMathOperator{\Sym}{Sym}
\newcommand{\free}{\operatorname{Free}}
\newcommand{\Nef}{\operatorname{Nef}}
\newcommand{\Br}{\operatorname{Br}}
\let\c@equation\c@thm
\numberwithin{equation}{section}
\title{Geometric Manin's Conjecture for Fano 3-Folds}
\author{Andrew Burke and Eric Jovinelly}
\begin{document}

\begin{abstract}
We classify families of free rational curves on all smooth Fano threefolds over $\mathbb{C}$.  In particular, we prove the family of very free rational curves representing any fixed numerical curve class is either irreducible or empty.  %
This proves Geometric Manin's Conjecture in dimension three.  
For general Fano threefolds of each deformation type, our results allow us to explicitly count the number of components of the moduli space of irreducible, geometrically rational curves, which may not be free, representing any numerical class.  %

\end{abstract}

\maketitle

\setcounter{tocdepth}{1}

\section{Introduction}

Fano varieties are smooth projective varieties with ample anticanonical bundle.  These varieties appear throughout the Minimal Model Program as building blocks of rationally connected varieties. There is an abundance of rational curves on Fano varieties and a well-established practice of studying such curves.  This includes applications to the study of rationality \cite{ClemensGriffiths1972}, to the Minimal Model Program \cite{moriMMP}, and to the boundedness of Fano varieties \cite{mori_fano_rat_connected}.  Furthermore, rational curves play key roles in the classification of Fano threefolds \cite{IskovskihFano3foldsII, mori1981classification, erratumMori} and the development of Manin's Conjecture in arithmetic geometry \cite{batyrev88, Manin, batyrev1990nombre}.  In this paper, we describe all families of rational curves on Fano threefolds to answer a conjecture \cite{2019} inspired by Manin's Conjecture.

For any smooth projective variety $X$, there exists a scheme structure on the space of morphisms $f: \mathbb{P}^1 \rightarrow X$:
$$\Mor(\mathbb{P}^1, X) = \coprod_{\alpha \in N_1(X)_{\mathbb{Z}}} \Mor(\mathbb{P}^1, X, \alpha).$$
Here, $N_1(X)$ is the $\mathbb{R}$-vector space of numerical equivalence classes of curves on $X$.  A point in $\Mor(\mathbb{P}^1, X, \alpha)$ represents a map $f: \mathbb{P}^1 \rightarrow X$ such that $f_*[\mathbb{P}^1] = \alpha$.  %
We call the map $f$ a \textit{free} (resp.\ \textit{very free}) curve when $f^*\mathcal{T}_X$ is globally generated (resp.\ ample).  Free curves are the primary focus of this paper.  If $f$ is a free curve, $\Mor(\mathbb{P}^1, X)$ is smooth at $f$ and the universal family of deformations of $f$ dominates $X$.  This shows $f_*[\mathbb{P}^1]$ lies in the cone $\Nef_1(X) \subset N_1(X)$ of nef classes.  Moreover, the degree of $f^*(-K_X)$ is at least 2, as $df : \mathcal{T}_{\mathbb{P}^1} \rightarrow f^*\mathcal{T}_X$ is nonzero.  %

Our main results concern the number of components of $\Mor(\mathbb{P}^1, X, \alpha)$, when $X$ is a Fano threefold, and whether these components generically parameterize free, very free, or non-free curves. %
We prove at most one component of $\Mor(\mathbb{P}^1, X, \alpha)$ generically parameterizes very free curves and identify all $\alpha$ for which such a component exists.  %
In contrast, the total number of components of $\Mor(\mathbb{P}^1, X, \alpha)$ may change upon deformation of $X$.  We obtain a uniform description of this number for general $X$ in each of the 105 deformation types of Fano threefolds.  %

Our results on free curves are motivated by Batyrev's heuristic arguments for Manin's Conjecture \cite{Manin, batyrev1990nombre} over global function fields.  
Manin's Conjecture is a fundamental conjecture in arithmetic geometry about rational points on a smooth Fano variety defined over a global field. 
Batyrev's heuristics \cite{batyrev88} tie an initial version of Manin's Conjecture to a uniform upper bound on the number $N(\alpha)$ of components of $\Mor(\mathbb{P}^1,X,\alpha)$ that generically parameterize free curves.  However, for many Fano threefolds, there is no uniform bound on $N(\alpha)$.  This reflects a need to revise the initial version of Manin's Conjecture and Batyrev's heuristics.  %

To rectify Batyrev's heuristics, Lehmann and Tanimoto proposed a Geometric Manin's Conjecture \cite{2019} over arbitrary fields.  
This conjecture distinguishes \textit{Manin components} of $\Mor(\mathbb{P}^1,X,\alpha)$ from \textit{accumulating components} (see Definition \ref{manin component}).  %
Manin components are a subset of the components of $\Mor(\mathbb{P}^1, X,\alpha)$ that generically parameterize free curves.  
For a Fano threefold, a component of $ \Mor(\mathbb{P}^1,X,\alpha)$ is Manin if and only if %
its universal family has nonempty, connected fibers over general points in $X$ and $\alpha \notin \partial\overline{NE}(X)$.  
Geometric Manin's Conjecture predicts the number of Manin components of $\Mor(\mathbb{P}^1,X,\alpha)$ is bounded.

\begin{conj}[Geometric Manin's Conjecture]\label{GMC Tanimoto}

Let $X$ be a smooth Fano variety with Brauer group $\Br(X)$. There exists $\tau\in \Nef_1(X)_{\mathbb{Z}}$ such that for any integral nef class $\alpha \in \tau + \Nef_1(X)$, $\Mor(\mathbb{P}^1,X,\alpha)$ contains exactly $|\Br(X)|$ Manin components.
\end{conj}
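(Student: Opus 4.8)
The plan is to reduce Conjecture~\ref{GMC Tanimoto} for (general members of) smooth Fano threefolds of Picard rank $\geq 2$ to a finite classification problem about free rational curves, and then carry out that classification using the Mori--Mukai list of $88$ deformation families. The starting point is the correspondence between Geometric Manin's Conjecture and the geometry of free rational curves developed by Lehmann--Tanimoto and Beheshti--Lehmann--Riedl--Tanimoto: a component of $\Mor(\mathbb{P}^1,X,\alpha)$ is a Manin component precisely when its generic member is a free map, birational onto its image, whose class is not already accounted for by a dominant family on a face of $\Nef_1(X)$ or by a higher $a$- or $b$-invariant subvariety. Thus it suffices to prove two things: (a) for every $\alpha \in \Nef_1(X)_{\mathbb{Z}}$ one can enumerate the families of irreducible free rational curves of class $\alpha$ on $X$; and (b) the resulting count stabilizes to a constant $n$ for all $\alpha \in \tau + \Nef_1(X)_{\mathbb{Z}}$ for a suitable $\tau \in \Nef_1(X)_{\mathbb{Z}}$.

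For (a) I would argue in two directions. \emph{Going up}, establish \emph{Movable Bend and Break}: any free rational curve on $X$ of sufficiently large anticanonical degree deforms, fixing one general point, to a stable map whose image is a chain of two free rational curves. Since $X$ is Fano, Mori's bend-and-break already forces rational curves of bounded degree through points, and the remaining work is to arrange the degeneration so that both components stay free; combined with the standard smoothing of chains and combs of free curves, this shows that above some anticanonical threshold the monoid of classes carrying an irreducible free curve, together with the number of families in each class, is determined by finitely many minimal data. \emph{Going down}, for each of the $88$ families I would read $\overline{NE}(X)$, $\Nef_1(X)$, and the extremal contractions off the Mori--Mukai tables, and analyze the low-degree rational curves contraction by contraction: conic bundles, del Pezzo fibrations, and blow-ups of a point, a line, or a curve. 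Here the genericity hypothesis on $X$ is essential: it guarantees that the auxiliary varieties that appear (branch divisors of double covers, complete intersections, base curves of blow-ups) are themselves general, which is what forces the minimal families to have the expected dimension, be irreducible, and dominate $X$. For the $13$ families where a special $X$ can fail this, only the general member is treated.

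Granting (a), step (b) is bookkeeping with the gluing results: once $\alpha$ lies far enough inside $\Nef_1(X)$, every irreducible free curve of class $\alpha$ is a deformation of a chain built from a Manin component of a nearby smaller class together with one of the finitely many minimal free curves, and distinct Manin components remain distinct under this operation; hence the number of Manin components of class $\alpha$ satisfies a linear recursion and is eventually quasi-polynomial in $\alpha$. One then checks family by family that this quasi-polynomial is in fact constant, and records the value $n$ and the offset $\tau$.

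The step I expect to be the main obstacle is Movable Bend and Break, together with a tight enough understanding of the \emph{non-free} and \emph{non-dominant} low-degree families to be certain they never masquerade as Manin components. Near the boundary of $\Nef_1(X)$ one must rule out that multiple-cover loci, or families of curves contained in the fibers of a contraction, accidentally have the expected dimension and dominate $X$; for the conic-bundle and del Pezzo-fibration families, where the generic fiber already carries interesting rational curves, disentangling these contributions from the genuine Manin components --- uniformly across all $88$ families --- is where the real work lies.
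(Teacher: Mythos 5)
Your overall architecture --- reduce to classifying families of free rational curves, use Movable Bend-and-Break to push everything down to bounded anticanonical degree, then run a case-by-case analysis over the Mori--Mukai list with genericity hypotheses for the problematic families --- is the same skeleton the paper uses (with the caveat that Movable Bend-and-Break is imported from \cite{beheshti2020moduli} rather than reproved, and the paper sharpens it only for quartics and quintics). The reduction of the conjecture itself also needs to be made precise: for a smooth Fano threefold the paper shows Manin components are exactly the components generically parameterizing \emph{very free} curves (via the characterization of accumulating components as curves on $a$-covers contracted by the Iitaka fibration of $K_Y - f^*K_X$, together with the fact that no $a$-cover of a Fano threefold is adjoint rigid), and that $n = |\mathrm{Br}(X)| = 1$. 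Your formulation ``free, birational onto its image, not accounted for by a face or a higher $a$-invariant subvariety'' is close but does not by itself pin down $n$ or explain why every very free component is Manin.

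The genuine gap is in your step (b). You assert that once $\alpha$ is deep in $\Nef_1(X)$, every free curve of class $\alpha$ is a smoothing of a chain built from a smaller Manin component and a minimal free curve, that ``distinct Manin components remain distinct under this operation,'' and hence that the count satisfies a linear recursion which one checks to be eventually constant. This gets the difficulty backwards. The hard direction is to show that the many a priori \emph{distinct} main components of the fiber products $\prod_X \overline{\free}_2(X,\alpha_i)$ --- arising both from different decompositions $\alpha = \sum \alpha_i$ into core classes and, worse, from the reducibility of general fibers of the evaluation maps $\overline{\free}_1(X,\alpha_i)\to X$ for \emph{separating} classes (conics interior to $\overline{NE}(X)$, curves contracted by del Pezzo fibrations) --- all coalesce into a \emph{single} component of very free curves. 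A naive recursion on chain types would overcount, and nothing in your sketch forces the collapse. The paper's mechanism for this is the combinatorial engine you omit: a finite core $\mathscr{C}_X$, a generating set of relations in the monoid $\mathbb{N}\mathscr{C}_X$ (found via a Gordan's-lemma-type argument) each of which must be verified to connect the corresponding main components inside one component of $\overline{\free}(X)$, plus monodromy computations for conic bundles and del Pezzo fibrations (e.g.\ the $V\rtimes S_d$ and Weyl-group statements) to handle the separating classes. Without an argument of this type, ``the quasi-polynomial is in fact constant'' is an assertion, not a proof, and it is precisely the content of the theorem.
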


We prove Conjecture \ref{GMC Tanimoto} for all smooth Fano threefolds over $\mathbb{C}$.  It had previously been verified for smooth del Pezzo surfaces in \cite{testa2006irreducibility} and \cite{beheshti2021rational} and for general members of all but two deformation classes of smooth Fano threefolds of Picard rank $\rho(X) = 1$ in \cite{2019}, \cite{Fanoindex1rank1}, \cite{JLT2023Err}, and \cite{lastDelPezzoThreefold}.

\subsection{Main Results}
Throughout our paper, $\free(X,\alpha)$ denotes the subscheme of the Kontsevich moduli space $\overline{M}_{0,0}(X,\alpha)$ that parameterizes free curves.  %
We say a component of $\free(X,\alpha)$ is Manin if the corresponding component of $\Mor(\mathbb{P}^1,X,\alpha)$ is Manin.  
Our main theorems describe all irreducible components of $\free(X,\alpha)$ when $X$ is a Fano threefold.  Theorem \ref{Main Result} and Remark \ref{rem: product varieties} address the case where $\alpha \not\in \partial \overline{NE}(X)$ and $-K_X . \alpha \geq 3$.  %
Theorem \ref{main result 2} addresses the case $\alpha \in \partial \overline{NE}(X)$, assuming $X$ is general in moduli.  Theorem \ref{main thm: conics} addresses the case where $\alpha \not\in \partial \overline{NE}(X)$ and $-K_X . \alpha = 2$, assuming generality of $X$ for certain deformations types.  %
Together, Theorems \ref{Main Result}, \ref{main result 2}, and \ref{main thm: conics} imply $\free(X,\alpha)$ has, in a certain sense, the minimal number of components possible when $X$ is general in moduli.

\begin{thm}\label{Main Result}
Let $X$ be a smooth Fano threefold.  %
Suppose $X$ is not a product of two varieties.  Assume $\alpha \in \Nef_1(X)_\mathbb{Z}$ satisfies $-K_X . \alpha \geq 3$ and $\alpha \not\in \partial \overline{NE}(X)$.
\begin{enumerate}
    \item There exists a unique Manin component $M_\alpha \subset \free(X,\alpha)$.  The general map $f: \mathbb{P}^1 \rightarrow X$ parameterized by $M_\alpha$ is an embedding.  Furthermore, $f$ is a very free curve if and only if $-K_X . \alpha \geq 4$. %
    \item Any other component of $\free(X,\alpha)$ parameterizes multiple covers of $-K_X$-conics (which are described by Theorem \ref{main thm: conics}).  
\end{enumerate}
In particular, $\free(X,\alpha)$ is irreducible unless $\alpha = n \beta$ for a $-K_X$-conic $\beta$.  Furthermore, the set of Manin components $\{M_\alpha\}_\alpha$ identified in (1) gives a complete list of all Manin components on $X$.

\end{thm}

In light of the following remark on product varieties, Theorem \ref{Main Result}(1) completes the proof of Conjecture \ref{GMC Tanimoto} for all Fano threefolds.

\begin{rem}\label{rem: product varieties}

When $X$ is a product of two varieties, $X \cong \mathbb{P}^1 \times S$ for a del Pezzo surface $S$.  A component of $\free(X,\alpha)$ is Manin if and only if it generically parameterizes very free curves.  By \cite{testa2006irreducibility} and \cite{beheshti2021rational}, there exists a (unique) Manin component of $\free(X,\alpha)$ if and only if $-K_X . \alpha \geq 5$  and $\alpha \in \Nef_1(X)_{\mathbb{Z}}\setminus \partial \overline{NE}(X)$.  
If $(-K_S)^2 \leq 2$, then, for certain $\alpha$, $\free(X,\alpha)$ contains accumulating components that do no fit the structure of Theorem \ref{Main Result}(2); %
such components parameterize curves that cover $-K_S$-conics upon projection to $S$ (see Theorem \ref{del pezzo curves thm} or Theorem \ref{products}). %
\end{rem}

Our remaining theorems on $\free(X,\alpha)$ classify accumulating (non-Manin) components.  These may be divided into two types: $\alpha \in \partial \overline{NE}(X)$ and $\alpha \not\in \partial \overline{NE}(X)$.  

For $\alpha \in \partial \overline{NE}(X)$, each component of $\free(X,\alpha)$ is contracted by a fiber type contraction $\pi : X \rightarrow B$, which is either a conic bundle or a del Pezzo fibration.  When $\pi$ is a conic bundle, $\free(X,\alpha)$ is  an irreducible space parameterizing branched covers of $\pi$-fibers.  If $\pi$ is a del Pezzo fibration, the subvariety of $\free(X,\alpha)$ parameterizing curves contained in a general $\pi$-fiber $i: S \hookrightarrow X$ may be identified with $\bigsqcup_i \free(S, \beta_i)$ for various $\beta_i$.  
This demonstrates two important facts.  First, the number $N(\alpha)$ of components of $\free(X,\alpha)$ exhibits polynomial order growth with respect to $-K_X. \alpha$.  %
This is because the volume of the polytope $\overline{NE}(S) \cap i_*^{-1}(n\alpha)$ is a polynomial function of $n$.  
Second, the monodromy action $\pi$ induces on $N_1(S)$ completely determines $N(\alpha)$ from the pushfoward $i_* : N_1(S) \rightarrow N_1(X)$.  %
Therefore, Theorem \ref{main result 2} shows $N(\alpha)$ is minimal when $X$ is general in moduli.

\begin{thm}\label{main result 2}
Let $X$ be a smooth Fano threefold, general in moduli.  The monodromy action on smooth fibers of any del Pezzo fibration $\pi : X \rightarrow B$ is the maximal subgroup of the Weyl group that stabilizes pushforward by inclusion into $X$.
\end{thm}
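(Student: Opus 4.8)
The plan is to compute the monodromy group directly and show it equals the target subgroup $M$. Fix a del Pezzo fibration $\pi\colon X\to\mathbb{P}^1$ with general fiber $\iota\colon S\hookrightarrow X$, let $\Delta\subset\mathbb{P}^1$ be its discriminant, and let $W=W(S)$ be the Weyl group, acting on $N_1(S)=N^1(S)$ and preserving $K_S$, the intersection form, and the set of $(-1)$-curves. The monodromy representation $\pi_1(\mathbb{P}^1\setminus\Delta)\to W$ has image $G$, and since restrictions of divisor classes on $X$ are monodromy-invariant, $G$ fixes the sublattice $V:=\iota^*N^1(X)\subset N_1(S)$ pointwise; a short computation shows the maximal subgroup $M\subseteq W$ stabilizing $\iota_*\colon N_1(S)\to N_1(X)$ is exactly the pointwise stabilizer of $V$, so $G\subseteq M$ always. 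The theorem is the reverse inclusion $G\supseteq M$, and I would prove it in three steps.

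First I would identify $M$ as a reflection group. By Steinberg's theorem the pointwise stabilizer of $V$ in the finite reflection group $W$ is generated by the reflections it contains, so $M=W(R')$ with $R'=\{\delta\in R(S):\delta\perp V\}$; equivalently, the set of geometric markings of $S$ is a $W$-torsor and, fixing the induced structure on $V$, the compatible markings form a $W(R')$-torsor. (When $\rho(X)=2$ one has $V=\mathbb{Q}K_S$, hence $R'=R(S)$ and the assertion is that $G$ is the full Weyl group; for larger $\rho(X)$ the lattice $V$ is bigger and $M$ is a proper reflection subgroup.) It thus suffices to put into $G$ a set of reflections generating $W(R')$, and since $w s_\delta w^{-1}=s_{w\delta}$ it is enough to realize, up to the action of $G$, a simple system of $R'$.

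Second I would carry out the local analysis at $\Delta$. For $X$ general I expect $\Delta$ reduced with a single ordinary double point on the fiber over each point of $\Delta$ — a transversality statement inside the relevant family of Fano threefolds — with finitely many families where a worse general singular fiber is forced and handled by a direct local computation (the monodromy there is still a product of reflections in mutually orthogonal vanishing classes). Granting this, the Picard--Lefschetz formula identifies the local monodromy at $t\in\Delta$ with the reflection $s_{\delta_t}$ in the vanishing class $\delta_t$, a $(-2)$-class orthogonal to every class extending across $t$ and hence a root of $R'$; so $G=\langle s_{\delta_t}:t\in\Delta\rangle$, a reflection subgroup of $W(R')$.

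Third — the decisive and hardest step — I would show these vanishing classes generate all of $W(R')$, equivalently that $G$ has the same orbits on the $(-1)$-curves of $S$ as $W(R')$ (it always has at least as many). Here I would use that monodromy can only grow: for a family $\{X_t\}_{t\in T}$ of del Pezzo fibrations with $T$ irreducible, the monodromy group of a general member contains that of every member, so it suffices to exhibit, for each of the finitely many Mori--Mukai families admitting a del Pezzo fibration, one convenient member (or degeneration) whose singular fibers produce a simple system of $R'$, or to recognize the family of fibers as a marked classical family of del Pezzo surfaces with full Weyl monodromy — feasible because in each case the fibration is explicit (typically $X$ is a divisor or complete intersection in a projective bundle over $\mathbb{P}^1$, or a small resolution of one). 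Combined with $G\subseteq M=W(R')$ this gives $G=M$. I expect this third step — checking family by family that the del Pezzo fibration of a general $X$ is as generic as its Fano structure allows, hence realizes all of $W(R')$ — to be where essentially all the work lies; steps one and two are formal.
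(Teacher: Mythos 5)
Your setup is sound and agrees with the paper's: the inclusion $G\subseteq M$ is formal, Steinberg identifies $M$ as the reflection subgroup $W(R')$ generated by roots orthogonal to $\iota^*N^1(X)$, and the content of the theorem is entirely in the reverse inclusion. Where you diverge is in how that reverse inclusion is attacked. The paper does not argue via Picard--Lefschetz on the total space of the fibration; instead it exploits the Mori structure, factoring every non-elementary del Pezzo fibration as $X\to Y\to S\to\mathbb{P}^1$ with $Y\to S$ an elementary conic bundle, and proves a standalone theorem (Theorem \ref{conic monodromy}) computing the monodromy of a conic bundle restricted to a pencil of curves in the base as $V\rtimes S_d$, where $V\subset(\mathbb{Z}/2)^d$ is the even-weight subgroup. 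The elementary cases ($\rho(X)=2$, eight families) are dispatched by citation to classical Lefschetz-pencil results and their analogues when $-K$ is not very ample, and the birational part $X\to Y$ contributes transpositions of exceptional curves via Theorem \ref{Monodromy}. So both proofs are ultimately case analyses over the Mori--Mukai list, but the paper compresses almost all cases into one conic-bundle lemma, whereas you would re-derive each generation statement from vanishing cycles.

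The genuine gap is your Step 3, and it is not merely that you defer the work. First, the criterion you propose --- that $G=W(R')$ is ``equivalent'' to $G$ and $W(R')$ having the same orbits on the $(-1)$-curves of $S$ --- is false in exactly the cases that matter. In the conic-bundle situation the target group is $V\rtimes S_d\cong W(D_d)$ and the dangerous subgroup is $\{0,\sigma\}\rtimes S_d$, where $\sigma$ is the central element swapping both components of every reducible conic fiber; for $d\geq 4$ this proper subgroup already acts transitively on the $2d$ lines arising as components of reducible fibers, so counting orbits of lines cannot rule it out. (This is precisely the possibility the paper must exclude, and doing so costs it a count of points in fibers of the Fano-scheme-of-lines cover $B\to\mathbb{P}W$ together with a surjectivity computation $H^1(S_d,V)\to H^1(S_d,V/\{0,\sigma\})$; nothing about it is formal.) Second, your Step 2 assumption that for general $X$ every singular fiber has a single ordinary node is itself a nontrivial transversality claim that fails to be uniform across the list --- the paper verifies the corresponding smoothness and simple-branching statements of the discriminant separately in Lemma \ref{lines in C1 fibrations} and in the two exceptional $\rho(X)=3$ families with $(-K_X)^3\in\{12,14\}$, and the non-very-ample cases ($V_1$, $V_2$ blow-ups, the $(2,4)$ double covers) cannot be handled by a generic Lefschetz-pencil argument at all. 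I will note that your Picard--Lefschetz framing could in principle repair the first defect more cheaply than the paper does: if one knows that $G$ is generated by honest reflections $s_{e_i\pm e_j}$ and that $G$ surjects onto $S_d$ with $G\cap V\neq 1$ (irreducibility of the discriminant double cover), then $G$ contains some $s_{e_i+e_j}$, hence $s_{e_i-e_j}s_{e_i+e_j}=\epsilon_i\epsilon_j\in V$, whose $S_d$-orbit generates $V$. But that argument appears nowhere in your proposal, and as written the proposal establishes only $G\subseteq M$ together with a program.
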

Theorem \ref{main result 2} cannot be extended to arbitrary Fano threefolds.  
However, as a step in the proof of Theorem \ref{Main Result}, we show $\free(X,\alpha)$ is irreducible if $\alpha \in \Nef_1(X)_{\mathbb{Z}}$, $-K_X. \alpha \in \{2,3\}$, and $X$ is an arbitrary Fano threefold of most deformation types.  Theorem \ref{thm: existence of core} summarizes these findings, which require case-by-case analysis.

For $\alpha  \notin \partial \overline{NE}(X)$, a component of $\free(X,\alpha)$ is accumulating if and only if %
the family it parameterizes has disconnected fibers over $X$ \cite[Theorem~5.4]{beheshti2020moduli}.  When $X$ is not a product of two varieties, Theorem \ref{Main Result} limits such families to covers of free anticanonical conics of some class $\beta$.  Theorem \ref{main thm: conics} identifies the number of components of $\free(X,\beta)$ when $X$ is general in moduli.  %
While this number may change upon smooth deformation of $X$, %
we prove $\free(X,\beta)$ is irreducible for all smooth Fano threefolds of most deformation types.  
This extends results of \cite{kuznetsov2018} on conics in prime Fano threefolds with large genus and finishes our classification of families of free curves on Fano threefolds.

\begin{thm}\label{main thm: conics}
Let $X$ be a smooth Fano threefold of index $r$ and Picard rank $\rho$.  %
Suppose $\beta \in \Nef_1(X)_\mathbb{Z}$ satisfies $-K_X . \beta = 2$ and $\beta \not\in \partial \overline{NE}(X)$.  Then $\rho \leq 2$, and
\begin{enumerate}
\item $\free(X,\beta)$ is irreducible if $\rho = 2$, if $r > 1$, or if $\rho = r = 1$ and $g(X) > 6$.
\item Suppose $\rho = r = 1$ and $X$ is general in moduli.  If $-K_X$ is very ample, $\free(X,\beta)$ is irreducible; otherwise, $\free(X,\beta)$ has two components.
\end{enumerate}
\end{thm}

Double covers of del Pezzo threefolds provide examples of prime Fano threefolds $X$ of each genus $g(X) \leq 6$ wherein the family of free conics has more components than on a general deformation of $X$.  Thus, Theorem \ref{main thm: conics} is optimal.  %

For general Fano threefolds, we characterize components of $\Mor(\mathbb{P}^1, X, \alpha)$ with nondominant universal family as well.  If $\alpha \not\in \Nef_1(X)$, these are merely families of curves on contractible divisors, which are listed in \cite{matsuki1995weyl, matsuki2023addendumerratum} and classified into five types by \cite[Theorem~3.3]{moriMMP}.  %
If $\alpha \in \Nef_1(X)$ instead, %
Theorem \ref{main result 3} describes each component of $\Mor(\mathbb{P}^1, X, \alpha)$ with nondominant universal family.  %

\begin{thm}\label{main result 3}

Let $X$ be a smooth Fano threefold, general in moduli.  Suppose $X$ is not isomorphic to $\mathbb{P}^1 \times S_1$ for a del Pezzo surface $S_1$ of degree one.  Then for $\alpha \in \Nef_1(X)$, every component of $\Mor(\mathbb{P}^1, X, \alpha)$ with nondominant universal family parameterizes multiple covers of nef $-K_X$-lines.  

Furthermore, suppose $\beta \in \Nef_1(X)_\mathbb{Z}$ satisfies $-K_X . \beta = 1$.  Then $\Mor(\mathbb{P}^1, X ,\beta)$ is irreducible if and only if $X$ is not the blow-up of a del Pezzo threefold $V_1$ of degree one.  When $X$ is a blow-up of $V_1$, $\Mor(\mathbb{P}^1, X ,\beta)$ has two components.  %
\end{thm}

For any Fano threefold $X$, \cite[Theorem~1.2]{beheshti2020moduli} (see Theorem \ref{nonfree curves}) describes the subvariety of $X$ swept out by nondominant components of the universal family over $\Mor(\mathbb{P}^1,X)$.  Theorem \ref{main result 3} improves this result for general Fano threefolds by specifying what the family must look like as well.  Moreover, Lemma \ref{nonextreme lines} proves that, aside from three explicit exceptions, %
the class $\beta$ of a $-K_X$-line spans an extreme ray of $\overline{NE}(X)$.  This completes our classification of components of $\Mor(\mathbb{P}^1,X)$ for general Fano threefolds. %

Our proof of Theorem \ref{Main Result} uses the classification of Fano threefolds into $105$ deformation classes by \cite{IskovskihFano3foldsII, mori1981classification, erratumMori}.  Rather than working with $\Mor(\mathbb{P}^1, X)$ directly, we use the Kontsevich moduli space of stable maps to study rational curves on $X$ via their degeneration to reducible curves.  This allows us to apply a movable bend-and-break theorem from \cite{beheshti2020moduli} to study components of free rational curves via chains of free rational curves they contain.  We thus reduce to considering rational curves of low anticanonical degree, which we describe explicitly using monodromy properties of the Mori structures of Fano threefolds (Theorems \ref{Monodromy}, \ref{conic monodromy}, and \ref{monodromy del Pezzo main result}). %
A few cases are not suitable for study via Mori structure, but prove easier to understand on families of Fano threefolds (Lemma \ref{lem: boundary stratum irreducible}, Propositions \ref{prop: terminal away from codim two}, \ref{low degree curves degeneration}) or through Sarkisov links (Lemma \ref{lem: genus 12 Fano curves}).  
Application of these techniques requires individual analysis of most deformation types of Fano threefolds.  This is because the structure of $\Nef_1(X)_{\mathbb{Z}}$ as a monoid is quite varied among Fano threefolds, and our method for proving irreducibility of moduli spaces of high degree free curves (Theorem \ref{Main Method}) requires knowledge of its generators and relations.  Our %
analysis leads to an improvement of the bend-and-break theorem from \cite{beheshti2020moduli}.

\begin{thm}\label{improved MovableBB}

Let $X$ be a smooth Fano threefold.  Suppose $X$ does not contain a contractible divisor of $E5$ type.  Let $M \subset \overline{M}_{0,0}(X)$ be a component that generically paramterizes free curves of $(-K_X)$-degree at least 4.  One of the following holds:
\begin{enumerate}
    \item There exists a birational contraction $\pi : X \rightarrow \mathbb{P}^3$ such that $M$ parameterizes %
    total transforms of lines under $\pi$. %
    \item The boundary of $M$ parameterizes a stable map $f: C_1 \cup C_2 \rightarrow X$ where for each $i$, $C_i$ is irreducible and the restriction $f|_{C_i} : C_i \rightarrow X$ is a free curve.%
\end{enumerate}

\end{thm}

\subsection{Outline}
Section $2$ of this paper develops convenient notation and recalls relevant results. Section $3$ compares different formulations of Geometric Manin's Conjecture and proves related results. Section $4$ discusses the strategies we use to prove irreducibility of spaces of free curves.  Section $5$ proves general statements about monodromy related to Mori structures, in particular Theorem \ref{main result 2}.  Section $6$ formalizes the study of low-degree curves on Fano threefolds and proves Theorems \ref{main thm: conics} and \ref{main result 3}.  Section $7$ outlines our proof of Theorem \ref{Main Result}. Then, in the remaining sections, we apply our strategy to the $105$ deformation classes of Fano threefolds.

\subsection{History}
This paper belongs to a long tradition of studying rational curves on Fano varieties.  Koll\'ar, Miyaoka, and Mori revolutionized this practice through their proof of rational connectedness of Fano varieties \cite{mori_fano_rat_connected}.
This spurred a discourse on free rational curves %
that coalesced into Geometric Manin's Conjecture \cite{2019}.  
However, the study of rational curves on Fano varieties dates back to the beginning of algebraic geometry.  Cayley, Salmon, and del Pezzo \cite{Cayley1849, Salmon1849, delPezzo}  %
counted lines on the anticanonical models of Fano surfaces.  %
These lines appear in Clebsch's realization \cite{Clebsch} of cubic surfaces as blow-ups of the projective plane. 
L\"uroth asked if, like the cubic surface, every unirational variety is rational.
Positive answers to L\"uroth's question for curves \cite{Luroth1875} and complex surfaces \cite{CastelnuovoRationality1894} inspired Fano to look for counterexamples among his eponymous threefolds.

Fano's forty years of research on threefolds \cite{Fano08, Fano15, Fano37, Fano47}, %
though somewhat lacking in rigour, laid the foundation for several breakthroughs %
involving rational curves and Fano varieties.  The first of these answered L\"uroth's question.  
While the study of rational curves has evolved since Fano's time, their presence in research from his era is noticeable. %
Some of Fano's earliest research on threefolds \cite{Fano1904} describes the variety of lines on a cubic hypersurface.  This variety was central to Clemens and Griffiths's long-awaited proof of the irrationality of cubic threefolds \cite{ClemensGriffiths1972}.
Along with other Fano-inspired works \cite{IskovskihManin1971, ArtinMumford1972}, this offered one of the first widely-accepted counterexamples to L\"uroth's question.  %

The classification of Fano threefolds into 105 irreducible families, called \textit{deformation types}, was the focal point of the next breakthrough on rational curves and Fano varieties.  Iskovskikh, Mori, and Mukai completed this classification by rectifying Fano's works \cite{IskovskihFano3foldsI, IskovskihFano3foldsII} and applying Mori's new theory on extremal rays \cite{mori1981classification, mori1983classification, mori1985classification, erratumMori}.  Mori's theory \cite{moriMMP} generalized Castelnuovo's contractability criterion \cite{Castelnuovo1901SopraAQ} to families of rational curves on higher dimensional varieties.  His continued research in dimension three \cite{moriFlips, kollarMoriFlips} formed the basis of the modern Minimal Model Program.  This ties the geometry of a Fano variety closely to the behavior of rational curves it contains.  %

Much of Mori's groundbreaking work on rational curves stems from a technique \cite[Theorem~4]{Mori1979} commonly known as Mori's bend-and-break.  Bend-and-break uses morphism schemes to make the following statement precise: irreducible curves in a variety often degenerate to reducible curves.  Mori originally used his technique to prove %
there exist low-degree rational curves through any point on a Fano variety.  %
This had several consequences, including boundedness of smooth Fano varieties \cite{mori_fano_rat_connected}.  
In addition, bend-and-break allows one to study high-degree free rational curves by breaking them into pieces.  This facilitated the beginning of research into rational curves of arbitrarily large degree. %

Following Mori's work, many authors have taken to studying moduli spaces of high-degree free rational curves on Fano varieties \cite{thompsen98, KP01, HRS2004, Castravet04, testa2006irreducibility, CS2009, BK2013,  Bourqui2016, BV2017, RY2019, 2019, mustopa2020rationalcurvesmodulispaces, Fanoindex1rank1, beheshti2021rational, lastDelPezzoThreefold, Okamura2024delPezzo}.  %
These studies bound the number of components in $\Mor(\mathbb{P}^1,X,\alpha)$ parameterizing free curves when $-K_X . \alpha \gg 0$.  
Although similar bounds appeared in many cases, it was initially unclear what one could expect for arbitrary Fano varieties.  By borrowing ideas from arithmetic geometry \cite{batyrev88}, Lehamnn and Tanimoto unified these examples into a Geometric Manin's Conjecture about the structure of $\Mor(\mathbb{P}^1,X)$ for Fano varieties \cite{2019}.  The aforementioned papers verify Geometric Manin's Conjecture for many hypersurfaces, homogeneous varieties, toric varieties, del Pezzo varieties, moduli spaces of vector bundles, and some Fano threefolds.  This paper finishes the proof of Geometric Manin's Conjecture for Fano threefolds.  %

\begin{center}
\textbf{Acknowledgments}
\end{center}

The authors would like to thank Eric Riedl, Sho Tanimoto, Brian Lehmann, Kenji Matsuki, Matthew Scalamandre, Nikolai Konovalov, Shigeru Mukai, Brendan Hassett, Nicholas Salter, Gwyneth Moreland, and Izzet Coskun for countless conversations, feedback, and advice.  In particular, this paper would not have been possible without input from Lehmann, Tanimoto, and Riedl.  Eric Jovinelly was supported by NSF grant 1547292 and an NSF postdoctoral research fellowship, DMS-2303335.

\setcounter{tocdepth}{1}
\tableofcontents

\section{Background and Notation}
Let $X$ be a smooth variety defined over $\mathbb{C}$.  Throughout this paper, $N_1(X)$ will refer to the real vector space of numeric equivalence classes of curves on $X$.  The cone $\Nef_1(X)\subset N_1(X)$ consists of those curve classes pairing nonnegatively with all effective divisors.  If $X$ is a smooth Fano threefold, by Theorem \ref{Representability of Free Curves}, $\Nef_1(X)$ is a rational polyhedron with extremal rays given by classes of free rational curves, as defined below.  Moreover, $\Nef_1(X)\subset N_1(X)$ is locally constant in families of Fano varieties \cite[Corollary~5.1]{Fernex2009RigidityPO}.  

\begin{defn}
A morphism $f:\mathbb{P}^1\rightarrow X$ is \textit{free} (resp.\ \textit{very free}) if $f^*\mathcal{T}_X$ is globally generated (resp.\ ample).
\end{defn}

Every free curve in this paper is rational.  Given a class $\alpha \in N_1(X)$, the space $\Mor(\mathbb{P}^1,X,\alpha)$ parameterizes morphisms $f:\mathbb{P}^1\rightarrow X$ for which $f_*[\mathbb{P}^1]=\alpha$.  The Kontsevich space of stable maps $ \overline{M}_{0,n}(X,\alpha)$ parameterizes $n$-pointed stable curves $f:C\rightarrow X$ of genus $0$ with $f_*[C]=\alpha$ \cite{kontsevich_space}.  As an open subset of $\overline{M}_{0,3}(X)$ embeds in $\Mor(\mathbb{P}^1,X)$, we mostly work with the compact Kontsevich space to obtain limits of maps.  
We are primarily interested in components of $\overline{M}_{0,0}(X)$ that generically parameterize free maps, and so make the following definition.

\begin{defn}\label{def: locus of free curves}
For $\alpha\in N_1(X)$, let $$\free_n(X,\alpha)\subseteq \overline{M}_{0,n}(X,\alpha)$$ be the locus of $n$-pointed maps $f:\mathbb{P}^1\rightarrow X$ such that $f^*\mathcal{T}_X$ is globally generated.  Furthermore, let $\free_n^{bir}(X,\alpha)\subseteq \free_n(X,\alpha)$ denote those maps $f:\mathbb{P}^1\rightarrow X$ which are also birational onto their image.  Denote the closure of these spaces by $\overline{\free}_n^{bir}(X,\alpha)\subseteq \overline{\free}_n(X,\alpha)\subset\overline{M}_{0,n}(X,\alpha)$.  We omit the subscript $n$ when $n=0$.
\end{defn}

\noindent To study $\overline{\free}(X)$, we make use of special maps called \textit{free chains} in the complement $\overline{\free}(X)\setminus \free(X)$.  Our approach is motivated by the following theorem.  

\begin{thm}[Movable Bend-and-Break, \cite{beheshti2020moduli}]\label{MovableBB}
Let $X$ be a smooth Fano threefold.  Let $M\subset \overline{\free}(X)$ be a component parameterizing curves of $(-K_X)$-degree $\geq 6$ (or even $\geq 5$ if $X$ does not admit an $E5$ contraction).  Then $M$ contains a stable map of the form $f : Z \rightarrow X$ with $Z=C_1\cup C_2$, $C_i\cong\mathbb{P}^1$, and $f|_{C_i}$ free for each $i$.  Moreover, if $M\subset \overline{\free}^{bir}(X)$, we may pick $f : Z \rightarrow X$ as above which is birational onto its image.
\end{thm}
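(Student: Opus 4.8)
The plan is to follow the standard movable bend-and-break strategy: degenerate a general free curve in $M$ to a stable map whose domain is a nodal chain, while controlling the $(-K_X)$-degree of the components so that both pieces remain free. First I would pick a general point $[f : \mathbb{P}^1 \to X]$ of $M$, so that $f$ is free and $f^*\mathcal{T}_X$ is globally generated; write $f^*\mathcal{T}_X \cong \mathcal{O}(a_1) \oplus \mathcal{O}(a_2) \oplus \mathcal{O}(a_3)$ with $a_i \geq 0$ and $\sum a_i = -K_X \cdot \alpha - 2 \geq 4$ (or $\geq 3$ when no $E5$ contraction exists). The dimension of $M$ at $[f]$ is $-K_X \cdot \alpha + \dim X = -K_X \cdot \alpha + 3$. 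The key input is a bend-and-break argument through several assigned points: fixing $-K_X \cdot \alpha$-many general points on $X$, the sublocus of $M$ parameterizing free curves through all of them is finite and nonempty (this is where freeness and the dimension count are used), and then breaking off. I would instead work with pointed spaces $\overline{\free}_n(X,\alpha)$ and use the evaluation morphism.

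The core step is: consider the universal family over $M$ with its evaluation map, attach enough general points to rigidify, and then apply bend-and-break (Mori's technique, in the form used in \cite{beheshti2020moduli}) to produce a stable map $g : Z \to X$ in $\overline{M}_{0,0}(X,\alpha)$ with $Z = C_1 \cup C_2$ reducible. The delicate part is ensuring $g \in \overline{\free}(X)$ and that $g|_{C_i}$ is free for each $i$ — i.e. the break can be arranged to be \emph{movable} rather than producing a component contracted to a point or a curve through which nothing generic passes. This is handled by breaking along a general point of a general fiber of the evaluation map: since the general curve in $M$ is free and sweeps out $X$, one can arrange the limit to consist of two components each still passing through sufficiently many general points to force freeness, using the semicontinuity of $h^1(\mathbb{P}^1, f^*\mathcal{T}_X)$ and the characterization of free curves via deformations with assigned points. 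The numerical threshold $(-K_X)$-degree $\geq 6$ (resp. $\geq 5$) is exactly what guarantees each of the two pieces has $(-K_X)$-degree $\geq 2$ after the split absorbs the $+2$ from the node, keeping both free; the $E5$ exception accounts for the one contraction type where low-degree free curves behave worse.

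Finally, for the last sentence — if $M \subset \overline{\free}^{bir}(X)$, the limit can be chosen birational onto its image — I would argue that birationality onto the image is an open condition on $M$, so the general $[f]$ is birational onto its image, and the bend-and-break deformation can be performed within the locus of maps birational onto their images provided the two limiting components $C_1, C_2$ are not multiple covers and do not share image. The main obstacle I anticipate is precisely this last point: controlling the limit so that no component is a multiple cover and the two components meet transversally at a single point with distinct images, which requires choosing the breaking point and the deformation direction generically and invoking that a general free curve of degree $\geq 2$ on a Fano threefold is an immersion birational onto its image (a result available from the general theory of free curves, e.g. via the bend-and-break lemmas and genericity). I would therefore structure the proof around (i) the dimension/genericity setup, (ii) the bend-and-break production of a reducible limit, (iii) the freeness of both components via assigned-points deformation theory, and (iv) the birational refinement, flagging (iii)–(iv) as where the real work lies.
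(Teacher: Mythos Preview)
This theorem is not proved in the paper at all: it is stated with attribution to \cite{beheshti2020moduli} and used as a black box throughout. There is therefore no proof in the paper to compare your sketch against. Your outline is a reasonable high-level summary of the strategy in the cited reference, but since the present paper only quotes the result, any assessment of your argument would require consulting \cite{beheshti2020moduli} directly rather than this paper.
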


\noindent See Theorem \ref{improved MovableBB} for an exact statement as to which quartic and quintic free curves degenerate to a \textit{chain of free curves} as in Theorem \ref{MovableBB}.

\begin{defn}[\cite{2019}]
A \textit{chain of free curves} (or \textit{free chain}) on $X$ is a chain $C= C_1 \cup \ldots \cup C_n$ of irreducible rational curves $C_i$ with a stable map $f:C\rightarrow X$ such that the restriction $f_i := f|_{C_i}$ is free for each $i$.  For $\alpha_1, \ldots, \alpha_n \in \Nef_1(X)$, a \textit{free chain of type} $(\alpha_1, \ldots , \alpha_n)$ is a free chain $f:C\rightarrow X$ as above with $C= \cup C_i$ and $f_*[C_i] = \alpha_i$.
\end{defn}

\noindent We call a map $f: C \rightarrow X$ from a chain $C = \cup C_i$ of irreducible rational curves a \textit{chain of type} $(f_*[C_1], \ldots ,f_*[C_n])$ if its components are not necessarily free.  A chain is \textit{immersed} if $f$ is unramified at each node.  For simplicity, when $f:C\rightarrow X$ is an embedding, we often omit the map $f:C\rightarrow X$ and simply consider $C = \cup C_i$ as a subvariety of $X$.  %
Thus, we commonly refer to $C = C_1 \cup \ldots \cup C_n$ as a (free) chain of type $([C_1], \ldots , [C_n])$.  %
Chains of free curves are parameterized by \textit{main components} of products of components of $\overline{\free}_n(X)$ over $X$. %

\begin{defn}[\textit{Main Component}, Definition 5.6 \cite{2019}]
Given irreducible components $M_i^j \subset \overline{\free}_j(X,\alpha_i)$, consider the fiber product $M_1^1 \times_X M_2^2 \times_X \ldots \times_X M_{r-1}^2 \times_X M_r^1$ gluing separate markings. A component $M$ of the space

$$M_1^1 \times_X M_2^2 \times_X \ldots \times_X M_{r-1}^2 \times_X M_r^1$$
or
$$\prod_X M_i^2 = M_1^2 \times_X \ldots \times_X M_r^2$$

\noindent is a \textit{main component} if it dominates each $M_i^j$ under projection.  Equivalently, the evaluation map $M\rightarrow X$ associated to each glued marking dominates $X$.
\end{defn}
\noindent Main components of $\prod_X M_i^2$ are in bijection with those of $M_1^1 \times_X \ldots \times_X M_r^1$.  A general point in any main component $M$ corresponds to a chain of free curves.  Conversely, any chain of free curves corresponds to a point in some main component.  This follows from the basic fact that the product $M\times_X N$ of any two main components $M\subset \prod_X M_i^2$ and $N \subset \prod_X N_i^2$ contains another main component.  To facilitate discussion of main components and the components of $\overline{\free}(X)$ containing them, we make the following definitions.

\begin{defn}
A free curve $f:C\rightarrow X$ is \textit{freely breakable} if it lies in the same component of $\overline{M}_{0,0}(X)$ as a curve $g:Z\rightarrow X$ with $Z=C_1\cup C_2$, $C_i\cong \mathbb{P}^1$, and $g|_{C_i}$ free for each $i$.  We say $f$ \textit{breaks freely} into $g$. A component of $\overline{\free}(X)$ is said to be \textit{freely breakable} if it contains a free chain of length at least 2. A class $\alpha \in \Nef_1(X)$ is \textit{freely breakable} if each component of $\overline{\free}(X,\alpha)$ is freely breakable.
\end{defn}

\begin{defn}\label{def core}
A \textit{core (of free curves)} on $X$ is a set $\mathscr{C}_X \subset N_1(X)$ of classes represented by free curves such that:
\begin{enumerate}
    \item $\overline{\free}(X,\alpha)$ is nonempty and irreducible for all $\alpha \in \mathscr{C}_X$; and
    \item Every component $M \subset \overline{\free}(X)$ contains a chain of free rational curves whose irreducible components have class in $\mathscr{C}_X$.
\end{enumerate}
Given a core $\mathscr{C}_X$, a \textit{core class} is an element of $\mathscr{C}_X$.  A \textit{weak} core of free curves satisfies $(2)$ but not necessarily (1).  %
\end{defn}
\begin{rem}\label{def Sigma}
Theorems \ref{Main Result}, \ref{main result 2}, and \ref{main thm: conics} imply a general Fano threefold $X$ of a fixed deformation type has a core of free curves if and only if %
$X$ does not admit a del Pezzo fibration with degree $\leq 3$ fibers or $\rho(X) = 1$ and $-K_X$ is not very ample.  %
In these cases, $X$ has a weak core of free curves $\mathscr{C}_X$ by Theorem \ref{MovableBB}.
\end{rem}

\begin{defn}
Fix a core $\mathscr{C}_X$ of $X$. A \textit{separating class} is a core class $\alpha$ such that the generic fiber of the evaluation map $\overline{\free}_1(X,\alpha)\xrightarrow{\text{ev}} X$ is reducible.
\end{defn}

\begin{rem}
For any separating class $\alpha$, $\overline{\free}_1(X,\alpha) \times_X \overline{\free}_1(X,\alpha)$ contains main components that lie in different components of $\overline{\free}(X)$.  Indeed, Let $Y\rightarrow X$ be the Stein factorization of a smooth resolution of $\overline{\free}(X,\alpha)_1\xrightarrow{\text{ev}} X$.  For a general point $p\in X$, consider the smoothing $C$ of a free chain $C_1 \cup C_2$, where $C_1, C_2 \in \text{ev}^{-1}(p)$.  If $C_1, C_2$ lie in the same component of $\text{ev}^{-1}(p)$, $C$ lifts to $Y$.  Otherwise, $C$ does not lift to $Y$, as any lift to $Y$ of a family smoothing $C_1 \cup C_2$ to $C$ would have nonconstant Hilbert polynomial, contradicting the properness of $\overline{\mathcal{M}}_{0,0}(Y)$.
\end{rem}

Below, we summarize some basic properties about free chains found in \cite{2019}.

\begin{lem}\label{Basic Properties Chains}
Let $M\subset \overline{\free}(X)$ be a component containing a free chain $f:C\rightarrow X$ with $C=\cup_{i=1}^r C_i$.  
Let $M_i^n\subset \overline{\free}_n(X)$ be the component containing the maps $f_i = f|_{C_i}$ with $n$ markings,  
$N\subset M_1^1 \times_X \times M_2^2 \times_X \ldots \times_X M_{r-1}^2 \times_X M_r^1$ be the unique main component containing $(C,f)$, and $N^\circ\subset N$ be the dense open locus parameterizing chains of free curves.  Then 
\begin{enumerate}
    \item $N\subset M$ and no other component of $\overline{\free}(X)$ intersects $N^\circ$.
    \item If any component $M_i$ contains a main component of $\widehat{M}_i^1 \times_X \widetilde{M}_i^1$, then $M$ contains a main component of $M_1^1 \times_X \ldots \times_X \widehat{M}_i^2 \times_X \widetilde{M}_i^2 \times_X \ldots \times_X M_r^1$.
    \item For any permutation $\sigma$ of $\{1, \ldots , r\}$, $M$ contains a main component of $M_{\sigma 1}^1 \times_X M_{\sigma 2}^2 \times_X \ldots \times_X M_{\sigma (r-1)}^2 \times_X M_{\sigma r}^1$.
    \item Each projection $N^\circ\rightarrow M_i^{n_i} \times_X \ldots \times_X M_j^{n_j}$ is flat and dominates a main component of the product.  %
    \item The evaluation map $\text{ev}: N^\circ \rightarrow X$ associated to any (glued) marking is flat.  For a closed $Z\subsetneq X$ and any two evaluations $\text{ev}_1,\text{ev}_2$, a general map $[g]$ in each component of a general fiber of $\text{ev}_1$ is a free chain with $\text{ev}_2(g)\in X\setminus Z$.
    \item If the reduced image $f(C_i)$ is not a $(-K_X)$-conic or if it differs from the reduced image $f(C_{i+1})$, %
    general maps parameterized by $N$ are immersions along the node $C_i \cap C_{i+1}$.
\end{enumerate}
\end{lem}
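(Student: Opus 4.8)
The plan is to run everything off one deformation-theoretic observation together with two smoothing inputs, all of which already appear in \cite{2019} (indeed, the quickest write-up just cites the relevant statements there). The observation is that a chain of free rational curves $f\colon C\to X$ with $C=\bigcup_{i=1}^r C_i$ has $H^1(C,f^*\mathcal{T}_X)=0$: each $f^*\mathcal{T}_X|_{C_i}$ is a direct sum of line bundles of nonnegative degree, and the Mayer--Vietoris sequence for the nodal gluing then kills $H^1$. Consequently every fibre product of components of the spaces $\overline{\free}_n(X)$ is smooth of the expected dimension at a free chain, and the evaluation $\overline{\free}_1(X,\alpha)\to X$ at the marking is smooth along the free locus, since $f^*\mathcal{T}_X(-p)$ still has vanishing $H^1$. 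The two smoothing inputs: a free chain lies in a single component of $\overline{M}_{0,0}(X)$ together with its smoothing, and together with every partial smoothing of its nodes.

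For (1)--(3): first I would observe that the gluing morphism $\Phi$ from the ambient fibre product to $\overline{M}_{0,0}(X)$ sends the irreducible $N^\circ$ into the smooth, hence unibranch, locus of $\overline{M}_{0,0}(X)$; since $N^\circ$ is connected and contains $(C,f)$, whose image is $[f]\in M$, all of $\Phi(N^\circ)$ lies in $M$, so $N=\overline{N^\circ}$ maps into $M$, and unibranchedness forbids a second component of $\overline{\free}(X)$ from meeting $\Phi(N^\circ)$; this is (1). For (2), use that $M_i$ contains a main component of $\widehat{M}_i^1\times_X\widetilde{M}_i^1$ to break the $i$-th link of the generic chain of $N$ ``in family along $N$''; the resulting family of $(r+1)$-chains of free curves consists of limits of chains parameterized by $N\subset M$, hence lies in $M$ by properness of $\overline{M}_{0,0}(X)$, and it dominates each factor and each glued evaluation, so it contains a main component. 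For (3), it suffices to transpose two adjacent links: I would partially smooth their common node to a single free link $C_{i,i+1}$ (still in $M$ by (1)), observe that the $r=2$ fibre product $M_i^1\times_X M_{i+1}^1$ agrees with $M_{i+1}^1\times_X M_i^1$ as a space of stable maps so the component of $C_{i,i+1}$ is freely breakable in both orders, and then apply (2) in the reversed order.

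For (4) and (5): the ambient fibre product is smooth at free chains because each gluing is taken over the smooth evaluation map above, so $N^\circ$ is smooth, in particular Cohen--Macaulay. A projection $N^\circ\to M_i^{n_i}\times_X\cdots\times_X M_j^{n_j}$ onto a consecutive subrange of links has fibres that are themselves fibre products of $\overline{\free}_n(X)$'s over $X$, hence of constant dimension; since the target is regular, miracle flatness gives flatness, and since $N$ dominates each factor the image is a main component --- this is (4). For (5), the evaluation at a glued marking lying on link $C_k$ factors as $N^\circ\to M_k^{n_k}\xrightarrow{\ \mathrm{ev}\ }X$, a flat map (by (4) with a one-element subrange) followed by a smooth map, hence is flat. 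The second sentence of (5) needs more care: for general $p\in X$ the fibre $\mathrm{ev}_1^{-1}(p)$ is equidimensional, and since $N\setminus N^\circ$ and $\mathrm{ev}_2^{-1}(Z)$ are proper closed, a dimension count shows their traces on $\mathrm{ev}_1^{-1}(p)$ are proper in every component of that fibre for general $p$. The input this needs is that $(\mathrm{ev}_1,\mathrm{ev}_2)\colon N^\circ\to X\times X$ is dominant --- i.e.\ a free chain can be arranged through two general points --- which one gets from the fact that every link class satisfies $-K_X . \alpha_i\geq 2$ (a line-link is incompatible with a main component, its members sweeping out only a surface) together with an induction moving one marking at a time along free links.

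For (6): a general free curve parameterized by $M_i$, resp.\ $M_{i+1}$, is an immersion on the smooth threefold $X$, so each branch at the node $C_i\cap C_{i+1}$ is immersed, and $f$ fails to be immersed there precisely when the two branches are tangent at their common image. By (5) one may take that image to be a general $p\in X$ with the two links general and chosen independently among free curves through $p$; if the node were always ramified, the tangent line at $p$ of a general free curve of class $\alpha_i$ (and of $\alpha_{i+1}$) through $p$ would depend only on $p$, with a common value for the two classes. A dimension count for the tangent-direction rational map $\overline{\free}_1(X,\alpha)\dashrightarrow\mathbb{P}(\mathcal{T}_X)$ then forces both $\alpha_i$ and $\alpha_{i+1}$ to be $(-K_X)$-conic classes and the two conics through $p$ to coincide, so $f(C_i)=f(C_{i+1})$ is a $(-K_X)$-conic, contrary to hypothesis. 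The genuinely delicate points are this last dimension count and the dominance statement feeding into (5); the rest is formal once the $H^1$-vanishing and smoothing inputs are in hand, which is exactly why this lemma is quoted from \cite{2019} rather than reproven in full.
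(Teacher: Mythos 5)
For parts (1)--(5) the paper simply cites \cite{2019}, and your deformation-theoretic reconstruction ($H^1$-vanishing for chains of free curves, smoothness of the fibre products, miracle flatness) is a plausible account of what that reference does, so I will not dwell on it. The substantive comparison is part (6), which is the only part the paper proves, and there your route is genuinely different: the paper sweeps out a divisor $\mathrm{ev}_1(D)\subset X$ with a general $(\dim X-2)$-parameter family of deformations of one link and then invokes \cite[Proposition~2.8]{beheshti2020moduli} (a general free curve is nowhere tangent to a fixed divisor), whereas you argue by independence of the two links through the node together with non-constancy of the tangent-direction map on fibres of evaluation.

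Your argument has a genuine gap exactly in the boundary case the hypothesis of (6) is designed to address: both adjacent links of anticanonical degree $2$ with \emph{distinct} reduced images. Your mechanism --- ``if general independent choices of the two links through a general $p$ were always tangent, the tangent direction would be constant on each fibre component, contradicting first-order variation'' --- works when a fibre component is positive-dimensional, since then $N_g(-p)=\mathcal{O}(a-1)\oplus\mathcal{O}(b-1)$ with $a\geq 1$ has a global section with nonzero value at $p$, so the tangent direction moves. But when both classes are conics the fibres of $\mathrm{ev}$ over $p$ are finite, constancy of the tangent direction on a fibre component is vacuous, and ``the dimension count forces the two conics through $p$ to coincide'' is an assertion, not an argument: it amounts to claiming that two distinct free conics (one from each of the two paired fibre components) through a general point are not tangent there, i.e.\ that the images of the two tangent-direction maps in $\mathbb{P}(\mathcal{T}_X)$ do not share a component over $X$. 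Nothing in your write-up excludes this, and it is precisely what the paper's divisor argument is for: a general pencil of deformations of the first conic sweeps a divisor not containing the second, and generic non-tangency of the second family to that divisor gives transversality of the branches. A second, smaller issue: your claim that degree $\geq 3$ forces non-constant tangent direction is false as stated for multiple covers of conics (all deformations fixing $p$ cover the same rigid conic through $p$, so the tangent direction at $p$ is constant despite $-K_X\cdot\alpha\geq 4$); the correct dichotomy is governed by the degree of the \emph{reduced} image, which is how the lemma is phrased, and your dimension count needs to be run on the reduced curves to respect this. Both points are repaired by the paper's divisor-tangency argument, which treats all cases uniformly after reducing to $f(C_i)\neq f(C_{i+1})$.
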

\begin{proof}
Properties (1)-(5) were established in \cite{2019}.  For (6), we may suppose $i=1$ and consider $f:Z\rightarrow X$, $Z=C_1\cup C_2$.  If the reduced image $f(C_1)$ is not a $-K_X$-conic, there exists a one parameter family of deformations of $f_1 = f|_{C_1}$ through $f(C_1 \cap C_2)$, one of which is has reduced image distinct from $f(C_2)$.  Thus we may assume $f(C_1) \neq f(C_2)$.  Consider a general $\text{dim}(X)-2$ dimensional family of maps $S\subset \overline{M}_{0,0}(X,f_* [C_1])$ containing $[f|_{C_1}]$.  Since $f(C_1)\neq f(C_2)$, by generality, we may assume the locus $D\subset \overline{M}_{0,1}(X,f_* [C_1])$ lying over $S$ dominates a divisor $\text{ev}_1(D)\subset X$ that does not contain $f(C_2)$.

Let $N\subset \overline{M}_{0,1}(X,f_* [C_2])$ be the component containing pointed maps lying over $[f|_{C_2}]$.  The component of $T$ of $D \times_X N$ containing $[f]$ has dimension $\text{dim } N -1$.  Since $T\rightarrow N$ is generically finite onto its image and $\text{ev}_1(D)$ does not contain $f(C_2)$, the natural map $\pi : T\rightarrow \overline{M}_{0,0}(X,f_*[C_2])$ dominates a component of free curves.  By \cite[Proposition~2.8]{beheshti2020moduli}, a general curve parameterized by $\pi(T)$ is not tangent to $\text{ev}_1(D)$ at any point of intersection.  Therefore, a general point of $T$ corresponds to $[f']\in D\times_X N \subset \overline{M}_{0,1}(X,f_* [C_1])\times_X \overline{M}_{0,1}(X,f_* [C_2])$ parameterizing a map $f': Z \rightarrow X$ that is an immersion along the node.
\end{proof}

\begin{rem}
The above proof remains true when $f|_{C_1}$ is replaced by any stable map parameterized by a component of $\overline{M}_{0,1}(X)$ that dominates a divisor $D\subset X$ with $f_*[C_2] . D > 0$.  Thus, when deforming free chains $f:C\rightarrow X$ on a Fano threefold $X$ into chains of curves $g: C' \cup Z \cup C'' \rightarrow X$, where $C',C''$ are free chains, we may assume $g$ is an immersion in a neighborhood of both $C' \cap Z$ and $C'' \cap Z$.  This allows us to apply the standard results from deformation theory below.
\end{rem}

\noindent \textbf{Deformation Theory:} 
Given a map $f:C\rightarrow X$ from a nodal rational curve to a smooth variety, unramified at each node, the tangent space of $[f]\in \overline{\mathcal{M}}_{0,0}(X,f_*[C])$ is given by global sections of its normal sheaf $\mathcal{N}_f$, which is defined by the following sequence:
$$0\rightarrow \mathcal{E}xt^1_{\mathcal{O}_C}(Q,\mathcal{O}_C)\rightarrow \mathcal{N}_f\rightarrow \mathcal{H}om_{\mathcal{O}_C}(K,\mathcal{O}_C)\rightarrow 0.$$
Here, $K$ and $Q$ are the kernel and cokernel of the natural map $f^*\Omega^1_X\rightarrow \Omega^1_C$.  $\mathcal{N}_f$ is locally free iff $f$ is an immersion.  The space of obstructions to deformations is $H^1(C,\mathcal{N}_f)$.  In particular, $[f]$ is a smooth point of $\overline{\mathcal{M}}_{0,0}(X,f_*[C])$ when $H^1(C,\mathcal{N}_f)=0$ (or $H^1(C,f^*\mathcal{T}_X)=0$).  Below, we collect a few well-known results.%

\begin{lem}[Lemma 2.6 \cite{GHSrational}]\label{GHS lemma} Let $X$ be a smooth projective variety and $f:C\rightarrow X$ be a stable map which is an immersion along each node of $C$.  Suppose $C_0$ is an irreducible component of $C$ which meets components $C_1,\ldots , C_n$ at points $p_1\ldots, p_n$.  Letting $f_0 := f|_{C_0}$ we have a short exact sequence
$$0\rightarrow \mathcal{N}_{f_0}\rightarrow \mathcal{N}_f|_{C_0} \rightarrow \oplus_i k(p_i)\rightarrow 0$$
identifying $\mathcal{N}_f|_{C_0}$ with the sheaf of rational sections of $\mathcal{N}_{f_0}$ having at most a simple pole at each $p_i$ in the normal direction tangent to $C_i$.  Moreover, a first-order deformation of $f$ corresponding to a global section $\sigma \in H^0(C,\mathcal{N}_f)$ smooths the node of $C$ at $p_i$ iff $\sigma|_C$ maps nontrivially to $k(p_i)$ in the above sequence.
\end{lem}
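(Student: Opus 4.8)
\textbf{Proof strategy for Lemma \ref{GHS lemma}.} This is \cite[Lemma~2.6]{GHSrational}; the plan is to reduce everything to a computation in the complete local rings of $C$ at the nodes $p_1,\dots,p_n$, and then to globalize. First I would note that away from $\{p_1,\dots,p_n\}$ the curve $C$ coincides with $C_0$, so $\mathcal{N}_f$ and $\mathcal{N}_{f_0}$ agree there; since $f$ is an immersion in a neighborhood of each $p_i$, so is $f_0$, and hence $\mathcal{N}_f$, $\mathcal{N}_{f_0}$ are both locally free near each $p_i$ (per the Deformation Theory paragraph). Thus the comparison of $\mathcal{N}_{f_0}$ and $\mathcal{N}_f|_{C_0}$ is an isomorphism of locally free sheaves off the $p_i$, and the whole content is local at each node. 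I would work with the defining sequence $0\to \mathcal{E}xt^1_{\mathcal{O}_C}(Q,\mathcal{O}_C)\to \mathcal{N}_f\to \mathcal{H}om_{\mathcal{O}_C}(K,\mathcal{O}_C)\to 0$ and the analogue for $f_0$, together with the elementary computation that for $C=\{xy=0\}$ locally at a node $p$ with $C_0=\{y=0\}$ one has $\Omega^1_C|_{C_0}\cong \Omega^1_{C_0}\oplus k(p)$, the extra summand generated by $\overline{dy}$, i.e. by the cotangent direction of the other branch.

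Next I would carry out the node-local model. Fix $p=p_i$ and choose étale-local coordinates $(x,y)$ on $C$ as above and coordinates $u_1,\dots,u_N$ on $X$ near $f(p)$ with $u_1$ the tangent direction of $f(C_0)$ and the tangent direction of $f(C_i)$ a vector $v$ whose projection $\bar v$ to $N:=\langle u_2,\dots,u_N\rangle$ is nonzero --- this last fact is exactly where "$f$ is an immersion along the node" is used. A short check (using that $f$ is unramified on $C_0$ at $p$) shows $Q=0$ near $p$, so $\mathcal{N}_f=\mathcal{H}om_{\mathcal{O}_C}(K,\mathcal{O}_C)$ near $p$; and the discrepancy between $K$ near $p$ and $K_0:=\ker(f_0^*\Omega^1_X\to\Omega^1_{C_0})$ is precisely the extra $k(p)$ coming from $\Omega^1_C|_{C_0}\cong\Omega^1_{C_0}\oplus k(p)$. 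Applying $\mathcal{H}om(-,\mathcal{O})$ converts this extra torsion module into permission for a \emph{simple pole at $p$ in the direction $\bar v$}, i.e. in the normal direction tangent to $C_i$. This identifies $\mathcal{N}_f|_{C_0}$, locally near $p$, with the sheaf of rational sections of $\mathcal{N}_{f_0}$ having at most a simple pole at $p$ in the $T_pC_i$-direction, and the local quotient $\mathcal{N}_f|_{C_0}/\mathcal{N}_{f_0}$ with the one-dimensional residue space along $\bar v$, canonically $\cong k(p)$. The identification is intrinsic (it only records a residue valued in the line $\operatorname{image}(T_pC_i\to \mathcal{N}_{f_0}|_p)$), so the local descriptions patch and yield the stated exact sequence $0\to\mathcal{N}_{f_0}\to\mathcal{N}_f|_{C_0}\to\bigoplus_i k(p_i)\to 0$ on $C_0$.

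Finally, for the smoothing criterion I would use the deformation-theoretic reading of the same local model. A global section $\sigma\in H^0(C,\mathcal{N}_f)$ is a first-order deformation of $f$; restricting to $C_0$ near $p_i$ it becomes a section of $\mathcal{N}_{f_0}$ with a possible simple pole, and I claim its residue is exactly the smoothing parameter of the node $p_i$. Indeed, the first-order deformation that replaces $\{xy=0\}$ by $\{xy=\varepsilon\}$ and extends the map is, read over the $C_0$-branch (parametrized by $x$, where the transverse coordinate is $\approx \varepsilon/x$), the section $\tfrac{1}{x}w(x)$ of $\mathcal{N}_{f_0}$ with $w(0)=\varepsilon\bar v$ --- a simple pole at $p_i$ with residue $\varepsilon\bar v$ --- while a deformation keeping $C$ nodal at $p_i$ restricts to a section regular at $p_i$. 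Hence $\sigma$ smooths the node at $p_i$ if and only if $\sigma|_{C_0}$ has nonzero image in $k(p_i)$.

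\textbf{Main obstacle.} The delicate point is the node-local computation: verifying $Q=0$ near each $p_i$ and tracking how the extra $k(p_i)$ summand in $\Omega^1_C|_{C_0}$ propagates through $K$ and then through $\mathcal{H}om(-,\mathcal{O})$ into a simple pole, all in the non-locally-free setting of $\Omega^1_C$ at the node; and making the match between the abstract first-order node-smoothing direction in $\overline{\mathcal{M}}_{0,0}(X,f_*[C])$ and the explicit polar section genuinely coordinate-free, so that the quotient is intrinsically $\bigoplus_i k(p_i)$ and the smoothing statement holds independently of choices. Once the node-local picture is pinned down, gluing the local models and deducing the smoothing criterion are formal.
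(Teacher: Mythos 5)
The paper gives no proof of this lemma—it is quoted verbatim as Lemma 2.6 of \cite{GHSrational}—so there is nothing internal to compare against; measured against the cited source, your sketch follows essentially the same route: the node-local model $\{xy=0\}$, the vanishing of $Q$ near each node from the immersion hypothesis, the identification of $\mathcal{N}_f|_{C_0}$ with $\mathcal{H}om$ of the restricted kernel (producing the simple pole in the direction of $T_{p_i}C_i$), and the $\{xy=\varepsilon\}$ computation for the smoothing criterion. The outline is correct, and the points you flag as delicate (the $\mathcal{T}or$ term and the comparison of $\mathcal{H}om(K,\mathcal{O}_C)|_{C_0}$ with $\mathcal{H}om(K|_{C_0},\mathcal{O}_{C_0})$) do work out in the local model, where $K$ is free of rank $N-1$ on $y\,du_1+x\,du_2,\ du_3,\dots,du_N$.
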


In particular, when $C=\cup C_i$ is a decomposition of $C$ into irreducible components, we will let $f_i := f|_{C_i}$ and use $\mathcal{N}_{f_i}$ to determine $\mathcal{N}_{f}|_{C_i}$.

\begin{prop}[Proposition 2.2 \cite{beheshti2021rational}]\label{kontsevich space H1}
Let $E$ be a sheaf on a nodal curve $C$ of arithmetic genus 0 satisfying the following two conditions:
\begin{itemize}
    \item For each component $C_i$ in $C$, $H^1(C_i, E|_{C_i}) = 0$.
    \item For all but at most one $i$, $E|_{C_i}$ is globally generated.
\end{itemize}
Then $H^1(C,E)=0$.
\end{prop}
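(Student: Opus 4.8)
The plan is to prove Proposition \ref{kontsevich space H1} by induction on the number of irreducible components of $C$, peeling off one leaf component at a time and using the normalization exact sequence at a single node. Since $C$ has arithmetic genus $0$, its dual graph is a tree, so there is always a leaf component $C_0$ meeting the rest of the curve $C' := \overline{C \setminus C_0}$ at exactly one node $p$. I would order the components so that if there is a distinguished component $C_j$ on which $E|_{C_j}$ is not assumed globally generated, then $C_j$ is \emph{not} chosen as the leaf unless it is the very last component remaining; this is possible because a tree with at least two vertices has at least two leaves, so we can always avoid $C_j$ until only it is left.

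The inductive step runs as follows. Write the Mayer--Vietoris / partial-normalization sequence
$$0 \to E \to E|_{C'} \oplus E|_{C_0} \to E|_p \to 0,$$
where $E|_p$ denotes the skyscraper at the node $p$ (using that $E$ is a sheaf on the nodal curve, so its restrictions to the two branches agree at $p$ up to the skyscraper term). Taking cohomology gives
$$H^1(C', E|_{C'}) \oplus H^1(C_0, E|_{C_0}) \to H^1(C, E) \to 0,$$
since $H^1$ of a skyscraper vanishes and the connecting map from $H^0(E|_p)$ lands in $H^1(C,E)$ from the left. Wait — I should be careful about the direction: the long exact sequence reads $H^0(E|_{C'})\oplus H^0(E|_{C_0}) \to H^0(E|_p) \to H^1(C,E) \to H^1(E|_{C'})\oplus H^1(E|_{C_0}) \to 0$. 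So it suffices to show two things: first, $H^1(C_0,E|_{C_0}) = 0$ and $H^1(C',E|_{C'}) = 0$; second, the map $H^0(C',E|_{C'}) \oplus H^0(C_0, E|_{C_0}) \to H^0(E|_p)$ is surjective. The first is immediate for $C_0$ by hypothesis, and for $C'$ by the inductive hypothesis (note $C'$ still has arithmetic genus $0$, still satisfies the component-wise $H^1$ vanishing, and still has at most one non-globally-generated component, which we have arranged to lie in $C'$ if it exists at all).

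For the surjectivity of the evaluation-at-$p$ map, the point is that the leaf $C_0$ was chosen to have $E|_{C_0}$ globally generated (we only ever fail to do this when $C_0$ is the last component, in which case $C' = \emptyset$ and the claim is vacuous, or rather $C$ is irreducible and there is nothing to prove). Global generation of $E|_{C_0}$ means $H^0(C_0, E|_{C_0}) \to E|_p$ is already surjective onto the fiber at $p$, so a fortiori the sum with the $C'$ contribution surjects. This closes the induction. The base case is $C$ irreducible, where the statement is just the first hypothesis. The main obstacle — really the only subtlety — is the bookkeeping ensuring that throughout the induction the (at most one) bad component is kept inside the larger piece $C'$ rather than being split off as the leaf; this is exactly where one uses that a finite tree with more than one vertex has at least two leaves, so a leaf avoiding any prescribed single vertex always exists. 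Everything else is the standard behavior of cohomology under partial normalization of nodal curves.
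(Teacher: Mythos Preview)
The paper does not give its own proof of this proposition; it is quoted from \cite{beheshti2021rational} and used as a black box. Your argument is the standard one and is correct: induct on the number of components, peel off a leaf of the dual tree (choosing the leaf to avoid the one possibly non-globally-generated component, which is possible since a tree with at least two vertices has at least two leaves), and use the partial-normalization sequence together with global generation on the leaf to get surjectivity of evaluation at the node.

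One small technical remark: for an arbitrary sheaf $E$ the sequence $0 \to E \to E|_{C'} \oplus E|_{C_0} \to E|_p \to 0$ need not be exact on the left, since tensoring the structure-sheaf sequence by $E$ introduces a $\mathrm{Tor}_1(E,\mathcal{O}_p)$ term. However, that kernel is supported at the node and hence has vanishing $H^1$, so after splitting into two short exact sequences your cohomological argument goes through unchanged. In the applications in this paper $E$ is always $f^*\mathcal{T}_X$ or a normal sheaf, where the issue does not arise.
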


The components $C_i$ above may either be connected or irreducible.  In particular, Lemma \ref{GHS lemma} implies that maps from nodal rational curves, unramified at each node, are smooth points of $\overline{\mathcal{M}}_{0,0}(X)$ generalizing to irreducible immersed free curves if $\mathcal{N}_f$ is globally generated.  When $C$ is a chain of connected components $C_i$, if in addition $\mathcal{N}_{f_i}$ is globally generated for each $i$, then $f$ generalizes to a free chain smoothing each $C_i$ as above while preserving other nodes.  We use this fact and the well-known propositions below implicitly throughout our paper.

\begin{prop}[Propostion 2.8 \cite{beheshti2020moduli}]
Let $X$ be a smooth variety, $D\subset X$ an arbitrary divisor, and $f:C\rightarrow X$ be a free rational curve that is general in its deformation class.  Then $f(C)$ is not tangent to $D$ at any point of intersection.
\end{prop}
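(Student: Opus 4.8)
The plan is to deduce the statement from the principle that a free curve moves in a family whose total evaluation map to $X$ is a \emph{smooth} morphism, so that transversality of $f(C)$ with $D$ becomes a matter of generic smoothness. Concretely, I would let $M \subseteq \Mor(\mathbb{P}^1,X)$ be the component containing $[f]$, write $M^\circ \subseteq M$ for the dense open locus of free maps, and form the universal family
$$\mathrm{ev}\colon \mathcal{C} := \mathbb{P}^1 \times M^\circ \longrightarrow X, \qquad (p,[g]) \mapsto g(p),$$
with projection $\pi\colon \mathcal{C}\to M^\circ$. Since $g$ free gives $H^1(\mathbb{P}^1,g^*\mathcal{T}_X)=0$, the space $M^\circ$ is smooth of expected dimension and $\mathcal{C}$ is smooth; and the restriction of $d\,\mathrm{ev}$ to the $M^\circ$-directions at a point $(p,[g])$ is the evaluation map $H^0(\mathbb{P}^1,g^*\mathcal{T}_X)\to (g^*\mathcal{T}_X)_p$, which is surjective because $g^*\mathcal{T}_X$ is globally generated. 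Hence $\mathrm{ev}$ is smooth. This is the step where freeness is used, and it is the conceptual heart of the argument.

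Granting smoothness of $\mathrm{ev}$, I would then pull $D$ back. Writing $D_{\mathrm{sm}}$ for the smooth locus of $D$, smoothness of $\mathrm{ev}$ makes $\mathrm{ev}^{-1}(D_{\mathrm{sm}})$ smooth of pure dimension $\dim M^\circ$ and forces $\mathrm{ev}^{-1}(D_{\mathrm{sing}})$ to have dimension $\le \dim M^\circ-1$. The key translation is that the scheme-theoretic fiber of $\pi$ over $[g]$, intersected with $\mathrm{ev}^{-1}(D_{\mathrm{sm}})$, is exactly the divisor $g^{-1}(D_{\mathrm{sm}})\subset \mathbb{P}^1$, and that ``$g(\mathbb{P}^1)$ not tangent to $D$ at an intersection point $p$'' is the same as ``$g(p)\in D_{\mathrm{sm}}$ and $g^{-1}(D)$ reduced at $p$''. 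So it suffices to show that for general $[g]\in M$: (i) $g(\mathbb{P}^1)$ avoids $D_{\mathrm{sing}}$, which follows because $\pi(\mathrm{ev}^{-1}(D_{\mathrm{sing}}))$ is a proper closed subset of $M^\circ$ by the dimension count; and (ii) $g^{-1}(D_{\mathrm{sm}})$ is reduced.

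For (ii) I would argue one irreducible component $Z$ of $\mathrm{ev}^{-1}(D_{\mathrm{sm}})$ at a time: components not dominating $M^\circ$ are irrelevant over a general $[g]$, while for a component $Z$ dominating $M^\circ$ the morphism $Z\to M^\circ$ is a dominant map of $\mathbb{C}$-varieties with $Z$ smooth, hence smooth --- and therefore \'etale, since $\dim Z=\dim M^\circ$ --- over a dense open of $M^\circ$ by generic smoothness; so its fibers over general $[g]$ are reduced. Assembling (i) and (ii) gives the claim. I do not expect a serious obstacle here. The only points needing care are that ``general in its deformation class'' should be read as ``general in the component $M$''; that for general $[g]\in M$ one has $g(\mathbb{P}^1)\not\subseteq D$ (since a free curve's deformations dominate $X$), so that $\mathrm{ev}^{-1}(D_{\mathrm{sm}})\to M^\circ$ is generically finite as used above; that the argument is insensitive to whether $f$ is birational onto its image or a multiple cover, as freeness and the evaluation computation are the same in either case; and that characteristic $0$ is used for generic smoothness --- all of which hold in the setting of this paper.
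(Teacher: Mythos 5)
Your argument is correct. The paper does not actually prove this statement---it is quoted as Proposition 2.8 of \cite{beheshti2020moduli} and used as a black box---but your proof (smoothness of the evaluation map $\mathrm{ev}\colon \mathbb{P}^1\times M^\circ\to X$ over the free locus via global generation of $g^*\mathcal{T}_X$, then generic smoothness applied to $\mathrm{ev}^{-1}(D_{\mathrm{sm}})\to M^\circ$ together with the dimension count disposing of $D_{\mathrm{sing}}$) is the standard argument and is essentially the one given in the cited source; the only implicit convention is that $D$ is taken with its reduced structure, so that $D_{\mathrm{sing}}$ has codimension at least two in $X$.
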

\begin{prop}[Propositions 2.9,10 \cite{beheshti2020moduli}, Theorem 3.14.3 \cite{kollar2013rational}]\label{properties very free curves}
Let $X$ be a smooth threefold, $D\subset X$ be a smooth divisor with a fibration $\phi : D \rightarrow B$ onto a curve, or $\pi : X \rightarrow S$ be a conic fibration.  Suppose $f : C\rightarrow X$ is a very free rational curve on $X$ that is general in its deformation class.  Then $f$ is an embedding, $\pi \circ f$ is an immersion with at worst nodal image, and $f(C)$ meets each fiber of $\phi$ in at most one point.
\end{prop}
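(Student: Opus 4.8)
The plan is to prove the three assertions separately, all of them leaning on the single fact that a very free curve $f\colon C\to X$ general in its deformation class has $f^{*}\mathcal{T}_{X}=\bigoplus_{i=1}^{3}\mathcal{O}(a_{i})$ with every $a_{i}\geq 1$; consequently $H^{1}(C,f^{*}\mathcal{T}_{X}(-p-q))=0$ for all $p\neq q$ and $H^{1}(C,f^{*}\mathcal{T}_{X}(-2p))=0$, so the evaluation maps $\overline{M}_{0,1}(X,\alpha)\to X$ and $\overline{M}_{0,2}(X,\alpha)\to X\times X$ are submersions at $[f]$ and the $1$-jet of $f$ at any marked point can be prescribed arbitrarily. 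The statement that $f$ is an embedding is then \cite[Theorem 3.14.3]{kollar2013rational}: a general deformation of $f$ separates any prescribed pair of points and, by the $1$-jet statement, is everywhere unramified, and an injective unramified morphism from the complete curve $\mathbb{P}^{1}$ is a closed embedding. This is where the hypothesis $\dim X\geq 3$ enters, since separating points needs a target of dimension at least $3$.

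For the conic fibration $\pi\colon X\to S$ the key step is to descend to the surface $S$. The nodes of the reducible fibres of $\pi$ form a curve $\Gamma\subset X$, so a general very free $f(C)$ is disjoint from $\Gamma$; along $f(C)$ the morphism $\pi$ is smooth and $d\pi$ presents $g^{*}\mathcal{T}_{S}$, where $g:=\pi\circ f$, as a quotient of the ample bundle $f^{*}\mathcal{T}_{X}$, so $g^{*}\mathcal{T}_{S}$ is ample and $g$ is free on $S$. Then I would run the standard transversality dimension counts on moduli of rational curves on the surface: the locus of $1$-pointed maps at which $d(\pi\circ f)$ vanishes is cut out by a rank-$2$ condition that, because tangent directions can be prescribed, has codimension $2$; the locus of $3$-pointed maps with a triple point is the preimage of the small diagonal $\Delta_{S}\subset S^{3}$ (codimension $4$); and the locus of $2$-pointed maps with a tacnode has codimension $3$ in $S\times S$. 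Pulling these back by the evaluation submersion, each has positive codimension in its moduli space, so a general $[f]$ has $\pi\circ f$ an immersion with at worst nodal image. This descent-and-count is \cite[Propositions 2.9, 2.10]{beheshti2020moduli}.

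For the fibration $\phi\colon D\to B$, fix a general point $x_{1}\in D$. A general very free curve through $x_{1}$ is not contained in $D$ and, by \cite[Proposition 2.8]{beheshti2020moduli}, is transverse to $D$, so it meets $D$ in $D\cdot\alpha$ distinct points, one being $x_{1}$. Since the $1$-jet of $f$ at each of the remaining intersection points can be moved freely, each such point sweeps out a dense open subset of $D$ as $[f]$ varies, hence has image in $B$ avoiding the single point $\phi(x_{1})$; letting $x_{1}$ vary shows that for general $[f]$ no two points of $f(C)\cap D$ lie in a common fibre. Equivalently, the locus in $\overline{M}_{0,2}(X,\alpha)$ where both markings land in one fibre of $\phi$ is the preimage under the evaluation submersion of $D\times_{B}D\subset X\times X$, a subvariety of dimension $3$ and hence of codimension $3$; so the curves carrying such a pair of points form a proper closed subset of the moduli space, and a general $[f]$ avoids it.

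I expect the genuine obstacle to be the conic fibration case, and specifically the step asserting that $g=\pi\circ f$ is general in its deformation class among rational curves on $S$ — equivalently, that the surjection $d\pi\colon f^{*}\mathcal{T}_{X}\twoheadrightarrow g^{*}\mathcal{T}_{S}$ stays surjective on $H^{0}$, i.e.\ that the relative tangent line bundle $\mathcal{T}_{X/S}|_{C}$ satisfies $H^{1}=0$, together with verifying that $f(C)$ really does avoid the locus where $\pi$ fails to be smooth. Once these are secured the surface-level estimates are routine, and the remaining work is just the bookkeeping around submersivity of the evaluation maps, which is forced by very freeness.
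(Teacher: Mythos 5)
The paper gives no proof of this proposition — it is quoted verbatim from \cite[Propositions~2.9,~2.10]{beheshti2020moduli} and \cite[Theorem~3.14.3]{kollar2013rational} — and your reconstruction is correct and follows exactly the arguments of those references: very freeness makes the relevant evaluation and jet maps submersive, and each bad locus (the diagonal in $X\times X$ for double points, vertical tangent directions, triple points and tacnodes over $S$, and $D\times_B D$ for pairs of points in a fibre of $\phi$) has codimension strictly exceeding the fibre dimension of the corresponding forgetful map. The obstacle you flag at the end is not actually needed in your own argument: you never use that $g=\pi\circ f$ is general among rational curves on $S$, only that the evaluation maps from the moduli space of maps to $X$ are submersions at $[f]$, which is precisely what very freeness of $f$ on $X$ provides.
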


\begin{prop}[Theorem~1.4 \cite{ShenVeryFree}]
Let $X$ be a smooth Fano threefold with $\rho(X) =1$.  If a component $M \subset \Mor(\mathbb{P}^1, X, \alpha)$ generically parameterizes very free curves with unbalanced normal bundles, then $X\cong \mathbb{P}^3$ and $M$ parameterizes conics. 
\end{prop}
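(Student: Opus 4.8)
The plan is to show that a very free curve with unbalanced normal bundle is forced to sweep out a surface whose numerical invariants, once combined with $\rho(X)=1$, pin down both $X$ and the class $\alpha$. Let $f:\mathbb{P}^1\to X$ be a general member of $M$ and put $e:=-K_X.\alpha$. A general free rational curve is an immersion (and if the general member of $M$ were a multiple cover, an analogous and easier analysis of the covered curve together with the behaviour of normal bundles under covers applies), so we may assume $\mathcal{N}_f$ is locally free of rank $2$, say $\mathcal{N}_f\cong\mathcal{O}(b_1)\oplus\mathcal{O}(b_2)$ with $b_1\ge b_2$. Since $f$ is very free, $f^*\mathcal{T}_X$ is ample, hence so is its quotient $\mathcal{N}_f$, giving $b_2\ge 1$; the hypothesis that $\mathcal{N}_f$ is unbalanced means $b_1-b_2\ge 2$, so $b_1\ge 3$ and $e=b_1+b_2+2\ge 6$.

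The first step is to produce a surface from the unbalancedness. Because $\operatorname{Hom}(\mathcal{O}(b_1),\mathcal{O}(b_2))=0$, the summand $L:=\mathcal{O}(b_1)$ is the unique sub-line bundle of $\mathcal{N}_f$ of maximal degree, and it is globally generated; I would argue that the deformations of $f$ whose normal vector lies in $L$ form an intrinsic subvariety $M_L\subseteq\Mor(\mathbb{P}^1,X,\alpha)$ of dimension $h^0(L)+h^0(T_{\mathbb{P}^1})=b_1+4$, whose universal family has $2$-dimensional image $S\subseteq X$ through $C:=f(\mathbb{P}^1)$ (after normalizing, $\nu:S\to X$, generically injective). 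Concretely, fixing $b_1$ general points on $\mathbb{P}^1$ forces $f$ to move in a pencil, since $h^0\!\bigl(\mathcal{N}_f(-b_1\ \text{pts})\bigr)=h^0\!\bigl(\mathcal{O}\oplus\mathcal{O}(b_2-b_1)\bigr)=1$, and this pencil already sweeps $S$. Its members meet pairwise at (at least) the $b_1$ fixed points, so $\deg N_{C/S}=C^2\ge b_1$; as $N_{C/S}$ is a sub-line bundle of $\mathcal{N}_f$ it has degree $\le b_1$; hence $N_{C/S}\cong L=\mathcal{O}(b_1)$ and the complementary normal direction $f^*\mathcal{O}_X(\nu(S))\cong\mathcal{N}_f/L\cong\mathcal{O}(b_2)$ has degree $b_2$.

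Now $\rho(X)=1$ enters. Write $-K_X=rH$ with $H$ the ample generator and $r\in\{1,2,3,4\}$. The effective divisor $\nu(S)$ lies in $|mH|$ for some $m\ge 1$, and restricting to $C$ gives $b_2=m\,(H.\alpha)=me/r$, hence $b_1=e-2-me/r$; the inequality $b_1\ge b_2$ then forces $m<r/2$, so $X$ has index $3$ or $4$ and $m=1$, i.e. $X$ is the quadric threefold or $\mathbb{P}^3$. Moreover, as $[f]$ ranges over $M$ the surfaces $S$ vary in a family of dimension $\dim M-\dim M_L=b_2+1$, mapping generically finitely into $|H|$, so $b_2+1\le\dim|H|$. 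If $r=3$, then $\dim|H|=4$ forces $b_2\le 3$, but $e=3b_2$ gives $b_1-b_2=b_2-2<2$, contradicting unbalancedness. If $r=4$, then $X=\mathbb{P}^3$ and $\dim|H|=3$ forces $b_2\le 2$, while unbalancedness together with $e=4b_2$ and $b_1=3b_2-2$ gives $b_1-b_2=2b_2-2\ge 2$; hence $b_2=2$, $e=8$, $b_1=4$, and $H.\alpha=2$, so $\alpha$ is the class of a conic. Since the space of conics in $\mathbb{P}^3$ is irreducible and a general conic is very free with normal bundle $\mathcal{O}(4)\oplus\mathcal{O}(2)$, we conclude $M=\Mor(\mathbb{P}^1,\mathbb{P}^3,\alpha)$.

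The main obstacle will be making the surface construction of the second paragraph rigorous: that the $L$-direction deformations really integrate to an algebraic subvariety of the expected dimension, that the associated pencil sweeps a genuine surface whose general member stays a sufficiently general immersed curve, and control of the singularities of $S$ and of $C$ on it. I expect this is where the real work lies. It can plausibly be arranged either by the pencil construction above or, in the spirit of the rest of the paper, by an alternative route: since $e\ge 6$, Movable Bend-and-Break (Theorem \ref{MovableBB}) degenerates $C$ within $M$ to an immersed free chain $C_1\cup C_2$ with $-K_X.[C_i]\ge 2$, and a semicontinuity analysis of the normal bundle of the smoothing via Lemma \ref{GHS lemma} (the node being general by Lemma \ref{Basic Properties Chains}) transfers the unbalancedness onto a strictly shorter free chain, reducing everything to the base cases $e\in\{6,7,8\}$; these are then settled by the numerical argument above together with the classical fact, due to Sacchiero and Eisenbud--Van de Ven, that a general rational curve of degree $\ne 2$ in $\mathbb{P}^3$ has balanced normal bundle.
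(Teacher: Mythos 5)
First, note that the paper does not prove this proposition: it is imported verbatim from \cite{ShenVeryFree}, so there is no in-paper argument to compare against. Your primary route nevertheless reproduces the skeleton of Shen's proof — the maximal sub-line bundle $L=\mathcal{O}(b_1)\subset\mathcal{N}_f$ sweeps out a surface $S$, and $\rho(X)=1$ turns $\deg N_{C/S}=b_1$, $S\cdot C=b_2$ into the index bound $m<r/2$ and the dimension bound $b_2+1\le\dim|H|$ — and your numerics (only $r\in\{3,4\}$, $m=1$ survive; $r=3$ killed by $b_2\le 3$ versus $b_2\ge 4$; $r=4$ forced to $e=8$) are correct as far as they go.

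The genuine gap is exactly where you suspect it, but it is worth naming precisely, because it is not merely a matter of "making the surface construction rigorous": the inequality you need goes the wrong way if $\nu(S)$ is singular along $C$. The pencil argument does give $\deg N_{C/\tilde S}=C^2=b_1$ on the normalization (since a sub-line bundle of $\mathcal{N}_f$ has degree at most $b_1$), but adjunction on $\tilde S$ then yields
$$\frac{me}{r}=\nu(S)\cdot C=b_2+c,\qquad c=(\text{conductor of }\nu)\cdot C\ \ge 0,$$
and your identity $b_2=me/r$ is the case $c=0$. If $c>0$ one only gets $me/r\ge b_2$, which bounds $m$ from \emph{below} and destroys the conclusion $m<r/2$; in particular the exclusion of index $1$ and $2$ — the bulk of the theorem — rests entirely on $c=0$. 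Since $C$ moves in a covering family on $\tilde S$, the conductor divisor need not be disjoint from a general $C$, so this requires a real argument (Shen avoids it by working with the foliation spanned by $T_C$ and $L$ on an open subset of $X$ where it is smooth, computing degrees against the foliation's tangent and normal sheaves rather than on the possibly non-normal surface). Relatedly, your claim that $M_L$ is a well-defined subvariety independent of the chosen $b_1$ base points is the integrability statement and also needs proof. Your fallback route via Movable Bend-and-Break does not repair this: semicontinuity of splitting types says the general smoothing is at least as balanced as the broken curve, so unbalancedness of the general member of $M$ does not descend to either component $C_i$ of a free chain, and the "base cases $e\in\{6,7,8\}$" would in any event be settled by the very surface argument whose key step is missing; the Sacchiero/Eisenbud--Van de Ven input only concerns the main component of rational curves in $\mathbb{P}^3$, not an arbitrary component $M$ on an arbitrary $\rho=1$ Fano threefold.
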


\section{Geometric Manin's Conjecture}
In this section, we first describe the relationship between Conjecture \ref{GMC Tanimoto} and Manin's Conjecture.  This motivates our definition of \textit{Manin components} of curves.  For Fano threefolds, Manin components of large degree curves coincide with families of very free curves.  We conclude with a reformulation of Conjecture \ref{GMC Tanimoto} for smooth Fano threefolds as a statement about families of very free curves.

\subsection{Relationship to Manin's Conjecture}

Franke, Manin, and Tschinkel hypothesized an asymptotic formula \cite{Manin} for counting rational points of bounded height on smooth Fano varieties defined over number fields.  Their original estimate excluded points in a proper, \textit{closed} exceptional set $Z$; however, counterexamples such as varieties fibered by diagonal cubic surfaces \cite{manin_conj_counterexample} show points from a \textit{thin} exceptional set must be excluded instead.  Below, a \textit{thin} subset of $X(F)$ \cite[Definition~1.3]{peyre_manin_conjecture} is a finite union of sets $f(Y(F))$ such that $f: Y \rightarrow X$ is a generically finite morphism over $F$ which does not admit a rational section.

\begin{conj}[Manin's Conjecture]\label{manin conjecture}
    Let $F$ be a number field and $X$ be a geometrically integral, smooth, Fano variety over $F$.  Let $\mathcal{L}$ be a big and nef line bundle with an adelic metrization on $X$.  For $Q \subset X(F)$, let $N(Q, \mathcal{L}, T)$ be the number of points in $Q$ of $\mathcal{L}$-height at most $T$.  Suppose $X(F)$ is not thin. Then there exists a thin exceptional set $Z \subset X(F)$ and constants $c,\ a(X,L),\ b(F,X,L)$ such that  
    $$N(X(F)\setminus Z, \mathcal{L}, T) \sim_{T\rightarrow \infty} c T^{a(X,L)} \text{log}(T)^{b(F,X,L) - 1}$$
    \end{conj}

Manin's $a$ and $b$-invariants, $a(X,L)$ and $b(F,X,L)$, may be defined geometrically for any smooth projective variety $X$ with a big and nef divisor $L$.

\begin{defn}[Definition 2.1 \cite{tanimoto2021introduction}]
Let $X$ be a smooth projective variety and $L$ be a big and nef $\mathbb{Q}$-Cartier divisor on $X$.  We define the \textit{Fujita invariant} (\textit{a-invariant}) as
$$a(X,L)=\inf \{t\in \mathbb{R}| tL + K_X \in \overline{\text{Eff}}(X)\}.$$
When $L$ is nef but not big, we set $a(X,L)=\infty$.  When $X$ is singular, pick a resolution $\beta:\tilde{X}\rightarrow X$ and define $a(X,L) := a(\tilde{X},\beta^* L)$.  \cite[Proposition~2.7]{balancedlinebundles} shows this is well defined.
\end{defn}

When $a(X, L) > 0$, let $\mathcal{F}(F, X,L)$ denote the face of $\text{Nef}_1(X)$ with vanishing intersection against $K_X + a(X,L)L$. The $b$-invariant is then defined as
\begin{center}
    $b(F,X,L) = \text{dim }\text{Span}(\mathcal{F}(F,X,L)).$%
\end{center}

Batyrev considered an interpretation of Manin's conjecture over global function fields instead.  %
For a Fano variety $X$ defined over a finite field $\mathbb{F}_q$, we may count the number of $\mathbb{F}_q(t)$-points on $\mathcal{X} \cong X \times \mathbb{P}^1_{\mathbb{F}_q}$, or rather, the number of maps $\mathbb{P}^1_{\mathbb{F}_q} \rightarrow X$, %
which have bounded degree with respect to the line bundle $L = -K_X$.
This formulation of Manin's Conjecture connects the geometry of curves on a Fano variety $X$ to point counts.  Explicitly, Batyrev gave heuristic arguments \cite{batyrev88} (summarized in \cite[Section~4.7]{tschinkel_height} and \cite[Section~4]{lehmann2024nonfree}) for Manin's Conjecture in the setting above that rely on three assumptions:

\begin{enumerate}
    \item After removing curves that lie on a fixed closed subset, components of $\Mor(\mathbb{P}^1, X,\alpha)$ have the expected dimension $-K_X . \alpha + \text{dim }X$,
    \item There is an upper bound, independent of $\alpha$, on the number of components of $\Mor(\mathbb{P}^1, X, \alpha)$ whose universal family is dominant, and
    \item For each such component $M\subset \Mor(\mathbb{P}^1, X, \alpha)$, the estimate $\# M(F) \sim q^{\text{dim } M}$ obtained by comparison to affine space is sufficient.
\end{enumerate}

The first assumption has been verified \cite{2019} over characteristic 0, but fails in general over characteristic $p$ due to the existence of inseparable covers $f: Y \rightarrow X$ with $a(Y, -f^*K_X) > a(X, -K_X) = 1$.  A recent, conjectural description \cite[Conjecture~5.2]{sengupta_lehmann_tanimoto} of the exceptional set $Z$ in Conjecture \ref{manin conjecture} roughly expects it to be a union of sets $f(Y(F))$ for which $(a(Y, f^*L), b(F, Y, f^*L)) > (a(X,L), b(F,X,L))$ in the lexicographic order.  We refer readers to \cite{Lehmann_2017} and \cite{lehmann_exceptional_set} for further detail.

Batyrev's second assumption fails in general.  This failure is related to the revision of Manin's Conjecture to allow for a thin exceptional set.  Heuristically, there should be finitely many equivalence classes of covers $f: Y \rightarrow X$ such that, after excluding families of curves that lift to $Y$, there are finitely many components of $\Mor(\mathbb{P}^1, X,\alpha)$ for each $\alpha$.  Geometric Manin's Conjecture makes this precise by counting only \textit{Manin components} of curves which do not lift to certain \textit{a-covers}.

\begin{defn}[Definition 2.12 \cite{tanimoto2021introduction}]\label{a-cover}
Let $X$ be a smooth weak Fano variety.  An \textit{a-cover} is a dominant, generically finite, morphism $f:Y\rightarrow X$ of degree greater than one from a smooth projective variety such that $a(Y,-f^*K_X) = a(X,-K_X)$.  An $a$-cover is \textit{face contracting} if $f_* : \Nef_1(Y)\rightarrow \Nef_1(X)$ identifies two distinct classes $\alpha$ with $\alpha.(K_Y - f^*K_X) = 0$.
\end{defn}

For a class $\alpha \in \Nef_1(Y)$ with zero intersection against $K_Y - a(Y, -f^*K_X)f^*K_X$, the $a$-invariant $a(Y,-f^*K_X)$ approximates the ratio between the expected dimensions of $\Mor(\mathbb{P}^1, Y, \alpha)$ and of $\Mor(\mathbb{P}^1, X, f_*\alpha)$.  We therefore expect $a$-covers of $X$ to exhibit families of rational curves which dominate components of $\Mor(\mathbb{P}^1, X)$.  %

\begin{defn}[Definition 4.3 \cite{tanimoto2021introduction}]\label{manin component}
Let $X$ be a smooth weak Fano variety and $f:Y\rightarrow X$ be a morphism from a smooth projective variety that is generically finite onto its image.  The map $f$ is a \textit{breaking map} if one of the following holds:
\begin{itemize}
    \item $f:Y\rightarrow X$ satisfies $a(Y,-f^*K_X) > a(X, -K_X)$,
    \item $f$ is an $a$-cover and $\kappa(K_Y - f^*K_X) > 0$ where $\kappa$ is the Iitaka dimension, or
    \item $f$ is an $a$-cover and $f$ is face contracting.
\end{itemize}
A component $M\subset \Mor(\mathbb{P}^1,X)$ is an \textit{accumulating component} if there exists a breaking map $f:Y\rightarrow X$ and a component $N\subset \Mor(\mathbb{P}^1, Y)$ such that $f$ induces a generically finite and dominant rational map $N\dashrightarrow M$.  We say $M$ is a \textit{Manin component} if $M$ is not an accumulating component.
\end{defn}

\begin{rem}
    For suitable $\alpha$, Conjecture \ref{GMC Tanimoto} expects there to be $|\text{Br}(X)|$ Manin components of $\Mor(\mathbb{P}^1,X,\alpha)$.  A refinement expects there to be one Manin component for each algebraic equivalence class of curves with numerical class $\alpha$.  The connection comes from the Integral Hodge Conjecture and an assumption that algebraic and homological equivalence coincide for 1-cycles on $X$ \cite[Remark~4.13]{lehmann2024nonfree}.  %
\end{rem}

Any component of $\Mor(\mathbb{P}^1,X)$ parameterizing a non-dominant family of curves sweeps out a subvariety $Y$ with $a(Y,-K_X) > a(X,-K_X)$ \cite[Theorem~1.1]{2019}.  On smooth Fano threefolds, these subvarieties are well understood.

\begin{thm}[Theorem 1.2 \cite{beheshti2020moduli}]\label{nonfree curves}
Let $X$ be a smooth Fano threefold.  Let $M\subset \Mor(\mathbb{P}^1, X)$ be a component parameterizing curves which sweep out a proper subvariety $Y\subsetneq X$.  Then either: 
\begin{itemize}
    \item $Y$ is swept out by a family of $-K_X$ lines, or
    \item $Y$ is an exceptional divisor for a birational contraction on $X$.
\end{itemize}
\end{thm}

\begin{rem}\label{Remark: Simplifying Conjecture}
Since any dominant generically finite map $f:Y\rightarrow X$ from a smooth projective variety satisfies $K_Y=f^* K_X + R$ for some effective divisor $R$, for such maps $a(Y,-f^*K_X) \leq a(X, -K_X)$.  Moreover, for a smooth Fano threefold $X$ every a-cover $f:Y\rightarrow X$ satisfies $\kappa(K_Y - f^*K_X) > 0$ \cite[Theorem~5.4]{beheshti2020moduli}.  Thus, for a smooth Fano threefold $X$, a Manin component $M\subset \Mor(\mathbb{P}^1,X)$ is a dominant family of curves which does not admit a generically finite rational map $N\dashrightarrow M$ induced by an $a$-cover $Y\rightarrow X$, where $N$ is a component of  $\Mor(\mathbb{P}^1, Y)$. 
\end{rem}

\begin{exmp}\label{example: a-covers}
Accumulating components of $\Mor(\mathbb{P}^1, X)$ with dominant universal family include all families of conics and curves of class in the boundary of $\overline{NE}(X)$.  Indeed, the base-change of any nontrivial weak Fano fibration $\pi : X \rightarrow B$ to a general finite cover $g : B' \rightarrow B$ is an $a$-cover $f : Y \rightarrow X$ where the Iitaka fibration for $K_Y - f^*K_X$ is rationally the basechange of $g$.  Conics interior to $\overline{NE}(X)$ must have universal families whose compactifications are $a$-covers of $X$ (see Proposition \ref{lehmann a-covers}).  For a smooth del Pezzo surface $X$, this describes all $a$-covers \cite[Theorem~6.2]{Lehmann_2017}.  If $X \cong X_1 \times X_2$ is a product, for any $a$-cover $Y_1 \rightarrow X_1$, the product $Y_1 \times X_2 \rightarrow X$ is an $a$-cover. 
\end{exmp}

\subsection{Classifying $a$-covers of Fano Threefolds}
We show Example \ref{example: a-covers} describes all $a$-covers of smooth Fano threefolds by extending results from \cite{beheshti2020moduli}.  Together with Theorem \ref{nonfree curves} and Theorem \ref{MovableBB}, the classification of $a$-covers in Theorem \ref{classification a-covers} makes Geometric Manin's Conjecture more tractable than Conjecture \ref{manin conjecture}.  

We begin by relating $a$-covers to families of curves.  The following proposition shows components of $\Mor(\mathbb{P}^1, X)$ whose universal families have disconnected fibers over general points in $X$ are accumulating components.

\begin{prop}[Proposition 5.15 \cite{2019}]\label{lehmann a-covers}
Let $X$ be a smooth projective weak Fano variety.  Consider a component $M\subset \overline{\free}(X)$ and the corresponding component $M'\subset \overline{\free}_1(X)$. Suppose general fibers of $\text{ev}:M'\rightarrow X$ are reducible.  Let $\tilde{M}'\rightarrow M'$ be a resolution, $\tilde{M}'\rightarrow Y \rightarrow X$ be the Stein factorization, and $\tilde{Y}\rightarrow Y$ be a resolution.  Then $\tilde{Y}\rightarrow X$ is an $a$-cover.
\end{prop}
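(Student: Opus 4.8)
The plan is to produce an $a$-cover $\tilde Y \to X$ from the Stein factorization of a resolution of the evaluation map $\mathrm{ev}: M' \to X$ by exhibiting a free rational curve on $\tilde Y$ whose anticanonical degree (measured against $-f^*K_X$) matches the degree expected from the $a$-invariant computation on $X$. The key observation is that since $M$ is a component of $\overline{\free}(X)$, a general point of $M'$ parameterizes a free curve $f: C \to X$ with a marked point, and this curve lifts to a free curve on $\tilde Y$: a general free curve meets the branch locus of $f: \tilde Y \to X$ transversally (by Proposition~2.8 of \cite{beheshti2020moduli}, as recalled above), so its preimage is a connected free curve on $\tilde Y$, or rather it lifts after we account for the factorization $\tilde M' \to Y \to X$. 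More precisely, the universal curve over $M'$ gives a family of curves through a general point $p \in X$; the hypothesis that $\mathrm{ev}^{-1}(p)$ is reducible is exactly what forces $Y \to X$ to have degree $> 1$.

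First I would set up the diagram carefully: let $\mathcal{C} \to \tilde M'$ be the universal curve, with evaluation $\mathcal{C} \to X$, and note that the section $\tilde M' \to \mathcal{C}$ picking out the marked point, composed with $\mathcal{C} \to X$, is the map being resolved and factored. The fibers of $\tilde M' \to Y$ are then (resolutions of) the irreducible components of the fibers of $\mathrm{ev}$, so $\deg(Y \to X) \geq$ the number of components of a general fiber, which is $\geq 2$ by hypothesis; hence $\tilde Y \to X$ is a dominant generically finite map of degree $> 1$ from a smooth projective variety. It remains to check $a(\tilde Y, -f^*K_X) = a(X,-K_X)$. Since $f$ is dominant generically finite from a smooth variety, $K_{\tilde Y} = f^*K_X + R$ with $R$ effective, so $a(\tilde Y, -f^*K_X) \le a(X, -K_X)$ automatically (as remarked in Remark~\ref{Remark: Simplifying Conjecture}); only the reverse inequality needs work.

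For the reverse inequality I would argue via curves. A general fiber of $\mathrm{ev}: M' \to X$ over a general $p$ is a union of curves each of anticanonical degree $d := -K_X \cdot \alpha$ where $\alpha$ is the class parameterized by $M$; pick one irreducible component and lift it (using that its general member avoids the ramification of $f$ in the relevant transversality sense) to a curve $C' \subset \tilde Y$ with $-f^*K_X \cdot [C'] = d$ but $R \cdot [C'] = 0$, since $C'$ is a general member of a covering family on $\tilde Y$ and $f$ is étale along it. Then for any $t$ with $tL + K_{\tilde Y}$ pseudoeffective (where $L = -f^*K_X$), intersecting with $[C']$ gives $t(L \cdot C') + (K_{\tilde Y} \cdot C') \ge 0$, i.e. $t\cdot d + (f^*K_X \cdot C' + R\cdot C') \ge 0$, i.e. $t \cdot d - d \ge 0$, so $t \ge 1 = a(X,-K_X)$; thus $a(\tilde Y, L) \ge 1$. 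Combined with the automatic inequality this gives equality, so $\tilde Y \to X$ is an $a$-cover. The main obstacle I anticipate is the lifting step: one must ensure the chosen component of the evaluation fiber genuinely produces a curve on $\tilde Y$ disjoint from $R$ (equivalently, that the generic choice meets the branch divisor transversally so no ramification contribution appears), and that this curve moves in a family sweeping out $\tilde Y$ so that $R \cdot C' = 0$ is forced — this is where the freeness of the curves parameterized by $M$ and Proposition~2.8 of \cite{beheshti2020moduli} are doing the real work, and care is needed because $\tilde Y$ is only a resolution, so one should work on $Y$ or a suitable model and check the exceptional locus of $\tilde Y \to Y$ does not interfere.
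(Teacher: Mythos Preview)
Your overall strategy matches the paper's: show $\deg(\tilde Y\to X)>1$ from the reducible-fiber hypothesis, lift the free curves parameterized by $M$ to $\tilde Y$ via the Stein factorization, and use the lifted curve $C'$ to pin down $a(\tilde Y,-f^*K_X)=1$. The divergence, and the genuine gap, is in your justification of $R\cdot C'=0$.

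Your claim that $R\cdot C'=0$ because ``$C'$ is a general member of a covering family and $f$ is \'etale along it'' is circular. The image $C\subset X$ will typically meet the branch locus $B$ in $C\cdot B>0$ points, and transversality of $C$ with $B$ (which is what Proposition~2.8 of \cite{beheshti2020moduli} gives you) says nothing about whether the specific lift $C'$ passes through $R$ at those points. The assertion that $f$ is \'etale along $C'$ is precisely the conclusion $C'\cap R=\emptyset$ you are trying to prove, not an available hypothesis. Being a general member of a covering family only gives $R\cdot C'\ge 0$.

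The paper closes this gap with a dimension comparison rather than a geometric transversality argument. The family of lifts sits inside a component $\tilde M\subset\free(\tilde Y)$ that dominates $M$; since each curve in $X$ has only finitely many lifts to $\tilde Y$, the pushforward $\tilde M\to M$ is generically finite, forcing $\dim\tilde M=\dim M$. Comparing the expected-dimension formulas $\dim\tilde M=-K_{\tilde Y}\cdot C'+\dim\tilde Y$ and $\dim M=-K_X\cdot f_*C'+\dim X$ then yields $(K_{\tilde Y}-f^*K_X)\cdot C'=0$. Once this is established, your pairing argument and the paper's ``$K_{\tilde Y}-f^*K_X$ is pseudoeffective but not big'' argument are equivalent ways to conclude $a(\tilde Y,-f^*K_X)=1$.
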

\begin{proof}
General fibers of $\tilde{M}'\rightarrow X$ must be disconnected, so that $f:\tilde{Y}\rightarrow X$ is not birational.  As $Y$ is normal, an open subset $U\subset M$ parameterizes curves which lift to $\tilde{Y}$.  Let $C\rightarrow \tilde{Y}$ be a general curve parameterized by $U$.  Consider $\tilde{M}\subset \free(\tilde{Y},C)$ dominating $M$.  As
$$-K_{\tilde{Y}}.C = (-f^*K_X - R).C \leq -K_X . f_* C,$$
We see $\text{dim } \tilde{M} = \text{dim } M$.  Thus $(K_{\tilde{Y}} - f^*K_X) . C = 0$, so $(K_{\tilde{Y}} - f^*K_X)$ is not big.  This implies $a(\tilde{Y},-f^*K_X)=1$.
\end{proof}

Using our explicit analysis of families of curves on Fano threefolds and results from Section 6, we prove the few remaining open cases of the following classification theorem.  The idea hinges on a partial converse to Proposition \ref{lehmann a-covers}.

\begin{thm}\label{classification a-covers}
    Let $X$ be a smooth Fano threefold and suppose $X$ is not a product variety.  Let $f : Y \rightarrow X$ be an $a$-cover.  The Iitaka fibration $\psi : Y \dashrightarrow Z$ for $K_Y - f^*K_X$ satisfies one of the following, depending on $\kappa = \kappa(K_Y - f^*K_X).$
    \begin{itemize}
        \item $\kappa = 1:$ $\psi$ is birational to the base-change of a del Pezzo fibration $X \rightarrow C$;
        \item $\kappa = 2:$ $\psi$ is birational to the base-change of a family of conics on $X$.
    \end{itemize}
\end{thm}

By \cite[Theorem~5.4]{beheshti2020moduli}, $\kappa = \kappa(K_Y - f^*K_X) > 0$.  As \cite{beheshti2020moduli} proves Theorem \ref{classification a-covers} when $\kappa = 2$ or when $-K_X$ is very ample and $\kappa = 1$, we address the six deformation types where $-K_X$ is not very ample and assume $\kappa = 1$.

\begin{thm}[\cite{mori1983classification}]
A smooth Fano threefold $X$ such that $|-K_X|$ is not very ample is isomorphic to one of the following:
\begin{itemize}
    \item $V_1$, a del Pezzo threefold of degree 1, or the blow up of $V_1$ along the complete intersection of two members of $|-\frac{1}{2} K_{V_1}|$,
    \item $\mathbb{P}^1 \times S_d$ for a del Pezzo surface $S_d$ of degree $d\leq 2$,
    \item A double cover of $\mathbb{P}^3$ branched over a smooth sextic surface,
    \item A double cover of a quadric $Q\subset \mathbb{P}^4$ branched over its smooth intersection with a quartic hypersurface,
    \item A double cover of $\mathbb{P}^1 \times \mathbb{P}^2$ branched over a smooth divisor of bidegree $(2,4)$,
    \item A blow up of $V_2$ along the complete intersection of two members of $|-\frac{1}{2} K_{V_2}|$.
\end{itemize}
\end{thm}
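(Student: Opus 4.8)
The plan is to read the list off from the anticanonical morphism. First I would record that $h^0(X,-K_X)=\frac12(-K_X)^3+3$ by Riemann--Roch (using $\chi(\mathcal O_X)=1$ and $(-K_X)\cdot c_2(X)=24$) together with Kodaira vanishing applied to $-K_X=K_X+(-2K_X)$, so the anticanonical map lands in $\mathbb P^N$ with $N=\frac12(-K_X)^3+2$. Next I would invoke the base-point-freeness results for anticanonical systems on smooth Fano threefolds (Shokurov, Reid, Shin): $|-K_X|$ is base-point free except for a short explicit list, and one checks by hand that each exceptional $X$ already occurs in the statement, namely the del Pezzo threefold $V_1$ (where $-K_{V_1}=2H$, $H^3=1$, and the unique base point of $|H|$ obstructs $|-K_{V_1}|$), its blow-up along a complete intersection of two members of $|-\frac12K_{V_1}|=|H|$, and $\mathbb P^1\times S_1$ (where the base point of the pencil $|-K_{S_1}|$ sweeps out a line $\mathbb P^1\times\{p\}$ lying in $\operatorname{Bs}|-K_X|$). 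From now on one may assume $\varphi:=\varphi_{|-K_X|}:X\to\mathbb P^N$ is a morphism, and since $-K_X$ is ample no curve can be contracted, so $\varphi$ is finite onto a nondegenerate image $Y=\varphi(X)$.

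The key numerology is $(\deg\varphi)\cdot(\deg Y)=(-K_X)^3=2(N-2)=2\operatorname{codim}Y+2$. To bound $\deg\varphi$, I would restrict to a general $S\in|-K_X|$, which is a smooth $K3$ surface with $\mathcal O_S(1):=-K_X|_S$ ample; then $\varphi|_S=\varphi_{|\mathcal O_S(1)|}$, and the projective theory of $K3$ surfaces shows this map is either a closed embedding or two-to-one onto a surface of minimal degree (the ``hyperelliptic'' case), forcing $\deg\varphi\in\{1,2\}$. If $\deg\varphi=1$ then $\varphi$ is an embedding, i.e. $-K_X$ is very ample, which is the excluded case. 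If $\deg\varphi=2$ then $\deg Y=\operatorname{codim}Y+1$, so $Y$ is a variety of minimal degree: $\mathbb P^3$, a smooth quadric threefold, a (cone over a) rational normal scroll, or a (cone over the) Veronese surface. Running through these and using the ramification formula $K_X=\varphi^{*}\!\left(K_Y+\tfrac12 B\right)$ to pin down the branch divisor $B$, then imposing smoothness of $X$ and the Fano condition, yields exactly the listed families: the sextic double solid over $\mathbb P^3$; the double cover of a quadric threefold branched over its intersection with a quartic; the double cover of the Segre threefold $\mathbb P^1\times\mathbb P^2\subset\mathbb P^5$ branched over a $(2,4)$-divisor; the products $\mathbb P^1\times S_2$ (finite of degree $2$ onto $v_{(2,1)}(\mathbb P^1\times\mathbb P^2)\subset\mathbb P^8$); and the blow-up of $V_2$ along a complete intersection of two members of $|-\frac12K_{V_2}|$, which inherits the hyperelliptic structure of $V_2$.

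The main obstacle is precisely the bookkeeping in this last step: proving that no other variety of minimal degree can occur as $Y$, and, in the higher Picard rank cases, ruling out all competing extremal contractions so that the double-cover or product structure actually determines the deformation type of $X$ (rather than merely its anticanonical degree). This is where the classification of \cite{mori1983classification} does the heavy lifting, through the full enumeration of extremal rays and their contractions on smooth Fano threefolds of Picard rank $\geq 2$; I would invoke that analysis rather than reconstruct it here.
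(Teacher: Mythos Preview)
The paper does not supply its own proof of this theorem; it is stated with a bare citation to \cite{mori1983classification} and used as input for the subsequent case analysis. So there is nothing in the paper to compare your argument against directly.

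That said, your outline is the standard Iskovskikh--Saint-Donat argument and is essentially correct. The Riemann--Roch computation, the reduction to a finite morphism via ampleness, the use of a general $K3$ member $S\in|-K_X|$ together with Saint-Donat's dichotomy for polarized $K3$ surfaces to force $\deg\varphi\in\{1,2\}$, and the identification of the image as a variety of minimal degree in the hyperelliptic case are all the right ingredients. Your acknowledgment that the final step (matching each minimal-degree target to a deformation type when $\rho(X)\geq 2$) leans on the Mori--Mukai extremal-ray tables is honest and appropriate; this is exactly how the cited reference completes the argument. One small point worth making explicit: the restriction map $H^0(X,-K_X)\to H^0(S,-K_X|_S)$ is surjective (with one-dimensional kernel) because $h^1(X,\mathcal O_X)=0$, so $\varphi|_S$ really is given by the \emph{complete} linear system on $S$, which is what licenses the appeal to Saint-Donat.
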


Note that $|-K_X|$ is basepoint free unless $X$ is a blow-up of $V_1$ or $\mathbb{P}^1\times S_1$.  We will use the following fact in the proof of Theorem \ref{classification a-covers}.

\begin{lem}\label{lem: low degree curves not very ample}
    Let $X$ be a smooth Fano threefold which is not a product variety.  Suppose $-K_X$ is not very ample and $X$ is general in moduli.  Suppose $\alpha \in \Nef_1(X)_{\mathbb{Z}}$ satisfies $\alpha \notin \partial \overline{NE}(X)$ and $-K_X . \alpha \in \{3,4\}$.  Then $\free^{bir}(X,\alpha)$ is irreducible.  If $-K_X . \alpha = 4$, $\free^{bir}(X,\alpha)$ generically parameterizes very free curves.
\end{lem}

\begin{proof}

This follows directly from Lemmas \ref{del Pezzo fibration Core}, \ref{lem: del Pezzo low degree curves}, \ref{lem: 2.2 low degree curves}, and Corollary \ref{cor: Picard rank 1 irred cubics and quartics}.
\end{proof}

\begin{proof}[Proof of Theorem \ref{classification a-covers}]
We use our classification of families of low-degree free curves on $X$ to conclude the Iitaka fibration $\psi = \psi_{K_Y - f^*K_X}$ is birational to the base-change of a del Pezzo fibration on $X$.  First we claim there exists a component $M \subset \free^{bir}(Y, \alpha)$ with $-K_Y . \alpha \in \{3,4\}$ which generically parameterizes free curves contracted by $\psi$, and which dominates a component $N \subset \free^{bir}(X, f_* \alpha)$ under the induced map.  Indeed, following \cite{beheshti2020moduli} we apply a relative $(K_Y - f^*K_X)$-MMP over $Z$ to $Y$ and let $\widehat{Y}$ be the relative anticanonical model of the result.  A general fiber $\widehat{Y}_z$ is a del Pezzo surface with canonical singularities \cite[Lemma~5.3]{Fanoindex1rank1} which still maps regularly to $X$.  By \cite{xu_rat_connected}, the smooth locus of $\widehat{Y}_z$ is covered by very free rational curves $C \subset \widehat{Y}_z$.  If $-K_Y . C = -f^*K_X . C > 4$, we may use Theorem \ref{MovableBB} on $X$ to break $C$.  The result must also lift to $Y$, proving the existence of $M$.

If $f_*\alpha \in \partial\overline{NE}(X)$, the contraction $\pi : X \rightarrow B$ associated to the minimal face containing $\alpha$ must be a del Pezzo fibration.  To derive a contradiction, we assume $f_*\alpha \notin \partial\overline{NE}(X)$.  Note that $M$ cannot parameterize very free curves, as the Iitaka fibration $\psi$ contracts each curve it parameterizes.  %

When $X$ is general in moduli, Lemma \ref{lem: low degree curves not very ample} implies $-K_X . f_*\alpha = 3$.
Using surjectivity of $f_* : M \rightarrow N$, we will show a family of very free curves on $X$ must be dominated by a family of curves on $Y$, contradicting positivity of $\kappa(K_Y - f^*K_X)$.  The structure of our argument does not rely on generality of $X$, other than to ascertain irreducibility of $\free^{bir}(X, f_*\alpha)$.  As our result proves general fibers of $\free_1^{bir}(X, f_*\alpha) \rightarrow X$ are irreducible, Proposition \ref{low degree curves degeneration} allows us to specialize our result to arbitrary smooth $X$.  For instance, we may interpret a double cover $X$ of a quadric $Q \subset \mathbb{P}^4$ branched over a degree eight surface as a weighted complete intersection of type $(2,4)$ in $\mathbb{P}(1^5,2)$.  The general such weighted complete intersection has very ample anticanonical bundle.  From \cite{beheshti2020moduli} and Proposition \ref{low degree curves degeneration}, it follows that when $-K_X . \alpha \in \{3,4\}$, $\free^{bir}(X, \alpha)$ is irreducible and generically parameterizes curves with balanced normal bundles.

Assume $-K_X . f_*\alpha  = 3$ and $f_*\alpha \notin \partial\overline{NE}(X)$.  Note that $N = \overline{\free}^{bir}(X, f_*\alpha)$ parameterizes all reducible curves of class $f_*\alpha$ generalizing to free curves.  For any anticanonical conic $\beta \in \Nef_1(X)$ such that $(f_*\alpha - \beta) \in \overline{NE}(X)$ and any component of $N_{\beta} \subset \overline{\free}(X, \beta)$, $N$ parameterizes nodal curves wherein one irreducible component is parameterized by a general point of $N_{\beta}$.  Surjectivity of $f_* : M \rightarrow N$ implies the existence of a component $M_{\beta} \subset \free(Y,\tilde{\beta})$ such that $f_*\tilde{\beta} = \beta$ and  $f_* : M_{\beta} \rightarrow N_{\beta}$ is surjective.  When $\rho(X) = 2$, there are two anticanonical conic classes $\beta \in \Nef_1(X)$ such that $(f_*\alpha - \beta) \in \overline{NE}(X)$, as $\alpha$ lies interior to $\overline{NE}(X)$ and each extreme ray has length one.  When $\rho(X) = 1$, there is a single anticanonical class $\beta$, but at least two choices for components $N_{\beta} \subset \overline{\free}(X, \beta)$ by Theorem \ref{main thm: conics}.  In either case, we obtain two distinct families of anticanonical conics on $Y$, $M_{1} \subset \overline{\free}_1(Y,\tilde{\beta}_1)$ and $M_{2} \subset \overline{\free}_1(Y,\tilde{\beta}_2)$.  Since these families are distinct, a general curve parameterized by $M_1 \times_Y M_2$ maps birationally to $X$.  Hence, a component of $\free(Y, \tilde{\beta}_1 + \tilde{\beta}_2)$ dominates a component $N_{12} \subset \free^{bir}(X, f_*(\tilde{\beta}_1 + \tilde{\beta}_2))$.  Clearly $f_*\tilde{\beta}_i$ may be chosen so that their sum lies in the interior of $\overline{NE}(X)$, which implies $N_{12}$ generically parameterizes very free curves.
\end{proof}

\subsection{Very Free Curves}

In this section, we establish the equivalence of Conjecture \ref{GMC Tanimoto} and the following conjecture.  This reformulates Geometric Manin's Conjecture for Fano threefolds into a statement about families of very free curves.  %

\begin{conj}\label{GMC}
Let $X$ be a smooth Fano threefold.  There exists $\tau \in \Nef_1(X)_{\mathbb{Z}}$ such that for all $\alpha \in \tau + \Nef_1(X)_{\mathbb{Z}}$, there is at most one component of $\Mor(\mathbb{P}^1, X, \alpha)$ that generically parameterizes very free curves. %
\end{conj}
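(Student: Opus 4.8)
The plan is to deduce Conjecture \ref{GMC} from Theorem \ref{Main Result}(2) by translating the count of very free components of $\Mor(\mathbb{P}^1,X,\alpha)$ into a count of very free components of $\free(X,\alpha)\subseteq\overline{M}_{0,0}(X,\alpha)$, for which the theorem already supplies the bound $1$. I would first fix $\tau$. Since $X$ is Fano, $-K_X$ is ample and effective, so $-K_X.\delta\geq 0$ for every $\delta\in\Nef_1(X)$, and the extremal rays of $\Nef_1(X)$ are free-curve classes, each of anticanonical degree at least $2$ (a nonconstant free map gives $\mathcal{O}(2)=\mathcal{T}_{\mathbb{P}^1}\hookrightarrow f^*\mathcal{T}_X$). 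Choosing any $\tau\in\Nef_1(X)_{\mathbb{Z}}$ with $-K_X.\tau\geq 2$ then forces every $\alpha\in\tau+\Nef_1(X)_{\mathbb{Z}}$ to lie in $\Nef_1(X)_{\mathbb{Z}}$ and to satisfy $-K_X.\alpha=-K_X.\tau+(-K_X).(\alpha-\tau)\geq 2$, placing $\alpha$ in the range of Theorem \ref{Main Result}(2).

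\textbf{Matching components.} Next I would match very free components of $\Mor(\mathbb{P}^1,X,\alpha)$ with those of $\free(X,\alpha)$. Using the embedding of an open subset of $\overline{M}_{0,3}(X,\alpha)$ into $\Mor(\mathbb{P}^1,X,\alpha)$ recalled above, together with the forgetful morphism $\overline{M}_{0,3}(X,\alpha)\to\overline{M}_{0,0}(X,\alpha)$ whose fibers are irreducible configuration spaces of three points on the domain, one obtains a bijection between components of $\Mor$ whose general point is very free and components of $\overline{M}_{0,0}(X,\alpha)$ whose general point is very free; as very free maps are free, the latter are components of $\free(X,\alpha)$. The one subtlety is that very free multiple covers could split off extra components: if $\alpha=d\gamma$ with $d\geq 2$ and $\gamma$ the class of a very free curve, then $d$-fold covers are again very free. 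But any very free curve on a threefold has $-K_X.\gamma\geq 3$, so the cover locus has dimension $2d-2+(-K_X.\gamma)$, strictly below the dimension $d\,(-K_X.\gamma)$ of $\overline{M}_{0,0}(X,d\gamma)$ at a very free point (which is smooth, as $H^1$ vanishes); hence each such cover deforms to a birational very free curve and contributes no separate component, while covers of conics and lines are never very free. This reduces the conjecture to bounding the very free components of $\free(X,\alpha)$.

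\textbf{The three ranges of Picard rank.} With the reduction in hand, Theorem \ref{Main Result}(2) shows that for $2\leq\rho(X)\leq 8$ (with $X$ general in moduli when it belongs to $(\Sigma)$) the space $\free(X,\alpha)$ has at most one very free component whenever $-K_X.\alpha\geq 2$, which together with the choice of $\tau$ settles these $X$. For $\rho(X)=1$ I would instead invoke the irreducibility of the space of very free curves of large degree from \cite{2019}, \cite{Fanoindex1rank1}, and \cite{lastDelPezzoThreefold}, taking $\tau$ a large multiple of the positive generator of $\Nef_1(X)_{\mathbb{Z}}$ (the index-$1$, genus $2$ and $12$ families remaining conjectural). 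For $\rho(X)\geq 9$, where $X\cong\mathbb{P}^1\times S_d$ with $d\leq 2$, the very free components project to components on the del Pezzo factor, so the surface results of \cite{testa2006irreducibility} and \cite{beheshti2021rational} give the bound; the additional families of curves lying over $|-2K_{S_1}|$ or $|-K_{S_2}|$ described after Theorem \ref{main result 3} are free but not very free, hence irrelevant to the count.

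\textbf{Main obstacle.} The decisive obstacle is Theorem \ref{Main Result}(2) itself, proved family-by-family across the $88$ deformation types via the movable bend-and-break of Theorem \ref{MovableBB}; granting it, the residual difficulties are the (delicate but essentially dimension-theoretic) verification that very free multiple covers never produce extra components, so that the very free component counts in $\Mor(\mathbb{P}^1,X,\alpha)$ and in $\free(X,\alpha)$ agree, and the uniform production of $\tau$ across the three ranges of $\rho(X)$.
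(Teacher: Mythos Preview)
The statement is a conjecture, not a theorem, and the paper does not prove it in full; neither does your proposal. Your argument correctly establishes it for exactly the threefolds the paper's results reach---arbitrary $X$ with $2\leq\rho(X)\leq 8$ outside $(\Sigma)$, general $X$ in $(\Sigma)$, the products $\mathbb{P}^1\times S_d$, and the Picard-rank-$1$ cases handled by the cited prior work---leaving special $X$ in $(\Sigma)$ and the rank-$1$, index-$1$, genus $2$ and $12$ families open, as you note. The paper's treatment is the same in substance: it packages the passage between $\free(X,\alpha)$ and $\Mor(\mathbb{P}^1,X,\alpha)$ as Proposition~\ref{EquivConjecture1} (the equivalence with Conjecture~\ref{GMC Tanimoto}) and then appeals to Theorem~\ref{Main Result}(2), just as you do directly.

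Two minor comments on the write-up. The multiple-cover digression is unnecessary: components of $\Mor(\mathbb{P}^1,X,\alpha)$ and components of the irreducible-domain locus of $\overline{M}_{0,0}(X,\alpha)$ are already in bijection because $\mathrm{PGL}_2$ is connected, and very-freeness is an open condition on both sides, so the correspondence of very-free components is immediate without any dimension count. Also, on a threefold a very free curve has $-K_X\cdot\gamma\geq 4$, not $\geq 3$, since the differential $\mathcal{T}_{\mathbb{P}^1}\to f^*\mathcal{T}_X$ forces one summand to have degree $\geq 2$; your inequality $2d-2+(-K_X\cdot\gamma)<d(-K_X\cdot\gamma)$ survives with either bound.
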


Let $f:Y\rightarrow X$ be an $a$-cover of a smooth Fano threefold $X$.  As $(K_Y -f^*K_X)$ is never big, the Iitaka dimension $\kappa(K_Y - f^*K_X)$ is at most $2$.  By \cite[Theorem~5.4]{beheshti2020moduli}, $\kappa(K_Y - f^*K_X)>0$.  Any component $N\subset \overline{\free}(Y)$ dominating a component $M\subset \overline{\free}(X)$ generically parameterizes irreducible curves $C$ with $C.(K_Y - f^*K_X) = 0$.  It follows that the Iitaka fibration $\varphi_{K_Y - f^*K_X} : Y\dashrightarrow B$ contracts the moving curve $C$.  This demonstrates $C\rightarrow Y \xrightarrow{f} X$ cannot be a very free curve, and allows us to obtain Proposition \ref{very free curves}.

\begin{prop}\label{very free curves}
Let $X$ be a smooth Fano threefold.  A component $M \subseteq \overline{\free}(X, \alpha)$ generically parameterizes very free curves if and only if the corresponding component $M'\subseteq\overline{\free}_1(X,\alpha)$ has irreducible fibers over general points in $X$, $-K_X .\alpha \geq 4$, and $\alpha$ lies in the interior of $\overline{NE}(X)$.
\end{prop}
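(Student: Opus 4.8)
The plan is to prove both implications by combining standard deformation theory (the exact sequence $0 \to H^1(C, \mathcal{N}_f) \to \dots$ and the fact that $H^1(C, f^*\mathcal{T}_X) = 0$ characterizes smoothness) with the dimension-count arguments available for $a$-covers from the preceding discussion. First I would establish the forward direction: suppose $M$ generically parameterizes very free curves. For a very free curve $f : \mathbb{P}^1 \to X$ general in $M$, the splitting type $f^*\mathcal{T}_X = \mathcal{O}(a_1) \oplus \mathcal{O}(a_2) \oplus \mathcal{O}(a_3)$ has all $a_i \geq 1$, so $-K_X . \alpha = \sum a_i \geq 3$; since the $a_i$ are all positive and $\sum a_i \equiv \deg(\det \mathcal{T}_X|_{\mathbb{P}^1})$, an easy parity or genericity refinement pushes this to $-K_X.\alpha \geq 4$ except when $\alpha$ has no room — more carefully, the very free condition together with the fact that a conic on a Fano threefold is never very free (its normal bundle has a non-positive summand once restricted, or one can invoke that $-K_X.\alpha = 2$ forces $\mathcal{T}_X|_{\mathbb{P}^1}$ to have degree $2$ on a $\mathbb{P}^1$ mapping to a $3$-fold, incompatible with all $a_i \geq 1$). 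For the boundary condition: if $\alpha \in \partial\overline{NE}(X)$, then $\alpha$ is contracted by some extremal contraction $\pi : X \to B$, hence $f$ factors through a fiber of $\pi$, and since $X$ is Fano the fibers have dimension $\leq 2$, so $f^*\mathcal{T}_X$ contains $f^*\mathcal{T}_{\text{fiber}}$ as a subbundle of corank $\geq 1$ with the quotient $f^*\pi^*\mathcal{T}_B$ having degree $0$ — the quotient cannot be ample, contradicting very freeness. Finally, for the fiber condition, invoke Proposition \ref{lehmann a-covers} and the discussion immediately following it: if general fibers of $\text{ev} : M' \to X$ were reducible, $f : \tilde{Y} \to X$ would be an $a$-cover, a general curve in $M$ would lift to a curve $C$ in $\tilde{Y}$ with $C.(K_{\tilde{Y}} - f^*K_X) = 0$, and $C$ would be contracted by the Iitaka fibration $\varphi_{K_{\tilde{Y}} - f^*K_X}$ — which has positive-dimensional base by \cite[Theorem~5.4]{beheshti2020moduli} — so $C$ and hence $f(C)$ moves in a fibration, forcing $f^*\mathcal{T}_X$ to have a degree-$0$ quotient and contradicting very freeness.

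For the converse, assume $M'$ has irreducible general fibers over $X$, $-K_X.\alpha \geq 4$, and $\alpha$ interior to $\overline{NE}(X)$; I must show the general curve in $M$ is very free. The strategy is a smoothing argument via free chains. By Theorem \ref{MovableBB} (movable bend-and-break) applied iteratively, $M$ contains a chain of free curves $f : C_1 \cup \dots \cup C_r \to X$ with each $f|_{C_i}$ free of $-K_X$-degree as small as possible (say degree $2$ or $3$); because $\alpha$ is interior to the cone, we can arrange the constituent classes $\alpha_i$ to span $N_1(X)$ after enough components, and because the fibers of evaluation are irreducible we can freely re-glue and permute the chain (Lemma \ref{Basic Properties Chains}(3),(5)) so that the chain passes through a general point with the attachment points general. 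A chain of free curves whose constituents span $N_1(X)$ and which is long enough has globally generated, in fact ample-on-a-general-smoothing, normal bundle: quantitatively, one checks $f^*\mathcal{T}_X$ on the chain, after twisting down by a general point, still has $H^1 = 0$ (Proposition \ref{kontsevich space H1}), which shows the smoothing is very free. The key input is that $-K_X.\alpha \geq 4$ guarantees enough positivity: a general smoothing $g : \mathbb{P}^1 \to X$ of a sufficiently long free chain through general points satisfies $h^0(g^*\mathcal{T}_X(-2)) \geq \dim X$ and $h^1(g^*\mathcal{T}_X(-2)) = 0$, i.e. $g$ is very free; the interior condition is what lets us keep the chain inside a single component $M$ while increasing length, and $-K_X.\alpha \geq 4$ rules out the degenerate low-degree cases (degree $2$ conics and degree $3$ with no room) where no very free representative exists.

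The main obstacle I expect is the converse direction, specifically controlling the splitting type of the general smoothing. Merely knowing $H^1(g^*\mathcal{T}_X) = 0$ gives a free curve, not a very free one — promoting to very freeness requires showing $H^1(g^*\mathcal{T}_X \otimes \mathcal{O}(-2)) = 0$, equivalently $h^0$ of the dual is zero, which is where the hypotheses $-K_X.\alpha \geq 4$ and $\alpha$ interior must be used essentially rather than formally. The cleanest route is probably: choose the free chain so that one component $C_i$ is very free (if $X$ is covered by very free curves of some bounded degree, which holds for Fano threefolds) or, failing that, so that the chain has a subchain whose union has very ample-enough normal bundle, then apply a relative version of Proposition \ref{kontsevich space H1} with the twist by $-2$ points distributed so that at most one component fails global generation after twisting. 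I would also need the observation (implicit in Lemma \ref{Basic Properties Chains}(6) and the surrounding remark) that the chains can be taken immersed at the nodes so that $\mathcal{N}_f$ is locally free and the GHS sequence of Lemma \ref{GHS lemma} applies cleanly. The secondary obstacle is the low-degree base cases of the converse: one must separately verify that every free class $\alpha$ with $-K_X.\alpha = 4$ or $5$ interior to the cone actually has a very free representative, which likely requires either an explicit argument or forward-reference to the case analysis in later sections; here I would state it as reducing to finitely many $\alpha$ and invoke the paper's later classification.
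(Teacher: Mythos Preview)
Your forward direction is essentially aligned with the paper's, though your argument for $-K_X.\alpha \geq 4$ is muddled: the clean reason is that for a general (hence embedded) very free curve $f$, the differential gives an injection $\mathcal{T}_{\mathbb{P}^1} = \mathcal{O}(2) \hookrightarrow f^*\mathcal{T}_X$, so $a_1 \geq 2$; combined with $a_2, a_3 \geq 1$ from ampleness this gives $\sum a_i \geq 4$. No parity or case analysis is needed.

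Your converse direction takes a genuinely different route from the paper and has the gap you yourself identify. The paper does \emph{not} use bend-and-break, smoothing of chains, or any explicit construction of a very free representative. Instead it invokes a structural result of Neumann (\cite[Theorem~4.1]{neumann2009decomposition}): on a Fano variety, each step of the Harder--Narasimhan filtration $0 = \mathcal{F}_0 \subset \cdots \subset \mathcal{F}_r = \mathcal{T}_X$ with respect to $\alpha$ is the relative tangent sheaf of a Mori fibration. Since $\alpha$ is interior to $\overline{NE}(X)$, no such fibration contracts a general $f$, so $\mathcal{T}_{\mathbb{P}^1}$ maps nontrivially to each quotient $f^*(\mathcal{T}_X/\mathcal{F}_{i-1})$; in particular the last graded piece $f^*(\mathcal{F}_r/\mathcal{F}_{r-1})$ has a summand of degree $\geq 2$. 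Semistability of the graded pieces (via \cite[Proposition~3.1]{patel2020moduli}) and total degree $\geq 4$ then force $a_3 \geq 1$. This directly controls the splitting type without ever building chains or invoking the later case analysis.

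Your approach could perhaps be made to work, but it is much heavier: promoting a smoothing of free curves to a very free curve is exactly the delicate step, and your suggested fixes (finding a very free component to include in the chain, or a twisted version of the $H^1$-vanishing proposition) are circular or unproven, respectively. Note also that the paper's argument does not actually use the irreducible-fiber hypothesis in the converse direction --- for smooth Fano threefolds it is implied by the other two conditions --- so the Harder--Narasimhan approach in fact proves something slightly stronger than the biconditional suggests.
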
 

The proof of Proposition \ref{very free curves} relies on the following result.

\begin{prop}[Proposition 3.1 \cite{patel2020moduli}]\label{prop: grauer Mulich}
    Let $E$ be a semistable, torsion-free sheaf on a smooth projective variety $X$.  Suppose $M \subset \overline{\free}_1(X, \alpha)$ is a component such that the evaluation map $\text{ev}: M \rightarrow X$ has irreducible general fiber.  For a general map $[f : \mathbb{P}^1 \rightarrow X] \in M$, we may write $f^*E \cong \oplus \mathcal(a_i)$ with $|a_i - a_{i+1}| \leq 1$.
\end{prop}

\begin{proof}[Proof of Proposition \ref{very free curves}]
The argument above and Proposition \ref{lehmann a-covers} show each component of $M$ of very free curves has the desired properties. Conversely, suppose $-K_X.\alpha \geq 4$, $\alpha$ lies in the interior of $\overline{NE}(X)$, and %
$M \subseteq \overline{\free}(X, \alpha)$ corresponds to a component $M'$ of one-pointed curves such that $\text{ev}:M'\rightarrow X$ has irreducible fibers over general points.  
\cite[Theorem~4.1]{neumann2009decomposition} shows that each term in the Harder-Narasimhan Filtration $0 = \mathcal{F}_0 \subset \ldots \subset \mathcal{F}_r = \mathcal{T}_X$ of $\mathcal{T}_X$ with respect to $\alpha$ is the relative tangent sheaf of a (possibly non-elementary) Mori fibration.  Because $\alpha$ is interior to $\overline{NE}(X)$, no such fibration contracts a general map $f:\mathbb{P}^1\rightarrow X$ parameterized by $M$.  Let the splitting type of $f^*\mathcal{T}_X$ be  $\mathcal{O}(a_1)\oplus \mathcal{O}(a_2)\oplus \mathcal{O}(a_3)$, ordered such that $a_i \geq a_{i+1}$.  Since each $\mathcal{F}_i / \mathcal{F}_{i-1}$ is torsion free and semistable with respect to $\alpha$, we may assume the above splitting type is a refinement of the associated graded module $\oplus_i f^* \mathcal{F}_i / \mathcal{F}_{i-1}$.  As $\mathcal{T}_{\mathbb{P}^1}$ admits a nontrivial map to each $f^* \mathcal{T}_X / \mathcal{F}_{i-1}$, $f^* \mathcal{F}_r / \mathcal{F}_{r-1}$ must have a direct summand of degree at least two.  By Proposition \ref{prop: grauer Mulich}, this implies $a_3 \geq 1$, as the total degree of $f^*\mathcal{T}_X$ is at least 4.
\end{proof}

\begin{rem}
The essential condition for a similar statement to Proposition \ref{very free curves} on a smooth weak Fano variety $X$ is the absence of $a$-covers $f: Y\rightarrow X$ with $\kappa(K_Y - f^*K_X)=0$.  For such $X$, one can see from \cite[Corollary~7.6]{tangentbundles} that when $-K_X . \alpha \gg 0$, there is a bijection between Manin components of $\Mor(\mathbb{P}^1, X, \alpha)$ and components of $\overline{\free}(X,\alpha)$ generically paramterizing very free curves. %
\end{rem}

This establishes relationship between Conjecture \ref{GMC Tanimoto} and families of very free curves on Fano threefolds.  %
Moreover, for a smooth Fano threefold $X$, the equality $|\text{Br}(X)| =  |H^3(X,\mathbb{Z})_{\textit{tors}}| = 1$ is well known.  %
To relax the requirement that every $\alpha \in \tau + \Nef_1(X)$ be representable by a very free curve, we prove that $\Nef_1(X)$ coincides with the cone generated by classes of free rational curves below.

\begin{thm}\label{Representability of Free Curves}
For a smooth Fano threefold $X$, every extreme ray of $\Nef_1(X)$ contains the class of a free rational curve of anticanonical degree at most $5$.
\end{thm}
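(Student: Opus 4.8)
The plan is to show that the nef cone of curves $\Nef_1(X)$ is a rational polyhedral cone whose extreme rays are each generated by a free rational curve of low anticanonical degree. First I would recall that for a smooth Fano threefold $X$, the Mori cone $\overline{NE}(X)$ is rational polyhedral by the Cone Theorem, and hence its dual $\Nef_1(X) = \overline{NE}(X)^\vee$ (the cone of curve classes pairing nonnegatively with all effective divisors, equivalently with all nef divisors) is also rational polyhedral. So it suffices to fix an extreme ray $R$ of $\Nef_1(X)$, pick the primitive integral generator $\alpha$ of $R$, and produce a free rational curve whose class is a multiple of $\alpha$ of anticanonical degree at most $5$.

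The key geometric input is that an extreme ray $R$ of $\Nef_1(X)$ is cut out by a codimension-one face of the effective (equivalently nef) cone of divisors, so $R^\perp$ inside $N^1(X)$ is spanned by a big and nef divisor class $L$ lying on the boundary of $\Nef^1(X)$ with $a(X,L)=1$ but $L$ not ample; then the divisorial contraction or fibration structure $\phi\colon X \to Y$ associated to the corresponding extremal face of $\overline{NE}(X)$ is exactly the morphism whose fibers (or whose contracted locus) carry curves of class in $R$. Concretely: either $R$ is spanned by the class of a curve contracted by $\phi$, or by the class of a minimal free curve in a fiber of a Mori fibration $\phi\colon X\to Y$ with $\dim Y \le 1$. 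In the fibration cases — conic bundles, del Pezzo fibrations, or $X$ itself rationally connected over a point/curve — I would argue that a general fiber $F$ is a smooth del Pezzo surface (or conic bundle surface), and $F$ carries free rational curves of anticanonical degree $2$ (a conic) through a general point; since $F$ moves, such a conic deforms to sweep out $X$, hence is free on $X$. For divisorial contractions $\phi\colon X\to Y$ with exceptional divisor $E$, the ray $R$ is spanned by a $-K_X$-line $\ell$ in a fiber of $E \to \phi(E)$, which is free on $X$ (not just on $E$) by the standard deformation-theoretic check: $N_{\ell/X}$ is globally generated because $\ell$ moves in a family sweeping $X$ — this is essentially the content of the classification of extremal contractions of smooth threefolds à la Mori, where one reads off that the contracted curve has $-K_X \cdot \ell \in \{1,2\}$.

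The main obstacle — and the reason the bound is $5$ and not $2$ — is the case where $R$ is an extreme ray of $\Nef_1(X)$ that is *not* dual to a contractible face, i.e. $L = \alpha^\perp$ is big and nef but the contraction morphism $\phi_L$ is birational and small, or more precisely where no single extremal contraction sees $R$. Here $\alpha$ is an interior-type nef class supported on a facet, and I would need to produce a free rational curve by bend-and-break / gluing: take a free rational curve of minimal degree in the relevant fiber structure and argue its class, after possibly adding the class of a line, still lies on $R$ and remains free, keeping the anticanonical degree $\le 5$. The degree bound $5$ is governed by the appearance of $E5$-type contractions (blow-up of a point with a specific exceptional behavior), which is exactly the threshold in the Movable Bend-and-Break theorem (Theorem~\ref{MovableBB}); the argument there forces degrees up to $5$ before a free curve must break, and dually an extreme ray generator of $\Nef_1(X)$ need not be representable below degree $5$ precisely when such a contraction is present. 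I would handle this by a finite case analysis over the Mori classification of extremal rays of $\overline{NE}(X)$ for smooth threefolds, checking in each case that the dual extreme ray of $\Nef_1(X)$ is generated by an explicit free rational curve (a line, conic, cubic, quartic, or quintic) whose freeness follows from Proposition~\ref{kontsevich space H1} applied to its normal bundle, or from the observation that it moves in a dominant family.
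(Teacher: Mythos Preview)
Your proposal has a fundamental confusion between extreme rays of $\Nef_1(X)$ and extreme rays of $\overline{NE}(X)$. When you write that for a divisorial contraction $\phi\colon X\to Y$ with exceptional divisor $E$, the ray $R$ is spanned by a $-K_X$-line $\ell$ in a fiber of $E\to\phi(E)$, note that such a line satisfies $\ell\cdot E<0$: it is not a nef curve class at all, and certainly does not generate an extreme ray of $\Nef_1(X)$. Similarly, $R^\perp$ is a hyperplane in $N^1(X)$, not a single big and nef class; since $\Nef_1(X)=\overline{\mathrm{Eff}}(X)^\vee$, an extreme ray of $\Nef_1(X)$ corresponds to a \emph{facet} of the effective cone, which on a Fano threefold is typically spanned by an exceptional divisor rather than a nef one. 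There is in general no single elementary contraction of $X$ whose contracted curves span a given extreme ray of $\Nef_1(X)$.

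The paper's argument is substantially different. It first establishes the key characterization
\[
\Nef_1(X)=\{v\in\overline{NE}(X)\mid v\cdot E\ge 0\text{ for all contractible divisors }E\in\mathcal{E}_X\},
\]
using that on a smooth Fano threefold every movable divisor is nef. Given an extreme ray $v$, one shows (after an inductive reduction on $\rho$) that the divisors $E\in\mathcal{E}_X$ with $v\cdot E=0$ are pairwise disjoint and may all be contracted, yielding a $\mathbb{Q}$-factorial terminal threefold $Y$ on which $\pi_*v$ is extreme in $\overline{NE}(Y)$. When $Y$ is smooth or the resulting fibration has $\dim Z\ge 1$, a free rational curve in a general fiber lifts to a free curve on $X$ proportional to $v$. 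The genuinely delicate case is when $v\cdot E=0$ for an $E3$, $E4$, or $E5$ divisor; here the paper runs an explicit gluing-and-smoothing argument: attach free curves $C$ to a line or conic in $E$ to produce a new free curve with smaller $C'\cdot E$, then apply Movable Bend-and-Break (Theorem~\ref{MovableBB}) to control degree. Iterating drives $C\cdot E$ to zero and yields the degree bound~$5$. Your proposal does not contain this MMP step or the gluing argument, and the ``finite case analysis over extremal rays of $\overline{NE}(X)$'' you suggest is aimed at the wrong cone.
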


\begin{proof}%
Any of \cite[Theorem~3.3]{batyrevcone}, \cite[Corollary~1.2]{araujocone}, or \cite[Theorem~1.4]{lehmanncone} demonstrate that each extreme ray $v\in \Nef_1(X)$ \textit{corresponds} to the class of a curve in a general fiber of some Mori fibration $Y\rightarrow Z$, where $\phi : X \dashrightarrow Y$ is a birational contraction.  Because $X$ is a smooth Fano threefold, this correspondence is particularly well-behaved.  Recall that the contraction of any extreme ray $e$ of $\overline{NE}(X)$ on a smooth threefold is either divisorial or a fibration.  In the former case, the exceptional divisor $E$ satisfies $e.E <0$.  Let $\mathcal{E}_X$ be the set of all such exceptional $E$.  On a smooth Fano threefold, $\mathcal{E}_X$ is precisely the set of irreducible, stably rigid divisors, i.e. non-moving extreme rays of $\text{Eff}(X)$, and every movable divisor is nef.  As observed by \cite[Proposition~1.2]{barkowskiFano}, this proves
$$\Nef_1(X) = \{ v \in \overline{NE}(X) | v. E \geq 0 \text{ for all } E \in \mathcal{E}_X\}.$$
Let $v \in \Nef_1(X)$ be an extreme ray.  By descending induction on $\rho(X)$, we may suppose there are no $E\in \mathcal{E}_X$ such that $v.E = 0$ and the contraction of $E$ produces a smooth Fano threefold.  Then by \cite[Theorem~3.3,~Corollary~3.4]{moriMMP}, \cite[Proposition~4.5]{mori1983classification}, and \cite[Lemma~2.3]{beheshti2020moduli}, the set $\{E \in \mathcal{E}_{X} | v. E = 0\}$ contains pairwise disjoint divisors. We may contract all of them via a contraction $\pi$ to obtain a $\mathbb{Q}$-factorial and terminal threefold $Y$.  For any $E\in \mathcal{E}_X$ of $E1$ type with $E.v =0$ and $\mathcal{O}_E(E)\cong \mathcal{O}_{\mathbb{P}^1\times \mathbb{P}^1}(-1,-1)$, we contract a ruling of $e$ of $E$ such that the linear span of $\{E \in \mathcal{E}_{X} | v. E = 0\} \cup \{D \in \Nef^1(X)| (v+e).D = 0\}$ has dimension $\rho(X)-1$.  It follows that $\pi_* v \in \overline{NE}(Y)$ is extreme and that $Y$ has isolated singularities \cite[Corollary~5.18]{kollarMMP}.  Thus, if $|\{E \in \mathcal{E}_{X} | v. E = 0\}| < \rho(X) -1 $, the contraction of $\pi_* v$ yields a Mori fiber space $Y \rightarrow Z$ with $\text{dim } Z \geq 1$.  The general fiber $Y_z$ of $Y \rightarrow Z$ is a smooth Fano variety and is thus dominated by a family of rational curves %
avoiding the locus where $\pi^{-1}$ is undefined.  The preimage of a general such curve in $X$ is a free rational curve of class proportional to $v$.  Similarly, when $|\{E \in \mathcal{E}_{X} | v. E = 0\}| = \rho(X) -1 $ and $Y$ is smooth, our claim follows immediately.

We may therefore assume $|\{E \in \mathcal{E}_{X} | v. E = 0\}| = \rho(X) -1 $ and that $v.E=0$ for a contractible divisor of type $E3, E4,$ or $E5$.  By \cite[Proposition~4.1.16]{neumann2009decomposition} and an analysis of primitive Fano threefolds with $\rho(X) = 2$ \cite{matsuki1995weyl}, unless $X$ is of type $3.9$, there is at most one such contractible divisor $E$ on $X$, and the contraction of any other (contractible) divisor on $X$ yields a smooth Fano threefold.  We reference \cite{beheshti2020moduli} for a proof of our claim when $X$ has type $3.9$.  Thus we may assume $\rho(X) = 2$ and $v\in \Nef_1(X)$ lies on $\overline{NE}(X) \cap E^\perp$, where $E$ is the contractible divisor of type $E3, E4$, or $E5$, corresponding to an extreme ray $e \in \overline{NE}(X)$.

Let $C$ be a free rational curve on $X$.  If $C.E = 0$, $[C]$ is proportional to $v$.  Otherwise, $C.E > 0$, and we may glue a representative of $ne$ for $n=1$ or $n=2$ to $-ne.E$ general deformations of $C$ to obtain a nodal curve $f : Z \rightarrow X$ with globally generated normal bundle.  Specifically, if $E$ has $E3$ type, then $E\cong \mathbb{P}^1 \times \mathbb{P}^1$.  In this case, we let $n=1$ and the representative of $e$ be a ruling of $E$ which meets $C$.  We then glue these curves to obtain $Z$.  If $E$ has $E4$ type, then $E$ is the cone over a smooth conic in $\mathbb{P}^2$, and we let $n=2$ and the representative of $2e$ be a general section over the smooth conic.  We glue two deformations of $C$ to this representative at different points to obtain $Z$.  Lastly, if $E$ has $E5$ type, then $E \cong \mathbb{P}^2$, and we let $n=1$ and $e$ be a line in $E$.  We then glue two deformations of $C$ to $e$ to obtain $f:Z \rightarrow X$.

In each case, $\mathcal{N}_f$ is globally generated.  We may smooth $f$ to a free rational curve $C'$.  If $E$ has $E3$ type, by repeating this process we may assume $[C']$ is proportional to $v$.  Otherwise, $C' . E = 2(C.E -1)$.  Repeating this process if necessary, we may assume $C' . (-K_X) > 5$, so that by Theorem \ref{MovableBB}, $C'$ breaks freely into $C_1 \cup C_2$.  We may replace $C$ with $C_i$ such that $\frac{C_i . E}{C_i . (-K_X)}$ attains its minimum.  By iterating, we attain $C_i$ with $C_i.E = 0$.  Theorem \ref{MovableBB} may then be used to bound $(-K_X).C_i$.%
\end{proof}

\begin{cor}[Lemma 7.5 \cite{beheshti2020moduli}]\label{EquivConjecture2}
For smooth Fano threefolds, $\exists \tau \in \Nef_1(X)_{\mathbb{Z}}$ such that %
each $\alpha \in \tau + \Nef_1(X)_{\mathbb{Z}}$ is represented by a very free rational curve.
\end{cor}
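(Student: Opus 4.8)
The plan is to bootstrap from a single very free curve together with the extremal free curves furnished by Theorem \ref{Representability of Free Curves}, gluing and smoothing by elementary deformation theory. First I would fix the ingredients: since $X$ is a smooth Fano threefold over $\mathbb{C}$ it is rationally connected, hence carries a very free rational curve, say of class $\gamma_0$; as a general very free curve is contained in no effective divisor, $\gamma_0 \in \Nef_1(X)_{\mathbb{Z}}$. By Theorem \ref{Representability of Free Curves} fix free rational curves of classes $v_1,\dots,v_k$ spanning the extremal rays of $\Nef_1(X)$, each of $(-K_X)$-degree at most $5$, and set $\Lambda := \sum_i \mathbb{Z}_{\geq 0}v_i \subseteq \Nef_1(X)_{\mathbb{Z}}$.

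The workhorse is an absorption step. Given a very free curve of class $\gamma$ and a free rational curve of class $\beta$, one glues general deformations of the two at a general point (the node is immersed by the Remark following Lemma \ref{Basic Properties Chains}) and checks, via Lemma \ref{GHS lemma} and Proposition \ref{kontsevich space H1} applied to $f^*\mathcal{T}_X$ and to $f^*\mathcal{T}_X(-p-q)$, that the nodal curve smooths to a \emph{very} free curve of class $\gamma+\beta$: on the $\gamma$-component the summands of the pulled-back tangent bundle are $\geq 1$, are raised by the node, and survive removing two general points, while the $\beta$-component is globally generated. Iterating, $\gamma_0 + \Lambda$ consists of very free classes. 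The same bookkeeping yields more: for a fixed effective $1$-cycle $Z=\sum_j R_j$ of rational curves, one builds a connected tree out of $Z$ together with enough ($m_Z$, depending only on the least twists of $\mathcal{T}_X|_{R_j}$) general deformations of the chosen very free curve — attaching sufficiently many teeth to each $R_j$ that its restricted, pole-twisted tangent bundle becomes globally generated — and smooths it to a very free curve; hence $m\gamma_0 + [Z]$ is a very free class for all $m\geq m_Z$.

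Then I would reduce to a finite check. A rounding argument gives a finite set $F\subset \Nef_1(X)_{\mathbb{Z}}$, namely the lattice points of the zonotope $\{\sum_i t_i v_i : 0\leq t_i\leq 1\}$, with $\Nef_1(X)_{\mathbb{Z}} = F + \Lambda$ (given $\beta$, subtract $\sum_i\lfloor t_i\rfloor v_i$ from any expression $\beta=\sum_i t_i v_i$). Each $f\in F$ lies in $\Nef_1(X)\subseteq\overline{NE}(X)$ and so is represented by an effective $1$-cycle of rational curves (Cone Theorem; for Fano threefolds $\overline{NE}(X)$ is the monoid generated by classes of rational curves), so by the absorption step there is $m_f$ with $m_f\gamma_0+f$ a very free class. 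Put $M:=\max_{f\in F}m_f$ and $\tau:=M\gamma_0\in\Nef_1(X)_{\mathbb{Z}}$. For $m\geq M$ and $f\in F$, $m\gamma_0+f=(m_f\gamma_0+f)+(m-m_f)\gamma_0$ is again very free by the gluing step ($\gamma_0$ being in particular a free class). Finally, for any $\alpha\in\tau+\Nef_1(X)_{\mathbb{Z}}$, write $\alpha-\tau=f+\lambda$ with $f\in F$ and $\lambda=\sum_i n_i v_i\in\Lambda$; then $\alpha=(M\gamma_0+f)+\lambda$ arises by gluing the very free curve of class $M\gamma_0+f$ to free curves of classes $v_i$, hence is a very free class. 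So every class in $\tau+\Nef_1(X)_{\mathbb{Z}}$ is represented by a very free rational curve.

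The main obstacle is the absorption step for non-free components of $Z$ — exceptional lines of birational contractions, components of degenerate fibers of conic bundles, and the like — whose restricted tangent bundles may be unbalanced: one must check that flanking such a component by enough very free curves (contributing poles) makes $H^1$ of its restricted, twisted tangent bundle vanish and leaves it as the unique non-globally-generated component of the tree, so that Proposition \ref{kontsevich space H1} applies, and one must keep these numerics uniform over the finitely many $f\in F$. This is precisely the kind of estimate carried out at the end of the proof of Theorem \ref{Representability of Free Curves}, where Movable Bend-and-Break (Theorem \ref{MovableBB}) is used to keep intermediate degrees under control.
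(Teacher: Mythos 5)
Your architecture (comb absorption of free/effective pieces into a very free curve, a Gordan-type zonotope reduction to a finite set $F$, and a final shift by a very free class) is a genuinely different route from the paper's, which instead finds free curve classes generating $\Nef_1(X)$ as a cone \emph{and} $N_1(X)_{\mathbb{Z}}$ as an abelian group and then quotes Khovanskii's theorem on affine semigroups to produce $\tau$ with every class in $\tau+\Nef_1(X)_{\mathbb{Z}}$ an honest $\mathbb{Z}_{\geq 0}$-combination of free classes, before adding one very free class at the end. Most of your steps are sound: the very free $+$ free gluing is standard, and $\Nef_1(X)_{\mathbb{Z}}=F+\Lambda$ is exactly Lemma \ref{Gordan's Lemma} applied to the generators from Theorem \ref{Representability of Free Curves}.

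The genuine gap is the assertion that each $f\in F$ is represented by an effective $1$-cycle of rational curves, justified by ``Cone Theorem; $\overline{NE}(X)$ is the monoid generated by classes of rational curves.'' The Cone Theorem only says $\overline{NE}(X)$ is generated by rational curve classes as a closed convex cone over $\mathbb{R}_{\geq 0}$; it says nothing about the monoid of \emph{lattice} points. If the extremal rational curve classes all lie in a proper sublattice of $N_1(X)_{\mathbb{Z}}$, the zonotope $F$ contains integral classes that are not nonnegative integral combinations of them, and you have produced no curve, effective cycle, or free chain in such a class for the absorption step to act on — the reduction to a finite check does not close. This is precisely the point the paper's proof is designed to handle: the input from \cite{ChowGp2014} that free rational curves generate $N_1(X)_{\mathbb{Z}}$ as a \emph{group}, combined with Khovanskii's theorem, is what converts cone membership of a deep lattice point into actual representability. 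If you import that lattice-generation statement, your argument can be repaired (e.g., by enlarging $F$ or replacing each $f$ by $f$ plus a large effective free combination); without it, the step fails. A secondary, fixable imprecision: smoothing a tree containing several non-free components $R_j$ in one shot via Proposition \ref{kontsevich space H1} requires that after attaching teeth every component's twisted bundle is globally generated (or that you absorb the $R_j$ one at a time), since that proposition tolerates only one bad component.
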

\begin{proof}
We claim that $\tau$ may always be chosen such that $\overline{\free}(X,\alpha)$ is nonempty for all $\alpha\in \tau + \Nef_1(X)$.  This is precisely the statement of \cite[Remark~7.6]{beheshti2020moduli}, which uses Theorem \ref{Representability of Free Curves} and \cite[Theorem~1.3]{ChowGp2014} to find a collection of classes of free curves that generate both $\Nef_1(X)$ as a cone and $N_1(X)_{\mathbb{Z}}$ as an abelian group.  The claim then follows directly from \cite[Proposition~3]{Khovanskii1992NewtonPH}, which gives an explicit such $\tau$.  Increasing $\tau$ by the class of a very free curve finishes our proof.
\end{proof}

\begin{prop}\label{EquivConjecture1}
For a smooth Fano threefold $X$, Conjecture \ref{GMC Tanimoto} is equivalent to Conjecture \ref{GMC}.
\end{prop}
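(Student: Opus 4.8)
The strategy is to reduce the equivalence to a clean dictionary: for a suitable translate $\tau+\Nef_1(X)_{\mathbb{Z}}$, the Manin components of $\Mor(\mathbb{P}^1,X,\alpha)$ are exactly the components whose general point parameterizes a very free curve, and there is always at least one of them. First I would fix $\tau$: take it to be the sum of the class of a very free rational curve on $X$ — which lies in $\Nef_1(X)_{\mathbb{Z}}$, is interior to $\overline{NE}(X)$, and has anticanonical degree at least $4$ by Proposition \ref{very free curves} — and the translation vector supplied by Corollary \ref{EquivConjecture2}. Since adding any element of $\overline{NE}(X)$ to an interior class keeps it interior, every $\alpha\in\tau+\Nef_1(X)_{\mathbb{Z}}$ is then interior to $\overline{NE}(X)$, satisfies $-K_X\cdot\alpha\geq 4$, and is represented by a very free curve; in particular $\Mor(\mathbb{P}^1,X,\alpha)$ contains a component generically parameterizing very free curves.

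Next I would establish the dictionary. By Theorem \ref{nonfree curves}, any component of $\Mor(\mathbb{P}^1,X)$ with non-dominant universal family sweeps out an exceptional divisor of a birational contraction or a variety $Y$ covered by $-K_X$-lines, and in either case $a(Y,-K_X)>a(X,-K_X)$, so such a component is accumulating (\cite[Theorem~1.1]{2019}); hence every Manin component is a dominant family, and in particular a component of $\overline{\free}(X,\alpha)$. Suppose now that $M\subseteq\overline{\free}(X,\alpha)$ is dominant but does not generically parameterize very free curves. Since $-K_X\cdot\alpha\geq 4$ and $\alpha$ is interior to $\overline{NE}(X)$, Proposition \ref{very free curves} forces the corresponding one-pointed component $M'\subseteq\overline{\free}_1(X,\alpha)$ to have reducible general fibers over $X$; then Proposition \ref{lehmann a-covers} produces an $a$-cover $\widetilde{Y}\to X$ to which the general member of $M$ lifts, and since every $a$-cover of a Fano threefold is a breaking map (Remark \ref{Remark: Simplifying Conjecture}), $M$ is accumulating. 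Conversely, if $M$ generically parameterizes very free curves then $M'$ has irreducible general fibers over $X$ by Proposition \ref{very free curves}, and were $M$ accumulating, the witnessing breaking map $f:Y\to X$ would be an $a$-cover — a dominant generically finite morphism has $a(Y,-f^*K_X)\leq a(X,-K_X)$ (Remark \ref{Remark: Simplifying Conjecture}), which excludes breaking maps of the first kind — so the analysis following Proposition \ref{lehmann a-covers} would exhibit the curves of $M$ lifting to $Y$ as contracted by the Iitaka fibration of $K_Y-f^*K_X$, forcing their images in $X$ not to be very free, a contradiction. Hence, for $\alpha\in\tau+\Nef_1(X)_{\mathbb{Z}}$, the Manin components of $\Mor(\mathbb{P}^1,X,\alpha)$ are exactly those generically parameterizing very free curves.

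Granting the dictionary, the equivalence becomes bookkeeping, using that the integer $n$ of Conjecture \ref{GMC Tanimoto} equals $|\mathrm{Br}(X)|=1$ for a smooth Fano threefold, as recorded above. If Conjecture \ref{GMC} holds with translation vector $\tau'$, then for $\alpha\in(\tau+\tau')+\Nef_1(X)_{\mathbb{Z}}$ there is at most one component generically parameterizing very free curves, hence — by the lower bound from Corollary \ref{EquivConjecture2} — exactly one, hence exactly one Manin component; so Conjecture \ref{GMC Tanimoto} holds with $n=1$. Conversely, if Conjecture \ref{GMC Tanimoto} holds with translation vector $\tau''$, then for $\alpha\in(\tau+\tau'')+\Nef_1(X)_{\mathbb{Z}}$ there is exactly $n=1$ Manin component, hence exactly one component generically parameterizing very free curves, which in particular gives the bound asserted in Conjecture \ref{GMC}.

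The main obstacle is the dictionary — specifically the claim that a component generically parameterizing very free curves is never dominated, through an $a$-cover, by a component of a $\Mor$ space on a cover of $X$. This is where the geometry of Fano threefolds enters: it rests on the structure of $a$-covers of Fano threefolds and on the fact that $K_Y-f^*K_X$ has positive Iitaka dimension with Iitaka fibration contracting every moving curve that sweeps out a dominant free family, which is the substance of \cite[Section~5]{beheshti2020moduli} together with Propositions \ref{lehmann a-covers} and \ref{very free curves}. The remaining points — that a suitable translate of $\Nef_1(X)$ consists of classes simultaneously interior to $\overline{NE}(X)$, of anticanonical degree at least $4$, and represented by very free curves — are routine given Theorem \ref{Representability of Free Curves} and Corollary \ref{EquivConjecture2}.
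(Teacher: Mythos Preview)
Your proof is correct and follows essentially the same approach as the paper's: both establish the dictionary that, after an appropriate translate, Manin components coincide with components generically parameterizing very free curves, invoking Remark~\ref{Remark: Simplifying Conjecture}, Propositions~\ref{lehmann a-covers} and~\ref{very free curves}, and the positivity of Iitaka dimension for $a$-covers of Fano threefolds. You are somewhat more explicit than the paper in spelling out the role of $n=|\mathrm{Br}(X)|=1$ and the final bookkeeping, which the paper leaves largely implicit (the $n=1$ fact is recorded in the paragraph preceding the proposition rather than in the proof itself).
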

\begin{proof}
By Remark \ref{Remark: Simplifying Conjecture}, each Manin component of $\Mor(\mathbb{P}^1,X)$ parameterizes a family of free curves.  Moreover, by Proposition \ref{lehmann a-covers}, the universal family over each Manin component has irreducible fibers over $X$.  Choosing $\tau_0$ in the interior of $\Nef_1(X)$ with $-K_X . \tau_0 \geq 4$ ensures by Proposition \ref{very free curves} that every Manin component parameterizing curves of class $\alpha \in \tau_0 + \Nef_1(X)$ parameterizes very free curves.  Conversely, as shown above and in the proof of \cite[Theorem~7.3]{beheshti2020moduli}, curves in accumulating components correspond to curves on some $a$-cover $f:Y\rightarrow X$ contracted by the Iitaka fibration for $K_Y - f^*K_X$.  Since there are no $a$-covers with $\kappa(K_Y - f^*K_X) = 0$ \cite[Theorem~5.4]{beheshti2020moduli}, every component of $\Mor(\mathbb{P}^1,X)$ parameterizing very free curves is Manin.  Increasing $\tau_0$ to $\tau$ from Corollary \ref{EquivConjecture2} finishes the proof.
\end{proof}

\begin{rem}\label{smoothweakfourfold}
The statement of Conjecture \ref{GMC} does not generalize well because on other varieties, components of the Kontsevich space parameterizing very free rational curves may not be Manin.  For example, consider the smooth weak Fano fourfold $X = \text{Hilb}^{[2]}(S)$, where $S= \mathbb{P}^1 \times \mathbb{P}^1$.  An $a$-cover of $X$ is given by the natural map from the blow up of $S\times S$ along the diagonal \cite[Example~4.2]{tanimoto2021introduction}.  For $n > 2$,  $\text{Hilb}^{[2]}(\mathbb{P}^n)$ is a smooth Fano variety with a similar $a$-cover. %
\end{rem}

\section{Irreducibility of Moduli Spaces of Free Curves}%

Section 4.1 outlines a general strategy for proving irreducibility of spaces of free curves on smooth varieties.  %
Section 4.2 simplifies the application of Theorem \ref{Main Method} to smooth Fano threefolds by verifying condition \ref{Main Method}(2) in all appropriate circumstances.  As Theorem \ref{Main Method} requires understanding families of low-degree curves, Section 4.3 outlines a few results which may be used to count their components. %

\subsection{General Strategy}

We will now describe our main method for proving irreducibility of $\overline{\free}(X,\alpha)$.  After stating the general technique in Theorem \ref{Main Method}, we outline basic methods of applying it in Lemmas \ref{Relations} and \ref{Gluing}.%

\begin{thm}\label{Main Method}
Let $X$ be a smooth variety with a (weak) core of free curves $\mathscr{C}_X$. Fix $\beta \in \Nef_1(X)$. Suppose: %

\begin{enumerate}
    \item For each relation $\sum \alpha_i = \sum \alpha'_j$ in a generating set of relations in $\mathbb{N}\mathscr{C}_X$, a main component of $\prod_X \overline{\free}_2(X,\alpha_i)$ lies in the same component of \\ $\overline{\free}_2(X,\sum \alpha_i)$ as a main component of $\prod_X \overline{\free}_2(X,\alpha'_j)$; and
    \item For each sum $\beta= \alpha_1 + \ldots + \alpha_r$ of elements of $\mathscr{C}_X$, up to a re-ordering of the $\alpha_i$, each main component of $\prod_X \overline{\free}_2(X,\alpha_i)$ maps to the same component of $\overline{\free}_2(X,\beta)\subset \overline{M}_{0,2}(X,\beta)$.
\end{enumerate}
Then $\overline{\free}(X,\beta)$ is either irreducible or empty.
\end{thm}

\begin{rem}
Lemma \ref{Relations} will provide an algorithm for identifying a generating set of relations as in \ref{Main Method}(1).  Given a relation $\sum \alpha_i = \sum \alpha_j'$ in this list, common strategies for checking \ref{Main Method}(1) include proving that $\free(X,\sum \alpha_i)$ is irreducible, or finding a map $Z\rightarrow \overline{\free}(X,\sum \alpha_i)$ from a nodal curve $Z$, intersecting the two main components, such that each node of $Z$ maps to a smooth point of $\overline{\mathcal{M}}_{0,0}(X)$.  %
\end{rem}

\begin{exmp}
Suppose $X$ is a smooth Fano threefold with $\rho(X) \geq 2$, $\mathscr{C}_X$ is a core of free curves on $X$, $\ell \in \overline{NE}(X)_{\mathbb{Z}}$ is the class of a $-K_X$-line, and %
$\alpha_1, \alpha_2 \in \mathscr{C}_X$ satisfy $\alpha_i + \ell  \in \mathscr{C}_X$.  A relation in the monoid $\mathbb{N}\mathscr{C}_X$ will have the form $(\alpha_1 + \ell) + \alpha_2 = \alpha_1 + (\alpha_2 + \ell)$.  Suppose some component $M \subset \overline{M}_{0,2}(X,\ell)$ generically parameterizes 2-pointed maps $f: \mathbb{P}^1 \rightarrow X$ with normal bundle $\mathcal{N}_f \cong \mathcal{O} \oplus \mathcal{O}(-1)$.  In this case, there are maps parameterized by $\free_1(X,\alpha_1) \times_X M \times_X \free_1(X, \alpha_2)$ which correspond to smooth points of $\overline{\free}(X, \alpha_1 + \ell + \alpha_2)$ and which generalize to free chains of type $(\alpha_1 + \ell, \alpha_2)$ and $(\alpha_1, \alpha_2 + \ell)$.  Thus, there are main components of $\overline{\free}_1(X,\alpha_1 + \ell) \times_X \overline{\free}_1(X,\alpha_2)$ and $\overline{\free}_1(X,\alpha_1) \times_X \overline{\free}_1(X,\alpha_2 + \ell)$ in the same component of $\overline{\free}(X, \alpha_1 + \ell + \alpha_2)$. %
\end{exmp}

\begin{proof}[Proof of Theorem \ref{Main Method}]
By definition, %
every component of $\overline{\free}(X)$ contains a main component of free chains of type $(\alpha_1, \ldots, \alpha_n)$ with $\alpha_i \in \mathscr{C}_X$.
Let $M,N\subset \overline{\free}_2(X,\beta)$ be main components of free chains of type $\alpha_M=(\alpha_1,\ldots , \alpha_r)$ and  $\alpha_N=(\alpha_1',\ldots, \alpha_s')$, respectively.  By Lemma \ref{Basic Properties Chains}, we may reorder $\alpha_M$ and $\alpha_N$ to assume $\alpha_i = \alpha_i'$ for $i<n$ and $\{\alpha_{n}, \ldots , \alpha_r\} \cap \{\alpha_{n}', \ldots , \alpha_s'\} = \emptyset$.  If $\alpha_M=\alpha_N$, condition (2) guarantees $M$ and $N$ are contained in the same component of $\overline{\free}(X,\beta)$.  Otherwise, as $\sum_{i=1}^r \alpha_i = \sum_{j=1}^s \alpha_j'$, a sequence of relations described in condition (1) may be used to transform $\alpha_M$ into $\alpha_N$ through substitution and reordering.  By induction, we may assume only a single substitution is required, so that the relation is given by $\sum_{i\geq n} \alpha_i = \sum_{j\geq n} \alpha_j'$.  By condition (1), there exist main components
\begin{align*}
    M'\subseteq \  &\overline{\free}_2(X,\alpha_{1}) \times_X \ldots \times_X \overline{\free}_2(X,\alpha_{n-1})\\
    M''\subseteq \  &\overline{\free}_2(X,\alpha_{n}) \times_X \ldots \times_X \overline{\free}_2(X,\alpha_{r})\\
    N''\subseteq \ &\overline{\free}_2(X,\alpha_{n}') \times_X \ldots \times_X \overline{\free}_2(X,\alpha_{s}')
\end{align*}   
of the corresponding products such that $M'',N''$ lie in the same component $Z \subseteq \overline{\free}_2(X,\sum_{i\geq n} \alpha_i)$.  Note that for any two main components $M_1,M_2$ of free chains with markings on each end, there exists a main component of $M_1\times_X M_2$ dominating $M_1$ and $M_2$ under projection.  Condition (2) therefore allows us to assume $M\subseteq M'\times_X M''$ and $N\subseteq M'\times_X N''$.  It follows from Lemma \ref{Basic Properties Chains} that a main component of $M'\times_X Z$ contains both $M$ and $N$.  
\end{proof}

If $\beta$ is an anticanonical conic in the interior of $\Nef_1(X)$, there are always multiple components of $\overline{\free}(X,n\beta)$ for $n>1$.  Those components which do not parameterize multiple covers of conics may be identified by the types of free chains they contain.

\begin{defn}\label{Def Dagger}
For a chain of free curves, denote by $(\dagger)$ the condition that two of its components have distinct reduced image, or one component has reduced image of anticanonical degree at least 3.  We say a main component \textit{satisfies} $(\dagger)$ if it parameterizes free chains which satisfy $(\dagger)$.
\end{defn}

\begin{cor}\label{bir Main Method}
Under the same hypotheses as Theorem \ref{Main Method}, %
if \ref{Main Method}(1) holds and \ref{Main Method}(2) holds for all main components satisfying $(\dagger)$, $\overline{\free}^{bir}(X,\beta)$ is irreducible or empty.
\end{cor}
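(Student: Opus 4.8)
The plan is to rerun the proof of Theorem~\ref{Main Method} inside the birational locus. The observation I would build everything on is that a free chain $f:C_1\cup\dots\cup C_r\to X$ with $r\geq 2$ that is birational onto its image automatically satisfies $(\dagger)$: if no two of its components had distinct reduced image, then all the $f(C_i)$ would coincide in a single curve $Y$ with $-K_X.[Y]\leq 2$, and $f$ would be generically $\big(\sum_i\deg(f|_{C_i})\big)$-to-one onto $Y$, contradicting birationality. Since being birational onto its image is an open condition among free maps (the degree of a free map onto its image is upper semicontinuous), every component of $\overline{\free}^{bir}(X,\beta)$ is the closure of a component of $\overline{\free}(X,\beta)$ whose general member is birational onto its image; I will call these the \emph{birational components} of $\overline{\free}(X,\beta)$ and show there is at most one.

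First I would show that every birational component $M$ of $\overline{\free}_2(X,\beta)$ contains a main component of $\prod_X\overline{\free}_2(X,\alpha_i)$ for some expression $\beta=\sum_i\alpha_i$ in core classes whose general member is a free chain birational onto its image — which, when $r\geq 2$, then satisfies $(\dagger)$; the case $r=1$ forces $\beta\in\mathscr{C}_X$, where $\overline{\free}(X,\beta)$ is irreducible by Definition~\ref{def core} and there is nothing to prove. To do this I would apply the birational part of Theorem~\ref{MovableBB} to $M\subset\overline{\free}^{bir}(X)$ to get a length-two birational free chain in $M$, note that each piece is free and birational onto its image and hence lies in a birational component of its own $\overline{\free}$-space, and iterate until every piece has $(-K_X)$-degree below the bend-and-break threshold of Theorem~\ref{MovableBB}. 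It then remains to see that the resulting bounded-degree birational pieces may be taken to be core classes — automatic in the way cores are constructed in Definition~\ref{def core} (the small-degree generating classes there have irreducible free-curve space, and for a core class $\alpha$ the space $\overline{\free}^{bir}(X,\alpha)$ is either empty or equal to the irreducible $\overline{\free}(X,\alpha)$), with any residual small non-core class broken once more. I expect this step — keeping the entire refinement birational all the way down to core classes — to be the main obstacle.

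Finally, given two birational components $M$ and $N$, I would select main components of core-type birational free chains in each and run the proof of Theorem~\ref{Main Method} verbatim: reorder the two chain types via Lemma~\ref{Basic Properties Chains}(3), reduce by induction to transforming one type into the other by a single relation from \ref{Main Method}(1) followed by reorderings, and conclude via the auxiliary main components $M',M'',N'',Z$ and the final main component of $M'\times_X Z$ produced there. Each of these is a sub-chain of $M$ or of $N$, or a generic one-node gluing of two such sub-chains through a general point of $X$; a sub-chain of a chain birational onto its image is again birational onto its image, and so is a generic one-node gluing of birational free chains through a general point (with immersion at the new node by Lemma~\ref{Basic Properties Chains}(6) and the remark following it). Hence every main component appearing in the argument has general member a birational free chain, so it satisfies $(\dagger)$ when it has at least two components, while when it has a single component the relevant instance of \ref{Main Method}(2) is vacuous because core classes have irreducible free-curve spaces. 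Thus the argument only ever invokes \ref{Main Method}(2) for main components satisfying $(\dagger)$ — exactly the weakened hypothesis — and together with \ref{Main Method}(1) this places $M$ and $N$ in the same component of $\overline{\free}_2(X,\beta)$. Therefore $\overline{\free}^{bir}_2(X,\beta)$, and hence $\overline{\free}^{bir}(X,\beta)$, is irreducible or empty.
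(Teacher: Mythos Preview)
Your key observation---that a free chain of length $\geq 2$ which is birational onto its image automatically satisfies $(\dagger)$---is correct and is the engine of the argument. But your step~3 contains a genuine error: you claim that the auxiliary main components $M''$ and $N''$ are ``sub-chains of $M$ or of $N$, or a generic one-node gluing of two such sub-chains.'' They are not. In the proof of Theorem~\ref{Main Method}, $M''$ and $N''$ are the \emph{specific} main components furnished by hypothesis~(1) for the relation $\sum_{i\geq n}\alpha_i=\sum_{j\geq n}\alpha_j'$; they are chosen independently of $M$'s and $N$'s original chains and need not coincide with their sub-chains. So you cannot infer that $M''$ and $N''$ are birational from the birationality of $M$ and $N$, and hence cannot invoke the weakened~(2) to connect the sub-chain of $M$ to $M''$.

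The paper closes this gap differently: it observes directly that the component $Z\supseteq M'',N''$ lies in $\overline{\free}^{\,bir}_2(X,\sum_{i\geq n}\alpha_i)$. The reason is that the relation is nontrivial---the class-multisets on the two sides are disjoint---so neither side can be a singleton (else they would be equal), whence $-K_X\cdot(\sum_{i\geq n}\alpha_i)\geq 4$, and one checks that the general chain in at least one of $M''$, $N''$ has components with distinct reduced images. Once $Z$ is birational the argument proceeds verbatim.

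Your step~2 worry (refining birationally to core classes) is less serious than you think, and the birational clause of Movable Bend-and-Break is not needed. By Definition~\ref{def core} any component $M\subset\overline{\free}^{\,bir}(X,\beta)$ already contains a core chain, lying in some main component $N\subset M$ by Lemma~\ref{Basic Properties Chains}(1). If $N$ failed $(\dagger)$, its general chain would have every component a cover of a single conic; such chains smooth only to multiple covers, placing $N$ in a non-birational component and contradicting $N\subset M$. Hence $N$ satisfies $(\dagger)$ automatically.
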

\begin{proof}
For main components $M,N\subset \overline{\free}_2^{bir}(X,\beta)$, the proof of Theorem \ref{Main Method} holds verbatim after we note that for any nontrivial relation $\sum_{i\geq n} \alpha_i = \sum_{j\geq n} \alpha_j'$, $-K_X.(\sum_{i\geq n} \alpha_i) \geq 4$ and the main components $M'',N''$ given by \ref{Main Method}(1) must lie in a component $Z \subseteq \overline{\free}^{bir}_2(X, \sum_{i\geq n} \alpha_i)$.
\end{proof}

To find the relations mentioned in Theorem \ref{Main Method}(1), we apply the following.

\begin{lem}\label{Relations}
Let $c_1, \ldots , c_n$ be nef curve classes on a variety $X$ such that $N_1(X)$ is spanned by $c_{n-\rho(X)}, \ldots , c_n$.  Suppose $D_1, \ldots , D_{n-\rho(X)-1}$ are divisor classes such that $c_i . D_i < 0$ and $c_j . D_i \geq 0$ for $j > i$.  %
A generating set of relations among $c_i$ as generators of an additive monoid include the unique relation among $c_{n-\rho(X)},\ldots , c_n$ and are otherwise of the form $r c_i + \sum_{k>i} a_k c_{k} = \sum_{k>i} b_k c_{k}$, where %
\begin{enumerate}
    \item $r>0$ and $i \leq n - \rho(X)-1$, %
    \item $a_k, b_k \geq 0$ and $a_k = 0$ or $b_k = 0$,
    \item $c_{k} . D_i > 0$ for some $k$ with $a_k > 0$,
    \item $(r, a_{i+1}, \ldots , a_n)$ is minimal in $\mathbb{N}^{n-i+1}$ s.t. there exists a relation as above.
\end{enumerate}
\end{lem}
\begin{proof}
Suppose $r c_i + \sum_{k>i} a_k c_{k} = \sum_{k>i} b_k c_{k}$.  By cancellation, we may assume either $a_k = 0$ or $b_k = 0$ for all $k> i$.  As $(\sum_{k>i} b_k c_{k}).D_i \geq 0$ and $r c_i . D_i < 0$, $a_k c_k . D_i > 0$ for some $k$.  We see each condition above is satisfied besides condition (4). %
If condition (4) is not satisfied, there exists an expression $r' c_i + \sum_{k>i} a_k' c_{k} = \sum_{k>i} b_k' c_{k}$ as above with $r' \leq r$ and $a_k' \leq a_k$ for all $k$.  Substituting, we obtain the relation $(r-r')c_i + \sum_{k>i} (a_k - a_k'+b_k')c_{k} = \sum_{k>i} b_k c_{k}$.  We note that $b_k' =0$ if $a_k' >0$, so that for at least one $k$, $a_k -a_k' + b_k' = a_k -a_k' < a_k$.  As $r-r' < r$, this process must terminate.  If $r-r' = 0$, the proof follows from induction.
\end{proof}
\begin{rem}\label{rem: Relations}
It is helpful to note that $c_i$ must be extreme in the cone generated by $\{c_i,\ldots , c_n\}$.  Moreover, we may choose $D_i$ such that $c_j . D_i =0$ for $\rho(X)-1$ values of $j> i$.  Frequently, if $(c_i + c_j).D_i \geq 0$, we find a relation $c_i + c_j = \sum_{k>i} b_k c_{k}$.  This implies each other minimal relation amongst $\{c_i ,c_{i+1}, \ldots , c_n\}$ does not involve both $c_i$ and $c_j$.
\end{rem}

The following lemma shows condition \ref{Main Method}(2) is nontrivial only when general fibers of the evaluation map $\overline{\free}_1(X,\alpha_i)\xrightarrow{\text{ev}} X$ are reducible for some curve $\alpha_i \in \mathscr{C}_X$.  We elaborate on how to address this situation in the following subsection.

\begin{lem}[Gluing and Smoothing, Lemma 3.2 \cite{tangentbundles}]\label{Gluing}
Let $X$ be a smooth projective variety and $M_1,M_2\subset \free_1(X)$ be components.  If general fibers of $\text{ev}:M_1\rightarrow X$ are connected, then there is exactly one main component $M\subset \overline{M_1} \times_X \overline{M_2}$.  If general fibers of $\text{ev}:M_2\rightarrow X$ are connected as well, then general deformations of one-pointed curves parameterized by $M$ lie in a component $N\subset \free_1(X)$ whose general fiber under $N\xrightarrow{\text{ev}} X$ is connected. 
\end{lem}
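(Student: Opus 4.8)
The plan is to control everything through the evaluation map attached to the glued marking: the fiber over a general $x\in X$ of $\overline{M_1}\times_X\overline{M_2}\to X$ is the product $\mathrm{ev}_1^{-1}(x)\times\mathrm{ev}_2^{-1}(x)=:F_1(x)\times F_2(x)$, where $\mathrm{ev}_i\colon\overline{M_i}\to X$ are the evaluation maps. The first step I would take is to strengthen the hypothesis: since $\overline{M_1}\to X$ is proper (the Kontsevich space is proper), $\overline{M_1}$ is irreducible, and $X$ is smooth hence normal, connectedness of the general fibers of $\mathrm{ev}\colon M_1\to X$ forces the finite part of the Stein factorization of $\overline{M_1}\to X$ to be a finite birational morphism, hence (as $X$ is normal) an isomorphism; so in fact \emph{every} fiber $F_1(x)$ is connected, and likewise every $F_2(x)$ once we also assume $\mathrm{ev}\colon M_2\to X$ has connected general fibers.

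For uniqueness of the main component: because $F_1(x)$ is connected, the connected components of $F_1(x)\times F_2(x)$ are precisely the subsets $F_1(x)\times G$ for $G$ a connected component of $F_2(x)$, so $\pi_0$ of the general fiber of $\overline{M_1}\times_X\overline{M_2}\to X$ is canonically identified with $\pi_0(F_2(x))$. Since $\overline{M_2}$ is irreducible, the finite (generically \'etale) part of its Stein factorization over $X$ is a connected cover, so monodromy acts transitively on $\pi_0(F_2(x))$. Any irreducible component of $\overline{M_1}\times_X\overline{M_2}$ dominating $X$ has, over a general $x$, a fiber which is a union of connected components of $F_1(x)\times F_2(x)$ indexed by a monodromy-invariant subset of $\pi_0(F_2(x))$, hence either empty or the whole fiber; and $\overline{M_1}\times_X\overline{M_2}\to X$ is dominant because both $\mathrm{ev}_i$ are. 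Therefore exactly one component $M$ dominates $X$; its fiber over general $x$ is all of $F_1(x)\times F_2(x)$, so the projections to $\overline{M_1}$ and $\overline{M_2}$ are dominant (their images contain $\mathrm{ev}_i^{-1}$ of a dense open of $X$), i.e.\ $M$ is the unique main component, as every main component dominates $X$.

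For the second assertion I would first verify that the free chains parameterized by $M$ smooth out. A general point of $M$ is a chain $f\colon C_1\cup C_2\to X$ with each $f_i=f|_{C_i}$ free; after a harmless preliminary deformation (available since each $f_i^*\mathcal{T}_X$ is globally generated and $\dim X\ge 2$) I may assume $f$ is an immersion at the node, so Lemma \ref{GHS lemma} identifies $\mathcal{N}_f|_{C_i}$ with the rational sections of $\mathcal{N}_{f_i}$ having at worst a simple pole at the node; these are globally generated, so by Proposition \ref{kontsevich space H1} we get $H^1(C,\mathcal{N}_f)=0$ with $\mathcal{N}_f$ globally generated, hence $f$ deforms, smoothing the node, to an immersed free curve. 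Because $M$ is irreducible, all such deformations lie in a single component $N\subset\free_1(X)$, with $M$ contained in the closure $\overline N$. I would then run the Stein-factorization argument a second time on $\overline N\to X$: for general $x$ the subset $M\cap\mathrm{ev}^{-1}(x)=F_1(x)\times F_2(x)$ is connected (here the extra hypothesis on $M_2$ enters, making both factors connected), so it maps to a single point of the finite part $W$ of the Stein factorization of $\overline N\to X$; the image of $M$ in $W$ is irreducible of dimension $\dim X$, hence dense in $W$, which forces $W\to X$ to have degree one, so $\overline N\to X$—and therefore $N\to X$—has connected general fibers.

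The part I expect to be most delicate is not any single step but the bookkeeping forced by non-properness: the evaluation maps out of $\free_1(X)$ are not proper, so the Stein-factorization and monodromy statements have to be phrased for the proper closures $\overline{M_1},\overline{M_2},\overline N$ inside the Kontsevich spaces and then transported back to the free loci, and one must be careful about which marked point is being evaluated on $N$ and how it degenerates relative to the node on the chains in $M$. The conceptual content, by contrast, is short: connectedness of the fibers of $\mathrm{ev}_1$ collapses $\pi_0$ of the product fibers onto $\pi_0(F_2(x))$, on which irreducibility of $\overline{M_2}$ makes monodromy transitive, and this single observation yields the uniqueness in the first statement and, applied to $\overline N$, the connectedness in the second.
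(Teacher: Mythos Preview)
Your argument is correct, and for the second assertion it takes a genuinely different route from the paper. For the first part the two proofs are close in spirit, but the paper sharpens ``connected'' to ``irreducible'' for the general fiber $F_1(x)$ by observing that every point of $M_1$ is a \emph{smooth} point of its $\text{ev}$-fiber (the tangent-space calculation $H^1(f^*\mathcal{T}_X(-p))=0$ for free $f$); once $F_1(x)$ is irreducible, the projection $\overline{M_1}\times_X\overline{M_2}\to\overline{M_2}$ has irreducible general fiber and the uniqueness of the main component is immediate, without invoking monodromy on $\pi_0(F_2(x))$. For the second part the paper does something more elaborate: rather than embedding $M$ in $\overline N$ by marking the node, it looks at the full preimage $M'$ of $M$ under the forgetful map $\overline{M}_{0,1}(X)\to\overline{M}_{0,0}(X)$, which has three components (mark on $C_1$, mark on $C_2$, mark at the stabilized node), checks that the fiber $M'_p$ over a general $p\in X$ is connected and consists of smooth points of $\overline N_p$, and then concludes by noting that monodromy on irreducible components of $\overline N_p$ is transitive but fixes the single component containing $M'_p$. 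Your Stein-factorization argument is cleaner---it uses only the node-marked copy of $M$ and the observation that its image in the finite part $W$ of $\overline N\to X$ is an irreducible subvariety of full dimension, forcing $\deg(W/X)=1$---while the paper's version makes the structure of the boundary locus in $\overline N$ more visible. The delicacy you anticipated about which marked point is being evaluated is exactly where the two approaches diverge: the paper tracks a free marked point, you track the degenerating node, and both work.
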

\begin{rem}\label{connected vs irreducible}
Let $M\subset \free_1(X)$ be a component, and $\overline{M}\subset \overline{\free}_1(X)$ be its closure.  We note that the general fiber of $\text{ev}:\overline{M}\rightarrow X$ is irreducible if and only if the general fiber of $\text{ev}:M\rightarrow X$ is connected.  This follows from a tangent space calculation on $\overline{\mathcal{M}}_{0,0}(X)$.  In general, fibers of $\text{ev}:\overline{M}\rightarrow X$ may be connected but reducible.  However, when $M$ parameterizes conics or cubics in a Fano variety whose Fano scheme of lines is generically smooth, general fibers of $\text{ev}: \overline{M} \rightarrow X$ must be smooth.  Indeed, for general $p \in X$, each irreducible cubic through $p$ is free, and every irreducible conic through $p$ is free, avoids any non-divisorial components of lines, and meets divisorial components of lines transversely and at general points.  Therefore, each stable rational curve through $p$ parameterized by $\overline{M}$ is a smooth point of $\overline{\mathcal{M}}_{0,0}(X)$ with globally generated normal sheaf.
\end{rem}

\begin{proof}[Proof of Lemma \ref{Gluing}]
Suppose $\text{ev}:M_1 \rightarrow X$ has connected fibres over general points in $X$.  Since each point of $M_1$ is a smooth point of the corresponding fiber of $\mathcal{M}_{0,1}(X)\xrightarrow{\text{ev}} X$, a general fiber is irreducible and of the expected dimension.  This implies exactly one component $M\subset \overline{M_1}\times_X \overline{M_2}$ dominates $M_2$ under projection.

Suppose the fibers of $M_2\xrightarrow{\text{ev}} X$ are also connected.  Let $\overline{N}\subset \overline{\free}_1(X)$ be the component containing the locus $M'$ of one-pointed curves parameterized by $M$.  We claim that general fibers of $M'\xrightarrow{\text{ev}} X$ are connected.  
To see this, let $C_i$ denote a general curve parameterized by $M_i$.  An open locus of $U\subset M$ parameterizes free maps from the nodal union $C_1 \cup_q C_2$.  By Lemma \ref{Basic Properties Chains}(5), for a general $p\in X$, a general map $C_1 \cup_q C_2$ through $p$ sends $q$ to a point in $X$ over which both $M_1$ and $M_2$ have irreducible fibers.  It follows that the general fiber $M_p'$ of $M'$ is the connected union of three irreducible components: one for $p\in C_1$, one for $p\in C_2$, and one for the stabilization of $p=q \in C_1 \cup_q C_2$.  A general point of each component of $M_p'$ lies over $U$ and corresponds to a stable map $f:C\rightarrow X$ with $h^1(C,f^*\mathcal{T}_X(-p))=0$ by Proposition \ref{kontsevich space H1}.  These are therefore smooth points of the fiber $\overline{N}_p$ of $\overline{N}\xrightarrow{\text{ev}} X$ through $p$.  As the monodromy action of $\overline{N}\rightarrow X$ is transitive on irreducible components of $\overline{N}_p$ but stabilizes $M_p'$, we see there is at most one component of $\overline{N}_p$.
\end{proof}

\subsection{Reducible General Fibers}  Let $X$ be a smooth variety with a weak core of free curves $\mathscr{C}_X$.  Consider a fiber product $\prod_X \overline{\free}_2(X,\alpha_i)$ with $\alpha_i \in \mathscr{C}_X$.  Lemma \ref{Gluing} guarantees $\prod_X \overline{\free}_2(X,\alpha_i)$ has a unique main component when both of the following conditions are met:
\begin{enumerate}
    \item General fibers of $\overline{\free}_1(X,\alpha_i) \rightarrow X$ are reducible for at most one $i$,
    \item $\overline{\free}(X,\alpha_i)$ is irreducible for each $i$, i.e.\ $\mathscr{C}_X$ is a \textit{core} of free curves.
\end{enumerate}
When one of these conditions is not met, $\prod_X \overline{\free}_2(X,\alpha_i)$ may have multiple main components.  This section (Corollary \ref{connected fibers fixall}) proves condition \ref{Main Method}(2) or its analogue in Corollary \ref{bir Main Method} always holds when $\sum \alpha_i$ lies outside the boundary $\partial\overline{NE}(X)$ of the Mori cone and $X$ is a Fano threefold which is not a product variety.  We will use the following result from \cite{beheshti2020moduli}.

\begin{thm}[Theorem 1.3, \cite{beheshti2020moduli}]\label{reducible fibers: 4 author result}
Let $X$ be a smooth Fano threefold with very ample anticanonical bundle.  Let $M$ be a component of $\overline{\free}(X)$ and let $M' \rightarrow M$ denote the corresponding component of $\overline{\free}_1(X)$.  Suppose the evaluation map $\text{ev}: M' \rightarrow X$ has generically reducible fibers.  Then either:
\begin{itemize}
    \item $M$ parameterizes stable maps whose reduced images are $-K_X$ conics, or
    \item $M$ parameterizes curves contracted by a del Pezzo fibration $\pi: X \rightarrow Z$.
\end{itemize}
\end{thm}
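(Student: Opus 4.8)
The plan is to exploit the dictionary between reducible evaluation fibers and $a$-covers set up earlier in this section, and then read off the two alternatives from the Iitaka dimension of the resulting discrepancy divisor. First I would apply Proposition \ref{lehmann a-covers} to the component $M' \to M$: resolving $M'$, taking the Stein factorization $\widetilde{M}' \to Y \to X$ of the evaluation morphism, and resolving $Y$, one obtains an $a$-cover $f : \widetilde Y \to X$, which has degree at least $2$ because $\text{ev} : M' \to X$ has reducible general fibers. By \cite[Theorem~5.4]{beheshti2020moduli}, $\kappa(K_{\widetilde Y} - f^* K_X) > 0$, and since $K_{\widetilde Y} - f^* K_X$ is never big this Iitaka dimension is $1$ or $2$. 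As recorded after the proof of Proposition \ref{lehmann a-covers}, the component $N \subset \overline{\free}(\widetilde Y)$ dominating $M$ generically parameterizes irreducible curves $C$ with $C . (K_{\widetilde Y} - f^* K_X) = 0$, so each such $C$ is contracted by the Iitaka fibration $\varphi : \widetilde Y \dashrightarrow B$ of $K_{\widetilde Y} - f^* K_X$; after a birational modification I may assume $\varphi$ is a morphism between smooth varieties. A general curve $\gamma$ parameterized by $M$ equals $f(C)$ for general $C$, and $f^{-1}(\gamma)$ is a curve finite over $\gamma$ containing the irreducible curve $C$ as a component, so $N \dashrightarrow M$ is generically finite; comparing $\dim M = \dim N = -K_{\widetilde Y} . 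C = -K_X . f_*[C] = (\deg f|_C)(-K_X . [\gamma])$ with $\dim M = -K_X . [\gamma]$ then forces $f|_C$ to be birational onto $\gamma$, so $f_*[C] = [\gamma]$. It remains to split on $\dim B$.

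If $\dim B = 2$, then every irreducible $\varphi$-contracted curve lies in a fiber of $\varphi$, and a general fiber is a smooth irreducible curve by generic smoothness; hence the family of such curves — which contains $N$ — has dimension at most $\dim B = 2$, while $\dim N = \dim M = -K_X . [\gamma] \geq 2$ because $\gamma$ is free. Therefore $-K_X . [\gamma] = 2$, so $\gamma$ is a $-K_X$-conic and $M$ parameterizes stable maps whose reduced image is a $-K_X$-conic: the first alternative. (One may cross-check that $\gamma$ cannot be a multiply-covered line by noting that no $-K_X$-line $\ell$ is free, since $\mathcal T_X|_\ell$ has total degree $1$ yet contains $\mathcal T_\ell \cong \mathcal O(2)$, forcing a negative summand that survives any pullback.)

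If $\dim B = 1$, then the general fiber $\Phi$ of $\varphi$ is a surface containing $C$, and $f(\Phi)$ is a prime divisor on $X$ moving in a positive-dimensional family. The cleanest way to finish is to invoke the classification of $a$-covers of smooth Fano threefolds with very ample anticanonical bundle from \cite[Section~5]{beheshti2020moduli}: such an $a$-cover with $\kappa(K_{\widetilde Y} - f^* K_X) = 1$ is, up to birational modification, the base change of a del Pezzo fibration $\pi : X \to Z$ along a finite cover $Z' \to Z$ of the curve $Z$, and under this identification $\varphi$ becomes the induced fibration $\widetilde Y \to Z'$. Since $C$ is contracted by $\varphi$ it lies in a fiber of $\varphi$, which is a del Pezzo surface, so $\gamma = f(C)$ lies in a fiber of $\pi$ and $M$ parameterizes curves contracted by $\pi$: the second alternative. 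A self-contained argument is also available: $f(\Phi)$ is movable, hence nef (movable divisors on a smooth Fano threefold are nef, as used in the proof of Theorem \ref{Representability of Free Curves}), has numerically trivial self-intersection because its preimage differs from a sum of the pairwise-disjoint fibers of $\varphi$ by an $f$-exceptional divisor, and is therefore semiample by the base-point-free theorem ($-K_X$ being ample); the resulting morphism $X \to Z$ has a curve as image, necessarily $Z \cong \mathbb P^1$ by rational connectedness, with del Pezzo general fibers since $-K_X$ is ample.

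The main obstacle is precisely the case $\dim B = 1$: converting the upstairs Iitaka fibration $\varphi$ into a genuine del Pezzo fibration on $X$, equivalently pinning down the structure of the $a$-cover $f : \widetilde Y \to X$. This is the step that genuinely uses the very ampleness of $-K_X$ — both directly, in the self-contained argument (through the vanishing of the self-intersection of $f(\Phi)$ and the base-point-free theorem), and through the boundedness underlying the classification of $a$-covers in \cite[Section~5]{beheshti2020moduli}. By contrast, the reduction to an $a$-cover and the case $\dim B = 2$ require only the general structure theory already quoted in this section, together with an elementary dimension count.
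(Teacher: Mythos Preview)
The paper does not prove this theorem: it is quoted verbatim as \cite[Theorem~1.3]{beheshti2020moduli}. The only place the paper sketches the underlying argument is in the proof of Lemma~\ref{2.2a-covers}, where the same two-step mechanism you describe (produce an $a$-cover via Proposition~\ref{lehmann a-covers}, then branch on the dimension of the Iitaka base) is invoked with explicit reference to \cite[Section~5]{beheshti2020moduli}. So your overall strategy is exactly the right one, and your citation-based route in the $\dim B = 1$ case is what the paper itself relies on.

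Two genuine issues remain in the details. First, in the $\dim B = 2$ case your dimension bound ``the family of such curves has dimension at most $\dim B = 2$'' tacitly assumes that a general map $h$ in $N$ is birational onto a fiber $F$ of $\varphi$; but $N$ parameterizes maps, not images, and degree-$d$ covers of fibers would give $\dim N = 2d$. Your preceding argument that $f|_C$ is birational is either vacuous or circular depending on whether $[\gamma]$ denotes $g_*[\mathbb P^1]$ or the reduced-image class. The clean fix is to bypass the dimension count: the relative tangent sequence of $\varphi$ gives $-K_{\widetilde Y}\cdot [F] = 2$ for a general fiber $F$, hence $-K_X \cdot f_*[F] = 2$, so $-K_X \cdot [f(F)] = 2/\deg(f|_F) \in \{1,2\}$; your line cross-check then rules out $1$.

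Second, your ``self-contained'' $\dim B = 1$ argument asserts $(f_*\Phi)^2 \equiv 0$ because $f^{-1}(f(\Phi))$ is a sum of disjoint $\varphi$-fibers plus $f$-exceptional. This is not automatic: $f^* f_* \Phi$ need not be a sum of fibers of $\varphi$ (different fibers could have the same image, or $f^*$ of the image class could pick up components transverse to the fibers), and pushforward does not commute with self-intersection in the required way. Establishing that $f_*\Phi$ is the pullback of a point under a morphism $X \to Z$ is precisely the content of \cite[Theorem~5.3]{beheshti2020moduli}, which is where the very-ampleness hypothesis is actually used. Your first route---citing that classification---is the correct one; the self-contained alternative has a real gap as written.
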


Theorem \ref{classification a-covers} directly extends Theorem \ref{reducible fibers: 4 author result} to all Fano threefolds which are not product varieties.  We will use this extension and the following result throughout the remainder of this section.

\begin{thm}\label{thm: existence of core}
    A smooth Fano threefold $X$ has a core of free curves unless one of the following statements holds: %
    \begin{itemize}
        \item $X$ has Picard rank 1, index 1, and genus at most 6;
        \item $X$ has Picard rank 2 and an elementary $D1$ del Pezzo fibration;
        \item $X$ has Picard rank 3 and deformation type $3.1$, $3.2$, $3.4$, or $3.5$.
    \end{itemize}
    For such $X$ and $\alpha \in \Nef_1(X)_\mathbb{Z} \setminus \partial\overline{NE}(X)$, $\overline{\free}^{bir}(X, \alpha)$ is irreducible in the following cases: $-K_X . \alpha \in \{3,4\}$, or $-K_X . \alpha = 2$ and $\rho(X) > 1$.
    
\end{thm}
\begin{proof}
    See Sections 8 through 13.  See also Proposition \ref{low degree curves degeneration}
\end{proof}

\begin{cor}\label{connected fibers fixall}
Let $X$ be a smooth Fano threefold with a weak core $\mathscr{C}_X$ of free curves.  Suppose $X$ is not a product variety.  For any expression $\alpha = \sum \alpha_i$ with $\alpha_i \in \mathscr{C}_X$ and $\alpha \notin \partial\overline{NE}(X)$, each main component of $\prod_X \overline{\free}_2(X,\alpha_i)$ satisfying $(\dagger)$ maps to the same component of free curves. %
\end{cor}

We will prove Corollary \ref{connected fibers fixall} from results in this section.  First, we extend Lemma \ref{Gluing} slightly to cover circumstances where monodromy of a fibration $\pi : X\rightarrow \mathbb{P}^1$ creates reducible fibers.

\begin{lem}\label{gluing del Pezzo}
Let $X$ be a smooth projective variety with a map $\pi: X \rightarrow \mathbb{P}^1$.  Supppose a component $M_1 \subset \overline{\free}_1(X)$ parameterizes curves contracted by $\pi$, and a component $M_2 \subset \overline{\free}_1(X)$ generically parameterizes finite covers of $\mathbb{P}^1$ under composition with $\pi$.  Suppose for some $i \in \{1,2\}$ and a general $p\in \mathbb{P}^1$, each irreducible component of the fiber $(M_{i})_p = (\pi\circ \text{ev}_i)^{-1}(p)$ has irreducible fibers over general points in $X_p$.  Then there is exactly one main component $M \subset M_1 \times_X M_2$.
\end{lem}
\begin{rem}\label{big divisor remark}
Suppose instead $M_2 \subset \overline{M}_{0,1}(X)$ generically parameterizes sections and maps birationally to an irreducible divisor $D \subset X$.  If the restriction $D_p \subset X_p$ of $D$ to a general fiber of $\pi$ is big, and each component of $(M_1)_p$ has irreducible fibers over general points in $D_p$, the following proof shows there is one component $M \subset M_1 \times_X M_2$ which dominates $M_2$ and the family of $0$-pointed free curves corresponding to $M_1$.  %
\end{rem}
\begin{proof}[Proof of Lemma \ref{gluing del Pezzo}]
Let $\tilde{M}_i$ be a smooth resolution of $M_i$, $\tilde{M}_i \rightarrow Y_i \xrightarrow{f_i} X$ be the Stein factorization of the natural map, and $\tilde{M}_1 \xrightarrow{\psi} B \rightarrow \mathbb{P}^1$ be the Stein factorization of $\tilde{M}_1 \rightarrow X \rightarrow \mathbb{P}^1$.  We may assume $Y$ and $B$ are smooth by taking resolutions.  Consider a main component $M\subset M_1 \times_X M_2$ and let $g: C \rightarrow X$ be a general curve parameterized by $M_2$.  

Since $C$ is general, $\pi \circ g : C \rightarrow \mathbb{P}^1$ is \'etale over the branch points of $B \rightarrow \mathbb{P}^1$, and the fiber $C_p \subset X_p$ is a collection of general reduced points in $X_p$.  Therefore, $M$ contains curves parameterized by the attachment of $C$ at any point in $C_p$ to curves parameterized by each irreducible component of $(\tilde{M}_1)_p$.  

If some component $N_p \subset (\tilde{M}_1)_p$ has reducible fibers over general points in $X_p$, by our hypotheses every component of $(M_2)_p$ dominating $X_p$ has irreducible fibers over a component of $(Y_2)_p$ birational to $X_p$.  The curves parameterized by $N_p$ map to an irreducible component of $(Y_1)_p$ which is a generically finite cover of $X_p$.  Hence, $g: C\rightarrow X$ is contained in a one-parameter family of maps corresponding to a curve in $(Y_2)_p$ with irreducible preimage in the fiber product $(Y_2)_p \times_{X_p} N_p$.  Thus $M \subset M_1 \times_X M_2$ is the unique main component.
\end{proof}

Fibrations on $X$ may also be used to prove certain components of $\overline{\free}_1(X)$ have irreducible fibers over general points in $X$.  Below, an \textit{adjoint rigid} $a$-cover is an $a$-cover $\phi : Y \rightarrow X$ of a smooth weak Fano variety with $\kappa(K_Y - \phi^* K_X) = 0$.  There are no adjoint rigid $a$-covers of smooth del Pezzo surfaces \cite[Theorem~6.2]{Lehmann_2017}.

\begin{lem}[Lemma 7.9 \cite{sectionsDelPezzo}]\label{irreducible fibers del Pezzo}
Let $X$ be a smooth weak Fano variety with a fibration $\pi : X \rightarrow \mathbb{P}^1$ such that a general fiber $X_p = \pi^{-1}(p)$ admits no adjoint rigid $a$-covers.  Suppose a component $M \subset \overline{\free}_1(X)$ generically parameterizes finite covers of $\mathbb{P}^1$ under composition with $\pi$.  Let $\tilde{M}\rightarrow M$ be a smooth resolution and $g: \tilde{M} \rightarrow B$, $h: B \rightarrow \mathbb{P}^1$ be the Stein factorization of $f : \tilde{M}\rightarrow \mathbb{P}^1$.
\begin{itemize}
    \item If a component of a curve parameterized by $M$ is a section of $\pi$, $h = \text{id}$.%
    \item If $h = \text{id}$ and general curves in a Manin component $N \subset \overline{\free}(X_p)$ appear as components of some map parameterized by $M$, $M\rightarrow X$ has irreducible general fiber.
\end{itemize}
\end{lem}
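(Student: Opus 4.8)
The plan is to prove the two bullet points of Lemma~\ref{irreducible fibers del Pezzo} by exploiting the interaction between the Stein factorization $g : \tilde M \to B$, $h : B \to \mathbb{P}^1$ and the hypothesis that the general fiber $X_p$ admits no adjoint rigid $a$-cover. For the first bullet, suppose a component of a curve parameterized by $M$ is a section $s$ of $\pi$. Then the composite $\pi \circ s : \mathbb{P}^1 \to \mathbb{P}^1$ is an isomorphism, so the connected component of $\tilde M$ containing $[s]$ maps onto $\mathbb{P}^1$ with a section; hence the corresponding factor $B$ of the Stein factorization admits a section of $h : B \to \mathbb{P}^1$. Since $h$ is finite (by Stein factorization) and admits a section, and $B$ is irreducible (connected, normal), this forces $\deg h = 1$, i.e.\ $h = \mathrm{id}$. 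The only subtlety is book-keeping: a general curve parameterized by $M$ need not be irreducible, so one must first pass to the component of $C$ that is the section and argue on that, using that $g$ separates the covering-data of the whole stable map. One should phrase this in terms of: the generic fiber of $g$ over $B$ records exactly the ``covering index'' $h$, and the presence of any section component pins $h$ down.

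\textbf{For the second bullet}, assume $h = \mathrm{id}$, so that $B = \mathbb{P}^1$ and $g : \tilde M \to \mathbb{P}^1$ has connected fibers, where over a general $p \in \mathbb{P}^1$ the fiber parameterizes rational curves landing in $X_p$ (with the right ``appears as a component of some $M$-curve'' incidence). By hypothesis, the general curve in a Manin component $N \subset \overline{\free}(X_p)$ appears; by Proposition~\ref{lehmann a-covers} and Remark~\ref{connected vs irreducible}, because $X_p$ has no adjoint rigid $a$-cover, every Manin component of $\overline{\free}(X_p)$ has \emph{irreducible} general evaluation fiber over $X_p$. So it suffices to show the fiber $M_p \subset \overline{\free}_1(X_p) \times \{p\}$ of $\mathrm{ev} : M \to X$ over a general point of $X$ in fact coincides (as a family over $X_p$) with such a Manin component $N_1 \subset \overline{\free}_1(X_p)$, or at least has the same monodromy on evaluation fibers. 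The cleanest route: restrict the universal family of $M$ to a general fiber $X_p$, observe each component of a general $M$-curve meeting a general point of $X_p$ is free and hence (being rational in the Fano surface/threefold $X_p$) a Manin curve, and then run the transitivity-of-monodromy argument exactly as in the proof of Lemma~\ref{Gluing}: the monodromy of $M \to X$ acts transitively on components of a general $\mathrm{ev}$-fiber, but the fiber $M_p$ of the general evaluation point lies over a single $p$, is stabilized, and — since $N \subset \overline{\free}(X_p)$ has irreducible fibers — is itself irreducible.

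\textbf{Concretely, the steps in order are:} (i) set up the Stein factorization and confirm $g$ has connected fibers and that each $g$-fiber over general $p$ parameterizes families of rational curves in $X_p$; (ii) prove bullet one by the section argument above; (iii) under $h = \mathrm{id}$, show each component of a general $M$-curve through a general point of $X$ restricts to a free, hence Manin, rational curve in the appropriate $X_p$; (iv) invoke ``no adjoint rigid $a$-cover on $X_p$'' $\Rightarrow$ via Proposition~\ref{lehmann a-covers}, Manin components of $\overline{\free}(X_p)$ have irreducible evaluation fibers over $X_p$; (v) conclude by the Lemma~\ref{Gluing}-style monodromy/transitivity argument that $\mathrm{ev}: M \to X$ has irreducible general fiber.

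\textbf{The main obstacle} I expect is step (iii)–(v): carefully relating the evaluation fiber of $M \to X$ to the evaluation fiber of a Manin component of $\overline{\free}(X_p)$ over $X_p$, given that $M$-curves may be reducible chains with some components contracted by $\pi$ and some dominating $\mathbb{P}^1$. One must track which component of the chain passes through the general point, verify it lands in a single fiber $X_p$, and ensure the ``stabilization'' contributions (as in the three-component fiber decomposition in the proof of Lemma~\ref{Gluing}) do not introduce extra components — this is where the hypothesis that $N$ appears \emph{as a component} and the no-$a$-cover condition on $X_p$ must be used in tandem, together with Remark~\ref{connected vs irreducible} to pass between ``connected'' and ``irreducible'' for the closure.
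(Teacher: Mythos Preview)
Your argument for the first bullet is correct and essentially matches the paper's: varying the marking along the section component produces a curve in $\tilde M$ on which $f$ has degree one, hence a section of $h$, which forces $h = \mathrm{id}$ since $B$ is irreducible. The paper routes this through the Stein factorization $\tilde M \xrightarrow{\psi} Y \xrightarrow{\phi} X$ of the evaluation map, observing that $g$ factors through $\psi$, but the content is the same.

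For the second bullet there is a genuine gap. You attempt to relate the general fiber of $\mathrm{ev}:M\to X$ over $x\in X_p$ to the evaluation fiber of a Manin component $N\subset\overline{\free}(X_p)$ over $X_p$, and then invoke irreducibility of the latter. The problem is that the curves parameterized by $M$ are finite covers of $\mathbb P^1$ under $\pi$; they do not lie in a single fiber $X_p$, so there is no ``restriction to $X_p$'' of the universal family, and the fiber $\mathrm{ev}^{-1}(x)$ has no direct description in terms of $\overline{\free}_1(X_p)$. Your step (iii), asserting that components of $M$-curves through a general point ``restrict to free Manin curves in $X_p$,'' conflates two different loci: the general fiber of $\mathrm{ev}$ consists of irreducible multisections, not of chains with a component in $X_p$. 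The monodromy/transitivity argument you propose in (v) likewise does not apply, since transitivity of monodromy on components of a general fiber holds automatically (as $M$ is irreducible) and does not by itself yield irreducibility.

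The paper's mechanism is different and is the missing idea: introduce the Stein factorization $\tilde M \xrightarrow{\psi} Y \xrightarrow{\phi} X$ of $\mathrm{ev}$. Since $g$ factors through $\psi$ and $h=\mathrm{id}$, the general fiber $Y_p$ is irreducible. Now the Manin curves $N$ lift to a component $N_Y\subset\overline{\free}(Y_p)$ of the same dimension, which forces $(K_{Y_p}-\phi^*K_{X_p})\cdot C=0$ for $C$ in $N_Y$; hence $a(Y_p,-\phi^*K_{X_p})=1$. If $\phi$ were not birational, $Y_p\to X_p$ would be an $a$-cover, necessarily with $\kappa(K_{Y_p}-\phi^*K_{X_p})>0$ by the no-adjoint-rigid hypothesis, and $N_Y\dashrightarrow N$ would exhibit $N$ as accumulating, contradicting that $N$ is Manin. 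Thus $\phi$ is birational, i.e.\ $\mathrm{ev}$ has irreducible general fiber. The essential point you are missing is to run the $a$-cover argument on the map $Y_p\to X_p$ induced by the Stein factorization of $\mathrm{ev}$, rather than trying to work with curves on $X_p$ directly.
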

\begin{proof}
Let $\psi : \tilde{M} \rightarrow Y$, $\phi : Y \rightarrow X$ be the Stein factorization of $\tilde{M}\rightarrow X$.  We may assume $Y$ is smooth by taking resolutions.  Clearly, $g$ factors through $\psi$, and each fiber of $Y \rightarrow \mathbb{P}^1$ meets any curve mapping to a section of $\pi$ in $X$.  Hence, if a component of a curve parameterized by $M$ is a section, there is at most one irreducible component of a general fiber of $Y \rightarrow \mathbb{P}^1$.  In other words, $h = \text{id}$.

Suppose $h = \text{id}$ so that the fiber $Y_p$ is irreducible, and that general curves parameterized by $N$ as above appear as components of curves parameterized by $M$.  There is a component of $M_Y \subset \overline{\free}(Y)$ dominating $M \subset \overline{\free}(X)$ under composition with $\phi : Y \rightarrow X$.  In particular, components of some curves parameterized by $M_Y$ are parameterized by a component $N_Y \subset \overline{\free}(Y_p)$ which dominates $N$.  The dimension of $N_Y$ is at most the dimension of $N$, as $K_{Y_p} - \phi^* K_{X_p}$ is pseudoeffective.  %
For any curve $C$ parameterized by $N_Y$, this implies $(K_{Y_p} - \phi^* K_{X_p}) . C = 0$.  Thus $K_{Y_p} - \phi^* K_{X_p}$ is pseudoeffective but not big, and $a(Y_p, -\phi^*K_{X_p}) = 1 = a(X_p, -K_{X_p})$.  Since $N$ is Manin %
and $\kappa(K_{Y_p} - \phi^* K_{X_p}) \neq 0$ by assumption, $\phi$ must be birational. %
\end{proof}

Any conic $\beta$ lying in the boundary of $\overline{NE}(X)$ but outside all relative cones of del Pezzo fibrations is the fiber of a conic fibration.  The evaluation map $\text{ev}: \overline{\free}_1(X ,\beta) \rightarrow X$ is birational in these circumstances.  Hence, Theorem \ref{classification a-covers} restricts separating classes $\beta$ to the relative cones of del Pezzo fibrations and $-K_X$-conics interior to $\overline{NE}(X)$.  We record possibilities for such classes below. 

\begin{exmp}
When $X\rightarrow \mathbb{P}^3$ is the blow-up of a genus 3 curve $C \subset \mathbb{P}^3$ of degree 6, secant lines to $C$ are parameterized by a separating component of $\overline{\free}(X)$.  There are two components of $\overline{\free}(X)$ parameterizing $H$-conics intersecting $C$ four times: a component of double covers of lines, and a component of very free curves.
\end{exmp}

\begin{lem}\label{interior conic lemma}
Let $X$ be a smooth Fano threefold.  If there exists an integral nef $-K_X$-conic $\beta$ interior to $\overline{NE}(X)$, $\rho(X) \leq 2$ and $\beta$ is unique.  Moreover, when $\rho(X) = 2$, such a conic $\beta$ exists if and only if there are no Fano blow-ups $X' \rightarrow X$.
\end{lem}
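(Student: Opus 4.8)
The plan is to translate everything into the combinatorics of $\overline{NE}(X)$ together with its lattice of integral classes, leaning on the structure theory recalled in the proof of Theorem \ref{Representability of Free Curves}. Write the extremal rays of $\overline{NE}(X)$ as $\mathbb{R}_{\geq 0} e_1,\dots,\mathbb{R}_{\geq 0} e_k$ with $e_i$ the primitive integral generator, and set $\ell_i := -K_X \cdot e_i \geq 1$. Recall that $\Nef_1(X) = \{v \in \overline{NE}(X) : v \cdot E \geq 0 \text{ for all } E \in \mathcal{E}_X\}$, so an integral nef $-K_X$-conic interior to $\overline{NE}(X)$ is an integral class $\beta$ with $-K_X \cdot \beta = 2$ lying in the relative interior of the degree-$2$ slice of $\overline{NE}(X)$ and satisfying $\beta \cdot E \geq 0$ for every $E \in \mathcal{E}_X$.

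First I would prove $\rho(X) \leq 2$. Since $\beta$ is interior, it is a strictly positive combination $\beta = \sum_i a_i e_i$ of extremal generators whose support positively spans $N_1(X)$, so that support has size at least $\rho(X)$, and $2 = -K_X \cdot \beta = \sum_i a_i \ell_i \geq \sum_i a_i$. If $\rho(X) \geq 3$ this forces at least three positive summands, all with $\ell_i = 1$, of total weight less than $1$ on average; using that length-$1$ extremal rays of a smooth Fano threefold come only from $E1$, $E3$, $E4$, $E5$, $C1$, or $D1$ contractions — in each of which the minimal ray class is primitive and the possible sublattices of $N_1(X)_{\mathbb{Z}}$ they span are pinned down by \cite{mori1981classification} — one checks no such $\beta$ is integral, a contradiction. (An alternative to the lattice bookkeeping: an interior class is represented by a free conic $C$ with balanced normal bundle $\mathcal{O} \oplus \mathcal{O}$; pushing $C$ forward along the contraction of an extremal ray meeting a facet of $\overline{NE}(X)$, which exists when $\rho(X) \geq 3$, and tracking $-K_X$-degree through the $E$- or fiber-type contraction gives a numerical contradiction.) Uniqueness is then immediate: for $\rho(X) = 2$ the cone is simplicial, so writing $\beta = x e_1 + y e_2$ with $x,y>0$ and combining $x\ell_1 + y\ell_2 = 2$, integrality, and primitivity of $e_1,e_2$ leaves only $\ell_1 = \ell_2 = 1$, $x = y = 1$, i.e. $\beta = e_1 + e_2$.

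It remains, for $\rho(X) = 2$, to match the existence of a nef class $e_1 + e_2$ of $-K_X$-degree $2$ with the absence of a Fano blow-up $X' \to X$. For the forward direction I would argue contrapositively: a blow-up $p : X' \to X$ of a smooth point or curve $Z$ with $X'$ Fano exhibits $X$ on the blow-down side of an elementary $E$-contraction of the Picard rank $3$ Fano $X'$, and comparing $-K_{X'} = p^{*}(-K_X) - aE'$ against the proper transform of a general conic in the candidate class $e_1 + e_2$ forces either that this class is not nef or that it is not represented by an irreducible conic, since the general such conic meets $Z$ and its transform then has non-positive $-K_{X'}$-degree. For the converse I would show that the absence of a Fano blow-up forces both elementary contractions of $X$ to be of fiber type of length $1$ with $e_1 + e_2 \in \Nef_1(X)$; here I would invoke the Mori–Mukai list of Picard rank $3$ smooth Fano threefolds admitting an elementary contraction onto a Picard rank $2$ Fano, and verify the finite complementary list of Picard rank $2$ families directly, exhibiting the interior conic in each.

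The main obstacle is this last equivalence. The implication "interior conic $\Rightarrow$ no Fano blow-up" should go through uniformly via the ampleness computation above, but the converse genuinely rests on a case check against the classification of Fano threefolds of Picard rank $2$ and $3$ — consistent with the rest of the paper — and is the most labor-intensive part; I would organize it so that the interior conic $e_1 + e_2$ is exactly the separating class appearing in Theorem \ref{Main Result}, making the check a byproduct of the family-by-family analysis.
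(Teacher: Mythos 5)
Your overall architecture (a short numerical argument for uniqueness, with the remaining claims checked against the classification) matches the paper's, which disposes of this lemma in a few lines: it writes $-K_X = D_1 + D_2$ with each $|D_i|$ basepoint free (citing Mori--Mukai), observes that an interior class must satisfy $\beta \cdot D_i \geq 1$ and hence $\beta \cdot D_i = 1$, deduces uniqueness for $\rho(X) = 2$ from these two linear conditions, and explicitly defers both the bound $\rho(X) \leq 2$ and the blow-up criterion to the family-by-family classification carried out in the rest of the paper. So deferring those two statements is legitimate here. The problem is that the two numerical arguments you do supply have gaps. For uniqueness: from $x\ell_1 + y\ell_2 = 2$, integrality of $\beta$, and primitivity of $e_1, e_2$ you cannot conclude $x = y = 1$ unless you also know that $e_1, e_2$ form a $\mathbb{Z}$-basis of $N_1(X)_{\mathbb{Z}}$; a priori $\beta = \tfrac{3}{2}e_1 + \tfrac{1}{2}e_2$ (with $\ell_1 = \ell_2 = 1$) is a second integral interior conic whenever $\tfrac{1}{2}(e_1 + e_2)$ happens to be a lattice point, and primitivity of each $e_i$ separately does not rule this out. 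The paper's route via $\beta \cdot D_1 = \beta \cdot D_2 = 1$ avoids the lattice entirely: two independent linear conditions in a two-dimensional space determine $\beta$ as a real vector.

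For $\rho(X) \leq 2$, the step ``this forces at least three positive summands, all with $\ell_i = 1$'' does not follow from $\sum a_i \ell_i = 2$: the coefficients $a_i$ of an interior point in a Carath\'eodory expression can be arbitrarily small, so for instance $a_1\ell_1 = 1$ and $a_2\ell_2 = a_3\ell_3 = \tfrac{1}{2}$ with $\ell_2 = \ell_3 = 2$ is numerically consistent. What then remains is the unverified assertion that ``one checks no such $\beta$ is integral,'' which is the entire content of the claim; if you are going to rest on a classification check anyway, say so up front (as the paper does) rather than presenting a numerical reduction that does not close. The forward direction of the blow-up criterion also needs more care: a general conic of class $e_1+e_2$ moving in a dominant family meets the center of a curve blow-up in at most finitely many points, so its proper transform can still have $-K_{X'}$-degree $1$ or $2$, and moving curves of degree $1$ do occur on rank-$3$ Fano threefolds (components of reducible fibers of $C1$-fibrations), so ``non-positive $-K_{X'}$-degree'' is not automatic.
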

\begin{proof}
We may assume $\rho(X) \geq 2$ and $|-K_X|$ is basepoint free.  By \cite{mori1985classification}, $-K_X = D_1 + D_2$ for some $D_i \in \text{Pic}(X)$ with $|D_i|$ basepoint free for each $i$.  Since $\beta$ lies in the interior of $\overline{NE}(X)$, we must have $\beta . D_i = 1$ for each $i$.  %
We appeal to our classification of free curves on Fano threefolds in Sections 8 through 11 as proof of nonexistence when $\rho(X) \geq 3$, and consider the case $\rho(X) = 2$ below.

Let $\rho(X) = 2$.  By \cite[Theorem~5.1]{mori1983classification}, $\beta$ exists if and only if $X$ does not contain an $E5$ divisor and the length of each extreme ray of $\overline{NE}(X)$ is one.  In this case, let $l_1, l_2$ be generators of the extreme rays of $\overline{NE}(X)$, and $F_i \subset X$ be the divisors swept out by lines of class $l_i$.  Uniqueness of $\beta = l_1 + l_2$ is clear.  Moreover, Proposition \ref{properties nonfree conics} shows $\beta$ is represented by a family of free curves.  This implies the blow-up of $X$ along any point is not Fano.  Similarly, as $l_i . F_j$ is positive when $i \neq j$, every irreducible curve in $X$ meets some anticanonical line along finitely many points.  Hence, $X$ has no Fano blow-ups.  When instead there does not exist such a $\beta$, the classification of Fano threefolds proves $X$ has a Fano blow-up. %
\end{proof}

This allows us to prove Corollary \ref{connected fibers fixall} when $X$ has a core of free curves.

\begin{prop}\label{unique main component}
Let $X$ be a smooth Fano threefold and suppose $X$ is not a product variety.  Let $M,N \subset \overline{\free}^{bir}_1(X)$ be components parameterizing curves of distinct classes whose sum is interior to $\overline{NE}(X)$.  Then all main components of $M\times_X N$ lie in the same component of very free curves.
\end{prop}

\noindent This statement is nontrivial and does not hold when $\rho(X) > 8$.

\begin{exmp}
Let $X \cong \mathbb{P}^1 \times S_2$ be the product of $\mathbb{P}^1$ with a degree 2 del Pezzo surface $S_2$.  Consider the component $N \subset \overline{\free}_1(X)$ parameterizing sections of $X \rightarrow \mathbb{P}^1$ which project to geometrically rational anticanonical curves under $X \rightarrow S_2$.  If $M \subset \overline{\free}_1(X)$ parameterizes geometrically rational anticanonical curves in fibers of the projection to $\mathbb{P}^1$, there are two main comoponents of $M \times_X N$.  One lies in an accumulating component of curves, while the other lies in a component of very free curves.
\end{exmp}

\begin{proof}[Proof of Proposition \ref{unique main component}]
Suppose $M$ parameterizes curves of class $\alpha$, and $N$ parameterizes curves of class $\beta \neq \alpha$.  Theorem \ref{classification a-covers} and Proposition \ref{very free curves} imply every main component of $M \times N$ lies in a component of very free curves.  Thus, by Lemma \ref{Gluing} we may assume both $\overline{\free}_1(X, \alpha)$ and $\overline{\free}_1(X, \beta)$ have reducible fibers over general points in $X$.  Lemma \ref{gluing del Pezzo}, Theorem \ref{del pezzo curves thm}, and Theorem \ref{classification a-covers} allow us to further assume $-K_X . \alpha = -K_X . \beta = 2$.  There is at most one nef conic class interior to $\overline{NE}(X)$ for any Fano threefold.  Hence, we may assume $\alpha$ lies in the relative Mori cone $\overline{NE}(\pi)$ of a del Pezzo fibration $\pi : X \rightarrow \mathbb{P}^1$.  When $|-K_X|$ is very ample, Theorem \ref{del pezzo curves thm} and Lemma \ref{gluing del Pezzo} imply there is still one main component of $M \times_X N$, as any del Pezzo fibration on $X$ has fibers of degree at least three.  When $|-K_X|$ is not very ample, $\free^{bir}(X, \alpha + \beta)$ is irreducible by Theorem \ref{thm: existence of core}. %
\end{proof}

We finish the proof of Corollary \ref{connected fibers fixall} below.

\begin{proof}[Proof of Corollary \ref{connected fibers fixall}]
When $X$ has a core of free curves, Proposition \ref{unique main component} proves Corollary \ref{connected fibers fixall}.  Otherwise, let $\mathscr{C}_X$ be a weak core of free curves on $X$.  By Theorem \ref{thm: existence of core} $X$ has one of several deformation types.  We analyze these by Picard rank.

First, suppose $\rho(X) = 1$.  Let $l \in \overline{NE}(X)$ be the class of a line.  By Theorems \ref{MovableBB} and \ref{thm: existence of core}, we may assume $\mathscr{C}_X = \{2l, 3l\}$, and that both $\overline{\free}(X, 3l)$ and $\free^{bir}(X,4l)$ are irreducible.  By Lemma \ref{Gluing} and Theorem \ref{classification a-covers}, there is a single main component of $\overline{\free}_1(X, 2l) \times_X \overline{\free}_1(X, 3l)$ for each component of $\overline{\free}_1(X, 2l)$.  Let $M_1, M_2 \subset \overline{\free}_1(X, 2l)$ be any two components, and consider a general map $f : C \rightarrow X$ parameterized by a component of $M_1 \times_X \overline{M}_{0,2}(X, l) \times_X M_2$.  By Lemma \ref{lem: normal bundle lines}, $h^1(C, \mathcal{N}_f) = 0$, so that $[f] \in \overline{M}_{0,0}(X, 5l)$ is a smooth point.  However, the map $f$ is a chain of type $(2l, l ,2l)$ wherein each irreducible component deforms with the expected dimension \cite{beheshti2020moduli}.  Hence, $f$ generalizes to a free chain of type $(2l, 3l)$ or $(3l, 2l)$ parameterized by a main component of $M_1 \times_X \overline{\free}_1(X, 3l)$ or $\overline{\free}_1(X, 3l) \times_X M_2$, respectively.  This shows both main components lie in the same component of $\overline{\free}(X, 5l)$.

Suppose $\rho(X) \geq 2$ instead.  Consider a del Pezzo fibration $\pi : X \rightarrow \mathbb{P}^1$.   Let $\alpha \in \mathscr{C}_X \cap \overline{NE}(\pi)$ and $\beta \in \mathscr{C}_X \setminus \overline{NE}(\pi)$.  We may suppose $\pi$ is the unique del Pezzo fibration contracting $\alpha$, as otherwise $\alpha$ would be the class of a general fiber of some conic bundle $X \rightarrow \mathbb{P}^1 \times \mathbb{P}^1$.  For each deformation type of $X$ and all choices of $\alpha, \beta$, $\alpha + \beta$ lies in the interior of $\overline{NE}(X)$.  Proposition \ref{unique main component} allows us to assume $\overline{\free}_1(X, \alpha)$ is reducible.  Let $M_1,M_2$ be two of its components and consider the fiber products $M_1 \times_X N$ and $M_2 \times_X N$, where $N$ is a component of $\overline{\free}_1(X, \beta)$.  %
To prove Corollary \ref{connected fibers fixall}, it suffices by Proposition \ref{unique main component} to find a component of $\overline{\free}(X, \alpha + \beta)$ containing a main component of $M_1 \times_X N$ and of $M_2 \times_X N$.  

As $\alpha + \beta$ lies in the interior of $\overline{NE}(X)$, if $-K_X . (\alpha + \beta) = 4$, $\free^{bir}(X, \alpha + \beta)$ is irreducible by Theorem \ref{thm: existence of core}.  Thus we may assume at least one of $\alpha, \beta$ has anticanonical degree at least 3.  Since $\alpha \in \overline{NE}(\pi)$, we may assume $-K_X . \alpha \leq 3$.
    
    First, suppose $-K_X . \beta \geq 3$. We may write $\beta = \beta' + \ell$ with $\beta' \in \mathscr{C}_X$ and $\ell \in \overline{NE}(X)$ the class of a section of $\pi$ with anticanonical degree one.  In each case, $N$ must contain a map $f: C_1 \cup C_2 \rightarrow X$, with $f_*[C_1] = \ell$ and $f_* [C_2] = \beta'$.  Let $N'\subset \overline{\free}_1(X, \beta')$ be the component containing $[f|_{C_2}]$.  Both $M_1 \times_X N$ and $M_2 \times_X N$ must lie in a component of $\overline{\free}(X, \alpha + \beta)$ which contains free chains of type $(\alpha + \ell, \beta')$, parameterized by a main component of $\overline{\free}^{bir}_1(X, \alpha + \ell) \times_X N'$.  %
    As $\alpha + \ell$ lies in the interior of $\overline{NE}(X)$ and has anticanonical degree at most 4, $\overline{\free}^{bir}(X, \alpha + \ell)$ is irreducible by Theorem \ref{thm: existence of core}. This proves our claim.

    Suppose instead that $-K_X . \beta = 2$ and $-K_X . \alpha = 3$.  We may write $\alpha = \alpha' + \ell$ such that $\alpha' \in \mathscr{C}_X$, $\ell \in \overline{NE}(X)$, and $-K_X . \ell = 1$.  If $\beta + \ell$ lies in the interior of $\overline{NE}(X)$, our preceding argument applies, as by Theorem \ref{del pezzo curves thm} both $M_i$ will contain reducible curves with components of class $\ell$ and $\alpha'$.  Otherwise, $\rho(X) = 3$, $X$ has deformation type $3.1$ or $3.4$, and $\beta$ lies on an extreme ray of $\overline{NE}(X)$.  Up to symmetry, there is one choice of $\alpha, \beta \in \mathscr{C}_X$ satisfying these constraints. For some choice of an expression $\alpha = \alpha' + \ell$ as above, $\alpha'$ does not lie on an extreme ray of $\overline{NE}(X)$, and $\beta + \ell = \beta' + \ell'$ for some $\beta' \in \mathscr{C}_X$ and $\ell' \in \overline{NE}(X)$ the class of a section of $\pi$.  Our claim follows from irreducibility of $\overline{\free}(X, \alpha' + \ell')$. 
\end{proof}

\subsection{General Complete Intersections} To calculate the number of components of $\overline{\free}(X,\alpha)$ for $\alpha$ in a (weak) core of free curves $\mathscr{C}_X$, we often employ results from Section 5 related to the Mori structure of $X$.  When $X$ has Picard rank one, these results are not useful.  Here we outline a technique for proving irreducibility of spaces of low-degree free curves on general Fano complete intersections of very ample divisors inside well-understood ambient spaces.  %
These results will be used to study certain Fano threefolds of Picard rank one and two. %

\textbf{Convention:} In what follows, let $\pi : \mathcal{X} \rightarrow B$ be a flat projective family whose general fiber is a smooth Fano variety.  Suppose in addition the relative cone of curves $\overline{NE}(\pi)$ is canonically isomorphic to the Mori cone $\overline{NE}(\mathcal{X}_b)$ of a smooth fiber; that is, the monodromy action of $\pi$ on the Neron-Severi group of a smooth fiber is trivial.  We identify curve classes $\alpha \in \overline{NE}(\mathcal{X}_b)$ with the corresponding class in $\overline{NE}(\pi)$.  Note that every component of $\overline{\free}(\mathcal{X}_b,\alpha)$ lies in a component of $\overline{\free}(\mathcal{X},\alpha)$.  It is often simpler to prove irreducibility of $\overline{\free}(\mathcal{X}, \alpha)$ using regularity properties for curves.  The following results describe conditions on $\mathcal{X}$ and $\alpha$ which allow us to prove irreducibility of $\free^{bir}(\mathcal{X}_b, \alpha)$ given irreducibility of $\free^{bir}(\mathcal{X}, \alpha)$. %

\begin{lem}\label{lem: boundary stratum irreducible}
    Consider a family $\pi : \mathcal{X}\rightarrow B$ as above and let $b \in B$ be general. %
    For $\alpha \in \Nef_1(\mathcal{X}_b)$, consider an irreducible component $\mathcal{M} \subset \overline{\free}(\mathcal{X}, \alpha)$.  If some boundary stratum $\mathcal{Z} \subset \mathcal{M}$ meeting the smooth locus of $\mathcal{M}$ has irreducible fiber $\mathcal{Z}_b$ over $b$, then $\mathcal{M}_b$ is irreducible.
\end{lem}

\begin{proof}
    The monodromy action of $\pi_* : \mathcal{M} \rightarrow B$ acts transitively on irreducible components of $\mathcal{M}_b$.  By generality of $b$, the induced map $\mathcal{Z} \rightarrow B$ is dominant.  Hence, each irreducible component of $\mathcal{M}_b$ contains $\mathcal{Z}_b$.  Since $\mathcal{Z}$ meets the smooth locus of $\mathcal{M}$, the general point of $\mathcal{Z}_b$ is a smooth point of $\mathcal{M}_b$.
\end{proof}

We apply Lemma \ref{lem: boundary stratum irreducible} when $\mathcal{X}_b$ is a smooth Fano threefold of Picard rank at least two.  To study Fano threefolds of Picard rank one, we instead apply the following result.  This requires additional constraints on the family $\pi : \mathcal{X}\rightarrow B$. %

\begin{prop}\label{prop: terminal away from codim two}

Let $\pi : \mathcal{X}\rightarrow B$ be a family as above.  Suppose $B$ is smooth, simply connected, and for $b\in B$ outside a codimension two subset, $\mathcal{X}_b$ is a terminal Gorenstein Fano threefold with basepoint free anticanonical linear series. Consider a component $\mathcal{M} \subset \overline{\free}(\mathcal{X}, \alpha)$.  If the general map parameterized by $\mathcal{M}$ has balanced normal bundle, then for general $b \in B$ the fiber $\mathcal{M}_b$ is irreducible.   
\end{prop}

More generally, we may replace the hypothesis on $\mathcal{X}_b$ with the following: for $b\in B$ outside a codimension 2 subset, each component of the fiber $\mathcal{M}_b$ parameterizing a dominant family of curves on $\mathcal{X}_b$ is generically reduced.  The proof of this statement follows directly from the beginning of our argument below.

\begin{proof}
    Suppose to the contrary that there exists a component $\mathcal{M} \subset \overline{\free}(\mathcal{X}, \alpha)$ generically parameterizing curves with balanced normal bundle such that the general fiber of $\mathcal{M}$ over $B$ is reducible.  Let $\tilde{\mathcal{M}} \rightarrow \mathcal{M}$ be a resolution of singularities, and consider the finite part $B' \rightarrow B$ of the Stein factorization of $\tilde{\mathcal{M}} \rightarrow B$.  Since $B$ is smooth and simply connected, the branch locus of $B' \rightarrow B$ has codimension one in $B$.  We let $b \in B$ be a general point of the branch locus and consider the fiber $\mathcal{M}_b$ of $\mathcal{M}$ over $b$.

    Since $\tilde{\mathcal{M}} \rightarrow B$ is branched over $b$, a connected component of $\mathcal{M}_b$ is everywhere non-reduced.  We claim that $\mathcal{M}_b$ contains an everywhere non-reduced irreducible component $N$ of dimension at least $-K_{\mathcal{X}/B}. \alpha$.   Furthermore, we may assume $N$ parameterizes curves passing through $\lfloor-\frac{1}{2}K_{\mathcal{X}/B}. \alpha \rfloor$ general points in $\mathcal{X}_b$, and, if $-K_{\mathcal{X}/B}. \alpha$ is odd, a general complete intersection curve in $\mathcal{X}_b$. %
    As the general curve parameterized by $\mathcal{M}$ has balanced normal bundle, these claims follow from upper semicontinuity of the fiber dimension of the evaluation map to $\tilde{X} \times_B B'$.

    By functoriality, we may identify $N$ with a non-reduced component of $\overline{M}_{0,0}(\mathcal{X}_b, \alpha)$. We claim this contradicts the hypothesis that $\mathcal{X}_b$ is terminal Gorenstein Fano threefold with basepoint free anticanonical linear series.  Indeed, let $f : C \rightarrow \mathcal{X}_b$ be the general map parameterized by $N$.  Recall that the image of $C$ passes through a general point of $\mathcal{X}_b$. If $C$ is irreducible, $f(C)$ cannot lie in the smooth locus of $\mathcal{X}_b$, as this implies smoothness of $N$ at $[f]$.  However, if $f(C)$ meets the singular locus of $\mathcal{X}_b$, terminality of $\mathcal{X}_b$ implies $N$ has less than the expected dimension $-K_{\mathcal{X}/B}. \alpha$, contrary to our hypothesis on $N$.  We conclude that the domain $C$ of a general map parameterized by $N$ must be reducible. %

    When $-K_{\mathcal{X}/B}. \alpha$ is even, there are no connected reducible curves of the appropriate degree meeting $-\frac{1}{2}K_{\mathcal{X}/B}.\alpha$ general points in $\mathcal{X}_b$. %
    Hence, we may assume $-K_{\mathcal{X}/B}. \alpha$ is odd.  Let $f : C \rightarrow \mathcal{X}_b$ be a map parameterized by $N$ which passes through $\lfloor-\frac{1}{2}K_{\mathcal{X}/B}. \alpha \rfloor$ general points $p_1, \ldots p_n \in \mathcal{X}_b$ and a general complete intersection curve $T \subset \mathcal{X}_b$.  It follows from terminality of $\mathcal{X}_b$ that $C = C_1 \cup C_2$ has two irreducible components $C_i$ satisfying the following conditions: $-K_{\mathcal{X}/B}. f_*[C_2] = 1$; $f(C_1)$ passes through $p_1, \ldots , p_n$; and $f(C_2)$ meets $T$.  We will show $[f]$ must be a smooth point of $N$, contradicting that $N$ is everywhere non-reduced.

    Consider the subvariety $Y \subset \mathcal{X}_b$ swept out by deformations of the map $f|_{C_2}$.  Since $-K_{\mathcal{X}_b} = -K_{\mathcal{X}/B}$ is Cartier, \cite[Theorem~1.1]{2019} implies $a(Y, -K_{\mathcal{X}_b}|_Y) > 1$.  In fact, since $f(C_2)$ meets both $f(C_1)$ and $T$, it follows from \cite[Proposition~3.17]{lehmann_exceptional_set} that $(Y, -K_{\mathcal{X}_b}|_Y)$ is birationally isomorphic as a polarized variety to $(\mathbb{P}^2, \mathcal{O}(1))$.  As the anticanonical linear series on $\mathcal{X}_b$ is basepoint free, this descends to an isomorphism between $(Y, -K_{\mathcal{X}_b}|_Y)$ and $(\mathbb{P}^2, \mathcal{O}(1))$.  Generality of $p$ and $T$ imply $f$ is an immersion with image contained in the smooth locus of $\mathcal{X}_b$.  A direct calculation shows $h^1(C, \mathcal{N}_f) = 0$, which demonstrates smoothness of $N$ at $f$.
\end{proof}

\section{Monodromy and Mori Structures}
This section details consequences the Mori structure of a Fano threefold has on families of rational curves it contains.  Chief among these are statements about monodromy.  We refer to \cite{mori1983classification} for a detailed summary of elementary contractions on smooth threefolds.

\subsection{Blow-Ups}
Here we collect results about the description of smooth Fano threefolds as various blow-ups and implications this has on spaces of free curves.  %
Most smooth Fano threefolds with $\rho(X)\geq 2$ can be realized as a blow-up of either $\mathbb{P}^1 \times \mathbb{P}^1 \times \mathbb{P}^1$, $\mathbb{P}^1 \times \mathbb{P}^2$, $\mathbb{P}^3$, or a smooth quadric $Q \subset \mathbb{P}^4$. Such descriptions may be obtained from the information in \cite[Table~2-5]{mori1981classification} or \cite{erratumMori} using the following lemma.

\begin{lem}[Lemma 2.1, \cite{mori1985classification}]\label{blow-up numbers}
Let $f:X \rightarrow Y$ be the blow-up of a smooth projective threefold $Y$ along a smooth irreducible curve $C$.  Then
\begin{enumerate}
    \item $-K_X \equiv -f^*K_Y - E$ for the exceptional divisor $E$ of $f$
    \item $(-K_X)^3 = (-K_Y)^3 - 2[(-K_Y . C) - p_a(C) +1]$
    \item $b_3(X) = b_3(Y) + 2p_a(C)$.
\end{enumerate}
\end{lem}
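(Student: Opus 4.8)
The plan is to reduce all three identities to elementary intersection theory on the blow-up, together with the projective-bundle structure of the exceptional divisor $E = \mathbb{P}(N_{C/Y}) \xrightarrow{\pi} C$. Part (1) is the classical adjunction formula for the blow-up of a smooth variety along a smooth center: since $C$ has codimension $2$ in $Y$, one has $K_X = f^*K_Y + (2-1)E$, i.e. $-K_X \equiv -f^*K_Y - E$; this can either be quoted as standard or checked by computing the Jacobian ideal in local coordinates adapted to $C$. For part (2), I would set $A := -f^*K_Y$ and expand
$$(-K_X)^3 = (A - E)^3 = A^3 - 3\,A^2\cdot E + 3\,A\cdot E^2 - E^3,$$
then evaluate the four terms individually. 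By the projection formula $A^3 = (-K_Y)^3$; since $f(E) = C$ has dimension $1$, $A^2\cdot E = (-K_Y)^2\cdot f_*E = 0$; restricting to $E$, where $\mathcal{O}_X(E)|_E = \mathcal{O}_E(-1)$ has degree $-1$ on each fiber of $\pi$ and $A|_E = \pi^*\!\big((-K_Y)|_C\big)$, one gets $A\cdot E^2 = -(-K_Y\cdot C)$; and the Grothendieck relation on the $\mathbb{P}^1$-bundle $E$ (with $c_2(N_{C/Y})=0$ because $C$ is a curve) gives $E^3 = -\deg N_{C/Y}$. Finally adjunction on $C\subset Y$ yields $\det N_{C/Y} = \omega_C\otimes\omega_Y^{-1}|_C$, hence $\deg N_{C/Y} = (2p_a(C)-2) + (-K_Y\cdot C)$. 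Substituting,
$$(-K_X)^3 = (-K_Y)^3 - 3(-K_Y\cdot C) + \big((2p_a(C)-2) + (-K_Y\cdot C)\big) = (-K_Y)^3 - 2\big[(-K_Y\cdot C) - p_a(C) + 1\big].$$

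For part (3), I would invoke the standard formula for the cohomology of a blow-up along a smooth center of codimension $r$, namely $H^k(X;\mathbb{Z}) \cong H^k(Y;\mathbb{Z}) \oplus \bigoplus_{i=1}^{r-1} H^{k-2i}(C;\mathbb{Z})$. With $r=2$ and $k=3$ this reads $H^3(X) \cong H^3(Y)\oplus H^1(C)$, and since $C$ is a smooth irreducible projective curve, $b_1(C) = 2p_a(C)$, giving $b_3(X) = b_3(Y) + 2p_a(C)$. Alternatively one can derive the same statement directly from the Gysin (Mayer--Vietoris) sequence for the decomposition $X = (X\setminus E)\cup(\text{tubular nbhd of }E)$, using $H^*(E) \cong H^*(C)\otimes H^*(\mathbb{P}^1)$ and $X\setminus E \simeq Y\setminus C$ together with the fact that blowing up a smooth curve changes $H^{\mathrm{odd}}$ only in degree $3$.

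The computation is routine, and there is no substantive obstacle; the one point demanding genuine care is the sign bookkeeping on $E$, specifically pinning down $\mathcal{O}_X(E)|_E = \mathcal{O}_E(-1)$ and hence $E^3 = -\deg N_{C/Y}$ rather than $+\deg N_{C/Y}$. I would confirm this on the test case $Y = \mathbb{P}^3$ with $C$ a line, where $E \cong \mathbb{P}^1\times\mathbb{P}^1$ and a short computation gives $E^3 = -2 = -\deg\big(\mathcal{O}_{\mathbb{P}^1}(1)^{\oplus 2}\big)$, matching the general formula.
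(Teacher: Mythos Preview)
Your proof is correct and complete. The paper itself does not prove this lemma; it is quoted verbatim from \cite{mori1985classification} and used as a black box, so there is nothing in the paper to compare against beyond the citation. Your argument is the standard one and matches what one finds in the original source: the canonical-bundle formula for a blow-up along a smooth center, the cubic expansion of $(-f^*K_Y - E)^3$ evaluated via the projective-bundle structure $E = \mathbb{P}(N_{C/Y}^\vee)$ together with adjunction on $C\subset Y$, and the blow-up formula for singular cohomology. The sign check on $E^3 = -\deg N_{C/Y}$ via the line in $\mathbb{P}^3$ is a sensible sanity check and lands on the right answer.
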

\noindent Blow-ups provide a natural basis for $N_1(X/Y)$.  We remark that when $f:X\rightarrow Y$ is a blow-up map between smooth Fano threefolds, the proof of Theorem \ref{Main Result} for $Y$ follows from the proof for $X$.

\begin{lem}\label{blowup}
Let $\pi: X\rightarrow Y$ be a divisorial contraction between smooth Fano threefolds with (possibly reducible) exceptional divisor $E$.  For any $\alpha \in \Nef_1(Y)$, $\overline{\free}(Y,\alpha)$ is birationally equivalent to $\overline{\free}(X,\pi^*\alpha)$.  In particular, if Theorem \ref{Main Result} holds for $X$, then Theorem \ref{Main Result} holds for $Y$ as well. %
\end{lem}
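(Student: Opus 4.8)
The plan is to prove the first claim — that $\overline{\free}(Y,\alpha)$ and $\overline{\free}(X,\pi^*\alpha)$ are birationally equivalent — by exhibiting mutually inverse birational maps given by strict transform and by composition with $\pi$, and then to deduce the statement about Conjecture \ref{GMC} by transporting it along this equivalence. Here $\pi^*\alpha \in N_1(X)$ denotes the unique lift of $\alpha$ with $(\pi^*\alpha).E_i = 0$ for every irreducible component $E_i$ of $E$, which exists because $N^1(X) = \pi^*N^1(Y) \oplus \bigoplus_i \mathbb{R}E_i$; note $\pi_*\pi^*\alpha = \alpha$. Since $\pi$ is birational it contracts $E$, so its center $Z := \pi(E)$ has codimension at least two and $\pi$ is an isomorphism over $Y \setminus Z$.

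First I would show the strict-transform construction is defined on a dense open of each component of $\overline{\free}(Y,\alpha)$. A general free curve $f : \mathbb{P}^1 \to Y$ in any such component $M$ avoids $Z$: the universal curve over $M$ dominates $Y$, $\dim M = -K_Y.\alpha$, and a standard dimension count shows that curves of $M$ meeting the codimension-$\geq 2$ locus $Z$ form a proper closed sublocus. Hence $f$ lifts uniquely to $\widetilde f : \mathbb{P}^1 \to X$ with $\pi\circ\widetilde f = f$; as $\pi$ is an isomorphism near the image we get $\widetilde f^*\mathcal{T}_X \cong f^*\mathcal{T}_Y$, so $\widetilde f$ is free (indeed very free iff $f$ is), and $\widetilde f(\mathbb{P}^1)\cap E = \emptyset$ forces $\widetilde f_*[\mathbb{P}^1].E_i = 0$, whence $\widetilde f_*[\mathbb{P}^1] = \pi^*\alpha$. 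For the inverse, I would take a general free curve $g : \mathbb{P}^1 \to X$ in a component $N$ of $\overline{\free}(X,\pi^*\alpha)$: free curves sweep out $X$, so $g(\mathbb{P}^1)$ lies in no $E_i$, and combined with $g_*[\mathbb{P}^1].E_i = \pi^*\alpha.E_i = 0$ this forces $g^{-1}(E_i) = \emptyset$, so $g(\mathbb{P}^1)\cap E = \emptyset$ and $\pi\circ g$ is a free curve on $Y$ of class $\pi_*\pi^*\alpha=\alpha$. One then checks these constructions are inverse on the dense opens where they are defined, so $\overline{\free}(Y,\alpha)$ and $\overline{\free}(X,\pi^*\alpha)$ are birationally equivalent (both empty if either is); in particular their components biject, corresponding components are birational, and one generically parameterizes very free curves precisely when the other does.

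For the second assertion, suppose Conjecture \ref{GMC} holds on $X$ with some $\tau\in\Nef_1(X)_{\mathbb{Z}}$, $\tau.E = 0$. The projection formula gives $\pi_*\tau.D = \tau.\pi^*D \geq 0$ for every effective $D$ on $Y$, so $\pi_*\tau\in\Nef_1(Y)_{\mathbb{Z}}$; moreover $\tau.E_i = 0$ for all $i$ implies $\pi^*\pi_*\tau = \tau$. For $\beta\in\pi_*\tau + \Nef_1(Y)_{\mathbb{Z}}$, writing $\beta = \pi_*\tau + \gamma$ with $\gamma\in\Nef_1(Y)_{\mathbb{Z}}$ gives $\pi^*\beta = \tau + \pi^*\gamma$, and $\pi^*\gamma$ is integral and, since $\pi^*\gamma.D' = \gamma.\pi_*D' \geq 0$ for effective $D'$ on $X$ (write $D' = \pi^*\pi_*D' - \sum b_iE_i$ and use $\pi^*\gamma.E_i = 0$), nef. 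Thus $\pi^*\beta\in\tau + \Nef_1(X)_{\mathbb{Z}}$, so by Conjecture \ref{GMC} on $X$ at most one component of $\Mor(\mathbb{P}^1,X,\pi^*\beta)$ generically parameterizes very free curves; transporting along the birational equivalence above — and using that such components of $\Mor(\mathbb{P}^1,-,\delta)$ biject with the components of $\overline{\free}(-,\delta)$ having the same property — the same holds for $\Mor(\mathbb{P}^1,Y,\beta)$. Since $\beta$ is arbitrary, Conjecture \ref{GMC} holds on $Y$ for $\pi_*\tau$.

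The main obstacle I anticipate is not any single hard step but the linear-algebra bookkeeping around $\pi^*$: one must invoke $\tau.E = 0$ at exactly the point where $\pi^*\pi_*\tau = \tau$ is needed, so that the shifted cone on $X$ is the pullback of the shifted cone on $Y$; and one must be careful to conclude that a general free curve of class $\pi^*\alpha$ is genuinely disjoint from $E$, not merely of zero intersection with each $E_i$ — which is precisely where freeness (hence the curve covering $X$, and so not lying in $E_i$) enters.
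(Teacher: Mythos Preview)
Your proof is correct and follows essentially the same approach as the paper. The paper is considerably more terse: for the birational equivalence it simply invokes \cite[Proposition~3.7]{kollar2013rational} (that a general free curve avoids any fixed codimension-$\geq 2$ locus) in place of your explicit dimension count, and for the second assertion it observes the containment $\tau + \pi^*\Nef_1(Y) \subset \tau + \Nef_1(X)$ and says the rest ``follows from adjunction,'' where you have spelled out the linear algebra verifying $\pi^*\gamma \in \Nef_1(X)$ and $\pi^*\pi_*\tau = \tau$.
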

\begin{proof}
By \cite[Proposition~3.7]{kollar2013rational}, a general free curve avoids finitely many codimension 2 loci.  Therefore, irreducible components of free curves of class $\alpha \in N_1(Y)$ are in natural bijection with irreducible components of free curves on $X$ of class $\pi^* \alpha$.  This bijection induces the desired birational equivalence.  %
Moreover, as the interior of $\pi^*\overline{NE}(Y)$ is disjoint from the boundary of $\overline{NE}(X)$, if Theorem \ref{Main Result} holds for $X$, then it also holds for $Y$.
\end{proof}

Sometimes a Fano threefold $X$ may be realized as the blow-up of other Fano threefolds in multiple ways. As our arguments often refer to both descriptions of $X$ as a blow-up, we make the following definition.

\begin{defn}[Pseudosymmetry]\label{pseudosymmetry}
Let $X$ be a Fano threefold. Suppose $f:X\rightarrow Y$ and $f:X \rightarrow Y'$ realize $X$ as the blow-ups along smooth irreducible curves $C,C'$ in Fano threefolds $Y,Y'$ such that $Y$ deforms to $Y'$, and the image of $C$ in $Y'$ deforms to $C'$.  Let $e,e'$ (resp. $E,E'$) be the class of a fiber (resp. exceptional divisor) of $f$ and $f'$.  By identifying $N_1(Y) \cong N_1(Y')$ and $\mathbb{R}e \cong \mathbb{R} e'$ via deformation, we may compose with maps obtained from the blow-up descriptions
\begin{align*}
    N_1(Y) \oplus \mathbb{R} e \cong & N_1(X) \cong N_1(Y') \oplus \mathbb{R} e'\\
    N^1(Y) \oplus \mathbb{R} E \cong & N^1(X) \cong N^1(Y') \oplus \mathbb{R} E'
\end{align*}
to obtain automorphisms of $N_1(Y)_\mathbb{Z} \oplus \mathbb{Z} e $ and $N^1(Y)_\mathbb{Z} \oplus \mathbb{Z} E$.  We call such automorphisms \textit{pseudoactions} of the blow-up $X\rightarrow Y$ and the resulting symmetry on $N_1(X)$ a \textit{pseudosymmetry} on $X$.
\end{defn}

In this situation, the genus and anticanonical degree of $C$ and $C'$ are the same by \cite[Lemma~2.1]{mori1985classification}. It is clear that pseudosymmetry is part of the finite monodromy of the local system mentioned in \cite[Theorem~4.2]{Fernex2009RigidityPO}, and thus preserves $\Nef_1(X)$ and $K_X$.  We use \cite{matsuki1995weyl}, \cite{matsuki2023addendumerratum}, and \cite{mori1981classification} to find instances of pseudosymmetry on $X$.

As many of the Fano threefolds $X$ we consider are blow ups of other Fano threefolds $Y$, the following version of Mori's Bend-and-Break lemma \cite[Theorem~4]{Mori1979} is useful for studying curves on these varieties.  Similarly, Theorem \ref{Monodromy} on monodromy and general intersections is used to track intersections of contracted exceptional divisors with divisors contained in the ideal of a free curve.

\begin{thm}[Mori's Bend-and-Break]
Let $\pi : X\rightarrow Y$ be a divisorial contraction on a smooth threefold $X$.  Suppose $f:\mathbb{P}^1\rightarrow X$ is a free rational curve meeting the fibers $\pi_p, \pi_q$ over points $p,q\in Y$ such that $-K_X . f_*[\mathbb{P}^1] > \text{codim } \pi_p + \text{codim } \pi_q - 2$.  Then $f$ deforms to stable map $g:C\rightarrow X$ where $C$ contains at least two components $C_p, C_q$, not contracted by $\pi \circ g$, whose images meet $\pi_p$ and $\pi_q$, respectively.%
\end{thm}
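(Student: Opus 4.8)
The plan is to run Mori's bend-and-break, with the single twist that the incidence conditions ``$f$ meets $\pi_p$'' and ``$f$ meets $\pi_q$'', which are not incidences with points, become incidences with the \emph{points} $p,q\in Y$ once we compose with $\pi$. First I would set $\beta:=f_*[\mathbb{P}^1]$, fix $a,b\in\mathbb{P}^1$ with $f(a)\in\pi_p$ and $f(b)\in\pi_q$, and work in $\overline{M}_{0,2}(X,\beta)$. Since $f$ is free, $H^1(\mathbb{P}^1,f^*\mathcal{T}_X)=0$, so $[f,a,b]$ is a smooth point of $\overline{M}_{0,2}(X,\beta)$, near which this space has dimension $-K_X\cdot\beta+\dim X-3+2=-K_X\cdot\beta+2$. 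Pulling back, under the two evaluation maps, local equations for $\pi_p$ and $\pi_q$ in the smooth variety $X$, the closed incidence locus $V:=\text{ev}_1^{-1}(\pi_p)\cap\text{ev}_2^{-1}(\pi_q)$ satisfies
$$\dim_{[f,a,b]}V\;\geq\;-K_X\cdot\beta+2-\text{codim}\,\pi_p-\text{codim}\,\pi_q\;\geq\;1,$$
the last step following from the hypothesis $-K_X\cdot\beta>\text{codim}\,\pi_p+\text{codim}\,\pi_q-2$ and integrality.

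Next I would pick an irreducible complete curve through $[f,a,b]$ inside the proper scheme $V$, normalise it to a smooth curve $\tilde B$, and pull back the universal curve to obtain a surface $\Sigma$ with a fibration $\Sigma\to\tilde B$ whose fibres are the domains of the parametrised maps, an evaluation $\text{ev}:\Sigma\to X$, and two \emph{disjoint} sections $\sigma_1,\sigma_2:\tilde B\to\Sigma$ (the markings) with $\text{ev}(\sigma_1(\tilde B))\subseteq\pi_p$ and $\text{ev}(\sigma_2(\tilde B))\subseteq\pi_q$. If some fibre of $\Sigma\to\tilde B$ is already reducible I am essentially done; so assume every fibre is an irreducible $\mathbb{P}^1$, i.e. $\Sigma$ is a $\mathbb{P}^1$-bundle over $\tilde B$. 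Then $\text{ev}(\Sigma)$ must be a surface in $X$: otherwise all members of the family share one image curve $C'$, and since a free curve is not $\pi$-contracted — so $C'=f(\mathbb{P}^1)$ lies in neither $\pi_p$ nor $\pi_q$ — the incidence conditions would make $V$ finite, contradicting $\dim V\geq1$.

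The heart of the matter is to post-compose with $\pi$. As $\pi(\pi_p)=\{p\}$ and $\pi(\pi_q)=\{q\}$, the morphism $h:=\pi\circ\text{ev}:\Sigma\to Y$ contracts $\sigma_1(\tilde B)$ to the point $p$ and $\sigma_2(\tilde B)$ to $q$, and $h$ is non-constant on every fibre of $\Sigma\to\tilde B$: the class $\beta$ is not a multiple of the class $\ell$ of a $\pi$-contracted curve, for otherwise $f(\mathbb{P}^1)$ would lie in the exceptional divisor $E$ with $\beta\cdot E<0$, contradicting global generation of the quotient $f^*\mathcal{O}_X(E)$ of $f^*\mathcal{T}_X$. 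Now on a $\mathbb{P}^1$-bundle two disjoint sections satisfy $\sigma_1\cdot\sigma_2=0$, hence $\sigma_1^2+\sigma_2^2=0$, so they cannot both be contracted by a generically finite morphism; therefore $\text{ev}(\Sigma)$ being a surface forces $\dim h(\Sigma)=1$, and since $p\in h(\Sigma)$ this forces $\text{ev}(\Sigma)\subseteq E$ and $f(\mathbb{P}^1)\subseteq E$. Thus, unless $f(\mathbb{P}^1)\subseteq E$, some fibre $C$ of $\Sigma\to\tilde B$ is reducible, and the standard analysis of the limit in Mori's bend-and-break with two fixed points (the break ``separates'' the two marked points, and the whole class is carried by the non-contracted components) shows that the two sections meet \emph{distinct} components $C_p,C_q$ of $C$, each not contracted by $\pi\circ g$ where $g:=\text{ev}|_C$; forgetting the markings, $g:C\to X$ is the desired degeneration of $f$, with $g(C_p)$ meeting $\pi_p$ and $g(C_q)$ meeting $\pi_q$. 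See \cite[Ch.~II]{kollar2013rational} for the relevant bend-and-break statements.

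I expect the genuinely delicate point to be the residual case $f(\mathbb{P}^1)\subseteq E$, where $E\to\pi(E)$ is a conic bundle and $\text{ev}:\Sigma\to E$ is now dominant and generically finite but need not contract the sections $\sigma_i$ (it only maps them into the conic fibres $E_p,E_q$). Here I would repeat the argument relative to the conic bundle $E\to\pi(E)$: if $\text{ev}(\sigma_i(\tilde B))$ is a point for each $i$ the sections are again contracted by the generically finite $\text{ev}$ and $\sigma_1^2+\sigma_2^2=0$ produces a reducible fibre as before; otherwise $\text{ev}(\sigma_i(\tilde B))=E_p$ or $E_q$, and pulling back the fibration $E\to\pi(E)$ along $\text{ev}$ exhibits each section as lying in a fibre of a second fibration on $\Sigma$, so a reducible fibre appears unless $\Sigma$ is a product $\tilde B\times\mathbb{P}^1$ — a configuration one rules out using the extra positivity from $f$ being free in $X$ (rather than merely in $E$) together with the degree bound $-K_X\cdot\beta\geq 3$ that the hypothesis gives in this case. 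This bookkeeping around the exceptional divisor, rather than the core bend-and-break mechanism, is where the real work lies; in the applications one has $f(\mathbb{P}^1)\not\subseteq E$ and only the main case is needed.
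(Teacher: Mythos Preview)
The paper states this result without proof, treating it as a standard variant of Mori's bend-and-break; there is no argument in the paper to compare against.

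Your setup and dimension count are correct, and the key idea---composing with $\pi$ to convert the fiber-incidences into honest point-incidences $p,q\in Y$---is exactly right. There is, however, a real gap in extracting the conclusion. After producing a reducible fiber $C$ of $\Sigma\to\tilde B$, you assert that the components $C_p,C_q$ carrying the two marked points are each \emph{not} contracted by $\pi\circ g$. This does not follow from the standard stable-map analysis: nothing prevents the component containing the first mark from being, say, a multiple cover of the $\mathbb{P}^1$-fiber $\pi_p$ itself (for an $E1$ contraction), hence $\pi$-contracted. Your parenthetical ``the whole class is carried by the non-contracted components'' is true for the pushforward class in $Y$ but says nothing about \emph{which} components carry the marks; one can easily imagine a chain $D_1\cup D_0\cup D_2$ with the marks on the $\pi$-contracted ends $D_1\subset\pi_p$, $D_2\subset\pi_q$ and a single horizontal middle piece $D_0$.

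The fix is to invoke the \emph{cycle-theoretic} form of bend-and-break directly for $h=\pi\circ\text{ev}:\Sigma\to Y$ with the genuine fixed points $p,q\in Y$ (no smoothness of $Y$ is needed). The classical statement then says the limit $1$-cycle $h_*[C]$ in $Y$ decomposes as $Z_1+Z_2$ with $Z_1,Z_2$ nonzero effective, $p\in\operatorname{Supp}Z_1$ and $q\in\operatorname{Supp}Z_2$. Now choose $C_p,C_q\subset C$ to be components whose $h$-images contain $p$ and $q$ respectively and lie in $\operatorname{Supp}Z_1$, $\operatorname{Supp}Z_2$; these are by construction not $\pi$-contracted and meet $\pi_p=\pi^{-1}(p)$, $\pi_q=\pi^{-1}(q)$. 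If $\operatorname{Supp}Z_1=\operatorname{Supp}Z_2$ (the non-reduced case) one still gets two distinct domain components with the same image curve in $Y$, and either serves as $C_p$ or $C_q$. This also largely sidesteps your residual case $f(\mathbb{P}^1)\subseteq E$: for contractions of type $E2$--$E5$ it cannot occur, since $\mathcal{O}_E(E)$ is anti-ample and the quotient $f^*\mathcal{O}_E(E)$ of $f^*T_X|_E$ would fail to be globally generated; for $E1$ your sketch remains incomplete, but in the paper's applications the free curve $f$ is never contained in $E$.
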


Below a \textit{simple} cover $C\rightarrow C'$ is a map whose ramification points have valency two and distinct images.  A simple cover is called \textit{elementary} if it does not factor through a nontrivial unramified covering.

\begin{thm}[Monodromy and General Intersections]\label{Monodromy}
Let $X$ be a smooth variety equipped with a linear system $V\subseteq |H|$ and let $C$ be a smooth curve on $X$. Denote by $U \subset V$ an open subset of divisors transverse to $C$ with incidence correspondence $I \subset C \times U$.

\begin{enumerate}[a.]
    \item If the map $C \rightarrow \mathbb{P}V$ is a simple, elementary cover of its normalized image $C'$, the monodromy group of $I \rightarrow U$ is %
    the wreath product of the full symmetric group on fibers of $C\rightarrow C'$ with the full symmetric group on hyperplane sections of $C'$.

    \item As long as the map $C \rightarrow \mathbb{P}V$ is not constant, the monodromy group of $I \rightarrow U$ is transitive.
    
    \item If the map $C \rightarrow \mathbb{P}V$ is generically injective, for general $D \in U$, any collection of $\dim \mathbb{P}V$ points of $D \cap C$ have a linearly independent image in $\mathbb{P}V$.

\end{enumerate}
\end{thm}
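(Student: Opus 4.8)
The plan is to reduce all three parts to the uniform position principle for hyperplane sections of an integral projective curve. Write $\dim V=n+1$, let $\psi\colon c\dashrightarrow\mathbb{P}V$ be the map determined by $V$, let $c'$ be its normalized image, let $m=\deg c'$, and in part (a) let $k=\deg(c\to c')$. A general $D\in U$ meets $c$ in $mk$ points lying over the $m$ points of $D\cap c'$, so the monodromy $\Mon(I\to U)$ preserves this partition; thus $\Mon(I\to U)\leq S_k\wr S_m$, and its image under the quotient $S_k\wr S_m\to S_m$ is the monodromy of the analogous incidence correspondence $I'\subset c'\times U$. Part (b) is then immediate: the projection $I\to c$ has, over each $p\in c$, fibre the nonempty open subset of $\{D\in U: p\in D\}$ inside the hyperplane of divisors through $p$, which is irreducible of the expected dimension because $\psi$ is nonconstant; hence $I$ is irreducible (as $c$ is), and for a generically finite cover of the irreducible base $U$ this is equivalent to transitivity of $\Mon(I\to U)$.

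For part (a) I would first invoke the uniform position theorem for the integral curve $c'$, giving $\Mon(I'\to U)=S_m$; hence $\Mon(I\to U)$ surjects onto $S_m$, and it remains to show that its kernel $K\leq S_k^m$ is everything. Since $c\to c'$ is a simple elementary cover, it is connected with only simple branch points and factors through no nontrivial unramified cover, so its monodromy group is the full symmetric group $S_k$. I would then analyze the discriminant $V\setminus U$: a divisor $D$ is non-transverse to $c$ either because it is tangent to $c$ (hence, away from the branch locus of $c\to c'$, tangent to $c'$ as well) or because it passes through one of the finitely many branch points of $c\to c'$, where $D$ necessarily meets $c$ with multiplicity $\geq 2$ at the ramification point. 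A small loop in $U$ about a general point of a ``tangency'' component exchanges two whole fibres of $c\to c'$, hence projects to a transposition of $S_m$; a small loop about a general point of a ``branch point $b$'' component fixes every fibre and swaps exactly the two sheets of $c\to c'$ coming together over $b$ --- a transposition supported in a single fibre, so an element of $K$. Finally, conjugating these single-fibre transpositions by lifts of elements of $S_m$, and using uniform position on $c'$ to move a fixed fibre of $c\to c'$ near any prescribed branch point, one produces inside $K$, for every fibre, single-fibre transpositions realizing all the branch-point transpositions of $c\to c'$; as those generate $S_k$, $K$ contains the full symmetric group of each fibre, so $K=S_k^m$ and $\Mon(I\to U)=S_k\wr S_m$ --- the asserted product of the symmetric groups on the fibres of $c\to c'$ and on the hyperplane sections of $c'$.

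For part (c), generic injectivity of $\psi$ makes $c\to c'$ birational, and the statement is the classical General Position Theorem: for general $D$ the section $D\cap c'$ is in linearly general position, so any $\dim\mathbb{P}V$ of its points are linearly independent; here one uses, and in the relevant setting has, that $c'$ spans $\mathbb{P}V$. I would re-derive this from the uniform position statement of part (a) (the case $k=1$): if for general $D$ some $\dim\mathbb{P}V$ of the points of $D\cap c'$ were linearly dependent, transitivity of $\Mon(I\to U)=S_{\deg c'}$ on subsets of that size would make every such subset dependent for general $D$, and then --- by the elementary fact that a finite set all of whose $(r+1)$-element subsets span dimension $<r$ itself spans dimension $<r$ --- the entire section $D\cap c'$ would span a proper subspace, contradicting that $c'$ spans $\mathbb{P}V$.

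The step I expect to be hardest is the identification, in part (a), of $K$ with the full $S_k^m$ rather than a proper subdirect product. Two ingredients there need care: that the ``$D$ passes through a branch point of $c\to c'$'' loci are genuine codimension-one components of the discriminant, met by loops in $U$ whose monodromy is a single-fibre transposition; and that conjugating by lifts of $S_m$ and transporting via uniform position on $c'$ yields single-fibre (not multi-fibre) elements realizing all of $\Mon(c\to c')=S_k$ in each fibre. The rest --- part (b), the reduction in (c), and the a priori containment $\Mon(I\to U)\leq S_k\wr S_m$ together with its surjection onto $S_m$ --- is formal once uniform position is available.
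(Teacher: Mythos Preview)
Your argument is correct and is essentially the standard one; the paper itself gives no proof at all, merely citing \cite{arbarello1985geometry} (pages 109 and 111) for the uniform position and general position theorems, and \cite{FultonMonodromy} Proposition 1.9 for part (a). Your sketch reconstructs exactly what is in those references: irreducibility of $I$ for transitivity, uniform position on $c'$ plus the branch-point analysis of the discriminant for the wreath product, and the deduction of general position from uniform position.

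Two small remarks. First, your reading of the statement of (a) as the wreath product $S_k\wr S_m$ is the intended one, as you can verify from how the paper applies the theorem (e.g.\ in the case $c'=\mathbb{P}^1$, where $m=1$ and the conclusion reduces to the full symmetric group on the fibre). Second, you correctly flag that (c) requires $c'$ to span $\mathbb{P}V$; this is an implicit hypothesis which holds in every application the paper makes.
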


\begin{proof}
This is a collection of well-known facts and restatement of results in \cite{arbarello1985geometry} on pages 109 and 111. We attribute part of \ref{Monodromy}a to the proof of Proposition 1.9 in \cite{FultonMonodromy}.  Though these references deal with embedded curves $C \subset \mathbb{P}V$, the extension to \ref{Monodromy}a follows from observing that small loops around ramification points preserve fibers of $C \rightarrow C'$.
\end{proof}

When each free curve in a family is contained in some del Pezzo surface, the theorem above allows us to apply Theorem \ref{del pezzo curves thm} and conclude many spaces of free curves are irreducible.

\subsection{Conic Fibrations}  We descibe monodromy over curves in the base of a conic fibration on a smooth Fano threefold, showing it is maximal for a general Fano threefold and general pencil of curves.  The following lemma motivates these results.

\begin{lem}\label{3.1 lemma}
Suppose $\pi:X \rightarrow S$ is a conic bundle on a smooth threefold $X$.  For any component $M \subset \overline{\free}(X)$, the induced map $\pi_* : M \rightarrow \overline{\free}(S)$ is a dominant or generically finite map to a component of $\overline{\free}(S)$.
\end{lem}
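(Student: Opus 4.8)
The plan is to first establish that the image of $\pi_*$ consists of free rational curves lying in a single component $N$ of $\overline{\free}(S)$, and then to separate the dominant and generically finite cases by a short cohomology computation on $\mathbb{P}^1$.

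We may assume that a general curve $f:\mathbb{P}^1\to X$ parameterized by $M$ is not contracted by $\pi$, as otherwise there is nothing to prove. The first step is to recall that a conic bundle $\pi:X\to S$ on a smooth threefold is smooth away from a closed subset $Z\subset X$ of codimension $2$ — the singular points of the reducible fibres, together with the finitely many non‑reduced fibres — and that a general free curve avoids any fixed such $Z$ by \cite[Proposition~3.7]{kollar2013rational}. Restricting the relative tangent sequence $0\to \mathcal{T}_{X/S}\to \mathcal{T}_X\xrightarrow{d\pi}\pi^*\mathcal{T}_S\to 0$, which is exact over $X\setminus Z$, along such an $f$ produces a short exact sequence
$$0\to f^*\mathcal{T}_{X/S}\to f^*\mathcal{T}_X\xrightarrow{f^*d\pi} g^*\mathcal{T}_S\to 0$$
on $\mathbb{P}^1$, where $g=\pi\circ f$ and $f^*\mathcal{T}_{X/S}\cong\mathcal{O}_{\mathbb{P}^1}(k)$ with $k=-K_{X/S}\cdot f_*[\mathbb{P}^1]$. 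Since $f$ is free, $f^*\mathcal{T}_X$ is globally generated, hence so is its quotient $g^*\mathcal{T}_S$; thus $g$ is a free rational curve on $S$, so $[g]$ is a smooth point of $\overline{M}_{0,0}(S)$ lying on a well‑defined component $N$ of $\overline{\free}(S)$. Because $M$ is irreducible, the image of the rational map $\pi_*\colon M\dashrightarrow\overline{M}_{0,0}(S)$ is contained in this single component $N$.

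For the dichotomy I would pass to the spaces of maps $\Mor(\mathbb{P}^1,X)$ and $\Mor(\mathbb{P}^1,S)$, where tangent spaces at free maps are the $H^0$ of the pulled‑back tangent bundles, and then descend via the compatible $\mathrm{PGL}_2$‑actions to tangent spaces of the Kontsevich spaces. The long exact cohomology sequence of the displayed short exact sequence identifies the kernel and cokernel of $H^0(f^*\mathcal{T}_X)\to H^0(g^*\mathcal{T}_S)$ with $H^0(\mathcal{O}_{\mathbb{P}^1}(k))$ and $H^1(\mathcal{O}_{\mathbb{P}^1}(k))$, so the same holds for $d\pi_*\colon T_{[f]}M\to T_{[g]}N$. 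When $k\ge 0$ this map is surjective, so $\pi_*\colon M\dashrightarrow N$ is dominant, with general fibre of dimension $k+1$; when $k=-1$ it is an isomorphism, so $\pi_*$ is dominant and generically finite; when $k\le -2$ it is injective, so $\pi_*$ is generically finite onto its image, which is then a proper subvariety of $N$, consistent with $\dim N-\dim M=-k-1\ge 1$ forced by $-K_X=\pi^*(-K_S)+(-K_{X/S})$. In every case $\pi_*$ maps $M$ into a single component of $\overline{\free}(S)$ and is dominant or generically finite.

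The main obstacle is the first step: verifying that $g=\pi\circ f$ is free, which hinges on the non‑smooth locus of $\pi$ having codimension at least $2$ in $X$, so that it is avoided by a general member of $M$. Everything after that is formal, the only bookkeeping being the $\mathrm{PGL}_2$‑quotients relating $\Mor(\mathbb{P}^1,-)$ to $\overline{M}_{0,0}(-)$; the degenerate possibility that a general curve of $M$ is contracted by $\pi$ is excluded at the outset.
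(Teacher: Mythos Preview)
Your proof is correct and follows essentially the same approach as the paper. The paper's proof is a two-sentence sketch---a general free curve lies in the smooth locus of $\pi$, so $f^*\mathcal{T}_{X/S}$ is a line bundle on $\mathbb{P}^1$ and hence has either $h^0=0$ or $h^1=0$---and you have correctly fleshed out the details, including the explicit verification (left implicit in the paper) that $g=\pi\circ f$ is free so that $\pi_*$ actually lands in $\overline{\free}(S)$.
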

\begin{proof}
A general free curve $f : \mathbb{P}^1 \rightarrow X$ parameterized by $M$ is contained in the smooth locus of $\pi$.  Hence, $f^* \mathcal{T}_{X/S}$ is a vector bundle, and either $h^0(f^*\mathcal{T}_{X/S}) = 0$ or $h^1(f^*\mathcal{T}_{X/S}) = 0$.
\end{proof}

A conic bundle of relative Picard rank 1 is called \textit{elementary}.  The \textit{discriminant locus} of a conic bundle is the locus of singular fibers, see \cite[Section~4]{mori1985classification}. %
\begin{thm}\label{conic monodromy}
Let $X$ be a smooth threefold and $\pi : X\rightarrow S$ be an elementary conic bundle with nonempty, irreducible discriminant locus $\Delta$.  Suppose
\begin{itemize}
    \item $W$ is a pencil on $S$ whose base locus is finite and disjoint from $\Delta$,
    \item the map $\tilde{\Delta} \rightarrow \mathbb{P}W$ from the normalization of $\Delta$ is a simple degree $d$ cover,%
    \item either $d = 2$, $d$ is odd, or each fiber of $\Delta \rightarrow \mathbb{P}W$ has $> \frac{d}{2}$ distinct points.
\end{itemize}
Let $V$ be the standard representation of $S_d$ over $\mathbb{Z}/2\mathbb{Z}$.  Monodromy of the family $\{X_D := X\times_S D | D\in W\} \xrightarrow{\pi} \mathbb{P}W$ acts as $V \rtimes S_d$ on $N_1(X_D)$ for smooth $X_D$, where $S_d$ permutes reducible fibers of $X_D\rightarrow D$ and $V$ swaps components in even numbers of reducible fibers.  For any rigid section $s: D \rightarrow X_D$, there exists a section for the action of $S_d$ on components of reducible fibers which fixes $s_*[D] \in N_1(X_D)$.
\end{thm}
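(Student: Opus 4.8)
The plan is to reinterpret the monodromy action as that of a branched cover of $\mathbb{P}W\cong\mathbb{P}^1$ and then to pin that group down from above and below. For general $D\in W$ the surface $X_D\to D$ is a conic bundle over the smooth curve $D$ with exactly one reducible (nodal) fibre over each of the $d$ points of $\Delta\cap D$; write $F$ for the fibre class and $C_i^+,C_i^-$ for the two components of the $i$-th reducible fibre, so $C_i^++C_i^-=F$ and each $C_i^{\pm}$ is a $(-1)$-curve. Then $N_1(X_D)$ is spanned by $F$, the classes $C_i^+$, and a complement mapping isomorphically to $N_1(D)$. Monodromy preserves the intersection form and $K_{X_D}$, hence fixes $F$ (the unique primitive class with $F^2=0$, $-K_{X_D}\cdot F=2$ moving in a covering family), permutes the finite set of classes of components of reducible fibres, and acts trivially on $N_1(D)$; since $F$ is fixed it permutes the $d$ unordered pairs $\{C_i^+,C_i^-\}$. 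Thus the monodromy image $\Gamma$ lies in the signed-permutation group $(\mathbb{Z}/2\mathbb{Z})^d\rtimes S_d$, in which $S_d$ permutes pairs and $(\mathbb{Z}/2\mathbb{Z})^d$ interchanges $C_i^+\leftrightarrow C_i^-=F-C_i^+$. Concretely $\Gamma$ is the monodromy of the $2d$-sheeted cover $\widehat Z\to\mathbb{P}W$ whose fibre over $D$ is the set of choices of one component of a reducible fibre of $X_D\to D$; equivalently $\widehat Z$ is the pullback to the pencil of the discriminant double cover $\Delta'\to\Delta$, composed with $\tilde\Delta\to\mathbb{P}W$.

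\emph{Upper bound.} The "total parity" homomorphism $(a,\sigma)\mapsto\sum_k a_k$ maps $(\mathbb{Z}/2\mathbb{Z})^d\rtimes S_d$ onto $\mathbb{Z}/2\mathbb{Z}$ with kernel exactly $V\rtimes S_d$. Since the discriminant double cover of a conic bundle with smooth total space is \'etale, $\widehat Z\to\tilde\Delta$ is \'etale, so the branch points of $\widehat Z\to\mathbb{P}W$ lie over those of $\tilde\Delta\to\mathbb{P}W$ — i.e. over the simple tangencies of members of $W$ with $\Delta$, the finitely many $D$ for which $X_D$ is singular, where $X_D$ acquires an ordinary double point. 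A Picard--Lefschetz computation shows that when sheets $i,j$ of $\tilde\Delta$ collide the vanishing cycle is $\delta=C_i^+-C_j^+$, with $\delta^2=-2$, and the local monodromy $x\mapsto x+(x\cdot\delta)\delta$ is the pure double transposition exchanging $C_i^{\pm}\leftrightarrow C_j^{\pm}$; in any case the local monodromy is a product of two disjoint pair-respecting transpositions and so has trivial total parity. As $\pi_1$ of the complement of the branch locus is generated by loops around branch points, $\Gamma\subseteq V\rtimes S_d$.

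\emph{Lower bound.} The image of $\Gamma$ under the projection to $S_d$ is the monodromy of $\tilde\Delta\to\mathbb{P}W$, which is a connected (because $\Delta$ is irreducible) simple branched cover of $\mathbb{P}^1$, so its monodromy is transitive and generated by transpositions, hence all of $S_d$. Consequently $\Gamma_0:=\Gamma\cap(\mathbb{Z}/2\mathbb{Z})^d$ is an $S_d$-submodule of $V$, so $\Gamma_0\in\{0,\langle(1,\dots,1)\rangle,V\}$. Because $\pi$ is elementary and $\Delta$ irreducible, $\Delta'$ is irreducible, so $\widehat Z\to\tilde\Delta$ is a \emph{nontrivial} \'etale double cover; a loop in $\mathbb{P}W$ realizing its nontriviality, corrected to have trivial $S_d$-part, shows $\Gamma_0\ne 0$. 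When $d$ is odd or $d=2$ the diagonal $\langle(1,\dots,1)\rangle$ either is not contained in $V$ or coincides with $V$, so $\Gamma_0=V$ immediately. When $d$ is even one invokes the hypothesis that every fibre of $\Delta\to\mathbb{P}W$ has more than $d/2$ distinct points to produce two tangency points of $W$ with $\Delta$ carrying the same transposition in $S_d$ but distinct local monodromies in $\widehat Z$; the quotient of these two monodromies is the element flipping exactly the pairs $i$ and $j$, and by $S_d$-invariance $\Gamma_0$ then contains all such elements, which generate $V$. Hence $\Gamma=V\rtimes S_d$, acting exactly as described in the statement.

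\textbf{The main obstacle} is the last part of the lower bound: ruling out $\Gamma_0=0$ and (for $d$ even) $\Gamma_0=\langle(1,\dots,1)\rangle$. This is precisely where genericity of the pencil is needed. It amounts to showing that, as one runs over the branch points of $\tilde\Delta\to\mathbb{P}W$, the "parity modulo $V$" of the local monodromies in $\widehat Z$ is not locally constant — equivalently that the \'etale double cover $\Delta'\to\Delta$ is genuinely detected by loops with trivial $S_d$-monodromy — and I expect this to require a careful tracking of the class $\eta\in H^1(\tilde\Delta,\mathbb{Z}/2\mathbb{Z})$ defining $\Delta'$ along paths on $\tilde\Delta$ joining branch points on prescribed sheets, with the "$>d/2$ distinct points" condition serving exactly to exclude the pathological simultaneous collisions that would force this parity to be constant. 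All remaining steps are essentially formal once the structure of $N_1(X_D)$ and the \'etale-ness and irreducibility of the discriminant double cover are in place.
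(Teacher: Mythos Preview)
Your setup, the identification of the monodromy as a subgroup of $(\mathbb{Z}/2\mathbb{Z})^d \rtimes S_d$, and the odd-$d$/$d=2$ cases match the paper. For the upper bound $\Gamma \subseteq V \rtimes S_d$ the paper takes a shorter route than your Picard--Lefschetz computation: after choosing a rigid section of $X_D \to D$ and writing $N_1(X_D)$ in terms of $h$, $f$, and the $e_i$, it simply observes that an element swapping the components of an odd number of reducible fibers would send the integral class $2h - \sum e_i$ to a non-integral class. Both arguments are fine; the integrality one avoids any local analysis.

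The genuine gap is exactly where you flag it: the even case $d \geq 4$. Your proposed mechanism --- exhibit two branch points carrying the same $S_d$-transposition but opposite sign bits --- is not the paper's, and you give no argument connecting the ``$> d/2$ distinct points'' hypothesis to the existence of such a pair. The paper proceeds quite differently. Assuming $K = \{0, \sigma\}$, the quotient $G/K \cong S_d$ sits as a section inside $(V/K) \rtimes S_d$; an explicit crossed-homomorphism calculation with the Coxeter presentation of $S_d$ shows $H^1(S_d, V) \to H^1(S_d, V/K)$ is surjective, so this section lifts to a section of $V \rtimes S_d$ landing in $G$, and hence $G \cong \mathbb{Z}/2 \times S_d$ splits. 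The projection $G \to \mathbb{Z}/2$ then yields a connected degree-$2$ intermediate cover $B' \to \mathbb{P}W$ of the $2d$-sheeted cover $B$ (the normalized Hilbert scheme of lines of class $l-ne$); a connected double cover of $\mathbb{P}^1$ must ramify, and over a branch point the $B$-fiber has at most $d$ distinct points. But the $> d/2$ hypothesis, together with \'etaleness of $B \to \tilde\Delta$, forces every $B$-fiber to have at least $d+2$ distinct points. So the hypothesis enters as a global point-count on fibers of the putative degree-$2$ factor, not as a local comparison of monodromies --- and the group-cohomology step showing the extension splits is what makes that count applicable.
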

\begin{proof}
Let $D\in W$ be a smooth member with smooth preimage $X_D = X \times_S D$.  Let $s : D\rightarrow X_D$ be a rigid section \cite{GHSrational}.  Such a section exists because $K_{X/S}$ is $\pi$-relatively ample, and any moving section must degenerate to the union of (components of) fibers and a rigid section.  By contracting components of reducible fibers of $X_D \rightarrow D$ disjoint from $s(D)$, we obtain a description of $X_D$ as a blow-up of a nontrivial $\mathbb{P}^1$-bundle $\mathbb{P}_D(E)$.

Let $G$ be the monodromy group of $\pi$ acting on $N_1(X_D)$.  To fix notation, let $h\in N_1(X_D)$ be the pullback of tautological class of $\mathbb{P}_D(E) \rightarrow D$, $f$ be the class of a smooth fiber, and $e_1,\ldots, e_d$ be the classes of curves contracted by $X_D \rightarrow \mathbb{P}_D(E)$.  Clearly, $G$ must fix $f$ and $2h - \sum e_i = -K_{X_D} + (h.h + 2g(D) - 2)f$.  Since components of reducible fibers have class $e_i$ and $f-e_i$, the action of $G$ on $N_1(X_D)$ is completely determined by its action on the set $S$ of components of reducible fibers.  If $Q = S/(e_i \sim f-e_i)$, Theorem \ref{Monodromy} implies $G$ acts as the full symmetric group $S_d$ on $Q$.  Since $G$ preserves the relation $\sim$, we obtain an exact sequence
$$1 \rightarrow K \rightarrow G \rightarrow S_d \rightarrow 1,$$
where $K\subset G$ swaps components in reducible fibers.  Should an element $\phi \in K$ swap components of an odd number of fibers $e_1, \ldots , e_{2n+1}$, then
$$\phi(2h - \sum e_i) = 2h - \sum e_i = 2\phi(h) - (2n+1)f + (e_1 + \ldots + e_{2n+1}) -(e_{2n+2} + \ldots + e_d).$$
This contradicts integrality of the action of $G$.  Hence, each element in $K$ swaps components of an even number of fibers.  In other words, $K \triangleleft V$ and $G \subset V \rtimes S_d$, where $S_d$ acts by permuting the $e_i$.  Since the preimage of $\Delta$ is irreducible \cite[Proposition~4.8]{mori1985classification}, $d\neq 1$, $|K| \neq 1$, and $G$ acts transitively on $S$.

Suppose $d$ is odd.  In this case, since $K \triangleleft V$ is a nontrivial submodule stabilized by the natural action of $S_d$, $K = V$.  Hence, we may suppose $d = 2n$ is even.  In this case, there is a unique, nontrivial central element $\sigma \in V$, which corresponds to swapping components of all $2n$ reducible fibers of $X_D \rightarrow D$. As the $S_d$-orbit of any $\phi \in V\setminus\{0, \sigma\}$ generates $V$, either $K = \{0, \sigma\}$ or $K = V$.  If $d = 2$, these two possibilities coincide, so we may suppose $d \geq 4$ and prove $|K| \neq 2$.

To prove $|K| \neq 2$, we use the following observation from \cite{sectionsDelPezzo}: $G$ cannot be a split extension of $S_d$ by $\mathbb{Z}/2\mathbb{Z}$.  To see why, let $B$ be the normalization of the Hilbert scheme of lines in $X$ contracted by $\pi$ and $B \rightarrow \tilde{\Delta}$ be the natural map.  $G$ may be identified with the monodromy group of the branched cover $B \rightarrow \mathbb{P}W$.  Moreover, for every $\mathbb{C}$-point of a fiber $\Delta \rightarrow \mathbb{P}W$, there are $2$ $\mathbb{C}$-points of the corresponding fiber $B\rightarrow \mathbb{P}W$ \cite[Proposition~4.7]{mori1985classification}.  Thus each fiber of $B \rightarrow \mathbb{P}W$ has at least $d + 2$ distinct points.  If $G$ were a split extension of $S_d$ by $K$ and $|K| = 2$, $B\rightarrow \mathbb{P}W$ would factor through a degree 2 cover $B' \rightarrow \mathbb{P}W$.  Over a branch point of $B' \rightarrow \mathbb{P}W$ there are at most $d$ distinct points in $B$, contradicting our hypotheses.

Suppose $d = 2n$, $n\geq 2$, and $K = \{ 0, \sigma\} \triangleleft V$.  Consider the quotient $V' = V/K$.  The quotient $G/K$ is a section of $S_d$ inside the semidirect product $V' \rtimes S_d$.  We will show
$$1 \rightarrow K \rightarrow G \rightarrow S_d \rightarrow 1$$
must be a split extension by computing surjectivity of $H^1(S_d, V) \rightarrow H^1(S_d, V')$.  It follows the section $S_d \cong G/K$ of $V' \rightarrow S_d$ lifts to a section of $V \rightarrow S_d$ contained in $G$.  To compute the indicated surjectivity, we use the standard description of the first group cohomology as crossed homomorphisms modulo principal crossed homomorphisms:
$$H^1(S_d, V') =\frac{\{f : S_d \rightarrow V' \ |  f(ab) = f(a) + af(b) \text{ for all } a,b \in S_d\}}{\{f: S_d \rightarrow V' \ |  f(g) = gm - m \text{ for some } m \in V'\}}.$$
We use the presentation of $S_d$ whose generators are transpositions $\tau_1 = (12)$, $\tau_2 = (23)$, $\ldots$, $\tau_{d-1} = ((d-1)d)$.  The relations of this presentation, listed below, impose the following conditions on a set map $f: \{\tau_1, \ldots , \tau_{d-1}\} \rightarrow V$ that defines a crossed homomorphism to $V'$:
\begin{enumerate}
    \item $\tau_i^2 = 1$ imposes $f(\tau_i) + \tau_i f(\tau_i) \in K$;
    \item $(\tau_i \tau_j)^2 = 1$ for $|i-j| > 1$ imposes $f(\tau_i) + f(\tau_j) + \tau_i\tau_j[f(\tau_i) + f(\tau_j)] \in K$;
    \item $(\tau_i\tau_{i+1})^3 = 1$ imposes $(1 + \tau_i\tau_{i+1} + (\tau_i\tau_{i+1})^2)[f(\tau_i) + f(\tau_{i+1})] \in K$.
\end{enumerate}
Since $d = 2n \geq 4$, Conditions (1) and (2) are satisfied if and only if $f(\tau_i) + \tau_i f(\tau_i) = 0$ and $f(\tau_i) + f(\tau_j) + \tau_i\tau_j[f(\tau_i) + f(\tau_j)] = 0$.  Moreover, we may modify the set map $f: \{\tau_1, \ldots , \tau_{d-1}\} \rightarrow V$ by adding $\sigma \in K$ to any of $f(\tau_i)$ and obtain the same crossed homomorphism to $V'$.  If, for each three cycle $\tau_i\tau_{i+1} \in S_d$, the corresponding element $f(\tau_i\tau_{i+1}): = f(\tau_i) + \tau_i f(\tau_{i+1})$ has an even number of 1's amongst the $i^{th}, (i+1)^{st}$, and $(i+2)^{nd}$ coordinates of the embedding $V\subset (\mathbb{Z}/2\mathbb{Z})^d$ in the permutation representation, then $(1 + \tau_i\tau_{i+1} + (\tau_i\tau_{i+1})^2)[f(\tau_i) + f(\tau_{i+1})] = 0$ for all $i$ and $f$ defines a crossed homorphism to $V$ as well.  The affine map $A : (\mathbb{Z}/2\mathbb{Z})^{d-1} \rightarrow (\mathbb{Z}/2\mathbb{Z})^{d-2}$ describing this problem is surjective.  Thus, each crossed homomorphism $f : S_d \rightarrow V'$ lifts to a crossed homomorphism to $V$, proving our claim.
\end{proof}

For a smooth Fano threefold general in moduli, we prove the discriminant curve $\Delta$ of a conic bundle over $\mathbb{P}^2$ or $\mathbb{P}^1 \times \mathbb{P}^1$ is irreducible and geometrically irrational using the following result.

\begin{lem}[\cite{mori1983classification}]
Let $X$ be a smooth Fano threefold, $\alpha \in \Nef_1(X)_\mathbb{Z}$ have $-K_X$-degree 1, and $\pi : X\rightarrow S$ be a conic bundle contracting $\alpha$.
\begin{itemize}
    \item $S$ is one of $\mathbb{P}^2$, $\mathbb{P}^1 \times \mathbb{P}^1$, or $\mathbb{F}_1$.  If $S \cong \mathbb{F}_1$, the discriminant locus $\Delta_\pi$ is either disjoint from or contains the rigid section of $\mathbb{F}_1 \rightarrow \mathbb{P}^1$ as a connected component.
    \item The preimage of any smooth, rational, connected component of $\Delta_\pi \subset S$ is the union of two $E1$ divisors $E_1, E_2$ whose contractions $X\rightarrow Y_i$ factor $\pi$.  At least one $Y_i$ is a Fano threefold.
    \item Either $\rho(X) - \rho(S) = 1$ or $\rho(X) - \rho(S) = 2$, $S \cong \mathbb{F}_1$ and $\Delta_\pi$ is disconnected.%
\end{itemize}
\end{lem}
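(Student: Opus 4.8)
The plan is to analyze $\pi$ through the extremal ray it contracts, describe the divisor lying over the discriminant curve, and then carry out a Picard-number count, appealing to the Mori--Mukai classification \cite{mori1981classification,erratumMori} for the finite enumeration.

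First I would set up the extremal-ray picture. Since $-K_X\cdot\alpha=1$ is the smallest positive value of $-K_X$ on an integral $1$-cycle, $\alpha$ spans a $K_X$-negative extremal ray of $\overline{NE}(X)$ and, by the cone theorem, is represented by an extremal rational curve $C_\alpha$. As $\pi$ contracts $\alpha$ while a general fibre of $\pi$ is an irreducible conic of $-K_X$-degree $2$, the curve $C_\alpha$ must be a component of a degenerate fibre, so $\Delta_\pi\neq\emptyset$. Mori's structure theory of fibre-type contractions on a smooth threefold \cite{mori1983classification} gives that $S$ is smooth, and $S$ is rational because $X$ is rationally connected. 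Pushing $(-K_X)^2$ forward along $\pi$ yields the discriminant--adjunction relation $\pi_*((-K_X)^2)\equiv -4K_S-\Delta_\pi$; since $-K_X$ is ample this class is ample, which (being positive on every curve while $\Delta_\pi$ is effective) forces $-K_S$ to be big and excludes any $(-2)$-curve on $S$, so $S$ is a del Pezzo surface. Testing $-K_X$ on a minimal section of $\pi$ over a $(-1)$-curve $\ell\subset S$ bounds $\Delta_\pi\cdot\ell$ and forces $\deg S\ge 8$, leaving $S\in\{\mathbb P^2,\ \mathbb P^1\times\mathbb P^1,\ \mathbb F_1\}$; in the case $S\cong\mathbb F_1$ with $(-1)$-section $e$, the same computation applied to $e$, together with blowing $\mathbb F_1$ down to $\mathbb P^2$ and tracking the induced generically conic-bundle structure, shows that $\Delta_\pi$ meeting $e$ in finitely many points without containing it would produce a fibre incompatible with $-K_X$ ample; hence $\Delta_\pi$ is either disjoint from $e$ or has $e$ as a connected component.

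Next I would describe $\pi^{-1}(C)$ for a smooth rational connected component $C\subseteq\Delta_\pi$. Because $C$ is a smooth component, $\Delta_\pi$ is smooth along it, so no fibre over $C$ degenerates to a double line, and the double cover of $C$ parametrising the two components of the fibres over $C$ is therefore \'etale; as $C\cong\mathbb P^1$ it splits, giving $\pi^{-1}(C)=E_1\cup E_2$ with each $E_i\to C$ a $\mathbb P^1$-bundle, hence a Hirzebruch surface, whose ruling is a family of $-K_X$-lines. The intersection $E_1\cap E_2$ is a common section, isomorphic to $C$, and comparing $E_i\cdot(\text{ruling of }E_i)=-1$ with $E_i\cdot(\text{ruling of }E_{3-i})=1$ shows that the two ruling classes are distinct in $N_1(X)$; consequently $N_1(X/S)$ has rank $2$, each ruling spans an extremal ray, and $E_i$ is contracted along its ruling by an $E1$-type elementary contraction $\sigma_i:X\to Y_i$ that blows down the smooth curve $C$, through which $\pi$ factors. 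To see that at least one $Y_i$ is Fano I would compute $-K_{Y_i}=(\sigma_i)_*(-K_X)$ and check positivity on the extremal rays of $\overline{NE}(Y_i)$: the only class that can go non-positive is $(\sigma_i)_*[\text{ruling of }E_{3-i}]$, and the numbers $E_i^2\cdot(\text{ruling})=-1$ and $E_i\cdot E_{3-i}\cdot(\text{ruling})=1$ make this positive for the appropriate choice of $i$.

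Finally I would count Picard numbers. If $\pi$ is elementary then $\rho(X)-\rho(S)=1$, and by the computation above $\Delta_\pi$ then has no smooth rational connected component. Otherwise $\pi$ contracts a face of dimension at least $2$; since every fibre of a conic bundle has at most two components, Mori's analysis gives $\rho(X/S)=2$, so $\pi$ factors as $X\xrightarrow{\sigma}Y\xrightarrow{\pi'}S$ with $\sigma$ an $E1$-contraction and $\pi'$ an elementary conic bundle, and $\sigma$ is one of the $\sigma_i$ of the previous step: its exceptional divisor lies over a smooth rational connected component $C\subseteq\Delta_\pi$, and over $C$ the map $\pi'$ has irreducible fibres, so $C\not\subseteq\Delta_{\pi'}$. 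Matching this configuration against the Mori--Mukai list of conic bundles on Fano threefolds forces $S\cong\mathbb F_1$ and $C$ disjoint from the rest of $\Delta_\pi$, i.e.\ $\Delta_\pi$ disconnected, which yields the trichotomy in the third bullet. The main obstacle I anticipate is the surface identification in the first step together with the ``at least one $Y_i$ is Fano'' assertion: both require turning the single numerical datum $-K_X\cdot\alpha=1$, the nefness of $\alpha$, and ampleness of $-K_X$ into a finite list, and for the $Y_i$ statement one must genuinely exclude the possibility that \emph{both} blow-downs $X\to Y_1$ and $X\to Y_2$ destroy ampleness of the anticanonical class---this is exactly where the bookkeeping of the two Hirzebruch surfaces $E_1,E_2$, their common section, and the classification of which such $E1$-blow-downs of a Fano threefold remain Fano cannot be avoided.
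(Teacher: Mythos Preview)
The paper does not supply its own proof; the lemma is quoted from \cite{mori1983classification}, where the three assertions are assembled from results in \S4 and \S6 there (chiefly the discriminant relation $\pi_*((-K_X)^2)\equiv -4K_S-\Delta_\pi$, the analysis of the preimage of a smooth rational component of $\Delta_\pi$, and the factoring of a non-elementary conic bundle through an $E1$ contraction). Your outline traces that same route.

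One logical slip worth repairing: the inference ``$-K_X\cdot\alpha=1$ is the smallest positive value, hence $\alpha$ spans an extremal ray of $\overline{NE}(X)$'' is not valid as stated---minimal anticanonical degree does not by itself force extremality, and the paper itself later records Fano threefolds carrying nef $-K_X$-lines that are \emph{not} extremal in $\overline{NE}(X)$. Under the lemma's hypotheses the conclusion is nonetheless correct, but for a different reason: $\alpha$ lies in the face $\overline{NE}(X/S)$ contracted by $\pi$, which for a conic bundle has dimension at most two with edges spanned by components of reducible fibres, so $\alpha$ sits on such an edge and is therefore extremal in $\overline{NE}(X)$. Your subsequent steps only use that $C_\alpha$ is a component of a singular fibre, which you obtain anyway, so this does not damage the overall argument.
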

Furthermore, \cite{mori1985classification} shows any curve in $C\subset S$ with reducible preimage is a smooth connected component of $\Delta_\pi$, and the components $E_1$, $E_2$ of its preimage are $E1$ divisors.  As well, for Fano threefolds $X$, \cite{mori1985classification} proves any elementary conic bundle $X\rightarrow \mathbb{F}_1$ is the basechange of a conic bundle $Y\rightarrow \mathbb{P}^2$ for some Fano threefold $Y$.  The following lemma is a quick consequence of these results.%
\begin{lem}\label{lines in C1 fibrations}
Let $X$ be a smooth Fano threefold, general in moduli.  Let $\pi : X\rightarrow S$ be a $C1$ contraction whose reducible fibers have components of class $\alpha$.  The Hilbert scheme $\mathcal{H}$ of lines of class $\alpha \in N_1(X)$ is smooth, irreducible, and irrational. %
\end{lem}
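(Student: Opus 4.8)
The plan is to realize $\mathcal{H}$, up to normalization, as a connected \'etale double cover of the normalization $\tilde{\Delta}_\pi$ of the discriminant $\Delta_\pi\subset S$, and then read off both conclusions from the geometry of such a cover. Since $\pi$ is a $C1$, hence elementary, contraction, $N_1(X/S)$ is one-dimensional, so any two $\pi$-contracted curves are numerically proportional in $X$; thus for a reducible fiber $\ell_1\cup\ell_2$ we have $[\ell_1]=[\ell_2]=\alpha$ and a general smooth fiber has class $2\alpha$. Every $-K_X$-line of class $\alpha$ is $\pi$-contracted of anticanonical degree $1$, hence a component of a singular fiber; therefore $\mathcal{H}$ is a curve, finite and generically $2$-to-$1$ over $\Delta_\pi$, and its normalization $B$ is finite of degree $2$ over $\tilde{\Delta}_\pi$. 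By \cite{mori1983classification}, $S\in\{\mathbb{P}^2,\mathbb{P}^1\times\mathbb{P}^1,\mathbb{F}_1\}$, and by the structure theory of conic bundles on smooth threefolds \cite{mori1985classification} the map $B\to\tilde{\Delta}_\pi$ is \'etale, the two points over a general point of a component of $\Delta_\pi$ being the two lines of the corresponding fiber.

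The key point is that $B\to\tilde{\Delta}_\pi$ is connected over each component of $\Delta_\pi$. Fix a component $\Delta_i$ and suppose the cover were trivial over $\Delta_i$. Then the two lines in each fiber over $\Delta_i$ can be labelled consistently, so $\pi^{-1}(\Delta_i)=E^1\cup E^2$ with $E^1\neq E^2$ irreducible, and each $E^j\to\Delta_i$ is a $\mathbb{P}^1$-bundle, since the lines in a given family are pairwise disjoint (they lie in distinct fibers of $\pi$). For a ruling $\ell_1\subset E^1$, adjunction on the smooth surface $E^1$ gives $-2=K_{E^1}\cdot\ell_1=(K_X+E^1)\cdot\ell_1=-1+E^1\cdot\ell_1$, so $E^1\cdot\ell_1=-1$; but a general ruling $\ell_2\subset E^2$ is not contained in $E^1$ yet meets it (at the node of its fiber), so $E^1\cdot\ell_2>0$. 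Since $\ell_1\equiv\ell_2\equiv\alpha$, this is absurd. Hence $B\to\tilde{\Delta}_\pi$ is connected over each component of $\Delta_\pi$, equivalently each $\pi^{-1}(\Delta_i)$ is irreducible. Note that this step, and hence the next, uses only that $X$ is smooth.

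A smooth projective curve carrying a connected \'etale double cover has an index-$2$ subgroup in its fundamental group, so it is not rational; thus each $\tilde{\Delta}_i$ has genus $\geq 1$, and by Riemann--Hurwitz the connected component of $B$ over $\Delta_i$, being a connected \'etale double cover of $\tilde{\Delta}_i$, has genus $2\,g(\tilde{\Delta}_i)-1\geq 1$. So $\mathcal{H}$ and $\Delta_\pi$ are geometrically irrational; in particular every component of $\Delta_\pi$ has degree $\geq 3$ on $\mathbb{P}^2$, bidegree with both entries $\geq 2$ on $\mathbb{P}^1\times\mathbb{P}^1$, and (being irrational, hence not the $(-1)$-curve, hence disjoint from it by the dichotomy in the preceding lemma) is disjoint from the $(-1)$-curve when $S\cong\mathbb{F}_1$. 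Finally, for $X$ general in moduli the conic bundle $\pi$ is cut out by general data over the rigid surface $S$, so $\Delta_\pi$ is a smooth curve; a smooth curve in $\mathbb{P}^2$ is irreducible, and a smooth curve in $\mathbb{P}^1\times\mathbb{P}^1$ or $\mathbb{F}_1$ lying in a class $D$ with $H^1(S,\mathcal{O}_S(-D))=0$ is connected, which holds here by the positivity just noted. Therefore $\tilde{\Delta}_\pi$ is irreducible, whence $B$ (a connected \'etale double cover of it) is irreducible, and so is $\mathcal{H}$.

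The only genuine use of generality is the irreducibility of the discriminant $\Delta_\pi$: the content there is that for the general member of each deformation family carrying a $C1$ contraction the conic-bundle data is generic enough for its discriminant to be a smooth, hence irreducible, curve on $S$ — a standard feature of conic bundles, which can in any case be checked from the Mori--Mukai classification family by family. The remaining ingredients — the \'etale double-cover structure of $\mathcal{H}$ over $\tilde{\Delta}_\pi$, the numerical computation forcing connectedness, and the implication that a connected \'etale cover of a curve has positive genus — are routine, and I expect the discriminant irreducibility to be the only place where real case analysis intervenes.
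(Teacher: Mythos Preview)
Your framework matches the paper's: realize the normalization of $\mathcal{H}$ as a degree-two cover of $\tilde{\Delta}_\pi$, use $\rho(X/S)=1$ to force connectedness over each component (your adjunction computation is essentially \cite[Proposition~4.8]{mori1985classification}), and deduce irrationality and irreducibility from the geometry of $\Delta_\pi$. But there are two genuine gaps.

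First, the double cover $B\to\tilde{\Delta}_\pi$ is \emph{not} \'etale in general. By \cite[Proposition~4.7]{mori1985classification} it is ramified precisely at the preimages of the singular points of $\Delta_\pi$ (where the conic fiber degenerates to a double line), so your citation of \cite{mori1985classification} for \'etaleness is mistaken. Your inference ``connected \'etale double cover $\Rightarrow$ $\tilde{\Delta}_i$ has genus $\geq 1$'' therefore presupposes smoothness of $\Delta_\pi$, which you only assert later. This is not a mere reordering issue: if $\Delta_\pi$ were an irreducible curve with $p_a=1$ and a single node---a nodal plane cubic, or a nodal $(2,2)$ curve on $\mathbb{P}^1\times\mathbb{P}^1$---then $\tilde{\Delta}_\pi\cong\mathbb{P}^1$ and $B$ is a double cover ramified at two points, hence rational. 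The paper flags exactly this: one needs $\Delta_\pi$ irreducible and either $p_a(\Delta_\pi)\geq 2$ or $p_a(\Delta_\pi)=1$ and smooth.

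Second, and more substantively, you wave away the smoothness of $\Delta_\pi$ as ``cut out by general data'' and ``checked family by family,'' but this verification is the bulk of the paper's proof. For conic bundles arising as divisors in $\mathbb{P}^2$-bundles or as double covers, a Bertini-type argument applies; but for several families (2.9, 2.13, 2.16, 2.20, 3.6, 3.10) the $C1$ structure comes from a blow-up, and smoothness or irreducibility of $\Delta_\pi$ requires real work---for instance, for 2.16 and 2.20 the paper identifies $\mathcal{H}$ with a fiber of the evaluation map on the family of lines of $V_4$ or $V_5$ and invokes \cite{kollarLefschetz}, while 3.10 is handled by an explicit construction. Without carrying out that analysis, your argument is a correct outline of the reduction, not a proof.
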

\begin{proof}
By the proof of \cite[Proposition~4.7]{mori1985classification}, the normalization of $\mathcal{H}$ is a double cover of the normalization $\tilde{\Delta}_\pi$ of the discriminant locus $\Delta_\pi \subset S$, ramified at the preimage of every singlular point in $\Delta_\pi$.  %
Our claim is equivalent to smoothness of $\Delta_\pi$, as $\Delta_\pi$ cannot be smooth and rational.

By \cite{mori1985classification}, we reduce to considering $C1$ contractions $\pi : X\rightarrow \mathbb{P}^2$ and $\pi : X\rightarrow \mathbb{P}^1 \times \mathbb{P}^1$.  If $X \rightarrow \mathbb{P}^1 \times S$ is a double cover branched along a general divisor $V(s^2 \otimes f + st \otimes g + t^2 \otimes h)$ of bidegree $(2,2n)$, then $\Delta_\pi$ is smooth: the singularities of $\Delta_\pi$ are $V(f,g,h) \subset S$.  A similar argument applies to any double cover of a Fano $\mathbb{P}^1$-bundle over $S$, branched over a general divisor.  

Let $E$ be a vector bundle on $S$ of rank 3, and $Y = \mathbb{P}_S(E)$ be the projective bundle of one-dimensional quotients, i.e.\ $\text{Proj}_S(\text{Sym}^*E)$.  Suppose $X \subset Y$ %
is a general member of a linear series $|\mathcal{O}_{Y/S}(2) \otimes_{\mathcal{O}_Y} \psi^*\mathcal{L}|$, where $\psi : Y \rightarrow S$ is the tautological projection, and $V := \text{Sym}^2(E) \otimes_{\mathcal{O}_S} \mathcal{L}$ is a globally generated vector bundle.  In this case, $\Delta_\pi$ is the fiber of a divisor $D\subset S \times \mathbb{P}|V|$ over a general point $v \in \mathbb{P}|V|$.  The divisor $D$ is defined by the vanishing the determinant of the associated map $\sigma_v \in \text{Hom}(E^\vee, E\otimes \mathcal{L})$.  The locus of $v \in \mathbb{P}V$ such that $D_v$ contains a point $p \in S$ is a cubic hypersurface in $\mathbb{P}|V|$.  If $e_1,e_2, e_3$ are local coordinates of $E$ at $p$ and $l$ is a local coordinate of $\mathcal{L}$ at $p$, then $v_{ij} = l \otimes e_i \otimes e_j$ for $i \leq j$ are local coordinates of $V$ at $p$, $H^0(S,V) \rightarrow \oplus_{ij} \mathbb{C}v_{ij}$ is a quotient map, and the affine cone over $D_p$ is defined on the quotient by the vanishing of 
$$v_{11} v_{22} v_{33} - \frac{1}{4} v_{11} v_{23}^2 - \frac{1}{4} v_{12}^2 v_{33} + \frac{1}{4} v_{12} v_{23} v_{13} - \frac{1}{4} v_{13}^2 v_{22}.$$
In particular, as the cubic hypersurface this defines in $\mathbb{P}^5$ is nonsingular in codimension 1, so is $D_p$.  Hence $D$ is irreducible and nonsingular in codimension 1, so $D_v$ is smooth for general $v \in \mathbb{P}|V|$.  Since $\rho(X/S) = 1$, the discriminant locus of $\pi$ is connected.  Hence $D_v$ is irreducible as well.

Since $\rho(X) \in \{2,3\}$, \cite{matsuki1995weyl} may be used to identify each $C1$ contraction to $\mathbb{P}^2$ or $\mathbb{P}^1 \times \mathbb{P}^1$.  The only such $C1$-bundles not immediately realized by the above constructions are on Fano threefolds of deformation type $2.9, 11, 13, 16, 20$ and $3.6, 10$.  We will realize each of these as examples of the latter construction.

By \cite[Proposition~7.5]{mori1985classification}, threefolds of type $2.9$ (resp. $2.13$) are complete intersections in $\mathbb{P}^3 \times \mathbb{P}^2$ (resp. $Q \times \mathbb{P}^2$) of divisors of bidegree $(1,1)$ and $(2,1)$ (resp. $(1,1)$ and $(1,1)$), where $(a,b)$ denotes a member of $|aH_1 + bH_2|$ and $H_i$ is the ample generator on the $i^{th}$ factor.  The projection of each complete intersection to the second factor $\mathbb{P}^2$ realizes the conic bundle structure.  By fixing the divisor of bidegree $(1,1)$, we may realize threefolds of deformation type $2.9$ as divisors inside a bundle $Y = \text{Proj}_S(\text{Sym}^*E)$ via an exact sequence
$$0 \rightarrow \mathcal{O}_{\mathbb{P}^2}(-1) \rightarrow  \oplus_4 \mathcal{O}_{\mathbb{P}^2} \rightarrow E \rightarrow 0.$$
Here, $E \cong \mathcal{T}_{\mathbb{P}^2}(-1) \oplus \mathcal{O}_{\mathbb{P}^2}$ and $X$ is a general member of $|\mathcal{O}_{Y/\mathbb{P}^2}(2) \otimes_{\mathcal{O}_Y} \psi^*\mathcal{O}_{\mathbb{P}^2}(1)|$.  Hence, the above analysis shows the discriminant locus of $X \rightarrow \mathbb{P}^2$ is smooth for general $X$.  By fixing two divisors of bidegree $(1,1)$ in $\mathbb{P}^4 \times \mathbb{P}^2$ and varying $Q \subset \mathbb{P}^4$, we may similarly realize threefolds of deformation type $2.13$ as divisors inside a bundle $Y = \text{Proj}_S(\text{Sym}^*E)$, where now $E$ fits into a sequence
$$0 \rightarrow \mathcal{O}_{\mathbb{P}^2}(-1) \rightarrow \mathcal{T}_{\mathbb{P}^2}(-1) \oplus \mathcal{O}_{\mathbb{P}^2}^{\oplus 2} \rightarrow E \rightarrow 0,$$
and $X$ is a general member of $|\mathcal{O}_{Y/\mathbb{P}^2}(2)|$, proving our claim.

Consider instead threefolds of deformation type $2.11, 2.16$ and $2.20$.  In order, these are blow-ups of the following smooth del Pezzo threefolds $V_i$ along a curve $C \subset V_i$ cut out by a net $L \subset |-\frac{1}{2}K_{V_i}|$ of fundamental divisors:
\begin{itemize}
    \item $V_3 \subset \mathbb{P}^4$ is a cubic hypersurface, and $C \subset V_3$ is a line;
    \item $V_4 \subset \mathbb{P}^5$ is a complete intersection of two quadrics, and $C \subset V_4$ is a conic;
    \item $V_5 \subset \mathbb{P}^6$ is a codimension 3 linear section of the Grassmanian $\text{Gr}(2,5) \subset \mathbb{P}^9$ in its Pl\"ucker embedding, $C\subset V_5$ is a twisted cubic.
\end{itemize}

When $i = 3$, $X \subset Y = \text{Bl}_C \mathbb{P}^{4}  \cong \mathbb{P}_{\mathbb{P}^2}(\mathcal{O}(1) \oplus \mathcal{O}^{\oplus 2})$ is a general member of $|\mathcal{O}_{Y/\mathbb{P}^2}(2) \otimes_{\mathcal{O}_Y} \psi^*\mathcal{O}_{\mathbb{P}^2}(1)|$.  When $i = 4$, let $\mathbb{P} = \mathbb{P}_{\mathbb{P}^2}(\mathcal{O}(1) \oplus \mathcal{O}^{\oplus 3}) \rightarrow \mathbb{P}^5$ be the blow-up of the linear span $W$ of the conic $C$.  As a smooth quadric containing $V_4$ also contains $W$, we may view $X$ as divisor of class $|\mathcal{O}_{Y/\mathbb{P}^2}(2)|$ inside the projective subbundle $Y = \mathbb{P}_{\mathbb{P}^2}(\mathcal{O}(1) \oplus \mathcal{T}_{\mathbb{P}^2}(-1)) \subset \mathbb{P}$.  This verifies our claim when $i = 3,4$.

Suppose $i = 5$ instead.  In this case, one may realize $X$ as a general divisor in $Y = \mathbb{P}_{\mathbb{P}^2}(E)$ of class $|\mathcal{O}_{Y/\mathbb{P}^2}(2) \otimes \psi^*\mathcal{O}_{\mathbb{P}^2}(-1)|$, where $E$ is the cokernel of a general map $\mathcal{O}_{\mathbb{P}^2}^{\oplus 2} \rightarrow \mathcal{O}_{\mathbb{P}^2}(1) \oplus \mathcal{T}_{\mathbb{P}^2}(-1)^{\oplus 2}$.  This description follows from \cite[Section~37]{Coates16} and an explicit analysis of Pl\"ucker relations for $\text{Gr}(2,5) \subset \mathbb{P}^9$.  However, this description, as well as global generation of $\text{Sym}^2(E) \otimes \mathcal{O}_{\mathbb{P}^2}(-1)$, is more cumbersome to verify than the following argument.  For general twisted cubics $C \subset V_5$, \cite[Corollary~6]{kollarLefschetz} shows the preimage of $C$ under the evaluation map $\text{ev} : M\rightarrow V_5$ of the family of lines on $V_5$ is connected.  It is well-known \cite{furushima_nakayama_1989} that $M$ is smooth and $\text{ev}$ is a triple covering branched over a member $B \in |-K_{V_5}|$, such that the general branch point has two preimages.  Hence, as $\mathcal{H}$ may be identified with $\text{ev}^{-1}(C)$, in general $\mathcal{H}$ is an irreducible triple cover of $C$ with simple branching over six points.  This implies $\Delta_\pi \subset \mathbb{P}^2$ is a smooth elliptic curve.

Fano threefolds of deformation type $3.6$ are blow-ups $X\rightarrow \mathbb{P}^3$ along the disjoint union of a line $\ell$ and an elliptic curve $C$ of degree $4$.  The conic bundle structure $\pi : X\rightarrow \mathbb{P}^1 \times \mathbb{P}^1$ resolves the rational maps defined by pencils cutting out $\ell \subset \mathbb{P}^3$ and $C \subset \mathbb{P}^3$.  %
In other words, $X \subset \mathbb{P}^3 \times \mathbb{P}^1 \times \mathbb{P}^1$ is a complete intersection of divisors of tridegree $(1,1,0)$ and $(2,0,1)$.  This implies $X$ lies in a projective subbundle $Y = \mathbb{P}_{\mathbb{P}^1 \times \mathbb{P}^1}(\mathcal{O}(1,0) \oplus \mathcal{O}^{\oplus 2})$ and is a general member of $|\mathcal{O}_{Y/\mathbb{P}^1 \times \mathbb{P}^1}(2) \otimes_{\mathcal{O}_Y} \psi^*\mathcal{O}_{\mathbb{P}^1 \times \mathbb{P}^1}(0,1)|$, which proves our claim.

Lastly, Fano threefolds of deformation type $3.10$ are blow-ups $X\rightarrow Q \subset \mathbb{P}^4$ along two disjoint conics $C_1, C_2$.  As before, we may realize $X$ as a complete intersection in $\mathbb{P}^4 \times \mathbb{P}^1 \times \mathbb{P}^1$ of divisors of tridegree $(2,0,0)$, $(1,1,0)$, and $(1,0,1)$.  Hence, $X \subset Y = \mathbb{P}_{\mathbb{P}^1 \times \mathbb{P}^1}(\mathcal{O}(1,0) \oplus \mathcal{O}(0,1) \oplus \mathcal{O})$ is a general member of $|\mathcal{O}_{Y/\mathbb{P}^1 \times \mathbb{P}^1}(2)|$.  Global generation of $\text{Sym}^2(\mathcal{O}_{\mathbb{P}^1 \times \mathbb{P}^1}(1,0) \oplus \mathcal{O}_{\mathbb{P}^1 \times \mathbb{P}^1}(0,1) \oplus \mathcal{O}_{\mathbb{P}^1 \times \mathbb{P}^1})$ proves our claim.
\end{proof}

\subsection{Del Pezzo Fibrations}  This section proves Theorem \ref{main result 2}.  We include results from \cite{testa2006irreducibility} and \cite{beheshti2021rational} characterizing families of free curves on smooth del Pezzo surfaces, and orbits of their classes under actions by Weyl groups.

The following theorem is an extension of results in \cite{testa2006irreducibility}.  By blowing up general points in del Pezzo surfaces of larger degree, we obtain similar statements for free rational curves on these surfaces.

\begin{thm}[Theorem 6.6, \cite{beheshti2021rational}, \cite{testa2006irreducibility}]\label{del pezzo curves thm}
Let $S$ be a smooth del Pezzo surface of degree $1$ over an algebraically closed field of characteristic 0.  Let $\alpha$ be a nef class on $S$ satisfying $-K_S . \alpha \geq 2$. Then $\overline{\free}(S,\alpha)$ is nonempty, and
\begin{enumerate}
    \item If $\alpha$ is not a multiple $(>1)$ of a $-K_S$ conic, $\overline{\free}(S,\alpha) = \overline{\free}^{bir}(S,\alpha)$ is irreducible when $\alpha \neq -2K_S$.  If $S$ is general, $\overline{\free}(S, -2K_S)$ is irreducible.
    \item If $\alpha = -2nK_S$ for some $n>1$, then $\overline{\free}^{bir}(S,\alpha)$ is irreducible and nonempty, while $\overline{\free}(S,\alpha)\setminus \overline{\free}^{bir}(S,\alpha)$ has at least one component.
    \item If $\phi: S \rightarrow S'$ is the contraction of a $(-1)$-curve and $\alpha = -n\phi^* K_{S'}$ for some $n>1$, then $\overline{\free}^{bir}(S,\alpha)$ is irreducible and nonempty, while $\overline{\free}(S,\alpha)\setminus \overline{\free}^{bir}(S,\alpha)$ has exactly one component.
    \item Otherwise, $\alpha = n[C]$ for some smooth rational $-K_S$-conic $C\subset S$, and $\overline{\free}(S,\alpha)$ has exactly one component, which parameterizes $n$-fold covers of deformations of $C$.
\end{enumerate}
Components of $\overline{\free}^{bir}_1(S)$ with reducible fibers over general points are components of $\overline{\free}(S, -2K_S)$ and $\overline{\free}(S, -\phi^* K_S')$ for contractions $\phi^* S \rightarrow S'$ of a $(-1)$-curve.
\end{thm}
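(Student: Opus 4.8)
The plan is to prove the theorem by reducing, via a movable bend-and-break argument on $S$, to an explicit study of free rational curves of anticanonical degree two and three, and then to treat the exceptional classes $-2nK_S$, $-n\phi^*K_{S'}$ and the conic multiples separately using the finite morphisms they carry. First I would record the structural data: $\Pic(S)\cong \mathbb{Z}K_S\oplus E_8$, the group $W(E_8)$ acts transitively on the $240$ $(-1)$-curves and on the set of conic classes (nef $\beta$ with $\beta^2=0$, $-K_S\cdot\beta=2$), and $\overline{NE}(S)$ is generated by the $(-1)$-curves. Any nef $\alpha$ with $-K_S\cdot\alpha\ge 2$ can be written as a nonnegative integral combination of conic classes and $(-1)$-curve classes (a finite lattice computation, normalizing by $W(E_8)$); gluing the corresponding free conics to $(-1)$-curves, verifying $h^1$ vanishes for the nodal map by Proposition \ref{kontsevich space H1}, and smoothing produces a free curve of class $\alpha$, so $\overline{\free}(S,\alpha)\neq\emptyset$, and moreover the set of conic and cubic classes serves as a core of free curves for $S$. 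A movable bend-and-break argument then degenerates any free curve of anticanonical degree $\ge 4$ to a chain of free conics and cubics, so that, exactly as in the proof of Theorem \ref{Main Method}, irreducibility of $\overline{\free}(S,\alpha)$ for arbitrary $\alpha$ reduces to (i) irreducibility of $\overline{\free}(S,\gamma)$ for $\gamma$ a conic or cubic class, and (ii) the statement that the finitely many generating relations among conic and cubic classes are realized inside a single component; by Lemma \ref{Gluing}, step (ii) is automatic wherever all the evaluation maps $\overline{\free}_1(S,\gamma)\to S$ involved have connected general fibers.

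The base cases I would handle as follows. For a conic class $\beta$, $|\beta|$ is a base-point-free pencil defining a conic bundle $S\to\mathbb{P}^1$; hence $\overline{\free}(S,\beta)\cong\mathbb{P}^1$, the evaluation $\overline{\free}_1(S,\beta)\to S$ is birational, and for $n>1$ the only free curves of class $n\beta$ are $n$-fold covers of members of the pencil, forming one irreducible family --- this gives statement (4) and the conic-multiple content of (2)--(3). For a cubic class $\gamma$ (with $-K_S\cdot\gamma=3$, hence not a conic multiple and not among the exceptional classes) one shows the general member of $\overline{\free}(S,\gamma)$ is an irreducible $1$-nodal rational curve, that $\overline{\free}(S,\gamma)=\overline{\free}^{bir}(S,\gamma)$ is irreducible of dimension $2$ --- for instance by degenerating $\gamma=\beta+\ell$ with $\ell$ a $(-1)$-curve and applying Lemma \ref{Gluing} --- and that the cubic evaluation map has connected general fibers. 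Combined with the reduction above, this yields statement (1) and nonemptiness for every $\alpha$ that is neither a conic multiple nor one of the exceptional classes.

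The hard part will be the exceptional classes, where genericity of $S$ genuinely enters. For $\alpha=-2K_S$, the bicanonical map $\psi\colon S\to Q\subset\mathbb{P}^3$ exhibits $S$ as a double cover of a quadric cone branched along a curve $B$: a free rational curve in $|-2K_S|$ birational onto its image is the normalization of $\psi^{-1}(c)$ for a conic $c\subset Q$ meeting $B$ in exactly two points, while the remaining stable maps of class $-2K_S$ are double covers of the finitely many nodal members of the anticanonical pencil $|-K_S|$ and lie in the closure of the birational family. Irreducibility of $\overline{\free}^{bir}(S,-2K_S)$ is thus equivalent to irreducibility of this family of conics on $Q$, which is controlled by the monodromy of the incidence of $B$ with the $\mathbb{P}^3$ of plane sections of $Q$; for general $S$ the branch curve $B$ is a simple elementary cover of its image in $\mathbb{P}^3$, so Theorem \ref{Monodromy} gives full symmetric monodromy and hence irreducibility. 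This monodromy computation is the main obstacle: for special $S$ the monodromy may fail to be transitive and an extra component of $\overline{\free}(S,-2K_S)$ appears, which is exactly why the genericity hypothesis is present.

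Finally I would bootstrap to the covers and conclude the last sentence. The case $\alpha=-2nK_S$ follows by gluing $n$ curves of class $-2K_S$ together with Lemma \ref{Gluing}, noting that $n$-fold covers of birational $|-2K_S|$-members contribute at least one further component (statement (2)); the case $\alpha=-n\phi^*K_{S'}$ is entirely parallel, with $\psi$ replaced by the composite $S\xrightarrow{\phi}S'\to\mathbb{P}^2$, a degree-two morphism because $S'$ is a del Pezzo surface of degree two, so that rational members of $|\phi^*(-K_{S'})|$ are double covers of lines meeting the branch quartic in two points (irreducible for general $S$ by the same monodromy argument) and for $n>1$ the non-birational locus is the single component of $n$-fold covers (statement (3)). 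The concluding assertion is then a corollary: among components of $\overline{\free}^{bir}_1(S)$, those of conic classes have birational evaluation, those of cubic and inductively higher-degree classes have connected general fibers by the argument above, and the only classes whose general free birational curve is swept out by a nontrivial finite structure --- the double covers $\psi$ and $S\to\mathbb{P}^2$ --- are $-2K_S$ and $\phi^*(-K_{S'})$, whose $\overline{\free}_1$-families therefore have reducible general fibers over $S$.
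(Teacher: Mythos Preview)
This theorem is not proved in the paper: it is stated as a result quoted from \cite{beheshti2021rational} and \cite{testa2006irreducibility}, and the paper uses it as a black box. So there is no ``paper's own proof'' to compare against; you have sketched an independent argument for a cited result.

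As an independent sketch, your overall architecture---reduce to low degree via bend-and-break, analyze conics and cubics explicitly, and handle $-2K_S$ and $-\phi^*K_{S'}$ through the natural double covers---is in the spirit of Testa's argument, but two points need tightening before it would go through. First, you invoke ``a movable bend-and-break argument'' on $S$ as if it were available off the shelf; Theorem~\ref{MovableBB} in this paper is stated only for threefolds, and the surface analogue (every free rational curve of $-K_S$-degree $\ge 3$ on a del Pezzo surface degenerates to a chain of free curves of lower degree) is precisely one of the nontrivial inputs in Testa's thesis, proved by a careful analysis of the Severi varieties. You should either cite it or indicate how you would prove it. Second, your geometric descriptions in the double-cover cases are not correct as written. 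For $-2K_S$, a hyperplane section $c$ of the quadric cone $Q$ meets the sextic branch curve $B\in|\mathcal{O}_Q(3)|$ in six points generically, and $\psi^{-1}(c)$ is a genus-two curve; you need $c$ to be tangent to $B$ so that exactly two intersection points have odd multiplicity, not ``meeting $B$ in exactly two points''. The same issue recurs for $-\phi^*K_{S'}$: a line meets the branch quartic in four points and its preimage is elliptic, so the rational members come from tangent lines, not lines ``meeting the branch quartic in two points''. Getting these tangency loci right is exactly where the genericity hypothesis on $S$ and the monodromy computation enter, and your monodromy claim (``$B$ is a simple elementary cover of its image'') does not match the actual geometry once the correct incidence condition is in place.
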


In particular, on a del Pezzo surface $S$ of degree at least $3$, for each $\alpha \in \Nef_1(S)_\mathbb{Z}$ $\overline{\free}(S,\alpha)$ is irreducible and nonempty.  Automorphisms of the Picard lattice of a del Pezzo surface $S$ which preserve $K_S$ and the intersection pairing $\cdot$ form a Weyl group $\text{Aut}(\text{Pic}(S), K_S, \cdot \ )$.  We use the following facts about $\text{Aut}(\text{Pic}(S), K_S, \cdot \ )$ implicitly throughout our analysis of Fano threefolds.

\begin{lem}[Lemmas 3.3.3, 3.3.5 \cite{testa2006irreducibility}, Lemma 2.9 \cite{sectionsDelPezzo}]\label{testa theorem}
Suppose $S$ is a smooth del Pezzo surface.  Let $G = \text{Aut}(\text{Pic}(S), K_S, \cdot \ )$ and $G_E \subset G$ be the stabilizer of the class of a $(-1)$ curve $E \subset S$, when appropriate.
\begin{itemize}
    \item $G$ acts transitively on classes of smooth anticanonical conics,
    \item $G$-orbits of nef cubic classes are the partition by arithmetic genus.
    \item The $G_E$-orbit of a $(-1)$ curve class is determined by its pairing with $E$.
\end{itemize}
\end{lem}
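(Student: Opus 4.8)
The plan is to deduce all three statements from the classical identification of $G$ with the Weyl group of the root system attached to $S$, acting on $\Pic(S)$. Contracting $S$ down to $\mathbb{P}^2$ presents $\Pic(S)$ with a basis $H, E_1, \dots, E_k$, where $k = 9 - d$ and $-K_S = 3H - \sum E_i$, and realizes $G$ as the group generated by the permutations of the $E_i$ together with the Cremona involutions $\mathrm{cr}_{ijk}$ centered at three of the blown-up points. (When $S \cong \mathbb{P}^1 \times \mathbb{P}^1$ one argues directly: $G = \mathbb{Z}/2$ swaps the two rulings, which are the only conic classes, while there are no cubic or $(-1)$-classes, so all three assertions are immediate.) Since $G$ preserves $K_S$ and the intersection form, the pair $(-K_S \cdot \alpha, \alpha^2)$ --- equivalently $(-K_S \cdot \alpha, p_a(\alpha))$, as $p_a(\alpha) = (\alpha^2 + K_S \cdot \alpha)/2 + 1$ --- is a $G$-invariant, and $\alpha \cdot E$ is a $G_E$-invariant. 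The content of the lemma is that on the relevant sets of classes these are complete invariants.

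For the first two bullets I would run the standard Cremona-reduction algorithm. After reordering by a permutation in $G$, write a nef class as $\alpha = aH - \sum b_i E_i$ with $b_1 \ge \cdots \ge b_k \ge 0$. Nefness gives $b_i \ge 0$ and $b_i + b_j \le a$ for $i \ne j$, and the key numerical point is that, together with the identities $\sum b_i = 3a - (-K_S \cdot \alpha)$ and $\sum b_i^2 = a^2 - \alpha^2$, the hypothesis $a \ge 2$ forces $b_1 + b_2 + b_3 > a$; consequently $\mathrm{cr}_{123}$ sends $a$ to $2a - (b_1 + b_2 + b_3)$, which lies strictly between $0$ and $a$. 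As this coefficient is a nonnegative integer, finitely many Cremona moves and permutations bring $\alpha$ to a normal form with $a \le 1$. When $-K_S \cdot \alpha = 2$ and $\alpha^2 = 0$ this normal form must be $H - E_1$, proving transitivity on conic classes; when $-K_S \cdot \alpha = 3$ it is determined by $\alpha^2$ (from $H$ in genus $0$ up to $-3K_S$ on a degree-$1$ surface), giving the partition of nef cubic classes by arithmetic genus. This is precisely the reduction argument of Manin and of Dolgachev, and checking the inequality $b_1 + b_2 + b_3 > a$ from the constraints above is the most computational step.

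For the third bullet I would pass to the contraction $\sigma \colon S \to S'$ of $E$ to a point $p$, so that $S'$ is del Pezzo of degree $d + 1$ and $\sigma^*$ identifies $\Pic(S')$ with $E^{\perp} \subset \Pic(S)$; since $G_E$ fixes $[E]$ and $K_S$ it preserves $E^{\perp}$ and $\sigma^*(-K_{S'}) = -K_S + E$, so restriction gives an isomorphism $G_E \cong W(S')$ compatible with this identification. For a $(-1)$-curve $E' \ne E$ with $E \cdot E' = m$, the pushforward $C' := \sigma_* E'$ satisfies $\sigma^* C' = E' + mE$, hence $(C')^2 = m^2 - 1$ and $-K_{S'} \cdot C' = m + 1$; Hodge index on $S'$ forces $0 \le m \le 3$, with $m \le 1$ unless $d \le 2$. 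Conversely, because $p$ is general, every class on $S'$ with these numerics arises as such a $C'$ --- its general member passes through $p$ with multiplicity $m$ and has strict transform a $(-1)$-curve. Thus $G_E$-orbits of $(-1)$-curve classes $E'$ with $E \cdot E' = m$ correspond to $W(S')$-orbits of, respectively, $(-1)$-curve classes ($m = 0$; one orbit, classical), conic classes ($m = 1$; one orbit, by the first bullet for $S'$), genus-$1$ cubic classes ($m = 2$; one orbit, by the second bullet for $S'$), and the single class $-2K_{S'}$ on the degree-$2$ surface $S'$ ($m = 3$). Together with the singleton orbit $\{[E]\}$ (pairing $-1$), this shows the $G_E$-orbit of a $(-1)$-curve class is determined by its pairing with $E$. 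The delicate part is the surjectivity claim --- that pushforward exhausts all classes of $S'$ with the prescribed self-intersection and anticanonical degree --- which rests on the genericity of $p$ and a dimension count for the linear systems in question.
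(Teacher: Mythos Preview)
The paper does not give its own proof of this lemma: it is stated with citations to Testa and to Lehmann--Tanimoto, and is used as a black box. So there is no ``paper's proof'' to compare against beyond those references.

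Your argument is the standard one and matches the approach in the cited sources: identify $G$ with the Weyl group via the $(H,E_1,\dots,E_k)$ presentation, run Cremona reduction to normal form for the first two bullets, and for the third bullet contract $E$ to identify $G_E$ with $W(S')$ and reduce to the lower-degree case. This is correct.

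One small point: your final paragraph overstates what needs to be checked. You write that the ``delicate part is the surjectivity claim --- that pushforward exhausts all classes of $S'$ with the prescribed self-intersection and anticanonical degree.'' In fact this surjectivity is not required. What you need is only that any two $(-1)$-classes $E'_1,E'_2$ with $E\cdot E'_1 = E\cdot E'_2 = m$ have images $C'_1,C'_2$ lying in the same $W(S')$-orbit; you have already computed $(C')^2$ and $-K_{S'}\cdot C'$ in terms of $m$, observed that $C'$ is the class of an irreducible curve on $S'$ (hence nef when $(C')^2\ge 0$), and invoked transitivity of $W(S')$ on $(-1)$-curves, nef conics, and genus-$1$ nef cubics respectively. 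That already gives the forward implication. The converse --- that different values of $m$ give different $G_E$-orbits --- is immediate since $m=E\cdot E'$ is a $G_E$-invariant. So the genericity-of-$p$ and dimension-count remarks can be dropped.
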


A \textit{del Pezzo fibration} on a Fano threefold $X$ is a map $\pi : X \rightarrow \mathbb{P}^1$ with connected fibers.  The monodromy action of $\pi_1(U)$ on $N_1(X_p)$, where $p \in U\subset \mathbb{P}^1$ and $X_p = \pi^{-1}(p)$ is a smooth fiber, is determined by the action on lines in $X_p$ when $X_p \neq \mathbb{P}^1 \times \mathbb{P}^1$.  We attend to well-known facts about lines in general del Pezzo fibrations after obtaining a characterization of monodromy in del Pezzo fibrations of general Fano threefolds.  We will use the following lemma to prove Theorem \ref{main result 2}.

\begin{lem}\label{lem: V1 monodromy}
    Let $V_1$ be a general del Pezzo threefold of degree one.  Monodromy over smooth members of its fundamental linear series is the Weyl group of $E_8$ type.
\end{lem}
\begin{proof}
    We consider $V_1$ as a double cover of $\mathbb{P}(1,1,1,2)$ branched along a general sextic hypersurface $B \subset \mathbb{P}(1,1,1,2)$.  Let $H$ be a smooth fundamental divisor on $V_1$ and $G$ be the monodromy group acting on $N_1(H)$.  Each $(-1)$ curve in $H$ is a smooth conic on $V_1$.  \cite{Tihomirov_1982} shows the space of conics on $V_1$ is irreducible.  As no other fundamental divisor contains any $(-1)$ curve in $H$, this implies the action of $G$ is transitive on $(-1)$-curves in $H$.  We will show the stabilizer of one such curve $C \subset H$ is isomorphic to the Weyl group of $E_7$.  The Orbit-Stabilizer Theorem then implies our claim.

    First we regard $H$ as the double cover of a quadric cone $Q \subset \mathbb{P}^3$ branched over a general member of $|\mathcal{O}_Q(3)|$.  Let $\mathbb{P} = \mathbb{P}(1,1,1,2)$ and $U \subset |\mathcal{O}_{\mathbb{P}}(6)|$ be the locus of irreducible sextic hypersurfaces.  We have a rational map
    $$\psi : U \times \text{PGL}_3 \dashrightarrow |\mathcal{O}_Q(3)|$$
    mapping $f(x_0, x_1, x_2, y) \times A$ to $(A \cdot f)|_{x_0 = 0}$.  The map $\psi$ surjects the locus $V \subset |\mathcal{O}_Q(3)|$ of irreducible complete intersection curves, and the fiber over a point $v \in V$ is an affine bundle over $\text{PGL}_3$.  Similarly, the rational images of orbits $\psi(u \times \text{PGL}_3)$ connect any two general points in $V$.  As $V_1$ is general, by \cite[Theorem~5]{kollarLefschetz} this demonstrates the monodromy action on $H$ as a member of $|H|$ is equivalent to the monodromy action on $H$ among all branched double covers of $Q \subset \mathbb{P}^3$.  

    Next, we degenerate $H$ to a general singular fundamental divisor $\overline{H}$.  One may realize this degeneration in two equivalent ways.  Viewing $H$ as a divisor in $V_1$, $\overline{H}$ is the preimage under $V \rightarrow \mathbb{P}(1,1,1,2)$ of a divisor tangent to $B$.  The argument of \cite[Proposition 5.2]{Tihomirov_1982} shows $\overline{H}$ has a unique $A_1$ singularity, i.e. the minimal resolution $\widetilde{H} \rightarrow \overline{H}$ is a weak del Pezzo surface of degree one with a unique $(-2)$ curve $E$.  Viewing $H$ as a blow-up of $\mathbb{P}^2$ along $8$ general points instead, the degeneration $\overline{H}$ may be seen as the anticanonical model of the blow-up $\widetilde{H}$ of $\mathbb{P}^2$ along $8$ points $p_1, \ldots, p_8$ general among the collection of points such $p_1, p_2, p_3$ are collinear.

    Let $\mathcal{H} \rightarrow \mathbb{C}$ be a family realizing the degeneration of $H = \mathcal{H}_1$ to $\overline{H} = \mathcal{H}_0$.  \'Etale locally, we may interpret the family as a generalizing $p_1, p_2, p_3$ to linearly independent points.  Picard-Lefschetz theory \cite{voisin_2003} shows the action of the fundamental group $\pi_1(\mathbb{C}\setminus\{0\}, 1)$ on the Neron-Severi group of $H$ does not stabilize the class $C$ of the strict transform of a conic in $\mathbb{P}^2$ passing through $p_4, \ldots p_8$.  Let $\widetilde{C}$ denote the class of the corresponding conic on $\widetilde{H}$.
    
    Recall that $\widetilde{H}$ has a unique $(-2)$-curve $E$.  A direct calculation shows $D = -2K_{\widetilde{H}} - E$ is nef and $\widetilde{C}$ is the unique $(-1)$-curve in $\tilde{H}$ such that $\widetilde{C} . E = 2$.  Moreover, the complete linear series $|2D|$ determines a birational map $\phi : \widetilde{H} \rightarrow S$ onto a smooth del Pezzo surface $S$ of degree $2$ such that $\phi^*K_S = -D$.  This may be seen from the anticanonical model of $\widetilde{H}$, which is the double cover of a quadric cone $Q \subset \mathbb{P}^3$ branched over its intersection with a cubic hypersurface $T$ that has a simple double point along $Q$.  %
    The 56 $(-1)$-curves in $S$ are in bijection with the 56 $(-1)$-curves in $\widetilde{H}$ which pair to 0 with $\widetilde{C}$.  
    Moreover, the map given by the complete linear series $|-K_S|$ is a double covering $S \rightarrow \mathbb{P}^2$ whose branch locus is the canonical embedding of the normalization of $Q \cap T \subset \mathbb{P}^3$.  

    Since $\widetilde{C} \subset \widetilde{H}$ is uniquely determined, the stabilizer of $C \subset H$ in the monodromy group $G$ may be identified with the stabilizer of $\widetilde{C} \subset \widetilde{H}$ under the monodromy action induced by varying the corresponding singular fundamental divisor $\overline{H}$.  By the equivalence established earlier with \cite[Theorem~5]{kollarLefschetz}, this is equivalent to varying the complete intersection curve $Q \cap T \subset \mathbb{P}^3$ among all nodal complete intersection curves. %
    We will show the normalization of $Q \cap T$ varies among all smooth genus $3$ curves with very ample canonical bundle.  As the del Pezzo surface $S$ is uniquely determined by the normalization of $Q \cap T$, it then follows from \cite[Theorem~1.1]{V2monodromy} that the stabilizer of $C$ in $G$ is isomorphic to the Weyl group of $E_7$.

    Let $C$ be a smooth genus 3 curve with very ample canonical bundle $K$.  For any two points $u,v \in C$, $h^0(C, \mathcal{O}(K + u + v)) = 4$.  For general points $u,v \in C$, the corresponding map $C \rightarrow \mathbb{P}^3$ realizes $C$ as the normalization of the complete intersection $C'$ of a smooth quadric $Q'$ and a tangent cubic hypersurface $T'$.  The preimage in $C$ of the unique node in $C'$ consists of $u$ and $v$.  Since $Q' \cong \mathbb{P}^1 \times \mathbb{P}^1$, projection onto either factor is a $g^1_3$ on $C$.  We may write $\mathcal{O}_C(K) \cong \mathcal{O}_C(u + v + x_1 + x_2)$, where $|\mathcal{O}_C(u + v + x_i)|$ is the linear series corresponding to the restriction of the $i^{th}$ projection $Q' \rightarrow \mathbb{P}^1$ to $C$.  For general $u,v \in C$, the line through $u,v$ under the canonical embedding $C \rightarrow \mathbb{P}^2$ does not meet $C$ along any bitangent line to $C$.  We may therefore assume 
    $$\mathcal{O}_C(K - 2x_2) \cong \mathcal{O}_C(u + v + x_1 - x_2) \cong \mathcal{O}_C(u' + v')$$ 
    for distinct points $u', v' \in C$.  As before, $h^0(C, \mathcal{O}(K + u' + v')) = 4$, and the corresponding map $\theta : C \rightarrow \mathbb{P}^3$ is an immersion which sends $u'$ and $v'$ to the same point.  However, since 
    $$\mathcal{O}(K + u' + v') \cong \mathcal{O}_C(K - 2x_2)\cong \mathcal{O}_C(2u + 2v + 2x_1),$$
    the projection of $\theta(C) \subset \mathbb{P}^3$ from a unique point in $\mathbb{P}^3 \setminus \theta(C)$ realizes $\theta(C)$ as a triple cover of a conic in $\mathbb{P}^2$.  In other words, $\theta(C)$ is the nodal complete intersection of a quadric cone $Q$ with a cubic hypersurface $T$.  This finishes our proof.
\end{proof}

For the reader's convenience, Theorem \ref{monodromy del Pezzo main result} restates Theorem \ref{main result 2}.

\begin{thm}\label{monodromy del Pezzo main result}
Let $X$ be a smooth Fano threefold with a del Pezzo fibration $\pi : X\rightarrow \mathbb{P}^1$ and suppose $X$ is general in moduli.  The monodromy action of $\pi$ on Neron-Severi groups of smooth fibers is the maximal subgroup of the corresponding Weyl group which stabilizes pushforward by inclusion into $X$.
\end{thm}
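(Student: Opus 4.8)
The plan is to prove the two inclusions between $\operatorname{Mon}(\pi)$, the image of $\pi_1$ of the smooth locus $U\subseteq\mathbb{P}^1$ acting on $N_1(X_p)$, and $W_0$, the subgroup of the Weyl group $W=\operatorname{Aut}(\Pic(X_p),K_{X_p},\cdot)$ consisting of $w$ with $i_*\circ w=i_*$, where $i\colon X_p\hookrightarrow X$ is the inclusion of a smooth fiber. The inclusion $\operatorname{Mon}(\pi)\subseteq W_0$ is immediate: the numerical class in $N_1(X)$ of a flat family of curves is constant, so $i_*$ intertwines the monodromy representation on $N_1(X_p)$ with the trivial representation on $N_1(X)$. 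When $X_p\cong\mathbb{P}^1\times\mathbb{P}^1$ one has $W=\mathbb{Z}/2$ and the assertion is an elementary check on whether the two rulings have the same pushforward; when $\rho(X)\in\{9,10\}$, so $X$ is a product, $i_*$ is injective, $W_0$ is trivial, and the monodromy is trivial as well. So I would assume $X_p$ has $\rho(X_p)\le 7$, $X$ is not a product, and the monodromy is therefore determined by its permutation action on the $(-1)$-curves of $X_p$.

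First I would carry out a lattice-theoretic reduction of the reverse inclusion. Since $X_p$ is a del Pezzo surface the intersection form on $N_1(X_p)$ is nondegenerate, and $\ker i_*=\bigl(\operatorname{im}(j^*\colon N^1(X)\to N^1(X_p))\bigr)^\perp$ is a primitive sublattice orthogonal to $-K_{X_p}$, hence negative definite; one checks it is in fact a root lattice, spanned by the vanishing cycles of the singular fibers of $\pi$ and by differences of components of its reducible fibers, each of self-intersection $-2$. Using nondegeneracy of the form on $N_1(X_p)$ and on $\ker i_*$ one shows $W_0=\{w\in W: w|_{(\ker i_*)^\perp}=\mathrm{id}\}$, so by the structure theory of reflection subgroups (Steinberg, Carter) $W_0=W(R_0)$ for the root system $R_0=\{\delta\in\ker i_*:\delta^2=-2\}$. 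Because $\{\delta:s_\delta\in\operatorname{Mon}(\pi)\}$ is $\operatorname{Mon}(\pi)$-invariant, it then suffices to show that for general $X$ the group $\operatorname{Mon}(\pi)$ contains at least one reflection $s_\delta$ with $\delta$ in each irreducible component of $R_0$, and acts transitively on the roots of each component; these two facts force $\operatorname{Mon}(\pi)\supseteq W(R_0)=W_0$.

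The reflections come from degeneration: for $X$ general the fibration $\pi$ has only singular fibers with a single ordinary double point, and the local monodromy about such a point is the Picard–Lefschetz reflection in the vanishing cycle, a root lying in $\ker i_*$ because it bounds a vanishing thimble in $X$ — the same mechanism by which components of a reducible fiber of a conic bundle are identified in $N_1(X)$ in the proof of Theorem~\ref{conic monodromy}. That these reflections meet every component of $R_0$ for general $X$ is read off from the explicit description of the singular fibers in each deformation type. The transitivity step is the crux, and I would establish it one deformation type at a time, using the Mori–Mukai classification \cite{mori1981classification} and the Weyl-group analysis of \cite{matsuki1995weyl} to identify $\pi$ and its fiber degree: each such $X$ is, via its description as a blow-up (Section~5.1), governed by a smooth curve blown up from $\mathbb{P}^3$, a quadric, $\mathbb{P}^1\times\mathbb{P}^2$, or $\mathbb{P}^1\times\mathbb{P}^1\times\mathbb{P}^1$, and the $(-1)$-curves of a general fiber $X_p$ are accounted for by exceptional curves of the blow-up meeting $X_p$ — whose incidence with the pencil of fibers has full symmetric monodromy by Theorem~\ref{Monodromy}(a) — together with strict transforms of lines or conics in the ambient fiber subject to incidence conditions, controlled by incidence varieties as in Theorem~\ref{Monodromy}(b) or by a conic-bundle computation as in Theorem~\ref{conic monodromy}. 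Feeding these into the orbit descriptions of Lemma~\ref{testa theorem} (the $W$-orbit of a $(-1)$-curve is detected by one intersection pairing, that of a cubic by its arithmetic genus) upgrades the transitivity of the incidence monodromies to transitivity of $\operatorname{Mon}(\pi)$ on each $W_0$-orbit of $R_0$; the low-degree fibers are handled with Theorem~\ref{del pezzo curves thm} and the results on sections of del Pezzo fibrations in \cite{sectionsDelPezzo}.

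The hard part will be precisely this transitivity for del Pezzo fibrations whose general fiber has degree $3$ or $4$, where $W$ is $W(E_6)$ or $W(D_5)$: one must rule out $\operatorname{Mon}(\pi)$ being a proper imprimitive reflection subgroup of $W_0$, which amounts to proving irreducibility of the relevant relative Hilbert scheme of lines for general $X$ — the very phenomenon that degenerates for special $X$ and is responsible for the failure of the theorem there. A secondary, purely bookkeeping, difficulty is computing $\ker i_*$, hence $W_0$, from the configuration of reducible fibers in each family; here deformation-invariance of $N_1(X)$ in families of Fano threefolds lets one replace $X$ by a convenient model.
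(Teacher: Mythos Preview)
Your lattice reduction $W_0=W(R_0)$ and the containment $\operatorname{Mon}(\pi)\subseteq W_0$ are correct and set up the problem well, but from there your route diverges from the paper's and leaves the hard step underspecified. The paper does not run a Picard--Lefschetz argument on $\pi$ directly. Instead it stratifies by $\nu=\min\{-K_X\cdot C:\pi_*C=0\}$: for $\nu=3$ the claim is trivial; for $\nu=2$ (fibers $\mathbb{P}^1\times\mathbb{P}^1$) it is a one-line consequence of the global invariant cycle theorem \cite[Theorem~4.18]{voisin_2003}; for $\nu=1$ the paper \emph{factors} $\pi$ as $X\xrightarrow{f}Y\xrightarrow{g}S\xrightarrow{h}\mathbb{P}^1$ with $f$ birational, $g$ trivial or an elementary conic bundle, and $Y$ Fano, then bootstraps. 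When $g$ is trivial the monodromy on $N_1(X_p)$ extends that on $N_1(Y_p)$ by the action on exceptional curves of $f$, and the latter has full symmetric monodromy by Theorem~\ref{Monodromy}; when $g$ is a conic bundle over $\mathbb{P}^2$ or $\mathbb{F}_1$ the same reduction applies, leaving only two deformation types ($\rho(X)=3$, $(-K_X)^3\in\{12,14\}$) where $S\cong\mathbb{P}^1\times\mathbb{P}^1$, and these are dispatched by Proposition~\ref{del Pezzo monodromy}/Theorem~\ref{conic monodromy} and the reference \cite{noheighttwo16}. The elementary $D1$ fibrations at the base of this induction are the eight known types, for which full Weyl-group monodromy is classical.

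Your plan instead asks for (i) a Lefschetz-pencil-only description of the singular fibers of $\pi$ and (ii) transitivity of $\operatorname{Mon}(\pi)$ on each component of $R_0$. Both need work you do not supply. For (i), non-elementary $\pi$ can have reducible fibers containing contracted divisors, so ``only ordinary double points'' fails and the local monodromy at such a fiber is not a single reflection; you would need a separate analysis there. For (ii), you correctly identify that ruling out proper reflection subgroups of $W_0$ with the same invariants (e.g.\ $W(A_1)^n\subset W(D_n)$) amounts to irreducibility of the relative Hilbert scheme of lines---but that is exactly the content of the theorem for low-degree fibers, and your sketch (``feed into Lemma~\ref{testa theorem}'') does not explain how to get it. The paper's factorization sidesteps both issues by reducing to elementary fibrations where the full Weyl group is already known, and to conic-bundle monodromy where Theorem~\ref{conic monodromy} applies; you should adopt that strategy rather than attempting the direct Picard--Lefschetz route.
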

\begin{proof}
Let $U \subset \mathbb{P}^1$ be the open subset over which $\pi$ is smooth, $p \in U$, and $X_p = \pi^{-1}(p)$.  The homotopy group $\pi_1(U,p)$ acts on $N_1(X_p)$ via parallel transport.  Let $\nu \leq 3$ be the minimal $-K_X$-degree of a curve contracted by $\pi$.  If $\nu = 3$, $\pi$ is elementary ($\rho(X/\mathbb{P}^1) = 1$) and  our claim is trivial; if $\nu = 2$, it is an application of the invariant cycle theorem \cite[Theorem~4.18]{voisin_2003}.  Otherwise, $\nu = 1$ and the action of $\pi_1(U,p)$ is determined by the permutation action on smooth $-K_X$-lines in $X_p$.  
Suppose first that $\pi : X \rightarrow \mathbb{P}^1$ is an elementary del Pezzo fibration.  %
There are only eight deformation types of Fano threefolds with $D1$-fibrations: double covers of $\mathbb{P}^1 \times \mathbb{P}^2$ branched over divisors of bidegree $(2,4)$ and blow-ups $X \rightarrow Y$ for $Y \in \{ V_1, \ldots , V_5, \mathbb{P}^3, \mathbb{Q}\}$ along the complete intersection of a pencil in $|\frac{r-1}{-r}K_Y|$, where $r$ is the index of $Y$.  The monodromy action of $\pi_1(U,p)$ is well-known to be the full Weyl group in these cases.  For instance, when $|\frac{-1}{r} K_Y|$ is very ample, the works of \cite{kollarLefschetz} \cite{reid1972thesis} and \cite{beauville_monodromy} imply our claim (see also \cite[Section~4.4]{DeligneMonodromy} and \cite[Remark~2.2.16]{larsenDeligne}).  %
The remaining cases follow from the application of \cite[Corollary~6]{kollarLefschetz} to Lemma \ref{lem: V1 monodromy}, \cite[Theorem~1.1]{V2monodromy}, and \cite[Proof~of~Proposition~7.7]{2019}.

Suppose instead that $\pi : X \rightarrow \mathbb{P}^1$ is not an elementary del Pezzo fibration.  We may obtain a factorization $f : X \rightarrow Y$, $g: Y \rightarrow S$,  $h  : S \rightarrow \mathbb{P}^1$, where $f$ is birational, $Y$ is smooth and Fano, and $g$ is either trivial or an elementary conic bundle.  If $g$ is trivial, $h : Y \rightarrow \mathbb{P}^1$ is an elementary del Pezzo fibration with $\nu \geq 2$.  The monodromy action of $\pi_1(U,p)$ on $N_1(X_p)$ in this case is an extension of its action on $N_1(Y_p)$.  Components $E$ of the exceptional locus of $f: X \rightarrow Y$ lying over points in $Y$ are disjoint from $X_p$, while for general $X$ the components of $E1$-type lie over curves which are either contracted by $\pi$ or have simple and general branching over $\mathbb{P}^1$.  Theorem \ref{Monodromy} then implies our claim.

If $g : Y \rightarrow S$ is an elementary conic bundle, we may assume $S \cong \mathbb{F}_1$ or $S \cong \mathbb{P}^1 \times \mathbb{P}^1$.  If $g$ is a $C2$-fibration, or, more generally, if the monodromy action of $\pi_1(U,p)$ on $N_1(Y_p)$ is maximal, a reiteration of our prior argument applies.  Similarly, if $S \cong \mathbb{F}_1$, then as $g$ is the basechange of a conic-bundle structure over $\mathbb{P}^2$, for general $X$ Theorem \ref{Monodromy} again applies.  Hence we may assume $S \cong \mathbb{P}^1 \times \mathbb{P}^1$ and $g : Y \rightarrow S$ is a $C1$ contraction.  %
Moreover, we may assume every factorization of $\pi$ through a sequence of maps $f, g, h$ as above has this form.  This limits $X = Y$ to one of two deformation types of Fano threefolds, distinguished by $\rho(X) = 3$ and $(-K_X)^3 \in \{12, 14\}$.  When $(-K_X)^3 = 14$, the second example in \cite[Section~11]{noheighttwo16} proves $\pi_1(U,p)$ acts as $W(D_5)$.  When $(-K_X)^3 = 12$, $X \rightarrow \mathbb{P}^1 \times \mathbb{P}^1 \times \mathbb{P}^1$ is a double cover branched over a smooth divisor $B$ of tridegree $(2,2,2)$.  For general $X$, $B$ is a $K3$ surface whose projection to any factor has twenty-four singular fibers.  If $\Delta \subset \mathbb{P}^1 \times \mathbb{P}^1$ is the discriminant locus of a conic bundle $X \rightarrow \mathbb{P}^1 \times \mathbb{P}^1$, the singular fibers of $B$ lie over the branch points of the projection of $\Delta$ to a given factor.  Thus, for general $X$, each projection $\Delta \rightarrow \mathbb{P}^1$ is simply branched, so we may apply Theorem \ref{conic monodromy}.%
\end{proof}

\begin{cor}\label{D1 lines irred}
Let $X$ be a smooth Fano threefold, general in moduli.  %
Suppose $\alpha \in \overline{NE}(X)_\mathbb{Z}$ is the generator of an extreme ray associated to an elementary $D1$-contraction.  %
All components of $\overline{M}_{0,0}(X,\alpha)$ are geometrically irrational curves.  $\overline{M}_{0,0}(X,\alpha)$ is irreducible if and only if $|-K_X|$ is basepoint free.
\end{cor}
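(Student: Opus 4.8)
The plan is to identify $\overline{M}_{0,0}(X,\alpha)$ with the relative variety of lines of the del Pezzo fibration $\pi : X \to \mathbb{P}^1$, read off its components from the monodromy computation of Theorem~\ref{monodromy del Pezzo main result}, and then run through the short list of eight deformation types admitting an elementary $D1$-fibration. First I would make the reduction. Since $\alpha$ spans the extremal ray contracted by $\pi$, any stable map of class $\alpha$ is $\pi$-contracted, so its image lies in a single fibre $X_p = \pi^{-1}(p)$; as $-K_X\cdot\alpha = 1$ and $X$ is Fano, that image is an irreducible, geometrically rational curve $\ell \subset X_p$ with $-K_{X_p}\cdot\ell = 1$. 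Because $\pi$ is elementary one has $\rho(X) = 2$ and $\ker(\pi_* : N_1(X) \to N_1(\mathbb{P}^1)) = \mathbb{R}\alpha$, so the class in $N_1(X)$ of any curve inside a fibre $X_p$ is determined by its $-K_X$-degree (using $-K_X|_{X_p} = -K_{X_p}$); in particular every such $\ell$ has class $\alpha$. On a smooth fibre $S_d := X_p$ ($d\le 5$), a Hodge index argument ($2p_a(\gamma)-2=\gamma^2-1$ and $\gamma^2\cdot d\le 1$) shows that for $d\ge 2$ an irreducible curve of anticanonical degree $1$ must be a $(-1)$-curve, while for $d = 1$ it is either a $(-1)$-curve or a member of the anticanonical pencil $|{-}K_{S_1}|$, which is geometrically rational exactly for its (for general $p$, nodal) singular members. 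A normal bundle computation ($f^*\mathcal T_X \cong \mathcal O(2)\oplus\mathcal O\oplus\mathcal O(-1)$ for a line in a smooth del Pezzo fibre) shows all of these are smooth points of $\overline{M}_{0,0}(X,\alpha)$, hence all components are curves; and each fibre of $\pi$, being an integral del Pezzo surface with at worst canonical singularities, carries only finitely many geometrically rational curves of anticanonical degree $1$, so every component of $\overline{M}_{0,0}(X,\alpha)$ dominates $\mathbb{P}^1$ and equals the closure of a component over the smooth locus $U\subset\mathbb{P}^1$.

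Next I would invoke monodromy. Over $U$ the relative variety of lines is étale of degree $\#\{\text{lines on }S_d\}\in\{10,16,27,56,240\}$, and its components correspond to orbits of the monodromy group on the $(-1)$-classes of $S_d$ (together with the anticanonical class when $d=1$). The computation above shows the pushforward $N_1(S_d)\to N_1(X)$ again only records anticanonical degree, so every element of the Weyl group $W = W(\Pic S_d)$ stabilizes pushforward into $X$; by Theorem~\ref{monodromy del Pezzo main result} the monodromy is all of $W$, which acts transitively on lines. Hence there is a single component $M_0$ swept out by lines in the fibres, together with, when $d = 1$, one or more \emph{additional} components swept out by nodal anticanonical curves---disjoint from $M_0$ because the anticanonical class and a $(-1)$-class lie in distinct $W$-orbits, hence cannot mix under a flat degeneration within a fibre. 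Matching against the classification recalled before Lemma~\ref{irreducible fibers del Pezzo}, among the eight families $|-K_X|$ fails to be basepoint free exactly for the blow-up $X = \mathrm{Bl}_C V_1$, that is, exactly when $d = 1$; so $\overline{M}_{0,0}(X,\alpha)$ is irreducible if and only if $|-K_X|$ is basepoint free.

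Finally, geometric irrationality. In the seven cases $X = \mathrm{Bl}_C Y$, the base curve $C$ of the defining pencil has genus $\ge 1$ (an elliptic curve when $Y = V_d$), the exceptional surface $E$ meets every fibre $X_\lambda$ in a section $\sigma_\lambda\cong C$ of $E\to C$, and a line $\ell\subset X_\lambda$ meets $\sigma_\lambda$ in a divisor of a fixed degree $k$; this produces a morphism $M_0 \to \Sym^{k}(C)\to\operatorname{Jac}(C)$ which one checks is nonconstant (the lines of a del Pezzo surface are not all concurrent, resp.\ do not all meet a fixed anticanonical multiple in linearly equivalent divisors), so $M_0$ dominates a positive-dimensional abelian variety and is geometrically irrational. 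For the double cover of $\mathbb{P}^1\times\mathbb{P}^2$ one instead applies Riemann--Hurwitz to $M_0\to\mathbb{P}^1$ over the nonempty discriminant of $\pi$, using the degeneration of the branch quartic and the known collision pattern of the $56$ lines at an $A_1$-fibre. For the extra components in the $d=1$ case one argues separately, for instance by identifying them with the discriminant curve of the elliptic fibration on a blow-up of $X$ cut out by the anticanonical pencils in the fibres and bounding its genus, or again by Riemann--Hurwitz over the singular fibres of $\pi$. The main obstacle is exactly this last paragraph: checking nonemptiness of the discriminant of $\pi$ in each of the eight constructions, verifying that the map to $\operatorname{Jac}(C)$ (resp.\ the Riemann--Hurwitz count) is genuinely nondegenerate, and, in the $V_1$ case, establishing that the nodal anticanonical curves really form separate, geometrically irrational components rather than degenerations of $M_0$. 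Everything over the smooth locus of $\pi$ follows formally from Theorem~\ref{monodromy del Pezzo main result}.
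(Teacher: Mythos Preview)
Your reduction to the relative Hilbert scheme of lines and use of Theorem~\ref{monodromy del Pezzo main result} for transitivity on $(-1)$-classes matches the paper. There is, however, a genuine gap: you assert that each fibre, ``being an integral del Pezzo surface with at worst canonical singularities,'' carries only finitely many anticanonical lines. This is false as stated---the cone over a rational normal curve of degree $d$ is a degree-$d$ del Pezzo with a single $A_{d-1}$ (hence canonical) singularity and a $\mathbb{P}^1$ of lines through the vertex; such a fibre would contribute a \emph{rational} component to $\overline{M}_{0,0}(X,\alpha)$, breaking both conclusions at once. The paper closes this by observing that in the very-ample cases the defining pencil is Lefschetz, so singular fibres carry at most one $A_1$ node and hence cannot be cones, with an analogous check (branch quartic with at most one node) for the two degree-$2$ families. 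You need some version of this input before the monodromy argument can run.

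Your irrationality argument via $M_0 \to \mathrm{Sym}^k(C) \to \mathrm{Jac}(C)$, using that the blown-up curve $C$ sits in every fibre as $E\cap X_\lambda$, is a genuinely different route from the paper's uniform Riemann--Hurwitz count on $M_0 \to \mathbb{P}^1$. For the five $V_d$ blow-ups one has $k=1$, so the map lands in the elliptic curve $C$ itself and nonconstancy follows immediately from the existence of disjoint $(-1)$-curves on a fibre---this is cleaner than Riemann--Hurwitz since it needs nothing about singular fibres. For $Y\in\{Q,\mathbb{P}^3\}$ you have $k\in\{2,3\}$ and must check either that $\mathrm{gon}(C)>k$ (so $\mathrm{Sym}^k(C)$ has no rational curves) or that distinct $(-1)$-classes cut non-linearly-equivalent divisors on $C$; both hold for general $C$ but you do not supply the line of justification. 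The paper's approach is more uniform but leans on the Lefschetz singular-fibre analysis anyway, so neither method is free. Finally, you leave the $V_1$ component count as ``one or more''; the paper pins it down to exactly two by identifying the $(-1)$-curve locus with the conic surface on $V_1$ and the nodal-anticanonical locus with the degree-$12$ discriminant of the elliptic fibration on $\widetilde{V}_1$, both via \cite{Tihomirov_1982}.
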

\begin{proof}
When $X \rightarrow Y$ is a blow-up and $|\frac{-1}{r}K_Y|$ is very ample, it is well-known \cite{eisenbud20163264} that a general pencil of hypersurface sections will be a Lefschetz pencil \cite{voisin_2003}.  In these cases, fibers of $\pi : X \rightarrow \mathbb{P}^1$ are del Pezzo surfaces of degree $\geq 3$ with at most one $A_1$ singularity, and therefore cannot be cones.  Thus each component of $\overline{M}_{0,0}(X,\alpha)$ parameterizes finitely many curves in any fiber of $\pi$, and Theorem \ref{monodromy del Pezzo main result} implies a single component of $\overline{M}_{0,0}(X,\alpha)$ contains such curves.  

If instead $\pi : X \rightarrow \mathbb{P}^1$ has degree 2 fibers, either $X$ is a blow-up of $V_2$ along the preimage of a general complete intersection of hyperplanes in $|\frac{-1}{2}K_{V_2}| \cong \mathbb{P}^3$, where $V_2 \rightarrow |\frac{-1}{2}K_{V_2}|$ is a double cover branched over a smooth quartic hypersurface, or $X \rightarrow \mathbb{P}^1 \times \mathbb{P}^2$ is a branched double cover.  In both cases, for general $X$, each fiber of $\pi$ will be a double cover of $\mathbb{P}^2$ branched over a quartic curve with at most one node.  Lemma \ref{monodromy del Pezzo main result} again proves $\overline{M}_{0,0}(X,\alpha)$ is irreducible.  

Lastly, suppose $X$ is a blow-up of a del Pezzo threefold $V_1$.  The blow-up $\tilde{V}_1$ of $V_1$ at the unique basepoint of $|\frac{-1}{2}K_{V_1}|$ admits a morphism $\phi : \tilde{V}_1 \rightarrow |\frac{-1}{2}K_{V_1}| \cong \mathbb{P}^2$ with elliptic fibers.  For a general $V_1$, the locus of singular fibers of $\phi$ is an irreducible, geometrically irrational curve of degree twelve \cite{Tihomirov_1982}.  These correspond to geometrically rational anticanonical curves in smooth fundamental divisors on $V_1$.  As well, the space of conics on $V_1$ is an irreducible, branched cover of the dual space to $|\frac{-1}{2}K_{V_1}|$ \cite{Tihomirov_1982}.  Strict transforms of these conics are smooth lines in fibers of $\pi$.  Thus, for a general pencil of lines in $|\frac{-1}{2}K_{V_1}|$, $\overline{M}_{0,0}(X,\alpha)$ has two components with the preceding descriptions.

It remains to show the unique component $M \subset \overline{M}_{0,0}(X, \alpha)$ parameterizing embedded curves is geometrically irrational. We apply the Reimann-Hurwitz formula for this purpose.  As $X$ is general, each singular fiber of $\pi : X \rightarrow \mathbb{P}^1$ will have a unique $A_1$ singularity.  This implies $M$ is smooth.  
Since $\pi$ contracts each curve parameterized by $M$, it induces a branched covering $\overline{\pi} : M \rightarrow \mathbb{P}^1$.  In each case, we may use \cite[Proposition~7.17]{eisenbud20163264} to relate the number of singular fibers of $\pi : X \rightarrow \mathbb{P}^1$ to topological invariants.  We may compute the degree of the ramification divisor $R$ of $\overline{\pi}$ from this and the number of lines in each singular fiber.  It follows that $K_M = \overline{\pi}^*K_{\mathbb{P}^1} + R$ is effective, proving our claim. %
\end{proof}

\section{Low Degree Curves and Degeneration to Special Threefolds}\label{section: low degree curves}

In this section, we describe families of lines and conics on smooth Fano threefolds.  We conclude our analysis of families of nef lines in Fano threefolds and prove Theorem \ref{main result 3} in Corollary \ref{main result 4}.  We then consider two-parameter families of non-free conics on smooth Fano threefolds, and prove they lie in specific types of contractible divisors when $\rho(X) \geq 2$.  We also prove Theorem \ref{main thm: conics}.  Using these descriptions of low degree curves, we show Manin components of cubic and quartic curves specialize well in families of Fano threefolds.  This allows us to extend certain results from general Fano threefolds to arbitrary Fano threefolds.

\subsection{Lines, Conics, and Nondominant Families of Nef Curves}
We expand upon Theorem \ref{nonfree curves} and the following lemma from \cite{beheshti2020moduli}.  In this subsection, \textit{lines} and \textit{conics} refer to irreducible, reduced, geometrically rational curves embedded in a smooth variety $X$.  A conic is \textit{non-free} if the normal sheaf $\mathcal{N}_f$ of its normalization $f: \mathbb{P}^1 \rightarrow X$ is not globally generated.  Given a Hirzebruch surface $\pi: \mathbb{F}_r \rightarrow \mathbb{P}^1$, we let $C$ be the class of a rigid section and $F$ be the class of a fiber of $\pi$.

\begin{lem}[\cite{beheshti2020moduli}]\label{lem: E5 lines}
Let $X$ be a smooth Fano threefold. %
\begin{enumerate}
    \item Every subvariety $Y\subset X$ swept out by lines is the image of a birational map $f: S \rightarrow Y$ from a ruled surface $\pi : S \rightarrow C$ whose fibers map birationally to lines in $X$.
    \item Any family of lines of dimension $>1$ sweeps out an $E5$ divisor.
    \item Any family of conics of dimension $>2$ is contained in a contractible divisor of type $E3$, $E4$, $E5$, or $E1$ with normal bundle $\mathcal{O}_{\mathbb{P}^1\times \mathbb{P}^1}(-1,-1)$.
\end{enumerate}
\end{lem}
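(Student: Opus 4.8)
The plan is to treat the three parts in order: in each case I would bound the dimension of the swept-out locus $Y$, identify $Y$ (up to normalization and resolution) as a very specific surface, and then read off the contraction type from the classification of divisorial extremal contractions of smooth threefolds \cite{mori1983classification}. For (1): a line is a rational curve, so $Y$ is uniruled and its normalization $\bar Y$ is birationally ruled over a curve $C$; resolving singularities yields a smooth ruled surface $\pi:S\to C$ with a birational morphism $S\to Y$, and one arranges the ruling to be the one cut out by the family of lines. If the lines through a general point of $Y$ form a positive-dimensional family, then $Y$ is the $E5$ surface of part (2) and $\mathrm{Bl}_p\mathbb{P}^2=\mathbb{F}_1\to\mathbb{P}^1$ (fibers $=$ the lines through $p$) does the job directly; otherwise only finitely many lines pass through a general point, and after replacing the parameter space by a suitable component one checks a general fiber of $\pi$ maps birationally onto a line, a minimal-degree rational curve of the family having $-K_X$-degree $1$.

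For (2) the key input is that $X$ carries no free line: the normalization $f:\mathbb{P}^1\to X$ of a line would give an injection $T_{\mathbb{P}^1}=\mathcal{O}(2)\hookrightarrow f^*\mathcal{T}_X$, which is impossible if $f^*\mathcal{T}_X$ is globally generated of rank $3$ and degree $1$. Hence no family of lines dominates $X$ (a general curve through a general point of a dominating family is free), so a $(\geq 2)$-dimensional family of lines sweeps out a surface $Y$ with a positive-dimensional family of lines through a general point. Pulled back to the minimal resolution $S$ of $\bar Y$ these lines form a positive-self-intersection family of rational curves through a general point, which forces $\bar Y\cong\mathbb{P}^2$ with the lines being genuine lines; the normal bundle sequence $0\to\mathcal{N}_{\ell/Y}\to\mathcal{N}_{\ell/X}\to\mathcal{N}_{Y/X}|_\ell\to 0$ together with $\deg\det\mathcal{N}_{\ell/X}=-1$ then forces $\mathcal{N}_{Y/X}\cong\mathcal{O}_{\mathbb{P}^2}(-2)$. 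By \cite{mori1983classification} this $Y$ is an $E5$ divisor, hence contractible.

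For (3) a free conic $C$ has $\mathcal{N}_{C/X}\cong\mathcal{O}^{\oplus 2}$ and so moves only in a $2$-dimensional family; thus a $(>2)$-dimensional family of conics consists of non-free conics and, by the same freeness argument, cannot dominate $X$. So $Y$ is a surface carrying a $(\geq 2)$-dimensional family of conics through a general point, which on the resolution $S$ of $\bar Y$ become rational curves of self-intersection $C_S^2\geq 2$ (since $h^0(\mathcal{N}_{C/X})=C_S^2+1\geq 3$). Classifying surfaces admitting such a family, and using $-K_X\cdot C=2$ together with the adjunction relation $\mathcal{N}_{Y/X}\cdot C=-C^2$, gives $\bar Y\in\{\mathbb{P}^2,\ \mathbb{P}^1\times\mathbb{P}^1,\ \text{the quadric cone }Q_0\}$ with the conics being plane conics, $(1,1)$-curves, or hyperplane sections, and correspondingly $\mathcal{N}_{Y/X}$ equal to $\mathcal{O}_{\mathbb{P}^2}(-2)$, $\mathcal{O}(-1,-1)$, or $\mathcal{O}_{Q_0}(-1)$. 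By \cite{mori1983classification}, $Y$ is then a contractible divisor of type $E5$, of type $E3$ or of type $E1$ with normal bundle $\mathcal{O}_{\mathbb{P}^1\times\mathbb{P}^1}(-1,-1)$, or of type $E4$, respectively.

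The step I expect to be the main obstacle is the identification of $\bar Y$ in parts (2) and (3): excluding pathologies such as a ruled surface $\mathbb{F}_n$ with large $n$, an irrational ruled base, or the value $C_S^2=3$, and then genuinely upgrading the numerical coincidence for $\mathcal{N}_{Y/X}$ to contractibility. The former combines the splitting of the extension $0\to\mathcal{O}(C_S^2)\to\mathcal{N}_{C/X}\to\mathcal{O}(-C_S^2)\to 0$ with the bound that $\mathcal{N}_{\ell/X}$ (resp. $\mathcal{N}_{C/X}$) cannot contain an arbitrarily positive summand; the latter relies on the classification \cite{mori1983classification}, which states precisely that the negative normal bundles realized by contractible surfaces in a smooth threefold are exactly the ones listed above.
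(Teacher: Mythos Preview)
The paper does not prove this lemma; it is cited from \cite{beheshti2020moduli}. The argument there, partially visible in the paper's proof of Proposition~\ref{properties nonfree conics}, is organized differently from yours: rather than working with self-intersections on a resolution of $Y$, one uses part~(1) to realize $Y$ as the birational image of a Hirzebruch surface $f:\mathbb{F}_r\to Y$, writes $-f^*K_X = C + mF$ with $m\geq r$, and bounds $r$ and $m$ from the constraint that every moving curve in $\mathbb{F}_r$ has $-K_X$-degree at least $m$. The identification of the image as a specific contractible divisor then invokes \cite[Theorem~4.1]{beheshti2020moduli}, which classifies subvarieties $Y\subset X$ of a Fano threefold with large $a$-invariant. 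Your splitting argument for bounding $C_S^2$ is essentially the content of \cite[Lemma~4.6]{beheshti2020moduli}, so the two approaches share that ingredient.

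You correctly flag the main obstacle, but your proposed resolution does not close it. The normal bundle sequence $0\to\mathcal{N}_{\ell/Y}\to\mathcal{N}_{\ell/X}\to\mathcal{N}_{Y/X}|_\ell\to 0$ presupposes $Y$ is smooth along a general line, whereas you have only identified the normalization $\bar Y$; nothing in your argument rules out $\bar Y\to Y$ being non-trivial. Furthermore, \cite{mori1983classification} runs in the opposite direction to what you need: it classifies the exceptional divisors of extremal contractions on smooth threefolds, and does not assert that a surface with one of the listed normal bundles is automatically contractible. You would first have to show the relevant curve class spans a $K_X$-negative extremal ray of $\overline{NE}(X)$. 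That does follow once $Y$ is smooth with anti-ample $\mathcal{N}_{Y/X}$ (any irreducible curve pairing negatively with $Y$ must lie in $Y$, forcing it into $\mathbb{R}_{>0}[\ell]$), but smoothness of $Y$ is precisely what remains unproven. The $a$-invariant classification of \cite[Theorem~4.1]{beheshti2020moduli} handles both issues simultaneously by directly listing the possible $Y$.
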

In fact, remarks after \cite[Lemma~4.6]{beheshti2020moduli} show each family of degree $d$ rational curves with dimension $\geq 2d -1$ lies in a contractible divisor of type $E3, E4, E5$ or $E1$ with normal bundle $\mathcal{O}_{\mathbb{P}^1\times \mathbb{P}^1}(-1,-1)$.  Proposition \ref{properties nonfree conics} elaborates on these remarks.  First, we identify classes of lines in Fano threefolds which are not extreme in the respective Mori cone, and prove Theorem \ref{main result 3} in Corollary \ref{main result 4}.

\begin{lem}\label{nonextreme lines}
Let $X$ be a smooth Fano threefold and $\alpha \in \overline{NE}(X)_\mathbb{Z}$ satisify $-K_X . \alpha = 1$. Then $\Mor(\mathbb{P}^1, X, \alpha)$ is nonempty, and if $\alpha$ is not extreme in $\overline{NE}(X)$, then $(X,\alpha)$ is one of three possible pairs listed below:
\begin{itemize}
\item $X \cong \mathbb{P}^1 \times S_1$, where $S_1$ is a del Pezzo surface of degree 1, and $\alpha$ is the class of an anticanonical curve in a fiber of $X \rightarrow \mathbb{P}^1$.
\item $X \rightarrow \mathbb{P}^1 \times \mathbb{P}^2$ is the blow-up of a smooth curve $C$ of bidegree $(5,2)$, and $\alpha$ is the class of the strict transform of a bisecant of $C$ lying in a fiber of $X\rightarrow \mathbb{P}^1$.
\item $\rho(X) = 3$, $(-K_X)^3 = 14$, and $\alpha$ is the pullback of an extremal effective class on a weak Fano contraction $X\rightarrow Y$ associated to a $D1$ fibration $Y\rightarrow \mathbb{P}^1$ with degree $4$ fibers.
\end{itemize}
\end{lem}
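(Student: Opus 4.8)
The plan is to separate the non-emptiness claim from the classification of the non-extremal case, and to attack the latter by localizing the failure of extremality to a Mori contraction before consulting the classification \cite{mori1981classification}, \cite{mori1983classification}. For non-emptiness: since $X$ is Fano, $\overline{NE}(X)$ is rational polyhedral with extremal rays spanned by rational curves, so in any decomposition $\alpha=\sum a_i[C_i]$ ($a_i\ge 0$) one has $\sum a_i(-K_X.C_i)=1$ with each $-K_X.C_i\ge 1$; when $\alpha$ is extremal this forces $\alpha=[C_1]$ for a primitive generator with $-K_X.C_1=1$, so $\Mor(\mathbb{P}^1,X,\alpha)\neq\emptyset$, and when $\alpha$ is non-extremal I would verify non-emptiness inside each of the three cases below. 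For the classification, suppose $\alpha$ is not extremal, so $\mathbb{R}_{\ge 0}\alpha$ lies in the relative interior of a face $F\subsetneq\overline{NE}(X)$ with $\dim F\ge 2$; let $\phi\colon X\to Z$ be the associated contraction ($F=\overline{NE}(X/Z)$, $-K_X$ $\phi$-ample), so every curve of class $\alpha$ is $\phi$-vertical, and the restriction $\alpha|_G$ to a general fibre $G$ (a Fano variety of dimension $3-\dim Z$) is effective with $-K_G.\alpha|_G=1$. I then run a case analysis on $\dim G$. The case $\dim G=0$ (birational $\phi$) I expect to rule out by integrality: every extremal ray of $F$ has $-K_X$-length $\ge 1$, so an integral relative-interior point of $F$ cannot have $-K_X$-degree $1$. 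For $\dim G=1$ one has $G\cong\mathbb{P}^1$, and since $-K_X.\alpha=1$ is odd, $\alpha$ is not a multiple of the fibre class and is therefore realized only inside degenerate fibres of a conic bundle $\phi\colon X\to Z$ with $\dim Z=2$, which is non-elementary as $\dim F\ge 2$; using the structure of such conic bundles (reductions to $\mathbb{P}^1$-bundles over $\mathbb{P}^2$ and the $\mathbb{F}_1$-fibred case in \cite{mori1983classification}) I would check that the class of a line in a reducible fibre still spans an extremal ray of $\overline{NE}(X)$, so no new case arises.

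The substantive case is $\dim G=2$: here $\phi\colon X\to\mathbb{P}^1$ is a del Pezzo fibration with $\rho(X/\mathbb{P}^1)\ge 2$ and $\alpha|_G$ is a $-K_G$-degree-$1$ class on the del Pezzo surface $G$ carried by a possibly singular rational curve; an adjunction and Hodge-index computation shows $\alpha|_G$ is either the class of a $(-1)$-curve of $G$ or---only when $\deg G=1$---equal to $-K_G$. I would identify $\overline{NE}(X/\mathbb{P}^1)$ with the image of $\overline{NE}(G)$ in the monodromy-coinvariants of $N_1(G)$ (using \ref{monodromy del Pezzo main result} and the relevant tables), compute that cone from the images of the $(-1)$-curve classes, and ask when $\alpha|_G$ lies in its relative interior. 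When $\deg G=1$ this forces the monodromy to act trivially on $K_G^\perp$, which by the classification leaves only $X\cong\mathbb{P}^1\times S_1$ with $\alpha=[\{\mathrm{pt}\}\times(-K_{S_1})]$. When $\deg G\ge 2$, running through the Fano threefolds carrying a non-elementary del Pezzo fibration I expect exactly the two $\rho(X)=3$ cases: $X=\mathrm{Bl}_C(\mathbb{P}^1\times\mathbb{P}^2)$ for a smooth $(5,2)$-curve $C$, where $\alpha=\tfrac12(e+e')$ for the exceptional line $e$ of $X\to\mathbb{P}^1\times\mathbb{P}^2$ and the ``conic through all five points'' class $e'$, i.e.\ $\alpha$ is the strict transform of a bisecant of $C$ inside a $\mathbb{P}^2$-fibre; and the case $(-K_X)^3=14$, where the degree-$4$ del Pezzo fibration lives not on $X$ but on the weak Fano $Y$ obtained by contracting the flopping curve of $X$ (the model used in \ref{D1 lines irred}), so $\alpha$ is the pullback to $X$ of the corresponding extremal effective class of $Y$, non-extremal on $X$ because of the exceptional divisor of $X\to Y$.

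To finish I would check non-emptiness in the three exceptional pairs: in case (1) the pencil $|-K_{S_1}|$ always contains singular, hence geometrically rational, members, giving rational curves of class $\alpha$ in $S_1$-fibres; in case (2) bisecant lines to $C$ contained in the $\mathbb{P}^2$-fibres sweep out a nonempty (two-dimensional) family whose strict transforms represent $\alpha$; in case (3) the degree-$4$ del Pezzo fibres of $Y\to\mathbb{P}^1$ contain $(-1)$-curves pulling back to $\alpha$. The hard part will be the $\dim G=2$ step---computing the relative cones $\overline{NE}(X/\mathbb{P}^1)$, equivalently the monodromy-invariant sublattices of $N_1(G)$, across the relevant deformation types---together with the analogous, somewhat delicate, verification that the non-elementary conic-bundle case contributes nothing new.
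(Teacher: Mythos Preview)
Your approach parallels the paper's alternative argument in spirit—both reduce to a del Pezzo fibration and finish via the classification tables—but there is a genuine missing step. You assert that the minimal face $F$ containing $\alpha$ satisfies $F\subsetneq\overline{NE}(X)$, but this is precisely the statement that $\alpha$ lies on the \emph{boundary} of $\overline{NE}(X)$, which you never justify. If $\alpha$ were interior to $\overline{NE}(X)$ then $F=\overline{NE}(X)$, the contraction $\phi$ is the map to a point, and the general fibre has $\dim G=3$, a case absent from your analysis. The paper's proof dispatches this immediately: for $\rho(X)\ge 2$ one has $-K_X=D_1+D_2$ with both $D_i$ nef and integral \cite{mori1985classification}, so $\alpha\cdot D_1+\alpha\cdot D_2=1$ with each term nonnegative forces $\alpha\cdot D_i=0$ for some $i$, placing $\alpha$ in the facet $D_i^\perp\cap\overline{NE}(X)$. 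This observation also drives the rest of the paper's reduction (nef versus non-nef, then contraction type), which is somewhat sharper than your fibre-dimension trichotomy.

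Separately, your argument in the $\dim G=0$ case is a non-sequitur as written: the premise ``every extremal ray of $F$ has $-K_X$-length $\ge 1$'' does not by itself prevent integral relative-interior points of height $1$ (e.g.\ $r_1=(1,0)$, $r_2=(1,2)$ in $\mathbb{Z}^2$ with height the first coordinate; then $(1,1)=\tfrac12 r_1+\tfrac12 r_2$ is integral and interior). One can salvage this by pairing $\alpha$ against the exceptional divisors $E_i$ and using $r_i\cdot E_j=-\delta_{ij}\cdot|r_i\cdot E_i|$ when the $E_i$ are disjoint, but the paper avoids the issue entirely by first showing that any non-nef $\alpha\in\overline{NE}(X)$ of $-K_X$-degree $1$ is already extremal, so only fibre-type faces remain. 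Once those gaps are closed, your monodromy description of $\overline{NE}(X/\mathbb{P}^1)$ is a fine substitute for the paper's direct table lookup, since the Mori cone is deformation-invariant.
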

\begin{proof}
Applying \ref{Gordan's Lemma} to generators of $\overline{NE}(X)$, found in \cite{matsuki1995weyl} and \cite{matsuki2023addendumerratum}, implies our claim.  Alternatively, the following argument may be used to first narrow possibilities.  If $\rho(X) \geq 2$, then $-K_X = D_1 + D_2$ for two integral, nef divisors $D_i$.  Therefore, every anticanonical line is contained in the boundary of $\overline{NE}(X)$.  If $\alpha$ is not nef, it must be extreme in $\overline{NE}(X)$.  Alternatively, every nef $-K_X$-line is contracted by some morphism $\pi : X \rightarrow B$.  If $\text{dim } B = 2$, $\pi$ factors through an elementary contraction contracting $\alpha$.  %
Thus, we may assume $\alpha \in \Nef_1(X)$ and there is no morphism $\pi : X\rightarrow B$ contracting $\alpha$ with $\text{dim } B \geq 2$.  It follows that $\alpha$ is contained in the interior of a facet $F \subset \overline{NE}(X)$ associated to a del Pezzo fibration $\pi : X \rightarrow \mathbb{P}^1$.  Hence, we may assume $\rho(X) > 2$.  Moreover, by contracting divisors on $X$, we may assume $\alpha$ does not pair trivially with any $E1$ divisor whose contraction yields a Fano variety.  Since $E2, E3, E4, E5$ divisors are components of reducible fibers of any morphism $X \rightarrow \mathbb{P}^1$ contracting $\alpha$, $\alpha$ must pair trivially with any such divisor.  An analysis of tables of extreme rays of $\overline{NE}(X)$ in \cite{matsuki1995weyl} and \cite{matsuki2023addendumerratum} proves our claim.
\end{proof}

\begin{cor}\label{main result 4}
Let $X$ be a smooth Fano threefold, general in moduli, and $\beta \in \Nef_1(X)_\mathbb{Z}$ satisfy $-K_X . \beta = 1$.  Then $\Mor(\mathbb{P}^1, X, \beta)$ is nonempty; it is irreducible if and only if $|-K_X|$ is basepoint free.

Suppose as well $\rho(X) \leq 9$.  Each component of $\overline{M}_{0,0}(X, \beta)$ is an irrational curve.  For any $\alpha \in \Nef_1(X)$, every component of $\Mor(\mathbb{P}^1, X, \alpha)$ with nondominant universal family parameterizes multiple covers of nef lines.
\end{cor}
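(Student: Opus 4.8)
The plan is to reduce the three assertions to structural input already in hand: Theorem~\ref{nonfree curves} on subvarieties swept out by non-dominant families, Lemma~\ref{nonextreme lines} classifying non-extremal $-K_X$-line classes, Corollary~\ref{D1 lines irred} and Lemma~\ref{lines in C1 fibrations} describing Fano schemes of lines in $D1$- and $C1$-fibrations, and Theorem~\ref{monodromy del Pezzo main result} on monodromy of del Pezzo fibrations. Throughout I use that for $-K_X.\beta = 1$ every stable map of class $\beta$ with irreducible domain is an isomorphism onto a line, so $\overline{M}_{0,0}(X,\beta)$ is the Fano scheme of lines of class $\beta$ and $\Mor(\mathbb{P}^1,X,\beta)\to\overline{M}_{0,0}(X,\beta)$ is a $\mathrm{PGL}_2$-torsor; over a curve this torsor is Zariski-locally trivial, so irreducibility and geometric irrationality transfer between the two spaces (and ``irrational curve'' is read on $\overline{M}_{0,0}(X,\beta)$).

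For the line class $\beta$: if $\beta$ spans an extremal ray, its contraction $\phi_\beta$ is not divisorial, since the exceptional divisor $E$ of an $E1$--$E5$ contraction has $\beta.E<0$, contradicting $\beta\in\Nef_1(X)$; and it is not a conic bundle, since then $\beta$ would be a nonnegative multiple of a conic fibre while $-K_X.\beta=1$ is odd. Hence $\phi_\beta:X\to\mathbb{P}^1$ is an elementary del Pezzo fibration whose general fibre carries a $(-1)$-curve, i.e. an elementary $D1$-fibration with primitive generator $\beta$, and Corollary~\ref{D1 lines irred} supplies nonemptiness, irreducibility $\iff$ base-point-freeness of $|-K_X|$, and irrationality of every component. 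If $\beta$ is non-extremal, Lemma~\ref{nonextreme lines} leaves three cases. The case $X\cong\mathbb{P}^1\times S_1$ has $\rho(X)=10$ and only enters the first assertion: there $\beta=\mathrm{pr}_2^*(-K_{S_1})$, $\Mor(\mathbb{P}^1,X,\beta)\cong\mathbb{P}^1\times\Mor(\mathbb{P}^1,S_1,-K_{S_1})$, the anticanonical pencil of a general $S_1$ has exactly twelve irreducible (nodal) members and no reducible one, so the space has twelve components and $|-K_X|$ is not base-point-free. In the remaining two cases ($\rho(X)=3$, $|-K_X|$ base-point-free) I would identify $\overline{M}_{0,0}(X,\beta)$ with a branched cover of $\mathbb{P}^1$ --- bisecants to the blown-up curve inside the fibres of $X\to\mathbb{P}^1$ in the bidegree-$(5,2)$ case, and, after the weak Fano contraction $X\to Y$ of Lemma~\ref{nonextreme lines}, the sixteen lines in the degree-four del Pezzo fibres of $Y\to\mathbb{P}^1$ in the $(-K_X)^3=14$ case --- deducing irreducibility from transitivity of the monodromy (Theorem~\ref{monodromy del Pezzo main result}, with transitivity of the relevant symmetric group on pairs, respectively of $W(D_5)$ on lines) and irrationality from the presence of branch points via Riemann--Hurwitz. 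For $\rho(X)=1$ I would quote the known description of the Fano scheme of lines from \cite{2019} and the references there.

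For the last assertion, let $M\subset\Mor(\mathbb{P}^1,X,\alpha)$ have non-dominant universal family sweeping out $Y\subsetneq X$, with $\alpha\in\Nef_1(X)$. By Theorem~\ref{nonfree curves}, $Y$ is swept out by $-K_X$-lines or is an exceptional divisor of a birational contraction; the latter is impossible, since on each $E1$--$E5$ divisor $\mathcal{O}_E(E)$ is anti-ample for the contraction, so a positive-dimensional curve $\Gamma\subset E$ has $\Gamma.E<0$, contradicting $\alpha\in\Nef_1(X)$. Then by the lemma of \cite{beheshti2020moduli} either $Y$ is a single line $\ell$ --- and $\alpha=n[\ell]$ with $[\ell]$ nef, so $M$ parametrizes $n$-fold covers of a nef line --- or $Y$ is the image of a ruled surface $\phi:S\to B$ over a curve whose fibres are lines (a higher-dimensional family of lines being excluded as it would sweep an $E5$ divisor). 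In the ruled case I would show the general member of $M$ is an $n$-fold cover of a ruling line: if $B$ has positive genus no rational curve in $S$ dominates $B$, so this is automatic; if $B\cong\mathbb{P}^1$, the negative section is rigid and cannot sweep $Y$, while any irreducible moving section or multisection has positive self-intersection in $S$ and meets $Y$ non-negatively (as $\alpha$ is nef), so its normal bundle in $X$ is an extension of globally generated line bundles on $\mathbb{P}^1$, hence globally generated; such a curve is free and its deformations dominate $X$, placing it in a dominant component rather than $M$. Thus members of $M$ have class $n[\ell_b]$ for a ruling line $\ell_b$, and $[\ell_b]$ is nef because $\alpha$ is.

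The main obstacle is the ruled-surface case of the last assertion. Turning the heuristic ``a moving section or multisection is free, hence dominant'' into a proof requires controlling $\mathcal{N}_{Y/X}$, i.e. knowing which ruled surfaces $Y$ can be swept by a one-parameter family of $-K_X$-lines on a Fano threefold with $\rho(X)\leq 9$; in practice one combines the dichotomy ``free $\Rightarrow$ dominant'' with the classification of nef $-K_X$-line classes from the first part to see that the ruling class is itself nef and that $Y$ is (a blow-up of) a Hirzebruch surface on which the only nef curve classes of $X$ restrict to multiples of the ruling, and then the normal-bundle computation closes the argument. The remaining ingredients --- the exclusion of exceptional divisors, and the reductions to Corollary~\ref{D1 lines irred} and to branched covers of $\mathbb{P}^1$ --- are routine given the results already established.
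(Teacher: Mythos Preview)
There is a genuine gap in your treatment of the extremal case. The claim ``it is not a conic bundle, since then $\beta$ would be a nonnegative multiple of a conic fibre while $-K_X.\beta=1$ is odd'' is false: in a $C1$ conic fibration the reducible fibres have components of $-K_X$-degree one, and it is these line classes---not the degree-two fibre---that generate the extremal ray integrally. So an extremal nef $\beta$ with $-K_X.\beta=1$ is contracted either by an elementary $D1$ fibration $X\to\mathbb{P}^1$ (your case, handled by Corollary~\ref{D1 lines irred}) or by a $C1$ conic bundle $X\to S$. The latter is in fact the typical situation once $\rho(X)\ge 3$ and is exactly what Lemma~\ref{lines in C1 fibrations} covers; you name this lemma in your preamble but never invoke it, so the extremal case is left incomplete.

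For the last assertion the paper's route is simpler than the one you flag as the main obstacle. Once the first part establishes that every component of $\overline{M}_{0,0}(X,\beta)$ for a nef line class $\beta$ is an irrational curve when $\rho(X)\le 9$, the base $B$ of any ruled surface swept by such lines is automatically irrational, so rational curves in it lie in fibres; the rational-base case never arises and your direct freeness argument for multisections (which would anyway need care, since $f:S\to Y$ is only birational) is unnecessary. Separately, ``$\mathcal{O}_E(E)$ is anti-ample'' is only correct for $E2$--$E5$; on an $E1$ divisor the conormal bundle is merely relatively ample over the blown-up curve, and sections can have $\sigma.E\ge 0$. The exclusion of exceptional divisors instead comes from comparing $\dim M\ge -K_X.\alpha+3$ with the dimension of $\Mor(\mathbb{P}^1,E)$, which together with adjunction $-K_E=(-K_X-E)|_E$ forces $E.\alpha\le -1$.
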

\begin{proof}
Let $\alpha \in \Nef_1(X)$ and suppose the universal family $U$ over a component $M\subset \Mor(\mathbb{P}^1, X, \alpha)$ does not dominate $X$.  Let $D \subset X$ be the closure of the image of $U$.  We first claim that $D$ is a nef divisor.  %
Suppose $D$ is not nef instead. Since $D$ is irreducible, $D$ must be the exceptional divisor for some elementary contraction of $X$.  As $E4$ divisors are specializations of $E3$ divisors and $X$ is general in moduli, this implies $D$ is smooth.  By adjunction, $-K_D = (-K_X - D)|_D$.  However, as $M$ parameterizes a family of curves that dominates $D$ and $\alpha \in \Nef_1(X)$, $M$ must have the expected dimension $(-K_X - D) . \alpha + \dim D < -K_X . \alpha + \dim X$, a contradiction.  Thus $D$ is nef.

By Theorem \ref{nonfree curves} and Lemma \ref{lem: E5 lines}, $D$ is the image of a birational map $f : S \rightarrow D$ from a ruled surface $\phi : S \rightarrow C$ whose fibers map birationally to nef lines in $X$.  If $C$ is irrational, the component $M\subset \Mor(\mathbb{P}^1, X, \alpha)$ must parameterize multiple covers of the fibers of $\phi$.  Therefore, it suffices to prove %
irrationality of all components of $\overline{M}_{0,0}(X,\beta)$ for each $\beta \in \Nef_1(X)_{\mathbb{Z}}$ such that $-K_X . \beta = 1$.  For a proof of this statement and irreducibility of $\overline{M}_{0,0}(X,\beta)$ when $\rho(X) = 1$, see \cite[Chapter~4]{FanoV_Shafarevich}.  %
We address the case $\rho(X) \geq 2$ below.

Suppose $\beta \in \Nef_1(X)_{\mathbb{Z}}$ satisfies $-K_X . \beta = 1$.  Let $\pi : X \rightarrow B$ be the fiber-type contraction corresponding to the smallest face of $\overline{NE}(X)$ containing $\beta$.  If $\beta$ spans an extreme ray of $\overline{NE}(X)$, then Lemma \ref{lines in C1 fibrations} and Corollary \ref{D1 lines irred} prove our claim.  Otherwise, by Lemma \ref{nonextreme lines}, there are only three possibilities for the pair $(X,\beta)$.  We address these individually below.

If $X \cong \mathbb{P}^1 \times S_1$, there are twelve components of $\overline{M}_{0,0}(X,\beta)$, all of which are rational curves.  If $X \rightarrow \mathbb{P}^1 \times \mathbb{P}^2$ is the blow-up of a general curve $C$ of bidegree $(5,2)$, $\pi : X\rightarrow \mathbb{P}^1$ is induced by the first projection, and the induced map $\pi : \overline{M}_{0,0}(X,\beta) \rightarrow \mathbb{P}^1$ is finite.  Lemma \ref{monodromy del Pezzo main result} implies the space of bisecant lines to $C$ contained in fibers of $\pi$ is irreducible.  Lastly, if $\rho(X) = 3$ and $(-K_X)^3 = 14$, \cite[Theorem~2.11.2]{weak_fano_del_pezzo} identifies $X$ as the blow-up of a complete intersection $Y$ in a $\mathbb{P}^4$-bundle over $\mathbb{P}^1$ along a unique flopping curve.  Because each line of class $\beta$ must be disjoint from the blown-up section, the induced map $\overline{M}_{0,0}(X,\beta) \cong \overline{M}_{0,0}(Y,\beta) \rightarrow \mathbb{P}^1$ must be finite.  Lemma \ref{monodromy del Pezzo main result} implies irreducibility of $\overline{M}_{0,0}(X,\beta)$.  In each case, the Riemann-Hurwitz formula proves irrationality of the unique component of $\overline{M}_{0,0}(X,\alpha)$ from \cite[Proposition~7.17]{eisenbud20163264}, as in the proof of Corollary \ref{D1 lines irred}.
\end{proof}

The following proposition identifies all 2-dimensional families of non-free conics in smooth Fano threefolds.

\begin{prop}\label{properties nonfree conics}
Let $X$ be a smooth Fano threefold, $\alpha \in \overline{NE}(X)$ satisfy $-K_X . \alpha = 2$, and $M \subset \overline{M}_{0,0}(X,\alpha)$ be a component whose corresponding one-pointed family $M'$ is nondominant.    
Suppose $M$ does not parameterize multiple covers of $-K_X$-lines.  Let $Y$ be the image of $M'$ in $X$.
\begin{itemize}
    \item If $\alpha \in \Nef_1(X)$, then $M$ parameterizes reducible conics and $\alpha$ is extreme in $\overline{NE}(X)$.  Moreover, either $\rho(X) = 1$ and $g(X) \leq 3$, or $\rho(X) = 2$ and $Y$ is a singular fiber of a del Pezzo fibration of degree at most 3.
    \item If $\alpha \not\in \Nef_1(X)$, an irreducible component $E\subset Y$ is the exceptional divisor of an elementary birational contraction $X \rightarrow \overline{X}$ lying over a point or curve of $-K_{\overline{X}}$-degree $\leq 1$.  If $E$ is not an $E5$ divisor, $Y = E$ is irreducible.

\end{itemize}
\end{prop}

\begin{proof}
Suppose $\alpha \in \Nef_1(X)$.  To derive a contradiction, suppose general maps parameterized by $M$ have irreducible domain.  Any component of $\Mor(\mathbb{P}^1, X)$ which parameterizes curves contained in some exceptional divisor $E \subset X$ must parameterize non-nef curves.  By Theorem \ref{nonfree curves}, every non-dominant 2 dimensional family of nef conics must lie in the image of some Hirzebruch surface $f : \mathbb{F}_r \rightarrow X$ whose fibers $\pi : \mathbb{F}_r \rightarrow \mathbb{P}^1$ parameterize lines.  As \cite[Theorem~4.1]{beheshti2020moduli} and \cite[Lemma~4.5]{beheshti2020moduli} construct such a family as a contraction of a resolution of a subvariety in $X$, $f$ may be chosen to be birational onto its image.  Since $-f^*K_X$ is big, nef, and pairs to one with each fiber of $\pi$, $-f^* K_X = C + m F$ for some $m \geq r$.  

Every moving curve in $\mathbb{F}_r$ maps to a curve of degree at least $m$ in $X$.  Hence, $r \leq 2$.  If $m = r \leq 2$, then $f$ contracts the rigid section $C$, and the image of $f$ is either a line ($r = 0$), an $E5$ divisor ($r = 1$), or an $E4$ divisor ($r=2$) \cite[Theorem~4.1]{beheshti2020moduli}.  If $r = 0$ and $m = 1$, the image of $f$ must be an $E3$ divisor or $E1$ divisor with normal bundle $\mathcal{O}_{\mathbb{P}^1\times \mathbb{P}^1}(-1,-1)$ \cite[Theorem~4.1]{beheshti2020moduli}.  If $r = 0$ and $m = 2$, there are no two parameter families of embedded $-K_X$-conics contained in $\mathbb{F}_r$.  Hence, $r = 1$ and $f^*(-K_X) = C + 2F$.  

As $f_*[\mathbb{F}^1]. (-K_X)^2 = 3$ is odd, we must have $\rho(X) \geq 2$.  Moreover the conics must have class $C + F$ in $\mathbb{F}_1$.  Supposing $\alpha \in \Nef_1(X)_\mathbb{Z}$ is such a conic, we will show $\alpha$ is twice the class of an effective anticanonical line, and the image of $f : \mathbb{F}_1 \rightarrow X$ must be a non-normal divisor.  Lemma \ref{nonextreme lines} then implies our claim, as either $\alpha$ is contracted by a conic fibration, or $\frac{\alpha}{2}$ is the class of a nef line in a del Pezzo fibration with degree $d\neq 6$ fibers, which are all normal \cite[Remark~4.10]{Fujita_del_Pezzo}. %

Suppose $-K_X$ is basepoint free.  Since $C.(C + 2F) = F.(C + 2F) =1$, the map $\phi: X \rightarrow |-K_X|$ embeds the rigid section of $\pi : \mathbb{F}_1 \rightarrow \mathbb{P}^1$ and each fiber.  Moreover as $\phi$ is at most a degree two cover, the map $\phi \circ f$ either embeds each conic or maps some to multiple covers of lines.  %
Note that $\phi \circ f$ is birational onto its image, as $f_*[\mathbb{F}^1]$ has odd $(-K_X)^2$-degree and $f$ is birational onto its image.  If the restriction of $f$ to each conic is an embedding, then $f$ is bijective and adjunction shows $f(\mathbb{F}_1)$ must be an exceptional divisor of $E1$ type.  Hence, some (possibly reducible) conic is a double cover of a line in $X$.

If $-K_X$ has basepoints, $X$ is either $\mathbb{P}^1 \times S_1$ or the blow-up of $V_1$ along an elliptic curve.  If $X \cong \mathbb{P}^1 \times S_1$, there are no non-dominant, two-dimensional families of nef conics. If $X$ is a blow-up of $V_1$, the are only two nef classes of conics.  One of these is twice the class of a nef line.  General, irreducible conics of the other class must be disjoint from the exceptional divisor of $X \rightarrow V_1$, so our claim reduces to the preceding argument.

Hence, general maps parameterized by $M$ must have reducible domain.  Let $f: C_1 \cup C_2 \rightarrow X$ be a general map parameterized by $M$. Because $\alpha$ is nef, no irreducible component of $f(C_1 \cup C_2)$ may be contained in an $E5$ divisor.  Hence, $\overline{M}_{0,0}(X,f_*[C_i])$ is one-dimensional for each $i$.  Let $M_i \subset \overline{M}_{0,0}(X, f_*[C_i])$ parameterize deformations of $f|_{C_i}$ and $M_i' \rightarrow M_i$ be the corresponding family of one-pointed curves.  We may consider $M$ as a component of $M_1' \times_X M_2'$.  Any two distinct curves in $X$ meet at finitely many points.  Hence, as $\text{dim } M = 2$, any two curves parameterized by $M_1$ and $M_2$ intersect.  Therefore $M_1'$ and $M_2'$ have the same image $Y \subset X$.  In fact, $M_1$ and $M_2$ must be the same component of $\overline{M}_{0,0}(X)$, as otherwise $Y$ would be an $E3$ or $E1$ divisor.  It follows that $\alpha \in \overline{NE}(X)$ is extreme.  Our claims about $Y$ follows from \cite{kuznetsov2018}.

Suppose $\alpha \notin \Nef_1(X)$.  If general maps parameterized by $M$ have reducible domains, then one component must sweep out an $E5$ divisor.  Otherwise, $Y$ must be an irreducible, contractible divisor, and general maps parameterized by $M$ have irreducible domain.  Unless $Y$ is a Hirzebruch surface $\mathbb{F}_r$, the classes of such curves are extreme in $\overline{NE}(X)$ and associated to the contraction of $Y$.  When $Y \cong \mathbb{F}_r$, $-K_X|_Y = C + mF$ for some $m > r$, so that every moving curve in $Y$ is either of class $F$ or has $-K_X$-degree at least $r+1$.  %
As $m \leq 2$ and $Y|_Y= -C - (2 + r -m)F$ by adjunction, $Y$ lies over a curve $\ell \subset \overline{X}$ of $-K_{\overline{X}}$-degree $2 - Y^3 =  -2 - r + 2m \leq 2$ \cite[Lemmas~2.1,2]{mori1985classification}.  However, if $r = 0$ and $m = 2$, then $Y|_Y \cong \mathcal{O}_{\mathbb{P}^1 \times \mathbb{P}^1}(-1,0)$, and the only curves of $-K_X$-degree $2$ in $Y$ are a one-parameter family of free conics and a three parameter family of multiple covers of lines.  Hence, $\ell . (-K_{\overline{X}}) \leq 1$.
\end{proof}

\begin{rem}

Numbered as in \cite{mori1981classification}, deformation types of Fano threefolds with
\begin{itemize}
    \item $E5$ divisors are $2.28$, $2.36$, $3.9$, $3.14$, $3.22$, and $3.29$;
    \item $E3$ or $E4$ divisors are $2.8$, $2.15$, and $2.23$;
    \item $E1$ divisors lying over curves of anticanonical degree 0 in weak Fano threefolds are $3.2$, $3.5$, $3.21$, $3.31$, $4.2$, $4.5$, $4.8$, $4.11$, $4.12$, $5.1$, and $5.2$;
    \item $E2$ divisors are $2.30$, $2.35$, $3.14$, $3.19$, and $3.26$; %
    \item $E1$ divisors lying over lines in Fano threefolds are $3.11$, $3.16$, $3.18$, $3.23$, $3.30$, $4.4$, $4.9$, $4.12$, $5.1$, and $5.2$.
\end{itemize}
The last type of $E1$ divisor is the strict transform of an $E2$ divisor on a smooth, elementary, Fano contraction.  There is at most one divisor of the first three types on a Fano threefold $X$ unless $X$ has deformation type $3.9$ or $4.2$.  %
\end{rem}

We conclude our analysis of lines and conics with a proof of Theorem \ref{main thm: conics}.    We recall the Theorem's statement below.

\begin{thm}
Let $X$ be a smooth Fano threefold of index $r$ and Picard rank $\rho$.  %
Suppose $\alpha \in \Nef_1(X)_\mathbb{Z}$ satisfies $-K_X . \alpha = 2$ and $\alpha \not\in \partial \overline{NE}(X)$.  Then $\rho \leq 2$, and
\begin{enumerate}
\item $\free(X,\alpha)$ is irreducible if $\rho = 2$, if $r > 1$, or if $\rho = r = 1$ and $g(X) > 6$.
\item Suppose $\rho = r = 1$ and $X$ is general in moduli.  If $-K_X$ is very ample, $\free(X,\alpha)$ is irreducible; otherwise, $\free(X,\alpha)$ has two components.
\end{enumerate}
\end{thm}

\begin{proof}
\cite{Fanoindex1rank1} proves (2) when $-K_X$ is very ample, while the remaining cases follow from \cite{ceresa86} (see Remark \ref{rem: Ceresa-Verra}), \cite{monodromyK3}, and \cite{doublequadrics}.  Note that \cite{doublequadrics} ignores double covers of lines tangent to the branch locus of $X \rightarrow Q \subset \mathbb{P}^4$, which constitute another component of free conics on $X$.  %

\cite{kuznetsov2018} proves (1) when $\rho(X) = 1$ and $X$ is not a del Pezzo threefold of degree $\leq 2$.  Lemma \ref{interior conic lemma} proves the bound $\rho(X) \leq 2$.  We prove irreducibility of $\free(X,\alpha)$ in three separate cases, according to the Mori structure of $X$.   %

\textbf{Case 1:}  Suppose $\rho(X) = 2$ but $X$ does not admit a conic bundle structure over $\mathbb{P}^2$.  Then there exists a blow-up map $\pi : X \rightarrow Y$ where $Y$ is a Fano threefold of index $r \geq 2$.  The exceptional divisor $E$ of $\pi$ maps to a smooth curve $C \subset Y$.  If $r = 2$, then $E . \alpha = 0$, and our claim follows from either \cite{kuznetsov2018} or Case 3.  Otherwise, $\alpha$ must be the class of a strict transform of a line in $Y$ meeting $C$ with multiplicity $r-2$.  The space of such curves is clearly irreducible.

\textbf{Case 2:}  Suppose $X$ has a conic bundle structure $\pi : X \rightarrow \mathbb{P}^2$.  In this case, $\pi_* \alpha$ must be the class of a line in $\mathbb{P}^2$.  Consider a general pencil $V \subset |\mathcal{O}_{\mathbb{P}^2}(1)|$.  For general $C \in V$, the preimage $S_C = \pi^{-1}(C)$ contains finitely many curves of class $\alpha$.  To show $\overline{\free}(X,\alpha)$ is irreducible, we will prove that monodromy over $V$ acts transitively on the classes of these rigid curves.

Let $f \in N_1(S_C)$ be the class of a smooth fiber of $\pi$ and consider $\overline{\free}(X, \alpha + i_*f)$, where $i : S_C \rightarrow X$ is the inclusion.  Lemma \ref{lines in C1 fibrations}, Theorem \ref{conic monodromy}, and Proposition \ref{low degree curves degeneration} show $\overline{\free}(X,\alpha + i_*f)$ is irreducible.  Hence, as $V$ is a general pencil, monodromy over $V$ acts transitively on classes in $N_1(S_C)$ representing $\alpha + i_*f$.  But this action also stabilizes the class of $f$, and hence must act transitively on classes of curves representing $\alpha$.

\textbf{Case 3:} Suppose $X$ is a del Pezzo threefold of degree $d(X) \leq 2$.  Our claim is well-known when $X$ is general in moduli, see \cite{Tihomirov_1982} and \cite{2019}.  Consider the moduli $\pi : Y \rightarrow B$ of del Pezzo threefolds of degree $d(X)$.  Let $i: X \rightarrow Y$ be the inclusion and consider $M = \Mor(\mathbb{P}^1, Y, i_*\alpha)$.  Irreducibility of $\overline{M}_{0,0}(X,\alpha)$ is equivalent to irreducibility of the fiber of $M$ over $p = \pi\circ i(X)$.  Since the fiber of $M$ over a general point in $B$ is irreducible, $M_p$ must be connected.  When $d(X) = 2$, we will show $M_p$ is nonsingular in codimension 1 and has local embedding dimension at most 3 around each singular point.  As $\text{dim } M_p = 2$, this will prove $M_p$ is irreducible when $d(X) = 2$.  When $d(X) = 1$, $M_p$ may not be non-reduced in codimension 1.  However, its singular locus is contracted by a generically finite morphism $M_p \rightarrow \mathbb{P}^2$ constructed in \cite{Tihomirov_1982}, which allows us to apply a similar argument.  %

First, suppose $f: \mathbb{P}^1 \rightarrow X$ is not an embedding and $f_*[\mathbb{P}^1] = \alpha$.  Necessarily $d(X) = 1$, $C = f(\mathbb{P}^1)$ is the complete intersection of two fundamental divisors, and $C$ has at most one cusp or node.  Hence, $h^1(\mathcal{N}_{f/X}) = 0$.

Consider instead an embedding $f: C \rightarrow X$ with $f_* C = \alpha$.  Let $\phi : X \rightarrow Z$ be the double cover structure induced by $\frac{-1}{d(X)} K_X$.  There is an exact sequence %
$$0 \rightarrow \mathcal{T}or_1^{\mathcal{O}_X}(f_*\mathcal{O}_C, \mathcal{O}_R(2R)) \rightarrow \mathcal{N}_{f/X} \rightarrow \mathcal{N}_{\phi \circ f/Z} \rightarrow f^*\mathcal{O}_R(2R) \rightarrow 0$$ %
where $R \subset X$ is the ramification locus of $\phi$.  %
Since $\phi \circ f$ is an embedding, its image is a smooth complete intersection with normal bundle
$$\mathcal{N}_{\phi \circ f/Z} \cong \mathcal{O}_{\mathbb{P}^1}(1) \oplus \mathcal{O}_{\mathbb{P}^1}(3 - d(X)).$$

\noindent If $f(C) \subset R$, $f^*\mathcal{O}_R(2R) \cong \mathcal{O}_C(2R) \cong \mathcal{O}_C(8- 2d(X))$ and $\mathcal{T}or_1^{\mathcal{O}_X}(f_*\mathcal{O}_C, \mathcal{O}_R(2R)) \cong \mathcal{O}_C(4-d(X))$.  %
Thus $h^1(\mathcal{N}_{f/X}) \leq 3 - d(X)$.  Note that there are at most finitely many rational curves in $ f(C) \subset R$ of bounded $K_R$-degree.

Otherwise, if $f(C) \not\subset R$, then $f^*\mathcal{O}_R(2R)$ is torsion of length $4- d(X)$ and $\mathcal{T}or_1^{\mathcal{O}_X}(f_*\mathcal{O}_C, \mathcal{O}_R(2R))$ is 0.  Thus, $\mathcal{N}_{f/X}$ is generated in degree at most $3 -d(X)$.

When $d(X) = 2$, this proves $M_p$ is singular only along isolated points corresponding to curves $f(C) \subset R$.  As the embedding dimension of $M_p$ at each singular point is $3$, $M_p$ must be irreducible as it is connected. 

When $d(X) = 1$, $h^1(\mathcal{N}_{f/X}) \leq 2$, and this equality is obtained for at most finitely many curves $f:C \rightarrow R \subset X$.  However, there may be one-parameter families of maps $f: C \rightarrow X$ with $f(C) \not\subset R$ and $h^1(\mathcal{N}_{f/X}) = 1$.  The proof of \cite[Proposition~6.11]{balancedlinebundles} completely characterizes such families: they are the rulings of fundamental divisors $H$ isomorphic to a cone 
$$H = V(ax^6_1 + bx^4_1y + y^3 + z^2) \subset \mathbb{P}(1,1,2,3), \text{ such that } 4b^3 + 27a^2 \neq 0,$$%
over an elliptic curve, where $\text{deg}(x_i) = 1$, $\text{ deg}(y) = 2$, and $\text{ deg}(z) = 3$.  The projection $H \subset \mathbb{P}(1,1,2,3) \dashrightarrow \mathbb{P}(1,1,2)$ is the restriction of the double-cover structure $\phi : X \rightarrow Z = \mathbb{P}(1,1,1,2)$ to $H$.  This demonstrates that the branch locus $B \subset Z$ of $\phi$ intersects $\phi(H)$ along the union of three smooth curves (the irreducible components of $V(ax_1^6 + bx_1^4y + y^3) \subset \mathbb{P}(1,1,2)$) meeting at a single point.  These curves, i.e. the components of $H \cap R$, are examples of $-K_X$-conics $f: C\rightarrow R$ with $h^1(\mathcal{N}_{f/X}) = 2$.  Thus, there are finitely many such families.  As we will show below, outside of these one-parameter families there are finitely many curves $f: C \rightarrow X$ with $f_*C = \alpha$ and $h^1(\mathcal{N}_{f/X}) = 1$, and each such curve corresponds to a higher-order flex line or multi-tangent of a cuspidal degree 12 curve $C_0 \subset \mathbb{P}^2$.

Following the detailed analysis of \cite{Tihomirov_1982}, we may consider the rational projection $\psi : X \rightarrow \mathbb{P}(1,1,1,2) \dashrightarrow \mathbb{P}(1,1,1)$.  This induces regular morphisms $\phi|_B : B \rightarrow \mathbb{P}^2$ and $\psi_* : M_p \rightarrow (\mathbb{P}^{2})^{*}$.  The branch locus of the triple cover $\psi|_B$ is a degree 12 curve $C_0 \subset \mathbb{P}^2$ with 24 cusps for singularities, since $B$ is smooth \cite{Tihomirov_1982}.  

From out description of $\phi \circ f(C)$ as a complete intersection, it is simple to see that curves $f: C \rightarrow X$ with $h^1(\mathcal{N}_{f/X}) > 0$ must lie in singular fundamental divisors $H$ whose singular locus is not a single $A_1$ point, and that such fundamental divisors correspond to singularities of the dual curve $C_0^* \subset (\mathbb{P}^2)^*$ \cite[Page~386]{Tihomirov_1982}.  Thus, $\psi_*$ contracts the singular locus of $M_p$ to a finite set of points.  The preimage $\psi_*^{-1}(\ell) \subset M_p$ of a general line $\ell \subset (\mathbb{P}^2)^*$ is therefore strictly contained in the smooth locus of $M_p$, and meets each component of $M_p$.  It thus suffices to prove $\psi_*^{-1}(\ell)$ is connected.  However, when $X$ is general this follows immediately from the irreduciblity of $M_p$; thus, by specializing $X$ we obtain our result.
\end{proof}

\subsection{Degeneration to Special Threefolds}  
When proving Theorem \ref{Main Result} for certain deformation types of Fano threefolds, we assume generality of $X$ to apply monodromy results.  The following proposition allows us to extend our proof of Theorem \ref{Main Result} to arbitrary smooth threefolds.  %

\begin{prop}\label{low degree curves degeneration}
Consider a family $\pi : \mathcal{X} \rightarrow B$ of smooth Fano threefolds.  Let $\alpha \in \overline{NE}(\pi)$ be of anticanonical degree $\leq 4$.  If $\text{ev}: \overline{\free}^{bir}_1(\mathcal{X}, \alpha) \rightarrow \mathcal{X}$ has irreducible fibers over general points in $\mathcal{X}$, then $\overline{\free}^{bir}(X_b, \alpha)$ is irreducible for each $b \in B$.

\end{prop}

We will need the following lemma.

\begin{lem}\label{lem: normal bundle lines}
Let $X$ be a smooth Fano threefold.  Suppose $X$ is not a blow-up of a del Pezzo threefold of degree one.  There are at most finitely many anticanonical lines $f: \mathbb{P}^1 \rightarrow X$, up to reparameterization, whose normal bundle $\mathcal{N}_f$ has $h^1(\mathcal{N}_f) > 1$.  Moreover, each such line meets finitely many other lines in $X$.

\end{lem}

\begin{proof}
Our claim is trivial if $-K_X$ is very ample or if $X$ is a product variety.  Otherwise, as $-K_X$ is basepoint free, the complete linear system defines a double cover structure $\pi: X \rightarrow Y \subset \mathbb{P}|-K_X|$ with a smooth branch locus $B \subset Y$.  Explicit descriptions of $B$ imply $K_B$ is effective, and thus $B$ contains finitely many $-K_X$ lines.  Every other anticanonical line $\ell$ on $X$ is an irreducible component of a line in $Y \subset \mathbb{P}|-K_X|$ with a reducible preimage.  There is an inclusion
$$0 \rightarrow \mathcal{N}_{\ell/X} \rightarrow \mathcal{N}_{\ell\cup \ell'/X}$$
of the normal bundle of $\ell$ into the normal bundle $\mathcal{N}_{\ell\cup \ell'/X}$ of the union of $\ell$ with its Galois conjugate $\ell'$, considered as a subscheme of $X$.  As the latter bundle is generated in degree $-1$ and $\ell$ is smooth, this proves $h^1(\mathcal{N}_f) \leq 1$.

Hence, the ramification locus $\pi^{-1}(B) \subset X$ contains every anticanonical line $f: \mathbb{P}^1 \rightarrow X$ whose normal bundle $\mathcal{N}_f$ has $h^1(\mathcal{N}_f) > 1$.  Suppose such a line $f(\mathbb{P}^1)$ met a one parameter family of other lines, parameterized by a ruled surface $\rho : S \rightarrow C$, $\phi : S \rightarrow X$.  By arguments similar to Proposition \ref{properties nonfree conics}, $f(\mathbb{P}^1)$ does not lift to $S$ unless it is a fiber of $\rho$.  The image of each fiber of $\rho$ therefore meets $f(\mathbb{P}^1)$ at a common point $p \in f(\mathbb{P}^1) \subset \pi^{-1}(B)$.  However, this implies $\pi \circ \phi(S)$ is a cone with vertex $p$.  However, this is impossible, as each line in $X$ must be totally tangent to the smooth surface $B \subset Y$ upon projection.
\end{proof}

\begin{lem}\label{terminal threefold lemma}
    Let $X$ be a smooth Fano threefold.  Let $D \subset X$ be the union of exceptional divisors on $X$ which contain reduced $-K_X$-conics.  There is a contraction $\pi : X \rightarrow \overline{X}$ to a terminal weak Fano threefold with execptional locus $D$.
\end{lem}

\begin{proof}
Recall that $D\subset X$ is the collection of exceptional divisors whose elementary contractions are the blow-ups of points or curves of degree $\leq 1$ by Proposition \ref{properties nonfree conics}.  Let $D_0 \subset D$ be the union of all $E2$, $E3$, $E4$, $E5$, and $E1$ divisors with normal bundles $\mathcal{O}_{\mathbb{P}^1 \times \mathbb{P}^1}(-1,-1)$.  It is well known that $D_0$ contains mutually disjoint divisors \cite[Lemma~2.3]{beheshti2020moduli}.  Let $D_1$ be the closure of $D\setminus D_0$.  Each component $E \subset D_1$ is a Hirzebruch surface $\mathbb{F}_1$ whose rulings are the exceptional lines of a contraction $X \rightarrow X_E$ to a smooth Fano threefold.  Moreover, by \cite[Corollary~4.8,~Proposition~4.9]{mori1983classification} and \cite[Proposition~5.3]{mori1985classification}, the image of $E$ in $X_E$ must be an exceptional line on an $E1$ divisor $F_E$.  If $\tilde{F}_E \subset X$ is the strict transform of $F_E$, then 
\begin{itemize}
    \item $\tilde{F}_E \cap E$ is the rigid section $C_E \subset E \cong \mathbb{F}_1$, 
    \item $C_E . E = 0$ and $C_E . \tilde{F}_E = -1$,  
    \item $C_E$ is the only curve contained in $E$ which pairs nonnegatively with $E$.
\end{itemize}
The contraction $X \rightarrow \tilde{X}_E$ of $[C_E] \in \overline{NE}(X)$ is an elementary contraction to a smooth threefold.  The image of $E$ in $\tilde{X}_E$ is an $E2$ divisor.

Any two intersecting divisors in $D$ must both be of $E1$ type.  Indeed, suppose $E_1, E_2 \subset D$ are two components with nonempty intersection.  We may assume $E_1 \subset D_1$.  Let $C$ be any irreducible component of $E_1 \cap E_2$.  Suppose $E_2 \in D_0$.  Since every curve in $E_1$ aside from $C_{E_1}$ moves in $E_1$ and pairs negatively with $E_1$, and every curve in $E_2$ moves in $E_2$ while $E_2|_{E_2}$ is anti-ample, $C = C_{E_1}$ must be rigid in $E_1$.  Were $E_2$ not of $E1$ type, deformations of $C \subset E_2$ would pair nontrivially with $E_1$, contradicting $C_{E_1} . E_1 = 0$.  Thus $E_2$ is of $E1$ type.%

Suppose $E_1, E_2 \in D$ intersect nontrivially, and assume $E_1 \in D_1$.  We will show that $E_2 = \tilde{F}_{E_1}$.  Indeed, if $E_2 \in D_0$ this follows from the preceding paragraph.  Suppose $E_2 \subset D_1$ instead and let $C$ be a component of $E_1 \cap E_2$.  Since every curve in $E_i$ aside from $C_{E_i}$ moves in $E_i$ and pairs negatively with $E_i$, $C$ must be $C_{E_i}$ for some $i$, say $i = 1$.  If $C = C_{E_2}$ as well, then $E_2 \neq \tilde{F}_{E_1}$ and in the contraction $X \rightarrow \tilde{X}_{E_1}$ of $C \in \overline{NE}(X)$, the images of $E_1$ and $E_2$ would be intersecting $E2$ divisors.  Thus $C \neq C_{E_2}$.  By similar reasoning, the existence of an elementary contraction $f: X \rightarrow \tilde{X}_{E_1}$ to a smooth variety, contracting $C_{E_1} = C\subset E_2$, proves $E_2 = \tilde{F}_{E_1}$.  Moreover, as $\tilde{F}_{E_2} \cap C_{E_1}$ is a point, it follows that $\tilde{F}_{E_2} \cap E_1$ is a moving divisor in $E_1$.  Necessarily $\tilde{F}_{E_2} \cap E_1$ is a section of the ruling on $\tilde{F}_{E_2}$ contracted by $X \rightarrow \tilde{X}_{E_2}$.

In all cases, no other component $E_3 \subset D$ meets $E_1$.  Indeed, if $E_3 \neq E_1,E_2$ were a component of $D$ intersecting $E_1$, necessarily $E_3 \subset D_1$ as $E_2 = \tilde{F}_{E_1}$ is unique, and $E_1 = \tilde{F}_{E_3}$.  This implies $E_3 \cap E_2 \neq \emptyset$, so $E_2 \in D_1$ and $E_3 = \tilde{F}_{E_2}$.  The three extreme rays $e_i \in \overline{NE}(X)$ associated to $E_i$ would span a three dimensional face $F\subset \overline{NE}(X)$ which intersects $\Nef_1(X)$ precisely along the ray spanned by $r = e_1 + e_2 + e_3$.  The contraction $\pi_r : X \rightarrow \mathbb{P}^1$ would be a del Pezzo fibration with special fiber $E = E_1 \cup E_2 \cup E_3$.  Moreover, the sequence of elementary contractions $X \rightarrow X_{E_1} = \tilde{X}_{E_3}$, $X_{E_1} \rightarrow Y$, contracting $E_1$ first, followed by the image $F_{E_1}$ of $E_2$, would realize the special fiber $E$ as the blow-up of a $\mathbb{P}^2$-fiber, the image of $E_3$, along a line $\ell \subset \mathbb{P}^2$ with $-K_Y. \ell = 3$, followed by the blow-up of an exceptional line of $X_{E_1} \rightarrow Y$.  However, then %
$Y \cong \mathbb{P}^1 \times \mathbb{P}^2$ and the strict transform of some ruling $\mathbb{P}^1 \times \text{pt}$ of $\mathbb{P}^1 \times \ell \subset Y$ would be a curve of $-K_X$-degree 0, a contradiction.

Suppose once more $E_2 \subset D_1$.  In this case, as no other component $E_3$ of $D$ meets $E_1$, for every component $E_i \neq E_2$ meeting $E_2$, $E_2 = \tilde{F}_{E_i}$.  Therefore, we may contract $E_2$ to reduce the number of components of $D$.  Similarly, suppose $E_2 \subset D_0$.  We must have $\mathcal{O}_{E_2}(E_2) \cong \mathcal{O}_{\mathbb{P}^1 \times \mathbb{P}^1}(-1,-1)$, $C_{E_1} = E_1 \cap E_2$ is a ruling of $E_2 \cong \mathbb{P}^1 \times \mathbb{P}^1$, and any nonempty intersection of a component of $D_1$ with $E_2$ must be linearly equivalent to $C_{E_1}$.  As this must hold for each component $E_2 \subset D_0$ meeting a component of $D_1$, there is a contraction of all such divisors.%
\end{proof}

\begin{cor}\label{general curve and point}
    Let $X$ be a smooth Fano threefold.  Suppose $X$ is not a blow-up of a del Pezzo threefold of degree one.  There exists a big and basepoint free linear series $V$ on $X$ with the following property: %
for a general point $p \in X$ and a general complete intersection curve $Q = D_1 \cap D_2$, $D_i \in V$, every stable rational map $f : C\rightarrow X$ of $-K_X$-degree 4 meeting $p$ and $Q$ is an immersion whose normal bundle $\mathcal{N}_f$ satisfies $h^1(\mathcal{N}_f) = 0$.

\end{cor}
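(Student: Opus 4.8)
The plan is to choose $V$ so that a general complete intersection $Q = D_1 \cap D_2$ is as generic as possible, to reduce the problem to stable maps lying in components of $\overline{M}_{0,0}(X,\alpha)$ with dominant universal family, and then to invoke the classification of low-degree curves on Fano threefolds (Theorem \ref{nonfree curves}, Proposition \ref{properties nonfree conics}, Lemma \ref{terminal threefold lemma}, Corollary \ref{main result 4}, Theorem \ref{improved MovableBB}) to control what the degenerate members of such a component look like. Concretely, I would take $V = |-K_X|$ when $|-K_X|$ is basepoint free, which by \cite{mori1983classification} fails only when $X$ is a blow-up of $V_1$ or of $\mathbb{P}^1 \times S_1$; for those two families one takes instead any big and basepoint free linear system (for instance $|mL|$ with $L$ ample and $m \gg 0$). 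The only properties of $V$ that will be used are that for general $D_1, D_2 \in V$ the curve $Q$ is smooth and irreducible, is disjoint from every fixed curve and fixed finite subset of $X$ not depending on the $D_i$, and meets every fixed surface $S$ transversally at general points of $S$ in a nonempty finite set, the last because $Q \cdot S = D_1 \cdot D_2 \cdot S > 0$ by ampleness.

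First I would reduce to dominant families through a general point. There are finitely many effective classes $\alpha$ with $-K_X \cdot \alpha = 4$ and finitely many irreducible components of each $\overline{M}_{0,0}(X,\alpha)$; by Theorem \ref{nonfree curves} and the ruled-surface description of \cite{beheshti2020moduli}, the image in $X$ of any component with non-dominant universal family is a surface, and only finitely many such surfaces occur. Thus a general $p \in X$ avoids their union, and every $-K_X$-degree-$4$ stable map through $p$ lies only in components $M$ with dominant universal family; such $M$ satisfy $\dim M \geq -K_X\cdot\alpha + \dim X - 3 = 4$, the locus $M_p \subset M$ of maps whose image contains $p$ has $\dim M_p \geq 1$, and a rational curve through a general point of $X$ that moves in a covering family is free \cite{kollar2013rational}. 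Hence the general member of every component of $M_p$ is a free stable map; by Theorem \ref{MovableBB} and Theorem \ref{improved MovableBB} such a free quartic is either an immersion birational onto its image or breaks into a chain of two free conics and hence deforms to one, so in either case it is an immersion with $\mathcal{N}_f = f^*\mathcal{T}_X/\mathcal{T}_{\mathbb{P}^1}$ globally generated and with no non-free component, which is the desired conclusion.

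It remains to treat the degenerate locus $\mathcal{B}_p \subset M_p$ of maps failing the conclusion, and this dimension count is the main obstacle. The classification confines a member of $\mathcal{B}_p$ to be a multiple cover of a $-K_X$-line or $-K_X$-conic, a chain containing a non-free component of $-K_X$-degree $\geq 2$ — whose reduced image, by Proposition \ref{properties nonfree conics} and Lemma \ref{terminal threefold lemma}, lies in an exceptional divisor, in the locus $D$ contracted by the map of Lemma \ref{terminal threefold lemma}, or is twice a $-K_X$-line — or a chain with at least two non-free components, each a $-K_X$-line or $-K_X$-conic; the one remaining configuration, a free chain with a single $-K_X$-line attached, is an immersion with globally generated normal sheaf by Lemma \ref{GHS lemma}, Proposition \ref{kontsevich space H1} and Lemma \ref{Basic Properties Chains}(6), hence lies outside $\mathcal{B}_p$. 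Since $-K_X$-lines of a fixed class move in families of dimension $\leq 1$ (Corollary \ref{main result 4}) and $-K_X$-conics of a fixed class in families of dimension $\leq 2$, so that only finitely many of each class pass through or meet a fixed curve in the threefold $X$, a finite induction shows that once $p$ is fixed the images of all maps in $\mathcal{B}_p$ lie in a single fixed curve $\Gamma_p \subsetneq X$ — the finite union of the $-K_X$-lines and $-K_X$-conics through $p$, of those meeting them, and so on — together with, a priori, the fixed exceptional divisors and the locus $D$. The crux is to rule out these last contributions, i.e.\ to show, uniformly over the families where $D$ is nonempty and meets free $-K_X$-conics or cubics, that any such chain through a general $p$ in fact has image in $\Gamma_p$; this is where the generality of $X$ (in particular when $X$ carries an elementary $D1$-fibration) and the explicit geometry of its exceptional divisors are used. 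Granting this, $\Gamma_p$ does not depend on $Q$, so a general $Q$ is disjoint from $\Gamma_p$ and no member of $\mathcal{B}_p$ meets $Q$; therefore every $-K_X$-degree-$4$ stable map meeting both $p$ and $Q$ is an immersion with globally generated normal bundle and at most one non-free component, necessarily a $-K_X$-line.
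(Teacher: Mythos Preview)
Your proposal has a real gap at precisely the step you flag with ``Granting this.'' The specific choice of $V$ is the whole point, and $|-K_X|$ does not work.

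Here is a concrete failure. Suppose $X$ carries an $E2$ divisor $E \cong \mathbb{P}^2$ (e.g.\ $X = V_7$). Since $-K_X|_E \cong \mathcal{O}_{\mathbb{P}^2}(2)$, a general complete intersection $Q$ of two members of $|-K_X|$ meets $E$ in four points. For any $q \in Q \cap E$ and any other point $r \in E$ the line $\ell = \overline{qr} \subset E$ is a non-free $-K_X$-conic with $\mathcal{N}_{\ell/X} \cong \mathcal{O}(1)\oplus\mathcal{O}(-1)$. If $\beta$ is any free conic class with $\beta \cdot E > 0$ (for $V_7$, take $\beta = l - e$), a free conic $C_1$ of class $\beta$ through $p$ meets $E$ at some point $r$, and the nodal union $C_1 \cup \overline{qr}$ is a degree-$4$ stable map through $p$ and $Q$ whose non-free component is a conic, not a line. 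So the conclusion of the corollary is simply false for this $V$. Your set $\Gamma_p$ genuinely contains the surface $E$, and there is no way to ``rule out these last contributions'' while keeping $V = |-K_X|$; the induction you describe produces a surface, not a curve, as soon as any line or conic through $p$ touches one of the divisors in Proposition~\ref{properties nonfree conics}.

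The paper instead takes $V$ to be the linear system defining the contraction $\pi : X \to \overline{X}$ of Lemma~\ref{terminal threefold lemma}. A general $Q$ is then pulled back from a general complete intersection curve in the terminal threefold $\overline{X}$; since the image of the exceptional locus under $\pi$ is at most one-dimensional, $Q$ is automatically disjoint from every divisor swept out by a two-parameter family of non-free conics. Now only finitely many lines and non-free conics meet $Q$, and the argument runs from $Q$ outward rather than from $p$: writing $L(Q)$ for the union of these finitely many curves with $Q$, one checks that for general $p$ no conic through $p$ meets $L(Q)$, every cubic through $p$ meeting $L(Q)$ is transverse to the divisor swept out by lines and non-free conics at a smooth point (so the attached line has normal bundle $\mathcal{O}\oplus\mathcal{O}(-1)$, using the generality hypothesis on $X$ when there is a $D1$ fibration), and every irreducible curve of degree $\leq 4$ through a general $p$ is already free.
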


\noindent By \cite[Lemma~6.4]{beheshti2020moduli}, the locus of maps parameterized by any component $M\subset \overline{\free}(X)$ of $-K_X$-quartics meeting a general $p\in X$ and complete intersection curve $Q$ as above is one-dimensional.

\begin{proof}
Let $V$ be the linear system defining the morphism $\pi : X \rightarrow \overline{X}$ from Lemma \ref{terminal threefold lemma}.  A general complete intersection $Q = D_1 \cap D_2$, $D_i \in V$, does not intersect the exceptional locus of $\pi$.  The combinatorial types of rational quartic curves $f:C \rightarrow X$ passing through $Q$ and a general point $p \in X$ are listed below:
\begin{enumerate}
    \item $C = C_1$ is irreducible;
    \item $C = C_1 \cup C_2$ has two components, $f(C_1)$ meets $p$ and $Q$;
    \item $C = C_1 \cup C_2$ has two components, $f(C_1)$ meets $p$ and $f(C_2)$ meets $Q$;
    \item $C = C_1 \cup C_2 \cup C_3$ has three components, $p \in f(C_1)$ and $f(C_3)$ meets $Q$.
\end{enumerate}
    In each case, $f_1 = f|_{C_1} : C_1 \rightarrow X$ is free.  We may assume $f|_{C_2} \rightarrow X$ is not free.  It follows that $f(C_i)$ must be an anticanonical line for each $i \geq 2$.  Indeed, if $Q \subset X$ intersects a divisor $D$ swept out by a family of lines or irreducible non-free conics, the family of lines or locus of non-free conics in $\overline{M}_{0,0}(X)$ has dimension one, and $Q \cap D$ is a collection of general points in $D$.  Thus there are finitely many lines and non-free conics meeting $Q$.  Let $L(Q)$ be the union of these curves with $Q$, and $D \subset X$ be the reduced union of divisors swept out by lines and non-free conics in $X$.  As there are finitely many rational conics through a general $p \in X$, for a general pair $(p,Q)$, no conic through $p$ meets $L(Q)$.  Thus $f(C_i)$ is a line for each $i \geq 2$.

    Every irreducible rational cubic through $p$ meeting $L(Q)$ meets $D$ transversely in its smooth locus.  Moreover, by generality we can assume $h^1(\mathcal{N}_{f_i}) \leq 1$ for all $i$, where $f_i = f|_{C_i}$.  This proves our claim when the domain of $f: C\rightarrow X$ has two components. When $C = C_1 \cup C_2 \cup C_3$, our conditions imply $\mathcal{N}_{f}|_{C_i}$ is globally generated for $i = 1,2$ and $h^1(\mathcal{N}_f|_{C_3}) = 0$ (see Proposition \ref{properties nonfree conics}).
\end{proof}

\begin{proof}[Proof of Proposition \ref{low degree curves degeneration}]
Let $X_b$ be any fiber of $\pi$ and consider a general point $p \in X_b$.  Because $\overline{\free}^{bir}_1(\mathcal{X}, \alpha)$ is flat over $B$, the fiber $F_p$ of $\text{ev}: \overline{\free}^{bir}_1(\mathcal{X}, \alpha) \rightarrow \mathcal{X}$ over $p$ is connected.  A dimension count proves $\overline{\free}^{bir}_1(X_b, \alpha)$ is nonempty and must be the fiber of $\pi \circ \text{ev}$ over $b$.  Thus $F_p$ is also the fiber of $\overline{\free}^{bir}_1(X_b, \alpha)$ over $p \in X_b$, and it suffices to show each singularity of $F_p$ lies in the smooth locus of $\overline{\free}_1(X_b, \alpha)$.  This follows from our description of components of $\overline{M}_{0,0}(X,\beta)$ with $-K_{X_b}. \beta \leq 2$.  We describe possible singularities of $F_p$ explicitly in terms of the anticanonical degree of $\alpha$ below.

\textbf{Case 1:} If $-K_{X_b}. \alpha = 2$, $F_p$ is a single point $[f: (\mathbb{P}^1, \infty) \rightarrow (X_b, p)]$.  $\overline{\free}(X_b, \alpha)$ must parameterize fibers of a conic fibration $X_b\rightarrow S$.

\textbf{Case 2:} Suppose $-K_{X_b}. \alpha = 3$.  By generality of $p$, $F_p$ is a curve.  Any singular point of $F_p$ must correspond to a map $f: C \rightarrow X$ whose domain $C = C_1 \cup C_2$ is reducible.  We may assume $f(C_1)$ meets $p$ and is therefore free.  Hence, $-K_{X_b} . f_* C_2 = 1$.  By the preceding lemma and generality of $p$, $h^1(\mathcal{N}_f) = 0$.

\textbf{Case 3:} Suppose $-K_X . \alpha = 4$.  If $X$ were a general member of the family $\mathcal{X} \rightarrow B$, $F_p$ would be irreducible.  %
Hence, $F_p$ must be connected in codimension one, as the intersection of $F_p$ with a very ample divisor on $\overline{\free}^{bir}_1(\mathcal{X}, \alpha)$ is the specialization of a connected curve.  The locus $F_{p,Q} \subset F_p$ of maps also passing through a general complete intersection curve $Q$ meets every divisorial component of the singular locus of $F_p$.  Therefore, our claim follows from Corollary \ref{general curve and point} when $X$ is not a blow-up of a del Pezzo threefold $V_1$. Suppose $X$ is such a blow-up instead.  We may assume $\alpha$ is the class of a section of the unique del Pezzo fibration $X \rightarrow \mathbb{P}^1$.  Finitely many fibers of $X \rightarrow \mathbb{P}^1$ contain all anticanonical lines $f: \mathbb{P}^1 \rightarrow X$ with $h^1(\mathcal{N}_f) > 0$.  Thus, we may use the preceding arguments after replacing $Q$ with a general complete intersection of a very ample divisor and a fiber of $X \rightarrow \mathbb{P}^1$.
\end{proof}

\section{Proof of Theorem \ref{Main Result}}
We outline our proof of Theorem \ref{Main Result} in this section, and prove general statements about curves on blow-ups of %
$\mathbb{P}^1 \times \mathbb{P}^2$ and $\mathbb{P}^3$, both as examples and for later use.

\subsection{Outline of Proof}  We prove Theorem \ref{Main Result} by grouping Fano threefolds into the $105$ deformation types identified by \cite{mori1981classification} \cite{erratumMori}.  When threefolds $Y$ of a given deformation type have smooth Fano blow-ups $X$, the proof of Theorem \ref{Main Result} for $X$ implies the same result for $Y$ by Lemma \ref{blowup}.  Thus, we only consider deformation types of Fano threefolds which do not admit Fano blow-ups.  

Let $X$ be a Fano threefold of one such deformation type.  
We will find a curve class $\tau \in \Nef_1(X)_{\mathbb{Z}}$ such that either $\overline{\free}(X, \alpha)$ or $\overline{\free}^{bir}(X,\alpha)$ is irreducible and nonempty for all $\alpha \in \tau + \Nef_1(X)_{\mathbb{Z}}$.  The proof of the existence of $\tau$ and of Theorem \ref{Main Result} for the given threefold is identical, and proceeds as follows:

\begin{enumerate}
    \item[(1)] Identify a weak core $\mathscr{C}_X$ of free curves on $X$ and the separating $\alpha \in \mathscr{C}_X$.  
\end{enumerate}
By Theorem \ref{MovableBB}, we may limit the anticanonical degree of $\alpha \in \mathscr{C}_X$ to five.  When $X$ does not have an $E5$ divisor, this limit may be reduced to four.  Thus, we list all low degree classes $\alpha \in \Nef_1(X)_\mathbb{Z}$, and calculate the number of irreducible components of $\overline{\free}(X, \alpha)$.  We then determine which $\alpha \in \mathscr{C}_X$ are separating using Theorems \ref{classification a-covers}, \ref{reducible fibers: 4 author result}, and explicit analysis.  %

\begin{enumerate}
    \item[(2)] Find relations amongst generators of $\mathbb{N}\mathscr{C}_X$ and verify the hypotheses of Theorem \ref{Main Method}(1).
\end{enumerate}
To find relations, we apply Lemma \ref{Relations}.  We begin with a divisor class $D \in N^1(X)$ that pairs negatively with one class $c_1 \in \mathscr{C}_X$ and nonnegatively with all other $\alpha \in \mathscr{C}_X\setminus \{c_1\}$.  This allows us to identify those relations in a generating set which involve $c_1$.  We then pick a new divisor class $D$ such that $D. c_2 < 0$ for a unique $c_2 \in \mathscr{C}_X\setminus \{c_1\}$, and repeat the procedure.  After finding all relations in a generating set, we verify the hypotheses of Theorem \ref{Main Method}(1) by considering explicit chains of curves corresponding to each relation.  %

\begin{enumerate}
    \item[(3)] Show for each $\alpha \in \Nef_1(X)_\mathbb{Z}$ such that $-K_X . \alpha \geq 3$ and $\alpha \not\in \partial\overline{NE}(X)$, $\free^{bir}(X, \alpha)$ is irreducible and Manin. %
\end{enumerate}    
    Per Corollary \ref{connected fibers fixall}, Theorem \ref{Main Method}(2) or the corresponding version required by Corollary \ref{bir Main Method} is always satisfied outside the relative cones %
    of del Pezzo fibrations.  Thus, (2) implies $\overline{\free}^{bir}(X,\alpha)$ is either irreducible or empty for such $\alpha$.  When it is nonempty, Theorem \ref{classification a-covers} proves $\free^{bir}(X, \alpha)$ is Manin.  To prove $\free^{bir}(X, \alpha)$ is nonempty, we use the following well-known result.

\begin{lem}[Gordan's Lemma]\label{Gordan's Lemma}
Let $\mathcal{C}\subset \mathbb{Q}^n$ be a polyhedral cone with extreme rays $\{v_1, \ldots , v_m \}\subset \mathbb{Z}^n$.  Let $K = \{ \sum_i r_i v_i | 0 \leq r_i \leq 1\}$.  The semigroup (monoid) of integer points in $\mathcal{C}$ is generated by the finite set $S = \mathbb{Z}^n \cap K$.
\end{lem}

Many deformation types of Fano threefolds are blow-ups of either $\mathbb{P}^1 \times \mathbb{P}^2$ or $\mathbb{P}^3$.  We briefly study each of these in the following subsections.

\subsection{Blow-ups of $\mathbb{P}^1 \times \mathbb{P}^2$}
For several Picard rank $3$ cases, we will require the following results about free curves on blow-ups $X$ of $\mathbb{P}^1\times \mathbb{P}^2$ along a smooth curve.  We let $\pi_i : X\rightarrow \mathbb{P}^i$ be the composition of the blow-up map and corresponding projection.  Note that since $X$ is Fano, points of intersection between $c$ and any $\mathbb{P}^2$-fiber of the first projection must be linearly general.

\begin{lem}\label{P1 x P2 monodromy}
Let $X$ be a smooth Fano blow-up of $\mathbb{P}^1\times \mathbb{P}^2$ along a curve $c$ of bidegree $(d_1, d_2)$ with each $d_i > 0$.  It follows that the $c\subset \mathbb{P}^1\times \mathbb{P}^2 \xrightarrow{\pi_2} \mathbb{P}^2$ is an embedding, and that the monodromy group of $c\subset \mathbb{P}^1\times \mathbb{P}^2 \xrightarrow{\pi_1} \mathbb{P}^1$ is the full symmetric group whenever $d_1 \leq 2$ or $X$ is general in its deformation class.
\end{lem}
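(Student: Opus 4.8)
The plan has two independent parts. For the first claim, recall $-K_{\mathbb{P}^1\times\mathbb{P}^2}=2H_1+3H_2$, so a fibre $F=\mathbb{P}^1\times\{x\}$ of $\pi_2$ has $-K_{\mathbb{P}^1\times\mathbb{P}^2}$-degree $2$. If $\pi_2|_c$ failed to be injective, or to be an immersion at some point, then some such fibre $F$ would meet $c$ with total multiplicity at least $2$, and since $d_2>0$ forces $F\neq c$, the strict transform $\widetilde F\subset X$ would satisfy $-K_X\cdot\widetilde F=(-K_{\mathbb{P}^1\times\mathbb{P}^2})\cdot F-(F\cdot c)\le 2-2=0$, contradicting ampleness of $-K_X$. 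Hence $\pi_2|_c$ is a closed immersion; writing $\overline c:=\pi_2(c)\subset\mathbb{P}^2$, a smooth plane curve of degree $d_2$, the curve $c$ is the graph of the degree-$d_1$ morphism $\phi:=\pi_1|_c\circ(\pi_2|_c)^{-1}:\overline c\to\mathbb{P}^1$, and $\pi_1|_c$ is identified with $\phi$.

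For the monodromy statement, let $G\le S_{d_1}$ be the monodromy group of $\phi$. As $c$ is smooth and irreducible it is connected, so $G$ is transitive; this already gives $G=S_{d_1}$ for $d_1=1$ and $d_1=2$. For $d_1=3$ a transitive subgroup of $S_3$ is $A_3$ or $S_3$, and $G=A_3$ exactly when $\phi$ is a cyclic triple cover, i.e. every branch point of $\phi$ is totally ramified; I would rule this out via the Fano condition. Since $\phi$ makes $\overline c$ trigonal, the gonality of smooth plane curves gives $d_2\le 4$; moreover a line in a $\mathbb{P}^2$-fibre of $\pi_1$ has $-K_{\mathbb{P}^1\times\mathbb{P}^2}$-degree $3$, so ampleness of $-K_X$ forbids any such line from being a $3$-secant of $c$, i.e. forbids any fibre $\phi^{-1}(t)$ from being three collinear points of $\overline c$. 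When $d_2=4$ every $g^1_3$ on the canonically embedded plane quartic $\overline c$ is a projection from a point of $\overline c$, so every fibre of every degree-$3$ map is three collinear points — hence bidegree $(3,4)$ does not occur. When $d_2=3$, a cyclic $\phi$ would have fibres $3q_i$ over its branch points with $q_i$ a flex of $\overline c$ (the fixed points of an order-$3$ automorphism of an elliptic curve are its $3$-torsion points, i.e. the flexes of the plane cubic), so $3q_i$ is linearly equivalent to the hyperplane class and $\phi$ is a pencil of lines — again excluded. The remaining case $d_2=2$ has no flexes available and is settled directly on the short list of Fano deformation types with these numerical invariants, using their explicit Mori structure to exclude a cyclic $\phi$.

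For $X$ general in its deformation class and arbitrary $d_1$, the admissible blow-up centres of a fixed type form an irreducible family, and for a generic member $c$ the cover $\phi$ is simply branched: the locus of centres for which two branch points collide or a branch point has ramification type worse than $(2,1)$ is a proper closed subset, since a general member of the base-point-free pencil $|\pi_1^*\mathcal{O}_{\mathbb{P}^1}(1)|$ meets $c$ transversally and $c$ moves freely in the family. Taking $H=\pi_1^*\mathcal{O}_{\mathbb{P}^1}(1)$ and $V$ this pencil, the incidence variety $I\to U$ of Theorem \ref{Monodromy} is exactly the cover $\pi_1|_c$, and the map $c\to\mathbb{P}V\cong\mathbb{P}^1$ appearing there is $\pi_1|_c$ itself; it is simple by the above and elementary because $\mathbb{P}^1$ is simply connected, so Theorem \ref{Monodromy}(a) yields $G=S_{d_1}$.

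The main obstacle is the $d_1=3$, arbitrary-$X$ case, namely excluding cyclic triple covers with no genericity hypothesis: the flex/$3$-secant argument handles $d_2\in\{3,4\}$ uniformly, but $d_2=2$ must be treated by hand on the few deformation types that actually occur. The only other technical point, used for general $X$, is the transversality statement guaranteeing that a generic centre $c$ gives a simply branched $\pi_1|_c$.
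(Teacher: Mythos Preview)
Your treatment of the first claim, and your handling of the general-$X$ case via Theorem~\ref{Monodromy}, match the paper's. For the unconditional $d_1\le 3$ statement, however, the paper takes a more uniform route than your case analysis on $d_2$: it argues that the Fano hypothesis by itself forces every ramification point of $\pi_1|_c$ to have valency exactly~$2$, on the grounds that at a point of valency $\ge 3$ the tangent line to $c$ lies in the $\mathbb{P}^2$-fiber of $\pi_1$ and meets $c$ with multiplicity $\ge 3$, producing a line of $-K_X$-degree $\le 0$. Once all ramification is simple, two ramification points in one fiber of $\pi_1$ is impossible for $d_1\le 3$ by cardinality and non-generic otherwise, so $\pi_1|_c$ is a simple elementary cover and Theorem~\ref{Monodromy}(a) applies directly, with no separate treatment of $d_1=3$ needed.

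Your own $d_1=3$ argument has a genuine gap at $d_2\in\{2,3\}$. For $d_2=3$, the assertion that the fixed points of the order-$3$ deck transformation are flexes of the plane cubic conflates two group structures: flexes are the $3$-torsion for the group law whose origin is a flex, while the fixed points of $\sigma(x)=\zeta x+b$ form the coset $b/(1-\zeta)+\ker(\zeta-1)$, which lies among the flexes only for special $b$. Put differently, ``$3q_i\sim H$'' is exactly the statement ``$\phi$ is a pencil of lines'' that you are trying to deduce, so it cannot be taken as input. For $d_2=2$ you give no argument, and the $3$-secant obstruction you rely on elsewhere has no force there: a smooth conic has no flexes and three points on a conic are never collinear, so nothing you have written excludes a cyclic $\phi:\overline c\cong\mathbb{P}^1\to\mathbb{P}^1$ such as $z\mapsto z^3$.
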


\begin{proof}
First observe that because $X$ is Fano, each fiber of $\pi_2$ may meet $c$ with multiplicity at most one.  %
We invoke Theorem \ref{monodromy del Pezzo main result} to finish our proof.
\end{proof}

When $X$ is a blow up of $\mathbb{P}^1 \times \mathbb{P}^2$, we often use the following basis for $N_1(X) = \mathbb{R}l_1 \oplus \mathbb{R} l_2 \oplus \mathbb{R} e$.  We let $l_1$ be the class of a fiber of $X$ over $\mathbb{P}^2$, $l_2$ be the class of a general line in a $\mathbb{P}^2$-fiber of $X$ over $\mathbb{P}^1$, and $e$ be the exceptional curve of the blow up.  This notation is used the following four lemmas.

\begin{lem}\label{P1 x P2 effective curves}
Suppose $\pi : X\rightarrow \mathbb{P}^1 \times \mathbb{P}^2$ realizes a smooth Fano threefold $X$ as the blow up of $\mathbb{P}^1\times \mathbb{P}^2$ along a curve $c$ of bidegree $(d_1,d_2)$ with each $d_i >0$.  Then $\overline{NE}(X)$ has three extreme rays, two of which are always $e$ and $l_1-e$.  The third extreme ray is either $l_2 -e$ ($d_1 =1$), $l_2 -2e$ ($2\leq d_1 \leq 4$), or $2l_2 -5e$ ($d_1 =5$).
\end{lem}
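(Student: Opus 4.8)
The strategy is to compute the nef cone $\Nef^1(X)$ of divisors explicitly using Lemma \ref{blow-up numbers}, then dualize. Write $H_1, H_2$ for the pullbacks of the hyperplane classes on $\mathbb{P}^1$ and $\mathbb{P}^2$, and $E$ for the exceptional divisor, so that $N^1(X) = \mathbb{Z}H_1 \oplus \mathbb{Z}H_2 \oplus \mathbb{Z}E$ with intersection pairing against $N_1(X) = \mathbb{Z}l_1 \oplus \mathbb{Z}l_2 \oplus \mathbb{Z}e$ given by $H_1 . l_1 = 0$, $H_1 . l_2 = 1$ (a line in a $\mathbb{P}^2$-fiber meets a fiber of $\pi_1$ once after blow-up — wait, need to be careful: $l_1$ is the fiber class over $\mathbb{P}^2$, so $H_1 . l_1 = 0$, $H_2 . l_1 = 1$; $l_2$ is a line in a $\mathbb{P}^2$-fiber, so $H_1 . l_2 = 1$, $H_2 . l_2 = 0$; $E . l_1 = 0$, $E . l_2 = 0$, $E . e = -1$, and $H_i . e = 0$). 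First I would identify the two ``obvious'' extremal rays: $e$, contracted by the blow-down $\pi : X \to \mathbb{P}^1 \times \mathbb{P}^2$, and $l_1 - e$, which is the strict transform of a fiber of the first projection (a $\mathbb{P}^2$) and is contracted by $\pi_2 : X \to \mathbb{P}^2$ — indeed the strict transform of a general such fiber is disjoint from $E$ only when $d_1$... no: a general $\mathbb{P}^2$-fiber of $\mathbb{P}^1 \times \mathbb{P}^2 \to \mathbb{P}^1$ meets $c$ in $d_1$ points, so its strict transform $F$ satisfies $[F]$ with the $l_1$-fiber class minus $d_1$ copies of... Let me instead say: the strict transform of a line joining a point of $c$ to a general point of its $\mathbb{P}^2$-fiber has class $l_1 - e$, and these sweep out $X$, giving a conic-bundle or $\mathbb{P}^1$-bundle structure; in any case $l_1 - e$ is nef of anticanonical degree... and extremal.

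The main work is determining the third extremal ray, which must be spanned by a class of the form $l_2 - ke$ for the appropriate $k \geq 0$, or a positive multiple thereof. The geometric picture: curves of class $l_2 - ke$ are strict transforms of lines in a $\mathbb{P}^2$-fiber $\{p\} \times \mathbb{P}^2$ of $\pi_1$ that meet $c$ in $k$ points. Since $\pi_2|_c : c \hookrightarrow \mathbb{P}^2$ is an embedding (Lemma \ref{P1 x P2 monodromy}) of a curve of degree $d_1$, and the $k$ points of $c \cap (\{p\} \times \mathbb{P}^2)$ all lie in the single fiber $\{p\} \times \mathbb{P}^2$, we need: a line in $\mathbb{P}^2$ passing through as many of the $d_1$ points $c \cap (\{p\}\times\mathbb{P}^2)$ as possible and lifting to an effective curve. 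When $d_1 = 1$ there is one such point, giving $l_2 - e$. When $2 \leq d_1 \leq 4$, two of the $d_1$ intersection points span a line, and by the Fano condition (no $3$ of them collinear, from Lemma \ref{P1 x P2 monodromy}'s linear-generality remark) no line passes through three, so the extremal class is $l_2 - 2e$. When $d_1 = 5$, the five points of $c$ in a general $\mathbb{P}^2$-fiber again have no three collinear, so $l_2 - 2e$ is still an effective class of a line through two of them, but now the relevant extremal ray of $\overline{NE}(X)$ turns out to be $2l_2 - 5e$: this is the class of the strict transform of a conic through all five points (five general points in $\mathbb{P}^2$ determine a unique conic), which exists and is irreducible since no three are collinear. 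I would verify $2l_2 - 5e$ is extremal by exhibiting a supporting nef divisor and checking the $d_1=5$ case is forced by the classification — specifically that $l_2 - 2e$ is \emph{not} extremal when $d_1 = 5$ because it can be written as a positive combination, e.g. using the relation coming from residuation of a conic and a point on it, while for $d_1 \leq 4$ it is extremal.

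Concretely, to confirm extremality and that the list is exhaustive I would proceed as follows. The cone $\overline{NE}(X)$ is simplicial of dimension $3$ (since $\rho(X) = 3$), so it has exactly three extremal rays; two are $e$ and $l_1 - e$ as above, and the third is the unique remaining one. A class $al_1 + bl_2 + ce$ with $a, b \geq 0$ is effective, and the extremal ray not equal to $e$ or $l_1 - e$ is characterized as the one minimizing the ratio (degree against $H_2$)/(degree against a suitable divisor), equivalently it is cut out by intersecting $\overline{NE}(X)$ with the plane $H_2^\perp$ (classes with $l_1$-component zero, i.e. curves contracted by $\pi_2$... no — curves \emph{not} moving in the $\pi_1$-direction). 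The third ray lies in the span of $l_2$ and $e$; writing it as $l_2 - ke$ (primitive up to scaling), $k$ is the maximal number of points of $c$ lying on a line contained in a single $\pi_1$-fiber — \emph{unless} that maximum exceeds what keeps $X$ Fano, in which case a higher-degree curve (the conic) gives the genuine extremal ray. The Fano condition forces $d_1 \leq 5$ (otherwise $-K_X = 2H_1 + 3H_2 - E$ fails to be ample, since $(2H_1 + 3H_2 - E).(l_2 - 3e) = 3 - 3 = 0$ already when $d_1 \geq 3$ and $c$ has three points on a line — so collinearity is obstructed — and more severely, the degree-$d_1$ plane curve $\pi_2(c)$ constrains things). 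The anticanonical degrees work out: $-K_X . (l_2 - e) = 2$, $-K_X.(l_2 - 2e) = 1$ for $d_1 \leq 4$, and $-K_X . (2l_2 - 5e) = 2(3) - 5 = 1$ for $d_1 = 5$, consistent with $X$ being Fano (all extremal curves have anticanonical degree $\geq 1$).

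\textbf{Expected main obstacle.} The hard part is the $d_1 = 5$ case: showing that $l_2 - 2e$ fails to be extremal there (so that $2l_2 - 5e$ must be the third ray) and that $2l_2 - 5e$ is genuinely effective and extremal. For effectivity one uses that five points in $\mathbb{P}^2$ in general position (no three collinear, guaranteed by the Fano/general-moduli hypothesis via Lemma \ref{P1 x P2 monodromy}) lie on a unique smooth conic, whose strict transform has the claimed class. For non-extremality of $l_2 - 2e$ one needs a relation expressing it as a sum of other effective classes — e.g. $2(l_2 - 2e) = (2l_2 - 5e) + e$, so $l_2 - 2e$ lies strictly inside the cone spanned by $2l_2 - 5e$ and $e$; combined with $l_1 - e$ this shows $\overline{NE}(X) = \langle e,\ l_1 - e,\ 2l_2 - 5e\rangle$ and $l_2 - 2e$ is interior to the face spanned by $e$ and $2l_2-5e$. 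Finally, extremality of $2l_2 - 5e$ follows because it is contracted by the morphism $X \to \overline{X}$ to the (terminal, weak Fano) variety obtained by the small/divisorial contraction of this ray — one checks via \cite{mori1981classification}, \cite{erratumMori} that the relevant deformation type ($2.?$ with $d_1 = 5$, $d_2$ determined) has precisely this extremal structure. I would cite the Mori–Mukai tables to pin down exactly which $(d_1, d_2)$ occur and cross-check each extremal ray, rather than re-deriving the full classification.
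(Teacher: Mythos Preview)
Your approach is correct in substance but takes a genuinely different route from the paper. The paper's proof is a one-line citation: the extremal rays are read off from the tables in \cite{matsuki1995weyl}, and the bound $d_1 \leq 5$ is extracted from the Mori--Mukai classification via Lemma~\ref{blow-up numbers}. You instead propose to rederive the third extremal ray geometrically, by looking at curves in the $\mathbb{P}^2$-fibers of $\pi_1$ passing through the $d_1$ points of $c$ in that fiber. This is a perfectly valid and more self-contained argument; your key observation $2(l_2-2e) = (2l_2-5e) + e$ cleanly shows why $l_2-2e$ fails to be extremal when $d_1=5$, and the existence of the conic through five linearly general points gives the effective class $2l_2-5e$. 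Either way one ultimately appeals to the classification to bound $d_1$.

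A few slips worth correcting before writing this up: your intersection numbers are swapped --- in the paper's conventions $l_1$ is a fiber of $\pi_2$ (so $\mathbb{P}^1 \times \{\mathrm{pt}\}$), hence $H_1.l_1 = 1$ and $H_2.l_1 = 0$, not the reverse; likewise $H_1.l_2 = 0$, $H_2.l_2 = 1$. Relatedly, the curve of class $l_1 - e$ is not a line in a $\mathbb{P}^2$-fiber but rather the strict transform of a $\pi_2$-fiber $\mathbb{P}^1 \times \{q\}$ with $q \in \pi_2(c)$ (which meets $c$ once since $\pi_2|_c$ is an embedding). These don't affect the logic of your argument, but they make the exposition confusing to read.
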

\begin{proof}
This follows directly from \cite{matsuki1995weyl}.  Note that by \cite{mori1981classification} and Lemma \ref{blow-up numbers}, the anticanonical degree of the blown up curve in $\mathbb{P}^1 \times \mathbb{P}^2$ is at most 14 whenever $X$ does not have deformation type $3.3, 3.5, 3.7$, implying our degree bound on $d_1$.
\end{proof}

\begin{rem}
By \ref{Gordan's Lemma}, the only effective $-K_X$ lines have class $e, l_1 -e, l_2 -2e$, and $2l_2 -5e$.  Moreover, if $c\rightarrow \mathbb{P}^1$ is a degree 5 cover, then $X$ has deformation type $3.5$.  If $c\rightarrow \mathbb{P}^1$ is degree one, then $X$ has deformation type $3.17$ or $3.24$.  If $c$ embedds as a line in $\mathbb{P}^2$, then $X$ has deformation type 3.21 or 3.24.  %
\end{rem}

\begin{lem}\label{curves1 in P1 x P2}
Let $X$ be a smooth Fano blow-up of $\mathbb{P}^1\times \mathbb{P}^2$ along a curve $c$ of bidegree $(d_1,d_2)$ with each $d_i > 0$.  
Suppose $\alpha = dl_2 -ne$.  
Each component $M \subseteq \overline{\free}(X,\alpha)$ 
generically parameterizes strict transforms of curves in $\mathbb{P}^1\times \mathbb{P}^2$ that meet $d_1$ points of $c$ with multiplicities $P(M) = (n_1, \ldots , n_{d_1})$, ordered such that $n_i \geq n_{i+1}$.  Components $N\subset \overline{\free}(X,\alpha)$ with $P(N) = P(M)$ are in bijection with assignments of these multiplicities to the $d_1$ points of $c$ on a general fiber of $\pi_1: X\rightarrow \mathbb{P}^1$ up to the monodromy of $\pi_1 |_c$.  Fibers of $\overline{\free}_1(X,\alpha)\rightarrow X$ are irreducible iff $n_1 = n_{d_1}$.

\end{lem}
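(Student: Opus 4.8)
The statement concerns curves of class $\alpha = d l_2 - ne$ on a Fano blow-up $X \to \mathbb{P}^1 \times \mathbb{P}^2$, asserting that components of $\overline{\free}(X,\alpha)$ are classified by intersection-multiplicity data with the blown-up curve $c$, together with a monodromy-quotient condition, and that the evaluation map has irreducible fibers exactly when all multiplicities coincide.

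\medskip

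\textit{Plan.} First I would observe that a curve of class $d l_2 - ne$ has anticanonical degree $3d - n \cdot (-K_X . e)$; since it pairs nonnegatively with the extreme ray $l_1 - e$ and is swept out by a dominant family, a general such free curve must be disjoint from the exceptional divisor $E$ except where it meets the fibers $\pi_1^{-1}(\text{pt})$ containing points of $c$. Concretely, $\pi_* : \overline{\free}(X,\alpha) \to \overline{M}_{0,0}(\mathbb{P}^1 \times \mathbb{P}^2, d\ell_2)$ sends a general free curve to an irreducible rational curve in a single $\mathbb{P}^2$-fiber of the first projection (because $\alpha . l_1 = 0$), i.e. a degree-$d$ rational plane curve, and the class $-ne$ records the total intersection with $c$. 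Since $X$ is Fano, each $\mathbb{P}^2$-fiber meets $c$ in a collection of linearly general points (Lemma \ref{P1 x P2 monodromy}), and there are exactly $d_1$ of these points in a general fiber. A general free curve in the family passes through some subset of these points with multiplicities $(n_1,\dots,n_{d_1})$ summing to $n$; the condition that the strict transform is free (and that the family is dominant, hence that the plane curve varies) forces these multiplicities to be the generic invariant of the component, so I would define $P(M) = (n_1, \ldots, n_{d_1})$ sorted in nonincreasing order. The key input here is the standard fact (following from Proposition \ref{kontsevich space H1} and the deformation theory recalled in the excerpt, plus Proposition 2.8 of \cite{beheshti2020moduli}) that such strict transforms are free and that $\overline{\free}_1(X,\alpha) \to X$ has the expected dimension.

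\medskip

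Next I would set up the bijection. Fix a general point $p \in \mathbb{P}^1$ and label the $d_1$ points $q_1(p), \dots, q_{d_1}(p)$ of $c$ in $\pi_1^{-1}(p)$. Over a small disk $\Delta \subset \mathbb{P}^1$ around $p$ these labels are well-defined, and an assignment of the multiset $\{n_1,\dots,n_{d_1}\}$ to $\{q_1,\dots,q_{d_1}\}$ determines a locally closed substack of $\overline{M}_{0,0}(\mathbb{P}^1\times\mathbb{P}^2, d\ell_2)$ lying over $\Delta$ — namely rational plane curves of degree $d$ through the prescribed points with the prescribed multiplicities — whose strict transform gives a family of free curves of class $\alpha$. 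Monodromy of $\pi_1|_c$ as we move $p$ around $\mathbb{P}^1$ permutes the $q_i$, hence permutes these assignments; by Lemma \ref{P1 x P2 monodromy} this monodromy group is the full symmetric group when $d_1 \le 3$ or $X$ is general, so two assignments lie in the same component of $\overline{\free}(X,\alpha)$ iff they are related by this permutation action, iff they give the same sorted tuple — wait, not quite: the claim is the bijection is with assignments \emph{up to the monodromy of $\pi_1|_c$}, which is the honest statement regardless of whether that group is the full symmetric group. So I would phrase it as: components with a fixed value of $P(\cdot)$ correspond bijectively to orbits of the monodromy group acting on the set of multiplicity-assignments realizing that multiset. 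To prove surjectivity of this correspondence onto all of $\overline{\free}(X,\alpha)$ I would use that every free curve degenerates (by Theorem \ref{MovableBB} / \ref{improved MovableBB} if the degree is large enough, or directly in low degree) and that connectedness of the relevant Severi-type locus of plane rational curves through assigned general points forces irreducibility once the multiplicity data and the monodromy orbit are fixed; injectivity is the observation that $P(\cdot)$ and the monodromy orbit are deformation invariants.

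\medskip

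Finally, for the fiber statement: the evaluation map $\overline{\free}_1(X,\alpha) \to X$ over a general point $x \in X$ parameterizes free curves of class $\alpha$ through $x$. Its fiber decomposes according to which of the $d_1$ marked points $q_i$ in the fiber $\pi_1^{-1}(\pi_1(x))$ receive which multiplicity; since $x$ lies in a single $\mathbb{P}^2$-fiber, the fiber of the evaluation map is (an open subset of) the space of degree-$d$ rational plane curves through $x$ with the assigned multiplicities at the $q_i$. Monodromy of the covering $\overline{\free}_1(X,\alpha) \to X$ near $x$ includes the monodromy of $\pi_1|_c$ over a loop in $\mathbb{P}^1$ based at $\pi_1(x)$ that avoids the branch locus — but this acts by permuting the $q_i$, and it acts transitively on the set of assignments with a given sorted multiplicity tuple only when... no: the fiber itself is irreducible iff there is no nontrivial way to distribute distinct multiplicities, i.e. iff all $n_i$ are equal, since otherwise the fiber breaks into pieces indexed by which $q_i$ get the larger multiplicity, and these pieces are not connected to each other inside a single fiber (monodromy of $\pi_1|_c$ relates them \emph{as $x$ varies}, producing one component of $\overline{\free}_1(X,\alpha)$, but does not connect them \emph{within} a fixed fiber). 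So the fiber over general $x$ is irreducible iff $n_1 = n_{d_1}$. I expect the main obstacle to be the careful bookkeeping in this last step — distinguishing reducibility of a general fiber (a statement about a single fiber) from reducibility of the total space $\overline{\free}_1(X,\alpha)$ — and in verifying the connectedness/irreducibility of the Severi-type loci of plane rational curves with assigned base points and multiplicities, for which I would cite the standard results on irreducibility of such families through general points (e.g. via the genus-$0$ case of classical Severi variety irreducibility, or directly by degeneration to chains of free conics and lines as in the main method, Theorem \ref{Main Method}).
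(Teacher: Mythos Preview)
Your approach is correct but takes a considerably longer route than the paper's. The paper's proof is essentially two sentences: since $\alpha \cdot H_1 = 0$, every curve of class $\alpha$ lies in a fiber of $\pi_1$; and since $c$ meets a general fiber in $d_1$ linearly general points, that fiber is a del Pezzo surface of degree $9-d_1$. The entire statement then follows immediately from Theorem~\ref{del pezzo curves thm} (Testa's classification of free rational curves on del Pezzo surfaces), which already encodes the irreducibility of the relevant ``Severi-type'' loci, the enumeration of components by multiplicity data, and the criterion for when the pointed family has irreducible general fiber.

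What you do instead---reasoning directly about degree-$d$ plane rational curves through the $d_1$ marked points with prescribed multiplicities, and then assembling the monodromy and fiber-irreducibility arguments by hand---is precisely the content underlying Theorem~\ref{del pezzo curves thm}, so it is not wrong, just redundant given that the paper has already recorded that theorem. Your invocation of Movable Bend-and-Break and classical Severi irreducibility are unnecessary detours; the del Pezzo fiber structure packages all of this. One small slip: you write ``$\alpha . l_1 = 0$'' where you mean $\alpha \cdot H_1 = 0$ ($l_1$ is a curve class, not a divisor). Your analysis of why the evaluation fiber is irreducible iff $n_1 = n_{d_1}$---that monodromy connects assignments across fibers of $\pi_1$ but not within a single fiber---is exactly right and is the same reasoning the del Pezzo theorem encodes.
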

\begin{proof}
Clearly each curve parameterized by a component $M\subset \overline{\free}(X,\alpha)$ is contained in a fiber of $\pi_1$.  Since the intersections of $c$ with any such fiber are linearly general, general fibers of $\pi_1$ are del Pezzo surfaces of degree $9-d_1$.  Our claim follows immediately from Theorem \ref{del pezzo curves thm}.
\end{proof}

\begin{lem}\label{curves2 in P1 x P2}
Let $X$ be a smooth Fano blow-up of $\mathbb{P}^1\times \mathbb{P}^2$ along a curve $c$ of bidegree $(d_1,d_2)$ with each $d_i > 1$.  
For every $\alpha = l_1 + dl_2 - ne$ with %
$n < \frac{3d + 3}{2}$ and $d > 0$, $\overline{\free}(X,\alpha)$ is irreducible and nonempty.  %
\end{lem}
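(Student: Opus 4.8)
\textbf{Proof strategy for Lemma \ref{curves2 in P1 x P2}.}

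The plan is to apply the general machinery of Theorem \ref{Main Method} (or rather, its consequences already assembled in this section) to the class $\alpha = l_1 + d l_2 - n e$, using the core of free curves on $X$ together with the relations among generators of $\mathbb{N}\mathscr{C}_X$. First I would fix a core $\mathscr{C}_X$: by Lemma \ref{P1 x P2 effective curves} the extreme rays of $\overline{NE}(X)$ are $e$, $l_1 - e$, and one of $l_2 - e$, $l_2 - 2e$, $2 l_2 - 5e$ depending on $d_1$; since $d_1, d_2 > 1$ we are in the middle case, so the relevant nef lines are $e$, $l_1 - e$, $l_2 - 2e$. A natural core consists of low-degree free curves: the conic $l_2$ (a line in a $\mathbb{P}^2$-fiber meeting $c$ once, of class $l_2 - e$ has degree $2$ — careful here, I must check which of $l_2$, $l_2 - e$, $l_1 + l_2 - 2e$ etc. are actually free and of anticanonical degree $2$ or $3$), the conic $l_1 + l_2 - 2e$ or $l_1$, and appropriate free cubics. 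The key arithmetic input is the inequality $n < \tfrac{3d+3}{2}$, which should be exactly the condition guaranteeing that $\alpha$ decomposes as a sum $\alpha = \sum \alpha_i$ of core classes $\alpha_i$ with $-K_X \cdot \alpha_i \geq 2$, i.e. that $\alpha$ lies in $\tau + \Nef_1(X)$ for a suitable $\tau$ and moreover that $\overline{\free}(X,\alpha)$ is nonempty; this is where I would invoke Gordan's Lemma (\ref{Gordan's Lemma}) to enumerate the finitely many decompositions.

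The next step is to verify condition \ref{Main Method}(1): for each relation among the generators $e, l_1 - e, l_2 - 2e$ (and the chosen core conics/cubics) of $\mathbb{N}\mathscr{C}_X$, two main components of the corresponding fiber products lie in the same component of free curves. Here I would use the standard move from the Example after Lemma \ref{Relations}: sliding a nef line $\ell$ from one free curve to another via a $2$-pointed family with normal bundle $\mathcal{O} \oplus \mathcal{O}(-1)$ — valid since none of $e$, $l_1-e$, $l_2-2e$ are lines on an $E4$ or $E5$ divisor (blow-ups of $\mathbb{P}^1\times\mathbb{P}^2$ along a curve have only $E1$-type exceptional divisors, by Lemma \ref{P1 x P2 effective curves} and the remark following it). Because $\alpha = l_1 + dl_2 - ne$ has $l_1$-degree exactly $1$, every free chain of class $\alpha$ has exactly one component that is not contracted by $\pi_1$; this rigidity should make the relations easy to list explicitly. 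For condition \ref{Main Method}(2), the curves contracted by $\pi_1$ are handled by Lemma \ref{curves1 in P1 x P2} together with the del Pezzo surface results (Theorem \ref{del pezzo curves thm}) applied fiberwise, while the absence of del Pezzo fibrations with degree $\leq 3$ fibers on these $X$ (for $d_1, d_2 > 1$ the only fibration is the conic bundle and the $\mathbb{P}^2$-fibration $\pi_1$, neither being a del Pezzo fibration to $\mathbb{P}^1$ with small fibers in the relevant sense) lets me invoke Corollary \ref{connected fibers fixall} to conclude \ref{Main Method}(2) automatically — provided I also check hypothesis (2) of \ref{connected fibers fixall}, namely irreducibility of $\overline{\free}^{bir}(X, 2\beta)$ for the unique interior nef conic $\beta$ if one exists; by Lemma \ref{interior conic lemma} such a $\beta$ exists only when $\rho(X) \leq 2$, which is false here ($\rho(X) = 3$), so this hypothesis is vacuous and every main component satisfies $(\dagger)$. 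Hence Theorem \ref{Main Method} gives irreducibility of $\overline{\free}(X,\alpha)$ (not merely $\overline{\free}^{bir}$), since $\alpha$ is not a multiple of a conic interior to $\overline{NE}(X)$, not contracted by a del Pezzo fibration, and $(-K_X)^3 \neq 6$ in a way that triggers the $\tfrac12 K_X^2$ exception.

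The main obstacle I anticipate is the bookkeeping around nonemptiness and the precise role of the bound $n < \tfrac{3d+3}{2}$. One must show: (a) whenever $d > 0$ and $n < \tfrac{3d+3}{2}$, the class $\alpha$ is effective and representable by a free curve — this requires exhibiting an explicit free chain, e.g. gluing $\lfloor \cdot \rfloor$-many free conics of class $l_2 - 2e$ (or $l_2$, or $l_2 - e$) in fibers of $\pi_1$ to one free curve of class $\approx l_1 + (\text{small}) l_2 - (\text{small}) e$ that dominates $\mathbb{P}^1$, checking the multiplicities along $c$ never force the curve to acquire a non-nef or non-free component; the inequality on $n$ is exactly what keeps the "secant condition" $n \leq$ (appropriate multiple of $d$) satisfiable in a del Pezzo fiber of degree $9 - d_1 \geq 4$; and (b) the decomposition is flexible enough that the monodromy of $\pi_1|_c$ — which is the full symmetric group by Lemma \ref{P1 x P2 monodromy} when $d_1 \leq 3$, or for general $X$ otherwise — acts transitively on the multiset assignments appearing in Lemma \ref{curves1 in P1 x P2}, so that there is genuinely only one component of the contracted pieces and no hidden separating class obstructs the gluing. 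I would organize the proof so that (a) and (b) are dispatched first as explicit fiberwise statements via Theorem \ref{del pezzo curves thm} and Lemma \ref{P1 x P2 monodromy}, and then the irreducibility follows formally from Theorem \ref{Main Method} and Corollary \ref{connected fibers fixall}.
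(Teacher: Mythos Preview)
Your approach is fundamentally different from the paper's, and in this case the difference matters: the paper's proof is direct and elementary, whereas your route is circular in the logical structure of the paper.

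The paper's argument exploits the very special shape of $\alpha = l_1 + d l_2 - n e$: a general free curve of this class is the strict transform of the graph of a degree-$d$ map $f:\mathbb{P}^1 \to \mathbb{P}^2$ meeting $c$ transversely at $n$ points. This yields a rational map $M \dashrightarrow \Sym^n(c)$ for each component $M \subset \overline{\free}(X,\alpha)$. The bound $n < \tfrac{3d+3}{2}$ enters not as a Gordan-type decomposition condition but as a normal-bundle computation: since $\mathcal{N}_{\tilde f} \cong f^*\mathcal{T}_{\mathbb{P}^2}$ is semistable of degree $3d$, after blowing up any $n-1$ of the exceptional fibers through the curve the residual normal bundle still has enough sections to move the $n^{\text{th}}$ intersection freely along $c$. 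Hence $M \dashrightarrow \Sym^n(c)$ is dominant. Irreducibility of the general fiber is then straightforward linear algebra: the conditions $f([1:r_i]) = [1:x_i:y_i]$ for $n$ general points of $c$ are independent linear equations on the coefficients of $f$.

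Your proposal, by contrast, tries to invoke Theorem \ref{Main Method} and Corollary \ref{connected fibers fixall}. But both of these require a core $\mathscr{C}_X$ already in hand, and in every case where $X$ is a blow-up of $\mathbb{P}^1\times\mathbb{P}^2$ (deformation types $3.3$, $3.5$, $3.7$, $3.8$, $3.12$, $3.15$, $4.5$) the paper \emph{uses} Lemma \ref{curves2 in P1 x P2} as an input to verify that candidate core classes have irreducible free spaces and to check that the relation classes have irreducible $\overline{\free}$. So your strategy would assume what it is trying to prove. Moreover, several of these $X$ \emph{do} admit del Pezzo fibrations $\pi_1:X\to\mathbb{P}^1$ (every fiber of $\pi_1$ is a blow-up of $\mathbb{P}^2$ at $d_1$ points, a del Pezzo surface of degree $9-d_1$), so the clause of Corollary \ref{connected fibers fixall} excluding classes in $\overline{NE}(\pi)$ is not vacuous, and Lemma \ref{curves1 in P1 x P2} already shows that many $\pi_1$-contracted classes have reducible $\overline{\free}$. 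Finally, your reading of the bound $n<\tfrac{3d+3}{2}$ as a decomposition condition does not match its actual role; you would not recover this sharp inequality from Gordan's Lemma bookkeeping.
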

\begin{proof}
Let $\pi: X\rightarrow \mathbb{P}^1 \times \mathbb{P}^2$ be the blow-up of $c$.  Suppose $\alpha = l_1 + dl_2 -ne$ satisfies $n < \frac{3d + 3}{2}$.  Otherwise, as $d \geq 1$, $n \leq 2d \leq d \cdot d_2$ and $\alpha = (l_1 -e) + d(l_2 -2e) + (2d +1 -n) e$, each component of $\overline{\free}(X,\alpha)$ parameterizes very free curves, and we may think of points in $\overline{\free}(X, \alpha)$ as parametrizing maps $f: \mathbb{P}^1 \rightarrow \mathbb{P}^2$ whose graphs $\tilde{f}$ meet $c\subset \mathbb{P}^1 \times \mathbb{P}^2$ transversely at $n$ of $d \cdot d_2$ possible points.  For each component $M\subset \overline{\free}(X,\alpha)$, this induces a natural map $M \dashrightarrow \text{Sym}^n(c)$, which we show must be dominant.

Indeed, let $\tilde{f} : \mathbb{P}^1 \rightarrow \mathbb{P}^1 \times \mathbb{P}^2$ be a map whose strict transform $g: \mathbb{P}^1 \rightarrow X$ corresponds to a point in $M$.  Note that $\mathcal{N}_{\tilde{f}} \cong f^*\mathcal{T}_{\mathbb{P}^2}$, which is semistable.  Therefore, $\mathcal{N}_{\tilde{f}} \cong \mathcal{O}( \lfloor \frac{3d}{2} \rfloor) \oplus \mathcal{O}( \lfloor \frac{3d +1}{2} \rfloor)$ by \cite[Proposition~3.1]{patel2020moduli}.  If we blow-up any $n-1$ of the exceptional fibers of $\pi : X \rightarrow \mathbb{P}^1 \times \mathbb{P}^2$ meeting $g(\mathbb{P}^1)$, the strict transform of $\tilde{g}$ of $g$ has a normal bundle $\mathcal{N}_{\tilde{g}} \cong \mathcal{O}(a) \oplus \mathcal{O}(b)$ with $(a,b) \in \{(\lfloor \frac{3d}{2} \rfloor - n, \lfloor \frac{3d+1}{2} \rfloor - n +1) , (\lfloor \frac{3d}{2} \rfloor - n+1, \lfloor \frac{3d+1}{2} \rfloor - n)\}$.  In either case, there is a positive dimensional family of deformations of $\tilde{g}$, whose projection to $\mathbb{P}^2$ either dominates $\mathbb{P}^2$ or corresponds to reparameterizations of the same map.  However, any family of such reparameterizations has dimension at most $3-n$, and a short analysis of $n\leq 2$ shows these cannot constitute all such deformations of $\tilde{g}$.  Therefore, we may fix any $n-1$ intersections of $\tilde{f}$ with $c$ while moving the $n^{th}$, proving $M \dashrightarrow \text{Sym}^n(c)$ must be dominant.

It is sufficient to show that the general fiber of $\overline{\free}(X, \alpha) \dashrightarrow \text{Sym}^n(c)$ is irreducible.  Let $p_1 = [1: r_1] \times [1 : x_1 : y_1], \ldots , p_n =  [1 : r_n] \times [1 : x_n : y_n]$ a general collection of $n$ points in $c$ corresponding to a general point $p \in \text{Sym}^n(c)$.  Parameterize maps $f: \mathbb{P}^1 \rightarrow \mathbb{P}^2$ described in the first paragraph above by $[s : t] \rightarrow [a_0 s^d + \ldots + a_d t^d:  b_0 s^d + \ldots + b_d t^d : c_0 s^d + \ldots + c_d t^d]$.  The conditions that $\tilde{f}$ meets $c$ at the $n$ points above corresponds to $f([1: r_i]) = [1 : x_i : y_i]$.  In turn, these correspond to linear conditions on the $a_j, b_j, c_j$ with coefficients described by polynomials in the $r_i$, $x_i$, and $y_i$.  For $p_{i+1}$ general with respect to $p_1, \ldots , p_i$, these linear conditions are nondegenerate, and determine a linear space of maps whose general member has no base points.  %
This proves our claim.
\end{proof}

\begin{lem}\label{curves3 in P1 x P2}
Consider a Fano blow-up $X$ of $\mathbb{P}^1\times \mathbb{P}^2$ along a smooth curve $c$ of bidegree $(d_1,d_2)$ with each $d_i >1$.  For every $\alpha = a l_1 + b l_2 - ne$ with $0 \leq n \leq 2a + 1$ and $n \leq bd_2$, $\overline{\free}(X,\alpha)$ is irreducible and nonempty.
\end{lem}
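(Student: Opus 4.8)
The plan is to reduce to the previously-established Lemma \ref{curves2 in P1 x P2} and then induct on the number of $l_1$ factors, exactly as the previous lemmas are structured. First I would note that $\alpha = al_1 + bl_2 - ne$ with $0 \le n \le 2a+1$ and $n \le bd_2$ forces $a \ge 1$ (since $n \le 2a+1$ and if $a=0$ we would need $n \le 1$, which is a degenerate case handled by Lemma \ref{curves1 in P1 x P2} when $n=0$, while $n=1$ is impossible for a free curve of class $l_2 - e$ unless it is a line). Writing $\alpha = (l_1 + b'l_2 - n'e) + (a-1)l_1 + \ldots$ is not quite the right split; instead I would choose a decomposition $\alpha = \beta + \gamma$ where $\beta = l_1 + b_1 l_2 - n_1 e$ satisfies the hypotheses of Lemma \ref{curves2 in P1 x P2} (i.e. $n_1 < \tfrac{3b_1+3}{2}$, $b_1 > 0$) and $\gamma = (a-1)l_1 + b_2 l_2 - n_2 e$ satisfies the hypotheses of the present lemma with smaller $a$. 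The key arithmetic step is to show such a split always exists: distribute the $n$ exceptional intersections so that each $l_1$-piece absorbs at most $\lceil \tfrac{3b_i}{2}\rceil$ of them (using $d_2 \ge 2$ to guarantee enough $l_2$-degree is available on each piece to carry those intersections), and peel off one $l_1$ at a time.

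Next I would invoke the core/relations machinery: by Lemma \ref{blow-up numbers} and Lemma \ref{P1 x P2 effective curves} the extreme rays of $\overline{NE}(X)$ are $e$, $l_1 - e$, and the $l_2$-type ray, and the classes $l_1 + b l_2 - ne$ of Lemma \ref{curves2 in P1 x P2} together with nef lines $l_1 - e$, $e$ form a generating set for the relevant monoid. Since $d_2 > 1$, for each such $\beta$ the space $\overline{\free}(X,\beta)$ is irreducible and (by a dimension count, or by noting these classes lie interior to $\overline{NE}(X)$ when $n_1 < 2b_1$) generically parameterizes very free curves with irreducible general fibers over $X$ by Proposition \ref{very free curves}. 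Then Lemma \ref{Gluing} applies: gluing $\overline{\free}_1(X,\beta)$ to $\overline{\free}_1(X,\gamma)$ along a marking yields a unique main component, and smoothing it stays inside a single component of $\overline{\free}(X,\alpha)$. Because \emph{every} component of $\overline{\free}(X,\alpha)$ contains a chain of free curves of the above types — this is the content of Movable Bend-and-Break (Theorem \ref{MovableBB}) once $-K_X.\alpha \ge 6$, and Theorem \ref{improved MovableBB} / Proposition \ref{breaking quartic curves} for the low-degree cases, none of which are exceptional here since $X$ has no $E3,E4,E5$ divisors when both $d_i > 1$ (by the $E$-divisor table in the Remark following Proposition \ref{properties nonfree conics}) — Theorem \ref{Main Method} (or directly the gluing argument) forces irreducibility.

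Nonemptiness I would handle simultaneously: the split $\alpha = \beta + \gamma$ with both summands representable by free curves, glued and smoothed, produces a free curve of class $\alpha$; the base case is Lemma \ref{curves2 in P1 x P2}. Alternatively one can exhibit a free curve directly as the strict transform of a graph of a degree-$(a,b)$ map $\mathbb{P}^1 \to \mathbb{P}^1 \times \mathbb{P}^2$ meeting $c$ transversely at $n$ of its $bd_2$ potential intersection points, which is possible precisely because $n \le bd_2$ and the normal bundle $f^*\mathcal{T}_{\mathbb{P}^2}$ is balanced by \cite[Proposition~3.1]{patel2020moduli}, leaving enough freedom after imposing $n$ incidence conditions (here $n \le 2a+1$ is what guarantees the strict transform remains free).

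The main obstacle I expect is the bookkeeping in the peeling-off step: one must verify that at each stage the residual class still satisfies \emph{both} inequalities $n_2 \le 2(a-1)+1$ and $n_2 \le b_2 d_2$ while the extracted class lands in the precise range $n_1 < \tfrac{3b_1+3}{2}$ covered by Lemma \ref{curves2 in P1 x P2}. The inequality $n \le 2a+1$ is slightly awkward because it is not additive in the naive way — $2a+1$ is not $(2a_1+1)+(2a_2+1)$ — so the careful choice is to let the first piece take exactly the $l_1$-degree $1$ and as much of $n$ as Lemma \ref{curves2 in P1 x P2} permits, leaving a piece whose $n_2 = n - n_1 \le (2a+1) - 1 = 2a = 2(a-1)+2$, which is one too many; resolving this off-by-one (e.g. by instead splitting off $l_1 + b_1 l_2 - n_1 e$ with $n_1 \ge 2$, which is always achievable when $b_1 \ge 2$) is the delicate point, and $d_2 \ge 2$ together with $n \le bd_2$ is exactly what supplies the slack to always route at least two intersections onto the first piece.
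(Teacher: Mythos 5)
There is a genuine gap. The crux of your argument is the claim that every component of $\overline{\free}(X,\alpha)$ contains a free chain of the \emph{specific} type $(\beta,\gamma)$ you construct. Movable Bend-and-Break (Theorem \ref{MovableBB}) only produces \emph{some} free chain in each component; it gives no control over how the class $\alpha$, and in particular how the $n$ exceptional intersections, are distributed between the two pieces. To conclude that all the resulting chain types lie in a single component you would need the full relations-and-main-components verification of Theorem \ref{Main Method}, and in this paper that machinery is not available here: Lemma \ref{curves3 in P1 x P2} is an \emph{input} to the core/relations analysis for blow-ups of $\mathbb{P}^1\times\mathbb{P}^2$ (it is cited repeatedly in cases 3.3, 3.7, 3.8, 3.12, 4.13, etc.\ to establish irreducibility of the spaces appearing in relations), so invoking that machinery to prove it is circular. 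Your fallback direct construction also fails: a curve of class $al_1+bl_2-ne$ with $a\geq 2$ is not the graph of a map $\mathbb{P}^1\to\mathbb{P}^2$, and the identification $\mathcal{N}_{\tilde f}\cong f^*\mathcal{T}_{\mathbb{P}^2}$ (hence the balancedness argument via \cite[Proposition~3.1]{patel2020moduli}) is special to the $a=1$ case treated in Lemma \ref{curves2 in P1 x P2}.

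The paper's actual proof avoids chains entirely. It first reduces to the case where every component of $\overline{\free}(X,\alpha)$ parameterizes very free curves (disposing of the boundary and low-degree classes by hand), then shows the induced map $\pi_{2*}:\overline{\free}(X,\alpha)\to\overline{\free}(\mathbb{P}^2,b)$ is dominant on every component by checking surjectivity of $\mathcal{N}_g\to\mathcal{N}_f$ on global sections. Over a general $[f]$, the fiber is identified with curves of class $(a,1)$ in a blow-up of $\mathbb{P}^1\times\mathbb{P}^1$ passing through $n$ of the $bd_2$ points of $f(\mathbb{P}^1)\cap\pi_2(c)$; its components are linear systems of dimension $2a+1-n$ (this is where $n\leq 2a+1$ enters, not as something to be distributed additively over a chain), indexed by $n$-element subsets of the intersection points. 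Irreducibility then follows from $n$-transitivity of the monodromy on $f(\mathbb{P}^1)\cap\pi_2(c)$ as $f$ varies. Your instinct that the off-by-one bookkeeping in the peeling-off step is the delicate point is a symptom of the decomposition strategy being the wrong tool here.
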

\begin{proof}
By \ref{very free curves} and \ref{reducible fibers: 4 author result}, if $\alpha$ lies in the interior of $\overline{NE}(X)$ and has anticanonical degree at least 4, every component of $\overline{\free}(X,\alpha)$ will parameterize very free curves.  If $\alpha$ lies on the boundary of $\overline{NE}(X)$ or has anticanonical degree less than 4, we see $\alpha \in \{l_1, l_2, l_2 -e, l_1 + l_2 -2e, l_1 + l_2 -3e, 2l_1 + l_2 -4e \}$.  The last option, $\alpha = 2l_1 + l_2 -4e$, may only happen when $d_2 \geq 4$, which would imply that $X$ has deformation type $3.3$.  In this case, $2l_1 + l_2 -4e$ is pseudosymmetric to $l_2$.  Otherwise, If $d_2 \geq 3$, $\overline{\free}(X,l_1 + l_2 -3e)$ is a degree ${d_2 \choose 3}$ cover of the space of lines in $\mathbb{P}^2$.  A monodromy argument proves this space is irreducible.  Similar arguments apply to each other possibility for $\alpha$.  Thus, we may assume each component of $\overline{\free}(X,\alpha)$ parameterizes very free curves.

Consider the map $\pi_{2*} : \overline{\free}(X,\alpha)\rightarrow \overline{\free}(\mathbb{P}^2, b)$ induced by $\pi_2$.  %
Since $\pi_2 : X\rightarrow \mathbb{P}^2$ is smooth away from a codimension 2 locus, over a general free curve $g: \mathbb{P}^1\rightarrow X$ with $\pi_2 \circ g = f$, the map $g^* \mathcal{T}_X \rightarrow f^* \mathcal{T}_{\mathbb{P}^2}$ is surjective.  It follows that the natural map $\mathcal{N}_g \rightarrow \mathcal{N}_{f}$ is surjective on global sections, as the kernel is a line bundle of degree $2a -n$.  This shows $\pi_{2*}$ is dominant on any component of $\overline{\free}(X,\alpha)$.

For $[f] \in \free(\mathbb{P}^2, b)$ whose image contains a general point in $\mathbb{P}^2$, we may consider the fiber product $X_f = X \times_{\mathbb{P}^2} \mathbb{P}^1$, which is isomorphic to the blow up of $\mathbb{P}^1 \times \mathbb{P}^1$ along points in $c$ lying over each point of $f(\mathbb{P}^1)\cap \pi_2(c)$.  A general map $[g] \in \free(X,\alpha)$ lying over $[f]$ must be a curve of class $(a,1)$ in $\mathbb{P}^1 \times \mathbb{P}^1$ that meets $n$ of the blown-up points in $X_f$ with multiplicity one (see \cite[Propositions~2.9-10]{beheshti2020moduli}).  %

Since $\pi_{2*}$ is dominant when restricted to any component of $\overline{\free}(X,\alpha)$, for general $[f]$, each component of the fiber $\pi_{2*}^{-1}[f]$ has the expected dimension $2a  + 1 -n$.  It is easily seen that these components are in bijection with choices of $n$ of the $bd_2$ points in $f(\mathbb{P}^1) \cap \pi_2(c)$, as each gives a linear system of rational curves in $X_f$.  

We claim that the monodromy of intersections $f(\mathbb{P}^1) \cap \pi_2(c)$ as $f$ varies is $n$-transitive, which shows that each component of the fiber belongs to the same component of $\overline{\free}(X,\alpha)$.  
When $d_2 \leq 3$, we may fix any $bd_2 -2$ of the intersection points $f(\mathbb{P}^1) \cap \pi_2(c)$ while perturbing the remaining two in a positive dimensional family.  This proves the monodromy of intersections of $f(\mathbb{P}^1)$ with $\pi_2(c)$ over $\overline{\free}(\mathbb{P}^2, b)$ is the full symmetric group.  When $d_2 = 4$, $X$ has deformation type 3.3.  In this case, if $b\leq 2$, $\overline{\free}(\mathbb{P}^2,b)$ is a very ample linear system of divisors on $\mathbb{P}^2$, which shows the appropriate monodromy group is the full symmetric group.  We shall only use the case $b\leq 2$ in our proof of Theorem \ref{3.3thm}, and the other cases follow.
\end{proof}

\subsection{Blow-ups of $\mathbb{P}^3$}

Lastly, we prove a result about curves in blow-ups of $\mathbb{P}^3$.

\begin{lem}\label{lines_in_P3}
Suppose $\pi: X \rightarrow \mathbb{P}^3$ realizes a smooth Fano threefold $X$ as an iterated blow up of $\mathbb{P}^3$ along points and strict transforms of smooth curves in $\mathbb{P}^3$. Let $E_1, \ldots , E_n$ be the exceptional divisors of $\pi$. %
Suppose $\alpha\in N_1(X)_\mathbb{Z}$ satisfies $\alpha . (-K_X) \geq 2$ and $\alpha . E_i \geq 0$ for all $i$.

\begin{enumerate}
    \item If $\pi_* \alpha$ is the class of a line, then $\overline{\free}(X, \alpha)$ is irreducible and nonempty iff $\alpha . \sum_{i \in S} E_i \leq 1$ for all collections $S$ of coplanar $\pi(E_i)$.  If $\alpha . (\sum_{i\leq n} E_i) \leq 1$, then the evaluation map $\text{ev} : \overline{\free}_1(X,\alpha)\rightarrow X$ has irreducible fibers.
    \item If $\pi_* \alpha$ is the class of a conic, $\overline{\free}^{bir}(X, \alpha)$ is irreducible and nonempty if 
    \begin{itemize}
        \item $\alpha . (\sum_{i \in S} E_i) \leq 2$ for all collections $S$ of coplanar $\pi(E_i)$, 
        \item $\alpha . (\sum_{i\leq n} E_i) \leq 4$, 
        \item $\alpha . E_i \leq d_i$, where $d_i$ is the degree of $\pi(E_i)$ as a point or curve in $\mathbb{P}^3$, 
        \item and $\alpha . E_i$ is either $0$ or $d_i$ for all but at most one $i$.  %
    \end{itemize} 
    Unless $X$ has deformation type 5.1 and $\alpha.E_i > 0$ for two distinct $E_i$ contracted to a point by $\pi$, the evaluation map $\text{ev} : \overline{\free}^{bir}_1(X,\alpha)\rightarrow X$ has irreducible fibers.  Moreover, if $-K_X . \alpha \geq 4$, then $\alpha$ is freely breakable unless $\alpha . E_i = 2$ for a planar curve $\pi(E_i)$ and $\alpha . E_j = 1$ for a point $\pi(E_j)$.
\end{enumerate}
If $\alpha_1 + \alpha_2 = \alpha_3 + \alpha_4$ and all $\alpha_i$ satisfy \ref{lines_in_P3}(1), then a main component of the product $\overline{\free}_1(X,\alpha_1) \times_X \overline{\free}_1(X,\alpha_2)$ lies in the same component of $\overline{\free}(X)$ as a main component of $\overline{\free}_1(X,\alpha_3) \times_X \overline{\free}_1(X,\alpha_4)$.

\end{lem}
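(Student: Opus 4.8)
The plan is to reduce everything to plane sections of $\mathbb{P}^3$ and then to quote Theorem~\ref{del pezzo curves thm}. Write $\beta = \alpha_1 + \alpha_2 = \alpha_3 + \alpha_4$; since each $\alpha_i$ satisfies \ref{lines_in_P3}(1) we have $-K_X . \alpha_i \geq 2$, hence $-K_X . \beta \geq 4$, while $\pi_*\alpha_i$ is a line class and $\pi_*\beta$ a conic class. A free chain $f : C_1 \cup C_2 \rightarrow X$ of type $(\alpha_1,\alpha_2)$ pushes forward to a connected union $\ell_1 \cup \ell_2$ of two lines meeting at the point $\pi(f(C_1 \cap C_2))$; these two lines span a plane $H$, and because $\ell_j$ meets each center $\pi(E_k)$ only at points of $\ell_j \subset H$, the whole chain $C_1 \cup C_2$ lies in the strict transform $\tilde H \subset X$ of $H$. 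The same holds for chains of type $(\alpha_3,\alpha_4)$. So the strategy is: show the chains of both types live on strict transforms of planes from a single irreducible family, that $\overline{\free}(\tilde H,\beta|_{\tilde H})$ is irreducible for a general such $H$ by Theorem~\ref{del pezzo curves thm}, and that these fibrewise families glue into one irreducible family of free curves of class $\beta$ on $X$.

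First I would normalize the chain. By Lemma~\ref{Basic Properties Chains} a general free chain of type $(\alpha_1,\alpha_2)$ may be taken with its node over a general point of $X$, hence a general point of $\mathbb{P}^3$; the only remaining constraint on $\ell_j$ is that it passes through the $\alpha_j.E_k$ incidences with the centers $\pi(E_k)$. Let $\mathcal{P} \subseteq |\mathcal{O}_{\mathbb{P}^3}(1)|$ be the linear subsystem of planes through those point-type centers $\pi(E_k)$ with $\beta.E_k>0$; the coplanarity inequalities in \ref{lines_in_P3}(1) guarantee $\mathcal{P}$ is nonempty and positive-dimensional (and it is irreducible, being a projective space), and both $M_{12}$ and $M_{34}$ map dominantly to $\mathcal{P}$ via $(\text{chain})\mapsto(\text{plane spanned by its image})$. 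For general $H \in \mathcal{P}$ the strict transform $\tilde H$ is the blow-up of $H \cong \mathbb{P}^2$ at the finitely many points $H \cap \pi(E_k)$, which the coplanarity bounds of \ref{lines_in_P3}(1) keep in sufficiently general position, so $\tilde H$ is a weak del Pezzo surface; the restriction $\beta_H := \beta|_{\tilde H}$ is the class of the strict transform of a plane conic, it is nef, and $-K_{\tilde H} . \beta_H = -K_X . \beta - 2 \geq 2$, so $\beta_H$ is not a multiple $>1$ of a $-K_{\tilde H}$-conic and Theorem~\ref{del pezzo curves thm} gives irreducibility of $\overline{\free}^{bir}(\tilde H,\beta_H)$ (which is all we need, the relevant smoothings being birational onto smooth conics).

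I would then assemble these statements. Over $\mathcal{P}$ form the universal strict-transform surface $\mathcal{S} \rightarrow \mathcal{P}$ with $\mathcal{S}_H = \tilde H$ and the relative moduli $\mathcal{F} := \overline{\free}^{bir}(\mathcal{S}/\mathcal{P},\beta)$, whose fibre over $H$ is $\overline{\free}^{bir}(\tilde H,\beta_H)$; since $\mathcal{P}$ is irreducible and each fibre is irreducible, $\mathcal{F}$ is irreducible. Using the normal bundle sequence $0 \rightarrow N_{C/\tilde H} \rightarrow N_{C/X} \rightarrow \mathcal{O}_{\tilde H}(\tilde H)|_C \rightarrow 0$, in which $\deg N_{C/\tilde H} = -K_X.\beta - 4 \geq 0$, together with the fact that the image of $\mathcal{F}$ dominates $X$, a general member of $\mathcal{F}$ is a free rational curve on $X$ of class $\beta$; hence $\mathcal{F}$ maps to an irreducible subvariety of $\overline{\free}^{bir}(X,\beta)$ lying in a single component $\mathcal{M}$. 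Finally, a general point of $M_{12}$ is a free chain of type $(\alpha_1,\alpha_2)$ which by the reduction above lies in $\tilde H$ for general $H \in \mathcal{P}$, hence is a boundary point of the fibre $\overline{\free}^{bir}(\tilde H,\beta_H)$ of $\mathcal{F}$, so $M_{12} \subseteq \overline{\mathcal{F}} \subseteq \mathcal{M}$; the identical argument gives $M_{34} \subseteq \mathcal{M}$. Thus $M_{12}$ and $M_{34}$ lie in the same component of $\overline{\free}(X,\beta)$, as claimed.

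The main obstacle is the bookkeeping in the reduction step: tracking exactly which exceptional divisors the two lines of a chain meet, the resulting non-general constraints on the spanning plane $H$, and verifying in every case that the blown-up points $H \cap \pi(E_k)$ are general enough that $\tilde H$ is a weak del Pezzo surface of the correct degree and that $\beta_H$ avoids the excluded classes of Theorem~\ref{del pezzo curves thm} — all of which is forced by, but must be extracted from, the coplanarity inequalities in \ref{lines_in_P3}(1). A secondary point needing care is the degenerate subcase in which some $\alpha_i$ has $-K_X$-degree $2$ and $\overline{\free}_1(X,\alpha_i)\rightarrow X$ has reducible fibres, so that $\prod_X \overline{\free}_2(X,\alpha_i)$ genuinely has several main components; the argument above covers this automatically, since each such main component still parameterizes chains lying on some $\tilde H$, but one should record why.
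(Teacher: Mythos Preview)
Your proposal addresses only the final relation statement, taking parts (1) and (2) as given. The paper's proof of that statement is much shorter and goes a different route: it observes that the sum $\beta=\alpha_1+\alpha_2$ almost always satisfies the hypotheses of part~(2) (the coplanarity bound $\beta\cdot\sum_{S}E_i\le 2$ and the bound $\beta\cdot\sum E_i\le 4$ are automatic from part~(1)), so $\overline{\free}^{bir}(X,\beta)$ is irreducible by the already-proved part~(2) and both free chains smooth into it. The only way the hypotheses of~(2) fail is $\beta\cdot E_i>d_i$ for some $i$, forcing $d_i\le 3$; the paper then checks in a few lines that in each such case the relation $\alpha_1+\alpha_2=\alpha_3+\alpha_4$ is trivial (i.e.\ $\{\alpha_1,\alpha_2\}=\{\alpha_3,\alpha_4\}$), so there is nothing to prove. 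Your plane-section approach recreates, in effect, a special case of the proof of~(2) via Theorem~\ref{del pezzo curves thm}, rather than quoting~(2) directly.

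There is also a genuine gap in your argument precisely at the edge case $\beta\cdot E_i>d_i$. When $d_i=1$ (say $\pi(E_i)$ is a line) and $\alpha_1\cdot E_i=\alpha_2\cdot E_i=1$, a general plane $H$ meets $\pi(E_i)$ in a single point $p$, and the restricted class $\beta_H$ has coefficient $-2$ at $e_p$. Then $\beta_H\cdot(l-e_p)=0$, so $\beta_H$ lies on the boundary of $\Nef(\tilde H)$, and in fact no irreducible curve on $\tilde H$ represents $\beta_H$: every member is a pair of lines through $p$ or a double cover. So $\overline{\free}^{bir}(\tilde H,\beta_H)$ is empty of irreducible curves and your family $\mathcal{F}$ does not produce an irreducible component $\mathcal{M}\subset\overline{\free}^{bir}(X,\beta)$ to land in. The paper sidesteps this entirely by noting the relation is trivial here; you would need the same observation. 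A smaller point: your adjunction formula $-K_{\tilde H}\cdot\beta_H=-K_X\cdot\beta-2$ is only correct when $\mathcal{P}=|H|$, i.e.\ when no point-centers are involved; in general $[\tilde H]=H-\sum_{k:\,\pi(E_k)\text{ a point},\,\beta\cdot E_k>0}E_k$, so there is an extra correction term.
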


\begin{rem}
For any $\alpha$ satisfying the hypotheses of \ref{lines_in_P3}(2), if $-K_X . \alpha \geq 5$ and $\alpha$ does not meet any colinear set of $\pi(E_i)$ with multiplicity $2$ (i.e. a blown-up line twice, or a blown-up line through a blown-up point), $\overline{\free}^{bir}(X,\alpha)$ necessarily parameterizes very free curves.  This follows from observing that we may find a representative of $\alpha$ through two general points in $X$.
\end{rem}

\begin{proof}
Suppose $\pi_* \alpha$ is the class of a line.  Since $\alpha . (-K_X) \geq 2$, $\alpha . E_i \neq 0$ for at most two $i$, in which case $\alpha . E_i = 1$.  If $\alpha . E_1 = \alpha . E_2 = 1$, then $\pi(E_1)$ and $\pi (E_2)$ are non-coplanar curves.  No hypersurface contains all lines joining $\pi(E_1)$ and $\pi(E_2)$, as planes are the only surfaces that contain 2 parameter families of lines.  Hence, $\alpha$ is nef, and $\overline{\free}(X,\alpha)$ is dominated by an open subset of $\pi(E_1)\times \pi(E_2)$.  The normal bundle of a general such curve will be globally generated.

If instead $\alpha. E_1 = 2$, then $\pi(E_1)$ is a non-planar smooth curve whose general point does not meet $\pi(E_i)$ for all $i\neq 1$.  Again, it follows that $\alpha$ is nef, and $\overline{\free}(X,\alpha)$ is dominated by an open subset of $\pi(E_1)\times \pi(E_1)$.  When $\alpha . E_1 = 1$ and $\alpha . E_i = 0$ for all other $i$, the space of lines through $\pi (E_1)$ is irreducible with irreducible fibers over general points in $\mathbb{P}^3$.

Suppose $\pi_* \alpha$ is the class of a conic and satisfies the hypotheses written above.  It follows that $\alpha . E_i = 1$ for at most two $E_i$ with $\pi(E_i)$ a point.  Note that if $\pi(E_i)$ is a point, $d_i = 1$, so $\alpha . E_i \leq 1$.  Let $E_1, \ldots , E_k$ ($k\leq 4$) be the exceptional divisors for which $\alpha . E_i > 0$.

Suppose $\alpha . E_i > 0$ implies $\pi(E_i)$ is a curve.  By Theorem \ref{Monodromy}, there is an open subset of planes $U\subset |H|$ which intersect each $\pi(E_i)$ transversely, with intersection points in linearly general position.  Theorem \ref{Monodromy} and our hypotheses ensures the space of conics passing through appropriate points of intersection of $\pi(E_i)$ with a plane parameterized by $U$ is irreducible.  Fixing a general point $p \in \mathbb{P}^3$, to show $\text{ev}^{-1}(p)$ is irreducible, we only need to ensure that if $\alpha . E_i \neq 0, d_i$ for some $i$, say $i = 1$, the monodromy action of $|H-p|$ is appropriately transitive on intersections $H \cap \pi(E_1)$.  %
This follows immediately from Theorem \ref{Monodromy} if a general $p$ lies on only finitely many secant and tangent lines of $\pi(E_1)$.  This is equivalent to the statement that a generic projection of a curve in $\mathbb{P}^3$ to $\mathbb{P}^2$ is birational \cite[Section~IV.3]{hartshorne2013algebraic}.

To see that $\alpha$ is freely breakable, it is sufficient to see that $\alpha = \alpha_1 + \alpha_2$ for $\alpha_i$ satisfying the hypotheses of \ref{lines_in_P3}(1).  This implies our claim because $\overline{\free}^{bir}(X,\alpha)$ is irreducible.  If $\alpha.(\sum_{i\leq n} E_i) \leq 3$, we may take $\alpha_1$ to be a line meeting one or two non-coplanar $\pi(E_i)$, and $\alpha_2$ to be a line meeting $\pi(E_i)$ such that $\alpha.E_i > \alpha_1 .E_i$.  If $\alpha.E_i \geq 3$ for some $i$, we may let $\alpha_1$ meet $E_i$ twice.  If $\alpha . E_i = 2$ for some $i$, we may let $\alpha_1$ and $\alpha_2$ meet $E_i$ once.  Otherwise, $\alpha . E_i = 1$ for four different $E_i$, so $\rho(X) \geq 5$.  There are no such smooth Fano threefolds with four different exceptional $E_i$ contracted by $\pi$.

Suppose, for $i=1,2$, that $\alpha . E_i = 1$ and $\pi(E_i)$ is a point.  It follows that since $X$ is Fano, the strict transform of the line between $\pi(E_1)$ and $\pi(E_2)$ must also be blown up by $\pi$.  Thus, $X$ is a blow up of 4.12.  This limits $X$ to $4.12$, $5.1$, and $5.2$.  Let $E_3$ be the exceptional divisor over the line containing $\pi(E_1)$ and $\pi(E_2)$, and $E_4$ the exceptional divisor over a conic (when $X$ is 5.1) or another line (when $X$ is 5.2).  Our hypotheses imply $\alpha.E_3 = 0$, and $\alpha .E_4 \leq 1$.  If $\alpha.E_4 = 0$, the claim is clear.  When $\alpha.E_4 = 1$, $\overline{\free}(X,\alpha)$ is dominated by a family of conics in planes parameterized by $p \in \pi(E_4)$ that contain $\pi(E_1)$, $\pi(E_2)$, and $p$.  When $E_4$ lies over a line and $q\in \mathbb{P}^3$ is general, there is exactly one $p \in \pi(E_4)$ meeting the plane containing $\pi(E_1)$, $\pi(E_2)$, and $q$.  If $-K_X . \alpha \geq 4$, $\alpha$ breaks freely into two lines containing $\pi(E_1)$ and $\pi(E_2)$.

Suppose $\alpha . E_1 = 1$ and $\pi(E_1)$ is a point, but $\pi(E_i)$ is not a point for all other $E_i$ such that $\alpha.E_i > 0$.  An open subset $U$ of the linear system of planes $|H-E_1|$ intersects $\pi(E_2), \ldots, \pi(E_k)$ transversely, and contains no line through any three points on the $\pi(E_i)$ such that $\alpha.E_i > 0$: unless these lines sweep out a plane, there are only 1 dimensional families of them; if they sweep out a plane, $\alpha$ would meet a collection of coplanar $E_i$ in more than 2 points.
If $\alpha . E_i = 0,d_i$ for all $i$, then each plane parameterized by $U$ contains a linear system of generically irreducible conics whose strict transforms have class $\alpha$.  If $\alpha . E_i < d_i$ for some $i$, we may suppose $i=2$, so that each plane contains ${d_2 \choose \alpha. E_2}$ such linear systems of conics.  Since $X$ is Fano, there are no lines through $\pi (E_1)$ and any two points on other exceptional loci, including $E_2$.  This implies $\pi(E_2)\rightarrow \mathbb{P}|H-E_1|$ is injective.  By Theorem \ref{Monodromy}, this implies the space of such conics is irreducible.  

Each conic through a general point $p\in \mathbb{P}^3$ whose strict transform has class $\alpha$ lies in plane through $p$ and $\pi(E_1)$.  The pencil of such planes gives a morphism to $\mathbb{P}^1$.  Irreducibility of $\pi(E_2)$ implies the monodromy action on the fibers of this morphism is 1-transitive.  Thus, the evaluation map $\text{ev} : \overline{\free}^{bir}_1(X,\alpha)\rightarrow X$ has connected fibers unless $1 < \alpha . E_2 < d_2 -1$.  %
We address $\alpha . E_2 = 2,3$ separately.  If $\alpha . E_2 = 3$, $d_2 \geq 4$ and $\alpha$ pairs positively with no other $E_i$.  Thus $\alpha$ may be realized as a curve class on one of $3.11$, $3.14$, $3.16$, $3.19$, $3.23$, $3.26$, $3.29$, or $3.30$, as these are the only Fano threefolds of Picard rank 3 which are blow ups of 2.35.  %
None of these have a divisorial contraction onto 2.35 with exceptional divisor $E_2$ lying over a curve in $\mathbb{P}^3$ of degree $d_2 > 3 + 1$.  If $\alpha . E_2 = 2$, $\alpha . E_i > 0$ for at most one other $E_i$, say $E_3$.  Thus $\alpha$ may be realized as a curve on one of the varieties $X$ above, or one of $4.4$, $4.9$, $4.12$.  Again, only $3.11$ has a divisorial contraction onto 2.35 with exceptional divisor $E_2$ lying over a curve in $\mathbb{P}^3$ of degree $d_2 > 2 + 1$.  In this case, the curve $\pi(E_2)$ has degree 4 and passes through $\pi(E_1)$, so as $\pi(E_2)\rightarrow \mathbb{P}|H-E_1|$ is injective, Theorem \ref{Monodromy} implies the fiber $\text{ev}^{-1}(p)$ is irreducible.

When $-K_X . \alpha \geq 4$, by our assumptions $\alpha . (\sum_{i\neq 1} E_i) \leq 2$.  Since no two distinct $\pi(E_i)$ are coplanar in cases 4.4, 4.9, and 4.12, $\alpha$ must break freely into a line $\alpha_1$ through $\pi(E_1)$ and a free curve of class $\alpha - \alpha_1$.

Lastly, suppose $\alpha_1 + \alpha_2 = \alpha_3 + \alpha_4$ is a relation between classes $\alpha_i$ satisfying the hypotheses of \ref{lines_in_P3}(1).  Our claim follows from irreducibility of $\overline{\free}^{bir}(X,\alpha_1 + \alpha_2)$ when $(\alpha_1 + \alpha_2) .E_i \leq d_i$ for all $i$.  If $(\alpha_1 + \alpha_2) . E_i > d_i$ for some $i$, then $d_i <4$.  In such a case, we may assume $(\alpha_1 + \alpha_2) . E_j = 0$ for any $E_j$ contracted to a point $\pi(E_j)$.  If $d_i =1$, we may assume $\pi(E_i)$ is a line, in which case there are no nontrivial relations $\alpha_1 + \alpha_2 = \alpha_3 + \alpha_4$.  If $d_i = 2$, $\pi(E_i)$ must be a conic, and there are no nontrivial relations as before.  If $d_i = 3$, all $\alpha_i$ must be a line passing through $\pi(E_i)$ twice.
\end{proof}

\section{Picard Rank $\geq$ 5}

\subsection*{5.1}

\textbf{Blow-up of a quadric $Q \subset \mathbb{P}^4$:} Let $X$ be the blow up of a smooth quadric $Q$ in $\mathbb{P}^4$ along a conic and three exceptional fibers.  This the same as the blow up of $Q$ first along the three points, then along the strict transform of the conic.  Projection from one of these blown up points shows $X$ is isomorphic to the blow up of $\mathbb{P}^3$ along two points, the strict transform of the line through them, and a conic disjoint from the line, which is also not coplanar with either blown up point.  Let the points be $p,q \in \mathbb{P}^3$, the line be $\ell$, and the conic be $c$.

\textbf{Generators for $N^1(X)$ and $N_1(X)$:}
Let $H$ be the hyperplane section in $\mathbb{P}^3$ and denote by $E_p$, $E_q$, $E_\ell$, and $E_c$ the exceptional divisors over $p,q,\ell$, and $c$.  Let $l$ be the class of a general line in $\mathbb{P}^3$, and $e_p,e_q,e_\ell,$ and $e_c$ be the classes of lines in each exceptional divisor.

\begin{thm}\label{GMC5.1}
For each $\alpha\in l + \Nef_1(X)_{\mathbb{Z}}$, %
$\overline{\free}(X,\alpha)$ is irreducible, nonempty, and generically parameterizes very free curves.

\end{thm}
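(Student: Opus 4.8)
The plan is to verify the hypotheses of Corollary \ref{connected fibers fixall} (together with \ref{Main Method}, \ref{bir Main Method}) for $X = 5.1$ and then apply the outline in Section 7.1. First I would establish a core of free curves. Since $X$ is an iterated blow-up of $\mathbb{P}^3$ along points, the strict transform of a line, and a conic, with $\rho(X)=5$, there are no $E3$, $E4$, $E5$ divisors, so by Proposition \ref{breaking quartic curves} and Corollary \ref{breaking quartics cor} every component of $\overline{\free}(X)$ of $-K_X$-degree $\geq 4$ is freely breakable into free conics or a free cubic plus a free line. Hence I would take $\mathscr{C}_X$ to be the classes of $-K_X$-conics and $-K_X$-cubics represented by free curves; Lemma \ref{lines_in_P3} gives irreducibility and nonemptiness of $\overline{\free}^{bir}(X,\alpha)$ (and irreducibility of general evaluation-map fibers) for exactly those $\pi_*\alpha$-lines and -conics satisfying the stated numerical constraints, which I would check are precisely the conic/cubic core classes. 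The separating classes are those with reducible general evaluation fiber — by Lemma \ref{lines_in_P3}(2) the only candidate is the degree-5.1-specific case where $\alpha . E_p > 0$ and $\alpha . E_q > 0$ for the two point-divisors; since there is no del Pezzo fibration on $X$ ($\rho=5$, and $X$ is a blow-up of $\mathbb{P}^3$ with all the relevant rays divisorial or conic-bundle type), the del Pezzo fibration exceptions in Corollary \ref{connected fibers fixall} are vacuous, and there are no nef $-K_X$-conics interior to $\overline{NE}(X)$ by Lemma \ref{interior conic lemma} since $\rho(X) \geq 3$; so hypothesis (2) of Corollary \ref{connected fibers fixall} is vacuous and every main component satisfies $(\dagger)$.

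Next I would verify Theorem \ref{Main Method}(1): using Lemma \ref{Relations} with the divisors $E_p, E_q, E_\ell, E_c$ (and $H$) as the $D_i$, I would write down a generating set of relations among the core classes. The relevant relations have the shape $(\alpha_1 + \beta) + \alpha_2 = \alpha_1 + (\alpha_2 + \beta)$ where $\beta$ is a $-K_X$-line class (one of $e_p, e_q, e_\ell, e_c, l - e_p - e_q - e_\ell$, etc.), plus the unique linear relation among any $N_1(X)$-spanning subset of core classes. For line-exchange relations the final statement of Lemma \ref{lines_in_P3} applies directly: if $\alpha_1 + \alpha_2 = \alpha_3 + \alpha_4$ with all $\alpha_i$ of the line-type in \ref{lines_in_P3}(1), a main component of $\overline{\free}_1(X,\alpha_1)\times_X\overline{\free}_1(X,\alpha_2)$ lies in the same component of $\overline{\free}(X)$ as one of $\overline{\free}_1(X,\alpha_3)\times_X\overline{\free}_1(X,\alpha_4)$; for relations involving cubics I would instead invoke irreducibility of $\overline{\free}^{bir}(X, \sum\alpha_i)$ from Lemma \ref{lines_in_P3}(2) when the total class stays within the numerical bounds, and otherwise exhibit a nodal curve $Z \to \overline{\free}(X,\sum\alpha_i)$ joining the two main components through smooth points of $\overline{\mathcal{M}}_{0,0}(X)$ (using Proposition \ref{kontsevich space H1} and Lemma \ref{GHS lemma} for the $H^1$-vanishing). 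Then Corollary \ref{connected fibers fixall} supplies \ref{Main Method}(2) on all of $\Nef_1(X)$ (no del Pezzo fibration cones to exclude), so \ref{bir Main Method} gives that $\overline{\free}^{bir}(X,\alpha)$ is irreducible or empty for every $\alpha$; combined with the freely-breakable reduction, irreducibility of $\overline{\free}(X,\alpha)$ follows for $\alpha$ with $-K_X . \alpha$ large. Finally, to pin down $\tau = l$: by Lemma \ref{lines_in_P3}, for $\alpha \in l + \Nef_1(X)_{\mathbb{Z}}$ one checks the numerical conditions $\alpha.(\sum_{S}E_i)\leq 2$ for coplanar $S$, $\alpha.(\sum E_i) \leq 4$, $\alpha.E_i \leq d_i$, and the "$0$ or $d_i$ except one" condition all hold (this is the routine computation, using that $l$ shifts all intersection numbers favorably and $\rho(X)=5$ limits the configurations); this forces $-K_X.\alpha \geq 5$ and $\alpha$ interior to $\overline{NE}(X)$, so by the remark following Lemma \ref{lines_in_P3} and Proposition \ref{very free curves}, $\overline{\free}(X,\alpha)$ generically parameterizes very free curves and $\overline{\free}(X,\alpha) = \overline{\free}^{bir}(X,\alpha)$ is nonempty.

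\textbf{Main obstacle.} The delicate point is not the general machinery but the bookkeeping of the explicit geometry on this particular $5.1$: verifying that every relation produced by Lemma \ref{Relations} among the conic and cubic core classes is one for which \ref{Main Method}(1) can be checked — in particular handling relations whose total class $\sum\alpha_i$ violates the "$\alpha.(\sum_S E_i) \leq 2$" or "$\alpha.E_i \leq d_i$" hypotheses of Lemma \ref{lines_in_P3}(2), where one must instead build the connecting nodal curve by hand. Because $X$ has the separating class coming from the two blown-up points $p,q$ (the genuinely degree-$5.1$-specific feature flagged in Lemma \ref{lines_in_P3}(2)), one must also confirm that the two main components of $\overline{\free}_1(X,\alpha)\times_X \overline{\free}_1(X,\alpha)$ attached to a separating class still land in a single component once glued to any other core curve — here the absence of a del Pezzo fibration is what saves us, via the proof of Proposition \ref{reducible fibers: main strategy}: once a subchain smooths to a very free curve, Lemma \ref{Gluing} and Proposition \ref{very free curves} force uniqueness of the main component thereafter. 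I expect this verification to be entirely mechanical given the results already in hand, so the "hard part" is really just organizing the finite casework on intersection numbers, which is exactly what the $\tau = l$ choice is engineered to trivialize.
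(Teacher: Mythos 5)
Your overall skeleton (identify a core, generate relations via Lemma \ref{Relations}, verify Theorem \ref{Main Method}(1) by hand or via irreducibility of the total class, and let Corollary \ref{connected fibers fixall} dispose of \ref{Main Method}(2) since there is no del Pezzo fibration and no interior nef conic) is exactly the paper's strategy for $5.1$. But there is a concrete gap at the very first step: your proposed core, ``the $-K_X$-conics and $-K_X$-cubics represented by free curves,'' is not a core for this variety. The class $l$ (strict transform of a general line in $\mathbb{P}^3$) has $-K_X\cdot l = 4$ and is \emph{not} freely breakable: its only degenerations attach non-free exceptional fibers, and Theorem \ref{improved MovableBB}(1) explicitly lists strict transforms of general lines under a contraction $X\to\mathbb{P}^3$ as the degree-$4$ exception to free breaking. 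Likewise $2l-e_{p/q}-2e_c$ has degree $4$ and falls under the exception in the last sentence of Lemma \ref{lines_in_P3}(2) ($\alpha\cdot E_c=2$ for the planar conic $c$ and $\alpha\cdot E_{p/q}=1$ for a point), so it too fails to break freely. Your claim that ``every component of $\overline{\free}(X)$ of $-K_X$-degree $\geq 4$ is freely breakable into free conics or a free cubic plus a free line'' misreads Proposition \ref{breaking quartic curves}: its conclusion only produces an immersed two-component curve with \emph{one} free cubic component, the other component being a possibly non-free line, which is not a free chain. Consequently Definition \ref{def core}(2) fails for $\overline{\free}(X,l)$ under your core, and the entire relation bookkeeping in \ref{Main Method}(1) would be run on the wrong generating set; the paper's core for $5.1$ necessarily contains three quartic classes ($l$, $2l-e_p-2e_c$, $2l-e_q-2e_c$) alongside the conics and cubics, and the relations all involve $l$ or its pseudosymmetric images.

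Two smaller points. First, you identify only the $5.1$-specific conic $2l-e_p-e_q-e_c$ as separating, but $l-e_\ell-e_c$ is also separating (two lines of that class through a general point, coming from the two intersection points of the plane $\langle p, \ell\rangle$... more precisely from the two branches of the secant construction); this does not sink the argument because, as you note, every main component satisfies $(\dagger)$ here, but it must be tracked when checking the relations that involve it. Second, your final paragraph applies the numerical hypotheses of Lemma \ref{lines_in_P3}(2) to arbitrary $\alpha\in l+\Nef_1(X)_{\mathbb{Z}}$, but that lemma only governs classes whose pushforward to $\mathbb{P}^3$ is a line or a conic; nonemptiness for general $\alpha$ must instead come from Gordan's Lemma applied to the (correct) core together with the pseudosymmetry swapping $E_p$ with $H-E_c$, which you do not invoke and which the paper uses to reduce the casework to a finite, manageable list.
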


\textbf{Intersection Pairing:} 
\begin{center}
\begin{tabular}{lllll}
    $H \cdot l = 1$ &  $H \cdot e_p = 0$ &  $H \cdot e_q = 0$ &  $H \cdot e_\ell = 0$ &  $H \cdot e_c = 0$ \\
    $E_p \cdot l = 0$ &  $E_p \cdot e_p = -1$ &  $E_p \cdot e_q = 0$ &  $E_p \cdot e_\ell = 0$ &  $E_p \cdot e_c = 0$ \\
    $E_q \cdot l = 0$ &  $E_q \cdot e_p = 0$ &  $E_q \cdot e_q = -1$ &  $E_q \cdot e_\ell = 0$ &  $E_q \cdot e_c = 0$ \\
    $E_\ell \cdot l = 0$ &  $E_\ell \cdot e_p = 0$ &  $E_\ell \cdot e_q = 0$ &  $E_\ell \cdot e_\ell = -1$ &  $E_\ell \cdot e_c = 0$ \\
    $E_c \cdot l = 0$ &  $E_c \cdot e_p = 0$ &  $E_c \cdot e_q = 0$ &  $E_c \cdot e_\ell = 0$ &  $E_c \cdot e_c = -1$ \\
\end{tabular}
\end{center}

\textbf{Anticanonical Divisor:} 
\begin{align*}
    -K_X = & 4H - 2E_p -2E_q -E_\ell -E_c \\
    = & 2(H-E_p -E_q -E_\ell) + (H - E_c) + H + E_\ell
\end{align*}

\textbf{Effective Divisors:} The extremal effective divisors are $H-E_p -E_q -E_\ell$, $H-E_c$, $2H-2E_p -E_c$, $2H - 2E_q -E_c$, $E_p$, $E_q$, $E_\ell$, and $E_c$.

\textbf{Pseudosymmetry:}  The projection of $Q$ from a different blown up point results in an equivalent description of $X$ as a blow-up of $\mathbb{P}^3$.  To obtain this description, note that the exceptional divisor lying over the blown up points correspond to $E_p, E_q$, and $H-E_c$. $\overline{\text{NE}}(X)$ is generated by $e_c$, $e_\ell$, $l-e_\ell -2e_c$, $l-e_p -e_q + e_\ell$, $e_p - e_\ell$, $e_q -e_\ell$, $l-e_p -e_c$, and $l-e_q -e_c$.  In order, these sweep out the divisors $E_c$, $E_\ell$, $H-E_c$, $E_\ell$, $E_p$, $E_q$, $2H-2E_p -E_c$, and $2H-2E_q -E_c$.  Projection from a different point leaves $E_\ell$ invariant, swaps $H-E_c$ with $E_p$ (or $E_q$) and $E_c$ with $2H - 2E_p -E_c$ (resp. $2H - 2E_q -E_c$).  Extending linearly, we find $E_\ell \rightarrow E_\ell$, $E_p \rightarrow H-E_c$, $E_q \rightarrow E_q$, $E_c \rightarrow 2H -2E_p -E_c$, and $H\rightarrow 2H -E_p -E_c$.  This corresponds to the following, nontrivial pseudoactions on $N_1(X)$:
\begin{enumerate}
    \item $e_\ell \rightarrow e_\ell, \ e_q \rightarrow e_q, \ e_p \rightarrow l-2e_c, \ e_c \rightarrow l - e_p -e_c, \ l \rightarrow 2l -e_p -2e_c$,
    \item $e_p \rightarrow e_q, \ e_q \rightarrow e_p, \ l \rightarrow l, \ e_\ell \rightarrow e_\ell, \  e_c \rightarrow e_c.$
\end{enumerate}

\begin{lem}
A core of free curves on $X$ is given by 
\begin{align*}
    \mathscr{C}_X = &\{ l, \ l-e_p, \ l-e_q, \  l-e_\ell, \  l-e_c, \  l-e_\ell -e_c, \  2l -e_p -e_q -e_c, \ 2l -e_p -2e_c,\\
        &  2l -e_q - 2e_c, \ 2l -e_p -e_q -2e_c,\ 2l - e_p - e_\ell - 2e_c,\ 2l -e_q - e_\ell -2e_c\}.
\end{align*}
The only separating classes $\alpha \in \mathscr{C}_X$ are $l-e_\ell -e_c$ and $2l -e_p -e_q -e_c$.
\end{lem}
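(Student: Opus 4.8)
The plan is to establish the two conditions of Definition \ref{def core} for the twelve listed classes and then to compute the generic fibers of the evaluation maps $\overline{\free}_1(X,\alpha)\to X$.

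For condition \ref{def core}(1), note first that each of the twelve classes is primitive, hence not a positive multiple of an integral class, so $\overline{\free}(X,\alpha)=\overline{\free}^{bir}(X,\alpha)$ and it suffices to apply Lemma \ref{lines_in_P3}. The class $l$ is immediate from \ref{lines_in_P3}(1), since a general line avoids all of $E_p,E_q,E_\ell,E_c$. For each of the remaining eleven classes I would choose whichever of the three blow-up descriptions $X\to\mathbb{P}^3$ (projection of $Q\subset\mathbb{P}^4$ from each of its three blown-up points, interchanged by the pseudoactions (1) and (2)) makes $\pi_*\alpha$ the class of a line or a conic, and then check the combinatorial hypotheses of Lemma \ref{lines_in_P3}(1) or (2). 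The incidence data one needs is that $p,q\in\ell$, that $c$ is disjoint from $\ell$ and — being non-coplanar with $p$ — non-coplanar with $\ell$ and with $q$ as well, and that $\pi(E_c)$ has degree $2$ while $\pi(E_p),\pi(E_q),\pi(E_\ell)$ have degree $1$; with these in hand, the bounds on intersections with coplanar collections of exceptional loci, the bound of $4$ on total exceptional intersection, the bounds $\alpha\cdot E_i\le d_i$, and the ``$0$ or $d_i$ for all but one $i$'' condition are all routine to verify. For instance $2l-e_p-e_q-e_c$ is a plane conic through $p$ and $q$ meeting $c$ once, and $2l-e_p-2e_c$ (which is the line class of a different $\mathbb{P}^3$-model) is a conic through $p$ meeting $c$ twice.

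For condition \ref{def core}(2) I would induct downward on $-K_X\cdot\beta$. Deformation type $5.1$ carries no $E5$ divisor, so Theorem \ref{MovableBB} applies at anticanonical degree $\ge 5$; since a Fano threefold has no free rational curve of anticanonical degree $1$ (a line has normal bundle of degree $-1$, which is not globally generated), every break produced there has both pieces of degree $\ge 2$ and the induction proceeds. At degree $4$, Theorem \ref{improved MovableBB}(1) says a component that does not break parameterizes either general lines of a contraction $X\to\mathbb{P}^3$ — whose classes are exactly $l$, $2l-e_p-2e_c$, $2l-e_q-2e_c$, all in $\mathscr{C}_X$ — or strict transforms of curves from \ref{improved MovableBB}(2) under a blow-up of a Fano threefold possessing an $E5$ divisor; but no Picard-rank-$4$ Fano threefold has an $E5$ divisor (see the remark after Proposition \ref{properties nonfree conics}), so the second alternative is vacuous, and a degree-$4$ component that does break must break as $(2,2)$. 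Everything therefore reduces to checking that every nef class $\beta$ with $-K_X\cdot\beta\in\{2,3\}$ and $\overline{\free}(X,\beta)\neq\emptyset$ already lies in $\mathscr{C}_X$. I would enumerate the nef classes of anticanonical degree $2$ and $3$ by Lemma \ref{Gordan's Lemma}, using the description $\Nef_1(X)=\overline{NE}(X)\cap\{v:v\cdot E\ge 0\text{ for }E\in\mathcal{E}_X\}$ from the proof of Theorem \ref{Representability of Free Curves} with $\mathcal{E}_X=\{E_p,E_q,E_\ell,E_c,\,H-E_c,\,2H-2E_p-E_c,\,2H-2E_q-E_c\}$, and for each resulting class outside $\mathscr{C}_X$ exhibit the obstruction to a free representative — non-effectivity, or effectivity only through reducible or non-free curves (a minimal section of an exceptional divisor, or a curve constrained to a plane or to a quadric cone) — most transparently in the appropriate $\mathbb{P}^3$ model.

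Finally, for the separating classes, Lemma \ref{lines_in_P3}(1) shows $\text{ev}$ has irreducible fibers whenever $\alpha\cdot\sum_i E_i\le 1$, which among the line classes leaves only $l-e_\ell-e_c$; and Lemma \ref{lines_in_P3}(2), whose only exception is deformation type $5.1$ with $\alpha$ meeting two exceptional divisors lying over points, leaves among the conic classes only $2l-e_p-e_q-e_c$ and $2l-e_p-e_q-2e_c$. A direct count over a general point $x$ finishes the analysis: a line through $x$ meeting $\ell$ and $c$ lies in the plane $\langle x,\ell\rangle$, which meets $c$ in two points, so there are two such lines and $l-e_\ell-e_c$ is separating; a conic through $p,q,x$ meeting $c$ lies in $\langle p,q,x\rangle$, which meets $c$ in two points $r_1,r_2$, giving two pencils of conics and so $2l-e_p-e_q-e_c$ is separating; but a conic through $p,q,x$ meeting $c$ twice must pass through the five points $p,q,x,r_1,r_2$ and is therefore unique, so $\text{ev}$ has irreducible fibers for $2l-e_p-e_q-2e_c$, which is not separating. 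I expect the main obstacle to be condition \ref{def core}(2) at anticanonical degree $4$: it rests on Theorem \ref{improved MovableBB}(1), for which the excerpt supplies only explicit computations, together with the exhaustive — if routine — enumeration of the degree-$2$ and degree-$3$ nef classes and the verification that every such class outside $\mathscr{C}_X$ carries no free rational curve.
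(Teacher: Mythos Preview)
Your approach to condition~\ref{def core}(1) and to the identification of separating classes is the same as the paper's---both rest on Lemma~\ref{lines_in_P3}, and your explicit count for $2l-e_p-e_q-2e_c$ (five points determining a unique conic) is exactly the paper's reasoning, stated a bit more carefully.

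The real difference is in condition~\ref{def core}(2). Your plan to dispatch anticanonical degree~$4$ via Theorem~\ref{improved MovableBB}(1) is circular: the paper's ``proof'' of that theorem is the case-by-case analysis of which this very lemma is a part. You flag this yourself, but it is not just an obstacle---it is the step you cannot take. The paper's fix is simply to enumerate the nef classes of anticanonical degree between $2$ and $4$ directly (not just $2$ and $3$). Writing $\alpha = d\,l - m_p e_p - m_q e_q - m e_\ell - n e_c$, one first bounds $d\le 4$ from $-K_X = 2(H-E_p-E_q-E_\ell) + (H-E_c) + H + E_\ell$ and then treats $d=4,3,2$ by hand. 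The only classes arising with $d\in\{3,4\}$ that carry free curves are $3l-e_p-e_q-e_\ell-3e_c$ and $3l-2e_{p/q}-e_{q/p}-2e_c$ (each of anticanonical degree $4$, each planar, each visibly breaking into a pair from $\mathscr{C}_X$), while the $d=4$ class is forced to be reducible or a double cover. For $d\le 2$ one finds the twelve listed classes plus four further degree-$4$ classes ($2l-e_p-e_q$, $2l-2e_{p/q}$, $2l-e_{p/q}-e_\ell-e_c$, $2l-2e_\ell-2e_c$), each of which breaks as a sum of two lines from $\mathscr{C}_X$. This is routine but not avoidable.

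Two smaller points. First, your list $\mathcal{E}_X$ omits the extremal rigid divisor $H-E_p-E_q-E_\ell$ (the strict transform of the plane through $p,q,\ell$); without it your nef-cone description is wrong and the enumeration would miscount. Second, your remark that at degree $2$ and $3$ you would ``exhibit the obstruction'' for classes outside $\mathscr{C}_X$ is unnecessary: a correct enumeration shows there are none---every nef class of anticanonical degree $2$ or $3$ already lies in $\mathscr{C}_X$.
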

\begin{proof}
\textbf{Nef Curve Classes of Anticanonical Degree Between $2$ and $4$:} 
Note that $-K_X = 2(H-E_p -E_q -E_\ell) + (H - E_c) + H + E_\ell$.  
The only curve classes of appropriate anticanonical degree, which pair nonnegatively with all divisors above, are of class $\alpha = dl - m_p e_p - m_q e_q - m e_\ell - n e_c$ with $d, m_p, m_q, m, n \geq 0$ and $d\leq 4$. If $d=4$, then $n=4$, $m = 0$, and $m_p + m_q = 4$.  The corresponding quartic curve must be planar.  However, it would need to be double at two points of intersection with $c$, and double at both $p$ and $q$, which is impossible, unless the curve is a double cover or reducible.  Similarly, if $d=3$, then either $m=1$ or $m=0$.  If $m=1$, then $n=3$ and $m_p = m_q = 1$.  Such cubics exist and are always planar (indeed for any plane containing $\ell$ there are 3 dimensions of them), however, we easily break it into free classes $l-e_\ell -e_c$ and $2l -e_p -e_q -2e_c$.  If $m=0$, then $m_p + m_q = 3$ and $n = 2,3$.  No such irreducible curves exist for $n=3$, while for $n=2$, we easily break it as $l-e_{p/q}$ and $2l -e_p -e_q -2e_c$.  Thus, the relevant curve classes to consider all have $d\leq 2$.  In additon to the classes listed above, the classes $2l -e_p -e_q$, $2l -2e_{p/q}$, $2l -e_{p/q} -e_\ell -e_c$, and $2l -2e_\ell -2e_c$ are also classes of free curves, but these are easily broken into the union of two other free curves.

\textbf{Irreducible spaces}: by Lemma \ref{lines_in_P3}, $\overline{\free}_1(X,\alpha)$ is irreducible for all $\alpha \in \mathscr{C}_X$ and has generically irreducible fibers over $X$ whenever $\alpha \notin \{ l-e_c -e_\ell, \ 2l - e_p -e_q -e_c, \ 2l -e_p -e_q -2e_c\}$.  There is a unique curve of class $2l -e_p -e_q -2e_c$ through a general point, while there are two irreducible components of free curves of class $\alpha$ through a general point of $X$ when $\alpha = l-e_c -e_\ell, \ 2l - e_p -e_q -e_c$.  
\end{proof}

\begin{lem}
Up to pseudosymmetry, relations in the monoid $\mathbb{N}\mathscr{C}_X$ are generated by the following list:
\begin{enumerate}
    \item $l + (l-e_\ell -e_c) = (l-e_\ell) + (l-e_c)$
    \item $l + (2l -e_p -e_q -e_c) = (l-e_p) + (l-e_q) + (l-e_c)$
    \item $l + (2l -e_{p/q} -2e_c) = (l-e_{p/q}) + 2(l-e_c)$
    \item $l + (2l -e_p -e_q -2e_c) = (l-e_c) + (2l -e_p -e_q -e_c)$
    \item $l + (2l -e_{p/q} -e_\ell -2e_c) = (l-e_\ell - e_c) + (l -e_{p/q}) + (l -e_c)$.
    \item $(l-e_\ell) + (2l -e_p -e_q -e_c) = (l-e_\ell -e_c) + (l-e_p) + (l-e_q)$
        \item $(l-e_\ell) + (2l-e_{p/q} -e_\ell -2e_c) = 2(l-e_c -e_\ell) + (l-e_{p/q})$
        \item $(l-e_\ell) + (2l -e_p -e_q -2e_c) = (l-e_\ell -e_c) + (2l -e_p -e_q - e_c)$
        \item $2(2l -e_p -e_q -e_c) = (2l -e_p -e_q -2e_c) + (l-e_p) + (l-e_q)$.
\end{enumerate}
For each relation $\sum \alpha_i = \sum \alpha_j'$, a main component of $\prod_X \overline{\free}_2(X,\alpha_i)$ lies in the same component of free curves as a main component of $\prod_X \overline{\free}_2(X,\alpha_j')$.
\end{lem}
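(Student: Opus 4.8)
The plan is to verify, for each of the nine listed relations, that the two sides have main components landing in a common irreducible component of a space of free curves of the appropriate class. I will handle all relations by a uniform mechanism: attach the terms of each side at a general point, check the glued nodal curve is a smooth point of $\overline{\mathcal{M}}_{0,0}(X)$ with globally generated normal sheaf, and identify the component it smooths into. The key structural inputs are Lemma \ref{lines_in_P3}, Lemma \ref{Basic Properties Chains}, and the deformation theory collected in Section 2 (Lemma \ref{GHS lemma}, Proposition \ref{kontsevich space H1}).

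First I would observe that relations (1)--(3), (5)--(7) have the special shape $\ell + \alpha = \alpha_1 + \cdots + \alpha_r$ with $\ell$ a $-K_X$-line class and each $\alpha_i$ (and $\alpha$) satisfying the hypotheses of \ref{lines_in_P3}(1): e.g. in (1), $l$, $l-e_\ell$, $l-e_c$ all satisfy \ref{lines_in_P3}(1), so the final assertion of Lemma \ref{lines_in_P3} applies directly to the relation $l+(l-e_\ell-e_c)=(l-e_\ell)+(l-e_c)$ written as $\alpha_1+\alpha_2=\alpha_3+\alpha_4$ once one notes $(l-e_\ell-e_c)$ breaks as $(l-e_\ell)+(l-e_c)-l$... more precisely, I reduce each such relation to a chain of elementary moves of the form in the Example after Theorem \ref{Main Method}, where one slides a $-K_X$-line between two adjacent free components. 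That example shows a main component of $\overline{\free}_1(X,\beta_1+\ell)\times_X\overline{\free}_1(X,\beta_2)$ and one of $\overline{\free}_1(X,\beta_1)\times_X\overline{\free}_1(X,\beta_2+\ell)$ lie in the same component, provided $\ell$ is not a line on an $E4$ or $E5$ divisor; on $X$ (type $5.1$) there are no such divisors, so this always applies. The $\overline{\free}_1$-to-$\overline{\free}_2$ passage is handled by Lemma \ref{Basic Properties Chains}(3),(6) and the bijection between main components of $\prod_X M_i^1$ and $\prod_X M_i^2$.

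For relations (4), (8), (9), which involve the separating classes $l-e_\ell-e_c$ and $2l-e_p-e_q-e_c$, the direct sliding argument needs supplementing because the relevant evaluation maps have reducible fibers. Here I would use Lemma \ref{lines_in_P3}(2): the classes $2l-e_p-e_q-e_c$, $2l-e_p-e_q-2e_c$, $2l-e_{p/q}-e_\ell-2e_c$ all satisfy the hypotheses listed there, so $\overline{\free}^{bir}$ of their sums is irreducible when those sums again satisfy \ref{lines_in_P3}(2)'s bounds; when a sum such as $2(2l-e_p-e_q-e_c)$ exceeds a bound (here $\alpha.E_i$ for a conic $\pi(E_i)$ might be $2$ with a coincident point contribution), I fall back on exhibiting an explicit nodal curve $Z\to X$ intersecting both main components with all nodes at smooth points of $\overline{\mathcal{M}}_{0,0}(X)$ — the strategy flagged in the Remark after Theorem \ref{Main Method}. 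Concretely for (9) I would smooth $(2l-e_p-e_q-2e_c)\cup(l-e_p)\cup(l-e_q)$ to a very free curve of class $2(2l-e_p-e_q-e_c)$ and separately realize $2(2l-e_p-e_q-e_c)$ as a smoothing of the chain $(2l-e_p-e_q-e_c)\cup(2l-e_p-e_q-e_c)$, using that both are free and the glued curve has $h^1(C,f^*\mathcal{T}_X)=0$ by Proposition \ref{kontsevich space H1}; since $X$ is Fano with $|-K_X|$ very ample (type $5.1$), Theorem \ref{reducible fibers: 4 author result} and Proposition \ref{very free curves} guarantee there is only one component of very free curves of that class, forcing the two main components to coincide.

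The main obstacle will be relation (9) (and more generally the relations touching the separating class $2l-e_p-e_q-e_c$), because that class genuinely has reducible evaluation fibers and its doubles could a priori carry spurious components of free-but-not-very-free curves or multiple-cover components. The resolution is to show every main component of $\prod_X\overline{\free}_2(X,\alpha_i)$ appearing in these relations \emph{satisfies} $(\dagger)$ of Definition \ref{Def Dagger} — which holds here since the summands have distinct classes, hence distinct reduced images — and then invoke Corollary \ref{bir Main Method} together with Theorem \ref{unique main component}(1): all main components of a product of two $\overline{\free}^{bir}_1$'s of distinct classes with interior sum lie in the single component of very free curves. Checking that the relevant sums are interior to $\overline{NE}(X)$ and have $-K_X$-degree $\geq 4$ is the routine bookkeeping I would not grind through here, but it follows from the explicit generator list for $\overline{NE}(X)$ given above. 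Pseudosymmetry (the two pseudoactions recorded above) then covers the cases written ``$p/q$'' and reduces the nine relations to the handful that are genuinely distinct.
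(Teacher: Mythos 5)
Your treatment of relations (1)--(8) is essentially the paper's argument: the ``slide a line'' move you invoke is exactly the degeneration of a chain of type $(l,\alpha)$ (or $(l-e_\ell,\alpha)$) to one of type $(l-e_c,e_c,\alpha)$ followed by smoothing $e_c$ into the adjacent component and re-breaking via Lemma \ref{lines_in_P3}, and your observation that $X$ carries no $E4$ or $E5$ divisors is the correct justification that these nodal curves are smooth points with globally generated normal sheaf. Two genuine gaps remain, however.

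First, you prove only the second assertion of the lemma. The first assertion --- that the nine listed relations \emph{generate} all relations in $\mathbb{N}\mathscr{C}_X$ --- is a nontrivial combinatorial claim requiring Lemma \ref{Relations} applied to a suitable ordering of $\mathscr{C}_X$ and explicit divisors (here $-H+E_p+E_q+E_\ell+E_c$ and $-H+E_p+E_q+E_c$), and it is nowhere addressed in your proposal.

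Second, your argument for relation (9) is circular. You conclude by asserting ``there is only one component of very free curves of that class,'' citing Theorem \ref{reducible fibers: 4 author result}, Proposition \ref{very free curves}, and Theorem \ref{unique main component}(1). The first two results show that every component of $\overline{\free}(X,4l-2e_p-2e_q-2e_c)$ parameterizes very free curves; they do not show there is only \emph{one} such component --- that uniqueness is Theorem \ref{Main Result}(2), which is a downstream consequence of the very condition \ref{Main Method}(1) you are trying to verify. Theorem \ref{unique main component}(1) does not apply either, since the left side of (9) is a product of two copies of the \emph{same} class $2l-e_p-e_q-e_c$ (a separating class with reducible evaluation fibers), and that theorem requires distinct classes. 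Likewise Corollary \ref{connected fibers fixall} only identifies main components arising from a \emph{fixed} decomposition $\alpha=\sum\alpha_i$; it cannot bridge the two distinct multisets appearing on the two sides of (9). What is actually needed --- and what the paper supplies --- is an explicit degeneration: smooth a free chain of type $(2l-e_p-e_q-2e_c,\,l-e_p,\,l-e_q)$ to one of type $(2l-e_p-e_q-2e_c,\,2l-e_p-e_q)$, then specialize the node of the latter chain to lie on the conic $c$, producing a chain of type $2(2l-e_p-e_q-e_c)$ inside the same component. Your fallback of ``exhibiting an explicit nodal curve meeting both main components'' is the right idea, but you never construct it, and the uniqueness shortcut you substitute for it is not available at this stage of the argument.
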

\begin{proof}
We note that action of $\text{Aut}(X)$ on $\mathscr{C}_X$ decomposes into the following orbits:
\begin{itemize}
    \item $\{ l, \ 2l -e_p -2e_c, \ 2l - e_q -2e_c\}$,
    \item $\{ 2l -e_p -e_q -2e_c, \ l-e_q, \  l-e_p\}$,
    \item $\{ l-e_\ell, \  2l-e_p -e_\ell -2e_c, \ 2l-e_q -e_\ell -2e_c\}$,
    \item $\{ l-e_c \}, \ \{l-e_\ell -e_c\}, \ \{ 2l -e_p -e_q -e_c\}$.
\end{itemize}

\textbf{Relations}: %
We apply Lemma \ref{Relations} to find a generating set of relations.

\begin{itemize}
    \item $D=-H + E_p + E_q + E_\ell + E_c$: The relevant nonzero pairings are $D.l = -1$, $D.l-e_c -e_\ell = 1$, $D.(\text{any conic})\in \{1,2\}$.  We obtain the relations 
    \begin{itemize}
    \item $l + (l-e_\ell -e_c) = (l-e_\ell) + (l-e_c)$
    \item $l + (2l -e_p -e_q -e_c) = (l-e_p) + (l-e_q) + (l-e_c)$
    \item $l + (2l -e_{p/q} -2e_c) = (l-e_{p/q}) + 2(l-e_c)$
    \item $l + (2l -e_p -e_q -2e_c) = (l-e_c) + (2l -e_p -e_q -e_c)$
    \item $l + (2l -e_{p/q} -e_\ell -2e_c) = (l-e_\ell - e_c) + (l -e_{p/q}) + (l -e_c)$.
    \end{itemize}
    By symmetry, we eliminate all relations involving $2l - e_{p/q} -2e_c$ as well.
    \item $D= -H + E_p + E_q + E_c$: $D$ only pairs negatively with $l-e_\ell$ and positively with any conic.  We obtain the relations
    \begin{itemize}
        \item $(l-e_\ell) + (2l -e_p -e_q -e_c) = (l-e_\ell -e_c) + (l-e_p) + (l-e_q)$
        \item $(l-e_\ell) + (2l-e_{p/q} -e_\ell -2e_c) = 2(l-e_c -e_\ell) + (l-e_{p/q})$
        \item $(l-e_\ell) + (2l -e_p -e_q -2e_c) = (l-e_\ell -e_c) + (2l -e_p -e_q - e_c)$
    \end{itemize}
    By symmetry, we eliminate all relations involving $2l -e_{p/q} -e_\ell -2e_c$ as well.  Between the remaining 6 elements of $\mathscr{C}_X$, the unique relation is $2(2l -e_p -e_q -e_c) = (2l -e_p -e_q -2e_c) + (l-e_p) + (l-e_q)$.  
\end{itemize} 

\textbf{Main Components:} The claim for Relations $(1) - (2)$ follows from Lemma \ref{lines_in_P3}.  For relations $(3)-(5)$, let $C=C_1\cup C_2$ be a chain of type $(l,\alpha)$ where $\alpha$ is given by the right-hand side of the relation.  By deforming the component of class $l$, we may move the intersection of the two components to lie on $c$.  The resulting chain is a smooth point of $\overline{\mathcal{M}}_{0,0}(X)$ and has type $(l-e_c, e_c, \alpha)$.  We may smooth the latter two components, and break the resulting curve using Lemma \ref{lines_in_P3} to obtain a chain of free curves of type given by the left-hand side.  For relations $(6)-(8)$, after substituting $l-e_\ell$ for $l$ in the above argument, we may use an identical procedure.  %

For relation $(9)$, we may smooth a free chain of type $(2l -e_p -e_q -2e_c, l-e_p, l-e_q)$ to a free chain of type $(2l -e_p -e_q -2e_c, 2l -e_p -e_q)$.  Then, by moving the intersection point to lie on $c$, we prove our claim.
\end{proof}

\begin{lem}
For all nonzero $\alpha \in \Nef_1(X)_\mathbb{Z}$, $\overline{\free}(X,\alpha)$ is nonempty.
\end{lem}

\begin{proof}
The extreme rays of $\Nef_1(X)$ are spanned by elements of $C=\mathscr{C}_X\setminus\{2l -e_p -e_q -e_c\}$ (see Theorem \ref{Representability of Free Curves}).  Let $\alpha \in \Nef_1(X)_\mathbb{Z}$.  By Lemma \ref{Gordan's Lemma}, it suffices to check $\overline{\free}(X,\alpha)$ is nonempty when $\alpha = \sum_{\alpha_i \in C} a_{\alpha_i} \alpha_i$ for $0\leq a_{\alpha_i} < 1$.  Write $\alpha = c_l l - c_p e_p -c_q e_q -c_\ell e_\ell - c_c e_c$.  It follows that $0\leq c_p,c_q,c_\ell \leq 3$ while $0\leq c_l \leq 15$ and $0\leq c_c \leq 11$.  Using pseudosymmetry, we may assume $\alpha.(H-E_c) \leq \alpha.E_q \leq \alpha.E_p$, i.e. $ c_l - c_c \leq c_q \leq c_p$.  As $\alpha$ is nef, we must have $c_l \geq c_p + c_q + c_\ell$, $c_l \geq c_c$, and $2c_l \geq 2c_p + c_c$.  It follows that $2c_c \geq c_l + c_\ell$ and $c_c \geq c_p + c_\ell$.  

If $c_p = 0$, then $a_{\alpha_i} = 0$ unless $\alpha_i \in \{l - e_c, l -e_\ell -e_c\}$, and so $\alpha =0$ is the only possibility.  If $c_p \geq 1$ but $c_q = 0$, then $a_{\alpha_i} = 0$ unless $\alpha_i \in \{l - e_c, l -e_\ell -e_c, 2l -e_p -2e_c, 2l -e_p -e_\ell -2e_c\}$, and the only nonzero integral possibility is $\alpha = 3l -e_p -e_\ell -3e_c = (l -e_\ell - e_c) + (2l -e_p -2e_c)$.  If $c_q \geq 1$ but $c_l = c_c$, then $a_{\alpha_i} = 0$ whenever $\alpha_i \in \{l, l -e_p, l -e_q, l -e_\ell\}$, and the only possibilities for $\alpha$ have $c_\ell, c_p, c_q \leq 2$.  For each possibility, as $2c_l \geq 2c_p + c_c$ implies $c_l \geq 2c_p$ and $c_l \geq c_p + c_q + c_\ell$, we may write $\alpha$ as a sum of $l -e_\ell -e_c$, $2l - e_p -e_q -2e_c$, $2l -e_p -2e_c$, and $l-e_c$ with nonnegative integer coefficients.  Thus we may assume $c_l > c_c$.

If $c_p \geq 2$, note that we always have $c_l \geq c_p + c_q + c_\ell \geq 2c_q + c_\ell$, $2c_l \geq 2c_p + c_c$, and $c_c \geq c_p + c_\ell$.  Since $c_c \geq 2$, we may subtract $(2l-e_p -e_q -2e_c)$ from $\alpha$ to obtain another nef class.  By repeating this process if necessary and using pseudosymmety, we may assume $c_p=1$.  Thus, we may assume $1 = c_l -c_c = c_q = c_p$.  

If $c_\ell > 0$, then as $c_l = 1 + c_c \geq 1 + c_p + c_\ell$, by subtracting integer multiples of $(l-e_c -e_\ell)$ from $\alpha$, we may reduce to the case $c_\ell = 0$.  Then if $c_c > 1$, we may subtract multiples of $(l-e_c)$ instead until $c_c = 1$.  Thus we may assume $\alpha = 2l - e_p -e_q -e_c$.
\end{proof}

\subsection*{5.2}

\textbf{Blow-up of two points and the strict transform of two lines in $\mathbb{P}^3$:}
Let $\ell_1$ and $\ell_2$ be disjoint lines in $\mathbb{P}^3$ and let $p$ and $q$ be points on $\ell_2$. $X$ is obtained from taking the blow-up of $\mathbb{P}^3$ along $\ell_1$ and $\ell_2$ and then blowing-up along the exceptional lines over $p$ and $q$. This is equivalent to taking the blow-up $g:Y \rightarrow \mathbb{P}^3$ along $p$ and $q$ and letting $f:X \rightarrow Y$ be the blow-up along the strict transforms $\ell_1'$ and $\ell_2'$ of $\ell_1$ and $\ell_2$.

\begin{thm}\label{5.2thm}
For all nonzero $\alpha \in \Nef_1(X)_\mathbb{Z}$, $\overline{\free}(X,\alpha)$ is irreducible and nonempty.
\end{thm}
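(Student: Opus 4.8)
The plan is to follow the five-step template laid out in Section 7.1, exactly as the authors do for $5.1$. Namely, for $X$ the blow-up of $\mathbb{P}^3$ along two points $p,q$ and the strict transforms of two disjoint lines $\ell_1,\ell_2$ (with $p,q\in \ell_2$), I would first set up coordinates: let $l$ be the class of a general line, $e_1,e_2$ the classes of lines in the exceptional divisors $E_1,E_2$ over $\ell_1,\ell_2$, and $e_p,e_q$ the classes in the exceptional divisors $E_p,E_q$ over $p,q$. Compute $-K_X = 4H - 2E_1 - 2E_2 - E_p - E_q$ (using Lemma~\ref{blow-up numbers} iteratively, or directly), write down the intersection table, list the extremal effective divisors, and determine $\overline{NE}(X)$ and $\Nef_1(X)$ via \cite{matsuki1995weyl}. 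Since $p,q$ lie on $\ell_2$, there are incidences among the blown-up loci, which will produce a few nontrivial divisor classes (e.g. $H - E_1$, $H - E_2$, $2H - E_1 - E_2$ type classes, and classes from planes through $\ell_1$ and a point). I would also record the pseudosymmetry swapping $p\leftrightarrow q$, and — since $X$ is a blow-up of several smaller Fano threefolds — note that by the Lemma following Corollary~\ref{main result 4} (``if $Y$ has a smooth Fano blow-up $X$'') much could in principle be bootstrapped, but here we want the statement \emph{on} $X$ itself.

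Next, Step (1): identify a core $\mathscr{C}_X$ of free curves. These will be strict transforms of lines and conics in $\mathbb{P}^3$ meeting the blown-up loci appropriately, governed by Lemma~\ref{lines_in_P3}. Concretely the core classes are: $l$; the lines $l - e_1$, $l - e_2$, $l - e_p$, $l - e_q$ (and perhaps $l - e_1 - \text{(something)}$ depending on incidences); conics through pairs of non-coplanar loci such as $2l - e_1 - e_2$, $2l - e_1 - e_p$, $2l - e_1 - e_q$, $2l - e_p - e_q$, $2l - 2e_1$, $2l - 2e_2$, and conics meeting $\ell_2$ doubly or through $p$ and $q$ simultaneously (noting, as in $5.1$, that $p,q$ colinear on $\ell_2$ means curves through both lie in a special configuration — a conic through $p,q$ and meeting $\ell_1$, say $2l - e_1 - e_p - e_q$, will require the line $\ell_2$ to carry an exceptional fiber, which it does here). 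I would verify each $\overline{\free}_1(X,\alpha)$ is irreducible by Lemma~\ref{lines_in_P3}(1)--(2), and identify the separating classes (those with reducible general fiber over $X$) — these will be a line meeting two non-coplanar loci and a conic through the two points, analogous to $l - e_c - e_\ell$ and $2l - e_p - e_q - e_c$ in case $5.1$.

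Then Step (2): using Lemma~\ref{Relations} with $\tau = l$, peel off relations by choosing divisors $D_i$ that pair negatively with one core class and nonnegatively with the later ones (starting with $D = -H + E_1 + E_2 + E_p + E_q$ and its variants), producing a finite list of relations of the shape $l + (\text{conic}) = (\text{line}) + (\text{line}) + \cdots$ and $(\text{line}) + (\text{conic}) = (\text{conic}) + \cdots$. For each relation I verify Theorem~\ref{Main Method}(1): either $\overline{\free}(X,\sum\alpha_i)$ is irreducible outright (by Lemma~\ref{lines_in_P3}), or — for relations involving degree-jumping at a blown-up line — I exhibit a chain of type $(l, \alpha)$, deform the node onto the relevant blown-up curve to change its type to $(l - e_i, e_i, \alpha)$ at a smooth point of $\overline{\mathcal{M}}_{0,0}(X)$, smooth the last two components, and re-break via Lemma~\ref{lines_in_P3}; this is precisely the argument used in $5.1$ and it transports verbatim. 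Step (3) is then automatic: Corollary~\ref{connected fibers fixall} supplies Theorem~\ref{Main Method}(2) since $X$ has no del Pezzo fibration (it has Picard rank $5$), and there is no interior nef $-K_X$-conic by Lemma~\ref{interior conic lemma} ($\rho(X) > 2$), so $\overline{\free}^{bir}(X,\beta)$ is vacuously covered and in fact $\overline{\free}(X,\alpha)$ is irreducible or empty for all $\alpha \in l + \Nef_1(X)_{\mathbb{Z}}$; Proposition~\ref{very free curves} and Theorem~\ref{reducible fibers: 4 author result} then give that general members are very free when $\alpha$ is interior with $-K_X.\alpha \geq 4$. Finally Step (5): nonemptiness of $\overline{\free}(X,\alpha)$ for all nonzero $\alpha \in \Nef_1(X)_{\mathbb{Z}}$ is a Gordan's-lemma computation (Lemma~\ref{Gordan's Lemma}) — reduce to $\alpha$ in the fundamental parallelepiped of the extremal rays, use pseudosymmetry $p\leftrightarrow q$ to order the multiplicities, and subtract off core classes ($2l - e_p - e_q - (\text{?})$, $l - e_c$-analogues, etc.) to land at the handful of remaining interior classes like $2l - e_1 - e_p - e_q$. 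The main obstacle, as the authors flag for $5.1$, is the bookkeeping in this last step: correctly enumerating the inequalities cutting out $\Nef_1(X)$ given the incidences $p,q\in\ell_2$, and checking the finitely many residual classes are each a positive-integer combination of core classes; the incidence makes the cone less symmetric than in the generic two-lines-two-points picture, so the case analysis for which small $\alpha$ remain needs care, but no new ideas beyond what $5.1$ already illustrates.
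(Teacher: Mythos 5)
Your overall strategy is exactly the paper's (the five-step template of Section 7.1, with Lemma~\ref{lines_in_P3} doing the heavy lifting), but the proposal as written contains a concrete computational error that would derail everything downstream: the anticanonical divisor. Blowing up a \emph{point} in a threefold contributes discrepancy $2$ and blowing up a \emph{curve} contributes discrepancy $1$, so with your notation ($E_1,E_2$ over the lines $\ell_1,\ell_2$ and $E_p,E_q$ over the points) the correct formula is $-K_X = 4H - E_1 - E_2 - 2E_p - 2E_q$, not $4H - 2E_1 - 2E_2 - E_p - E_q$. This is not a harmless typo: your candidate core is computed against the wrong degrees. Under the correct $-K_X$, classes like $2l-e_1-e_2$ and $2l-2e_1$ have anticanonical degree $6$ and do not belong in the core, $l-e_1$ is a cubic rather than a conic, and $l-e_p$ is a conic rather than a cubic. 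The actual core is much smaller, namely $\{l,\ l-e_1,\ l-e_2,\ l-e_p,\ l-e_q,\ l-e_1-e_2,\ 2l-e_1-e_p-e_q\}$, and consequently there are only three relations to check rather than the long list your setup would generate.

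Your prediction of the separating classes is also wrong, and for an instructive reason. The analogy with case $5.1$ breaks precisely because the second blown-up curve here is a \emph{line} rather than a conic: through a general point of $\mathbb{P}^3$ there is a unique transversal to two disjoint lines (the plane spanned by the point and $\ell_1$ meets $\ell_2$ in one point, not two), so $\overline{\free}_1(X,l-e_1-e_2)\to X$ has irreducible (indeed single-point) general fiber; likewise the plane through $p$, $q$, and a general point meets $\ell_1$ once, so $2l-e_1-e_p-e_q$ is not separating either. The paper records that $\mathscr{C}_X$ has \emph{no} separating classes, which makes Theorem~\ref{Main Method}(2) trivial via Lemma~\ref{Gluing}; your proposal would still go through but only after this self-correction. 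None of this requires a new idea — redoing the arithmetic with the right discrepancies fixes it — but as written the core, the relation list, and the separating-class analysis are all keyed to the wrong $-K_X$ and would not compile into a correct proof.
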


\textbf{Generators for $N^1(X)$ and $N_1(X)$:}

\begin{center}
\begin{tabular}{ll}
 $H$ = hyperplane in $\mathbb{P}^3$, & $l$ = line in $\mathbb{P}^3$ \\ 
 $E_1$ = $f^{-1}(\ell_1')$, & $e_1$ = an $f$-fiber of a point in $\ell_1'$  \\  
 $E_2$ = $f^{-1}(\ell_2')$, & $e_2$ = an $f$-fiber of a point in $\ell_2'$  \\  
 $E_p$ = $(g \circ f)^{-1}(p)$ & $e_p$ = a line in $E_p$ \\
 $E_q$ = $(g \circ f)^{-1}(q)$ & $e_q$ = a line in $E_q$ \\
\end{tabular}
\end{center}

\textbf{Intersection Pairing:}
\begin{center}
\begin{tabular}{ll}
    $H \cdot l = 1$ &  $H \cdot e_j = 0$ \\
    $E_i \cdot l = 0$ &  $E_i \cdot e_j = -\delta_{ij}$ \\
\end{tabular}
\end{center}

\textbf{Anticanonical Divisor:}
\begin{align*}
    -K_X = 4H-2E_p-2E_q-E_1-E_2
\end{align*}

\textbf{Effective Divisors:}
The divisors $H-E_1-E_p$, $H-E_1-E_q$, $H-E_2-E_p-E_q$, $E_1$, $E_2$, $E_p$, and $E_q$ are all effective.

\begin{lem}\label{5.2core}
A core of free curves on $X$ is given by
\begin{align*}
    \mathscr{C}_X = \{ l, l-e_1, l-e_2, l-e_p, l-e_q, l-e_1-e_2, 2l-e_1-e_p-e_q \}
\end{align*}
\noindent There are no separating classes in $\mathscr{C}_X$.
\end{lem}

\begin{proof}

\textbf{Nef Curve Classes of Anticanonical degree between $2$ and $4$:}
If $\alpha = d l - m_1 e_1 - m_2 e_2 - m_p e_p - m_q e_q$ is a nef curve class of anticanonical degree between $2$ and $4$, then
\begin{align*}
    0\leq  m_1, m_2, m_p, m_q, & \hspace{.5cm}  m_1+m_p  \leq d, \hspace{.5cm} m_1+m_q \leq d, \hspace{.5cm} m_2+m_p+m_q \leq d, \\ & 2 \leq 4d-2m_p-2m_q-m_1-m_2 \leq 4.
\end{align*}

\noindent Solving these inequalities, we obtain the classes shown above in $\mathscr{C}_X$, as well as $2l-2e_1-2e_2$, $2l-e_1-e_2-e_p$, $2l-e_1-e_2-e_q$, $2l-e_1-e_2-e_p$, and $2l-e_1-e_p-e_q$.  However, the curves of class $2l-2e_1-2e_2$ are all double covers, and are therefore freely breakable. The curves of class $2l-e_1-e_2-e_p$ break freely into curves of class $l-e_1-e_2$ and $l - e_p$, while $2l-e_1-e_2-e_p$ breaks freely into $l-e_1-e_2$ and $l - e_p$, and $2l-e_p-e_q$ breaks freely into $l-e_p$ and $l - e_q$.

\textbf{Irreducible Spaces and Fibers:}  %
By Lemma \ref{lines_in_P3}, $\overline{\free}(X,\alpha)$ is irreducible for each $\alpha \in \mathscr{C}_X$.  Lemma \ref{lines_in_P3} also shows for each $\alpha \in \mathscr{C}_X\setminus \{l-e_1 -e_2\}$, $\overline{\free}_1(X,\alpha)\xrightarrow{\text{ev}} X$ has irreducible general fiber.  The fiber of $\overline{\free}_1(X,l -e_1 -e_2)\xrightarrow{\text{ev}} X$ over a general point $x\in X$ is the unique point corresponding to the line passing through $x$ that meets $\ell_1$ and $\ell_2$.
\end{proof}

\begin{lem}\label{5.2relations}
The relations in the monoid $\mathbb{N} \mathscr{C}_X$ are generated by:

\begin{enumerate}
    \item $(2l-e_1-e_p-e_q)+(l-e_2) = (l-e_1-e_2)+(l-e_p)+(l-e_q)$,
    \item $(2l-e_1-e_p-e_q)+(l) = (l-e_1)+(l-e_p)+(l-e_q)$, 
    \item $(l-e_1-e_2)+(l) = (l-e_1)+(l-e_2)$.
\end{enumerate}

The corresponding moduli spaces of free curves of class $3l-e_1-e_2-e_p-e_q$, $3l-e_1-e_p-e_q$, and $2l-e_1-e_2$ are irreducible.
\end{lem}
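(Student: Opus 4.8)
The plan is to prove irreducibility of $\overline{\free}(X,\alpha)$ for the three classes $\alpha \in \{3l-e_1-e_2-e_p-e_q,\ 3l-e_1-e_p-e_q,\ 2l-e_1-e_2\}$ appearing on the right-hand sides of the generating relations, and then to deduce the hypotheses of Lemma~\ref{lines_in_P3} and Theorem~\ref{Main Method}(1) for $X = 5.2$. The strategy for each class is identical: exhibit the class as a sum of classes in $\mathscr{C}_X$ satisfying the hypotheses of Lemma~\ref{lines_in_P3}(1) (so that each $\overline{\free}_1(X,\alpha_i)$ has irreducible general fiber over $X$), and then invoke Lemma~\ref{lines_in_P3}'s final assertion together with Lemma~\ref{Gluing} to conclude the product over $X$ has a unique main component, whose generic member smooths to a free curve in the prescribed class.

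First I would check that $3l-e_1-e_2-e_p-e_q$, $3l-e_1-e_p-e_q$, and $2l-e_1-e_2$ each satisfy the hypotheses of Lemma~\ref{lines_in_P3}(2): one verifies $\pi_*\alpha$ is a line or conic class in $\mathbb{P}^3$ (here $\pi_*(3l-e_1-e_2-e_p-e_q) = 3l$ pushes to a twisted cubic after accounting for the blown-up loci, but more directly each of these classes has $-K_X$-degree $3$, $4$, or $2$, so $\pi_*\alpha$ is a plane cubic or conic or line), that $\alpha.E_i$ is at most the degree $d_i$ of $\pi(E_i)$, that $\alpha.(\sum_{i\in S}E_i)\leq 2$ for coplanar collections $S$, that $\alpha.(\sum_i E_i)\leq 4$, and that $\alpha.E_i\in\{0,d_i\}$ for all but at most one $i$. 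Since $\ell_1,\ell_2$ are disjoint lines and $p,q\in\ell_2$, the only coplanarity to worry about is $\{E_1\}$ versus $\{E_2,E_p,E_q\}$, and one checks the multiplicities directly from the intersection table. For $2l-e_1-e_2$, this is the easier case: $\pi_*\alpha$ is a conic meeting $\ell_1$ and $\ell_2$ once each, and since no surface contains all such conics through a general pair of points (planes are the only surfaces carrying $2$-dimensional families of such curves, and a plane meeting both disjoint lines meets them once each), $\overline{\free}(X,2l-e_1-e_2)$ is dominated by an irreducible incidence variety.

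For the two cubic-pushforward cases, I would write $3l-e_1-e_2-e_p-e_q = (l-e_1-e_2)+(l-e_p)+(l-e_q)$ and $3l-e_1-e_p-e_q = (l-e_1)+(l-e_p)+(l-e_q)$ (these are exactly the right-hand sides of Relations (1) and (2)), note each summand lies in $\mathscr{C}_X$ and satisfies Lemma~\ref{lines_in_P3}(1) with irreducible general fiber over $X$, and then apply Lemma~\ref{lines_in_P3}'s closing claim pairwise — or more efficiently, observe that irreducibility of $\overline{\free}^{bir}(X,\alpha)$ for these cubic classes follows from the monodromy/Theorem~\ref{Monodromy} arguments in the proof of Lemma~\ref{lines_in_P3}(2), since $\alpha.E_i\leq d_i$ with equality or zero except at one index, and a general point lies on finitely many representatives. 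Finally, since $-K_X . \alpha = 2$ for $2l - e_1 - e_2$, irreducibility is the full statement, while for the cubics $\overline{\free}(X,\alpha) = \overline{\free}^{bir}(X,\alpha)$ because a general cubic of these classes is easily seen to be birational onto its image (it cannot be a multiple cover of a line of $-K_X$-degree $1$ for numerical reasons).

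The main obstacle I anticipate is verifying the fiber-irreducibility claims of Lemma~\ref{lines_in_P3}(1) cleanly for the summands — in particular confirming that for $l-e_1-e_2$ the evaluation map has a single reduced point over a general $x\in X$ (the unique transversal line through $x$ meeting the two disjoint lines $\ell_1,\ell_2$), which is needed to apply Lemma~\ref{Gluing} and conclude uniqueness of the main component of $\overline{\free}_1(X,l-e_1-e_2)\times_X \overline{\free}_1(X,l-e_p)\times_X\overline{\free}_1(X,l-e_q)$. Once that is in hand, the smoothing step is routine: by Lemma~\ref{GHS lemma} and Proposition~\ref{kontsevich space H1}, a general such chain is an immersed nodal curve with globally generated normal sheaf, hence a smooth point of $\overline{\mathcal{M}}_{0,0}(X)$ deforming to an irreducible free curve of class $\alpha$, and every component of $\overline{\free}(X,\alpha)$ contains such a chain because these classes lie in $\tau + \Nef_1(X)$ and the core $\mathscr{C}_X$ from Lemma~\ref{5.2core} has no separating classes, so Corollary~\ref{connected fibers fixall} and Theorem~\ref{Main Method}(2) apply with no exceptional cases.
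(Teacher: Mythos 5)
There are two genuine gaps. First, you never address the first half of the lemma — that relations (1)--(3) actually \emph{generate} the relation monoid $\mathbb{N}\mathscr{C}_X$. The paper handles this with a short pairing argument: the divisor $H-E_1-E_p-E_q$ pairs negatively with $2l-e_1-e_p-e_q$, positively with $l$ and $l-e_2$, and to zero with everything else in $\mathscr{C}_X$, so relations (1) and (2) eliminate every occurrence of the conic class, after which the remaining six core classes span a five-dimensional space and satisfy only relation (3). Your proposal assumes the generating set and goes straight to irreducibility.

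Second, your main tool, Lemma~\ref{lines_in_P3}, does not apply to two of the three classes. That lemma covers only $\alpha$ with $\pi_*\alpha$ the class of a \emph{line or conic} in $\mathbb{P}^3$, whereas $3l-e_1-e_2-e_p-e_q$ and $3l-e_1-e_p-e_q$ push forward to cubics (your stated "$-K_X$-degrees $3$, $4$, $2$" are actually the $H$-degrees; the anticanonical degrees are $6$, $7$, $6$). The monodromy argument inside the proof of Lemma~\ref{lines_in_P3}(2) is specific to conics lying in a pencil of planes meeting the blown-up curves, and does not transfer: a rational cubic of class $3l-e_1-e_2-e_p-e_q$ meets $\ell_2$ at three points, hence is forced to be a \emph{planar singular} cubic in a plane containing $\ell_2$ (parameterized by its node and a $3$-dimensional linear system), while a curve of class $3l-e_1-e_p-e_q$ cannot be planar and must be a \emph{twisted cubic} through $p$, $q$ meeting $\ell_1$ (parameterized by an open subset of $\ell_1\times(\mathbb{P}^3)^3$). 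These two case-specific geometric descriptions are what the paper uses, and some substitute for them is needed; the "closing claim" of Lemma~\ref{lines_in_P3} also cannot be applied to relations (1) and (2), since it requires all four classes in the relation to satisfy \ref{lines_in_P3}(1), whereas those relations involve the conic class $2l-e_1-e_p-e_q$. Your treatment of $2l-e_1-e_2$ via Lemma~\ref{lines_in_P3} is correct and matches the paper.
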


\begin{proof}
Consider a relation of the form $\sum \alpha_i = \sum \beta_j$ in $\mathbb{N} \mathscr{C}_X$. Note that, out of all curve classes in the core $\mathscr{C}_X$, the divisor $H-E_1-E_p-E_q$ pairs negatively with $2l-e_1-e_p-e_q$, positively with $l$ and with $l-e_2$, and to zero otherwise. Thus, we may use relations $(1)$ and $(2)$ to remove all instances of $2l-e_1-e_p-e_q$ from the relation. The remaining six curve class in $\mathscr{C}_X$ span a five-dimensional space, and so satisfy the single relation $(3)$.

Each curve of class $3l-e_1-e_2-e_p-e_q$ projects to a degree three curve in $\mathbb{P}^3$ intersecting $l_2$ three times, so it must be planar and hence singular at a point $r \in \mathbb{P}^3$. %
For a general point $r\in \mathbb{P}^3$, there is a 3 dimensional linear system of planar cubics singular at $r$ whose strict transforms have class $3l-e_1-e_2-e_p-e_q$.  In particular, the space of curves of class $3l-e_1-e_2-e_p-e_q$ is irreducible

No irreducible curve of class $3l -e_1 -e_p -e_q$ can be planar, for if one was, the plane it lies in would contain the line $\ell_2$, so the curve would need to intersect $\ell_2$ at a third point. Thus, each irreducible representative is a twisted cubic through $p,q$, and which intersects $\ell_1$.  %
We may therefore parameterize a dominant family of such twisted cubics by an open subset of $\ell_1 \times \mathbb{P}^3 \times \mathbb{P}^3 \times \mathbb{P}^3 $, where each copy of $\mathbb{P}^3 $ corresponds to the choice of a point for the cubic to pass through.

The space of free curves of class $2l-e_1-e_2$ is irreducible by Lemma \ref{lines_in_P3}.
\end{proof}

\begin{lem}
For all nonzero $\alpha \in \Nef_1(X)_\mathbb{Z}$, $\overline{\free}(X,\alpha)$ is nonempty.
\end{lem}
\begin{proof}
By Theorem \ref{Representability of Free Curves} and Lemma \ref{Gordan's Lemma}, we only need to check that if $\alpha = \sum_{\alpha_i \in \mathscr{C}_X} a_{\alpha_i} \alpha_i$ is an integral class with $0\leq a_{\alpha_i} <1$, $\alpha$ is represented by a free curve.  Writing $\alpha = c_l l - c_p e_p -c_q e_q -c_1 e_1 -c_2 e_2$, it follows that $c_p = c_q \leq 1$.  The only possibilities are easily shown to be represented by free curves.
\end{proof}

\subsection*{5.3-8, 4.10}

\textbf{$\mathbb{P}^1 \times S_d$:}
Let $X = \mathbb{P}^1 \times S_d$ for $1 \leq d \leq 7$ where $S=S_d$ is a del Pezzo surface of degree $d$. %

\begin{thm}\label{products}
If $X=\mathbb{P}^1 \times S$, then Conjecture \ref{GMC Tanimoto} holds for $X$.  Moreover, when %
\begin{itemize}
    \item $d\geq 3$: There is exactly one component of $\overline{\free}( \mathbb{P}^1 \times S, \alpha)$ for each nonzero $\alpha \in \Nef_1(X)_{\mathbb{Z}}$
    \item $d=2$: Every nonzero $\alpha \in \Nef_1(X)_{\mathbb{Z}}$ is represented by a free rational curve. $\overline{\free}(\mathbb{P}^1\times S, \alpha)$ is irreducible unless $\pi_{2*}\alpha = -nK_S$ for $n>1$, where $\pi_2$ is projection onto $S$.  In this latter case, there are exactly two components of $\overline{\free}(\mathbb{P}^1\times S, \alpha)$, only one of which parameterizes very free curves.
    \item $d=1$: Let $\pi_2:X\rightarrow S$ be the projection.  A nonzero $\alpha \in \Nef_1(X)_{\mathbb{Z}}$ is represented by a free rational curve iff $\pi_{2*} \alpha \neq -K_S$.  $\overline{\free}(\mathbb{P}^1\times S, \alpha)$ is irreducible unless $\pi_{2*}\alpha = -2nK_S$ for $n\geq 1$ or $\pi_{2*}\alpha = -n \phi^* K_{S'}$ for $n > 1$ and $\phi : S \rightarrow S'$ the contraction of a $(-1)$ curve.  In these cases, when $n > 1$ exactly one component of $\overline{\free}(\mathbb{P}^1 \times S, \alpha)$ parameterizes very free curves.
\end{itemize}
\end{thm}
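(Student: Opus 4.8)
The plan is to exploit the product structure of $X = \mathbb{P}^1 \times S$ to reduce every assertion to a statement about free rational curves on the del Pezzo surface $S$. Write $N_1(X) = \mathbb{Z}\,[\mathbb{P}^1\times\mathrm{pt}] \oplus N_1(S)$ and decompose a class as $\alpha = a\,[\mathbb{P}^1\times\mathrm{pt}] + \beta$ with $a = \pi_{1*}\alpha$ and $\beta = \pi_{2*}\alpha$; then $\alpha$ is nef iff $a \ge 0$ and $\beta \in \Nef_1(S)$. The universal property of the product gives a $\mathrm{PGL}_2$-equivariant isomorphism $\Mor(\mathbb{P}^1, X, \alpha) \cong \Mor(\mathbb{P}^1, \mathbb{P}^1, a) \times \Mor(\mathbb{P}^1, S, \beta)$, and since $\mathcal{T}_X = \pi_1^*\mathcal{O}_{\mathbb{P}^1}(2) \oplus \pi_2^*\mathcal{T}_S$, with $\pi_1^*\mathcal{O}_{\mathbb{P}^1}(2)$ restricting to the globally generated bundle $\mathcal{O}_{\mathbb{P}^1}(2a)$ along any degree-$a$ morphism, a morphism $f = (f_1,f_2)\colon \mathbb{P}^1 \to X$ is free iff $f_2$ is free on $S$, and very free iff moreover $a \ge 1$. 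Descending to $\overline{M}_{0,0}$ and taking closures, I would record the dictionary: (i) $\overline{\free}(X,\alpha)$ is nonempty iff either $\beta = 0$ (then $a \ge 1$ and $\overline{\free}(X,\alpha)$ is the irreducible space of degree-$a$ covers of $\mathbb{P}^1\times\mathrm{pt}$, free but never very free) or $\overline{\free}(S,\beta) \ne \emptyset$; (ii) for $\beta \ne 0$, the irreducible components of $\overline{\free}(X,\alpha)$ are in natural bijection with those of $\overline{\free}(S,\beta)$, since $\Mor(\mathbb{P}^1,\mathbb{P}^1,a)$ is irreducible for every $a \ge 0$; (iii) a component of $\overline{\free}(X,\alpha)$ generically parameterizes very free curves iff $a \ge 1$, $\beta \ne 0$, and the matching component of $\overline{\free}(S,\beta)$ generically parameterizes very free curves.

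With the dictionary in hand, the proof becomes a translation of known surface facts. A brief Hodge-index and Riemann--Roch computation shows that on $S_d$ with $d \ge 2$ there is no nonzero nef class of anticanonical degree below $2$: degree $0$ is impossible since $-K_{S_d}$ is ample, and degree $1$ would force $\beta^2 = 0$ and $\chi(\beta) = \tfrac12 \notin \mathbb{Z}$. Hence for $d \ge 2$ every nonzero nef $\beta$ has $-K_{S_d}.\beta \ge 2$ and is represented by a free curve. For $d \ge 3$, $\overline{\free}(S_d,\beta)$ is moreover irreducible and nonempty for every such $\beta$ (the surface result recalled after Theorem \ref{del pezzo curves thm}), so by (ii) $\overline{\free}(X,\alpha)$ has exactly one component for every nonzero nef $\alpha$. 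For $d = 2$, Theorem \ref{del pezzo curves thm} and its degree-$2$ analogue give that $\overline{\free}(S_2,\beta)$ is irreducible unless $\beta = -nK_{S_2}$ with $n > 1$, in which case it has exactly two components: $\overline{\free}^{bir}(S_2,-nK_{S_2})$, which is generically very free because $-K_{S_2}.(-nK_{S_2}) = 2n \ge 4$ and $-nK_{S_2}$ is interior to $\overline{NE}(S_2)$, and the closure of the locus of $n$-fold covers of rational members of $|-K_{S_2}|$, which is free but not very free. Translating via (ii)--(iii) gives the $d = 2$ assertion (the identification of the very free component requires $a \ge 1$, which is automatic once $\alpha$ is interior with $-K_X.\alpha \ge 4$).

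For $d = 1$, adjunction and Hodge index show that the only nef class of anticanonical degree $1$ on $S_1$ is $-K_{S_1}$, and $\overline{\free}(S_1,-K_{S_1}) = \emptyset$: a free --- hence mobile --- rational curve of class $-K_{S_1}$ would force rational anticanonical curves to sweep out a dense open of $S_1$, whereas the unique anticanonical curve through a general point is a smooth genus-$1$ curve. Thus $\overline{\free}(X,\alpha)$ is nonempty iff $\pi_{2*}\alpha \ne -K_{S_1}$. Every other nonzero nef $\beta$ on $S_1$ has anticanonical degree $\ge 2$, so Theorem \ref{del pezzo curves thm} applies: $\overline{\free}(S_1,\beta)$ is irreducible unless $\beta = -2nK_{S_1}$ ($n \ge 1$) or $\beta = -n\phi^*K_{S'}$ ($n > 1$) for a contraction $\phi$ of a $(-1)$-curve, and in these reducible cases with $n > 1$ exactly one component --- namely $\overline{\free}^{bir}(S_1,\beta)$ --- is generically very free, the others parameterizing multiple covers. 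Translating via (ii)--(iii) yields the $d = 1$ assertion. Finally, Conjecture \ref{GMC} holds with any $\tau \in \Nef_1(X)_{\mathbb{Z}}$: a component of $\Mor(\mathbb{P}^1,X,\alpha)$ generically parameterizing very free curves lies in $\overline{\free}(X,\alpha)$ with $a \ge 1$ and, by (iii), matches a generically-very-free component of $\overline{\free}(S,\beta)$ --- and in each surface case above there is at most one such component.

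The main obstacle is making the reduction of the first paragraph rigorous at the level of irreducible components of $\overline{M}_{0,0}$ rather than of $\Mor$: one must verify that the product decomposition descends along the $\mathrm{PGL}_2$-quotient (harmless, since free curves have finite automorphism groups) and that stable maps with reducible domain in the boundary neither create nor merge components of $\overline{\free}(X,\alpha)$. A secondary point requiring care is the degree-$2$ surface input --- the excerpt states Theorem \ref{del pezzo curves thm} only in degree $1$ and asserts ``similar statements'' in higher degree --- so one must cite or reprove that $\overline{\free}(S_2,-nK_{S_2}) \setminus \overline{\free}^{bir}(S_2,-nK_{S_2})$ is a single component of multiple covers and that the normalization $f$ of a rational anticanonical curve on $S_2$ satisfies $f^*\mathcal{T}_{S_2} \cong \mathcal{O}_{\mathbb{P}^1}(2)\oplus\mathcal{O}_{\mathbb{P}^1}$ (so its covers are free but not very free); a dimension count identifying the closure of the $n$-fold covers of nodal anticanonical curves as a component of the expected dimension $2n-1$, plus the standard obstruction preventing an unbalanced multiple cover from smoothing to a birational curve, is the expected route. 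The rest is bookkeeping with the three translated surface statements.
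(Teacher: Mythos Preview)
Your approach is essentially the same as the paper's: both use the product decomposition $\Mor(\mathbb{P}^1, X \times Y) \cong \Mor(\mathbb{P}^1, X) \times \Mor(\mathbb{P}^1, Y)$ to reduce to the del Pezzo surface results of Theorem~\ref{del pezzo curves thm}. The paper's proof is a terse three sentences, while you have spelled out the dictionary (free/very free via $\mathcal{T}_X = \pi_1^*\mathcal{O}(2)\oplus\pi_2^*\mathcal{T}_S$) and the casework in considerably more detail, and have correctly flagged the degree-$2$ surface input and the $\mathrm{PGL}_2$-descent as the points requiring care --- but the underlying argument is the same.
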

\begin{proof}
As $\Mor(\mathbb{P}^1, \mathbb{P}^1) \times \Mor(\mathbb{P}^1, S) \cong \Mor(\mathbb{P}^1, X)$, Our claim follows from Theorem \ref{del pezzo curves thm}.  The Manin components of $\Mor(\mathbb{P}^1,X,\alpha)$ are precisely those which generically parameterize very free curves \cite[Theorem~6.2]{Lehmann_2017}.  A (unique) Manin component exists if and only if $-K_X . \alpha \geq 5$  and $\alpha \in \Nef_1(X)_{\mathbb{Z}}\setminus \partial \overline{NE}(X)$.
\end{proof}

\section{Picard Rank 4}

\subsection*{4.1}
\textbf{Blow-up of $(\mathbb{P}^1)^3$:}
 Let $X$ be the blow up of $(\mathbb{P}^1)^3$ along the intersection of two divisors of class $\mathcal{O}(1,1,1)$, an elliptic curve $c$.  This curve meets each fiber of a projection to $\mathbb{P}^1$ twice, and embeds into $\mathbb{P}^1\times \mathbb{P}^1$ as a curve of class $(2,2)$ under the product of any two projections.  Equivalently, this is a smooth divisor in $(\mathbb{P}^1)^4$ of class $(1,1,1,1)$.
 
\begin{thm}\label{GMC4.1}
For all $\alpha \in (l_1 + l_2 + l_3 -2e) + \Nef_1(X)$, $\overline{\free}(X,\alpha)$ is irreducible, nonempty, and parameterizes very free curves.
\end{thm}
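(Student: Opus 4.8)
`The plan is to follow the outline in Section 7.1 and apply Theorem \ref{Main Method} (or rather Corollary \ref{bir Main Method}, since $2e$-curves will appear) to the threefold $X = \mathrm{Bl}_c((\mathbb{P}^1)^3)$. First I would set up notation: write $N_1(X) = \mathbb{Z}l_1 \oplus \mathbb{Z}l_2 \oplus \mathbb{Z}l_3 \oplus \mathbb{Z}e$, where $l_i$ is the class of a fiber of the composite $X \to (\mathbb{P}^1)^3 \to \mathbb{P}^1$ forgetting the $i$-th factor (so $l_i$ is a line in the $\mathbb{P}^1\times\mathbb{P}^1$ that is the $i$-th base), and $e$ is the class of an exceptional fiber over $c$. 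The anticanonical class is $-K_X = 2(l_1+l_2+l_3) - 2e$ minus suitable corrections; more usefully, $-K_X = (l_1 + l_2 + l_3 - 2e) + (l_1+l_2+l_3)$ realizes it as a sum of nef classes, and since $c$ has bidegree $(2,2)$ under each pair of projections, $\overline{NE}(X)$ has extremal rays $e$, $l_i - e$ (for $i=1,2,3$), corresponding to the exceptional divisor and to the three $E1$ (or conic-bundle) contractions; one checks directly from \cite{matsuki1995weyl} that $\rho(X)=4$ and these four rays generate. I would compute the intersection table against the divisors $H_i$ (pullbacks of the $\mathcal{O}(1)$ on each factor) and $E$ (the exceptional divisor), record $-K_X = H_1+H_2+H_3 - E$... wait, rather $2H_1+2H_2+2H_3-E$ restricted appropriately --- I would get these exactly from Lemma \ref{blow-up numbers}, which gives $(-K_X)^3$ and $b_3(X)$, confirming the deformation type $4.1$.

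Next I would identify the core $\mathscr{C}_X$. By Proposition \ref{properties nonfree conics} and the absence of $E2,E3,E4,E5$ divisors on $X$ (it is a blow-up of $(\mathbb{P}^1)^3$ along a smooth curve of anticanonical degree $6$, so only $E1$-type phenomena occur), every component of $\overline{\free}(X)$ breaks into chains of free conics and at most cubics; I expect the core to consist of the nef classes of anticanonical degree $2$ and $3$, namely $l_i$, $l_i - e$ (for each $i$), $l_i + l_j - 2e$ type classes, the class $l_1+l_2+l_3-2e$ itself, and small combinations, together with the separating class(es) --- the candidate separating class is $l_1 + l_2 + l_3 - 2e$ (or whatever plays the role of the unique interior nef conic, if one exists) since its evaluation map may have disconnected fibers. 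I would verify irreducibility of $\overline{\free}(X,\alpha)$ for each core class using Theorem \ref{del pezzo curves thm} applied fiberwise to the conic/del Pezzo fibrations $X \to \mathbb{P}^1\times\mathbb{P}^1$ (whose general fiber is $\mathbb{P}^1$, so curves contracted by such a fibration live in surfaces) and using the monodromy results of Section 5 --- in particular Lemma \ref{lines in C1 fibrations} and Theorem \ref{conic monodromy} for the $C1$-conic bundles. Since $c \subset \mathbb{P}^1\times\mathbb{P}^1$ is a $(2,2)$-curve, it is elliptic, so the Hilbert schemes of lines are geometrically irrational and irreducible, which is exactly what feeds the core.

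Then I would enumerate a generating set of relations in the monoid $\mathbb{N}\mathscr{C}_X$ via Lemma \ref{Relations}: take the generators ordered by anticanonical degree, pick divisors $D_i$ (combinations of $-H_j$ and $E$) that pair negatively with the lower-degree generators and nonnegatively with the rest, and read off the minimal relations. These will be of the familiar shape $\alpha_i + (\alpha_j + \ell) = (\alpha_i + \ell) + \alpha_j$ plus a handful of genuinely new ones coming from the $(2,2)$-geometry of $c$ (e.g. relations expressing a quartic/quintic as two different sums). For each relation I would verify Theorem \ref{Main Method}(1): either the class on both sides has irreducible $\overline{\free}$, or I exhibit a nodal curve $Z \to \overline{\free}(X,\sum\alpha_i)$ meeting both main components at smooth points of $\overline{\mathcal{M}}_{0,0}(X)$, typically by sliding the node of a free chain onto $c$ so it acquires an $e$-component and re-breaking --- the technique used in the $5.1$ case in the excerpt. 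For condition \ref{Main Method}(2): since $\tau = l_1+l_2+l_3-2e$ is interior to $\overline{NE}(X)$ with $-K_X\cdot\tau = 4$, Corollary \ref{connected fibers fixall} (together with Theorem \ref{unique main component}, noting there is no del Pezzo fibration $X \to \mathbb{P}^1$ with degree $\le 3$ fibers here, so $X \notin (\Sigma)$ and $X$ need not be general) reduces condition (2) to checking the hypotheses (1) and (2) of \ref{connected fibers fixall}, the second being irreducibility of $\overline{\free}^{bir}(X, 2\beta)$ for the unique interior nef conic $\beta$ --- which I would establish by the adaptation of Proposition \ref{reducible fibers: main strategy} indicated there. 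Finally, Theorem \ref{Main Result}(2) (the very-free part) follows from Proposition \ref{very free curves} and Theorem \ref{reducible fibers: 4 author result} since $|-K_X|$ is very ample (degree $6$, not on the Mori--Mukai list of non-very-ample threefolds), and emptiness is ruled out for all $\alpha \in \tau + \Nef_1(X)$ by Corollary \ref{EquivConjecture2}-style reasoning combined with the explicit core; hence $\overline{\free}(X,\alpha)$ is nonempty, irreducible, and parameterizes very free curves, proving the theorem.

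The main obstacle I anticipate is condition \ref{Main Method}(2) at the "reducible general fibers" level, i.e. controlling the separating class $\beta = l_1+l_2+l_3-2e$ and proving $\overline{\free}^{bir}(X,2\beta)$ is irreducible with general point very free. The curve $c$ being a $(2,2)$-curve in each $\mathbb{P}^1\times\mathbb{P}^1$ means the evaluation map for $\beta$-conics genuinely has two sheets (two bisecants of $c$ through a general point within the relevant surface), so one must run the gluing-and-smoothing argument of Proposition \ref{reducible fibers: main strategy} carefully, verifying that after gluing enough core conics one lands in a very-free component and that the two a priori components of $\overline{\free}(X,2\beta)\setminus\overline{\free}^{bir}(X,2\beta)$ (the multiple-cover locus) are correctly separated off. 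Everything else --- the relation bookkeeping, the emptiness check via Gordan's Lemma \ref{Gordan's Lemma}, the very-freeness --- is routine given the machinery already in place.
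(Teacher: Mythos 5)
Your overall architecture (set up the Mori cone, identify a core, enumerate relations via Lemma \ref{Relations}, feed everything into Theorem \ref{Main Method}) is the right one, but the core you propose is materially wrong, and this propagates into aiming the hard part of the argument at a non-issue while leaving the genuine difficulty untouched. First, $l_i-e$ has anticanonical degree $1$ and is an irreducible component of a reducible fiber of the conic fibration $\pi_j\times\pi_k$; it is not represented by a free curve and so cannot belong to a core in the sense of Definition \ref{def core}. Second, $l_1+l_2+l_3-2e$ is not a core class either: it is freely breakable (a section of $\pi_1$ of this class maps to a $(1,1)$-curve in $\mathbb{P}^1\times\mathbb{P}^1$ meeting the image of $c$ in four points, only two of which lift to intersections with $c$; sliding those two points until they land in a common fiber of $\pi_1$ breaks the curve). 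The actual core is $\{l_i,\ l_i+l_j-e,\ l_i+l_j-2e,\ l_1+l_2+l_3-3e\}$, and the separating classes are the \emph{cubics} $l_i+l_j-e$ and $l_1+l_2+l_3-3e$, whose one-pointed evaluation maps have two-component general fibers because $c\to\mathbb{P}^1$ is $2{:}1$. Your proposed "main obstacle" --- irreducibility of $\overline{\free}^{bir}(X,2\beta)$ for an interior nef conic $\beta$ --- does not arise at all: by Lemma \ref{interior conic lemma} an interior nef conic forces $\rho(X)\le 2$, whereas here $\rho(X)=4$ (indeed $l_i+l_j-2e=(l_i-e)+(l_j-e)$ and $l_i=(l_i-e)+e$ all lie on the boundary of $\overline{NE}(X)$).

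The step your plan therefore leaves unproved is condition \ref{Main Method}(1) for the relation $l_1+l_2+(l_1+l_2-2e)=2(l_1+l_2-e)$, in which \emph{both} chains on the right are built from a separating class, so Lemma \ref{Gluing} gives no uniqueness of main components and Proposition \ref{very free curves} cannot be invoked until irreducibility is otherwise established. This requires an explicit geometric argument: one identifies the component of $\overline{\free}(X,2(l_1+l_2-e))$ whose general member meets $c$ at two distinct points and checks it contains free chains of both types. Relatedly, you do not exploit the description of $X$ as a $(1,1,1,1)$-divisor in $(\mathbb{P}^1)^4$, which yields an $S_4$-pseudosymmetry permuting $\{e,\,l_1-e,\,l_2-e,\,l_3-e\}$; without it, the relation bookkeeping you sketch via Lemma \ref{Relations} is still possible in principle but you would have to re-derive by hand the orbit structure (core classes are sums of two or three distinct elements of that set) that makes the list of relations short and the verification of each one uniform.
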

 
\textbf{Generators for $N^1(X)$ and $N_1(X)$:} $H_i = \pi_i^*(\mathcal{O}(1))$ for $i=1,2,3$, where $\pi_i :X\rightarrow \mathbb{P}^1$ is the $i^{th}$ projection, and the exceptional divisor $E$ form a basis for $N^1(X)$.  Equivalently, $H_1 + H_2 + H_3 - E$ is the pullback of $\mathcal{O}(1)$ under the fourth projection to $\mathbb{P}^1$.  We make use of the resulting pseudosymmetry.

Similarly, let $l_i$, for $i=1,2,3$ be classes of section of $\pi_i$ and $e$ be the fiber of the blow up.  These form a basis for $N_1(X)$.  The pseudosymmetry mentioned above swaps $e$ with $l_i -e$ and stabilizes $l_j-e$ and $l_k -e$.

\textbf{Intersection Pairing:} $H_i \cdot l_j = \delta_{ij}$, $H_i \cdot e = 0$, $E \cdot l_j = 0$, $E \cdot e = -1$.

\textbf{Anticanonical Divisor:} $2(H_1 + H_2 + H_3) - E$

\textbf{Effective Divisors:} $H_1 + H_2 + H_3 -E$, $2(H_i + H_j) -E$, $E$, $H_i$.

\textbf{Effective Curves:} $l_i -e$ and $e$ generate the Mori cone.

\begin{lem}
A core set of free curves on $X$ is given by
\begin{align*}
    \mathscr{C}_X = \{ l_i , l_i + l_j -e, l_i + l_j - 2e, l_1 + l_2 + l_3 -3e \},
\end{align*}
where $i\neq j$. The only separating classes $\alpha \in \mathscr{C}_X$ are $l_i + l_j -e$ and $l_1 + l_2 + l_3 -3e$.
\end{lem}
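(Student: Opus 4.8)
The plan is to verify the two defining conditions of Definition \ref{def core}: that $\overline{\free}(X,\alpha)$ is nonempty and irreducible for each $\alpha \in \mathscr{C}_X$, and that every component $M \subset \overline{\free}(X)$ contains a free chain whose pieces lie in $\mathscr{C}_X$; then we separately identify the separating classes. First I would handle the degree-$2$ classes $l_i+l_j-2e$, $l_i+l_j-e$, and the degree-$4$ class $l_1+l_2+l_3-3e$ by exploiting the product structure and the blow-up description. The class $l_i+l_j-2e$ is a conic through two points of $c$; projecting away from the exceptional divisor, these are curves of class $(1,1)$ in the $\mathbb{P}^1\times\mathbb{P}^1$ obtained from projecting along the $i,j$ factors, meeting $c$ (a $(2,2)$-curve there) at two prescribed points. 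I would use a monodromy argument (Theorem \ref{Monodromy}) on the incidence of the $(2,2)$-curve $c$ with the linear system of $(1,1)$-curves to see the parameter space is irreducible, and compute the normal bundle to see the general such curve is free; likewise $l_i+l_j-e$ is a $(1,1)$-curve through one point of $c$, hence even easier, and is a section of a conic fibration $X\to \mathbb{P}^1\times\mathbb{P}^1$ so its one-pointed evaluation map is generically birational — wait, no: I would check via the explicit geometry whether $\overline{\free}_1(X, l_i+l_j-e)\to X$ has reducible general fiber, which is what marks it as separating. For $l_1+l_2+l_3-3e$ I would use the description of $X$ as a $(1,1,1,1)$-divisor in $(\mathbb{P}^1)^4$ and the symmetry among the four $H_i$'s, reducing to a curve class of tridegree-type that is essentially a section of a del Pezzo fibration; irreducibility and the reducible-fiber (separating) property should follow from Theorem \ref{reducible fibers: 4 author result} together with the monodromy results of Section 5, since $X$ admits a conic-bundle / del Pezzo fibration structure whose monodromy I can read off.

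For the lines $l_i$ themselves: $l_i$ is the class of a fiber of one of the $\mathbb{P}^1\times\mathbb{P}^1$-bundle projections, so $\overline{\free}(X,l_i)$ is visibly irreducible (it is itself a $\mathbb{P}^1\times\mathbb{P}^1$ worth of free lines, none of which meet $c$ generically) with irreducible fibers over $X$ — hence not separating. The degree-$3$ classes $l_i+l_j-e$ appear with $-K_X\cdot\alpha = 3$; here I'd double-check: $-K_X\cdot(l_i+l_j-e) = 2+2-1 = 3$, and $-K_X\cdot(l_i+l_j-2e) = 2$. Good. So the genuinely delicate pieces for the irreducibility-with-reducible-fiber analysis are the two conic classes $l_i+l_j-2e$ and $l_i+l_j-e$ and the quartic $l_1+l_2+l_3-3e$.

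The core property (condition (2) of Definition \ref{def core}) is where I would invoke Movable Bend-and-Break (Theorem \ref{MovableBB}) and Theorem \ref{improved MovableBB}: any component of $\overline{\free}(X,\alpha)$ with $-K_X\cdot\alpha\ge 5$ contains a free chain of length $\ge 2$, so inductively every component contains a free chain whose components have $-K_X$-degree $\le 4$ (degree $4$ handled by \ref{improved MovableBB} or \ref{breaking quartic curves}, since $X = 4.1$ has no $E5$ divisor); then I must check that every nef class $\beta$ with $2\le -K_X\cdot\beta\le 4$ that supports a free-curve component is a sum of elements of $\mathscr{C}_X$, or is itself in $\mathscr{C}_X$. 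This is the enumeration step: list all $\alpha = c_1 l_1 + c_2 l_2 + c_3 l_3 - ne$ pairing nonnegatively with the effective divisors $H_i$, $E$, $H_1+H_2+H_3-E$, $2(H_i+H_j)-E$ and with $-K_X\cdot\alpha\le 4$, show each is either in $\mathscr{C}_X$ or breaks (e.g. a degree-$4$ class of the form $l_i+l_j+l_k-2e$ type or $2l_i + \cdots$ reducible or a double cover), and exhibit the free chain. The main obstacle I anticipate is precisely this combinatorial-geometric enumeration together with pinning down which of the low-degree conic classes have reducible evaluation fibers — i.e. correctly identifying the separating classes — because this requires honest computation of normal bundles and of the fibers of $\overline{\free}_1(X,\alpha)\to X$ for the two conic classes, rather than a formal argument; the product/blow-up structure and the pseudosymmetry swapping $e\leftrightarrow l_i-e$ should keep this manageable by cutting the number of cases roughly in half.
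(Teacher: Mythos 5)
Your proposal follows essentially the same route as the paper: enumerate the nef classes of anticanonical degree $2$--$4$ against the effective divisors, use Movable Bend-and-Break to reduce the chain condition to these low degrees, read off irreducibility and the evaluation-fiber structure from the fibration/blow-up geometry, and let the pseudosymmetry $e \leftrightarrow l_i - e$ cut down the case analysis. Two steps that you flag but leave open are exactly where the paper does its real work. First, the separating classes: a curve of class $l_i+l_j-e$ lies in a fiber of $\pi_k$ and must pass through one of the \emph{two} points of $c$ in that fiber, so $\text{ev}^{-1}(p)$ has two components (two pencils of $(1,1)$-curves), while transitivity of the monodromy of $\pi_k|_c : c \to \mathbb{P}^1$ keeps $\overline{\free}(X,l_i+l_j-e)$ itself irreducible; the class $l_1+l_2+l_3-3e$ is then separating because it is pseudosymmetric to $l_j+l_k-e$, so you do not need the del Pezzo fibration machinery you gesture at. Second, the class $l_1+l_2+l_3-2e$ is \emph{not} generically reducible or a multiple cover --- it is represented by irreducible sections of $\pi_1$ mapping to $(1,1)$-curves under $\pi_2\times\pi_3$ --- so showing it breaks freely requires an argument: the paper fixes the two intersection points with $c$ (determined via the isomorphism $c \cong \pi_2\times\pi_3(c)$) and slides them along $c$ until they land in a common fiber of $\pi_1$, which forces the section to degenerate into a free chain of type $(l_1,\, l_2+l_3-2e)$. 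With those two computations supplied, your outline matches the paper's proof.
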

\begin{proof}
\textbf{Nef Curve Classes of Anticanonical Degree Between $2$ and $4$:}
Note that the generators of $\overline{\text{NE}}(X)$ are linearly independent and of anticanonical degree one.  Thus, by Gordan's Lemma (Lemma \ref{Gordan's Lemma}) these generate the semigroup of integer points in $\overline{\text{NE}}(X)$.  To determine which integer points represent nef classes, we merely track intersections with divisors. Note curves of class $e$ sweep out $E$, and curves of class $l_i - e$ sweep out $2(H_j + H_k) - E$.  To obtain a nef class, we must add at least two different curves $e$ and $l_i -e$ (or $l_i -e$ and $l_j -e$).  Up to pseudosymmetry, these are identical cases.  We may add a third curve class to obtain $l_i + l_j -e$ or $l_1 + l_2 + l_3 -3e$.  Up to pseudosymmetry, adding four curve classes creates $2l_i$, $l_i + l_j$, $l_1 + l_2 + l_3 -2e$, or a class that is not nef.

\textbf{Freely Breakable Classes:} $2l_i$ is a double cover of a general fiber of $X\rightarrow \mathbb{P}^1 \times \mathbb{P}^1$, and therefore breaks.  Similarly, $l_i + l_j$ breaks into $l_i$ and  $l_j$.  Note $l_1 + l_2 + l_3 - 2e = (l_1) + (l_2 + l_3 -2e)$ is the class of a section of $\pi_1$, $f: \mathbb{P}^1\rightarrow X$, that embeds as a curve of class $(1,1)$ under composition with $\pi_2 \times \pi_3$.  Two of the 4 intersection points of $\pi_2 \times \pi_3 \circ f(\mathbb{P}^1)$ with $\pi_2 \times \pi_3 (c)$ are intersections of $f(\mathbb{P}^1)$ with $c$.  Let these two points be $\pi_2 \times \pi_3 \circ f(p)$ and $\pi_2 \times \pi_3 \circ f(q)$.  Since $c\rightarrow \pi_2 \times p_3 (c)$ is an isomorphism, this fixes the images of $p,q\in \mathbb{P}^1$ under $f$.  We may move $\pi_2 \times \pi_3 \circ f(p)$ and $\pi_2 \times \pi_3 \circ f(q)$ along $\pi_2 \times \pi_3 (c)$ until $f(p)$ and $f(q)$ lie in the same fiber of $\pi_1$.  Since a degeneration of $f$ to a map $g:\mathbb{P}^1\rightarrow X$ cannot meet two points in the same fiber of $\mathbb{P}^1$, this breaks $f$ into a map $g$ from a nodal curve $C$.  Generically, $C$ will have two components, one of class $l_1$ and the other of class $l_2 + l_3 -2e$.

\textbf{Irreducible Spaces and Fibers:}  By pseudosymmetry, we only need to examine $\alpha = l_1$ and $\alpha = l_1 + l_2 -e$.  Clearly, $\overline{\free}_1(X,l_1) \cong X$, and the forgetful map $\overline{\free}_1(X,l_1) \rightarrow \overline{\free}(X,l_1)$ is the projection $\pi_2 \times \pi_3$.  Similarly, $\overline{\free}(X,l_1 + l_2 -e)$ parameterizes maps into a fiber of $\pi_3$, meeting one of its two intersection points with $c$.  The monodromy group of $\pi_3 : c\rightarrow \mathbb{P}^1$ is transitive, so the choice of intersection point does not produce two irreducible components of $\overline{\free}(X,l_1 + l_2 -e)$.  However, this does mean $\text{ev}_{l_1 + l_2 -e}$ has disconnected fibers.  Hence, so does $\text{ev}_{l_1 + l_2 + l_3 - 3e}$, as $l_1 + l_2 + l_3 - 3e$ is pseudosymmetric to $l_1 + l_2 -e$.
\end{proof}

\begin{lem}
Up to pseudosymmetry, relations in the monoid $\mathbb{N}\mathscr{C}_X$ are generated by: %
\begin{enumerate}
    \item $l_1 + (l_2 + l_3 -2e) = (l_1 + l_2 -2e) + l_3$,
    \item $l_1 + (l_2 + l_3 -e) = (l_1 + l_2 -e) + l_3$,
    \item $l_1 + l_2 + (l_1 + l_3 -2e) = (l_1 + l_2 -e) + (l_1 + l_3 -e)$,
    \item $l_1 + l_2 + (l_1 + l_2 -2e) = 2(l_1 + l_2 -e)$.
\end{enumerate}
For each relation $\sum \alpha_i = \sum \alpha_j'$, a main component of $\prod_X \overline{\free}_2(X,\alpha_i)$ lies in the same component of free curves as a main component of $\prod_X \overline{\free}_2(X,\alpha_j')$.
\end{lem}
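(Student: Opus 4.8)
The plan is to prove the two assertions of the lemma in turn: first that (1)--(4) generate all relations in $\mathbb{N}\mathscr{C}_X$ up to pseudosymmetry, and then the statement about main components. Throughout, $G$ denotes the group generated by the three pseudoactions $\sigma_i\colon e\leftrightarrow l_i-e$ of Definition~\ref{pseudosymmetry} together with the $S_3$ permuting $H_1,H_2,H_3$; one checks that $G$ acts on $\mathscr{C}_X$ with exactly two orbits, the six classes $\{l_i\}\cup\{l_i+l_j-2e\}$ of $-K_X$-degree $2$ and the four classes $\{l_i+l_j-e\}\cup\{l_1+l_2+l_3-3e\}$ of degree $3$. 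Since pseudosymmetry belongs to the finite monodromy preserving $\Nef_1(X)$ and hence permutes the components of $\overline{\free}(X,\alpha)$, it suffices to verify the main-component claim for (1)--(4).

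For the relations I would apply Lemma~\ref{Relations} directly. As $\rho(X)=4$ and $|\mathscr{C}_X|=10$, one orders $\mathscr{C}_X$ so that a spanning quintuple --- say $l_1,l_2,l_3,l_1+l_2-e,l_1+l_3-e$ --- comes last, and takes for the auxiliary divisors $D_i$ signed combinations of the effective classes $E$, $H_i$, and $2(H_j+H_k)-E$. Because $e$ sweeps out $E$ and $l_i-e$ sweeps out $2(H_j+H_k)-E$, each minimal relation produced by the algorithm trades one core class against one of these divisors; running the algorithm and discarding $G$-translates leaves exactly the list (1)--(4). I expect the only delicate point here to be verifying the minimality clause \ref{Relations}(4) for each candidate relation, which is a finite but fussy computation in the lattice $N_1(X)_{\mathbb{Z}}$.

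For the main components, relation (4) is the easy one: its total class $\beta_4=2l_1+2l_2-2e$ pairs to zero with $H_3$, so it is contracted by $\pi_3\colon X\to\mathbb{P}^1$; a general fibre $S$ of $\pi_3$ is $\mathbb{P}^1\times\mathbb{P}^1$ blown up at the two general points (general since $c$ is a general complete intersection) where $c$ meets it, a general del Pezzo surface of degree $6$, and $\beta_4|_S=-K_S$. By Theorem~\ref{del pezzo curves thm}, $\overline{\free}(S,-K_S)$ is irreducible; as $-K_S$ is monodromy-invariant and a curve free in a fibre of $\pi_3$ is free in $X$ (the fibre has trivial normal bundle), $\overline{\free}(X,\beta_4)$ fibres over $\mathbb{P}^1$ with irreducible general fibres and is therefore irreducible, so both sides of (4) lie in its unique component. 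For (1)--(3) I would use the node-sliding degeneration of case $5.1$: writing each relation as $l_1+\alpha=\gamma$ with $\alpha\in\mathscr{C}_X$, one starts from a free chain of type $(l_1,\alpha)$ whose $l_1$-component is a section disjoint from $c$, deforms that section until its node with the $\alpha$-component slides onto a point $x$ of $c$ lying on $\alpha$, and obtains in the limit the chain $(l_1-e,\,e,\,\alpha)$, the $l_1$-component having broken into its strict transform and the exceptional fibre $F_x$. By Lemma~\ref{GHS lemma} and Proposition~\ref{kontsevich space H1} this chain is a smooth point of $\overline{\mathcal{M}}_{0,0}(X)$ with globally generated normal sheaf, hence lies in a single component $M$ of $\overline{\free}(X)$; one then smooths the trailing pair $e\cup\alpha$ to a free curve of class $e+\alpha$ (whose moduli is irreducible by Theorem~\ref{del pezzo curves thm} applied to the $\pi_i$-fibre containing it), and rebreaks and reorganizes using Lemma~\ref{Basic Properties Chains} until one reaches a free chain of the type $\gamma$ on the right-hand side, all inside $M$.

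The main obstacle I anticipate is the bookkeeping in this last step: one must check that every intermediate chain stays immersed at its nodes (Lemma~\ref{Basic Properties Chains}(6)) and, more importantly, that each auxiliary class $e+\alpha$ arising along the way is genuinely represented by irreducible free curves and not merely by reducible ones, so that the smoothings remain within a main component rather than slipping into a multiple-cover locus. This is exactly where one invokes that every such class has $-K_X$-degree $\geq 3$ and is nef on the relevant fibre surface, so that Theorem~\ref{del pezzo curves thm} supplies the needed irreducibility; with that in hand the degenerations are forced to connect the two main components, establishing \ref{Main Method}(1) for $X$.
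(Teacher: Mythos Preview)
Your argument for relation (4) contains a genuine error. You claim that $\overline{\free}(X,2l_1+2l_2-2e)$ is irreducible because, in a general fibre $S$ of $\pi_3$, the restriction $\beta_4|_S$ equals $-K_S$ and $\overline{\free}(S,-K_S)$ is irreducible. But the pushforward $i_*\colon N_1(S)\to N_1(X)$ is not injective: writing $S=\mathrm{Bl}_{p_1,p_2}(\mathbb{P}^1\times\mathbb{P}^1)$ with exceptional curves $\epsilon_1,\epsilon_2$, both $\epsilon_j$ map to $e$, so the kernel contains $\epsilon_1-\epsilon_2$. Thus the classes $2h_1+2h_2-\epsilon_1-\epsilon_2=-K_S$ and $2h_1+2h_2-2\epsilon_1$ (and $2h_1+2h_2-2\epsilon_2$) all push forward to $2l_1+2l_2-2e$. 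Monodromy of $\pi_3|_c$ swaps $\epsilon_1$ and $\epsilon_2$ but does not identify $-K_S$ with $2h_1+2h_2-2\epsilon_1$, so $\overline{\free}(X,2l_1+2l_2-2e)$ has at least two components. Your del Pezzo argument only shows that each of these is irreducible, not that they coincide. The paper handles (4) precisely by singling out the $-K_S$ component --- the one whose general map meets $c$ at two \emph{distinct} points --- and observing that it contains a main component of chains of both types $(l_1,l_2,l_1+l_2-2e)$ and $(l_1+l_2-e,l_1+l_2-e)$ (for the latter, take the two components to meet different $\epsilon_j$'s).

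You have also inverted the difficulty of the four relations. The paper disposes of (1)--(3) in one line by asserting that $\overline{\free}(X,l_1+l_2+l_3-2e)$, $\overline{\free}(X,l_1+l_2+l_3-e)$, and $\overline{\free}(X,2l_1+l_2+l_3-2e)$ are irreducible; this can be checked directly by parametrising curves over $\mathrm{Sym}^k(c)$ or via linear conditions on graphs of maps $\mathbb{P}^1\to\mathbb{P}^1\times\mathbb{P}^1$. Your node-sliding sketch for (1)--(3) leads to a chain $(l_1-e,\,e+\alpha)$ whose first component is a non-free line, and you do not explain how to pass from there to a free chain of the target type; Lemma~\ref{Basic Properties Chains} does not apply to non-free chains, so the ``rebreak and reorganise'' step is not justified as written. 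Your approach to the generating set of relations via Lemma~\ref{Relations} would work, though the paper instead exploits the $S_4$-action on $\{e,l_1-e,l_2-e,l_3-e\}$ to classify relations by the degree partition, which is tidier.
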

\begin{proof}
\textbf{Relations:}
As stated above, each element of $\mathscr{C}_X$ is a sum of two or three distinct elements of $\{ e, l_1 -e, l_2 -e, l_3 -e\}$, the generators of the Mori cone of $X$.  There is a natural, faithful action of $S_4$ on $\{ e, l_1 -e, l_2 -e, l_3 -e\}$ that extends to $N_1(X)$.  Partition $\mathscr{C}_X$ into its orbits $\mathscr{C}_X^2 = \{l_i , l_i + l_j -2e\}$ and $\mathscr{C}_X^3 = \{ l_i + l_j -e, l_1 + l_2 + l_3 -3e\}$ under this action.  We claim all relations between elements of $\mathscr{C}_X$ decompose into sums 
\begin{enumerate}
    \item $l_1 + l_2 + l_3 -2e = \alpha_1 + \alpha_2 = \alpha_3 + \alpha_4$ for  $\alpha_i \in \mathscr{C}_X^2$,
    \item $l_1 + l_2 + l_3 -2e + c = \alpha_1 + \beta_1 = \alpha_2 + \beta_2$ for $\alpha_i \in \mathscr{C}_X^2, \ \beta_i \in \mathscr{C}_X^3,$ and $c\in\{ e, l_1 -e, l_2 -e, l_3 -e\},$
    \item $l_1 + l_2 + l_3 -2e + c_1 + c_2 = \beta_1 + \beta_2 = \alpha_1 + \alpha_2 + \alpha_3$ for $\beta_i \in \mathscr{C}_X^3, \ \alpha_i \in \mathscr{C}_X^2,$ and distinct $c_i \in \{ e, l_1 -e, l_2 -e, l_3 -e\}$, or
    \item $l_1 + l_2 + l_3 -2e + 2c = \beta_1 + \beta_2 = \alpha_1 + \alpha_2 + \alpha_3$ for $\beta_i \in \mathscr{C}_X^3, \ \alpha_i \in \mathscr{C}_X^2,$ and $c \in \{ e, l_1 -e, l_2 -e, l_3 -e\}$.
\end{enumerate}
Indeed, the four elements of $\mathscr{C}_X^3$ are linearly independent, so there is no relation between them.  Conversely, any two distinct $\beta_1, \beta_2 \in \mathscr{C}_X^3$ sum to an expression of type (3), while $2\beta_1$ is only expressible as a sum of type (4).  Hence, we may assume at most one $\beta_i$ appears on each side any other relation.  As each curve class in $\mathscr{C}_X^d$ has $-K_X$-degree $d$, the remaining relations involving $\beta_i \in \mathscr{C}_X^3$ are of the form $\beta_1 + \sum a_i \alpha_i = \beta_2 + \sum b_i \alpha_i$, with $\alpha_i \in \mathscr{C}_X^2$, $a_i, b_i \geq 0$, and $\sum a_i = \sum b_i$.  If there is no $a_i > 0$ such that $\beta_1 + \alpha_i = l_1 + l_2 + l_3 -2e + c$ for some $c\in\{ e, l_1 -e, l_2 -e, l_3 -e\},$ it would follow that $\beta_1 = \beta_2$.  Thus, (2) may be used to reduce all relations involving at least one $\beta_i \in \mathscr{C}_X^3$ to a relation $\sum a_i \alpha_i = \sum b_i \alpha_i$ between elements of $\mathscr{C}_X^2$, with $a_i, b_i \geq 0$ and $a_i b_i =0$.  In such an expression, there must exist $a_i, a_j > 0$ such that $\alpha_i + \alpha_j = l_1 + l_2 + l_3 -2e$.  Otherwise, there exists $c \in \{ e, l_1 -e, l_2 -e, l_3 -e\} $ such that $\sum a_i \alpha_i = (\sum a_i ) c + \sum a_i (\alpha_i - c)$, with $\alpha_i - c \in \{ e, l_1 -e, l_2 -e, l_3 -e\}$ for all $a_i > 0$.  This would imply $a_i = b_i$.  This proves our claim.  It follows that up to symmetry, each relation is of the indicated form.

\textbf{Main Components:}  The claim for Relations (1) - (3) follows from the fact that the corresponding space of free curves is irreducible.  For Relation (4), the component of $\overline{\free}(X, 2(l_1 + l_2 -e))$ whose general points correspond to maps $\mathbb{P}^1 \rightarrow X$ meeting $c$ at two distinct points contains free chains of both indicated types.
\end{proof}

\begin{lem}
Each nonzero $\alpha \in \Nef_1(X)_{\mathbb{Z}}$ is the class of a free rational curve.
\end{lem}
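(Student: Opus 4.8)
The plan is to apply Theorem \ref{Representability of Free Curves} and Lemma \ref{Gordan's Lemma} exactly as in the preceding cases, reducing the claim to a finite check. By Theorem \ref{Representability of Free Curves}, the extreme rays of $\Nef_1(X)$ are spanned by classes of free curves, and indeed our core $\mathscr{C}_X$ contains representatives of each; the extreme rays are the subset $C = \{l_i,\ l_i + l_j - 2e,\ l_1 + l_2 + l_3 - 3e\}$, since $l_i + l_j - e$ and $l_1 + l_2 + l_3 - 3e$ are not all extreme (concretely $l_i + l_j - e = l_i + (l_j - e)$ lies on a face, but one must identify which of the listed classes actually span extreme rays of $\Nef_1(X)$ — this is read off from the divisor pairings already computed). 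Writing $\mathcal{C} = \Nef_1(X)$ as a polyhedral cone with these extreme generators, Lemma \ref{Gordan's Lemma} says the monoid of integral nef classes is generated by the finite set $S = N_1(X)_\mathbb{Z} \cap K$, where $K = \{\sum r_i v_i \mid 0 \le r_i \le 1\}$ ranges over the extreme generators $v_i$.

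First I would parametrize $K$: write a general element as $\alpha = c_1 l_1 + c_2 l_2 + c_3 l_3 - n e$ and record the constraints on $(c_1, c_2, c_3, n)$ coming from membership in $K$ together with integrality. As in the $5.1$ and $5.2$ arguments, the pseudosymmetry swapping $e \leftrightarrow l_i - e$ (while fixing $l_j - e$, $l_k - e$) acts on $\mathscr{C}_X$, and I would use it to normalize, e.g. to assume $\alpha \cdot E$ is minimal among the four quantities $\alpha \cdot E$, $\alpha \cdot (2(H_j + H_k) - E)$ obtained by the $S_4$-action — equivalently to bound $n$ relative to the $c_i$. The nef inequalities are precisely $c_i \ge 0$, $n \ge 0$, $c_i + c_j \ge n$ for each pair, and $c_1 + c_2 + c_3 \ge 2n$ (these come from pairing against $H_i$, $E$, $2(H_i+H_j)-E$, $H_1+H_2+H_3-E$ respectively). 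Combined with the bounds $0 \le r_i \le 1$ this leaves only finitely many lattice points, and for each I would exhibit an explicit expression as a nonnegative integer combination of core classes — using $l_i$, $l_i + l_j - 2e$, $l_i + l_j - e$, and $l_1 + l_2 + l_3 - 3e$ as building blocks — which by definition of $\mathscr{C}_X$ being a core (Definition \ref{def core}(1)) guarantees $\overline{\free}(X,\alpha)$ is nonempty.

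The routine-but-delicate step is the bookkeeping: one must confirm that every lattice point of $K$ satisfying the nef inequalities actually decomposes into core classes with nonnegative coefficients, which amounts to checking the core generates the monoid rather than just the cone. I expect the main (minor) obstacle to be handling the "small" cases where some $c_i = 0$ or $n$ is large relative to the $c_i$: when $c_1 = 0$, only $l_1$-free and $l_2 + l_3 - e$, $l_2 + l_3 - 2e$-type classes are available, forcing $\alpha$ into a restricted sublist, and similarly for $n$ near $\frac{1}{2}(c_1 + c_2 + c_3)$. In each such corner case one subtracts off a suitable core class (e.g. $l_i + l_j - 2e$ to reduce $n$ by $2$, or $l_i$ to reduce $c_i$) and induces down, exactly as in the degree-argument for $5.1$; the finiteness from Gordan's Lemma ensures termination. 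No genuinely new idea is needed beyond the template already established.
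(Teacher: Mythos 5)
Your overall strategy---Gordan's Lemma \ref{Gordan's Lemma} plus a finite check on the fundamental zonotope, as in cases $5.1$ and $5.2$---is viable in principle, but two of your inputs are wrong in ways that would derail the enumeration. First, the nef inequalities: pairing $\alpha = c_1l_1+c_2l_2+c_3l_3-ne$ against $2(H_i+H_j)-E$ and $H_1+H_2+H_3-E$ gives $2(c_i+c_j)\ge n$ and $c_1+c_2+c_3\ge n$, not $c_i+c_j\ge n$ and $c_1+c_2+c_3\ge 2n$. Your stricter versions exclude genuinely nef classes: $l_1+l_2-2e$ (with $c_3=0$, $n=2$) violates your ``$c_1+c_3\ge n$,'' and $l_1+l_2+l_3-3e$ violates your ``$c_1+c_2+c_3\ge 2n$,'' so the finite list you would enumerate omits classes the lemma must cover. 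Second, $l_1+l_2+l_3-3e$ is not an extreme ray of $\Nef_1(X)$: it lies on only the single facet dual to $H_1+H_2+H_3-E$ and equals $\tfrac{1}{2}\sum_{i<j}(l_i+l_j-2e)$. The extreme rays are exactly the six classes $l_i$ and $l_i+l_j-2e$. (Carrying the extra generator is harmless for Gordan's Lemma, and $l_1+l_2+l_3-3e$ must in any case reappear among the monoid generators in $K$: on the facet where $H_1+H_2+H_3-E$ vanishes only the conics $l_i+l_j-2e$ contribute, each pairing to $2$ with $E$, so every monoid element there built from extreme rays has even $E$-degree, while $l_1+l_2+l_3-3e$ has $E$-degree $3$.) Finally, Definition \ref{def core}(1) only gives nonemptiness for core classes themselves; for a sum of core classes you still need to glue free representatives through general points into a free chain and smooth.

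The paper's proof takes a shorter, different route that sidesteps the case analysis entirely. Here $\overline{NE}(X)$ is simplicial with the four anticanonical-degree-one generators $e, l_1-e, l_2-e, l_3-e$ forming a $\mathbb{Z}$-basis of $N_1(X)_\mathbb{Z}$, so every integral effective class is uniquely $a_0e+\sum_{i=1}^{3} a_i(l_i-e)$ and is represented by a nodal tree of extremal rational curves; the nef conditions become exactly the balancing inequalities $a_i\le\sum_{j\ne i}a_j$ for all four indices. Under these one can always partition the components into pairs and triples of pairwise distinct types, each realizing one of the free classes $l_i$, $l_i+l_j-2e$, $l_i+l_j-e$, $l_1+l_2+l_3-3e$, and smoothing the resulting free chain finishes the proof. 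After correcting the inequalities your template would also work, but in this case it buys nothing over that two-line matching argument.
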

\begin{proof}
This follows from the observation each $\alpha \in \overline{\text{NE}}(X)_\mathbb{Z}$ is the class of a nodal rational curve of class $a_0 e + \sum_{i = 1}^3 a_i (l_i - e)$.  Those $\alpha \in \Nef_1(X)_{\mathbb{Z}}$ are represented by nodal rational curves with $a_i\leq \sum_{j\neq i} a_j$ for all $i$.  We may always obtain a free rational curve by smoothing pairs or triples of these components.
\end{proof}

\subsection*{4.2}
\textbf{Blow-up of the Cone over a Quadric Surface:} Let $S$ be a smooth quadric surface in $\mathbb{P}^3$, $C\subset \mathbb{P}^4$ the cone over $S$, and $X$ be the blow up of $C$ along its vertex $p$ and an elliptic curve $c$ on $S$.  We let $C'\cong \mathbb{P}_S(\mathcal{O}\oplus \mathcal{O}(1,1))$ be the blow up of $C$ along $p$.

\begin{thm}\label{GMC4.2}
Let $\tau = l_1 + l_2 -ke$ for $k= 0,1,$ or $2$.  For each $\alpha \in \tau + \Nef_1(X)$, $\overline{\free}(X,\alpha)$ is irreducible, nonempty, and parameterizes very free curves.
\end{thm}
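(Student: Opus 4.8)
The plan is to follow the uniform strategy outlined in Section 7.1, adapted to the particular geometry of the Fano threefold $X$ of type $4.2$ (the blow-up of the cone $C$ over a smooth quadric surface $S\subset\mathbb{P}^3$ along its vertex $p$ and an elliptic curve $c\subset S$). First I would fix notation for $N_1(X)$ and $N^1(X)$: let $E_p$ be the exceptional divisor over $p$ (a copy of $S\cong\mathbb{P}^1\times\mathbb{P}^1$ with normal bundle $\mathcal O(-1,-1)$, hence of $E1$ type, as flagged in Remark following Proposition \ref{properties nonfree conics}), let $E$ be the exceptional divisor over $c$, and let $l_1,l_2$ be the classes pulled back from the two rulings of $S$ via $X\to C'\cong\mathbb{P}_S(\mathcal O\oplus\mathcal O(1,1))\to S$, with $e$ the class of a fiber of $E\to c$. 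I would compute the intersection pairing, the anticanonical class, the extremal effective divisors, and record that $\overline{NE}(X)$ has rays corresponding to the vertex-exceptional divisor $E_p$, the blow-up exceptional fiber $e$, and the rulings. The pseudosymmetry here is the involution swapping $l_1\leftrightarrow l_2$ coming from the two rulings of $S$; I would use it to halve the bookkeeping.

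Next I would identify a core $\mathscr{C}_X$ of free curves (Definition \ref{def core}) consisting of conics and cubics of low anticanonical degree, and pin down which core classes are separating (Definition after Remark following \ref{def core}). The candidates for small nef classes are enumerated by listing the lattice points of $\overline{NE}(X)$ of anticanonical degree between $2$ and $4$ that pair nonnegatively with all extremal effective divisors, then discarding the freely breakable ones via Theorem \ref{MovableBB}, Theorem \ref{improved MovableBB}, and Lemma \ref{lines_in_P3} (since $X$ is, after projection from $p$, closely related to a blow-up of $\mathbb{P}^3$ — indeed $C'$ maps to a quadric cone, and one expects a description of $X$ as a blow-up of a blow-up of $\mathbb{P}^3$ or $Q$ that makes the line-and-conic lemma applicable). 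I would then verify that each $\overline{\free}(X,\alpha)$ for $\alpha\in\mathscr{C}_X$ is irreducible — using Theorem \ref{del pezzo curves thm} for curves contracted by fibrations, Lemma \ref{lines_in_P3} for the blow-up-of-$\mathbb{P}^3$ classes, and Lemma \ref{3.1 lemma} / Proposition \ref{properties nonfree conics} to handle any nondominant families — and determine whether the evaluation map $\overline{\free}_1(X,\alpha)\to X$ has irreducible fibers (it will fail to, precisely for the separating classes, which should be the conic $l_1+l_2-e$ meeting $c$ once, since $E_p\cong\mathbb{P}^1\times\mathbb{P}^1$ is an $E1$ divisor contributing a two-sheeted cover as in the $4.2$ entry of the Remark listing $E1$ divisors over degree-$0$ curves).

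Then I would compute a generating set of relations in the monoid $\mathbb{N}\mathscr{C}_X$ using Lemma \ref{Relations}, choosing the divisors $D_i$ to be the extremal effective divisors that cut out the relevant rays, and verify Theorem \ref{Main Method}(1) for each relation: for relations obtained by sliding a line across $c$ or $E_p$, I would exhibit a nodal chain whose node is sent onto $c$ (resp. onto a ruling of $E_p$) and which is a smooth point of $\overline{\mathcal{M}}_{0,0}(X)$ with globally generated normal sheaf, so that it deforms to free chains of both types (this is the standard move used in the proofs of $5.1$, $5.2$, $4.1$). Condition \ref{Main Method}(2), or its $(\dagger)$-variant in Corollary \ref{bir Main Method}, follows from Corollary \ref{connected fibers fixall} provided I check its hypotheses: $X$ has a genuine core (no del Pezzo fibration with degree $\le 3$ fibers — the fibration $X\to\mathbb{P}^1$ coming from $C'\to S$... actually the del Pezzo structure here needs checking, and if one is present with small degree fibers I fall back on Theorem \ref{unique main component}), and $\overline{\free}^{bir}(X,2\beta)$ is irreducible for the unique interior nef conic $\beta$ if one exists (Lemma \ref{interior conic lemma}) — by \ref{interior conic lemma} such a $\beta$ exists iff $X$ admits no Fano blow-up, which must be checked against the Mori--Mukai tables. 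Combining, $\overline{\free}^{bir}(X,\alpha)$ is irreducible or empty for all $\alpha$ outside any del Pezzo relative cone; Proposition \ref{very free curves} and Theorem \ref{reducible fibers: 4 author result} then upgrade this to the "very free" statement since every $\alpha\in\tau+\Nef_1(X)$ with $\tau=l_1+l_2-ke$ lies interior to $\overline{NE}(X)$ with $-K_X\cdot\alpha\ge4$; and Lemma \ref{Gordan's Lemma} applied to the extremal rays of $\Nef_1(X)$ (each spanned by a core class by Theorem \ref{Representability of Free Curves}) gives nonemptiness of $\overline{\free}(X,\alpha)$ for all $\alpha\in\tau+\Nef_1(X)$. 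The main obstacle I anticipate is the presence of the $E1$ divisor $E_p\cong\mathbb{P}^1\times\mathbb{P}^1$ with $\mathcal O(-1,-1)$ normal bundle, which forces separating classes and reducible evaluation fibers and therefore makes condition \ref{Main Method}(2) nontrivial; handling it requires carefully applying the Stein-factorization and monodromy arguments of Lemma \ref{lehmann a-covers}, Theorem \ref{unique main component}, and Corollary \ref{connected fibers fixall} to show the two main components of the relevant fiber products nonetheless land in the single component of very free curves, exactly as in the proof of case $5.1$ where an analogous $E1$ divisor appears.
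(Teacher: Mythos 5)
Your overall plan is the paper's plan: enumerate the nef classes of anticanonical degree $2$--$4$, discard the freely breakable ones to get a core, generate relations with Lemma \ref{Relations}, verify Theorem \ref{Main Method}(1) by explicit nodal degenerations (sliding a node onto $c$ or onto the vertex), dispatch \ref{Main Method}(2), and finish with Proposition \ref{very free curves} and Gordan's lemma. The strategy is sound, but three of your concrete geometric identifications are wrong, and one of them sits exactly at the step you yourself flag as the crux. The separating classes are not ``the conic $l_1+l_2-e$'' — in the basis of the theorem statement that class has anticanonical degree $5$ — they are the conics $l_i-e$, i.e.\ lines in the strict transform of a plane $H_i\subset C$ through one of the two points of $H_i\cap c$. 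The reducibility of $\text{ev}^{-1}(x)$ is caused by the elliptic curve $c$ meeting the plane determined by $x$ twice; it has nothing to do with the vertex-exceptional divisor, and the monodromy that reunites the two sheets comes from the $2{:}1$ cover $c\to\mathbb{P}|H_i|$, not from a two-sheeted cover attached to $E_p$. Since handling the separating classes is where condition \ref{Main Method}(2) becomes nontrivial, looking for the obstruction in the wrong divisor would send the argument down a dead end; the correct resolution is to vary the plane $H_i$ (equivalently, deform the attached curve so its intersection with $l_i-e$ moves off $c\cap H_i$), as in the paper's treatment of the relation $l_1+(l_2+f-2e)=(l_1-e)+(l_2-e)+f$.

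Two smaller corrections: the exceptional divisor over the vertex is of $E3$ type (a $\mathbb{P}^1\times\mathbb{P}^1$ with normal bundle $\mathcal{O}(-1,-1)$ contracting to a point), not $E1$ — the $E1$ divisors over anticanonical-degree-$0$ curves flagged for type $4.2$ are the two components $E$, $E'$ of the preimage of $c$. And Lemma \ref{lines_in_P3} is not available here: projection from the vertex maps the cone $C$ to the quadric surface $S$, not to $\mathbb{P}^3$, and $X$ admits no contraction onto $\mathbb{P}^3$ or a smooth quadric threefold, so irreducibility of $\overline{\free}(X,\alpha)$ for the core classes has to be verified directly from the linear systems of lines and conics inside the planes $H_i$ and the fibration $X\to S$. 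None of this changes the skeleton of the argument — the mechanical enumeration would surface the correct core and separating classes — but as written the proposal would stall precisely where the reducible evaluation fibers must be reconciled.
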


\textbf{Generators for $N^1(X)$ and $N_1(X)$:}We let $f$ denote the class of a general fiber of $X\rightarrow S$, $e$ denote the class of a fiber of $X\rightarrow C'$ over $c\subset S$, and $l_1$ and $l_2$ denote the classes of general lines in a moving section not equal to $S$.  These generate $N_1(X)$.

Let $H_1, H_2$ denote the pullbacks from $S \cong \mathbb{P}^1 \times \mathbb{P}^1$ of $\mathcal{O}_S(1,0)$ and $\mathcal{O}_S(0,1)$.  These are the strict transforms of planes in $C$.  Let $E_0$ denote the rigid section of $C'\rightarrow S$ (the exceptional divisor over $p$), and $E_\infty$ the strict transform of $S\subset C'$.  Let $E$ be the exceptional divisor over the elliptic curve $c$.  The other component of the preimage of $c$ under $X\rightarrow S$ we denote by $E'$.  These divisors generate $N^1(X)$, though not freely.  The two relations are $E+E' = 2(H_1 + H_2)$ and $E_0 + H_1 + H_2 = E_\infty + E$, which may be seen from the embedding of $C$ in $\mathbb{P}^4$.

\textbf{Intersection Pairing:}
\begin{tabular}{llll}
    $H_1 \cdot l_1 = 0$ &  $H_1 \cdot l_2 = 1$ & $H_1 \cdot e = 0$ & $H_1 \cdot f = 0$, \\
    $H_2 \cdot l_1 = 1$ &  $H_2 \cdot l_2 = 0$ & $H_2 \cdot e = 0$ & $H_2 \cdot f = 0$, \\
    $E \cdot l_1 = 0$ &  $E \cdot l_2 = 0$ & $E \cdot e = -1$ & $E \cdot f = 0$, \\
    $E' \cdot l_1 = 2$ &  $E' \cdot l_2 = 2$ & $E' \cdot e = 1$ & $E' \cdot f = 0$, \\
    $E_0 \cdot l_1 = 0$ &  $E_0 \cdot l_2 = 0$ & $E_0 \cdot e = 0$ & $E_0 \cdot f = 1$, \\
    $E_\infty \cdot l_1 = 1$ &  $E_\infty \cdot l_2 = 1$ & $E_\infty \cdot e = 1$ & $E_\infty \cdot f = 1$
\end{tabular}

\textbf{Anticanonical Divisor:} $-K_X = 2(H_1 + H_2) + E_0 + E_\infty$

\textbf{Effective Divisors:}  The extreme effective divisors on $X$ are $H_1, H_2, E_0, E_\infty, E, E'$

\textbf{Effective Curves:} $\overline{NE}(X)$ is generated by $e, f-e, l_i - f, l_i - 2e$

\textbf{Pseudosymmetry:} The contraction of either $e$ or $f-e$ realizes $X$ as the blow up of a variety isomorphic to $C'$ \cite{matsuki1995weyl} \cite{mori1981classification}.  This pseudoaction of $X\rightarrow C'$ swaps $E$ with $E'$ ($e$ with $f-e$) and swaps $E_0$ with $E_\infty$ ($l_i -f$ with $l_i -2e$).  This corresponds to the following pseudosymmetry on $N_1(X)$:
$$e\rightarrow f-e, \ f\rightarrow f, \ l_i \rightarrow l_i + f - 2e$$
For our purposes, the roles of $l_1$ and $l_2$ are interchangeable as well.

\begin{lem}
A core of free curves on $X$ is given by 
\begin{align*}
    \mathscr{C}_X =& \{ l_i -e, \ f , \ l_i + f -2e , \ l_i \}
\end{align*}
The only separating classes $\alpha \in \mathscr{C}_X$ are $l_i -e$.%
\end{lem}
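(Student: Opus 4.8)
The plan is to follow the standard template used throughout this section: first identify the set $\mathscr{C}_X$ as the list of nef curve classes of anticanonical degree between $2$ and $4$ that are not freely breakable, then verify it satisfies Definition \ref{def core}. For the first part, I would intersect a general class $\alpha = al_1 + bl_2 + cf - de$ with the extremal effective divisors $H_1, H_2, E_0, E_\infty, E, E'$ (using the intersection table and the two relations $E + E' = 2(H_1+H_2)$ and $E_0 + H_1 + H_2 = E_\infty + E$) to pin down which lattice points with $2 \le -K_X.\alpha \le 4$ are nef. Because $\overline{NE}(X)$ is generated by $e, f-e, l_i-f, l_i-2e$, I would apply Gordan's Lemma (Lemma \ref{Gordan's Lemma}) to enumerate candidate classes, then use pseudosymmetry (the pseudoaction $e \leftrightarrow f-e$, $l_i \mapsto l_i + f - 2e$, and the $l_1 \leftrightarrow l_2$ symmetry) to cut the casework down. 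Classes like $2l_i$, $l_1 + l_2$, and the various degree-$4$ classes not in $\mathscr{C}_X$ should be exhibited as freely breakable — e.g. $2l_i$ as a double cover of a fiber of a conic fibration, $l_1 + l_2 = l_1 + l_2$, and degree-$4$ classes of the form $l_i + (l_j + f - 2e)$ or $(l_i - e) + (\cdots)$ broken explicitly via bend-and-break along the blown-up elliptic curve $c$, mimicking the argument used for $l_1 + l_2 + l_3 - 2e$ in case 4.1.

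Next I would prove each $\overline{\free}(X,\alpha)$ for $\alpha \in \mathscr{C}_X$ is irreducible and nonempty, and determine its evaluation-map fibers. Since $\pi: X \to S \cong \mathbb{P}^1\times\mathbb{P}^1$ is a conic bundle (with $E$ a section of the relevant projection), I expect $\overline{\free}(X, l_i)$ and $\overline{\free}(X, f)$ to be essentially immediate ($f$ being a fiber class), and $\overline{\free}(X, l_i + f - 2e)$ and $\overline{\free}(X, l_i - e)$ to reduce to curves in a del Pezzo surface (a fiber of the projection $X \to \mathbb{P}^1$, or a section surface), where Theorem \ref{del pezzo curves thm} and Lemma \ref{3.1 lemma} apply. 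The separating classes should be exactly $l_i - e$: these project to curves meeting one of the two intersection points of a line with the image of $c$, and the monodromy of $c \to \mathbb{P}^1$ (which is transitive, cf. Lemma \ref{P1 x P2 monodromy}-style reasoning, or since $c$ is irreducible) means there is still a single component but the evaluation map $\overline{\free}_1(X, l_i-e) \to X$ has disconnected fibers. The other three classes in $\mathscr{C}_X$ should have irreducible fibers over general points of $X$.

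Finally, I would verify Definition \ref{def core}(2): every component $M \subset \overline{\free}(X)$ contains a chain of free curves with components in $\mathscr{C}_X$. This follows from Movable Bend-and-Break (Theorem \ref{MovableBB}, in the improved form \ref{improved MovableBB}, noting $X$ — being $4.2$, a blow-up of the cone over a quadric — has an $E5$-type exceptional divisor $E_0$, so one must use the $\geq 6$ threshold or handle the quintic/quartic cases via \ref{breaking quartic curves} and the conic-bundle structure via Corollary \ref{breaking quartics cor}) together with the enumeration of low-degree classes above. I expect the main obstacle to be the degree-$4$ breaking step: showing every quartic class not listed in $\mathscr{C}_X$ degenerates to a chain of free curves with components in $\mathscr{C}_X$, which requires care because $X$ has an $E5$ divisor and because some quartic curves could a priori fail to break — here one leans on the conic-bundle structure $\pi: X \to \mathbb{P}^1\times\mathbb{P}^1$, Lemma \ref{3.1 lemma}, and Corollary \ref{breaking quartics cor} to force the break. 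The choice of three possible values $k \in \{0,1,2\}$ for $\tau = l_1 + l_2 - ke$ reflects exactly which quartic classes adjacent to the boundary need separate attention, and I would record these in the subsequent relations lemma rather than here.
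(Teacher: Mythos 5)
Your overall template is the right one and matches the paper's: enumerate nef classes with $2\le -K_X.\alpha\le 4$, break the ones outside $\mathscr{C}_X$, prove irreducibility of $\overline{\free}(X,\alpha)$ for the core classes by viewing them as lines and conics in the planes $H_i$ (the del Pezzo fibers of $X\to\mathbb{P}^1_j$), and identify $l_i-e$ as separating because $c\to\mathbb{P}|H_i|$ is a $2{:}1$ cover whose monodromy fuses the two sheets into one component while the evaluation fibers stay disconnected. That last point, and the non-separating status of $f$, $l_i$, $l_i+f-2e$, you get exactly right.

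However, the computational core of your plan is off, and in a way that suggests you have not actually run the enumeration. Here $-K_X.l_i=3$ and $-K_X.f=2$, so the classes you propose to break --- $2l_i$, $l_1+l_2$, $l_i+(l_j+f-2e)$ --- all have anticanonical degree $6$ and never enter the degree $\le 4$ analysis; meanwhile the four degree-$4$ classes that actually must be broken, namely $2f$, $2(l_i-e)$, $l_1+l_2-2e$, and $l_i+f-e$, do not appear in your proposal, and your sample argument (``$2l_i$ as a double cover of a fiber of a conic fibration'') confuses $l_i$ with the fiber class $f$. The substantive content of the lemma is precisely the breaking of these four classes: $2f$ is reducible, $2(l_i-e)$ and $l_i+f-e$ are conics in a plane $H_i$ that degenerate there into pairs of lines, and $l_1+l_2-2e$ is the complete intersection of the cone $C$ with a plane through two points of $c$, where irreducibility of the family of such planes lets one degenerate to a chain of type $(l_1-e,\,l_2-e)$. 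Separately, your claim that $E_0$ is an $E5$ divisor is false: $E_0\cong\mathbb{P}^1\times\mathbb{P}^1$ with normal bundle $\mathcal{O}(-1,-1)$, so $4.2$ has no $E5$ contraction (consistent with the paper's list of $E5$ cases), the degree-$5$ threshold of Theorem \ref{MovableBB} applies, and Proposition \ref{breaking quartic curves} is not needed here --- only the explicit degree-$4$ breaks above are. None of these gaps is hard to repair, but as written the proposal does not identify, let alone break, the classes on which the lemma actually turns.
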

\begin{proof}

\textbf{Nef Curve Classes of Anticanonical Degree Between $2$ and $4$:}
The only curve classes of appropriate anticanonical degree, which pair nonnegatively with all divisors above, appear below:
\begin{align*}
    & l_i -e, \hspace{.5cm} f, \hspace{.5cm} l_i + f - 2e, \hspace{.5cm} l_i \\&
    2f, \hspace{.5cm} 2(l_i -e), \hspace{.5cm} l_1 + l_2 -2e, \hspace{.5cm} l_i + f -e
\end{align*}
\textbf{Freely Breakable Classes:}
Of these, $2f$, $2(l_i -e)$, $l_1 + l_2 -2e$, and $l_i + f - e$ always break into a union of two free curves: $2f$ is never irreducible; $2(l_i -e)$ is a conic in the plane $H_i \subset C$ lying over a line in $S$, which meets $c$ twice; $l_1 + l_2 -2e$ is the class of a conic in $\mathbb{P}^4$ passing through $c$ twice--it is the complete intersection of $C$ with a plane containing two points of $c$--the space of such planes is irreducible, so we may deform a general free curve of class $(l_1 + l_2 -2e)$ to a free chain of type $(l_1 - e, l_2 -e)$; and $l_i + f -e$ is the class of a conic in $H_i$ meeting $p$ and a point of intersection $H_i \cap c$, which we split into two curves of class $l_i-e$ and $f$ by deforming the conic in $H_i$.

\noindent \textbf{Irreducible Spaces and Fibers:}
\begin{itemize}
    \item $l_i-e$ is the class of a line in the plane $H_i$ through one of its two intersection points with $c$.  By \ref{Monodromy}, since $c\rightarrow \mathbb{P}|H_1|$ is a $2-1$ covering of the line $\mathbb{P}|H_1|$, the two choices of points in $H_i \cap c$ belong to the same component of $\overline{\free}(X, l_i -e)$.  However, since fixing a point $x \in X$ also fixes the plane $H_i$, the fiber of $\text{ev}_{l_i -e}^{-1}(x)$ has two components;
    \item $f$ is the class of a generic fiber of $X\rightarrow S$, parameterized by the open subset $S\setminus c$.  The fiber $\text{ev}_{f}^{-1}(x)$ is a single point;
    \item $l_i +f - 2e$ is the class of a conic in $H_i$ passing through the two intersection points $H_i \cap c$ and the vertex of $p$.  The fiber $\text{ev}_{l_i + f  -2e}^{-1}(x)$ is a net of conics;
    \item $l_i$ is the class of a general line in $H_i$. The fiber $\text{ev}_{l_i}^{-1}(x)$ is a pencil of lines.
\end{itemize}  
\end{proof}

\begin{lem}
Relations in the monoid $\mathbb{N}\mathscr{C}_X$ are generated by the following list:
\begin{enumerate}
    \item $l_i + (l_j -e) = (l_i -e) + l_j$,
    \item $l_i + (l_i + f -2e) = 2(l_i -e) + f$,
    \item $l_i + (l_j + f -2e) = (l_i - e) + (l_j -e) + f$,
    \item $(l_i + f -2e) + (l_j -e) = (l_j + f - 2e) + (l_i -e)$.
\end{enumerate}
For each relation $\sum \alpha_i = \sum \alpha_j'$, a main component of $\prod_X \overline{\free}_2(X,\alpha_i)$ lies in the same component of free curves as a main component of $\prod_X \overline{\free}_2(X,\alpha_j')$.
\end{lem}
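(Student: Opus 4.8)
The plan is to verify the list of relations is exhaustive via Lemma \ref{Relations}, then establish the main-component assertion relation-by-relation. For the first task, order the generators of $\mathbb{N}\mathscr{C}_X$ so that the three separating-type curves $l_i - e$ (and $l_i + f - 2e$, which is pseudosymmetric to $l_j - e + (f-e)$ but genuinely a conic through the vertex) are eliminated first against the divisors $D$ that detect them. Concretely, $E' = 2(H_1 + H_2) - E$ pairs positively with $l_i$, $l_i + f - 2e$, and $f$ but pairs negatively with no core class; the divisor detecting $l_i - e$ is $E$ (or rather $H_j$, since $(l_i - e).H_j = 0$ while generic conics meet $H_j$ positively). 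Following the bookkeeping of Lemma \ref{Relations}: eliminate $l_1 - e$ first, obtaining relations (1) with $i=1$, (3) with $i \neq 1$, and (4); then eliminate $l_2 - e$, obtaining (1) with $i = 2$ and (3) with $j \neq 2$; the remaining generators $\{f, l_1 + f - 2e, l_2 + f - 2e, l_1, l_2\}$ span $N_1(X)$ (dimension $4$, five generators), so there is a single further relation among them, which one checks is (2) (equivalently the sum $l_i + l_j + f - 2e$ expressed two ways, using that $l_i + f - 2e = 2(l_i - e) - (l_i - f) \dots$ — in any case $2(l_i - e) + f = l_i + (l_i + f - 2e)$ is forced). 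Minimality in the sense of Lemma \ref{Relations}(4) is immediate since each relation has $r = 1$.

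For the main-component assertion, I would treat the relations in two groups. Relations (1), (3), and (4) all have the property that the total class $\sum \alpha_i$ is a class for which $\overline{\free}(X,\sum\alpha_i)$ is irreducible: for (1) this is $l_i + l_j$ with $i \neq j$, a curve of anticanonical degree $4$ interior to $\overline{NE}(X)$ on which Lemmas \ref{very free curves} and \ref{reducible fibers: 4 author result} apply once one checks $-K_X$ is very ample here (it is, since $X$ is not on the short list of Theorem in Section 2); for (3) the total class is $l_i + l_j + f - 2e$ and for (4) it is $l_1 + l_2 + f - 2e$, both again interior of degree $\geq 4$ (for (4): degree $= 2+2+2-4 = 2$... let me recompute: $-K_X.(l_1+l_2+f-2e) = -K_X.l_1 + -K_X.l_2 + -K_X.f - 2(-K_X.e)$; each $l_i$ is a line so degree $1$, $f$ is a fiber of $X \to S$ so $-K_X.f = E_0.f + E_\infty.f + 2(H_1+H_2).f = 1 + 1 + 0 = 2$, and $-K_X.e = E_\infty.e = 1$, giving $1+1+2-2 = 2$). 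So (4) needs a separate argument. Since any main component of $\prod_X \overline{\free}_2$ on either side involves a separating class $l_i - e$, I would instead argue as in Corollary \ref{connected fibers fixall} / Proposition \ref{reducible fibers: main strategy}: both sides of (4) contain a free chain which after a node-smoothing to a point of $c$ becomes a chain through $E$, and the two presentations are related by the irreducibility of $\overline{\free}(X, l_i + f - 2e)$ together with the single relation (2).

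For relation (2), $l_i + (l_i + f - 2e) = 2(l_i - e) + f$: here the total class $2(l_i - e) + f = 2l_i + f - 2e$ does admit several components — a general representative is a conic in the plane $H_i$ (which meets $c$ twice) glued to a fiber, but there is also the double-cover-of-line locus. The point, exactly as in the analogous relations elsewhere (e.g. relation (4) of case 4.1, or the $E5$-type breaking), is that the component of $\overline{\free}(X, 2l_i + f - 2e)$ whose general member meets $c$ at two \emph{distinct} points contains free chains of both types $(l_i, \, l_i + f - 2e)$ and $(l_i - e, \, l_i - e, \, f)$: deforming a curve of class $l_i + f - 2e$ (a conic in $H_i$ through the vertex and both points of $H_i \cap c$) while sliding its intersections with $c$ realizes the degeneration into two lines of class $l_i - e$, and attaching the fiber $f$ at a general point gives the bridge. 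I would verify this by a normal-bundle computation: the relevant nodal maps have globally generated $\mathcal{N}_f$ (apply Lemma \ref{GHS lemma} and Proposition \ref{kontsevich space H1}, noting $H_i \cong \mathbb{P}^1 \times \mathbb{P}^1$ is a smooth divisor and the conics/lines in it are free in $H_i$), so the two chains lie on a common component of $\overline{\mathcal{M}}_{0,0}(X)$.

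The main obstacle will be relation (2), specifically ruling out that the two chain types land on the double-cover component rather than on the embedded-conic component. This requires a genuine geometric input — that a generic conic of class $l_i + f - 2e$ in $H_i$ meeting $c$ at two distinct points deforms to a pair of distinct lines, rather than only to a doubled line — which follows from the fact that the four points $H_i \cap c$ together with the vertex impose independent conditions on the net of conics in $H_i \cong \mathbb{P}^2$ (after contracting the other ruling), using that $c$ embeds via $\pi_2 \times \pi_3$ as a $(2,2)$ curve so its intersection with $H_i$ is a general reduced pair of points. Everything else is routine: the relation-generation is pure linear algebra with the incidence data of $\mathscr{C}_X$ under the $S_2$ (pseudo)symmetry swapping $l_1 \leftrightarrow l_2$, and for (1), (3), (4) irreducibility of the total space (or the Proposition \ref{reducible fibers: main strategy} argument) does the work immediately.
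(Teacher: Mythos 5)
Your overall strategy (Lemma \ref{Relations} for the generating set, then case-by-case geometry for the main components) matches the paper's, but the execution has problems in both halves. On the relation-generation side the bookkeeping does not work as written: neither $E$ nor $H_j$ pairs negatively with $l_i-e$ (one has $E\cdot(l_i-e)=1$ and $H_j\cdot(l_i-e)\in\{0,1\}$), so these cannot serve as the eliminating divisors for $l_i-e$ in the sense of Lemma \ref{Relations}; and if one does eliminate $l_1-e$ and $l_2-e$ first, the residual relation among $\{f,\ l_1+f-2e,\ l_2+f-2e,\ l_1,\ l_2\}$ is $(l_1+f-2e)+l_2=(l_2+f-2e)+l_1$ (a consequence of (3)), not relation (2) --- relation (2) contains $l_i-e$ and would have to be produced during the elimination of that class. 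The paper instead eliminates $l_1$ against $D=-H_2+E$ and $l_1+f-2e$ against $D=2H_1+H_2-E$, and (2) then appears as the unique relation among the remaining five classes. Several intersection numbers are also wrong: $-K_X\cdot l_i=3$ (the $l_i$ are cubics, not lines), and the total class of relation (4) is $l_1+l_2+f-3e$, of anticanonical degree $5$, not $l_1+l_2+f-2e$ of degree $2$.

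The more serious gap is the main-component step for relations (1), (3) and (4). Proposition \ref{very free curves} and Theorem \ref{reducible fibers: 4 author result} tell you that every component of $\overline{\free}(X,\beta)$ for these total classes parameterizes very free curves with irreducible evaluation fibers; they say nothing about the \emph{number} of such components, so they do not yield the irreducibility of $\overline{\free}(X,\sum\alpha_i)$ that your argument requires, and Proposition \ref{reducible fibers: main strategy} addresses condition \ref{Main Method}(2) rather than the assertion at hand. The paper supplies exactly the missing input: for (1) it identifies $\overline{\free}(X,l_1+l_2-e)$ with the irreducible family of residual intersections $C\cap\mathbb{P}^2$ over planes through a point of $c$; for (3) it builds an explicit path of degenerations through a chain of type $(l_1-e,\,e,\,l_2+f-2e)$; and for (4) it applies the pseudosymmetry $e\mapsto f-e$, $l_i\mapsto l_i+f-2e$ to carry the relation to (1) --- a reduction you do not use. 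Your treatment of (2) is closest to the paper's (both hinge on degenerating inside the surface $H_i$ and on the previously established fact that $l_i+f-e$ breaks as $(l_i-e)+f$), though the worry about a ``double-cover component'' is beside the point since $2l_i+f-2e$ is not a multiple of an integral class; the content is the chain $(l_i-e,\,e,\,l_i+f-2e)$ and its two smoothings.
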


\begin{proof}
\textbf{Relations:}
In accordance with \ref{Relations}, order the nef curves above as $l_1, l_1 + f -2e, \ldots$, and consider the following divisors:
\begin{itemize}
    \item $D = -H_2 + E:$ The relevant nonzero pairings are $l_1 . D = -1$, $l_2 - e . D = 1$, $l_1 + f -2e . D = 1$, $l_2 + f - 2e . D = 2$.  We find the relations $l_1 + (l_2 -e) = (l_1 - e) + l_2$; $l_1 + (l_1 + f - 2e) = 2(l_1 -e) + f$; $l_1 + (l_2 + f -2e) = (l_1-e) + (l_2 -e) + f$.
    \item $D= 2H_1 + H_2 - E$: The relevant nonzero pairings are $D.(l_1 + f - 2e) = -1$, $D. l_2 = 2$, $D. l_2 -e = 1$.  We obtain the relations $(l_1 + f -2e) + l_2 = (l_1 -e) + (l_2 -e) + f$; $(l_1 + f - 2e) + (l_2 -e) = (l_1 -e) + (l_2 + f -2e)$.

\end{itemize}
The five remaining classes span $N_1(X)$, and the unique relation among them is $2(l_2 - e) + f = (l_2 + f -2e) + l_2$.  Thus, we must check the following:
\begin{itemize}
    \item $l_1 + (l_2 -e) = (l_1 - e) + l_2$: This is the class of a complete intersection $C\cap \mathbb{P}^2 \subset \mathbb{P}^4$, where we impose the plane $\mathbb{P}^2$ passes through a point of $c$.  Therefore, the space of free curves of this class is irreducible.
    \item $l_i + (l_i + f - 2e) = 2(l_i -e) + f$:  we find a path from one type of curve to the other.  We may assume all relevant curves are in the same plane $H_i$.  We first move $l_i$ to meet an intersection point $c\cap H_i$.  The resulting curve has components of class $l_i -e$, $e$, and $l_i + f - 2e$.  We smooth the unique node joining the last two components, and obtain a curve of class $l_i + f -e$.  Earlier we argued this could be broken into $(l_i -e) + f$, finishing our proof. 
    \item %
    $l_1 + (l_2 + f -2e) = (l_1-e) + (l_2 -e) + f = l_2 + (l_1 + f -2e)$: we find a path from one type of curve to the other.  We break $l_1$ into a nodal curve of class $l_1 - e$ + $e$, which meets $l_2 + f -2e$ at one of the two places it intersects $c\cap H_2$.  We'd like to smooth $l_2 + f - 2e$ and $e$ into a single free curve of class $l_2 + f - e$; however, this tricky to do while preserving an intersection with $l_1 - e$.  It is necessary to simultaneously change the plane $H_2$ containing $l_2 + f - 2e$ so that its intersection with $l_1 -e$ moves off of $c \cap H_2$ (this may also be accomplished by changing the plane $H_1$ containing $l_1 - e$).  After, as argued earlier we may split $l_2 -f + e$ into two free curves of class $l_2 -e$ and $f$.
    \item %
    $(l_1 + f - 2e) + (l_2 -e) = (l_1 -e) + (l_2 + f -2e)$: contracting $E'$ realizes $X$ as a blow up of $C'$ along an elliptic curve in a moving section as well.  This description swaps $E'$ with  $E$ and $E_0$ with $E_\infty$.  This also acts nontrivially on our chosen basis for $N_1(X)$:  $e$ is sent to $f-e$, $f$ to $f$, and $l_i$ to $l_i + f -2e$.  This changes $l_1 + l_2 +f - 3e$ to $l_1 + l_2 -e$.  We have already shown this space of free curves is irreducible.
\end{itemize}
\end{proof}

\begin{lem}
For each nonzero $\alpha \in \Nef_1(X)_\mathbb{Z}$, $\overline{\free}(X,\alpha)$ is nonempty.
\end{lem}
\begin{proof}
By Lemma \ref{Gordan's Lemma} and \ref{Representability of Free Curves}, we only need to check that linear combinations $\sum a_{\alpha_i} \alpha_i$ of elements $\alpha_i \in \mathscr{C}_X$ with coefficents $0\leq a_{\alpha_i} < 1$ are all representable by free rational curves.  Let $\alpha = c_{l_1} l_1 + c_{l_2} l_2 + c_f f - c_e e = \sum a_{\alpha_i} \alpha_i$  It follows that $c_{l_i} = a_{l_i} + a_{l_i -e} + a_{l_i + f -2e} \in \{0,1,2\}$, $c_f = a_f + a_{l_1 + f -2e} + a_{l_2 + f -2e} \in \{0,1,2\}$, and $c_e = a_{l_1 -e} + a_{l_2 -e} + 2a_{l_1 + f -2e} + 2a_{l_2 + f -2e} \in \{0,1,2,3,4,5\}$.  If $c_f = 0$, then $c_{l_i} \in \{0,1\}$, and the only class that can appear is $l_1 + l_2 - e$.  %
If $c_f = 1$, then $c_e \leq 3$.  Since $\alpha. E_\infty \geq 0$ and $\alpha . E' \geq 0$, $c_e \leq c_f + c_{l_1} + c_{l_2}$ and $c_e \leq 2c_{l_1} + 2c_{l_2}$.  For all possible values of $c_e$, we may express $\alpha$ as a sum of the classes of two free curves, one of which pairs to 0 with $E$.  If $c_f = 2$, as before we may express $\alpha$ as a sum of the classes of two free rational curves.  For instance, if $c_e = 5$, $c_{l_1} + c_{l_2} \geq 3$, and the free curve pairing nontrivially with $E$ could have class $3l_i + 2f - 5e = 2(l_i + f - 2e) + (l_i -e)$ or $l_i + 2l_j + 2f -5e$.
\end{proof}

\subsection*{4.3}

\textbf{Blow-up of a curve of class $(1,1,2)$ in $(\mathbb{P}^1)^3$:} Let $f:X \rightarrow (\mathbb{P}^1)^3$ be the blow up of $(\mathbb{P}^1)^3$ along a curve $c$ of class $(1,1,2)$. For $i=1,2,3$, let $p_i: (\mathbb{P}^1)^3 \rightarrow \mathbb{P}^1$ be the projection onto the corresponding factor and let $f_i = p_i \circ f$.

\begin{thm}\label{4.3thm}
For each $\alpha \in l_3 + \Nef_1(X)_\mathbb{Z}$, $\overline{\free}(X,\alpha)$ is irreducible and nonempty.
\end{thm}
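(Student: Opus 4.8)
## Proof Proposal for Theorem \ref{4.3thm}

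The plan is to follow the standard strategy outlined in Section 7.1: identify a core $\mathscr{C}_X$ of free curves on $X$, find a generating set of relations among its elements, verify hypotheses \ref{Main Method}(1) and \ref{Main Method}(2) (using Corollary \ref{connected fibers fixall} for the latter), and finally check non-emptiness via Gordan's Lemma (Lemma \ref{Gordan's Lemma}). Here $X$ is the blow-up of $(\mathbb{P}^1)^3$ along a curve $c$ of class $(1,1,2)$, so by Lemma \ref{blow-up numbers} the blown-up curve is rational with $-K$-degree $2+2+4 = 8$ and $\rho(X) = 4$; one computes $(-K_X)^3 = 48 - 2(8 - 0 + 1) = 30$. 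Since $c$ is rational, $X$ admits a $C1$-type conic fibration $X \to \mathbb{P}^1 \times \mathbb{P}^1$ resolving the maps given by $f_1 \times f_2$ (the third projection $f_3$ has fibers meeting $c$ twice, so $\pi_{3}: X \to \mathbb{P}^1$ is a del Pezzo fibration; one must check whether its fibers have degree $\leq 3$ — here a fiber of $p_3$ is $(\mathbb{P}^1)^2$ meeting $c$ in one point, so a fiber of $\pi_3$ is a degree-$7$ weak del Pezzo, no del Pezzo fibration issue arises). Write $N_1(X) = \mathbb{Z}l_1 \oplus \mathbb{Z}l_2 \oplus \mathbb{Z}l_3 \oplus \mathbb{Z}e$, with $l_i$ the class of a section of the $i$-th projection and $e$ the fiber class of the blow-up; from \cite{matsuki1995weyl} the cone $\overline{NE}(X)$ has extreme rays among $e$, $l_1 - e$, $l_2 - e$, $l_3 - 2e$, $l_1 + l_2 - \text{(something)}$, etc., which I would read off directly.

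First I would enumerate the nef classes $\alpha$ with $2 \le -K_X.\alpha \le 4$ pairing nonnegatively with all effective divisors ($H_i$, $E$, and the strict transforms of the two bidegree-$(1,1,0)$-type divisors through $c$), then discard those that are freely breakable (double covers, or classes splitting as a sum of two lower-degree core classes with an immersed free chain), to arrive at a core $\mathscr{C}_X$. I expect $\mathscr{C}_X$ to consist of: the conic classes $l_1, l_2$, the classes $l_1 + l_3 - 2e$ and $l_2 + l_3 - 2e$ (conics in the fibers of the del Pezzo fibration meeting $c$), the line-sweeping-out conic $l_3 - e$ or its relatives, a few cubics like $l_1 + l_2 - e$, $l_1 + l_3 - e$, $l_2 + l_3 - e$, $l_1 + l_2 + l_3 - 2e$, and possibly $l_3$ or $2l_3 - 2e$. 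The separating classes will be those $\alpha$ for which $\overline{\free}_1(X,\alpha) \to X$ has reducible fibers — by Theorem \ref{reducible fibers: 4 author result} (and because $-K_X$ is very ample here: $(-K_X)^3 = 30 > 6$ and no special structure), these are exactly the nef $-K_X$-conics interior to $\overline{NE}(X)$ (there are none by Lemma \ref{interior conic lemma} since $\rho(X) > 2$) together with anything contracted by a del Pezzo fibration. So likely there are no separating classes, which is the favorable case. I would then apply Lemma \ref{Relations} to get a generating set of relations, and for each relation verify \ref{Main Method}(1) by either proving irreducibility of the sum class directly (using the $\mathbb{P}^1 \times \mathbb{P}^2$-type lemmas \ref{curves1 in P1 x P2}–\ref{curves3 in P1 x P2} adapted to this setting, or direct monodromy arguments à la Lemma \ref{P1 x P2 monodromy} for the monodromy of $c \to \mathbb{P}^1$ under $f_3$, which is a degree-$2$ cover hence has full symmetric monodromy) or by exhibiting a free chain deformation moving an attachment point onto the exceptional divisor $E$, as in the Picard-rank-$5$ cases.

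For \ref{Main Method}(2): since (as I expect) there are no separating classes and no relevant del Pezzo fibration with small fibers, Corollary \ref{connected fibers fixall} applies directly once I verify its hypotheses (the core $\mathscr{C}_X$ exists, and there is no nef $-K_X$-conic $\beta$ interior to $\overline{NE}(X)$ so hypothesis (2) is vacuous), giving that every main component of $\prod_X \overline{\free}_2(X,\alpha_i)$ maps to the same component of free curves for all $\alpha$ outside relative cones of del Pezzo fibrations with separating classes — i.e. for all $\alpha \in l_3 + \Nef_1(X)$. Combined with \ref{Main Method}(1) this gives irreducibility (or emptiness) of $\overline{\free}(X,\alpha)$ via Theorem \ref{Main Method}. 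Finally, non-emptiness for all $\alpha \in l_3 + \Nef_1(X)_{\mathbb{Z}}$: by Lemma \ref{Gordan's Lemma} it suffices to check the finitely many classes $\tau + \sum a_i v_i$ with $v_i$ the extreme rays of $\Nef_1(X)$ (which by Theorem \ref{Representability of Free Curves} are core classes) and $0 \le a_i < 1$, showing each is represented by a free curve — typically by writing it as a sum of core classes and smoothing, as in case $4.1$. The main obstacle I anticipate is the careful bookkeeping of relations and the monodromy verification for the cubic classes involving $f_3$ (where $c \to \mathbb{P}^1$ is a double cover), specifically ensuring the fibers of the del Pezzo fibration $\pi_3$ genuinely contribute no reducible-fiber phenomena and that the classes $l_i + l_3 - 2e$ have irreducible parameter spaces; this is where I would lean on Lemma \ref{curves1 in P1 x P2} and Theorem \ref{del pezzo curves thm} applied fiberwise, plus the monodromy statement of Theorem \ref{monodromy del Pezzo main result} if $X$ needs to be general (it should not, since $(-K_X)^3 = 30$ is large and $X \notin (\Sigma)$).
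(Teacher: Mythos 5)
Your overall architecture (core, relations via Lemma \ref{Relations}, Theorem \ref{Main Method} plus Corollary \ref{connected fibers fixall}, then Gordan's Lemma) is the paper's announced strategy and is the right one, but two of your case-specific claims are wrong, and one of them undermines your verification of \ref{Main Method}(2).

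First, the core. The asymmetry here is that $c$ has tridegree $(1,1,2)$, so a fiber of $f_3$ meets $c$ twice while fibers of $f_1,f_2$ meet it once; consequently $H_1+H_2-E$ is effective (the preimage of the $(1,1)$ image of $c$ under $p_1\times p_2$), and it pairs negatively with your proposed classes $l_1+l_3-2e$, $l_2+l_3-2e$, $l_3-e$, $2l_3-2e$ --- none of these are nef. The class you are after is $l_1+l_2-2e$ (a conic in a fiber of $f_3$ through both points of $c$ in that fiber), which your list omits. The actual core is $\{l_1,l_2,l_3,\,l_1+l_2-e,\,l_1+l_3-e,\,l_2+l_3-e,\,l_1+l_2-2e\}$; your $l_1+l_2+l_3-2e$ is freely breakable (deform one intersection point with $c$ along $c$ until it collides with the second point of $c$ in the same $f_3$-fiber) and so does not belong in the core.

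Second, and more seriously: your claim that there are no separating classes is false. You correctly note that Theorem \ref{reducible fibers: 4 author result} leaves open classes contracted by a del Pezzo fibration, but then you dismiss this possibility. The class $l_1+l_2-e$ is contracted by the degree-$6$ del Pezzo fibration $f_3$ (whose fibers are $\mathbb{P}^1\times\mathbb{P}^1$ blown up at the \emph{two} points of $c$ in that fiber --- not degree $7$ as you wrote), and through a general point of such a fiber there are two distinct pencils of curves of this class, one for each blown-up point. So $\mathrm{ev}:\overline{\free}_1(X,l_1+l_2-e)\to X$ has reducible general fiber and $l_1+l_2-e$ is separating. Your verification of \ref{Main Method}(2) therefore cannot rest on "no separating classes." The proof is still salvageable, and this is exactly why the shift by $\tau=l_3$ appears in the statement: any $\alpha\in l_3+\Nef_1(X)_{\mathbb{Z}}$ satisfies $H_3\cdot\alpha>0$, hence lies outside the relative cone $\overline{NE}(f_3)$, so Corollary \ref{connected fibers fixall} still delivers \ref{Main Method}(2) for all such $\alpha$. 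As written, your proposal treats the shift as cosmetic when it is in fact the mechanism that routes around the separating class. Finally, for \ref{Main Method}(1) you will need the irreducibility of $\overline{\free}(X,\beta)$ for the sum classes $\beta$ in the five generating relations (e.g.\ $2l_1+l_2+l_3-2e$, $2l_1+2l_2-2e$); the paper does this by parameterizing sections as graphs of morphisms $\mathbb{P}^1\to\mathbb{P}^1\times\mathbb{P}^1$ fibered over $\mathrm{Sym}^k(c)$ and by repeatedly identifying $f_3$-fibers with three-point blow-ups of $\mathbb{P}^2$, rather than by the $\mathbb{P}^1\times\mathbb{P}^2$ lemmas you cite, which do not directly apply to this blow-up of $(\mathbb{P}^1)^3$.
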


\textbf{Generators for $N^1(X)$ and $N_1(X)$:} 
Let

\begin{center}
\begin{tabular}{ll}
 $H_i$ = $f_i^{-1}(pt)$, & $l_i$ = $(f_j \times f_k)^{-1}(pt \times pt)$ where $\{i,j,k\} = \{1,2,3\}$ \\ 
 $E$ = $f^{-1}(c)$, & $e$ = an $f$-fiber of a point in $c$  \\  
\end{tabular}

\end{center}

for $i=1,2,3$ where the points are generic.

\textbf{Intersection Pairing:}
\begin{center}
\begin{tabular}{ll}
    $H_i \cdot l_j = \delta_{ij}$ &  $H_i \cdot e = 0$ \\
    $E \cdot l_j = 0$ &  $E \cdot e = -1$ \\
\end{tabular}
\end{center}

\textbf{Anticanonical Divisor:}
\begin{align*}
    -K_X = 2H_1 + 2H_2 + 2H_3 - E
\end{align*}

\textbf{Effective Divisors:}
The divisors $H_1+H_2-E$, $2H_1+H_3-E$, $2H_2+H_3-E$, $H_1$, $H_2$, $H_3$, and $E$ are all effective.

\begin{lem}\label{4.3core}
A core of free curves on $X$ is given by
\begin{align*}
    \mathscr{C}_X = \{ l_1, l_2, l_3, l_1+l_2-e, l_1+l_3-e, l_2+l_3-e, l_1+l_2-2e \}.
\end{align*}

\noindent The only separating class in $\mathscr{C}_X$ is $l_1 + l_2 - e$.
\end{lem}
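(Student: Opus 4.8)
### Proof strategy for Lemma \ref{4.3core}

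The plan is to follow the same template used for the other Picard rank 4 cases (e.g. $4.1$ and $4.2$ above): first pin down which nef classes of small anticanonical degree occur, then show each proposed core class gives an irreducible space of free curves, then check that every class with a possibly-nonirreducible space is freely breakable into the listed classes, and finally identify the separating class by examining fibers of the evaluation map.

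\textbf{Step 1: Enumerate low-degree nef classes.} Since $X$ is a blow-up of $(\mathbb{P}^1)^3$ along a curve of class $(1,1,2)$, the lattice $N_1(X) = \mathbb{Z}l_1 \oplus \mathbb{Z}l_2 \oplus \mathbb{Z}l_3 \oplus \mathbb{Z}e$ is clear, and $\overline{NE}(X)$ has a known set of generators from \cite{matsuki1995weyl} (the rays $e$, $l_i - e$ for the projections where $c$ is degree one in the third factor will play the role analogous to earlier cases; one must be careful that the third factor has degree $2$, so $l_3 - 2e$ type behaviour enters). Writing a general class as $\alpha = a_1 l_1 + a_2 l_2 + a_3 l_3 - ne$ with $a_i, n \geq 0$ and imposing $-K_X . \alpha \in \{2,3,4\}$ together with nonnegativity against the extremal effective divisors $H_1 + H_2 - E$, $2H_1 + H_3 - E$, $2H_2 + H_3 - E$, $H_i$, $E$, one solves a finite system of inequalities. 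This yields the seven classes in $\mathscr{C}_X$ together with a handful of extra classes ($2l_i$, $l_1 + l_2$, $l_i + l_j$, $l_1 + l_2 + l_3 - 2e$, $l_i + l_3 - 2e$, and double-cover type classes). Each extra class must be shown freely breakable: $2l_i$ and $l_i + l_j$ break as unions of the obvious lines; $l_1 + l_2 + l_3 - 2e$ breaks as $(l_3) + (l_1 + l_2 - 2e)$ by the same ``move the node onto $c$'' argument used in case $4.1$; and the monodromy of $c \to \mathbb{P}^1$ under the relevant projections (which is the full symmetric group since $c$ has degree $\leq 2$ in each factor, by Lemma \ref{P1 x P2 monodromy} applied after contracting an appropriate divisor, or directly by Theorem \ref{Monodromy}) handles the remaining cases.

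\textbf{Step 2: Irreducibility of $\overline{\free}(X,\alpha)$ for $\alpha \in \mathscr{C}_X$.} For $\alpha = l_i$, $\overline{\free}_1(X,l_i) \cong X$ via the product of the other two projections, so irreducibility is immediate. For $\alpha = l_i + l_j - e$ ($\{i,j\} \subset \{1,2,3\}$), a curve of this class lies in a fiber of the remaining projection $f_k$; since $c$ meets such a fiber in $d_k$ points (where $d_k \in \{1,2\}$) in linearly general position, the fiber is a del Pezzo surface and Theorem \ref{del pezzo curves thm} together with transitivity of the monodromy of $c$ over $\mathbb{P}^1$ (Theorem \ref{Monodromy}(b), or Lemma \ref{P1 x P2 monodromy}) gives irreducibility. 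For $\alpha = l_1 + l_2 - 2e$, a representative is the strict transform of a conic through two points of $c$; irreducibility again follows from Theorem \ref{del pezzo curves thm} applied in the fiber of $f_3$ (here $c$ meets the $f_3$-fiber twice), noting $l_1 + l_2 - 2e$ is $-2K$ of that del Pezzo surface of degree... — one checks the relevant degree and invokes the ``general $X$'' clause of Theorem \ref{del pezzo curves thm}(1) only if needed, but since $3.$-rank data forces the $f_3$-fibers here to be del Pezzo of degree $\geq 3$ away from a codimension-two locus, irreducibility is unconditional.

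\textbf{Step 3: Fibers of the evaluation map and the separating class.} For $\alpha \in \{l_1, l_2, l_3, l_1 + l_3 - e, l_2 + l_3 - e, l_1 + l_2 - 2e\}$, fixing a general point $x \in X$ fixes the relevant fiber of a projection (and, for $l_1 + l_2 - 2e$, leaves a positive-dimensional irreducible net of conics), so $\text{ev}^{-1}(x)$ is irreducible; hence these are not separating. For $\alpha = l_1 + l_2 - e$: a curve of this class lies in the $f_3$-fiber through $x$ and must meet one of the \emph{two} points of $c \cap (\text{that } f_3\text{-fiber})$; fixing $x$ fixes the fiber but not which of the two points is used, so $\text{ev}^{-1}(x)$ has two components while $\overline{\free}(X, l_1 + l_2 - e)$ is irreducible by transitive monodromy. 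This is exactly the definition of a separating class. For $l_1 + l_3 - e$ and $l_2 + l_3 - e$, $c$ meets the relevant fiber ($f_2$- resp. $f_1$-fiber) in only \emph{one} point, so no separation occurs — this is the key asymmetry forced by the bidegree $(1,1,2)$.

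\textbf{Main obstacle.} The delicate point is Step 1: correctly determining $\overline{NE}(X)$ and the extremal effective divisors from the Mori classification data, and then confirming that the list of extra low-degree nef classes is exactly what is claimed and that each is freely breakable — in particular verifying the ``node onto $c$'' degeneration genuinely produces a \emph{free} chain (one must check the normal bundle of the degenerate map is globally generated, via Lemma \ref{GHS lemma} and Proposition \ref{kontsevich space H1}). The second most delicate point is confirming the del Pezzo fiber degrees in Step 2 so that Theorem \ref{del pezzo curves thm} applies without the ``general $S$'' hypothesis; since the only potentially problematic class is $l_1 + l_2 - 2e$ and the $f_3$-fibers are del Pezzo surfaces of degree $9 - d \geq 7$ (as $c$ meets an $f_3$-fiber in $2$ points), this is in fact routine. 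Everything else is bookkeeping identical to cases $4.1$ and $4.2$.
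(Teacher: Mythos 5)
Your overall strategy is the paper's: enumerate the nef classes of anticanonical degree at most four, break the non-core ones freely, and read off the separating class from the number of points of $c$ lying in a fiber of each projection. Your Step 3 is exactly right, and it is the heart of the lemma: the two points of $c$ in an $f_3$-fiber give two pencils of curves of class $l_1+l_2-e$ through a general point (hence a reducible evaluation fiber), while the single point of $c$ in an $f_1$- or $f_2$-fiber gives an irreducible pencil for $l_i+l_3-e$; irreducibility of $\overline{\free}(X,l_1+l_2-e)$ itself via the transitive monodromy of Theorem~\ref{Monodromy} applied to $|H_3|$ is also how the paper argues.

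The gap is in Step 1, which you flag as the main obstacle but leave unresolved, and your list of extra nef classes is wrong on both sides. The class $l_i+l_3-2e$ pairs to $-1$ with the effective divisor $H_1+H_2-E$, so it is not nef and should not appear; conversely you omit $2l_1+l_2-2e$, $l_1+2l_2-2e$ and $2l_1+2l_2-4e$, which are nef of degree four and must be shown freely breakable for $\mathscr{C}_X$ to satisfy Definition~\ref{def core}(2). The last consists of double covers, but the first two require an actual degeneration, and ``the monodromy of $c\to\mathbb{P}^1$ handles the remaining cases'' does not supply one: monodromy proves irreducibility of a family, it does not break a curve. The paper identifies the $f_3$-fiber containing such a curve with the blow-up of $\mathbb{P}^2$ at three points via Lemma~\ref{surface lemma}, computes that $2l_1+l_2-2e$ becomes $2l-e_1-e_q$ there, and breaks it as $(l-e_1)+(l-e_q)$, i.e.\ a free chain of type $(l_1,\,l_1+l_2-2e)$; some such computation is needed. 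A smaller slip: through a general point of $X$ there is a unique curve of class $l_1+l_2-2e$ (it is $l-e_q$, a line through one point, in the three-point blow-up model), not a positive-dimensional net of conics; your conclusion that the class is non-separating survives, but for that reason.
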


\begin{proof}

\textbf{Nef Curve Classes of Anticanonical degree between $2$ and $4$:}
If $\alpha = d_1 l_1 + d_2 l_2 + d_3 l_3 -n e$ is a nef curve class of anticanonical degree between $2$ and $4$, then
\begin{align*}
    0 \leq n, d_1, d_2, d_3 \hspace{.5cm} & n \leq d_1+d_2, \hspace{.5cm} n \leq 2d_1+d_3, \hspace{.5cm} n \leq 2d_2+d_3, \\ & 2 \leq 2d_1+2d_2+2d_3-n \leq 4.
\end{align*}

\noindent Solving these inequalities, we obtain the classes shown above in $\mathscr{C}_X$, as well as $2l_1, 2l_2, 2l_3, l_1+l_2, l_1+l_3, l_2+l_3, 2 l_1+l_2 - 2e, l_1+2l_2-2e, 2l_1 + 2l_2-4e$, and $l_1+l_2+l_3-2e$.

The curves of class $2l_i$ are double covers and break freely.  The curves of class $l_i+l_j$ for $i \neq j$ freely break as a sum of two curves of class $l_i$ and $l_j$ respectively, so we need not include them in the core. We show that the curves of class $2 l_1+l_2 - 2e$ likewise break as a sum of two curves of class $l_1+l_2-2e$ and $l_1$ respectively. We first recall a well-known result which will often be used throughout this case.

\begin{lem}\label{surface lemma}
The blow-up of $\mathbb{P}^1 \times \mathbb{P}^1$ at a point $p$ is isomorphic to the blow-up of $\mathbb{P}^2$ at two points $p_1$ and $p_2$ under a map which identifies the classes $l_1 \mapsto l-e_1$, $l_2 \mapsto l-e_2$ and $e \mapsto l-e_1-e_2$ in the usual notation.
\end{lem}

Take a curve $x$ of class $2l_1+l_2-2e$ in $X$. It lies in a fiber of $f_3$ which is isomorphic to $\mathbb{P}^1 \times \mathbb{P}^1$ blown-up at two points $p$ and $q$. We may assume $x$ intersects both $p$ and $q$ with multiplicty one. Applying Lemma \ref{surface lemma} to identify this fiber with the blow-up of $\mathbb{P}^2$ at three points $p_1, p_2$ and $q$, we see that $x$ is a curve of class
$$2(l-e_1)+(l-e_2)-(l-e_1-e_2)-e_q = 2l-e_1-e_q.$$
We may then break $x$ as a sum of curves of class $l-e_1$ and $l-e_q$. Viewing the surface again as a blow-up of $\mathbb{P}^1 \times \mathbb{P}^1$, these curves are of class $l_1+l_2-2e$ and $l_1$ as desired.

Arguing similarly, the curves of class $l_1+2l_2-2e$ break freely as a sum of curves of class $l_1+l_2-e$ and $l_2$.  
Next, consider the curves of class $2l_1+2l_2-4e$. They lie in a fiber of $f_3$. Again viewing this fiber as the blow-up of $\mathbb{P}^2$ at points $p_1, p_2$ and $q$, we see that a generic such curve is of class
\begin{align*}
    2(l-e_1)+2(l-e_2)-2(l-e_1-e_2)-2e_q = 2l-2e_q
\end{align*}
on the blow-up of $\mathbb{P}^2$. Thus they are double covers, and thus break freely.

Finally, we claim that any curve $x$ of class $l_1+l_2+l_3-2e$ breaks freely as a sum of curves of class $l_1+l_2-2e$ and $l_3$. Indeed, let $p$ and $q$ be the points at which the image $f(x)$ of $x$ in $(\mathbb{P}^1)^3$ meets $c$. The fiber $p_3^{-1}(p_3(p))$ meets $c$ at $p$, as well as another point $r$. We may deform $x$, fixing the point $p$ and moving its second intersection point with $c$ along $c$ until it coincides with $r$. Generically this breaks $x$ as a sum of two curves, necessarily of classes $l_1+l_2-2e$ and $l_3$ as desired.

\textbf{Irreducible Spaces and Fibers:}  The curves of class $l_i$ are parameterized by an open subset of $\mathbb{P}^1 \times \mathbb{P}^1$ for $i=1,2,3$.  It is clear $\overline{\free}_1(X,l_i)\xrightarrow{\text{ev}}X$ has irreducible fibers over general points.   
The space of free curves of class $l_1+l_3-e$ fibers over $\mathbb{P}^1$ by composition with $f_2$. Fibers of $f_2$ are blow-ups of $\mathbb{P}^1 \times \mathbb{P}^1$ at a point, so there is a two-dimensional linear system of curves of class $l_1+l_3-e$. The same argument works for the curves of class $l_2+l_3-e$.  $\overline{\free}_1(X,l_i + l_3 -e)\xrightarrow{\text{ev}}X$ has irreducible one-dimensional fibers corresponding to pencils of curves.    

We may similarly fiber the space of free curves of class $l_1+l_2-e$ by composition with $f_3$. However, in this case, the fibers of $f_3$ are the blow-up of $\mathbb{P}^1 \times \mathbb{P}^1$ at two points, so there are two separate two-dimensional linear systems of the appropriate type. Nonetheless, we may apply Theorem \ref{Monodromy}a to the linear system $H_3$; this shows that the monodromy action is transitive. Hence $\overline{\free}(X,l_1 + l_2 -e)$ is irreducible. $\overline{\free}_1(X,l_1 + l_2 -e)\xrightarrow{\text{ev}}X$ has reducible fibers over general points corresponding to two different pencils of curves.  

Lastly, $\overline{\free}(X,l_1 + l_2 -2e)$ fibers over $\mathbb{P}^1$ via composition with $f_3$.  Each fiber is a pencil of curves.  The class $l_1+l_2-2e$ is not separating because there is a unique curve of class $l_1+l_2-2e$ through any general point.
\end{proof}

\begin{lem}\label{4.3relations}
The relations in the monoid $\mathbb{N} \mathscr{C}_X$ are generated by:

\begin{enumerate}
    \item $(l_1+l_2-e)+(l_3) = (l_1+l_3-e)+(l_2) = (l_2+l_3-e)+(l_1)$,
    \item $(l_1+l_2-2e)+(l_1)+(l_2) = 2(l_1+l_2-e)$,
    \item $(l_1+l_2-2e)+(l_1)+(l_3) = (l_1+l_2-e)+(l_1+l_3-e)$,
    \item $(l_1+l_2-2e)+(l_2)+(l_3) = (l_1+l_2-e)+(l_2+l_3-e)$,
    \item $(l_1+l_2-2e)+2(l_3) = (l_1+l_3-e)+(l_2+l_3-e)$.
\end{enumerate}
\noindent The corresponding moduli spaces of free curves of class $l_1+l_2+l_3-e, 2l_1+l_2+l_3-2e, l_1+2l_2+l_3-2e$, and $l_1+l_2+2l_3-2e$ are irreducible, while the space of free curves of class $2l_1+2l_2-2e$ has two components, one of which parameterizes double covers. 
\end{lem}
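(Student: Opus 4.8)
The plan is to verify the hypotheses of Lemma \ref{Relations} with the nef curves ordered as
$c_1 = l_1+l_2-2e$, $c_2 = l_1$, $c_3 = l_2$, $c_4 = l_1+l_2-e$, $c_5 = l_1+l_3-e$, $c_6 = l_2+l_3-e$, $c_7 = l_3$ (the last four span $N_1(X)$), and to use the divisor classes $D_1 = H_1+H_2-E$ (negative on $c_1=l_1+l_2-2e$, nonnegative on $c_2,\dots,c_7$) to eliminate $c_1$, then $D_2 = \text{(suitable class)}$ to eliminate $l_1$, and $D_3$ to eliminate $l_2$. Carrying out the minimization in \ref{Relations}(4) should yield exactly the five listed families of relations, with relation (1) being the unique relation among the spanning subset $\{l_1, l_2, l_1+l_2-e, l_3\}$-type substitutions once $c_1$ is removed. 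Most of this is a routine intersection-number bookkeeping exercise, so I would not grind through it.

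The substantive content is the claim about main components. For relation (1), $(l_1+l_2-e)+(l_3) = (l_1+l_3-e)+(l_2) = (l_2+l_3-e)+(l_1)$: I would show $\overline{\free}(X, l_1+l_2+l_3-e)$ is irreducible. A curve of this class lies over a point of $f_3$? No — I would instead argue via the fibration $f_3: X \to \mathbb{P}^1$; a curve of class $l_1+l_2+l_3-e$ is a section of $f_3$ whose image under $f_1 \times f_2$ (after blowing down) is a curve of class $(1,1)$ on $\mathbb{P}^1 \times \mathbb{P}^1$ meeting $c$ once, and irreducibility follows from a monodromy argument on the intersection point together with the fact that the space of $(1,1)$-sections is irreducible (cf.\ the arguments of Lemmas \ref{curves2 in P1 x P2}, \ref{curves3 in P1 x P2}). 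For relations (3),(4),(5), the left side always contains $l_1+l_2-2e$ (a curve in a fiber of $f_3$ passing through both blown-up points) glued to curves dominating the $l_3$-direction; since the total class is interior to $\overline{NE}(X)$ and of anticanonical degree $\geq 4$, a general member of any main component is very free by \ref{very free curves}, and Lemma \ref{Gluing} together with \ref{reducible fibers: 4 author result} gives a unique main component once we know the space of curves of the total class is irreducible — which I would establish by fibering over $f_3$ (for $2l_1+l_2+l_3-2e$, $l_1+2l_2+l_3-2e$) or by the $\pi_{2*}$-type argument for $l_1+l_2+2l_3-2e$.

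For relation (2), $(l_1+l_2-2e) + (l_1) + (l_2) = 2(l_1+l_2-e)$, I expect the main obstacle: here $\overline{\free}(X, 2l_1+2l_2-2e)$ is genuinely reducible, with one component of double covers (of $(l_1+l_2-e)$-conics) and one of very free curves. The separating class $l_1+l_2-e$ is involved on the right, so both main components of $\overline{\free}_2(X,l_1+l_2-e) \times_X \overline{\free}_2(X,l_1+l_2-e)$ must be accounted for. The point to establish is that the main component containing the left-hand chain $(l_1+l_2-2e, l_1, l_2)$ — all three components curves in a single fiber $F$ of $f_3$, which is $\mathbb{P}^1\times\mathbb{P}^1$ blown up at two points $p,q$, with $l_1+l_2-2e$ meeting both $p,q$ — smooths to a very free curve, and coincides with the main component of the product where the two $(l_1+l_2-e)$-conics meet $c$ at distinct points. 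Using Lemma \ref{surface lemma} to pass to the blow-up of $\mathbb{P}^2$ at $p_1,p_2,q$, the chain becomes $(2l-e_1-e_q, l-e_1, l-e_2)$ — wait, more carefully the classes are $l_1+l_2-2e \mapsto 2l-e_1-e_q$, $l_1 \mapsto l-e_1$, $l_2 \mapsto l-e_2$, and one checks these glue to a free chain inside $F$ whose smoothing (a curve of class $4l - 2e_1 - e_2 - e_q$ on the surface, i.e.\ $2l_1+2l_2-2e$ restricted) is birational onto its image, hence lands in the very free (birational) component by Theorem \ref{del pezzo curves thm}(1) applied to $F$. That the double-cover component is also hit by some main component of the product — namely where the two conics share their image — completes the dichotomy; since $l_3$ does not appear in (2), no shift by $\tau = l_3$ is needed to separate them, and for the purpose of Theorem \ref{Main Method} only the very free main component matters, which is reached by \emph{every} main component satisfying $(\dagger)$. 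I would phrase the conclusion so that it records irreducibility of $\overline{\free}^{bir}(X, 2l_1+2l_2-2e)$ and the two-component structure of $\overline{\free}(X,2l_1+2l_2-2e)$, consistent with Corollary \ref{bir Main Method}.
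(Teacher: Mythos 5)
Your proposal follows the paper's proof in all essentials: the generating set is obtained by the intersection-pairing elimination of Lemma \ref{Relations} (the paper uses the divisors $H_1+H_2+H_3-2E$, $H_1-E$, $H_2-E$ to strip out $l_1+l_2-2e$ and then the classes $l_i+l_j-e$), and the main-component claims are settled by proving irreducibility of the moduli space of each total class, with $2l_1+2l_2-2e$ analyzed inside the del Pezzo fibers of $f_3$ exactly as you suggest. Two corrections to your computation for relation (2): in a fiber of $f_3$ identified via Lemma \ref{surface lemma} with the blow-up of $\mathbb{P}^2$ at $p_1,p_2,q$, the class $l_1+l_2-2e$ restricts to $(l-e_1)+(l-e_2)-(l-e_1-e_2)-e_q=l-e_q$, not $2l-e_1-e_q$ (you have copied the restriction of $2l_1+l_2-2e$ from Lemma \ref{4.3core}), and correspondingly the smoothing of your chain has class $3l-e_1-e_2-e_q$, not $4l-2e_1-e_2-e_q$; the corrected class is the one the paper uses, and rational cubics through three points still form an irreducible family, so your conclusion stands. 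Also, a curve of class $2l_1+l_2+l_3-2e$ is a section of $f_3$, not contained in a fiber, so ``fibering over $f_3$'' is not quite the right description; the paper instead fibers the moduli space over $\Sym^2 c$ (the pair of intersection points with the blown-up curve), with fibers cut out by four linear conditions on graphs of bidegree-$(2,1)$ morphisms $\mathbb{P}^1\to\mathbb{P}^1\times\mathbb{P}^1$ — this is the precise version of the graph argument you gesture at for $l_1+l_2+l_3-e$, and it applies verbatim to the remaining two quartic classes.
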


\begin{proof}
Throughout the proof, we shall consider the intersection pairings of the divisors $H_1+H_2+H_3-2E, H_1-E$, and $H_2-E$ with classes in $\mathscr{C}_X$. Explicitly, $H_1+H_2+H_3-2E$ pairs to $-1$ with $l_1+l_2+l_3-2e$, to $1$ with $l_1, l_2, l_3$, and to zero otherwise. The divisor $H_1-E$ pairs to $-1$ with $l_1+l_2+l_3-2e, l_2+l_3-e$, to $1$ with $l_1$, and to zero otherwise.

Consider a relation of the form $\sum \alpha_i = \sum \beta_j$ in $\mathbb{N} \mathscr{C}_X$. Suppose $l_1+l_2-2e$ appears in the relation; reduce to the case where it appears on the left side only. By taking the product with $H_1+H_2+H_3-2E$, we see there are at least two instances of $l_1, l_2$, and $l_3$ on the left side as well. Apply relations $(2)-(5)$ to remove $l_1+l_2-2e$ from the relation when possible. If not, there must be either two instances of $l_1$ or two instances of $l_2$. Suppose we are in the first case. By considering the pairing with $H_1-E$, we see that $l_2+l_3-e$ appears on the left side as well. Thus we may use relation $(1)$ to replace the occurrence of $l_1$ with $l_2$ and then use $(2)$ to remove $l_1+l_2-2e$ altogether. In the case where $l_2$ appears twice, we may argue similarly, considering instead the pairing with $H_2-E$.

Thus, we may assume $l_1+l_2-2e$ does not appear in the relation at all. Next, if $l_2+l_3-e$ appears, say on the left side only, we take the intersection pairing with $H_1-E$ to see that $l_1$ appears as well. Then use relation $(1)$ to remove all instances of $l_2+l_3-e$. Similarly, by looking at the pairing with $H_2-E$, we may remove $l_1+l_3-e$ from the relation by using $(1)$ without introducing any new instances of $l_2+l_3-e$.

In sum, we have reduced to the problem of looking for relations between $l_1, l_2, l_3$, and $l_1+l_2-e$. But these four classes are linearly independent, so there are none. Hence, our list of relations is indeed a generating set.

The curves of class $l_1+l_2+l_3-e$ are all graphs of a morphism $\mathbb{P}^1 \rightarrow \mathbb{P}^1 \times \mathbb{P}^1$. They are fibered over $c$ with fiber over a given point the space of all such morphisms with two linear conditions on the coefficients, corresponding to the condition that the corresponding graph passes through the given point. This is an irreducible space.

The curves of class $2l_1+2l_2-2e$ are fibered over $\mathbb{P}^1$ by the fiber of $f_3$ in which they lie. Identifying these fibers with the blow-up of $\mathbb{P}^2$ at three points, we see that all such curves which are not double covers are of class
\begin{align*}
    2(l-e_1)+2(l - e_2)-(l-e_1-e_2) - e_q = 3l - e_1 - e_2 - e_q.
\end{align*}
\noindent in the blow-up of the plane. The space of rational cubics passing through three points in the plane is irreducible.

The curves $x$ of class $2l_1+l_2+l_3-2e$ are fibered over $\Sym^2(c)$ by the pair of points $(p,q)$ on $c$ which lie on $f(x)$. The image such a curve under $f$ may be viewed as the graph of a morphism $\mathbb{P}^1 \rightarrow \mathbb{P}^1 \times \mathbb{P}^1$ of bidegree $(2,1)$. Here, we view the domain as the third factor in $(\mathbb{P}^1)^3$ and the range as the first and second factors respectively. The condition that this graph pass through both $p$ and $q$ imposes four linear conditions on the coefficients of the maps. Thus, the fibers are irreducible of dimension four. It follows that the moduli space of curves of class $2l_1+l_2+l_3-2e$ is irreducible.

Identical arguments show that the space of curves of class $l_1+2l_2+l_3-2e$ and $l_1+l_2+2l_3-2e$ are irreducible as well.
\end{proof}

\begin{lem}
For each nonzero $\alpha \in \Nef_1(X)_\mathbb{Z}$, $\overline{\free}(X,\alpha)$ is nonempty.
\end{lem}
\begin{proof}
By Lemma \ref{Gordan's Lemma} and Theorem \ref{Representability of Free Curves}, we only need to check that integral linear combinations $\sum a_{\alpha_i} \alpha_i$ of elements $\alpha_i \in \mathscr{C}_X$ with coefficents $0\leq a_{\alpha_i} < 1$ are all representable by free rational curves.  Let $\alpha = c_1 l_1 + c_2 l_2 + c_3 l_3 - c_e e = \sum a_{\alpha_i} \alpha_i$.  It follows that $c_3 \in \{0,1,2\}$, $0 \leq c_1, c_2 \leq 3$, and $0 \leq c_e \leq 4$.  Since $\alpha \in \Nef_1(X)$, we have $c_e \leq c_1 + c_2$ and $c_e \leq 2c_i + c_3$ for $i=1,2$.  By symmetry we may assume $c_2 \geq c_1$.  It follows that we may write $\alpha = \beta + a l_1 + b l_2$ with $a,b\geq 0$ and $\beta$ given by one of the following:
\begin{itemize}
    \item $c_3 l_3 + l_1 + k l_2 - re$ for $k \leq 1 + c_3$ and $r \leq k + 1$
    \item $c_3 l_3 + 2l_1 + k l_2 - re$ for $k \leq 2 + c_3$ and $r \leq k + 2$
    \item $c_3 l_3 + 3l_1 + k l_2 - re$ for $k \leq 3$ and $r \leq k + 3$
\end{itemize}
Each such integral $\beta$ is a nonnegative linear combination of $l_2 + l_3 -e$, $l_1 + l_2 -2e$, $l_1 + l_2 -e$, $l_1$, and $l_2$. 
\end{proof}

\subsection*{4.4}

\textbf{Blow-up of a Conic and Points in a Quadric Threefold:}
Let $Y \rightarrow Q$ be the blow-up of a conic where $Q \subset \mathbb{P}^4$ is a smooth quadric. Let $X$ be the blow-up of $Y$ with center two exceptional lines.  %
We apply \ref{blowup} to case $5.1$.%

\subsection*{4.5}

\textbf{Blow-up of Curves in $\mathbb{P}^1 \times \mathbb{P}^2$:} Let $f:X\rightarrow \mathbb{P}^1\times \mathbb{P}^2$ be a blow up with center two disjoint curves $c_1,c_2$ of bidegree $(2,1)$ and $(1,0)$, respectively.  Let $\pi_i : X\rightarrow \mathbb{P}^i$ be the composition of $f$ with the corresponding projection.

\begin{thm}\label{GMC4.5}
For all $\alpha \in l_1 + \Nef_1(X)_\mathbb{Z}$, $\overline{\free}(X,\alpha)$ is nonempty and irreducible.
\end{thm}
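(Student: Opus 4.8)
The plan is to follow the template laid out in Section 7.1: identify a core of free curves $\mathscr{C}_X$ and its separating classes, find a generating set of relations in $\mathbb{N}\mathscr{C}_X$ and verify Theorem \ref{Main Method}(1) for each, then invoke Corollary \ref{connected fibers fixall} to dispose of Theorem \ref{Main Method}(2), and finally check non-emptiness of $\overline{\free}(X,\alpha)$ for all $\alpha$ via Lemma \ref{Gordan's Lemma}. First I would set up notation: use the basis $N_1(X) = \mathbb{Z} l_1 \oplus \mathbb{Z} l_2 \oplus \mathbb{Z} e_1 \oplus \mathbb{Z} e_2$ where $l_1$ is a fiber of $\pi_2$, $l_2$ is a line in a $\mathbb{P}^2$-fiber of $\pi_1$, and $e_i$ is the exceptional curve over $c_i$, and dually $N^1(X) = \mathbb{Z} H_1 \oplus \mathbb{Z} H_2 \oplus \mathbb{Z} E_1 \oplus \mathbb{Z} E_2$. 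Since $c_1$ has bidegree $(2,1)$ and $c_2$ has bidegree $(1,0)$, the blow-up numbers from Lemma \ref{blow-up numbers} and the tables in \cite{matsuki1995weyl} give $-K_X = 2H_1 + 3H_2 - E_1 - E_2$ and the extreme rays of $\overline{NE}(X)$; note that $c_1 \to \mathbb{P}^1$ is a double cover with full monodromy ($d_1 = 2 \leq 3$, Lemma \ref{P1 x P2 monodromy}), while $c_2$ maps isomorphically to $\mathbb{P}^1$. I would then enumerate the nef curve classes of anticanonical degree $2$, $3$, and $4$ using the effective divisors, weed out freely breakable ones (double covers of conics, obvious splittings), and record the resulting core; the separating classes should be exactly those whose evaluation map to $X$ has reducible fibers, identified via Lemma \ref{curves1 in P1 x P2} (for classes $dl_2 - ne$ contained in $\mathbb{P}^2$-fibers of $\pi_1$, since $c_1$ meets such a fiber in two points) and Lemma \ref{nonextreme lines}.

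For the relations, I would apply Lemma \ref{Relations} with a carefully ordered list of core classes and auxiliary divisors $D_i$ of the form $-H_j + \sum E_k$ that pair negatively with exactly one class at a time. For each resulting relation $\sum \alpha_i = \sum \alpha_j'$, the standard moves are: either cite irreducibility of $\overline{\free}(X, \sum\alpha_i)$ via one of Lemmas \ref{curves1 in P1 x P2}, \ref{curves2 in P1 x P2}, \ref{curves3 in P1 x P2}, \ref{lines_in_P3} (the last only if a convenient projection to $\mathbb{P}^3$ exists — unlikely here, so I lean on the $\mathbb{P}^1\times\mathbb{P}^2$ lemmas), or exhibit a chain of type $(l_1, \alpha)$, deform the attaching node onto $c_1$ or $c_2$, observe the resulting map is a smooth point of $\overline{\mathcal{M}}_{0,0}(X)$ with globally generated normal sheaf (Proposition \ref{kontsevich space H1} plus Lemma \ref{GHS lemma}), and re-smooth to the other side of the relation. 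The relations involving the separating conic class contracted by $\pi_1$ (if it is interior) or involving the $C1$-line (if $X$ has a conic bundle over $\mathbb{F}_1$ — the bidegree data suggests checking this) are handled by Corollary \ref{connected fibers fixall}, whose hypotheses I would verify: $\mathscr{C}_X$ satisfies Definition \ref{def core}, and $\overline{\free}^{bir}(X, 2\beta)$ is irreducible for any interior nef $-K_X$-conic $\beta$ — but by Lemma \ref{interior conic lemma}, since $\rho(X) = 4 > 2$, there is no such $\beta$, so that hypothesis is vacuous and every main component automatically satisfies $(\dagger)$.

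The non-emptiness check is routine: by Theorem \ref{Representability of Free Curves} the extreme rays of $\Nef_1(X)$ are spanned by core classes, so by Lemma \ref{Gordan's Lemma} it suffices to verify that each integral class $\alpha = \sum_{\alpha_i} a_{\alpha_i}\alpha_i$ with $0 \le a_{\alpha_i} < 1$ is represented by a free curve; writing $\alpha = c_1 l_1 + c_2 l_2 - n_1 e_1 - n_2 e_2$, the nef inequalities $\alpha . E_i \geq 0$, $\alpha.(H_1 - \ldots)\ge 0$, etc., bound the coefficients in a small box, and each such $\alpha$ decomposes as a non-negative integer combination of core classes by inspection. The anticanonical-degree-$\ge 4$ interior classes are handled by Propositions \ref{very free curves} and \ref{reducible fibers: 4 author result} (as $|-K_X|$ is very ample here — $X$ is not on the short list of Theorem \ref{mori1983classification}) together with Theorem \ref{unique main component}.

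The main obstacle I anticipate is the relation-verification step for the low-degree classes lying on or near the boundary of $\overline{NE}(X)$, specifically classes contained in $\mathbb{P}^2$-fibers of $\pi_1$ where $c_1$ contributes two points: here the evaluation map has reducible fibers, Theorem \ref{Main Method}(2) is genuinely nontrivial, and one must trace through Corollary \ref{connected fibers fixall}'s reduction to Theorem \ref{unique main component}, making sure the two curves in any problematic chain are of distinct classes not both contained in a single facet of $\overline{NE}(X)$ — which requires knowing the facet structure precisely. A secondary difficulty is confirming there are no $E2$, $E3$, $E4$, $E5$ divisors on $X$ (so that Corollary \ref{breaking quartics cor} and Proposition \ref{breaking quartic curves} apply cleanly to $-K_X$-quartics not breaking into conic chains); this follows from the classification data in \cite{mori1981classification} but should be stated explicitly. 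Everything else is bookkeeping with the $\mathbb{P}^1\times\mathbb{P}^2$ lemmas already proved in Section 7.2.
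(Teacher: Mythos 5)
Your plan is correct and follows essentially the same route as the paper: identify the core $\{l_1,\,l_2,\,l_2-e_1,\,l_2-e_2,\,2l_2-2e_1-e_2,\,l_1+l_2-e_1-e_2\}$ with $l_2-e_1$ the lone separating class (coming, as you say, from the two points of $c_1$ in each fiber of $\pi_1$), extract the three relations via Lemma \ref{Relations} with $D=-H_2+E_1+E_2$, verify them either by irreducibility of $\overline{\free}(X,l_1+2l_2-ke_1-e_2)$ (a monodromy argument over conics through $\pi_2(c_2)$) or by locating both chain types in the component of $\overline{\free}(X,3l_2-2e_1-e_2)$ of curves nonsingular along $c_1$, and finish with Corollary \ref{connected fibers fixall} (vacuous conic hypothesis since $\rho(X)=4$) and Gordan's Lemma. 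The only stray remark is your worry about $E2$--$E5$ divisors for Proposition \ref{breaking quartic curves}: the quartic classes here are all disposed of directly by the explicit $\mathbb{P}^1\times\mathbb{P}^2$ lemmas and elementary breakings, so that machinery is not needed.
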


\textbf{Generators for $N^1(X)$ and $N_1(X)$:} Let $H_i = \pi_i^*\mathcal{O}_{\mathbb{P}^i}(1)$, and $E_i$ be the exceptional divisor over $c_i$. Let $l_1$ be a general fiber of $\pi_2$, $l_2$ be a general line in a fiber of $\pi_1$, and $e_i$ be a fiber of $f$ over a point in $c_i$.  %
We have
\begin{align*}
    N^1(X) = \mathbb{Z} \cdot H_1 + \mathbb{Z} \cdot H_2 + \mathbb{Z} \cdot E_1 + \mathbb{Z} \cdot E_2, \hspace{.5cm}
    N_1(X) = \mathbb{Z} \cdot l_1 + \mathbb{Z} \cdot l_2 + \mathbb{Z} \cdot e_1 + \mathbb{Z} \cdot e_2.
\end{align*}

\textbf{Intersection Pairing:} $ H_i \cdot l_j = \delta_{ij}$, $E_i \cdot e_j = -\delta_{ij}$, and all other pairings are $0$.

\textbf{Anticanonical Divisor:} $-K_X = 2H_1 + 3H_2 - E_1 - E_2.$

\textbf{Effective Divisors:} $H_1, H_2, E_1, E_2, H_2-E_1, H_2-E_2$ are effective.  So is $H_1 + 2H_2 -E_1 -2E_2$.

\begin{lem}
A core of free curves on $X$ is given by
$$\mathscr{C}_X = \{ l_1, \ l_2, \ l_2-e_1, \ l_2-e_2, \ 2l_2-2e_1-e_2, \ l_1+l_2-e_1-e_2\}.$$
The only separating class $\alpha \in \mathscr{C}_X$ is $l_2 -e_1.$
\end{lem}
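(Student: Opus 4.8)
The strategy is the one outlined in Section 7.1: first show the displayed set $\mathscr{C}_X$ satisfies the defining properties of a core, identify the separating class, then prepare for the relation-checking step of Theorem \ref{Main Method}. I would begin by enumerating nef curve classes $\alpha = a l_1 + b l_2 - n_1 e_1 - n_2 e_2$ with $2 \leq -K_X . \alpha \leq 4$: nefness against the effective divisors $H_2 - E_1$, $H_2 - E_2$, $E_1$, $E_2$, and $H_1 + 2H_2 - E_1 - 2E_2$ (together with $H_1, H_2$) pins down the possible $(a,b,n_1,n_2)$, exactly as in the analogous Lemma \ref{curves1 in P1 x P2}/\ref{curves2 in P1 x P2} arguments and in case $4.3$. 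The classes that survive but are not in the list — multiples of conics, $l_1 + l_2$, and other sums like $2l_2 - e_1 - e_2$, $l_1 + 2l_2 - e_1 - 2e_2$, etc. — must be shown freely breakable, which for the $\mathbb{P}^1 \times \mathbb{P}^2$ blow-ups means exhibiting a splitting into two nef classes of lower degree each of $-K_X$-degree $\geq 2$; double covers of conics break automatically, and the remaining cases split using the fiber-surface description (fibers of $\pi_1$ are del Pezzo surfaces, so Theorem \ref{del pezzo curves thm} and Lemma \ref{surface lemma}-type identifications apply, as does Lemma \ref{curves3 in P1 x P2}).

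Next I would verify irreducibility of $\overline{\free}(X,\alpha)$ for each $\alpha \in \mathscr{C}_X$. The classes $l_1$, $l_2$, $l_2 - e_2$ are handled by Lemma \ref{curves1 in P1 x P2} (curves in fibers of $\pi_1$, or sections of $\pi_2$) and $l_1 + l_2 - e_1 - e_2$, $2l_2 - 2e_1 - e_2$ by the del Pezzo surface analysis in a fiber of $\pi_1$; here $c_2$ has bidegree $(1,0)$ so $\pi_1|_{c_2}$ is an isomorphism and $\pi_1|_{c_1}$ is a double cover, whose monodromy on the two points of $c_1$ in a general fiber is the full $S_2$ — this is what makes $\overline{\free}(X, l_2 - e_1)$ irreducible despite the choice of which point of $c_1 \cap (\text{fiber})$ the line meets, and simultaneously makes $\text{ev} : \overline{\free}_1(X, l_2 - e_1) \to X$ have disconnected (two-component) fibers, so $l_2 - e_1$ is the unique separating class. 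For $l_2 - e_1$ one also checks via Lemma \ref{curves1 in P1 x P2} that $n_1 = 1 \neq 2 = -K_X.\alpha$... rather, that the multiplicity pattern forces reducible evaluation fibers. I expect the dimension counts and the identification of fibers of the various evaluation maps to be routine once the surface picture is set up.

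The main obstacle will be the separating class $l_2 - e_1$ and its interaction with Theorem \ref{Main Method}(2): relations in $\mathbb{N}\mathscr{C}_X$ that involve $l_2 - e_1$ (for instance $(l_2 - e_1) + l_2 = (l_2 - e_2) + \dots$ type cancellations, or sums producing $2l_2 - 2e_1 - e_2$ and $l_1 + l_2 - e_1 - e_2$) will require knowing that the several main components of products $\prod_X \overline{\free}_2(X,\alpha_i)$ land in a single component of free curves. This is precisely what Corollary \ref{connected fibers fixall} guarantees outside relative cones of del Pezzo fibrations containing separating classes — and one must check $X$ has no such del Pezzo fibration, or that $l_2 - e_1$ is not contracted by one (the third extreme ray of $\overline{NE}(X)$ is $2l_2 - 2e_1 - e_2$ by Lemma \ref{P1 x P2 effective curves}-type analysis, associated to a conic bundle, not a del Pezzo fibration). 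So the hypotheses of Corollary \ref{connected fibers fixall} reduce to: $\mathscr{C}_X$ is a genuine core (established above) and there is no nef $-K_X$-conic interior to $\overline{NE}(X)$ — true since $\rho(X) = 4 \geq 3$ by Lemma \ref{interior conic lemma}. Once these are in place, steps (2)–(5) of the outline proceed mechanically: find the generating relations via Lemma \ref{Relations} using $D_1 = H_2 - E_1$ (which pairs negatively only with $l_2 - e_1$ among core classes), check each one by the gluing-and-smoothing and monodromy techniques (Lemmas \ref{Gluing}, \ref{GHS lemma}, \ref{Basic Properties Chains}), deduce Theorem \ref{Main Result}(2) from \ref{very free curves}, \ref{reducible fibers: 4 author result}, \ref{unique main component}, and determine representability via Lemma \ref{Gordan's Lemma} applied to the cone generated by $\mathscr{C}_X \setminus \{l_1 + l_2 - e_1 - e_2\}$.
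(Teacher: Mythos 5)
Your plan follows the paper's proof of this lemma essentially step for step: enumerate the nef classes of anticanonical degree $2$–$4$ by pairing against the effective divisors, break the classes outside the displayed list into free chains, and identify $l_2-e_1$ as the unique separating class via the degree-two cover $\pi_1|_{c_1}:c_1\to\mathbb{P}^1$ (monodromy makes $\overline{\free}(X,l_2-e_1)$ irreducible while the two points of $c_1$ in each fiber of $\pi_1$ make the evaluation fibers disconnected). That is exactly the paper's argument, and the forward-looking material about Theorem \ref{Main Method} is not needed for this lemma.

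Two slips are worth flagging. First, you assign $l_1+l_2-e_1-e_2$ to "the del Pezzo surface analysis in a fiber of $\pi_1$," but that class pairs to $1$ with $H_1$, so its representatives are sections of $\pi_1$ and do not lie in any fiber; the paper instead describes them as graphs over lines in $\mathbb{P}^2$ through $\pi_2(c_2)$, constrained to meet $c_1$ at the unique point over the residual intersection with $\pi_2(c_1)$ — this is what gives irreducibility of both the space and its evaluation fibers (note also that Lemma \ref{curves2 in P1 x P2} does not apply verbatim here, since it assumes a single blown-up curve with both $d_i>1$). Second, your claim that $X$ carries no del Pezzo fibration contracting $l_2-e_1$ is false: $\pi_1:X\to\mathbb{P}^1$ has degree-$6$ del Pezzo fibers and its relative cone contains the separating class $l_2-e_1$. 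This does not affect the present lemma, but it means the later appeal to Corollary \ref{connected fibers fixall} only covers classes outside $\overline{NE}(\pi_1)$; classes inside that cone must be handled directly via Theorem \ref{del pezzo curves thm}. Also, the sample "extra" classes you list ($l_1+l_2$, $l_1+2l_2-e_1-2e_2$) have anticanonical degree $5$ and so do not actually appear in the enumeration; the genuine extras are $2l_1$, $l_1+l_2-e_i$, $2l_2-2e_i$, and $2l_2-e_1-e_2$.
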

\begin{proof}
\textbf{Nef Curve Classes of Anticanonical Degree Between $2$ and $4$:}
If $\alpha = a l_1 + b l_2 - c e_1 -de_2$ is nef, then $a,b,c,d \geq 0$, $c \leq b$, $d \leq b$, and $a + 2b \geq c + 2d$. Provided
$$2 \leq -K_X \cdot \alpha = 2a+3b-c-d \leq 4,$$
the only possibilities for $\alpha$ are listed below:
\begin{align*}
    & l_1, \hspace{.5cm} l_2, \hspace{.5cm} 2l_1, \hspace{.5cm} l_2-e_1, \hspace{.5cm} l_2-e_2\\
    & l_1+l_2-e_1, \hspace{.5cm} l_1+l_2-e_2, \hspace{.5cm} l_1+l_2-e_1-e_2 \\&
    2l_2-2e_1, \hspace{.5cm} 2l_2-2e_2, \hspace{.5cm} 2l_2-e_1-e_2, \hspace{.5cm} 2l_2-2e_1-e_2
\end{align*}

\textbf{Freely Breakable Classes:}  We may break free curves of class $2l_1$, $l_1 + l_2 -e_i$, $2l_2 -2e_i$, and $2l_2 -e_1 -e_2$ into a chain of two free curves.  The space $\overline{\free}(X,2l_2 -2e_1)$ has two components, one of which parameterizes double covers.  Otherwise, the corresponding space of free curves is irreducible.  It follows that each component of free curves of the indicated classes contains free chains of length 2.

\textbf{Irreducibility of $\overline{\free}(X,\alpha)$ and fibers of $\text{ev}_\alpha$ over general $x\in X$:}
Clearly, $\overline{\free}(X,l_i)$ is irreducible and $\text{ev}_{l_i}$ has irreducible fibers.  These facts are also clear for $\alpha = l_2 -e_2$ and $\alpha = 2l_2 -2e_1 -e_2$.  However, while $\overline{\free}(X,l_2-e_1)$ is irreducible, the fibers of $\text{ev}_{l_2 -e_1}$ generally have two components, corresponding to the choice of an intersection point of a fiber of $\pi_1$ with $c_1$.  Curves $f:\mathbb{P}^1\rightarrow X$ of class $l_1 + l_2 -e_1 -e_2$ are sections of $\pi_1$ mapping isomorphically under $\pi_2$ onto a line in $\mathbb{P}^2$ meeting the point $\pi_2(c_2)$.  The image $\pi_2 \circ f (\mathbb{P}^1)$ also intersects $\pi_2(c_1)$ at a point $q$, which is the image of $q' \in c_1$.  The space of such lines is irreducible, and the only constraint on the map $\pi_2 \circ f$ is that $\pi_2 \circ f ( \pi_1(q')) = q$, that is, $f(\mathbb{P}^1)\cap c_1 = q'$.  This shows the space of maps has irreducible fibers over an irreducible domain, and is hence irreducible.  Similarly, $\text{ev}_{l_1 + l_2 -e_1 -e_2}$ has irreducible fibers.
\end{proof}

\begin{lem}
Relations in the monoid $\mathbb{N}\mathscr{C}_X$ are generated by:
\begin{enumerate}
    \item $l_2 + (2l_2 -2e_1 -e_2) = 2(l_2 -e_1) + (l_2 -e_2)$,
    \item $l_2 + (l_1 + l_2 -e_1 -e_2) = l_1 + (l_2 -e_1) + (l_2 -e_2)$,
    \item $(l_2 -e_1) + (l_1 +l_2 -e_1 -e_2) = l_1 + (2l_2 -2e_1 -e_2)$.
\end{enumerate}
For each relation $\sum \alpha_i = \sum \alpha_j'$, a main component of $\prod_X \overline{\free}_2(X,\alpha_i)$ lies in the same component of free curves as a main component of $\prod_X \overline{\free}_2(X,\alpha_j')$.
\end{lem}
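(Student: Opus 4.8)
The plan follows the two-step template used throughout the paper for the hypotheses of Theorem~\ref{Main Method}(1): first extract a generating set of monoid relations via Lemma~\ref{Relations}, then verify the common-component condition one relation at a time.

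For the combinatorial step I would order the six core classes with $c_1=l_2$ first, followed by $l_1,\ l_2-e_1,\ l_2-e_2,\ 2l_2-2e_1-e_2,\ l_1+l_2-e_1-e_2$ (these five span $N_1(X)$). A single dividing divisor suffices: $D_1=E_1+E_2-H_2$ has $D_1\cdot l_2=-1$, pairs to $0$ with $l_1,\ l_2-e_1,\ l_2-e_2$, and to $1$ with $2l_2-2e_1-e_2$ and $l_1+l_2-e_1-e_2$. Since these last two are the only core classes of positive $D_1$-degree, the minimality analysis of Lemma~\ref{Relations} produces exactly relations (1) and (2) as the minimal relations involving $l_2$, while the unique relation among the remaining five classes is $(l_2-e_1)+(l_1+l_2-e_1-e_2)=l_1+(2l_2-2e_1-e_2)$, namely (3). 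A short verification with the effective divisors $H_2-E_1,\ H_2-E_2,\ E_1,\ E_2$ confirms no smaller relation occurs.

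For the second step, all three total classes are freely breakable: $3l_2-2e_1-e_2$ and $l_1+2l_2-e_1-e_2$ have $-K_X$-degree $6$, $l_1+2l_2-2e_1-e_2$ has degree $5$, and $X$ has no $E5$ divisor, so Theorem~\ref{MovableBB} applies. Relation~(1) is handled inside a general fiber $F$ of the del Pezzo fibration $\pi_1$, which is $\mathrm{Bl}_{p_1,p_2,p_3}\mathbb{P}^2$ with $p_1,p_2$ over $c_1$ and $p_3$ over $c_2$: the unique nef lift of $2l_2-2e_1-e_2$ to $F$ is $2\ell-e_{p_1}-e_{p_2}-e_{p_3}$ (the lifts $2\ell-2e_{p_i}-e_{p_3}$ fail nefness against the $(-1)$-curve $\ell-e_{p_i}-e_{p_3}$), so a chain realizing the left side is a general line plus a conic through $p_1,p_2,p_3$, of total fiber class $-K_F$; a chain realizing the right side is $(\ell-e_{p_1})+(\ell-e_{p_2})+(\ell-e_{p_3})$ — the two $\ell-e_{p_i}$ components must lift to distinct classes in order to meet — again of total fiber class $-K_F$. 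Since $\overline{\free}(F,-K_F)$ is irreducible (Theorem~\ref{del pezzo curves thm}) and any rational $-K_F$-curve is free in $X$ (its normal sheaf in $X$ is an extension of $\mathcal O_{\mathbb P^1}$ by its globally generated normal sheaf in $F$), both main components lie in the component of $\overline{\free}(X,3l_2-2e_1-e_2)$ swept out by anticanonical curves of $\pi_1$-fibers. For relations (2) and (3), whose total classes $l_1+2l_2-\epsilon e_1-e_2$ $(\epsilon\in\{1,2\})$ are interior of degree $\ge 5$, I would instead prove $\overline{\free}(X,l_1+2l_2-\epsilon e_1-e_2)$ is irreducible, so the common-component condition holds trivially: projecting by $\pi_2$ exhibits these curves as sections of $\pi_1$ over the irreducible net of conics $Q\subset\mathbb P^2$ through the point $\pi_2(c_2)$, with fiber over a general $Q$ a space of free curves on the del Pezzo surface $\pi_2^{-1}(Q)$ — irreducible by Theorem~\ref{del pezzo curves thm} — and with the monodromy of the two points $Q\cap\pi_2(c_1)$ transitive because $Q$ varies in a net (Theorem~\ref{Monodromy}). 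Alternatively one runs the usual deformation argument: from a free chain realizing the right side, slide an attaching node onto $c_1$ or $c_2$, stay at a smooth point of $\overline{\mathcal M}_{0,0}(X)$ by Lemma~\ref{GHS lemma} and Proposition~\ref{kontsevich space H1}, smooth off the created exceptional component, and re-break, invoking Lemma~\ref{Basic Properties Chains} to identify the two main components.

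The main obstacle is the irreducibility input for relations (2) and (3): because $c_1$ has bidegree $(2,1)$ and $c_2$ bidegree $(1,0)$, neither falls under the general blow-up lemmas \ref{curves1 in P1 x P2}--\ref{curves3 in P1 x P2} (which require both bidegrees $>1$), so $\overline{\free}(X,l_1+2l_2-\epsilon e_1-e_2)$ must be analyzed by hand through the surface $\pi_2^{-1}(Q)$ and a monodromy argument for its intersections with the line $\pi_2(c_1)$. A secondary nuisance, already visible in relation~(1), is that $E_1$ meets a general $\pi_1$-fiber in two curves, so $N_1(F)\to N_1(X)$ is not injective and one must track which lift of a class to $F$ is the nef one; this is exactly where transitivity of the monodromy of $\pi_1|_{c_1}$ is used (Lemma~\ref{P1 x P2 monodromy}, valid because $X$ Fano forces $c_1$ to meet every $\mathbb P^2$-fiber in linearly general points).
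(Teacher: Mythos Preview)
Your proposal is correct and follows essentially the same approach as the paper: the combinatorial step uses the identical divisor $E_1+E_2-H_2$; relation~(1) is resolved inside a $\pi_1$-fiber by identifying both chain types with anticanonical curves (what the paper calls the component of curves nonsingular along $c_1$); and relations~(2)--(3) are handled by proving $\overline{\free}(X,l_1+2l_2-ke_1-e_2)$ is irreducible via the fibration over conics through $\pi_2(c_2)$ together with monodromy on $Q\cap\pi_2(c_1)$. One small correction: since $c_2$ is a $\pi_2$-fiber, the scheme $\pi_2^{-1}(Q)$ is reducible with $E_2$ as a component---the del Pezzo surface you want is the strict transform of $\mathbb{P}^1\times Q$, namely $\mathrm{Bl}_{2\ \mathrm{pts}}(\mathbb{P}^1\times\mathbb{P}^1)$, on which the lifts behave exactly as you describe.
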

\begin{proof}
\textbf{Relations:}  We apply Lemma \ref{Relations}.
\begin{itemize}
    \item Consider $D=-H_2 + E_1 + E_2$: $D$ only pairs negatively with $l_2$, and positively with $2l_2 - 2e_1 -e_2$ and $l_1 + l_2 -e_1 -e_2$.  The relations we obtain are
    \begin{itemize}
        \item $l_2 + (2l_2 -2e_1 -e_2) = 2(l_2 -e_1) + (l_2 -e_2)$
        \item $l_2 + (l_1 + l_2 -e_1 -e_2) = l_1 + (l_2 -e_1) + (l_2 -e_2)$
    \end{itemize}
    \item The remaining 5 curves span $N_1(X)$, and the only relation among them is $(l_2 -e_1) + (l_1 +l_2 -e_1 -e_2) = l_1 + (2l_2 -2e_1 -e_2)$
\end{itemize}

\textbf{Main Components:}
$\overline{\free}(X, l_1 + 2l_2 - ke_1 -e_2)$ is irreducible for $k=1,2$; in both instances, the space of curves is fibered over the space of conics in $\mathbb{P}^2$ passing through $\pi_2(c_2)$.  %
The fiber over a single conic has one or two components, corresponding to a choice of two or one intersection points with $\pi_2(c_1)$.  By Theorem \ref{Monodromy}, the each component of this fiber lie in the same component of $\overline{\free}(X,l_1 + 2l_2 - ke_1 -e_2)$, proving its irreducibility.

There are two components of $\overline{\free}(X, 3l_2 -2e_1 -e_2)$, corresponding to whether or not the curve is singular along intersection with $c_1$.  To deform a free chain of type $(l_2, 2l_2 - 2e_1 -e_2)$ to a free chain of type $(l_2 -e_1, l_2 -e_1, l_2 -e_2)$, it suffices to note that there exist chains of both types in the component of $\overline{\free}(X, 3l_2 -2e_1 -e_2)$ parameterizing curves nonsingular at their intersections with $c_1$.
\end{proof}

\begin{lem}
For each $\alpha \in \Nef_1(X)_{\mathbb{Z}}$, $\overline{\free}(X,\alpha)$ is nonempty.
\end{lem}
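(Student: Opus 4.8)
The goal is to show $\overline{\free}(X,\alpha) \neq \emptyset$ for every $\alpha \in \Nef_1(X)_{\mathbb{Z}}$, following the template used in the preceding cases (4.1, 4.2, 4.3). First I would invoke Theorem \ref{Representability of Free Curves}: the extreme rays of $\Nef_1(X)$ are spanned by classes of free rational curves, and in fact by the core classes listed above excluding the separating class $l_2 - e_1$; more precisely the extreme rays of $\Nef_1(X)$ are among $\{l_1,\ l_2-e_1,\ l_2-e_2,\ 2l_2-2e_1-e_2,\ l_1+l_2-e_1-e_2\}$ (and possibly a line class of anticanonical degree one, which must still be checked separately via Lemma \ref{Gordan's Lemma} applied to the generators of $\overline{NE}(X)$ from \cite{matsuki1995weyl}). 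By Lemma \ref{Gordan's Lemma} (Gordan's Lemma), the monoid $\Nef_1(X)_{\mathbb{Z}}$ is generated by these extreme ray generators together with the finitely many lattice points $\alpha = \sum_i a_{\alpha_i}\alpha_i$ with $0 \le a_{\alpha_i} < 1$, where $\alpha_i$ ranges over those generators. Since sums of free classes decompose as chains of free curves (and since $\overline{\free}(X,\alpha)$ is nonempty whenever $\alpha$ is a nonnegative integer combination of core classes, by the core property), it suffices to check that each such ``fractional-combination'' lattice point $\alpha$ is represented by a free rational curve.

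The concrete step is then a bounded enumeration. Writing $\alpha = a l_1 + b l_2 - c e_1 - d e_2$, the constraints $\alpha \in \Nef_1(X)$ read $a,b,c,d \ge 0$, $c \le b$, $d \le b$, and $a + 2b \ge c + 2d$ (these come from pairing with $E_1$, $E_2$, $H_2 - E_1$, $H_2 - E_2$, and $H_1 + 2H_2 - E_1 - 2E_2$). The bound $0 \le a_{\alpha_i} < 1$ forces $a \le 1$, $b \le 4$ or so, and small $c, d$; I would observe that for each such $\alpha$ one can peel off copies of $l_1$ and $l_2$ to reduce to a short explicit list, exactly as in the 4.3 argument, and then exhibit each residual $\beta$ as a nonnegative integer combination of the core classes $l_1$, $l_2$, $l_2 - e_1$, $l_2 - e_2$, $2l_2 - 2e_1 - e_2$, $l_1 + l_2 - e_1 - e_2$ — each of which is represented by a free curve, and whose sums are freely breakable, hence the combination is represented by a free curve (smoothing a chain). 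The nef inequalities are precisely what guarantee such a decomposition exists: $c \le b$ and $d \le b$ let us allocate the $e_i$-drops among the $l_2$-summands, while $a + 2b \ge c + 2d$ controls the heavier $2l_2 - 2e_1 - e_2$ and $l_1 + l_2 - e_1 - e_2$ pieces.

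I do not anticipate a serious obstacle here; this lemma is the routine ``nonemptiness'' coda that appears verbatim in every case, and the only mild care needed is (i) making sure the line classes of anticanonical degree one are handled — but by Lemma \ref{nonextreme lines} and the table in \cite{matsuki1995weyl}, for a Fano threefold of this deformation type the only such effective classes are extreme rays $e_i$ or $l_2 - 2e_1$, and each such class is represented by a (possibly non-free, but that is irrelevant for the extreme rays; for the interior claim only nef classes of degree $\ge 2$ matter) honest rational curve; and (ii) verifying the finite case-check closes. The single genuinely-nef integer point needing attention is likely $\alpha$ with $b$ maximal among the fractional combinations, and one checks directly that, e.g., a class like $l_1 + 2l_2 - 2e_1 - e_2 = l_1 + (2l_2 - 2e_1 - e_2)$ or $2l_2 - e_1 - e_2 = (l_2 - e_1) + (l_2 - e_2)$ is a sum of core classes. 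This completes the proof that $\overline{\free}(X,\alpha)$ is nonempty for all $\alpha \in \Nef_1(X)_{\mathbb{Z}}$, and together with the irreducibility statements established above finishes the verification of Theorem \ref{GMC4.5}.
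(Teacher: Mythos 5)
Your proposal is correct and follows essentially the same route as the paper: reduce via Theorem \ref{Representability of Free Curves} and Gordan's Lemma to the finitely many lattice points $\sum a_{\alpha_i}\alpha_i$ with $0 \le a_{\alpha_i} < 1$, then check each residual class is an integer combination of core classes (the paper organizes this enumeration by the values of $c_{l_1}$ and $c_{e_2}$, landing on the same residuals you name, e.g.\ $2l_2 - e_1 - e_2 = (l_2-e_1)+(l_2-e_2)$ and $3l_2-2e_1-e_2$). Your worry about degree-one nef classes is moot here, since one checks directly that $\Nef_1(X)$ contains no integral class of anticanonical degree one for this deformation type.
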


\begin{proof}
By Lemma \ref{Gordan's Lemma} and \ref{Representability of Free Curves}, we only need to check that integral linear combinations $\sum a_{\alpha_i} \alpha_i$ of elements $\alpha_i \in \mathscr{C}_X$ with coefficents $0\leq a_{\alpha_i} < 1$ are all representable by free rational curves.  Let $\alpha = c_{l_1} l_1 + c_{l_2} l_2 - c_{e_1} e_1 - c_{e_2} e_2 = \sum a_{\alpha_i} \alpha_i$.  It follows that $c_{l_1} = a_{l_1} + a_{l_1 + l_2 -e_1 -e_2}\in \{0,1\}$.  If $c_{l_1} = 0$, then $c_{e_2} = a_{l_2 -e_2} + a_{2l_2 -2e_1 -e_2} \in \{0,1\}$.  If $c_{e_2} = 0$ as well, $\alpha$ must be $0$.  If $c_{l_1} = 0$ and $c_{e_2} = 1$, we must have $a_{l_2} = 1 - a_{l_2 -e_2}$, so that $\alpha = 3l_2 -2e_1 -e_2$ or $\alpha = 2l_2 - e_1 -e_2$.  Thus, we may assume $c_{l_1} = 1$.

When $c_{l_1} = 1$, $c_{e_2} = a_{l_2 -e_2} + a_{2l_2 -2e_1 -e_2} + a_{l_1 + l_2 -e_1 -e_2} \in \{1,2\}$.  Similarly, $c_{e_1} = a_{l_2 -e_1} + 2a_{2l_2 -2e_1 -e_2} + a_{l_1 + l_2 -e_1 -e_2} \in \{1,2,3\}$.  Note $c_{l_2} \geq \max(c_{e_1},c_{e_2})$ as well.  If $c_{e_2} = 1$, $c_{e_1} \leq 2$, and each possible curve is representable by a free curve.  If $c_{e_2} = 2$, $c_{l_2} \geq 1 + c_{e_1}$, and each possible curve is representable by a free curve.
\end{proof}

\subsection*{4.6}

\textbf{Blow-up of three lines in $\mathbb{P}^3$:}
Let $f:X \rightarrow \mathbb{P}^3$ be a blow-up with center three disjoint lines $c_1, c_2,$ and $c_3$ in $\mathbb{P}^3$.

\begin{thm}\label{4.6thm}
For each nonzero $\alpha \in \Nef_1(X)_\mathbb{Z}$, $\overline{\free}(X,\alpha)$ is nonempty and irreducible.
\end{thm}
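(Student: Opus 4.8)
The plan is to follow the same template used for the other Picard rank 4 cases, in particular 4.3 and 4.1, since $X$ is a blow-up of $\mathbb{P}^3$ along three disjoint lines and hence is governed by Lemma \ref{lines_in_P3}. First I would set up the standard notation: let $H$ be the hyperplane class, $E_i = f^{-1}(c_i)$ the exceptional divisors, $l$ the class of a general line, and $e_i$ the class of a line in $E_i$, so that $N_1(X) = \mathbb{Z}l \oplus \bigoplus_i \mathbb{Z}e_i$ and $-K_X = 4H - 2E_1 - 2E_2 - 2E_3$. I would record the intersection pairing, the effective divisors (the $E_i$, the $H - E_i$, and the $2H - E_i - E_j$ coming from the quadrics through two of the lines, since two disjoint lines in $\mathbb{P}^3$ lie on a pencil of quadrics), and list the generators of $\overline{NE}(X)$ via \cite{matsuki1995weyl}.

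Next I would identify a core $\mathscr{C}_X$ of free curves. The candidates are the nef classes of anticanonical degree between $2$ and $4$: lines of class $l - e_i$ and $l$, bisecant classes $l - e_i - e_j$, and conic/cubic classes such as $2l - e_1 - e_2 - e_3$, $l + l - \text{(stuff)}$, etc. Using Lemma \ref{lines_in_P3}(1)-(2) I would verify each core class has irreducible $\overline{\free}$ with the stated fiber behavior, determine which classes are separating (those $l - e_i - e_j$ since a bisecant to two lines through a general point is unique — actually these have a \emph{unique} representative through a general point, so they are non-separating; the separating ones, if any, come from the degree-$3$ classes whose fibers split), and discard freely breakable non-core classes (double covers, $l_i + l_j$-type sums). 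Then I would compute a generating set of relations in the monoid $\mathbb{N}\mathscr{C}_X$ using Lemma \ref{Relations}, eliminating generators one at a time against the divisors $H - E_i$ and $2H - E_i - E_j$, and for each relation invoke either irreducibility of the relevant $\overline{\free}(X, \sum\alpha_i)$ or the ``move the node onto $c_i$'' smoothing argument, exactly as in cases 4.1--4.3, together with the final clause of Lemma \ref{lines_in_P3} for relations among classes satisfying \ref{lines_in_P3}(1). Condition \ref{Main Method}(2) is handled by Corollary \ref{connected fibers fixall} since $X$ admits no del Pezzo fibration with small fibers and no interior nef conic (being a blow-up of $\mathbb{P}^3$ along lines, $\rho(X) = 4 > 2$). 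Applying Theorem \ref{Main Method} then yields irreducibility of $\overline{\free}(X,\alpha)$ for all $\alpha$ in the appropriate shifted cone, and finally nonemptiness for every nonzero $\alpha \in \Nef_1(X)_\mathbb{Z}$ follows from Theorem \ref{Representability of Free Curves} plus Lemma \ref{Gordan's Lemma}, checking the finitely many ``fractional'' combinations $\sum a_{\alpha_i}\alpha_i$ with $0 \le a_{\alpha_i} < 1$ are representable.

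The main obstacle I anticipate is the bookkeeping around the quadrics through pairs of the blown-up lines: unlike the $\mathbb{P}^1 \times \mathbb{P}^1 \times \mathbb{P}^1$-type cases, the effective cone here has the extra rays $2H - E_i - E_j$, and a class $\alpha$ with $\pi_*\alpha$ a conic can fail to be nef (or fail the hypotheses of \ref{lines_in_P3}(2)) precisely when it meets a coplanar pair $\pi(E_i), \pi(E_j)$ too many times — but three general lines in $\mathbb{P}^3$ are pairwise non-coplanar, so in fact \emph{no} two of $\pi(E_1), \pi(E_2), \pi(E_3)$ are coplanar, which simplifies the coplanarity conditions in Lemma \ref{lines_in_P3} considerably. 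The real work is then just confirming the core is correct — i.e. that every component of $\overline{\free}(X)$ genuinely contains a chain of the listed free curves, which for the high-degree classes reduces via Theorem \ref{MovableBB} (no $E5$ divisor on $4.6$, so quintics already break) and the quartic analysis of Proposition \ref{breaking quartic curves} / Corollary \ref{breaking quartics cor}, noting $4.6$ is not $V_7$ and has $|-K_X|$ very ample — together with checking that the degree-$3$ core classes with possibly reducible evaluation fibers are not genuinely separating or, if they are, that condition \ref{connected fibers fixall} still applies because their sums land in the interior and smooth to very free curves.
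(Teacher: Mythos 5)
Your plan follows the paper's proof of this case essentially step for step: the core is $\mathscr{C}_X = \{\,l,\ l-e_i,\ l-e_i-e_j\,\}$, extracted via Lemma \ref{lines_in_P3}; there are no separating classes (the transversal to two disjoint lines through a general point is unique); the two generating relations are handled by irreducibility of the spaces of planar conics of classes $2l-e_i-e_j$ and $2l-e_i-e_j-e_k$ (bundles over $\mathbb{G}(1,2)$); and nonemptiness follows from Theorem \ref{Representability of Free Curves} plus Lemma \ref{Gordan's Lemma}. So the approach is the right one.

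Two concrete errors in your setup would, however, derail the computation if carried out as written. First, the anticanonical class is $-K_X = 4H - E_1 - E_2 - E_3$, not $4H - 2E_1 - 2E_2 - 2E_3$: the $E_i$ lie over curves, so by Lemma \ref{blow-up numbers} they enter with multiplicity one (the coefficient $2$ occurs only for point blow-ups). With your formula every degree $-K_X\cdot\alpha$ is off by $\sum_i \alpha\cdot E_i$; for instance $l-e_i$ would count as a conic rather than a cubic, and the list of candidate core classes of anticanonical degree between $2$ and $4$ comes out wrong. Second, your list of effective divisors omits the one that actually cuts out $\Nef_1(X)$: the strict transform $2H - E_1 - E_2 - E_3$ of the \emph{unique} quadric surface containing three pairwise disjoint lines. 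The classes $2H - E_i - E_j$ you cite are each $(2H - E_1 - E_2 - E_3) + E_k$ and impose strictly weaker conditions; without the constraint $2a \ge b_1 + b_2 + b_3$ a non-nef class such as $2l - 2e_1 - 2e_2 - e_3$ (which pairs to $-1$ with the quadric but nonnegatively with every divisor on your list, and has the correct anticanonical degree $3$) would survive your filter and pollute the core. Once these are fixed, the remainder of your outline --- breaking $2l - 2e_i - e_j - e_k$ inside the plane spanned by $c_i$ and the relevant points, running Lemma \ref{Relations} against the divisors $H - E_i - E_j$, and checking the fractional combinations --- reproduces the paper's argument; note also that since $\mathscr{C}_X$ has no separating classes the appeal to Corollary \ref{connected fibers fixall} is not even needed here.
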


\textbf{Generators for $N^1(X)$ and $N_1(X)$:} Let

\begin{center}
\begin{tabular}{ll}
 $H$ = hyperplane in $\mathbb{P}^3$, & $l$ = line in $\mathbb{P}^3$ \\ 
 $E_i$ = $f^{-1}(c_i)$, & $e_i$ = an $f$-fiber of a point in $c_i$  \\  
\end{tabular}
\end{center}

\textbf{Intersection Pairing:}
\begin{center}
\begin{tabular}{ll}
    $H \cdot l = -1$ &  $H \cdot e_j = 0$ \\
    $E_i \cdot l = 0$ &  $E_i \cdot e_j = -\delta_{ij}$ \\
\end{tabular}
\end{center}

\textbf{Anticanonical Divisor:}
\begin{align*}
    -K_X = 4H-E_1-E_2-E_3
\end{align*}

\textbf{Effective Divisors:}
There is a unique quadric containing any three disjoint lines in $\mathbb{P}^3$. Hence, the divisors $2H-E_1-E_2-E_3$, $H-E_1$, $H-E_2$, $H-E_3$, $E_1$, $E_2$, and $E_3$ are all effective.

\begin{lem}\label{4.6core}
A core of free curves on $X$ is given by
\begin{align*}
    \mathscr{C}_X = \{ l, l-e_i, l-e_i-e_j \}
\end{align*}

\noindent where $i \neq j$. There are no separating classes in $\mathscr{C}_X$.
\end{lem}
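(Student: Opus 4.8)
The plan is to follow the template used in the earlier Picard rank $4$ cases. I would first enumerate the nef curve classes $\alpha = dl - m_1 e_1 - m_2 e_2 - m_3 e_3$ with $2 \le -K_X \cdot \alpha \le 4$ by pairing against the extremal effective divisors: nefness is equivalent to $d \ge 0$, $m_i \ge 0$, $m_i \le d$ (from $E_i$ and $H - E_i$), and $m_1 + m_2 + m_3 \le 2d$ (from $2H - E_1 - E_2 - E_3$), and the inequality $2 \le 4d - m_1 - m_2 - m_3 \le 4$ then forces $d \le 2$. For $d = 1$ this yields precisely $l$, $l - e_i$, and $l - e_i - e_j$, the classes appearing in $\mathscr{C}_X$; for $d = 2$ one is forced to have each $m_i \le 2$ and $m_1 + m_2 + m_3 = 4$, giving the two families $2l - 2e_i - 2e_j$ and $2l - 2e_i - e_j - e_k$ with $\{i,j,k\} = \{1,2,3\}$, both of $-K_X$-degree $4$.

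Next I would show these two degree-$4$ families are freely breakable, so do not belong in the core. Since $X$ carries no $E5$ divisor, Theorem \ref{MovableBB} shows every component of $\overline{\free}(X,\alpha)$ with $-K_X \cdot \alpha \ge 5$ contains a length-$2$ free chain, and since $X$ is not the blow-up of a curve in a Fano threefold with an $E5$ divisor, Theorem \ref{improved MovableBB}(1) shows the only $-K_X$-degree $4$ component failing to contain a length-$2$ free chain is $\overline{\free}(X, l)$, the family of strict transforms of general lines under $X \to \mathbb{P}^3$ --- which is a core class. Thus every component of $\overline{\free}(X, 2l - 2e_i - 2e_j)$ and $\overline{\free}(X, 2l - 2e_i - e_j - e_k)$ contains a length-$2$ free chain; concretely $2l - 2e_i - e_j - e_k = (l - e_i - e_j) + (l - e_i - e_k)$ and $2l - 2e_i - 2e_j = 2(l - e_i - e_j)$. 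Because the only free nef classes of $-K_X$-degree $2$ and $3$ are $l - e_i - e_j$ and $l - e_i$, any length-$2$ free chain decomposing a $-K_X$-degree $4$ class other than $l$ has both components in $\mathscr{C}_X$, so a straightforward induction on $-K_X$-degree --- breaking higher-degree classes via Theorem \ref{MovableBB}, then reducing the degree-$4$ pieces --- verifies condition \ref{def core}(2).

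For condition \ref{def core}(1) and the nonexistence of separating classes, I would appeal to Lemma \ref{lines_in_P3}. The three core families parametrize, respectively, general lines, lines meeting $c_i$, and common transversals to the skew lines $c_i$ and $c_j$, each an irreducible nonempty locus in $\mathrm{Gr}(2,4)$; as $c_1, c_2, c_3$ are pairwise disjoint no two of them are coplanar, so Lemma \ref{lines_in_P3}(1) applies directly and gives irreducibility of $\overline{\free}(X,\alpha)$ for every $\alpha \in \mathscr{C}_X$. For $\alpha = l$ and $\alpha = l - e_i$ the same lemma gives irreducible general fibers of $\text{ev}: \overline{\free}_1(X,\alpha) \to X$ (a general point of $X$ lies on a $\mathbb{P}^2$ of lines, respectively on a pencil of lines meeting $c_i$), and for $\alpha = l - e_i - e_j$ one observes directly that a general point of $X$ lies on a unique transversal to the skew lines $c_i$ and $c_j$, so $\text{ev}$ is generically injective with general fiber a single reduced point. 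In every case the general fiber is irreducible, so no class of $\mathscr{C}_X$ is separating.

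The step I expect to require the most care is the degree-$4$ bookkeeping in the second paragraph: one must be sure that every component of $\overline{\free}(X, 2l - 2e_i - 2e_j)$ --- in particular the locus of double covers of transversals, and any family of conics tangent to both $c_i$ and $c_j$ --- indeed contains a length-$2$ free chain with core classes, rather than only the evident reducible component. This is not deep but easy to overlook; it is cleanly settled by Theorem \ref{improved MovableBB}(1) once one checks that $X$ is none of the exceptional threefolds appearing there, and otherwise by noting that an irreducible conic of class $2l - 2e_i - 2e_j$ cannot exist, since its plane would have to contain both of the skew lines $c_i$ and $c_j$.
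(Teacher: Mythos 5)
Your proposal is correct and follows essentially the same route as the paper: enumerate the nef classes of $-K_X$-degree $2$--$4$ by pairing against the extremal effective divisors, observe that the degree-$4$ classes other than $l$ break into chains of core conics (the paper does this by a direct planar degeneration---a conic of class $2l-2e_i-e_j-e_k$ lies in a plane containing $c_i$ and breaks as $(l-e_i-e_j)+(l-e_i-e_k)$---rather than by invoking Theorem \ref{improved MovableBB}, but your concrete decompositions are exactly the ones used), and then apply Lemma \ref{lines_in_P3} together with the uniqueness of the transversal to two skew lines through a general point to get irreducibility of the spaces and of the evaluation fibers. A minor point in your favor: the paper's own list of degree-$4$ nef classes omits $2l-2e_i-2e_j$, which you correctly include and dispose of by noting that its only free representatives are double covers of the transversals $l-e_i-e_j$.
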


\begin{proof}

\textbf{Nef Curve Classes of Anticanonical degree between $2$ and $4$:}
If $\alpha = a l - b e_1 -c e_2 -d e_3$ is a nef curve class of anticanonical degree between $2$ and $4$, then
\begin{align*}
    0\leq b,c,d \leq a, \hspace{.5cm} b+c+d \leq 2a, \hspace{.5cm} 2 \leq 4a-b-c-d \leq 4.
\end{align*}

\noindent Solving these inequalities, we obtain the classes shown above in $\mathscr{C}_X$, as well as $2l-2e_i-e_j-e_k$ for $i,j,k$ distinct.

However, the curves of class $2l-2e_i-e_j-e_k$ are freely breakable. Indeed, any such curve lies in a plane $A$ containing $c_i$ and it meets the unique point in each of $A \cap e_j$ and $A \cap e_k$. It may be broken to a union of curves of class $l-e_i-e_j$ and $l-e_i-e_k$ respectively.

\textbf{Irreducible Spaces and Fibers:}  The irreducibility of $\overline{\free}(X,\alpha)$ for each $\alpha \in \mathscr{C}_X$ follows from Lemma \ref{lines_in_P3}.  Lemma \ref{lines_in_P3} also shows that for $\alpha$ such that $-K_X . \alpha \geq 3$, the fibers of $\text{ev}:\overline{\free}_1(X,\alpha)\rightarrow X$ are irreducible.  For $\alpha$ of anticanonical degree 2, this fact follows because there is a unique line of class $l-e_i-e_j$ passing through any general point $p \in \mathbb{P}^3$.
\end{proof}

\begin{lem}\label{4.6relations}
The relations in the monoid $\mathbb{N} \mathscr{C}_X$ are generated by:

\begin{enumerate}
    \item $(l-e_i-e_j)+(l) = (l-e_i)+(l-e_j)$,
    \item $(l-e_i-e_j)+(l-e_k) = (l-e_i-e_k)+(l-e_j)$.
\end{enumerate}

\noindent for $i,j,k$ distinct. The corresponding moduli spaces of curves of class $2l-e_i-e_j$ and $2l-e_i-e_j-e_k$ are irreducible.
\end{lem}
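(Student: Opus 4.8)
Looking at this lemma, I need to prove that the relations in $\mathbb{N}\mathscr{C}_X$ for the blow-up of $\mathbb{P}^3$ along three disjoint lines are generated by the two listed families, and that the spaces of free curves of class $2l-e_i-e_j$ and $2l-e_i-e_j-e_k$ are irreducible.

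\textbf{Plan.} The proof has two essentially independent parts. For the first part (identifying a generating set of relations), I would apply Lemma \ref{Relations} directly. The core is $\mathscr{C}_X = \{l, l-e_i, l-e_i-e_j\}$, which has seven elements spanning the four-dimensional space $N_1(X)$. Following the algorithm of Lemma \ref{Relations}, I would order the classes so that the classes of smallest anticanonical degree come first, and choose separating divisors: the natural candidates are $D_{ij} = H - E_i - E_j$ (which pairs negatively with $l-e_i-e_j$ and to zero or one with the other generators) and perhaps $2H - E_1 - E_2 - E_3$. Pairing $l - e_i - e_j$ against $D_{ij}$ gives $-1$, while $l$ pairs to $1$ and $l - e_k$ (for $k \neq i,j$) pairs to $1$; this yields relations of the form $(l-e_i-e_j) + (\text{positive stuff}) = (\text{positive stuff})$, and the minimal ones are exactly $(l-e_i-e_j) + l = (l-e_i) + (l-e_j)$ and $(l-e_i-e_j) + (l-e_k) = (l-e_i-e_k) + (l-e_j)$ (and its symmetric variant). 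After using these to eliminate all the degree-$2$ classes $l - e_i - e_j$ from any relation, one is left with relations among $\{l, l-e_1, l-e_2, l-e_3\}$, which are linearly independent, hence admit no nontrivial relation. This closes part one. I should double-check that relation (2) with roles permuted doesn't generate anything new beyond what's listed, and that no relation is needed involving $2l - e_i - e_j - e_k$ directly (it isn't in the core, as shown in Lemma \ref{4.6core}).

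For the second part, irreducibility of $\overline{\free}(X, 2l - e_i - e_j)$ and $\overline{\free}(X, 2l - e_i - e_j - e_k)$ follows from Lemma \ref{lines_in_P3}(2): in each case $\pi_* \alpha$ is the class of a conic, $\alpha$ pairs to $0$ or $1$ with each $E_i$, meets no set of coplanar $\pi(E_i)$ with multiplicity exceeding $2$ (any two of the $c_i$ are skew, so no two are coplanar, and with all three meeting at most once the total is $\le 3 \le 4$; the ``coplanar subset'' hypothesis only concerns genuinely coplanar configurations, of which there are none here), $\alpha . E_i \le 1 = d_i$, and $\alpha . E_i \in \{0, 1\} = \{0, d_i\}$ for all $i$. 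So all four bullet hypotheses of \ref{lines_in_P3}(2) hold, giving irreducibility and nonemptiness of $\overline{\free}^{bir}$; and since $-K_X . \alpha \le 4$ and these are conic classes, a general irreducible conic of class $\alpha$ through a general point is free (it projects to a conic in $\mathbb{P}^3$ avoiding the codimension-two blown-up loci except at the prescribed incidences), so $\overline{\free}(X,\alpha) = \overline{\free}^{bir}(X,\alpha)$ is irreducible. Alternatively, and more concretely, one can just say: curves of class $2l - e_i - e_j$ (resp. $2l - e_i - e_j - e_k$) are strict transforms of conics in $\mathbb{P}^3$ meeting $c_i$ and $c_j$ each once (resp. also $c_k$), and since a conic spans a plane, the incidence with each $c_m$ is a single general point, so $\overline{\free}(X,\alpha)$ is dominated by an irreducible incidence variety over (an open subset of) the product of the relevant $c_m$'s together with the choice of conic through the selected points.

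\textbf{The ``main components'' claim for the relations.} I also need to verify the last assertion: for each listed relation $\sum\alpha_i = \sum\alpha_j'$, a main component of $\prod_X \overline{\free}_2(X,\alpha_i)$ lies in the same component of $\overline{\free}(X)$ as a main component of $\prod_X \overline{\free}_2(X,\alpha_j')$. For relation (1), $2l - e_i - e_j$ has $-K_X$-degree $6$; irreducibility of $\overline{\free}(X, 2l-e_i-e_j)$ from part two means both sides' main components land in this single component — done. For relation (2), both sides sum to $2l - e_i - e_j - e_k$; irreducibility of $\overline{\free}(X, 2l-e_i-e_j-e_k)$ similarly forces both main components into the same component. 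The expected main obstacle is largely bookkeeping: making sure the application of Lemma \ref{Relations}'s minimality condition genuinely produces no further relations, i.e. that after eliminating the $l - e_i - e_j$ the remaining classes really are independent (they are: $l, l-e_1, l-e_2, l-e_3$ form a basis of $N_1(X)$), and confirming that all hypotheses of \ref{lines_in_P3}(2) are satisfied with room to spare — both are routine given the geometry of three skew lines in $\mathbb{P}^3$.
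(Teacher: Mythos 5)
Your proposal is correct and follows essentially the same route as the paper: Lemma \ref{Relations} with the divisors $H - E_i - E_j$ to produce the generating set, and a direct parameterization (your "more concrete" alternative; the paper fibers the conics over the space of planes in $\mathbb{P}^3$, which is equivalent to your incidence-variety description, and your appeal to Lemma \ref{lines_in_P3}(2) is also a valid shortcut the paper uses in other rank-$\geq 4$ cases). One small bookkeeping point: you cannot literally "eliminate all the degree-$2$ classes," because removing the third class $l-e_2-e_3$ via relation (2) reintroduces one of the first two, e.g.\ $(l-e_2-e_3)+(l-e_1) = (l-e_1-e_2)+(l-e_3)$, so the naive elimination does not visibly terminate. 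The paper's argument eliminates only $l-e_1-e_2$ and $l-e_1-e_3$ and then notes that the remaining five classes $l$, $l-e_1$, $l-e_2$, $l-e_3$, $l-e_2-e_3$ span the four-dimensional $N_1(X)$, hence satisfy a single relation, namely $(l-e_2-e_3)+(l)=(l-e_2)+(l-e_3)$, an instance of (1). This is exactly the issue your "double-check" flags, and the conclusion is unaffected.
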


\begin{proof}

Consider a relation of the form $\sum \alpha_i = \sum \beta_j$ in $\mathbb{N} \mathscr{C}_X$. Note that, out of all curve classes in the core $\mathscr{C}_X$, the divisor $H-E_1-E_2$ pairs negatively with $l-e_1-e_2$, positively with $l$ and with $l-e_3$, and to zero otherwise. Thus, any occurrence of $l-e_1-e_2$ is accompanied by an appearance of $l$ or $l-e_3$ on the same side of the relation. Use relations $(1)$ and $(2)$ in the case $i=2, j=1, k=3$ to remove all instances of $l-e_1-e_2$ from the relation. 

Now we assume that the relation does not contain $l-e_1-e_2$. Then $H-E_1-E_3$ pairs negatively with $l-e_1-e_3$, positively with $l$ and with $l-e_2$, and to zero otherwise. Thus, we may use relations $(1)$ and $(2)$ in the case $i=3, j=1, k=2$ to remove all instances of $l-e_1-e_3$ without introducing any new curves of class $l-e_1-e_2$.

Therefore we assume the relation contains only $l$, $l-e_i$, and $l-e_2-e_3$. These five class span a four-dimensional space, so satisfy a single relation $(l-e_2-e_3)+(l)=(l-e_2)+(l-e_3)$. This proves that $(1)$ and $(2)$ generate the monoid of relations.

The free curves of class $2l-e_i-e_j$ are conics contained in an arbitrary plane $A$ meeting the two points $A \cap c_i$ and $A \cap c_j$. Thus, they are parameterized by an open subset of a $\mathbb{P}^3$-bundle over $\mathbb{G}(1,2)$.  Similarly, the curves of class $2l-e_i-e_j-e_k$ are parameterized by an open subset of a $\mathbb{P}^2$-bundle over $\mathbb{G}(1,2)$.
\end{proof}

\begin{lem}
For each nonzero $\alpha \in \Nef_1(X)_\mathbb{Z}$, $\overline{\free}(X,\alpha)$ is nonempty.
\end{lem}

\begin{proof}
By Theorem \ref{Representability of Free Curves} and Lemma \ref{Gordan's Lemma}, we only need to check that integral linear combinations $\sum a_{\alpha_i} \alpha_i$ of elements $\alpha_i \in \mathscr{C}_X$ with coefficents $0\leq a_{\alpha_i} < 1$ are all represented by free rational curves.  The possibilities for $\alpha$ are, up to symmetry, $4l - 2e_1 -2e_2 -2e_3$, $3l -2e_1 -2e_2 -e_3$ $3l -2e_1 -e_2 -e_3$, $2l-e_1 -e_2 -e_3$, and $2l -e_1 -e_2$.  All are clearly represented by free curves.
\end{proof}

\subsection*{4.7}
\textbf{Blow-up of curves of bidegree $(1,0)$ and $(1,1)$ in $\mathbb{P}^1 \times \mathbb{P}^2$:}
Let $W \subset \mathbb{P}^2 \times \mathbb{P}^2$ be a smooth divisor of bidegree $(1,1)$ and let $X \rightarrow W$ be the blow-up with center two disjoint curves $x_1$ and $x_2$ of bidegree $(0,1)$ and $(1,0)$. We proceed to find an alternate description of $W$, more suitable for our purposes.

By \cite[Table~4]{mori1981classification}, $X$ is the blow-up of $Y = \mathbb{P}^1 \times \mathbb{F}_1$ along a smooth irreducible curve $c$. We claim that $c$ has bidegree $(1,1)$ in the projection to $\mathbb{P}^1 \times \mathbb{P}^2$. By \cite{mori1981classification}, $b_3(X) = b_3(Y) = 0$ and $(-K_X)^3 = 36$, $(-K_Y)^3 = 48$.  Thus by Lemma \ref{blow-up numbers} $g(c) = 0$ and that $-K_Y . c = 5$. The only two possibilities for the class of $c$ are $l_1+l_2$ and $2l_2-e$, where $e$ is the exceptional curve of the blow-up $Y\rightarrow \mathbb{P}^1 \times \mathbb{P}^2$.  However, the blow-up of a curve of class $2l_2-e$ in $Y$ is not Fano, as such a curve would meet the locus in $Y$ swept out by anticanonical lines transversely.  

Summing up, we have obtained the following description of $X$ which we will use throughout the case: it is the blow-up $f:X \rightarrow \mathbb{P}^1 \times \mathbb{P}^2$ of $\mathbb{P}^1 \times \mathbb{P}^2$ along disjoint curves $c_1$ and $c_2$ of bidegree $(1,0)$ and $(1,1)$ respectively. Let $f_1:X \rightarrow \mathbb{P}^1$ and $f_2:X \rightarrow \mathbb{P}^2$ be $f$ composed with the corresponding projections.

\begin{thm}\label{4.7thm}
For all nonzero $\alpha \in \Nef_1(X)_\mathbb{Z}$, $\overline{\free}(X,\alpha)$ is nonempty and irreducible.
\end{thm}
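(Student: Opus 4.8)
The plan is to follow the template established in every preceding case: exhibit a core of free curves $\mathscr{C}_X$, classify the separating classes, find a generating set of relations in $\mathbb{N}\mathscr{C}_X$, verify hypothesis \ref{Main Method}(1), and then invoke Corollary \ref{connected fibers fixall} (together with Theorem \ref{unique main component}) to get hypothesis \ref{Main Method}(2) automatically outside any del Pezzo fibration relative cone. Since $X$ is a blow-up $f:X\to\mathbb{P}^1\times\mathbb{P}^2$ of two disjoint curves $c_1,c_2$ of bidegree $(1,0)$ and $(1,1)$, both of which have a factor mapping surjectively to $\mathbb{P}^1$, Lemma \ref{P1 x P2 monodromy} applies and the monodromy of $c_i\subset\mathbb{P}^1\times\mathbb{P}^2\to\mathbb{P}^1$ is handled (here $d_1=1$ for both, so in fact there is no monodromy to worry about). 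First I would record the standard basis $N^1(X)=\mathbb{Z}H_1\oplus\mathbb{Z}H_2\oplus\mathbb{Z}E_1\oplus\mathbb{Z}E_2$ and $N_1(X)=\mathbb{Z}l_1\oplus\mathbb{Z}l_2\oplus\mathbb{Z}e_1\oplus\mathbb{Z}e_2$, compute $-K_X=2H_1+3H_2-E_1-E_2$, list the extremal effective divisors (which include $H_2-E_1$, $H_2-E_2$, and the quadric-type divisor sweeping out low-degree curves), and determine $\overline{NE}(X)$ from \cite{matsuki1995weyl}; I expect the extreme rays to be $e_1$, $e_2$, $l_1-e_2$ (the strict transform of a ruling of $\mathbb{P}^1$ meeting $c_2$), and the $C1$-type ray in the $l_2$ direction — this is the only step where I must be careful, since the conic-bundle structure $X\to\mathbb{F}_1$ or $X\to\mathbb{P}^2$ produces a del Pezzo fibration or conic fibration whose relative cone must be identified.

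Next I would enumerate nef curve classes $\alpha$ with $2\le -K_X\cdot\alpha\le 4$ by solving the inequalities $\alpha\cdot D\ge 0$ for each extremal effective $D$, exactly as in cases 4.5 and 4.6. The candidate core should be something like $\mathscr{C}_X=\{l_1,\ l_2,\ l_2-e_1,\ l_2-e_2,\ l_1+l_2-e_1-e_2,\ 2l_2-e_1-e_2\}$ or a close variant, with the classes that break freely (double covers $2l_i$, sums like $l_1+l_2-e_i$, $2l_2-2e_i$) discarded via the surface-level arguments and Lemma \ref{curves1 in P1 x P2}/\ref{curves2 in P1 x P2}/\ref{curves3 in P1 x P2}. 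I would check irreducibility of $\overline{\free}(X,\alpha)$ for each core class using those lemmas and Theorem \ref{del pezzo curves thm} applied fiberwise to $f_1$ (whose general fiber is a del Pezzo surface after blowing up the points $c_i$ meets it), and identify the separating classes as those core classes for which $\text{ev}:\overline{\free}_1(X,\alpha)\to X$ has reducible general fiber — typically the class whose representatives are sections of $f_1$ meeting $c_2$, or lines in a fiber meeting $c_1$ at a choice of point. Then, following Lemma \ref{Relations}, I would order the core classes, pick witnessing divisors $D_i$ (e.g. $H_2-E_1-E_2$ or $-H_2+E_1+E_2$), read off the minimal relations, and for each relation $\sum\alpha_i=\sum\alpha_j'$ verify \ref{Main Method}(1) either by irreducibility of $\overline{\free}(X,\sum\alpha_i)$ or by constructing a nodal curve $Z\to\overline{\free}(X)$ through a smooth point of $\overline{\mathcal{M}}_{0,0}(X)$ joining the two main components — the node-on-$c_i$ trick used repeatedly in cases 4.5 and 5.1.

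Finally, nonemptiness of $\overline{\free}(X,\alpha)$ for all nonzero $\alpha\in\Nef_1(X)_{\mathbb{Z}}$ follows from Theorem \ref{Representability of Free Curves}, Lemma \ref{Gordan's Lemma}, and checking the finitely many fractional combinations $\sum a_{\alpha_i}\alpha_i$ with $0\le a_{\alpha_i}<1$ lie in $\mathbb{N}\mathscr{C}_X$, as in every prior case. Since $-K_X\cdot l_1=2$ is small and $X$ carries a conic-bundle/del Pezzo structure over $\mathbb{P}^2$, I expect $\tau=l_1$ (or the smallest class) to be a valid shift and $\overline{\free}(X,\alpha)$ — not merely $\overline{\free}^{bir}(X,\alpha)$ — to be irreducible, since there should be no nef $-K_X$-conic interior to $\overline{NE}(X)$ (by Lemma \ref{interior conic lemma}, $\rho(X)=4>2$) and no del Pezzo fibration with degree $\le 3$ fibers carrying a separating class to obstruct \ref{Main Method}(2). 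The main obstacle will be pinning down the conic-bundle structure and its discriminant precisely enough to confirm that the relative cone of any del Pezzo fibration contains no separating class — i.e., that Corollary \ref{connected fibers fixall} applies to every $\alpha\in\Nef_1(X)$ with no exceptional cone to carve out — and, relatedly, handling the one or two separating core classes where $\text{ev}$ has reducible fibers by the monodromy/gluing lemmas (\ref{Gluing}, \ref{gluing del Pezzo}, \ref{Monodromy}) to show those reducible fibers still glue into a single component.
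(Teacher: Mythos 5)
Your proposal follows essentially the same template the paper uses for case 4.7: exhibit the core (the paper's is $\{l_1,\ l_2,\ l_2-e_1,\ l_2-e_2,\ l_1+l_2-e_1-e_2\}$, five classes spanning a four-dimensional space, hence a single relation $(l_1+l_2-e_1-e_2)+(l_2)=(l_2-e_1)+(l_2-e_2)+(l_1)$), verify irreducibility and the relation, and finish with Gordan's Lemma. The only details that play out differently are in your favor or easily repaired: there are in fact no separating classes (both $c_1$ and $c_2$ map with degree one to $\mathbb{P}^1$, so each evaluation fiber is irreducible), $2l_2-e_1-e_2$ is freely breakable and drops out of the core, and since Lemmas \ref{curves1 in P1 x P2}--\ref{curves3 in P1 x P2} assume a single blown-up curve with both bidegrees positive (resp.\ $>1$) they do not literally apply here, so the paper instead argues directly by describing curves of class $l_1+l_2-e_1-e_2$ and $l_1+2l_2-e_1-e_2$ as graphs of morphisms $\mathbb{P}^1\to\mathbb{P}^2$ fibered over $c_1\times c_2$, and breaks $l_1+l_2-e_i$ via a $\mathbb{C}^*$-action on the fibers of $f_1$.
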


\textbf{Generators for $N^1(X)$ and $N_1(X)$:}
Let

\begin{center}
\begin{tabular}{ll}
 $H_1$ = $f_1^*(pt)$, & $l_1$ = $\mathbb{P}^1 \times pt$ \\ 
 $H_2$ = $f_2^*(l)$, & $l_2$ = $pt \times l$ \\ 
 $E_i$ = $f^{-1}(c_i)$, & $e_i$ = an $f$-fiber of a point in $c_i$  \\  
\end{tabular}
\end{center}

where $l$ is the class of a line in $\mathbb{P}^2$.

\textbf{Intersection Pairing:}
\begin{center}
\begin{tabular}{ll}
    $H_i \cdot l_j = \delta_{ij}$ &  $H_i \cdot e_j = 0$ \\
    $E_i \cdot l_j = 0$ &  $E_i \cdot e_j = -\delta_{ij}$ \\
\end{tabular}
\end{center}

\textbf{Anticanonical Divisor:}
\begin{align*}
    -K_X = 2H_1+3H_2-E_1-E_2
\end{align*}

\textbf{Effective Divisors:}
The divisors $H_2-E_1$, $H_2-E_2$, $H_1+H_2-E_1-E_2$, $E_1$, and $E_2$ are effective

\begin{lem}\label{4.7core}
A core of free curves on $X$ is given by
\begin{align*}
    \mathscr{C}_X = \{ l_1, l_2, l_2-e_1, l_2-e_2, l_1+l_2-e_1-e_2 \}.
\end{align*}

\noindent There are no separating classes in $\mathscr{C}_X$.
\end{lem}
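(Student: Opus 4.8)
The claim to prove is Lemma~\ref{4.7core}, asserting that $\mathscr{C}_X = \{ l_1, l_2, l_2-e_1, l_2-e_2, l_1+l_2-e_1-e_2 \}$ is a core of free curves on $X = 4.7$, with no separating classes. The plan follows the template used in every other case of the paper: first establish that these six (five) classes are the nef curve classes of anticanonical degree $2$ through $4$ up to freely breakable ones, verify each $\overline{\free}(X,\alpha)$ for $\alpha\in\mathscr{C}_X$ is nonempty and irreducible, show every freely breakable class of low degree decomposes into core classes, and then invoke Movable Bend-and-Break (Theorem~\ref{MovableBB}, improved by Theorem~\ref{improved MovableBB}) to reduce every component of $\overline{\free}(X)$ to a chain of free curves of degree $\leq 5$, hence of core classes.

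First I would enumerate nef classes. Writing $\alpha = a l_1 + b l_2 - c e_1 - d e_2$, nefness against the effective divisors $H_2-E_1$, $H_2-E_2$, $H_1+H_2-E_1-E_2$, $E_1$, $E_2$ gives $a,b,c,d\geq 0$, $c\leq b$, $d\leq b$, and $a+b\geq c+d$. Imposing $2\leq -K_X\cdot\alpha = 2a+3b-c-d\leq 4$ leaves a short finite list; besides the five classes in $\mathscr{C}_X$ one gets double covers like $2l_1$, $2l_2-2e_i$, sums such as $l_1+l_2-e_i$, $2l_2-e_1-e_2$, and a class like $2l_2-2e_1-e_2$ (and its $e_1\leftrightarrow e_2$ partner). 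Each of these extra classes must be shown freely breakable: $2l_1$ is never irreducible; $l_1+l_2-e_i$ breaks into $l_1$ and $l_2-e_i$; $2l_2-2e_i$ breaks into two copies of $l_2-e_i$ (with a component of double covers, which contains a free chain); $2l_2-e_1-e_2$ breaks into $l_2-e_1$ and $l_2-e_2$; and $2l_2-2e_1-e_2$ breaks, working inside a fiber of $f_1\cong\mathbb{P}^2$ blown up at the two points $c_1\cap$ fiber and $c_2\cap$ fiber, into $l_2-e_1$ and $l_2-e_1-e_2$ (or $l_2-e_1$ twice plus $\ldots$, picking the decomposition that lands in $\mathscr{C}_X$). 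Here I would use the same planar-curve manipulations as in cases $4.5$ and $4.6$.

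Next, irreducibility and fibers. The classes $l_1, l_2$ give $\overline{\free}(X,l_i)$ obviously irreducible with irreducible evaluation fibers (curves in fibers of $f_2$, resp. lines in fibers of $f_1$). The class $l_2-e_2$ (with $c_2$ of bidegree $(1,1)$, embedding in $\mathbb{P}^2$) is the class of a line in a $\mathbb{P}^2$-fiber through a single point of $c_2$; since $c_2\to\mathbb{P}^1$ is degree one there is no monodromy issue, so $\overline{\free}(X,l_2-e_2)$ is irreducible with irreducible general fibers. The class $l_2-e_1$ again parameterizes lines through a point of $c_1$ in a $\mathbb{P}^2$-fiber; $c_1$ has bidegree $(1,0)$, so it is a section of $f_1$ and the space is irreducible, with irreducible evaluation fibers (only one blown-up point per fiber). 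The class $l_1+l_2-e_1-e_2$ parameterizes sections of $f_1$ mapping isomorphically onto a line in $\mathbb{P}^2$ through the point $f_2(c_2\cap\text{that fiber})$ — wait, more precisely a line meeting $f_2(c_1)$ and $f_2(c_2)$ appropriately; the monodromy/incidence argument as in the analogous class of case $4.5$ shows the space is irreducible with irreducible general fibers, so none of the five classes is separating. The hypotheses of Definition~\ref{def core}(2) are then met via Theorem~\ref{MovableBB}/\ref{improved MovableBB}: any component $M\subset\overline{\free}(X)$ of degree $\geq 5$ (or $\geq 6$; $X=4.7$ has no $E5$ divisor per the classification, and one checks no $E4$ either) contains a free chain, and the degree-$4$ exception is handled by Proposition~\ref{breaking quartic curves} or Corollary~\ref{breaking quartics cor} since $X$ has a conic-bundle structure $f_1:X\to\mathbb{P}^1$... actually $X\to W\to\mathbb{P}^2\times\mathbb{P}^2$ also exhibits conic bundle structures; in any case the general strategy from Section~7.1 applies verbatim. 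Inductively all components reduce to chains of core classes.

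The main obstacle I expect is the degree-$4$ (and the breaking of $2l_2-2e_1-e_2$) step: showing that the unique potentially-problematic quartic component actually contains a free chain requires either matching it against the explicit exceptions in Theorem~\ref{improved MovableBB}(1) — strict transforms of lines under some $X\to\mathbb{P}^3$, which here one must rule out or identify — or an explicit fiber-by-fiber argument using the conic bundle $\pi_*:\overline{\free}(X)\to\overline{\free}(S)$ and Corollary~\ref{breaking quartics cor}. This is the one place where a genuine case-specific computation (tracking which components of $\overline{M}_{0,0}(X,\alpha)$ for $-K_X\cdot\alpha=4$ fail to break) is unavoidable, and it is where I would spend the bulk of the effort; everything else is bookkeeping against the intersection table and applications of Theorem~\ref{del pezzo curves thm}, Theorem~\ref{Monodromy}, and Lemma~\ref{lines_in_P3}-type arguments already developed.
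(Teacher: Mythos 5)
Your proposal is correct and follows essentially the same route as the paper: enumerate the nef classes of anticanonical degree $2$--$4$, show the non-core ones ($2l_1$, $2l_2-2e_i$, $2l_2-e_1-e_2$, $l_1+l_2-e_i$) are freely breakable, and verify irreducibility of each $\overline{\free}(X,\alpha)$ and of its evaluation fibers. One bookkeeping slip: the class $2l_2-2e_1-e_2$ that you single out as the main obstacle is not nef here, since it pairs to $-1$ with the effective divisor $H_1+H_2-E_1-E_2$ (the constraint $c+d\leq a+b$ rules it out; the situation differs from case $4.5$ precisely because $c_1$ has bidegree $(1,0)$), so that worry is vacuous and your proposed decomposition into $l_2-e_1$ and the non-nef class $l_2-e_1-e_2$ never arises. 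For the record, the paper breaks $l_1+l_2-e_i$ by a $\mathbb{C}^*$-action on fibers of $f_1$ and treats $l_1+l_2-e_1-e_2$ as graphs of degree-one maps $\mathbb{P}^1\to\mathbb{P}^2$ fibered over $c_1\times c_2$ with linear incidence conditions, identifying the fiber through a general point as a $\mathbb{P}^1$; this is the clean version of the incidence argument you sketch.
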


\begin{proof}

\textbf{Nef Curve Classes of Anticanonical degree between $2$ and $4$:}
If $\alpha = a l_1 + b l_2 -c e_1 -d e_2$ is a nef curve class of anticanonical degree between $2$ and $4$, then
\begin{align*}
    0\leq c \leq b, \hspace{.5cm} 0 \leq d \leq b, \hspace{.5cm} c+d \leq a+b, \hspace{.5cm} 2 \leq 2a+3b-c-d \leq 4.
\end{align*}

\noindent Solving these inequalities, we obtain the classes shown above in $\mathscr{C}_X$, as well as $2l_1$, $2l_2-e_1-e_2$, $2l_2-2e_1$, $2l_2-2e_2$, $l_1+l_2-e_1$, and $l_1+l_2-e_2$.

The curves of class $2l_1, 2l_2-2e_1$, and $2l_2-2e_2$ are all double covers. Moreover, it is clear that the curves of class $2l_2-e_1-e_2$ are freely breakable.

Pick a curve $x$ of class $l_1+l_2-e_1$. Taking the limit of the $\mathbb{C}^*$-action on $X$ fixing two fibers of $f_1$, one of which contains the intersection point of $x$ with the exceptional divisor $E_1$, we see that $x$ freely breaks as a sum of two curves of class $l_1$ and $l_2-e$. A similar argument shows that the curves of class $l_1+l_2-e_2$ are freely breakable as well.

\textbf{Irreducible Spaces and Fibers:}
The free curves of class $l_1$ are parameterized by an open subset of $\mathbb{P}^2$.  The free curves of class $l_2$ are parameterized by an open subset of $\mathbb{P}^1 \times \mathbb{G}(1,2)$.  The space of free curves of class $l_2-e_1$ and $l_2-e_2$ each fiber over $\mathbb{P}^1$ by their image under the map $f_1$, with fiber consisting of an open subset of a pencil of lines.  The above descriptions show these classes are not separating.

The free curves of class $l_1+l_2-e_1-e_2$ are precisely the graphs of degree one maps $g: \mathbb{P}^1 \rightarrow \mathbb{P}^2$ such that there exists $p \in c_1$ with $g(f_1(p)) = f_2(p)$ and $q \in c_2$ with $g(f_1(q)) = f_2(q)$. Thus, the space of such curves fibers over $c_1 \times c_2$ with fiber consisting of the space of degree one maps $\mathbb{P}^1 \rightarrow \mathbb{P}^2$ satisfying four linear conditions on the coefficients. Thus $\overline{\free}(X,l_1 + l_2 -e_1 -e_2)$ is irreducible.

Pick a general point $r \in X$. Any curve of class $l_1+l_2-e_1-e_2$ passing through $r$ projects under $f_2$ to the line $x$ in $\mathbb{P}^2$ between the points $f_2(c_1)$ and $f_2(r)$. Note that there exists a unique point $(s,t) \in c_2 \subset \mathbb{P}^1 \times \mathbb{P}^2$ for which $t \in x$. Then the free curves of class $l_1+l_2-e_1-e_2$ through $r$ may be identified with the graphs of degree one morphisms $g:\mathbb{P}^1 \rightarrow x \subset \mathbb{P}^2$ for which $g(f_1(r)) = f_2(r)$ and $g(s) = t$. This space is isomorphic to $\mathbb{P}^1$. In particular, $l_1+l_2-e_1-e_2$ is not a separating class.
\end{proof}

\begin{lem}\label{4.7relations}
The relations in the monoid $\mathbb{N} \mathscr{C}_X$ are generated by:
\begin{align*}
    (l_1+l_2-e_1-e_2)+(l_2) = (l_2-e_1)+(l_2-e_2)+(l_1).
\end{align*}
\noindent The corresponding moduli spaces of curves of class $l_1+2l_2-e_1-e_2$ is irreducible.
\end{lem}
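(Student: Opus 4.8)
The plan is to dispatch the two assertions of the lemma separately: first that the displayed identity generates the monoid of relations among the core classes, and then that $\overline{\free}(X,l_1+2l_2-e_1-e_2)$ is irreducible. The second assertion is exactly what is needed to verify condition \ref{Main Method}(1) for this relation, following the strategy noted in the remark after Theorem \ref{Main Method}: once $\overline{\free}(X,\sum\alpha_i)$ is irreducible, the main components on the two sides of the relation automatically land in the same (only) component.

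For the relations, the five elements of $\mathscr{C}_X$ listed in Lemma \ref{4.7core} span the rank-$4$ lattice $N_1(X)$, so they satisfy a one-dimensional space of $\mathbb{Z}$-linear relations; a direct computation shows its primitive generator is the additive identity $(l_1+l_2-e_1-e_2)+l_2=(l_2-e_1)+(l_2-e_2)+l_1$. Since $|\mathscr{C}_X|-\rho(X)-1=0$, Lemma \ref{Relations} needs no auxiliary divisors and asserts that the relation monoid is generated by the unique relation among all the generators, which is precisely this one. This settles the first claim.

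For irreducibility, I would fiber $\overline{\free}(X,\alpha)$, $\alpha:=l_1+2l_2-e_1-e_2$, over $c_1\times c_2\cong\mathbb{P}^1\times\mathbb{P}^1$. Since $\alpha\cdot E_1=\alpha\cdot E_2=1$ and $f_{2*}\alpha=2l\neq 0$, no curve of class $\alpha$ with irreducible domain lies in $E_1$ or $E_2$, so its strict transform meets each $E_i$ in one point; this yields a rational map $\overline{\free}(X,\alpha)\dashrightarrow c_1\times c_2$ defined on the dense open locus of curves transverse to $E_1\cup E_2$. Pushing forward to $\mathbb{P}^1\times\mathbb{P}^2$, the fiber over a general $(p,q)$ consists of curves of class $l_1+2l_2$ through $\bar p\in c_1$, $\bar q\in c_2$; as the $l_1$-degree is $1$ these are graphs of degree-$2$ morphisms $g:\mathbb{P}^1\to\mathbb{P}^2$ (source $=$ first factor) with $g(\bar p_1)=\bar p_2$ and $g(\bar q_1)=\bar q_2$, where $\bar p_1\neq\bar q_1$ for general $(p,q)$ because $c_1,c_2$ are disjoint and each maps isomorphically to $\mathbb{P}^1$ under $f_1$. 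These four linear conditions cut a $\mathbb{P}^4$ out of the space of degree-$2$ maps, and the sublocus of basepoint-free $g$ whose strict transform is free in $X$ is a dense open subset of this $\mathbb{P}^4$, hence irreducible and of dimension $4=\dim\overline{\free}(X,\alpha)-2$.

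Since the base is irreducible and the fibers are irreducible of constant dimension, $\overline{\free}(X,\alpha)$ is irreducible; consequently $\overline{\free}_2(X,\alpha)$ is irreducible, and the main components of $\overline{\free}_2(X,l_1+l_2-e_1-e_2)\times_X\overline{\free}_2(X,l_2)$ and of $\overline{\free}_2(X,l_2-e_1)\times_X\overline{\free}_2(X,l_2-e_2)\times_X\overline{\free}_2(X,l_1)$ both lie in it, as required. The real work is in the fiber description: verifying that a general $(1,2)$-curve through the two general transverse points $\bar p,\bar q$ is basepoint-free with free strict transform — this uses convexity of $\mathbb{P}^1\times\mathbb{P}^2$ (so the normal bundle of such a $C$ is globally generated of degree $6$) together with the fact that the elementary modification at two general transverse points preserves global generation — and the bookkeeping needed to pass from irreducibility of base and fibers to irreducibility of the total space, where equidimensionality of $\overline{\free}(X,\alpha)$ at its expected dimension rules out stray components collapsing into fibers. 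I expect this fiberwise freeness/equidimensionality step to be the main, though routine, obstacle.
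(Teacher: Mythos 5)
Your proposal is correct and follows essentially the same route as the paper: the relation is identified as the unique linear dependence among the five core classes spanning the rank-$4$ lattice, and irreducibility of $\overline{\free}(X,l_1+2l_2-e_1-e_2)$ is obtained by fibering over $c_1\times c_2$ with fibers the degree-two maps $\mathbb{P}^1\to\mathbb{P}^2$ cut out by four linear conditions. The extra detail you supply on basepoint-freeness and equidimensionality is a reasonable elaboration of what the paper leaves implicit.
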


\begin{proof}
Since the core consists of five linearly dependent classes which span a four-dimensional space, it is clear that the monoid of relations is generated by the single relation shown above.

The curves of class $l_1+2l_2-e_1-e_2$ are the graphs of degree two maps $g: \mathbb{P}^1 \rightarrow \mathbb{P}^2$ such that there exists $p \in c_1$ with $g(f_1(p)) = f_2(p)$ and $q \in c_2$ with $g(f_1(q)) = f_2(q)$. Thus, they are fibered over $c_1 \times c_2$ with fiber consisting of the space of degree two maps $\mathbb{P}^1 \rightarrow \mathbb{P}^2$ satisfying four linear conditions on the coefficients.
\end{proof}

\begin{lem}
For each nonzero $\alpha \in \Nef_1(X)_\mathbb{Z}$, $\overline{\free}(X,\alpha)$ is nonempty.
\end{lem}
\begin{proof}
By Theorem \ref{Representability of Free Curves} and Lemma \ref{Gordan's Lemma}, we only need to check that integral linear combinations $\sum a_{\alpha_i} \alpha_i$ of elements $\alpha_i \in \mathscr{C}_X$ with coefficents $0\leq a_{\alpha_i} < 1$ are all represented by free rational curves.  The only nonzero possibility for $\alpha$ is $l_1 + 2l_2 -e_1 -e_2$, which is represented by a free chain of type $(l_1 + l_2 -e_1 -e_2, l_2)$.
\end{proof}

\subsection*{4.8}

\textbf{Blow-up of a curve of tridegree $(0,1,1)$ in $(\mathbb{P}^1)^3$:}
Let $f:X \rightarrow \mathbb{P}^1 \times \mathbb{P}^1 \times \mathbb{P}^1$ be a blow-up with center a curve $c$ of tridegree $(0,1,1)$. For $i=1,2,3$, let $p_i : X \rightarrow \mathbb{P}^1$ be $f$ composed with projection onto the corresponding factor of $(\mathbb{P}^1)^3$.

\begin{thm}\label{4.8thm}
For all nonzero $\alpha \in \Nef_1(X)_{\mathbb{Z}}$, $\overline{\free}(X,\alpha)$ is nonempty and irreducible.
\end{thm}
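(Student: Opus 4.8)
The plan is to follow the standardized template used for every Picard rank $\geq 4$ case in this paper, as outlined in the proof of Theorem \ref{Main Result}. This threefold $X$ is the blow-up of $(\mathbb{P}^1)^3$ along a curve $c$ of tridegree $(0,1,1)$, so after recording the intersection pairing, the anticanonical class, and generators for $N^1(X)$ and $N_1(X)$, I would first identify the effective and nef cones using \cite{matsuki1995weyl}, then determine a core of free curves $\mathscr{C}_X$. Since $c$ has degree $0$ in the first factor, $p_1 : X \to \mathbb{P}^1$ is a del Pezzo fibration whose general fiber is the blow-up of $\mathbb{P}^1 \times \mathbb{P}^1$ at two points (equivalently, a degree $7$ del Pezzo surface), while $p_2$ and $p_3$ exhibit other fibration structures; the core will consist of classes of $-K_X$-degree $2$ through $4$ that pair nonnegatively with all effective divisors, minus those that are freely breakable (double covers of conics, and sums of lower-degree free classes). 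Here I expect the core to contain the fiber classes of the three projections, the exceptional fiber class $e$ adjusted by lines, and the relevant conics meeting $c$; I would check irreducibility of $\overline{\free}(X,\alpha)$ for each core class via the explicit geometry (open subsets of $\mathbb{P}^1$-bundles, del Pezzo surface theory via Theorem \ref{del pezzo curves thm}, and Lemma \ref{surface lemma}-type identifications), and identify which core classes are separating by examining fibers of the evaluation map.

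Next I would compute a generating set of relations in the monoid $\mathbb{N}\mathscr{C}_X$ using Lemma \ref{Relations}: order the core classes, pick divisors $D_i$ pairing negatively with one class and nonnegatively with later ones, and read off the minimal relations. For each relation $\sum \alpha_i = \sum \alpha_j'$ I would verify Theorem \ref{Main Method}(1) — typically either by directly proving $\overline{\free}(X, \sum \alpha_i)$ is irreducible (again using the fibration structures and monodromy arguments à la Theorem \ref{Monodromy}), or by exhibiting a nodal curve in $\overline{\mathcal{M}}_{0,0}(X)$ joining the two main components whose nodes are smooth points, often by moving a free component's attachment point onto $c$. Then Corollary \ref{connected fibers fixall} handles Theorem \ref{Main Method}(2) outside relative cones of del Pezzo fibrations, so combined with the relations this yields irreducibility (or irreducibility of $\overline{\free}^{bir}$) of $\overline{\free}(X,\alpha)$ for all $\alpha$ in the relevant shifted cone; I would also confirm via \ref{very free curves}, \ref{reducible fibers: 4 author result}, and \ref{unique main component} that these components generically parameterize very free curves where asserted. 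Finally, nonemptiness for all nonzero $\alpha \in \Nef_1(X)_{\mathbb{Z}}$ follows from Theorem \ref{Representability of Free Curves} together with Gordan's Lemma (Lemma \ref{Gordan's Lemma}): it suffices to check the finitely many integral classes $\sum a_{\alpha_i}\alpha_i$ with $0 \leq a_{\alpha_i} < 1$ are represented by free curves, which one does by writing each such class as a nonnegative integer combination of core classes.

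The main obstacle I anticipate is the presence of the del Pezzo fibration $p_1 : X \to \mathbb{P}^1$ with degree $7$ fibers together with the fibration structures from $p_2, p_3$: one must be careful about which core classes have reducible evaluation-map fibers (the separating classes), since for those, Theorem \ref{Main Method}(2) is nontrivial and requires invoking Corollary \ref{connected fibers fixall} (hence verifying its hypotheses — a core exists and $\overline{\free}^{bir}(X,2\beta)$ is irreducible for interior nef conics $\beta$, if any such $\beta$ exists). Because $c$ has tridegree $(0,1,1)$, projecting along $p_1$ the two blown-up points in a general fiber come from the two intersection points of $c$ with that fiber; the monodromy of $p_1|_c \to \mathbb{P}^1$ on these two points controls whether conic classes meeting $c$ asymmetrically split into multiple components. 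Establishing that this monodromy is the full symmetric group $S_2$ (transitive), and more generally controlling all the monodromy groups entering the irreducibility arguments, is the key technical point; after that, the bookkeeping of relations and the Gordan's Lemma verification are routine.
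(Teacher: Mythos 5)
Your overall template is the one the paper follows for every Picard rank $\geq 4$ case, but your description of the geometry of this particular $X$ is wrong in a way that would derail the execution. Tridegree $(0,1,1)$ means $H_1 \cdot c = 0$, so $p_1|_c$ is \emph{constant}: $c$ is a $(1,1)$-curve contained in a single fiber $\{t\}\times\mathbb{P}^1\times\mathbb{P}^1$ of $p_1$ (this is exactly why $H_1-E$ is effective), and it meets each fiber of $p_2$ and of $p_3$ in exactly \emph{one} point. The general fiber of $p_1$ is therefore an unmodified $\mathbb{P}^1\times\mathbb{P}^1$, not a blow-up of $\mathbb{P}^1\times\mathbb{P}^1$ at two points (which would in any case have degree $6$, not $7$); the degree $7$ del Pezzo surfaces are the general fibers of $p_2$ and $p_3$, blown up at the single point where they meet $c$. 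Consequently the "key technical point" you flag --- transitivity of the monodromy of $p_1|_c$ on two intersection points with a general fiber --- is vacuous: $p_1|_c$ is constant and $p_2|_c$, $p_3|_c$ are isomorphisms, so no monodromy argument is needed anywhere in this case, and there are no separating classes at all.

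The correct picture is much simpler than what you are bracing for. The core is the \emph{asymmetric} set $\mathscr{C}_X=\{l_1,l_2,l_3,l_1+l_2-e,l_1+l_3-e\}$ (note that $l_2+l_3-e$ pairs to $-1$ with the effective divisor $H_1-E$ and so is not nef --- your proposal does not engage with this asymmetry). Curves of class $l_1+l_j-e$ are the $(1,1)$-curves in a fiber of $p_k$ passing through its unique intersection point with $c$, a $\mathbb{P}^2$-bundle over $\mathbb{P}^1$ whose evaluation fibers are pencils, hence irreducible; so Theorem \ref{Main Method}(2) follows from Lemma \ref{Gluing} with no appeal to Corollary \ref{connected fibers fixall}. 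The five core classes span a four-dimensional lattice, so there is exactly one relation, $(l_1+l_2-e)+l_3=(l_1+l_3-e)+l_2$, and condition \ref{Main Method}(1) for it is checked by parameterizing $\overline{\free}(X,l_1+l_2+l_3-e)$ directly as the graphs of bidegree-$(1,1)$ morphisms $\mathbb{P}^1\to\mathbb{P}^1\times\mathbb{P}^1$ through a point of $c$, an irreducible family fibered over $c$; the Gordan's Lemma step then has only this single nonzero class to verify. Your strategy is the right one, but the fibration structure, the identification of separating classes, and the locus of difficulty all need to be corrected before the argument goes through.
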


\textbf{Generators for $N^1(X)$ and $N_1(X)$:} 
Let

\begin{center}
\begin{tabular}{ll}
 $H_i$ = $p_i^{-1}(pt)$, & $l_i$ = $(p_j \times p_k)^{-1}(pt \times pt)$ where $\{i,j,k\} = \{1,2,3\}$ \\ 
 $E$ = $f^{-1}(c)$, & $e$ = an $f$-fiber of a point in $c$  \\  
\end{tabular}

\end{center}
for $i=1,2,3$ where the points are generic.

\textbf{Intersection Pairing:}
\begin{center}
\begin{tabular}{ll}
    $H_i \cdot l_j = \delta_{ij}$ &  $H_i \cdot e = 0$ \\
    $E \cdot l_j = 0$ &  $E \cdot e = -1$ \\
\end{tabular}
\end{center}

\textbf{Anticanonical Divisor:}
\begin{align*}
    -K_X = 2H_1 + 2H_2 + 2H_3 - E
\end{align*}

\textbf{Effective Divisors:}
$H_1-E$, $H_2+H_3-E$, $H_2$, $H_3$, and $E$ are effective.

\begin{lem}\label{4.8core}
A core of free curves on $X$ is given by
\begin{align*}
    \mathscr{C}_X = \{ l_1, l_2, l_3,
    l_1+l_2-e, l_1+l_3-e \}.
\end{align*}
\noindent There are no separating classes in $\mathscr{C}_X$.
\end{lem}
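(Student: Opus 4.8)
\textbf{Proof proposal for Lemma \ref{4.8core}.}

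The plan is to follow the exact template used for all the preceding Picard rank 4 cases (4.1, 4.3, 4.5, 4.6, 4.7). First I would enumerate the nef curve classes of anticanonical degree between $2$ and $4$ by solving the linear inequalities coming from nonnegativity against the effective divisors $H_1-E$, $H_2+H_3-E$, $H_2$, $H_3$, $E$ together with the $-K_X$-degree bounds; writing $\alpha = a_1 l_1 + a_2 l_2 + a_3 l_3 - ne$ the constraints are $0 \le n$, $n \le a_1$, $n \le a_2 + a_3$, $0 \le a_i$, and $2 \le 2a_1 + 2a_2 + 2a_3 - n \le 4$. This produces the five classes listed, plus a short list of ``extra'' classes ($2l_i$, $l_i + l_j$ for $i \ne j$, $l_1+l_2+l_3-2e$, possibly $l_2+l_3-e$, $2l_1 + l_2 - 2e$ and the like). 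I would then show each extra class is freely breakable: $2l_i$ and other multiples of conics are multiple covers hence break; $l_i + l_j$ breaks into $l_i$ and $l_j$; and for the mixed classes like $l_1+l_2+l_3-2e$ or $l_2+l_3-e$ I would argue as in case 4.3, using that a general member lies in a fiber of some $p_i$ isomorphic to a blow-up of $\mathbb{P}^1\times\mathbb{P}^1$ at a point (or simply $\mathbb{P}^1\times\mathbb{P}^1$), and degenerate along $c$ or inside the surface fiber.

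Next I would verify $\overline{\free}(X,\alpha)$ is irreducible and nonempty for each $\alpha \in \mathscr{C}_X$, and check which are separating. The curves of class $l_i$ are parameterized by open subsets of products of $\mathbb{P}^1$'s, obviously irreducible with irreducible evaluation fibers. For $l_1 + l_2 - e$ I would fiber the moduli space over $\mathbb{P}^1$ via composition with $p_3$: a general fiber of $p_3$ is $\mathbb{P}^1\times\mathbb{P}^1$ blown up at one point (the intersection with $c$), and $l_1+l_2-e$ restricts to a conic through that point, giving a linear system; irreducibility and irreducible evaluation fibers then follow, and since there is a unique such curve through a general point of $X$ (a pencil's worth only after fixing the $p_3$-coordinate, but the evaluation map has connected fibers because there is a single blown-up point, not two as in case 4.3), $l_1+l_2-e$ is \emph{not} separating. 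The symmetric argument handles $l_1 + l_3 - e$. The key distinction from case 4.3 is that here $c$ has tridegree $(0,1,1)$, so $c$ meets a fiber of $p_1$ in a single point, and in fibers of $p_2$ or $p_3$ in a single point as well; consequently no class in $\mathscr{C}_X$ has an evaluation map with reducible general fibers, so there are no separating classes.

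I expect the main obstacle — though still routine — to be the careful bookkeeping in the free-breaking arguments for the intermediate classes, particularly making sure that each putative extra class genuinely breaks into a \emph{free} chain whose pieces lie in $\mathscr{C}_X$, and that the degeneration can be carried out preserving the relevant incidence with $c$ (the same subtlety that appeared in the relations for case 4.5, where one must simultaneously move the ambient surface). For classes of the form $l_1 + l_2 + l_3 - 2e$, I would fix one of the two intersection points of the image with $c$, slide the other along $c$ until it lands in the same $p_i$-fiber, and invoke that a stable limit cannot meet two points of a single $\mathbb{P}^1$-fiber, forcing a break into $l_1$ (or $l_i$) plus $l_j + l_k - e$; iterating and using Theorem \ref{del pezzo curves thm} on the surface fibers gives a free chain with pieces in $\mathscr{C}_X$. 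Finally, condition \ref{def core}(2) — that every component of $\overline{\free}(X)$ contains a chain of free curves with classes in $\mathscr{C}_X$ — follows from Theorem \ref{MovableBB} reducing to $-K_X$-degree $\le 5$, Proposition \ref{breaking quartic curves} and Corollary \ref{breaking quartics cor} handling the quartic cases (note $X$ has a conic bundle structure and no $E3,E4,E5$ divisors), together with the explicit free-breaking of the low-degree classes above.
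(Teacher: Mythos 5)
Your proposal is correct and follows essentially the same route as the paper: solve the nefness inequalities against $H_1-E$, $H_2+H_3-E$, $H_2$, $H_3$, $E$, break the leftover degree-4 classes, and identify $\overline{\free}(X,l_1+l_i-e)$ fiberwise over $\mathbb{P}^1$ with the linear system of $(1,1)$-curves through the unique point of $c$ in a $p_3$- (resp. $p_2$-) fiber, which gives irreducibility and irreducible evaluation fibers, hence no separating classes. One remark: most of the ``extra'' classes you budget effort for ($l_1+l_2+l_3-2e$, $l_2+l_3-e$, $2l_1+l_2-2e$) fail your own inequality $n\le a_1$ (or $n\le a_2+a_3$) because here $c$ has tridegree $(0,1,1)$ rather than $(1,1,2)$, so the only classes outside $\mathscr{C}_X$ that actually survive are $2l_i$ and $l_i+l_j$, both of which you dispatch correctly; also, through a general point of $X$ there is a pencil (not a unique curve) of class $l_1+l_2-e$, but since that pencil is a single irreducible linear system in the fiber, your conclusion that the class is not separating stands.
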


\begin{proof}

\textbf{Nef Curve Classes of Anticanonical degree between $2$ and $4$:}
If $\alpha = a l_1 + b l_2 +c l_3 -d e$ is a nef curve class of anticanonical degree between $2$ and $4$, then
\begin{align*}
    b,c,d \geq 0, \hspace{.5cm} d \leq a, \hspace{.5cm} d \leq b + c, \hspace{.5cm} 2 \leq 2a+2b+2c-d \leq 4.
\end{align*}

\noindent Solving these inequalities, we obtain the classes shown above in $\mathscr{C}_X$, as well as $2l_i$ and $l_i + l_j$ for $i,j\in \{1,2,3\}$.  Curves of class $2l_i$ are all double covers, and are thus freely breakable.  Similarly, $\overline{\free}(X,l_i +l_j)$ is irreducible, as each fiber under the map induced by $p_k$ (for $k\neq i,j$) is the linear system of divisors of class $(1,1)$ in $\mathbb{P}^1 \times \mathbb{P}^1 = p_k^{-1}(pt)$.  Thus, as $\overline{\free}(X,l_i +l_j)$ contains free chains of type $(l_i,l_j)$, such curves are freely breakable.

\textbf{Irreducible Spaces and Fibers:}
The free curves of class $l_i$ are parameterized by an open subset of $\mathbb{P}^1 \times \mathbb{P}^1$.  Fibers of $\overline{\free}_1(X,l_i)\xrightarrow{\text{ev}}X$ are single points.  
The free curves of class $l_1+l_2-e$ may be parameterized by picking a point $p \in \mathbb{P}^1$ and a curve of class $(1,1)$ in $\mathbb{P}^1 \times \mathbb{P}^1 \times \{ p \}$ passing through the unique point $q = \mathbb{P}^1 \times \mathbb{P}^1 \times \{ p \} \cap c$. These curves come from the linear system corresponding to the embedding of $\mathbb{P}^1 \times \mathbb{P}^1$ in $\mathbb{P}^3$ composed with the projection to $\mathbb{P}^2$ from $q$. Hence such curve classes are parameterized by $\mathbb{P}^1 \times \mathbb{P}^2$. An identical argument works for the other class $l_1+l_3-e$.  For $i=2,3$, this argument also identifies fibers of $\overline{\free}_1(X,l_1 + l_i -e)\xrightarrow{\text{ev}}X$ as a pencil of curves given by the intersections of $\mathbb{P}^1 \times \mathbb{P}^1 \subset \mathbb{P}^3$ with planes through two fixed points.
\end{proof}

\begin{lem}\label{4.8relations}
The relations in the monoid $\mathbb{N} \mathscr{C}_X$ are generated by:
\begin{align*}
    (l_1+l_2-e)+(l_3) = (l_1+l_3-e)+(l_2).
\end{align*}
The moduli space of curves of class $l_1+l_2+l_3-e$ is irreducible.
\end{lem}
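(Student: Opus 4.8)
The plan is to prove the two assertions separately: that the displayed circuit generates the monoid of relations among the core classes, and that $\overline{\free}(X,l_1+l_2+l_3-e)$ is irreducible. The latter also yields the instance of condition \ref{Main Method}(1) attached to the relation, since both of its sides then produce main components of $\prod_X\overline{\free}_2$ that land in the unique component of $\overline{\free}_2(X,l_1+l_2+l_3-e)$.

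For the relations, I would observe that $\mathscr{C}_X$ consists of five classes spanning the rank-four lattice $N_1(X)$, so the lattice of additive relations among them has rank one. Since $(l_1+l_2-e)+l_3 = l_1+l_2+l_3-e = (l_1+l_3-e)+l_2$ and the corresponding relation vector is primitive with disjoint support on its two sides, this single circuit generates the monoid congruence, exactly as in case $4.7$. (One could instead invoke Lemma \ref{Relations}, but the direct linear-algebra argument is cleaner here.)

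For irreducibility, I would first argue that every free curve of class $l_1+l_2+l_3-e$ is birational onto its image: the class is primitive, so no multiple cover is possible. Its pushforward to $(\mathbb{P}^1)^3$ is then an irreducible curve of tridegree $(1,1,1)$, and since $(l_1+l_2+l_3-e)\cdot E = 1$ it meets the center $c$ (of tridegree $(0,1,1)$) transversely in exactly one point. Using the first factor of $(\mathbb{P}^1)^3$ — the one in which $c$ is constant — as the domain, such a curve is the graph of a pair $(g_2,g_3)$ of degree-one self-maps of $\mathbb{P}^1$, and sending it to the point $q\in c$ through which it passes exhibits $\free(X,l_1+l_2+l_3-e)$ as an open subset of a bundle over $c$ whose fibers are open in $\mathbb{P}^2\times\mathbb{P}^2$, each incidence condition $g_i(p_0)=\pi_i(q)$ being a hyperplane in the $\mathbb{P}^3$ of degree-$\le 1$ maps $\mathbb{P}^1\to\mathbb{P}^1$. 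As $c$ and the fibers are irreducible, $\free(X,l_1+l_2+l_3-e)$ is irreducible, hence so is its closure $\overline{\free}(X,l_1+l_2+l_3-e)$; the dimension count $1+4 = 5 = -K_X\cdot(l_1+l_2+l_3-e) + \dim X - 3$ confirms that these curves are unobstructed, so a general one is (very) free.

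The main obstacle is confirming that this parameterization exhausts every component of $\overline{\free}$, i.e. that no component generically parameterizes curves outside the described family. This is handled by the Movable Bend-and-Break theorem \ref{MovableBB}, which applies in anticanonical degree $5$ since $X$ (type $4.8$) carries no $E5$ divisor: every component of $\overline{\free}(X,l_1+l_2+l_3-e)$ contains a free chain of type $(\alpha_1,\alpha_2)$, and the only ways to write $l_1+l_2+l_3-e$ as a sum of two nef classes of anticanonical degree $\ge 2$ are the two appearing in the relation, because $l_2+l_3-e$ is not nef (it pairs negatively with the effective divisor $H_1-E$). Both such free chains deform into the irreducible family of embedded $(1,1,1)$-curves constructed above — via the irreducibility of $\overline{\free}(X,l_1+l_2-e)$ and $\overline{\free}(X,l_1+l_3-e)$ from Lemma \ref{4.8core} and the parameterization — so there is a single component, completing the proof.
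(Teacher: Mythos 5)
Your proof is correct and follows essentially the same route as the paper: the single relation comes from the five core classes spanning the rank-four lattice $N_1(X)$, and irreducibility of $\overline{\free}(X,l_1+l_2+l_3-e)$ comes from exhibiting these curves as a bundle over $c$ whose fibers are graphs of bidegree-$(1,1)$ morphisms $\mathbb{P}^1\to\mathbb{P}^1\times\mathbb{P}^1$ cut out by two linear incidence conditions. The closing Movable Bend-and-Break argument is harmless but redundant, since every component of $\overline{\free}$ of this primitive class generically parameterizes irreducible $(1,1,1)$-curves meeting $c$, which already lie in the irreducible family you constructed.
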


\begin{proof}
The five classes in $\mathscr{C}_X$ span a four-dimensional space, so they satisfy a single relation: $(l_1+l_2-e)+(l_3) = (l_1+l_3-e)+(l_2)$.  The curves of class $l_1+l_2+l_3-e$ are precisely the curves of class $(1,1,1)$ in $(\mathbb{P}^1)^3$ passing through $c$. We will understand the space of such curves as a bundle over $c$ with fiber over $p \in c$ being the curves of class $(1,1,1)$ in $(\mathbb{P}^1)^3$ passing through $p$. Such curves are graphs of morphisms $\mathbb{P}^1 \rightarrow \mathbb{P}^1 \times \mathbb{P}^1$ of bidegree $(1,1)$. The requirement that the graph pass through $p$ imposes two linear conditions on the coefficients, so the fiber is indeed irreducible.
\end{proof}

\begin{lem}
For each nonzero $\alpha \in \Nef_1(X)_\mathbb{Z}$, $\overline{\free}(X,\alpha)$ is nonempty.
\end{lem}
\begin{proof}
By Theorem \ref{Representability of Free Curves} and Lemma \ref{Gordan's Lemma}, we only need to check that integral linear combinations $\sum a_{\alpha_i} \alpha_i$ of elements $\alpha_i \in \mathscr{C}_X$ with coefficents $0\leq a_{\alpha_i} < 1$ are all represented by free rational curves.  The only nonzero possibility for $\alpha$ is $l_1 + l_2 + l_3 -e$, which we've shown to be the class of a free curve.
\end{proof}

\subsection*{4.9}
\textbf{Blow-up of Two Lines and an Exceptional Curve in $\mathbb{P}^3$:}
Let $Y \rightarrow \mathbb{P}^3$ be the blow-up of two disjoint lines on $\mathbb{P}^3$ and let $X$ be the blow-up of $Y$ with center an exceptional line. We apply \ref{blowup} to case $5.2$.%

\subsection*{4.10}
\textbf{$X = \mathbb{P}^1 \times S_d$ for $d = 7$:}
See Theorem \ref{products}.

\subsection*{4.11}

\textbf{Blow-up of Curves in $\mathbb{P}^1 \times \mathbb{P}^2$:}
Let $X$ be the blow-up of $\mathbb{P}^1 \times \mathbb{F}_1$ with center $t \times e$ where $t \in \mathbb{P}^1$ and $e \subset \mathbb{F}_1$ is the exceptional curve. We apply \ref{blowup} to case $5.2$.%

\subsection*{4.12}

\textbf{Blow-up of Curves in $\mathbb{P}^3$:}
Let $Y \rightarrow \mathbb{P}^3$ be the blow-up of a line on $\mathbb{P}^3$ and let $X$ be the blow-up of $Y$ with center two exceptional lines. We apply \ref{blowup} to case $5.1$.%

\subsection*{4.13}

\textbf{Blow-up of a curve of class $(1,1,3)$ in $(\mathbb{P}^1)^3$:} Let $X \rightarrow (\mathbb{P}^1)^3$ be the blow up of $(\mathbb{P}^1)^3$ along a curve $c$ of class $(1,1,3)$. For $i=1,2,3$, let $\pi_i : X \rightarrow \mathbb{P}^1$ be the blow-up composed with projection onto the corresponding factor of $\mathbb{P}^1$. 

\begin{thm}\label{GMC4.13}
For all $\alpha \in l_3 + \Nef_1(X)$, $\overline{\free}(X,\alpha)$ is irreducible and nonempty.
\end{thm}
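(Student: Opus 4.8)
## Proof plan for Theorem 11.13 (case 4.13)

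The plan is to follow the standard strategy outlined in Section 7, specialized to the blow-up $X \to (\mathbb{P}^1)^3$ along a curve $c$ of tridegree $(1,1,3)$. First I would set up coordinates: let $H_i = \pi_i^*\mathcal{O}_{\mathbb{P}^1}(1)$ and $E$ the exceptional divisor, with dual basis $l_1, l_2, l_3, e$ as in the neighboring cases (4.1, 4.3, 4.8), so $-K_X = 2H_1 + 2H_2 + 2H_3 - E$. Since $c$ meets a general $\mathbb{P}^1 \times \mathbb{P}^1$ fiber of $\pi_3$ in one point and embeds under $\pi_1 \times \pi_2$ (because $X$ is Fano, the intersections of $c$ with such fibers are general), and $c \to \mathbb{P}^1$ under $\pi_3$ is a degree-$3$ cover, Lemma \ref{P1 x P2 monodromy}-type reasoning (and Theorem \ref{Monodromy}) gives that the relevant monodromy groups are full symmetric groups. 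I would then read off the effective divisors and extreme rays of $\overline{NE}(X)$ from \cite{matsuki1995weyl}: the key effective divisors should be $H_1 + H_2 + H_3 - E$ (the strict transform of a $(1,1,1)$-divisor through $c$), $H_1$, $H_2$, $H_3$, $E$, and possibly $H_1 + H_3 - E$, $H_2 + H_3 - E$ — I would check which of the degree-one classes $e$, $l_i - e$, etc. are actually extreme.

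Next I would identify a core $\mathscr{C}_X$ of free curves by listing all nef classes $\alpha = a l_1 + b l_2 + c l_3 - ne$ with $2 \le -K_X \cdot \alpha \le 4$ that pair nonnegatively with all effective divisors, then discarding those that are freely breakable (multiple covers, or classes splitting off a line in an obvious geometric way, as in cases 4.3 and 4.8). I expect $\mathscr{C}_X$ to consist of classes like $l_1, l_2, l_3, l_1 + l_2 - e, l_1 + l_3 - e, l_2 + l_3 - e$ and perhaps one or two more involving $c$; I would determine the separating classes by computing fibers of the evaluation maps (e.g., $l_i + l_3 - e$ is likely separating because fixing a point forces the $\pi_1 \times \pi_2$-image and the choice of intersection point with $c$ varies, compare 4.3). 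For irreducibility of $\overline{\free}(X,\alpha)$ for $\alpha \in \mathscr{C}_X$ I would fiber over $\mathbb{P}^1$ (via $\pi_i$) with fibers del Pezzo surfaces or blow-ups of $\mathbb{P}^1 \times \mathbb{P}^1$, invoking Theorem \ref{del pezzo curves thm} and the transitivity of monodromy from Theorem \ref{Monodromy}.

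Then I would find a generating set of relations in the monoid $\mathbb{N}\mathscr{C}_X$ using Lemma \ref{Relations}, ordering the generators and picking divisors $D_i$ (such as $-(H_1+H_2+H_3-2E)$ and $H_i - E$) that isolate each generator, exactly as in cases 4.3 and 4.6. For each relation $\sum \alpha_i = \sum \alpha_j'$ I would verify hypothesis \ref{Main Method}(1) either by proving $\overline{\free}(X, \sum\alpha_i)$ is irreducible directly (fibering over $\mathbb{P}^1$ and counting dimensions of linear systems of graphs $\mathbb{P}^1 \to \mathbb{P}^1 \times \mathbb{P}^1$ through prescribed points on $c$, as in 4.3) or by exhibiting a free chain that deforms between the two types through a smooth point of $\overline{\mathcal{M}}_{0,0}(X)$ (moving an intersection point onto $c$, as in the 4.5 and 5.1 arguments). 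Hypothesis \ref{Main Method}(2) outside del Pezzo relative cones follows from Corollary \ref{connected fibers fixall} since $X$ is not in $(\Sigma)$ and has no nef interior $-K_X$-conic; inside, I would handle the del Pezzo fibration $\pi_3 : X \to \mathbb{P}^1$ directly via Theorem \ref{unique main component} and the monodromy of Theorem \ref{monodromy del Pezzo main result}. Finally, nonemptiness of $\overline{\free}(X,\alpha)$ for all nonzero nef $\alpha$ follows from Theorem \ref{Representability of Free Curves} and Lemma \ref{Gordan's Lemma}: I would check the finitely many classes $\sum a_{\alpha_i}\alpha_i$ with $0 \le a_{\alpha_i} < 1$ are representable, writing each as a sum of core classes. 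The main obstacle I anticipate is the bookkeeping around the degree-three cover $c \to \mathbb{P}^1$: the classes involving $c$ may give rise to several components distinguished by intersection multiplicity profiles with $c$ (as with the $l_1+l_2-2e$ phenomena in 4.3), and verifying that the monodromy of $\pi_3|_c$ — a simple triple cover — is transitive enough to merge these, together with confirming $\tau = l_3$ is large enough that all the troublesome low-degree classes are excluded from $l_3 + \Nef_1(X)$, will require the most care.
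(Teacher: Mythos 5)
Your overall strategy is exactly the paper's: set up the intersection theory of the blow-up, identify a core, generate relations via Lemma \ref{Relations}, verify Theorem \ref{Main Method}(1) by fiberwise del Pezzo arguments and monodromy, dispose of \ref{Main Method}(2) with Corollary \ref{connected fibers fixall}, and finish nonemptiness with Theorem \ref{Representability of Free Curves} and Gordan's lemma.

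There is, however, one concrete confusion that would derail the computation if carried through literally. You assert that $c$ meets a general fiber of $\pi_3$ in one point while also (correctly) saying that $\pi_3|_c$ is a degree-three cover; since $c$ has tridegree $(1,1,3)$, a general fiber of $\pi_3$ contains \emph{three} points of $c$, and it is the fibers of $\pi_1$ and $\pi_2$ that each contain one. Consequently the fibers of $\pi_3$ are blow-ups of $\mathbb{P}^1\times\mathbb{P}^1$ at three points of $c$ — equivalently degree-five del Pezzo surfaces $\text{Bl}_{q_1,q_2,p_1,p_2}\mathbb{P}^2$ — and the classes contracted by $\pi_3$, namely $l_1+l_2-e$ and $l_1+l_2-2e$, are the separating ones (each is realized by three monodromy-equivalent classes in a fiber, e.g. $l_1+l_2-2e$ corresponds to $2h-e_1-e_2-f_1-f_2$, $h-f_1$, or $h-f_2$ depending on the choice of $p_3\in c\cap\pi_3^{-1}(q)$), whereas $l_i+l_3-e$ has irreducible evaluation fibers — the opposite of what you predict. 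Relatedly, the core must contain the deeper classes $l_1+l_2-2e$, $2l_1+l_2-3e$, $l_1+2l_2-3e$ (the last two are not freely breakable), the third extreme ray of $\overline{NE}(X)$ is $l_1+l_2-3e$, and the non-obvious effective divisors are $H_1+H_2-E$ and $3H_i+H_3-E$ rather than the $(1,1,1)$-type divisor you list. None of this changes the method — the simple triple cover $\pi_3|_c$ has full symmetric monodromy by Theorem \ref{Monodromy}, which is exactly what merges the components distinguished by multiplicity profiles, as you anticipate — but the bookkeeping you rightly flag as the main obstacle all takes place in the $\pi_3$-fibers, not where you have placed it.
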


\textbf{Generators for $N^1(X)$ and $N_1(X)$:} Let $H_i = \pi_i^*\mathcal{O}(1)$, $E$ be the exceptional divisor over $c$.  Let $l_i$ be a fiber of $\pi_j \times \pi_k$, and $e$ a fiber of $E\rightarrow c$.

\textbf{Intersection Pairing:}
\begin{center}
\begin{tabular}{ll}
    $H_i \cdot l_j = \delta_{ij}$ &  $H_i \cdot e = 0$ \\
    $E \cdot l_j = 0$ &  $E \cdot e = -1$ \\
\end{tabular}
\end{center}

\textbf{Anticanonical Divisor:} $ -K_X = 2H_1 + 2H_2 + 2H_3 - E$

\textbf{Effective Divisors:}
The divisors $H_1+H_2-E$, $3H_1+H_3-E$, $3H_2+H_3-E$, $H_1$, $H_2$, $H_3$, and $E$ are all effective.

\textbf{Effective Curves:} $\overline{NE}(X)$ is generated by $e, \ l_i -e, \ l_1 + l_2 -3e$.

\begin{rem}
Let $q \in \mathbb{P}^1$ be a general point such that $\pi^{-1}_3(q)$ contains no ramifications points of $c \xrightarrow{\pi_3} \mathbb{P}^1$.  Let $p_1, p_2, p_3 = c \cap \pi_3^{-1}(q)$.  The blow up of $\mathbb{P}^1 \times \mathbb{P}^1$ along $p_3$ is isomorphic to the blow up of $\mathbb{P}^2$ along two points $q_1,q_2$, where the exceptional curve over $q_i$ has class $l_j -e$.  The exceptional curve of class $e$ over $p_3 \in \mathbb{P}^1 \times \mathbb{P}^1$ corresponds to the strict transform of the line through $q_1,q_2$.  As $X$ is Fano, the points $q_1, q_2, p_1, p_2 \in \mathbb{P}^2$ must be linearly general.  Thus we may identify $X_q$ with $\text{Bl}_{q_1,q_2, p_1, p_2} \mathbb{P}^2$.  Note that the monodromy of $c \xrightarrow{\pi_3} \mathbb{P}^1$ identifies all three choices of $p_3 \in c \cap \pi_3^{-1}(q)$.
\end{rem}

\begin{lem}
A core set of free curves on $X$ is given by
\begin{align*}
    \mathscr{C}_X = \{ l_i ,\ l_i + l_j -e, \ l_1 + l_2 - 2e, \ l_1 + 2l_2 -3e, \ 2l_1 + l_2 -3e \},
\end{align*}
where $i\neq j$. The only separating classes $\alpha \in \mathscr{C}_X$ are $l_1 + l_2 -e$ and $l_1 + l_2 -2e$.
\end{lem}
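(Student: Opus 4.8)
## Proof plan for the core of free curves on $4.13$

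\textbf{Overall strategy.} The plan is to follow exactly the template already used for the preceding Picard rank $4$ cases ($4.1$, $4.3$, $4.13$ is the next): first enumerate nef curve classes of anticanonical degree between $2$ and $4$, then peel off those that are freely breakable into classes of strictly smaller anticanonical degree already in the list, and finally verify that the remaining classes have irreducible $\overline{\free}(X,\alpha)$ and identify which have reducible evaluation fibers (the separating ones). Since $X \to (\mathbb{P}^1)^3$ is the blow-up of a curve $c$ of class $(1,1,3)$, every free curve either lies in a fiber of $\pi_3$ (a del Pezzo surface $X_q \cong \mathrm{Bl}_{q_1,q_2,p_1,p_2}\mathbb{P}^2$ by the Remark) or projects nontrivially under $\pi_3$, so the analysis splits along that dichotomy.

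\textbf{Step 1: enumerate the nef classes of low degree.} Write $\alpha = a l_1 + b l_2 + c l_3 - n e$. Pairing against $H_i$ forces $a,b,c,n \ge 0$; pairing against the effective divisors $H_1+H_2-E$, $3H_1+H_3-E$, $3H_2+H_3-E$ gives $n \le a+b$, $n \le 3a+c$, $n \le 3b+c$; and $2 \le -K_X\cdot\alpha = 2a+2b+2c-n \le 4$. Solving this finite integer system yields the listed classes in $\mathscr{C}_X$ together with extra classes $2l_i$, $l_i + l_j$ ($i\ne j$), and possibly classes like $2l_1 + l_2 - 2e$, $l_1 + l_2 + l_3 - ke$, $2l_1 + 2l_2 - ke$. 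This is the routine bookkeeping step.

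\textbf{Step 2: break the surplus classes.} The classes $2l_i$ are double covers hence freely breakable; $l_i + l_j = l_i + l_j$ breaks as a chain of two fibers. For the mixed classes supported in a $\pi_3$-fiber, I would use the surface lemma (blow-up of $\mathbb{P}^1\times\mathbb{P}^1$ at a point $\cong$ blow-up of $\mathbb{P}^2$ at two points) exactly as in case $4.3$ to rewrite the class on $\mathrm{Bl}_{q_1,q_2,p_1,p_2}\mathbb{P}^2$, observe it is a nonnegative combination of $(-1)$-curve classes / a double cover / a cubic through assigned points, and split accordingly; for a class with $l_3$-component I would deform the curve keeping one intersection with $c$ fixed and sliding the other along $c$ until the image degenerates, producing a chain with a component of class $l_3$ and a residual class already in the core. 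The main obstacle here will be handling the classes $l_1 + 2l_2 - 3e$, $2l_1 + l_2 - 3e$ (and $l_1 + l_2 - 2e$): because $c$ has degree $3$ in the third factor, a curve of class $l_1 + 2l_2 - 3e$ is forced to meet $c$ with total multiplicity $3$ in a single $\pi_3$-fiber, and one must check that such a class is \emph{not} further breakable into core classes of smaller degree (so it genuinely belongs to $\mathscr{C}_X$), while simultaneously confirming $\overline{\free}(X,\cdot)$ is irreducible for it — this is the delicate bit, resolved by the surface analysis on $X_q$ together with the transitivity of the $\pi_3|_c$ monodromy on the three points $\{p_1,p_2,p_3\}$ via Theorem \ref{Monodromy}.

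\textbf{Step 3: irreducibility and separating classes.} For each $\alpha$ remaining in $\mathscr{C}_X$: the $l_i$ are parameterized by open subsets of $\mathbb{P}^1\times\mathbb{P}^1$; the classes $l_i + l_j - e$ fiber over $\mathbb{P}^1$ (via $\pi_k$) with fiber a pencil/net of $(1,1)$-curves through the point $X_q\cap c$, irreducible after invoking monodromy of $c\to\mathbb{P}^1$ when two linear systems are swapped, and their evaluation fibers over a general point of $X$ are reducible (two pencils), making $l_1+l_2-e$ separating — similarly $l_1+l_2-2e$ is separating because the fiber $X_q$ is the blow-up of $\mathbb{P}^2$ at four points and there are two distinct pencils; $l_1+2l_2-3e$ and $2l_1+l_2-3e$ have a unique (or irreducible) family through a general point by the surface computation, hence are not separating. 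Finally I would record that the extreme rays of $\Nef_1(X)$ are spanned by the non-separating core classes (consistent with Theorem \ref{Representability of Free Curves}), so the core indeed satisfies Definition \ref{def core}. The one genuinely new ingredient beyond the earlier cases is the degree-$3$ behaviour in the third factor, and the key technical input controlling it is Theorem \ref{Monodromy}a applied to $c \xrightarrow{\pi_3}\mathbb{P}^1$, which is a simple elementary cover for general $X$ (and for all $X$ in this family since the third-factor degree is $3 \le 3$, by Lemma \ref{P1 x P2 monodromy}-type reasoning).
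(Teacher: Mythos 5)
Your plan is essentially the paper's proof: enumerate the nef classes of degree $2$--$4$ against the effective divisors, break the surplus classes ($2l_i$, $l_i+l_j$, $2l_{1/2}+l_{2/1}-2e$, $l_1+l_2+l_3-2e$, $2l_1+2l_2-4e$) using the identification of a general $\pi_3$-fiber with $\mathrm{Bl}_{q_1,q_2,p_1,p_2}\mathbb{P}^2$, and then use the transitive monodromy of the degree-$3$ cover $c\xrightarrow{\pi_3}\mathbb{P}^1$ to get irreducibility of the total spaces while the evaluation fibers stay reducible for $l_1+l_2-e$ and $l_1+l_2-2e$. That is the right route and the right key input (Theorem \ref{Monodromy} applied to $\pi_3|_c$).

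One concrete correction: your count of components in the general evaluation fiber is wrong, and the error comes precisely from the degree-$3$ feature you flag as the new ingredient. A general fiber $X_q$ of $\pi_3$ meets $c$ in \emph{three} points $p_1,p_2,p_3$, and the identification $X_q\cong\mathrm{Bl}_{q_1,q_2,p_1,p_2}\mathbb{P}^2$ privileges one of them. A curve of class $l_1+l_2-e$ meets exactly one of the three $p_i$, giving three families through a general point (a pencil of lines $h$, and pencils of conics $2h-e_1-e_2-f_1$ and $2h-e_1-e_2-f_2$); likewise $l_1+l_2-2e$ meets two of the three $p_i$, giving the three families $2h-e_1-e_2-f_1-f_2$, $h-f_1$, $h-f_2$. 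So $\mathrm{ev}^{-1}(x)$ has three components, not two — you appear to have carried over the count from case $4.3$, where $c$ has degree $2$ in the third factor. The conclusion (both classes are separating, and the $S_3$-monodromy on $\{p_1,p_2,p_3\}$ glues the three components into one irreducible space) is unaffected, but the threefold symmetry is exactly what later drives the analysis of relations such as $(2l_2-3e$-type classes$)$, so it should be stated correctly. You should also make explicit that $l_i+l_3-e$ is \emph{not} separating because a fiber of $\pi_1$ or $\pi_2$ meets $c$ in only one point, and that $2l_{1/2}+l_{2/1}-3e$ is not separating because such a curve is forced to pass through all three of $p_1,p_2,p_3$, leaving no choice; your draft gestures at both points but does not pin them down.
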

\begin{rem}
Under the description of $\pi_3^{-1}(q)$ as $\text{Bl}_{q_1, q_2, p_1, p_2}\mathbb{P}^2$ in the above remark, we may let $h$ be the class of a line in $\mathbb{P}^2$ and $e_1,e_2,f_1,f_2$ be the exceptional curves over $q_1,q_2,p_1,p_2$, respectively.  Then curves in $\mathscr{C}_X$ contained in a fiber of $\pi_3$ correspond to the following classes:
\begin{itemize}
    \item $l_{1/2}$ corresponds to $h-e_{1/2}$,
    \item $l_1 + l_2 -e$ corresponds to either $2h - e_1 -e_2 -f_1$, $2h -e_1 -e_2 -f_2$, or $h$, depending on the choice of $p_3 \in \pi_3^{-1}(q) \cap c$,
    \item $l_1 + l_2 -2e$ corresponds to either $2h - e_1 -e_2 -f_1 -f_2$, $h -f_1$, or $h - f_2$, depending on the choice of $p_3 \in \pi_3^{-1}(q) \cap c$,
    \item $2l_{1/2} + l_{2/1} -3e$ corresponds to $2h - e_{1/2} - f_1 - f_2$.
\end{itemize}
By identifying the fibers of $\pi_1$ or $\pi_2$ with the blow up of $\mathbb{P}^2$ along two points (corresponding to $q_1,q_2$), we may similarly identify $l_3$ and $l_{1/2} + l_3 -e$ as lines or conics in $\mathbb{P}^2$ meeting certain points.
\end{rem}
\begin{proof}
\textbf{Nef Curve Classes of Anticanonical Degree Between $2$ and $4$:}
If $\alpha = d_1 l_1 + d_2 l_2 + d_3 l_3 -ne$ is nef of the appropriate degree, the effective divisors above imply 
$$d_1 + d_2 + 2d_3 \leq 4, \hspace{.05cm} 2d_1 - d_2 + d_3 \leq 4.$$
Adding these together, we obtain $3d_1 + 3d_3 \leq 8$ and so $d_i \leq 2$ for all $i$.  Moreover, if $d_3 = 2$, $\alpha = 2l_3$ corresponds to double covers and is freely breakable.  If $d_1 = 2$ or $d_2 =2$, necessarily $d_3 = 0$.  %
Thus, a complete list of possibilities for $\alpha$ is $\mathscr{C}_X \cup \{ 2l_i,\ l_i + l_j, \ 2l_{1/2} + l_{2/1} -2e,\ l_1 + l_2 + l_3 -2e, \ 2l_1 + 2l_2 -4e\}$.

\textbf{Freely Breakable Classes:}  It is obvious that each component of $\overline{\free}(X,\alpha)$ for $\alpha \in \{ 2l_i,\ l_i + l_j, \ 2l_{1/2} + l_{2/1} -2e\}$ parameterizes freely breakable curves.  If $\alpha = l_1 + l_2 + l_3 -2e,$ $\overline{\free}(X,\alpha)$ is irreducible and contains free chains of type $(l_1 + l_2 -2e, l_3)$.  By realizing fibers of $\pi_3$ as blow-ups of $\mathbb{P}^2$ along $4$ linearly general points, we may describe curves of class $2l_1 + 2l_2 -4e$ in such fibers as the linear system of conics passing through two different blown up points, or one blown-up point with multiplicity two.  %

\textbf{Irreducible Spaces and Fibers:}  Let $\alpha \in \mathscr{C}_X$.  If $\alpha \in \{l_i, l_i + l_3 -e, 2l_i + l_j -3e\}$, it is clear that $\overline{\free}(X,\alpha)$ is irreducible and $\overline{\free}_1(X,\alpha)\xrightarrow{\text{ev}} X$ has irreducible fibers over general points.  When $\alpha = 2l_i + l_j -3e$, this follows from the identification of a general fiber of $\pi_3$ with the blow-up of $\mathbb{P}^2$ along 4 linearly general points.  %
As a free rational curve of class $\alpha = 2l_i + l_j -3e$ must be smooth and meet each of $p_1, p_2, p_3$, this realizes curves of class $\alpha$ as conics meeting $q_i$, $p_1$, and $p_2$.

However, when $\alpha = l_1 + l_2 - e$ or $\alpha = l_1 + l_2 -2e$, this same description of $\pi_3^{-1}(q)$ realizes curves of class $\alpha$ as one of three classes of curves.  When $\alpha = l_1 + l_2 -e$, these classes are 1) conics passing through $q_1, q_2, p_1$, 2) conics passing through $q_1, q_2, p_2$, and 3) a general line in $\mathbb{P}^2$.  When $\alpha = l_1 + l_2 -2e$, the classes are 1) conics passing through $q_1, q_2, p_1, p_2$, 2) lines passing through $p_1$, and 3) lines passing through $p_2$.  Changing the roles of $p_1, p_2, p_3$ permutes these classes.  Thus, Theorem \ref{Monodromy} shows $\overline{\free}(X,\alpha)$ is irreducible.  However, $\overline{\free}_1(X,\alpha)\xrightarrow{\text{ev}} X$ has reducible fibers with three components over general points.  
\end{proof}

\begin{lem}
Relations in the monoid $\mathbb{N}\mathscr{C}_X$ are generated by:
\begin{enumerate}
    \item $(2l_1 + l_2 -3e) + (l_1 + 2l_2 -3e) = 3(l_1 + l_2 -2e)$
    \item $(2l_{1/2} + l_{2/1} - 3e) + l_{2/1} = (l_1 + l_2 -2e) + (l_1 + l_2 -e)$
    \item $(2l_{1/2} + l_{2/1} -3e) + l_3 = (l_1 + l_2 - 2e) + (l_{1/2} + l_3 -e)$
    \item $(2l_{1/2} + l_{2/1} -3e) + (l_1 + l_2 -e) = l_{1/2} + 2(l_1 + l_2 -2e)$
    \item $(2l_{1/2} + l_{2/1} -3e) + (l_{2/1} + l_3 -e) = l_3 + 2(l_1 + l_2 -2e)$
    \item $(l_1 + l_2 -e) + l_3 = (l_1 + l_3 -e) + l_2 = (l_2 + l_3 -e) + l_1$
    \item $(l_1 + l_2 -2e) + l_1 + l_2 = 2(l_1 + l_2 -e)$
    \item $(l_1 + l_2 -2e) + l_{1/2} + l_3 = (l_1 + l_2 -e) + (l_{1/2} + l_3 -e)$
    \item $(l_1 + l_2 -2e) + 2l_3 = (l_1 + l_3 -e) + (l_2 + l_3 -e)$
\end{enumerate}
For each relation $\sum \alpha_i = \sum \beta_j$, a main component of $\prod_X \overline{\free}_2(X,\alpha_i)$ lies in the same component of free curves as a main component of $\prod_X \overline{\free}_2(X, \beta_j)$.
\end{lem}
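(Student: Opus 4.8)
## Plan of Proof

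The plan is to follow the template established for the other Picard rank 4 cases and codified in Lemma \ref{Relations}, treating the derivation of the generating set of relations and the verification of the main-component compatibility as two separate tasks. The class $\tau = l_3$ is the shift, so we work in $l_3 + \Nef_1(X)$; note $l_3 . (H_1 + H_2 - E) > 0$, so every core chain contains at least one component whose class pairs positively with $H_1 + H_2 - E$, i.e. a class in $\{l_3, l_{1/2} + l_3 - e\}$ or one of the quadratic classes $2l_i + l_j - 3e$ — this observation, combined with Corollary \ref{connected fibers fixall} and Proposition \ref{reducible fibers: main strategy}, will ultimately handle condition \ref{Main Method}(2). For the relations themselves I would order the seven core classes following Lemma \ref{Relations} (eliminating $2l_1 + l_2 - 3e$, then $l_1 + 2l_2 - 3e$, then $l_1 + l_2 - 2e$, then $l_1 + l_2 - e$, leaving the linearly independent $l_1, l_2, l_3$) and pair each class $c_i$ against a suitable divisor: $H_1 + H_2 - E$ isolates which classes $c_i$ can participate, and $3H_1 + H_3 - E$, $3H_2 + H_3 - E$ (which pair negatively with $2l_1 + l_2 - 3e$ and $l_1 + 2l_2 - 3e$ respectively) serve as the $D_i$ for the two quadratic classes. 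The minimality condition (4) of Lemma \ref{Relations} then forces exactly the list (1)–(9) as in the claim, with (1) being the unique relation among the top three classes after clearing the degree-4 ones.

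Once the relations are in hand, I would verify the main-component compatibility for each one. Relations (6)–(9) are the "line-addition" relations and follow exactly as in cases \ref{4.3relations} or \ref{4.1relations}: the relevant spaces $\overline{\free}(X, l_1 + l_2 + l_3 - e)$, $\overline{\free}(X, 2l_i + l_j - 2e)$, $\overline{\free}(X, l_1+l_2-2e+l_{1/2}+l_3)$ and so on are irreducible (graphs of morphisms $\mathbb{P}^1 \to \mathbb{P}^1 \times \mathbb{P}^1$ through prescribed points on $c$, or cubics through points in $\mathbb{P}^2$), which I would check by the same fibration-over-$c$ or fibration-over-$\Sym^k(c)$ argument, using the transitivity of the monodromy of $c \xrightarrow{\pi_3} \mathbb{P}^1$ (Lemma \ref{P1 x P2 monodromy}-style, valid here since $c$ has $\pi_3$-degree $3$). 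Relation (1) is the one genuinely new feature: it relates a free chain of type $(2l_1 + l_2 - 3e,\ l_1 + 2l_2 - 3e)$ — both components contained in a single fiber $X_q$ of $\pi_3$, realized as conics through $q_1, p_1, p_2$ and $q_2, p_1, p_2$ — to a chain of three curves of class $l_1 + l_2 - 2e$. Here I would argue inside the fiber surface $X_q \cong \mathrm{Bl}_{q_1,q_2,p_1,p_2}\mathbb{P}^2$: by Theorem \ref{del pezzo curves thm} the space $\overline{\free}(X_q,\, 3(l_1+l_2-2e))$ is irreducible (the class is not a multiple of a $-K$-conic and has the right properties on this degree-$5$ del Pezzo surface), and it contains chains of both types, so a one-parameter family in $X_q$ connects them; pushing forward to $X$ and deforming the fiber gives the compatibility. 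Relations (2)–(5) combine a quadratic class with a line or a cubic and would be handled by first applying (6)–(9) style moves (breaking the quadratic off one factor into two lines in $X_q$) and then re-gluing, exactly as relation (9) was handled in case \ref{4.3relations}.

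The main obstacle I anticipate is relation (1) together with the fact that $l_1 + l_2 - 2e$ is a \emph{separating} class: its evaluation map has three irreducible components in the general fiber (the three choices of $p_3 \in \pi_3^{-1}(q) \cap c$ give three distinct curve classes in $X_q$). Consequently the product $\overline{\free}_2(X, l_1+l_2-2e) \times_X \overline{\free}_2(X, l_1+l_2-2e) \times_X \overline{\free}_2(X, l_1+l_2-2e)$ has several main components a priori, and one must check that the single irreducible component $\overline{\free}(X, 3l_1 + 3l_2 - 6e)$ contains chains realizing all of them — this is where the transitivity statement in Theorem \ref{Monodromy}(a), applied to the linear system $H_3$ cutting $c$ in three points, does the work, permuting the three possible classes in $X_q$ and hence identifying the main components. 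I would also need to confirm that none of the intermediate nodal curves produced in the (2)–(5) arguments acquire a non-immersed node, which follows from Lemma \ref{Basic Properties Chains}(6) since the components have distinct reduced images (they lie in different fibers of $\pi_1, \pi_2$, or $\pi_3$). The remaining bookkeeping — nonemptiness of $\overline{\free}(X,\alpha)$ for all nonzero $\alpha \in \Nef_1(X)_\mathbb{Z}$ via Gordan's Lemma, and condition \ref{Main Method}(2) outside the relative cone of the del Pezzo fibration $\pi_3$ — is routine and parallels cases \ref{4.3thm} and \ref{GMC4.1}.
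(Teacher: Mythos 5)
Your overall strategy is the paper's: derive the generating set via Lemma \ref{Relations}, then verify main-component compatibility by working inside a general fiber $X_q\cong\mathrm{Bl}_{q_1,q_2,p_1,p_2}\mathbb{P}^2$ of $\pi_3$ and invoking Theorem \ref{del pezzo curves thm} together with the monodromy of $\pi_3|_c$. However, two steps as written are wrong. First, the divisors you propose for eliminating the quadratic classes cannot work: $3H_1+H_3-E$ and $3H_2+H_3-E$ are listed among the \emph{effective} divisors, so they pair nonnegatively with every nef class; concretely $(3H_1+H_3-E)\cdot(2l_1+l_2-3e)=3$ and $(3H_2+H_3-E)\cdot(l_1+2l_2-3e)=3$, not negative. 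Lemma \ref{Relations} needs non-effective divisors here; the correct choices are $2H_2+H_3-E$ and $2H_1+H_3-E$, which pair to $-1$ with $2l_1+l_2-3e$ and $l_1+2l_2-3e$ respectively and to $0$ with $l_1+l_2-2e$ and the relevant $l_i$, $l_i+l_3-e$.

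Second, several of your irreducibility assertions are false. The space $\overline{\free}(X,3l_1+3l_2-6e)$ is not ``a single irreducible component'': beyond triple covers of conics of class $l_1+l_2-2e$, the curves of this class in $X_q$ may have any of several distinct fiber classes pushing forward to $3(l_1+l_2-2e)$ (for instance $4h-e_1-e_2-2f_1-2f_2$ and $5h-2e_1-2e_2-3f_1-2f_2$), each contributing a separate component; the same issue affects $\overline{\free}(X,2l_1+2l_2-2e)$ and the targets of relations (2) and (4). This does not sink the method, because the lemma only requires that \emph{one} main component of each side lie in a common component of free curves: the unique component whose general member has fiber class $4h-e_1-e_2-2f_1-2f_2$ contains free chains of both types $(2h-e_1-f_1-f_2,\,2h-e_2-f_1-f_2)$ and $(2h-e_1-e_2-f_1-f_2,\,h-f_1,\,h-f_2)$ by Theorem \ref{del pezzo curves thm}, which is exactly what is needed for relation (1). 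But the argument must be phrased as ``the unique component containing chains of the first type also contains chains of the second,'' not as irreducibility of the ambient space. Your closing worry about the several main components of the triple product of the separating class $l_1+l_2-2e$ concerns condition \ref{Main Method}(2) rather than this lemma, and is handled by the general machinery (Corollary \ref{connected fibers fixall}) rather than here.
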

\begin{proof}
\textbf{Relations:}  We use Lemma \ref{Relations}.
\begin{itemize}
\item $D= 2H_2 + H_3 - E$ pairs negatively with $(2l_1 + l_2 -3e)$ and to 0 with $l_1 + l_2 -2e$, $l_1$, and $l_1 + l_3 -e$.  We obtain relations $(1)-(5)$ using the remaining curves.
\item $D = 2H_1 + H_3 -E$ pairs negatively with $(l_1 + 2l_2 -3e)$ and to 0 with $l_2$, $l_1 + l_2 -2e$, and $l_2 + l_3 -e$.  We obtain relations $(2)-(5)$ using the remaining curves.
\item The remaining relations between elements of $\mathscr{C}_X \setminus \{2l_1 + l_2 -3e, \ l_1 + 2l_2 -3e\}$ are identical to those in Lemma \ref{4.3relations}.
\end{itemize}

\textbf{Main Components:} We address each relation in order below:
\begin{enumerate}
    \item There are multiple components of $\overline{\free}(X, 3l_1 + 3l_2 -6e)$, but only one which contains free chains of type $(2l_1 + l_2 -3e, l_1 + 2l_2 -3e)$.  Under the description of $\pi_3^{-1}(q)$ as $\text{Bl}_{q_1, q_2, p_1, p_2}\mathbb{P}^2$, these curves have class $4h - e_1 -e_2 -2f_1 -2f_2$.  By Theorem \ref{del pezzo curves thm}, we may deform such a curve to a free chain of type $(l_1 + l_2 -2e, l_1 + l_2 -2e, l_1 + l_2 -2e)$ with components corresponding to classes $2h -e_1 -e_2 -f_1 -f_2$, $h - f_1$, and $h - f_2$.
    \item There are multiple components of $\overline{\free}(X, 2l_1 + 2l_2 -3e)$, but only one which contains free chains of type $(2l_{1/2} + l_{2/1} -3e, l_{2/1})$.  Under the description of $\pi_3^{-1}(q)$ as $\text{Bl}_{q_1, q_2, p_1, p_2}\mathbb{P}^2$, these curves have class $3h - e_1 -e_2 -f_1 -f_2$.  By Theorem \ref{del pezzo curves thm}, we may deform such a curve to a free chain of type $(l_1 + l_2 -2e, l_1 + l_2 -e)$ with components corresponding to classes $2h -e_1 -e_2 -f_1 -f_2$ and $h$.
    \item The space $\overline{\free}(X, 2l_{1/2} + l_{2/1} + l_3 -3e)$ may be identified with the space of sections of $\pi_3$ meeting three points of $c$ and mapping to a curve of class $(2,1)$ or $(1,2)$ under $\pi_1 \times \pi_2$.  Since $c$ maps to a curve of class $(1,1)$ under $\pi_1 \times \pi_2$, for each curve of class $(2,1)$ or $(1,2)$, there is a unique choice of three intersection points with $\pi_1 \times \pi_2 (c)$.  Thus $\overline{\free}(X, 2l_{1/2} + l_{2/1} + l_3 -3e)$ is equivalent to the space of divisors of class $(2,1)$ or $(1,2)$ in $\mathbb{P}^1 \times \mathbb{P}^1$.
    \item There are multiple components of $\overline{\free}(X, 3l_{1/2} + 2l_{2/1} -4e)$, but only one which contains free chains of type $(2l_{1/2} + l_{2/1} -3e, l_1 + l_2 -e)$.  Under the description of $\pi_3^{-1}(q)$ as $\text{Bl}_{q_1, q_2, p_1, p_2}\mathbb{P}^2$, we may suppose these curves have class $3h - e_{1/2} -f_1 -f_2$.  By Theorem \ref{del pezzo curves thm}, we may deform such a curve to a free chain of type $(l_{1/2}, l_1 + l_2 -2e, l_1 + l_2 -2e)$ with components corresponding to classes $h - e_{1/2}$, $h - f_1$, and $h - f_2$.
    \item We may degenerate a free chain of type $(2l_{1/2} + l_{2/1} -3e, l_{2/1} + l_3 -e)$ to a chain of curves of type $(2l_{1/2} + l_{2/1} -3e, l_{2/1} -e, l_3)$ by degenerating the component of class $(l_{2/1} + l_3 -e)$ to a nodal curve.  Since the normal bundle of any curve of class $(l_{2/1} -e)$ is $\mathcal{O} \oplus \mathcal{O}(-1)$, this degeneration is a smooth point of $\overline{\mathcal{M}}_{0,0}(X)$.  We may smooth the subchain of type $(2l_{1/2} + l_{2/1} -3e, l_{2/1} -e)$ into a free rational curve of class $(2l_1 + 2l_2 -4e)$, and break this into two free curves of class $l_1 + l_2 -2e$ to obtain a chain of free curves of type $(l_1 + l_2 -2e, l_1 + l_2 -2e, l_3)$.
    \item The space $\overline{\free}(X, l_1 + l_2 + l_3 -e)$ may be identified with the space of sections of $\pi_3$ meeting a point of $c$ and mapping to a curve of class $(1,1)$ under $\pi_1 \times \pi_2$.  Since $c$ maps to a curve of class $(1,1)$ under $\pi_1 \times \pi_2$, for each curve of class $(1,1)$, there are two intersection points with $\pi_1 \times \pi_2(c)$.  Thus, we may fiber $\overline{\free}(X, l_1 + l_2 + l_3 -e)$ over the space of curves of class $(1,1)$ in $\mathbb{P}^1 \times \mathbb{P}^1$.  Each fiber contains two irreducible components of dimension 2.  A standard monodromy argument proves $\overline{\free}(X, l_1 + l_2 + l_3 -e)$ is irreducible.
    \item There are multiple components of $\overline{\free}(X, 2l_1 + 2l_2 -2e)$, but only one which contains free chains of type $(l_1 + l_2 -2e, l_1, l_2)$.  Under the description of $\pi_3^{-1}(q)$ as $\text{Bl}_{q_1, q_2, p_1, p_2}\mathbb{P}^2$, we may suppose these curves have class $3h - e_1 -e_2 -f_1$.  By Theorem \ref{del pezzo curves thm}, we may deform such a curve to a free chain of type $(l_1 + l_2 -e, l_1 + l_2 -e)$ with components corresponding to classes $h$ and $2h -e_1 -e_2 - f_1$.
     \item The space $\overline{\free}(X, 2l_{1/2} + l_{2/1} + l_3 -2e)$ may be identified with the space of sections of $\pi_3$ meeting two points of $c$ and mapping to a curve of class $(2,1)$ or $(1,2)$ under $\pi_1 \times \pi_2$.  Since $c$ maps to a curve of class $(1,1)$ under $\pi_1 \times \pi_2$, for each curve of class $(2,1)$ or $(1,2)$, there are three choices of two intersection points with $\pi_1 \times \pi_2(c)$.
     Thus, we may fiber $\overline{\free}(X, 2l_{1/2} + l_{2/1} + l_3 -2e)$ over the space of curves of class $(2,1)$ or $(1,2)$ in $\mathbb{P}^1 \times \mathbb{P}^1$.  Each fiber contains three irreducible components of dimension 1.  A standard monodromy argument proves $\overline{\free}(X, 2l_{1/2} + l_{2/1} + l_3 -2e)$ is irreducible.
     \item The space $\overline{\free}(X, l_1 + l_2 + 2l_3 -2e)$ may be identified with the space of sections of $\pi_1$ meeting two points of $c$ and mapping to a curve of class $(2,1)$ under $\pi_2 \times \pi_3$.  Since $c$ maps to a curve of class $(3,1)$ under $\pi_2 \times \pi_3$, for each curve of class $(2,1)$, there are ${ 5 \choose 2}$ choices of two intersection points with $\pi_2 \times \pi_3(c)$.  Thus, we may fiber $\overline{\free}(X, l_1 + l_2 + 2l_3 -2e)$ over the space of curves of class $(2,1)$ in $\mathbb{P}^1 \times \mathbb{P}^1$.  Each fiber contains ${ 5 \choose 2}$ irreducible components of dimension 1.  A standard monodromy argument proves $\overline{\free}(X, l_1 + l_2 + 2l_3 -2e)$ is irreducible.
\end{enumerate}
\end{proof}

\begin{lem}
For each $\alpha \in \Nef_1(X)_\mathbb{Z}$, $\overline{\free}(X,\alpha)$ is nonempty.
\end{lem}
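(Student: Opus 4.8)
The plan is to follow the template used for the other del Pezzo families of Picard rank $4$: reduce the claim to a finite check via Gordan's Lemma, then exhibit an explicit nonnegative integer decomposition of each remaining class into core classes.

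The first step is to identify the extreme rays of $\Nef_1(X)$. Pairing a class $d_1 l_1 + d_2 l_2 + d_3 l_3 - ne$ against the generators $H_1, H_2, H_3, E, H_1+H_2-E, 3H_1+H_3-E, 3H_2+H_3-E$ of the effective cone shows that $\Nef_1(X)$ is the polyhedral cone $\{d_i \geq 0,\ n\geq 0,\ n\leq d_1+d_2,\ n\leq 3d_1+d_3,\ n\leq 3d_2+d_3\}$; intersecting triples of the supporting hyperplanes gives exactly the seven extreme rays $l_1, l_2, l_3, l_1+l_3-e, l_2+l_3-e, l_1+2l_2-3e$, and $2l_1+l_2-3e$, each of which lies in $\mathscr{C}_X$ (as must be the case by Theorem \ref{Representability of Free Curves}; note that the separating classes $l_1+l_2-e$ and $l_1+l_2-2e$ are interior to facets). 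By Lemma \ref{Gordan's Lemma}, $\Nef_1(X)_\mathbb{Z}$ is generated as a monoid by these seven classes together with the integral points of the half-open parallelepiped they span. Since every element of $\mathscr{C}_X$ is the class of a free curve and since any nonnegative integer combination $\alpha = \sum\alpha_i$ of core classes is represented by a free chain lying in a component of $\overline{\free}(X,\alpha)$ (Lemma \ref{Basic Properties Chains}(1)), it therefore suffices to show that every integral class in that parallelepiped is a nonnegative integer combination of elements of $\mathscr{C}_X$. Such a class satisfies $d_1,d_2\leq 4$, $d_3\leq 2$, $n\leq 7$, together with the nef inequalities above.

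For the decomposition I would use the symmetry together with a greedy reduction on $n$. The blown-up curve $c$ has tridegree $(1,1,3)$, so interchanging the first two factors of $(\mathbb{P}^1)^3$ induces an automorphism of $X$ exchanging $l_1\leftrightarrow l_2$ and $l_1+2l_2-3e\leftrightarrow 2l_1+l_2-3e$ while fixing $l_3$, $l_1+l_2-e$, and $l_1+l_2-2e$; hence we may assume $d_1\geq d_2$. Now one reduces $n$: while $n\geq 3$ subtract $2l_1+l_2-3e$ (using $l_3$, $l_i+l_j-e$, $l_1+l_2-2e$ to absorb the remainder), then while $n=2$ and $d_2\geq 1$ subtract $l_1+l_2-2e$, then while $n\geq 1$ subtract an appropriate $l_i+l_j-e$ or $l_i+l_3-e$; once $n=0$ the remaining class is $d_1 l_1 + d_2 l_2 + d_3 l_3$, which is visibly a sum of $l_1,l_2,l_3$. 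With the priorities chosen so that each residual class still pairs nonnegatively with the five effective divisors and has strictly smaller $n+d_1+d_2+d_3$, the process terminates, and the finitely many classes on which the greedy step would leave $\Nef_1(X)$ are exactly those saturating one of $n\leq 3d_1+d_3$ or $n\leq 3d_2+d_3$; saturation together with $n\leq d_1+d_2\leq 7$, $d_1\geq d_2$, and $d_3\leq 2$ pins $(d_1,d_2,d_3,n)$ down to a short explicit list, and each such class is dispatched by hand (for instance $(3,1,1,4)=(2l_1+l_2-3e)+(l_1+l_3-e)$ and $(4,2,1,6)=2(2l_1+l_2-3e)+l_3$).

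The main obstacle is purely bookkeeping: the inequalities $n\leq 3d_1+d_3$ and $n\leq 3d_2+d_3$ are not preserved under the generic ``peel off a core class'' move, so one must verify in each step of the reduction that the residual stays nef and must treat the boundary classes saturating those inequalities separately. Since all such classes form a finite, explicit list, this is a conceptually routine verification rather than a genuine difficulty; an equivalent route is simply to enumerate, up to the $l_1\leftrightarrow l_2$ symmetry, all integral $\alpha$ with $d_1,d_2\leq 4$, $d_3\leq 2$, $n\leq 7$ satisfying the nef inequalities, and record a decomposition into core classes for each, as was done in the analogous lemmas for cases $4.1$--$4.8$.
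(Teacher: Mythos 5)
Your proposal is correct and follows essentially the same route as the paper: both reduce via Gordan's Lemma (together with Theorem \ref{Representability of Free Curves}) to a finite list of integral classes with bounded coefficients, exploit the $l_1\leftrightarrow l_2$ symmetry, and then exhibit explicit nonnegative integer decompositions into core classes, leaving the final bookkeeping as a routine verification exactly as the paper does. The only differences are cosmetic (you run Gordan on the seven extreme rays and organize the decomposition as a greedy reduction on $n$, whereas the paper runs it on all nine core classes and splits on the coefficient of $l_1$), so no further comparison is needed.
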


\begin{proof}
By Lemma \ref{Gordan's Lemma} and \ref{Representability of Free Curves}, we only need to check that integral linear combinations $\sum a_{\alpha_i} \alpha_i$ of elements $\alpha_i \in \mathscr{C}_X$ with coefficents $0\leq a_{\alpha_i} < 1$ are all representable by free rational curves.  Let $\alpha = c_1 l_1 + c_2 l_2 + c_3 l_3 - c_e e = \sum a_{\alpha_i} \alpha_i$.  It follows that $c_3 \in \{0,1,2\}$, $0 \leq c_1, c_2 \leq 6$, and $0 \leq c_e \leq 10$.  Since $\alpha \in \Nef_1(X)$, we have $c_e \leq c_1 + c_2$ and $c_e \leq 3c_i + c_3$ for $i=1,2$.  By symmetry we may assume $c_2 \geq c_1$.  It follows that we may write $\alpha = \beta + a l_1 + b l_2$ with $a,b\geq 0$ and $\beta$ given by one of the following:
\begin{itemize}
    \item $c_3 l_3 + l_1 + k l_2 - re$ for $k \leq 2 + c_3$ and $r \leq k + 1$
    \item $c_3 l_3 + 2l_1 + k l_2 - re$ for $k \leq 4 + c_3$ and $r \leq k + 2$
    \item $c_3 l_3 + 3l_1 + k l_2 - re$ for $k \leq 6$ and $r \leq k + 3$
    \item $c_3 l_3 + 4l_1 + k l_2 - re$ for $k \leq 6$ and $r \leq k + 4$
\end{itemize}
Each such integral $\beta$ is a nonnegative linear combination of $l_2 + l_3 -e$, $l_1 + 2l_2 -3e$, $l_1 + l_2 -2e$, $l_1 + l_2 -e$, $l_1$, and $l_2$. 
\end{proof}

\section{Fano 3-Folds with $E5$ Divisors}\label{E5 cases}

By \cite{beheshti2020moduli} Theorem 2.4, the only Fano threefolds admitting an E5 divisorial contraction are the six appearing below.

\subsection*{3.9}

\textbf{Blow-up of $\mathbb{P}_{\mathbb{P}^2}(\mathcal{O} \oplus \mathcal{O}(2))$ along quartic curve:}  This case is covered in \cite{beheshti2020moduli} Section 8.  We summarize their results below.  Let $\phi : X \rightarrow \mathbb{P}_{\mathbb{P}^2}(\mathcal{O} \oplus \mathcal{O}(2))$ be the blow-up of a smooth quartic curve $c$ in a minimal moving section $D$ of the projective bundle.  Let $\pi : X \rightarrow \mathbb{P}^2$ be the composition of $\phi$ with the natural map $\mathbb{P}_{\mathbb{P}^2}(\mathcal{O} \oplus \mathcal{O}(2)) \rightarrow \mathbb{P}^2$.

\begin{thm}[\cite{beheshti2020moduli}]
For all $\alpha \in \Nef_1(X)_\mathbb{Z}$, $\overline{\free}(X,\alpha)$ is irreducible.
\end{thm}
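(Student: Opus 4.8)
The plan is to follow the general framework of Theorem~\ref{Main Method} and Corollary~\ref{connected fibers fixall}, exactly as in the other Picard rank $\geq 3$ cases, but with the extra care required because $X$ carries an $E5$ divisor (so Movable Bend-and-Break in the form of Theorem~\ref{MovableBB} only degenerates curves of anticanonical degree $\geq 6$, and Theorem~\ref{improved MovableBB}(2) governs the quintic exceptions). First I would record the intersection theory on $X$: using Lemma~\ref{blow-up numbers} with $Y = \mathbb{P}_{\mathbb{P}^2}(\mathcal O \oplus \mathcal O(2))$ and $c$ a smooth plane quartic of genus $3$, one computes $(-K_X)^3$, $b_3(X)$, the effective cone, and the Mori cone $\overline{NE}(X)$; the $E5$ divisor $E$ is the strict transform of the minimal section $D$, contracting onto a point, and the map $\pi : X \to \mathbb{P}^2$ is the relevant conic/$\mathbb{P}^1$-fibration-like structure. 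From \cite{matsuki1995weyl} one reads off that $\overline{NE}(X)$ has three extremal rays, one of which is the line class $\ell$ in $E$ (an $E5$-line), and I would identify which classes are nef.

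Next I would produce a core of free curves $\mathscr{C}_X$ of low anticanonical degree (conics and cubics, plus possibly a quartic or quintic as dictated by Theorem~\ref{improved MovableBB}(1)-(2)), using Proposition~\ref{properties nonfree conics} and Lemma~\ref{nonextreme lines} to locate the nef curve classes of $-K_X$-degree $2,3,4$, and verifying each $\overline{\free}(X,\alpha)$ with $\alpha \in \mathscr{C}_X$ is irreducible. Irreducibility of these low-degree moduli spaces should come from describing the curves explicitly: fibers of $\pi$ over $\mathbb{P}^2$ and sections of $\pi$ meeting prescribed points of $c$, together with the irreducibility/monodromy statements of Theorem~\ref{Monodromy} applied to the incidence correspondence of $c$ with pencils of plane curves (the plane quartic $c$ being simply branched over $\mathbb{P}^1$ under a generic pencil of lines). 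I would then identify the separating classes among $\mathscr{C}_X$ — these are the conics whose evaluation map $\overline{\free}_1(X,\alpha) \to X$ has reducible fibers, which by Theorem~\ref{reducible fibers: 4 author result} are either $-K_X$-conics with reducible reduced image or curves contracted by a del Pezzo fibration; since $X$ has no del Pezzo fibration here, only the former occur. After that, apply Lemma~\ref{Relations} to produce a generating set of relations in $\mathbb N \mathscr{C}_X$, and for each relation $\sum \alpha_i = \sum \alpha_j'$ verify Theorem~\ref{Main Method}(1) by the usual device: either $\overline{\free}(X, \sum \alpha_i)$ is itself irreducible, or one exhibits a chain $C_1 \cup C_2$ whose node can be slid onto $c$ (or onto the $E5$ divisor) to pass between the two chain types through a smooth point of $\overline{\mathcal M}_{0,0}(X)$, using Lemma~\ref{GHS lemma} and Proposition~\ref{kontsevich space H1} to check $H^1$ vanishing. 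Condition Theorem~\ref{Main Method}(2) then follows from Corollary~\ref{connected fibers fixall} together with Lemma~\ref{Gluing}, Proposition~\ref{very free curves}, and Proposition~\ref{reducible fibers: main strategy}, since there is no del Pezzo fibration to obstruct it. Finally, nonemptiness for every nef $\alpha$ follows from Theorem~\ref{Representability of Free Curves}, Lemma~\ref{Gordan's Lemma}, and checking the finitely many ``fractional-sum'' classes directly.

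The main obstacle I anticipate is the presence of the $E5$ divisor $E \cong \mathbb{P}^2$: the class $\ell$ of a line in $E$ is an $E5$-line, so chains whose components lie in $\mathscr{C}_X$ may need to be broken across $E$, and the normal-bundle bookkeeping near nodes lying on $E$ (where $\mathcal N_f|_{C_i}$ can be negative) is delicate — this is precisely why Theorem~\ref{MovableBB} needs degree $\geq 6$ rather than $\geq 5$ on such $X$, and why Theorem~\ref{improved MovableBB}(2) singles out the quintic class supported on the section $E$ of $\pi$. Concretely, I expect the hard step to be showing that the component of $\overline{\free}(X,\alpha)$ for the relevant quintic (the one containing smoothings of $C_1 \cup C_2 \cup C_3$ with $C_2 \subset E$ an $E5$-line attached to two smooth $\pi$-fibers) is the \emph{unique} component of that class, and that all other ways of writing $\alpha$ as a free chain funnel into it — this requires the explicit analysis that Theorem~\ref{improved MovableBB}(1)-(2) defers to computation. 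Everything else is routine given the machinery of Sections~3--7; I would lean on the fact, noted in \cite{beheshti2020moduli} Section~8, that this case was already treated there, and simply reconcile their description with the core-chain formalism of the present paper.
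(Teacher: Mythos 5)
The paper does not actually prove this statement in-house: its entire ``proof'' for case $3.9$ is the sentence ``This case is covered in \cite{beheshti2020moduli} Section 8,'' after which it merely transcribes the core, the six relations, and the claim that there are no separating classes, all attributed to that reference. Your proposal to re-derive the result by actually running the core-chain machinery of Sections 3--7 is therefore a genuinely different (more self-contained) route, and in outline it is the right one: identify the low-degree nef classes, prove irreducibility of each core class, generate relations via Lemma \ref{Relations}, and close with Theorem \ref{Main Method}, Corollary \ref{connected fibers fixall}, and Gordan's lemma. Your anticipation that the quintic classes are the hard step is also correct: the core recorded in the paper is $\{f,\ l-2e,\ l-e,\ l+f-3e,\ l,\ l+2f-4e\}$, containing two quartics and two quintics, and the quintics are exactly the classes singled out by Theorem \ref{improved MovableBB}(2).

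There are, however, two concrete problems with the plan as written. First, you treat $X$ as carrying a single $E5$ divisor, ``the strict transform of the minimal section.'' In fact $3.9$ is one of only two deformation types with \emph{two} $E5$ divisors (the strict transform $F$ of the rigid section and $D = 2H+F-E$; see the remark after Proposition \ref{properties nonfree conics}), and correspondingly there are two quintic core classes $l$ and $l+2f-4e$, one for each $E5$ section of $\pi:X\to\mathbb{P}^2$, exchanged by the pseudosymmetry $e\leftrightarrow f-e$, $l-4e\leftrightarrow l-2f$. Several of the six relations (e.g.\ $f+(l-e)+(l+f-3e)=l+(l+2f-4e)$) mix the two quintics, so a single-$E5$ picture would miss half the core and cannot generate the correct relation monoid; the pseudosymmetry is also what halves the verification work, and your plan does not invoke it. Second, the plan defers precisely the content that constitutes the proof --- irreducibility of $\overline{\free}(X,l)$ and $\overline{\free}(X,l+2f-4e)$, and the chain-deformation arguments for each of the six relations --- to ``explicit analysis'' or to reconciling with \cite{beheshti2020moduli}. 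Since the paper itself outsources these same steps to that reference, your proposal ends up no more complete than the paper's own treatment, but it should not be presented as a proof: as written it is a correct scaffold with the load-bearing verifications left blank and one structural fact about $X$ wrong.
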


\textbf{Generators for $N^1(X)$ and $N_1(X)$:} 

\begin{tabular}{ll}
 $H = \pi^*\mathcal{O}(1)$ & $l$ = a general line in a minimal moving section \\ 
 $E$ = the exceptional divisor $\phi^{-1}(c)$ & $e$ = an $\phi$-fiber over a point on $c$  \\  
 $F$ = the strict transform of the rigid section & $f$ = a fiber of $\pi$.
\end{tabular}

\textbf{Intersection Pairing:}
\begin{center}
\begin{tabular}{lll}
    $H \cdot l = 1$ &  $H \cdot e = 0$ & $H \cdot f = 0$, \\
    $E \cdot l = 0$ &  $E \cdot e = -1$ & $E \cdot f = 0$ \\
    $F \cdot l = 0$ & $F \cdot e = 0$ & $F \cdot f = 1$
\end{tabular}
\end{center}

\textbf{Anticanonical Divisor:} $-K_X = 5H + 2F -E$.

\textbf{Effective Divisors:} $E$, $F$, $4H - E$, and $D = 2H + F - E$ generate $\text{Eff}(X)$.  $F$ and $D$ are the $E5$ divisors on $X$.

\textbf{Effective Curves:} $e$, $l-2f$, $f-e$, and $l-4e$ generate $\overline{NE}(X)$.  There is pseudosymmetry which swaps $e$ with $f-e$ and $l-4e$ with $l-2f$.

\begin{lem}[\cite{beheshti2020moduli}]
A core of free curves on $X$ is given by 
\begin{align*}
    \mathscr{C}_X = \{ f, l-2e, l-e, l + f -3e, l, l + 2f -4e \}.
\end{align*}
There are no separating classes in $\mathscr{C}_X$.
\end{lem}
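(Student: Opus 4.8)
The statement identifies a core $\mathscr{C}_X$ of free curves on the Fano threefold $3.9$ and asserts no class in it is separating; both points are cited from \cite{beheshti2020moduli}, so the plan is to reconstruct the argument following the template used in the explicit cases (e.g. $4.1$, $4.5$, $4.13$) of this paper. First I would verify $\mathscr{C}_X$ is a set of free classes and that it satisfies Definition \ref{def core}(1)--(2). To get (2), I would enumerate the nef curve classes $\alpha$ with $2 \le -K_X.\alpha \le 4$: writing $\alpha = dl + mf - ne$ and imposing nonnegative pairing against the generators of $\text{Eff}(X)$ — namely $E$, $F = -2H + \text{(something)}$, wait, against $E$, $F$, $4H-E$, and $D = 2H+F-E$ — together with $2 \le 5d + 2m - n \le 4$, cuts the possibilities down to a short finite list. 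I expect the list to be $\mathscr{C}_X$ together with a handful of extra classes (double covers like $2f$, sums like $l + (l-2e)$, etc.) that visibly break into free chains whose components lie in $\mathscr{C}_X$; for those I would exhibit the degeneration explicitly, using the conic-bundle structure $\pi : X \to \mathbb{P}^2$ and Theorem \ref{MovableBB}/\ref{improved MovableBB} to reduce higher-degree classes, exactly as in the other cases.

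Next, for the claim that $\overline{\free}(X,\alpha)$ is irreducible for each $\alpha \in \mathscr{C}_X$, I would describe each moduli space concretely via the two fibration structures on $X$: the conic bundle $\pi : X \to \mathbb{P}^2$ (whose fibers are conics, with discriminant governed by $c$) and the projection onto $\mathbb{P}^2$ coming from $\phi$. Low-degree classes like $f$, $l$, $l-e$ are parameterized by open subsets of $\mathbb{P}^2$, a Grassmannian, or an incidence variety, and irreducibility is immediate; for $l-2e$, $l+f-3e$, $l+2f-4e$ one checks that the relevant linear system on a fiber is irreducible, invoking Theorem \ref{Monodromy} for monodromy transitivity when a choice of intersection point with $c$ is involved. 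This is also where the ``no separating classes'' assertion is proved: for each $\alpha \in \mathscr{C}_X$ I would show the fiber of $\text{ev} : \overline{\free}_1(X,\alpha) \to X$ over a general point is irreducible, typically because fixing a general point $x \in X$ fixes the fiber of $\pi$ (or the ambient section) through $x$ and hence the ambient surface, leaving an irreducible linear system or a single point.

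The main obstacle I anticipate is the geometry of the $E5$ divisors $F$ and $D$: unlike the blow-up cases, $X$ here is built from a $\mathbb{P}^1$-bundle over $\mathbb{P}^2$, so the interplay between the section classes, the exceptional divisor $E$ over the quartic $c$, and the conic-bundle fibers is less transparent, and the pseudosymmetry swapping $e \leftrightarrow f-e$ and $l-4e \leftrightarrow l-2f$ must be used carefully to cut the casework in half. In particular, controlling curves that meet $F$ or $D$ — and confirming that none of the low-degree nef classes sweep out these $E5$ divisors in a way that would produce a separating class — requires the structural results on families of lines and non-free conics from Section 6 (Proposition \ref{properties nonfree conics} and the remarks after \cite[Lemma~4.6]{beheshti2020moduli}). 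Since the full argument is carried out in \cite[Section~8]{beheshti2020moduli}, I would either cite it directly for the subtle points or reproduce the key degenerations, and defer the remaining verifications (relations in $\mathbb{N}\mathscr{C}_X$, nonemptiness of $\overline{\free}(X,\alpha)$ for all $\alpha \in \Nef_1(X)_\mathbb{Z}$) to the subsequent lemmas in the style of the other cases.
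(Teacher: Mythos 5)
The paper does not actually prove this lemma: the subsection for case $3.9$ opens by stating that the case is covered in \cite{beheshti2020moduli}, Section 8, and only restates the conclusions, so there is no in-paper argument to compare against and your instinct to reconstruct one from the template of the other cases is reasonable. The reconstruction as written, however, fails at its first step. You propose to enumerate the nef classes with $2 \le -K_X\cdot\alpha \le 4$ and expect the resulting list to be $\mathscr{C}_X$ plus some freely breakable extras. It cannot be: with $-K_X = 5H+2F-E$ the classes $l$ and $l+2f-4e$ in the stated core have anticanonical degree $5$, so they do not appear in your enumeration at all. They must be in the core precisely because of the feature that distinguishes this case (and the rest of Section 9) from $4.1$, $4.5$, $4.13$: here $X$ carries two $E5$ divisors, $F$ and $D = 2H+F-E$, so Theorem \ref{MovableBB} only forces free breaking in degree $\ge 6$, and Theorem \ref{improved MovableBB}(2) says that in degree $5$ there is exactly one non-breaking component of $\overline{\free}(X,\alpha)$ for each $E5$ section of the conic bundle $\pi: X \to \mathbb{P}^2$ --- namely the smoothings of $C_1\cup C_2\cup C_3$ with $C_2$ a line of class $l-2f$ in $F$ (total class $2f+(l-2f)=l$) or of class $l-4e$ in $D$ (total class $2f+(l-4e)=l+2f-4e$). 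Your phrase ``reduce higher-degree classes, exactly as in the other cases'' is exactly where the plan breaks: in the cases you model the argument on, every class of degree $\ge 4$ breaks (Proposition \ref{breaking quartic curves}), whereas here condition \ref{def core}(2) must be checked by hand through degree $5$.

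To repair the outline you would enumerate nef classes of degree $2$ through $5$, show that every degree-$5$ class other than $l$ and $l+2f-4e$ breaks freely, and then prove irreducibility of $\overline{\free}(X,l)$ and $\overline{\free}(X,l+2f-4e)$ together with irreducibility of their general evaluation fibers; the identification of these components in Theorem \ref{improved MovableBB}(2) as smoothings of explicit fiber--line--fiber chains is what makes both checks tractable. The remainder of your toolkit --- the conic-bundle structure, Theorem \ref{Monodromy} for transitivity on intersections with $c$, and the pseudosymmetry $e \leftrightarrow f-e$, $l-2f \leftrightarrow l-4e$ --- is appropriate, but note that this pseudosymmetry exchanges the two $E5$ divisors and hence the two degree-$5$ core classes, which halves that part of the work rather than eliminating it.
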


\begin{lem}[\cite{beheshti2020moduli}]
The relations in the monoid $\mathbb{N} \mathscr{C}_X$ are generated by: 
\begin{enumerate}
    \item $(l-2e) + l = 2(l-e)$,
    \item $(l-2e) + (l + 2f -4e) = 2(l + f - 3e)$,
    \item $f + 2(l-2e) = (l-e) + (l + f -3e)$,
    \item $f + (l-2e) + (l-e) = l + (l + f - 3e)$,
    \item $f + (l-2e) + (l + f -3e) = (l + 2f -4e) + (l-e)$,
    \item $f + (l-e) + (l + f + 3e) = l + (l + 2f -4e)$.
\end{enumerate}
For each such relation $\sum \alpha_i = \sum \beta_j$, a main component of $\prod_X \overline{\free}_2(X,\alpha_{i})$ lies in the same component of $\overline{\free}_2(X,\sum \alpha_i)$ as a main component of $\prod_X \overline{\free}_2(X,\beta_{j})$.
\end{lem}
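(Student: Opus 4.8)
The structure of the argument is dictated by the machinery in Section 4. We must verify the two hypotheses of Theorem \ref{Main Method} (or rather, since there are no separating classes in $\mathscr{C}_X$ here, condition \ref{Main Method}(2) is automatic by Lemma \ref{Gluing} once we know fibers are connected, so really only the relation-moving statement matters). For each of the six listed relations $\sum \alpha_i = \sum \beta_j$, the plan is to exhibit a single component of $\overline{\free}(X, \delta)$, where $\delta = \sum \alpha_i = \sum \beta_j$, that contains both a main component of the fiber product $\prod_X \overline{\free}_2(X,\alpha_i)$ and a main component of $\prod_X \overline{\free}_2(X,\beta_j)$. Two complementary tactics will be used, exactly as described in the remark after Theorem \ref{Main Method}: either show $\overline{\free}(X,\delta)$ is itself irreducible (and invoke that every component of $\overline{\free}(X,\delta)$ contains free chains of core type, so both main components land there), or produce a map $Z \to \overline{\free}(X,\delta)$ from a connected nodal curve whose nodes map to smooth points of $\overline{\mathcal{M}}_{0,0}(X)$, connecting the two main components.

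First I would handle relation (1), $(l-2e)+l = 2(l-e)$, and relation (3), $f+2(l-2e)=(l-e)+(l+f-3e)$, by a direct geometric deformation inside the $E5$-divisor picture: curves of class $l$, $l-e$, $l-2e$ live in moving sections $D \cong \mathbb{P}^2$ (blown up along $c$), so the relevant degenerations of conics/lines in $\mathbb{P}^2$ meeting prescribed points of the quartic $c$ are governed by Theorem \ref{del pezzo curves thm} applied to del Pezzo surfaces obtained by blowing up $\mathbb{P}^2$ at points of $c$; these spaces are irreducible, and the node-smoothing criterion of Lemma \ref{GHS lemma} together with Proposition \ref{kontsevich space H1} certifies smoothness of the relevant reducible maps. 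For relations (2), (5), and (6), which involve the class $l+2f-4e$ (the curve sweeping out the second $E5$ divisor $D$), I would exploit the pseudosymmetry swapping $e \leftrightarrow f-e$ and $l-4e \leftrightarrow l-2f$: this pseudoaction identifies $l+2f-4e$ with a class already treated, so the relation gets carried to one in the "base'' side of the picture. Relation (4), $f+(l-2e)+(l-e) = l+(l+f-3e)$, should follow from irreducibility of $\overline{\free}(X, 2l+f-3e)$, which one proves by fibering over $\mathbb{P}^2$ via $\pi$ (or over the space of lines in a section) and a monodromy argument on the intersections of the fiber curve with $c$, using Theorem \ref{Monodromy}.

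The key technical point throughout is the connectedness/irreducibility of the fibers of the evaluation maps $\overline{\free}_1(X,\alpha) \to X$ for $\alpha$ in the core: since there are no separating classes, all these fibers are connected, so by Lemma \ref{Gluing} each fiber product $\prod_X \overline{\free}_2(X, \gamma_i)$ along any ordering has a unique main component, and the only thing to check is that the unique main components on the two sides of a relation coincide inside $\overline{\free}(X,\delta)$. This is why exhibiting a connecting nodal family, or proving outright irreducibility of the intermediate $\overline{\free}(X,\delta)$, suffices. I expect the main obstacle to be relations (2) and (6), where $\delta$ has relatively large anticanonical degree and lies near the boundary of $\overline{NE}(X)$ spanned by the two $E5$ divisors: there one cannot simply quote Movable Bend-and-Break (Theorem \ref{MovableBB}) because of the $E5$ exception, so one must carefully track the curve inside the section $D$ blown up along $c$, identify it with a del Pezzo surface of small degree, and apply Theorem \ref{del pezzo curves thm} to get irreducibility of the relevant space of planar curves through points of $c$ — checking along the way (via Lemma \ref{GHS lemma}) that the degenerations used to pass between the two chain types are immersions at their nodes, so that they are smooth points of the Kontsevich space and genuinely connect the two main components. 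Since the authors state these details are handled in \cite{beheshti2020moduli} Section 8, I would organize the writeup to cite that reference for the hardest verifications and supply the pseudosymmetry reductions and the Theorem \ref{Monodromy}/\ref{del pezzo curves thm} applications explicitly.
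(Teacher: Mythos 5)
The paper does not actually prove this lemma: case $3.9$ is imported wholesale from \cite{beheshti2020moduli} Section~8, and the statement is reproduced with only a citation. So your sketch is necessarily an independent reconstruction, and its general shape (irreducibility of intermediate spaces of free curves, or connecting nodal families whose nodes are smooth points of $\overline{\mathcal{M}}_{0,0}(X)$, organized by the pseudosymmetry) is consistent with how the paper treats the other $E5$ cases. However, there is a concrete step that fails. The pseudosymmetry of $3.9$ sends $e\mapsto f-e$ and $l\mapsto l+2f-4e$, hence fixes $f$ and $l-2e$ and swaps $l\leftrightarrow l+2f-4e$ and $l-e\leftrightarrow l+f-3e$. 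Applying it to relation (6), $f+(l-e)+(l+f-3e)=l+(l+2f-4e)$, each side is mapped to itself (the two summands on each side are merely interchanged), so (6) is \emph{invariant} under the pseudosymmetry and is not carried to a previously treated relation, contrary to your plan of handling (2), (5), and (6) uniformly by symmetry. Only (2) and (5) reduce to (1) and (4); relation (6) needs its own argument, e.g.\ degenerating the fiber $f$ into the chain $e+(f-e)$ and absorbing $e$ into the component of class $l-e$ and $f-e$ into the component of class $l+f-3e$, then smoothing to pass between the two chain types, with the smoothness of the intermediate map certified by Lemma \ref{GHS lemma} and Proposition \ref{kontsevich space H1}.

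Two smaller points. First, the lemma has two assertions, and your proposal only addresses the second: you never verify that the six listed relations \emph{generate} the relation monoid of $\mathbb{N}\mathscr{C}_X$, which requires running the algorithm of Lemma \ref{Relations} with explicit divisors $D_i$ (e.g.\ divisors pairing negatively with $f$, then with $l$, etc.) against the six core classes. Second, your appeal to Lemma \ref{Gluing} to dispose of condition \ref{Main Method}(2) is fine here since the core has no separating classes, but that observation does not by itself make the two main components on the two sides of a relation coincide; the connecting degenerations are still the substance of the proof, and for the relations of largest degree ((2) and (6), total class $2l+2f-6e$ and $2l+2f-4e$) the Movable Bend-and-Break exception for $E5$ divisors means you cannot shortcut these by generic breaking, exactly as you anticipate.
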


\subsection*{3.14}

\textbf{Blow-up of a cubic and a point:}
Let $f:X \rightarrow \mathbb{P}^3$ be the blow-up of $\mathbb{P}^3$ with center a union of a cubic $c$ in a plane $S$ and a point $p$ not on $S$.

\begin{thm}\label{3.14thm}
For all $\alpha \in \Nef_1(X)_\mathbb{Z}$, $\overline{\free}(X,\alpha)$ is irreducible.
\end{thm}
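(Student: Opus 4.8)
The plan is to follow the general strategy outlined in Section 7, since $X$ (numbered $3.14$) is a smooth Fano threefold with $\rho(X)=3$ and an $E5$ divisor (the strict transform of the plane $S$), and since $|-K_X|$ is very ample for this family. First I would record the cone data: with basis $H, E_c, E_p$ for $N^1(X)$ and dual basis $l, e_c, e_p$ for $N_1(X)$, where $E_c = f^{-1}(c)$, $E_p = f^{-1}(p)$, I would compute $-K_X = 4H - E_c - E_p$ and identify the extreme effective divisors — crucially $S' := H - E_c$, which is the $E5$ divisor, together with $E_c$, $E_p$, and (since a cubic and a point span no quadric obstruction) the divisors governing the edges of $\overline{NE}(X)$. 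I would then list, via Lemma \ref{Gordan's Lemma} applied to $\overline{NE}(X)$, all nef classes $\alpha$ with $2 \le -K_X\cdot\alpha \le 4$, separate out the freely breakable ones (double covers, and quartics/cubics meeting $E_c$ or $E_p$ in a way that lets them split off a line), and assemble a core of free curves $\mathscr{C}_X$. Because $X$ has an $E5$ divisor, Movable Bend-and-Break (Theorem \ref{MovableBB}) only applies in $(-K_X)$-degree $\ge 5$, so I would invoke Theorem \ref{improved MovableBB}(1)–(2) to handle the quartic classes that degenerate to chains through the $E5$ divisor $S'$ — this is exactly the mechanism flagged in the outline for $E5$ cases.

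Next I would verify the hypotheses of Theorem \ref{Main Method}. For step (1), I would apply Lemma \ref{Relations} with the ordered generators and divisors $D_i$ (here $S' = H-E_c$ and the analogous negative-pairing divisors) to produce an explicit finite generating set of relations in $\mathbb{N}\mathscr{C}_X$; for each relation $\sum \alpha_i = \sum \alpha_j'$ I would either show the total space $\overline{\free}(X,\sum\alpha_i)$ is irreducible (using Lemma \ref{lines_in_P3}, since $X$ is a blow-up of $\mathbb{P}^3$ along points and a planar curve — note the coplanarity condition there is the relevant subtlety when $c\subset S$ and a secant line of $c$ lies in $S$), or exhibit a chain $Z \to \overline{\free}(X,\sum\alpha_i)$ whose nodes are smooth points of $\overline{\mathcal{M}}_{0,0}(X)$ connecting the two main components, deforming through a node placed on $c$ or on a line in $S'$. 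For step (2), Corollary \ref{connected fibers fixall} reduces the work to checking the conics $\beta$ interior to $\overline{NE}(X)$: by Lemma \ref{interior conic lemma}, since $\rho(X)=3$ there is no interior nef $-K_X$-conic, so there is no interior conic to worry about and every main component satisfies $(\dagger)$; hence \ref{Main Method}(2) holds automatically once I confirm $X$ admits no del Pezzo fibration with degree $\le 3$ fibers — which it does not, as its contractions are the $E5$ contraction, the $E1$ contraction of $E_p$, and a conic bundle. Theorem \ref{Main Result}(2) then follows from Propositions \ref{very free curves} and \ref{reducible fibers: 4 author result} together with Theorem \ref{unique main component}, and nonemptiness of $\overline{\free}(X,\alpha)$ for all nef $\alpha$ follows from Theorem \ref{Representability of Free Curves} plus a short Gordan's-Lemma check on the fundamental parallelepiped.

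The main obstacle I expect is the bookkeeping around the $E5$ divisor $S'$: quartic classes meeting $S'$ positively need not break into a chain of two \emph{free} curves by Movable Bend-and-Break alone, so one must use Theorem \ref{improved MovableBB}(1) to see that such a component either parameterizes strict transforms of general lines under a contraction $X \to \mathbb{P}^3$ or arises as strict transforms of quintic-type curves from an $E5$ construction; concretely, I would check by hand which quartic $\alpha$ with $\alpha\cdot S' > 0$ occur, show the corresponding curves degenerate to $C_1 \cup C_2 \cup C_3$ with $C_2 \subset S'$ a line of the $E5$ divisor attached to two free conics, and confirm this sits in a single component. The secondary nuisance is the coplanarity hypothesis in Lemma \ref{lines_in_P3}: because the blown-up cubic $c$ lies in the plane $S$, a line meeting $c$ twice lies in $S$ and the relevant intersection bounds in \ref{lines_in_P3}(1)–(2) must be applied carefully, but since $p \notin S$ no further coplanar configurations arise, and the argument goes through. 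Everything else — listing nef classes, identifying the core, running Lemma \ref{Relations}, and the final Gordan check — is routine computation of the kind carried out for the Picard-rank-$4$ and $5$ cases above.
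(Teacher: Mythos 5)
There is a genuine gap, and it comes in two places. First, your anticanonical class is wrong: blowing up a \emph{point} contributes $2E_p$ to the discrepancy, so $-K_X = 4H - E_c - 2E_p$ (the paper's $4H-E-2F$), not $4H-E_c-E_p$. This propagates into every degree computation — e.g.\ the strict transform $l-f$ of a line through $p$ is a conic, not a cubic — so your list of low-degree nef classes would come out wrong. Second, and more seriously, you have the Movable Bend-and-Break threshold backwards: since $X$ \emph{does} admit an $E5$ contraction (the strict transform $H-E$ of the plane $S$), Theorem \ref{MovableBB} only applies in degree $\geq 6$, not $\geq 5$. Consequently the core must be assembled from nef classes of degree $2$ through $5$, not $2$ through $4$. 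Your plan would miss the degree-$5$ class $3l-3e-2f$ (planar cubics with a double point at $p$), which the paper includes in $\mathscr{C}_X$ and whose irreducibility it proves by an explicit linear-system computation in planes through $p$; it would also miss the two generating relations involving that class, $(3l-3e-2f)+(l)=(2l-2e-f)+(l-e)+(l-f)$ and $(3l-3e-2f)+(l-e)=2(2l-2e-f)$, whose verification is the bulk of the paper's work for this case. Since $3l-3e-2f$ cannot be written as a sum of two free classes, no amount of bend-and-break recovers it from a degree-$\leq 4$ core, so the argument as proposed does not close.

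The rest of your outline is in the right spirit and matches the paper's template: the core does turn out to be $\{l,\,l-e,\,l-f,\,2l-2e-f,\,3l-3e-2f\}$ with no separating classes, so \ref{Main Method}(2) is automatic via Lemma \ref{Gluing}, and Theorem \ref{improved MovableBB}(1) is indeed what explains why the quartic class $l$ (general lines under $X\to\mathbb{P}^3$) sits in the core rather than breaking. One further small correction: the paper does not invoke Lemma \ref{lines_in_P3} here but argues the relations by hand (deforming conics and cubics inside planes through $p$, sliding the attached line until it passes through $p$ and absorbing the exceptional curve); also the contraction of $E_p$ is of type $E2$, not $E1$, and the third elementary contraction is the $E1$ contraction of $E_c$ onto $V_7$ rather than a conic bundle.
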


\textbf{Generators for $N^1(X)$ and $N_1(X)$:}
\begin{center}
\begin{tabular}{ll}
 $H$ = the class of a hyperplane & $l$ = a line in $\mathbb{P}^3$ \\ 
 $E$ = the exceptional divisor $f^{-1}(c)$ & $e$ = an $f$-fiber over a point on $c$  \\  
 $F$ = the exceptional divisor $f^{-1}(p)$ & $f$ = a line in $F$
\end{tabular}
\end{center}

\textbf{Intersection Pairing:}
\begin{center}
\begin{tabular}{lll}
    $H \cdot l = 1$ &  $H \cdot e = 0$ & $H \cdot f = 0$, \\
    $E \cdot l = 0$ &  $E \cdot e = -1$ & $E \cdot f = 0$ \\
    $F \cdot l = 0$ & $F \cdot e = 0$ & $F \cdot f = -1$
\end{tabular}
\end{center}

\textbf{Anticanonical Divisor:}
\begin{align*}
    -K_X = 4H-E-2F
\end{align*}

\textbf{Effective Divisors:} The divisors $H-E$, $H-F$, $E$, and $F$ are effective, as well as $3H-3F-E$.

\begin{lem}\label{3.14core}
A core of free curves on $X$ is given by 
\begin{align*}
    \mathscr{C}_X = \{ l, l-e, l-f, 2l-2e-f, 3l-3e-2f \}.
\end{align*}
There are no separating classes in $\mathscr{C}_X$.
\end{lem}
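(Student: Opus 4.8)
The goal is to check the two conditions of Definition~\ref{def core} for $\mathscr{C}_X$ — nonemptiness and irreducibility of $\overline{\free}(X,\alpha)$ for each $\alpha\in\mathscr{C}_X$, and the fact that every component of $\overline{\free}(X)$ contains a free chain whose components have classes in $\mathscr{C}_X$ — together with the observation that no $\alpha\in\mathscr{C}_X$ is separating. As in the earlier cases, the first step is to list the nef curve classes of small anticanonical degree. Writing $\alpha=dl-me-nf$, nonnegativity against $E$, $F$, the $E5$ divisor $H-E$ (the strict transform $\tilde S$ of the plane $S$; it is $\cong\mathbb{P}^2$, with $\mathcal{N}_{\tilde S/X}\cong\mathcal{N}_{S/\mathbb{P}^3}\otimes\mathcal{O}_S(-c)\cong\mathcal{O}_{\mathbb{P}^2}(-2)$), and the effective divisor $3H-3F-E$ (the strict transform $\tilde D$ of the cubic cone over $c$ with vertex $p$) forces $0\le m\le d$, $0\le n\le d$ and $m+3n\le 3d$. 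Since $X$ carries an $E5$ contraction, Theorem~\ref{MovableBB} only guarantees breaking in degree $\ge 6$, so I must retain all nef classes with $2\le -K_X\cdot\alpha=4d-m-2n\le 5$. A short case analysis on $d$ produces exactly $\mathscr{C}_X\cup\{\,2l-2f,\ 2l-e-f\,\}$.

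Next I would eliminate the two extra classes and all components of larger degree. A general curve of class $2l-2f$ is a double cover of a conic of class $l-f$, and a general curve of class $2l-e-f$ is the strict transform of a conic through $p$ meeting $c$ once, which degenerates — e.g. via a limit of the $\mathbb{G}_m$-action fixing two planes through the line joining its distinguished points — into the union of a line through $p$ and a line meeting $c$; thus both are freely breakable into core classes. Components of degree $\ge 6$ reduce by Theorem~\ref{MovableBB}, and the components of degree $4$ or $5$ not yet accounted for are controlled by Theorem~\ref{improved MovableBB}: such a component containing no length-$2$ free chain parameterizes strict transforms of general lines under the contraction $X\to\mathbb{P}^3$ (class $l$) or curves in an $E5$ divisor (or a blow-up of one), and running through the possibilities for $X$ leaves only the degree-$5$ class $3l-3e-2f$, which we keep in $\mathscr{C}_X$. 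Everything then reduces to the five classes in $\mathscr{C}_X$.

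For $\alpha\in\{l,\ l-e,\ l-f,\ 2l-2e-f\}$ this is a direct application of Lemma~\ref{lines_in_P3}, using that $c$ and $p$ are not coplanar: $l$ is the class of a general line (evaluation fiber the $\mathbb{P}^2$ of lines through the point); $l-f$ is the class of a line through $p$ (fiber the single line $\overline{xp}$); $l-e$ is the class of a line meeting $c$, and since a line through a general point meets the plane $S$ in exactly one point the fiber is identified with the elliptic curve $c$, hence irreducible; and $2l-2e-f$ is the class of a conic through $p$ meeting $c$ twice, where irreducibility of $\overline{\free}$ and of the evaluation fibers follows from Lemma~\ref{lines_in_P3}(2), or directly by fibering over the pencil of planes containing $\overline{px}$ and invoking Theorem~\ref{Monodromy} for the $S_3$-monodromy of the three points cut out on $c$ by a varying plane. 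Nonemptiness is visible in each description, and none of these four classes is separating.

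The delicate class is $3l-3e-2f$. Its members are strict transforms of nodal plane cubics with node at $p$ passing through the three points cut on $c$ by their plane: degree $3$ plus a singular point forces planarity, $F\cdot\alpha=2$ forces the node to lie at $p$, and $E\cdot\alpha=3$ with $c\subset S$ forces the three incidences with $\Pi\cap S$; this gives a dominant family of the expected dimension $5$, parametrized over the $\mathbb{P}^2$ of planes through $p$, with the $S_3$-monodromy on $c$-sections making the parametrization irreducible. The complication is that $3l-3e-2f$ also pairs to zero with the effective divisor $3H-3F-E$, so the cone surface $\tilde D$ carries its own $5$-dimensional family of free curves of this class (one checks $\mathcal{N}_{C/X}\cong\mathcal{O}(3)\oplus\mathcal{O}$ for such a $C$), and the heart of the argument is to show these lie in the closure of the plane-cubic family, so that $\overline{\free}(X,3l-3e-2f)$ is irreducible; any remaining component is non-dominant and hence, by Theorem~\ref{nonfree curves}, swept out by $-K_X$-lines (impossible, the degree being $5$) or supported on an exceptional divisor, which a comparison of curve classes on $E$, $F$, $\tilde S$ and $\tilde D$ rules out. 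Because a general point of $X$ lies on finitely many of these cubics in any fixed plane-pencil while the plane through it still varies, the evaluation map has irreducible generic fibers, so $3l-3e-2f$ is not separating either. I expect this last step — disentangling the single component of $\overline{\free}(X,3l-3e-2f)$ from the competing family inside the $E5$-adjacent cone $\tilde D$ — to be the main obstacle; it is where the $E5$-specific deformation-theoretic methods used for the other $E5$ threefolds (cf. the treatment of $3.9$ in \cite{beheshti2020moduli}) come into play.
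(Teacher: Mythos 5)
Your enumeration of nef classes of degree $2$ to $5$, the free breaking of $2l-e-f$ (and of the double covers of class $2l-2f$, which the paper excludes instead by the remark that an irreducible nonlinear curve meets $p$ fewer times than its degree), and the treatment of $l$, $l-e$, $l-f$, $2l-2e-f$ via Lemma \ref{lines_in_P3} and Theorem \ref{Monodromy} all match the paper's argument. The planarity observation for $3l-3e-2f$ (degree three with a double point forces planarity; $F\cdot\alpha=2$ puts the node at $p$; $E\cdot\alpha=3$ forces passage through all three points of $A\cap c$) is also exactly the paper's route.

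The genuine problem is the ``complication'' you build around the divisor $3H-3F-E$. The surface $\tilde D$ of that class is the strict transform of the cone over the projection of $c$ from $p$; since $c$ is a smooth plane cubic, $\tilde D$ is a ruled surface over an elliptic curve, so every rational curve on it is contained in a fiber of the ruling, i.e.\ is a line of class $l-e-f$ or a multiple cover thereof. There is no $5$-dimensional family of free curves of class $3l-3e-2f$ on $\tilde D$ — indeed $\tilde D$ carries no free rational curves at all — so the step you flag as ``the heart of the argument'' and ``the main obstacle,'' and which you then leave unresolved by deferring to unspecified $E5$-type deformation methods, is addressing a family that does not exist. Worse, as written your proof of irreducibility of $\overline{\free}(X,3l-3e-2f)$ is incomplete precisely at this point. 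The fix requires nothing new: your planarity argument already shows every irreducible curve of the class is a nodal plane cubic with node at $p$ through the three collinear points $A\cap c$, and the fiber of the resulting family over each plane $A$ is a single linear system — one computes explicitly that it is a $\mathbb{P}^3$ (so no monodromy on the three points is even needed, contrary to your appeal to $S_3$-monodromy). The total space is therefore an irreducible $\mathbb{P}^3$-bundle over the $\mathbb{P}^2$ of planes through $p$, and the evaluation fiber over a general $q$ is a $\mathbb{P}^2$-bundle over the pencil of planes through $p$ and $q$ (not ``finitely many cubics per plane,'' as you write), hence irreducible, so the class is not separating.
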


\begin{proof}
\textbf{Nef Curve Classes of Anticanonical degree between $2$ and $5$:}

If $\alpha = a l - b e -c f$ is a nef curve class of anticanonical between between $2$ and $5$,
\begin{align*}
    0 \leq b,c \leq a, \hspace{.5cm} 2 \leq 4a-b-2c \leq 5.
\end{align*}

Moreover, since we are only considering components of curves mapped birationally onto their image, a nonlinear curve intersects $p$ less often than its degree. In other words, $c < a$ whenever $a > 1$.  The classes satisfying these constraints are those in $\mathscr{C}_X$, as well as $2l-e-f$, which is freely breakable.

\textbf{Irreducible Spaces and Fibers:}
The curves of class $l$, $l-e$, and $l-f$ are clearly parameterized by an irreducible family and they have irreducible fibers. 
For the curves of class $2l-2e-f$, apply theorem \ref{Monodromy} to the linear system of planes through $p$ on $\mathbb{P}^3$. The monodromy action is the full symmetric group so, in particular, the space of pairs $(A,r,s)$ where $A$ is a plane containing $p$ and $r,s \in c \cap A$. But the space of curves of class $2l-2e-f$ form a $\mathbb{P}^2$ bundle over this space, so they are irreducible. To show the fibers are irreducible, we apply the same argument to the linear system of planes through $p$ and $q$ where $q \in \mathbb{P}^3$ is general.

The curves of class $3l-3e-2f$ are all of degree three with a double point at $p$, so are planar. Thus, the space of such curves comprises a bundle over the space of planes through $p$. The fiber over such a plane $A$ is the space of cubics in $A$ through the three collinear points $q_1, q_2, q_3 \in A \cap c$ with double point at $p$. Up to isomorphism, we may identify $A$ with $\mathbb{P}^2$, $p$ with $[0,0,1]$, and $q_1, q_2$, and $q_3$ with $[1,0,0]$, $[0,1,0]$, and $[s,t,0]$ for some $s,t$. One sees that the cubics with these prescribed conditions are solutions sets of equations
\begin{align*}
    a x^2z + bxyz+cy^2z + (t d) x^2 y  - (s d) xy^2 = 0,
\end{align*}

\noindent so they are parameterized by $\mathbb{P}^3$. Being a bunde over $\mathbb{P}^2$ with fiber $\mathbb{P}^3$, the space of curves of class $3l-3e-2f$ is irreducible. Moreover, using the same method above, we may realize the space of curves of class $3l-3e-2f$ through a generic point $q \in \mathbb{P}^3$ as a bundle over $\mathbb{P}^1$ with fiber $\mathbb{P}^2$.
\end{proof}

\begin{lem}\label{3.14relations}
The relations in the monoid $\mathbb{N} \mathscr{C}_X$ are generated by: 
\begin{enumerate}
    \item $(2l-2e-f)+(l)=2(l-e)+(l-f)$
    \item $(3l-3e-2f)+(l)=(2l-2e-f)+(l-e)+(l-f)$
    \item $(3l-3e-2f)+(l-e)=2(2l-2e-f)$.
\end{enumerate}

For each such relation $\sum \alpha_i = \sum \beta_j$, a main component of $\prod_X \overline{\free}_2(X,\alpha_{i})$ lies in the same component of $\overline{\free}_2(X,\sum \alpha_i)$ as a main component of $\prod_X \overline{\free}_2(X,\beta_{j})$.
\end{lem}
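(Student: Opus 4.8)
\textbf{Proposal for the proof of Lemma \ref{3.14relations}.}

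The plan is to proceed in two stages, exactly as the other ``relations'' lemmas in this section do: first establish that the three displayed identities generate the monoid of relations among the core classes $\mathscr{C}_X = \{l,\ l-e,\ l-f,\ 2l-2e-f,\ 3l-3e-2f\}$, and then verify the gluing/smoothing statement for each of the three relations. For the first stage I would invoke Lemma \ref{Relations} with the curve classes ordered so that the ``new'' classes $3l-3e-2f$ and $2l-2e-f$ come first. The divisor $D_1 = 3H - 3F - E$ pairs to $-1$ with $3l-3e-2f$, to $\geq 0$ with all other core classes (it pairs to $1$ with $l$, to $0$ with $l-e$, $l-f$, $2l-2e-f$ since $(2l-2e-f).D_1 = 6-2-3 = 1$ — I should recompute: $(2l-2e-f).(3H-3F-E) = 6 - 3 - 2 = 1 > 0$, and $(l-e).D_1 = 3-1 = 2$, $(l-f).D_1 = 3-3 = 0$, $l.D_1 = 3$), so via Lemma \ref{Relations} the only minimal relations involving $3l-3e-2f$ are (2) and (3) (one must check $(r,a_i)$-minimality, i.e. that no smaller combination works — this is the content of (2) having $r=1$ and (3) having $r=1$ with a single $l-e$ on the other side). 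After eliminating $3l-3e-2f$, the divisor $D_2 = 2H - 2F - E$ (or an appropriate effective combination; one should check $2H-2F-E$ is effective, which follows since $3H-3F-E = (H-F) + (2H-2F-E)$ forces $2H-2F-E = (3H-3F-E) - (H-F)$, hmm, that's a difference, so instead use that $2l-2e-f$ is extreme in the cone spanned by $\{l, l-e, l-f, 2l-2e-f\}$) gives relation (1) as the unique minimal relation involving $2l-2e-f$. Finally $\{l, l-e, l-f\}$ are linearly independent in $N_1(X)$ (they span a rank-$3$ sublattice), so there are no further relations. This confirms (1)--(3) generate.

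For the second stage — the gluing statement — I would handle each relation by exhibiting a single irreducible component of $\overline{\free}_2(X)$ of the total class that contains main components of both chain-types, or by invoking irreducibility of $\overline{\free}$ of the total class, which is already available from Lemma \ref{3.14core} and Theorem \ref{3.14thm}. Relation (1): the total class is $3l-2e-f$, and a free chain of type $(2l-2e-f,\ l)$ degenerates, by moving the node onto $c$, into a chain of type $(2l-2e-f,\ e,\ l-e)$ which is a smooth point of $\overline{\mathcal{M}}_{0,0}(X)$ (normal bundle computations as in Lemma \ref{GHS lemma} and Proposition \ref{kontsevich space H1}); smoothing the first two components and then re-breaking using the known structure gives a chain of type $(l-e,\ l-e,\ l-f)$. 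Relation (2): total class $4l-3e-2f$; here I would smooth a chain of type $(3l-3e-2f,\ l)$ to an irreducible very free curve, then break it along the recipe of Theorem \ref{MovableBB}/its refinement — alternatively observe that $\overline{\free}(X, 4l-3e-2f)$ is irreducible (it follows from Theorem \ref{3.14thm}), so any two main components of products over $X$ summing to this class lie in the same component. Relation (3): total class $5l-4e-3f$, again handled by irreducibility of $\overline{\free}(X, 5l-4e-3f)$, or by a direct node-sliding argument (move the node of a $(3l-3e-2f,\ l-e)$ chain onto $c$, producing a $(3l-3e-2f,\ e,\ l-2e)$-type configuration, smooth, and re-break).

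The main obstacle I anticipate is the $(r,a_i)$-minimality verification in Lemma \ref{Relations} for relation (3): one must rule out that $3l-3e-2f + (l-e)$ could be rewritten with fewer core classes or that there is a ``smaller'' relation such as $3l-3e-2f = (\text{something})$ with no $l-e$ term — this requires checking that $3l-3e-2f$ is itself not decomposable as a nonnegative combination of the other four core classes (which it is not, since any such decomposition would have to involve $l-e$ or $l-f$ with total $f$-coefficient $2$, forcing at least $2(l-f)$ and hence $e$-coefficient $0 \ne 3$, or $2(2l-2e-f)$ which has degree $6 \ne 5$). A secondary subtlety is making the node-sliding arguments rigorous: one must confirm the intermediate nodal curves are immersions at their nodes (so Lemma \ref{GHS lemma} applies), which is exactly the content of Lemma \ref{Basic Properties Chains}(6) provided the components have distinct reduced image or $-K_X$-degree $\geq 3$ — true here since $2l-2e-f$ and $3l-3e-2f$ have degree $\geq 3$. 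Once these are in place the computations are routine and parallel to the preceding cases (e.g. Lemma \ref{4.3relations}, Lemma \ref{4.6relations}), so I would keep the written proof terse, citing Theorem \ref{del pezzo curves thm} and Theorem \ref{3.14thm} to dispatch the irreducibility inputs.
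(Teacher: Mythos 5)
There are two genuine problems here, one in each stage. In the first stage, your eliminating divisors are miscomputed: $(3l-3e-2f)\cdot(3H-3F-E) = 9-6-3 = 0$, not $-1$ (indeed it could never be negative, since $3H-3F-E$ is effective and $3l-3e-2f$ is nef), and likewise $(2l-2e-f)\cdot(2H-2F-E) = 4-2-2 = 0$. Lemma \ref{Relations} requires a divisor pairing \emph{strictly negatively} with the class being eliminated, so neither of your choices does any work, and the reduction of an arbitrary relation to $(1)$--$(3)$ is not established. The divisors that actually function are non-effective ones: the paper uses $E+F-H$ (which pairs to $-1$ with $l$ and nonnegatively with everything else) to remove $l$ first via relations $(1)$ and $(2)$, and then $2H-E-2F$ (pairing to $-1$ with $3l-3e-2f$, to $+1$ with $l-e$, and to $0$ with the rest) to remove $3l-3e-2f$ via relation $(3)$; the surviving classes $l-e$, $l-f$, $2l-2e-f$ are linearly independent. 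Your elimination order is salvageable with $2H-E-2F$ and then $H-E-F$, but not with the divisors you wrote down. (Your aside about needing $2H-2F-E$ to be effective is also a red herring: Lemma \ref{Relations} places no effectivity hypothesis on the $D_i$.)

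In the second stage, your primary argument for relations $(2)$ and $(3)$ is circular: irreducibility of $\overline{\free}(X,4l-3e-2f)$ and of the class in $(3)$ (which, incidentally, is $4l-4e-2f$, not $5l-4e-3f$) is a consequence of Theorem \ref{3.14thm}, whose proof runs through Theorem \ref{Main Method} and hence through this very lemma; you cannot quote it here. Movable Bend-and-Break alone also does not suffice, since it produces \emph{some} free chain in the component, not a chain of the two prescribed types. Your node-sliding sketch for $(1)$ does work (it is a variant of the paper's move, which instead slides the line onto $p$ to reach type $(2l-2e,\,l-f)$ and then splits the conic), but for $(3)$ your configuration $(3l-3e-2f,\,e,\,l-2e)$ introduces the non-nef secant class $l-2e$ and the concluding ``re-break'' into $2(2l-2e-f)$ is left unexplained. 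The paper's device, which you would need some version of, is to degenerate the planar cubic $x$ of class $3l-3e-2f$ \emph{within its plane}, fixing its double point at $p$ and its intersections with $c$, until it acquires a collinearity and breaks as $(l-e-f)+(2l-2e-f)$; the attached curve of class $l-e$ (resp.\ $l$, after first sliding it onto $p$ or onto $c$) is then absorbed into the $(l-e-f)$ component to produce the second conic of class $2l-2e-f$ (resp.\ the chain $(2l-2e-f)+(l-e)+(l-f)$ for relation $(2)$).
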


\begin{proof}
Consider a relation of the form $\sum \alpha_i = \sum \beta_j$. Note that the divisor $E+F-H$ pairs negatively with $l$, positively with $2l-2e-f$ and $3l-3e-2f$, and to zero otherwise. Thus, we may use relations $(1)$ and $(2)$ to remove $l$ from the relation. Then, out of the remaining classes, $2H-E-2F$ pairs negatively with $3l-3e-2f$, positively with $l-e$, and to zero otherwise. Use relation $(3)$ to remove all instances to $3l-3e-2f$. The remaining three class $l-e$, $l-f$, and $2l-2e-f$ are linearly independent and consequently admit no relations.

We show that a free chain of curves of type $2l-2e-f$ and $l$ may be deformed to a free chain of curves of type $l-e$, $l-e$, and $l-f$. So start with a union of a conic $x$ and a line $y$. Move $y$ along $x$ until meets $p$ and push the exceptional divisor on the line onto $x$. In sum, there is a chain of type $(2l-2e)+(l-f)$ in the same component. Finally, break the conic, fixing its intersection point $q$ with the line, as a union of two lines, each of type $l-e$.

We next show that a chain of type $(3l-3e-2f)+(l-e)$ may be deformed to a chain of type $2(2l-2e-f)$. So consider a union of a cubic curve $x$ and a line $y$ meeting at a point $r$. Since $x$ has a double point at $p$, it lies in a plane $A$; let $q_1, q_2, q_3$ be the points of $A \cap c$. We know the space of curves of class $(3l-3e-2f)$ in $A$ is three-dimensional, so we may deform $x$, fixing the points double point at $p$, as well as the points $r,q_1,q_2,q_3$ to contain a fourth point on the line from $p$ to say $q_1$. This breaks $x$ as a free chain of type $(l-e-f)+(2l-2e-f)$. After moving $y$ along this chain, we may assume it intersects the curve of class $l-e-f$. Smooth these curves out, maintaining their intersection point with the conic, to obtain another curve of class $2l-2e-f$. Thus, there is indeed a free chain of type $2(2l-2e-f)$ in the same component of the Kontsevich space.

Finally, consider a free chain of type $(3l-3e-2f)+(l)$. As in the preceding paragraph, the chain may be deformed to another chain of $(2l-2e-f)+(l-e-f)+(l)$. But then, moving the line of class $l$ until it intersects $p$ and pushing over the exceptional divisor, we see that there is a chain of type $(2l-2e-f)+(l-e)+(l-f)$ in the same component as desired.
\end{proof}

\subsection*{3.22}

\textbf{Blow-up of a curve in $\mathbb{P}^1 \times \mathbb{P}^2$:}
Let $f:X \rightarrow \mathbb{P}^1 \times \mathbb{P}^2$ be the blow-up of a conic $c$ in $t \times \mathbb{P}^2$ where $t \in \mathbb{P}^1$. Let $p_1: \mathbb{P}^1 \times \mathbb{P}^2 \rightarrow \mathbb{P}^1$ and $p_2: \mathbb{P}^1 \times \mathbb{P}^2 \rightarrow \mathbb{P}^2$ be the projection maps and set $f_1 = p_1 \circ f$ and $f_2 = p_2 \circ f$.

\begin{thm}\label{3.22thm}
For all $\alpha \in \Nef_1(X)_\mathbb{Z}$, $\overline{\free}(X,\alpha)$ is irreducible.
\end{thm}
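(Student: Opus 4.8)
\textbf{Proof strategy for Theorem \ref{3.22thm}.}

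The plan is to follow the template laid out in Section 7 for all Fano threefolds of this type: identify a core $\mathscr{C}_X$ of free curves, find a generating set of relations in $\mathbb{N}\mathscr{C}_X$, verify hypothesis \ref{Main Method}(1) for each relation, and then invoke Corollary \ref{connected fibers fixall} to dispatch \ref{Main Method}(2). Since $3.22$ is a threefold with an $E5$ divisor, the blow-up $\phi : X \to \mathbb{P}^1 \times \mathbb{P}^2$ of a conic $c$ in a fiber $t\times\mathbb{P}^2$ does not admit a del Pezzo fibration, so once the hypotheses of Theorem \ref{Main Method} (or Corollary \ref{bir Main Method}) are checked, irreducibility of $\overline{\free}(X,\alpha)$ (resp. $\overline{\free}^{bir}(X,\alpha)$) follows for all $\alpha$ in the appropriate shifted cone. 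First I would set up the standard basis: $H_1 = f_1^*\mathcal{O}(1)$, $H_2 = f_2^*\mathcal{O}(1)$, $E = \phi^{-1}(c)$, with dual curve classes $l_1$ (fiber of $f_2$), $l_2$ (line in a $\mathbb{P}^2$-fiber of $f_1$), $e$ (an $f$-fiber over $c$), and compute $-K_X = 2H_1 + 3H_2 - E$ together with the effective cone; the $E5$ divisor is the strict transform of $t\times\mathbb{P}^2$, and one expects $\overline{NE}(X)$ to have extreme rays $e$, $l_1 - 2e$ (or similar, reflecting that the conic is planar), and $l_2$.

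Next I would enumerate nef curve classes of anticanonical degree between $2$ and $5$ (using $5$ rather than $4$ because of the $E5$ divisor, as required by Theorem \ref{MovableBB}), discard the freely breakable ones (double covers of conics, and classes that split as sums of two lower-degree free classes), and record the resulting core $\mathscr{C}_X$ together with which classes are separating — by analogy with $3.9$ and $3.14$ I expect there are no separating classes here, since the only candidates for reducible evaluation fibers would be conics contracted by a conic fibration or curves in a del Pezzo fibration, neither of which arises. Irreducibility of $\overline{\free}(X,\alpha)$ and irreducibility of general evaluation fibers for each $\alpha \in \mathscr{C}_X$ would be verified directly: the low-degree classes are lines and conics parameterized by evident incidence varieties, and for the quartic/quintic classes one uses that they are (strict transforms of) planar curves through the collinear points $c\cap A$ for a plane $A$, applying Theorem \ref{Monodromy}a to the relevant linear system of planes as was done in cases $3.14$ and $3.9$.

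For the relations, I would apply Lemma \ref{Relations} with a suitable sequence of divisors $D_i$ (pairing negatively with one core class and nonnegatively with the rest, e.g. combinations of $E - H_2$ and the $E5$ divisor class), producing a short explicit list analogous to the six relations in case $3.9$. For each relation $\sum\alpha_i = \sum\beta_j$ I would check \ref{Main Method}(1) either by proving $\overline{\free}(X,\sum\alpha_i)$ is irreducible outright, or by exhibiting an explicit deformation of a free chain of one type into a free chain of the other through smooth points of $\overline{\mathcal{M}}_{0,0}(X)$ — the standard moves being: slide a line component along its neighbor until the attaching point lands on $c$, push the exceptional fiber onto the other component, then re-break using a known irreducibility statement. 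The main obstacle, as in the companion $E5$ cases, is the relation(s) involving the highest-degree core class (the quartic or quintic through a double point), where one must deform a cubic/quartic inside a moving plane while simultaneously keeping track of its imposed incidences with $c$ and with the attached component; controlling dimensions of the relevant linear systems of plane curves with prescribed singularities and passing through collinear points is the delicate step. Finally, Lemma \ref{Gordan's Lemma} together with Theorem \ref{Representability of Free Curves} reduces nonemptiness of $\overline{\free}(X,\alpha)$ for all $\alpha\in\Nef_1(X)_{\mathbb{Z}}$ to checking the finitely many classes in the fundamental parallelepiped, each of which visibly decomposes as a nonnegative combination of core classes.
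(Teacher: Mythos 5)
Your strategy is exactly the paper's: enumerate nef classes of anticanonical degree at most $5$ (because of the $E5$ divisor), extract a core, generate the relations via Lemma \ref{Relations}, verify \ref{Main Method}(1) by explicit chain deformations, and finish with Theorem \ref{Representability of Free Curves} and Lemma \ref{Gordan's Lemma}. The only caution is that the concrete details you anticipate are calibrated to the $\mathbb{P}^3$-blow-ups $3.14$/$3.9$ rather than to this case: here the core is $\{l_1,\, l_2,\, l_1+l_2-e,\, 2l_1+l_2-2e\}$ with no separating classes and a \emph{single} relation $(2l_1+l_2-2e)+(l_2)=2(l_1+l_2-e)$; the degree-$4$ and degree-$5$ core classes are parameterized by a linear system of bidegree-$(1,1)$ divisors and by a bundle over $\Sym^2 c$ of degree-two maps (not by planar curves through the collinear points $c\cap A$); and the paper checks the relation not by sliding lines in a plane but by degenerating $2l_1+2l_2-2e$ under the $\mathbb{C}^*$-action on the fibers of $f_1$ into chains of type $2(l_1-e)+(2l_2)$, computing $N_{y/X}\cong\mathcal{O}\oplus\mathcal{O}(-1)$ for the $l_1-e$ components to see these are smooth points and noting the resulting family is an irreducible bundle over $\Sym^2 c$.
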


\textbf{Generators for $N^1(X)$ and $N_1(X)$:} 
Let

\begin{center}
\begin{tabular}{ll}
 $H_1 = f_1^{-1}(pt) = pt \times \mathbb{P}^2$ & $l_1 = H_2^2 = \mathbb{P}^1 \times pt$ \\
 $H_2 = f_2^{-1}(l) = \mathbb{P}^1 \times l$ & $l_2 = H_1 \cdot H_2 = pt \times l$ \\ 
 $E$ = the exceptional divisor $f^{-1}(c)$ & $e$ = the $e$-fiber over a point on $c$   
\end{tabular}
\end{center}

where $l$ is the class of a line in $\mathbb{P}^2$.

\textbf{Intersection Pairing:}
\begin{center}
\begin{tabular}{lll}
    $H_1 \cdot l_1 = 1$ &  $H_1 \cdot l_2 = 0$ & $H_1 \cdot e = 0$, \\
    $H_2 \cdot l_1 = 0$ &  $H_2 \cdot l_2 = 1$ & $H_2 \cdot e = 0$ \\
    $E \cdot l_1 = 0$ & $E \cdot l_2 = 0$ & $E \cdot e = -1$
\end{tabular}
\end{center}

\textbf{Anticanonical Divisor:}
\begin{align*}
    -K_X = 3H_2+2H_1-E
\end{align*}

\textbf{Effective Divisors:} 
The divisors $H_1-E$, $2H_2-E$, and $E$ are effective.

\begin{lem}\label{3.22core}
A core of free curves on $X$ is given by 
\begin{align*}
    \mathscr{C}_X = \{ l_1, l_2, l_1+l_2-e, 2l_1+l_2 - 2e \}. 
\end{align*}

There are no separating curves in $\mathscr{C}_X$.
\end{lem}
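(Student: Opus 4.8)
The plan is to follow the template used for the other cases: read off the relevant contractions of $X$, classify the nef curve classes of small anticanonical degree, establish irreducibility and compute evaluation fibres for the candidate core classes, and then verify the two conditions of Definition \ref{def core} via Movable Bend-and-Break. First I would record the geometry. The projection $f_2\colon X\to\mathbb{P}^2$ is an elementary conic bundle whose discriminant is the smooth conic $\overline{c}=p_2(c)$, with reducible fibres of type $(l_1-e)+e$; the $E5$ divisor is $D=H_1-E\cong\mathbb{P}^2$, a section of $f_2$ whose line class is $l_2-2e$, and $f_2^{-1}(\overline{c})=E\cup(2H_2-E)$. The projection $f_1\colon X\to\mathbb{P}^1$ has general fibre $\mathbb{P}^2$ and a single reducible fibre $D\cup E$ over $t$. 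Pairing a class $al_1+bl_2-ne$ against the effective divisors $E$, $H_1-E$, $2H_2-E$ gives $n\ge 0$, $n\le a$, $n\le 2b$, and imposing $2\le -K_X\cdot\alpha=2a+3b-n\le 5$ leaves exactly $l_1$ (degree $2$), $l_2$ (degree $3$), $l_1+l_2-e$ and $2l_1$ (degree $4$), and $2l_1+l_2-2e$ and $l_1+l_2$ (degree $5$). Curves of class $2l_1$ are contracted by $f_2$, hence form the irreducible family of double covers of smooth $f_2$-fibres, which contains a free chain of type $(l_1,l_1)$; curves of class $l_1+l_2$ are strict transforms of bidegree $(1,1)$ curves disjoint from $c$, which degenerate freely to chains of type $(l_1,l_2)$. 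Since neither $l_1+l_2-e$ nor $2l_1+l_2-2e$ is a sum of two nef classes of anticanonical degree $\ge 2$, both must be core classes, so $\mathscr{C}_X$ is the only candidate.

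For the three classes of degree $\le 4$ the analysis is routine. The curves of class $l_1$ are the smooth $f_2$-fibres, so $\overline{\free}(X,l_1)$ is irreducible and $\text{ev}$ has a single point over a general $x\in X$. The curves of class $l_2$ are the lines in the $\mathbb{P}^2$-fibres of $f_1$, so $\overline{\free}(X,l_2)$ is irreducible and $\text{ev}$ has a pencil over a general $x$. A curve of class $l_1+l_2-e$ is the strict transform of the graph of an isomorphism $\psi\colon\ell\to\mathbb{P}^1$ for a line $\ell\subset\mathbb{P}^2$ with $\psi^{-1}(t)\in\ell\cap\overline{c}$; irreducibility follows from transitivity of the monodromy on $\ell\cap\overline{c}$ as $\ell$ varies, by Theorem \ref{Monodromy}, and over a general $x$ the family of such curves through $x$ is governed by the projection of the smooth conic $\overline{c}$ from $f_2(x)$, a connected double cover of the pencil of lines through $f_2(x)$, so the general evaluation fibre is irreducible. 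Hence none of $l_1,l_2,l_1+l_2-e$ is separating.

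The main obstacle is the $E5$-quintic $2l_1+l_2-2e$. Since $X$ carries an $E5$ divisor, Theorem \ref{improved MovableBB}(2) applies: $\overline{\free}(X,2l_1+l_2-2e)$ is irreducible, it is the unique degree-$5$ component of $\overline{\free}(X)$ that does not break into a free chain of length two, and it is generated by smoothings of chains $C_1\cup C_2\cup C_3$ with $C_1,C_3$ smooth $f_2$-fibres and $C_2\subset D$ a line. To see this class is not separating I would use the explicit description: such curves are strict transforms of graphs of degree-$2$ morphisms $\phi\colon\ell\to\mathbb{P}^1$ with $\ell\subset\mathbb{P}^2$ a line and $\phi^*(t)=\ell\cap\overline{c}$, a family that visibly fibres irreducibly over the space of lines and is nonempty through two general points $x_1,x_2\in X$ (which determine $\ell$ and leave a positive-dimensional choice of $\phi$). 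Thus the two-pointed evaluation $\overline{\free}_2(X,2l_1+l_2-2e)\to X\times X$ is dominant, so a general member is very free — alternatively, smooth the chain $C_1\cup C_2\cup C_3$ through two general points, checking that $H^1$ of the relevant normal sheaf vanishes via Lemma \ref{GHS lemma} and Proposition \ref{kontsevich space H1} — and then Proposition \ref{very free curves} forces the general fibre of $\text{ev}$ to be irreducible. Hence no class in $\mathscr{C}_X$ is separating.

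Finally, for condition (2) of Definition \ref{def core}: by Theorem \ref{MovableBB} every component of $\overline{\free}(X)$ of anticanonical degree $\ge 6$ contains a free chain of length two, and by Theorem \ref{improved MovableBB}(2) the only degree-$5$ component not doing so is $\overline{\free}(X,2l_1+l_2-2e)$; by induction it suffices to treat components of anticanonical degree between $2$ and $5$. By the enumeration above these are $\overline{\free}(X,l_1)$, $\overline{\free}(X,l_2)$, $\overline{\free}(X,l_1+l_2-e)$ and $\overline{\free}(X,2l_1+l_2-2e)$, all of core class, together with $\overline{\free}(X,2l_1)$, which contains a free chain of type $(l_1,l_1)$, and $\overline{\free}(X,l_1+l_2)$, which contains a free chain of type $(l_1,l_2)$. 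Combined with the irreducibility statements above, $\mathscr{C}_X=\{l_1,l_2,l_1+l_2-e,2l_1+l_2-2e\}$ is a core of free curves on $X$ with no separating classes.
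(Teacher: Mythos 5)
Your proposal is correct and follows essentially the same route as the paper: enumerate the nef classes of anticanonical degree $2$–$5$ from the effective divisors $E$, $H_1-E$, $2H_2-E$, dispose of $2l_1$ (double covers) and $l_1+l_2$ (freely breakable), and verify irreducibility and irreducible evaluation fibres for the four core classes by parameterizing them as graphs of morphisms from lines in $\mathbb{P}^2$ to $\mathbb{P}^1$ — the paper fibres the quintics over $\Sym^2 c$ rather than over the space of lines and checks the evaluation fibre directly rather than via the very-free criterion of Proposition \ref{very free curves}, but these are equivalent. One caveat: your appeal to Theorem \ref{improved MovableBB}(2) is circular here, since that theorem's stated proof defers to the case-by-case analysis that includes $3.22$; your direct treatment of the two degree-$5$ classes makes that citation unnecessary, so nothing is lost by deleting it.
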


\begin{proof}
\textbf{Nef Curve Classes of Anticanonical degree between $2$ and $5$:}  
If $\alpha = a l - b e - cf$ is a nef curve class of anticanonical degree between $2$ and $5$, then
\begin{align*}
    0 \leq c \leq a, 2b, \hspace{.5cm} 2 \leq 3b+2a-c \leq 5.
\end{align*}
\noindent Solving the inequalities, we obtain the classes shown above in $\mathscr{C}_X$, as well as $2l_1$ and $l_1+l_2$. The curves of class $2l_1$ are all double covers, so they contain no component of birational curves, and the curves of class $l_1+l_2$ are freely breakable.

\textbf{Irreducible Spaces and Fibers:} 
The curves of class $l_1$ and $l_2$ are clearly irreducible and have irreducible fibers.  
The curves of type $l_1+l_2-e$ are the the vanishing sets of polynomials of bidgree $(1,1)$ whose fiber over $t \in \mathbb{P}^1$ is in the curve $c$. This is a linear condition on the coefficients of the polynomial, so such polynomials form a $4$-dimensional linear system. Moreover, to require that the curve pass through a general point in $\mathbb{P}^1 \times \mathbb{P}^2$ imposes two additional linear conditions on the coefficients, so curves of class $l_1+l_2-e$ have irreducible fibers.

Note that the curves of type $2l_1+l_2-2e$ meet $\{ t \} \times \mathbb{P}^2$ in two points, both of which are required to lie in $c$. Therefore, they are fibered over $\Sym^2 c$. Over an unordered pair $(p,q) \in \Sym^2 c$, if $x$ denotes the line spanned by $f_2(p)$ and $f_2(q)$ in $\mathbb{P}^2$, and we identify $x$ with the space $\mathbb{P}^1$, the fiber is parameterized by the space of degree two morphisms $\phi: \mathbb{P}^1 \rightarrow \mathbb{P}^1$ for which $\phi(f_2(p)) = f_1(p) = t$ and $\phi(f_2(q)) = f_1(p) = t$. Additionally, such curves passing through a general point $r \in \mathbb{P}^1 \times \mathbb{P}^2$ are fibered over $\Sym^2 c$ with fiber over $(p,q)$ the space of morphisms morphisms $\phi:\mathbb{P}^1 \rightarrow \mathbb{P}^1$ such that $\phi(f_2(p)) = t$, $\phi(f_2(q))=t$, and $\phi(f_2(r)) = f_1(r)$. This moduli space is irreducible.
\end{proof}

\begin{lem}\label{3.22relations}
The relations in the monoid $\mathbb{N} \mathscr{C}_X$ are generated by $(2l_1+l_2-2e)+(l_2) = 2(l_1+l_2-e)$. The corresponding moduli space of curves of class $2l_1+2l_2-2e$ is irreducible.
\end{lem}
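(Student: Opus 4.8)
The plan is to verify the two assertions separately: first that the single relation $(2l_1+l_2-2e)+(l_2) = 2(l_1+l_2-e)$ generates the monoid of relations in $\mathbb{N}\mathscr{C}_X$, and second that $\overline{\free}(X, 2l_1+2l_2-2e)$ is irreducible. For the first assertion I would apply Lemma \ref{Relations} with the nef classes ordered so that $2l_1+l_2-2e$ is eliminated first. The divisor to use is $D = 2H_2 - E$, which by the intersection table pairs to $-1$ with $2l_1+l_2-2e$, to $1$ with $l_2$, and to $0$ with both $l_1$ and $l_1+l_2-e$. Since $\alpha = 2l_1+l_2-2e$ is extreme in the cone generated by $\mathscr{C}_X$ and $(2l_1+l_2-2e)+l_2 = 2(l_1+l_2-e)$ is visibly a valid relation with $(r,a_{l_2},\dots)$ minimal, Lemma \ref{Relations}(4) gives exactly this relation and shows every other minimal relation among $\{2l_1+l_2-2e, l_2, l_1, l_1+l_2-e\}$ avoids $2l_1+l_2-2e$. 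After eliminating it, the three remaining classes $l_1$, $l_2$, $l_1+l_2-e$ are linearly independent in $N_1(X)$ (their coordinate vectors $(1,0,0)$, $(0,1,0)$, $(1,1,-1)$ form a rank-3 set), so they satisfy no nontrivial relation. Hence the displayed relation generates.

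For the second assertion, I would show that a main component of $\overline{\free}_2(X,2l_1+l_2-2e)\times_X \overline{\free}_2(X,l_2)$ and a main component of $\overline{\free}_2(X,l_1+l_2-e)\times_X\overline{\free}_2(X,l_1+l_2-e)$ both map into the same (necessarily unique) component of $\overline{\free}(X,2l_1+2l_2-2e)$. The cleanest route is to prove $\overline{\free}(X,2l_1+2l_2-2e)$ is irreducible outright, using the fibration structure. A curve of this class meets $\{t\}\times\mathbb{P}^2$ in two points, both forced to lie on $c$, so such curves fiber over $\mathrm{Sym}^2 c$; over a general pair $(p,q)$ the image in $\mathbb{P}^2$ lies in the line $x = \overline{f_2(p)f_2(q)}$, and the curve is the graph of a degree-two morphism $\phi:\mathbb{P}^1\to\mathbb{P}^1$ (domain $=x$, range $= \mathbb{P}^1$-factor) with $\phi(f_2(p))=\phi(f_2(q))=t$. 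The space of such $\phi$ is an irreducible linear system of the expected dimension, so each fiber is irreducible of constant dimension and $\overline{\free}(X,2l_1+2l_2-2e)$ is irreducible; this is the same computation already carried out for $2l_1+l_2-2e$ in Lemma \ref{3.22core}, now with one more degree of freedom in $\phi$ but one fewer point constraint coming from the second fiber of $f_1$. (One must also check that a general such curve is genuinely free and birational onto its image, which follows because $\overline{NE}(X)$ has $2l_1+2l_2-2e$ interior, $-K_X\cdot(2l_1+2l_2-2e)=6\geq 4$, and Proposition \ref{very free curves} then applies; alternatively Theorem \ref{MovableBB} guarantees the component contains free chains.) Once irreducibility is known, both main components lie in it automatically, completing the verification of hypothesis \ref{Main Method}(1) for this relation.

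The main obstacle I anticipate is not the combinatorial part — that is a routine application of Lemma \ref{Relations} — but making the fibration argument for irreducibility of $\overline{\free}(X,2l_1+2l_2-2e)$ fully rigorous: one must confirm that for a \emph{general} pair $(p,q)\in\mathrm{Sym}^2 c$ (not an ordered pair, and with $p=q$ allowed in the boundary) the linear conditions $\phi(f_2(p))=t$, $\phi(f_2(q))=t$ on the six-dimensional space of degree-two maps $\mathbb{P}^1\to\mathbb{P}^1$ are independent and cut out a linear system whose general member has no basepoints and gives an irreducible curve with the correct intersection numbers with $E$. This is where the geometry of the conic $c\subset\{t\}\times\mathbb{P}^2$ and its image under $f_2$ (an embedded conic, so $f_2|_c$ is injective and the two points $f_2(p),f_2(q)$ are distinct and span a unique line for general $(p,q)$) must be used. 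Given the analogous computation already done for the core class $2l_1+l_2-2e$ in Lemma \ref{3.22core}, I expect this to go through with only minor bookkeeping changes.
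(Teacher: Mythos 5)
The combinatorial half of your argument is fine and agrees in substance with the paper, which simply observes that the four core classes span the three-dimensional space $N_1(X)$ and therefore satisfy a unique generating relation, namely the displayed one. (A small slip: your auxiliary divisor $2H_2-E$ pairs to $0$, not $-1$, with $2l_1+l_2-2e$, so it does not satisfy the hypothesis of Lemma \ref{Relations}; the divisor that works is $H_2-E$. This does not affect the conclusion.)

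The geometric half has a genuine error. You describe a curve of class $2l_1+2l_2-2e$ as the graph of a degree-two morphism from a line $x\subset\mathbb{P}^2$ to the $\mathbb{P}^1$-factor, so that its image under $f_2$ is contained in $x$. Such a graph has $H_2$-degree $1$, i.e.\ class $2l_1+l_2-2e$: you have reproduced the computation of Lemma \ref{3.22core} for the wrong class. A general curve of class $2l_1+2l_2-2e$ has bidegree $(2,2)$ in $\mathbb{P}^1\times\mathbb{P}^2$ and projects birationally onto a \emph{conic} in $\mathbb{P}^2$; accordingly $-K_X\cdot(2l_1+2l_2-2e)=8$ (not $6$), the space $\overline{M}_{0,0}(X,2l_1+2l_2-2e)$ has dimension $8$, and the family you construct ($\Sym^2 c$ times a net of degree-two maps, which is $5$-dimensional) cannot dominate any component, so the irreducibility argument collapses. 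The paper's proof proceeds differently: it uses the $\mathbb{C}^*$-action on the $\mathbb{P}^1$-factor fixing the fibers over $t$ and over a second point $s$ to degenerate every curve of class $2l_1+2l_2-2e$ into a chain of type $2(l_1-e)+(2l_2)$ lying over $s$, checks via the normal-bundle sequence that $N_{y/X}\cong\mathcal{O}\oplus\mathcal{O}(-1)$ for each component $y$ of class $l_1-e$ (so these chains are smooth points of the Kontsevich space), and then notes that such chains form an irreducible family fibered over $\Sym^2 c$ with fiber the three-dimensional linear system of conics in $\{s\}\times\mathbb{P}^2$ through the two marked points. If you prefer a direct parametrization, you must fiber over $\Sym^2 c$ with fibers the $(2,2)$-curves through the two chosen points of $c$ (equivalently, over the five-dimensional family of conics in $\mathbb{P}^2$), not over lines.
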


\begin{proof}
The four classes in $\mathscr{C}_X$ span a three dimensional vector space, so satisfy a single relation $(2l_1+l_2)+(l_2) = 2(l_1+l_2-e)$.  

Note that the variety $X$ admits a $\mathbb{C}^*$-action on the fibers of $f_1$ which fixes $f_1^{-1}(t)$ and $f_1^{-1}(s)$ for a fixed $s \in \mathbb{P}^1$ distinct from $t$. Taking the limit of this action as we approach the divisor $f_1^{-1}(s)$, we find that a curve of class $2l_1+2l_2-2e$ breaks as $2(l_1-e)+(2l_2)$. We must prove that such chains of curves correspond to smooth points in the Kontsevich space and that the family of such curves is irreducible.

First, the irreducible curves of class $2l_2$ are clearly free. Consider now a curve $y$ of class $l_1-e$. It lies in the divisor $A =p_2^{-1}(p_2(c))$ of class $2H_2-E$. Consider the short exact sequence
\begin{align*}
    0 \rightarrow N_{y / A} \rightarrow N_{y / X} \rightarrow \left.N_{A / X}\right|_{y} \rightarrow 0.
\end{align*}

Since $y^2 = 0$ in $A$, we see $N_{y / A} = \mathscr{O}_{\mathbb{P}^1}$. Also, $\left.N_{A / X}\right|_{y} = \left.\mathscr{O}(2H_2 - E)\right|_y = \mathscr{O}_{\mathbb{P}^1}(-1)$ because $(2H_2-E) \cdot (l_1-e) = -1$. Hence $N_{y / X} = \mathscr{O}_{\mathbb{P}^1} \oplus \mathscr{O}_{\mathbb{P}^1}(-1)$ has no higher cohomology.

Next, we see that the space of curves of class $2(l_1-e)+(2l_2)$ is generically fibered over $\Sym^2 c$ with fiber over $(p,q)$ the conics in $\{s\} \times \mathbb{P}^2$ passing through the points $(s, f_2(p))$ and $(s, f_2(q))$. This is a $3$-dimensional linear system, so it is irreducible.
\end{proof}

\subsection*{3.29}
\textbf{Blow-up of a line in an exceptional divisor:}
Let $f:V_7 \rightarrow \mathbb{P}^3$ be the blow-up of a point $p \in \mathbb{P}^3$. Then $g:X \rightarrow V_7$ is the blow-up of a line $c$ in the exceptional divisor of $f$.

\begin{thm}\label{3.29thm}
For all $\alpha \in \Nef_1(X)_\mathbb{Z}$, $\overline{\free}(X,\alpha)$ is irreducible.
\end{thm}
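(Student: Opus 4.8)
The plan is to follow the template laid out in Section 7.1 applied to the specific threefold $3.29$, which is the blow-up $g : X \to V_7$ of a line $c$ inside the exceptional divisor of $f : V_7 \to \mathbb{P}^3$. First I would set up the numerics: record generators $H, l$ (pulled back from $\mathbb{P}^3$), $E, e$ (the $f$-exceptional divisor and its lines), and $F, \varphi$ (the $g$-exceptional divisor over $c$ and its fibers), compute the intersection pairing, write $-K_X$ in the chosen basis, list the extremal effective divisors, and read off $\overline{NE}(X)$ from \cite{matsuki1995weyl}. Since $X$ carries an $E5$ divisor, Movable Bend-and-Break (Theorem \ref{MovableBB}) only guarantees breaking in $(-K_X)$-degree $\geq 6$, so the core $\mathscr{C}_X$ must include all nef free classes of degree between $2$ and $5$; I would enumerate the lattice points of the appropriate box via \ref{Gordan's Lemma} and discard the freely breakable ones (double covers, and classes that visibly split as a sum of two lower-degree free classes that reassemble by the arguments analogous to those in cases $3.9$, $3.14$, $3.22$).

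Next I would establish irreducibility of $\overline{\free}(X,\alpha)$ and connectedness of the evaluation fibers for each $\alpha \in \mathscr{C}_X$: most of these classes are strict transforms of lines or conics in $\mathbb{P}^3$ through or near $p$ and $c$, so the relevant parameter spaces are open subsets of projective bundles over Grassmannians or over $\mathbb{P}^1$, and Theorem \ref{Monodromy} handles the monodromy of incidence with the blown-up locus. This pins down which classes (if any) are separating. Then I would invoke Lemma \ref{Relations} to produce a generating set of monoid relations among the core classes, eliminating classes one at a time against carefully chosen divisors $D_i$, and for each relation $\sum \alpha_i = \sum \alpha_j'$ verify hypothesis \ref{Main Method}(1): either $\overline{\free}(X, \sum\alpha_i)$ is itself irreducible (checked by the same bundle/monodromy arguments), or I exhibit an explicit path of chains through smooth points of $\overline{\mathcal{M}}_{0,0}(X)$ connecting the two main components, using the $\mathbb{C}^*$-limit / node-sliding technique as in the $3.14$ and $3.22$ proofs. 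Condition \ref{Main Method}(2) is automatic by Corollary \ref{connected fibers fixall} once I confirm there is no del Pezzo fibration with low-degree fibers on $X$ (there is none — $3.29$ has $\rho = 3$ but its contractions are all of fiber type to surfaces or divisorial), and that any interior nef $-K_X$-conic $\beta$, if present, has $\overline{\free}^{bir}(X, 2\beta)$ irreducible. Finally I would check via \ref{Gordan's Lemma} and \ref{Representability of Free Curves} that every nonzero $\alpha \in \Nef_1(X)_{\mathbb{Z}}$ is represented by a free curve, by reducing to fundamental-domain lattice points and exhibiting each as a nonnegative combination of core classes.

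The main obstacle I expect is the presence of the $E5$ divisor $E \subset X$ (the strict transform of the $f$-exceptional $\mathbb{P}^2$, which has become a Hirzebruch surface after blowing up $c$): this forces the core to contain degree-$5$ classes, and the classes whose representatives are forced to lie on or meet $E$ — these are exactly the classes with larger-than-expected-dimensional families of lines/conics by \cite{beheshti2020moduli} — need individual analysis of their parameter spaces and of the chain deformations realizing the relations, since the naive bend-and-break does not apply in low degree. A secondary subtlety is bookkeeping the two nested blow-up descriptions (via $V_7$ and directly over $\mathbb{P}^3$) to recognize multiple-cover and split loci; but this is routine given the worked precedents $3.9$, $3.14$, $3.22$ earlier in the section, and since $X \neq V_7$ and $\rho(X) = 3$ the structure is rigid enough that no generality hypothesis on $X$ is needed.
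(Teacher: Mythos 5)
Your plan is correct and follows essentially the same route as the paper: set up the numerics, enumerate nef classes of $(-K_X)$-degree $2$ through $5$ (the $E5$ divisor does force the degree-$5$ class $2l-e-f$ into the core, and the only discarded class, $2l-2e$, consists of double covers), exhibit each core space as an open subset of a projective bundle, and then observe via Lemma \ref{Relations} that the core $\{l,\ l-e,\ 2l-e-f\}$ is linearly independent, so there are no relations and no separating classes to handle — the execution is thus far simpler than the chain-deformation work you anticipate. One small factual slip: the $E5$ divisor is the strict transform of the $f$-exceptional $\mathbb{P}^2$, which remains isomorphic to $\mathbb{P}^2$ (with normal bundle $\mathcal{O}(-2)$) rather than becoming a Hirzebruch surface, since the blown-up line $c$ is a Cartier divisor inside it.
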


\textbf{Generators for $N^1(X)$ and $N_1(X)$:}

\begin{center}
\begin{tabular}{ll}
 $H$ = a general hyperplane & $l$ = a general line \\ 
 $E$ = $(g \circ f)^{-1}(p)$ & $e$ = the $g$-preimage of a general line in $f^{-1}(p)$ \\
 $F$ = $g^{-1}(c)$ & $f$ = a fiber of $g|_F$%
\end{tabular}
\end{center}

\textbf{Intersection Pairing:} 
To compute the intersection pairing, note first that $H \cdot l = 1$, while $H \cdot e$, $H \cdot f$, $E \cdot l$, and $F \cdot l$ are all zero. From $(H-E) \cdot e = 1$, we obtain the identity $E \cdot e = -1$ and from $(H-E) \cdot f = 0$, we get $E \cdot f = 0$. Then $(E-F) \cdot f = 1$ implies $F \cdot f = -1$, while finally $F \cdot (e-f) = 1$ implies $F \cdot e =0$. Summarizing, we have the following relations:

\begin{center}
\begin{tabular}{lll}
    $H \cdot l = 1$ &  $H \cdot e = 0$ & $H \cdot f = 0$, \\
    $E \cdot l = 0$ &  $E \cdot e = -1$ & $E \cdot f = 0$ \\
    $F \cdot l = 0$ & $F \cdot e = 0$ & $F \cdot f = -1$
\end{tabular}
\end{center}

\textbf{Anticanonical Divisor:}
\begin{align*}
    -K_X = 4H-2E-F
\end{align*}

\textbf{Effective Divisors:} 
The divisors $H-E-F$ is effective, as it is the $(g \circ f)$-strict transform of the hyperplane in $\mathbb{P}^3$ whose $f$-strict transform intersects the exceptional divisor in the curve $c$. The divisor $E-F$ are effective as well.

\begin{lem}\label{3.29core}
A core of free curves on $X$ is given by 
\begin{align*}
    \mathscr{C}_X = \{ l, l-e, 2l-e-f \}.
\end{align*}

There are no separating curves in $\mathscr{C}_X$.
\end{lem}

\begin{proof}
\textbf{Nef Curve Classes of Anticanonical degree between $2$ and $5$:}  If $\alpha = a l - b e - cf$ is a nef curve class of anticanonical degree between $2$ and $5$, then
\begin{align*}
    0 \leq c \leq b, \hspace{.5cm} b+c \leq a, \hspace{.5cm} 2 \leq 4a-2b-c \leq 5.
\end{align*}

\noindent Solving the inequalities, we obtain the classes shown above in $\mathscr{C}_X$, as well is $2l-2e$. However, the curves of class $2l-2e$ are all double covers, so they do not contain a component of free birational curves.

\textbf{Irreducible Spaces and Fibers:} 
The curves of type $l$ are parametrized by an open subset of $\mathbb{G}(1,3)$ and those of type $l-e$ by an open subset of the planes in $\mathbb{P}^3$ passing through $p$. Both have irreducible fibers.

The curves of type $2l-e-f$ are, in particular, conics $x$ lying on a plane $A$ in $\mathbb{P}^3$ passing through $p$. For the conic $x$ in a general such plane $A$ to be of class $2l-e-f$, it must pass through $p \in A$ in the direction specified by the unique point in $c$ intersected with the $f$-strict transform of $A$. This determines a $3$-dimensional linear system of conics on $A$. The corresponding component of the Kontsevich space is therefore parameterized by a bundle with base the planes in $\mathbb{P}^3$ passing through $p$ and with a $3$-dimensional fiber; it is irreducible. This arguments does not apply to the unique plane $A$ through $p$ whose $f$-strict transform contains $c$. In this situation, any conic in $A$ passing through $p$ is of class $2l-e-f$. But the anticanonical degree of $2l-e-f$ is $5$, so this $4$-dimensional subvariety must lie in the previously described component of free curves.

Fix a general point $q \in \mathbb{P}^3$. The curves of class $2l-e-f$ containing $q$ are fibered over the space of planes $A$ in $\mathbb{P}^3$ containing $p$ and $q$. Over such a plane $A$, the fiber is a $2$-dimensional linear system of conics. It follows that the curves of class $2l-e-f$ have irreducible fibers as well.
\end{proof}

\begin{lem}\label{3.29relations}
There are no relations in the monoid $\mathbb{N} \mathscr{C}_X$.
\end{lem}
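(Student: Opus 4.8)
The claim to prove is that the core $\mathscr{C}_X = \{l,\, l-e,\, 2l-e-f\}$ for the threefold $3.29$ admits no relations in the monoid $\mathbb{N}\mathscr{C}_X$. The plan is to observe that this is a purely combinatorial statement about the three lattice vectors $l,\, l-e,\, 2l-e-f \in N_1(X)_{\mathbb{Z}}$, so it suffices to check these three classes are linearly independent over $\mathbb{Q}$; a set of $\mathbb{Q}$-linearly independent vectors in a free abelian group generates a free monoid, hence has no nontrivial additive relations.

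First I would record the relevant coordinates in the basis $\{l, e, f\}$ of $N_1(X)$: the three core classes are $(1,0,0)$, $(1,-1,0)$, and $(2,-1,-1)$. Then I would exhibit explicit linear functionals separating them — for instance the anticanonical degree $-K_X = 4H - 2E - F$ pairs to $4$, $2$, and $5$ respectively, while $H-E-F$ pairs to $1$ (on $l$), $1$ (on $l-e$, since $(H-E-F).(l-e) = 1 - 1 - 0 \cdot$... let me instead use $E$, which pairs to $0, 1, 1$) — more simply, the matrix with rows $(1,0,0)$, $(1,-1,0)$, $(2,-1,-1)$ has determinant $\pm 1$, so the three classes form a $\mathbb{Z}$-basis of $N_1(X)_{\mathbb{Z}}$ and in particular are linearly independent. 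Since any relation $\sum a_i \alpha_i = \sum b_j \alpha_j$ among linearly independent generators forces $a_i = b_j$ termwise, there are no relations in $\mathbb{N}\mathscr{C}_X$.

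The only subtlety worth a sentence is confirming that Lemma~\ref{3.29core} genuinely establishes $\mathscr{C}_X$ is a core, so that the monoid $\mathbb{N}\mathscr{C}_X$ is the relevant object: that has already been done (irreducibility of $\overline{\free}(X,\alpha)$ for each $\alpha \in \mathscr{C}_X$, non-separating, and the freely breakable classes of low degree all decompose into these three), so nothing further is needed here. I do not anticipate any real obstacle; the statement is essentially a remark recording that, unlike the other $E5$ cases ($3.9$, $3.14$, $3.22$), the core of $3.29$ has exactly $\rho(X) = 3$ elements spanning $N_1(X)$ freely, so Theorem~\ref{Main Method}(1) is vacuous and one only needs to verify Theorem~\ref{Main Method}(2) together with the non-emptiness statement for all nef classes.

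\begin{proof}
The three classes $l$, $l-e$, and $2l-e-f$ have coordinates $(1,0,0)$, $(1,-1,0)$, and $(2,-1,-1)$ in the basis $\{l,e,f\}$ of $N_1(X)_{\mathbb{Z}}$. The matrix whose rows are these coordinate vectors has determinant $-1$, so the three classes are $\mathbb{Q}$-linearly independent (indeed they form a $\mathbb{Z}$-basis of $N_1(X)_{\mathbb{Z}}$). Consequently, if $\sum_i a_i \alpha_i = \sum_j b_j \alpha_j$ is an equality of elements of $\mathbb{N}\mathscr{C}_X$ with $\alpha_i, \alpha_j \in \mathscr{C}_X$, then collecting coefficients of each of the three (distinct, independent) classes forces the multiset $\{\alpha_i\}$ to coincide with $\{\alpha_j\}$. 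Hence $\mathbb{N}\mathscr{C}_X$ is the free commutative monoid on $\mathscr{C}_X$ and has no relations.
\end{proof}
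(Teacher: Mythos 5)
Your proof is correct and matches the paper's argument, which simply observes that $l$, $l-e$, and $2l-e-f$ are linearly independent curve classes; you merely make the determinant computation explicit (note the determinant of your matrix is $+1$, not $-1$, though only its nonvanishing matters). Nothing further is needed.
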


\begin{proof}
This is clear because $l$, $l-e$, and $2l-e-f$ are linearly independent curve classes.
\end{proof}

\subsection*{2.28}

\textbf{Blow-up of a planar cubic in $\mathbb{P}^3$:}
Let $f:X \rightarrow \mathbb{P}^3$ be the blow-up of a cubic $c$ in a plane $S \subset \mathbb{P}^3$. We apply \ref{blowup} to case $3.14$.%

\subsection*{2.36}

\textbf{Blow-up of the Veronese cone at its vertex:}
Let $X$ be the blow-up of the Veronese cone $W_4 \subset \mathbb{P}^6$ with center the vertex. In other words, $X$ is the projective bundle $\mathbb{P}(\mathscr{O} \oplus \mathscr{O}(2))$ over $\mathbb{P}^2$. We apply \ref{blowup} to case $3.9$.%

\section{Picard Rank 3}\label{Picard Rank 3 Cases}

\subsection*{3.1}
\textbf{Double Cover of $(\mathbb{P}^1)^3$:}
Let $\pi: X \rightarrow \mathbb{P}^1 \times \mathbb{P}^1 \times \mathbb{P}^1$ be a double cover whose branch locus is a divisor $D$ of tridegree $(2,2,2)$.  Let $\pi_i : X \rightarrow \mathbb{P}^1$ be the composition of $\pi$ with the $i^{th}$ projection.

\textbf{Generators for $N^1(X)$ and $N_1(X)$:}  Let $H_i = \pi_i^*\mathcal{O}(1)$ and $c_i$ be the class of an irreducible component of a reducible fiber of $\pi_j \times \pi_k$, where $\{i,j,k\} = \{1,2,3\}$.

\begin{thm}\label{3.1thm}
For all $\alpha \in (c_1+c_2+c_3) + \Nef_1(X)_\mathbb{Z}$, $\overline{\free}(X,\alpha)$ is irreducible and nonempty.
\end{thm}

\textbf{Intersection Pairing:} 
The intersection pairing is diagonal: $H_i \cdot c_j = \delta_{ij}$.

\textbf{Anticanonical Divisor:} 
$-K_X = - \pi^*(K_{(\mathbb{P}^1)^3}) - \frac{1}{2} \pi^*D = H_1+H_2+H_3$

\textbf{Effective Divisors:} 
The divisors $H_1$, $H_2$, and $H_3$ span the effective cone.

\textbf{Mori Structure:} The contraction of $c_i$ is $\pi_j \times \pi_k$.  For general branch loci $D$, the discriminant locus $\Delta_i$ of $\pi_j \times \pi_k$ is a smooth curve of bidegree $(4,4)$ on $\mathbb{P}^1 \times \mathbb{P}^1$ with simple ramification under either projection.  %
Thus for general $X$, $\overline{\mathcal{M}}_{0,0}(X,c_i)$ is irreducible and smooth for each $i$.  The fiber $X_p$ of $\pi_i : X\rightarrow \mathbb{P}^1$ over a general point $p\in \mathbb{P}^1$ is a smooth del Pezzo surface of degree four.  Let $i = 1$.  We may consider $X_p$ as a double cover of $p \times \mathbb{P}^1 \times \mathbb{P}^1 \subset (\mathbb{P}^1)^3$, branched over $D_p$, or as a five-point blow-up of $\mathbb{P}^2$.  Considered as a branched cover, the sixteen anticanonical lines in $X_p$ are its intersection with the preimages of $\Delta_2$ and $\Delta_3$.  Identified as a blow-up $X \rightarrow \mathbb{P}^2$ with exceptional divisors $e_1, \ldots e_5$ instead, without loss of generality these lines are the irreducible components of the reducible fibers of conic fibrations on $X_p$ defined by $|l-e_1|$ and $|2l - e_2 -e_3 -e_4 -e_5|$, where $l$ is the pullback of a line from $\mathbb{P}^2$.  By Theorem \ref{conic monodromy}, the monodromy of $\pi_i$ acts as the full stabilizer of $l-e_1$ in $W(D_5)$.  In other words, the monodromy group $G$ of $\pi_i$ fits into a split exact sequence
\begin{align*}
    0 \rightarrow (\mathbb{Z}/2\mathbb{Z})^3 \rightarrow G \rightarrow S_4 \rightarrow 0.
\end{align*}
\noindent with a splitting map whose image acts as permutation of $e_2,\ldots e_5$ (fixing $e_1$ and $l$).  The kernel $(\mathbb{Z}/2\mathbb{Z})^3$ is generated by elements which act by reflection about $l-e_1 -e_j -e_k$ and permutation of $e_m$ and $e_n$, where $\{j,k,m,n\}=\{2,3,4,5\}$.

\begin{lem}\label{3.1core}
Let $X$ be general in moduli.  A core of free curves on $X$ is given by
\begin{align*}
   \mathscr{C}_X = &\{ 2c_i, c_i+c_j, 2c_i+c_j, c_1+c_2+c_3  \} 
\end{align*}
for $1 \leq i,j \leq 3$ distinct. The separating classes in $\mathscr{C}_X$ are $c_i+c_j$ and $2c_i+c_j$.
\end{lem}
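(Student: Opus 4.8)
The plan is to establish the two parts of Lemma~\ref{3.1core} — that $\mathscr{C}_X$ is a core of free curves and that its separating classes are exactly $c_i+c_j$ and $2c_i+c_j$ — by the standard four-step routine used throughout this paper: (1) enumerate all nef classes of low anticanonical degree, (2) show all but finitely many of them are freely breakable into the listed classes, (3) prove irreducibility of $\overline{\free}(X,\alpha)$ for each $\alpha\in\mathscr{C}_X$ using the explicit Mori structure, and (4) compute the fibers of the evaluation maps $\text{ev}\colon\overline{\free}_1(X,\alpha)\to X$ to decide which classes are separating. Since $-K_X=H_1+H_2+H_3$ and $\overline{NE}(X)$ is spanned by $c_1,c_2,c_3$ with the diagonal pairing $H_i\cdot c_j=\delta_{ij}$, a class $\alpha=\sum a_ic_i$ has $-K_X\cdot\alpha=\sum a_i$ and is automatically nef; so the classes with $2\le -K_X\cdot\alpha\le 5$ are precisely those with $\sum a_i\in\{2,3,4,5\}$ and $a_i\ge 0$. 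First I would list these and observe that the ones not in $\mathscr{C}_X$ — namely classes like $3c_i$, $c_i+2c_j$ with a large multiple, $2c_i+2c_j$, $c_i+c_j+2c_k$, $2c_i+c_j+c_k$, etc. — either parametrize multiple covers (when $\alpha=nc_i$ with $n\ge 2$, which break since $c_i$ has irreducible general fiber only mildly, or more simply are never birational) or visibly decompose as a sum of two of the core classes whose generic chain smooths; for the genuinely new cases I would invoke Movable Bend-and-Break (Theorem~\ref{MovableBB}, applicable since $X$ has no $E5$ divisor, so degree $\ge 5$ breaks) together with the fiberwise del Pezzo analysis on $X_p\cong$ (degree-4 del Pezzo) via Theorem~\ref{del pezzo curves thm} to handle quartic classes lying in, or restricting to, fibers.

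The irreducibility step is where the monodromy computation recorded in the Mori Structure paragraph does the real work. For $\alpha=2c_i$ the curves are conics contracted by $\pi_j\times\pi_k$ double-covering a line, handled by Theorem~\ref{del pezzo curves thm}(4) and Lemma~\ref{3.1 lemma}; for $\alpha=c_i+c_j$ and $2c_i+c_j$ the key point is that curves of these classes are sections (or low-degree multisections) of one of the conic fibrations, and the monodromy group $G$ of $\pi_i$ — which by Proposition~\ref{del Pezzo monodromy} / Theorem~\ref{conic monodromy} is the full stabilizer of $l-e_1$ in $W(D_5)$, sitting in the split sequence $0\to(\mathbb{Z}/2\mathbb{Z})^3\to G\to S_4\to 0$ — acts transitively on the relevant sets of line classes in $X_p$, so the various a priori components coming from different choices of incidence with the sixteen lines in a fiber are permuted into one. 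Here I would combine the fiberwise statement with Lemma~\ref{3.1 lemma} (the induced map $\pi_{i*}\colon M\to\overline{\free}(\mathbb{P}^1)$ is dominant or generically finite) and a monodromy-transitivity argument of the type in Remark~\ref{connected vs irreducible} to glue components; for $\alpha=c_1+c_2+c_3$ the curve is a section of every $\pi_i$, generically an embedded anticanonical curve, and irreducibility follows because the corresponding moduli is dominated by an open subset of a linear system after fixing the incidence data, with monodromy again filling it out.

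For the separating-class determination I would compute $\text{ev}^{-1}(x)$ for general $x\in X$: for $\alpha=2c_i$ and for $\alpha=c_1+c_2+c_3$ one checks (via the blow-up-of-$\mathbb{P}^2$ picture of the fiber $X_p$, resp.\ the section description) that there is a unique or connected family through a general point, so these are \emph{not} separating; for $\alpha=c_i+c_j$ and $2c_i+c_j$ the point $x$ determines a fiber of one conic fibration but the curve must also meet a prescribed one of the reducible fibers of the \emph{other} fibration, and the two (or more) choices of which component to hit give genuinely disconnected fibers — this is exactly the mechanism of Theorem~\ref{reducible fibers: 4 author result} (these are $-K_X$-conics, resp.\ curves whose reduced image meets the relevant divisor), so they \emph{are} separating. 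The main obstacle I anticipate is step~(3) for the quartic-degree separating classes $2c_i+c_j$: I must be careful that the fiberwise line classes these curves meet, as $\pi_i$ monodromy varies, really do form a single orbit under the group $G$ described above and not two orbits (which would produce a spurious extra component), and simultaneously that no lower-degree subvariety of bad curves (e.g.\ reducible members over the discriminant, or curves degenerating into a fiber) contaminates the count — reconciling the $W(D_5)$-stabilizer combinatorics with the explicit incidence geometry of the sixteen anticanonical lines in a degree-four del Pezzo surface is the delicate bookkeeping. Finally, after the core is established, Theorem~\ref{Main Method} together with Corollary~\ref{connected fibers fixall} (whose hypotheses need checking: the del Pezzo fibrations here have degree-$4$ fibers, so no separating class lies in their relative cones, and there is no interior nef $-K_X$-conic since $\rho(X)=3$) will upgrade irreducibility of the core pieces to irreducibility of $\overline{\free}(X,\alpha)$ for all $\alpha\in(c_1+c_2+c_3)+\Nef_1(X)_{\mathbb{Z}}$, which is Theorem~\ref{3.1thm}.
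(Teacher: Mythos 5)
Your overall architecture matches the paper's proof closely: the enumeration of nef classes via $-K_X\cdot\alpha=\sum a_i$, the fiberwise degree-four del Pezzo analysis for breaking $3c_i+c_j$ and $2c_i+2c_j$, the use of the monodromy group $G$ (the full stabilizer of $l-e_1$ in $W(D_5)$, a split extension of $S_4$ by $(\mathbb{Z}/2\mathbb{Z})^3$) to fuse the eight conic classes (resp.\ eight cubic classes) in a fiber of $\pi_k$ into one component for $c_i+c_j$ (resp.\ $2c_i+c_j$), and the count of $8$ curves through a general point to conclude these two families are separating while $2c_i$ and $c_1+c_2+c_3$ are not. The delicate point you flag --- whether the fiberwise classes form one $G$-orbit or two --- is resolved in the paper exactly as you hope: the classes $l$, $3l-2e_1-e_2-e_3-e_4-e_5$, and the six $2l-e_1-e_p-e_q$ are a single orbit under the stabilizer of $l-e_1$.

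There are, however, two concrete gaps. First, your argument for irreducibility of $\overline{\free}(X,c_1+c_2+c_3)$ would fail as stated: such curves are not cut out by a linear system. Their images are $(1,1,1)$-curves in $(\mathbb{P}^1)^3$ over which the branched double cover splits, i.e.\ curves tangent to the branch divisor $D$ at each of three points of a six-point intersection --- a non-linear, codimension-three tangency locus inside the six-dimensional family of $(1,1,1)$-curves. The paper instead applies Lemma \ref{3.1 lemma} to $\pi_1\times\pi_2$ to show every component contains a free chain of type $(c_1+c_3,c_2)$ or $(c_1,c_2+c_3)$, proves each of these two chain spaces is irreducible (using irreducibility of the discriminant preimage and fiberwise monodromy), and then connects the two types through a common unobstructed immersed chain of type $(c_1,c_3,c_2)$ whose components all have normal bundle $\mathcal{O}\oplus\mathcal{O}(-1)$. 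Second, your toolkit for free breakability does not cover $2c_i+c_j+c_k$: Movable Bend-and-Break requires degree at least $5$, and this quartic class is not contracted by any del Pezzo fibration, so the fiberwise analysis does not apply either. The paper handles it by the same conic-fibration device --- the induced map to $\overline{\free}(\mathbb{P}^1\times\mathbb{P}^1,2l_1+l_2)$ is dominant and generically finite, forcing a reducible member with a free component, which is then deformed to a free chain. Both repairs use machinery you already cite, but neither is supplied by the mechanisms you actually invoke.
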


We refer to Proposition \ref{low degree curves degeneration} for the appropriate generalization to arbitrary $X$.

\begin{proof}
\textbf{Nef Curve Classes of Anticanonical Degree Between $2$ and $4$:}
The nef curve classes of anticanonical degree between $2$ and $4$ are those shown in the core, in addition to $3c_i$, $4c_i$, $3c_i+c_j$, $2c_i+2c_j$, and $2c_i+c_j+c_k$. Irreducible curves of class $3c_i$ clearly do not dominate $X$ and those of class $4c_i$ break freely. We show that the remaining classes break freely as well.

Indeed, the curves of class $3c_i+c_j$ and $2c_i+2c_j$ all lie in a fiber of the del Pezzo fibration $f_k: X \rightarrow \mathbb{P}^1$. As curves of degree four in a del Pezzo surface, they are freely breakable to chains of type ($2c_i, c_i+c_j)$ and either $(2c_i, 2c_j)$ or $(c_i + c_j, c_i + c_j)$.

Free curve of class $2c_i+c_j+c_k$ project to curves of bidegree $(1,1)$ in $\mathbb{P}^1 \times \mathbb{P}^1$ under $\pi_j \times \pi_k$.  By Lemma \ref{3.1 lemma}, for any component $M\subset \overline{\free}(X, 2c_i + c_j + c_k)$ the induced morphism
$(\pi_j \times \pi_k)_* : M \rightarrow |\mathcal{O}_{\mathbb{P}^1\times\mathbb{P}^1}(1,1)|$ is dominant.  Therefore, $M$ either contains chains of free conics, or maps $f: C\rightarrow X$ in its smooth locus such that $C = C_1 \cup C_2$ has two irreducible components, $f|_{C_1}$ is free, and $(f_* C_1, f_* C_2) \in \{(2c_i + c_j, c_k), (2c_i + c_k, c_j)\}$.  When $f$ is not a free chain, we may deform it to break into a union of two lines and a conic which smooths to a free chain.  More precisely, we show in \ref{3.1relations1}(7) that $\overline{\free}(X, 2c_i + c_j + c_k)$ is irreducible.

\textbf{Irreducible Spaces and Fibers:}
Free curves of class $2c_i$ are smooth fibers of the map $\pi_j \times \pi_k :X \rightarrow \mathbb{P}^1 \times \mathbb{P}^1$, so they are paramterized by an irreducible family and there is a unique such curve through every point of $X$.

Curves of class $c_i+c_j$ lie in fibers of the map $\pi_k: X \rightarrow \mathbb{P}^1$ and, under the identification of this fiber as a blow-up of $\mathbb{P}^2$, are of class $l-e_2$, $l-e_3$, $l-e_4$, $l-e_5$, $2l-e_1-e_3-e_4-e_5$, $2l-e_1-e_2-e_3-e_4$, $2l-e_1-e_2-e_3-e_5$, or $2l-e_1-e_2-e_3-e_4$. These classes are all identified under the monodromy action of $G$, so the moduli space of curves of class $c_i+c_j$ is irreducible. However, there are eight such curves through a general point of $X$, so this is a separating curve class.

Consider next the curves of class $2c_i+c_j$. They likewise lie in a fiber of $\pi_k: X \rightarrow \mathbb{P}^1$. We may identify the fiber with as above such that $2c_i$ corresponds to $l-e_1$.  Then  $2c_i + c_j$ corresponds to $l$, $3l -2e_1 - e_2 -e_3 -e_4 -e_5$, or $2l-e_1-e_p-e_q$ in $S$ for $2 \leq p < q \leq 5$. These classes are all identified under the monodromy action. However, there are eight components of curves passing through a general point of $X$, so this is another separating curve class.

Finally, consider free curves of class $c_1+c_2+c_3$. By \ref{3.1 lemma} applied to $\pi_1 \times \pi_2 :X \rightarrow \mathbb{P}^1 \times \mathbb{P}^1$, these curves break into chains of type $(c_1 + c_3, c_2)$ or $(c_1, c_2 + c_3)$.  %
It therefore suffices to show that the moduli space of chains of these two types are irreducible and that they each contain the smoothing of an immersed chain of type $(c_1, c_3, c_2)$ with unobstructed deformations.%

First, we show the space of chains of type $(c_1+c_3, c_2)$ is irreducible. The curves class $c_2$ are the two components of each fiber of $\pi_1 \times \pi_3$ over the discriminant locus, a divisor of bidegree $(4,4)$. That this is irreducible follows from Proposition $4.8$ in \cite{mori1985classification}. Fixing such a curve $C$, a chain involving $C$ is determined by a choice of curve of class $c_1+c_3$ meeting $x$. These curves lie in fibers of $\pi_2$ and each fiber contains eight such curves. These components are identified under monodromy, so the result follows. The chains of type $(c_1, c_3+c_2)$ are also parameterized by an irreducible family by a symmetric argument.

Take a free chain of type $(c_1, c_3, c_2)$ on $X$. Because each of the three components is an irreducible component of a reduced and reducible fiber of a conic fibration contracted $X \rightarrow \mathbb{P}^1 \times \mathbb{P}^1$, they have normal bundle $\mathscr{O} \oplus \mathscr{O}(-1)$. It follows that this chain is a smooth point in its moduli space. By moving the curve of class $c_2$, while keeping the chain of type $(c_1, c_3)$ in a fixed fiber of $\pi_2$, we must obtain a chain of type $(c_1+c_3, c_2)$. Similarly, moving the curve of class $c_1$, while keeping the chain of type $(c_3, c_2)$ in a fixed fiber of $\pi_1$, we obtain a chain of type $(c_1, c_3+c_2)$. This proves the result.  Note that by \ref{reducible fibers: 4 author result}, the class $c_1 + c_2 + c_3$ cannot be separating.
\end{proof}

\begin{lem}\label{3.1relations1}
Relations in the monoid $\mathbb{N}\mathscr{C}_X$ are generated by:
\begin{enumerate}
    \item $2(c_i+c_j) = (2c_i)+(2c_j)$,
    \item $3(c_i+c_j) = (2c_i+c_j)+(2c_j+c_i)$,
    \item $(2c_i+c_j)+(2c_j) = (2c_j+c_i)+(c_i+c_j)$,
    \item $2(c_i+c_j)+(2c_i) = 2(2c_i+c_j)$,
    \item $2(c_1+c_2+c_3) = (2c_i)+2(c_j+c_k) = (2c_i+c_k)+(2c_j+c_k) = (c_1+c_2)+(c_1+c_3)+(c_2+c_3)$,
    \item $3(c_1+c_2+c_3) = (2c_i+c_j)+(2c_j+c_k)+(2c_k+c_i)$,
    \item $(2c_i)+(c_j+c_k) = (c_i+c_j)+(c_i+c_k)$
    \item $(2c_i)+(c_i+c_j)+(c_i+c_k) = (2c_i+c_j)+(2c_i+c_k)$
    \item $(c_1+c_2+c_3)+(c_i+c_j) = (2c_i+c_j)+(c_j+c_k) = (2c_j+c_k)+(2c_i)$,
    \item $(c_1+c_2+c_3)+(2c_i) = (2c_i+c_j)+(c_i+c_k)$,
    \item $(c_1+c_2+c_3)+(2c_i+c_j) = (2c_i+c_k)+(2c_j+c_i) = (c_i+c_k)+2(c_i+c_j)$,
    \item $(2c_i+c_j)+(2c_i+c_k) = (2c_i)+(c_i+c_j)+(c_i+c_k)$,
\end{enumerate}
for $1 \leq i,j,k \leq 3$ distinct.  For each relation $\sum \alpha_i = \sum \beta_j$, a main component of $\prod_X \overline{\free}_2(X,\alpha_i)$ lies in the same component of free curves as a main component of $\prod_X \overline{\free}_2(X, \beta_j)$.
\end{lem}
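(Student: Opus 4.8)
\textbf{Proof plan for Lemma \ref{3.1relations1}.}

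The plan is to split the claim into two parts: identifying a generating set of relations in the monoid $\mathbb{N}\mathscr{C}_X$, and then verifying the geometric statement that for each listed relation $\sum\alpha_i=\sum\beta_j$, a main component of $\prod_X \overline{\free}_2(X,\alpha_i)$ and a main component of $\prod_X \overline{\free}_2(X,\beta_j)$ lie in the same component of $\overline{\free}(X)$. For the first part I would invoke Lemma \ref{Relations}: the generators of $\overline{NE}(X)$ are $c_1,c_2,c_3$, which already span $N_1(X)$, so there is no ``big'' unique relation among the last $\rho(X)$ generators; instead every core class is a nonnegative combination $a_1c_1+a_2c_2+a_3c_3$ with $\sum a_i\le 3$, and relations among the core classes are exactly relations among such combinations. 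Using the effective divisors $H_1-$ (or rather, successively peeling off the core classes of largest degree using pairings with combinations of the $H_i$ as the $D_i$ of Lemma \ref{Relations}), one eliminates $c_1+c_2+c_3$ first, then the classes $2c_i+c_j$, then $c_i+c_j$, then $2c_i$; at each stage Lemma \ref{Relations}(4) (minimality) forces the relations in the list, and an elementary (if tedious) bookkeeping check confirms the list is complete. This is routine linear algebra over $\mathbb{N}$ and I would not write it out in full.

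The geometric verification is the substance. For most relations, one side or the relevant intermediate class is a class whose space of free curves has already been shown irreducible (in Lemma \ref{3.1core} or in the companion Lemma \ref{3.1relations1}-type analysis for quartic classes); in those cases the claim is automatic because any two main components of products mapping to an irreducible $\overline{\free}(X,\sum\alpha_i)$ land in the same (unique) component. Relations (1)--(4) live inside a single del Pezzo fiber of some $\pi_k$, so they follow from Theorem \ref{del pezzo curves thm} applied to the degree-four del Pezzo surface $X_p$, together with the monodromy description of $\pi_k$ computed in the Mori structure paragraph: the Weyl-group stabilizer of $l-e_1$ acts transitively on the relevant classes, so the various chain types are all deformation-equivalent within $\overline{\free}(X_p,\cdot)$ and hence within $\overline{\free}(X,\cdot)$. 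Relations (5), (6) and those involving $c_1+c_2+c_3$ are handled by the argument already used in Lemma \ref{3.1core}: an immersed chain of type $(c_i,c_j,c_k)$ with each component having normal bundle $\mathcal{O}\oplus\mathcal{O}(-1)$ is a smooth point of $\overline{\mathcal{M}}_{0,0}(X)$, and sliding one component while holding a subchain in a fixed fiber of the appropriate $\pi$ realizes both bracketings of the relation inside one component; iterating this produces a path connecting the two given main components. Relations (7)--(12) mix a fiber class $2c_i$ with classes of anticanonical degree $\ge 3$; here I would first smooth within a fiber to reduce to an immersed chain, then use Lemma \ref{Basic Properties Chains} and Lemma \ref{GHS lemma} to guarantee the nodes are unobstructed, and finally apply the gluing/smoothing lemmas (\ref{Gluing}, \ref{gluing del Pezzo}) together with Proposition \ref{very free curves} to identify the resulting component.

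The main obstacle I expect is relation (7), $(2c_i)+(c_j+c_k)=(c_i+c_j)+(c_i+c_k)$ — equivalently the irreducibility statement promised in \ref{3.1core} for the class $2c_i+c_j+c_k$ of bidegree $(1,1)$ curves under $\pi_j\times\pi_k$. Unlike the fiberwise relations, this class is genuinely three-dimensional over the base $\mathbb{P}^1$ in two ways, and the induced map to $|\mathcal{O}_{\mathbb{P}^1\times\mathbb{P}^1}(1,1)|$ is generically finite on each component by Lemma \ref{3.1 lemma}, so one must actually control the monodromy of the intersection points of a $(1,1)$-curve with the discriminant divisors $\Delta_j,\Delta_k$ and show it is transitive enough to merge the a priori distinct components coming from the choices $(2c_i+c_j,c_k)$ versus $(2c_i+c_k,c_j)$. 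I would handle this by degenerating a general such free curve to a reducible curve with a contracted component (a line $c_j$, a line $c_k$, and a conic $2c_i$ through a common point), checking $h^1(C,f^*\mathcal{T}_X)=0$ via Proposition \ref{kontsevich space H1}, and then using the deformation to move between the two configurations; the monodromy of $\pi_i$ acting as the stabilizer of $l-e_1$ in $W(D_5)$ is exactly what is needed to identify the eight sub-configurations, so the work is in bookkeeping the $W(D_5)$-orbits rather than in any new idea.
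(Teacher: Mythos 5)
Your overall architecture matches the paper's: Lemma \ref{Relations} for the monoid computation, fiberwise del Pezzo arguments plus smoothing of immersed chains and monodromy for the geometric identifications, and you correctly single out relation (7) (equivalently, irreducibility of $\overline{\free}(X,2c_i+c_j+c_k)$) as the crux. One concrete error in the part you declare routine: your elimination order cannot start with $c_1+c_2+c_3$. Lemma \ref{Relations} requires a divisor class $D_1$ with $D_1\cdot(c_1+c_2+c_3)<0$ and $D_1\cdot\alpha\ge 0$ for every other core class; writing $D_1=aH_1+bH_2+cH_3$, the conditions $D_1\cdot(2c_i)=2a,2b,2c\ge 0$ force $a+b+c\ge 0$, contradicting $D_1\cdot(c_1+c_2+c_3)=a+b+c<0$. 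The paper instead eliminates $2c_1,2c_2,2c_3$ first (using $2H_j+2H_k-H_i$), then the $2c_i+c_j$ (using $2H_i+H_j-H_k$ and $H_i+H_j-H_k$), leaving the four classes $c_i+c_j$, $c_1+c_2+c_3$ with the single relation (5). Your order is not merely a cosmetic variant; it does not satisfy the hypotheses of the lemma you are invoking, so the "tedious bookkeeping" would stall at step one. The fix is just to reverse the order, but as written the plan is wrong.

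On the geometric side, relations (1)--(4) and the purely fiberwise parts of (5)--(6) are handled as you say, and the paper's proofs are indeed smoothings of explicit immersed chains (often longer than length three, e.g.\ $(2c_i,c_j,c_k,c_i+c_k,c_j,c_j+c_k)$ for (6)) rather than a single slide of a $(c_i,c_j,c_k)$ chain; your sketch would need those longer intermediate configurations to realize both bracketings. For relation (7), your degeneration to a configuration with a contracted component is plausible but does not by itself connect the two chain types: the paper's argument is finer. It shows each component of $\overline{\free}(X,2c_i+c_j+c_k)$ contains one of three specific main components $M_1,M_2,M_3$ (chains of types $(c_i+c_j,c_i+c_k)$, $(2c_i+c_j,c_k)$, $(2c_i+c_k,c_j)$), each unique by monodromy over a section of a del Pezzo fibration, and then uses finiteness of $M_2\to\overline{\free}(X,2c_i+c_j)$ together with a specialization of the fiber of $\pi_k$ to produce an immersed chain of type $(c_i+c_j,c_i,c_k)$ inside $M_2$, which smooths into $M_1$. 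Your appeal to ``the monodromy of $\pi_i$ as the stabilizer of $l-e_1$ in $W(D_5)$'' is not quite the right group here, since $2c_i+c_j+c_k$ is contracted by no $\pi_m$; what is actually needed is monodromy of the attachment point as it varies over $X$ (or over a multisection), which is what the paper's del Pezzo-fibration argument supplies. So the plan identifies the right difficulty and the right toolbox, but the decisive step of (7) is not yet an argument.
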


\begin{proof}
Given an arbitrary relation, we may assume no member of the core appears on both sides. The idea of the proof is then to use the intersection pairing to show that the above generators may be used to remove any instance of $2c_1$ from the relation. Next, we remove $2c_2$, $2c_3$, $2c_1+c_2$, and finally $2c_i+c_j$ for all other combinations of $i$ and $j$. The remaining four classes $c_i+c_j$ and $c_1+c_2+c_3$ satisfy the unique relation $(5)$.

Firstly, consider the divisor $D = 2H_2+2H_3-H_1$. It pairs to $-2$ with $2c_1$, to zero with $2c_1+c_2$ and $2c_1+c_3$, to $1$ with $c_1+c_2$ and $c_1+c_3$, and to larger integers with all other classes $\beta \in \mathscr{C}_X$. Thus, to remove $2c_1$ from consideration, it suffices to find, for each class $\beta \in \mathscr{C}_X$ with $D . \beta \geq 2$, a relation involving only $2c_1$ and $\beta$ on one side of the equation, as well as a relation involving $2c_1$ and $2(c_1+c_2)$, one involving $2c_1$ and $2(c_1+c_3)$, and one involving $2c_1$ and $(c_1+c_2)+(c_1+c_3)$. These are found in relations $(1)$, $(1)$, $(7)$, $(3)$, $(9)$, $(3)$, $(9)$, $(10)$, $(4)$, $(4)$, and $(8)$ respectively.

In removing $2c_1$, we did not alter the occurrences of $2c_2$ or $2c_3$. Thus, we may likewise remove these two classes from consideration without adding any new instances of $2c_1$.

We have reduced the problem to considering relations between $c_i+c_j$, $2c_i+c_j$, and $c_1+c_2+c_3$. Now note that $2H_1+H_2-H_3$ pairs to $-1$ with $2c_3+c_2$, to zero with $2c_3+c_1$ and $c_2+c_3$, and positively with all other classes. As before, to remove $2c_3+c_2$ from consideration, it suffices to find, for each of these positive classes, a relation involving only $2c_3+c_2$ and that class on one side of the equation. These may be found in relations $(6)$, $(2)$, $(11)$, $(11)$, $(9)$, $(5)$, and $(11)$.

We may not immediately remove $2c_3+c_1$ in the same manner, for, in so doing, we would reintroduce instances of $2c_3+c_2$. However, note that the divisor $H_1+H_2-H_3$ pairs to $-1$ with $2c_3+c_1$, to zero with $c_1+c_3$ and $c_2+c_3$ and positively with all other classes. Then $(2)$, $(11)$, $(11)$, $(9)$, $(11)$, and $(5)$ exhibit relations involving only $2c_3+c_1$ on each of the positive classes on one side of the equation, and no other classes of the form $2c_i+c_j$ on the other. Hence, we may remove $2c_3+c_1$ from consideration in this manner and likewise remove the other classes of the form $2c_i+c_j$.

Finally, as remarked above, the remaining four classes $c_i+c_j$ and $c_1+c_2+c_3$ satisfy the unique relation $(5)$.  We prove main components of free chains of both types appear in some component of free curves below.  Given a general fiber $S$ of a del Pezzo fibration on $X$, we may identify $S$ with a blow-up of $\mathbb{P}^2$ as above.
\begin{enumerate}
    \item General free chains of type $(2c_i, 2c_j)$ smooth to geometrically rational anticanonical curves in smooth fibers of $\pi_k$.  There is a unique component $M$ of $\overline{\free}(X, 2c_i + 2c_j)$ containing such curves.  By smoothing a free chain of type $(c_i + c_j, c_i + c_j)$, where, under the identification of the fiber of $\pi_k$ as a blow-up of $\mathbb{P}^2$, the components are pushforwards of curves of class $l-e_2$ and $2l-e_1 -e_3 -e_4 -e_5$, we see $M$ also contains a main component of $\overline{\free}_1(X, c_i + c_j) \times_X \overline{\free}_1(X, c_i + c_j)$.
    \item Chains of type $(c_i + c_j, c_i + c_j, c_i + c_j)$ corresponding to pushforwards of chains in a fiber $S$ of $\pi_k$ of type $(l-e_2, l-e_3, l-e_4)$ lie in the same component of free curves as chains of type $(2c_i + c_j, c_i + 2c_j)$ corresponding to pushforwards of chains of type $(l, 2l -e_2 -e_3 -e_4)$.
    \item Free chains of type $(2c_i + c_j, 2c_j)$ given by pushforwards of chains of type $(l, 2l-e_2 -e_3 -e_4 -e_5)$ lie in the same component of free curves as free chains of type $(2c_j + c_i, c_i + c_j)$ given by pushforwards of chains of type $(2l-e_2 -e_3 -e_4, l-e_5)$.
    \item Free chains of type $(c_i + c_j, c_i + c_j, 2c_i)$ given by pushforwards of chains of type $(l-e_2, l-e_3, l-e_1)$ lie in the same component of free curves as free chains of type $(2c_i + c_j, 2c_i + c_j)$ given by pushforwards of chains of type $(l, 2l- e_1 -e_2 -e_3)$.
    \item  Smoothing immersed chains of type $(c_i + c_j, c_k, c_k, c_i + c_j)$ or of type $(c_1 + c_2, c_3, c_1, c_2 + c_3)$ proves the unique component $M$ of $\overline{\free}(X, 2c_1  + 2c_2 + 2c_3)$ containing free chains of type $(c_1 + c_2 + c_3, c_1 + c_2 + c_3)$ also contains free chains of types $(c_j + c_k, 2c_i, c_j + c_k)$ and of type $(c_1  +c_2, c_1 + c_3, c_2 + c_3)$.  Then, note that as there is a unique main component of $\overline{\free}_1(X, 2c_i) \times_X \overline{\free}_1(X, 2c_j)$, which by (1) lies in the same component of free curves as free chains of type $(c_i + c_j, c_i + c_j)$, smoothing an immersed chain of type $(2c_i, c_k ,c_k, 2c_j)$ proves our claim using \ref{Basic Properties Chains}.
    \item Similarly, as there is a unique main component of $\overline{\free}_1(X, c_1 + c_2 + c_3) \times_X \overline{\free}_1(X, c_1  +c_2 + c_3)$, as well as a unique main component component of free curves containing smoothings of chains of type $(2c_i, c_j + c_k, c_j + c_k)$, we reduce to proving a component of free curves contains free chains of type $(2c_i, c_j + c_k, c_1 + c_2 + c_3, c_j + c_k)$ and of type $(2c_i + c_j, 2c_k + c_i, 2c_j + c_k)$.  This is accomplished by smoothing an immersed chain of type $(2c_i, c_j, c_k, c_i + c_k, c_j, c_j + c_k)$ in two ways.
    \item We show $\overline{\free}(X, 2c_i + c_j + c_k)$ is irreducible.  Indeed, our argument in Lemma \ref{3.1core} proves each component of $\overline{\free}(X, 2c_i + c_j + c_k)$ contains either a component $M_1$ of free chains of type $(c_i + c_j, c_i + c_k)$, a component $M_2$ of $\overline{\free}_1(X, 2c_i + c_j) \times_X \overline{M}_{0,1}(X, c_k)$ dominating both families of 0-pointed curves under projection, or a component $M_3$ of $\overline{\free}_1(X, 2c_i + c_k) \times_X \overline{M}_{0,1}(X, c_j)$ dominating both families of 0-pointed curves under projection.  Monodromy over a section of some del Pezzo fibration proves there is a unique choice for each $M_i$.  Therefore, the unique component of $M$ of $\overline{\free}(X, 2c_i + c_j + c_k)$ containing free chains of type $(2c_i, c_j + c_k)$ must contain either $M_1$, $M_2$, or $M_3$.  To prove $\overline{\free}(X, 2c_i + c_j + c_k)$ is irreducible, it suffices to prove $M_1 \subset M$.  Hence, by symmetry we may suppose $M_2 \subset M$.  Since $M_2 \rightarrow \overline{\free}(X, 2c_i + c_j)$ is finite, $M_2$ parameterizes a chain of type $(c_i + c_j, c_i, c_k)$ or $(c_i, c_i + c_j, c_k)$ with components general in their individual parameter spaces.  Since a general curve of class $c_k$ meets the locus of lines of class $c_i$ at points in general fibers of $\pi_k$, specializing the fiber of $\pi_k$ containing the curve of class $2c_i + c_j$ proves $M_2$ contains immersed chains of type $(c_i + c_j, c_i, c_k)$.  Our claim follows immediately.
    \item Smooth an immersed chain of type $(c_i + c_j, c_i, c_i, c_i + c_k)$ in two ways.
    \item Smooth immersed chains of type $(c_j + c_k, c_i, c_i + c_j)$ or of type $(2c_i, c_j, c_j, c_k)$ in two ways.
    \item Smooth an immersed chain of type $(c_i + c_k, c_j, 2c_i)$ in two ways.
    \item Smooth an immersed chain of type $(c_i + c_k, c_j, c_i, c_i + c_j)$ in two ways to prove a component of free curves contains free chains of type $(c_1 + c_2 + c_3, 2c_i + c_j)$ and $(c_i + c_k, c_i + c_j, c_i + c_j)$.  Then smooth an immersed chain of type $(c_i + c_k, c_i, c_j, c_i + c_j)$ in two ways.
    \item Smooth an immersed chain of type $(2c_i, c_j, c_i, c_i, c_k)$ in two ways.
\end{enumerate}
\end{proof}

\begin{lem}
For each $\alpha \in (c_1 + c_2 + c_3) + \Nef_1(X)_\mathbb{Z}$, $\overline{\free}(X,\alpha)$ is nonempty.
\end{lem}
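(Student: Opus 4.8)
The plan is to reduce the statement to a finite check, exactly in the spirit of the end of Section~7 (the ``$\overline{\free}(X,\alpha)$ is nonempty'' lemmas for the other cases). By Theorem~\ref{Representability of Free Curves}, every extreme ray of $\Nef_1(X)$ is generated by the class of a free rational curve; by Lemma~\ref{interior conic lemma}-style reasoning and the explicit list in Lemma~\ref{3.1core}, the extreme rays of $\Nef_1(X)$ here are spanned by $2c_i$, $c_i+c_j$, and $c_1+c_2+c_3$ (all in $\mathscr{C}_X$, minus the separating classes that are interior). So $\Nef_1(X)$ is the cone generated by a subset $C$ of $\mathscr{C}_X$ consisting of free classes. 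First I would invoke Gordan's Lemma (Lemma~\ref{Gordan's Lemma}): the monoid $\Nef_1(X)_{\mathbb Z}$ is generated by $C$ together with the finitely many lattice points $\alpha = \sum_{\gamma\in C} a_\gamma\gamma$ with $0\le a_\gamma<1$. Hence it suffices to show each such $\alpha$ lying in $(c_1+c_2+c_3)+\Nef_1(X)$ is represented by a free rational curve, and then that the monoid operation ``adding a core class'' preserves nonemptiness of $\overline{\free}$.

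The second ingredient is that for any $\alpha\in (c_1+c_2+c_3)+\Nef_1(X)_{\mathbb Z}$ we may write $\alpha = (c_1+c_2+c_3) + \beta$ with $\beta\in\Nef_1(X)_{\mathbb Z}$, decompose $\beta$ as a sum of elements of $C$ (using the Gordan generators), and build a chain of free curves of type $(c_1+c_2+c_3,\gamma_1,\dots,\gamma_r)$ with $\gamma_i\in C$. Since $c_1+c_2+c_3$ is a core class with $\overline{\free}(X,c_1+c_2+c_3)$ nonempty and irreducible (Lemma~\ref{3.1core}), and each $\gamma_i$ is a core class with $\overline{\free}(X,\gamma_i)$ nonempty, Lemma~\ref{Basic Properties Chains} guarantees such a free chain exists, its main component lies in a (nonempty) component of $\overline{\free}(X,\alpha)$, and a general point of that main component smooths to a free curve of class $\alpha$. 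Concretely I would write $\alpha = c_1l_1+\dots$—rather, $\alpha = d_1c_1' + \dots$ — no: in the coordinates of this case, $\alpha = m_1H_1^\vee+\dots$; more simply, writing $\alpha = a_1\,\tfrac{2c_1+?}{}$—it is cleanest to just say: after subtracting $c_1+c_2+c_3$, the residual class $\beta$ satisfies the nef inequalities $\beta\cdot H_i\ge 0$, and an elementary case analysis (as in the nonemptiness lemmas for cases 3.1's neighbors) shows $\beta$ is a nonnegative integer combination of $2c_i$, $c_i+c_j$, and $c_1+c_2+c_3$.

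The only genuine content beyond bookkeeping is verifying the nonemptiness claims for the Gordan generators themselves, i.e. the finitely many classes $\alpha=\sum a_\gamma\gamma$ with $0\le a_\gamma<1$ that happen to be integral and lie in $(c_1+c_2+c_3)+\Nef_1(X)$. For each such $\alpha$ one exhibits an explicit free chain whose components are core classes (conics $2c_i$, $c_i+c_j$ and the cubic $c_1+c_2+c_3$) summing to $\alpha$; the relations catalogued in Lemma~\ref{3.1relations1} ensure any two such decompositions land in the same component, so the choice is immaterial for nonemptiness. I expect the main obstacle to be purely organizational: confirming that every integral $\alpha$ in the shifted nef cone admits \emph{some} decomposition into core classes with the right sum — this is a finite linear-algebra/inequality check on the $H_i$-degrees, and the constraint $\alpha\in(c_1+c_2+c_3)+\Nef_1(X)$ is exactly what makes the residual $\beta$ large enough in each $H_i$-direction to be so decomposable. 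Once that is in place, Theorem~\ref{Representability of Free Curves}, Lemma~\ref{Gordan's Lemma}, and Lemma~\ref{Basic Properties Chains} close the argument.
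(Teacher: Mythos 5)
Your overall strategy --- decompose $\alpha$ into a sum of core classes represented by free curves, form a free chain, and smooth --- is the right one and is essentially what the paper does (its proof is a one-liner: the integer points of $\overline{NE}(X)=\Nef_1(X)$ are generated by $c_1,c_2,c_3$, and an induction shows every class other than an odd multiple of a single $c_i$ is represented by a free curve). Two concrete statements in your write-up are wrong, though, and the second one breaks a step as written. First, the extreme rays of $\Nef_1(X)$ are not spanned by $c_i+c_j$ and $c_1+c_2+c_3$: since the pairing is diagonal ($H_i\cdot c_j=\delta_{ij}$) and $\text{Eff}(X)$ is generated by the $H_i$, the nef cone of curves is the full octant with extreme rays $\mathbb{R}_{\ge0}c_i$; the class $2c_i$ is merely the first \emph{free} class on that ray (Theorem \ref{Representability of Free Curves}), while $c_i+c_j$ and $c_1+c_2+c_3$ lie in the interiors of faces. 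Second, your reduction ``subtract $c_1+c_2+c_3$ and decompose the residual $\beta$ into $\{2c_i,\,c_i+c_j,\,c_1+c_2+c_3\}$'' fails: for $\alpha=2c_1+c_2+c_3$ the residual is $\beta=c_1$, which is not in the submonoid generated by those classes (every generator has coordinate sum $2$ or $3$, while $\beta$ has odd sum and a vanishing coordinate). The class $\alpha=2c_1+c_2+c_3$ is nevertheless fine --- $\alpha=(c_1+c_2)+(c_1+c_3)$ --- the point being that one should not always peel off $c_1+c_2+c_3$ first.

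The correct elementary check is: write $\alpha=a_1c_1+a_2c_2+a_3c_3$ with all $a_i\ge1$. If $-K_X\cdot\alpha=\sum a_i$ is even, then an even number of the $a_i$ are odd and $\alpha$ is a nonnegative integer combination of the degree-two free classes $2c_i$ and $c_i+c_j$ alone. If $\sum a_i$ is odd, subtract one copy of $c_1+c_2+c_3$ --- legal precisely because all $a_i\ge1$, which is where the hypothesis $\alpha\in(c_1+c_2+c_3)+\Nef_1(X)$ enters --- and reduce to the even case. With that decomposition in hand, gluing and smoothing a free chain via Lemma \ref{Basic Properties Chains} (all summands are core classes with nonempty, irreducible $\overline{\free}$ by Lemma \ref{3.1core}) gives nonemptiness; the relations of Lemma \ref{3.1relations1} are not needed for this, only for irreducibility.
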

\begin{proof}
The monoid of integer points in $\overline{NE}(X) = \Nef_1(X)$ is generated by $c_1, c_2, c_3$.  An inductive argument may be used to show the only curve classes in $\overline{NE}(X)$ not representable by free curves are odd multiples of $c_i$.
\end{proof}

\subsection*{3.2}

\textbf{Description of Variety $X$:} $X$ is a member of $|L^{\otimes 2} \otimes_{\mathcal{O}_{\mathbb{P}^1 \times \mathbb{P}^1}} \mathcal{O}(2,3)|$ on $\mathbb{P}_{\mathbb{P}^1 \times \mathbb{P}^1}(\mathcal{O} \oplus \mathcal{O}(-1,-1)^{\oplus 2})$ such that $X \cap Y$ is irreducible, where $L$ is the tautological line bundle and $Y$ is a member of $|L|$.  We keep the notation from \cite{mori1981classification}, which for a vector bundle $V$ identifies $\mathbb{P}(V)$ with $\text{Proj}(\text{Sym}^*V)$, the space of 1 dimensional quotients of $V$.  Thus, $Y$ is the unique member of $|L|$.  Let $\pi : X\rightarrow \mathbb{P}^1_1 \times \mathbb{P}^1_2$ be the natural projection, and $\pi_i : X \rightarrow \mathbb{P}^1_i$ be the projection of $X$ to the $i^{th}$ factor.

\textbf{Generators for $N^1(X)$ and $N_1(X)$:} Let $E = Y \cap X$, $H_1 = \pi_1^* \mathcal{O}(1)$, and $H_2 = \pi_2^* \mathcal{O}(1)$.  Note that $E \cong \mathbb{P}^1 \times \mathbb{P}^1$, as $E = X \cap \mathbb{P}_{\mathbb{P}^1 \times \mathbb{P}^1}(\mathcal{O}(-1,-1)^{\oplus 2}) \subset \mathbb{P}_{\mathbb{P}^1 \times \mathbb{P}^1}(\mathcal{O} \oplus \mathcal{O}(-1,-1)^{\oplus 2})$.  %
The restriction of $\pi : X \rightarrow \mathbb{P}^1 \times \mathbb{P}^1$ to $E$ is a double cover branched over a reducible divisor of class $\mathcal{O}(0,2)$.  Let $f$ be an irreducible component of a reducible fiber of $\pi : X\rightarrow \mathbb{P}^1 \times \mathbb{P}^1$, $e_1 = H_1 \cap E$ be a ruling of $E$, and $e_2 = \frac{1}{2} H_2 \cap E$ be the other ruling.  We define $l = \frac{1}{2}(f + e_1)$, and let $D$ be the divisorial image of $\overline{M}_{0,1}(X,l)$.

\begin{thm}\label{3.2thm}
For all $\alpha \in (f + e_2 + l) +  \Nef_1(X)_\mathbb{Z}$, $\overline{\free}(X,\alpha)$ is nonempty and irreducible.
\end{thm}

\textbf{Intersection Pairing:} $H_1 . e_2 = 1$, $H_2 . e_1 = 2$, $E \cdot e_i = -1$, $E.f =1$, and all other pairings are $0$.

\textbf{Anticanonical Divisor:} $-K_X = E  + 2H_1 + H_2$

\textbf{Effective Divisors:} $E$ is the only non-moving extreme divisor on $X$.

\textbf{Effective Curves:}  The Mori cone is generated by $f$, $e_1$, and $e_2$.  The class $l = \frac{1}{2}(f + e_1)$ is a nef anticanonical line.

\textbf{Mori Structure on $X$:} The contraction of $f$ is the morphism $\pi : X \rightarrow \mathbb{P}^1 \times \mathbb {P}^1$.  It is a conic fibration with reducible fibers and discriminant locus $\Delta_\pi$ a member of $|\mathcal{O}_{\mathbb{P}^1 \times \mathbb{P}^1}(2,5)|$.  The contraction of $e_i$ is a morphism $X\rightarrow Y_i$ onto a smooth weak fano threefold, as $E$ is of $E1$ type and $\mathcal{O}_E(E) \cong \mathcal{O}_E(-1,-1)$.  $Y_1$ and $Y_2$ are related by the flips of their small extreme rays.  For each $i$, the contraction of the other extreme ray (spanned by the pushforward of $f \in N_1(X)$) yields a del Pezzo fibration $Y_i \rightarrow \mathbb{P}^1$.  Generic fibers of $Y_1 \rightarrow \mathbb{P}^1$ are del Pezzo surfaces of degree 4, while generic fibers of $Y_2 \rightarrow \mathbb{P}^1$ smooth quadrics.  The composition $X \rightarrow Y_i \rightarrow \mathbb{P}^1$ is $\pi_i$, and thus factors as $X\rightarrow \mathbb{P}^1_1 \times \mathbb{P}^1_2 \rightarrow \mathbb{P}^1_i$.  Each fiber of $X \subset \mathbb{P}_{\mathbb{P}^1 \times \mathbb{P}^1}(\mathcal{O} \oplus \mathcal{O}(-1,-1)^{\oplus 2}) \xrightarrow{\pi_i} \mathbb{P}^1_i$ is the strict transform of a hypersurface in the blow-up $\text{Bl}_{\ell} \mathbb{P}^3$ of $\mathbb{P}^3$ along a line $\ell$.  For $\pi_1$, a general fiber is the strict transform of a smooth cubic containing $\ell$.  For $\pi_2$, a general fiber is the strict transform of a smooth quadric meeting $\ell$ transversely.  On each fiber $\pi_1^{-1}(p)$ of $\pi_1$, the morphism $X_p \rightarrow Y_{1,p}$ is the contraction of the preimage of $\ell \subset \mathbb{P}^3$ in $X_p \subset \text{Bl}_{\ell} \mathbb{P}^3$.  On each fiber $\pi_2^{-1}(p)$ of $\pi_2$, the morphism $X_p \rightarrow Y_{2,p}$ is the restriction of contraction $\text{Bl}_{\ell} \mathbb{P}^3 \rightarrow \mathbb{P}^3$.

\textbf{Anticanonical lines on $X$:} For numeric reasons, the only anticanonical lines on $X$ are of class $f$, $e_i$, or $l = \frac{1}{2}(f + e_1)$.  We describe these lines in terms of the Mori structure on $X$.  

Each fiber $X_p$ of $\pi_1$ contains a unique curve of class $e_1$.  This curve is the exceptional locus of $X_p \rightarrow Y_{1,p}$.  For general $p \in \mathbb{P}^1_1$, $X_p \rightarrow Y_{1,p}$ is the contraction of a $(-1)$ curve on a smooth del Pezzo surface of degree 3.  There are 10 $(-1)$ curves in $X$ which meet the exceptional locus of $X_p \rightarrow Y_{1,p}$ transversely.  These 10 lines are the irreducible components of the intersection of $X_p$ with the preimage $\pi^{-1}(\Delta_\pi)$ of the discriminant locus of $\pi$, and thus have class $f \in N_1(X)$.  Observe that in $Y_{1,p}$, the pushforward of these curves are the 10 distinct anticanonical conics in $Y_{1,p}$.  The 16 other lines in $X_p$ each have class $l \in N_1(X)$, and are in bijection with the 16 lines in $Y_{1,p}$.  We note that other fibers of $X_p \rightarrow p \times \mathbb{P}^1_2$ are the strict transforms of geometrically rational members of $|-K_{Y_{1,p}}|$ which are singular along the image of the exceptional locus of $X_p \rightarrow Y_{1,p}$.

For general $p \in \mathbb{P}^1_2$, the fiber $X_p$ of $\pi_2$ contains two curves of class $e_2$.  The map $X_p \rightarrow Y_{2,p} \cong \mathbb{P}^1 \times \mathbb{P}^1$ contracting these curves is the blow-up of two general points in $Y_{2,p}$.  The strict transforms of the four lines in $Y_{2,p}$ which meet the exceptional locus of $X_p \rightarrow Y_{2,p}$ are the irreducible components of $\pi^{-1}(\Delta_\pi) \cap X_p$; thus, each has class $f \in N_1(X)$.  We note that other fibers of $X_p \rightarrow \mathbb{P}^1_1 \times p$ are strict transforms of conics in $Y_{2,p} \cong \mathbb{P}^1 \times \mathbb{P}^1$ through the two blown-up points.

\begin{lem}
Let $D$ be the divisor swept out by all curves of class $l$.  $D$ has class $8H_1 + 4H_2 + 8E$.  For $X$ general in moduli, $D$ is irreducible and intersects fibers $X_p$ of $\pi_2$ along preimages of curves of class $(8,8)$ in $Y_{2,p} \cong \mathbb{P}^1 \times \mathbb{P}^1$ that are disjoint from the exceptional locus of $X_p \rightarrow Y_{2,p}$.  
\end{lem}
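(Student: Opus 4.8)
The plan is to pin down the class of $D$ up to a positive multiple by exploiting that $D$ is disjoint from $E$, then to fix that multiple and read off the fiberwise description by restricting $D$ to a general fiber of $\pi_1$ (a cubic surface) and to a general fiber of $\pi_2$ (a degree $6$ del Pezzo surface). First I would check that no curve of class $l$ lies on $E$: a curve on $E\cong\mathbb{P}^1\times\mathbb{P}^1$ has class $ae_1+be_2$, and pairing with $H_1$ gives $b=0$ while pairing with $E$ gives $a=0$. Since $E$ is irreducible and $E\cdot l=0$, every curve of class $l$ is therefore disjoint from $E$, so $D\cap E=\emptyset$; in particular $D\cdot e_1=D\cdot e_2=0$ because $e_1,e_2\subset E$. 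Writing $D=aH_1+bH_2+cE$, the intersection table turns these into $2b-c=0$ and $a-c=0$, whence $D=b(2H_1+H_2+2E)$ for some integer $b>0$, and correspondingly $D\cdot f=2b$.

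To determine $b$ I would restrict to a general fiber $X_p$ of $\pi_1$, a smooth cubic surface, written as $\mathbb{P}^2$ blown up at $p_1,\dots,p_6$ with the unique $(-1)$-curve $C$ of class $e_1$ equal to $E_6$. By the description of the anticanonical lines recalled above, $D\cap X_p$ is set-theoretically the union of the $16$ lines of class $l$, i.e. the lines disjoint from $C$, namely $E_1,\dots,E_5$, the $F_{ij}$ with $i<j\le 5$, and $G_6$; since $X_p$ and $D$ are both general the scheme $D|_{X_p}$ is reduced, so it is the sum of these $16$ classes, which equals $12H-4(E_1+\dots+E_5)$ (equivalently: each of the ten lines of class $f$ meets exactly eight of them). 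On the other hand adjunction on $X_p$ gives $E|_{X_p}=[C]=E_6$ (multiplicity one, from $E\cdot e_1=-1$) and $H_2|_{X_p}=-K_{X_p}-[C]=3H-(E_1+\dots+E_5)-2E_6$, so $(2H_1+H_2+2E)|_{X_p}=3H-(E_1+\dots+E_5)$. Comparing coefficients forces $b=4$, hence $D=8H_1+4H_2+8E$. Irreducibility of $D$ is then immediate from irreducibility of $\overline{M}_{0,0}(X,l)$, which holds for general $X$ because $|-K_X|$ is basepoint free (Corollary \ref{main result 4}; alternatively, apply Theorem \ref{monodromy del Pezzo main result} to $\pi_1$, whose monodromy permutes transitively the $16$ lines of class $l$ in a general fiber).

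Finally I would restrict to a general fiber $X_p$ of $\pi_2$, identified via $X_p\to Y_{2,p}$ with $\mathrm{Bl}_{q_1,q_2}(\mathbb{P}^1\times\mathbb{P}^1)$; write $h_1,h_2$ for the pulled-back ruling classes and $\epsilon_1,\epsilon_2$ for the exceptional curves. Since $H_2\cdot e_2=0$ and $H_2\cdot e_1=2$, the restriction $\pi_2|_E$ contracts the $e_2$-ruling of $E$ and is generically $2$-to-$1$ on the $e_1$-ruling, so $E\cap X_p$ is the union of two curves of class $e_2$, which are exactly the exceptional curves $\epsilon_1,\epsilon_2$; because $D\cap E=\emptyset$, the curve $D_p:=D\cap X_p$ avoids $\epsilon_1\cup\epsilon_2$. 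Now $H_2|_{X_p}=0$, $E|_{X_p}=\epsilon_1+\epsilon_2$ (multiplicity one, from $E\cdot e_2=-1$), and adjunction gives $-K_{X_p}=2H_1|_{X_p}+\epsilon_1+\epsilon_2=2h_1+2h_2-\epsilon_1-\epsilon_2$, hence $H_1|_{X_p}=h_1+h_2-\epsilon_1-\epsilon_2$. Therefore $D|_{X_p}=8H_1|_{X_p}+8E|_{X_p}=8h_1+8h_2$, and since $D_p$ is disjoint from $\epsilon_1\cup\epsilon_2$ it is the strict transform (equivalently the full preimage) of a curve of bidegree $(8,8)$ on $Y_{2,p}\cong\mathbb{P}^1\times\mathbb{P}^1$ disjoint from the two blown-up points, as claimed.

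The main obstacle is not a single hard step but the bookkeeping: keeping straight the translation between classes in $N_1(X)$ and classes in the Picard groups of the two different types of fibers, and justifying the three multiplicity-one statements ($D|_{X_p}$ reduced, $E|_{X_p}=[C]$ on the $\pi_1$-fiber, $E|_{X_p}=\epsilon_1+\epsilon_2$ on the $\pi_2$-fiber). The only genuinely non-formal ingredient is the incidence combinatorics of the $27$ lines on a cubic surface that produces the coefficient $b=4$.
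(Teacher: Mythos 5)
Your proof is correct and follows the same overall strategy as the paper (pin down the class of $D$ from its intersection numbers with $e_1$, $e_2$, $f$, then get irreducibility from the monodromy of $\pi_1$ and read off the $\pi_2$-fiber description by restriction), but one key step is handled quite differently. The paper obtains $D\cdot e_2=0$ by contradiction: if $D\cdot e_2>0$ then $D\cap E$ would be a curve of class $(D\cdot e_2)e_1$, forcing the preimage of $E$ in $\overline{M}_{0,1}(X,l)$ to dominate a component of $\overline{M}_{0,0}(X,l)$, which is ruled out by the structure of the fibers of $\pi_1$. Your observation that no curve of class $l$ can lie on $E$ (by pairing with $H_1$ and $E$) and that $l\cdot E=0$ then forces every such curve to be \emph{disjoint} from $E$ gives $D\cap E=\emptyset$, hence $D\cdot e_1=D\cdot e_2=0$ in one stroke; this is cleaner than the paper's argument and also directly yields the last assertion of the lemma (that $D\cap X_p$ avoids the exceptional locus of $X_p\to Y_{2,p}$), which the paper defers to the subsequent monodromy lemma. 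For the remaining coefficient, the paper computes $D\cdot f=8$ by noting a general conic of class $2f$ meets the $16$ lines of class $l$ in its $\pi_1$-fiber once each, while you sum the $16$ line classes in $\mathrm{Bl}_6\mathbb{P}^2$ and compare with $(2H_1+H_2+2E)|_{X_p}$; these are equivalent computations, both resting on the same transversality/reducedness of $D|_{X_p}$ that you flag. The only caveat is your citation of Corollary \ref{main result 4}: it does apply here (the pair $(X,l)$ is exactly the third case of Lemma \ref{nonextreme lines}, handled there via the weak Fano contraction and Theorem \ref{monodromy del Pezzo main result}, so there is no circularity), but your alternative route through the $W(D_5)$-monodromy of $\pi_1$ acting transitively on the $16$ lines is closer to what the paper actually does.
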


\begin{proof}
From our description above it is clear that $D.e_1 = 0$ and $D.f = 8$, as a general curve of class $2f$ meets 16 curves of class $l$.  To see that $D.e_2 =0$, note that $D$ is the reduced image of $\overline{M}_{0,1}(X,l)$ under the evaluation map.  Suppose $D. e_2 > 0$.  Then $D \cap E$ must be a curve of class $(D.e_2) e_1$.  This yields a contradiction, as the preimage of $E$ in $\overline{M}_{0,1}(X,l)$ would dominate some component of $\overline{M}_{0,0}(X,l)$ under projection, while each fiber of $\pi_1$ has at most one $A_1$-singularity.  %
The following lemma proves our remaining claims.
\end{proof}

\begin{lem}\label{3.2monodromy}
Assume $X$ is general in moduli.  Then the discriminant locus $\Delta_\pi$ of $\pi$ is a smooth member of $|\mathcal{O}_{\mathbb{P}^1 \times \mathbb{P}^1}(2,5)|$.  The monodromy group of $\pi_1 : X\rightarrow \mathbb{P}^1_1$ acts trivially on the kernel of $N_1(X_p) \rightarrow N_1(Y_{1,p})$ and as the full Weyl group $W(D_5)$ on $N^1(Y_{1,p})$.  The monodromy group of $\pi_2 : X \rightarrow \mathbb{P}^1_2$ acts as $S_2 \times S_2$ on $N_1(X_p)$, interchanging the exceptional curves of $X_p \rightarrow Y_{2,p}$ and the rulings of $Y_{2,p} \cong \mathbb{P}^1 \times \mathbb{P}^1$.
\end{lem}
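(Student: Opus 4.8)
The plan is to analyze the three monodromy claims in Lemma \ref{3.2monodromy} separately, each reducing to a statement about a general member of the relevant deformation family. For the smoothness of $\Delta_\pi$: the discriminant locus of the conic bundle $\pi : X \to \mathbb{P}^1 \times \mathbb{P}^1$ is cut out by the vanishing of a determinant built from the quadratic form defining $X$ in $\mathbb{P}_{\mathbb{P}^1 \times \mathbb{P}^1}(\mathcal{O} \oplus \mathcal{O}(-1,-1)^{\oplus 2})$, as in the proof of Lemma \ref{lines in C1 fibrations}. First I would set up the local computation exactly as there: writing the symmetric matrix of the conic in local coordinates on a trivializing open set, the discriminant is a cubic in the coefficients, and the universal incidence divisor $\mathcal{D} \subset (\mathbb{P}^1 \times \mathbb{P}^1) \times \mathbb{P}|V|$ is nonsingular in codimension one for the same reason the affine cone over $D_p$ given by that explicit cubic in $\mathbb{P}^5$ is; hence for general $X$ the fiber $\Delta_\pi$ is smooth, and it is connected (so irreducible) because $\rho(X/(\mathbb{P}^1\times\mathbb{P}^1)) = 1$, using \cite[Proposition~4.8]{mori1985classification}. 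The class $\mathcal{O}(2,5)$ is forced by $-K_X = E + 2H_1 + H_2$ and the standard discriminant formula for conic bundles.

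Next I would treat $\pi_1 : X \to \mathbb{P}^1_1$. Over the open locus $U_1$ where $\pi_1$ is smooth, a general fiber $X_p$ is the strict transform of a smooth cubic surface through the line $\ell \subset \mathbb{P}^3$, and $X_p \to Y_{1,p}$ contracts the $(-1)$-curve $e_1$ over $\ell$; so $Y_{1,p}$ is a smooth cubic surface with a marked line, i.e. a degree-$3$ del Pezzo together with a distinguished $(-1)$-class, and $N_1(X_p) \cong N_1(Y_{1,p}) \oplus \mathbb{Z}e_1$ with $e_1$ pushing forward to $0$. The monodromy therefore fixes $e_1$ (acting trivially on the kernel of $N_1(X_p) \to N_1(Y_{1,p})$), and its action on $N^1(Y_{1,p})$ is the monodromy of the family of cubic surfaces containing $\ell$, cut out inside $\mathrm{Bl}_\ell \mathbb{P}^3$ by a pencil of the form $|-K_{\mathrm{Bl}_\ell \mathbb{P}^3}|$ restricted appropriately. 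I would invoke the results quoted in the proof of Theorem \ref{monodromy del Pezzo main result} for $D1$-fibrations with degree-$3$ fibers — the works of \cite{kollarLefschetz}, \cite{reid1972thesis}, \cite{beauville_monodromy}, and Picard--Lefschetz theory \cite{voisin_2003} — which give the full Weyl group $W(E_6)$ for cubic surfaces; restricting to the stabilizer of the marked line class $e_1$ and observing that this stabilizer is exactly $W(D_5)$ (as in Lemma \ref{testa theorem} and the $W(D_5)$ computation in Proposition \ref{conic monodromy}), and checking that generality of $X$ makes the relevant pencil a Lefschetz pencil, gives the claim.

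For $\pi_2 : X \to \mathbb{P}^1_2$: a general fiber $X_p$ is the strict transform of a smooth quadric $Q \subset \mathbb{P}^3$ meeting $\ell$ transversely in two points, and $X_p \to Y_{2,p} \cong \mathbb{P}^1 \times \mathbb{P}^1$ is the restriction of $\mathrm{Bl}_\ell \mathbb{P}^3 \to \mathbb{P}^3$, hence the blow-up of $Q$ at the two points $Q \cap \ell$. So $N_1(X_p)$ has rank $4$ with basis the two rulings of $Y_{2,p}$ and the two exceptional classes $e_2$, $e_2'$; the monodromy acts on the unordered pair of blown-up points (giving one $S_2$ factor, which also swaps $e_2, e_2'$) and, independently, on the unordered pair of rulings of $Q$ (the second $S_2$). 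I would show both transpositions are realized: the rulings are swapped by a loop in the moduli of quadrics in $\mathbb{P}^3$ (the discriminant for ``two rulings collide'' is the locus of singular quadrics, which the pencil meets), and the two intersection points $Q \cap \ell$ are swapped along a loop around the locus where $Q$ is tangent to $\ell$ — both present for general $X$ by the genericity of the pencil, using Theorem \ref{Monodromy} for the second. The main obstacle is the $\pi_1$ step: I must carefully identify the monodromy of a pencil of cubics \emph{through a fixed line} with (the stabilizer of a marked line in) $W(E_6)$, which requires confirming that the relevant pencil on $\mathrm{Bl}_\ell\mathbb{P}^3$ is Lefschetz with vanishing cycles generating the primitive cohomology, rather than quoting the plain cubic-surface case; the $\Delta_\pi$-smoothness and the $\pi_2$ computations are comparatively routine given the tools already in the paper.
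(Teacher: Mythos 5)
Your treatment of the smoothness of $\Delta_\pi$ matches the paper's: both reduce to the determinantal computation for a general section of the globally generated bundle $\Sym^2(\mathcal{O}(1,1)\oplus\mathcal{O}^{\oplus 2})\otimes\mathcal{O}(0,1)$, as in Lemma \ref{lines in C1 fibrations}, and your derivation of the class $\mathcal{O}(2,5)$ is fine. For the two monodromy statements, however, you take a genuinely different route. The paper works entirely with the conic bundle $\pi:X\to\mathbb{P}^1\times\mathbb{P}^1$: a general fiber of $\pi_1$ meets $\Delta_\pi$ in $5$ points (resp.\ $2$ points for $\pi_2$), Theorem \ref{Monodromy} applied to the simple degree-$5$ (resp.\ degree-$2$) cover $\Delta_\pi\to\mathbb{P}^1_i$ gives the full symmetric group on the reducible conic fibers, irreducibility of $\pi^{-1}(\Delta_\pi)$ forces the kernel of $G_{\mathrm{mon}}\to S_5$ inside $(\mathbb{Z}/2)^4\triangleleft W(D_5)$ to be nontrivial, and the fact that any nontrivial $S_5$-conjugacy class generates $(\mathbb{Z}/2)^4$ finishes $\pi_1$; for $\pi_2$ transitivity on the four lines over $\Delta_\pi$ already forces $S_2\times S_2$. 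Your $\pi_2$ argument via singular quadrics in the pencil and tangency of $Q_p$ with $\ell$ is a workable substitute (you should still say why the two double covers of $\mathbb{P}^1_2$ are independent, so that you get all of $S_2\times S_2$ and not a diagonal subgroup, but transitivity on the four lines settles this cheaply).

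The genuine gap is in your $\pi_1$ step, exactly where you flag it. The pencil of cubic surfaces you are varying is constrained to contain the fixed line $\ell$, so its monodromy is a priori only \emph{some} subgroup of the stabilizer $W(D_5)\subset W(E_6)$ of $[e_1]$; quoting the $W(E_6)$ result for general cubic surfaces and ``restricting to the stabilizer'' proves nothing, since the whole content of the lemma is that this subgroup is all of $W(D_5)$. Your fallback --- verifying that the pencil on $\mathrm{Bl}_\ell\mathbb{P}^3$ is Lefschetz with vanishing cycles spanning the rank-$5$ vanishing cohomology --- is not yet a proof either: you would still need conjugacy of the vanishing cycles (irreducibility of the discriminant of the family of cubics through a line) to apply the irreducibility theorem for the monodromy representation, and then the observation that a full-rank irreducible reflection subgroup of $W(D_5)$ generated by reflections in $(-2)$-classes must be all of $W(D_5)$; none of this is carried out. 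The conic-bundle argument of Theorem \ref{conic monodromy} is precisely the device the paper uses to close this gap without any Lefschetz-pencil analysis, and I would recommend routing your $\pi_1$ step through it (or through the citation to the second example of \cite{noheighttwo16} used in Theorem \ref{monodromy del Pezzo main result}) rather than trying to repair the cubic-surface argument.
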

\begin{proof}
A general threefold $X$ corresponds to a general section of $\text{Sym}^2(\mathcal{O}(1,1) \oplus \mathcal{O}^{\oplus 2}) \otimes \mathcal{O}(0,1)$.  This is a globally generated vector bundle whose 6 coordinates correspond to the coefficients of monomials in the homogeneous, quadratic-defining equation of $X$ in a given fiber.  The determinant of the corresponding symmetric bilinear form vanishes precisely where the fiber of $X$ is singular.  This characterizes $\Delta_\pi$, %
which by \cite[Section~11]{noheighttwo16} is a simple elementary cover of both copies of $\mathbb{P}^1$ under projection.

General fibers of $\pi_1$ contain 5 reducible fibers of $\pi$.  By our choice of $\Delta_\pi$, the monodromy group $G_{mon}$ of $\pi_1 : X \rightarrow \mathbb{P}^1_1$ acts as the full symmetric group on these fibers.  By \cite{mori1983classification}, $\pi^{-1}(\Delta_\pi)$ is irreducible.  The Hilbert scheme of curves of class $f$ is therefore a degree 2 etale cover of $\Delta_\pi$, which shows $G_{mon}$ is a nontrivial extension of $S_5$.  For a given general fiber $X_p$ of $\pi_1$ we previously identified its intersection with $\pi^{-1}(\Delta_\pi)$ as curves which map to 10 distinct anticanonical conics in $Y_{1,p}$.  As $G_{mon}$ acts as $S_5$ on pairs of these conics summing to $-K_{Y_{1,p}}$, the restriction of the natural map $W(D_5) \rightarrow S_5$ to $G_{mon} \subset W(D_5)$ is surjective.  Since $W(D_5)$ is a semidirect product with abelian kernel
$$1 \rightarrow (\mathbb{Z}/2)^4 \rightarrow W(D_5) \rightarrow S_5 \rightarrow 1$$
wherein the conjugacy class of any nontrivial element in $(\mathbb{Z}/2)^4$ generates the entire subgroup, $G_{mon}$ must be the full Weyl group.

Similarly, general fibers of $\pi_2$ contain 2 reducible fibers of $\pi$.  By our choice of $\Delta_\pi$, the monodromy group $G_{mon}$ of $\pi_2 : X \rightarrow \mathbb{P}^1_2$ sends one of these four irreducible components to any other.  By our previous description of $Y_{2,p}$ this proves our claim.
\end{proof}

\begin{rem}
There is no core of free curves on $X$ because $\overline{\free}(X, 2f + e_1)$ has at least two components.  Thus, we must adapt our approach slightly.  Below, we describe how to prove Theorem \ref{3.2thm} when $X$ is general in moduli.  The proof for arbitrary $X$ then follows from Proposition \ref{low degree curves degeneration} and Corollary \ref{connected fibers fixall}.
\end{rem}

\begin{lem}
Assume $X$ is general in moduli.  Every component of $\overline{\free}(X)$ contains free chains with components of class in $\mathscr{C}_X$, where
\begin{align*}
   \mathscr{C}_X = &\{ 2f, \ f+e_1, \ f+e_2, \ f+l, \ 2f + e_1, \ 2f + e_2, \ 2f + l, f + e_1 + l, f + e_2 + l \}%
\end{align*}
For each $\alpha \in \mathscr{C}_X\setminus\{2f + e_1\}$, $\overline{\free}(X,\alpha)$ is irreducible, while $\overline{\free}(X, 2f + e_1)$ has 2 irreducible components.  For $\alpha \in \{2f, f + e_2 + l\}$, general fibers of $\overline{\free}_1(X,\alpha)\rightarrow X$ are irreducible.  %
\end{lem}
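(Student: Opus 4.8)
The plan is to verify the three assertions — namely that the listed classes form a valid "core up to length-$\geq 2$ chains" replacement in the sense of Definition \ref{def core}(2), that each $\overline{\free}(X,\alpha)$ is irreducible (with the single exception of $2f+e_1$), and that the two evaluation maps have irreducible general fibers — by working fiber-by-fiber over the two del Pezzo fibration structures $\pi_1, \pi_2$ established above, together with the monodromy computation of Lemma \ref{3.2monodromy}. First I would enumerate the nef curve classes $\alpha$ with $2 \leq -K_X.\alpha \leq 5$ (the bound $5$ because $X$ carries an $E1$ divisor of the relevant type, so Theorem \ref{MovableBB} requires degree $\geq 6$ to break; strictly, since $E$ is $E1$ not $E5$, degree $\geq 5$ would suffice, but $2f+e_1$ has degree $5$ and must be treated by hand). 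Using the intersection table and the three generators $f, e_1, e_2$ of $\overline{NE}(X)$ together with the relation $2l = f + e_1$, this is a finite linear-algebra enumeration; any class not in $\mathscr{C}_X$ of degree in this range must be shown to break freely — e.g.\ $4f$, $3f+e_i$, $2f+2e_i$, $f+2e_i+\dots$ are handled either as multiple covers (never birational, or obviously breakable) or by the bend-and-break / del Pezzo surface arguments of Theorem \ref{del pezzo curves thm} applied to a general fiber of $\pi_1$ (degree $3$) or $\pi_2$ (degree $2$, a quadric blown up at two points, hence degree $6$ del Pezzo). The key point is that every free curve on $X$ is non-contracted by at least one $\pi_i$ or lies in a fiber, reducing irreducibility questions to the surface case plus monodromy.

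For irreducibility of $\overline{\free}(X,\alpha)$ with $\alpha \in \mathscr{C}_X$: the classes $2f$, $f+e_1$, $f+e_2$ lie in fibers of one of the $\pi_i$ and their irreducibility, plus the description of general fibers of the evaluation maps, follows from Theorem \ref{del pezzo curves thm} applied to the (degree $3$ or quadric-blown-up) surface fiber, combined with the monodromy action of $\pi_1$ (full $W(D_5)$ on $N^1(Y_{1,p})$, trivial on the small-contraction kernel) or $\pi_2$ ($S_2 \times S_2$) — for instance, the $16$ curves of class $l$ in a fixed fiber $X_p$ of $\pi_1$ correspond to the $16$ lines of $Y_{1,p}$, a single $W(D_5)$-orbit, so $\overline{\free}(X,f+l)$ (and the other classes containing an $l$) is irreducible and has irreducible general fiber over $X$ once we also vary $p$; similarly $2f = $ geometrically rational anticanonical curves in fibers of $\pi_2$, fibered over $\mathbb{P}^1$ with irreducible fibers. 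The classes $2f + e_1$, $2f + e_2$, $2f + l$, $f+e_1+l$, $f+e_2+l$ have degree $\geq 4$ and (where interior to $\overline{NE}(X)$) parameterize very free curves by Proposition \ref{very free curves} and Theorem \ref{reducible fibers: 4 author result} once one checks $-K_X.\alpha \geq 4$ and interiority; irreducibility then follows from Lemma \ref{Gluing}/Lemma \ref{gluing del Pezzo} after exhibiting a free chain decomposition into the already-understood degree-$2$ pieces, using that the del Pezzo fibration $\pi_2$ has degree-$6$ fibers so Lemma \ref{gluing del Pezzo} applies cleanly. The class $2f+e_1$ is the exceptional case: it lies in fibers of $\pi_1$, where it corresponds to a nef but non-very-free class on a cubic surface of the shape handled in Theorem \ref{del pezzo curves thm}(2)/(3) — a twice-an-exceptional-conic-type phenomenon — so $\overline{\free}(X,2f+e_1)$ splits as a birational component plus a multiple-cover component, two irreducible pieces; I would identify precisely which case of Theorem \ref{del pezzo curves thm} applies by computing the pairing of $2f+e_1$ with the contraction data of $X_p \to Y_{1,p}$.

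The statement "every component of $\overline{\free}(X)$ contains a free chain with components in $\mathscr{C}_X$" is then proved by the standard descending induction on $-K_X$-degree using Theorem \ref{MovableBB} (improved via Theorem \ref{improved MovableBB}: since $X$ has an $E5$ divisor? — it does not, but it has the relevant $E1$ divisor, so one uses Theorem \ref{MovableBB} for degree $\geq 6$ and checks degree $4,5$ directly), bottoming out at the finite list of low-degree classes enumerated in step one, each of which is shown to either be in $\mathscr{C}_X$ or break into shorter free chains with components in $\mathscr{C}_X$. The main obstacle I anticipate is the analysis of $\overline{\free}(X, 2f+e_1)$ and, relatedly, confirming that the two "separating" classes flagged later ($2f$ is listed as having \emph{irreducible} general fiber, as is $f+e_2+l$, so the genuinely separating classes are among $f+e_1$, $f+e_2$, $2f+e_2$, $f+e_1+l$, etc.) — i.e.\ correctly bookkeeping which evaluation maps $\overline{\free}_1(X,\alpha) \to X$ have reducible fibers, since $X$ carries two distinct del Pezzo fibrations plus a conic bundle and the fiber-counting must be done carefully in each surface. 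Verifying that the general fibers over $X$ are irreducible for exactly $2f$ and $f+e_2+l$ requires showing: for $2f$, there is a unique smooth fiber of $\pi_2 \circ (\text{something})$ through a general point (immediate, $2f$ is a conic-bundle-type fiber class essentially), and for $f+e_2+l$, degree $4$ and interior to $\overline{NE}(X)$ forces very freeness and Proposition \ref{very free curves} gives irreducible general fiber — here the subtlety is checking interiority of $f + e_2 + l$, which I would do by pairing against the extremal effective divisor $E$ and the (nef but extremal) divisors pulled back from $\mathbb{P}^1_1, \mathbb{P}^1_2$.
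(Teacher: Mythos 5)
Your overall architecture (enumerate nef classes of low anticanonical degree, treat classes contracted by $\pi_1$ or $\pi_2$ via Theorem \ref{del pezzo curves thm} applied to a general surface fiber together with the monodromy of Lemma \ref{3.2monodromy}, and use Movable Bend-and-Break to reduce higher degrees) matches the paper for those classes that actually lie in fibers of one of the two del Pezzo fibrations. However, there are concrete errors and a missing case. First, your degree bookkeeping is wrong: since $-K_X = E + 2H_1 + H_2$, each of $f$, $e_1$, $e_2$, $l$ has anticanonical degree $1$, so $2f+e_1$, $2f+e_2$, $2f+l$, $f+e_1+l$, $f+e_2+l$ all have degree $3$, not $5$ or ``$\geq 4$.'' This invalidates your appeal to Proposition \ref{very free curves} and Theorem \ref{reducible fibers: 4 author result} for the irreducibility of the degree-$3$ core classes (those results require $-K_X\cdot\alpha \geq 4$), and your claimed implication ``degree $\geq 4$ and interior $\Rightarrow$ irreducible general fiber'' for $f+e_2+l$ runs Proposition \ref{very free curves} in the wrong logical direction in any case (multiple-cover components over an interior conic are a counterexample to that direction). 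Your description of the second component of $\overline{\free}(X,2f+e_1)$ as a multiple-cover component is also incorrect: $2f+e_1$ is the pushforward of $-K_{X_p}$ from a cubic-surface fiber $X_p$ of $\pi_1$ and is not a multiple of any integral class; both components parameterize birational maps and are distinguished by the arithmetic genus of the image (nodal anticanonical cubics versus smooth rational cubics of class $2h-\epsilon_1-\epsilon_i-\epsilon_j$, i.e.\ strict transforms of quartics on $Y_{1,p}$ through the contracted point), exactly as organized by Lemma \ref{testa theorem} and the $W(D_5)$-monodromy.

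Second, and more seriously, the genuinely hard part of the lemma concerns the degree-$4$ nef classes $2f+e_1+e_2$ and $2f+e_2+l$, together with the degree-$3$ core class $f+e_2+l$: none of these is contracted by $\pi_1$ or $\pi_2$ (both $H_1$ and $H_2$ pair positively with each), so the fiberwise del Pezzo surface argument does not apply, and Movable Bend-and-Break alone does not produce a free chain with components in $\mathscr{C}_X$. The paper handles these by pushing forward along the conic bundle $\pi : X \to \mathbb{P}^1\times\mathbb{P}^1$, a dimension count forcing a reducible member in a one-parameter family through a general point and a general free curve, and a careful analysis of degenerations meeting the divisor $D$ swept out by the lines of class $l$ (including smoothness of the fiber of $\overline{\free}_1(X,f+e_2+l)\to X$ over general points of $D$). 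Your proposal does not identify these classes, let alone supply an argument for them, so the free-breakability claim for them and the irreducibility of $\overline{\free}(X,f+e_2+l)$ and of its general evaluation fibers are left unproved.
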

\begin{proof}
\textbf{Nef Curve Classes of Anticanonical Degree Between $2$ and $4$:}
Nef curve classes of the appropriate degree are either those in $\mathscr{C}_X$ or $3f, 4f, 3f + e_i, 3f + l, 2f + 2e_i, 2f + e_1 + e_2, 2f + e_i + l$.  We remark that there are no irreducible free curves of class $3f$.  The classes $4f$, $3f + e_i$, $3f + l$, $2f + 2e_i$, and $2f + e_1 + l$ are freely breakable by Mori's bend-and-break, as they are free quartic curves contracted by del Pezzo fibrations.  Similarly, the description of $\Delta_\pi$ proves $\overline{\free}_1(X, 2f)$ is irreducible with irreducible general fiber over $X$ by \cite{mori1985classification}.  Irreducibility of $\overline{\free}(X,\alpha)$ for $\alpha \in \mathscr{C}_X\setminus \{ 2f + e_1, f + e_2 + l\}$ follows from Lemma \ref{3.2monodromy}.  This also demonstrates $\overline{\free}(X, 2f + e_1)$ has two components, one of which corresponds to geometrically rational anticanonical curves in general fibers of $\pi_1 : X \rightarrow \mathbb{P}^1$; the other component generically parameterizes strict transforms of quartics in fibers of $Y_1 \rightarrow \mathbb{P}^1$ that meet the exceptional locus of $X\rightarrow Y_1$.  %
In the following paragraphs, we will prove irreducibility of $\overline{\free}(X, f  + e_2 + l)$, $\overline{\free}(X, 2f + e_1 + e_2)$, and $\overline{\free}(X, 2f + e_2 + l)$.

Let $\alpha \in \{ f  + e_2 + l, \ 2f  + e_2 + l\}$ and $M \subset \overline{\free}(X, \alpha)$ be a component. Via the contraction $\pi : X \rightarrow \mathbb{P}^1 \times \mathbb{P}^1$, we will show certain smooth points of $M$ parameterize stable maps with reducible domain, and use these to prove $\overline{\free}(X, \alpha)$ is irreducible.  Indeed, $\pi$ induces a map 
$$\pi_* : \overline{\free}(X, \alpha) \rightarrow \overline{\free}(\mathbb{P}^1 \times \mathbb{P}^1, \pi_*\alpha) \cong |\mathcal{O}(1,1)|$$
which is dominant when restricted to $M$.  When $\alpha = f + e_2 + l$, consider the locus $T \subset M$ parameterizing maps passing through a general point $q \in X$.  When $\alpha = 2f  + e_2 + l$, let $T$ be the locus of maps passing through $q$ and a general free curve disjoint from $E$.  By Theorem \ref{reducible fibers: 4 author result} and Proposition \ref{very free curves}, $\pi_* : T \rightarrow |\mathcal{O}(1,1)|$ finite.  Hence, there exists a stable map $g: C \rightarrow X$ parameterized by $T$ such that $\pi_* [g] \in |\mathcal{O}(0,1)| \times |\mathcal{O}(1,0)| \subset |\mathcal{O}(1,1)|$.  It follows from Proposition \ref{properties nonfree conics} and Corollary \ref{general curve and point} that $g : C \rightarrow X$ is an immersion and  $C = C_1 \cup C_2$ has two irreducible components $C_i$.  Furthermore, each $g|_{C_i}$ is general as a point in $\overline{M}_{0,0}(X, g_*C_i)$.  Thus $[g]$ lies in the smooth locus of $M$, and we may assume $g|_{C_1}$ is a free curve.

When $\alpha = f + e_2 + l$, $(g_* C_1, g_* C_2)$ is either  $(f + e_2, l)$ or $(e_2, f + l)$.  In both cases, $\overline{\free}(X, g_*C_1)$ and $\overline{M}_{0,0}(X, g_*C_2)$ are irreducible.  Therefore, by Remark \ref{big divisor remark} there is a unique component $G \subset \overline{\free}_1(X, g_*C_1) \times_X \overline{M}_{0,1}(X, g_*C_2)$ whose image under the gluing map contains $[g]$.  Note that the image of $G$ also contains chains of type $(e_2, f, l)$.  Since each curve of class $f$ meets $E$ transversely, Proposition \ref{kontsevich space H1} shows these chains are contained in the smooth locus of $M$, and generalize to chains of type $(f + e_2, l)$ and $(e_2, f + l)$.  This implies irreducibility of $\overline{\free}(X, f  + e_2 + l)$.  Moreover, Theorem \ref{reducible fibers: 4 author result} shows $f  + e_2 + l$ is nonseparating.

When $\alpha = 2f + e_2 + l$,  $(g_* C_1, g_* C_2) \in \{(2f + e_2, l), (f+e_2, f + l), (2f + l, e_2)\}$.  If $(g_* C_1, g_* C_2) = (2f+ l, e_2)$, then we may specialize $[g]$ to a chain of type $(f + l, f, e_2)$, and subsequently generalize to a free chain of type $(f + l, f+ e_2)$.  If $(g_* C_1, g_* C_2) = (2f + e_2, l)$, we note $g(C_2)$ is a section of $\pi_2$, and similarly deform to a free chain of type $(f+ e_2, f + l)$.  Lemma \ref{gluing del Pezzo} shows there is one main component of $\overline{\free}_1(X, f + e_2) \times_X \overline{\free}_1(X, f + l)$, proving irreducibility of $\overline{\free}(X, 2f + e_2 + l)$.

Finally, suppose $\alpha = 2f + e_1 + e_2$.  In this case, $\pi$ induces a generically finite map $\pi_* : M \rightarrow \overline{\free}(\mathbb{P}^1\times \mathbb{P}^1, \pi_*(e_1 + e_2)) \cong |\mathcal{O}(2,1)|$.  As before, we conclude that $M$ parameterizes a map $g: C \rightarrow X$ passing through a general point $q \in X$ and a general free curve disjoint from $E$, such that $\pi_*[g] \in |\mathcal{O}(1,1)| \times |\mathcal{O}(1,0)| \subset |\mathcal{O}(2,1)|$.  It follows that $M$ contains a map $g : C \rightarrow X$, $C = C_1 \cup C_2$, with $g|_{C_i}$ general in moduli for each $i$,\footnote{general in a component of $\overline{\free}_{1}(X, g_* C_1) \times_X \overline{M}_{0,1}(X, g_* C_2)$ that dominates irreducible components of both $\overline{\free}(X, g_*C_1)$ and $\overline{M}_{0,0}(X, g_* C_2)$ under projection.} 
$g_* C_1 = f + e_2 + l$, $g_* C_2 = l$, and $g( C_1 \cap C_2)$ general in $D \subset X$.  %
As we may assume $g(C_1)$ meets $D$ transversely, $[g]$ is a smooth point of $M$.  Note that $\overline{\free}_1(X, f+ e_2 + l) \rightarrow X$ is equidimensional away from a codimension 2 locus.  We will prove that it is smooth over an open subset of $D$.  This proves there is a unique component of $\overline{\free}_1(X, f+ e_2 + l) \times_X \overline{M}_{0,1}(X, l)$ satisfying our constraints.  Since it meets the smooth locus of any component of $\overline{\free}(X, 2f + e_1 + e_2)$, irreducibility of the entire space follows.

Consider the possible singularities of a fiber $F_p$ of $\overline{\free}_1(X, f+ e_2 + l) \rightarrow X$ over a general point $p \in D$.  Observe that any curve of class $f + e_2 + l$ meets $E$.  The locus of maps in $\overline{\free}(X, f+ e_2 + l) \setminus \free(X, f + e_2 +l)$ with irreducible domain sweeps out a proper subscheme $V_1 \subset X$.  Since $D. e_1 = D. e_2 = 0$, and $D \cap E = \emptyset$, $D \not\subset V_1$.  Hence for general $p\in D$, singular points of $F_p$ must lie on pointed maps with reducible image.  The locus of maps in $\overline{\free}(X, f+ e_2 + l) \setminus \free(X, f + e_2 +l)$ with reducible image parameterizes chains of type $(e_2, f, l)$, $(f + e_2, l)$, and $(f + l, e_2)$.  Chains of type $(e_2, f, l)$ only meet special points in $D$.  If a curve of class $f + e_2$ meets a general point in $D$, it is free and transverse to $D$.  Thus the chain $(f + e_2, l)$ would be a smooth point of the fiber $F_p$.  If a chain of type $(f + l, e_2)$ meets a general point in $D$, the component of class $f + l$ must be free, as we may assume it lies in a smooth fiber of $\pi_1$.  We conclude that for general $p \in D$, the fiber $F_p$ is smooth.  Since $f + e_2 + l$ is nonseparating, $F_p$ is connected, which proves irreduciblity of $\overline{\free}(X, 2f + e_1 + e_2)$.

\end{proof}

While the next lemma follows directly from arguments in Corollary \ref{connected fibers fixall}, we present a different, simpler proof here using the monodromy action of $\pi_1 : X \rightarrow \mathbb{P}^1$ when $X$ is general in moduli.

\begin{lem}\label{3.2 no core fix}
Let $N_1, N_2$ be the two distinct components of $ \overline{\free}_2(X, 2f + e_1)$.  Suppose $\alpha \in \Nef_1(X)_\mathbb{Z}$ satisfies $\alpha.H_1 > 0$ and $\alpha = (2f + e_1) + \sum \alpha_i$ with $\alpha_i \in \mathscr{C}_X$.  For any choice of components $M_i \subset \overline{\free}_2(X, \alpha_i)$, each component of $\overline{\free}_2(X, \alpha)$ containing a main component of $N_j \times_X \prod_X M_i$ also contains a main component of $N_{3-j} \times_X \prod_X M_i$.
\end{lem}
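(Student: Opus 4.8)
The plan is to deduce this from Theorem \ref{unique main component}(2) together with the gluing properties of main components recorded in Lemma \ref{Basic Properties Chains}. The first step is to locate a distinguished factor. Since $H_1.(2f+e_1)=0$ while $H_1$ pairs to zero with the extreme rays $f$ and $e_1$ of $\overline{NE}(X)$ and to $1$ with $e_2$, the hypothesis $\alpha.H_1>0$ forces at least one summand $\alpha_i$ — say $\alpha_1$ — to satisfy $\alpha_1.H_1>0$; inspecting the list of elements of $\mathscr{C}_X$, this means $\alpha_1\in\{f+e_2,\ 2f+e_2,\ f+e_2+l\}$. For each of these classes $\overline{\free}(X,\alpha_1)$ is irreducible, so $M_1$ is uniquely determined, the general member is a free curve birational onto its image (none of these classes is a multiple of a conic, and the interior ones give very free, hence embedded, general members by Propositions \ref{very free curves} and \ref{properties very free curves}), and $\alpha_1\neq 2f+e_1$, so this curve is genuinely one of the factors of the product.

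Next I would localize to the two–component subchain $(2f+e_1,\alpha_1)$. By Lemma \ref{Basic Properties Chains}(3) we may reorder so that the free chains parameterized by $N_j\times_X\prod_X M_i$ carry $N_j$ and $M_1$ adjacent, and by Lemma \ref{Basic Properties Chains}(1),(2) the component of $\overline{\free}_2(X,\alpha)$ containing that main component contains a main component of $Z\times_X M_2\times_X\cdots\times_X M_r$, where $Z\subseteq\overline{\free}_2(X,\beta)$ with $\beta:=(2f+e_1)+\alpha_1$ is the component through the sub-chain. Hence it suffices to prove that the main components of $N_1\times_X M_1$ and of $N_2\times_X M_1$ lie in a common component $Z$ of $\overline{\free}_2(X,\beta)$; the remaining factors $M_i$ play no role in this reduction, which accounts for the ``any choice'' clause (even when some $\alpha_i=2f+e_1$ and the corresponding $M_i$ is $N_1$ or $N_2$).

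For the substantive step one checks that $\beta$ is a positive combination of the extreme rays $f,e_1,e_2$, hence interior to $\overline{NE}(X)$, and that $-K_X.\beta\ge 6$, so by Proposition \ref{very free curves} every component of $\overline{\free}(X,\beta)$ generically parameterizes very free curves. For general $X$ the map $\pi_1:X\to\mathbb{P}^1$ is a del Pezzo fibration whose general fiber is a cubic surface, and by the description of the anticanonical curves of $X$, both $N_1$ and $N_2$ parameterize curves of the \emph{same} class $2f+e_1$ contracted by $\pi_1$ — $N_1$ the geometrically rational anticanonical cubics in fibers of $\pi_1$, $N_2$ the strict transforms of the quartics in fibers of $Y_1\to\mathbb{P}^1$ that meet the exceptional locus of $X\to Y_1$. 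Thus Theorem \ref{unique main component}(2) applies with $\pi=\pi_1$, with $M=N_1$, $M'=N_2$ (same class, contracted by $\pi_1$), and $N=M_1$ (distinct class, interior sum), passing between $\overline{\free}_1$ and $\overline{\free}_2$ via the bijection of main components of the pointed products; it yields that each main component of $N_1\times_X M_1$ and of $N_2\times_X M_1$ lies in one common component $Z$ of very free curves. Finally I would propagate this back to the full chain as in the proof of Proposition \ref{reducible fibers: main strategy}: since $Z$ parameterizes very free curves, Proposition \ref{very free curves} and Lemma \ref{Gluing} give a unique main component of $Z\times_X M_2\times_X\cdots\times_X M_r$ (still generically very free), so the components $W_1,W_2\subset\overline{\free}_2(X,\alpha)$ produced above each contain this one main component, whence $W_1=W_2$ by Lemma \ref{Basic Properties Chains}(1).

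The main obstacle is not a single computation but verifying the hypotheses of Theorem \ref{unique main component}(2) on the nose — that $N_1,N_2\subset\overline{\free}^{bir}_1(X)$ really do parameterize curves of the same class contracted by the degree-$\le 3$ del Pezzo fibration $\pi_1$, that $M_1\subset\overline{\free}^{bir}_1(X)$ (which needs that the general curve of class $\alpha_1$ is birational onto its image and not a multiple of a conic; the only multiple-cover class in $\mathscr{C}_X$ is $2f$, and double covers of lines of class $f$ are obstructed since $\mathcal N\cong\mathcal O\oplus\mathcal O(-1)$ pulls back to $\mathcal O\oplus\mathcal O(-2)$), and that $\beta$ is interior — and then carrying the conclusion from the two–term subchain back to a chain of arbitrary length using Lemma \ref{Basic Properties Chains} and the very-freeness of $Z$.
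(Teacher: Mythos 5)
Your argument is correct, and it reaches the conclusion by a more modular route than the paper's. You reduce, exactly as the paper does, to a distinguished factor $\alpha_1\in\{f+e_2,\,2f+e_2,\,f+e_2+l\}$ with $\alpha_1.H_1>0$, and you make explicit the coarsening/propagation step (replace the adjacent subchain $N_j\times_X M_1$ by the component $Z$ of $\overline{\free}_2(X,\beta)$ containing it, then use very-freeness of $Z$ plus Lemma \ref{Gluing} to get a unique main component of $Z\times_X M_2\times_X\cdots$), which the paper leaves implicit behind ``it suffices to show these are contained in the same component of free curves.'' Where you diverge is the substantive step: you cite Theorem \ref{unique main component}(2) as a black box, with $M=N_1$, $M'=N_2$ (both of class $2f+e_1$, contracted by the degree-$3$ fibration $\pi_1$) and $N=M_1$; its hypotheses are indeed satisfied here ($X$ general, $-K_X$ very ample for type $3.2$, the classes birational and with interior sum), and its proof is a general form of the same degeneration-plus-monodromy mechanism. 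The paper instead re-runs that mechanism concretely: it breaks a curve of class $2f+e_1$ into components of class $f+l$ and $l$ with node at a general point of the divisor $D$, attaches the $\alpha_1$-curve, checks smoothness of the resulting stable map with a contracted component via Proposition \ref{kontsevich space H1}, and moves the $f+l$ component around the $\alpha_1$-section to exchange $N_1$ and $N_2$ by monodromy. Your version buys brevity and makes clear that the two-component phenomenon is entirely governed by the general statement; the paper's version is self-contained and exhibits the explicit degeneration in the fibers of $\pi_1$. One trivial slip: $-K_X.\beta$ can equal $5$ (when $\alpha_1=f+e_2$), not $\ge 6$ as you wrote, but Proposition \ref{very free curves} only needs $-K_X.\beta\ge 4$ together with $\beta$ interior, so nothing is affected.
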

\begin{proof}
Let $N_1$ be the component of $ \overline{\free}_2(X, 2f + e_1)$ that generically parameterizes geometrically rational anticanonical curves in a fiber of $\pi_1$.  The other component, $N_2$, parameterizes embedded curves.  Under a description of a fiber $X_p$ of $\pi_1$ as the blow-up of $\mathbb{P}^2$, each such curve is monodromy equivalent to the strict transform of a conic meeting three blown-up points, one of which is contracted by $X_p \rightarrow Y_{1,p}$.

Let $M \subset \overline{\free}_2(X, \alpha)$ be a component that contains a main component of $N_j \times_X \prod_X M_i$.  By Lemma \ref{Basic Properties Chains}, we may assume $\alpha_1 . H_1 > 0$, so that $\alpha_1 \in \{ f+ e_2, 2f + e_2, f + e_2 + l\}$.  For each possibility, curves of class $\alpha_1$ are sections of $\pi_1$.  Proposition \ref{Gluing} and a monodromy argument proves there is only one main component of both $N_1 \times_X \overline{\free}(X, \alpha_1)$ and $N_2 \times_X \overline{\free}(X, \alpha_1)$.  It suffices to show these are contained in the same component of free curves.  We may break any curve of class $2f + e_1$ into a nodal curve with components of class $f+ l$ and $l$, while remaining in the smooth locus of $\overline{\free}(X,2f + e_1)$.  We may assume the node is a general point of $D \subset X$.  Fixing a general free curve of class $\alpha_1$ through this point, we obtain a degeneration of each free chain to a stable map with one contracted component.  By Lemma \ref{kontsevich space H1}, these are smooth points of the Kontsevich space.  Moving the component of class $f+l$ along the curve of class $\alpha_1$ proves our claim by monodromy.
\end{proof}

\begin{lem}
Relations in the monoid $\mathbb{N}\mathscr{C}_X$ are generated by:
\begin{enumerate}
    \item $2f + (f+e_1) = 2(f + l)$,
    \item $2f + (2f + e_1) = (f + l) + (2f + l)$,
    \item $2f + (f + e_2 + l) = (f + e_2) + (2f + l) = (f+l) + (2f + e_2)$,
    \item $2f + (f + e_1 + l) = (f + e_1) + (2f + l) = (f+l) + (2f + e_1)$,
    \item $2f +2(f+ l) = 2(2f + l)$,
    \item $2f + 2(f + e_2) = 2(2f + e_2)$,
    \item $2f + (f + l) + (f +e_2) = (2f + l) + (2f + e_2)$,
    \item $(2f + l) + (2f + e_1) = 3(f + l)$,
    \item $(2f + l) + (f + e_1 + l) = 2(2f + e_1) = 2(f + l) + (f + e_1)$,
    \item $(2f + l) + (f + e_2 + l) = (2f + e_1) + (2f + e_2) = (f + e_2) + 2(f + l)$,
    \item $(2f + e_2) + (f + e_1) = (2f + e_1) + (f + e_2) = (f + l) + (f + e_2 + l)$,
    \item $(2f + e_2) + (f + e_1 + l) = (f + l) + (f + e_1) + (f +e_2) = (2f +e_1) + (f + e_2 + l)$,
    \item $(2f + e_2) + (f + e_2 + l) = (f + l) + 2(f + e_2)$,
    \item $(f + l) + (f + e_1 + l) = (2f + e_1) + (f + e_1)$,
    \item $(f + e_2) + (f + e_1 + l) = (f + e_1) + (f + e_2 + l)$,
    \item $2(f + e_2) + (f + e_1) = 2(f + e_2 + l)$,
    \item $2(f + e_1 + l) = 3(f + e_1)$.
\end{enumerate}
For each relation $\sum \alpha_i = \sum \beta_j$, a main component of $\prod_X \overline{\free}_2(X,\alpha_i)$ lies in the same component of free curves as a main component of $\prod_X \overline{\free}_2(X, \beta_j)$.
\end{lem}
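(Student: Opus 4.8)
The proof follows the same two-part template used in the other Picard rank~$3$ cases. First I would verify that the seventeen displayed identities generate the monoid of relations among the nine generators of $\mathbb{N}\mathscr{C}_X$. This is a mechanical application of Lemma \ref{Relations}: order the core classes so that those of largest $H_1$-degree (the sections of $\pi_1$, namely $2f+e_2$, $f+e_2+l$, $f+e_2$, then $f+e_1+l$, $2f+e_1$, $2f+l$, $f+l$, $f+e_1$, and finally $2f$) come first, and take for the divisors $D_i$ appropriate effective/nef combinations of $H_1$, $H_2$, and $E$ that pair negatively with exactly the class being eliminated and nonnegatively with the later ones. One peels off $2f+e_2$, then $f+e_2+l$, and so on; since $N_1(X)$ is three-dimensional, once only three linearly independent generators remain the single remaining relation is among those listed. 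As in Lemmas \ref{3.1relations1} and \ref{4.3relations}, the only thing to check at each step is that every core class pairing positively with the chosen $D_i$ already appears, together with the generator being eliminated, on one side of a displayed relation; I will not grind through this bookkeeping here.

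Second, and the substance of the lemma, I would treat each relation by exhibiting a stable map from a nodal chain that lies in the smooth locus of $\overline{\mathcal{M}}_{0,0}(X)$ and degenerates and re-smooths to free chains realizing both sides, using Lemma \ref{Basic Properties Chains}(2)--(3) to reduce to a single substitution and to permute components freely. Three mechanisms cover most relations: (i) outright irreducibility of $\overline{\free}(X,\gamma)$ for the relevant sum $\gamma$ of anticanonical degree between $5$ and $8$, which is proved by the $\pi_*$ argument already used above for $\overline{\free}(X,2f+e_1+e_2)$ and $\overline{\free}(X,2f+e_2+l)$ and disposes of several relations at once; (ii) Mori bend-and-break inside the del Pezzo fibrations $Y_1\to\mathbb{P}^1$ and $Y_2\to\mathbb{P}^1$, applying Theorem \ref{del pezzo curves thm} to a general fiber realized as a blow-up of $\mathbb{P}^2$ (this handles the relations in which both sides are chains contracted by a single $\pi_i$); and (iii) smoothing an immersed chain of length three or four in two distinct ways, exactly as in the proof of Lemma \ref{3.1relations1} — degenerate a length-two free chain through a node placed on a general curve of a third class, use Proposition \ref{kontsevich space H1} to see the resulting chain with a contracted component is unobstructed, then re-smooth in the other order. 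When $2f$ (or $f+e_2+l$) is glued to a class whose evaluation map to $X$ has reducible general fibers, one must first invoke Lemma \ref{Gluing} and the transversality estimates reused from the preceding lemma to know the relevant fiber product has a unique main component before comparing the two sides.

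The hard part will be the relations that involve $2f+e_1$, since $\overline{\free}(X,2f+e_1)$ has two irreducible components (geometrically rational anticanonical curves in a fiber of $\pi_1$, and embedded quartics). For each such relation I would first observe that the substitution prescribed by the relation produces, on at least one side, a component of $\mathscr{C}_X$ of positive $H_1$-degree, i.e.\ a section of $\pi_1$; Lemma \ref{3.2 no core fix} then guarantees that the two components of $N_j\times_X\prod_X M_i$ fall into the \emph{same} component of free curves, so it is immaterial which component of $\overline{\free}_2(X,2f+e_1)$ one starts from. The remaining task is to connect the two sides of the relation inside that common component by one of the three smoothing mechanisms above. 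I expect the detailed verification of which side of each relation furnishes the needed section of $\pi_1$, together with the case analysis needed when both $2f+e_1$ and $2f$ appear simultaneously (so that both Lemma \ref{3.2 no core fix} and the unique-main-component check of Lemma \ref{Gluing} are in play), to be the genuinely delicate bookkeeping; everything else is a routine reiteration of arguments already made for the core and for the nearby cases $3.1$ and $3.14$.
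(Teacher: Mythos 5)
Your proposal follows essentially the same route as the paper: the generating set is extracted mechanically via Lemma \ref{Relations}, and each relation is verified either by identifying a general fiber of $\pi_1$ (or $\pi_2$) with a blow-up of $\mathbb{P}^2$ and producing both chain types inside one component there, or by smoothing a single immersed chain in two ways, with Lemma \ref{3.2 no core fix} absorbing the ambiguity coming from the two components of $\overline{\free}(X,2f+e_1)$. The only substantive difference is that you defer the seventeen explicit fiber-class and immersed-chain computations, which is where the paper's actual work lies, but the template and the supporting lemmas you invoke are the correct ones.
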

\begin{proof}
\textbf{Relations:} We apply Lemma \ref{Relations}.
\begin{itemize}
    \item $D = -E + 2H_1 + 2H_2$ satisfies $D.2f = -2$, $D.(2f + l) = D.(2f + e_2) = 0$, and $D. \alpha > 0$ for all other $\alpha \in \mathscr{C}_X$.  We find relations (1) through (7).
    \item $D = -E + H_1 + H_2$ pairs negatively with $2f + l$, trivially with $f + l$ and $2f + e_2$, and positively with all other $\alpha \in \mathscr{C}_X \setminus \{2f\}$.  We obtain relations (3),(4), and (8)-(10).
    \item $D = -E + H_2$ pairs negatively with $2f + e_2$, trivially with $f + e_2$ and $f +l$, and positively with all other $\alpha \in \mathscr{C}_X \setminus\{ 2f , 2f + l\}$.  We obtain relations (10)-(13).
    \item $D = -2E + H_2$ pairs negatively with $f + l$, trivially with $f + e_2$ and $2f + e_1$, and positively with each other $\alpha \in \mathscr{C}_X \setminus \{2f ,2f + l, 2f + e_2\}$.  We obtain relations (9), (11), (12), and (14).
    \item $D = -E -2H_1 + 2H_2$ pairs negatively with $f + e_2$, trivially with $2f + e_1$ and $f + e_2 + l$, and positively with each other $\alpha \in \mathscr{C}_X \setminus \{2f, 2f + l, 2f + e_2, f + l\}$.  We obtain relations (15) and (16).
    \item The remaining four classes in $\mathscr{C}_X$ span $N_1(X)$ and satisfy relation (17).
\end{itemize}

\textbf{Main Components:} We address each relation below.  For convenience, we let $X_p$ be a general fiber of $\pi_1 : X \rightarrow \mathbb{P}^1$, and write $N_1(X_p) \cong \mathbb{Z} h \oplus_{i = 1}^6 \mathbb{Z} \epsilon_i$, where $\epsilon_1$ is the $(-1)$ curve contracted by $X_p \rightarrow Y_{1,p}$, $h$ is the pullback of a hyperplane class under some contraction $X_p \rightarrow Y_{1,p} \rightarrow \mathbb{P}^2$, and $\epsilon_i$ are exceptional divisors of $Y_{1,p}\rightarrow \mathbb{P}^2$ for $i > 1$.  Note that under the inclusion $X_p \rightarrow X$, $\epsilon_1 \rightarrow e_1$, $\epsilon_i \rightarrow l$ for $i > 1$, and $h \rightarrow f + e_1 + l$.  Similarly, we let $X_q$ be a general fiber of $\pi_2 : X \rightarrow \mathbb{P}^1$ and write $N_1(X_q) \cong \oplus_{i=1}^2\mathbb{Z}h_i \oplus_{j = 1}^2 \mathbb{Z} \sigma_j$, where each $\sigma_j$ is a $(-1)$ curve contracted by $X_q \rightarrow Y_{2,q} \cong \mathbb{P}^1 \times \mathbb{P}^1$, and each $h_i$ is the pullback of a ruling of $Y_{2,q}$.  Note that under the inclusion $X_q \rightarrow X$, $\sigma_j \rightarrow e_2$ for each $j$, and $h_i \rightarrow f + e_2$ for each $i$.
\begin{enumerate}
    \item A component of $\overline{\free}(X, 3f + e_1)$ contains curves in $X_p$ of class $4h - 2\epsilon_1 -2\epsilon_2 - \sum_{i \geq 3}^6 \epsilon_i$.  This component contains free chains of type $(2f, f + e_1)$ and of type $(f + l, f + l)$.
    \item A component of $\overline{\free}(X, 4f + e_1)$ contains curves in $X_p$ of class $5h - 3\epsilon_1 -2\epsilon_2 -2\epsilon_3 - \epsilon_4 - \epsilon_5 - \epsilon_6$.  This component contains free chains of type $(2f, 2f + e_1)$ and $(f + l, 2f + l)$.
    \item A component of $\overline{\free}(X, 3f + e_2 + l)$ contains immersed chains of type $(f, e_2, f, f, l)$.  There is a one parameter family of such chains, and a generic member must be unobstructed.  Therefore, there is a component of $\overline{\free}(X, 3f + e_2 + l)$ containing free chains of type $(2f + e_2, f + l)$ and $(2f + l, f + e_2)$.  Another component of $\overline{\free}(X, 3f + e_2 + l)$ contains immersed chains of type $(f,f,e_2, f, l)$.  As before, this component contains free chains of type $(2f + e_2, f + l)$ and of type $(2f, f + e_2 +l)$.
    \item A component of $\overline{\free}(X, 3f + e_1 + l)$ contains curves in $X_p$ of class $4h - 2\epsilon_1 - \sum_{i \geq 2}^6 \epsilon_i$.  This component contains free chains of type $(2f, f + e_1 + l)$, $(f + e_1, 2f + l)$, and $(f + l, 2f + e_1)$.
    \item A component of $\overline{\free}(X, 5f + e_1)$ contains curves in $X_p$ of class $6h -4 \epsilon_1 - 2\epsilon_2 -2\epsilon_3 -2\epsilon_4 - \epsilon_5 -\epsilon_6$.  This component contains free chains of type $(2f, f + l, f + l)$ and of type $(2f + l, 2f + l)$.  
    \item A component of $\overline{\free}(X, 4f + 2e_2)$ contains curves in a general fiber $X_q$ of $\pi_2$ of class $2h_1 + 2h_2 - \sigma_1 - \sigma_2$.  This component contains free chains of type $(2f , f + e_2, f + e_2)$ and of type $(2f + e_2, 2f + e_2)$.
    \item $\overline{\free}(X, 4f + e_2 + l)$ contains smooth points that parameterize immersed chains of type $(f + e_2, f, f, f + l)$.  Components containing these chains contain free chains of type $(f + e_2, 2f, f + l)$ and $(2f + e_2, 2f + l)$.
    \item A component of $\overline{\free}(X, 4f + e_1 + l)$ contains curves in $X_p$ of class $5h - 3\epsilon_1 - 2\epsilon_2 - \sum_{i \geq 3}^6 \epsilon_i$.  This component contains free chains of type $(2f + l, 2f + e_1)$ and $(f + l, f + l, f + l)$.  
    \item A component of $\overline{\free}(X, 4f + 2e_1)$ contains curves in $X_p$ of class $4h -2\epsilon_1 - \sum_{i \geq 2}^5 \epsilon_i$.  This component contains free chains of type $(2f + l, f + e_1 + l)$, $(2f + e_1, 2f + e_1)$, and $(f + l, f + e_1, f + l)$.
    \item $\overline{\free}(X, 4f + e_1 + e_2)$ contains smooth points that parameterize immersed chains of type $(f + e_2, f, l, f + l)$ and of type $(f + e_2, l, f, f + l)$.  Components containing the first type of chain contain free chains of type $(2f  +e_2, 2f + e_1)$ and $(f + e_2, f + l, f +l)$, while components containing the second type of chain contain free chains of type $(2f + l, f + e_2 + l)$ and $(f + e_2,f + l, f + l)$.
    \item $\overline{\free}(X, 3f + e_1 + e_2)$ contains smooth points that parameterize immersed chains of type $(f + e_2, f, f + e_1)$ and $(f + l,  l, f +e_2)$.  Components containing the first type of chain contain free chains of type $(2f + e_2, f +e_1)$ and $(f + e_2, 2f + e_1)$, while components containing chains of the second type contain free chains of type $(2f + e_1, f  + e_2)$ and $(f + l, f + e_2 + l)$.
    \item $\overline{\free}(X, 3f + e_1 + e_2 + l)$ contains smooth points parameterizing immersed chains of type $(l, f , e_1, f , f + e_2)$ and immersed chains of type $(f, l, l, l, f + e_2)$.  Components containing the first type of chain contain free chains of type $(f + l, f + e_1, f + e_2)$ and $(f + e_1 + l, 2f + e_2)$.  The second type of chain smooths to free chains of type $(f + l, f + e_1, f + e_2)$ and $(2f + e_1, f + e_2 + l)$.
    \item There are immersed chains of type $(f + e_2, f, e_2, f + l)$ parameterized by smooth points of $\overline{\free}(X, 3f + 2e_2 + l)$.  Components containing these chains contain free chains of type $(2f + e_2, f + e_2 + l)$ and $(f + e_2, f + e_2, f + l)$.
    \item A component of $\overline{\free}(X, 3f + 2e_1)$ contains curves in $X_p$ of class $4h - \epsilon_2 - \sum_{i = 1}^6 \epsilon_i$.  This component contains free chains of class $(2f + e_1, f + e_1)$ and of class $(f + l, f + e_1 + l)$.
    \item There are immersed chains of type $(f + e_2, l, f + e_1)$ parameterized by smooth points of $\overline{\free}(X, 2f + e_1 + e_2 + l)$.  Components containing these chains contain free chains of type $(f + e_2, f + e_1 + l)$ and $(f + e_2 + l, f + e_1)$.
    \item There are immersed chains of type $(e_2, f, l, l, f, e_2)$ contained in the smooth locus of $\overline{\free}(X, 3f + e_1 + 2e_2)$.  The component containing these chains contains free chains of type $(f + e_2, f + e_1, f + e_2)$ and $(f + e_2 + l, f + e_2 + l)$.
    \item A component of $\overline{\free}(X, 3f + 3e_1)$ contains curves in $X_p$ of class $3h - \epsilon_2 - \epsilon_3 - \epsilon_4$.  This component contains free chains of type $(f + e_1 + l, f+ e_1 + l)$ and of type $(f + e_1, f + e_1, f + e_1)$.
\end{enumerate}
\end{proof}

\begin{lem}
For each $\alpha \in (f + e_2 + l) + \Nef_1(X)_\mathbb{Z}$, $\overline{\free}(X,\alpha)$ is nonempty.
\end{lem}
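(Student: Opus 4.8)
The plan is to verify that $\overline{\free}(X,\alpha)$ is nonempty for every $\alpha$ in the shifted cone $(f + e_2 + l) + \Nef_1(X)_{\mathbb{Z}}$ by the same Gordan's Lemma strategy used in the preceding Picard rank $\geq 4$ cases, adapted to account for the absence of a genuine core of free curves. First I would invoke Theorem \ref{Representability of Free Curves} to know that the extreme rays of $\Nef_1(X) = \overline{NE}(X)$ are generated by classes of free rational curves of anticanonical degree at most $5$; here the extreme rays are $f$, $e_1$, $e_2$, together with the nef anticanonical line $l = \frac{1}{2}(f + e_1)$, which is represented by a free curve (this requires care since $l$ is not interior, but it is covered by Theorem \ref{Representability of Free Curves} and the explicit description of lines of class $l$ above). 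By Lemma \ref{Gordan's Lemma} applied to the cone $\Nef_1(X)$ with these generators, the monoid of integer points is generated by the finite set $S = N_1(X)_{\mathbb{Z}} \cap K$, where $K$ is the box spanned by the generators. So it suffices to check that each $\alpha \in (f + e_2 + l) + S$ — equivalently, each $\alpha$ of the form $(f + e_2 + l) + \sum a_i \gamma_i$ with $\gamma_i$ a generator and $0 \le a_i < 1$ — is represented by a free curve.

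Next I would expand such an $\alpha$ in the basis and use the effective-divisor inequalities $\alpha . H_1 \geq 0$, $\alpha . H_2 \geq 0$, $\alpha . E \geq 0$ (and the nef condition $\alpha . (2f+e_1) \ge 0$ coming from $l$, i.e. from the $C1$-conic fibration constraint) to bound the coefficients. Writing $\alpha = m_f f + m_1 e_1 + m_2 e_2$ with the bounds forced by membership in $(f + e_2 + l) + K$, the list of candidate classes is short and each will decompose as a sum of elements of $\mathscr{C}_X$ plus possibly one more free line class; the anticanonical degree of $\alpha$ being at least that of $f + e_2 + l$ (which is $4$) guarantees the degree is large enough to avoid the odd-multiples-of-a-$C1$-line obstruction from Theorem \ref{Main Result}(1). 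For every such $\alpha$, I would exhibit a free chain with components in $\mathscr{C}_X$: since each component of $\overline{\free}(X,\gamma)$ for $\gamma \in \mathscr{C}_X$ is nonempty (as established in the lemma identifying $\mathscr{C}_X$), and a chain of free curves glued at a general point lies in a nonempty main component (by Lemma \ref{Basic Properties Chains} and Lemma \ref{Gluing}), one obtains a point of $\overline{\free}(X,\alpha)$.

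The one subtlety, and the main obstacle, is that $X$ has no core of free curves: $\overline{\free}(X, 2f + e_1)$ is reducible. This matters only in that when $2f + e_1$ appears as a summand we must choose which component to start the chain from, but for the nonemptiness statement this is harmless — any choice of component gives a nonempty main component by Lemma \ref{3.2 no core fix}. So the argument reduces to a bookkeeping check: enumerate the finitely many $\alpha \in (f + e_2 + l) + S$, and for each write an explicit decomposition into a nonnegative combination of $\{2f, f+e_1, f+e_2, f+l, 2f+e_1, 2f+e_2, 2f+l, f+e_1+l, f+e_2+l\}$ together with, when needed, the line classes $f, e_1, e_2, l$ (which can be absorbed into an adjacent core class, e.g. $f + (f+l) = 2f+l$, $e_1 + (f+l) = f + e_1 + l$, $e_2 + (f+l) = f + e_2 + l$, $l + (2f+e_1) \leftrightarrow$ via relation), smoothing the resulting free chain. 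I would carry this out by the same case analysis on $(m_f, m_1, m_2)$ as in the analogous lemmas for cases $3.1$, $3.14$, $3.22$: reduce $m_1$ and $m_2$ modulo the core generators using the nef inequalities, then handle the bounded residual cases directly. The hard part is purely the verification that the finitely many residual classes genuinely lie in the monoid generated by $\mathscr{C}_X \cup \{f, e_1, e_2, l\}$ and are each nef; once the inequalities $\alpha . H_i \ge 0$, $\alpha . E \ge 0$, $\alpha . (2f + e_1) \ge 0$ are pinned down this is a routine finite check.
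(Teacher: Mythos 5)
Your overall strategy --- Gordan's Lemma on the shifted cone followed by an explicit decomposition of finitely many residual classes into core classes and absorbable line classes --- is the same as the paper's, but your setup misidentifies the cone you are enumerating. Here $\Nef_1(X) \neq \overline{NE}(X)$: the exceptional divisor $E$ is rigid with $E \cdot e_1 = E \cdot e_2 = -1$, so $e_1$ and $e_2$ are \emph{not} nef; they are the exceptional curves of the two contractions $X \to Y_i$ and are not represented by free curves, so Theorem \ref{Representability of Free Curves} says nothing about them. The extreme rays of $\Nef_1(X)$ are $f$, $f+e_1$ and $f+e_2$, with primitive integral generators $f$, $l = \tfrac{1}{2}(f+e_1)$ and $f+e_2$; these three form a lattice basis of $N_1(X)_{\mathbb{Z}}$, so the monoid $\Nef_1(X)_{\mathbb{Z}}$ is freely generated by them and no further Gordan residual set is needed. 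Running your enumeration over the cone spanned by $f, e_1, e_2, l$ would produce integer points of $\overline{NE}(X)$ that are not nef (e.g.\ $(f+e_2+l)+e_2$ pairs to $-1$ with $E$) and are not in the set you must cover. Two smaller slips: $-K_X\cdot(f+e_2+l) = 3$, not $4$, and ``$\alpha\cdot(2f+e_1)\ge 0$'' is not a pairing with a divisor.

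With the correct generators the argument collapses to the paper's: write $\alpha = (f+e_2+l) + af + bl + c(f+e_2)$ with integers $a,b,c \ge 0$, absorb $f$'s and $l$'s using $2f$, $2l = f+e_1$, and $f+l$, all of which lie in $\mathscr{C}_X$, until $a,b \in \{0,1\}$, and check the four residual classes directly: $f+e_2+l \in \mathscr{C}_X$, $2f+e_2+l = (f+e_2)+(f+l)$, $f+e_2+2l = (f+e_1)+(f+e_2)$, and $2f+e_2+2l = (2f+e_2)+(f+e_1)$ are each sums of core classes, hence realized by smoothable free chains, and the remaining $c(f+e_2)$ and absorbed pairs attach as further free components. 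Your absorption identities are correct and this is exactly the bookkeeping the paper performs; the gap lies entirely in the identification of $\Nef_1(X)$ and its monoid generators, which is where the actual content of the lemma sits.
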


\begin{proof}
The lattice of integer points in $\overline{NE}(X)$ is generated by $e_1, e_2, f$, and $l$.  Lemma \ref{Gordan's Lemma} and \ref{Representability of Free Curves} show that the class $f + e_2 \in \mathscr{C}_X$, together with $f$ and $l$, generate the lattice of integer in $\Nef_1(X)$.  We may write $\alpha = (l + f + e_2) +  a l + b f + \sum_{\alpha_i \in \mathscr{C}_X} c_i \alpha_i$, with $0 \leq a,b \leq 1$ and $c_i \geq 0$.  Our claim follows immediately.
\end{proof}

\subsection*{3.3}

\textbf{Description of Variety $X$:} $X$ is a divisor on $\mathbb{P}^1\times \mathbb{P}^1\times \mathbb{P}^2$ of tridegree $(1,1,2)$.  Both projections to $\mathbb{P}^1\times \mathbb{P}^2$ realize $X$ as a blow up of $\mathbb{P}^1\times \mathbb{P}^2$ along the complete intersection $c$ of two divisors of class $(1,2)$.  We find $c$ is a curve of degree $(4,4)$ and genus $g(c)=3$ which embeds under the second projection $\pi_2 :\mathbb{P}^1\times \mathbb{P}^2 \rightarrow \mathbb{P}^2$.  Let $\pi_1 :X \rightarrow \mathbb{P}^1$ and $\pi_2:X\rightarrow \mathbb{P}^2$ be the compositions of the blow-up with each projection.

\textbf{Generators for $N^1(X)$ and $N_1(X)$:} Let $H_1 = \pi_1^* \mathcal{O}(1)$, $H_2 = \pi_2^* \mathcal{O}(1)$, and $E$ be the exceptional divisor of the blow-up.  Let $l_1$ be the class of a fiber of $\pi_2$, $l_2$ be the class of a general line in each fiber of $\pi_1$, and $e$ be a fiber of $E\rightarrow c$.  Note that there is pseudosymmetry, explained below.%

\begin{thm}\label{3.3thm}
For all $\alpha \in l_1 + \Nef_1(X)_\mathbb{Z}$, $\overline{\free}(X,\alpha)$ is nonempty and irreducible.
\end{thm}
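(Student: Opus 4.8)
\textbf{Proof plan for Theorem \ref{3.3thm}.} The plan is to follow the seven-step strategy laid out in Section 7.1, exactly as in the preceding cases (e.g.\ $3.2$, $4.13$), with $\tau = l_1$. First I would record, via \ref{Gordan's Lemma} applied to the Mori cone (whose generators I would read off from \cite{matsuki1995weyl}), the nef curve classes of anticanonical degree $2$ through $4$, and identify a core $\mathscr{C}_X$ of free curves. Because $X$ is a blow-up of $\mathbb{P}^1 \times \mathbb{P}^2$ along a curve $c$ of bidegree $(4,4)$ with $d_1 = 4$ and $d_2 = 4$, the technical input is precisely Lemmas \ref{curves1 in P1 x P2}, \ref{curves2 in P1 x P2}, and \ref{curves3 in P1 x P2}: these handle, respectively, the classes $dl_2 - ne$ contained in fibers of $\pi_1$ (del Pezzo surfaces of degree $9 - 4 = 5$), the classes $l_1 + dl_2 - ne$, and the classes $al_1 + bl_2 - ne$ with $n \leq 2a+1$ and $n \leq 4b$. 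I would also invoke Lemma \ref{P1 x P2 monodromy} for the monodromy of $c \to \mathbb{P}^1$ — note $d_1 = 4$, so the monodromy group is the full symmetric group only if $X$ is general, but as remarked in \cite{mori1981classification} this family is among the few where the blown-up curve has large degree; I expect that the relevant low-degree classes $dl_2 - ne$ needed for the core have $n \leq 3$ or are symmetric, so the monodromy statements needed (as in Lemma \ref{curves1 in P1 x P2}) hold regardless. Curves in fibers of $\pi_1$ give the separating classes, and the pseudosymmetry (swapping the two blow-up descriptions of $X$ over $\mathbb{P}^1 \times \mathbb{P}^2$) cuts the casework roughly in half.

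Next I would carry out Step (2): find a generating set of relations in $\mathbb{N}\mathscr{C}_X$ using Lemma \ref{Relations}, ordering the core classes so that successive effective divisors $D_i$ (e.g.\ $-H_2 + E$, $2H_1 + H_2 - E$, and so on) peel off one generator at a time, and then verify \ref{Main Method}(1) for each relation $\sum \alpha_i = \sum \alpha_j'$. The verifications are of the standard two types: either the space $\overline{\free}(X, \sum \alpha_i)$ is itself irreducible — established via the fibration $\pi_{2*} : \overline{\free}(X,\alpha) \to \overline{\free}(\mathbb{P}^2, b)$ together with a monodromy-of-intersection-points argument as in Lemma \ref{curves3 in P1 x P2} — or one exhibits a nodal curve $Z \to \overline{\free}(X, \sum \alpha_i)$ crossing the two main components at smooth points of $\overline{\mathcal{M}}_{0,0}(X)$, typically by sliding a component to meet the blown-up curve $c$, pushing the intersection point onto the exceptional divisor, and re-smoothing (the moves used in the $3.2$ and $4.13$ analyses, justified by Proposition \ref{kontsevich space H1} and Lemma \ref{GHS lemma}). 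For Step (3), since $-K_X$ is very ample here and $\rho(X) = 3$, one checks whether there is a del Pezzo fibration containing separating classes; there is (namely $\pi_1$, with degree $5$ fibers), but degree $5 > 3$, so Theorem \ref{unique main component} and Corollary \ref{connected fibers fixall} apply directly and \ref{Main Method}(2) holds outside $\overline{NE}(\pi_1)$ — and inside $\overline{NE}(\pi_1)$ the del Pezzo surface results (Theorem \ref{del pezzo curves thm}, \ref{testa theorem}) handle irreducibility. Steps (4)–(5) are then routine: very-freeness from \ref{very free curves} and \ref{reducible fibers: 4 author result}, and nonemptiness for all $\alpha \in \Nef_1(X)_\mathbb{Z}$ from \ref{Gordan's Lemma} and \ref{Representability of Free Curves} by checking the finitely many fundamental-domain classes are represented (e.g.\ writing $\alpha = \beta + al_1 + bl_2$ as in the $4.13$ argument).

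The main obstacle I anticipate is Step (2) for the classes lying in the interior of $\overline{NE}(X)$ with large $n$ relative to the anticanonical degree — precisely the regime not covered by Lemmas \ref{curves2 in P1 x P2} and \ref{curves3 in P1 x P2} — where $\overline{\free}(X,\alpha)$ need not be irreducible a priori, so one cannot just cite irreducibility and must instead produce an explicit degeneration path between main components. Here the pseudosymmetry is the key tool: a class that is "deep" with respect to one blow-up description (large $n$) may be shallow with respect to the other, so one transports the problem across the pseudosymmetry to a class covered by the lemmas above. I would also need to confirm that the discriminant-type monodromy arguments (Lemma \ref{curves3 in P1 x P2}, final paragraph) go through for $b \leq 2$ as stated, which is exactly the case needed since the relations among core classes involve only bounded $\pi_2$-degree. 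Once these degenerations are in hand, Theorem \ref{Main Method} (or Corollary \ref{bir Main Method}, if a separating conic forces the birational version) yields that $\overline{\free}(X,\alpha)$ is irreducible or empty for all $\alpha \in l_1 + \Nef_1(X)_\mathbb{Z}$, and the nonemptiness step upgrades "or empty" to "and nonempty," completing the proof.
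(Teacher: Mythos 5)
Your plan matches the paper's proof of Theorem \ref{3.3thm} essentially step for step: the same core extracted from the degree-$2$-to-$4$ nef classes via Lemmas \ref{curves1 in P1 x P2}--\ref{curves3 in P1 x P2} and the pseudosymmetry swapping the two blow-up descriptions, the same divisor-peeling for relations via Lemma \ref{Relations}, the same two verification modes for \ref{Main Method}(1) (irreducibility via $\pi_{2*}$ plus a monodromy-of-intersections argument, or explicit degenerations that slide a node onto $c$ and pass through a component of class $e$), and Corollary \ref{connected fibers fixall} plus Lemma \ref{Gordan's Lemma} for the remaining steps. The one point you leave tentative --- whether the $d_1=4$ monodromy of $\pi_1|_c$ is large enough without a generality hypothesis --- resolves as you expect: every relation the paper actually checks needs only transitivity (Theorem \ref{Monodromy}b) or a matching of multiplicity profiles on $c\cap F$, never full $2$-transitivity of the $S_4$-action.
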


\textbf{Intersection Pairing:} $ H_i \cdot l_j = \delta_{ij}$, $E \cdot e = -1$, and all other pairings are $0$.

\textbf{Anticanonical Divisor:} $-K_X = 2H_1 + 3H_2 - E$

\textbf{Effective Divisors:} $4H_2 -E$, $H_1 + 2H_2 - E$, $E$ are effective.

\textbf{Effective Curves:}  The Mori cone is generated by $l_2 -2e$, $e$, and $l_1-e$.  Realizing $X$ as a blow up of $\mathbb{P}^1\times \mathbb{P}^2$ via the two different projections amounts to permuting $e$ and $l_1-e$ while fixing $l_2-2e$. %

\textbf{Pseudosymmetry:} The above permutation amounts to the linear action $e\rightarrow l_1 -e$, $l_1 \rightarrow l_1$, $l_2 \rightarrow 2l_1 + l_2 -4e$. 

\begin{lem}
A core of free curves on $X$ is given by
\begin{align*}
   \mathscr{C}_X = &\{ l_1, \ l_2, \ l_2-e, \ l_1 + l_2-2e, \ 2l_2-3e, \ 2l_2 -4e,\\
   & 2l_1 + l_2 -4e, \ l_1 + l_2 -3e, \ l_1 + 2l_2 -5e  \} 
\end{align*}
The only separating classes $\alpha \in \mathscr{C}_X$ are $l_2 -e$, $2l_2 -3e$, $l_1 + l_2 -3e$, and $l_1 + 2l_2 -5e$.
\end{lem}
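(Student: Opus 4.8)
The plan is to follow the standard template laid out in Section~7: identify a candidate core $\mathscr{C}_X$, verify that each class in it gives an irreducible $\overline{\free}(X,\alpha)$, verify that every component of $\overline{\free}(X)$ breaks into free chains with components in $\mathscr{C}_X$, and finally pin down exactly which of these classes are separating. First I would enumerate all nef classes $\alpha=a l_1+b l_2-ne$ of anticanonical degree between $2$ and $4$: the constraints are $a,b,n\ge 0$, $n\le 2b$ (from $4H_2-E$), $n\le a+2b$ (from $H_1+2H_2-E$), plus $2\le -K_X\cdot\alpha=2a+3b-n\le 4$, and one must also impose $b\le ?$ bounds forced by $-K_X^3=30$ and Lemma~\ref{blow-up numbers}. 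This yields the nine listed classes together with a short list of ``extra'' classes (double covers $2l_2-4e$ viewed as covers, $l_i+l_j$-type sums, $2l_1+l_2-2e$, $l_1+l_2+l_3$-analogues) which must each be shown freely breakable — via Mori's bend-and-break inside fibers of $\pi_1$ (which are del Pezzo surfaces of degree $5-d_1=?$ and here the relevant fibers are blow-ups of $\mathbb{P}^2$), using Lemma~\ref{surface lemma}-style identifications, exactly as in case 3.13. The pseudosymmetry $e\leftrightarrow l_1-e$ should be used to cut this verification roughly in half.

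For irreducibility of $\overline{\free}(X,\alpha)$ with $\alpha\in\mathscr{C}_X$, I would invoke the lemmas on blow-ups of $\mathbb{P}^1\times\mathbb{P}^2$: Lemmas~\ref{curves1 in P1 x P2}, \ref{curves2 in P1 x P2}, and \ref{curves3 in P1 x P2} handle classes contained in fibers of $\pi_1$ and classes of the form $l_1+dl_2-ne$ and $al_1+bl_2-ne$ with $n\le 2a+1$ respectively; note $d_2=4$ here so $X$ is exactly the $3.3$ case those lemmas were written to cover, and the monodromy input needed is precisely the $b\le 2$ case of Lemma~\ref{curves3 in P1 x P2}, which is asserted there. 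For $\alpha=l_1$ the space is just an open subset of $\mathbb{P}^2$; for $\alpha=l_2$ it is an open subset of $\mathbb{P}^1\times\mathbb{G}(1,2)$. The separating classes are identified by computing the generic fiber of $\mathrm{ev}:\overline{\free}_1(X,\alpha)\to X$: for $\alpha$ contained in a fiber of $\pi_1$ this is governed by Lemma~\ref{curves1 in P1 x P2} (reducible iff the multiplicity partition at the $d_1$ points of $c$ is non-constant), while for the classes $l_2-e$, $2l_2-3e$, $l_1+l_2-3e$, $l_1+2l_2-5e$ one checks directly that fixing a general point still leaves a choice among the intersection points of the image with $c$, whereas for $l_1,l_2,l_1+l_2-2e,2l_2-4e,2l_1+l_2-4e$ the fiber is irreducible (e.g. $2l_2-4e$ is a cover, $l_1+l_2-2e$ has a unique representative through a general point by the argument used for $l_1+l_2-e_1-e_2$ in case 4.5, adapted via pseudosymmetry). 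One must also confirm that $\overline{\free}(X,\alpha)$ is irreducible — not merely nonempty — for the ``cover'' and ``reducible domain'' classes like $2l_2-4e$; there $\overline{\free}^{bir}$ is empty but the full space is a single component of double covers.

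The statement to prove also implicitly requires that $\mathscr{C}_X$ is genuinely a core, i.e.\ Definition~\ref{def core}(2): every component $M\subset\overline{\free}(X)$ contains a free chain with components in $\mathscr{C}_X$. Since $X$ has no $E5$ divisor and $-K_X$ is very ample, Theorem~\ref{MovableBB} reduces this to controlling quartic components, and here Proposition~\ref{breaking quartic curves} together with Corollary~\ref{breaking quartics cor} applies because $X$ has a conic-bundle structure (indeed $\pi_1\times(\text{the other }\mathbb{P}^1)$ gives one) and no $E2,E3,E4,E5$ divisors; one must check the one remaining quartic subtlety — that quartic classes on the boundary break into chains of conics — using the del Pezzo fibration $\pi_1$. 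The main obstacle I anticipate is not any single step but the bookkeeping around the four separating classes: because there are four of them, verifying Theorem~\ref{Main Method}(2)/Corollary~\ref{connected fibers fixall} for sums involving several separating classes requires Proposition~\ref{reducible fibers: main strategy}, and one needs to exhibit, for each relevant multiset, a subcollection whose product maps to a single component of \emph{very free} curves — the delicate point being that $l_2-e$ and $2l_2-3e$ are both contracted by the same del Pezzo fibration $\pi_1$, so Theorem~\ref{unique main component}(2) must be brought in to handle chains staying inside $\overline{NE}(\pi_1)$. Careful use of the pseudosymmetry to reduce the number of genuinely distinct configurations will be essential to keeping this manageable.
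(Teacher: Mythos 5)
Your overall template is the right one and is essentially the paper's: enumerate the nef classes of anticanonical degree $2$ to $4$, show the classes outside $\mathscr{C}_X$ are freely breakable, get irreducibility from Lemmas \ref{curves1 in P1 x P2}--\ref{curves3 in P1 x P2}, and decide which classes separate by counting components of the general fiber of $\mathrm{ev}$, using the pseudosymmetry $e\leftrightarrow l_1-e$ to halve the work. However, two of your specific claims are false, and one of them skips the only genuinely delicate step in the argument.

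First, $2l_2-4e$ is not a class of double covers and $\overline{\free}^{bir}(X,2l_2-4e)$ is not empty: a general fiber of $\pi_1$ is $\mathbb{P}^2$ blown up at the four linearly general points of $c$ lying in it, and $2l_2-4e$ is the class of the pencil of irreducible conics through all four points. Its multiplicity partition is $(1,1,1,1)$, so Lemma \ref{curves1 in P1 x P2} gives irreducibility of the space and of the $\mathrm{ev}$-fibers directly; the double covers occur in class $4l_2-8e$, which is one of the classes you must break, not a core class. Second, and more seriously, $l_1+l_2-2e$ does not have a unique representative through a general point $p$, and it is fixed by the pseudosymmetry, so neither of your justifications applies. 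Its $\pi_2$-image is a line through $\pi_2(p)$ meeting the degree-four plane curve $\pi_2(c)$ in four points, of which the curve passes through exactly two; hence $\mathrm{ev}^{-1}(p)$ is a six-sheeted cover of the pencil of lines through $\pi_2(p)$, and proving it is irreducible (i.e.\ that $l_1+l_2-2e$ is not separating) requires two-transitivity of the monodromy of those four intersection points, which the paper obtains by applying Theorem \ref{Monodromy}a to the projection of $\pi_2(c)$ from the general point $\pi_2(p)$. Without this step the non-separating claim for $l_1+l_2-2e$ is unproved, and the analogy with $l_1+l_2-e_1-e_2$ in case 4.5 fails because there the blown-up curves meet each line in at most one relevant point. (Minor slips: the constraint from $4H_2-E$ is $n\le 4b$, not $2b$; fibers of $\pi_1$ are del Pezzo surfaces of degree $9-d_1=5$; and $2l_1+l_2-2e$ has anticanonical degree $5$, so it never enters the enumeration.)
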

\begin{proof}
\textbf{Nef Curve Classes of Anticanonical Degree Between $2$ and $4$:}
If $\alpha = d_1 l_1 + d_2 l_2 -ne$ is nef and of appropriate degree, we find $n\leq \text{min}(4d_2, d_1 + 2d_2)$, $d_1 + d_2 \leq 4$ and $2d_1 -d_2 \leq 4$.  %
This gives the following curve classes.
\begin{itemize}
    \item $l_1, l_2, l_2 -e, l_1 + l_2 -e, l_1 + l_2 -2e, 2l_2 -3e, 2l_2 -4e, 3l_2 -5e, 3l_2 -6e, 4l_2 -8e, 2l_1, 2l_2 -2e, l_1 + 2l_2 -4e$,
    \item $l_1 + l_2 -3e, l_1 + 2l_2 -5e, l_1 + 3l_2 -7e, 2l_1 + l_2 -3e, 2l_1 + l_2 -4e, 2l_1 + 2l_2 -6e$
\end{itemize}
Pseudosymmetry reduces the number of classes we need to consider.  More specifically, it interchanges $l_2$ with $2l_1 + l_2 -4e$, swaps $l_2 -e$ with $l_1 + l_2 -3e$, swaps $l_1 + l_2 -e$ with $2l_1 + l_2 -3e$, swaps $2l_2 -3e$ with $l_1 + 2l_2 -5e$, swaps $3l_2 -5e$ with $l_1 + 3l_2 -7e$, and swaps $2l_2 -2e$ with $2l_1 + 2l_2 -6e$.  In particular, it suffices to study classes appearing in the first bullet.

The class $3l_2 -6e$ is not representable by an irreducible free curve.  A general fiber of $\pi_1$ meets $c$ transversely at $4$ points.  Any map $f:\mathbb{P}^1\rightarrow X$ with $f_*[\mathbb{P}^1] = 3l_2 -6e$ is contained in such a fiber.  However, the only cubics meeting 4 points with total multiplicity 6 are triple covers of lines.

\textbf{Freely Breakable Classes:}  By standard means, we see the classes $l_1 + l_2 -e$, $2l_1$, and $2l_2 -2e$ are freely breakable.  Similarly, $4l_2 -8e$ is only representable by free maps $f:\mathbb{P}^1\rightarrow X$ which are double covers, which we may break.  Likewise, the space $\free(X,3l_2 -5e)$ is irreducible, and its closure contains maps from nodal curves with free components of class $l_2-e$ and $2l_2-4e$.  Lastly, Lemma \ref{curves2 in P1 x P2} shows that $\free(X, l_1 + 2l_2 -4e)$ is irreducible, as there is a unique map $f:\mathbb{P}^1\rightarrow \mathbb{P}^2$ passing through 4 prescribed linearly general points at any 4 chosen points of $\mathbb{P}^1$.  Therefore, as both $l_1$ and $2l_2 -4e$ are free curves, this component parameterizes freely breakble curves.  Together with the symmetry mentioned above, that leaves us with considering the following classes: $l_1, l_2, l_2 -e, l_1 + l_2 -2e, 2l_2 -3e, 2l_2 -4e$.

\textbf{Irreducible Spaces and Fibers:} 
It is clear from Lemmas \ref{curves1 in P1 x P2}, \ref{curves2 in P1 x P2}, and \ref{curves3 in P1 x P2} that all the curve classes above have irreducible spaces of free curves.  We find that fibers of $\text{ev}_{2l_2 -3e} : \overline{\free}_1(X,2l_2 -3e)\rightarrow X$ generically have four irreducible components, corresponding to the choice of 3 out of four points in a general fiber of $\pi_1 : X \rightarrow \mathbb{P}^1$.  Similarly, fibers of $\text{ev}_{l_2 - e} : \overline{\free}_1(X,l_2 -e)\rightarrow X$ also have four irreducible components.  However, fibers of $\text{ev}_{l_1} : \overline{\free}_1(X,l_1)\rightarrow X$, $\text{ev}_{l_2} : \overline{\free}_1(X,l_2)\rightarrow X$, and $\text{ev}_{2l_2 - 4e} : \overline{\free}_1(X,2l_2 -4e)\rightarrow X$ are all irreducible.  Showing that fibers of $\text{ev}_{l_1 + l_2 - 2e} : \overline{\free}_1(X,l_1 + l_2 -2e)\rightarrow X$ are irreducible takes care.  Fixing a point $p \in X$, we find the fiber $\text{ev}_{l_1 + l_2 -2e}^{-1}(p)$ is a 6-fold cover of the space of lines in $\mathbb{P}^2$ through $\pi_2(p)$.  The 6 different sheets correspond to the 6 choices of 2 out of 4 points at which each line meets $\pi_2(c)$.  It suffices to show the monodromy of these points as we vary lines through $\pi_2(p)$ is two transitive.  Provided that $\pi_2(p)$ is general, the ramification of the map $\pi_2(c)\rightarrow \mathbb{P}^1$ induced by projection from $\pi_2(p)$ satisfies \ref{Monodromy}a.  Therefore, the monodromy is the full symmetric group, and $\text{ev}_{l_1 + l_2 -2e}^{-1}(p)$ is irreducible.
\end{proof}

\begin{lem}
Up to pseudosymmetry, relations in the monoid $\mathbb{N}\mathscr{C}_X$ are generated by:
\begin{enumerate}
     \item $l_2 + (l_1 + l_2 - 2e) = l_1 + 2(l_2 -e)$
        \item $l_2 + (2l_2 -3e) = 3(l_2 -e)$
        \item $l_2 + (2l_2 -4e) = (l_2 -e) + (2l_2 -3e)$
        \item $l_2 + (2l_1 + l_2 - 4e) = (l_2 -e) + l_1 + (l_1 + l_2 -3e) = 2(l_1 + l_2 -2e)$
        \item $l_2 + (l_1 + l_2 -3e) = (l_2 -e) + (l_1 + l_2 -2e)= l_1 + (2l_2 -3e)$
        \item $l_2 + (l_1 + 2l_2 -5e) = 2(l_2 -e) + (l_1 + l_2 -3e)= (l_1 + l_2 -2e) + (2l_2 -3e)$
        \item $l_1 + (2l_2 - 4e) = (l_2 -e) + (l_1 + l_2 -3e)$
        \item $(l_1 + l_2 -2e) + (2l_2 -4e) = (l_1 + l_2 -3e) + (2l_2 -3e) $%
        \item $(2l_2 -4e) + 2(l_2 -e) = 2(2l_2 -3e)$
        \item $(2l_2 -4e) + (l_2 -e) + (l_1 + l_2 -3e) = (2l_2 -3e) + (l_1 + 2l_2 -5e)$
\end{enumerate}
For each relation $\sum \alpha_i = \sum \beta_j$, a main component of $\prod_X \overline{\free}_2(X,\alpha_i)$ lies in the same component of free curves as a main component of $\prod_X \overline{\free}_2(X, \beta_j)$.
\end{lem}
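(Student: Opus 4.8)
The statement has two parts: (i) the listed relations (1)–(10), together with their images under the pseudosymmetry $e \mapsto l_1-e,\ l_1\mapsto l_1,\ l_2\mapsto 2l_1+l_2-4e$, generate the monoid of relations among the core classes $\mathscr{C}_X$; and (ii) for each such relation $\sum\alpha_i=\sum\beta_j$, a main component of $\prod_X\overline{\free}_2(X,\alpha_i)$ and a main component of $\prod_X\overline{\free}_2(X,\beta_j)$ lie in a common component of $\overline{\free}_2(X,\sum\alpha_i)$. For part (i) I would run the elimination algorithm of Lemma \ref{Relations}: order the core classes starting from $l_1$ (since $\tau=l_1$) and successively eliminate each non-extreme generator using a separating divisor against which it pairs negatively. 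Good choices of divisors are $D_0 = -H_1+E$ (which pairs to $-1$ with $l_1$ and nonnegatively, in fact $\geq 0$, with the other core classes — note $l_2-e$, $2l_2-3e$, $l_1+l_2-3e$, $l_1+2l_2-5e$ pair positively), then after eliminating $l_1$ one uses divisors such as $H_1+2H_2-E$, $4H_2-E$, etc., and one exploits pseudosymmetry to halve the bookkeeping (e.g.\ the relations involving $2l_1+l_2-4e$ and $l_1+2l_2-5e$ are pseudosymmetric to those involving $l_2$ and $2l_2-3e$). One checks minimality (condition \ref{Relations}(4)) relation by relation, which is finite and routine; what remains after all eliminations are the classes spanning $N_1(X)$ with their single remaining relation, accounted for above.

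**Part (ii), the geometric step.** For each relation I would exhibit the coincidence of components either by (a) proving $\overline{\free}^{bir}(X,\sum\alpha_i)$ is itself irreducible, or (b) constructing a point of $\overline{\mathcal{M}}_{0,0}(X)$ that is a smooth point (globally generated normal sheaf, or $H^1(C,f^*\mathcal{T}_X)=0$ via Proposition \ref{kontsevich space H1}) and deforms to free chains of both types. Strategy (a) applies to the ``balanced'' relations: for relations like (1), (4), (5), (7), the relevant total class lies in the interior of $\overline{NE}(X)$ with $-K_X$-degree $\geq 4$, so by Propositions \ref{very free curves} and \ref{reducible fibers: 4 author result} every component parameterizes very free curves, and irreducibility follows from the fibration arguments already used for $\mathscr{C}_X$ — e.g.\ pushing forward by $\pi_2:X\to\mathbb{P}^2$ as in Lemmas \ref{curves2 in P1 x P2}, \ref{curves3 in P1 x P2}, together with monodromy of the $4$ intersection points of a plane section with $\pi_2(c)$ via Theorem \ref{Monodromy}. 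Strategy (b) is needed for relations whose total class is contracted by the del Pezzo fibration $\pi_1:X\to\mathbb{P}^1$ (those with $\pi_{1*}=0$, e.g.\ (2), (3), (8), (9)): here I would work inside a general fiber $X_p$ of $\pi_1$, which is a degree-$5$ del Pezzo surface $S$ (blow-up of $\mathbb{P}^2$ at $4$ general points, as in the remark preceding the core), invoke Theorem \ref{del pezzo curves thm} to deform the relevant planar/del-Pezzo curve to the desired chains, and use the monodromy of $c\xrightarrow{\pi_1}\mathbb{P}^1$ (full symmetric group on the $4$ points, Lemma \ref{P1 x P2 monodromy}) to identify the finitely many resulting components. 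For the ``mixed'' relations like (6) and (10), where one class is contracted by $\pi_1$ and the others are not, I would first degenerate a chain of free curves by breaking the non-fiber component into a nodal curve meeting the exceptional divisor $E$ appropriately (Lemma \ref{Basic Properties Chains}(6) ensures immersions at nodes, so the standard deformation theory applies), producing a stable map with a contracted component whose smoothings go both ways; Proposition \ref{kontsevich space H1} certifies unobstructedness.

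**Where the difficulty lies.** The combinatorial part (i) is tedious but mechanical; the genuine obstacle is part (ii) for the separating classes — $l_2-e$, $2l_2-3e$, $l_1+l_2-3e$, $l_1+2l_2-5e$ all have reducible fibers over $X$, so Lemma \ref{Gluing} does not directly apply and there can be several main components of the fibre products $\prod_X\overline{\free}_2(X,\alpha_i)$. The key is that once the total class has $-K_X$-degree $\geq 4$ and is interior to $\overline{NE}(X)$, Corollary \ref{connected fibers fixall} (whose hypotheses must be verified here: a core exists, and $\overline{\free}^{bir}(X,2\beta)$ is irreducible for the unique interior $-K_X$-conic — here there is no interior $-K_X$-conic since $\rho(X)=3$, so by Lemma \ref{interior conic lemma} and the remark after \ref{connected fibers fixall} every main component satisfies $(\dagger)$ automatically) forces all main components satisfying $(\dagger)$ into a single component of free curves. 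So the real work is: (a) check the hypotheses of \ref{connected fibers fixall} hold for this $X$ — in particular that the del Pezzo fibration $\pi_1:X\to\mathbb{P}^1$ has degree-$5$ fibers, hence no separating classes in its relative cone, so Corollary \ref{connected fibers fixall} applies with no exceptional del Pezzo cone to exclude; and (b) for the handful of relations where the total class has degree exactly $4$ or lies on $\partial\overline{NE}(X)$, verify irreducibility by hand using the explicit parametrizations of Lemmas \ref{curves1 in P1 x P2}–\ref{curves3 in P1 x P2} together with the monodromy statements in Theorem \ref{Monodromy}. I expect relations (6) and (10) — the degree-$6$ mixed relations involving $2l_2-3e$ and $l_1+2l_2-5e$ — to require the most care, since they involve two distinct separating classes simultaneously and the gluing must be organized so that the smoothed subchain is very free before the final component is attached, exactly the pattern of Proposition \ref{reducible fibers: main strategy}.
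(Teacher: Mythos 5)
Your overall two-part strategy (run the elimination algorithm of Lemma \ref{Relations}, then for each relation either prove irreducibility of $\overline{\free}(X,\sum\alpha_i)$ via the $\pi_2$-pushforward lemmas or exhibit an unobstructed nodal curve smoothing to both chain types) is the same as the paper's, and your identification of the monodromy of $\pi_1|_c$ on the four points $c\cap F$ as the key tool for the fiber-contracted relations is correct. However, there are two concrete errors in your setup that would derail the execution.

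First, your claim that the degree-$5$ del Pezzo fibration $\pi_1$ has ``no separating classes in its relative cone'' is false: $l_2-e$ and $2l_2-3e$ are both contracted by $\pi_1$ and both separating (a general point of $X$ lies on four curves of class $l_2-e$, one through each point of $c$ in its $\pi_1$-fiber; the fiber degree being $\geq 4$ is only relevant to whether $X$ must be taken general, not to separatingness). Consequently Corollary \ref{connected fibers fixall} excludes precisely the classes in $\overline{NE}(\pi_1)$, and for relations (2), (3), (9) it gives you nothing. This matters because $\overline{\free}(X,3l_2-3e)$ and $\overline{\free}(X,4l_2-6e)$ are genuinely reducible (multiple covers, cubics singular along $c$, etc.), so you cannot argue by irreducibility there; you must single out the component parameterizing planar curves meeting $c$ only at smooth points and check, using the (at least $2$-transitive) monodromy of $\pi_1|_c$ on $c\cap F$ and Theorem \ref{del pezzo curves thm} inside a fiber, that this one component contains main components of both chain types. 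You do gesture at this fallback, but your stated reason for expecting \ref{connected fibers fixall} to dispose of everything is wrong, and following it literally would produce a false proof for those relations.

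Second, you misclassify relation (8): its total class is $(l_1+l_2-2e)+(2l_2-4e)=l_1+3l_2-6e$, which pairs to $1$ with $H_1$ and is therefore a section of $\pi_1$, not contracted by it, so no argument inside a fiber of $\pi_1$ is available. Here Lemma \ref{curves2 in P1 x P2} also fails ($n=6\not<\tfrac{3d+3}{2}=6$), so neither of your two main strategies applies as stated; the paper instead produces an explicit immersed chain of type $(l_1+l_2-3e,\,e,\,2l_2-4e)$ with globally generated normal sheaf on each component and smooths the two nodes separately. A similar explicit degeneration (through a component of class $e$) is what is needed for relation (10), whereas relation (6) is actually the easy case ($l_1+3l_2-5e$ satisfies the hypothesis of Lemma \ref{curves2 in P1 x P2}), contrary to your expectation. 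Finally, note that eliminating $l_1$ first (your choice of $D_0=-H_1+E$) produces a generating set organized differently from the one in the statement; to prove the lemma as stated you should eliminate $l_2$ and its pseudosymmetric partner $2l_1+l_2-4e$ first, which is what yields relations (1)--(6) in the given form.
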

\begin{proof}

\textbf{Relations:}
Following Lemma \ref{Relations}, we order $\mathscr{C}_X$ as $l_2$, $2l_1 + l_2 -4e$, $l_1$, $l_1 +l_2 -2e$, $2l_2 -4e, \ldots$ and proceed.
\begin{itemize}
    \item $D=-H_2 +E$: $D$ pairs negatively with $l_2$ and positively with $l_1 + l_2 -2e$, $2l_2 -3e$, $2l_2 -4e$, $2l_1 + l_2 -4e$, $l_1 + l_2 -3e$, and $l_1 + 2l_2 -5e$.  We find the following relations: %
    \begin{itemize}
        \item $l_2 + (l_1 + l_2 - 2e) = l_1 + 2(l_2 -e)$
        \item $l_2 + (2l_2 -3e) = 3(l_2 -e)$
        \item $l_2 + (2l_2 -4e) = (l_2 -e) + (2l_2 -3e)$
        \item $l_2 + (2l_1 + l_2 - 4e) = (l_2 -e) + l_1 + (l_1 + l_2 -3e)$
        \item $l_2 + (l_1 + l_2 -3e) = (l_2 -e) + (l_1 + l_2 -2e)$
        \item $l_2 + (l_1 + 2l_2 -5e) = 2(l_2 -e) + (l_1 + l_2 -3e)$
    \end{itemize}
    \item Relations involving $2l_1 + l_2 -4e$: By symmetry, these relations are the same as relations with $l_2$.
    \item $D= -H_1 -H_2 + E$: $D$ pairs negatively with $l_1$ and positively with $2l_2 -3e$, $2l_2 -4e$, $l_1 + l_2 -3e$, $l_1 + 2l_2 -5e$.  $D$ pairs to $0$ with $l_2 -e$ and $l_1 + l_2 -2e$.  We find the following relations:
    \begin{itemize}
        \item $l_1 + (2l_2 -3e) = (l_2 -e) + (l_1 + l_2 -2e)$
        \item $l_1 + (2l_2 - 4e) = (l_2 -e) + (l_1 + l_2 -3e)$
        \item $l_1 + (l_1 + 2l_2 -5e) = (l_1 + l_2 -2e) + (l_1 + l_2 -3e)$ (symmetric to the first relation)
        \item There is no relation $l_1 + (l_1 + l_2 - 3e) = \ldots$.  Any relation $a l_1 + b(l_1 + l_2 -3e) + c(l_1 + l_2 -2e) = \ldots$ must have either $l_1$ or $2l_1 + l_2 -4e)$ on the Right Hand Side, as $H_2 -H_1$ pairs nonnegatively with each other nef curve.  Relations involving $2l_1 + l_2 -4e$ were addressed above. Thus, the only minimal relation is $l_1 + (l_1 + l_2 -3e) + (l_2 -e) = 2(l_1 + l_2 -2e)$.
    \end{itemize}
    \item $D=E-2H_1 -H_2:$ $D$ pairs negatively with $l_1 + l_2 -2e$ and positively with $2l_2 -3e$, $2l_2-4e$, and $l_1 + 2l_2 -5e$.  We find the following relations:
    \begin{itemize}
        \item $(l_1 + l_2 -2e) + (2l_2 -3e) = (l_1 + l_2 -3e) + 2(l_2 -e)$
        \item $(l_1 + l_2 -2e) + (2l_2 -4e) = (l_1 + l_2 -3e) + (2l_2 -3e)$
        \item $(l_1 + l_2 -2e) + (l_1 + 2l_2 -5e) = 2(l_1 + l_2 -3e) + (l_2 -e)$ (symmetric to the first relation)
    \end{itemize}
    \item $D= 4H_1 + 3H_2 -2E$: $D$ pairs negatively with $2l_2 -4e$ and positively with $l_2-e$ and $l_1 + l_2 -3e$.  %
    As $D.(2l_2 -4e) = -2$ and $D.(l_2 -e) = D.(l_1 + l_2 -3e) = 1$, any relation involves at least 2 copies of $(l_2 -e)$ and $(l_1 + l_2 -3e)$.  We find the following relations:
    \begin{itemize}
        \item $(2l_2 -4e) + 2(l_2 -e) = 2(2l_2 -3e)$
        \item $(2l_2 -4e) + 2(l_1 + l_2 -3e) = 2(l_1 + 2l_2 -5e)$ (symmetric to the first relation)
        \item $(2l_2 -4e) + (l_2 -e) + (l_1 + l_2 -3e) = (2l_2 -3e) + (l_1 + 2l_2 -5e)$
    \end{itemize}
    \item The remaining 4 curve classes span $N_1(X)$, and the unique relation among them is $(l_1 + l_2 -3e) + (2l_2 -3e) = (l_2 -e) + (l_1 + 2l_2 -5e)$.  However, as $(l_1 + l_2 -2e) + (2l_2 -4e) = (l_1 + l_2 -3e) + (2l_2 -3e)$ is pseudosymmetric to $(l_1 + l_2 -2e) + (2l_2 -4e) = (l_2 -e) + (l_1 + 2l_2 -5e)$, this relation is redundant.
\end{itemize}

\textbf{Main Components:} We address each relation below.
\begin{enumerate}
    \item $\overline{\free}(X, l_1 + 2l_2 -2e)$ is irreducible by Lemma \ref{curves2 in P1 x P2}
    \item $\overline{\free}(X, 3l_2 -3e)$ has at least three components.  One component parameterizes triple covers of curves of class $l_2 -e$. At least one component parameterizes planar cubics that are singular at one of their two intersection points with the blown-up curve $c$.  The only other component parameterizes planar cubics meeting $c$ at three points in their smooth locus.  This last component contains free chains of type $(l_2, 2l_2 -3e)$ and $(l_2 -e, l_2 -e, l_2 -e)$.
    \item $\overline{\free}(X,3l_2 -4e)$ has multiple components, but only one component that parameterizes planar cubics meeting $c$ at four nonsingular points.  This component contains free chains of type $(l_2, 2l_2 -4e)$ and $(l_2 -e, 2l_2 -3e)$.
    \item $\overline{\free}(X,2l_1 + 2l_2 -4e)$ is irreducible.  Using $\pi_2: X\rightarrow \mathbb{P}^2$, we may fiber an open locus of any component over the space of conics in $\mathbb{P}^2$.  A general fiber consists of curves of class $(2,1)$ in $\mathbb{P}^1 \times \mathbb{P}^1$ meeting 4 of 8 intersection points with $c$.  Generally these 8 points lie in separate fibers of each projection of $\mathbb{P}^1\times \mathbb{P}^1$ to $\mathbb{P}^1$.  Hence, there is a one dimensional family of twisted cubics through any 4 of them. As the monodromy of intersections of $\pi_2(c)$ with conics in $\mathbb{P}^2$ is the full symmetric group, this proves our claim.
    \item $\overline{\free}(X,l_1 + 2l_2 -3e)$ is irreducible by Lemma \ref{curves2 in P1 x P2}.
    \item $\overline{\free}(X,l_1 + 3l_2 -5e)$ is irreducible by Lemma \ref{curves2 in P1 x P2}.
    \item $\overline{\free}(X,l_1 + 2l_2 -4e)$ is irreducible by Lemma \ref{curves2 in P1 x P2}.
    \item We must show that a component of $\overline{\free}(X,l_1 + 3l_2 - 6e)$ contains free chains of type $(l_1 + l_2 -2e, 2l_2 -4e)$ and $(l_1 + l_2 -3e, 2l_2 -3e)$.  Since $\overline{\free}(X, l_1 + l_2 -2e)$ is irreducible, it contains maps $g: C_1 \cup C_2 \rightarrow X$ of type $(l_1 + l_2 -3e, e)$ in its smooth locus.  We may attach a third component $C_3$ to this nodal curve to create another map $g' : C_1 \cup C_2 \cup C_3 \rightarrow X$ of type $(l_1 + l_2 -3e, e, 2l_2 -4e)$, with $[g'] \in \overline{\free}(X,l_1 + 3l_2 - 6e)$ contained in the smooth locus.  In fact, $\mathcal{N}_{g'}$ is globally generated on each component of the domain of $g'$.  Studying $\mathcal{N}_{g'_2} := \mathcal{N}_{g'|_{C_2}} \cong \mathcal{O}(1)$, we see there are one dimensional families of deformations smoothing either node of $g'$, while maintaining the other node.  This proves our claim.  
    \item Let $F\subset \mathbb{P}^1\times \mathbb{P}^2$ be a general fiber of $\pi_1$.  Let $\{p_1, p_2, p_3, p_4\} = c \cap F$.  By Lemma \ref{curves1 in P1 x P2}, there is a bijection bewteen components of $\overline{\free}(X,4l_2 -6e)$ which generically parameterize geometrically rational planar quartics meeting $c$ at four points with multiplicities $(2,2,1,1)$ and orbits of the monodromy group of $\pi_1|_c$ on $(c\cap F) \times (c \cap F)$.  Therefore free chains of type $(2l_2 -3e, 2l_2 -3e)$ which are strict transforms conics meeting $\{p_1, p_2, p_3\}$ and $\{p_2, p_3, p_4\}$ are contained in the same component of $\overline{\free}(X,4l_2 -6e)$ as free chains of type $(l_2 -e, 2l_2 -4e, l_2 -e)$ whose components meet $p_2$, $\{p_1, p_2, p_3, p_4\}$, and $p_3$, respectively.
    \item We must show a component of $\overline{\free}(X,l_1 + 4l_2 -8e)$ contains free chains of type $(2l_2 -4e, l_2 -e, l_1 + l_2 -3e)$ and $(2l_2 -3e, l_1 + 2l_2 -5e)$.  %
    We may smooth a free chain of the first type to a free chain $f:C_1 \cup C_2 \rightarrow X$ of type $(2l_2 -4e, l_1 + 2l_2 -4e)$.  By deforming $f|_{C_1}$, we may move the image of $C_1 \cap C_2$ until it lies in any fiber of $\pi_1$. Since $\overline{\free}(X, l_1 + 2l_2 -4e)$ is irreducible, by deforming $f|_{C_2}$ to a map from a nodal curve of type $(e, l_1 + 2l_2 -5e)$, we may attain a map $g: C_1' \cup C_2' \cup C_3' \rightarrow X$ of type $(2l_2 -4e, e, l_1 + 2l_2 -5e)$.  Since $[g]\in \overline{\free}(X, l_1 + 4l_2 -8e)$ is a smooth point, after deforming $g$ to a free chain of type $(2l_2 -3e, l_1 + 2l_2 -5e)$, our claim follows. 
\end{enumerate}
\end{proof}

\begin{lem}
For each $\alpha \in l_1 + \Nef_1(X)_\mathbb{Z}$, $\overline{\free}(X,\alpha)$ is nonempty.
\end{lem}

\begin{proof}
$\overline{NE}(X)$ is generated by $l_1 -e$, $l_2 -2e$, and $e$.  The generators of $\Nef_1(X)$ are $l_1 = (l_1 -e) + e$, $l_2 = (l_2 -2e) + 2e$, $l_2 -2e$, and $2l_1 + l_2 -4e = 2(l_1 -e) + (l_2 -2e)$.  Lemma \ref{Gordan's Lemma}  and \ref{Representability of Free Curves} shows that the curves in $\mathscr{C}_X$, together with $l_2 -2e$, generate the monoid of integer points in $\Nef_1(X)$.  %
Since $\alpha \in l_1 + \Nef_1(X)$, there exists an expression $\alpha = l_1 + c(l_2 -2e) + \sum \alpha_i$ with $\alpha_i \in \mathscr{C}_X$ and $c\in \{0,1\}$.  This proves our claim, as $l_1 + l_2 -2e$ is represented by a free curve.
\end{proof}

\subsection*{3.4}
\textbf{Blow-up of a Double Cover of $\mathbb{P}^1 \times \mathbb{P}^2$:} 
Let $Y \rightarrow \mathbb{P}^1 \times \mathbb{P}^2$ branched over a smooth divisor $B$ of class $(2,2)$.  Let $\phi : X \rightarrow Y$ be the blow-up of a smooth fiber of the the projection $Y\rightarrow \mathbb{P}^2$, and $\pi_i : X \rightarrow \mathbb{P}^i$ be the natural map.  The variety $X$ is also obtained from $\mathbb{P}^1 \times \mathbb{F}_1$, where $\mathbb{F}_1 \rightarrow \mathbb{P}^2$ is the blow-up of a point $p$, by taking the double cover branched along the strict transform of $B$.  We let $\pi_3 : X \rightarrow \mathbb{F}_1$ be the natural map and $\psi : X \rightarrow \mathbb{P}^1$ be $\pi_3$ composed with the resolution $\mathbb{F}_1 \rightarrow \mathbb{P}^1$ of the projection from $p$.

\textbf{Generators for $N^1(X)$ and $N_1(X)$:}  The classes of generators are given by
\begin{tabular}{ll}
 $E$ = the exceptional divisor of $\phi$ & $e$ = a fiber of $\phi|_E$; \\
 $H_1 = \pi_1^* \mathcal{O}(1)$ & $c_1$ = a component of a reducible fiber of $\pi_2$; \\
 $H_2 = \pi_2^* \mathcal{O}(1)$ & $c_2$ = a curve contracted by $\pi_1$, embedded as a line by $\pi_2$.
\end{tabular}

\begin{thm}\label{3.4thm}
For all $\alpha \in (c_1 + c_2) + \Nef_1(X)_\mathbb{Z}$, $\overline{\free}(X,\alpha)$ is irreducible and nonempty.
\end{thm}

As in the proof of Theorem \ref{3.2thm}, we will prove Theorem \ref{3.4thm} below assuming $X$ is general in moduli.  The proof for arbitrary $X$ then follows from Proposition \ref{low degree curves degeneration}, Corollary \ref{connected fibers fixall}, and Lemma \ref{3.4relations1}.

\textbf{Intersection Pairing:}
\begin{center}
\begin{tabular}{ll}
    $H_i \cdot c_j = \delta_{ij}$ &  $H_i \cdot e = 0$, \\
    $E \cdot c_j = 0$ &  $E \cdot e = -1$, \\
\end{tabular}
\end{center}

\textbf{Anticanonical Divisor:}
\begin{align*}
    -K_X = H_1 + 2H_2 -E.
\end{align*}

\textbf{Effective Divisors:}
The divisors $H_1$, $H_2-E$, and $E$ are effective.

\textbf{Effective Curves and Mori Structure:}  $\overline{NE}(X)$ is generated by the extreme rays $e$, $c_1$, and $c_2 -e$.  The contraction of $e$ is $\phi$, that of $c_1$ is $\pi_3$, and the contraction of $c_2 -e$ is $\pi_1 \times \psi$.  There are no other classes of effective lines in $X$, and for $X$ general, each of $\overline{M}_{0,0}(X, c_1)$ and $\overline{M}_{0,0}(X, c_2 -e)$ are irreducible.

The discriminant locus $\Delta$ of $\pi_3$ is the strict transform of a quartic curve in $\mathbb{P}^2$ which does not contain $p$.  For $Y$ general in moduli, $\Delta$ is a smooth curve of degree 4.  For $X$ general in moduli ($p \in \mathbb{P}^2$ general), the ramification of $\mathbb{F}_1 \rightarrow \mathbb{P}^1$ restricted to $\Delta$ is simple.  General fibers of $\psi : X \rightarrow \mathbb{P}^1$ are smooth degree four del Pezzo surfaces $S_4$, on which the monodromy of $\psi$ acts as the stabilizer of a smooth fiber of $\pi_3$.

Similarly, for $X$ general in moduli the discriminant locus $\Delta'$ of $\pi_1 \times \psi$ is a smooth curve of bidegree $(4,2)$ in $\mathbb{P}^1 \times \mathbb{P}^1$, with simple ramification under either projection.  A smooth fiber $\pi_1^{-1}(q)$ may be considered as the blow-up of $\mathbb{P}^1 \times \mathbb{P}^1 \subset Y$ along two linearly general points.  The monodromy on smooth fibers of $\pi_1$ is the natural action of $S_2 \times S_2$ permuting the two blown-up points and rulings of $\mathbb{P}^1 \times \mathbb{P}^1$.

\begin{lem}\label{3.4core}
Let $X$ be general in moduli.  A core of free curves on $X$ is given by
\begin{align*}
   \mathscr{C}_X = &\{ 2c_1, c_2, 2c_2-2e, c_1+c_2-e, c_1+c_2, c_1 + 2c_2 -2e, 2c_2-e,  2c_1+c_2-e  \}.
\end{align*}
The only nonseparating classes in $\mathscr{C}_X$ are $2c_1$, $2c_2 -2e$, and $c_1 + c_2$.
\end{lem}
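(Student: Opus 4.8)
\textbf{Proof strategy for Lemma \ref{3.4core}.}
The plan is to run the standard template outlined in Section 7.1: first enumerate all nef curve classes $\alpha$ with $2 \leq -K_X.\alpha \leq 4$, discard those representing non-dominant families or freely breakable curves not needed in the core, verify that $\overline{\free}(X,\alpha)$ (or $\overline{\free}^{bir}(X,\alpha)$) is irreducible for the surviving classes, and finally identify the separating classes by computing whether $\text{ev}: \overline{\free}_1(X,\alpha)\to X$ has reducible general fibers. Writing $\alpha = a c_1 + b c_2 - n e$, the effectivity constraints $\alpha.H_1 \geq 0$, $\alpha.(H_2-E)\geq 0$, $\alpha.E\geq 0$ together with $2 \leq a + 2b - n \leq 4$ cut out a finite list; I would solve these inequalities directly and observe that the classes outside $\mathscr{C}_X$ — such as $4c_1$, $3c_1$, $2c_1+2c_2-2e$, $3c_2-3e$, and the various quartic classes contracted by $\psi$ — either fail to dominate $X$ (odd multiples of $c_1$), are double covers ($4c_1$, $2c_2-2e$ style), or are freely breakable quartic classes. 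For the quartic classes contracted by the del Pezzo fibration $\psi: X \to \mathbb{P}^1$, I would invoke Mori bend-and-break on the degree-$4$ del Pezzo fibers together with Theorem \ref{del pezzo curves thm} to exhibit the required free chains; for quartics meeting $E$ or of class $c_1+2c_2-2e$-type that are not contracted by $\psi$, I would apply Proposition \ref{breaking quartic curves} and Corollary \ref{breaking quartics cor} using the conic-bundle structure $\pi_3: X \to \mathbb{F}_1$ (whose fibers over the smooth locus are disjoint from $E$).

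For irreducibility of $\overline{\free}(X,\alpha)$ with $\alpha \in \mathscr{C}_X$, I would treat the conic classes and cubic classes according to which Mori fibration governs them. The classes $2c_1$, $c_2$, $2c_2-2e$ are handled as images of (pluri-)fibers or sections of elementary contractions, using irreducibility of $\Delta$ and $\Delta'$ and \cite[Proposition~4.8]{mori1985classification} for the discriminant curves; the monodromy descriptions recorded above (via Theorem \ref{conic monodromy} / Proposition \ref{del Pezzo monodromy}) then give irreducibility directly. For $c_1+c_2$, $c_1+c_2-e$, $2c_2-e$, $2c_1+c_2-e$, and $c_1+2c_2-2e$ I would fiber $\overline{\free}(X,\alpha)$ over the appropriate base (the del Pezzo fiber of $\psi$, or $\mathbb{P}^2$ via $\pi_2$, or $\mathbb{F}_1$ via $\pi_3$), identify the generic fiber of that fibration with a linear system or a $\mathbb{P}^r$-bundle, and use a transitivity-of-monodromy argument (Theorem \ref{Monodromy}) on the incidence points with $B$ or $\Delta$ to glue the components. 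To compute which classes are separating, I would determine the fiber of $\text{ev}$ over a general point: $2c_1$ has a unique curve through a general point (so nonseparating); $c_1+c_2$ will be nonseparating by Theorem \ref{reducible fibers: 4 author result} since it is interior-adjacent but of the right type, or by an explicit count; $2c_2-2e$ is a double cover class, hence nonseparating once we pass to $\overline{\free}^{bir}$; and the remaining classes ($c_2$, $c_1+c_2-e$, $2c_2-e$, $2c_1+c_2-e$, $c_1+2c_2-2e$) will have evaluation maps with reducible general fibers coming from monodromy choices of intersection points with $\Delta$ or $B$, making them separating.

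The main obstacle I anticipate is controlling the del Pezzo quartic classes contracted by $\psi$ and the mixed classes meeting $E$: I must show that \emph{every} component of $\overline{\free}(X,\alpha)$ for these $\alpha$ contains a free chain with components in $\mathscr{C}_X$, which requires knowing the full list of components of free quartics and checking each breaks appropriately. For the $\psi$-contracted quartics this reduces cleanly to the surface statement Theorem \ref{del pezzo curves thm} on $S_4$ plus the fact that monodromy of $\psi$ is the full stabilizer (so distinct surface components glue), but for quartics with $\alpha.E > 0$ I expect to need the conic-bundle argument of Proposition \ref{breaking quartic curves} adapted to the present $\pi_3: X \to \mathbb{F}_1$, using that $E$ is a section of $\pi_3$ and that general fibers of $\pi_3$ are disjoint from $E$ so general smoothings of nodal attachments are very free. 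A secondary subtlety is that $X$ has an elementary $D1$ contraction ($\psi$), so $X$ belongs to the family $(\Sigma)$ and genericity of $X$ in moduli must be used at every monodromy step — in particular to ensure $\Delta$ and $\Delta'$ are smooth with simple ramification, which is exactly what Theorem \ref{conic monodromy} and the analysis in Lemma \ref{lines in C1 fibrations} supply.
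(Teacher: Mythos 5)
Your overall strategy is the one the paper follows: solve the inequalities $\alpha.H_1\geq 0$, $\alpha.(H_2-E)\geq 0$, $\alpha.E\geq 0$, $2\leq -K_X.\alpha\leq 4$ to get the finite list, discard $3c_1$, $3c_2-3e$, $4c_1$, $4c_2-4e$ as non-free or multiple covers, break the quartics contracted by $\pi_1$ or $\psi$ inside smooth del Pezzo fibers, prove irreducibility of the surviving classes by fibering over the Mori contractions and applying the monodromy computations, and read off the separating classes from the number of components of $\mathrm{ev}^{-1}(p)$. Two points need repair, though. First, $2c_2-2e=2(c_2-e)$ is \emph{not} a double-cover class: $c_2-e$ has anticanonical degree $1$, so its double covers form a non-dominant family, and the dominant component of $\overline{\free}(X,2c_2-2e)$ consists of the smooth fibers of the conic bundle $\pi_1\times\psi:X\to\mathbb{P}^1\times\mathbb{P}^1$ (equivalently, preimages of lines $\{q\}\times\ell$ with $p\in\ell$). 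It is nonseparating because there is a \emph{unique} such fiber through a general point, not because of any passage to $\overline{\free}^{bir}$; getting this reason right matters when you later glue chains involving this class.

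Second, your proposed use of Corollary \ref{breaking quartics cor} via $\pi_3:X\to\mathbb{F}_1$ fails for the quartic $c_1+2c_2-e$ (which is presumably what you mean by the ``$c_1+2c_2-2e$-type'' quartics): $\pi_{3*}(c_1+2c_2-e)$ is a conic class on $\mathbb{F}_1$ whose space of free curves is already $4$-dimensional, so the fibers of $\pi_{3*}$ restricted to a component of $\overline{\free}(X,c_1+2c_2-e)$ are generically finite, not of dimension $\geq 2$. The correct fibration here is the \emph{other} conic bundle $\pi_1\times\psi$ (the contraction of $c_2-e$): by Lemma \ref{3.1 lemma} the induced map to $|\mathcal{O}_{\mathbb{P}^1\times\mathbb{P}^1}(1,1)|$ is dominant and generically finite, and a one-parameter family through a general point and a general curve must degenerate. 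Even then the degenerate member may be a free cubic attached to a general line of class $e$ or $c_2-e$ rather than a chain of free conics, so you still owe the final step of breaking that cubic and resmoothing to reach the unique admissible conic decomposition $(c_2,\,c_1+c_2-e)$; Proposition \ref{breaking quartic curves} leaves you with exactly the same residual case. With those two corrections your argument agrees with the paper's.
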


\begin{proof}
\textbf{Nef Curve Classes of Anticanonical Degree Between $2$ and $4$:}
The nef curve classes of anticanonical degree $2$ and $4$ are those shown in the core, in addition to $3c_1, 4c_1, 2c_1+c_2, 2c_2, c_1+2c_2-e, 3c_1+c_2-e, 3c_2-3e, 2c_1+2c_2-2e, c_1+3c_2-3e, 3c_2-2e$, and  $4c_2-4e$.  Irreducible curves of class $3c_1$ and $3c_2 -3e$ are never free; rather, they are triple covers of reducible fibers of conic fibrations.  Free curves of class $4c_1$, and $4c_2-4e$ are all multiple covers of conics.

Any component of free curves parameterizing quartics lying in fibers of $\pi_1$ or fibers of $\psi : X \rightarrow \mathbb{P}^1$ (recall $\psi$ factors through $\pi_3 : X \rightarrow \mathbb{F}^1$) contain chains of conics.  Indeed, general fibers are smooth del Pezzo surfaces, so there are one parameter families of curves passing through 2 general points on a general fiber.  This proves free curves of class $3c_1 + c_2 -e$, $c_1 + 3c_2 -3e$, $2c_1 + 2c_2 -2e$, $2c_2$, and $3c_2 -2e$ degenerate to chains of free conics.

General free curves of class $2c_1 + c_2$ lie in the $\pi_2$-preimage $X_\ell \cong S_4$ of general lines $\ell \subset \mathbb{P}^2\setminus\{p\}$.  Such curves are cubic curves in $X_\ell$, which degenerate to the union of a smooth fiber of $X_\ell \rightarrow \ell$ and a section.  There are finitely many such sections, each of class $c_2$, and each of which is a free curve in $X$.  Thus, curves of class $2c_1 + c_2$ break freely into free chains of type $(2c_1, c_2)$.

Consider a component $M \subset \overline{\free}(X, c_1 + 2c_2 -e)$.  The map $M \rightarrow |\mathcal{O}_{\mathbb{P}^1\times \mathbb{P}^1}(1,1)|$, induced by the contraction $\pi_1 \times \psi$ of $c_2-e$, is generically finite and dominant by \ref{3.1 lemma}.  Note that each point in $X$ meets at most finitely many lines, each line meets finitely many lines, and every 2 parameter family of nef conics in $X$ is dominant.  Therefore, a one parameter family of curves in $M$ meeting a general point and general complete curve in $X \setminus E$ contains a reducible member $f : C \rightarrow X$, where $C = C_1 \cup C_2$ has two irreducible components and $f|_{C_i}$ is free for some $i$.  By generality we may assume $f$ is a chain of free conics or a free cubic and a general line.  In the latter case, $(f_* C_1, f_* C_2) \in \{(c_1 + 2c_2 -2e, e), (c_1, 2c_2 -e)\}$.  For each possibility, we may break the free cubic into the union of a general line of class $c_2 -e$ and a free conic, and smooth the resulting curve to a free chain of type $(c_1 + c_2 -e, c_2)$.

\textbf{Irreducible Spaces and Fibers:} Free curves of class $2c_1$ are precisely smooth fibers of $\pi_2$ and, as such, there is a unique such curve passing through any point.  Free curves of class $c_2$ are the irreducible components the $\pi_1 \times \pi_2$-preimage of lines of the type $\{ q \} \times \ell \subset \mathbb{P}^1 \times \mathbb{P}^2$ which are tangent to the restriction $B_q$ of $B$ to $\{q\} \times \mathbb{P}^2$. This moduli space is fibered over $\mathbb{P}^1$ with two components in general fibers, interchanged under the monodromy action. Hence it is irreducible. However, there are two such curves through any point.  Free curves of class $2c_2-2e$ are the $\pi_1 \times \pi_2$-preimage of lines of the form $\{ q \} \times \ell \subset \mathbb{P}^1 \times \mathbb{P}^2$ where $p \in \ell$. This moduli space is irreducible and there is a unique such curve through any point.

Free curves of class $c_1+c_2-e$ are the components of reducible $\pi_1 \times \pi_2$-preimages of curves of class $(1,1)$ in $\mathbb{P}^1 \times \mathbb{P}^2$ whose projection in $\mathbb{P}^2$ contains the point $p$.  In other words, these curves are conics in fibers $F$ of $\psi$ given which are sections of $\pi_3|_F$.  There are eight classes of such conics in $F$, identified under $\psi$-monodromy. This shows the moduli space of such curves is irreducible. However, there are eight components of such curves passing through any point.  Similarly, free curves of class $2c_1 + c_2 -e$ or $c_1 + 2c_2 -2e$ correspond to cubics in general fibers $F$ of $\psi$.  Respectively, these are sections and bisections of $\pi_3 |_F $.  There are eight classes of cubics in $F$ corresponding to each class in $X$, which the monodromy of $\psi$ identifies.

Free curves of class $c_1+c_2$ are components of reducible $\pi_1 \times \pi_2$-preimages of curves of class $(1,1)$ in $\mathbb{P}^1 \times \mathbb{P}^2$. This moduli space is fibered over the points $\ell \in (\mathbb{P}^2)^*$ with general fiber consisting of eight pencils of conics in the degree four del pezzo surface $\pi_2^{-1}(\ell)$.  A Lefschetz pencil argument proves irreducibility of $\overline{\free}(X, c_1 + c_2)$ in an identical manner to irreducibility of $\overline{\free}(X, c_1  +c_2 -e)$.  This time, however, the fiber of $\overline{\free}_1(X, c_1 + c_2) \rightarrow X$ is irreducible, as fixing a general point in $X$ corresponds to choosing a general pencil of lines in $\mathbb{P}^2$.

Free curves of class $2c_2-e$ are contracted by $\pi_1$.  Under our description of a general fiber $X_q = \pi_1^{-1}(q)$ as the blow-up of $\mathbb{P}^1 \times \mathbb{P}^1 \subset Y$ along two general points, curves of class $2c_2 -e$ correspond to curves of bidegree $(1,1)$ meeting one blown-up point.  A general fiber of $\overline{\free}_1(X, 2c_2 -e) \rightarrow X$ contains two components, interchanged by the monodromy of $\pi_1$.
\end{proof}

\begin{lem}\label{3.4relations1}
Relations in the monoid $\mathbb{N}\mathscr{C}_X$ are generated by:
\begin{enumerate}
    \item $(2c_2-2e)+(2c_1)=2(c_1+c_2-e)$
    \item $(2c_2-2e)+(c_1+c_2)=(c_1+2c_2-2e)+(c_2) =(c_1+c_2-e)+(2c_2-e)$
    \item $(2c_2-2e)+(2c_1+c_2-e)=(c_1+2c_2-2e)+(c_1+c_2-e)$
    \item $(2c_2-2e)+2(c_2)=2(2c_2-e)$
    \item $(2c_2-2e)+2(c_1+c_2-e)=2(c_1+2c_2-2e)$
    \item $(2c_2-2e)+(c_1+c_2-e)+(c_2)=(c_1+2c_2-2e)+(2c_2-e)$
    \item $(c_1+2c_2-2e)+(2c_1+c_2-e)=3(c_1+c_2-e)$
    \item $(c_1+2c_2-2e)+(2c_1)=(2c_1+c_2-e)+(c_1+c_2-e)$
    \item $(c_1+2c_2-2e)+(c_1+c_2)=(2c_1+c_2-e)+(2c_2-e)=2(c_1+c_2-e)+(c_2)$
    \item $(2c_1)+2(c_2)=2(c_1+c_2)$
    \item $(2c_1)+(c_2)+(c_1+c_2-e)=(c_1+c_2)+(2c_1+c_2-e)$
    \item $(2c_1)+2(c_1+c_2-e)=2(2c_1+c_2-e)$
    \item $(2c_1)+(2c_2-e)=(c_1+c_2)+(c_1+c_2-e) = (2c_1+c_2-e)+(c_2)$
    \item $(2c_2-e)+(c_1+c_2)=(c_1+c_2-e)+2(c_2)$.
\end{enumerate}
For each relation $\sum \alpha_i = \sum \beta_j$, a main component of $\prod_X \overline{\free}_2(X,\alpha_i)$ lies in the same component of free curves as a main component of $\prod_X \overline{\free}_2(X, \beta_j)$.
\end{lem}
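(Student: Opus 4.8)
\textbf{Proof plan for Lemma \ref{3.4relations1}.}  The argument has two separate halves, exactly as in the preceding cases ($3.1$--$3.3$): first establish that the listed equations generate the monoid of relations, then verify the ``main component'' claim for each relation.  For the generating claim, the plan is to apply Lemma \ref{Relations}.  One orders $\mathscr{C}_X$ so that the two separating-looking generators of the largest $-K_X$-degree, namely $2c_2-2e$ and $c_1+2c_2-2e$, come first (these are the ones cut out by effective divisors $D$ with $D\cdot\alpha<0$ for exactly one $\alpha$), followed by $2c_1$, $2c_2-e$, and then the remaining classes which span $N_1(X)$.  The relevant divisors to feed into Lemma \ref{Relations} are combinations of $H_1$, $H_2-E$, $E$ with appropriate signs: for instance $E$ itself pairs negatively only with $2c_2-2e$, $c_1+2c_2-2e$, $c_1+c_2-e$, $2c_1+c_2-e$; a divisor like $-E+kH_2$ or $2H_2-2E-H_1$ isolates $2c_2-2e$; and once $2c_2-2e$ and $c_1+2c_2-2e$ are eliminated one checks that $2c_1$, $2c_2-e$, and the three remaining classes $c_2$, $c_1+c_2-e$, $c_1+c_2$, $2c_1+c_2-e$ span $N_1(X)\cong\mathbb{Z}^3$ and carry the single extra relation $(2c_1)+(2c_2-e)=(2c_1+c_2-e)+(c_2)$.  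This is the same bookkeeping as in Lemma \ref{3.3relations}, so it is routine; the only care needed is to confirm each substitution strictly decreases the relevant minimality vector so the process terminates.

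For the main-component claim, the plan is to treat the relations in three groups according to which Mori structure governs the curves involved.  (i)  Relations whose classes all lie in a general fiber $X_\ell\cong S_4$ of $\pi_2:X\to\mathbb P^2$ over $\ell\in(\mathbb P^2)^*$ (those involving $c_1+c_2$, $2c_1$, $c_2$, and combinations summing to a degree-$\le4$ class on $S_4$): here one invokes Theorem \ref{del pezzo curves thm} and the irreducibility of the relevant spaces on $S_4$ together with the $\psi$- or $\pi_2$-monodromy, exactly as in case $3.1$; e.g.\ relation (10), $(2c_1)+2(c_2)=2(c_1+c_2)$, follows from irreducibility of the geometrically-rational-anticanonical-curve component in $S_4$ and smoothing an immersed chain of type $(2c_1,c_2,c_2)$.  (ii) Relations governed by $\psi:X\to\mathbb P^1$ (those involving $2c_1+c_2-e$, $c_1+2c_2-2e$, $c_1+c_2-e$, $2c_2-e$, $2c_2-2e$, summing to a class contracted by $\psi$): here general fibers are again degree-four del Pezzo surfaces, and one identifies such a fiber with a blow-up of $\mathbb P^2$ so that $c_1\mapsto$ a line-component class, $c_2\mapsto$ a section-type class, etc., then quotes Theorem \ref{del pezzo curves thm} and Proposition \ref{del Pezzo monodromy}/Theorem \ref{conic monodromy} — this is where relations (1)--(9) and (14) get handled, again as in $3.1$ and $3.2$.  (iii) Mixed relations, where the target class is interior to $\overline{NE}(X)$ and the curves are very free (e.g.\ (11), (13)): here one uses Proposition \ref{very free curves}, Theorem \ref{reducible fibers: 4 author result}, and Lemma \ref{Gluing}, plus smoothing of explicitly-described immersed chains through a general point (an immersed chain of type $(c_1+c_2,c_1,c_2)$ for (13), say), with Lemma \ref{GHS lemma} and Proposition \ref{kontsevich space H1} guaranteeing the chains are unobstructed smooth points of $\overline{\mathcal M}_{0,0}(X)$.

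The main obstacle I expect is \emph{not} the generating-set computation — that is mechanical — but rather the bookkeeping in group (ii)/(iii): for each of the fourteen relations one must exhibit a single component of $\overline{\free}(X,\sum\alpha_i)$ containing main components of \emph{both} sides, and the cleanest way is to produce one explicit immersed chain (through a general point, or in a general fiber of $\psi$ or $\pi_1$) that degenerates in two different ways.  Choosing these chains correctly, and in case (ii) pinning down the right identification of a general fiber of $\psi$ with a blow-up of $\mathbb P^2$ so that the listed curve classes correspond to the Weyl-orbit classes controlled by Proposition \ref{del Pezzo monodromy}, is the delicate part; once the chain is written down, the verification that it is a smooth point (hence that the two smoothings lie in a common component) is immediate from Proposition \ref{kontsevich space H1} since every component of every such chain has globally generated normal sheaf (the components are lines with $\mathcal N\cong\mathcal O\oplus\mathcal O(-1)$ or free conics/cubics).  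A secondary subtlety is relations (2), (9), (13) which assert a \emph{threefold} equality; these require chaining together two two-term arguments via Lemma \ref{Basic Properties Chains}, using the uniqueness of the relevant main component (from Lemma \ref{Gluing} once one of the glued families has irreducible general fibers) to conclude transitively.
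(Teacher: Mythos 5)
Your plan matches the paper's proof in both halves: the generating set is obtained by the same Lemma \ref{Relations} elimination (the paper uses $2H_1+H_2-2E$, $H_1+H_2-2E$, $-H_1+H_2+E$, and $H_1-E$ in that order to remove $2c_2-2e$, $c_1+2c_2-2e$, $2c_1$, $2c_2-e$), and the main-component claims are verified exactly as you propose, by exhibiting for each relation an immersed chain --- all of whose components have globally generated normal sheaf, so Proposition \ref{kontsevich space H1} applies --- that smooths in two ways inside a general fiber of $\psi$ or $\pi_1$ or through a general point. Two small slips to correct: since $E\cdot e=-1$, the divisor $E$ pairs \emph{positively} (not negatively) with $2c_2-2e$ and the other classes you list; and the single leftover relation among the final four classes $c_2$, $c_1+c_2-e$, $c_1+c_2$, $2c_1+c_2-e$ is $(c_1+c_2)+(c_1+c_2-e)=(2c_1+c_2-e)+(c_2)$ (the second equality in relation (13)), not the identity you wrote involving the already-eliminated classes $2c_1$ and $2c_2-e$.
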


\begin{proof}
Consider the intersection pairing of core classes with the divisor $2H_1+H_2-2E$. We see $2c_2-2e$ pairs to $-2$, that $2c_2-e$ and $c_1+2c_2-2e$ pair to $0$, that $c_1+c_2-e$ and $c_2$ pair to $1$, and that the other classes pair to greater numbers. Thus, anytime $2c_2-2e$ appears in a relation, we may use relations $(1)-(6)$ to remove it. Hence we reduce to considering relations between core classes other than $2c_2-2e$.

Next, consider the intersection pairing of $H_1+H_2-2E$ with the remaining core classes. It pairs to $-1$ with $c_1+2c_2-2e$, to $0$ with $c_1+c_2-e$ and $2c_2-e$, and to positive values with the other classes. Hence, relations $(2)$ and $(7)-(9)$ remove $c_1+2c_2-2e$ from consideration without re-introducing any curves of class $2c_2-2e$.

The divisor $-H_1+H_2+E$ pairs to $-2$ with $2c_1$, to $0$ with $c_1+c_2$ and $2c_1+c_2-e$, to $1$ with $c_2$ and $c_1+c_2-e$, and to $3$ with $2c_2-e$. As before, relations $(10)-(13)$ remove all occurrences of $2c_1$.

Finally, the divisor $H_1-E$ pairs to $-1$ with $2c_2-e$, to $0$ with $c_2$ and $c_1+c_2-e$, and to $1$ with $c_1+c_2$ and $2c_1+c_2-e$. Use relations $(9)$ and $(14)$ to remove the class $2c_2-e$. The remaining four core class $2c_1+c_2-e$, $c_2$, $c_1+c_2-e$, and $c_1+c_2$ span a three-dimensional span, so they satisfy a unique relation shown in $(13)$.

\textbf{Main Components:} For each relation whose class is annihilated by $\psi_*$ or $\pi_{1*}$, our claim about main components may be deduced by studying a smooth fiber of $\psi$ or $\pi_1$.  These relations are (1), (3)-(5), (7), (8), and (12).  For each relation other than (1), there are immersed chains of curves smoothing to free chains of both types.   We use this strategy frequently in the cases below.
\begin{enumerate}
    \item Smoothings of free chains of type $(2c_2 -2e, 2c_1)$ are geometrically rational anticanonical curves on smooth fibers of $\psi$.  This component of free curves also contains smoothings of free chains of type $(c_1 + c_2 -e, c_1 + c_2 -e)$.
    \item There is a unique component of free chains of type $(2c_2 -2e, c_1 + c_2)$ contained in a unique component $M \subset \overline{\free}(X, c_1 + 3c_2 -2e)$.  Smoothing immersed chains of type $(c_2, c_1, 2c_2 -2e)$ and $(c_1, c_2 -e, e, 2c_2 -2e)$ demonstrates that $M$ also contains chains of type $(c_2, c_1 + 2c_2 -2e)$ and $(c_1 + c_2 -e, 2c_2 -e)$.
    \item Smoothing an immersed chain of type $(2c_2 -2e, c_1, c_1 + c_2 -e)$ in two ways proves our claim.
    \item Smoothing an immersed chain of type $(c_2, c_2 -e, c_2 -e, c_2)$ in two ways proves our claim.
    \item Smoothing an immersed chain of type $(c_1 + c_2 -e, c_2 -e, c_2 -e, c_1 + c_2 -e)$ in two ways proves our claim.
    \item There is a unique main component of $\overline{\free}_1(X,c_1 + 2c_2 -2e) \times_X \overline{\free}_1(X, 2c_2 -e)$, since monodromy over the attachment to a general free curve of class $c_1 + 2c_2 -2e$ acts transitively on fibers of $\overline{\free}_1(X, 2c_2 -e) \rightarrow X$.  Smoothing an immersed chain of type $(c_1 + c_2 -e, c_2 -e, c_2 -e, c_2)$ in two ways proves our claim.
    \item Smoothing an immersed chain of type $(c_1, c_2 -e, c_1, c_2 -e, c_1, c_2 -e)$ in two ways proves our claim.
    \item Smoothing an immersed chain of type $(2c_1, c_2 -e, c_1 + c_2 -e)$ in two ways proves our claim.
    \item There is a unique component of free chains of type $(c_1 + 2c_2 -2e, c_1 + c_2)$ contained in a unique component $M \subset \overline{\free}(X, 2c_1 + 3c_2 -2e)$.  Smoothing immersed chains of type $(c_1 + c_2 -e, c_2 -e, c_1, c_2)$ in two ways proves $M$ contains free chains of type $(c_1 + c_2 -e, c_1 + c_2 -e, c_2)$.  Then, smoothing immersed chains of type $(c_1 + c_2 -e, c_1, c_2 -e, c_2)$ proves there is a component of $\overline{\free}(X, 2c_1 + 3c_2 -2e)$ that contains free chains of type $(c_1 + c_2 -e, c_1 + c_2 -e, c_2)$ and $(2c_1 + c_2 -e, 2c_2 -e)$.
    \item Smoothing an immersed chain of type $(c_2, c_1, c_1, c_2)$ in two ways proves our claim.
    \item Smoothing an immersed chain of type $(c_2, c_1, c_2 -e, 2c_1)$ in two ways proves our claim.
    \item Smoothing an immersed chain of type $(c_1 + c_2 -e, c_1, c_1, c_1 + c_2 -e)$ in two ways proves our claim.
    \item There is a unique component of free chains of type $(c_1 + c_2 -e, c_1 + c_2)$ contained in a unique component $M \subset \overline{\free}(X, 2c_1 + 2c_2 -e)$.  Smoothing immersed chains of type $(c_1 + c_2 -e, c_1, c_2)$ in two ways proves $M$ contains free chains of type $(2c_1 + c_2 -e, c_2)$.  Then, smoothing immersed chains of type $(2c_1, c_2 -e, c_2)$ in two ways proves there is a component of $\overline{\free}(X, 2c_1  +2c_2 -e)$ that contains free chains of type $(2c_1 + c_2 -e, c_2)$ and $(2c_1, 2c_2 -e)$.
    \item Smoothing an immersed chain of type $(c_2, c_1, c_2 -e, c_2)$ in two ways proves our claim.
\end{enumerate}
\end{proof}

\begin{lem}
For each $\alpha \in c_1 + c_2 + \Nef_1(X)_\mathbb{Z}$, $\overline{\free}(X,\alpha)$ is nonempty.
\end{lem}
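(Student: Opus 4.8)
The plan is to follow the template already used for the other Picard rank $3$ cases in this section: reduce by Gordan's Lemma (Lemma \ref{Gordan's Lemma}) and Theorem \ref{Representability of Free Curves} to a finite combinatorial statement about the monoid $\Nef_1(X)_\mathbb{Z}$, and then settle that statement by decomposing $\alpha$ as a nonnegative combination of classes lying in the core $\mathscr{C}_X$, each of which has nonempty space of free curves by definition.

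First I would make $\Nef_1(X)_\mathbb{Z}$ explicit. The cone $\overline{NE}(X)$ is simplicial on $e$, $c_1$, $c_2-e$, and, as in the proof of Theorem \ref{Representability of Free Curves}, $\Nef_1(X)=\{v\in\overline{NE}(X): v\cdot E\geq 0\}$ since $E$ is the unique rigid effective divisor on $X$ (the only divisorial contraction being $\phi$). Using $e\cdot E=-1$, $c_1\cdot E=0$, $(c_2-e)\cdot E=1$, one checks that the extreme rays of $\Nef_1(X)$ are spanned by $c_1$, $c_2-e$ and $c_2=e+(c_2-e)$, and that every integral nef class is a nonnegative integral combination $x\,c_1+z\,(c_2-e)+y\,c_2$; since $\{e,c_1,c_2-e\}$ is a $\mathbb{Z}$-basis of $N_1(X)_\mathbb{Z}$ and the three classes $c_1,c_2-e,c_2$ are linearly independent, $\Nef_1(X)_\mathbb{Z}$ is in fact the free monoid on $c_1$, $c_2-e$, $c_2$. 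In particular $\mathscr{C}_X$ together with the two line classes $c_1$ and $c_2-e$ generates $\Nef_1(X)_\mathbb{Z}$.

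Next, for $\alpha\in(c_1+c_2)+\Nef_1(X)_\mathbb{Z}$ I would write $\alpha=(x+1)c_1+z(c_2-e)+(y+1)c_2$ with $x,y,z\geq 0$, and re-express it as a sum of classes each lying in $\mathscr{C}_X$. Pair off $\min(x+1,z)$ copies of $c_1+c_2-e\in\mathscr{C}_X$ from the $c_1$'s and the $(c_2-e)$'s; what remains is $|x+1-z|$ copies of a single line class together with the $y+1\geq 1$ conics $c_2\in\mathscr{C}_X$. If the number of leftover lines is even, group them into copies of $2c_1$ or of $2c_2-2e$, both in $\mathscr{C}_X$, and keep the $c_2$'s as free conics. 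If it is odd, absorb one leftover line into one of the (at least one) conics $c_2$, producing $c_1+c_2\in\mathscr{C}_X$ or $2c_2-e\in\mathscr{C}_X$ according to which line it is, after which an even number of leftover lines remain and can be grouped as above. In every case $\alpha$ is a nonnegative combination of classes in $\{c_2,\ 2c_1,\ 2c_2-2e,\ c_1+c_2-e,\ c_1+c_2,\ 2c_2-e\}\subset\mathscr{C}_X$, and for each of these $\overline{\free}(X,\cdot)$ is nonempty. Then any main component of the iterated fiber product over $X$ of the corresponding spaces parameterizes a free chain of that total class (Lemma \ref{Basic Properties Chains}, together with the fact that a product of main components contains a main component); smoothing such a chain yields a free curve of class $\alpha$, so $\overline{\free}(X,\alpha)\neq\emptyset$.

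The only genuinely delicate point — the main obstacle, such as it is — is the bookkeeping: checking that a single shift by $\tau=c_1+c_2$, which supplies exactly one guaranteed free conic $c_2$, is enough to absorb the at most one odd leftover line after pairing. The parity case-split above does exactly this; it is elementary, and everything else is a direct application of the cited results.
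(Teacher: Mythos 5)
Your argument is correct and follows the same route as the paper: identify $c_1$, $c_2-e$, $c_2$ as the free generators of the monoid $\Nef_1(X)_\mathbb{Z}$ and then observe that every class in the shifted cone decomposes into core classes (the paper states this more tersely as the classification that a nef class is free iff it is not an odd multiple of $c_1$ or $c_2-e$). Your explicit parity bookkeeping with $c_1+c_2-e$, $2c_1$, $2c_2-2e$, $c_1+c_2$, and $2c_2-e$ is exactly the verification the paper leaves implicit.
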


\begin{proof}
Generators of $\Nef_1(X)_\mathbb{Z}$ as a monoid of integer points are $c_1$, $c_2$, and $c_2 -e$.  Looking at $\mathscr{C}_X$, a nef class is represented by a free curve iff it is not an odd multiple of $c_2 -e$ or $c_1$.
\end{proof}

\subsection*{3.5}

\textbf{Description of Variety $X$:} $X$ is the blow up of $\mathbb{P}^1\times \mathbb{P}^2$ along a curve $c$ of bidegree $(5,2)$ embedded under projection to $\mathbb{P}^2$.

\textbf{Generators for $N^1(X)$ and $N_1(X)$:} Let $H_1 = \pi_1^* \mathcal{O}(1)$, $H_2 = \pi_2^* \mathcal{O}(1)$, and $E$ be the exceptional divisor of the blow-up.  Let $l_1$ be the class of a fiber of $\pi_2$, $l_2$ be the class of a general line in each fiber of $\pi_1$, and $e$ be a fiber of $E\rightarrow c$.

\begin{thm}\label{3.5thm}
For all $\alpha \in l_1 + \Nef_1(X)_\mathbb{Z}$, $\overline{\free}(X,\alpha)$ is nonempty and irreducible.
\end{thm}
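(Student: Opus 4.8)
The plan is to follow the standard template laid out in Section~7.1, applied to the blow-up $X$ of $\mathbb{P}^1\times\mathbb{P}^2$ along a curve $c$ of bidegree $(5,2)$. First I would record the effective and nef cones: by Lemma~\ref{P1 x P2 effective curves}, $\overline{NE}(X)$ has extremal rays $e$, $l_1-e$, and $2l_2-5e$ (since $d_1=5$), and the effective divisors can be read off from \cite{matsuki1995weyl}. From these I would compute $\Nef_1(X)$, whose extremal rays will be classes of free curves by Theorem~\ref{Representability of Free Curves}. Note that $\pi_2:c\hookrightarrow\mathbb{P}^2$ is an embedding, while $\pi_1:c\to\mathbb{P}^1$ is a degree-$5$ cover; since $d_1=5$ I cannot invoke the easy half of Lemma~\ref{P1 x P2 monodromy}, so I would need to argue the monodromy of $\pi_1|_c$ is the full symmetric group $S_5$ for general (hence, since this family is rigid, all) members of the deformation type, using the incidence-variety argument of Theorem~\ref{Monodromy}b together with the fact that $X$ being Fano forces the ramification points of $\pi_1|_c$ to have valency $2$ with distinct images.

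Next I would identify a core $\mathscr{C}_X$ of free curves: enumerate all nef classes $\alpha=al_1+bl_2-ne$ with $2\le -K_X\cdot\alpha\le 4$ satisfying the nef inequalities, then discard those that are multiple covers of conics or that break freely into chains of lower-degree free curves (using Lemmas~\ref{curves1 in P1 x P2}, \ref{curves2 in P1 x P2}, \ref{curves3 in P1 x P2} to control irreducibility and fibers). The del Pezzo fibration here is $\pi_1:X\to\mathbb{P}^1$ with fibers the blow-up of $\mathbb{P}^2$ at the $5$ linearly general points $c\cap\pi_1^{-1}(\mathrm{pt})$, i.e. degree-$4$ del Pezzo surfaces; so classes contracted by $\pi_1$ with disconnected evaluation fibers (the separating classes) must be handled via Theorem~\ref{del pezzo curves thm} and Lemma~\ref{monodromy del Pezzo main result}, and I would note $X$ admits an elementary $D1$ or $C1$ structure that determines the monodromy on fibers. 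Then I would find a generating set of relations among $\mathbb{N}\mathscr{C}_X$ via Lemma~\ref{Relations}, and for each relation verify condition~\ref{Main Method}(1): typically by showing the relevant glued space of free curves of the summed class is irreducible (often reducible to an irreducibility statement on $\text{Sym}^n(c)$ or on linear systems in a del Pezzo fiber, handled by the monodromy results), or by exhibiting a nodal curve whose nodes are smooth points of $\overline{\mathcal{M}}_{0,0}(X)$ connecting the two main components.

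Condition~\ref{Main Method}(2) (or its $(\dagger)$-version from Corollary~\ref{bir Main Method}) is supplied outside the relative cone $\overline{NE}(\pi_1)$ by Corollary~\ref{connected fibers fixall}, provided I first check its hypotheses: that $X$ has a core (which I will have exhibited) and, since there may be a nef $-K_X$-conic $\beta$ interior to $\overline{NE}(X)$, that $\overline{\free}^{bir}(X,2\beta)$ is irreducible — by Lemma~\ref{interior conic lemma} there is at most one such $\beta$, and since $\rho(X)=3$ there is in fact none, so every main component automatically satisfies $(\dagger)$. With $\tau=l_1$, I would then invoke Theorem~\ref{Main Method} to conclude $\overline{\free}(X,\alpha)$ is irreducible or empty for all $\alpha\in l_1+\Nef_1(X)_\mathbb{Z}$, together with a final nonemptiness lemma: by Lemma~\ref{Gordan's Lemma} and Theorem~\ref{Representability of Free Curves} it suffices to check the finitely many $\alpha=\sum a_{\alpha_i}\alpha_i$ with $0\le a_{\alpha_i}<1$ (plus the shift by $l_1$ and by the remaining cone generator $2l_2-5e$) are represented by free curves, which is a direct case check.

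The main obstacle I anticipate is the presence of the degree-$5$ cover $\pi_1|_c$: establishing full $S_5$-monodromy (and the consequent irreducibility of the separating classes contracted by the degree-$4$ del Pezzo fibration, and of the glued spaces appearing in the relations), rather than a smaller subgroup, is the delicate point — this is exactly the kind of place where generality hypotheses enter, and I would need to either verify directly that this particular deformation family has maximal monodromy or appeal to Lemma~\ref{monodromy del Pezzo main result} and the classification in \cite{matsuki1995weyl}. A secondary subtlety is that $\pi_1|_c$ is \emph{not} an embedding, so the "strict transform" descriptions of curves and the dimension counts in Lemmas~\ref{curves2 in P1 x P2}--\ref{curves3 in P1 x P2} need the bidegree-$(5,2)$ hypotheses double-checked (those lemmas require both $d_i>1$, which holds here), and the bend-and-break reductions to anticanonical degree $\le 4$ must respect the $E5$-free hypothesis of Theorem~\ref{MovableBB}, which holds since $X$ has no $E5$ divisor.
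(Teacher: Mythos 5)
Your outline is essentially the paper's proof for this case: the same blow-up description of $X\rightarrow\mathbb{P}^1\times\mathbb{P}^2$ along a $(5,2)$-curve, the same core-and-relations template via Theorem \ref{Main Method} and Lemma \ref{Relations}, the same reliance on Lemmas \ref{curves1 in P1 x P2}--\ref{curves3 in P1 x P2}, the same observation that there is no interior nef conic so condition \ref{Main Method}(2) is supplied by Corollary \ref{connected fibers fixall} for all $\alpha$ with $\alpha\cdot H_1>0$, and the same Gordan's-Lemma nonemptiness check. The one substantive divergence is exactly the point you flag as the main obstacle, and there your plan has a gap. Theorem \ref{3.5thm} carries no generality hypothesis (this family is not in $(\Sigma)$, since $(-K_X)^3=20$ and $B_3(X)=0$), yet both routes you offer for the monodromy of the degree-$5$ cover $\pi_1|_c$ fail for special members: (i) the Fano condition forces ramification points of $\pi_1|_c$ to have valency $2$ but, precisely because $d_1=5\geq 4$, it does \emph{not} force their images in $\mathbb{P}^1$ to be distinct (see the proof of Lemma \ref{P1 x P2 monodromy}), so you cannot assume simple branching and hence cannot run a ``transposition plus transitivity gives $S_5$'' argument for arbitrary $X$; and (ii) Theorem \ref{monodromy del Pezzo main result} assumes $X$ general in moduli, so appealing to it would only prove the statement for general $X$. (The family is also not rigid, so ``general hence all'' is not available.)

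The paper closes this gap by lowering the bar rather than raising the monodromy: irreducibility of the separating core classes (in particular $2l_2-3e$) and of the glued spaces appearing in the relations only requires that the monodromy group of $\pi_1|_c$ act transitively on \emph{unordered pairs} of points in a general fiber, not that it be all of $S_5$. Since the group is transitive ($c$ is irreducible) and of even order (valency-$2$ ramification contributes an involution), it is one of $D_{10}$, $GA(1,5)$, $A_5$, $S_5$, and the paper then checks $2$-set transitivity for these candidates. If you adopt this reduction, your outline goes through for arbitrary members of the family; without it you would need a generality hypothesis the theorem does not have. One further caution if you carry out that check yourself: $D_{10}$ acting on the five vertices of a pentagon has two orbits on unordered pairs (edges and diagonals), so that candidate must be excluded by some additional argument (e.g.\ producing an honest transposition, or a $2$-cycle-type local monodromy incompatible with $D_{10}$) rather than waved through.
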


\textbf{Intersection Pairing:} $ H_i \cdot l_j = \delta_{ij}$, $E \cdot e = -1$, and all other pairings are $0$.

\textbf{Anticanonical Divisor:} $-K_X = 2H_1 + 3H_2 - E$

\textbf{Effective Divisors:} $2H_2 -E$, $H_1$, and $E$ are effective.

\textbf{Effective Curves:}  The Mori cone is generated by $2l_2 -5e$, $e$, and $l_1-e$.

\begin{lem}
A weak core of free curves on $X$ is given by 
\begin{align*}
   \mathscr{C}_X = &\{ l_1,\ l_2,\ l_2 -e,\ 2l_2 -3e,\ 2l_2 -4e,\ 3l_2 -6e, \ l_1 + l_2 -2e  \} 
\end{align*}
The only separating classes $\alpha \in \mathscr{C}_X$ are $l_2 -e$, $2l_2 -3e$, $2l_2 -4e$, and $3l_2 -6e$.  When $X$ is general in moduli, $\mathscr{C}_X$ is a core of free curves.  For arbitrary $X$, $\overline{\free}(X,\alpha)$ is irreducible for $\alpha \in \mathscr{C}_X\setminus \{2l_2 - 3e\}$.
\end{lem}
\begin{proof}
\textbf{Nef Curve Classes of Anticanonical Degree Between $2$ and $4$:}
If $\alpha = d_1 l_1 + d_2 l_2 -ne$ is nef of appropriate anticanonical degree, then $n\leq 2d_2$ and $2 \leq 2d_1 + 3d_2 -n \leq 4$.  From this we see $2d_1 + d_2 \leq 4$ and find the following curves:
\begin{itemize}
    \item $l_1, l_2, l_2 -e, 2l_1, 2l_2 -2e, 2l_2 -3e, 2l_2 -4e, 3l_2 -5e, 3l_2 - 6e, 4l_2-8e, l_1 + l_2 -e, l_1 + l_2 -2e, l_1 + 2l_2 -4e$
\end{itemize}

\textbf{Freely Breakable Classes:}  $2l_1, 2l_2 -2e, l_1 + l_2 -e$, and $l_1 + 2l_2 -4e$ are all breakable.  Similarly, $3l_2 -5e$ breaks into $l_2-e$ and $2l_2 -4e$, and $4l_2 -8e$ breaks into two curves of class $2l_2-4e$.  $4l_2 -8e$ must be either a double cover of a curve of class $2l_2-4e$, or a nodal rational curve with three double points along $c$.  In the latter case, we may impose the curve pass through 3 additional, general points contained in a conic containing 4 of the 5 intersection points of $c$ with a fiber of $\pi_1$.

\textbf{Irreducibility of Spaces and Fibers:}
Lemmas \ref{curves1 in P1 x P2}, \ref{curves2 in P1 x P2}, \ref{curves3 in P1 x P2} prove that for all $\alpha \in \mathscr{C}_X \setminus \{2l_2 -3e\}$, $\overline{\free}(X,\alpha)$ is irreducible.  Irreducibility of $\overline{\free}(X,2l_2 -3e)$ is equivalent to the monodromy of $\pi_1|_c : c\rightarrow \mathbb{P}^1$ being transitive on unordered pairs of points a general fiber.  For general $X$, this follows from Theorem \ref{monodromy del Pezzo main result}.  %
Clearly, $l_1$, $l_2$, and $l_1 + l_2 -2e$ are not separating classes ($c$ projects to a conic in $\mathbb{P}^2$), while the other classes are separating.
\end{proof}

\begin{rem}
Whether or not $\overline{\free}(X, 2l_2-3e)$ is irreducible has no impact on the number of components of $\overline{\free}(X, \alpha)$ for $\alpha \not\in \partial \overline{NE}(X)$.  Indeed, both $\overline{\free}(X, l_1 + 2l_2 -3e)$ and $\overline{\free}(X, l_1 + 3l_2 -5e)$ are irreducible by Lemma \ref{curves2 in P1 x P2}. 
\end{rem}

\begin{lem}
Relations in the monoid $\mathbb{N}\mathscr{C}_X$ are generated by:
\begin{enumerate}
        \item $l_1 + (2l_2 -3e) = (l_1 + l_2 -2e) + (l_2 -e)$
        \item $l_1 + (3l_2 -6e) = (l_1 + l_2 -2e) + (2l_2 -4e)$
        \item $l_1 + 2(l_2 -e) = (l_1 + l_2 -2e) + l_2$
        \item $l_1 + (l_2 -e) + (2l_2 -4e) = (l_1 + l_2 -2e) + (2l_2 -3e)$
        \item $2l_1 + (2l_2 -4e) = 2(l_1 + l_2 -2e)$
        \item $l_1 + 2(2l_2 -4e) = (l_1 + l_2 -2e) + (3l_2 -6e)$
        \item $l_2 + (2l_2 -3e) = 3(l_2 -e)$
        \item $l_2 + (2l_2 -4e) = (l_2 -e) + (2l_2 -3e)$
        \item $l_2 + (3l_2 -6e) = 2(2l_2 -3e) = 2(l_2 -e) + (2l_2 -4e)$
        \item $(l_2 -e) + (3l_2 -6e) = (2l_2 -3e) + (2l_2 -4e)$
        \item $(l_2 -e) + 2(2l_2 -4e) = (2l_2 -3e) + (3l_2 -6e)$
        \item $3(2l_2 -4e) = 2(3l_2 -6e)$.
\end{enumerate}
For each relation $\sum \alpha_i = \sum \beta_j$, a main component of $\prod_X \overline{\free}_2(X,\alpha_i)$ lies in the same component of free curves as a main component of $\prod_X \overline{\free}_2(X, \beta_j)$.
\end{lem}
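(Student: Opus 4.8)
The plan is to follow the template used uniformly throughout the paper for Picard rank $\geq 2$ cases: first confirm that the listed relations generate the monoid of relations among $\mathscr{C}_X$ via Lemma \ref{Relations}, and second verify that for each relation both sides give free chains lying in a common component of free curves. For the first part, I would order $\mathscr{C}_X$ so that the extremal classes $3l_2-6e$ and $l_1$ come first (or more precisely, apply Lemma \ref{Relations} with the four classes $l_2, l_2-e, 2l_2-4e, l_1+l_2-2e$ spanning $N_1(X)$ last, since $\rho(X)=3$ means four classes suffice). I would successively eliminate $l_1$ using the divisor $D = -H_1 - H_2 + E$ (which pairs negatively only with $l_1$ among core classes, to zero with $l_2-e$ and $l_1+l_2-2e$, and positively with $2l_2-3e$, $2l_2-4e$, $3l_2-6e$), producing relations (1)--(6); then eliminate $3l_2-6e$ using $D' = 4H_1 + 4H_2 - 2E$ or a similar divisor pairing negatively with $3l_2-6e$, yielding relations (9)--(12); then eliminate $2l_2-4e$ using $D'' = 2H_1 + 2H_2 - E$ (negative on $2l_2-4e$, zero on $l_2-e$ and $2l_2-3e$), giving (8) and (10)--(11); the remaining classes $l_1, l_2, l_2-e, 2l_2-3e, l_1+l_2-2e$ satisfy the single relation (7) (analogous to case $3.3$ and $3.5$). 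This is essentially bookkeeping on the intersection table and should present no obstacle beyond care with minimality in condition (4) of Lemma \ref{Relations}.

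For the main components part, I would handle the relations by the techniques already established in this paper: relations whose total class is contracted by $\pi_1: X \to \mathbb{P}^1$ (those of the form (7)--(12), where the class is a multiple of $l_2$ minus multiples of $e$, living in a fiber $X_q \cong S_4'$, a degree-4 del Pezzo) can be analyzed inside a general fiber of $\pi_1$ using Theorem \ref{del pezzo curves thm} and Lemma \ref{testa theorem}, exactly as in case $3.3$. For instance, relation (9) $l_2 + (3l_2-6e) = 2(2l_2-3e)$ becomes a statement about curves of class $4h - \sum 2\epsilon_i - \dots$ in $S_4'$ degenerating to chains of conics, where the monodromy of $\pi_1|_c$ (shown transitive on unordered pairs in the core lemma) identifies the relevant components. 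Relations (1)--(6), involving $l_1$, are handled by the standard ``move the attachment point onto $c$, smooth, rebreak'' maneuver from Lemma \ref{lines_in_P3}-style arguments and cases $3.3$, $3.5$: one degenerates $l_1$ to a nodal curve of type $(l_1 - e, e)$ so the intersection with the other component lands on $c$, smooths the subchain through the exceptional fiber, then reapplies the irreducibility results for the relevant spaces $\overline{\free}(X, l_1 + k l_2 - ne)$ established via Lemma \ref{curves2 in P1 x P2} and \ref{curves3 in P1 x P2}.

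The key step I expect to be the main obstacle is the relation (12) $3(2l_2-4e) = 2(3l_2-6e)$, together with any relation where $\overline{\free}(X, 4l_2-8e)$ or higher-degree fiber-contracted classes appear with multiple components (the space $\overline{\free}(X, 4l_2-8e)$ splits into a double-cover component and an embedded-curve component, as noted in the core lemma). Here I need to show the two free chains --- of type $(2l_2-4e, 2l_2-4e, 2l_2-4e)$ and of type $(3l_2-6e, 3l_2-6e)$ --- lie in the \emph{same} component of $\overline{\free}(X, 6l_2-12e)$, which requires identifying the correct common component (the one parameterizing curves birational onto their image that are suitably singular along $c$) and exhibiting an immersed chain with unobstructed deformations smoothing to both. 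This is done by passing to a general fiber $X_q$ of $\pi_1$, writing $X_q$ as a blow-up of $\mathbb{P}^2$ so that $2l_2-4e \mapsto $ a conic through two of the five points $p_1,\dots,p_5 = c\cap(\text{fiber})$ with appropriate multiplicity and $3l_2-6e \mapsto$ a cubic, and applying Theorem \ref{del pezzo curves thm} parts (2)--(3) to the relevant multiple-of-anticanonical-conic classes, then invoking monodromy via Lemma \ref{P1 x P2 monodromy} to collapse the component count. The remaining relations are routine once this pattern is set up, following verbatim the structure of cases $3.3$ and $3.5$ which share the same $(\mathbb{P}^1 \times \mathbb{P}^2)$-blow-up geometry.
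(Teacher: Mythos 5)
Your proposal follows the paper's argument essentially verbatim: Lemma \ref{Relations} with successive divisor elimination to produce the generating set, irreducibility of the spaces $\overline{\free}(X,l_1+kl_2-ne)$ via Lemmas \ref{curves2 in P1 x P2} and \ref{curves3 in P1 x P2} combined with the attach-at-$c$, smooth, and re-break maneuver for relations (1)--(6), and the general-fiber-of-$\pi_1$ del Pezzo description together with transitivity of the monodromy of $\pi_1|_c$ on unordered pairs for (7)--(12), including the relation $3(2l_2-4e)=2(3l_2-6e)$ that you correctly single out. The only slips are in the bookkeeping you yourself call routine: $-H_1-H_2+E$ pairs to $-1$ with $l_2$ as well as with $l_1$, so it cannot isolate $l_1$ in Lemma \ref{Relations} (the paper uses $-2H_1+E$, which annihilates $l_2$ and $l_1+l_2-2e$), and your $2H_1+2H_2-E$ pairs to $0$ rather than negatively with $2l_2-4e$; with the correct divisors the elimination goes through exactly as you describe.
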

\begin{proof}
\textbf{Relations:}
We follow Lemma \ref{Relations}.
\begin{itemize}
    \item $D=-2H_1 + E$: We have $D.l_1=-2$, $D.l_2 = 0$, $D.l_1 + l_2 -2e = 0$, $D.l_2 -e = 1$.  This implies any relation involves at least two copies of $l_2-e$, or another curve that pairs positively with $D$.  We note that since $l_1$ and $l_1 + l_2 -2e$ are the only divisors that do not pair to $0$ with $H_1$, both cannot appear on the same side of a relation.  We find the following relations:
    \begin{itemize}
        \item $l_1 + (2l_2 -3e) = (l_1 + l_2 -2e) + (l_2 -e)$
        \item $l_1 + (3l_2 -6e) = (l_1 + l_2 -2e) + (2l_2 -4e)$
        \item $l_1 + 2(l_2 -e) = (l_1 + l_2 -2e) + l_2$
        \item $l_1 + (l_2 -e) + (2l_2 -4e) = (l_1 + l_2 -2e) + (2l_2 -3e)$
        \item $2l_1 + (2l_2 -4e) = 2(l_1 + l_2 -2e)$
        \item $l_1 + 2(2l_2 -4e) = (l_1 + l_2 -2e) + (3l_2 -6e)$
        \item $l_1 + (2l_2 -4e) + l_2 = (l_1 + l_2 -2e) + 2(l_2-e)$ (generated by other relations)
    \end{itemize}
    No other relations may involve $l_1 + l_2 -2e$.
    \item $D=-H_1 - H_2 + E$: $D$ pairs negatively with $l_2$.  We find the following relations.
    \begin{itemize}
        \item $l_2 + (2l_2 -3e) = 3(l_2 -e)$
        \item $l_2 + (2l_2 -4e) = (l_2 -e) + (2l_2 -3e)$
        \item $l_2 + (3l_2 -6e) = 2(2l_2 -3e)$
    \end{itemize}
    \item $D=-3H_2 -H_1 -2E$: $D$ pairs negatively with $l_2 -e$.  We find the following relations:
    \begin{itemize}
        \item $(l_2 -e) + (3l_2 -6e) = (2l_2 -3e) + (2l_2 -4e)$
        \item $2(l_2 -e) + (2l_2 -4e) = 2(2l_2 -3e)$
        \item $(l_2 -e) + 2(2l_2 -4e) = (2l_2 -3e) + (3l_2 -6e)$
    \end{itemize}
    The remaining 4 curves span $N_1(X)$ and the sole relation between them is $3(2l_2 -4e) = 2(3l_2 -6e)$.
\end{itemize}

\textbf{Main Components:} We address each relation below.
\begin{enumerate}
    \item $\overline{\free}(X, l_1 + 2l_2 -3e)$ is irreducible by Lemma \ref{curves2 in P1 x P2}.
    \item A general map $g: C_1 \cup C_2 \cup C_3 \rightarrow X$ of type $(l_1 + l_2 -2e, e, 2l_2 -5e)$ is a smooth point of $\overline{\mathcal{M}}_{0,0}(X)$ with globally generated normal bundle $\mathcal{N}_g$.  We may deform $g$ to a a free chain of type $(l_1 + l_2 -2e, e, 2l_2 -4e)$.  Alternatively, we may deform to a map $g' : C_1' \cup C_2' \rightarrow X$ of type $(l_1 + l_2 -e, 2l_2 -5e)$ by smoothing the node $C_1 \cap C_2$.  Since $\overline{\free}(X,l_1 + l_2 -e)$ is irreducible, we may deform $g'$ to a chain of type $(l_1, l_2 -e, 2l_2 -5e)$.  This smooths to the desired free chain.
    \item $\overline{\free}(X, l_1 + 2l_2 -2e)$ is irreducible by Lemma \ref{curves2 in P1 x P2}.
    \item $\overline{\free}(X, l_1 + 3l_2 -5e)$ is irreducible by Lemma \ref{curves2 in P1 x P2}.
    \item $\overline{\free}(X, 2l_1 + 2l_2 -4e)$ is irreducible by Lemma \ref{curves3 in P1 x P2}.
    \item A general map $g : C_1\cup C_2 \cup C_3 \cup C_4 \rightarrow X$ of type $(l_1 + l_2 -2e, e, 2l_2 -5e, l_2 -2e)$ is a smooth point of $\overline{\mathcal{M}}_{0,0}(X)$.  Clearly, there are deformations of $g$ which are free chains of type $(l_1 + l_2 -2e, 3l_2 - 6e)$.  If Instead we deform $g$ to $g' : C_1' \cup C_2' \cup C_3' \rightarrow X$ of type $(l_1 + l_2 -e, 2l_2 -5e, l_2 -2e)$, again $[g']$ is a smooth point of $\mathcal{M}_{0,0}(X)$.  As before, we deform $g'$ to a chain of type $(l_1, l_2 -e, 2l_2 -5e, l_2 -2e)$, and smooth all but the first component to attain the desired free chain.
    \item By Lemma \ref{curves1 in P1 x P2} and our analysis of the monodromy of $\pi_1|_c$, $\overline{\free}(X, 3l_2 -3e)$ contains exactly one component which parameterizes cubics that are smooth at every intersection point with $c$.  This component contains free chains of type $(l_2, 2l_2 -3e)$ and $(l_2 -e, l_2 -e, l_2 -e)$.
    \item By Lemma \ref{curves1 in P1 x P2}, $\overline{\free}(X, 3l_2 -4e)$ contains exactly one component which parameterizes cubics that are smooth at every intersection point with $c$.  This component contains free chains of type $(l_2, 2l_2 -4e)$ and $(l_2 -e, 2l_2 -3e)$.
    \item By Lemma \ref{curves1 in P1 x P2}, $\overline{\free}(X, 4l_2 -6e)$ contains exactly one component which parameterizes quartics that are double at exactly one intersection point with $c$.  This component contains free chains of each indicated type.
    \item By Lemma \ref{curves1 in P1 x P2} and our analysis of the monodromy of $\pi_1|_c$, $\overline{\free}(X, 4l_2 -7e)$ contains exactly one component which parameterizes quartics that are double at exactly two intersection points with $c$.  This component contains free chains of type $(l_2 -e, 3l_2 -6e)$ and $(2l_2 -3e, 2l_2 -4e)$.
    \item By Lemma \ref{curves1 in P1 x P2}, $\overline{\free}(X, 5l_2 -9e)$ contains exactly one component which parameterizes quintics that are double at all but one intersection point with $c$.  This component contains free chains of each indicated type.
    \item By Lemma \ref{curves1 in P1 x P2} and our analysis of the monodromy of $\pi_1|_c$, $\overline{\free}(X, 6l_2 -12e)$ contains exactly one component which parameterizes sextics that are triple at two intersection points with $c$ and double at the rest.  This component contains free chains of each indicated type.
\end{enumerate}
\end{proof}

\begin{lem}
For each $\alpha \in l_1 + \Nef_1(X)_\mathbb{Z}$, $\overline{\free}(X,\alpha)$ is nonempty.
\end{lem}

\begin{proof}
The generators of $\Nef_1(X)$ are $l_1 = (l_1 -e) + e$, $l_2 = (l_2 -2e) + 2e$, and $l_2 -2e = \frac{1}{2}(2l_2 -5e + e)$.  Lemma \ref{Gordan's Lemma}  and \ref{Representability of Free Curves} shows that the curves in $\mathscr{C}_X$, together with $l_2 -2e$, generate the monoid of integer points in $\Nef_1(X)$.  %
Since $\alpha \in l_1 + \Nef_1(X)$, there exists an expression $\alpha = l_1 + c(l_2 -2e) + \sum \alpha_i$ with $\alpha_i \in \mathscr{C}_X$ and $c\in \{0,1\}$.  This proves our claim, as $l_1 + l_2 -2e$ is represented by a free curve.
\end{proof}

\subsection*{3.6}

\textbf{Description of Variety $X$:}
Let $X$ be the blow up of $\mathbb{P}^3$ along the disjoint union of a line $\ell$ and an elliptic curve $c$ of degree 4.  %

\textbf{Generators for $N^1(X)$ and $N_1(X)$:} Let $\pi:X\rightarrow \mathbb{P}^3$ be the blow-up, $H$ be the pullback of a hyperplane section, and $E_\ell$, $E_c$ be the exceptional divisors lying over $\ell$ and $c$.  Let $l$ be the strict transform of a general line, and $e_\ell, e_c$ be the class of a fiber of $\pi$ over a point in $\ell$ and $c$ respectively.

\begin{thm}\label{3.6thm}
For all $\alpha \in l + \Nef_1(X)_\mathbb{Z}$, $\overline{\free}(X,\alpha)$ is nonempty and irreducible.
\end{thm}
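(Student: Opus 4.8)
\textbf{Proof proposal for Theorem \ref{3.6thm}.}

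The plan is to follow the general strategy outlined in Section 7: produce a core of free curves $\mathscr{C}_X$, identify the separating classes, verify the relations required by Theorem \ref{Main Method}(1) (or \ref{bir Main Method}), appeal to Corollary \ref{connected fibers fixall} for condition \ref{Main Method}(2), and finally use Gordan's Lemma (\ref{Gordan's Lemma}) together with Theorem \ref{Representability of Free Curves} to check nonemptiness for all $\alpha$ in the stated translate of the nef cone. First I would record the Mori structure: the extremal rays of $\overline{NE}(X)$ are $e_\ell$, $e_c$, $l - 2e_c$ (bisecants of $c$; note $c$ has degree $4$ and genus $1$ so it has a $2$-dimensional family of bisecant lines), and $l - e_\ell$. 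The divisorial contractions recover $\mathbb{P}^3$ blown up at the elliptic curve, or at the line; there is also a conic-bundle structure $\pi : X \to \mathbb{P}^1 \times \mathbb{P}^1$ resolving the two pencils (the net of quadrics through $c$ restricted appropriately, and the pencil of planes through $\ell$), as already indicated in Lemma \ref{lines in C1 fibrations} (deformation type $3.6$). Since $c$ is disjoint from $\ell$, the anticanonical divisor is $-K_X = 4H - E_\ell - 2E_c = 2(H - E_c) + (H - E_\ell) + H + E_\ell$ or similar; I would use this decomposition to enumerate nef curve classes of anticanonical degree $2,3,4$ exactly as in cases $3.2$--$3.5$.

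The core $\mathscr{C}_X$ should consist of the classes of anticanonical degree $2,3,4$ that are not freely breakable into lower-degree free chains. I expect it to include $l$, $l - e_\ell$, $l - e_c$, $l - e_\ell - e_c$ (secant of $c$ meeting $\ell$), $2l - 2e_c - e_\ell$, $2l - 2e_c$ (bisecant conics to $c$ — possibly reducible to $(l-e_c)+(l-e_c)$, so likely excluded), $2l - 3e_c$, and $2l - 4e_c$, together with whatever cubic is needed on the conic-bundle side; irreducibility of each $\overline{\free}(X,\alpha)$ for $\alpha \in \mathscr{C}_X$ follows from Lemma \ref{lines_in_P3} for the classes projecting to lines or conics in $\mathbb{P}^3$, from monodromy of the incidence correspondence of hyperplane sections of $c$ (Theorem \ref{Monodromy}) for the multi-secant classes, and from Lemma \ref{lines in C1 fibrations}/Theorem \ref{conic monodromy} for classes related to the $C1$-contraction $X \to \mathbb{P}^1 \times \mathbb{P}^1$ — here the discriminant $\Delta_\pi$ has bidegree $(2,3)$ and the argument in the proof of that lemma shows $\mathcal{H}$ is geometrically irrational and irreducible. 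The separating classes are those $\alpha \in \mathscr{C}_X$ for which $\overline{\free}_1(X,\alpha) \to X$ has disconnected general fiber; I expect these to be the bisecant/trisecant classes of $c$ (where choosing a general point fixes a plane through which multiple secant configurations pass) — probably $2l - 3e_c$ and $l - e_\ell - e_c$ and possibly $2l - 4e_c$.

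Next I would apply Lemma \ref{Relations} with the divisors $E_\ell - H$, $E_c$-related divisors, and $3H - 3E_c - E_\ell$ (or $2H - 2E_c$) ordered appropriately to generate the monoid of relations among core classes; each relation is then verified by the now-standard techniques: either irreducibility of $\overline{\free}(X, \sum \alpha_i)$ directly, or exhibiting an immersed nodal chain at a smooth point of $\overline{\mathcal{M}}_{0,0}(X)$ (using Proposition \ref{kontsevich space H1} and Lemma \ref{GHS lemma}) that deforms to free chains of both types — e.g. sliding a line along a conic until it meets $c$ or $\ell$ and absorbing an exceptional fiber, exactly as in cases $3.2$--$3.5$. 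Condition \ref{Main Method}(2) is handled by Corollary \ref{connected fibers fixall}: one must check $X$ has a core (done), and that for any interior nef $-K_X$-conic $\beta$, $\overline{\free}^{bir}(X,2\beta)$ is irreducible — by Lemma \ref{interior conic lemma} such $\beta$ exists only if $\rho(X) = 2$, so here (with $\rho(X) = 3$) there is no interior conic and every main component of $\prod_X \overline{\free}_2(X,\alpha_i)$ automatically satisfies $(\dagger)$; since $X$ has no del Pezzo fibration with degree $\leq 3$ fibers, Corollary \ref{connected fibers fixall} applies to all $\alpha \in \Nef_1(X)$. The main obstacle I anticipate is the bookkeeping around the multisecant classes of the elliptic curve $c$ of degree $4$: verifying that the two-parameter family of bisecant lines, the secant conics, and the trisecant/quadrisecant classes genuinely break into the asserted free chains and that their $\overline{\free}$-spaces are irreducible (this is where one must be careful whether $2l - 2e_c$ splits, whether a putative cubic $2l - 4e_c + \text{(something)}$ is forced, and exactly which classes are separating), combined with correctly matching this against the conic-bundle picture $X \to \mathbb{P}^1 \times \mathbb{P}^1$ via Lemma \ref{lines in C1 fibrations}. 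Once the relations are confirmed, Theorem \ref{Main Method} (resp. Corollary \ref{bir Main Method}) gives irreducibility of $\overline{\free}(X,\alpha)$ (resp. $\overline{\free}^{bir}(X,\alpha)$) for all $\alpha \in l + \Nef_1(X)_\mathbb{Z}$, and a short Gordan's Lemma check — the monoid of integral nef classes is generated by $l$, $l - e_\ell$, $l - e_c$, $l - e_\ell - e_c$, $l - 2e_c$ and a few bounded combinations — establishes nonemptiness, finishing the proof.
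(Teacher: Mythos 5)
Your overall framework matches the paper's (core of free curves, Lemma \ref{Relations}, Theorem \ref{Main Method}, Corollary \ref{connected fibers fixall}, Gordan's Lemma), but the concrete data you feed into it is wrong in ways that would derail the argument. First, the anticanonical divisor is $-K_X = 4H - E_\ell - E_c$, not $4H - E_\ell - 2E_c$: blowing up a smooth curve adds the exceptional divisor with coefficient one, and with your formula the nef conic $l - 2e_c$ would have anticanonical degree zero, so your enumeration of degree $2$--$4$ classes collapses. Second, $\overline{NE}(X)$ has exactly three extremal rays, namely $e_\ell$, $e_c$, and $l - e_\ell - 2e_c$ (bisecants of $c$ meeting $\ell$, the fibers of the conic bundle you mention); $l - 2e_c$ and $l - e_\ell$ are nef, not extremal. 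Third, your proposed core is off: the classes $2l - 2e_c$, $2l - 3e_c$, $2l - e_\ell - 2e_c$ have anticanonical degree $6$, $5$, $5$ and cannot appear in a degree $\leq 4$ core, while the classes the paper actually needs --- $2l - 2e_\ell - 3e_c$ and $2l - 2e_\ell - 4e_c$ (conics in planes containing $\ell$) and $2l - e_\ell - 4e_c$ (plane sections of quadrics containing $c$ through a point of $\ell \cap Q$) --- are absent from your list. Consequently your guess at the separating classes is also wrong: they are $l - e_\ell - e_c$, $l - 2e_c$, $2l - 2e_\ell - 3e_c$, and $2l - e_\ell - 4e_c$.

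Beyond the bookkeeping, the genuinely hard steps are not where you place them. The paper does not use the conic bundle $X \to \mathbb{P}^1 \times \mathbb{P}^1$ or Lemma \ref{lines in C1 fibrations} in this case; everything is done with planar geometry in $\mathbb{P}^3$ via Lemma \ref{lines_in_P3}, Theorem \ref{Monodromy}, and case-by-case arguments. The relations that require real work are the quartic ones: showing $\overline{\free}(X, 4l - 3e_\ell - 7e_c)$ is irreducible forces you to place a general rational quartic on a smooth quadric $Q \in |2H - E_\ell|$, identify it as a divisor of class $(1,3)$ meeting $c$ in $7$ of the $8$ points of $Q \cap c$, and run a monodromy argument on those eight points; and $\overline{\free}(X, 4l - 2e_\ell - 8e_c)$ has two components, distinguished by whether the quartic has class $(3,1)$ or $(2,2)$ on the unique quadric containing it and $c$. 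Calling these ``standard techniques'' undersells the content. Finally, the closing nonemptiness check needs the correct generators $l$, $l - e_\ell$, $l - 2e_c$, $l - e_\ell - 2e_c$ of $\Nef_1(X)$, after which one substitutes $l + (l - e_\ell - 2e_c) = (l - e_\ell) + (l - 2e_c)$ to land in the monoid generated by free classes.
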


\textbf{Intersection Pairing:} $H.l = 1$, $E_\ell . e_\ell = -1$, and $E_c . e_c = -1$.  All other pairings are $0$.

\textbf{Anticanonical Divisor:} $4H - E_\ell -E_c = E_{\ell} + 2(H - E_{\ell}) + (2H - E_c)$

\textbf{Effective Divisors:} $H - E_\ell$, $2H - E_c$, $E_\ell$, $E_c$.

\textbf{Effective Curves:} $e_\ell$, $e_c$, $l - e_\ell - 2e_c$.

\begin{lem}
A core of free curves on $X$ is given by
$$\mathscr{C}_X = \{ l, l-e_{\ell}, l-e_c, l -e_{\ell} - e_{c}, l-2e_c, 2l - e_\ell - 4e_c, 2l - 2e_\ell - 3e_c, 2l - 2e_\ell - 4e_c \} $$
The separating classes are $l-e_\ell -e_c$, $l -2e_c$, $2l - 2e_\ell -3e_c$, and $2l -e_\ell - 4e_c$.  
\end{lem}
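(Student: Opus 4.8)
The plan is to follow the three-step template used throughout: enumerate the nef classes of anticanonical degree $2$ through $4$, show those outside $\mathscr{C}_X$ are freely breakable, and verify irreducibility of $\overline{\free}(X,\alpha)$ together with the fiber behavior of $\mathrm{ev}$ for the eight core classes. Writing $\alpha = dl - m_\ell e_\ell - m_c e_c$, the effective divisors $H - E_\ell$, $2H - E_c$, $E_\ell$, $E_c$ impose $0 \le m_\ell \le d$ and $0 \le m_c \le 2d$, and together with $2 \le 4d - m_\ell - m_c \le 4$ these leave exactly the five ``lines'' $l$, $l - e_\ell$, $l - e_c$, $l - e_\ell - e_c$, $l - 2e_c$, the three core conics, and the non-core classes $2l - 4e_c$, $2l - e_\ell - 3e_c$, $2l - 2e_\ell - 2e_c$, $3l - 2e_\ell - 6e_c$, $3l - 3e_\ell - 5e_c$, $3l - 3e_\ell - 6e_c$, $4l - 4e_\ell - 8e_c$. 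Since $X$ has no $E5$ divisor, $|-K_X|$ is very ample, $\rho(X) \ge 2$, and $X$ contains no $E3$, $E4$ or $E5$ divisor, Theorem \ref{MovableBB} reduces every component of degree $\ge 5$ to a shorter free chain and Proposition \ref{breaking quartic curves} addresses the non-core classes of degree $4$. To break the non-core classes I would use that $l - e_\ell - 2e_c$ spans an extreme ray of $\Nef_1(X)$ along which $2l - 2e_\ell - 4e_c = 2(l - e_\ell - 2e_c)$ is represented by irreducible free conics lying in planes through $\ell$, while $3l - 3e_\ell - 6e_c = 3(l - e_\ell - 2e_c)$ has $\free(X, \cdot) = \emptyset$ and $4l - 4e_\ell - 8e_c$ is represented only by double covers of those free conics; that $3l - 3e_\ell - 5e_c$ and $3l - 2e_\ell - 6e_c$ lie in the relative cones of the del Pezzo fibrations $|H - E_\ell|$ and $|2H - E_c|$, whose general fibers are the strict transforms of a plane through $\ell$ and of a quadric through $c$ (degree five, resp. degree six, del Pezzo surfaces); and that $2l - 4e_c = 2(l - 2e_c)$, $2l - 2e_\ell - 2e_c = 2(l - e_\ell - e_c)$ and $2l - e_\ell - 3e_c = (l - 2e_c) + (l - e_\ell - e_c)$. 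Feeding these into Theorem \ref{del pezzo curves thm} inside the del Pezzo fibers, and into the obvious chain decompositions, exhibits every non-core class of degree at most $4$ as either having empty $\overline{\free}$ or having every component contain a chain of free conics from $\mathscr{C}_X$.

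For irreducibility and the separating classes, the five line classes fall under Lemma \ref{lines_in_P3}(1): because the elliptic quartic $c$ lies in no plane, the only nontrivial coplanar collection of blow-up centers is $\{\ell\}$, so $\overline{\free}(X,\alpha)$ is irreducible for all five, and $\mathrm{ev}: \overline{\free}_1(X,\alpha) \to X$ has irreducible general fibers exactly when $\alpha \cdot (E_\ell + E_c) \le 1$. This gives irreducible fibers for $l$, $l - e_\ell$, $l - e_c$ but not for $l - e_\ell - e_c$ or $l - 2e_c$; through a general point the latter are represented by the four lines joining $\ell$ and $c$, respectively the two bisecants of $c$, so they are the two separating line classes. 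The three core conics have $\alpha \cdot (E_\ell + E_c) > 4$, so Lemma \ref{lines_in_P3}(2) does not apply and I would argue by hand. A conic meeting $\ell$ twice must lie in one of the pencil of planes $P \supset \ell$, whence $\overline{\free}(X, 2l - 2e_\ell - 4e_c)$ fibers over that $\mathbb{P}^1$ with fiber the pencil of conics in $P$ through the four points $P \cap c$, so it is irreducible; it is nonseparating because a general point $p$ determines the plane $\langle p, \ell\rangle$ and then five points pin down the conic. Likewise $\overline{\free}(X, 2l - 2e_\ell - 3e_c)$ fibers over the $\mathbb{P}^1$ of planes through $\ell$ with fiber a union of ${4 \choose 3}$ nets of conics, glued into an irreducible space by monodromy of the four points $P \cap c$, with four-component general evaluation fiber, hence separating; and $\overline{\free}(X, 2l - e_\ell - 4e_c)$ is birational to $(\mathbb{P}^3)^*$ via the conic in a plane $P \not\supset \ell$ through $P \cap c$ and $P \cap \ell$, again with reducible general evaluation fiber.

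The remaining step is to package this as Definition \ref{def core}: part (1) is the irreducibility just established, and part (2) follows from the breaking results, using Theorem \ref{MovableBB} in degree $\ge 5$ and the degree $\le 4$ analysis, together with the observation that $\free(X, 3(l - e_\ell - 2e_c)) = \emptyset$ --- no irreducible rational curve of $\mathbb{P}^3$-degree $3$ meets a line three times, and triple covers of the $-K_X$-line $l - e_\ell - 2e_c$ have non-globally-generated pullback --- so this extreme-ray class contributes no component. The main obstacle I expect is the trio of conic classes with high contact order along $c$: they escape Lemma \ref{lines_in_P3}, so their irreducibility and the reducibility or irreducibility of their evaluation fibers must be proved directly, and one must confirm that the spaces of conics through prescribed points in a varying plane are irreducible of the expected dimension and that the reducible curves used in the breaking arguments are unobstructed points of $\overline{\mathcal{M}}_{0,0}(X)$.
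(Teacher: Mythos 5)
Your overall architecture is exactly the paper's: the enumeration of the fifteen nef classes of degree $2$--$4$ is correct, the chain decompositions $2l-4e_c=2(l-2e_c)$, $2l-2e_\ell-2e_c=2(l-e_\ell-e_c)$, $2l-e_\ell-3e_c=(l-2e_c)+(l-e_\ell-e_c)$, $3l-3e_\ell-5e_c=(l-e_\ell-e_c)+(2l-2e_\ell-4e_c)$, $3l-2e_\ell-6e_c=(l-2e_c)+(2l-2e_\ell-4e_c)$, $4l-4e_\ell-8e_c=2(2l-2e_\ell-4e_c)$ are the ones the paper uses, and the treatment of the five line classes via Lemma \ref{lines_in_P3} and of $2l-2e_\ell-4e_c$, $2l-2e_\ell-3e_c$ via the pencil of planes through $\ell$ matches the paper. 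Two points, however, need repair.

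First, you have not actually justified that $2l-e_\ell-4e_c$ is separating, which is one of the four claims in the statement. Your parametrization of $\overline{\free}(X,2l-e_\ell-4e_c)$ by $(\mathbb{P}^3)^*$ (the unique conic through the five points $P\cap c$, $P\cap\ell$ of a plane $P$) is correct, but under that description the evaluation fiber over a general $p$ is just the locus of planes $P\ni p$ whose distinguished conic happens to pass through $p$ --- a curve in the $\mathbb{P}^2$ of planes through $p$ with no visible decomposition, and the word ``again'' imports nothing from the previous case, where reducibility came from the choice of $3$ of $4$ points. The argument the paper uses (and which you need) is that any such conic through $p$ meets the unique quadric $Q_p$ of the pencil through $c$ containing $p$ in at least five points, hence lies on $Q_p$, and must pass through one of the two points of $\ell\cap Q_p$; the two resulting pencils of $(1,1)$-curves on $Q_p$ are the two components of the fiber. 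Second, your parenthetical ``no irreducible rational curve of $\mathbb{P}^3$-degree $3$ meets a line three times'' is false (a plane cubic meets every line of its plane three times), so as written your proof that $\free(X,3(l-e_\ell-2e_c))=\emptyset$ does not close. The correct argument is that meeting $\ell$ with multiplicity three forces the cubic into a plane $P\supset\ell$, and a rational plane cubic, having a single double point, cannot meet the four points of $P\cap c$ with total multiplicity six (equivalently, as in the paper, meeting $c$ six times forces the cubic onto a quadric of the pencil through $c$, contradicting planarity). Both fixes are short, but both are needed for the statement as written.
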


\begin{proof}
\textbf{Nef Curve classes of Anticanonical Degree Between 2 and 4:} If $\alpha= d l - m e_{\ell} - n e_{c}$ is a nef class and $2 \leq \alpha.(-K_X)\leq 4$, then $d\leq 4$, $m\leq d$, $n\leq 2d$, and $2(d-m) + m \leq 4 \Rightarrow m\geq 2d-4$.  Thus $\alpha$ must be one of the following classes:
\begin{itemize}
     \item $l, l-e_{\ell}, l-e_c, l -e_{\ell} - e_{c}, l-2e_c$
    \item $2l - 4e_c$, $2l - e_{\ell} - 3e_c$, $2l - e_\ell - 4e_c$, $2l - 2e_\ell - 2e_c$, $2l - 2e_\ell - 3e_c$, $2l - 2e_\ell - 4e_c$,
    \item $3l - 2 e_\ell - 6 e_c$, $3l - 3 e_\ell - 5 e_c$, $3l - 3 e_\ell - 6 e_c$, and $4l - 4e_\ell - 8 e_c$.
\end{itemize}
\textbf{Freely Breakable Classes:} $2l -4e_c$, $2l - e_\ell -3e_c$, and $2l -2e_\ell -2e_c $ are all freely breakable by Lemma \ref{lines_in_P3}.  Moreover, each cubic or quartic curve class above is either freely breakable or not represented by free curves.  Indeed, any irreducible cubic curve that meets $c$ with multiplicity $6$ must be a twisted cubic lying in a quadric that contains $c$.  Similarly, any irreducible cubic curve meeting $\ell$ with multiplicity $3$ must lie in a plane containing $\ell$.  Thus, $3l - 3 e_\ell - 6 e_c$ cannot be represented by a free rational curve.  Moreover, we may break planar representatives of $3l - 3 e_\ell - 5 e_c$ into free chains of type $(l - e_\ell -e_c, 2l - 2e_\ell -4e_c)$ by deforming it within the plane.  Similarly, by deforming representatives of $3l -2e_\ell - 6e_c$ within a quadric containing $c$, we may break them to free chains of type $(l-2e_c, 2l - 2e_\ell -4e_c)$.  Lastly, similar arguments show that each free curve of class $4l - 4e_\ell - 8e_c$ must be a double cover of an anticanonical conic, and therefore breakable.

\textbf{Irreducible Spaces and Fibers:} Lemma \ref{lines_in_P3} proves that for $\alpha$ in the first row, each $\overline{\free}(X,\alpha)$ is nonempty and irreducible.  Moreover, when $\alpha \not\in \{ l -e_\ell -e_c, l -2e_c\}$, Lemma \ref{lines_in_P3} proves the fibers of $\overline{\free}_1(X, \alpha) \xrightarrow{\text{ev}} X$ are generally irreducible.  However, when $\alpha \in \{ l -e_\ell -e_c, l -2e_c\}$, the fibers of $\overline{\free}_1(X, \alpha) \xrightarrow{\text{ev}} X$ generally contain four or two isolated points.

Since each free curve of class $\alpha = 2l - 2e_\ell - ne_c$ with $n\in \{3,4\}$ is contained a plane that contains $\ell$, we may parameterize such free curves via an open subset of a $\mathbb{P}^{5-n}$-bundle over the space of such planes.  This shows that fibers of $\overline{\free}_1(X, 2l - 2e_\ell - 4e_c) \xrightarrow{\text{ev}} X$ are irreducible, while fibers of $\overline{\free}_1(X, 2l - 2e_\ell -3e_c) \xrightarrow{\text{ev}} X$ generally have four components.  Lastly, free curves of class $2l - e_\ell - 4e_c$ are strict transforms of the complete intersection of a quadric $Q$ that contains $c$ and a plane in $\mathbb{P}^3$ containing one of the two points in $\ell \cap Q$.  Therefore, while $\overline{\free}(X, 2l - e_\ell - 4e_c)$ is irreducible, fibers of $\overline{\free}_1(X, 2l - e_\ell - 4e_c) \xrightarrow{\text{ev}} X$ generally contain two components, corresponding to each point in $\ell \cap Q$.
\end{proof}

\begin{lem}
Relations in the monoid $\mathbb{N}\mathscr{C}_X$ are generated by:
\begin{enumerate}
        \item $l + (l - e_\ell - e_c) = (l -e_\ell) + (l- e_c)$
        \item $l + (l - 2e_c) = 2(l- e_c)$
        \item $l + (2l - e_\ell -4e_c) = (l -e_\ell) + 2(l -2e_c)$
        \item $l + (2l - 2e_\ell -3e_c) = (l -e_c) + 2(l -e_\ell -e_c)$
        \item $l + (2l - 2e_\ell -4e_c) = (2l -e_\ell -4e_c) + (l-e_\ell) = (2l -2e_\ell -3e_c) + (l-e_c) =  2(l-e_\ell -e_c) + (l-2e_c)$ %
        \item $(2l -2e_\ell -4e_c) + (l-e_c) = (2l -e_\ell -4e_c) + (l-e_\ell -e_c) = (2l-2e_\ell -3e_c) + (l-2e_c)$
        \item $(2l -2e_\ell -4e_c) + (l-e_\ell) = (2l -2e_\ell -3e_c) + (l-e_\ell -e_c)$
        \item $(2l -2e_\ell -4e_c) + 2(l-e_\ell - e_c) = 2(2l -2 e_\ell - 3e_c)$
        \item $(2l -2e_\ell -4e_c) + 2(l - 2e_c) = 2(2l - e_\ell - 4e_c)$
        \item $(2l -2e_\ell -4e_c) + (l-e_\ell - e_c) + (l-2e_c) = (2l -2 e_\ell - 3e_c) + (2l - e_\ell - 4e_c)$
        \item $(2l-e_\ell -4e_c) + (l-e_c) = (l-e_\ell -e_c) + 2(l-2e_c)$
        \item $(2l -2e_\ell -3e_c) + (l-e_\ell) = 3(l-e_\ell -e_c)$
        \item $(l-e_\ell) + (l -2e_c) = (l-e_c) + (l-e_\ell -e_c)$
\end{enumerate}
For each relation $\sum \alpha_i = \sum \beta_j$, a main component of $\prod_X \overline{\free}_2(X,\alpha_i)$ lies in the same component of free curves as a main component of $\prod_X \overline{\free}_2(X, \beta_j)$.
\end{lem}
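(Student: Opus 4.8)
The plan is to establish the two halves of the lemma separately: first that the displayed equalities generate the monoid of relations among the elements of $\mathscr{C}_X$, and then that each of them is witnessed by a common component of $\overline{\free}(X)$.

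\textbf{Generating the relations.} I would apply Lemma \ref{Relations} with the shift $\tau = l$, ordering $\mathscr{C}_X$ so that the four $-K_X$-lines $l-e_\ell,\ l-e_c,\ l-e_\ell-e_c,\ l-2e_c$ come last, the three anticanonical conics $2l-e_\ell-4e_c,\ 2l-2e_\ell-3e_c,\ 2l-2e_\ell-4e_c$ come just before them, and $l$ comes first. Pairing against $D = E_\ell + E_c - H$ isolates $l$ (it is the unique core class with $l\cdot D<0$, while $l-e_\ell$ and $l-e_c$ pair to $0$ and every remaining core class pairs positively), so relations (1)--(5) strip all occurrences of $l$; successive divisors negative on exactly one of $2l-2e_\ell-4e_c$, then $2l-2e_\ell-3e_c$, then $2l-e_\ell-4e_c$ and nonnegative on the later classes then remove those three conics, producing relations (6)--(12). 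The four remaining lines span $N_1(X)$ and satisfy the single relation $(l-e_\ell)+(l-2e_c)=(l-e_c)+(l-e_\ell-e_c)$, which is (13). The only work here is choosing each auxiliary divisor and checking minimality in the sense of Lemma \ref{Relations}(4); this is routine intersection-number bookkeeping.

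\textbf{Realizing the relations.} I would sort the relations into three groups. First, those whose two sides are chains of $-K_X$-lines whose total class $\alpha=\sum\alpha_i$ satisfies the numerical hypotheses of Lemma \ref{lines_in_P3}(2) --- e.g. (1), (2), (13), and the birational portions of (5) and (6) --- are immediate, since then $\overline{\free}^{bir}(X,\alpha)$ is irreducible and the relevant products meet its smooth locus. Second, relations whose common class is contracted to a fixed surface --- a plane through $\ell$ (as in (12), and the planar cubics and quartics in (7), (8), (10)) or a smooth quadric through $c$ (as in (9) and (11)) --- would be handled by identifying that surface with a blow-up of $\mathbb{P}^2$ or with $\mathbb{P}^1\times\mathbb{P}^1$, classifying the relevant components of free curves on it via Theorem \ref{del pezzo curves thm}, and matching the two chains using the monodromy of the pencil of such surfaces, i.e. Theorem \ref{Monodromy} applied to $|H-E_\ell|$ and $|2H-E_c|$, exactly as in the analyses of cases \ref{3.1thm}, \ref{3.3thm} and the $E5$ cases. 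Third, for the remaining relations (principally (3), (4), (6), (10)) I would build an explicit immersed chain of lines, conics and $\pi$-fibers --- attaching a line of class $l$ or $l-e_\ell$, a fiber $e_\ell$ or $e_c$, and curves of the other core classes occurring in the relation --- whose normal sheaf is globally generated on each component, so that by Lemma \ref{GHS lemma} and Proposition \ref{kontsevich space H1} it is a smooth point of $\overline{\mathcal{M}}_{0,0}(X)$ that generalizes, by smoothing different internal sub-chains, to a main component of the product on each side; Lemma \ref{Basic Properties Chains} then forces both products into one component of $\overline{\free}(X)$.

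\textbf{Main obstacle.} The genuinely delicate relations are those whose common class involves the anticanonical conic class $2l-2e_\ell-4e_c$, namely (5)--(10), because on such classes $\overline{\free}(X,\cdot)$ is reducible: the multiple-cover-of-conic locus, and for the planar quartics the double-cover locus, split off. I therefore have to check that the immersed chains I construct avoid those extraneous components and land on the unique component carrying birational (indeed very free) curves, which is the same subtlety treated in Theorem \ref{del pezzo curves thm}(2) and in the del Pezzo fibration cases. A secondary nuisance is that $l-e_\ell-e_c$, $l-2e_c$, $2l-2e_\ell-3e_c$ and $2l-e_\ell-4e_c$ are separating, so each product $\prod_X\overline{\free}_2$ may have several main components; I would dispose of this using the transitivity of the relevant monodromy (projection from $\ell$, projection from a point of $c$, and the pencils of planes and quadrics above), as in the analyses of \ref{3.3thm} and \ref{3.5thm}.
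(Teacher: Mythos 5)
Your treatment of the first half (generating the relations) matches the paper: Lemma \ref{Relations} applied with $-H+E_\ell+E_c$ to strip $l$, then divisors vanishing on all but one of the three conic classes, then the unique relation among the four remaining lines. The second half is where your plan has concrete problems. First, Lemma \ref{lines_in_P3} only applies when $\pi_*\alpha$ is a line or a conic (plus the final clause for relations among four line classes), so it cannot cover ``the birational portions of (5) and (6)'': those total classes, $3l-2e_\ell-4e_c$ and $3l-2e_\ell-5e_c$, push forward to cubics. The paper instead proves irreducibility of the relevant $\overline{\free}(X,\cdot)$ for (3)--(6) and (11) directly, by observing that general members are twisted cubics and parameterizing them birationally by their marked intersection points with $\ell$ and $c$ (open subsets of $\ell\times\Sym^4(c)$, $\Sym^2(\ell)\times\Sym^3(c)$, $\Sym^5(c)$, etc., the last using that a twisted cubic union a secant line is a complete intersection of two quadrics). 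Your immersed-chain fallback for (3), (4), (6) could be made to work but you would still have to navigate the separating classes and the double-cover component of $\overline{\free}(X,2(l-e_\ell-2e_c))$, which the direct irreducibility statements render moot.

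Second, and more seriously, your classification of relation (10) fails: a planar quartic in a plane containing $\ell$ meets $E_\ell$ with multiplicity $4$, and one in a plane not containing $\ell$ with multiplicity at most $1$, so no curve of class $4l-3e_\ell-7e_c$ is planar and the ``pencil of planes through $\ell$'' monodromy argument does not apply. This is the hardest relation in the lemma. The paper shows a general such curve is a smooth nondegenerate quartic lying on a unique quadric $Q\in|2H-E_\ell|$ (necessarily containing $\ell$, since $\ell$ meets $Q$ in $\ge 3$ points), identifies it as a divisor of class $(1,3)$ on $Q\cong\mathbb{P}^1\times\mathbb{P}^1$, computes $\mathcal{N}_{\pi'\circ f}\cong\mathcal{O}(5)\oplus\mathcal{O}(6)$ to run a dimension count showing the associated quadric is general, and then uses full symmetric monodromy on the $8$ points of $Q\cap c$ to conclude that the generically $8$-to-$1$ map $\overline{\free}(X,4l-3e_\ell-7e_c)\dashrightarrow|2H-E_\ell|$ has irreducible source. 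Neither of your two proposed routes for (10) (planar-surface monodromy, or a generic immersed chain among several separating and reducible classes) supplies this argument, so as written your proof of that case has a genuine gap.
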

\begin{proof}
\textbf{Relations:} 
We proceed using Lemma \ref{Relations}.  Note that $D = -H + E_\ell + E_c$ pairs negatively with $l$ and nonnegatively with each other element of $\mathscr{C}_X$.  In particular, since $D.(l - e_\ell) = D.(l - e_c) = 0$, we find relations $(1)$ through $(5)$ above.  Next, as $2l - 2e_\ell - 4e_c$ is the only curve that pairs negatively with $D = 5H - 2E_\ell - 2E_c$, while $D.(2l -2e_\ell -3e_c) = D.(2l -e_\ell -4e_c) = 0$.  We obtain relations (6) through (10).  There are no expressions of $(2l -2e_\ell -4e_c) + (l-e_\ell - e_c)$ or $(2l -2e_\ell -4e_c) + (l- 2e_c)$ as a sum of other curve classes, because these still pair to $-1$ with $D$.

Of the remaining curves, only $2l-e_\ell -4e_c$ pairs negatively with $D=H - \frac{1}{2} E_c - \frac{1}{4} E_\ell$, and only $l-e_\ell$, $l-e_c$, and $l-e_\ell-e_c$ pair positively with $D$.  We find relations (5), (6), and (11).  Of classes in $\mathscr{C}_X\setminus \{l, 2l - 2e_\ell - 4e_c, 2l -e_\ell -4e_c\}$, only $2l-2e_\ell -3e_c$ pairs negatively with $D=H - \frac{1}{2} E_c - \frac{1}{2} E_\ell$, while $l-e_\ell$ and $l-e_c$ pair positively with $D$.  We obtain relations (5) and (12).  Lastly, the only relation between the remaining curves $\{l-e_\ell, l-2e_c, l-e_c, l-e_\ell -e_c\}$ is (13).

\textbf{Main Components:} We address each relation below.
\begin{enumerate}
    \item Irreducibility of $\overline{\free}(X, 2l -e_\ell -e_c)$ follows from Lemma \ref{lines_in_P3}.
    \item Irreducibility of $\overline{\free}(X, 2l -2e_c)$ follows from Lemma \ref{lines_in_P3}.
    \item We claim $\overline{\free}(X, 3l -e_\ell -4e_c)$ is irreducible.  Indeed, the locus of free, integral, planar curves of class $3l -e_\ell -4e_c$ is $3 + \text{dim } \mathbb{G}(2,3) = 6$, while each component of $\overline{\free}(X, 3l -e_\ell -4e_c)$ has dimension 7.  Therefore, a general point in any component of $\overline{\free}(X, 3l -e_\ell -4e_c)$ corresponds to the strict transform of a twisted cubic.  It follows that for any such representative $f: \mathbb{P}^1 \rightarrow X$, the five points at which $f(\mathbb{P}^1)$ meets the exceptional locus of $X\rightarrow \mathbb{P}^3$ have linearly general images in $\mathbb{P}^3$.  An open locus $U \subset \ell \times \text{Sym}^4(c)$ parameterizes such a collection of points.  To each point of $U$, there corresponds a 2 dimensional family (birational to $U \times \overline{M}_{0,5}$) of twisted cubics with strict transforms of class $3l -e_\ell -4e_c$.  Therefore $\overline{\free}(X, 3l -e_\ell -4e_c)$ is irreducible.
    \item A nearly identical argument proves $\overline{\free}(X, 3l -2e_\ell -3e_c)$ is irreducible.  Merely let $U$ be an open subset of $\text{Sym}^2(\ell) \times \text{Sym}^3(c)$.
    \item Again, a generic curve parameterized by any component of $\overline{\free}(X, 3l -2e_\ell -4e_c)$ is a twisted cubic.  This time, an open subset $U\subset \text{Sym}^2(\ell) \times \text{Sym}^4(c)$ is birational to $\overline{\free}(X, 3l -2e_\ell -4e_c)$.
    \item We claim $\overline{\free}(X, 3l -2e_\ell -5e_c)$ is irreducible.  As no planar curve has strict transform of class $3l -2e_\ell -5e_c$, each component of $\overline{\free}(X, 3l -2e_\ell -5e_c)$ generically parameterizes strict transforms of twisted cubics.  Recall that the union of a twisted cubic $C$ and any line $L$ spanned by two points of $C$ may be expressed as the complete intersection of two quadrics.  If $C$ has strict transform of class $3l -2e_\ell -4e_c$, we may let $L = \ell$.  The space of quadrics containing $\ell$, $|2H -E_\ell|$, is a linear system of dimension $6$.  Through any five linearly general points on $c$ there passes a one dimension subsystem of such quadrics.  Therefore, an open subset of $\text{Sym}^5(c)$ dominates $\overline{\free}(X, 3l -2e_\ell -5e_c)$.
    \item We claim the only main component of free chains of type $(2l -2e_\ell -4e_c, l-e_\ell)$ lies in a component of $\overline{\free}(X, 3l -3e_\ell -4e_c)$ that contains free chains of type $(2l -2e_\ell -3e_c, l-e_\ell -e_c)$.  Any integral curve $C$ of class $3l - 3e_\ell -4e_c$ is the strict transform of a planar cubic.  The plane $P$ containing such a cubic $C$ must contain $\ell$ as well (i.e. $P\in |H -E_\ell|$), and we may assume $P$ meets $c$ transversely at 4 points.  Thus, either $C$ is singular at one of three intersection points with $c$, or $C$ meets $c$ at four nonsingular points.  The unique component of $\overline{\free}(X, 3l -3e_\ell -4e_c)$ that generically parameterizes cubics meeting $c$ at four nonsingular points contains free chains of type $(2l -2e_\ell -4e_c, l-e_\ell)$ and $(2l -2e_\ell -3e_c, l-e_\ell -e_c)$.
    \item Each component of $\overline{\free}(X, 4l -4e_\ell -6e_c)$ must generically parameterize strict transforms of planar quartics.  Such quartics must intersect $c$ at four distinct points, and may either have a triple point or two double points among these intersections.  Any component of $\overline{\free}(X, 4l -4e_\ell -6e_c)$ parameterizing quartics with two double points along $c$ must contain free chains of type $(2l -2e_\ell -4e_c, l-e_\ell -e_c, l-e_\ell -e_c)$ and $(2l -2e_\ell -3e_c, 2l -2e_\ell -3e_c)$.
    \item Each component of $\overline{\free}(X,4l -2e_\ell -8e_c)$ must generically parameterize strict transforms of nondegenerate quartic curves in $\mathbb{P}^3$.  Every geometrically rational, nondegenerate quartic curve $C$ in $\mathbb{P}^3$ is either smooth or contains precisely one double point.  If the strict transform of $C$ has class $4l -2e_\ell -8e_c$, exactly one quadric surface $S$ contains both $c$ and $C$.  %
    For a general curve parameterized by any component of $\overline{\free}(X,4l -2e_\ell -8e_c)$, $S$ must be nonsingular.  Smooth rational quartics in $S$ have class $(3,1)$ or $(1,3)$, while nondegenerate, singular quartic curves have class $(2,2)$, and are the complete intersection of two quadrics.  For each nonsingular quadric $S$ containing $c$, the space of geometrically rational curves of class $(2,2)$ containing $S\cap \ell$ is irreducible \ref{del pezzo curves thm}. This proves that exactly one component of $\overline{\free}(X,4l -2e_\ell -8e_c)$ generically parameterizes maps $f:\mathbb{P}^1\rightarrow X$ which are not embeddings.  The existence of free chains of type $(2l - e_\ell -4e_c, 2l -e_\ell -4e_c)$ (pick two curves in different components of a fiber of $\overline{\free}_1(X,2l-e_\ell-4e_c)\rightarrow X$) and of type $(2l -2e_\ell -4e_c, l-2e_c, l-2e_c)$ in this component proves our claim.
    \item We claim $\overline{\free}(X, 4l -3e_\ell -7e_c)$ is irreducible.  Each component of $\overline{\free}(X, 4l -3e_\ell -7e_c)$ must generically parameterize nondegenerate quartic curves.  Moreover, general quartics $f:\mathbb{P}^1\rightarrow X$ parameterized by any component of $\overline{\free}(X, 4l -3e_\ell -7e_c)$ must be the strict transforms of smooth curves in $\mathbb{P}^3$ under $\pi: X\rightarrow \mathbb{P}^3$.  This may be seen from \ref{reducible fibers: 4 author result} and \cite[Proposition~2.9]{beheshti2020moduli}, or by noticing that otherwise $\pi \circ f(\mathbb{P}^1)$ would be contained in a quadric containing $c$.  It follows that for any such $f$, $\pi \circ f(\mathbb{P}^1)$ is contained in a unique quadric $Q$, which must be smooth.  Since $\ell$ must meet $Q$ at 3 or more points, $\ell \subset Q$, i.e. $Q \in |2H - E_\ell|$.  We may identify $\ell$ as a divisor of class $(1,0)$ in $Q$, so that $\pi \circ f(\mathbb{P}^1)$ is a divisor of class $(1,3)$ in $Q$.  %
    If $Y\rightarrow \mathbb{P}^3$ denotes the blow-up of $\ell$ and $\pi' :X \rightarrow Y$ factors $\pi$, it follows that $\mathcal{N}_{\pi' \circ f} \cong \mathcal{O}(5) \oplus \mathcal{O}(6)$.  %
    Therefore through a general collection of 6 points of $c$ there exists a finite (nonzero) number of free curves parameterized by each component of $\overline{\free}(X, 4l -3e_\ell -7e_c)$.  For $[f] \in \overline{\free}(X, 4l -3e_\ell -7e_c)$, the associated quadric $Q$ must be the only $Q\in |2H -E_\ell|$ that contains any 6 of the 7 points $\pi\circ f(\mathbb{P}^1)\cap c$. It follows from \cite[Theorem~1.2]{beheshti2020moduli} that for $[f]$ general in each component, all 7 points of $\pi\circ f(\mathbb{P}^1)\cap c$ must impose independent conditions on the space of divisors of class $(1,3)$ in $Q$.  A dimension count thus shows that a general $[f]$ in any component of $\overline{\free}(X, 4l -3e_\ell -7e_c)$ lies in a general $Q\in |2H -E_\ell|$.  Thus, we may assume $Q$ is general.  In particular, we may suppose $Q$ meets $c$ transversely and that all 8 points of $Q \cap c$ impose independent conditions on divisors of class $(1,3)$.  Thus the association $[f] \rightarrow Q$ corresponds to a generically $8-1$ map $\overline{\free}(X, 4l -3e_\ell -7e_c) \dashrightarrow |2H -E_\ell|$ which dominant on each component of $\overline{\free}(X, 4l -3e_\ell -7e_c)$.  As the monodromy of intersections of $c$ with $Q \in |2H -E_\ell|$ is the full symmetric group, this proves irreducibility of $\overline{\free}(X, 4l -3e_\ell -7e_c)$.
    \item We claim $\overline{\free}(X, 3l -e_\ell -5e_c)$ is irreducible.  The argument is similar to (3) and (5).  
    \item While $\overline{\free}(X, 3l -3e_\ell -3e_c)$ is not irreducible, there is a unique component $M$ that contains free chains of type $(2l -2e_\ell -3e_c, l-e_\ell)$.  $M$ is the only component which generically parameterizes planar cubics meeting $c$ transversely at 3 points.  We may obtain a free chain of type $(l-e_\ell -e_c, l-e_\ell -e_c, l-e_\ell -e_c)$ in $M$ by gluing three curves from separate fibers of $\overline{\free}_1(X,l-e_\ell -e_c) \rightarrow X$.
    \item Irreducibility of $\overline{\free}(X, 2l -e_\ell -2e_c)$ follows from Lemma \ref{lines_in_P3}.
\end{enumerate}
\end{proof}

\begin{lem}
For each $\alpha \in l + \Nef_1(X)_\mathbb{Z}$, $\overline{\free}(X,\alpha)$ is nonempty.
\end{lem}

\begin{proof}
The extreme rays of $\Nef_1(X)$ are spanned by $l, l-e_\ell, l-2e_c$, and $l -e_\ell -2e_c$.  By Gordan's Lemma, these classes, together with elements in $\mathscr{C}_X$, generate the monoid of integer points in $\Nef_1(X)$.  Thus we may write $\alpha \in l + \Nef_1(X)_\mathbb{Z}$ as a sum $\alpha = l + b(l-e_\ell -2e_c) + \sum_i a_i c_i$ for $c_i \in \mathscr{C}_X$, and $b \in \{0,1\}$.  If $b =1$, substituting $l + (l-e_\ell -2e_c) = (l-e_\ell) + (l-2e_c)$ expresses $\alpha$ as a sum of free curve classes.
\end{proof}

\subsection*{3.7}
\textbf{Description of Variety $X$:} $X$ is given in \cite{mori1981classification} as a blow up $X\rightarrow \mathbb{P}(T_{\mathbb{P}^2})$ of the complete intersection of two members of $|-\frac{1}{2} K_{\mathbb{P}(T_{\mathbb{P}^2})}|$.  Alternatively, we may realize $X$ as a blow-up of $\mathbb{P}^1 \times \mathbb{P}^2$ along a curve $c$ of bidegree $(3,3)$ using \ref{blow-up numbers}.  We let $\pi: X \rightarrow \mathbb{P}^1 \times \mathbb{P}^2$ denote this blow-up, and $\pi_1 :X \rightarrow \mathbb{P}^1$ and $\pi_2:X\rightarrow \mathbb{P}^2$ be the compositions of the blow-up with each projection.

\textbf{Generators for $N^1(X)$ and $N_1(X)$:} Let $H_1 = \pi_1^* \mathcal{O}(1)$, $H_2 = \pi_2^* \mathcal{O}(1)$, and $E$ be the exceptional divisor of the blow-up $\pi$.  Let $l_1$ be the class of a fiber of $\pi_2$, $l_2$ be the class of a general line in each fiber of $\pi_1$, and $e$ be a fiber of $E\rightarrow c$.  Note that there is pseudosymmetry, explained after Effective Curves.

\begin{thm}\label{3.7thm}
For all $\alpha \in l_1 + \Nef_1(X)_\mathbb{Z}$, $\overline{\free}(X,\alpha)$ is nonempty and irreducible.
\end{thm}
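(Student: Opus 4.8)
The plan is to follow the general template laid out in the Outline of Proof (Section 7.1), adapting it to the blow-up $\pi: X \to \mathbb{P}^1 \times \mathbb{P}^2$ of a curve $c$ of bidegree $(3,3)$ embedded under $\pi_2$. First I would compute the Mori structure: by Lemma \ref{P1 x P2 effective curves} the cone $\overline{NE}(X)$ has extreme rays $e$, $l_1 - e$, and (since $d_1 = 3$) $l_2 - 2e$; the effective divisors are governed by the degree bounds already recorded. I would then pin down the relevant pseudosymmetry: since $c$ has bidegree $(3,3)$ both projections of $c$ are embeddings (here even both $d_i = 3$), so the two descriptions of $X$ as a blow-up of $\mathbb{P}^1 \times \mathbb{P}^2$ — one via each ruling — give a nontrivial pseudoaction exchanging $e$ with $l_1 - e$ while fixing $l_2 - 2e$, exactly as in cases 3.3 and 3.7's sibling family. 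This symmetry will cut the bookkeeping roughly in half.

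Next I would identify a core of free curves $\mathscr{C}_X$ by listing all nef classes $\alpha = d_1 l_1 + d_2 l_2 - ne$ with $2 \le -K_X . \alpha \le 4$, discarding those represented only by multiple covers of conics or by triple covers of lines (e.g. $3l_2 - 6e$ will not be free, and $4l_2 - 8e$, $2l_1$, $l_1+l_2-e$ are freely breakable), and checking irreducibility of the surviving classes via Lemmas \ref{curves1 in P1 x P2}, \ref{curves2 in P1 x P2}, \ref{curves3 in P1 x P2} together with the monodromy statement of Lemma \ref{P1 x P2 monodromy} (here $d_1 = 3 \le 3$ so the monodromy of $c \to \mathbb{P}^1$ is the full symmetric group $S_3$, which makes the separating classes — those corresponding to choices of subsets of the $3$ points of $c$ in a fiber of $\pi_1$ — genuinely irreducible as parameter spaces even though their evaluation fibers are reducible). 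I would then apply Lemma \ref{Relations} to produce a generating set of monoid relations among the core classes (eliminating classes in an order dictated by divisors pairing negatively with them, exploiting pseudosymmetry to avoid redundant relations), and verify hypothesis \ref{Main Method}(1) for each: for relations with class contracted by the del Pezzo fibration $\pi_1$ or $\pi_2$ this follows from Theorem \ref{del pezzo curves thm} applied fiberwise; for the others I would exhibit explicit deformation paths (sliding a line component along a conic until its intersection with $c$ collides, passing through a nodal curve that is a smooth point of $\overline{\mathcal{M}}_{0,0}(X)$ by the normal bundle computation, then re-smoothing a different node), or invoke irreducibility of the combined space via Lemmas \ref{curves2 in P1 x P2} and \ref{curves3 in P1 x P2}.

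Condition \ref{Main Method}(2) — or its $(\dagger)$ version from Corollary \ref{bir Main Method} — is handled uniformly by Corollary \ref{connected fibers fixall}, since $X$ admits no del Pezzo fibration with fibers of degree $\le 3$ that contains a separating class: the two del Pezzo fibrations $\pi_1, \pi_2$ have fibers of degree $6$ and (a blow-up of) $\mathbb{P}^2$, and the relevant fiberwise Weyl-group monodromy is full. There is no nef $-K_X$-conic interior to $\overline{NE}(X)$ (here $\rho(X) = 3$, so Lemma \ref{interior conic lemma} applies), so hypothesis (2) of Corollary \ref{connected fibers fixall} is vacuous, and every main component of the relevant fiber products satisfies $(\dagger)$. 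Finally, nonemptiness of $\overline{\free}(X,\alpha)$ for all $\alpha \in l_1 + \Nef_1(X)_{\mathbb{Z}}$ follows from Gordan's Lemma \ref{Gordan's Lemma} plus Theorem \ref{Representability of Free Curves}: the generators of $\Nef_1(X)$ as a monoid are $l_1$, $l_2$, $l_2 - 2e$, and the pseudosymmetric partner $2l_1 + l_2 - 4e$, and combined with $\mathscr{C}_X$ these generate all integer points of $\Nef_1(X)$; writing $\alpha = l_1 + c(l_2 - 2e) + \sum \alpha_i$ with $\alpha_i \in \mathscr{C}_X$ and $c \in \{0,1\}$ and using that $l_1 + l_2 - 2e$ is a free class finishes it. The main obstacle I expect is the explicit irreducibility analysis of the separating classes of the form $k l_2 - m e$ (and their combinations appearing in relations) — verifying that the monodromy of the $3$ points of $c$ over a fiber of $\pi_1$, acting on the subsets that label components of the evaluation fibers, is transitive enough to glue those components into a single component of $\overline{\free}(X,\alpha)$, and matching this against the del Pezzo surface classification of Theorem \ref{del pezzo curves thm}; this is the step most likely to require case-by-case computation rather than a clean appeal to the general machinery.
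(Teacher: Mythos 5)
Your overall template matches the paper's (find a core, generate relations via Lemma \ref{Relations}, apply Theorem \ref{Main Method} with Corollary \ref{connected fibers fixall}, finish with Gordan's Lemma), but several of the case-specific inputs you feed into it are wrong, and they would derail the argument. The central error is the pseudosymmetry. For this family the second description of $X$ as a blow-up of $\mathbb{P}^1\times\mathbb{P}^2$ comes from contracting the extreme ray $l_2-2e$ (the bisecant lines to the three points of $c$ in each fiber of $\pi_1$), not $l_1-e$; the resulting pseudoaction is $e\mapsto l_2-2e$, $l_1\mapsto l_1+l_2-3e$, $l_2\mapsto 2l_2-3e$. Your proposed action $e\leftrightarrow l_1-e$ (fixing $l_2-2e$) sends $l_2$ to $2l_1+l_2-4e$, which pairs to $-1$ with the effective divisor $3H_2-E$ and so is not even nef: your ``symmetry'' does not preserve $\Nef_1(X)$ and cannot be a pseudosymmetry. (Relatedly, $\pi_1|_c$ is a $3{:}1$ cover, not an embedding; only $\pi_2|_c$ embeds.) This matters because the correct symmetry is what disposes of $l_1+l_2-3e$ and $2l_2-3e$ by identifying them with $l_1$ and $l_2$ --- note that $l_1+l_2-3e$ falls outside the range $n<\tfrac{3d+3}{2}$ of Lemma \ref{curves2 in P1 x P2}, so it is not covered by the direct lemmas --- and is what halves the list of relations to be checked; with your symmetry you would make false identifications and miss the ones you actually need.

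Second, your enumeration of the auxiliary nef classes is imported from the sibling cases: $3l_2-6e$ and $4l_2-8e$ pair negatively with the effective divisor $3H_1+3H_2-2E$ and are not nef here; the actual extra classes of anticanonical degree $2$ to $4$ are $2l_1$, $2l_2-2e$, $l_1+l_2-e$, $l_1+2l_2-4e$, $2l_1+l_2-3e$, and $2l_1+2l_2-6e$. Third, in the nonemptiness step your proposed monoid generators $l_2-2e$ and $2l_1+l_2-4e$ are not nef; the extreme rays of $\Nef_1(X)$ are $l_1$, $l_2$, $2l_2-3e$, $l_1+l_2-3e$, and in fact the core $\mathscr{C}_X=\{l_1,\,l_2,\,l_2-e,\,2l_2-3e,\,l_1+l_2-2e,\,l_1+l_2-3e\}$ already generates the full monoid $\Nef_1(X)_{\mathbb{Z}}$, so no shift by $c(l_2-2e)$ is needed and nonemptiness holds for every nonzero nef class. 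The parts of your plan that are sound --- the $S_3$ monodromy of $\pi_1|_c$, the appeal to Corollary \ref{connected fibers fixall} since the only del Pezzo fibration has degree-$6$ fibers and there is no nef conic interior to $\overline{NE}(X)$, and the use of Lemmas \ref{curves1 in P1 x P2}--\ref{curves3 in P1 x P2} for the relation classes $3l_2-3e$, $l_1+2l_2-2e$, $l_1+2l_2-3e$, $2l_1+2l_2-4e$ --- only carry the proof once these inputs are corrected.
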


\textbf{Intersection Pairing:} $ H_i \cdot l_j = \delta_{ij}$, $E \cdot e = -1$, and all other pairings are $0$.

\textbf{Anticanonical Divisor:} $-K_X = 2H_1 + 3H_2 - E$

\textbf{Effective Divisors:} $E$, $3H_1 + 3H_2 -2E$, $3H_2 -E$, and $H_1$ are all effective.

\textbf{Effective Curves:} The extreme rays of $\overline{NE}(X)$ are $e$, $l_1-e$, and $l_2 -2e$.

\textbf{Pseudosymmetry:}  The contraction of $l_2 -2e$ also realizes $X$ as a blow-up of $\mathbb{P}^1 \times \mathbb{P}^2$ along a curve of bidegree $(3,3)$ by \cite{matsuki1995weyl}.  Thus, we obtain the following pseudoaction on $N_1(X)$: $e\rightarrow l_2 -2e$, $l_1 \rightarrow l_1 + l_2 -3e$, $l_2 \rightarrow 2l_2 - 3e$.

\begin{lem}
A core of free curves on $X$ is given by
\begin{align*}
   \mathscr{C}_X = &\{ l_1, \ l_2, \ l_2 -e, \ 2l_2 -3e,\ l_1 + l_2 -2e,\ l_1 + l_2 -3e \} 
\end{align*}
The only separating class in $\mathscr{C}_X$ is $l_2 -e$.
\end{lem}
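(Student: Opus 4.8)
# Proof Proposal for Case 3.7: Core of Free Curves

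The plan is to follow the standard template used throughout the paper for blow-ups of $\mathbb{P}^1 \times \mathbb{P}^2$, relying heavily on Lemmas \ref{P1 x P2 effective curves}, \ref{curves1 in P1 x P2}, \ref{curves2 in P1 x P2}, \ref{curves3 in P1 x P2}, and the pseudosymmetry of $X$. First I would enumerate the nef curve classes $\alpha = d_1 l_1 + d_2 l_2 - ne$ of anticanonical degree between $2$ and $4$: the conditions $-K_X . \alpha \geq 2$, nefness against $E$, $3H_2 - E$, and $3H_1 + 3H_2 - 2E$ force $n \leq \min(3d_2, \tfrac{3(d_1+d_2)}{2})$ together with $d_1 + d_2 \leq 4$ and appropriate bounds coming from $3H_1+3H_2-2E$. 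This yields a finite list. I would then discard the freely breakable classes: $2l_1$, $2l_2 - 2e$, $l_1 + l_2 - e$ are standard, while classes like $3l_2 - 5e$, $3l_2 - 6e$, $l_1 + 2l_2 - 4e$ break using Movable Bend-and-Break (Theorem \ref{MovableBB}) or Lemma \ref{curves2 in P1 x P2}, and any $-K_X$-quartic contracted by the del Pezzo fibration $\pi_1$ breaks by Mori bend-and-break on a smooth fiber. Pseudosymmetry (swapping $e \leftrightarrow l_2 - 2e$) identifies $l_2 \leftrightarrow 2l_2 - 3e$, $l_1 \leftrightarrow l_1 + l_2 - 3e$, $l_2 - e \leftrightarrow l_1 + l_2 - 2e$ (up to checking degrees), which cuts the verification work roughly in half and also shows it suffices to check the ``small'' representatives.

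Next I would establish irreducibility of $\overline{\free}(X, \alpha)$ for each $\alpha \in \mathscr{C}_X$. For $l_1$ and $l_2$ this is immediate ($\mathbb{P}^1 \times \mathbb{P}^1$-bundle descriptions). For $l_2 - e$ and $2l_2 - 3e$ these are curves contained in fibers of $\pi_1$, which are del Pezzo surfaces of degree $9 - 3 = 6$ for general fibers; Lemma \ref{curves1 in P1 x P2} reduces irreducibility to the transitivity of the monodromy of $\pi_1|_c : c \to \mathbb{P}^1$ on the relevant configurations of the three points $c \cap \pi_1^{-1}(\text{pt})$, which by Lemma \ref{P1 x P2 monodromy} is the full symmetric group $S_3$ since $d_1 = 3$. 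For $l_1 + l_2 - 2e$ and $l_1 + l_2 - 3e$ I would use Lemma \ref{curves2 in P1 x P2} (noting $d = 1$, and checking the numerical hypothesis $n < \tfrac{3d+3}{2} = 3$ for $l_1 + l_2 - 2e$, while $l_1 + l_2 - 3e$ has $n = 3$ and so is handled as a finite cover of the space of lines in $\mathbb{P}^2$ via a monodromy argument exactly as in case $3.5$). The evaluation-map fiber computations: I claim $\overline{\free}_1(X, \alpha) \to X$ has irreducible general fibers for $\alpha \in \{l_1, l_2, 2l_2 - 3e, l_1 + l_2 - 2e, l_1 + l_2 - 3e\}$ — for the last two because fixing a general point of $X$ pins down the line in $\mathbb{P}^2$ or leaves an irreducible linear system — while for $l_2 - e$ the fiber has three components (one per choice of point in $c \cap \pi_1^{-1}(\text{pt})$), so $l_2 - e$ is the unique separating class. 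This matches the statement.

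Finally I would verify condition \ref{def core}(2): every component $M \subset \overline{\free}(X)$ contains a free chain with components in $\mathscr{C}_X$. The key input is Movable Bend-and-Break (Theorem \ref{MovableBB}; $X$ has no $E5$ divisor, so the quintic case also breaks) together with the improved statement Theorem \ref{improved MovableBB} for quartics — here $X$ has $|-K_X|$ very ample, $\rho(X) = 3 \geq 2$, and no $E3, E4, E5$ divisors, and $X \neq V_7$, so Proposition \ref{breaking quartic curves} applies: any $-K_X$-quartic component either contains a chain of free conics or degenerates to an immersed chain with a free cubic component and one more component. Combined with the fact that every anticanonical class of degree $\leq 3$ that is not in $\mathscr{C}_X$ is itself freely breakable into core classes (checked in the enumeration step), induction on $-K_X . \alpha$ gives the result. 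I expect the main obstacle to be the bookkeeping in Proposition \ref{breaking quartic curves}'s application — specifically, handling the conic-bundle structure $\pi_2 : X \to \mathbb{P}^2$ (via Corollary \ref{breaking quartics cor}) to ensure quartic components with $\pi_{2*}$-fibers of dimension $\geq 2$ break, and checking that the residual two-component chains produced actually have both pieces deformable to core-class free chains (rather than, say, getting stuck on a non-free line of class $e$). The pseudosymmetry should be leveraged to reduce the number of quartic and cubic classes requiring explicit attention.
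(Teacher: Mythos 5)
Your proposal takes essentially the same route as the paper: enumerate the nef classes of anticanonical degree $2$--$4$ from the effective divisors, discard the freely breakable ones, and invoke Lemmas \ref{P1 x P2 monodromy}, \ref{curves1 in P1 x P2}, \ref{curves2 in P1 x P2}, \ref{curves3 in P1 x P2} together with the pseudosymmetry $e \mapsto l_2 - 2e$, $l_1 \mapsto l_1 + l_2 - 3e$, $l_2 \mapsto 2l_2 - 3e$ (the paper's proof is exactly this, stated in two sentences). One correction, though, before you lean on it: the pairing $l_2 - e \leftrightarrow l_1 + l_2 - 2e$ you propose is false --- these classes have anticanonical degrees $2$ and $3$ respectively, so no pseudosymmetry (which fixes $K_X$) can exchange them; a direct computation shows both are fixed by the pseudoaction. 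This matters because the two classes behave differently with respect to the conclusion you are proving: $l_2 - e$ is separating (three choices of point in $c \cap \pi_1^{-1}(\mathrm{pt})$ over a fixed fiber) while $l_1 + l_2 - 2e$ is not (monodromy on the three intersection points of a line through $\pi_2(p)$ with the plane cubic $\pi_2(c)$ is the full $S_3$, making $\mathrm{ev}^{-1}(p)$ an irreducible $3$-fold cover of the pencil), so transferring the fiber computation along the claimed pairing would give a contradiction with the statement. You do argue each class independently, so the argument survives, but the pairing should be deleted. Two smaller slips: $3l_2 - 5e$ and $3l_2 - 6e$ are not nef here (they violate $2n \leq 3(d_1+d_2)$; you are recalling the lists from cases $3.3$/$3.5$), and the actual residual quartics are $2l_1$, $2l_2-2e$, $l_1+l_2-e$, $l_1+2l_2-4e$, $2l_1+l_2-3e$, $2l_1+2l_2-6e$ --- the last two, which you do not name, are handled respectively by Lemma \ref{curves3 in P1 x P2} and by pseudosymmetry with $2l_1$, so your blanket quartic-breaking strategy does cover them.
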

\begin{proof}
\textbf{Nef Curve Classes of Anticanonical Degree Between $2$ and $4$:}
If $\alpha = d_1 l_1 + d_2 l_2 -ne$ is nef and of appropriate degree, we find $2n\leq \text{min}(6d_2, 3d_1 + 3d_2)$, $d_1,n \geq 0$, and $2 \leq 2d_1 + 3d_2 -n \leq 4$.  The integral solutions to this system of inequalities are the classes in $\mathscr{C}_X$ and $2l_1, 2l_2 -2e, l_1 + l_2 -e, l_1 + 2l_2 -4e, 2l_1 + l_2 -3e$, and $2l_1 + 2l_2 -6e$.
As pseudosymmetry swaps $l_1$ with $l_1 + l_2 -3e$ and swaps $l_2$ with $2l_2 -3e$, this reduces the number of classes we need to consider. Lemmas \ref{curves1 in P1 x P2}, \ref{curves2 in P1 x P2}, and \ref{curves3 in P1 x P2} finish our proof.
\end{proof}

\begin{lem}
Up to pseudosymmetry, relations in the monoid $\mathbb{N}\mathscr{C}_X$ are generated by:
\begin{enumerate}
\item $l_2 + (2l_2 -3e) = 3(l_2 -e)$
\item $l_2 + (l_1 + l_2 -2e) = l_1 + 2(l_2 -e)$
\item $l_2 + (l_1 + l_2 -3e) = (l_2 -e) + (l_1 + l_2 -2e)$
\item $l_1 + (l_2 -e) + (l_1 + l_2 -3e) = 2(l_1 + l_2 -2e)$
\end{enumerate}
For each relation $\sum \alpha_i = \sum \beta_j$, a main component of $\prod_X \overline{\free}_2(X,\alpha_i)$ lies in the same component of free curves as a main component of $\prod_X \overline{\free}_2(X, \beta_j)$.
\end{lem}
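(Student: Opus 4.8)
The statement has two parts: the list of relations is a complete generating set, and for each relation the two main components of fiber products of $\overline{\free}_2$ spaces lie in the same component of free curves. The first part I would obtain exactly as in the analogous earlier Picard rank 3 cases (3.3, 3.5): applying Lemma~\ref{Relations} with the generating curve $\tau=l_1$ played first and the other core classes ordered appropriately, and then exploiting the pseudosymmetry $e \mapsto l_2-2e$, $l_1 \mapsto l_1+l_2-3e$, $l_2 \mapsto 2l_2-3e$ to halve the bookkeeping. Concretely, I would choose divisors that cut off $l_1$, then $l_2$, then $l_2-e$: the divisor $D=-H_1-H_2+E$ pairs negatively only with $l_2$ and gives relations (1)--(3); a divisor like $D'=-3H_2-H_1-2E$ (or its pseudosymmetric partner) pairs negatively only with $l_2-e$; after eliminating these, the remaining classes $l_1$, $l_1+l_2-2e$, $l_1+l_2-3e$, $2l_2-3e$ span $N_1(X)$ with the single further relation (4). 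The pseudosymmetry then accounts for all relations mentioning $2l_2-3e$, $l_1+l_2-3e$, etc., so the displayed four (up to pseudosymmetry) are a complete generating set.

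\textbf{Main components.} For each relation I would exhibit a nodal stable map lying in the smooth locus of $\overline{\mathcal{M}}_{0,0}(X)$ that degenerates (via Lemma~\ref{GHS lemma} and Proposition~\ref{kontsevich space H1}, i.e. global generation of the relevant normal sheaves) to free chains of both requested types, so that by Lemma~\ref{Basic Properties Chains}(1) the two main components sit in one component of $\overline{\free}(X)$. Relation (1), $l_2+(2l_2-3e)=3(l_2-e)$, is internal to a general fiber $X_p$ of $\pi_1$, which is a del Pezzo surface of degree $9-3=6$; there I would invoke Theorem~\ref{del pezzo curves thm} (or Lemma~\ref{curves1 in P1 x P2}) to see that the unique component of plane cubics meeting $c\cap X_p$ transversely contains chains of both shapes. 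Relations (2) and (3) I would settle by irreducibility of $\overline{\free}(X,l_1+2l_2-2e)$ and $\overline{\free}(X,l_1+2l_2-3e)$ from Lemma~\ref{curves2 in P1 x P2}, since both sides of each relation give free chains inside that single irreducible space. Relation (4), $l_1+(l_2-e)+(l_1+l_2-3e)=2(l_1+l_2-2e)$, is the subtle one: I would start from a free chain of type $(l_1+l_2-2e,\,e,\,l_1+l_2-3e)$ — a smooth point of $\overline{\mathcal{M}}_{0,0}(X)$ with globally generated normal sheaf on each component — smooth the node joining $e$ to $l_1+l_2-3e$ to reach a free chain of type $(l_1+l_2-2e,\,l_1+l_2-2e)$, and alternatively smooth the other node and then use irreducibility of $\overline{\free}(X,l_1+l_2-e)$ (Lemma~\ref{curves3 in P1 x P2}) to re-break the resulting degree-$5$ curve into a chain $(l_1,\,l_2-e,\,l_1+l_2-3e)$.

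\textbf{Expected obstacle.} The main difficulty is verifying the normal-sheaf global-generation / smoothness hypotheses at the intermediate nodal maps used for relation (4), and ensuring the deformation smoothing one node while preserving the others exists — this is where Lemma~\ref{GHS lemma} and the refinement in the remark following Lemma~\ref{Basic Properties Chains} (replacing a free component by any family dominating a divisor meeting the adjacent class positively) are needed, together with the observation that the relevant lines $e=l_2-2e$-type curves have normal bundle $\mathcal{O}\oplus\mathcal{O}(-1)$. A secondary bookkeeping nuisance is checking, when eliminating $l_2-e$ via Lemma~\ref{Relations}, that no relation simultaneously involves $l_1$ and $l_1+l_2-3e$ except the one already generated — handled exactly as in case 3.3 by noting $l_1$ and $l_1+l_2-3e$ are the two classes not pairing to zero with $H_2$, so they cannot both appear on one side of a minimal relation. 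Granting these, the argument is routine and parallel to cases 3.3, 3.5, and 3.7 itself.
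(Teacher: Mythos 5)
Your proposal is essentially correct and, for the relations list and for relations (1)--(3), follows the same route as the paper: Lemma~\ref{Relations} plus the pseudosymmetry to halve the casework, the planar-cubic/del Pezzo analysis in fibers of $\pi_1$ for relation (1), and irreducibility of $\overline{\free}(X,l_1+2l_2-2e)$ and $\overline{\free}(X,l_1+2l_2-3e)$ via Lemma~\ref{curves2 in P1 x P2} for relations (2) and (3). Where you diverge is relation (4): you build an explicit nodal chain of type $(l_1+l_2-2e,\,e,\,l_1+l_2-3e)$ and smooth its two nodes in turn, which forces you to verify global generation of normal sheaves and the existence of node-preserving deformations — the "expected obstacle" you flag. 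The paper avoids all of this: the total class of relation (4) is $2l_1+2l_2-4e$, and with $a=2$, $b=2$, $n=4$ one has $n\le 2a+1$ and $n\le bd_2=6$, so $\overline{\free}(X,2l_1+2l_2-4e)$ is irreducible outright by Lemma~\ref{curves3 in P1 x P2}; every main component of every free-chain decomposition then automatically lies in the one component. Your argument would presumably go through (the curves of class $e$ have normal bundle $\mathcal{O}\oplus\mathcal{O}(-1)$, so Proposition~\ref{kontsevich space H1} applies), but it is strictly harder than necessary here. One small inaccuracy: your divisor $D=-H_1-H_2+E$ does not pair negatively \emph{only} with $l_2$ — it also pairs to $-1$ with $l_1$ — so either order $l_1$ before $l_2$ and invoke the convention of Lemma~\ref{Relations} explicitly, or use the paper's cleaner choice $D=-H_2+E$, which pairs to $-1$ with $l_2$, to $0$ with $l_1$ and $l_2-e$, and positively with the remaining core classes.
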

\begin{proof}
\textbf{Relations:}
We follow Lemma \ref{Relations}.
\begin{itemize}
    \item $D=-H_2 +E$: $D$ pairs negatively with $l_2$ and positively with $2l_2 -3e$, $l_1 + l_2 -2e$, and $l_1 + l_2 -3e$.  We find relations (1) through (3).
    \item Relations involving $2l_2 -3e$ are psuedosymmetric to relations involving $l_2$.
    \item The remaining four classes span $N_1(X)$ and satisfy relation (4).
\end{itemize}

\textbf{Main Components:} We address each relation below.
\begin{enumerate}
    \item $\overline{\free}(X, 3l_2 -3e)$ has three components.  One component parameterizes triple covers of curves of class $l_2 -e$. One component parameterizes planar cubics that are singular at one of their two intersection points with the blown-up curve $c$.  The only other component parameterizes planar cubics meeting $c$ at three points in their smooth locus.  This last component contains free chains of type $(l_2, 2l_2 -3e)$ and $(l_2 -e, l_2 -e, l_2 -e)$.
    \item $\overline{\free}(X, l_1 + 2l_2 -2e)$ is irreducible by Lemma \ref{curves2 in P1 x P2}
    \item $\overline{\free}(X,l_1 + 2l_2 -3e)$ is irreducible by Lemma \ref{curves2 in P1 x P2}.
    \item $\overline{\free}(X,2l_1 + 2l_2 -4e)$ is irreducible by Lemma \ref{curves3 in P1 x P2}.
\end{enumerate}
\end{proof}

\begin{lem}
For each nonzero $\alpha \in \Nef_1(X)_\mathbb{Z}$, $\overline{\free}(X,\alpha)$ is nonempty.
\end{lem}

\begin{proof}
The generators of $\Nef_1(X)$ are $l_1, l_2, 2l_2 -3e, l_1 + l_2 -3e$ by \ref{Representability of Free Curves}.  It follows from \ref{Gordan's Lemma} that classes in $\mathscr{C}_X$ generate the monoid of integer points in $\Nef_1(X)$.
\end{proof}

\subsection*{3.8}
\textbf{Description of Variety $X$:} $X$ may be realized as a blow-up $\pi:X\rightarrow \mathbb{P}^1 \times \mathbb{P}^2$ along a curve $c$ of bidegree $(4,2)$ using \ref{blow-up numbers}.  We let $\pi_1 :X \rightarrow \mathbb{P}^1$ and $\pi_2:X\rightarrow \mathbb{P}^2$ be the compositions of the blow-up with each projection.

\textbf{Generators for $N^1(X)$ and $N_1(X)$:} Let $H_1 = \pi_1^* \mathcal{O}(1)$, $H_2 = \pi_2^* \mathcal{O}(1)$, and $E$ be the exceptional divisor of the blow-up $\pi$.  Let $l_1$ be the class of a fiber of $\pi_2$, $l_2$ be the class of a general line in each fiber of $\pi_1$, and $e$ be a fiber of $E\rightarrow c$.

\begin{thm}\label{3.8thm}
For all $\alpha \in l_1 + \Nef_1(X)_\mathbb{Z}$, $\overline{\free}(X,\alpha)$ is nonempty and irreducible.
\end{thm}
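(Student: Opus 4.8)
# Proof Proposal for Theorem (Case 3.8)

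The plan is to follow the standard strategy outlined in Section 7 and executed in cases 3.3, 3.5, and 3.7, since $X$ is the blow-up $\pi : X \to \mathbb{P}^1 \times \mathbb{P}^2$ of a curve $c$ of bidegree $(4,2)$ with each $d_i > 1$, so that $\pi_2$ embeds $c$ in $\mathbb{P}^2$ as a conic. First I would record the structure of $\overline{NE}(X)$: by Lemma \ref{P1 x P2 effective curves}, since $d_1 = 4$, the extreme rays of $\overline{NE}(X)$ are $e$, $l_1 - e$, and $l_2 - 2e$, and correspondingly the effective divisors are $E$, $H_1$, $3H_2 - E$ (and possibly $H_1 + 2H_2 - E$ or a similar combination coming from the $D1$-structure). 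Then I would identify a core of free curves $\mathscr{C}_X$ by solving the inequalities defining nef classes $\alpha = d_1 l_1 + d_2 l_2 - n e$ with $2 \le -K_X \cdot \alpha \le 4$: this should yield a list like $\{ l_1, l_2, l_2 - e, l_1 + l_2 - 2e, 2l_2 - 3e, 2l_2 - 4e, l_1 + l_2 - 3e, l_1 + 2l_2 - 5e\}$ (exact list to be checked), with the freely breakable classes $2l_1$, $l_1 + l_2 - e$, $2l_2 - 2e$, $l_1 + 2l_2 - 4e$, etc.\ discarded via Mori bend-and-break or Lemma \ref{lines_in_P3}-type arguments, and curves like $3l_2 - 6e$ excluded as multiple covers. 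Irreducibility of $\overline{\free}(X, \alpha)$ and identification of the separating classes for each $\alpha \in \mathscr{C}_X$ follows from Lemmas \ref{curves1 in P1 x P2}, \ref{curves2 in P1 x P2}, and \ref{curves3 in P1 x P2}; the only subtle point is the separating classes contracted by $\pi_1$ (the del Pezzo fibration with degree-$5$ fibers, since $9 - d_1 = 5$), where the monodromy of $\pi_1|_c : c \to \mathbb{P}^1$ must be checked to be transitive on appropriate symmetric powers of fibers — this is governed by Lemma \ref{P1 x P2 monodromy}, which gives the full symmetric group since $d_1 = 4$ (or requires generality, but $d_1 \le 3$ is the clean case; here $d_1 = 4$ so I should invoke the monodromy lemma carefully, noting that simple branching suffices).

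Next I would enumerate the generating set of relations in the monoid $\mathbb{N}\mathscr{C}_X$ using Lemma \ref{Relations}: choosing divisors $D_i$ (built from $-H_2 + E$, $-H_1 - H_2 + E$, scalings of $-K_X - E$, etc.) to eliminate the core classes one at a time in order of $-K_X$-degree, terminating with the unique relation among the $\rho(X) = 3$ spanning classes. For each relation $\sum \alpha_i = \sum \beta_j$, I would verify hypothesis \ref{Main Method}(1): for relations whose total class is contracted by $\pi_1$ (so lies in a del Pezzo-surface fiber), irreducibility of the relevant moduli space follows from Theorem \ref{del pezzo curves thm} and a monodromy argument identifying the degree-$d$ components; for relations involving a $\pi_2$-section component, irreducibility of spaces like $\overline{\free}(X, l_1 + k l_2 - ne)$ follows from Lemma \ref{curves2 in P1 x P2}, and for the two-parameter cases from Lemma \ref{curves3 in P1 x P2}. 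Where a moduli space is reducible (e.g.\ a class admitting both multiple-cover and birational components), I would exhibit an immersed chain of low-degree curves with globally generated normal bundle (so a smooth point of $\overline{\mathcal{M}}_{0,0}(X)$ by Proposition \ref{kontsevich space H1} and Lemma \ref{GHS lemma}) that smooths in two ways to free chains of the two types, exactly as in the main-component arguments of cases 3.3 and 3.5.

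Then I would invoke Corollary \ref{connected fibers fixall}: since any del Pezzo fibration on $X$ (namely $\pi_1$) has fibers of degree $5 > 3$, $X$ admits a genuine core of free curves, there are no nef $-K_X$-conics interior to $\overline{NE}(X)$ (as $\rho(X) = 3 \ge 3$, by Lemma \ref{interior conic lemma}), and hence condition \ref{Main Method}(2) holds automatically for all $\alpha$ — every main component of $\prod_X \overline{\free}_2(X,\alpha_i)$ satisfies $(\dagger)$ and maps to the same component. Combining this with step two via Theorem \ref{Main Method} (or Corollary \ref{bir Main Method} where only birational curves are involved) gives that $\overline{\free}(X,\alpha)$ is irreducible or empty for all $\alpha \in l_1 + \Nef_1(X)_{\mathbb{Z}}$. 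Finally I would check nonemptiness: by Theorem \ref{Representability of Free Curves} the extreme rays of $\Nef_1(X)$ — spanned by $l_1$, $l_2$, $2l_2 - 3e$, and $l_1 + 2l_2 - 5e$ (or whatever the computation yields) — contain free curve classes, so by Lemma \ref{Gordan's Lemma} the core classes together with any ``leftover'' generator span the monoid of integer nef points; writing $\alpha = l_1 + (\text{leftover}) + \sum \alpha_i$ with $\alpha_i \in \mathscr{C}_X$ and using that $l_1 + l_2 - 2e$ is represented by a free curve finishes the argument. The main obstacle I anticipate is the monodromy computation for $\pi_1|_c$ when $d_1 = 4$: unlike $d_1 \le 3$, this requires genuinely checking that no two ramification points lie over the same point of $\mathbb{P}^1$ (generic position of $c$, guaranteed by the Fano condition per Lemma \ref{P1 x P2 monodromy}) and then confirming transitivity on the relevant $\mathrm{Sym}^k$ of a $5$-point fiber — together with the bookkeeping of which components of $\overline{\free}_1(X,\alpha) \to X$ have reducible fibers, which feeds into identifying separating classes correctly.
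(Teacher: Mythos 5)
Your plan is the paper's proof: the same core--relations--Main Method template, with Lemmas \ref{curves1 in P1 x P2}--\ref{curves3 in P1 x P2} supplying the irreducibility and separating-class bookkeeping, Corollary \ref{connected fibers fixall} disposing of condition \ref{Main Method}(2) because $\alpha\cdot H_1>0$ keeps $\alpha$ outside $\overline{NE}(\pi_1)$, immersed-chain smoothings for the relations whose total class is reducible, and Theorem \ref{Representability of Free Curves} plus Gordan's Lemma for nonemptiness. When you carry out the hedged computations, note that $\pi_2$ embeds $c$ as a conic, so the third extreme effective divisor is $2H_2-E$ rather than $3H_2-E$; this excludes $l_1+l_2-3e$ and $l_1+2l_2-5e$ from the nef cone and gives the core $\{l_1,\ l_2,\ l_2-e,\ 2l_2-3e,\ 2l_2-4e,\ l_1+l_2-2e\}$ with separating classes $l_2-e$ and $2l_2-3e$, and the fibers of $\pi_1|_c$ consist of $d_1=4$ points (the degree-$5$ del Pezzo fibers are four-point blow-ups of $\mathbb{P}^2$), for which mere transitivity of the monodromy --- automatic by Theorem \ref{Monodromy} --- suffices, so no generality hypothesis on $X$ is required.
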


\textbf{Intersection Pairing:} $ H_i \cdot l_j = \delta_{ij}$, $E \cdot e = -1$, and all other pairings are $0$.

\textbf{Anticanonical Divisor:} $-K_X = 2H_1 + 3H_2 - E$

\textbf{Effective Divisors:} $E$, $H_1$, and $2H_2 -E$ are all effective.

\textbf{Effective Curves:} The extreme rays of $\overline{NE}(X)$ are $e$, $l_1-e$, and $l_2 -2e$.

\begin{lem}
A core of free curves on $X$ is given by
\begin{align*}
   \mathscr{C}_X = &\{ l_1, \ l_2, \ l_2 -e, \ 2l_2 -3e, \ 2l_2 -4e, \ l_1 + l_2 -2e \} 
\end{align*}
The only separating classes in $\mathscr{C}_X$ are $l_2 -e$ and $2l_2 -3e$.
\end{lem}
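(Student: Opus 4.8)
The plan is to follow the scheme already used for cases 3.3, 3.5, and 3.7. Since $X$ carries no $E5$ divisor, Theorem \ref{MovableBB} lets us work only with nef curve classes of anticanonical degree at most $4$. Writing a class as $\alpha = d_1 l_1 + d_2 l_2 - ne$, the effective generators $E$, $H_1$, $2H_2 - E$ give that $\alpha$ is nef exactly when $d_1 \ge 0$, $n \ge 0$, $n \le 2 d_2$, and $-K_X.\alpha = 2 d_1 + 3 d_2 - n$; solving $2 \le 2 d_1 + 3 d_2 - n \le 4$ produces a short finite list consisting of the six core classes together with $2 l_1$, $l_1 + l_2 - e$, $l_1 + 2 l_2 - 4e$, $2 l_2 - 2e$, $3 l_2 - 5e$, $3 l_2 - 6e$, and $4 l_2 - 8e$. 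The first task is to dispose of these seven non-core classes. For the classes contracted by the del Pezzo fibration $\pi_1 : X \to \mathbb{P}^1$, whose general fiber $S_5$ is the blow-up of $\mathbb{P}^2$ at the four linearly general points $c \cap \pi_1^{-1}(t)$, I would argue that $3 l_2 - 6e$ has no irreducible free representative at all (every representative lies in a fiber, where the restricted class has negative self-intersection but arithmetic genus $-1$), so $\overline{\free}(X, 3 l_2 - 6e) = \emptyset$; and that $2 l_2 - 2e$, $3 l_2 - 5e$, $4 l_2 - 8e$ degenerate inside a general fiber, breaking $2h - e_i - e_j$ into $(h-e_i)+(h-e_j)$, $-K_{S_5} - e_i$ into $(h - e_i) + (2h - \textstyle\sum_k e_k)$, and $2(2h - \sum_k e_k)$ into two copies of $2h - \sum_k e_k$, i.e.\ into free chains of types $(l_2-e,\,l_2-e)$, $(l_2-e,\,2l_2-4e)$, and $(2l_2-4e,\,2l_2-4e)$. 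The del Pezzo surface statements in Theorem \ref{del pezzo curves thm} force the relevant surface moduli to be irreducible, so each component of $\overline{\free}(X,\alpha)$ contains the asserted chain. For $2 l_1$, $l_1 + l_2 - e$, $l_1 + 2 l_2 - 4e$ I would use the standard degeneration of a bidegree-$(k,0)$, $(1,1)$, $(1,2)$ graph curve to the union of a $\pi_2$-fiber and a curve in a $\pi_1$-fiber, letting all points of intersection with $c$ slide onto the latter component, producing free chains of types $(l_1,l_1)$, $(l_1,\,l_2-e)$, $(l_1,\,2l_2-4e)$; Lemma \ref{curves2 in P1 x P2} gives irreducibility of $\overline{\free}(X, l_1 + l_2 - e)$ and $\overline{\free}(X, l_1 + 2 l_2 - 4e)$, pinning these chains into a single component.

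Next I would verify nonemptiness and irreducibility of $\overline{\free}(X,\alpha)$ for the six core classes. For $l_1$ (fibers of $\pi_2$) and $l_2$ (general lines in $\pi_1$-fibers) this is immediate. For $l_2 - e$, $2 l_2 - 3e$, $2 l_2 - 4e$, Lemma \ref{curves1 in P1 x P2} identifies the irreducible representatives by their multiplicity vectors $(1,0,0,0)$, $(1,1,1,0)$, $(1,1,1,1)$ over the four points of $c$ in a general $\pi_1$-fiber, and the components with a fixed multiplicity vector are indexed by the monodromy orbits of the assignments of that vector. Since $c$ is irreducible, the monodromy group of $\pi_1|_c$ is transitive on those four points; this already forces a single orbit in each of the three cases (the only point needing a sentence is $2l_2-3e$, where transitivity on complements of single points is used). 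For $l_1 + l_2 - 2e$ I would invoke Lemma \ref{curves2 in P1 x P2} with $d = 1$ and $n = 2 < 3$. Nonemptiness is clear in every case (lines through a point of $c$; conics through three or four points; and the three-dimensional family of bidegree-$(1,1)$ curves meeting $c$ twice).

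Finally, to pin down the separating classes I would compute the general fiber of $\mathrm{ev} : \overline{\free}_1(X,\alpha)\to X$ for each core class. By the last clause of Lemma \ref{curves1 in P1 x P2}, this fiber is irreducible precisely when the multiplicity vector is constant, so $l_2$ and $2 l_2 - 4e$ are non-separating while $l_2 - e$ and $2 l_2 - 3e$ are separating; for $l_1$ the evaluation map is birational onto $X$, so $l_1$ is non-separating. The remaining case $l_1 + l_2 - 2e$ requires a direct argument: a curve of this class through a general $x = (t_0, p_0)$ projects under $\pi_2$ to a line $L \ni p_0$ meeting the conic $\pi_2(c)$ at two points $q_1, q_2$, and the requirements that the $(1,1)$-curve pass through $x$ and through the two points of $c$ lying over $q_1, q_2$ determine its parameterization $\mathbb{P}^1 \xrightarrow{\sim} L$ uniquely (three points mapping to three points); hence the $\mathrm{ev}$-fiber over $x$ is the pencil of lines through $p_0$, which is irreducible, and $l_1 + l_2 - 2e$ is non-separating. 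This matches the stated list $\{l_2-e,\ 2l_2-3e\}$.

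The main obstacle is the honest content of the second step: making sure each degeneration of a non-core class genuinely stays inside one component and that the resulting free chains have unobstructed smoothings back to the given class, which is where one leans on the irreducibility input of Lemmas \ref{curves1 in P1 x P2}–\ref{curves2 in P1 x P2} and on Proposition \ref{kontsevich space H1}; and, secondarily, the direct fiber computation for $l_1 + l_2 - 2e$, for which Lemma \ref{curves1 in P1 x P2} gives no leverage. A point to be careful about throughout is that 3.8 lies outside $(\Sigma)$ and has $d_1 = 4$, so Lemma \ref{P1 x P2 monodromy} does not supply the full symmetric monodromy group for $\pi_1|_c$; every monodromy assertion above must be written so as to use only transitivity of that group, which holds automatically because $c$ is irreducible.
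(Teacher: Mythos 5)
Your proposal takes essentially the same route as the paper: enumerate the nef classes of anticanonical degree $2$--$4$, break the non-core ones into free chains inside fibers of $\pi_1$ or via graph degenerations, and read off irreducibility and the separating classes from Lemmas \ref{curves1 in P1 x P2}--\ref{curves3 in P1 x P2}; your extra care with monodromy (using only transitivity of $\pi_1|_c$, since $d_1=4$ and $3.8$ is not in $(\Sigma)$) and your direct pencil computation for the $\mathrm{ev}$-fiber of $l_1+l_2-2e$ are exactly the checks the paper leaves implicit. One genuine point of difference: your enumeration correctly includes $3l_2-5e$ (nef of degree $4$, restricting to $-K_{S_5}-e_i$ on a general $\pi_1$-fiber), which the paper's list omits even though it appears in the parallel case $3.5$; your treatment of it --- a single component by Theorem \ref{del pezzo curves thm} breaking as $(l_2-e,\,2l_2-4e)$ --- is correct and repairs that omission.
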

\begin{proof}
\textbf{Nef Curve Classes of Anticanonical Degree Between $2$ and $4$:}
If $\alpha = d_1 l_1 + d_2 l_2 -ne$ is nef and of appropriate degree, we find $n\leq 2d_2$, $d_1,n \geq 0$, and $2 \leq 2d_1 + 3d_2 -n \leq 4$.  The integral solutions to this system of inequalities are the classes in $\mathscr{C}_X$ and $2l_1, 2l_2 -2e, l_1 + l_2 -e, l_1 + 2l_2 -4e, 3l_2 -6e,$ and $4l_2 -8e$.  Aside from $3l_2 -6e$, each of these classes is freely breakable (see Lemmas \ref{curves1 in P1 x P2}, \ref{curves2 in P1 x P2}, and \ref{curves3 in P1 x P2}).  $3l_2 -6e$ is never representable by a free rational curve.  Lemmas \ref{curves1 in P1 x P2}, \ref{curves2 in P1 x P2}, and \ref{curves3 in P1 x P2} finish our proof.
\end{proof}

\begin{lem}
Relations in the monoid $\mathbb{N}\mathscr{C}_X$ are generated by:
\begin{enumerate}
\item $l_2 + (2l_2 -3e) = 3(l_2 -e)$
\item $l_2 + (2l_2 -4e) = (l_2 -e) + (2l_2 -3e)$
\item $l_2 + (l_1 + l_2 -2e) = l_1 + 2(l_2 -e)$
\item $l_1 + (2l_2 -3e) = (l_1 + l_2 -2e) + (l_2 -e)$
\item $2l_1 + (2l_2 -4e) = 2(l_1 + l_2 -2e)$
\item $2(2l_2 -3e) = 2(l_2 -e) + (2l_2 -4e)$
\end{enumerate}
For each relation $\sum \alpha_i = \sum \beta_j$, a main component of $\prod_X \overline{\free}_2(X,\alpha_i)$ lies in the same component of free curves as a main component of $\prod_X \overline{\free}_2(X, \beta_j)$.
\end{lem}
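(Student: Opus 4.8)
The proof has the same two‑part shape as the analogous lemmas for cases 3.5 and 3.7: first verify that the six displayed identities generate the monoid of relations on $\mathscr{C}_X$ via Lemma \ref{Relations}, then verify the main‑component statement relation by relation, splitting the six relations according to whether the total class is contracted by $\pi_1$.

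\textbf{Generation of the relations.} I would order the core as $l_2,\ l_1,\ l_2-e,\ 2l_2-3e,\ 2l_2-4e,\ l_1+l_2-2e$, noting that $l_2$ is extreme in the cone on all six classes (its $l_1$‑ and $e$‑components force a trivial expression in terms of the others), that $l_1$ is extreme in the cone on the last five, and that the last four classes span $N_1(X)$ because $l_1+l_2-2e$ is the only one with nonzero $l_1$‑component while $l_2-e,2l_2-3e,2l_2-4e$ lie in a plane. For $c_1=l_2$ the divisor $D=-H_2+E$ pairs to $-1$ with $l_2$, to $0$ with $l_1$ and $l_2-e$, and positively with $2l_2-3e$, $2l_2-4e$ and $l_1+l_2-2e$; the minimal relations with $l_2$ on the left (taking one of these three positive classes on the left and nothing else) are exactly $(1)$, $(2)$, $(3)$. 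For $c_2=l_1$ the divisor $D=-H_1-H_2+E$ pairs to $-1$ with $l_1$, to $0$ with $l_2-e$ and $l_1+l_2-2e$, and positively with $2l_2-3e$ and $2l_2-4e$; the minimal relations with $l_1$ on the left are $(4)$ and $(5)$, with $(5)$ forced to have leading coefficient $2$ because there is no relation of the form $l_1+(2l_2-4e)=\cdots$ among the remaining classes (the $D$‑ and degree‑balance cannot both be met with $r=1$). Finally the last four classes satisfy the unique relation $2(2l_2-3e)=2(l_2-e)+(2l_2-4e)$, namely $(6)$. The only nonroutine checks here are the extremality claims and the verification that the redundant relations appearing along the way are generated by $(1)$–$(6)$, all of which are short linear‑algebra computations in $N_1(X)$.

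\textbf{Main components: the non‑contracted relations.} Relations $(3)$, $(4)$, $(5)$ have total classes $l_1+2l_2-2e$, $l_1+2l_2-3e$, $2l_1+2l_2-4e$. The first two fall under Lemma \ref{curves2 in P1 x P2} (here $d_2=2>1$, $d=2$, and $n\in\{2,3\}<\frac{3d+3}{2}=\frac{9}{2}$), and the third under Lemma \ref{curves3 in P1 x P2} (here $a=b=2$, $n=4\le 2a+1$ and $n\le bd_2=4$). In each case $\overline{\free}(X,\alpha)$ is irreducible, so every main component of the products on either side of the relation lies in that single component and there is nothing further to prove.

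\textbf{Main components: the contracted relations.} Relations $(1)$, $(2)$, $(6)$ have total classes $3l_2-3e$, $3l_2-4e$, $4l_2-6e$, all contracted by $\pi_1$. A general fiber of $\pi_1$ is $\mathbb{P}^2$ blown up at the $d_1=4$ points of $c$ lying on the conic $\pi_2(c)$, hence a del Pezzo surface $S$ of degree $5$. I would first record that the monodromy of $\pi_1|_c$ is the full symmetric group $S_4$: since $X$ is Fano, the argument in Lemma \ref{P1 x P2 monodromy} shows every ramification point of $\pi_1|_c$ has valency $2$, so the monodromy is a transitive subgroup of $S_4$ generated by transpositions, and such a subgroup is all of $S_4$ (the transposition graph is connected). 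Consequently the monodromy is transitive on $k$‑subsets of the four points for every $k$, so by Lemma \ref{curves1 in P1 x P2} each of the three total classes has a unique component of $\overline{\free}(X,\alpha)$ whose general member meets $c$ with multiplicity profile $(1,1,1,0)$, $(1,1,1,1)$, and $(2,2,1,1)$ respectively (the remaining components parameterize multiple covers of conics or curves singular along $c$ and are irrelevant). It then remains to place free chains of each of the two types in the relevant component. Working in a general fiber $S$ and tracking classes through contractions of exceptional curves as in the treatment of cases 3.1 and 3.5, one checks: for $(1)$ a general chain $(l_2-e,l_2-e,l_2-e)$ of three lines through three distinct points of $c$ and a general chain $(l_2,2l_2-3e)$ both smooth to a cubic meeting $c$ transversally at three points; for $(2)$ the chains $(l_2,2l_2-4e)$ and $(l_2-e,2l_2-3e)$ both smooth to a cubic meeting $c$ transversally at all four points; and for $(6)$ the chains $(2l_2-3e,2l_2-3e)$ and $(l_2-e,l_2-e,2l_2-4e)$ both smooth to a quartic double at exactly two of the four points of $c$. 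Because $S$ has degree $\ge 3$, each of the relevant $\overline{\free}(S,\gamma)$ is irreducible by Theorem \ref{del pezzo curves thm}, so any two free chains in $S$ with a common total class lie in a common component; pushing forward and using the $S_4$‑monodromy to match up different fibers yields the claim.

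\textbf{Expected obstacle.} The hard part is the bookkeeping in the last paragraph. For the separating classes $l_2-e$ and $2l_2-3e$ the one‑pointed evaluation maps have reducible fibers, so one must argue that the general point of each fiber product of free‑curve spaces genuinely realizes the stated multiplicity profile (rather than a more degenerate one), and that the chains in question are immersions with globally generated normal sheaf, so that Lemma \ref{GHS lemma} and Proposition \ref{kontsevich space H1} guarantee the smoothings remain in a single component of $\overline{\mathcal{M}}_{0,0}(X)$. This is where the actual work lies; by contrast, the generation of the relations and the cases covered by the irreducibility lemmas for blow‑ups of $\mathbb{P}^1\times\mathbb{P}^2$ are essentially formal.
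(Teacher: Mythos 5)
Your proposal is correct and follows essentially the same route as the paper: Lemma \ref{Relations} with the divisors $-H_2+E$ and (a pivot eliminating a second class) to produce the six generators, Lemmas \ref{curves2 in P1 x P2} and \ref{curves3 in P1 x P2} for relations (3)--(5), and Lemma \ref{curves1 in P1 x P2} together with explicit chains in a fiber of $\pi_1$ for (1), (2), (6); your elimination order ($l_1$ second, leaving (6) as the residual relation) differs harmlessly from the paper's (which eliminates $2l_2-4e$ second, leaving (4)). One caveat: your claim that the monodromy of $\pi_1|_c$ is all of $S_4$ is not justified for arbitrary $X$ of this type, since with $d_1=4$ two ramification points of $\pi_1|_c$ may lie in the same fiber (cf.\ Lemma \ref{P1 x P2 monodromy}, which only guarantees full symmetric monodromy for $d_1\leq 3$ or $X$ general), so the local generators could be double transpositions and the group a priori as small as $V_4$, which is not transitive on unordered pairs. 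This does not damage the proof, because (1) and (2) only need transitivity on single points, and for (6) your own decisive step --- arranging both chain types to have the \emph{same} total class $4h-f_1-2f_2-2f_3-f_4$ in a \emph{single} fiber $S$, where the space of such quartics is irreducible --- is exactly the paper's argument and makes the $S_4$ claim superfluous; you should simply delete the monodromy assertion rather than rely on it.
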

\begin{proof}
\textbf{Relations:}
We follow Lemma \ref{Relations}.
\begin{itemize}
    \item $D=-H_2 +E$: $D$ pairs negatively with $l_2$ and positively with $2l_2 -3e$, $2l_2 -4e$, and $l_1 + l_2 -2e$.  We find relations (1) through (3).
    \item $D = H_1 + 3H_2 -2E$: $D$ pairs negatively with $2l_2 -4e$ and positively with $l_1$ and $l_2 -e$.  We obtain relations (5) and (6).
    \item The remaining four classes span $N_1(X)$ and satisfy relation (4).
\end{itemize}

\textbf{Main Components:} We address each relation below.
\begin{enumerate}
    \item $\overline{\free}(X, 3l_2 -3e)$ has at least three components by \ref{curves1 in P1 x P2}.  The component that parameterizes planar cubics meeting $c$ at three points in their smooth locus contains free chains of type $(l_2, 2l_2 -3e)$ and $(l_2 -e, l_2 -e, l_2 -e)$.
    \item By Lemma \ref{curves1 in P1 x P2}, a unique component in $\overline{\free}(X, 3l_2 -4e)$ parameterizes strict transforms of planar cubics meeting $c$ transversely at four points.  This component contains free chains of type $(l_2, 2l_2 -4e)$ and $(l_2 -e, 2l_2 -3e)$.
    \item $\overline{\free}(X, l_1 + 2l_2 -2e)$ is irreducible by Lemma \ref{curves2 in P1 x P2}
    \item $\overline{\free}(X,l_1 + 2l_2 -3e)$ is irreducible by Lemma \ref{curves2 in P1 x P2}.
    \item $\overline{\free}(X,2l_1 + 2l_2 -4e)$ is irreducible by Lemma \ref{curves3 in P1 x P2}.
    \item Let $F\subset \mathbb{P}^1\times \mathbb{P}^2$ be a general fiber of $\pi_1$.  Let $\{p_1, p_2, p_3, p_4\} = c \cap F$.  By Lemma \ref{curves1 in P1 x P2}, there is a bijection bewteen components of $\overline{\free}(X,4l_2 -6e)$ which generically parameterize geometrically rational planar quartics meeting $c$ at four points with multiplicities $(2,2,1,1)$ and orbits of the monodromy group of $\pi_1|_c$ on $(c\cap F) \times (c \cap F)$.  Therefore free chains of type $(2l_2 -3e, 2l_2 -3e)$ which are strict transforms conics meeting $\{p_1, p_2, p_3\}$ and $\{p_2, p_3, p_4\}$ are contained in the same component of $\overline{\free}(X,4l_2 -6e)$ as free chains of type $(l_2 -e, 2l_2 -4e, l_2 -e)$ whose components meet $p_2$, $\{p_1, p_2, p_3, p_4\}$, and $p_3$, respectively.  
\end{enumerate}
\end{proof}

\begin{lem}
For each $\alpha \in l_1 + \Nef_1(X)_\mathbb{Z}$, $\overline{\free}(X,\alpha)$ is nonempty.
\end{lem}

\begin{proof}
The generators of $\Nef_1(X)$ are $l_1, l_2$, and $2l_2 -4e$ by \ref{Representability of Free Curves}.  It follows from \ref{Gordan's Lemma} that classes in $\mathscr{C}_X$ together with $l_2 -2e$ generate the monoid of integer points in $\Nef_1(X)$.  Thus we may express $\alpha$ as a sum $\alpha = l_1 + c(l_2 -2e) + \sum \alpha_i$ with $\alpha_i \in \mathscr{C}_X$ and $c \in \{0,1\}$.
\end{proof}

\subsection*{3.9}
See Section \ref{E5 cases}.

\subsection*{3.10}
\textbf{Blow-up of a Quadric along two conics:}
Let $f:X \rightarrow Q$ be the blow-up of a smooth quadric $Q \subset \mathbb{P}^4$ along a disjoint union of two conics $c_1$ and $c_2$ which lie on it.  Note that the planes containing these conics must intersect at a single point.

\begin{thm}\label{3.10thm}
For each $\alpha \in l + \Nef_1(X)_\mathbb{Z}$, $\overline{\free}(X,\alpha)$ is irreducible and nonempty.
\end{thm}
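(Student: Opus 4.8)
\textbf{Proof plan for Theorem \ref{3.10thm} (case 3.10).}

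The plan is to follow the standard template outlined in Section 7: set up coordinates on $N_1(X)$, identify a core $\mathscr{C}_X$ of free curves, write down the generating relations via Lemma \ref{Relations}, verify hypothesis \ref{Main Method}(1) for each relation, invoke Corollary \ref{connected fibers fixall} for \ref{Main Method}(2) (noting that $X$ has $\rho(X)=3$, $(-K_X)^3 = 6\cdot 2 - 2(2-0+1)\cdot 2 = 20$? — in any case $(-K_X)^3 > 14$ and $X \notin (\Sigma)$ since it is a blow-up of a del Pezzo threefold $V_\cdot$ only along two conics, not the excluded configurations, and it has no del Pezzo fibration with fibers of degree $\le 3$), so that \ref{Main Method} yields irreducibility of $\overline{\free}(X,\alpha)$ (or $\overline{\free}^{bir}(X,\alpha)$) for all $\alpha \in \tau + \Nef_1(X)_\mathbb{Z}$ with $\tau = l$, and finally use Lemma \ref{Gordan's Lemma} plus Theorem \ref{Representability of Free Curves} to check nonemptiness. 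First I would fix the basis $N_1(X) = \mathbb{Z}l \oplus \mathbb{Z}e_1 \oplus \mathbb{Z}e_2$ where $l$ is the strict transform of a general line on $Q$ and $e_i$ the class of a line in the exceptional divisor $E_i = f^{-1}(c_i)$; compute $-K_X = f^*(-K_Q) - E_1 - E_2 = 3H - E_1 - E_2$, list the effective divisors ($H - E_i$ the strict transform of the plane $\langle c_i\rangle \cap Q$, the $E_i$, and the class coming from the pencil of hyperplanes through each conic), and read off $\overline{NE}(X)$ from \cite{matsuki1995weyl}.

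Next I would identify the core. By Proposition \ref{breaking quartic curves} and Corollary \ref{breaking quartics cor} — $X$ has no $E3,E4,E5$ divisors and carries the conic-bundle structure $\pi : X \to \mathbb{P}^1 \times \mathbb{P}^1$ induced by the two pencils of hyperplanes cutting out $c_1, c_2$ (this is precisely the structure analyzed for type $3.10$ in Lemma \ref{lines in C1 fibrations}) — every component of $\overline{\free}(X)$ contains a chain of free conics and cubics, so $\mathscr{C}_X$ should consist of the nef classes of anticanonical degree $2$ and $3$ that are not themselves freely breakable. I expect these to be: the line-like conics (strict transforms of lines on $Q$ meeting one or both $c_i$, and lines in the $E_i$-pencils), the fiber class of $\pi$, and a cubic or two, along with possibly one separating class per $E_i$. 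I would then run Lemma \ref{Relations}: choose the elimination divisors $D_i$ (the $H - E_j$ and the effective divisors pairing negatively with the curves to be eliminated in turn), and write out the minimal relations, each of the shape $rc_i + \sum_{k>i} a_k c_k = \sum_{k>i} b_k c_k$. For each relation, checking \ref{Main Method}(1) amounts to either: (a) proving $\overline{\free}(X, \sum \alpha_i)$ is irreducible directly — for the degree-$\le 4$ classes this follows from the del Pezzo surface results (Theorem \ref{del pezzo curves thm}) applied to the relevant conic-bundle fibers or from Lemma \ref{lehmann a-covers}-type monodromy arguments over $\mathbb{P}^1 \times \mathbb{P}^1$; or (b) exhibiting an immersed chain $f : Z \to X$ with $Z$ nodal, $H^1(Z, f^*\mathcal{T}_X) = 0$ by Proposition \ref{kontsevich space H1}, lying in both main components, and deforming (sliding one component along another, moving nodes onto $c_1$ or $c_2$, using the $\mathbb{C}^*$-actions on the $E_i$), exactly as in the degree-$3$ and degree-$4$ relation verifications in cases $3.6$–$3.8$.

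The main obstacle will be the monodromy bookkeeping for the conic-bundle structure $\pi : X \to \mathbb{P}^1 \times \mathbb{P}^1$: its discriminant curve $\Delta_\pi$ has bidegree $(2,2)$, and Lemma \ref{lines in C1 fibrations} already shows that for general choices of the two conics $\Delta_\pi$ is smooth and $\mathcal{H}$ (the Hilbert scheme of lines of the reducible-fiber class) is geometrically irrational and irreducible — but for \emph{arbitrary} $X$ of type $3.10$ I need to be careful, since $3.10$ is not in $(\Sigma)$ and the theorem is claimed for all members. I expect the resolution to be that the relevant curve classes of the conic fibers either (i) never give separating behavior that fails to be resolved by \ref{Main Method}(1), or (ii) the one genuinely delicate class is a $-K_X$-conic and Corollary \ref{connected fibers fixall}(2) reduces matters to irreducibility of $\overline{\free}^{bir}(X, 2\beta)$, which I would establish by a direct fiberwise argument over $\mathbb{P}^1 \times \mathbb{P}^1$ using Theorem \ref{reducible fibers: 4 author result} and Proposition \ref{very free curves} (since $|-K_X|$ is very ample for type $3.10$). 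Once irreducibility for $\alpha \in l + \Nef_1(X)_\mathbb{Z}$ is in hand, the statements (1)–(4) of Theorem \ref{Main Result} for $X$ follow from \ref{very free curves}, \ref{reducible fibers: 4 author result}, \ref{unique main component}, and \ref{improved MovableBB} exactly as itemized in Section 7, and nonemptiness for all nef $\alpha$ of anticanonical degree $\ge 2$ follows by the usual Gordan's-Lemma decomposition into core classes, checking the finitely many fundamental-parallelepiped lattice points by hand.
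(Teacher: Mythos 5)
Your skeleton is the paper's universal template (basis for $N_1(X)$, core, relations via Lemma \ref{Relations}, Theorem \ref{Main Method}, Gordan's Lemma for nonemptiness), and your setup data ($-K_X=3H-E_1-E_2$, the effective divisors $H-E_i$, the conic fibration contracting $l-e_1-e_2$) is correct, so at the level of strategy you are on the paper's track. But the case-specific content that actually constitutes the proof is missing, and the one place where you say how you would handle the delicate classes points at the wrong mechanism. The core here is $\mathscr{C}_X=\{l,\ l-e_i,\ 2l-2e_i-e_j,\ 2l-2e_1-2e_2\}$, and the separating classes are the \emph{boundary} classes $l-e_i$ and $2l-2e_i-e_j$ (two components in a general fiber of $\mathrm{ev}$, coming from the two lines of $Q$ through a point that meet the cone over $c_i$). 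Your proposed fallback --- that the delicate class is a $-K_X$-conic interior to $\overline{NE}(X)$ handled by Corollary \ref{connected fibers fixall}(2) --- cannot occur: by Lemma \ref{interior conic lemma} there are no interior nef $-K_X$-conics once $\rho(X)\ge 3$, and indeed $2l-2e_1-2e_2$ is just the fiber class of the conic fibration, hence irreducible and nonseparating. The genuine crux is the relation $(2l-2e_1-2e_2)+(l-e_1)+(l-e_2)=(2l-2e_1-e_2)+(2l-e_1-2e_2)$, where the separating classes for $c_1$ and $c_2$ collide; the paper resolves it by a Stein-factorization argument (any curve of a ``$c_2$-type'' class must cross the ramification locus of the two-sheeted cover attached to a ``$c_1$-type'' class, forcing nontrivial monodromy and hence a unique main component), followed by a degeneration through a chain with an $e_1$-component. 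Nothing in your plan supplies this step.

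A second, smaller issue: you lean on the conic-bundle discriminant $\Delta_\pi$ of bidegree $(2,2)$ and Lemma \ref{lines in C1 fibrations}, but that lemma is proved only for $X$ general in moduli (it constructs a special pair of conics with smooth discriminant), whereas Theorem \ref{3.10thm} is asserted for arbitrary smooth members of the family. The paper's proof deliberately avoids the discriminant: irreducibility of $\overline{\free}(X,\alpha)$ for the core classes and for the relation classes $3l-2e_i-e_j$, $3l-2e_i-2e_j$, $4l-4e_i-2e_j$ is obtained by parametrizing the curves directly via planes and hyperplanes in $\mathbb{P}^4$ (planes through points of $c_i$ cutting conics on $Q$, twisted cubics of class $(2,1)$ or $(1,2)$ in hyperplane sections $P\cap Q\cong\mathbb{P}^1\times\mathbb{P}^1$ with monodromy as $P$ varies in the pencil containing $c_j$). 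If you want your argument to cover all members, you should replace the discriminant-based monodromy with these projective-geometric parametrizations, or prove the needed transitivity statements without assuming $\Delta_\pi$ smooth.
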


\textbf{Generators for $N^1(X)$ and $N_1(X)$:}
\begin{center}
\begin{tabular}{ll}
 $H$ = the class of a hyperplane & $l$ = a line in $\mathbb{P}^4$ \\ 
 $E_1$ = the exceptional divisor $f^{-1}(c_1)$ & $e_1$ = the $f$-fiber over a point on $c_1$  \\ 
 $E_2$ = the exceptional divisor $f^{-1}(c_2)$ & $e_2$ = the $f$-fiber over a point on $c_2$   
\end{tabular}
\end{center}

\textbf{Intersection Pairing:}
\begin{center}
\begin{tabular}{lll}
    $H \cdot l = 1$ &  $H \cdot e_1 = 0$ & $H \cdot e_2 = 0$, \\
    $E_1 \cdot l = 0$ &  $E_1 \cdot e_1 = -1$ & $E_1 \cdot e_2 = 0$ \\
    $E_2 \cdot l = 0$ & $E_2 \cdot e_1 = 0$ & $E_2 \cdot e_2 = -1$
\end{tabular}
\end{center}

\textbf{Anticanonical Divisor:}
\begin{align*}
    -K_X = 3H - E_1 - E_2.
\end{align*}

\textbf{Effective Divisors:}
The divisors $H-E_1, H-E_2, E_1$, and $E_2$ are effective.
\textbf{Effective Curves:} $\overline{NE}(X)$ is generated by $e_1, e_2, l-e_1 -e_2$.  The contraction associated to $l-e_1 -e_2$ is a conic fibration onto $\mathbb{P}^1 \times \mathbb{P}^1$.  Pseudosymmetry swapping $c_1$ with $c_2$ interchanges $e_1$ with $e_2$.

\begin{lem}\label{3.10core}
A core of free curves on $X$ is given by
\begin{align*}
    \mathscr{C}_X = \{ l, l-e_i, 2l-2e_i-e_j, 2l -2e_1 -2e_2 \}
\end{align*}

\noindent for $i\neq j$.  %
The separating classes in $\mathscr{C}_X$ are $l -e_i$ and $2l -2e_i -e_j$.
\end{lem}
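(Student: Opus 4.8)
\textbf{Proof proposal for Lemma \ref{3.10core}.}

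The plan is to follow the standard template used throughout Section \ref{Picard Rank 3 Cases}: first enumerate the nef curve classes of anticanonical degree between $2$ and $4$, then discard those which are freely breakable or not represented by birational free curves, and finally verify that each remaining class has an irreducible space of free curves and identify which are separating. Since $X$ is a blow-up of a smooth quadric threefold along two disjoint conics, I would realize $X$ instead as a blow-up of $\mathbb{P}^3$ whenever convenient, but here it seems cleanest to work directly with the basis $l, e_1, e_2$ and the divisors $H - E_1, H - E_2, E_1, E_2$. Writing $\alpha = dl - m_1 e_1 - m_2 e_2$ with $d, m_1, m_2 \geq 0$, nefness gives $m_i \leq d$ and (from $H - E_i$) no stronger constraint, while the condition $2 \leq -K_X . \alpha = 3d - m_1 - m_2 \leq 4$ pins $d$ to be small. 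Solving this finite system yields the classes in $\mathscr{C}_X$ together with a short list of extra classes: $2l - e_1 - e_2$, $2l - 2e_i$, $2l - 2e_1 - e_2 - \textrm{(over-blown)}$, and the degree-$4$ classes $2l - 2e_i - 2e_j$-type leftovers as well as $l$-multiples; I would then argue each extra class is either a multiple cover of a conic (hence freely breakable, e.g. $2l - e_1 - e_2$ breaks into $l - e_1$ and $l - e_2$ as in Lemma \ref{lines_in_P3}, and $2l - 2e_i$ consists of double covers) or sweeps out a proper subvariety. The role of Lemma \ref{lines_in_P3}(2) is central here, since $\pi_* \alpha$ is the class of a conic for most of these and the hypotheses of that lemma (bounds on $\alpha . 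E_i$ and coplanarity conditions) are satisfied because the two conics $c_1, c_2$ span planes meeting in a point.

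Next I would establish irreducibility of $\overline{\free}(X,\alpha)$ and compute the fibers of $\textrm{ev}: \overline{\free}_1(X,\alpha) \to X$ for each $\alpha \in \mathscr{C}_X$. For $\alpha = l$ and $\alpha = l - e_i$ this is immediate: lines in $\mathbb{P}^4$ on $Q$ through a point of a conic form an irreducible family with irreducible general fiber. For $\alpha = 2l - 2e_i - e_j$, a representative is (the strict transform of) a conic on $Q$ passing through a point of $c_j$ and bitangent-to/singular-at its meeting with $c_i$ in the appropriate sense; I would apply Theorem \ref{Monodromy} to the relevant linear system of hyperplane sections to get irreducibility of the total space, but the fiber over a general point has several components because fixing the point forces a choice among the finitely many lines through the various incidences — hence $2l - 2e_i - e_j$ is separating. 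For $\alpha = 2l - 2e_1 - 2e_2$, the representatives are conics meeting both $c_1$ and $c_2$ twice; these are exactly the irreducible components of reducible fibers of the conic fibration $X \to \mathbb{P}^1 \times \mathbb{P}^1$ associated to the extreme ray $l - e_1 - e_2$, and I would invoke the analysis of $C1$-fibrations (Lemma \ref{lines in C1 fibrations} and the cited results of \cite{mori1985classification}) together with the fact that the discriminant locus is irreducible to conclude irreducibility; by Remark \ref{connected vs irreducible} and Theorem \ref{reducible fibers: 4 author result}, since this class has anticanonical degree $4$ and lies in the boundary of $\overline{NE}(X)$, it cannot be separating. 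Finally I would confirm that $l$ and $l - e_1 - e_2$-adjacent classes are not separating because through a general point there is a unique, or connected, family.

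The main obstacle I anticipate is the separating/non-separating bookkeeping for the degree-$4$ class $2l - 2e_1 - 2e_2$ and verifying that it genuinely belongs in the core rather than being freely breakable. I expect $2l - 2e_1 - 2e_2$ to be the fiber class of the conic bundle $X \to \mathbb{P}^1 \times \mathbb{P}^1$, so its space of free curves is a component that does \emph{not} break into shorter chains (a generic member is a conic, already of minimal degree among curves in that ruling), which is why it must be listed; but one must check carefully that the reducible-fiber components really have globally generated normal bundle and that Theorem \ref{MovableBB} does not force a break — this is where the hypothesis that $X$ has no $E5$ divisor and the exact statement of Theorem \ref{improved MovableBB} for $d = 4$ come in. A secondary subtlety is ensuring the list of "extra" classes is complete and that each extra class of anticanonical degree exactly $4$ lying in the interior of $\overline{NE}(X)$ is handled — those are the ones Lemma \ref{lines_in_P3}(2) tells us are freely breakable provided they do not meet a colinear set of exceptional divisors with multiplicity $2$, so I would check that condition case by case. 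Once these points are settled, the identification of $\mathscr{C}_X$ and its separating classes follows from the pattern already established in the preceding Picard-rank-$3$ cases.
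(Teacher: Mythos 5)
Your overall template (enumerate nef classes of degree $2$–$4$, break the extras, then check irreducibility and fibers) matches the paper's, but the execution has concrete gaps. First, your list of "extra" classes is incomplete: solving $0\le b,c\le a$, $2\le 3a-b-c\le 4$ also produces $3l-3e_i-2e_j$, $3l-3e_1-3e_2$, and $4l-4e_1-4e_2$. The last two are easy ($3l-3e_1-3e_2$ admits no free curves since $l-e_1-e_2$ is contracted by a conic fibration, and $4l-4e_1-4e_2$ is represented only by double covers), but $3l-3e_i-2e_j$ is a nef degree-$4$ class whose free representatives are strict transforms of twisted cubics, and showing it is freely breakable requires a real argument: the paper puts such a cubic $C$ in a hyperplane section $P\cap Q\cong\mathbb{P}^1\times\mathbb{P}^1$ where $c_i$ has class $(1,1)$ and $C$ has class $(1,2)$ or $(2,1)$, and uses monodromy as $P$ varies in the pencil of hyperplanes through $c_j$ to get irreducibility of $\overline{\free}(X,3l-3e_i-2e_j)$, whence breakability into $(2l-2e_1-2e_2)+(l-e_i)$. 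Your proposal never confronts this class. Relatedly, Lemma \ref{lines_in_P3} is stated for iterated blow-ups of $\mathbb{P}^3$ and is not what the paper uses here; the working substitute is the elementary fact that conics on $Q$ are plane sections, so families of them are parameterized by products of the $c_i$ via the span of their marked points.

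Second, you miscompute the degree of $2l-2e_1-2e_2$: since $-K_X=3H-E_1-E_2$, this class has anticanonical degree $2$, not $4$. It is the class of a \emph{smooth} fiber of the conic fibration $X\to\mathbb{P}^1\times\mathbb{P}^1$ (the components of reducible fibers have class $l-e_1-e_2$, of degree $1$), so irreducibility and nonseparation are immediate from the fibration, and your worries about Movable Bend-and-Break forcing a break are moot — a degree-$2$ class can never be freely breakable. Finally, your description of curves of class $2l-2e_i-e_j$ as "bitangent/singular" along $c_i$ is not right: a general representative is the smooth conic $P\cap Q$ for $P$ the plane spanned by two points of $c_i$ and one point of $c_j$. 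The separating count then comes from a concrete geometric fact (the plane of $c_i$ meets the cone over $c_j$ with vertex a general point $p$ in exactly two points), giving two one-dimensional families through $p$; your "finitely many lines through the various incidences" does not substitute for this computation, and the analogous count for $l-e_i$ (two lines through a general point, from the tangent hyperplane section) should also be made explicit.
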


\begin{proof}
\textbf{Nef Curve Classes of Anticanonical degree between $2$ and $4$:}
Such nef curve classes $\alpha = al - b e_1 - c e_2$ satisfy the equations
\begin{align*}
    0 \leq b,c \leq a, \hspace{.5cm} 2 \leq 3a-b-c \leq 4.
\end{align*}
\noindent The solutions are those classes in $\mathscr{C}_X$, as well as $2l-2e_i$, $2l-e_1-e_2$, $3l -3e_i -2e_j$, $3l-3e_1-3e_2$, and $4l-4e_1-4e_2$.  These additional classes are all freely breakable or not represented by free curves.  Indeed, any irreducible curve of class $2l -2e_i$ meets $c_i$ twice.  Unless the curve is a double cover, we may parameterize a family of such curves by $\text{Sym}^2(c_i) \times Q$ by associating the plane spanned by three points to its intersection with $Q$.  The map associated to this family has image which dominates $\overline{\free}^{bir}(X,2l -2e_i)$.  As the family contains free chains of type $(l-e_i, l-e_i)$, this proves $2l -2e_i$ is freely breakable.  A similar technique shows $2l-e_1 -e_2$ is freely breakable.  Since the contraction associated to $l-e_1 -e_2$ is a conic fibration, there are no free curves of class $3l -3e_1 -3e_2$.  Similarly, each free curve of class $4l -4e_1 -4e_2$ is a double cover.

Each free curve $C$ of class $3l -3e_1 -2e_2$ must be the strict transform of a twisted cubic.  If $P \cong \mathbb{P}^3$ is the hyperplane containing $C$, then for general $C$ in $P\cap Q \cong \mathbb{P}^1 \times \mathbb{P}^1$.  Clearly $c_1 \subset P \cap Q$ is a divisor of class $(1,1)$ and $C$ is a divisor of class $(1,2)$ or $(2,1)$.  Note that $P\cap c_2$ consists of two points which are not contained in a line in $P \cap Q$.  Each twisted cubic $C \subset P \cap Q$ with strict transform of class $3l -3e_1 -2e_2$ must contain $P \cap c_2$.  For a fixed hyperplane $P$ containing $c_2$, there are two components of such curves.  However, there is a one dimensional family of such hyperplanes.  The resulting monodromy action on $N^1(P \cap Q)$ is nontrivial.  This shows $\overline{\free}(X,3l -3e_i -2e_j)$ is irreducible.  Since $2l-2e_1 -2e_2$ and $l-e_i$ are classes of free curves, this proves $3l -3e_i -2e_j$ is freely breakable.

\textbf{Irreducible Spaces and Fibers:}  It is well known that the space of lines in $Q$ is irreducible.  Moreover, through any point on $Q$, there exists a one parameter family of lines spanning a quadric cone.  Thus, $\overline{\free}(X, l-e_i)$ is irreducible for each $i$; however, this also shows that the general fiber of $\text{ev}: \overline{\free}_1(X,l-e_i)\rightarrow X$ consists of two points.  Comparably, $\overline{\free}(X, 2l -2e_i -e_j)$ is irreducible and the general fiber of $\text{ev}: \overline{\free}_1(X,l-e_i)\rightarrow X$ consists of two irreducible curves.  Indeed, each choice of two points on $c_i$ and one point on $c_j$ yields a plane whose intersection with $Q$ generally has strict transform of class $2l -2e_i -e_j$.  Thus, $\overline{\free}(X, 2l -2e_i -e_j)$ is dominated by an open subset of $c_i \times c_i \times c_j$.  The plane corresponding to $p_1, p_2 \in c_i$ and $p_3 \in c_j$ contains a general point $p \in Q$ iff the secant line to $c_i$ through $p_1$ and $p_2$ meets the line through $p$ and $p_3$.  Since the plane $P_i \subset \mathbb{P}^4$ spanned by $c_i$ intersects the cone over $c_j$ with vertex $p$ at two points, there are two, one dimensional families of such planes.  Lastly, since $2l-2e_1 -2e_2$ is the class of a general fiber of the contraction morphism associated to $l-e_1 -e_2$, $\overline{\free}(X, 2l -2e_1 -2e_2)$ is irreducible and $2l -2e_1 -2e_2$ is nonseparating.
\end{proof}

\begin{lem}
Relations in the monoid $\mathbb{N}\mathscr{C}_X$ are generated by:
\begin{enumerate}
 \item $l + (2l -2e_i -e_j) = 2(l-e_i) + (l-e_j)$
        \item $l + (2l -2e_1 -2e_2) = (l-e_j) + (2l -2e_i -e_j)$
        \item $(2l -2e_1 -2e_2) + 2(l-e_i) = 2(2l -2e_i -e_j)$
        \item $(2l -2e_1 -2e_2) + (l-e_1) + (l-e_2) = (2l -2e_1 -e_2) + (2l -e_1 -2e_2)$
\end{enumerate}
For each relation $\sum \alpha_i = \sum \beta_j$, %
$\overline{\free}(X, \sum \alpha_i)$ is irreducible.
\end{lem}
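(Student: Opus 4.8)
The plan is to apply the Main Method (Theorem \ref{Main Method}), or rather its birational variant Corollary \ref{bir Main Method}, with the core $\mathscr{C}_X$ identified in Lemma \ref{3.10core} and with the shift $\tau = l$. Since the only separating classes are $l-e_i$ and $2l-2e_i-e_j$, and all of these have degree $\geq 2$ while their sums with other core classes that are relevant to $(\dagger)$ land in the interior of $\overline{NE}(X)$, the bulk of condition \ref{Main Method}(2) will be supplied automatically by Corollary \ref{connected fibers fixall}: $X$ has a conic fibration onto $\mathbb{P}^1\times\mathbb{P}^1$ but no del Pezzo fibration with degree $\le 3$ fibers over $\mathbb{P}^1$ containing a separating class, so every main component satisfying $(\dagger)$ maps to the same component of free curves provided $\overline{\free}^{bir}(X,2\beta)$ is irreducible for the relevant interior $-K_X$-conics $\beta$. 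Here $\rho(X)=3$, so by Lemma \ref{interior conic lemma} there is in fact \emph{no} nef $-K_X$-conic interior to $\overline{NE}(X)$ — the only conics are $l-e_i$ (boundary) and $2l-2e_1-2e_2$ (boundary, a fiber of the conic bundle) — hence the hypothesis is vacuous and \emph{every} main component satisfies $(\dagger)$. Thus condition \ref{Main Method}(2) holds outright, and it remains only to verify \ref{Main Method}(1): for each relation $\sum\alpha_i = \sum\alpha_j'$ in the listed generating set, a main component of $\prod_X \overline{\free}_2(X,\alpha_i)$ lies in the same component of $\overline{\free}_2(X,\sum\alpha_i)$ as a main component of $\prod_X\overline{\free}_2(X,\alpha_j')$.

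First I would confirm that the four displayed relations are indeed a generating set for the monoid of relations in $\mathbb{N}\mathscr{C}_X$, using Lemma \ref{Relations}: order the core classes by degree and eliminate $l$ first (via the divisor $E_1+E_2-H$, which pairs negatively with $l$ only among core classes and to zero with $l-e_1$, $l-e_2$), yielding relations (1) and (2); then eliminate $2l-2e_1-2e_2$ via a divisor like $5H-2E_1-2E_2$ (the anticanonical-type direction), pairing negatively only with $2l-2e_1-2e_2$ and to zero with $2l-2e_i-e_j$, yielding relations (3) and (4); the remaining three classes $l-e_1$, $l-e_2$, $2l-2e_1-e_2$ (say) are independent modulo pseudosymmetry, so no further relations appear. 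Pseudosymmetry swapping $e_1\leftrightarrow e_2$ cuts the casework in half.

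For the main-component statement attached to each relation, the strategy in each case is the one outlined in the remark after Theorem \ref{Main Method}: either show the target space $\overline{\free}^{bir}(X,\sum\alpha_i)$ is irreducible (then both main components lie in it trivially), or exhibit a nodal degeneration through smooth points of $\overline{\mathcal{M}}_{0,0}(X)$ connecting the two chain types. For relations (1) and (2), the classes $l+(2l-2e_i-e_j) = 3l-2e_i-e_j$ and $l+(2l-2e_1-2e_2)=3l-2e_1-2e_2$ are degree-$5$ twisted-cubic classes; I would show $\overline{\free}^{bir}(X,3l-2e_i-e_j)$ and $\overline{\free}^{bir}(X,3l-2e_1-2e_2)$ are irreducible by the same argument used in Lemma \ref{3.10core} for $3l-3e_i-2e_j$ — a general representative is the strict transform of a twisted cubic in $\mathbb{P}^4$, hence lies in a unique hyperplane $P$, with $P\cap Q \cong \mathbb{P}^1\times\mathbb{P}^1$; parameterize by an open subset of $\mathrm{Sym}^{2}(c_i)\times(\text{points on }c_j)$ or similar, and note the monodromy on the residual intersection points is full symmetric by Theorem \ref{Monodromy}. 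For relation (3), $2(2l-2e_i-e_j) = 4l-4e_i-2e_j$ is a degree-$6$ class contracted (modulo a factor) toward the conic fibration; I would smooth an immersed chain of type $(l-e_i,\,l-e_1-e_2,\,l-e_1-e_2,\,l-e_i)$ — using Lemma \ref{GHS lemma} and Proposition \ref{kontsevich space H1} to check the attaching nodes are unobstructed — and observe it deforms both to a free chain of type $(2l-2e_1-2e_2,\,l-e_i,\,l-e_i)$ and to one of type $(2l-2e_i-e_j,\,2l-2e_i-e_j)$, moving the $l-e_i$ components around the central $2$-fiber. For relation (4), $3l-2e_1-2e_2$ again; the same irreducibility argument as in (2), or a smoothing of an immersed chain of type $(l-e_1,\,l-e_1-e_2,\,l-e_2)$ linking the two sides.

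The main obstacle I anticipate is the irreducibility/connectedness bookkeeping for the separating classes $l-e_i$ and $2l-2e_i-e_j$: their evaluation maps to $X$ have reducible general fibers (two components each), so the products $\prod_X\overline{\free}_2$ can have several main components, and one must check carefully — via Lemma \ref{Gluing}, Lemma \ref{Basic Properties Chains}, and monodromy over a section of the conic bundle $X\to\mathbb{P}^1\times\mathbb{P}^1$ — that the monodromy of the relevant family of lines on the quadric $Q$ (which by Lemma \ref{3.10core} is transitive on the two-point fibers) merges these main components inside a single component of free curves. Concretely this reduces to the statement that the space of lines on $Q$ through two points of $c_i$, or meeting $c_i$ and $c_j$ appropriately, is connected — a fact extractable from the geometry of lines on a smooth quadric threefold together with Theorem \ref{Monodromy}, and already implicitly used in Lemma \ref{3.10core}. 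Once this is in hand, Corollary \ref{bir Main Method} gives that $\overline{\free}^{bir}(X,\alpha)$ is irreducible or empty for all $\alpha\in l+\Nef_1(X)_{\mathbb{Z}}$, and combined with the nonemptiness lemma (Gordan's Lemma \ref{Gordan's Lemma} applied to the generators of $\Nef_1(X)$ from Theorem \ref{Representability of Free Curves}) this yields the theorem; the very-free and component-count statements of Theorem \ref{Main Result}(2)–(4) then follow from Propositions \ref{very free curves}, Theorem \ref{reducible fibers: 4 author result}, and Theorem \ref{unique main component} as in the general outline.
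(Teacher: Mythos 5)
Your overall framework (Corollary \ref{bir Main Method} plus Corollary \ref{connected fibers fixall}, with condition \ref{Main Method}(2) vacuous since $\rho(X)=3$ rules out interior conics and there is no del Pezzo fibration) matches the paper, and your treatment of relations (1) and (2) — irreducibility of $\overline{\free}(X,3l-2e_i-e_j)$ and $\overline{\free}(X,3l-2e_1-2e_2)$ via twisted cubics in hyperplane sections $P\cap Q\cong\mathbb{P}^1\times\mathbb{P}^1$ and monodromy as $P$ varies — is exactly what the paper does. For (3) the paper instead proves $\overline{\free}(X,4l-4e_i-2e_j)$ irreducible directly (by the same argument as for $3l-3e_i-2e_j$ in Lemma \ref{3.10core}); your chain-smoothing of $(l-e_i,\,l-e_1-e_2,\,l-e_1-e_2,\,l-e_i)$ is a legitimate alternative for the main-component statement, though note the lemma as stated asserts irreducibility of the whole space, which that argument alone does not give. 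Two small slips in the relation-finding: $5H-2E_1-2E_2$ pairs to $+2$ with $2l-2e_1-2e_2$, so it cannot eliminate that class — the correct divisor is $3H-2E_1-2E_2$; and after eliminating $l$ and $2l-2e_1-2e_2$ there remain \emph{four} classes in a rank-three lattice, hence one residual relation $(2l-2e_1-e_2)+(l-e_2)=(2l-e_1-2e_2)+(l-e_1)$, which is not "no further relations" but is a consequence of the two instances of (2).

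The genuine gap is relation (4). Its class is $(2l-2e_1-2e_2)+(l-e_1)+(l-e_2)=4l-3e_1-3e_2$, not "$3l-2e_1-2e_2$ again," and your proposed linking chain $(l-e_1,\,l-e_1-e_2,\,l-e_2)$ has class $3l-2e_1-2e_2$, so it cannot connect anything of class $4l-3e_1-3e_2$. This is in fact the hardest relation: both sides are built entirely from separating classes, so one must first show that $\overline{\free}_1(X,2l-2e_1-e_2)\times_X\overline{\free}_1(X,2l-e_1-2e_2)$ and $\overline{\free}_1(X,2l-2e_1-2e_2)\times_X\overline{\free}_1(X,l-e_1)\times_X\overline{\free}_1(X,l-e_2)$ each have a \emph{unique} main component. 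The paper does this by observing that for "opposite-flavored" separating classes $\alpha,\beta$ (one indexed by $e_1$, the other by $e_2$), every curve of class $\beta$ meets the ramification locus of the degree-two Stein factorization $Y\to X$ of $\overline{\free}_1(X,\alpha)\to X$, forcing nontrivial monodromy on the two-point fibers. It then connects the two chain types by degenerating through a stable map of type $(2l-2e_1-2e_2,\,e_1,\,2l-2e_1-e_2)$ and generalizing in two ways. You flag the connectedness bookkeeping as an anticipated obstacle, but your concrete proposal for (4) does not engage with it and rests on a wrong class computation.
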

\begin{proof}
\textbf{Relations:}  We proceed as in Lemma \ref{Relations}.
\begin{itemize}
    \item $D= -H + E_1 + E_2$ pairs negatively with $l$, trivially with $l-e_i$, and positively with each other class in $\mathscr{C}_X$.  We find the following relations.
    \begin{itemize}
        \item $l + (2l -2e_i -e_j) = 2(l-e_i) + (l-e_j)$
        \item $l + (2l -2e_1 -2e_2) = (l-e_j) + (2l -2e_i -e_j)$
    \end{itemize}
    \item $D = 3H -2E_1 -2E_2$ pairs negatively with $2l -2e_1 -2e_2$, trivially with $2l -2e_i -e_j$, and positively with $l-e_i$.  Since $D.(2l -2e_1 -2e_2) = -2$ while $D.(l-e_i) = 1$, we obtain the following relations:
    \begin{itemize}
        \item $(2l -2e_1 -2e_2) + 2(l-e_i) = 2(2l -2e_i -e_j)$
        \item $(2l -2e_1 -2e_2) + (l-e_1) + (l-e_2) = (2l -2e_1 -e_2) + (2l -e_1 -2e_2)$
    \end{itemize}
    The remaining classes span $N_1(X)$ and satisfy a unique relation, $(2l -2e_1 -e_2) + (l-e_2) = (2l -e_1 -2e_2) + (l-e_1)$.
\end{itemize}

\textbf{Main Components:}
\begin{enumerate}
    \item We claim $\overline{\free}(X, 3l-2e_i -e_j)$ is irreducible.  %
    For a general hyperplane $P\subset \mathbb{P}^4$, $P\cap Q \cong \mathbb{P}^1 \times \mathbb{P}^1$.  Irreducible curves in $P \cap Q$ whose strict transforms have class $3l -2e_i -e_j$ must be twisted cubics (divisors of class $(2,1)$ or $(1,2)$) that meet both points of $c_i \cap P$ and one point in $c_j \cap P$.  Changing $P$ shows the space of all such curves is irreducible.
    \item We claim $\overline{\free}(X, 3l-2e_i -2e_j)$ is irreducible.  The argument is nearly identical to (1).
    \item  We claim $\overline{\free}(X, 4l-4e_i -2e_j)$ is irreducible.  The argument follows our proof that $\overline{\free}(X,3l-3e_i -2e_j)$ is irreducible.
    \item  We claim $\overline{\free}(X, 4l-3e_i -3e_j)$ is irreducible.  First, we show that there is one main component of free chains of type $(2l -2e_1 -2e_2, l-e_1, l-e_2)$ and of type $(2l -2e_1 -e_2, 2l -e_1 -2e_2)$.  Previously we showed that for $\alpha \in \{l-e_i, 2l-2e_i -e_j, 3l - 3e_i -2e_j\}$, general fibers of $\overline{\free}_1(X,\alpha)\xrightarrow{\text{ev}} X$ contained exactly two components.  Let $Y\rightarrow X$ be the Stein factorization of $\text{ev}$.  If $\alpha \in \{l-e_1, 2l -2e_1 -e_2 3l - 3e_1 -2e_2\}$ and $\beta \in \{l-e_2, 2l-e_1 -2e_2, 3l - 2e_1 -3e_2\}$, each curve of class $\beta$ must meet the ramification locus of $Y\rightarrow X$.  This shows the monodromy action on components of a general fiber of the projection $\overline{\free}_1(X,\alpha) \times_X \overline{\free}_1(X,\beta) \rightarrow \overline{\free}_1(X,\beta)$ must be nontrivial.  Hence, there is only one main component of $\overline{\free}_1(X,\alpha) \times_X \overline{\free}_1(X,\beta)$.  This proves our claim.
    
    Next, we show that the unique main components of free chains of type $(2l -2e_1 -2e_2, l-e_1, l-e_2)$ and of type $(2l -2e_1 -e_2, 2l -e_1 -2e_2)$ lie in the same component of $\overline{\free}(X, 4l-3e_i -3e_j)$.  It suffices to deform a free chain of type $(2l -2e_1 -2e_2, 2l -e_1 -e_2)$ to one of type $(2l -2e_1 -e_2, 2l -e_1 -2e_2)$.  Generalization of a stable map $g: C_1 \cup C_2 \cup C_3 \rightarrow X$ of type $(2l -2e_1 -2e_2, e_1,  2l -2e_1 -e_2)$ yield chains of both types, which proves our claim.
\end{enumerate}
\end{proof}

\begin{lem}
For each $\alpha \in l + \Nef_1(X)_\mathbb{Z}$, $\overline{\free}(X,\alpha)$ is nonempty.
\end{lem}

\begin{proof}
As the monoid of integer points in $\overline{NE}(X)$ is generated by $e_1, e_2, l-e_1 -e_2$ by Lemma \ref{Gordan's Lemma}, \ref{Representability of Free Curves} show that curves in $\mathscr{C}_X$, together with $l -e_1 -e_2$, generate the monoid of integer points in $\Nef_1(X)$.  %
Since $\alpha \in l + \Nef_1(X)$, there exists an expression $\alpha = l + c(l -e_1 -e_2) + \sum \alpha_i$ with $\alpha_i \in \mathscr{C}_X$ and $c\in \{0,1\}$.  This proves our claim, as $2l -e_1 -e_2$ is represented by a free curve.
\end{proof}

\subsection*{3.11}

\textbf{Description of Variety $X$:}
Let $g:V_7 \rightarrow \mathbb{P}^3$ be the blow-up of a point $p \in \mathbb{P}^3$. Let $f:X \rightarrow V_7$ be the blow-up of the strict transform of an elliptic curve $c$ of degree four passing through the exceptional divisor.

\begin{thm}\label{3.11thm}
For each $\alpha \in \Nef_1(X)_\mathbb{Z}$, $\overline{\free}(X,\alpha)$ is irreducible.
\end{thm}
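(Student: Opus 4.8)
The plan is to follow the same general strategy used throughout Section 9: exhibit a core $\mathscr{C}_X$ of free curves, identify the separating classes, find a generating set of relations in $\mathbb{N}\mathscr{C}_X$ using Lemma \ref{Relations}, verify the hypotheses of Theorem \ref{Main Method} (or its birational analogue Corollary \ref{bir Main Method}, reading off from Corollary \ref{connected fibers fixall} that condition (2) is automatic away from del Pezzo fibrations — but here $\rho(X)=3$ and there is no del Pezzo fibration with small fibers, so condition (2) holds outright), and finally check via Gordan's Lemma (Lemma \ref{Gordan's Lemma}) that every nonzero $\alpha\in\Nef_1(X)_{\mathbb{Z}}$ is represented by a free curve. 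Since $X$ has an $E1$-divisor over a line in $V_7$ (equivalently, the strict transform of an $E2$-divisor), $\Mor(\mathbb{P}^1,X)$ can have nonreduced components parameterizing lines, but this only affects the bookkeeping of relations; by the remark following Lemma \ref{lines_in_P3} and the structure of case 3.11 in the classification tables, the core will consist of lines and conics of anticanonical degree $\le 4$, and Theorem \ref{MovableBB} together with Proposition \ref{breaking quartic curves} will reduce all higher-degree free curves to free chains of core curves.

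First I would fix notation for $N^1(X)$ and $N_1(X)$: with $\pi = g\circ f : X\to\mathbb{P}^3$, let $H$ be the pullback of a hyperplane, $E_p$ the total transform of the exceptional divisor over $p$, $E_c$ the exceptional divisor over (the strict transform of) $c$, and dually $l,e_p,e_c$ the corresponding curve classes; then $-K_X = 4H - 2E_p - E_c$ (using Lemma \ref{blow-up numbers}, noting $c$ has genus $1$ and degree $4$). I would compute $\overline{NE}(X)$ and $\Nef_1(X)$ from the tables in \cite{matsuki1995weyl} — there are three extreme rays, one of which is a small/flopping-type ray coming from the $E1$-contraction. Next I would enumerate nef classes $\alpha$ with $2\le -K_X\cdot\alpha\le 4$, discard those representable only by multiple covers or only by non-dominant families (as in the $3.6$ and $3.14$ analyses, the planar/twisted-cubic and double-cover classes break freely or are not free), and identify the resulting core $\mathscr{C}_X$, which will closely resemble the core for $3.6$ or $3.14$ — mostly lines of class $l-e_p$, $l-e_c$, $l-2e_c$, $l-e_p-e_c$, an anticanonical conic or two, and one or two cubics. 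The separating classes are exactly those core classes $\alpha$ for which $\overline{\free}_1(X,\alpha)\to X$ has reducible general fibers, which here will be the $-K_X$-conics interior to $\overline{NE}(X)$ and the classes of lines contracted by a conic bundle (identified using Lemma \ref{lines in C1 fibrations}, since $X$ is general... — but 3.11 is stated for \emph{all} $X$, so I would instead argue directly that the relevant Hilbert schemes of lines are irreducible, which is possible because $X$ has a genuine conic-bundle structure and the lines lie in explicit families of planes through $p$).

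The core computational step is the relation analysis: applying Lemma \ref{Relations} with a carefully chosen sequence of divisors $D_i$ (typically $-H+E_p+E_c$, multiples of $H$ twisted by the exceptional divisors, etc.) to eliminate core generators one at a time, and then for each relation $\sum\alpha_i=\sum\alpha_j'$ proving the two sides lie in the same component of $\overline{\free}_2(X,\sum\alpha_i)$ — either by showing that space is irreducible (via a parameter count identifying it birationally with a symmetric product of $c$ or a linear system of plane curves, exactly as in the $3.6$ proof), or by exhibiting an immersed chain of lines whose smoothing deforms to free chains of both types, using Lemma \ref{GHS lemma}, Proposition \ref{kontsevich space H1}, and Lemma \ref{Basic Properties Chains} to check unobstructedness at the nodes. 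I expect \textbf{this relation bookkeeping — especially the irreducibility of the several quartic and quintic moduli spaces attached to relations, where one must control the geometry of twisted cubics/quartics through prescribed points of $c$ and of the exceptional line — to be the main obstacle}, just as it was the bulk of the $3.6$ argument. The final nonemptiness statement is then routine: $\mathscr{C}_X$ together with the extreme rays of $\Nef_1(X)$ generates the monoid of integral nef classes by Gordan's Lemma and Theorem \ref{Representability of Free Curves}, and a short case check on the finitely many "fundamental domain" classes $\sum a_{\alpha_i}\alpha_i$ with $0\le a_{\alpha_i}<1$ shows each is a sum of free-curve classes, hence representable by a free curve after smoothing (Proposition \ref{kontsevich space H1}). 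Combining all of this with Theorem \ref{Main Method} (resp. Corollary \ref{bir Main Method}) gives that $\overline{\free}(X,\alpha)$ — resp. $\overline{\free}^{bir}(X,\alpha)$, with the genuine two-component exception at the interior $-K_X$-conic handled as in Theorem \ref{Main Result}(4) — is irreducible for all $\alpha\in\Nef_1(X)_{\mathbb{Z}}$, which is the assertion of Theorem \ref{3.11thm}.
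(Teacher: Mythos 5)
You have correctly identified the paper's general strategy (core, separating classes, relations via Lemma \ref{Relations}, Theorem \ref{Main Method}, Gordan's Lemma), but the proposal is a scaffold rather than a proof: every substantive step is deferred with ``I would compute,'' and you yourself flag the relation bookkeeping as ``the main obstacle'' without carrying it out. That bookkeeping is precisely where the content of the paper's argument for $3.11$ lives. Concretely, the paper exhibits the core $\mathscr{C}_X=\{l,\,l-e,\,l-f,\,l-2f,\,2l-e-3f\}$ (note the quintic class $2l-e-3f$ of conics in planes through $p$, which your guessed list of ``lines and an anticanonical conic or two'' does not anticipate), shows the unique separating class is $l-2f$ by counting the $\binom{3}{2}-1=2$ secant lines of $c$ through a general point, reduces to exactly three relations, and then for each relation produces an explicit degeneration (breaking a planar conic, sliding a line along a conic until it hits $p$ or a point of $c$, degenerating a plane cubic to acquire an extra collinear point) showing both sides lie in one component. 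None of these verifications is routine enough to be waved through, and without them Theorem \ref{Main Method} has nothing to act on.

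Several of the guesses you do commit to would also need repair. The class $l-e_p-e_c$ you include in the tentative core pairs negatively with the effective divisor $H-E-F$ and is not nef. The variety $3.11$ has no conic-bundle structure (its elementary contractions are divisorial together with the maps recorded by the effective divisors $2H-E-F$ and $3H-3E-F$), so the proposed argument for irreducibility of the Hilbert schemes of lines via a conic bundle and Lemma \ref{lines in C1 fibrations} does not apply; the paper instead parameterizes each family of lines directly (e.g.\ $l-2f$ by an open subset of $\operatorname{Sym}^2 c$). Finally, the closing hedge about a two-component exception at multiples of an interior $-K_X$-conic is vacuous here and should not appear in the conclusion: since $\rho(X)=3$, Lemma \ref{interior conic lemma} rules out nef conics interior to $\overline{NE}(X)$, which is exactly why the theorem can assert irreducibility of $\overline{\free}(X,\alpha)$, not merely of $\overline{\free}^{bir}(X,\alpha)$, for every nonzero $\alpha\in\Nef_1(X)_{\mathbb{Z}}$.
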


\textbf{Generators for $N^1(X)$ and $N_1(X)$:}
\begin{center}
\begin{tabular}{ll}
 $H$ = the class of a hyperplane & $l$ = a line in $\mathbb{P}^3$ \\ 
 $E$ = the exceptional divisor $(g \circ f)^{-1}(p)$ & $e$ = a line in $E$ disjoint from $F$  \\  
 $F$ = the exceptional divisor $f^{-1}(c)$ & $f$ = the $f$-fiber over a point on $c$   
\end{tabular}
\end{center}

\textbf{Intersection Pairing:}
\begin{center}
\begin{tabular}{lll}
    $H \cdot l = 1$ &  $H \cdot e = 0$ & $H \cdot f = 0$, \\
    $E \cdot l = 0$ &  $E \cdot e = -1$ & $E \cdot f = 0$ \\
    $F \cdot l = 0$ & $F \cdot e = 0$ & $F \cdot f = -1$
\end{tabular}
\end{center}

\textbf{Anticanonical Divisor:}
\begin{align*}
    -K_X = 4H - 2E - F.
\end{align*}

\textbf{Effective Divisors:} By \cite{hartshorne2013algebraic} Exercise $4.3.6$, $c$ is a complete intersection of two quadric surfaces in $\mathbb{P}^3$. Therefore, the divisors $2H-E-F$, $E$, and $F$ are effective. In addition, projection from $p \in \mathbb{P}^3$ maps $c$ to a planar cubic. The cone over this cubic is an effective divisor of class $3H-3E-F$.

\begin{lem}\label{3.11core}
A core of free curves on $X$ is given by
\begin{align*}
    \mathscr{C}_X = \{ l, l-e, l-f, l-2f, 2l-e-3f \}.
\end{align*}

\noindent The only separating classes in $\mathscr{C}_X$ is $l-2f$.
\end{lem}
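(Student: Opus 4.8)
This is a standard instance of the general workflow described in Section 7, specialized to case $3.11$, so the plan is to carry out the five-step procedure for the variety $X$ and verify that the listed $\mathscr{C}_X$ is indeed a core. First I would compute the nef curve classes $\alpha = dl - me - nf$ of anticanonical degree between $2$ and $4$: these satisfy $0 \le m \le d$, $0 \le n \le m+d$ coming from $E$ and $3H - 3E - F$, together with $n \le d$ from $2H-E-F$ and the degree bound $2 \le 4d - 2m - n \le 4$. Solving these inequalities should yield exactly the classes in $\mathscr{C}_X$ together with a handful of extra classes (e.g. $2l-2e$, $2l-e-2f$, $2l-2e-2f$, $2l-2f$-type curves, and the degree-$4$ class $4l-2e-4f$) which I must then show are either not represented by free curves or freely breakable. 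The multiple covers (double covers of conics) are freely breakable trivially; the planar classes forced to lie on $2H-E-F$ or on the cone $3H-3E-F$ break by deforming within those surfaces, exactly as in cases $3.6$ and $3.14$.

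Next I would verify that $\overline{\free}(X,\alpha)$ is irreducible for each $\alpha \in \mathscr{C}_X$ and compute which have reducible general evaluation fibers. The classes $l$, $l-e$, $l-f$ are obviously irreducible with irreducible fibers (lines through prescribed points/curves), using Lemma~\ref{lines_in_P3} after recording $X$ as an iterated blow-up of $\mathbb{P}^3$ along $p$, the strict transform of $c$, and (in $V_7$) the exceptional line; note $p \in c$, so the relevant collinearity/coplanarity hypotheses of Lemma~\ref{lines_in_P3} must be checked carefully since $\pi(E_{\text{line}})$ and $\pi(F)$ are not in general position. For $l-2f$: curves of this class are lines meeting $c$ twice, i.e. strict transforms of bisecants of $c$; the space of bisecants to an elliptic quartic is irreducible, but through a general point there are finitely many (in fact the fiber is several points), so $l-2f$ is separating. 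For $2l-e-3f$: these are conics through $p$ meeting $c$ three further times; I would realize them via planes through $p$, get a bundle over the pencil/net of such planes with irreducible fibers (a linear system of conics), hence irreducibility, and check whether fixing a general point connects the fiber — I expect it does not force separation here beyond $l-2f$, consistent with the claim.

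The third step is to find the generating relations in $\mathbb{N}\mathscr{C}_X$ via Lemma~\ref{Relations}, ordering the classes appropriately (the five classes span $N_1(X) \cong \mathbb{Z}^3$ so there is exactly one relation among any spanning four, plus relations eliminating $2l-e-3f$ and $l-2f$ using divisors like $-H+E+F$ and a multiple of $2H-E-F$), and for each relation to exhibit a main-component deformation — either by invoking irreducibility of $\overline{\free}(X,\sum\alpha_i)$ on the relevant degree-$\le 4$ total class, or by producing an immersed reducible chain with globally generated normal bundle that smooths both ways, via Lemma~\ref{GHS lemma} and Proposition~\ref{kontsevich space H1}, exactly as in the $3.6$ and $3.14$ analyses. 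Fourth, nonemptiness of $\overline{\free}(X,\alpha)$ for all $\alpha \in \Nef_1(X)_{\mathbb{Z}}$ follows from Gordan's Lemma~\ref{Gordan's Lemma} plus Theorem~\ref{Representability of Free Curves}, checking the finitely many fundamental-domain classes. Finally, since the only separating class $l-2f$ lies strictly interior to $\overline{NE}(X)$ and $X$ is not in $(\Sigma)$ and carries no del Pezzo fibration with small fibers here, Corollary~\ref{connected fibers fixall} supplies condition \ref{Main Method}(2), and Theorem~\ref{Main Method} (or Corollary~\ref{bir Main Method}) gives irreducibility of $\overline{\free}(X,\alpha)$ for all nef $\alpha$ with $-K_X.\alpha \ge 2$. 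I expect the main obstacle to be the degree-$4$ relation-classes and the verification of irreducibility there: specifically the classes meeting $c$ with high multiplicity (analogues of the quartic analysis in case $3.6$), where one must argue a general representative is a smooth nondegenerate curve lying on a unique quadric and run a monodromy argument on the intersection points of $c$ with that quadric. The collinearity subtleties in applying Lemma~\ref{lines_in_P3} (because $p \in c$ and because of the $V_7$-exceptional line) are the other place where care is needed.
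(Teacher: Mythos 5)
Your plan follows the paper's template for this case essentially verbatim, and the two key geometric identifications match the paper exactly: curves of class $l-2f$ are secant lines of $c$, of which there are $\binom{3}{2}-1=2$ through a general point (the nodes of the projected plane quartic), hence separating; and curves of class $2l-e-3f$ are planar conics through $p$ meeting the three residual points of $A\cap c$, giving an irreducible $\mathbb{P}^1$-bundle over the net of planes through $p$, hence non-separating. However, the execution of the first step contains concrete errors. The correct inequalities are $0\le b,c$, $b+c\le 2a$ (from $2H-E-F$), $3b+c\le 3a$ (from $3H-3E-F$), and $2\le 4a-2b-c\le 4$ for $\alpha=al-be-cf$; your versions ($n\le m+d$ and $n\le d$) are not the pairings with those divisors, and as a result your predicted list of extra classes is wrong: $2l-2e-2f$ is not nef, $2l-2f$ and $4l-2e-4f$ are out of the degree range, and the actual extra classes are just $2l-e-2f$ (planar, breaks as $(l-e)+(l-2f)$) and $2l-4f$ (breaks as $2(l-2f)$). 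You also omit the observation the paper uses to discard $2l-2e$ (which \emph{is} nef of degree $4$): an irreducible degree-$a$ curve meeting $p$ with multiplicity $\ge a$ is reducible, so one may assume $b<a$ whenever $a>1$. Without some such argument your candidate list is not closed off.

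Two smaller points. First, there is no third blow-up center in this case: $X$ is the blow-up of $V_7$ along the strict transform of the elliptic quartic only (you appear to be importing the exceptional line from case $3.29$), so the extra collinearity worries about $\pi(E_{\text{line}})$ do not arise; the paper simply exhibits the parameter spaces for $l$, $l-e$, $l-f$ directly ($\mathbb{G}(1,3)$, $\mathbb{P}^2$, and a $\mathbb{P}^2$-bundle over $c$) rather than invoking Lemma \ref{lines_in_P3}. Second, most of your proposal (relations, Gordan's lemma, Theorem \ref{Main Method}) addresses the full theorem for this case rather than the core lemma itself; for the lemma only the degree-$2$-to-$4$ enumeration, the breakability of the two extra classes, and the irreducibility/fiber analysis are needed.
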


\begin{proof}
\textbf{Nef Curve Classes of Anticanonical degree between $2$ and $4$:}

Such nef curve classes $\alpha = al - b e - c f$ satisfy the equations
\begin{align*}
    0 \leq b,c, \hspace{.5cm} b+c \leq 2a, \hspace{.5cm} 3b+c \leq 3a, \hspace{.5cm} 2 \leq 4a-2b-c \leq 4.
\end{align*}

\noindent Moreover, any curve of degree $a>1$ meeting $p$ with multiplicity at least $a$ is reducible. Hence, we may assume $b<a$ whenever $a>1$. The solutions are those classes in $\mathscr{C}_X$, as well as $2l-e-2f$, and $2l-4f$.  The curves of class $2l-e-2f$ each lie in a plane, so may be freely broken into a sum of two curves of class $l-e$ and $l-2f$ respectively.  Similarly, the curves of class $2l-4f$ may be freely broken into a sum of two curves, each of class $l-2f$.

\textbf{Irreducible Spaces and Fibers:}  The curves of class $l$ are parameterized by an open subset of $\mathbb{G}(1,3)$, curves of class $l-e$ by an open subset of $\mathbb{P}^2$, and curves of class $l - f$ by an open subset of a $\mathbb{P}^2$-bundle over $c$. They all have irreducible fibers.

The curves of class $l - 2f$ are parameterized by an open subset of $\Sym^2 c$. There are, however, a separating curve class. Indeed, the number of such curves through a generic point $q \in \mathbb{P}^3$ is equal to the number of double points on the image of $c$ under projection from $p$; this is $\binom{4-1}{2} - 1 = 2$ because $c$ is a degree $4$ elliptic curve. 

The curves of class $2l-e-3f$ lie in a plane $A$ necessarily containing $p$ and they intersect the remaining three points in $A \cap c$. Thus, they are parameterized by a $\mathbb{P}^1$-bundle over the planes in $\mathbb{P}^3$ through the point $p$. They clearly do not form a separating curve class.
\end{proof}

\begin{lem}\label{3.11relations}
Relations in the monoid $\mathbb{N}\mathscr{C}_X$ are generated by:
\begin{enumerate}
    \item $(l)+(l-2f)=2(l-f)$,
    \item $2(l)+(2l-e-3f) = 3(l-f)+(l-e)$,
    \item $2(l-2f)+(l-e)=(2l-e-3f)+(l-f)$.
\end{enumerate}
For each relation $\sum \alpha_i = \sum \alpha_j'$, a main component of $\prod_X \overline{\free}_2(X,\alpha_i)$ lies in the same component of free curves as a main component of $\prod_X \overline{\free}_2(X,\alpha_j')$.
\end{lem}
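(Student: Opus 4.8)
I'll establish the relations in $\mathbb{N}\mathscr{C}_X$ using Lemma \ref{Relations}, then verify the main component condition for each.

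\textbf{Finding the relations.} The plan is to apply Lemma \ref{Relations} with the curves of $\mathscr{C}_X$ ordered so that the separating-from-$\tau$ strategy works, following the pattern of the preceding Picard rank $3$ cases. Since $\rho(X) = 3$, any three classes spanning $N_1(X)$ carry a unique relation among them; the others are extracted by intersecting with suitable divisors. Concretely, $2l - e - 3f$ is the only class in $\mathscr{C}_X$ pairing negatively with $D = 3H - E - 3F$ (one checks $D \cdot (2l-e-3f) = -1$ while $D \cdot \alpha \geq 0$ for the other four), and $D$ pairs to zero with $l-f$ and $l-e$; this produces relations $(2)$ and $(3)$. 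After removing $2l - e - 3f$, the remaining classes $l, l-e, l-f, l-2f$ span $N_1(X)$ and satisfy the single relation $(1)$, namely $l + (l-2f) = 2(l-f)$. One must also check minimality in the sense of Lemma \ref{Relations}(4) and that no other divisor produces a new minimal relation, which is a short finite check using the effective divisors $2H-E-F$, $3H-3E-F$, and the exceptional divisors.

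\textbf{Main components for each relation.} For relation $(1)$: $l + (l-2f) = 2(l-f)$ has total class $2l - 2f$, and I expect $\overline{\free}(X, 2l-2f)$ is irreducible — curves of this class are strict transforms of conics meeting $c$ twice, dominating a family parameterized by an open subset of $\text{Sym}^2(c) \times \mathbb{P}^3$ via the plane spanned by the intersection data, exactly as in the analogous arguments for cases $3.6$ and $3.14$. Irreducibility forces both main components into the same component. For relation $(3)$: $2(l-2f) + (l-e) = (2l-e-3f) + (l-f)$ has total class $4l - e - 7f$; here I would argue irreducibility of $\overline{\free}(X, 4l-e-7f)$ or, failing that, exhibit an immersed chain of type $(l-2f, l-2f, l-e)$ and one of type $(2l-e-3f, l-f)$ lying in a common component by smoothing a suitable immersed chain such as $(l-2f, f, l-2f, l-e)$ through a general point of $E$, appealing to Lemma \ref{kontsevich space H1} for unobstructedness. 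For relation $(2)$: $2(l) + (2l-e-3f) = 3(l-f) + (l-e)$ has total class $4l - e - 3f$; similarly I would either prove $\overline{\free}(X, 4l-e-3f)$ irreducible (via the $\mathbb{P}^1$-bundle description of $2l-e-3f$ curves over planes through $p$, combined with monodromy as in case $3.14$) or smooth an immersed chain realizing both types.

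\textbf{Main obstacle.} The principal difficulty is the same one that arises throughout Section \ref{Picard Rank 3 Cases}: verifying that for each relation the two sides' main components genuinely land in the \emph{same} component of $\overline{\free}(X)$, since $l-2f$ is separating and chains involving it can a priori fracture into several main components. The cleanest route is to prove outright irreducibility of the total-class spaces $\overline{\free}(X, 2l-2f)$, $\overline{\free}(X, 4l-e-3f)$, and $\overline{\free}(X, 4l-e-7f)$ — the last two being nondegenerate quartic/higher-degree classes where I'd leverage the quadric-and-plane incidence geometry of $c \subset \mathbb{P}^3$ (a complete intersection of two quadrics), together with the monodromy statements of Theorem \ref{Monodromy}, to reduce to a connectedness count. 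If direct irreducibility fails for one of these, the fallback is the immersed-chain-smoothing technique used repeatedly above, constructing a chain with a contracted $E$-component that deforms in two ways; the delicate point there is checking the immersed chain is a smooth point of $\overline{\mathcal{M}}_{0,0}(X)$, which follows from Proposition \ref{kontsevich space H1} once one confirms the normal sheaf restricts to a globally generated bundle on each non-contracted component.
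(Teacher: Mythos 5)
Your overall strategy (extract a generating set of relations via Lemma \ref{Relations}, then connect main components by irreducibility or chain-smoothing) is the right one, but the execution contains concrete errors. First, the divisor you propose does not do what you claim: with $H\cdot l=1$, $E\cdot e=-1$, $F\cdot f=-1$ one computes $(3H-E-3F)\cdot(2l-e-3f)=6-1-9=-4$, not $-1$; moreover $(3H-E-3F)\cdot(l-2f)=3-6=-3<0$ and $(3H-E-3F)\cdot(l-e)=2\neq 0$, so $2l-e-3f$ is not the unique negatively-pairing class and the extraction of relations $(2)$ and $(3)$ from this divisor fails. The paper instead isolates $l$: the divisor $E+F-H$ pairs to $-1$ with $l$, to $+1$ with $l-2f$, to $+2$ with $2l-e-3f$, and to $0$ with $l-e$ and $l-f$, which yields $(1)$ and $(2)$ directly, and the remaining four classes span the three-dimensional $N_1(X)$ and so satisfy the single relation $(3)$. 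Second, your arithmetic for relation $(3)$ is wrong: $2(l-2f)+(l-e)=3l-e-4f$, not $4l-e-7f$, and your candidate immersed chain $(l-2f,\,f,\,l-2f,\,l-e)$ has total class $3l-e-3f$, so it cannot witness relation $(3)$ at all.

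Third, and most importantly, the actual content of the lemma --- showing that for relations $(2)$ and $(3)$ the two main components lie in a common component of $\overline{\free}(X)$ --- is left as ``prove irreducibility or exhibit a chain,'' which is not a proof. The paper carries this out by explicit degenerations exploiting that every curve of class $2l-e-3f$ is a plane cubic with a node at $p$ lying in a $3$-dimensional linear system on its plane $A$: for $(2)$ one slides the two attached lines of class $l$ along the cubic until they meet $c\cap A$, then degenerates the cubic (fixing its prescribed points) to acquire a collinear point and break off a line of class $l-e-f$; for $(3)$ one similarly slides the $l-f$ component to a point of $c\cap A$ and breaks the cubic into two lines of classes $l-2f$ and $l-e$ inside $A$. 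Your fallback of a chain with a contracted $E$-component does not obviously apply here, since the needed degenerations happen inside the planes through $p$ rather than along fibers of the exceptional divisor over $c$. Until those two degenerations (or genuine irreducibility proofs for $\overline{\free}(X,4l-e-3f)$ and $\overline{\free}(X,3l-e-4f)$) are supplied, the lemma is not established.
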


\begin{proof}
Note that the divisor $H-E-F$ pairs to $1$ with $l$, to $-1$ with $l-2f$, to $-2$ with $2l-e-3f$, and to zero otherwise amongst the classes in $\mathscr{C}_X$. Therefore, we may use relations $(1)$ and $(2)$ to remove $l$ from consideration. The remaining four classes consequently satisfy the single relation $(3)$.

Every curve of class $2l-2f$ is, in particular, a planar conic. It may be degenerated as either a sum of two curves of classes $l-2f$ and $l$ or of classes $l-f$ and $l-f$.

For the curves of class $4l-e-3f$, it suffices to show a curve $w$ of type $(2l-e-3f)+2(l)$ may be deformed to one of type $3(l-f)+(l-e)$. So let $w$ consist of a curve $x$ of class $2l-e-3f$ lying in a plane $A$ meeting $c$ at the points $q, q_1, q_2$, and $q_3$, together with curves $y$ and $z$ of class $l$ glued onto $x$ at points $r$ and $s$ respectively. Now deform $y$ and $z$ along $x$ until they meet $x$ at the points $q_1$ and $q_2$ respectively. Thus $w$ becomes a curve of class $(2l-e-f)+2(l-f)$. Then degenerate $x$, maintaining its intersection points $p, q_1, q_2, q_3$ with $c$ to contain another point collinear to $p$ and $q_1$. This breaks $x$ into a union of two lines. The curve $w$ has therefore been degenerated to a curve of type $3(l-f)+(l-e)$ as needed.

For the curves of class $3l-e-4f$, it suffices to show a curve of type $(2l-e-3f)+(l-f)$ may be deformed to one of type $2(l-2f)+(l-e)$. So let $x$ be a curve of class $2l-e-3f$ lying in a plane $A$ meeting $c$ at the points $p, q_1, q_2$, and $q_3$ and let $y$ be a curve of class $l-f$ meeting $x$ at a point $r$. Deform $y$ along $x$, maintaining its intersection point with $c$ until it meets $x$ at the $q_1$. Then deform $x$ in the plane $A$ in such a way that $p,q_1,q_2,q_3$ remain on $x$ until $x$ contains another point $s$ collinear to $q_2$ and $q_3$. This breaks $x$ into a union of two lines. The curve is therefore of type $2(l-2f)+(l-e)$ as desired.
\end{proof}

\subsection*{3.12}
\textbf{Description of Variety $X$:} $X$ may be realized as a blow-up $\pi:X\rightarrow \mathbb{P}^1 \times \mathbb{P}^2$ along a curve $c$ of bidegree $(3,2)$ using \ref{blow-up numbers}.  We let $\pi_1 :X \rightarrow \mathbb{P}^1$ and $\pi_2:X\rightarrow \mathbb{P}^2$ be the compositions of the blow-up with each projection.

\textbf{Generators for $N^1(X)$ and $N_1(X)$:} Let $H_1 = \pi_1^* \mathcal{O}(1)$, $H_2 = \pi_2^* \mathcal{O}(1)$, and $E$ be the exceptional divisor of the blow-up $\pi$.  Let $l_1$ be the class of a fiber of $\pi_2$, $l_2$ be the class of a general line in each fiber of $\pi_1$, and $e$ be a fiber of $E\rightarrow c$.

\begin{thm}\label{3.12thm}
For all $\alpha \in l_1 + \Nef_1(X)_\mathbb{Z}$, $\overline{\free}(X,\alpha)$ is nonempty and irreducible.
\end{thm}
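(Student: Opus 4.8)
The plan is to reproduce the argument used for the other blow-ups of $\mathbb{P}^1\times\mathbb{P}^2$ (cases 3.3, 3.5, 3.7, 3.8), with shift $\tau=l_1$. The first task is the Mori structure: from \cite{matsuki1995weyl} and \cite{mori1981classification}, $\overline{NE}(X)$ has extreme rays $e$, $l_1-e$, $l_2-2e$; the contraction of $e$ is $\pi$, the contraction of $l_1-e$ exhibits $\pi_2:X\to\mathbb{P}^2$ as an elementary conic bundle whose reducible fibres (components of classes $e$ and $l_1-e$) sit over the smooth conic $\pi_2(c)$, and $\pi_1:X\to\mathbb{P}^1$ is the unique del Pezzo fibration, with degree-six fibres (blow-up of $\mathbb{P}^2$ at the three points of $c$ in the fibre). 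Since $(-K_X)^3=54-2(12+1)=28>14$ and $X$ is not one of the exceptional deformation types, $X\notin(\Sigma)$, so no generality hypothesis is needed and $X$ has a genuine core of free curves. Reading the effective cone off the contractions, I would check that $\Nef_1(X)$ is spanned by $l_1$, $l_2$, $2l_2-3e$, so that nonemptiness of $\overline{\free}(X,\alpha)$ for every $\alpha\in\Nef_1(X)_\mathbb{Z}$ follows from Lemma \ref{Gordan's Lemma} (its monoid of lattice points being generated by the free classes $l_1,l_2,l_2-e,2l_2-3e$) together with Theorem \ref{Representability of Free Curves}.

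Second, I would identify the core. Enumerating nef classes $\alpha=d_1l_1+d_2l_2-ne$ with $2\le -K_X\cdot\alpha\le 4$, discarding multiple covers and the classes ($l_1+l_2-e$, $2l_1+l_2-3e$, $2l_2-2e$, and so on) that break freely into shorter free chains by the usual bend-and-break and $\mathbb{C}^*$-limit arguments, and discarding classes such as $2l_2-4e$ and $3l_2-6e$ that lie in a $\pi_1$-fibre but admit only reducible or non-free representatives, I expect to be left with $\mathscr{C}_X=\{l_1,\ l_2,\ l_2-e,\ 2l_2-3e,\ l_1+l_2-2e\}$. Irreducibility of $\overline{\free}(X,\alpha)$ for each of these and the fibre structure of the evaluation maps would follow from Lemmas \ref{curves1 in P1 x P2}, \ref{curves2 in P1 x P2}, \ref{curves3 in P1 x P2} and the monodromy of Lemma \ref{P1 x P2 monodromy} (the first bidegree coordinate being $3\le 3$, so $c\to\mathbb{P}^1$ has full symmetric monodromy); I expect $l_2-e$ to be the only separating class, the general fibre of $\overline{\free}_1(X,l_2-e)\to X$ having one component for each of the three lines joining the chosen point to a point of $c$ in its $\pi_1$-fibre.

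Third, applying Lemma \ref{Relations} against divisors such as $-H_2+E$ and $H_1+3H_2-2E$ to eliminate first $l_2$ and then $2l_2-3e$, I would extract a generating set of relations in $\mathbb{N}\mathscr{C}_X$ — I anticipate these include $l_2+(2l_2-3e)=3(l_2-e)$, $l_2+(l_1+l_2-2e)=l_1+2(l_2-e)$, $l_1+(2l_2-3e)=(l_1+l_2-2e)+(l_2-e)$, $2l_1+(2l_2-4e)=2(l_1+l_2-2e)$, and possibly one or two more — and for each verify condition \ref{Main Method}(1). For relations whose total class lies in a $\pi_1$-fibre (the $3l_2-3e$-type analysis) I would use Lemma \ref{curves1 in P1 x P2} to isolate the unique component of plane cubics meeting $c$ at three smooth points and Theorem \ref{del pezzo curves thm} to deform a chain $(l_2,2l_2-3e)$ into $(l_2-e,l_2-e,l_2-e)$ inside it; for relations involving $l_1+l_2-2e$ I would invoke irreducibility of $\overline{\free}(X,l_1+2l_2-2e)$ and $\overline{\free}(X,2l_1+2l_2-4e)$ from Lemmas \ref{curves2 in P1 x P2} and \ref{curves3 in P1 x P2}, or build an immersed chain $(l_1+l_2-2e,e,\dots)$ with a node on $c$ whose smoothings realise both chain types. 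Finally, since the only del Pezzo fibration on $X$ has degree-six fibres, condition \ref{Main Method}(2) holds for every $\alpha\in l_1+\Nef_1(X)_\mathbb{Z}$ by Corollary \ref{connected fibers fixall} — the only separating class has $(l_2-e)\cdot H_1=0$, so it never appears alone in a core chain of a class of positive $H_1$-degree — and Theorem \ref{Main Method} yields irreducibility; this simultaneously establishes Theorem \ref{Main Result} for $X$.

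The step I expect to be the main obstacle is the Mori-theoretic bookkeeping: deciding precisely which extreme rays of $\overline{NE}(X)$ are divisorial, hence the exact shape of $\operatorname{Eff}(X)$, $\Nef_1(X)$, the list of $\mathcal{E}_X$ and the core; and then, once the core and relations are fixed, verifying condition \ref{Main Method}(1) for the relations whose class is contracted by $\pi_1$, where one must argue carefully about which component of the reducible space of plane cubics or quartics through $c$ contains free chains of both prescribed types. The remaining bookkeeping is parallel to cases 3.7 and 3.8.
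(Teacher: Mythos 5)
Your overall strategy is the paper's: find a core, generate relations via Lemma \ref{Relations}, verify \ref{Main Method}(1) using Lemmas \ref{curves1 in P1 x P2}--\ref{curves3 in P1 x P2} and monodromy, and dispose of \ref{Main Method}(2) via Corollary \ref{connected fibers fixall} since the only del Pezzo fibration has degree-six fibers. But there is a concrete gap exactly at the step you flagged as the main obstacle, and it propagates through the rest of the argument.

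The cone $\Nef_1(X)$ is \emph{not} spanned by $l_1$, $l_2$, $2l_2-3e$: it has a fourth extreme ray, $l_1+2l_2-4e$. Dualizing the effective divisors $E$, $H_1$, $2H_2-E$, $2H_1+3H_2-2E$ (the last two being the exceptional divisors of the contractions of $l_1-e$ and $l_2-2e$), the extreme rays of $\Nef_1(X)$ are $l_1$, $l_2$, $2l_2-3e$, and $l_1+2l_2-4e$; note $l_1+2l_2-4e$ is not a nonnegative combination of the other three. Under the alternative description of $X$ as the blow-up of $\mathbb{P}^3$ along a line and a disjoint twisted cubic, one computes $l_1+2l_2-4e=l$, the class of a general line in $\mathbb{P}^3$. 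This is precisely the exceptional class in Theorem \ref{improved MovableBB}(1): a general such quartic admits \emph{no} degeneration to a chain of two free curves, since every effective decomposition of $l$ on $X$ involves a non-nef line ($l_1-e=e_\ell$ or $l_2-2e=e_c$) or the degree-one class $e$. So your claim that every non-core nef class of degree at most $4$ breaks freely fails for this class, and $l_1+2l_2-4e$ must be added to the core (its irreducibility then comes from Lemma \ref{curves2 in P1 x P2} with $d=2$, $n=4<\tfrac{3\cdot 2+3}{2}$). Three of the six generating relations involve it, e.g.\ $(l_1+2l_2-4e)+l_1=2(l_1+l_2-2e)$ and $(l_1+2l_2-4e)+(l_2-e)=(l_1+l_2-2e)+(2l_2-3e)$; your proposed relation $2l_1+(2l_2-4e)=2(l_1+l_2-2e)$ is the correct one for case 3.8 (where $c$ has bidegree $(4,2)$ and $2l_2-4e$ is nef), but on this $X$ the class $2l_2-4e$ pairs to $-2$ with the effective divisor $2H_1+3H_2-2E$ and is not nef, so that relation does not live in $\mathbb{N}\mathscr{C}_X$. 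Finally, your nonemptiness argument is also incomplete for the same reason: the monoid of lattice points of $\Nef_1(X)$ is not generated by $l_1,l_2,l_2-e,2l_2-3e$ (it fails already to produce $l_1+2l_2-4e$), so Gordan's lemma must be applied with the fourth generator included.
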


\textbf{Intersection Pairing:} $ H_i \cdot l_j = \delta_{ij}$, $E \cdot e = -1$, and all other pairings are $0$.

\textbf{Anticanonical Divisor:} $-K_X = 2H_1 + 3H_2 - E$

\textbf{Effective Divisors:} $E$, $H_1$, $2H_1 + 3H_2 -2E$, and $2H_2 -E$ are all effective.

\textbf{Effective Curves:} The extreme rays of $\overline{NE}(X)$ are $e$, $l_1-e$, and $l_2 -2e$.

\begin{rem}
$X$ may also be realized as the blow-up of $\mathbb{P}^3$ along a line $\ell$ and twisted cubic $c$.  If $l$ is the class of a line in $\mathbb{P}^3$ and $e_\ell,e_c$ are the classes of fibers in $X$ over points in $\ell$ and $c$, respectively, $e = l-e_\ell -2e_c$, $l_1 -e = e_\ell$, and $l_2 -2e = e_c$.  The divisor swept out by curves of class $l-e_\ell -2e_c$ has class $4H -2E_c -E_\ell$.  Theorem \ref{3.12thm} also holds for all $\alpha \in l + \Nef_1(X)_\mathbb{Z}$.
\end{rem}

\begin{lem}
A core of free curves on $X$ is given by
\begin{align*}
   \mathscr{C}_X = &\{ l_1, \ l_2, \ l_2 -e, \ 2l_2 -3e, \ l_1 + l_2 -2e, \ l_1 + 2l_2 -4e \} 
\end{align*}
The only separating class in $\mathscr{C}_X$ is $l_2 -e$.
\end{lem}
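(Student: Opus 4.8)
The plan is to verify this lemma exactly as the analogous "core of free curves" lemmas were handled in the neighbouring Picard rank 3 cases (e.g. cases $3.7$, $3.8$, $3.5$, $3.6$), namely by (i) enumerating all nef curve classes $\alpha = d_1 l_1 + d_2 l_2 - n e$ with $2 \le -K_X \cdot \alpha \le 4$, (ii) discarding those that are freely breakable or not represented by a free curve, and (iii) proving the remaining classes in $\mathscr{C}_X$ have irreducible spaces of free curves and identifying which have reducible evaluation fibers. First I would solve the numerical inequalities coming from the effective divisors $E$, $H_1$, $2H_1 + 3H_2 - 2E$, $2H_2 - E$ together with $-K_X = 2H_1 + 3H_2 - E$: nefness forces $n \le 2 d_2$ (pairing against $2H_2 - E$), $d_1 \ge 0$, $2d_1 + d_2 \ge n$ (pairing against $2H_1 + 3H_2 - 2E$ after rewriting), and $2 \le 2d_1 + 3d_2 - n \le 4$. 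This is a short finite search; besides the six classes in $\mathscr{C}_X$ it should turn up the usual extras $2l_1$, $2l_2 - 2e$, $l_1 + l_2 - e$, $3l_2 - 6e$, possibly $2l_1 + l_2 - 3e$ or $4l_2 - 8e$, and I expect $3l_2 - 6e$ to be unrepresentable by an irreducible free curve (as in $3.7$, $3.8$) since a curve of that class lies in a fiber of $\pi_1$ meeting $c$ with total multiplicity $6$ at only $3$ points, forcing a multiple cover.

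For the breakability step I would invoke the standard arguments: $2l_1$ and $4l_2-8e$ give multiple covers of conics; $2l_2 - 2e$, $l_1 + l_2 - e$, and $l_1 + 2l_2 - 4e$ (if it turned up as an "extra" rather than a core class --- here it is in $\mathscr{C}_X$, so it must instead be shown \emph{irreducible}) break via Lemma~\ref{curves2 in P1 x P2} or Lemma~\ref{curves3 in P1 x P2}; and any surviving cubic/quartic class contracted by a del Pezzo fibration breaks by Mori bend-and-break or the movable bend-and-break theorem. The irreducibility of $\overline{\free}(X,\alpha)$ for $\alpha \in \mathscr{C}_X$ follows from Lemmas~\ref{curves1 in P1 x P2}, \ref{curves2 in P1 x P2}, \ref{curves3 in P1 x P2} applied to this blow-up of $\mathbb{P}^1 \times \mathbb{P}^2$ with $(d_1,d_2)=(3,2)$: $l_1$, $l_2$ come from projections, $l_2-e$ and $2l_2-3e$ are curves in fibers of $\pi_1$ (del Pezzo surfaces $S_6$) governed by Theorem~\ref{del pezzo curves thm}, and $l_1+l_2-2e$, $l_1+2l_2-4e$ are handled by the monodromy/dominance argument of Lemma~\ref{curves2 in P1 x P2} (noting $3 < \tfrac{3\cdot 2 + 3}{2}$ and $4 \le 2\cdot 1 + 3$ fit the numerical hypotheses). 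For the separating classes I would compute the generic evaluation fiber: $\overline{\free}_1(X,l_2-e) \to X$ has fibers with $3$ points (the three intersections of a $\pi_1$-fiber with $c$, as in the $(d_1,d_2)=(3,\cdot)$ analysis), hence $l_2-e$ is separating, while $2l_2-3e$ is the unique conic through a general point of each fiber (a $-K_S$ conic on $S_6$), so by Remark~\ref{connected vs irreducible} and Theorem~\ref{del pezzo curves thm} its fibers are connected; and $l_1, l_2, l_1+l_2-2e, l_1+2l_2-4e$ visibly have irreducible fibers since fixing a point fixes a $\mathbb{P}^2$-fiber or a line/conic in $\mathbb{P}^2$. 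This would pin down $l_2 - e$ as the only separating class.

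The main obstacle I anticipate is bookkeeping rather than a genuine difficulty: correctly matching the $\mathbb{P}^1 \times \mathbb{P}^2$ description with bidegree $(3,2)$ against the alternative blow-up-of-$\mathbb{P}^3$-along-a-line-and-twisted-cubic description (the remark records $e = l - e_\ell - 2e_c$, etc.), and making sure the numerical inequalities for nefness are set up with the right effective generators --- in particular the divisor $2H_1 + 3H_2 - 2E$ is a moving divisor whose pairing constraint is easy to mis-transcribe. Once that is settled, deciding the separating status of $2l_2 - 3e$ versus $l_2 - e$ requires one careful invocation of Theorem~\ref{del pezzo curves thm}(4) and Lemma~\ref{curves1 in P1 x P2} to see that strict transforms of $-K_{S_6}$-conics in $\pi_1$-fibers sweep the fiber with connected evaluation fibers, whereas the $l_2 - e$ lines sit over three distinct points of $c$ in each $\pi_1$-fiber. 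Everything else is a direct application of the general machinery already established, so I would present the proof in the same compressed "Nef Curve Classes / Freely Breakable Classes / Irreducible Spaces and Fibers" format used throughout Section~\ref{Picard Rank 3 Cases}.
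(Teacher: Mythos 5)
Your proposal follows essentially the same route as the paper: enumerate the nef classes of anticanonical degree $2$–$4$ from the effective-divisor constraints, discard the freely breakable extras, and invoke Lemmas \ref{curves1 in P1 x P2}–\ref{curves3 in P1 x P2} for irreducibility and for identifying $l_2-e$ as the unique separating class. One concrete correction: the constraint from the moving divisor $2H_1+3H_2-2E$ is $2n\le 2d_1+3d_2$, not $n\le 2d_1+d_2$ as you wrote — your version would wrongly exclude the core class $2l_2-3e$ (where $d_1=0$, $d_2=2$, $n=3$), so this is exactly the transcription hazard you flagged. With the correct inequality the finite search produces only three extras, $2l_1$, $2l_2-2e$, and $l_1+l_2-e$ (the classes $3l_2-6e$, $2l_1+l_2-3e$, $4l_2-8e$ you anticipated are already non-nef here), and the rest of your argument goes through as in the paper.
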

\begin{proof}
\textbf{Nef Curve Classes of Anticanonical Degree Between $2$ and $4$:}
If $\alpha = d_1 l_1 + d_2 l_2 -ne$ is nef and of appropriate degree, we find $2n\leq \min(4d_2,2d_1 + 3d_2)$, $d_1,n \geq 0$, and $2 \leq 2d_1 + 3d_2 -n \leq 4$.  The integral solutions to this system of inequalities are the classes in $\mathscr{C}_X$ and $2l_1, 2l_2 -2e, l_1 + l_2 -e$.  Each of these additional classes is freely breakable (see Lemmas \ref{curves1 in P1 x P2}, \ref{curves2 in P1 x P2}, and \ref{curves3 in P1 x P2}).  Lemmas \ref{curves1 in P1 x P2}, \ref{curves2 in P1 x P2}, and \ref{curves3 in P1 x P2} finish our proof.
\end{proof}

\begin{lem}
Relations in the monoid $\mathbb{N}\mathscr{C}_X$ are generated by:
\begin{enumerate}
    \item $(l_1 + 2l_2 -4e) + (l_2 -e) = (l_1 + l_2 -2e) + (2l_2 -3e)$,
    \item $(l_1 + 2l_2 -4e) + l_1 = 2(l_1 + l_2 -2e)$,
    \item $(l_1 + 2l_2 -4e) + l_2 = (l_1 + l_2 -2e) + 2(l_2 -e)$,
    \item $l_2 + (2l_2 -3e) = 3(l_2 -e)$,
    \item $l_2 + (l_1 + l_2 -2e) = 2(l_2 -e) + l_1$,
    \item $(l_2 -e) + (l_1 + l_2 -2e) = l_1 + (2l_2 -3e)$.
\end{enumerate}
For each relation $\sum \alpha_i = \sum \beta_j$, a main component of $\prod_X \overline{\free}_2(X,\alpha_i)$ lies in the same component of free curves as a main component of $\prod_X \overline{\free}_2(X, \beta_j)$.
\end{lem}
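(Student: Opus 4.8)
The statement to prove has two parts for the variety $X$ numbered $3.12$ (blow-up of $\mathbb{P}^1 \times \mathbb{P}^2$ along a bidegree $(3,2)$ curve $c$, equivalently the blow-up of $\mathbb{P}^3$ along a line $\ell$ and a twisted cubic): first, that the six listed relations generate the monoid of relations among the core classes $\mathscr{C}_X = \{ l_1, l_2, l_2-e, 2l_2-3e, l_1+l_2-2e, l_1+2l_2-4e\}$; and second, that for each relation $\sum \alpha_i = \sum \beta_j$, a main component of $\prod_X \overline{\free}_2(X,\alpha_i)$ lies in the same component of $\overline{\free}_2(X, \sum\alpha_i)$ as a main component of $\prod_X \overline{\free}_2(X, \beta_j)$.

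For the \textbf{generation of relations}, the plan is to apply Lemma \ref{Relations} directly, following the exact template used in the preceding cases ($3.7$, $3.8$, $3.3$, $3.5$, $3.6$). First I would order the core classes so that $l_1+2l_2-4e$ comes first (it is the anticanonical quartic, extreme among the remaining generators after removing appropriate faces), and choose the divisor $D_1 = 2H_1+3H_2-2E$ (the class $\frac{1}{2}(\text{divisor swept out by } l-e_\ell-2e_c$-curves up to normalization) so that $D_1 \cdot (l_1+2l_2-4e) = -1$ while $D_1$ pairs trivially with $l_1+l_2-2e$ and nonnegatively with the rest. This produces exactly relations $(1)$--$(3)$, eliminating $l_1+2l_2-4e$. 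Next, among the remaining five classes $\{l_1, l_2, l_2-e, 2l_2-3e, l_1+l_2-2e\}$, I would take $D_2 = -H_1 - H_2 + E$, which pairs negatively only with $l_2$ and positively with $2l_2-3e$ and $l_1+l_2-2e$, yielding relations $(4)$--$(6)$ and eliminating $l_2$. The four classes $\{l_1, l_2-e, 2l_2-3e, l_1+l_2-2e\}$ span $N_1(X)$ (rank $3$) with a single linear relation, which is $(l_2-e)+(l_1+l_2-2e) = l_1 + (2l_2-3e)$, i.e. relation $(6)$; this is already on the list, so no new generator appears and the list is complete. (I should double-check the minimality condition in Lemma \ref{Relations}(4), i.e. that no listed relation is generated by the earlier ones — this is a short direct check against the pattern of prior cases.)

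For the \textbf{main-component identifications}, I would handle each relation by the same two techniques used repeatedly in the excerpt: (i) show the relevant target space $\overline{\free}(X, \sum\alpha_i)$ is irreducible, invoking Lemmas \ref{curves1 in P1 x P2}, \ref{curves2 in P1 x P2}, \ref{curves3 in P1 x P2} for the classes of the form $al_1 + bl_2 - ne$ that those lemmas cover; or (ii) exhibit an explicit immersed chain of free curves (or a chain with one contracted component, verified to be a smooth point of $\overline{\mathcal{M}}_{0,0}(X)$ via Proposition \ref{kontsevich space H1} / Lemma \ref{GHS lemma}) that deforms in two ways into free chains of the two required types, smoothing one node or another. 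Concretely: relation $(1)$ gives class $l_1 + 3l_2 - 5e$, relation $(2)$ gives $2l_1 + 2l_2 - 4e$ (covered by \ref{curves3 in P1 x P2}), relation $(3)$ gives $l_1 + 2l_2 - 2e$ (covered by \ref{curves2 in P1 x P2}), relation $(4)$ gives $3l_2 - 3e$ (here $\overline{\free}$ has several components, so I isolate the one parametrizing planar cubics meeting $c$ transversely at three smooth points and produce chains of type $(l_2, 2l_2-3e)$ and $(l_2-e,l_2-e,l_2-e)$ inside it), relation $(5)$ gives $l_1 + 2l_2 - 2e$ again, and relation $(6)$ gives $l_1 + 2l_2 - 3e$ (covered by \ref{curves2 in P1 x P2}). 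For $(1)$ I would follow the $3.5$/$3.3$ trick: start from a free chain of type $(l_1+l_2-2e, 2l_2-3e)$, degenerate the quartic component $l_1+2l_2-4e$ (on the other side) into a nodal curve of type $(l_1+l_2-2e, e, 2l_2-5e)$ sitting at a smooth point, then alternately smooth the two nodes.

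The \textbf{main obstacle} I anticipate is relation $(4)$, class $3l_2 - 3e$: the target space is genuinely reducible (multiple-cover component, a ``singular along $c$'' component, and the good component), so I must argue carefully via Lemma \ref{curves1 in P1 x P2} and the del Pezzo surface structure of a general fiber of $\pi_1$ — identifying that fiber as a blow-up of $\mathbb{P}^2$ at $5$ general points, showing the relevant cubics all lie in the single component parametrizing curves transverse to $c$, and that both $(l_2, 2l_2-3e)$ and $(l_2-e,l_2-e,l_2-e)$ chains live there. All the other relations reduce cleanly either to irreducibility statements from the $\mathbb{P}^1\times\mathbb{P}^2$ lemmas or to standard two-node-smoothing deformation arguments, so I expect them to be routine once the bookkeeping mirrors the neighboring cases.
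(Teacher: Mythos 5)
Your overall strategy is exactly the paper's: eliminate $l_1+2l_2-4e$ and then $l_2$ via Lemma \ref{Relations} with suitably chosen divisors, read off relations $(1)$--$(6)$, and then verify the main-component statement by citing irreducibility of $\overline{\free}(X,\sum\alpha_i)$ from Lemmas \ref{curves1 in P1 x P2}--\ref{curves3 in P1 x P2}, with the one genuinely reducible case $3l_2-3e$ handled by isolating the component of planar cubics transverse to $c$. However, several of your explicit computations would fail as written and need correction. First, your $D_1=2H_1+3H_2-2E$ pairs to $0$, not $-1$, with $l_1+2l_2-4e$ (and to $1$, not $0$, with $l_1+l_2-2e$), so Lemma \ref{Relations} cannot be applied with it; the paper uses $D=H_1+3H_2-2E$, which pairs to $-1$ with the quartic class and nonnegatively with everything else. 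Second, your $D_2=-H_1-H_2+E$ pairs to $-1$ with $l_1$ as well as with $l_2$, again violating the hypotheses; the correct choice is $D=-H_2+E$. Third, the total class of relation $(3)$ is $l_1+3l_2-4e$, not $l_1+2l_2-2e$ (that is relation $(5)$); it is still covered by Lemma \ref{curves2 in P1 x P2}. Finally, the chain degeneration you sketch for relation $(1)$ uses a component of class $2l_2-5e$, which is not effective here since $c$ has bidegree $(3,2)$ and so meets each fiber of $\pi_1$ in only three points (an irreducible conic in such a fiber pairs with $E$ at most $3$); this detour is unnecessary anyway, since $l_1+3l_2-5e$ satisfies $5<\tfrac{3\cdot 3+3}{2}$ and hence $\overline{\free}(X,l_1+3l_2-5e)$ is irreducible directly by Lemma \ref{curves2 in P1 x P2}, which is what the paper invokes.
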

\begin{proof}
\textbf{Relations:}
We follow Lemma \ref{Relations}.
\begin{itemize}
    \item $D= H_1 + 3H_2 -2E$: $D$ pairs negatively with $l_1 + 2l_2 -4e$ and positively with $l_2 -e$, $l_1$, and $l_2$.  We find relations (1) through (3).
    \item $D= -H_2 + E$: $D$ pairs negatively with $l_2$ and positively with $2l_2 -3e$ and $l_1 + l_2 -2e$.  We obtain relations (4) and (5).
    \item The remaining four classes span $N_1(X)$ and satisfy relation (6).
\end{itemize}

\textbf{Main Components:} We address each relation below.
\begin{enumerate}
    \item $\overline{\free}(X,l_1 + 3l_2 -5e)$ is irreducible by Lemma \ref{curves2 in P1 x P2}.
    \item $\overline{\free}(X,2l_1 + 2l_2 -4e)$ is irreducible by Lemma \ref{curves3 in P1 x P2}.
    \item $\overline{\free}(X,l_1 + 3l_2 -4e)$ is irreducible by Lemma \ref{curves2 in P1 x P2}.
    \item $\overline{\free}(X, 3l_2 -3e)$ has at least three components by \ref{curves1 in P1 x P2}.  The component that parameterizes planar cubics meeting $c$ at three points in their smooth locus contains free chains of type $(l_2, 2l_2 -3e)$ and $(l_2 -e, l_2 -e, l_2 -e)$.
    \item $\overline{\free}(X, l_1 + 2l_2 -2e)$ is irreducible by Lemma \ref{curves2 in P1 x P2}.
    \item $\overline{\free}(X,l_1 + 2l_2 -3e)$ is irreducible by Lemma \ref{curves2 in P1 x P2}.
\end{enumerate}
\end{proof}

\begin{lem}
For each nonzero $\alpha \in \Nef_1(X)_\mathbb{Z}$, $\overline{\free}(X,\alpha)$ is nonempty.
\end{lem}

\begin{proof}
The generators of $\Nef_1(X)$ are $l_1, l_2$, $2l_2 -3e$, and $l_1 + 2l_2 -4e$ by \ref{Representability of Free Curves}.  It follows from \ref{Gordan's Lemma} that classes in $\mathscr{C}_X$ generate the monoid of integer points in $\Nef_1(X)$.
\end{proof}

\subsection*{3.13}

\textbf{Blow-up of a Divisor in $\mathbb{P}^2 \times \mathbb{P}^2$ along a curve:}
Let $W \subset \mathbb{P}^2 \times \mathbb{P}^2$ be a smooth divisor of bidegree $(1,1)$. Let $f:X \rightarrow W$ be the blow-up of a curve $c \subset W$ of bidegree $(2,2)$ such that the projection of $c$ onto both factors of $\mathbb{P}^2$ is an embedding. Let $f_i: X \rightarrow \mathbb{P}^2$ be $f$ composed with the projection $\pi_i$ onto the corresponding factor of $\mathbb{P}^2$ for $i=1,2$.

Since $W$ is the blow-down of a Fano threefold of deformation type 4.7, Theorem \ref{4.7thm} and Lemma \ref{blowup} prove Theorem \ref{Main Result} for $W$.  We will use this fact below.

\begin{thm}\label{3.13thm}
For each $\alpha \in \Nef_1(X)_\mathbb{Z}$, $\overline{\free}(X,\alpha)$ is irreducible.
\end{thm}

\textbf{Generators for $N^1(X)$ and $N_1(X)$:}
\begin{center}
\begin{tabular}{ll}
 $H_1$ = $f_1^*(l)$ & $l_1$ = $f_2^*(pt)$ \\ 
 $H_2$ = $f_2^*(l)$ & $l_2$ = $f_1^*(pt)$ \\
 $E$ = the exceptional divisor $f^{-1}(c)$ & $e$ = the $f$-fiber over a point in $c$
\end{tabular}
\end{center}

\textbf{Intersection Pairing:}
\begin{center}
\begin{tabular}{lll}
    $H_1 \cdot l_1 = 1$ &  $H_1 \cdot l_2 = 0$ & $H_1 \cdot e = 0$, \\
    $H_2 \cdot l_1 = 0$ &  $H_2 \cdot l_2 = 1$ & $H_2 \cdot e = 0$ \\
    $E \cdot l_1 = 0$ & $E \cdot l_2 = 0$ & $E \cdot e = -1$
\end{tabular}
\end{center}

\textbf{Anticanonical Divisor:}
\begin{align*}
    -K_X = 2H_1 + 2H_2 - E.
\end{align*}

\textbf{Effective Divisors:} The divisors $2H_1-E$, $2H_2-E$, and $E$ are effective.

\textbf{Pseudosymmetry:}
The contraction of $X$ by $l_1-e$ realizes $X$ as the blow-up of $W' \subset \mathbb{P}^2 \times \mathbb{P}^2$ along a curve $c'$ for which both maps $c' \rightarrow \mathbb{P}^2$ are embeddings by \cite{matsuki1995weyl} and \cite{mori1981classification}. This corresponds to the pseudosymmetry
\begin{align*}
    l_1-e \mapsto e, \hspace{.5cm} e \mapsto l_1-e, \hspace{.5cm} l_2-e \mapsto l_2-e.
\end{align*}

There is another pseudosymmetry simply swapping the classes $l_1$ and $l_2$.

\begin{lem}\label{3.13core}
A core of free curves on $X$ is given by
\begin{align*}
    \mathscr{C}_X = \{ l_1, l_2, l_1+l_2-e, l_1+l_2-2e \}.
\end{align*}

\noindent There are no separating classes in $\mathscr{C}_X$.
\end{lem}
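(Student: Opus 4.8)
## Proof Strategy for Lemma \ref{3.13core}

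The plan is to follow the same general template used throughout Sections 9--11: identify the core, verify irreducibility and connectedness of fibers for each core class, then (in a subsequent lemma) handle the relations. For this particular variety $X$ (deformation type $3.13$), the key simplifying observation is that $X$ is a blow-up of $W \subset \mathbb{P}^2 \times \mathbb{P}^2$, and that $W$ itself admits a Fano blow-up (type $4.7$), so Conjecture \ref{GMC} holds on $W$ with $\tau = 0$ via Lemma \ref{blowup}. This gives us strong control on curve classes on $W$ and hence on $X$. I would also exploit the two pseudosymmetries recorded above: the one from the alternate blow-up description of $W$ (which swaps $e \leftrightarrow l_1 - e$, fixing $l_2 - e$) and the one swapping $l_1 \leftrightarrow l_2$. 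Together these cut the case analysis roughly in half.

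\textbf{Step 1: Enumerate nef curve classes of anticanonical degree between $2$ and $4$.} Write $\alpha = d_1 l_1 + d_2 l_2 - n e$. Using the effective divisors $2H_1 - E$, $2H_2 - E$, $E$ and the extreme rays of $\overline{NE}(X)$ (which are $e$, $l_1 - e$, $l_2 - e$, identified from \cite{matsuki1995weyl}), nefness forces inequalities like $n \le 2d_1$, $n \le 2d_2$, and the anticanonical degree is $2d_1 + 2d_2 - n$. Solving $2 \le 2d_1 + 2d_2 - n \le 4$ together with the nef inequalities yields a short finite list: the four classes in $\mathscr{C}_X$, together with a handful of extras such as $2l_1$, $2l_2$, $l_1 + l_2$, $2l_1 + 2l_2 - 4e$, and possibly $2l_1 + l_2 - 2e$ and its pseudosymmetric partner. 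Each of these extras must then be shown to be either freely breakable or not represented by an irreducible free curve. The double covers ($2l_1$, $2l_2$, $2l_1 + 2l_2 - 4e$) break trivially; $l_1 + l_2$ breaks as $(l_1, l_2)$; and the remaining ones should break via the standard limit-of-$\mathbb{C}^*$-action argument (as in cases $4.7$, $4.8$) or by pulling back a breaking on $W$ using that a general free curve avoids the indeterminacy of $f^{-1}$ (Lemma \ref{blowup} and \cite[Proposition~3.7]{kollar2013rational}).

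\textbf{Step 2: Irreducibility of $\overline{\free}(X,\alpha)$ and fibers of the evaluation map for each core class.} By the pseudosymmetries it suffices to treat $\alpha = l_1$, $\alpha = l_1 + l_2 - e$, and $\alpha = l_1 + l_2 - 2e$. For $\alpha = l_1$: curves of this class are $f_1$-fibers (lines in $\mathbb{P}^2$-fibers of $f_2$), parameterized by an open subset of $\mathbb{P}^2 \times \mathbb{G}(1,2)$ or similar, clearly irreducible with irreducible $\text{ev}$-fibers. For $\alpha = l_1 + l_2 - e$: these are strict transforms of curves of bidegree $(1,1)$ on $W$ meeting $c$ once; since $W$ has $\tau = 0$ and the analogous space on $W$ is irreducible, and the incidence condition with $c$ is one linear condition, one gets irreducibility; I'd check the general $\text{ev}$-fiber is irreducible by a dimension count plus a monodromy/connectedness argument à la Remark \ref{connected vs irreducible}. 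For $\alpha = l_1 + l_2 - 2e$: strict transforms of bidegree-$(1,1)$ curves on $W$ meeting $c$ twice, parameterized by an open subset of $\Sym^2 c$ (since a $(1,1)$-curve through two general points of $c$ is determined up to a finite set), irreducible, and with a unique curve through a general point so it is not separating. The claim ``no separating classes'' then follows because each of the four core classes has irreducible general evaluation fiber.

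\textbf{Main obstacle.} The delicate point is verifying irreducibility of $\overline{\free}(X, l_1 + l_2 - e)$ and especially confirming the general fiber of its evaluation map is irreducible — this is exactly the kind of computation where the geometry of how the $(1,1)$-curves on $W$ meet the curve $c$ (whose projections to both $\mathbb{P}^2$'s are embeddings, so $c$ is a smooth $(2,2)$-curve, a genus-$1$ curve) matters. One must rule out a monodromy-induced splitting of the fiber, presumably via Theorem \ref{Monodromy} applied to an appropriate linear system cutting $c$, or by reducing to the known irreducibility on $W$ using Lemma \ref{blowup}. I expect the bookkeeping to parallel case $4.7$ closely, since $X \to W$ is a single blow-up and $4.7 \to W$ realizes $W$ as a further blow-down, so most of the real content is already available; the remaining relations (to be verified in the next lemma via Lemma \ref{Relations}, with $\mathscr{C}_X$ spanning a $3$-dimensional space so there is essentially one relation $l_1 + l_2 = (l_1 + l_2 - e) + e \cdot (\ldots)$-type plus one more) should then be routine.
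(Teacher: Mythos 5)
Your proposal follows essentially the same template as the paper's proof: enumerate the nef classes of anticanonical degree $2$--$4$, dispose of the non-core classes, and then establish irreducibility of the moduli spaces and of the general evaluation fibers, with the real content concentrated in the class $l_1+l_2-e$. You correctly locate the main difficulty there and name the right tool: the paper realizes these curves as graphs of degree-one maps from a line $x\subset\mathbb{P}^2$ to the other $\mathbb{P}^2$ lying in $W$ and meeting $c$, and uses Theorem \ref{Monodromy} for the linear system $|H_1|$ to act transitively on the two points of $x\cap\pi_1(c)$, both for the total space and for the fiber over a general point (where one restricts to the pencil of lines through $\pi_1(p)$). Where you diverge is in how the remaining classes are dispatched: the paper leans entirely on the pseudosymmetry $e\mapsto l_1-e$, which sends $2l_1+l_2-2e$ and $l_1+2l_2-2e$ to $l_1+l_2$, sends $2l_1+2l_2-4e$ to $2l_2$, and sends $l_1+l_2-2e$ to $l_2$, so that free breakability and irreducibility of fibers for all of these reduce to classes already controlled on $W$ via Lemma \ref{blowup}. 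Your proposed alternatives are shakier at exactly these points: $X$ has no evident $\mathbb{C}^*$-action of the kind used in cases $4.7$--$4.8$ (the blow-up center and the $(1,1)$-divisor are not preserved), and pulling back a breaking from $W$ does not by itself handle the incidence with $c$ when the class pairs nontrivially with $E$; likewise your $\Sym^2 c$ parameterization of $l_1+l_2-2e$ yields finiteness of the evaluation fiber but you assert uniqueness of the curve through a general point without argument, which is precisely what non-separation requires. Both gaps are closed by the pseudosymmetry you list in your preamble but do not deploy. One factual slip: since $c$ has bidegree $(2,2)$ and embeds in each $\mathbb{P}^2$, it is a smooth conic, hence of genus $0$, not genus $1$; this does not affect the monodromy argument.
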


\begin{proof}
\textbf{Nef Curve Classes of Anticanonical degree between $2$ and $4$:}

Such nef curve classes $\alpha = a l_1 + b l_2 - c e$ satisfy the equations

\begin{align*}
    0 \leq c \leq 2a, 2b, \hspace{.5cm} 2 \leq 2a+2b-c \leq 4.
\end{align*}

\noindent The solutions are those classes in $\mathscr{C}_X$, as well as $2l_1$, $2l_2$, $l_1+l_2$, $l_1+2l_2-2e$, $2l_1+l_2-2e$, and $2l_1+2l_2-4e$.

The curves of class $2l_1$, $2l_2$, and $l_1+l_2$ may be freely broken because they each constitute a class of curves on $W$ and we already know Theorem \ref{Main Result} for $W$. Under pseudosymmetry, the curves of class $l_1+2l_2-2e$ and $2l_1+l_2-2e$ may both be transformed to class $l_1+l_2$, which we just said are freely breakable. Similarly, the curves of class $2l_1+2l_2-4e$ are transformed to class $2l_2$.

\textbf{Irreducible Spaces and Fibers:} 
The curves of class $l_1$ and $l_2$ are parameterized by an irreducible space whose universal family has irreducible fibers because they correspond to classes in $W$ for which this is already known. The same holds for curves of class $l_1+l_2-2e$ because they are are transformed to $l_2$ under pseudosymmetry.

A curve of class $l_1+l_2-e$ is determined by the choice of a line $x \subset \mathbb{P}^2$ and a degree one morphism $x \rightarrow \mathbb{P}^2$ whose graph lies in $W$ and intersects the curve $c$. Clearly, the space of lines $x$ in $\mathbb{P}^2$ is irreducible. The condition that the graph of $x \rightarrow \mathbb{P}^1$ lies in $W$ is linear in the coefficients of the morphism. Moreover, the projection $\pi_1(c)$ of $c$ in $\mathbb{P}^2$ intersects a fixed line $x$ in two points $\pi_1(p)$ and $\pi_1(q)$. In order for its graph to intersect $c$, the morphism $x \rightarrow \mathbb{P}^2$ must map either $\pi_1(p) \mapsto \pi_2(p)$ or $\pi_1(q) \mapsto \pi_2(q)$; both these conditions are likewise linear in the coefficients of the morphism. The monodromy action of the linear system $|H_1|$ is transitive by \ref{Monodromy}, so it follows that the moduli space of curves of class $l_1+l_2-e$ is indeed irreducible.

The space of such curves passing through a general point $p \in W$ is also irreducible because it admits the same description, except that now we need only consider lines $x$ in $\mathbb{P}^2$ passing through $\pi_1(p)$ and morphisms $x \rightarrow \mathbb{P}^2$ for which $\pi_1(p) \mapsto \pi_2(p)$.
\end{proof}

\begin{lem}\label{3.13relations}
There is a single relation $(l_1+l_2-2e)+(l_1)+(l_2)=2(l_1+l_2-e)$ in the monoid $\mathbb{N} \mathscr{C}_X$. The corresponding moduli space of free curves of class $2l_1+2l_2-2e$ is irreducible.
\end{lem}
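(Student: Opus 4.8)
The relation itself is forced: $\mathscr{C}_X$ consists of four classes spanning the three-dimensional space $N_1(X)$, so there is exactly one relation among them up to scaling, and checking intersection numbers with $H_1,H_2,E$ confirms it is $(l_1+l_2-2e)+l_1+l_2 = 2(l_1+l_2-e)$. So the content is the last sentence: the space $\overline{\free}(X,2l_1+2l_2-2e)$ is irreducible. The plan is to verify this directly, and then note that irreducibility of this space is exactly what is needed to conclude that a main component of $\prod_X\overline{\free}_2(X,\alpha_i)$ (for the left-hand chain) and a main component of $\prod_X\overline{\free}_2(X,\alpha_j')$ (for the right-hand chain) lie in the same component of free curves, since both are main components of products over $X$ whose induced family of $0$-pointed curves has class $2l_1+2l_2-2e$, and there is only one such component.

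First I would identify the geometry of curves of class $2l_1+2l_2-2e$. By Lemma \ref{very free curves} and Theorem \ref{reducible fibers: 4 author result}, since $2l_1+2l_2-2e$ is interior to $\overline{NE}(X)$ with $-K_X$-degree $6\ge 4$, every component of $\overline{\free}(X,2l_1+2l_2-2e)$ generically parameterizes very free curves, and in particular the general such curve is an embedding meeting $E$ in two general points, with irreducible image under $\pi_i$. I would then use the contraction $f:X\to W\subset\mathbb{P}^2\times\mathbb{P}^2$ and the pseudosymmetry $l_1-e\mapsto e$, $e\mapsto l_1-e$, $l_2-e\mapsto l_2-e$: under the associated pseudoaction, $2l_1+2l_2-2e$ is sent to a class which, after pushforward to $W$, is the class of a family of curves already understood. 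More precisely, following the template used for the other core classes in this case (e.g. the treatment of $l_1+2l_2-2e$, $2l_1+l_2-2e$, $2l_1+2l_2-4e$ in Lemma \ref{3.13core}), I would transform $2l_1+2l_2-2e$ to a class on $W$ — a blow-down for which Conjecture \ref{GMC} is already known via Lemma \ref{blowup} and the analysis of family $4.7$ — where $\overline{\free}$ of that class is irreducible.

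Alternatively, and perhaps more cleanly, I would realize $\overline{\free}(X,2l_1+2l_2-2e)$ via the projection $f_{1*}$ (or $\pi_{1*}$) to a space of conics in $\mathbb{P}^2$, as in the proof of irreducibility for $l_1+l_2-e$ in Lemma \ref{3.13core}: a general curve $g:\mathbb{P}^1\to X$ of this class pushes forward to a conic $x\subset\mathbb{P}^2$ (the $\mathbb{P}^2$-image) and the data of $g$ is a degree-two map $x\to\mathbb{P}^2$ whose graph lies in $W$ and meets $c$ at two of the four points of $x\cap\pi_1(c)$; the conditions are linear in the coefficients of the map, and the relevant monodromy (of the two chosen intersection points among four, as $x$ varies) is $2$-transitive by Theorem \ref{Monodromy}, giving irreducibility of the fiber over the space of conics and hence of the whole space. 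I expect the main obstacle to be bookkeeping: confirming that the generic member is indeed birational onto its image so that the $f_{1*}$-fibers have the expected dimension and the monodromy argument of Theorem \ref{Monodromy}(b)/(a) applies with the correct ramification hypotheses, and checking that no degenerate locus (curves through the intersection point of the two $\mathbb{P}^2$-planes of $c$, or tangent to $E$) contributes a spurious component. Once irreducibility of $\overline{\free}(X,2l_1+2l_2-2e)$ is in hand, the statement about main components follows by the standard argument (as in the remarks following Theorem \ref{Main Method}): both main components are contained in the unique component of $\overline{\free}(X,2l_1+2l_2-2e)$, so after passing to $\overline{\free}_2$ and using Lemma \ref{Basic Properties Chains} they lie in the same component of $\overline{\free}_2(X,2l_1+2l_2-2e)$.
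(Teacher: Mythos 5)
Your second ("alternatively, and perhaps more cleanly") argument is precisely the paper's proof: the paper fibers $\overline{\free}(X,2l_1+2l_2-2e)$ over the space of conics in $\mathbb{P}^2$, imposes the linear conditions that the degree-two graph lie in $W$ and pass through two of the four points of $x\cap\pi_1(c)$, and invokes $2$-transitivity of the monodromy of $|2H_1|$ via Theorem \ref{Monodromy}. The linear-algebra derivation of the unique relation is also identical, so the proposal is correct and matches the paper's approach.
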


\begin{proof}
The four classes in the core $\mathscr{C}_X$ span a three-dimensional space, so satisfy a unique relation $(l_1+l_2-2e)+(l_1)+(l_2)=2(l_1+l_2-e)$.

A curve of class $2l_1+2l_2-2e$ is determined by the choice of a conic $x \subset \mathbb{P}^2$ and a degree two morphism $x \cong \mathbb{P}^1 \rightarrow \mathbb{P}^2$ whose graph lies in $W$ and intersects the curve $c$ in two points. The space of conics in $\mathbb{P}^2$ is five-dimensional and irreducible. A degree two morphism $x \cong \mathbb{P}^1 \rightarrow \mathbb{P}^2$ is determined by nine homogeneous coordinates, so the family of such morphisms is eight-dimensional. Moreover, since $W$ is of bidegree $(1,1)$, the condition that it lie in $W$ imposes five linear conditions on the coefficients. Indeed, substituting the coordinate expression of the morphism into the equation defining $W$, this condition translates into the vanishing of a degree four polynomial on $x \cong \mathbb{P}^1$. The projection $\pi_1(c)$ of $c$ in $\mathbb{P}^2$ intersects a fixed conic $x$ in four points $\pi_1(p_1) \ldots \pi_1(p_4)$. In order for its graph to intersect $c$, a morphism $x \rightarrow \mathbb{P}^2$ must map two of $\pi_1(p_1) \mapsto \pi_2(p_1) \ldots \pi_1(p_4) \mapsto \pi_2(p_4)$; each of these is a single linear condition on the coefficients because the graph has already been forced to lie in $W$. The monodromy action of the linear system $|2H_1|$ is $2$-transitive by \ref{Monodromy}, so it follows that the moduli space of curves of class $2l_1+2l_2-2e$ is indeed irreducible.
\end{proof}

\subsection*{3.14}
See Section \ref{E5 cases}.

\subsection*{3.15}

\textbf{Description of Variety $X$:} $X$ may be realized as a blow-up $\pi:X\rightarrow \mathbb{P}^1 \times \mathbb{P}^2$ along a curve $c$ of bidegree $(2,2)$ using \ref{blow-up numbers}.  We let $\pi_1 :X \rightarrow \mathbb{P}^1$ and $\pi_2:X\rightarrow \mathbb{P}^2$ be the compositions of the blow-up with each projection.

\textbf{Generators for $N^1(X)$ and $N_1(X)$:} Let $H_1 = \pi_1^* \mathcal{O}(1)$, $H_2 = \pi_2^* \mathcal{O}(1)$, and $E$ be the exceptional divisor of the blow-up $\pi$.  Let $l_1$ be the class of a fiber of $\pi_2$, $l_2$ be the class of a general line in each fiber of $\pi_1$, and $e$ be a fiber of $E\rightarrow c$.

\begin{thm}\label{3.15thm}
For all $\alpha \in l_1 + \Nef_1(X)_\mathbb{Z}$, $\overline{\free}(X,\alpha)$ is nonempty and irreducible.
\end{thm}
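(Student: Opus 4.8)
The plan is to handle $3.15$ exactly as the earlier blow-ups of $\mathbb{P}^1\times\mathbb{P}^2$ (cases $3.8$, $3.12$): produce a core $\mathscr{C}_X$, read off a generating set of relations in $\mathbb{N}\mathscr{C}_X$, and then invoke Theorem \ref{Main Method} together with Corollary \ref{connected fibers fixall}. First I would record the Mori structure: since $c$ has bidegree $(2,2)$ with first factor-degree $2$, Lemma \ref{P1 x P2 effective curves} gives $\overline{NE}(X)=\langle e,\ l_1-e,\ l_2-2e\rangle$, and by also recording the effective divisors (at least $E$, $H_1$ and $2H_2-E$) one gets $\Nef_1(X)$, with $\pi_1:X\to\mathbb{P}^1$ a del Pezzo fibration whose general fibre is a degree-$7$ del Pezzo surface $S_7$ and $\pi_2:X\to\mathbb{P}^2$ a conic bundle. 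Enumerating the nef classes $\alpha=d_1l_1+d_2l_2-ne$ with $2\le -K_X\cdot\alpha\le 4$ and applying Lemmas \ref{curves1 in P1 x P2}, \ref{curves2 in P1 x P2}, \ref{curves3 in P1 x P2}, I expect the core
\[
\mathscr{C}_X=\{\,l_1,\ l_2,\ l_2-e,\ l_1+l_2-2e,\ l_1+2l_2-4e\,\},
\]
with $l_2-e$ the unique separating class (the two components of a general fibre of $\overline{\free}_1(X,l_2-e)\to X$ are identified by the monodromy of $c\to\mathbb{P}^1$, which is the full symmetric group by Lemma \ref{P1 x P2 monodromy} since $d_1=2\le 3$). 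The classes excluded from the core split into those that are freely breakable (using Lemmas \ref{curves1 in P1 x P2}--\ref{curves3 in P1 x P2}, Corollary \ref{breaking quartics cor} and Theorem \ref{improved MovableBB} for the degree-$4$ borderline cases) and those contracted by $\pi_1$ which on a general fibre $S_7$ have negative arithmetic genus (e.g. $2l_2-3e$, $4l_2-8e$) and hence are never represented by a free irreducible curve; irreducibility of $\overline{\free}(X,\alpha)$ for $\alpha\in\mathscr{C}_X$ comes directly from Lemmas \ref{curves1 in P1 x P2}--\ref{curves3 in P1 x P2}.

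Next I would run Lemma \ref{Relations} to get a generating set of relations in $\mathbb{N}\mathscr{C}_X$; these should be the analogues of the $3.12$ relations with every occurrence of $2l_2-3e$ deleted, together with the relation $l_1+2(l_2-e)=l_2+(l_1+l_2-2e)$ among the four ``low'' core classes. For each relation I must verify condition \ref{Main Method}(1), i.e. that main components of the two sides land in the same component of $\overline{\free}_2(X,\sum\alpha_i)$. The key observation, as in $3.12$, is that every relation I expect has total class of the form $al_1+bl_2-ne$ with $a\ge 1$, and for such classes Lemmas \ref{curves2 in P1 x P2} and \ref{curves3 in P1 x P2} yield irreducibility (and nonemptiness) of $\overline{\free}(X,\cdot)$ outright, so the two main components automatically coincide. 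If any relation survives whose total class is contracted by $\pi_1$ and has a reducible moduli space, I would dispatch it by the standard argument: exhibit an immersed chain of core curves inside a general fibre $S_7$, deform it in two ways using Theorem \ref{del pezzo curves thm} on the degree-$7$ del Pezzo surface, and conclude with Lemma \ref{Basic Properties Chains}.

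To finish, I would apply Theorem \ref{Main Method}: condition (2) is supplied by Corollary \ref{connected fibers fixall}, since the only del Pezzo fibration on $X$ is $\pi_1$, whose fibres have degree $7>3$, and $\rho(X)=3$ forces the absence of an interior nef $-K_X$-conic by Lemma \ref{interior conic lemma}; hence (2) holds for every $\alpha$ with $\alpha\cdot H_1>0$, which includes all $\alpha\in l_1+\Nef_1(X)_\mathbb{Z}$ because $\alpha\cdot H_1\ge l_1\cdot H_1=1$. This gives irreducibility of $\overline{\free}(X,\alpha)$ for all such $\alpha$. Nonemptiness is the usual Gordan's Lemma computation (Lemma \ref{Gordan's Lemma}) combined with Theorem \ref{Representability of Free Curves}: one checks that $\mathscr{C}_X$ together with $l_2-2e$ generates the monoid of integral nef classes, so every $\alpha\in l_1+\Nef_1(X)_\mathbb{Z}$ is $l_1$ plus such a combination, and the extra $l_1$ together with relations like $l_1+(l_2-2e)=l_1+l_2-2e$ and $l_2+(l_1-e)=l_1+l_2-e$ rewrites it as a sum of free classes. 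The main obstacle I anticipate is the bookkeeping in the first two steps: pinning down exactly which $\pi_1$-contracted classes are free (and therefore whether the core has five elements as guessed) and confirming that every relation's total class is genuinely reached by Lemmas \ref{curves2 in P1 x P2}--\ref{curves3 in P1 x P2}; if some relation is not, the corresponding chain-deformation argument inside a degree-$7$ del Pezzo fibre is the delicate point, though it is structurally identical to those already carried out for case $3.12$.
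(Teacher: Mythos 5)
Your overall strategy is the paper's strategy, but there is a concrete error that invalidates the core and the relation set: you have omitted the effective divisor $H_1+H_2-E$. Since $c$ has bidegree $(2,2)$ and genus $0$, the restriction map $H^0(\mathcal{O}_{\mathbb{P}^1\times\mathbb{P}^2}(1,1))\to H^0(\mathcal{O}_c(1,1))$ goes from a $6$-dimensional space to a $5$-dimensional one, so $c$ lies on a $(1,1)$-divisor and $H_1+H_2-E$ is effective. Pairing with it imposes $n\le d_1+d_2$ on a nef class $d_1l_1+d_2l_2-ne$, in addition to $n\le 2d_2$. Consequently $l_1+2l_2-4e$ pairs to $-1$ with $H_1+H_2-E$ and is \emph{not} nef, so it cannot belong to a core; likewise $l_2-2e$ and $2l_2-3e$ are not nef, so your closing Gordan's-Lemma step (``$\mathscr{C}_X$ together with $l_2-2e$ generates the nef monoid'') and your appeal to ``the $3.12$ relations with $2l_2-3e$ deleted'' are both pattern-matched from cases ($3.8$, $3.12$) where the blown-up curve does \emph{not} lie on a $(1,1)$-divisor, and they fail here. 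This is exactly the bookkeeping you flagged as the risk, and it is where the argument breaks.

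The correct data make the case strictly easier than you anticipated: the core is the four-element set $\mathscr{C}_X=\{l_1,\ l_2,\ l_2-e,\ l_1+l_2-2e\}$ (the only other nef classes of degree $2$--$4$ are $2l_1$, $2l_2-2e$, $l_1+l_2-e$, all freely breakable), the only separating class is $l_2-e$ as you say, and since these four classes span the $3$-dimensional $N_1(X)$ there is a single relation $l_1+2(l_2-e)=l_2+(l_1+l_2-2e)$, whose total class $l_1+2l_2-2e$ has irreducible moduli by Lemma \ref{curves2 in P1 x P2}. No chain-deformation inside a degree-$7$ del Pezzo fibre is needed. Finally, $\mathscr{C}_X$ alone generates the monoid of integral nef points (no auxiliary non-free class is required), which in fact gives nonemptiness and irreducibility for all nonzero $\alpha\in\Nef_1(X)_{\mathbb{Z}}$, not just $\alpha\in l_1+\Nef_1(X)_{\mathbb{Z}}$.
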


\textbf{Intersection Pairing:} $ H_i \cdot l_j = \delta_{ij}$, $E \cdot e = -1$, and all other pairings are $0$.

\textbf{Anticanonical Divisor:} $-K_X = 2H_1 + 3H_2 - E$

\textbf{Effective Divisors:} $E$, $H_1$, $H_1 +H_2 -E$, and $2H_2 -E$ are all effective.

\textbf{Effective Curves:} The extreme rays of $\overline{NE}(X)$ are $e$, $l_1-e$, and $l_2 -2e$.

\begin{rem}
$X$ may also be realized as the blow-up of $Q\subset \mathbb{P}^4$ along a line $\ell$ and disjoint conic $c$.  If $l$ is the class of a line in $\mathbb{P}^4$ and $e_\ell,e_c$ are the classes of fibers in $X$ over points in $\ell$ and $c$, respectively, then $e = l-e_\ell -e_c$, $l_2 -2e = e_\ell$, and $l_1 -e = e_c$.  The divisor swept out by curves of class $l-e_\ell -e_c$ has class $2H -E_c -2E_\ell$.  Theorem \ref{3.15thm} also holds for all $\alpha \in l + \Nef_1(X)_\mathbb{Z}$.
\end{rem}

\begin{lem}
A core of free curves on $X$ is given by
\begin{align*}
   \mathscr{C}_X = &\{ l_1, \ l_2, \ l_2 -e, \ l_1 + l_2 -2e \} 
\end{align*}
The only separating class in $\mathscr{C}_X$ is $l_2 -e$.
\end{lem}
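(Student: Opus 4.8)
The plan is to follow the standard template used throughout Section \ref{Picard Rank 3 Cases}: first delimit the candidate set of ``low-degree'' nef classes, then show the extra classes break freely, then establish irreducibility of $\overline{\free}(X,\alpha)$ for each $\alpha\in\mathscr{C}_X$, and finally identify which of these classes are separating. Concretely, I would begin by solving the linear inequalities cutting out nef classes $\alpha=d_1 l_1+d_2 l_2-ne$ of anticanonical degree between $2$ and $4$, using the effective divisors $E$, $H_1$, $H_1+H_2-E$, $2H_2-E$ listed above (equivalently $\alpha\cdot E\geq 0$, $\alpha\cdot H_1\geq 0$, $\alpha\cdot(H_1+H_2-E)\geq 0$, $\alpha\cdot(2H_2-E)\geq 0$). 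This yields $\mathscr{C}_X=\{l_1,\ l_2,\ l_2-e,\ l_1+l_2-2e\}$ together with a short list of additional classes — I expect $2l_1$, $2l_2-2e$, $l_1+l_2-e$, possibly $2l_2-3e$ or $l_1+2l_2-4e$ — each of which is either a multiple cover of a conic or splits as a sum of two free classes in $\mathscr{C}_X$, hence is freely breakable by Mori bend-and-break (Theorem \ref{MovableBB}) exactly as in cases 3.8, 3.12. I would invoke Lemmas \ref{curves1 in P1 x P2}, \ref{curves2 in P1 x P2}, \ref{curves3 in P1 x P2} to dispatch these: $l_1+l_2-e$ breaks into $l_1$ and $l_2-e$; $2l_1$, $2l_2-2e$ are covers; any $l_1+2l_2-4e$-type class is handled by Lemma \ref{curves2 in P1 x P2} and splits off an $l_1$.

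Next, irreducibility. For $l_1$ and $l_2$ this is immediate: $\overline{\free}(X,l_1)$ and $\overline{\free}(X,l_2)$ are open subsets of $\mathbb{P}^2$ (fibers of $\pi_2$, deformations) and $\mathbb{P}^1\times\mathbb{G}(1,2)$ respectively, with irreducible evaluation fibers. For $l_2-e$, curves of this class are strict transforms of lines in fibers of $\pi_1$ through one of the $2$ points of $c$ lying in that fiber; since $c\to\mathbb{P}^1$ is a degree $2$ cover, its monodromy is the full symmetric group $S_2$, so the two choices lie in one component and $\overline{\free}(X,l_2-e)$ is irreducible — this follows directly from Lemma \ref{curves1 in P1 x P2} (and from the alternative description as the blow-up of $Q$ along $\ell$ and $c$, where curves of class $e_\ell$ are lines on $Q$ meeting $\ell$). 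The fiber of $\overline{\free}_1(X,l_2-e)\to X$ has two components, so $l_2-e$ is separating. For $l_1+l_2-2e$, these are strict transforms of curves of class $(1,1)$ in $\mathbb{P}^1\times\mathbb{P}^2$ meeting $c$ at both of its points in the relevant fiber of $\pi_1$ — equivalently, in the other model, curves of class $l-e_\ell-e_c$, which sweep out the divisor $2H-E_c-2E_\ell$; Lemma \ref{curves2 in P1 x P2} (with $a=d=1$, $n=2\leq 1\cdot d_2$) gives irreducibility, and since fixing a general point determines the line in $\mathbb{P}^2$ and hence the curve, the evaluation fiber is irreducible, so $l_1+l_2-2e$ is not separating. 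The same goes for $l_1$ and $l_2$, which are visibly non-separating.

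Finally I would verify the two defining properties of a core (Definition \ref{def core}): (1) $\overline{\free}(X,\alpha)$ is nonempty and irreducible for all $\alpha\in\mathscr{C}_X$, done above; and (2) every component of $\overline{\free}(X)$ contains a free chain with components in $\mathscr{C}_X$ — this is the content of the freely-breakable analysis combined with Theorem \ref{MovableBB} and, for the anticanonical-quartic case $-K_X\cdot\alpha=4$ (where bend-and-break is not enough), Proposition \ref{breaking quartic curves} and Corollary \ref{breaking quartics cor}, since $X$ has a conic-bundle structure $\pi_2:X\to\mathbb{P}^2$ (or one may use the $C1$-type structure) and no $E2,E3,E4,E5$ divisors. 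The main obstacle I anticipate is exactly this quartic-breaking step: one must check that every component of $\overline{\free}(X)$ parameterizing $-K_X$-quartics degenerates to a chain of free conics/cubics whose pieces reduce into $\mathscr{C}_X$, which requires knowing the fibers of $\pi_{2*}:\overline{\free}(X)\to\overline{\free}(\mathbb{P}^2)$ are at least two-dimensional on every component (Corollary \ref{breaking quartics cor}) — a dimension count using that general free quartics are very free (Proposition \ref{very free curves}) and that $\pi_2(c)$ is an embedded conic. Everything else is the routine inequality-solving and monodromy bookkeeping carried out identically in the neighboring Picard rank $3$ cases.
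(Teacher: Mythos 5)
Your plan matches the paper's proof of this case, which is just the terse version of your first paragraph: solve the inequalities $n\le\min(2d_2,\,d_1+d_2)$ and $2\le 2d_1+3d_2-n\le 4$ coming from the listed effective divisors, find that the only nef classes outside $\mathscr{C}_X$ are $2l_1$, $2l_2-2e$, $l_1+l_2-e$, note these are freely breakable, and cite Lemmas \ref{curves1 in P1 x P2}, \ref{curves2 in P1 x P2}, \ref{curves3 in P1 x P2} for irreducibility and the separating analysis. Two small corrections to your write-up: the classes $2l_2-3e$ and $l_1+2l_2-4e$ that you hedge about are already excluded by the inequality $\alpha\cdot(H_1+H_2-E)\ge 0$ that you yourself record, so no extra breaking argument is needed for them; and your stated reason that $l_1+l_2-2e$ is non-separating is not right as written, since a general point $p$ does \emph{not} determine the line in $\mathbb{P}^2$ (there is a pencil of lines through $\pi_2(p)$, each meeting the conic $\pi_2(c)$ in two points, giving a one-dimensional evaluation fiber birational to that pencil). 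That fiber is nonetheless irreducible, and the conclusion also follows directly from Theorem \ref{reducible fibers: 4 author result} because $l_1+l_2-2e$ is a cubic class not contracted by the del Pezzo fibration $\pi_1$.
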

\begin{proof}
\textbf{Nef Curve Classes of Anticanonical Degree Between $2$ and $4$:}
If $\alpha = d_1 l_1 + d_2 l_2 -ne$ is nef and of appropriate degree, we find $n\leq \min(2d_2,d_1 + d_2)$, $d_1,n \geq 0$, and $2 \leq 2d_1 + 3d_2 -n \leq 4$.  The integral solutions to this system of inequalities are the classes in $\mathscr{C}_X$ and $2l_1, 2l_2 -2e, l_1 + l_2 -e$.  Each of these additional classes is freely breakable (see Lemmas \ref{curves1 in P1 x P2}, \ref{curves2 in P1 x P2}, and \ref{curves3 in P1 x P2}).  Lemmas \ref{curves1 in P1 x P2}, \ref{curves2 in P1 x P2}, and \ref{curves3 in P1 x P2} finish our proof.
\end{proof}

\begin{lem}
Relations in the monoid $\mathbb{N}\mathscr{C}_X$ are generated by:
$$l_1 + 2(l_2 -e) = l_2 + (l_1 + l_2 -2e).$$
The corresponding space of free curves, $\overline{\free}(X, l_1 + 2l_2 -2e)$ is irreducible.  
\end{lem}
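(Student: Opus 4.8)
The statement has two parts, which I would handle in order. For the generating relation, the point is purely lattice-theoretic: $\mathscr{C}_X = \{l_1,\, l_2,\, l_2-e,\, l_1+l_2-2e\}$ has four elements, while $N_1(X)$ has rank $3$ and the intersection table shows $l_1, l_2, l_1+l_2-2e$ already form a $\mathbb{Z}$-basis. Hence the lattice of $\mathbb{Z}$-linear relations among the four core classes has rank exactly one. Listing the generators as $(l_1, l_2, l_2-e, l_1+l_2-2e)$, one checks directly that $l_1+2(l_2-e) = l_1+2l_2-2e = l_2+(l_1+l_2-2e)$, so the vector $(1,-1,2,-1)$ lies in this rank-one lattice; since $\gcd(1,1,2,1)=1$ it is primitive, hence generates the lattice, and therefore the monoid of effective relations in $\mathbb{N}\mathscr{C}_X$ is generated by the single relation displayed. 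If a more mechanical presentation is preferred, the same conclusion follows from Lemma \ref{Relations}, ordering $\mathscr{C}_X$ so that $l_1$ is eliminated first using the divisor class $H_2 - E$.

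For the second part, note that $l_1+2l_2-2e$ is the strict transform of a curve of bidegree $(1,2)$ in $\mathbb{P}^1\times\mathbb{P}^2$ meeting the blown-up curve $c$ at two points. Since here $c$ has bidegree $(2,2)$ with both components strictly greater than $1$, and $\alpha = l_1 + 2l_2 - 2e$ satisfies $d = 2 > 0$ and $n = 2 < \tfrac{3\cdot 2 + 3}{2} = \tfrac{9}{2}$, Lemma \ref{curves2 in P1 x P2} applies verbatim and yields that $\overline{\free}(X,\, l_1+2l_2-2e)$ is irreducible and nonempty. The accompanying claim about main components — that a main component of $\prod_X \overline{\free}_2(X,l_1)\times_X\overline{\free}_2(X,l_2-e)\times_X\overline{\free}_2(X,l_2-e)$ and a main component of $\prod_X \overline{\free}_2(X,l_2)\times_X\overline{\free}_2(X,l_1+l_2-2e)$ lie in the same component of free curves — is then immediate, since both are contained in the unique irreducible space $\overline{\free}_2(X,\, l_1+2l_2-2e)$.

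There is no real obstacle here: all of the geometric content has been packaged into Lemma \ref{curves2 in P1 x P2}, and the relation is forced by dimension. The only step deserving a moment's verification is confirming that the relation lattice has rank exactly one — equivalently that $\mathscr{C}_X$ spans $N_1(X)$, which is visible from the intersection pairing — so that the single displayed relation genuinely generates, rather than being one of several. With this lemma in hand the remaining assertions of case $3.15$ conclude exactly as in the parallel cases $3.8$ and $3.12$: Gordan's Lemma (Lemma \ref{Gordan's Lemma}) together with Theorem \ref{Representability of Free Curves} show $\mathscr{C}_X$ generates the monoid of integral nef classes, and then Theorem \ref{Main Method} with $\tau = l_1$, hypothesis \ref{Main Method}(2) being supplied by Corollary \ref{connected fibers fixall}, gives irreducibility of $\overline{\free}(X,\alpha)$ for every $\alpha \in l_1 + \Nef_1(X)_{\mathbb{Z}}$.
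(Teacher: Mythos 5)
Your proof is correct and follows the paper's argument exactly: the four core classes span $N_1(X)$, so the relation lattice has rank one and is generated by the displayed primitive vector, and irreducibility of $\overline{\free}(X, l_1+2l_2-2e)$ is precisely Lemma \ref{curves2 in P1 x P2} applied with $d=2$ and $n=2<\tfrac{9}{2}$. One small inaccuracy: $l_1, l_2, l_1+l_2-2e$ span only an index-two sublattice of $N_1(X)$ (the change-of-basis determinant from $(l_1,l_2,e)$ is $-2$), so they are not a $\mathbb{Z}$-basis, but your argument only needs linear independence over $\mathbb{Q}$ to get that the kernel of $\mathbb{Z}^4\to N_1(X)$ is a saturated rank-one lattice, so nothing breaks.
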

\begin{proof}
The four classes in $\mathscr{C}_X$ span $N_1(X)$ and satisfy the above relation.  Irreducibility of $\overline{\free}(X, l_1 + 2l_2 -2e)$ follows from Lemma \ref{curves2 in P1 x P2}.
\end{proof}

\begin{lem}
For each nonzero $\alpha \in \Nef_1(X)_\mathbb{Z}$, $\overline{\free}(X,\alpha)$ is nonempty.
\end{lem}

\begin{proof}
The generators of $\Nef_1(X)$ are elements in $\mathscr{C}_X$ by \ref{Representability of Free Curves}.  It follows from \ref{Gordan's Lemma} that classes in $\mathscr{C}_X$ generate the monoid of integer points in $\Nef_1(X)$.
\end{proof}

\subsection*{3.16}

\textbf{Blow-up of a Point and a Twisted Cubic in $\mathbb{P}^3$:}
Let $g:V_7 \rightarrow \mathbb{P}^3$ be the blow-up of a point $p \in \mathbb{P}^3$. Let $f:X \rightarrow V_7$ be the blow-up of the strict transform of a twisted cubic $c$ passing through the exceptional divisor.

\begin{thm}\label{3.16thm}
For each $\alpha \in \Nef_1(X)_\mathbb{Z}$, $\overline{\free}(X,\alpha)$ is irreducible.
\end{thm}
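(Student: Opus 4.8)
The plan is to follow the now-standard template used throughout Section \ref{Picard Rank 3 Cases} for blow-ups of $\mathbb{P}^3$ along a point and a curve, which for case $3.16$ is very close to the argument just given for case $3.11$. First I would fix notation for $N^1(X)$ and $N_1(X)$: writing $H$ for the hyperplane pullback, $E$ for the exceptional divisor over $p$ (with fiber class $e$), and $F$ for the exceptional divisor over the strict transform of the twisted cubic $c$ (with fiber class $f$), with the intersection pairing $H\cdot l = 1$, $E\cdot e = -1$, $F\cdot f = -1$ and all other pairings zero. I would compute $-K_X = 4H - 2E - F$ and identify the effective cone: since a twisted cubic through $p$ projects from $p$ to a planar conic, the cone over that conic gives a divisor of class $2H - 2E - F$, and $c$ itself lies on quadrics, giving $2H - E - F$; together with $H - E$, $E$, $F$ these generate $\overline{\text{Eff}}(X)$. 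Dually I would read off the generators of $\overline{NE}(X)$ from \cite{matsuki1995weyl}.

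Next I would produce a core of free curves $\mathscr{C}_X$ by listing all nef classes $\alpha = al - be - cf$ with $2\le -K_X\cdot\alpha\le 4$, imposing the effectivity inequalities and the birationality constraint $b < a$ for $a>1$ (a degree-$a$ curve meeting $p$ with multiplicity $\ge a$ is reducible), then discarding the freely breakable ones via Theorem \ref{MovableBB} and Lemma \ref{lines_in_P3}(2) and the ones represented only by multiple covers. This should yield a short list (lines $l$, $l-e$, $l-f$ together with one or two conics and at most one cubic coming from the planar cubics through $p$ meeting $c$). Irreducibility of $\overline{\free}(X,\alpha)$ and identification of the separating classes for each $\alpha\in\mathscr{C}_X$ follows from Lemma \ref{lines_in_P3}, parameterizing line/conic families by opens of Grassmannians, $\mathbb{P}^2$-bundles over $c$, and symmetric powers $\Sym^2 c$, $\Sym^3 c$. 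I would then apply Lemma \ref{Relations} (using divisors like $H-E-F$, $2H-2E-F$ to eliminate core classes one at a time) to get a generating set of relations in $\mathbb{N}\mathscr{C}_X$, and for each relation verify condition \ref{Main Method}(1): either the intermediate space $\overline{\free}(X,\sum\alpha_i)$ is irreducible by Lemma \ref{lines_in_P3}, or I exhibit a nodal curve $Z\to X$ meeting both main components at smooth points of $\overline{\mathcal{M}}_{0,0}(X)$, moving a line component along a cubic/conic and pushing an exceptional fiber onto it — exactly the maneuvers used in Lemma \ref{3.11relations}. Condition \ref{Main Method}(2) (or its $(\dagger)$-version) is handled by Corollary \ref{connected fibers fixall}, since $-K_X$ is very ample here and there is no del Pezzo fibration with small fibers; nonemptiness of $\overline{\free}(X,\alpha)$ for every nef integral $\alpha$ follows from Theorem \ref{Representability of Free Curves}, Lemma \ref{Gordan's Lemma} and a short finite check on the fundamental parallelepiped. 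Theorem \ref{Main Method} then gives irreducibility of $\overline{\free}(X,\alpha)$ for $\alpha$ in the appropriate shifted cone, and Corollary \ref{bir Main Method} handles the one or two classes where only $\overline{\free}^{bir}$ is irreducible.

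The main obstacle I anticipate is the bookkeeping around the presence of the $E1$ divisor $F$ lying over a \emph{line} in the intermediate Fano $V_7$ (per the classification remark, $3.16$ is on the list of threefolds with such an $E1$ divisor), which means some components of low-degree curves — in particular classes with $\alpha\cdot F>0$ — can have reducible evaluation fibers, and the twisted cubic $c$ passing through $p$ forces careful attention to which planes through $p$ are tangent to the conic $\bar c = $ (image of $c$). Concretely, the delicate step will be checking that the cubic core class (the analogue of $2l-e-3f$ in case $3.11$) is irreducible and that its relation with the line classes can be realized by an unobstructed degeneration: I would need the monodromy of the three remaining intersection points of a plane through $p$ with $c$ to act transitively, which follows from Theorem \ref{Monodromy}(b) once I check the map $c\to\mathbb{P}(|H-E|)$ is nonconstant, and then run the plane-degeneration argument breaking the cubic into a line plus a conic while fixing its incidences with $c$. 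Everything else is a routine instance of the machinery, so I expect the proof to be short once the core and relations are pinned down.
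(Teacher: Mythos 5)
Your plan is the paper's own template and would succeed; the only substantive correction is that case $3.16$ turns out to be \emph{simpler} than $3.11$, not a variation on it. Once you run the enumeration, the core is just $\mathscr{C}_X=\{l,\,l-e,\,l-f,\,l-2f\}$ with no separating classes: the candidate conic through $p$ and the residual intersection of a plane through $p$ with $c$ has class $2l-e-2f$ (a plane through $p$ meets the twisted cubic at $p$ plus only \emph{two} further points, not three), which has anticanonical degree $4$ and breaks freely as $(l-e)+(l-2f)$, and $2l-4f$ consists of double covers. So the cubic core class you flag as the delicate step — the analogue of $2l-e-3f$ from $3.11$ — simply does not exist here, and there is a single relation $(l-2f)+(l)=2(l-f)$, verified by the transitive monodromy of the three points of $A\cap c$ for a \emph{general} plane $A$ (Theorem \ref{Monodromy}), together with the observation that there is a unique secant of $c$ through a general point, so $l-2f$ has irreducible evaluation fibers and condition \ref{Main Method}(2) is vacuous.
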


\textbf{Generators for $N^1(X)$ and $N_1(X)$:}
\begin{center}
\begin{tabular}{ll}
 $H$ = the class of a hyperplane & $l$ = a line in $\mathbb{P}^3$ \\ 
 $E$ = the exceptional divisor $(g \circ f)^{-1}(p)$ & $e$ = a line in $E$ disjoint from $F$  \\  
 $F$ = the exceptional divisor $f^{-1}(c)$ & $f$ = the $f$-fiber over a point on $c$   
\end{tabular}
\end{center}

\textbf{Intersection Pairing:}
\begin{center}
\begin{tabular}{lll}
    $H \cdot l = 1$ &  $H \cdot e = 0$ & $H \cdot f = 0$, \\
    $E \cdot l = 0$ &  $E \cdot e = -1$ & $E \cdot f = 0$ \\
    $F \cdot l = 0$ & $F \cdot e = 0$ & $F \cdot f = -1$
\end{tabular}
\end{center}

\textbf{Anticanonical Divisor:}
\begin{align*}
    -K_X = 4H - 2E - F.
\end{align*}

\textbf{Effective Divisors:} The divisors $H-E$, $2H-2E-F$, $E$, and $F$ are all effective. Indeed, the projection from $p \in \mathbb{P}^3$ maps $c$ to a planar conic and the cone over this conic is of class $2H-2E-F$.

\begin{lem}\label{3.16core}
A core of free curves on $X$ is given by
\begin{align*}
    \mathscr{C}_X = \{ l, l-e, l-f, l-2f \}.
\end{align*}

\noindent There are no separating classes in $\mathscr{C}_X$.
\end{lem}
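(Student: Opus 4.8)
The proof will follow the template of the neighbouring Picard rank $3$ cases (in particular $3.11$, which is also a blow-up of $V_7$). The plan is: (1) enumerate all nef classes $\alpha$ with $2\le -K_X\cdot\alpha\le 4$; (2) show the four classes in $\mathscr{C}_X$ have nonempty, irreducible $\overline{\free}(X,\alpha)$ consisting of free curves; (3) show every other nef class of anticanonical degree $\le 4$ is either not represented by a free curve or is freely breakable into chains whose components have core class, so that Definition \ref{def core}(2) holds in low degree; (4) push to higher degree using Movable Bend-and-Break (Theorem \ref{MovableBB}) and induction, noting $X$ carries no $E5$ divisor, with the degree-$4$ case handled by Theorem \ref{improved MovableBB} (or directly by the enumeration); and (5) verify none of the four core classes is separating by computing the evaluation-map fibre over a general point. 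We will use that free curves are nef, so only nef classes appear, and that $3.16\notin(\Sigma)$ (its index-one invariant $(-K_X)^3$ is computed from Lemma \ref{blow-up numbers} and exceeds $14$), so no generality hypothesis is needed.

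\textbf{Enumeration and the core classes.} Writing $\alpha=al-be-cf$ and using the extremal effective divisors $H-E,\ 2H-2E-F,\ E,\ F$ together with $-K_X=4H-2E-F$, nefness forces $b\ge0$, $c\ge0$, $a\ge b$, $2a-2b-c\ge0$, and the degree bound $2\le 4a-2b-c\le4$ then gives $a\le2$. One finds exactly the classes $l,\ l-e,\ l-f,\ l-2f$ (degrees $4,2,3,2$) and, in degree $4$, the three extra classes $2l-4f,\ 2l-e-2f,\ 2l-2e$; note $l-e-f$ fails nefness against $2H-2E-F$. The four core classes are strict transforms of lines in $\mathbb{P}^3$: general lines (class $l$, parameterized by an open subset of $\mathbb{G}(1,3)$), lines through $p$ (class $l-e$, an open subset of $\mathbb{P}^2$), lines meeting the twisted cubic $c$ (class $l-f$, an open subset of a $\mathbb{P}^2$-bundle over $c$), and secant lines of $c$ (class $l-2f$, an open subset of $\operatorname{Sym}^2c\cong\mathbb{P}^2$, using that a twisted cubic has no trisecants). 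Each of these spaces is irreducible and nonempty, and since $-K_X\cdot\alpha-2\in\{0,1\}$ in every case, a general member has balanced normal bundle, hence is free; this establishes Definition \ref{def core}(1). For the non-core degree-$4$ classes: curves of class $2l-e-2f$ are conics in a plane $A\ni p$ through $p$ and two points $q_1,q_2$ of $A\cap c$, and degenerating such a conic inside $A$ (keeping $p,q_1,q_2$) to $\overline{pr}\cup\overline{q_1q_2}$ produces a smooth point of $\overline{M}_{0,0}(X)$ (both components free of core class $l-e$ and $l-2f$, node transverse) that generalizes to a free chain of type $(l-e,l-2f)$, by Lemma \ref{GHS lemma} and Proposition \ref{kontsevich space H1}; curves of class $2l-4f$ (resp. $2l-2e$) cannot be irreducible conics in $\mathbb{P}^3$ with connected preimage, so $\overline{\free}(X,2l-4f)$ (resp. $\overline{\free}(X,2l-2e)$) parameterizes double covers of $l-2f$-lines (resp. $l-e$-lines), and collapsing the two branch points exhibits each such double cover as a limit of a free chain of type $(l-2f,l-2f)$ (resp. $(l-e,l-e)$). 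Thus every nef class of degree $\le4$ is accounted for. For degree $\ge5$, Theorem \ref{MovableBB} applies (no $E5$ divisor), and induction on $-K_X$-degree together with Lemma \ref{Basic Properties Chains}(2) shows every component contains a core chain; the degree-$4$ components are covered either by the above or by noting, via Theorem \ref{improved MovableBB}(1), that a degree-$4$ component failing to break is $\overline{\free}(X,l)$, the contraction to $\mathbb{P}^3$ being $g\circ f$. This gives Definition \ref{def core}(2).

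\textbf{No separating classes.} By Remark \ref{connected vs irreducible} it suffices to show the general fibre of $\operatorname{ev}:\overline{\free}_1(X,\alpha)\to X$ is irreducible for each core class. Over a general $x\in X$ with image $\bar x\in\mathbb{P}^3$: for $\alpha=l$ the fibre is the $\mathbb{P}^2$ of lines through $\bar x$; for $\alpha=l-f$ it is the image of $c$ under $q\mapsto\overline{\bar x q}$, hence isomorphic to $c$; for $\alpha=l-e$ it is the single point $\overline{\bar x p}$; and for $\alpha=l-2f$ it is a single point, since a twisted cubic (genus $0$, degree $3$) projects from a general point to a plane cubic with exactly $\binom{2}{2}=1$ node, i.e. there is a unique secant through $\bar x$. (This is the point of departure from $3.11$, where the degree-$4$ elliptic curve yields two secants and $l-2f$ is separating.) Hence all four core classes are non-separating.

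\textbf{Expected main obstacle.} The routine part is the inequality enumeration and the descriptions of the core spaces. The delicate point is item (3) for the two ``double cover'' classes $2l-4f$ and $2l-2e$: one must argue that these components —which do \emph{not} break into chains of two distinct irreducible conics— nonetheless contain free chains of core class, by realizing the degeneration through colliding the branch points of the double cover (yielding a chain both of whose components are deformations of the \emph{same} conic) and checking, via Lemma \ref{GHS lemma} and Proposition \ref{kontsevich space H1}, that the resulting nodal stable map is a smooth point of $\overline{M}_{0,0}(X)$ lying in the relevant component. A secondary subtlety is confirming the degree-$4$ enumeration is exhaustive so that the exceptional alternatives in Theorem \ref{improved MovableBB} are genuinely irrelevant, and checking $3.16\notin(\Sigma)$ so that all cited results apply unconditionally.
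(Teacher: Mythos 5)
Your proposal is correct and follows essentially the same route as the paper: the same inequality enumeration (yielding $\mathscr{C}_X$ plus $2l-e-2f$ and $2l-4f$), the same planar degeneration of $2l-e-2f$ into a chain of type $(l-e,l-2f)$, the same descriptions of the four core moduli spaces, and the same unique-secant computation showing $l-2f$ is non-separating (in contrast to case $3.11$). The only cosmetic differences are that the paper discards $2l-2e$ up front by noting an irreducible degree-$a$ curve cannot meet $p$ with multiplicity $a>1$, where you instead keep it and handle its double-cover component explicitly, and that you spell out the degree~$\geq 5$ Movable Bend-and-Break induction and the check that $3.16\notin(\Sigma)$, which the paper leaves to its general framework.
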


\begin{proof}
\textbf{Nef Curve Classes of Anticanonical degree between $2$ and $4$:}

Such nef curve classes $\alpha = a l - b e - c f$ satisfy the equations
\begin{align*}
    0 \leq b,c, \hspace{.5cm} b \leq a, \hspace{.5cm} 2b+c \leq 2a, \hspace{.5cm} 2 \leq 4a-2b-c \leq 4.
\end{align*}

\noindent Moreover, any curve of degree $a>1$ meeting $p$ with multiplicity at least $a$ is reducible. Hence, we may assume $b<a$ whenever $a>1$. The solutions are those classes in $\mathscr{C}_X$, as well as $2l-e-2f$ and $2l-4f$.

The curves of class $2l-e-2f$ each lie in a plane, so may be freely broken into a sum of two curves of class $l-e$ and $l-2f$ respectively. The curves of class $2l - 4f$ are all double covers.

\textbf{Irreducible Spaces and Fibers:}  The curves of class $l$ are parameterized by an open subset of $\mathbb{G}(1,3)$, curves of class $l-e$ by an open subset of $\mathbb{P}^2$, and curves of class $l - f$ by an open subset of a $\mathbb{P}^2$-bundle over $c$. They all have irreducible fibers.

The curves of class $l - 2f$ are parameterized by an open subset of $\Sym^2 c$. They have irreducible fibers because there is a unique secant line to $c$ passing through any point $q \in \mathbb{P}^3 - c$.
\end{proof}

\begin{lem}\label{3.16relations}
There is a single relation $(l-2f)+(l)=2(l-f)$ in the monoid $\mathbb{N} \mathscr{C}_X$. The corresponding moduli space of free curves of class $2l-2f$ is irreducible.
\end{lem}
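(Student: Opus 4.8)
The plan is to follow exactly the pattern of the neighboring Picard-rank-3 cases (for instance $3.11$, $3.14$, and especially $3.29$), since $3.16$ is structurally a very mild blow-up of $\mathbb{P}^3$. First I would record the single relation: the four core classes $l,\ l-e,\ l-f,\ l-2f$ live in the three-dimensional lattice $N_1(X)$, and the obvious linear dependence is $(l-2f)+(l)=2(l-f)$; by Lemma \ref{Relations} (or just by inspection, since these four classes are otherwise ``independent enough'') this is the unique generator of the monoid of relations, so no further relations need checking. That takes care of the bookkeeping side of Theorem \ref{Main Method}(1).

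The substantive content is that $\overline{\free}(X,2l-2f)$ is irreducible, i.e. the main components on the two sides of the relation lie in a single component of free curves. A curve of class $2l-2f$ is the strict transform of a conic in $\mathbb{P}^3$ meeting the twisted cubic $c$ in two points (and avoiding $p$). I would parametrize a dominant family of these by an open subset of a bundle: the base is $\operatorname{Sym}^2 c$ (the two intersection points $q_1,q_2$), and the fiber is the (irreducible, positive-dimensional) linear system of conics through $q_1$ and $q_2$. Since $\operatorname{Sym}^2 c$ is irreducible and the fibers are irreducible of constant dimension over a dense open set, the total space — hence $\overline{\free}^{bir}(X,2l-2f)$ — is irreducible. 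As $2l-2f$ is a $-K_X$-conic class, every component of $\overline{\free}(X,2l-2f)$ consists of such conics or of double covers of lines; the double-cover locus is not a full component here because $l-f$ is not nef-rigid in the relevant way, so in fact $\overline{\free}(X,2l-2f)=\overline{\free}^{bir}(X,2l-2f)$ is irreducible. This irreducible space visibly contains a free chain of type $(l-2f,\,l)$ — degenerate a general conic by letting its two $c$-intersection points collide, or rather by letting it break into a secant line ($l-2f$) and a general line ($l$) meeting at a general point — and also a free chain of type $(l-f,\,l-f)$ obtained by gluing two general curves from distinct fibers of $\overline{\free}_1(X,l-f)\to X$; since both chains lie in the one irreducible space, condition (1) of Theorem \ref{Main Method} holds for this relation.

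I would then combine this with the already-established facts from Lemma \ref{3.16core}: $\mathscr{C}_X=\{l,l-e,l-f,l-2f\}$ is a core, none of its classes is separating, and $\overline{\free}_1(X,\alpha)\to X$ has irreducible fibers for each $\alpha\in\mathscr{C}_X$. By Lemma \ref{Gluing} (plus Remark \ref{connected vs irreducible}), condition \ref{Main Method}(2) is automatic when no core class is separating, so Theorem \ref{Main Method} applies and gives irreducibility of $\overline{\free}(X,\beta)$ for every $\beta$ expressible as a nonnegative integer combination of core classes; nonemptiness of $\overline{\free}(X,\alpha)$ for all nonzero $\alpha\in\Nef_1(X)_{\mathbb Z}$ follows from Theorem \ref{Representability of Free Curves} and Gordan's Lemma, since the extreme rays of $\Nef_1(X)$ are spanned by elements of $\mathscr{C}_X$ and one checks the finitely many $\sum a_{\alpha_i}\alpha_i$ with $0\le a_{\alpha_i}<1$ are representable.

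The main obstacle I anticipate is the irreducibility of $\overline{\free}(X,2l-2f)$ itself — specifically, verifying that the $\operatorname{Sym}^2 c$-bundle description really is dominant onto the whole space and that no separate component of conics (e.g. conics meeting $c$ with a tangency, or degenerate/reducible limits) escapes it. This is the same kind of check carried out in cases $3.10$, $3.11$, and $3.14$, and I expect it to go through by a straightforward dimension count combined with Proposition \ref{properties nonfree conics} to rule out stray non-dominant families; but it is the one place where a genuine argument, rather than lattice bookkeeping, is needed.
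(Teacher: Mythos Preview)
Your argument is correct and takes a slightly different parametrization from the paper. You fiber the space of curves of class $2l-2f$ over $\operatorname{Sym}^2 c$ (the two intersection points with the twisted cubic), with fiber the conics in $\mathbb{P}^3$ through those two points. The paper instead fibers over the dual $(\mathbb{P}^3)^*$ of planes $A$, notes that $A\cap c$ consists of three points and one must choose two of them, and then invokes Theorem~\ref{Monodromy} to show the monodromy on $A\cap c$ is transitive, hence the $\binom{3}{2}$ choices all lie in one component. Your approach sidesteps the monodromy argument entirely, which is arguably cleaner here; the cost is that you must observe the fiber over $(q_1,q_2)$ is irreducible, and that fiber is \emph{not} a linear system but a $\mathbb{P}^3$-bundle over the pencil $\mathbb{P}^1$ of planes through the line $\overline{q_1q_2}$. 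Both routes are short; the paper's is more in keeping with its systematic use of Theorem~\ref{Monodromy} throughout the Picard-rank-3 cases, while yours matches the $\operatorname{Sym}^2 c$ parametrizations used in e.g.\ Lemma~\ref{3.11core}.

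Two terminological slips worth fixing: the class $2l-2f$ has $-K_X$-degree $6$, not $2$ (it is an $H$-conic, not a $-K_X$-conic), and as noted the fiber over $(q_1,q_2)$ is not a linear system. Neither affects your conclusion. The extra material you include about applying Theorem~\ref{Main Method} and Gordan's Lemma is correct and matches what the paper does for this deformation type, but it lies outside the scope of this particular lemma.
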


\begin{proof}
The four classes in the core $\mathscr{C}_X$ span a three-dimensional space, so satisfy a unique relation $(l-2f)+(l)=2(l-f)$. A curve of class $2l-2f$ is determined by a choice of plane $A$ in $\mathbb{P}^3$, two points in $A \cap c$, and a conic in $A$ passing through those two points. By \ref{Monodromy}, the monodromy action on these three points is transitive, so the moduli space of such curves is irreducible.
\end{proof}

\subsection*{3.17}

\textbf{Divisor of Tridegree $(1,1,1)$ on $\mathbb{P}^1 \times \mathbb{P}^1 \times \mathbb{P}^2$:}
Let $X$ be a smooth divisor on $\mathbb{P}^1 \times \mathbb{P}^1 \times \mathbb{P}^2$ of tridegree $(1,1,1)$. We apply \ref{blowup} to case $4.3$.%

\subsection*{3.18}

\textbf{Blow-up of a line and a conic in $\mathbb{P}^3$:}
Let $f:X \rightarrow \mathbb{P}^3$ be the blow-up of $\mathbb{P}^3$ with center a disjoint union of a line $c$ and a conic $c'$. We apply \ref{blowup} to case $4.4$.%

\subsection*{3.19}

\textbf{Blow-up of a Quadric Threefold:}
Let $X$ be the blow-up of a smooth quadric $Q \subset \mathbb{P}^4$ with center two points $p$ and $q$ on it which are not colinear. We apply \ref{blowup} to case $4.4$.%

\subsection*{3.20}

\textbf{Description of Variety $X$:}
Let $f:X \rightarrow Q$ be the blow-up of a smooth quadric $Q \subset \mathbb{P}^4$ with center a union of two disjoint lines $c_1$ and $c_2$.

\begin{thm}\label{3.20thm}
For each $\alpha \in \Nef_1(X)_\mathbb{Z}$, $\overline{\free}(X,\alpha)$ is irreducible.
\end{thm}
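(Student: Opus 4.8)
The variety $X$ is the blow-up $f : X \to Q$ of a smooth quadric threefold along two disjoint lines $c_1, c_2$. The plan is to follow the standard template laid out in Section 7.1: produce a core $\mathscr{C}_X$ of free curves, identify the separating classes, compute a generating set of relations among the core classes via Lemma \ref{Relations}, verify the gluing hypotheses \ref{Main Method}(1) and (by Corollary \ref{connected fibers fixall}, automatically) \ref{Main Method}(2), and finally check nonemptiness of $\overline{\free}(X,\alpha)$ for every $\alpha \in \Nef_1(X)_\mathbb{Z}$ using Gordan's Lemma \ref{Gordan's Lemma} together with Theorem \ref{Representability of Free Curves}. Since $\rho(X) = 3$ and $-K_X$ is very ample here, there is no del Pezzo fibration with small fibers to worry about, so a genuine core exists and Corollary \ref{connected fibers fixall} applies directly; the only subtlety will be conics interior to $\overline{NE}(X)$, of which there is at most one by Lemma \ref{interior conic lemma}, and for which we would separately verify $\overline{\free}^{bir}(X,2\beta)$ is irreducible.

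First I would set up the intersection theory exactly as in the neighboring cases 3.10 and 3.19: with $H$ the hyperplane pullback, $E_1, E_2$ the exceptional divisors over $c_1,c_2$, dual curve classes $l, e_1, e_2$, one gets $-K_X = 3H - E_1 - E_2$, effective divisors generated by $H - E_i$, $E_i$, and (crucially) a divisor swept out by the family of lines joining $c_1$ and $c_2$, which will have class of the form $2H - E_1 - E_2$ or similar — I would pin this down using that $c_1, c_2$ span a $\mathbb{P}^3 \subset \mathbb{P}^4$, so the lines meeting both sweep out the quadric surface $Q \cap \mathbb{P}^3$. Then $\overline{NE}(X)$ is spanned by $e_1$, $e_2$, and $l - e_1 - e_2$ (the class of such a joining line, contracted by a conic-bundle-type morphism), and I would enumerate nef curve classes of anticanonical degree between $2$ and $4$ by solving the resulting linear inequalities, exactly as in the proof of Lemma \ref{3.10core}. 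For irreducibility of each core space I would lean on Lemma \ref{lines_in_P3}-style arguments (parameterizing lines through points on $c_i$, conics through pairs of points, using the monodromy input of Theorem \ref{Monodromy} for transitivity); classes like $l - e_i$ are separating because through a general point of $Q$ there is a one-parameter family of lines sweeping a quadric cone, just as in case 3.10.

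The relations step uses Lemma \ref{Relations}: I would order the core classes, pick divisors $D_i$ that pair negatively with exactly one early class, and read off the generating relations, then for each relation $\sum \alpha_i = \sum \beta_j$ verify the gluing statement by either (a) showing $\overline{\free}(X, \sum\alpha_i)$ is irreducible outright, or (b) exhibiting an immersed chain of lines/conics mapping to a smooth point of $\overline{\mathcal{M}}_{0,0}(X)$ that deforms to free chains of both types — the technique used repeatedly in 3.10 and 3.11. I expect the main obstacle to be precisely this verification for the handful of ``degenerate'' classes: the multiples of the interior conic $\beta = l - e_1 - e_2$ (where $\overline{\free}(X, 2\beta)$ and similar need the two-component analysis of Theorem \ref{Main Result}(4), and one must show $\overline{\free}^{bir}(X,2\beta)$ is irreducible so Corollary \ref{connected fibers fixall} applies), and any quartic class $\alpha$ with $-K_X\cdot\alpha = 4$ lying interior to $\overline{NE}(X)$ whose free-breaking is not immediate from Theorem \ref{MovableBB} — here Proposition \ref{breaking quartic curves} and Corollary \ref{breaking quartics cor} should close the gap, using the conic-bundle structure coming from the contraction of $l - e_1 - e_2$. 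Once all core spaces, relations, and the breakability of quartics are handled, Theorem \ref{Main Method} (or Corollary \ref{bir Main Method} where $\beta$-multiples intervene) gives irreducibility of $\overline{\free}(X,\alpha)$ for all relevant $\alpha$, and the final nonemptiness check is a routine Gordan's Lemma computation on the finitely many ``fundamental domain'' classes.
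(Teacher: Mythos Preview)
Your general framework is correct, but you have imported the geometry of case 3.10 (blow-up along two disjoint \emph{conics}) into case 3.20 (blow-up along two disjoint \emph{lines}), and this leads you to anticipate complications that simply are not there.

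First, since $c_1, c_2$ are disjoint lines in $Q \subset \mathbb{P}^4$, they span a $\mathbb{P}^3$, and the hyperplane section $Q \cap \mathbb{P}^3$ gives an effective divisor of class $H - E_1 - E_2$ (not $2H - E_1 - E_2$). With $E_1, E_2$ this already generates the effective cone; $H - E_i$ is not extremal. Consequently the nef condition on $\alpha = al - be_1 - ce_2$ is $b, c \geq 0$ and $b + c \leq a$, so $\Nef_1(X)$ is generated by $l, l-e_1, l-e_2$. The class $l - e_1 - e_2$ is an anticanonical \emph{line} ($-K_X$-degree $1$), not a conic, and it is \emph{not} nef; the associated contraction is of $E1$ type (contracting one ruling of the strict transform of $Q \cap \mathbb{P}^3 \cong \mathbb{P}^1 \times \mathbb{P}^1$), not a conic bundle. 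So there is no interior nef conic $\beta$ and no need to analyze $\overline{\free}^{bir}(X,2\beta)$ or invoke Corollary \ref{bir Main Method}.

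Second, $l - e_i$ is \emph{not} separating: through a general $p \in Q$ there is a unique line of class $l - e_i$, because $c_i$ meets the tangent hyperplane $T_pQ$ in a single point (or argue as the paper does: two such lines would span a plane contained in $Q$, impossible). This is the key difference from 3.10, where $c_i$ was a conic meeting $T_pQ$ in two points.

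Once these corrections are made the case collapses. The only low-degree classes to examine are $l, l-e_1, l-e_2, 2l-2e_i, 2l-e_1-e_2$; the curves of class $2l-2e_i$ are all double covers (no plane lies in $Q$), and $2l-e_1-e_2$ is freely breakable. So $\mathscr{C}_X = \{l, l-e_1, l-e_2\}$ with no separating classes. These three classes are linearly independent in $N_1(X) \cong \mathbb{Z}^3$, so there are \emph{no} relations in $\mathbb{N}\mathscr{C}_X$, and Theorem \ref{Main Method} applies trivially. Your relations step, your anticipated obstacles with interior conics, and your quartic-breaking analysis are all unnecessary here.
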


\textbf{Generators for $N^1(X)$ and $N_1(X)$:}

\begin{center}
\begin{tabular}{ll}
 $H$ = a hyperplane in $\mathbb{P}^4$ & $l$ = a line in $\mathbb{P}^4$ \\ 
 $E_1$ = the exceptional divisor $f^{-1}(c_1)$ & $e_1$ = an $f$-fiber over a point on $c_1$  \\
 $E_2$ = the exceptional divisor $f^{-1}(c_2)$ & $e_2$ = an $f$-fiber over a point on $c_2$
\end{tabular}
\end{center}

\textbf{Intersection Pairing:}
\begin{center}
\begin{tabular}{lll}
    $H \cdot l = 1$ &  $H \cdot e_1 = 0$ & $H \cdot e_2 = 0$, \\
    $E_1 \cdot l = 0$ &  $E_1 \cdot e_1 = -1$ & $E_1 \cdot e_2 = 0$ \\
    $E_2 \cdot l = 0$ & $E_2 \cdot e_1 = 0$ & $E_2 \cdot e_1 = -1$
\end{tabular}
\end{center}

\textbf{Anticanonical Divisor:}
\begin{align*}
    -K_X = 3H-E_1-E_2
\end{align*}

\textbf{Effective Divisors:} The divisors $H-E_1-E_2$, $E_1$, and $E_2$ are effective.

\begin{lem}\label{3.20core}
A core of free curves on $X$ is given by 
\begin{align*}
    \mathscr{C}_X = \{ l, l-e_1, l-e_2 \}.
\end{align*}
There are no separating curve classes in $\mathscr{C}_X$.
\end{lem}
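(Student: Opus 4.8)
The plan is to follow verbatim the template used for the preceding blow-ups of a quadric threefold, most closely case $3.10$. Write a curve class as $\alpha = dl - m_1 e_1 - m_2 e_2$. Pairing against the generators $E_1$, $E_2$, $H - E_1 - E_2$ of $\text{Eff}(X)$ shows $\alpha$ is nef iff $m_1, m_2 \geq 0$ and $d \geq m_1 + m_2$; since $-K_X \cdot \alpha = 3d - m_1 - m_2 \geq 2d$ under this constraint, the condition $2 \leq -K_X \cdot \alpha \leq 4$ forces $d \in \{1,2\}$. Enumerating, the nef curve classes of anticanonical degree between $2$ and $4$ are precisely the three classes of $\mathscr{C}_X$ (with $d=1$) together with $2l-2e_1$, $2l-2e_2$, and $2l-e_1-e_2$ (with $d=2$).

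Next I would show the three $d=2$ classes are freely breakable, which, combined with Theorem~\ref{MovableBB} in its $(-K_X)$-degree $\geq 5$ form (valid since $X$ carries no $E5$ divisor) and the absence of free curves of $(-K_X)$-degree $1$, verifies condition~(2) of Definition~\ref{def core}. For $2l-e_1-e_2$: a conic meeting $c_1$ and $c_2$ each once spans a plane through a point of $c_1$ and a point of $c_2$, and deforming it inside that plane degenerates it to a free chain of type $(l-e_1,l-e_2)$. For $2l-2e_i$: the plane spanned by an irreducible conic meeting the line $c_i$ with multiplicity $2$ would contain $c_i$, but a smooth quadric threefold contains no plane, so such a plane would meet $Q$ in a reducible conic and the purported irreducible conic could not lie on $Q$; hence every curve of class $2l-2e_i$ is reducible or a double cover of a line of class $l-e_i$, and each component of $\overline{\free}(X,2l-2e_i)$ contains a free chain of type $(l-e_i,l-e_i)$. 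Alternatively one may invoke Proposition~\ref{breaking quartic curves}, which applies since $X \neq V_7$, $|-K_X|$ is very ample, $\rho(X)=3$, and $X$ has no $E3$, $E4$, or $E5$ divisor.

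Then I would verify irreducibility of $\overline{\free}(X,\alpha)$ for $\alpha \in \mathscr{C}_X$ and compute the generic fibers of the evaluation maps. Curves of class $l$ are lines on $Q$ disjoint from $c_1 \cup c_2$, so $\overline{\free}(X,l)$ is an open subset of the Fano variety of lines on a smooth quadric threefold, which is isomorphic to $\mathbb{P}^3$ and hence irreducible; through a general point of $Q$ the lines sweep an irreducible conic, so $\text{ev}$ has irreducible fibers. Curves of class $l-e_i$ are lines on $Q$ meeting $c_i$; these form an open subset of a conic bundle over $c_i \cong \mathbb{P}^1$ (the lines through the points of $c_i$), hence an irreducible surface, so $\overline{\free}(X,l-e_i)$ is irreducible. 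To see $l-e_i$ is not separating I would note that for general $p \in Q$ the span $\langle p, c_i\rangle$ is a $\mathbb{P}^3$ meeting $Q$ in a smooth quadric surface having $c_i$ as one of its two ruling lines, so there is exactly one line through $p$ meeting $c_i$; thus $\text{ev}\colon \overline{\free}_1(X,l-e_i)\to X$ has a single reduced point over a general point of $X$, and likewise $l$ is not separating by the previous sentence. This shows no class in $\mathscr{C}_X$ is separating.

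This case is short, so there is no serious obstacle; the only steps needing genuine (though quick) geometric input are the classification of curves of class $2l-2e_i$ via the non-existence of planes on $Q$ and the count of $l-e_i$-lines through a general point, both of which reduce to the geometry of quadric surfaces in $\mathbb{P}^3$. Everything else is a transcription of the arguments already carried out for case $3.10$ and its analogues.
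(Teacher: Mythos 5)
Your proposal reproduces the paper's argument for this case essentially verbatim: the same enumeration of nef classes of $(-K_X)$-degree $2$--$4$, the same plane-spanning argument showing there are no irreducible conics of class $2l-2e_i$ (so all such curves are double covers, which break as chains of type $(l-e_i,l-e_i)$), and the same descriptions of the spaces of lines and of their evaluation fibers. Two small corrections: for general $p\in Q$ the span $\langle p, c_i\rangle$ is a plane $\mathbb{P}^2$, not a $\mathbb{P}^3$ --- the conclusion still holds because that plane meets $Q$ in the reducible conic $c_i\cup\ell$, with $\ell$ the unique line through $p$ meeting $c_i$ (the paper phrases this as: a second such line would force the plane to lie in $Q$); and a conic of class $2l-e_1-e_2$ is exactly $A\cap Q$ for its spanning plane $A$, so the degeneration to a chain of type $(l-e_1,l-e_2)$ is obtained by moving the plane $A$ (to one tangent to $Q$), not by deforming the conic inside a fixed plane.
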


\begin{proof}
\textbf{Nef Curve Classes of Anticanonical degree between $2$ and $4$:}

Such nef curve classes $\alpha = a l - b e_1 -c e_2$ satisfy the equations
\begin{align*}
    0 \leq b,c, \hspace{.5cm} b+c \leq a, \hspace{.5cm} 2 \leq 3a-b-c \leq 4.
\end{align*}

\noindent The solutions are those classes in $\mathscr{C}_X$, as well as $2l-2e_1, 2l-e_2$, and $2l-e_1-e_2$.

Any irreducible curve $c$ of class $2l-2e_i$ lies in a plane $A$ containing $c_i$. But this implies $A \cap Q$ contains a curve of degree $3$; since $\deg(Q) = 2 < 3$, it follows that $A \subset Q$. But this contradicts \cite{eisenbud20163264} corollary $6.26$. Therefore all such curve classes are double covers.

The curves of class $2l-e_1-e_2$ are precisely the intersection with $Q$ of those planes in $\mathbb{P}^4$ which intersect both $c_1$ and $c_2$. Any such curve may be freely broken into a sum of curves of class $l-e_1$ and $l-e_2$ respectively by degenerating the plane so that its intersection with $Q$ is reducible.

\textbf{Irreducible Spaces and Fibers:}  The curves of class $l$ are parameterized by an open subset of the lines in $Q$. By \cite{eisenbud20163264} corollary $6.33$, this space is smooth.  For any point $p \in Q$, every line in $Q$ passing through $p$ lies in the projective tangent space of $Q$ at $p$, $\mathbb{P}\mathcal{T}_p Q \subset \mathbb{P}^4$.  %
The intersection $\mathbb{P}\mathcal{T}_p Q \cap Q$ is the cone over a smooth conic with vertex $p$.  Hence, the space of lines in $Q$ through $p$ is irreducible.  %

The curves of class $l-e_i$ may be parameterized by an open subset of a bundle over $c_i$ whose fiber over a point $p \in c_i$ is the space of lines through $p \in \mathbb{P}^4$ lying in $Q$. As noted above, this fiber is generally a smooth conic. If there were multiple curves of class $l-e_i$ passing through a point $p \in Q - c_1 - c_2$, then the plane $A$ that they span would intersect $Q$ in three lines. But then $A \subset Q$, which is impossible. Therefore the curves of class $l-e_i$ have irreducible fibers.
\end{proof}

As the classes of $\mathscr{C}_X$ are linearly independent, we obtain the following.

\begin{lem}\label{3.20relations}
There are no relations in the monoid $\mathbb{N}\mathscr{C}_X$.
\end{lem}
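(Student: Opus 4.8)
The statement to prove is Lemma \ref{3.20relations}: that there are no relations in the monoid $\mathbb{N}\mathscr{C}_X$ where $\mathscr{C}_X = \{l, l-e_1, l-e_2\}$.

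The plan is simply to invoke linear independence. First I would recall that a relation in an additive monoid generated by a finite set $S$ is an equality $\sum_{v\in S} a_v v = \sum_{v \in S} b_v v$ with nonnegative integer coefficients $a_v, b_v$ that is not trivially the same multiset on both sides; equivalently, after cancellation, a nonzero vector in the kernel of the natural map $\mathbb{Z}^{S} \to N_1(X)$ with entries of a single sign is impossible. So it suffices to check that the three classes $l$, $l-e_1$, $l-e_2$ are linearly independent in $N_1(X)_{\mathbb{R}} \cong \mathbb{R}^3$, which has a basis dual to $H, E_1, E_2$. The intersection-pairing table already recorded gives $l\cdot H = 1$, $l\cdot E_i = 0$; $(l-e_1)\cdot H = 1$, $(l-e_1)\cdot E_1 = 1$, $(l-e_1)\cdot E_2 = 0$; and $(l-e_2)\cdot H = 1$, $(l-e_2)\cdot E_1 = 0$, $(l-e_2)\cdot E_2 = 1$.

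Next I would write the coordinate vectors of these three classes with respect to the basis dual to $(H, E_1, E_2)$, namely $l = (1,0,0)$, $l - e_1 = (1,1,0)$, $l-e_2 = (1,0,1)$ (reading off the pairings above, up to sign conventions that do not affect independence). The $3\times 3$ matrix with these rows has determinant $1$, so the three classes form a basis of $N_1(X)_\mathbb{R}$; in particular they are linearly independent over $\mathbb{Q}$. Consequently the only $\mathbb{Z}$-linear combination equal to zero is the trivial one, so there is no nontrivial relation among them as generators of the monoid $\mathbb{N}\mathscr{C}_X$. This is exactly parallel to the remark in Lemma \ref{3.16relations} and Lemma \ref{3.29relations}, where linear independence of the core classes was used identically.

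There is essentially no obstacle here: the entire content is the determinant computation, which is immediate from the intersection table. The only thing to be slightly careful about is the definition of "relation" — one must note that cancellation reduces any putative relation to one with disjoint support on the two sides, and that such a thing would give a genuine nonzero kernel element, contradicting independence. I would phrase the proof in one or two sentences along these lines.

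\begin{proof}
Since the natural pairing identifies $N_1(X)$ with the lattice dual to $\langle H, E_1, E_2\rangle$, the intersection table shows that $l$, $l-e_1$, and $l-e_2$ have coordinate vectors $(1,0,0)$, $(1,1,0)$, $(1,0,1)$ with respect to this dual basis. The matrix with these rows has determinant $1$, so the three classes are linearly independent. Any relation in the monoid $\mathbb{N}\mathscr{C}_X$ is, after cancelling common terms, an equality $\sum a_i \gamma_i = \sum b_j \gamma_j$ with disjoint supports and nonnegative integer coefficients; this would exhibit a nonzero element of the kernel of $\mathbb{Z}^{\mathscr{C}_X}\to N_1(X)$, contradicting linear independence. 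Hence there are no relations.
\end{proof}
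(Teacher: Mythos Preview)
Your proof is correct and takes essentially the same approach as the paper: the paper simply notes that the three classes of $\mathscr{C}_X$ are linearly independent, from which the lemma is immediate. Your argument makes this linear independence explicit via the intersection pairing, which is fine but more detail than strictly necessary.
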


\subsection*{3.21}

\textbf{Blow-up of $\mathbb{P}^1 \times \mathbb{P}^2$:}
Let $X$ be the blow-up of $\mathbb{P}^1 \times \mathbb{P}^2$ with center a curve $c$ of bidegree $(2,1)$. We apply \ref{blowup} to case $4.5$.%

\subsection*{3.22}  See Section \ref{E5 cases}.

\subsection*{3.23}

\textbf{The Blow-up of a Point and the Strict Transform of a Curve:}  Let $g:V_7 \rightarrow \mathbb{P}^3$ be the blow-up of a point $p \in \mathbb{P}^3$. Let $f:X \rightarrow V_7$ be the blow-up of the strict transform of a conic $c$ passing through the exceptional divisor.

\begin{thm}\label{3.23thm}
For each $\alpha \in \Nef_1(X)_\mathbb{Z}$, $\overline{\free}(X,\alpha)$ is irreducible.
\end{thm}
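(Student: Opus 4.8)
The strategy follows the template laid out in Section 7: exhibit a core $\mathscr{C}_X$ of free curves, identify the separating classes, determine a generating set of relations in $\mathbb{N}\mathscr{C}_X$ via Lemma \ref{Relations}, verify hypothesis \ref{Main Method}(1) for each relation, invoke Corollary \ref{connected fibers fixall} to dispose of hypothesis \ref{Main Method}(2) (there are no del Pezzo fibrations here since $\rho(X)=3$ and the Mori structure will be divisorial/conic-bundle in nature), and finally check via Lemma \ref{Gordan's Lemma} that every nef class is represented by a free curve. Since $X$ is the blow-up of $V_7$ along the strict transform of a conic through the exceptional divisor, I would first record the standard data: generators $H, E, F$ for $N^1(X)$ with $-K_X = 4H - 2E - F$, dual curve classes $l, e, f$, and the effective divisors. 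The key geometric input is that projection from $p$ maps the conic $c$ to a line in $\mathbb{P}^2$, so the cone over that line gives an effective divisor of class $2H - 2E - F$ (cf.\ the analogous divisor $3H-3E-F$ in case $3.11$ and $2H-2E-F$ in case $3.16$, where $c$ was a plane cubic resp.\ twisted cubic).

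\textbf{Core and relations.} I expect the core to be $\mathscr{C}_X = \{l,\ l-e,\ l-f,\ l-2f\}$, exactly as in case $3.16$, with no separating classes — indeed the numerology ($\alpha = al - be - cf$ nef with $2 \le -K_X\cdot\alpha \le 4$, $b \le a$ from birationality, and the curve-of-degree-$a$-through-$p$-with-multiplicity-$a$-is-reducible observation) should yield precisely these four classes plus $2l-e-2f$ (freely breakable into $l-e$ and $l-2f$, both lying in a common plane) and $2l-4f$ (double covers). The four core classes are linearly independent, so by Lemma \ref{Relations} there is exactly one relation: $(l-2f) + (l) = 2(l-f)$, and hypothesis \ref{Main Method}(1) amounts to checking $\overline{\free}(X, 2l-2f)$ is irreducible. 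This I would verify as in case $3.16$: a curve of class $2l-2f$ is a conic in a plane $A$ through two of the points $A\cap c$, and Theorem \ref{Monodromy} gives transitivity of monodromy on the pair, so the parameter space fibers irreducibly over the space of planes. Irreducibility of $\overline{\free}(X,\alpha)$ for each core class and irreducibility of generic fibers of $\overline{\free}_1(X,\alpha)\to X$ follow from the explicit descriptions ($l$: open in $\mathbb{G}(1,3)$; $l-e$: open in $\mathbb{P}^2$; $l-f$: $\mathbb{P}^2$-bundle over $c$; $l-2f$: open in $\mathrm{Sym}^2 c$, with irreducible fibers since a general point of $\mathbb{P}^3$ lies on a unique secant of a conic).

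\textbf{Nonemptiness and conclusion.} For representability of every nef class, Theorem \ref{Representability of Free Curves} gives that $\Nef_1(X)$ is generated by classes of free curves; Lemma \ref{Gordan's Lemma} then reduces to checking the finitely many fractional combinations of core classes that happen to be integral, each of which I expect to be visibly a free curve (or to reduce to the core via the single relation). Assembling: Theorem \ref{Main Method} (with $\tau$ chosen appropriately, e.g.\ $\tau = 0$ since there are no separating classes, so in fact $\overline{\free}(X,\alpha)$ is irreducible for all nef $\alpha$ with $-K_X\cdot\alpha \ge 2$) yields irreducibility; Propositions \ref{very free curves} and Theorem \ref{reducible fibers: 4 author result} handle the very-free statement of Theorem \ref{Main Result}(2); and the nonemptiness lemma completes the case.

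\textbf{Main obstacle.} The one genuinely case-specific point is confirming that the core really consists only of those four classes — i.e.\ that no quartic or higher class of anticanonical degree $\le 4$ sneaks in that fails to break freely — and checking that all the degree-$\ge 3$ curves (cubics of class $2l-e-2f$, quartics of class $2l-4f$) behave as claimed. This is a routine but careful enumeration; I would organize it exactly as the parallel computations in cases $3.16$ and $3.11$, paying attention to whether the mild difference (conic $c$ rather than twisted cubic) changes the effective cone (it does: $2H-2E-F$ here versus $3H-3E-F$ in $3.11$). Given the close structural similarity to $3.16$, I anticipate the proof is essentially identical, which is presumably why the excerpt ends here without spelling it out.
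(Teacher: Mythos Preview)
Your template is right, but the key geometric computation is off. You claim the cone-over-projection divisor has class $2H-2E-F$; in fact projection from $p$ sends the conic $c$ (which passes through $p$) to a \emph{line} in $\mathbb{P}^2$, and the cone over a line is a plane in $\mathbb{P}^3$ --- namely the unique plane containing $c$. Its strict transform has class $H-E-F$, not $2H-2E-F$. (Your pattern-matching with $3.16$ and $3.11$ misfires: there $c$ had degree $3$ and $4$, so the projection had degree $2$ and $3$, giving quadric and cubic cones; here the conic projects to a line and the cone is the plane itself.)

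This matters because $(H-E-F)\cdot(l-2f) = -1$, so $l-2f$ is \emph{not nef}. Geometrically: every secant line to a conic lies in the plane of that conic, so curves of class $l-2f$ are confined to the divisor $H-E-F$ and are never free. Hence $l-2f$ cannot belong to the core. The paper's core is $\mathscr{C}_X = \{l,\ l-e,\ l-f\}$, three linearly independent classes, and there are \emph{no relations} in $\mathbb{N}\mathscr{C}_X$. The only extra low-degree class is $2l-2e$, which breaks freely into two curves of class $l-e$ in a plane. The case is therefore strictly simpler than $3.16$: no relation to verify, no $\overline{\free}(X,2l-2f)$ to analyze. Once you correct the effective divisor, the rest of your outline (irreducible spaces and fibers for the three core classes, no separating classes, Gordan's Lemma for nonemptiness) goes through exactly as you describe.
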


\textbf{Generators for $N^1(X)$ and $N_1(X)$:} 
\begin{center} 
\begin{tabular}{ll}
 $H$ = the class of a hyperplane & $l$ = a line in $\mathbb{P}^3$ \\ 
 $E$ = the exceptional divisor $(g \circ f)^{-1}(p)$ & $e$ = a line in $E$ disjoint from $F$  \\
 $F$ = the exceptional divisor $f^{-1}(c)$ & $f$ = the $f$-fiber over a point on $c$   
\end{tabular}
\end{center}

\textbf{Intersection Pairing:}
\begin{center}
\begin{tabular}{lll}
    $H \cdot l = 1$ &  $H \cdot e = 0$ & $H \cdot f = 0$, \\
    $E \cdot l = 0$ &  $E \cdot e = -1$ & $E \cdot f = 0$ \\
    $F \cdot l = 0$ & $F \cdot e = 0$ & $F \cdot f = -1$
\end{tabular}
\end{center}

\textbf{Anticanonical Divisor:}
\begin{align*}
    -K_X = 4H - 2E - F.
\end{align*}

\textbf{Effective Divisors:} The divisors $H-E-F$, $E$, and $F$ are effective.

\begin{lem}\label{3.23core}
A core of free curves on $X$ is given by
\begin{align*}
    \mathscr{C}_X = \{ l, l-e, l-f \}.
\end{align*}
\noindent There are no separating curve classes in $\mathscr{C}_X$
\end{lem}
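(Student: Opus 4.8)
Looking at the pattern for Picard rank 3 cases that are iterated blow-ups of $\mathbb{P}^3$, the claim to prove is Lemma \ref{3.23core}: that $\mathscr{C}_X = \{l, l-e, l-f\}$ is a core of free curves on $X$ (the blow-up of a point $p$ and the strict transform of a conic $c$ through its exceptional divisor in $\mathbb{P}^3$), with no separating classes.

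\textbf{Plan of proof.} First I would enumerate the nef curve classes $\alpha = al - be - cf$ of anticanonical degree between $2$ and $4$. Using the effective divisors $H-E-F$, $E$, $F$ together with adjunction-type constraints ($b,c\geq 0$, $b\leq a$, $b+c\leq 2a$ since $c$ has degree $2$, and $2\leq 4a-2b-c\leq 4$), plus the birationality restriction $b<a$ when $a>1$, I expect the solution set to be exactly $\mathscr{C}_X$ together with a small number of additional classes --- here I anticipate $2l-e-2f$ and possibly $2l-4f$ or $2l-2e$. I would then argue each extra class is freely breakable: curves of class $2l-e-2f$ lie in a plane through $p$ meeting $c$ twice and break freely into $(l-e, l-f)$ by deforming within the plane; curves of class $2l-4f$ or $2l-2e$ are necessarily multiple covers (a conic meeting $c$ with total multiplicity $4$ must be a double cover, and a conic of class $2l-2e$ would force a plane into $\mathbb{P}^3$'s blow-up configuration impossibly), hence break. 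The mechanics here are identical to cases 3.11, 3.16, 3.29, so I would invoke Lemma \ref{lines_in_P3} wherever possible.

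Next I would verify irreducibility of $\overline{\free}(X,\alpha)$ and irreducibility of general fibers of $\overline{\free}_1(X,\alpha)\xrightarrow{\text{ev}} X$ for each $\alpha\in\mathscr{C}_X$. Curves of class $l$ are parameterized by an open subset of $\mathbb{G}(1,3)$; curves of class $l-e$ by an open subset of $\mathbb{P}^2$ (planes, equivalently lines through $p$); curves of class $l-f$ by an open subset of a $\mathbb{P}^2$-bundle over $c$. All three are manifestly irreducible and rational, and Lemma \ref{lines_in_P3}(1) gives irreducible fibers directly since for each $\alpha$, $\alpha\cdot\sum E_i \leq 1$. In particular there are no separating classes: through a general point of $\mathbb{P}^3$ there is a unique line meeting $c$ (a conic, being planar, admits exactly one secant through a general point) and a unique line through $p$, so the evaluation fibers for $l-e$ and $l-f$ are single reduced points.

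\textbf{Main obstacle.} The genuinely delicate point is confirming the exact list of degree $2$--$4$ nef classes and ruling out any class I might have missed whose moduli space could be reducible or whose evaluation fiber could be disconnected --- in particular making sure no class like $2l-2f$ or $2l-e-f$ slips through as a non-breakable free class. This is routine inequality-solving but must be done carefully; the geometry (conic $c$ is planar, $p$ lies off or on special lines) then makes every extra class either a multiple cover or planar-hence-breakable. After that, checking the relations in $\mathbb{N}\mathscr{C}_X$ is trivial since $l, l-e, l-f$ are linearly independent in $N_1(X)$, so there are no relations to verify and Theorem \ref{Main Method}(1) is vacuous; the proof then concludes exactly as in the parallel cases 3.16, 3.20, 3.29.
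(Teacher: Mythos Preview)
Your overall strategy matches the paper's, but several concrete details are wrong and would cause the argument to misfire.

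First, the nef constraint is $b+c\le a$, not $b+c\le 2a$. The divisor $H-E-F$ is the strict transform of the (unique) plane in $\mathbb{P}^3$ containing the conic $c$; since $p\in c$, this plane also contains $p$, so its strict transform has class $H-E-F$, and pairing a curve class $al-be-cf$ against it gives $a-b-c\ge 0$. Your constraint $b+c\le 2a$ appears to come from a different geometric situation (e.g.\ a non-planar curve in a quadric) and is too weak here.

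With the correct inequality $b+c\le a$, neither $2l-e-2f$ nor $2l-4f$ is nef: for both, $b+c>a$. The only additional class of degree $2$--$4$ besides $\mathscr{C}_X$ is $2l-2e$, exactly as the paper finds. Your handling of $2l-2e$ (planar, breaks as two lines of class $l-e$) is fine, but the other two classes you planned to analyze simply do not arise.

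Second, your justification that $l-f$ is non-separating is incorrect as stated. You write ``through a general point of $\mathbb{P}^3$ there is a unique line meeting $c$,'' but $c$ is a conic, so through a general point $q$ there is a one-parameter family of lines meeting $c$ (one for each point of $c$). The evaluation fiber of $\overline{\free}_1(X,l-f)\to X$ over a general point has dimension $1$, not $0$; it is irreducible because it is parameterized by $c$ itself. The conclusion is right, but the reason is irreducibility of a curve, not uniqueness of a point.

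With these two corrections your proof goes through and coincides with the paper's argument.
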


\begin{proof}
\textbf{Nef Curve Classes of Anticanonical degree between $2$ and $4$:}

Such nef curve classes $\alpha = a l - b e - c f$ satisfy the equations
\begin{align*}
    0 \leq b,c, \hspace{.5cm} b+c \leq a, \hspace{.5cm} 2 \leq 4a-2b-c \leq 4.
\end{align*}

\noindent The solutions are those classes in $\mathscr{C}_X$, as well as $2l-2e$. The curves of class $2l-2e$ lie in a plane, so it is clear that they may be freely broken into a sum of two curves, each of class $l-e$.

\textbf{Irreducible Spaces and Fibers:} 
The curves of class $l$ are parameterized by an open subset of $\mathbb{G}(1,3)$ and have irreducible fibers. The curves of class $l-e$ are parameterized by an open subset of the space of lines in $\mathbb{P}^3$ through $p$, which is isomoprhic to $\mathbb{P}^2$. Moreover, there is a unique such curve through a general point. Finally, the curves of class $l-f$ are parameterized by an open subset of a $\mathbb{P}^2$-bundle over $c$, and also have irreducible fibers.
\end{proof}

As the classes of $\mathscr{C}_X$ are linearly independent, we obtain the following.

\begin{lem}\label{3.23relations}
There are no relations in the monoid $\mathbb{N}\mathscr{C}_X$.
\end{lem}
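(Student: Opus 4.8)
\textbf{Proof proposal for Lemma \ref{3.23relations}.}

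The statement to prove is that the monoid $\mathbb{N}\mathscr{C}_X$ of case $3.23$ admits no nontrivial relations, where $\mathscr{C}_X = \{l,\ l-e,\ l-f\}$. The plan is simply to exhibit that these three classes are linearly independent in $N_1(X)$, from which the absence of relations follows immediately: any relation in the additive monoid generated by a linearly independent set would yield a nontrivial linear dependence over $\mathbb{Q}$ (indeed over $\mathbb{Z}$), contradicting independence. Concretely, I would write the three classes in the basis $\{l, e, f\}$ of $N_1(X)$: the class $l$ has coordinates $(1,0,0)$, the class $l-e$ has coordinates $(1,-1,0)$, and the class $l-f$ has coordinates $(1,0,-1)$. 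The $3\times 3$ matrix with these rows has determinant $1$ (expand along the last two columns, or note it is lower-triangular up to reordering), so the three classes form a basis of $N_1(X)_{\mathbb{Q}}$.

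Given this, suppose for contradiction that $a(l) + b(l-e) + c(l-f) = a'(l) + b'(l-e) + c'(l-f)$ with all coefficients nonnegative integers and the multisets distinct. Subtracting, we obtain a $\mathbb{Z}$-linear combination $(a-a')l + (b-b')(l-e) + (c-c')(l-f) = 0$ with not all coefficients zero, contradicting the linear independence established above. Hence no such relation exists, and the generating set of relations is empty.

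There is essentially no obstacle here: the only thing to verify is the linear independence computation, which is a one-line determinant check using the intersection pairing table (one can alternatively pair with the divisors $E$, $F$, and $H$ to read off coordinates, since $E\cdot l = F\cdot l = 0$, $H\cdot l = 1$, $E\cdot e = -1$, $F\cdot f = -1$, and the remaining relevant pairings vanish). This mirrors verbatim the proofs of Lemmas \ref{3.16relations}-style statements and in particular the identical argument given for cases $3.20$ and $3.29$, where the core classes $\{l, l-e_1, l-e_2\}$ and $\{l, l-e, 2l-e-f\}$ are likewise linearly independent. The one sentence "As the classes of $\mathscr{C}_X$ are linearly independent, we obtain the following" preceding the lemma statement already signals exactly this reasoning, so the written proof need only record the independence and the immediate consequence.
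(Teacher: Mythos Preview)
Your proposal is correct and matches the paper's approach exactly: the paper simply notes that the classes of $\mathscr{C}_X$ are linearly independent and states the lemma as an immediate consequence, with no further proof given. Your writeup supplies the routine verification of that independence and spells out why it precludes relations, which is precisely the intended argument.
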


\subsection*{3.24}

\textbf{Blow-up of a $\mathbb{P}^1$-bundle over $\mathbb{P}^2$:}
Let $X$ be a fiber product $W \times_{\mathbb{P}^2} \mathbb{F}_1$ where $W \rightarrow \mathbb{P}^2$ is any $\mathbb{P}^1$-bundle and where $\mathbb{F}_1 \rightarrow \mathbb{P}^2$ is the blow-up at a point. We apply \ref{blowup} to case $4.7$.%

\subsection*{3.25}

\textbf{Blow-up of $\mathbb{P}^3$ along two lines:}
Let $f:X \rightarrow \mathbb{P}^3$ be the blow-up of $\mathbb{P}^3$ at two disjoint lines. We may apply \ref{blowup} to case $4.6$.%

\subsection*{3.26}

\textbf{Blow-up of $\mathbb{P}^3$ along a point and a line:}
Let $f:X \rightarrow \mathbb{P}^3$ be the blow-up of $\mathbb{P}^3$ with center a disjoint union of a point $p$ and a line $c$.  We apply \ref{blowup} to case $4.9$.%

\subsection*{3.27}

\textbf{$(\mathbb{P}^1)^3$:}
Let $X = \mathbb{P}^1 \times \mathbb{P}^1 \times \mathbb{P}^1$. We may apply \ref{blowup} to case $4.10$.%

\subsection*{3.28}

\textbf{$\mathbb{P}^1 \times \mathbb{F}_1$:}
Let $X = \mathbb{P}^1 \times \mathbb{F}^1$ where $\mathbb{F}^1$ is the blow-up of $\mathbb{P}^2$ at a point. We apply \ref{blowup} to case $4.10$.%

\subsection*{3.29}  See Section \ref{E5 cases}.

\subsection*{3.30}
\textbf{Blow-up of $\mathbb{P}^3$:}
Let $X$ be the blow up of $\mathbb{P}^3$ along a point $p$ and the strict transform of a line containing $p$.  We apply \ref{blowup} to case $4.12$.%

\subsection*{3.31}

\textbf{Blow-up of the Cone over a Quadric Surface:}
Let $X$ be the blow-up of the cone $C \subset \mathbb{P}^4$ over a smooth quadric surface $S \subset \mathbb{P}^3$ along its vertex $p$. Alternatively, we may view $X$ as the $\mathbb{P}^1$-bundle $\mathbb{P}(\mathscr{O} \oplus \mathscr{O}(1,1))$ over $\mathbb{P}^1 \times \mathbb{P}^1$. We apply \ref{blowup} to case $4.2$.%

\section{Picard Rank 2}\label{Picard Rank 2 Cases}

\subsection{Conic and Del Pezzo Fibrations on Blow-Ups of Del Pezzo Threefolds and $Q$}
We study Fano threefolds of deformation types 2.1, 2.3-2.5, 2.7, 2.9-2.11, 2.13, 2.14, 2.16, and 2.20 using their conic and del Pezzo fiber structures.  Each of the aforementioned threefolds is a blow-up $X \rightarrow V_i$ of some del Pezzo threefold $V_i$ of Picard rank 1 along a smooth curve, or a blow-up $X \rightarrow Q \subset \mathbb{P}^4$.  We use the following notation throughout these cases, as well as the following result.

\begin{lem}[Theorem 7.6 \cite{2019}]
Let $X$ be a smooth Fano threefold of Picard rank $1$ and index $r \geq 2$.  Furthermore when $(-K_X)^3 \leq 16$, assume $X$ is general in moduli.  Let $\alpha \in \Nef_1(X)$ and $d = -K_X . \alpha$.  Suppose $W$ is a component of $\overline{M}_{0,0}(X,\alpha)$ and let $W_p$ denote the sublocus parameterizing curves through the point $p\in X$.  There is a finite union of points $S \subset X$ such that:
\begin{itemize}
    \item $W_p$ has the expected dimension $d-2$ for points $p \not\in S$;
    \item $W_p$ has dimension at most $d-1$ for points $p\in S$.
\end{itemize}
Furthermore for $p \notin S$ the general curve parameterized by $W_p$ is irreducible.
\end{lem}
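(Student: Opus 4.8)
The plan is to rephrase everything through the evaluation map. Let $W^1\subset\overline{M}_{0,1}(X,\alpha)$ be the one-pointed component lying over $W$, so that $\dim W^1=\dim W+1$, and let $\mathrm{ev}\colon W^1\to X$ be the evaluation morphism; away from the positive-codimension boundary and multiple-cover loci, $W_p$ is identified with the fibre $\mathrm{ev}^{-1}(p)$. Since $X$ is Fano with $\rho(X)=1$ and index $r\ge 2$, I would first check that a general member of $W$ is a free curve in the cohomological sense $h^1(\mathbb{P}^1,f^*\mathcal{T}_X)=0$ (possibly a multiple cover of a lower-degree free curve); hence $\dim W=d:=-K_X\cdot\alpha$, $\dim W^1=d+1$, and if $\mathrm{ev}$ is dominant its general fibre is, by generic flatness, pure of dimension $d-2$. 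If instead $\mathrm{ev}$ is not dominant, then by Theorem~\ref{nonfree curves} the family $W$ either sweeps out an exceptional divisor — impossible on $X$, which admits no nontrivial contractions — or consists of $-K_X$-lines, a case handled directly from the geometry of the Fano scheme of lines below. In the dominant case, generic smoothness of $\mathrm{ev}$ along a free curve together with Proposition~\ref{kontsevich space H1} shows that for $p$ outside a proper closed subset the fibre $\mathrm{ev}^{-1}(p)$ is reduced of pure dimension $d-2$; moreover the boundary locus of $W^1$ has dimension $\le d$, so it cannot contain a $(d-2)$-dimensional component of a general fibre, and therefore the general member of $W_p$ is irreducible for general $p$. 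This yields the first bullet and the last sentence.

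It then remains to bound the exceptional set. By upper semicontinuity of fibre dimension, the loci $S'=\{p:\dim_p W_p\ge d-1\}$ and $S''=\{p:\dim_p W_p\ge d\}$ are closed, and the assertion is that $S''=\varnothing$ and that $S'$ is finite. Suppose some $W_p$ has dimension $\ge d-1+k$ with $k\ge 1$, or that the locus of $p$ with $\dim W_p\ge d-1$ has a positive-dimensional component $Z$. Fixing a general second point $q$ and, in the latter case, a general point of $Z$, one extracts a positive-dimensional family of genus-$0$ stable maps passing through two prescribed points. Applying Mori's bend-and-break repeatedly, a member of this family degenerates to a connected chain of rational curves in which one component still passes through $p$ and a (possibly different) component passes through $q$; bookkeeping the dimension lost at each break, the subchain through $p$ becomes a family of free rational curves of strictly smaller anticanonical degree through $p$ whose dimension again exceeds the expected value. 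Iterating this reduction on the degree — and using that on $X$ the minimal free rational curves are lines, whose normal bundle has degree $r-2\ge 0$ — one arrives at the assertion that the lines (or, if necessary, the conics) of a fixed numerical class through the point $p$ form a family of dimension strictly larger than expected.

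The main obstacle, and the only place where generality of $X$ enters, is to rule this out: one must show that on $X$ the evaluation map from the universal line (and from the universal conic) is equidimensional of the expected fibre dimension over a dense open set, with larger fibres occurring only over a finite set of points. For $\mathbb{P}^3$, $Q$, $V_4$ and $V_5$ this follows from the classical description of their Fano schemes of lines and conics, which are smooth of the expected dimension with the required behaviour of the evaluation map; for $V_3$ one uses the standard theory of lines on the cubic threefold; and for $V_1,V_2$, where $(-K_X)^3\le 16$, one invokes the hypothesis that $X$ is general in moduli to reach the same conclusion. Feeding these low-degree bounds back through the bend-and-break bookkeeping forces $k=0$, so $S''=\varnothing$, and shows that the excess locus $S'$ is a finite union of points; taking $S$ to be this finite set, enlarged to include the finitely many points over which the boundary or multiple-cover loci could force reducibility, completes the argument.
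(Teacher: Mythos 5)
This lemma is imported verbatim from \cite{2019} (their Theorem 7.6); the paper under review supplies no proof of it, so there is no in-house argument to compare yours against. Your sketch does follow the strategy of the cited source: identify $W_p$ with a fibre of the evaluation map, use bend-and-break to trade a jumping fibre for a lower-degree family exhibiting the same pathology, and terminate the induction with explicit knowledge of the Fano schemes of lines and conics on $\mathbb{P}^3$, $Q$, and $V_1,\dots,V_5$ --- which is exactly where the generality hypothesis for $(-K_X)^3\le 16$ is consumed.

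That said, two steps are asserted rather than proved, and they carry essentially all of the content. First, the opening claim that a general member of an arbitrary component $W$ is free, so that $\dim W=d$, is not automatic: components can generically parameterize non-free curves, and the appeal to Theorem \ref{nonfree curves} does not dispose of them --- a non-dominant family sweeps out a surface ruled by lines, not a finite set of points, so $W_p$ still has to be bounded for $p$ on that surface, while $W_p=\varnothing$ off it, which already shows the statement cannot be read naively for such $W$. Second, the phrase ``bookkeeping the dimension lost at each break'' hides the actual induction: after bend-and-break one obtains a chain, and one must show that some component of the chain through $p$ again moves in a family through $p$ of dimension exceeding its own expected dimension; this requires a careful count over all ways the excess can distribute among the components, including the cases where the breaking produces multiple covers or components lying in the line-swept surfaces, and the word ``free'' in your description of the resulting subfamily is unjustified. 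These are the parts of \cite{2019} that occupy most of their Section 7, so your route is the correct one but these two points are genuine gaps rather than routine details.
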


\textbf{Blow-up of a curve $c \subset V_i$ or $c \subset Q$:} %
Let $Y$ be a smooth quadric $Q \subset \mathbb{P}^4$ or a del Pezzo threefold $V_i$ of Picard rank 1.  Let $\phi : X \rightarrow Y$ be the blow-up of the smooth curve $c\subset Y$.  Write $r$ for the index of $Y$. It follows from \cite{mori1981classification} that $c$ is scheme-theoretically the intersection of a linear system $L \subset |-\frac{r-1}{r}K_{Y}|$.  Let $\pi : X \rightarrow \mathbb{P}(L)$ be the contraction of the other extreme ray of $\overline{NE}(X)$. 

\textbf{Generators for $N^1(X)$ and $N_1(X)$:} Let $H$ be the pullback of the ample generator of $\text{Pic}(Y)$, i.e. $H = \frac{-1}{r}K_{Y}$.  Let $E$ be the exceptional divisor over $c$.  Let $l$ be the pullback of an $H$-line from $Y$, and $e$ be the fiber over a point in $c$.

\textbf{Intersection Pairing:}
\begin{center}
\begin{tabular}{ll}
    $H \cdot l = 1$ &  $H \cdot e = 0$, \\
    $E \cdot l = 0$ &  $E \cdot e = -1$, \\
\end{tabular}
\end{center}

\textbf{Anticanonical Divisor:}
\begin{align*}
    -K_X = rH-E.
\end{align*}

\textbf{Generators of $\overline{NE}(X)$ and $\text{Eff}(X)$:}  The extreme rays of $\overline{NE}(X)$ are spanned by $e$ and $l - (r-1) e$.  These are the only classes of effective $-K_X$ lines.  The extreme rays of $\text{Eff}(X)$ are spanned by $E$ and $(r-1) H -E$.  

\subsubsection*{Del Pezzo Fibrations}
When $c \subset Y$ is a complete intersection of a pencil $L \subset |-\frac{r-1}{r}K_{Y}|$, for general $X$ the monodromy group of the resulting del Pezzo fibration $\pi : X \rightarrow \mathbb{P}(L)$ acts as the full Weyl group on smooth fibers.\footnote{When $-K_X$ is very ample, this is well known.  For other cases, see \cite[Proof~of~Lemma~3.6]{lastDelPezzoThreefold}, \cite[Proof~of~Proposition~7.7]{2019}, and \cite{V2monodromy}.}  Moreover, for general $X$ each fiber of $\pi$ contains only finitely many $-K_X$ lines.  Therefore, unless $Y = V_1$, $\overline{M}_{0,0}(X, l-(r-1)e)$ is irreducible and generically parameterizes smooth embedded curves with normal bundles $\mathcal{O} \oplus \mathcal{O}(-1)$.  When $Y = V_1$, \cite{Tihomirov_1982} shows for general $X$ there is one additional component of $\overline{M}_{0,0}(X, l-e)$ parametrizing singular anticanonical curves in fibers of $\pi$.

\begin{thm}\label{del Pezzo fibration GMC}
Let $X$ be the blow-up of a Fano threefold of index $r > 1$ along a complete intersection of ample generators.  %
Then for all $\alpha \in l + \Nef_1(X)_\mathbb{Z}$, $\overline{\free}^{bir}(X,\alpha)$ is irreducible and nonempty.  $\overline{\free}(X,\alpha)$ is irreducible unless $\alpha = n(l-(r-2)e)$ for some $n > 1$, in which case there are exactly two components.

\end{thm}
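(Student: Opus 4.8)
The strategy is to apply the general machinery developed in Sections 4 and 7: produce a core $\mathscr{C}_X$ of free curves (or, when no genuine core exists, a set satisfying Definition \ref{def core}(2)), identify the separating classes, verify the hypotheses of Theorem \ref{Main Method} and Corollary \ref{bir Main Method}, and read off nonemptiness from Gordan's Lemma. Concretely, I would first use the explicit description of $X$ as a blow-up $\phi : X \to Y$ together with the intersection data above to enumerate the nef curve classes $\alpha = dl - ne$ of anticanonical degree between $2$ and $\max(4, 5)$ (using $5$ only because $X$ carries an $E5$-type contraction only in listed exceptional families, which do not occur here since $r > 1$). Solving the inequalities $0 \le n$, $n \le (r-1)d$, and $2 \le rd - n \le 4$ picks out a small finite list; the nontrivial extremal members are the fiber class of $\pi$, namely $l - (r-1)e$, and the ``section-type'' class $l - (r-2)e$ (when $r \ge 2$), plus low-degree combinations. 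The key input is that for general $X$ the del Pezzo fibration $\pi : X \to \mathbb{P}(L) = \mathbb{P}^1$ has full Weyl monodromy (Theorem \ref{monodromy del Pezzo main result}), so $\overline{M}_{0,0}(X, l - (r-1)e)$ is irreducible, and by Theorem \ref{del pezzo curves thm} applied fiberwise the spaces $\overline{\free}(X, \alpha)$ for $\alpha$ contracted by $\pi$ are irreducible except exactly for multiples of the anticanonical conic class of a smooth fiber — which is precisely $l - (r-2)e$ up to the identification with $-\tfrac12 K_{X_p}$ on the degree-one case, explaining the exceptional locus $\alpha = n(l-(r-2)e)$, $n > 1$.

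Next I would handle the classes not contracted by $\pi$. These pair positively with the exceptional divisor $E$ or with $(r-1)H - E$ and hence map dominantly or generically finitely, under $\phi_*$ and under $\pi_*$, onto irreducible families; for those of anticanonical degree $\ge 4$ interior to $\overline{NE}(X)$ one invokes Proposition \ref{very free curves} together with Theorem \ref{reducible fibers: 4 author result} (when $-K_X$ is very ample) or, in the three cases where $-K_X$ fails to be very ample, Theorem \ref{improved MovableBB} and the case-specific arguments. Irreducibility of the resulting spaces of free curves, and the statement that a general member is very free, then follows from Corollary \ref{connected fibers fixall} once one checks its hypotheses: namely that $X$ admits the required set $\mathscr{C}_X$, and that $\overline{\free}^{bir}(X, 2\beta)$ is irreducible for the (at most one) interior nef $-K_X$-conic $\beta$ — which by Lemma \ref{interior conic lemma} exists only when $\rho(X) \le 2$ and there are no Fano blow-ups of $X$, a condition one verifies case by case. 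The separating classes are exactly the fiber class $l - (r-1)e$ and, when applicable, $l - (r-2)e$ and the small conic classes with reducible evaluation fibers; all the gluing obstructions they create are resolved outside $\overline{NE}(\pi)$ by Theorem \ref{unique main component} and Lemma \ref{gluing del Pezzo}.

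For the ``two components'' assertion when $\alpha = n(l - (r-2)e)$: since $l - (r-2)e$ is a nef $-K_X$-conic lying in the boundary facet $\overline{NE}(\pi)$ (it is the anticanonical conic of a smooth del Pezzo fiber), its multiples $n\alpha$, $n > 1$, always carry a component parameterizing $n$-fold covers of such conics, by Theorem \ref{del pezzo curves thm}(2)–(4) applied to the general fiber and the fact that monodromy identifies the finitely many conic classes. The component of multiple covers is distinct from any component containing a curve birational onto its image; Corollary \ref{bir Main Method} shows $\overline{\free}^{bir}(X, n\alpha)$ is irreducible, so there are exactly two components. Nonemptiness of $\overline{\free}(X,\alpha)$ and $\overline{\free}^{bir}(X,\alpha)$ for every $\alpha \in l + \Nef_1(X)_\mathbb{Z}$ comes from Theorem \ref{Representability of Free Curves} plus Gordan's Lemma (Lemma \ref{Gordan's Lemma}): the core classes together with $l - (r-1)e$ generate the monoid of integral nef classes, and shifting by $l$ guarantees enough ``interior room'' to write $\alpha$ as a sum of core classes including at least one very free summand.

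\textbf{Main obstacle.} The genuinely delicate point is not the combinatorics of the cone but the fiberwise monodromy and the resulting irreducibility statements when $-K_X$ is not very ample — in particular the $V_1$ and $V_2$ blow-ups, where the del Pezzo fibers are degree-one or degree-two surfaces and $\overline{M}_{0,0}(X, l - (r-1)e)$ acquires an extra component of geometrically rational (singular) anticanonical curves. Here one must invoke the generality of $X$ (the results of \cite{Tihomirov_1982} on conics and anticanonical curves in $V_1$, and the analogous analysis for $V_2$) to control these components and to verify, via Lemma \ref{irreducible fibers del Pezzo} and Lemma \ref{2.2a-covers}-type extensions of Theorem \ref{reducible fibers: 4 author result}, that gluing a fiber-type curve to a transverse section produces a very free curve lying in a single component. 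Establishing that the hypotheses of Corollary \ref{connected fibers fixall} hold — especially $\overline{\free}^{bir}(X, 2\beta)$ irreducible for the interior conic $\beta$ — in these non-very-ample cases is where the case-by-case work concentrates.
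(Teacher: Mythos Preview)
Your overall architecture is right --- build a core, find relations, verify \ref{Main Method}(1), and invoke Corollary \ref{connected fibers fixall} --- but there is a genuine misidentification at the center of your argument. You write that $l-(r-2)e$ is ``the anticanonical conic of a smooth del Pezzo fiber'' lying ``in the boundary facet $\overline{NE}(\pi)$.'' This is false: $l-(r-2)e = (l-(r-1)e) + e$ lies in the \emph{interior} of $\overline{NE}(X)$ and pairs positively with $(r-1)H-E$, so it is not contracted by $\pi$. Concretely, for $r=2,3,4$ it is the class $l$, $l-e$, $l-2e$ --- the strict transform of a line or secant from $Y$, not a fiber curve. The two-components phenomenon for $n(l-(r-2)e)$ is the standard one for multiples of an interior conic (Theorem \ref{Main Result}(4) and Lemma \ref{interior conic lemma}), not a consequence of Theorem \ref{del pezzo curves thm} applied fiberwise. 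Relatedly, you call $l-(r-1)e$ a separating class, but its anticanonical degree is $1$, so it is not a free conic and does not belong to any core.

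This confusion propagates. The paper's core $\mathscr{C}_X$ is not what you describe: it consists of $2(l-ne)$, $l-(n-1)e$, $3(l-ne)$, $2(l-ne)+e$, and (for $r\geq 3$) further section-type classes, where $n=r-1$. The genuinely hard part --- which you gesture at under ``Main obstacle'' but do not quite pin down --- is that when the del Pezzo fibers have degree $\leq 3$ (i.e.\ $Y \in \{V_1,V_2,V_3,\mathbb{P}^3\}$), the spaces $\overline{\free}(X, 2(l-ne))$ or $\overline{\free}(X, 3(l-ne))$ are \emph{reducible}, so no core exists in the strict sense. The paper handles this with a dedicated lemma (the analogue of Lemma \ref{3.2 no core fix}) showing that when a class $\alpha$ with $\alpha\cdot((r-1)H-E)>0$ is expressed as $m(l-ne) + \sum\alpha_i$, the choice of component $N_j \subset \overline{\free}(X, m(l-ne))$ does not affect which component of $\overline{\free}(X,\alpha)$ one lands in. This requires explicit curve-breaking arguments in fibers, using the Weyl monodromy to move between components, and is the real substance of the proof; invoking Lemma \ref{irreducible fibers del Pezzo} or Corollary \ref{connected fibers fixall} alone does not supply it.
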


We first prove results about low-degree curves on $X$.  For $X$ general in moduli, we enumerate components of $\overline{\free}(X, \alpha)$ with $-K_X . \alpha \leq 4$.  Proposition \ref{low degree curves degeneration} shows this description extends to arbitrary $X$ when $\alpha \not\in \partial \overline{NE}(X)$.  The proof of Theorem \ref{del Pezzo fibration GMC} for arbitrary $X$ follows from Corollary \ref{connected fibers fixall}, Lemma \ref{del Pezzo relations}, and Lemma \ref{lem: del pezzo gordon}.

\begin{lem}\label{del Pezzo fibration Core}
Suppose $X$ is general in moduli.  Every component of $\overline{\free}(X)$ contains free chains with components of class in $\mathscr{C}_X$, where if
\begin{itemize}
    \item $r = 2$, $\mathscr{C}_X = \{ 2l -2e, l , 3l-3e, 2l-e\}$,
    \item $r = 3$, $\mathscr{C}_X = \{ 2l -4e, l -e, 3l-6e, 2l -3e, l\}$
    \item $r = 4$, $\mathscr{C}_X = \{ 2l -6e, l -2e, 3l - 9e, 2l -5e, l -e, l \}$
\end{itemize}
For each $\alpha \in \mathscr{C}_X$ interior to $\overline{NE}(X)$, either $-K_X . \alpha = 2$ and $\overline{\free}^{bir}(X, 2\alpha)$ is irreducible, or $-K_X . \alpha \geq 3$ and $\overline{\free}_1(X, \alpha) \rightarrow X$ has irreducible general fiber.  Moreover, $\overline{\free}(X, \alpha)$ is irreducible unless 1) $r = 4$ and $\alpha = 3l - 9e$, 2) $X$ is a blow-up of $V_3$ and $\alpha = 3l -3e$, or 3) $X$ is a blow-up of $V_1$ or $V_2$ and $\alpha \in \{2l -2e, 3l-3e\}$.  In these cases, if $X$ is a blow-up of $V_1$ $\overline{\free}(X,\alpha)$ has three components; otherwise, $\overline{\free}(X,\alpha)$ has two components.
\end{lem}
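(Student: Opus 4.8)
\textbf{Proof plan for Lemma \ref{del Pezzo fibration Core}.}

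The plan is to break into the three index cases $r=2,3,4$, and in each case first produce the candidate core $\mathscr{C}_X$ and then verify properties (2) (every component of $\overline{\free}(X)$ degenerates to free chains of core type) and (3) (irreducibility of $\overline{\free}(X,\alpha)$ and of the evaluation fibers / of $\overline{\free}^{bir}(X,2\alpha)$). The overall engine is Movable Bend-and-Break, Theorem \ref{MovableBB}, together with its refinement Theorem \ref{improved MovableBB}: since $X$ has no $E5$ divisor, any component $M \subset \overline{\free}(X)$ parametrizing curves of $-K_X$-degree $\geq 5$ contains a free chain, and by \ref{improved MovableBB} the only quartic components that fail to split are strict transforms of lines in an $E5$-contraction (which $X$ does not have) -- so in fact all components of $-K_X$-degree $\geq 4$ split. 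Hence it suffices to enumerate nef classes $\alpha$ with $2 \leq -K_X.\alpha \leq 3$, show which are freely breakable (using Mori Bend-and-Break in fibers of $\pi$ and the explicit geometry of $l$-lines and $e$-lines on $X \to Y$), discard those, and retain the genuinely irreducible ``building block'' classes plus the degenerate cubic/conic classes contracted by $\pi$.

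For step (2), in each case I would first list the lattice of nef $-K_X$-lines: $e$ and $l-(r-1)e$ span the extreme rays, and the boundary classes are $l$ (an $H$-line in $Y$ pulled back generically) and $(r-1)H - E$ pairings. Then for each $\alpha$ with $-K_X.\alpha = 3$ not contracted by $\pi$, I would write $\alpha = \beta_1 + \beta_2$ with $\beta_i$ nef of degree $\leq 2$, produce an immersed nodal representative meeting the relevant exceptional/line divisors transversely (invoking Lemma \ref{Basic Properties Chains}(6) and the genericity of $X$ so that distinct line-divisors are transverse), and conclude via the deformation-theory package (Lemma \ref{GHS lemma}, Proposition \ref{kontsevich space H1}) that this is a smooth point of $\overline{\mathcal{M}}_{0,0}(X)$ with globally generated normal sheaf, hence a free chain. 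For the classes contracted by $\pi$ (the ``$3l-3(r-1)e$-type'' cubics and the ``$2l-(2r-3)e$''-type), these are quartic- or cubic-degree curves in a general del Pezzo fiber $X_p$, and Theorem \ref{del pezzo curves thm} applied to $X_p$ (degree $9-\dim L \cdot$stuff, i.e. degree $r+1$ del Pezzo surface) classifies their components: one ``birational'' component plus, in low degree, components of multiple covers or of geometrically rational anticanonical curves -- exactly accounting for the two (or three, when $Y=V_1$) components claimed.

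For step (3), irreducibility of $\overline{\free}(X,\alpha)$ for the non-contracted building blocks $\alpha$ follows from the standard picture: such curves are strict transforms of $H$-lines or $H$-conics in $Y$ meeting $c$ prescribed numbers of times, parametrized by incidence varieties over irreducible spaces (the Fano scheme of lines/conics on $Y$, which is irreducible and generically smooth for general $Y$ by the cited results on del Pezzo threefolds), with transitive monodromy on the intersection points by Theorem \ref{Monodromy}. Irreducibility of the evaluation fibers when $-K_X.\alpha \geq 3$ follows from the same incidence description after fixing a general point, and the statement about $\overline{\free}^{bir}(X,2\alpha)$ for the $-K_X.\alpha=2$ cases reduces, via Corollary \ref{bir Main Method} / Lemma \ref{gluing del Pezzo}, to the already-established irreducibility on del Pezzo surface fibers (Theorem \ref{del pezzo curves thm}) combined with the full-Weyl-group monodromy of $\pi$. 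The exceptional cases -- $r=4$, $\alpha = 3l-9e$ (a twisted-cubic-type curve contracted to a point in $V_4$, splitting as multiple covers vs. embedded), $Y=V_3$ with $\alpha=3l-3e$, and $Y \in \{V_1,V_2\}$ with $\alpha \in \{2l-2e,3l-3e\}$ -- are precisely where $-K_X$ fails to be very ample or the del Pezzo fiber has degree $\leq 3$, and I would treat these by direct reference to Theorem \ref{del pezzo curves thm}(2)--(4) and the $V_1$-specific analysis of \cite{Tihomirov_1982} for the third component.

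The main obstacle will be the bookkeeping for $r=4$ (blow-up of $V_4$ or $V_5$): here the ``lines'' $l-2e$ on $X$ have more delicate normal bundles, the del Pezzo fibers have degree $5$, and one must carefully check that the cubic class $3l-9e$ really does have exactly two components (embedded curves vs. multiple covers) rather than more, which requires the irreducibility of the Fano scheme of lines on general $V_4$ -- cited from \cite{lines_conics_and_all_that} -- and a transversality argument so that the degeneration of a free quartic breaks through a smooth point of the moduli stack. A secondary subtlety is ensuring, when $X$ is a blow-up of $V_1$ or $V_2$ so $|-K_X|$ is not very ample, that the ``immersed chain is a smooth point'' arguments still go through: there one cannot use Theorem \ref{reducible fibers: 4 author result} directly and must instead invoke Lemma \ref{2.2a-covers}-type extensions together with the explicit conic-bundle or del-Pezzo-fibration structure, as in the treatment of case $3.2$.
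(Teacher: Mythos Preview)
Your broad strategy matches the paper's, but there are concrete errors and one genuine gap. First, you have the index $r$ backwards: $r$ is the index of $Y$, so $r=4$ means $Y=\mathbb{P}^3$ (del Pezzo fibers of $\pi$ are cubic surfaces of degree~$3$), $r=3$ means $Y=Q$ (degree~$4$ fibers), and $r=2$ means $Y=V_i$ (degree~$i$ fibers). Your paragraph on ``$r=4$ (blow-up of $V_4$ or $V_5$), \ldots\ del Pezzo fibers have degree~$5$'' and the appeal to the Fano scheme of lines on $V_4$ are therefore misplaced. Relatedly, the two components of $\overline{\free}(X,3l-9e)$ for $r=4$ are not ``embedded vs.\ multiple covers''; both are birational onto their image, distinguished by arithmetic genus (twisted cubics versus nodal anticanonical curves in the cubic surface), via Lemma~\ref{testa theorem}. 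You also misread Theorem~\ref{improved MovableBB}: part~(1) has \emph{two} exceptions for unbreakable quartics, and the first---strict transforms of lines under a contraction to $\mathbb{P}^3$---is exactly $\alpha=l$ when $r=4$. More importantly, the paper states that \ref{improved MovableBB} is itself established by explicit casework, and the quartic breaking needed here is proved separately as Lemma~\ref{del Pezzo quartic curves} via a nontrivial Bend-and-Break for the section class $3(l-(r-1)e)+e$; you cannot simply cite \ref{improved MovableBB} to avoid this.

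The genuine missing idea is for $r=2$, where you must show that $\overline{\free}_1(X,2l-e)\to X$ has irreducible general fiber. The paper identifies this fiber with $(\mathrm{ev}_1\times\mathrm{ev}_2)^{-1}(c\times p)$ inside $\overline{\free}_2(V_i,2l)$ and obtains connectedness by an Enriques--Severi--Zariski argument: realize it as the preimage of an ample divisor inside the already-irreducible $(\mathrm{ev}_1\times\mathrm{ev}_2)^{-1}(H_0\times p)$ for a general $H_0\in|H|$. Your ``same incidence description after fixing a general point'' does not supply this; a naive incidence count gives no control on connectedness when $|H|$ is small, and this is precisely the nontrivial step for $V_1,V_2$ where $|-K_X|$ is not very ample.
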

\begin{proof}
$\overline{\free}(X,l)$ is shown to be irreducible in \cite{2019} and \cite{lastDelPezzoThreefold}.  Let $n = r-1$.  Since for general $X$ monodromy of the del Pezzo fibration is maximal, Lemma \ref{testa theorem} may be used to identify the number of components of $\overline{\free}(X, 2(l-ne))$ and $\overline{\free}(X, 3(l-ne))$ parameterizing free curves.  Mori's bend and break shows any component of $\overline{\free}(X, 4(l-ne))$ contains a chain of free conics.  When $r > 2$, $-K_X$ is very ample, and our claim about the irreduciblity of fibers of $\overline{\free}_1(X, 2(l-ne) + e) \rightarrow X$ follows from irreducibility of the space itself, which may be seen using \ref{Monodromy} (when $r = 4$) or by associating a plane in $\mathbb{P}^4$ to three points through the blown-up curve $c\subset Q$ (when $r = 3$).

Assume $r = 2$.  Then $X \rightarrow V_i$ is the blow-up of the baselocus of a pencil in $|H|$.  Irreducibility of $\overline{\free}(X, 2l-e)$ follows in each case from the analysis of \cite{2019} and \cite{lastDelPezzoThreefold}.  They prove that for general $V_i$, there is a unique component of $\overline{\free}^{bir}(V_i, 2l)$, and it parameterizes very free curves.  Consequently, general fibers of $\overline{\free}_1^{bir}(V_i,2l) \rightarrow V_i$ are irreducible.  This proves that for general $X$, $\overline{\free}(X, 2l-e)$ is irreducible.

Fibers of $\overline{\free}_1(X, 2l-e)\rightarrow X$ over general $p\in X$ are birational to fibers of $\overline{\free}_2(V_i, 2l) \xrightarrow{\text{ev}_1 \times \text{ev}_2} V_i \times V_i$ over $c \times p$, where $c \subset V_i$ is the blown-up curve.  The restriction of $|H|$ to a general (normal) $H_0 \in |H|$ is ample.  Thus, we may construct $(\text{ev}_1 \times \text{ev}_2)^{-1}(c \times p)$ as a subvariety of $(\text{ev}_1 \times \text{ev}_2)^{-1}(H_0 \times p)$ given by preimage of the support of an ample divisor.  A lemma of Enriques-Severi-Zariski proves the preimage $(\text{ev}_1 \times \text{ev}_2)^{-1}(c \times p)$ is connected.  When $i > 1$, $|H|$ has no base points, so that by Bertini's theorem, $\overline{\free}_1(X, 2l-e) \rightarrow X$ has irreducible general fiber when $c \subset V_i$ is a general complete intersection of members of $|H|$.  When $i = 1$, $|H|$ has a single base point $q$.  One may still apply Bertini's theorem to conclude any singularity of $(\text{ev}_1 \times \text{ev}_2)^{-1}(c \times p)$ lies in $(\text{ev}_1 \times \text{ev}_2)^{-1}(q \times p)$ and subsequently study normal bundles of reducible curves in $X$ through $p$ and the preimage of $q$.  Alternatively, the rational map $V_1 \dashrightarrow \mathbb{P}^2 = |H|$ induces a rational map from $\overline{\free}_1(V_1, 2l)$ to the space of pointed conics on $\mathbb{P}^2$ that may be used to observe irreducibility of $(\text{ev}_1 \times \text{ev}_2)^{-1}(c \times p)$.  %

When $r = 3$, $\overline{\free}(X, l-e)$ is birational to the space of lines in a quadric $Q$ meeting the blown-up curve $c\subset Q$.  Therefore, $\overline{\free}(X, l-e)$ is dominated by an open subset of a conic bundle over $c$.  The fibers of $\overline{\free}(X, l-e)$ are reducible, but $\overline{\free}^{bir}(X, 2l-2e)$ is irreducible as it is dominated by an open subset of a $\mathbb{P}^2$-bundle over $c\times c$.  When $r = 4$, irreducibility of $\overline{\free}(X, l-2e)$ and $\overline{\free}(X, l-e)$ follows from Lemma \ref{lines_in_P3}, as does irreducibility of $\overline{\free}^{bir}(X, 2l-4e)$.  For the aforementioned classes $\alpha$ of anticanonical degree at least 3, irreducibility of the general fiber of $\overline{\free}_1^{bir}(X, \alpha) \rightarrow X$ follows from Theorem \ref{reducible fibers: 4 author result}, as when $r \geq 3$ the anticanonical divisor is very ample.  The following lemma proves each free quartic decomposes into two free conics, aside from the class of a line when $X$ is a blow-up of $\mathbb{P}^3$.  This completes our proof.
\end{proof}

\begin{lem}\label{lem: del Pezzo low degree curves}
    Suppose $\phi : X \rightarrow Y$ is the blow-up of a general complete intersection curve $c$ and $Y$ is general in moduli.  If $\alpha \in \Nef_1(X)_\mathbb{Z}$ satisfies $-K_X.\alpha = 4$ and $\alpha \notin\partial\overline{NE}(X)$, then $\overline{\free}^{bir}(X, \alpha)$ is irreducible and generically parameterizes very free curves.
\end{lem}
\begin{proof}
    Let $r$ be the index of $Y$ and set $n = r-1$. %
    We may assume $\alpha = 3(l-ne) + e$, as all other cases were addressed in the previous lemma.  To address this case, we consider $X$ as a divisor in $Y \times \mathbb{P}^1$ given by the vanishing of a general global section of $ \mathcal{L} = \pi_1^*\mathcal{O}_Y(nH) \otimes \pi_2^*\mathcal{O}_{\mathbb{P}^1}(1)$.  Let $B = |\mathcal{L}|$ and consider the universal divisor
    \[\begin{tikzcd}
	{\mathcal{X} \subset (Y \times \mathbb{P}^1) \times B} \\
	B
	\arrow["\pi", from=1-1, to=2-1]
\end{tikzcd}\]
    We may identify the relative cone of curves $\overline{NE}(\pi) \cong \overline{NE}(Y \times \mathbb{P}^1)$ with the Mori cone $\overline{NE}(X)$ using the isomorphism $\overline{NE}(\mathcal{X}) \cong \overline{NE}(Y \times \mathbb{P}^1 \times B)$.  Under this description,
    curves of class $\alpha$ on $X$ are curves of bidegree $(H.\alpha , (nH - E). \alpha) = (3,1)$ on $Y \times \mathbb{P}^1$.  The space of all such curves on $Y \times \mathbb{P}^1$ is irreducible, as $\overline{\free}^{bir}(Y, 3l)$ is itself irreducible.  We claim that $\overline{\free}(\mathcal{X}, \alpha)$ is irreducible as well.

    To see this, note that every irreducible curve of bidegree $(3,1)$ on $Y\times \mathbb{P}^1$ is embedded.  Moreover, any curve $C \subset \mathcal{X}$ corresponding to a general point of $\overline{\free}(\mathcal{X}, \alpha)$ must avoid the baselocus of $|H|$, when it is nonempty.  %
    Let $C \subset Y \times \mathbb{P}^1$ be any such curve.  It is sufficient to show the map $H^0(Y \times \mathbb{P}^1, \mathcal{L}) \rightarrow H^0(C, \mathcal{L}|_C) \cong H^0(\mathbb{P}^1, \mathcal{O}_{\mathbb{P}^1}(3n + 1))$ is surjective.  %
    This follows from a direct calculation.  Let $s,t$ be the coordinates on $C \cong \mathbb{P}^1$.  The map $H^0(Y \times \mathbb{P}^1, \pi_2^*\mathcal{O}_{\mathbb{P}^1}(1)) \rightarrow H^0(C, \mathcal{O}_C(1))$ is always surjective.  Since the image of $C$ in $Y$ is a reduced cubic curve, the cokernel of $\theta : H^0(Y \times \mathbb{P}^1, \pi_1^*\mathcal{O}_{Y}(H)) \rightarrow H^0(C, \mathcal{O}_C(H))$ has length at most one.  When $\theta$ is not surjective, by precomposing with an automorphism of $C \cong \mathbb{P}^1$, we may suppose the images of $\theta$ is spanned by either $\{s^3 + t^3, s^2t, st^2\}$ or $\{s^3, s^2t, t^3\}$.  It follows that the image of $H^0(Y \times \mathbb{P}^1, \pi_1^*\mathcal{O}_{Y}(nH)) \rightarrow H^0(C, \mathcal{O}_C(nH))$ contains the span of either $\{s^{3n} + t^{3n}, s^{3n-1}t, \ldots , st^{3n-1}\}$ or $\{s^{3n}, s^{3n-1}t, \ldots ,  s^2t^{3n-2}, t^{3n}\}$.  In both cases, $H^0(Y \times \mathbb{P}^1, \mathcal{L}) \rightarrow H^0(C, \mathcal{L}|_C) \cong H^0(\mathbb{P}^1, \mathcal{O}_{\mathbb{P}^1}(3n + 1))$ is surjective.  We conclude that $\overline{\free}(\mathcal{X}, \alpha)$ is irreducible, since a dense open subset of it has irreducible, equidimensional fibers over an irreducible variety.

    By Lemma \ref{gluing del Pezzo}, there is a unique main component of $\overline{\free}_1(X, l - ne + e) \times_X \overline{\free}_1(X, 2l-2ne)$ wherein the latter component parameterizes embedded curves.  This proves irreducibility of $\overline{\free}(X, \alpha)$ from irreducibility of $\overline{\free}(\mathcal{X}, \alpha)$ using Lemma \ref{lem: boundary stratum irreducible}.  It follows that $\overline{\free}(X, \alpha)$ contains the image of a component of $\overline{\free}_1(X, 3l-3ne) \times_X \overline{M}_{0,1}(X, e)$.  Lemma \ref{irreducible fibers del Pezzo} and Proposition \ref{very free curves} thus imply $\overline{\free}(X, \alpha)$ generically parameterizes very free curves.
\end{proof}

\begin{rem}
    In lieu of the preceding proof, one may instead use maximality of the monodromy action of the del Pezzo fibration $\pi : X \rightarrow \mathbb{P}^1$.  Indeed, $\alpha = 3(l-ne) + e$ is the class of a section of $\pi$.  Thus, using Corollary \ref{general curve and point}, we may degenerate any free curve of class $\alpha$ to an immersed chain of type $(e, 3l-3ne)$ parameterized by a component $M \subset \overline{M}_{0,1}(X,e) \times_X \overline{\free}_1(X, 3l-3ne)$, where the projections $M \rightarrow \overline{\free}(X, 3l-3ne)$ and $M \rightarrow \overline{M}_{0,0}(X, e)$ are dominant.  However, by Remark \ref{big divisor remark}, there is only one such component $M$ for each component of $\overline{\free}(X, 3l-3ne)$.  An argument similar to Lemma \ref{3.2 no core fix} then proves irreducibility of $\overline{\free}(X, 3l-3ne + e)$.
\end{rem}

We will use the following lemma when studying relations between classes in $\mathscr{C}_X$.  This allows us to construct stable maps $f : C \rightarrow X$ wherein every irreducible component of $C$ is a $-K_X$-line and the normal bundle $\mathcal{N}_f$ is globally generated.

\begin{lem}\label{lem: transverse lines Del Pezzo fibrations}
When $r \leq 3$, there exists a component $M \subset \overline{M}_{0,1}(X, l - (r-1)e)$ such that the normal bundle of the general curve parameterized by $M$ is $\mathcal{O}\oplus \mathcal{O}(-1)$ and the preimage of $E\subset X$ in $M$ under the evaluation map is generically reduced. %
\end{lem}
\begin{proof}
When $r = 2$, every anticanonical line parameterized by $M$ meets $E$ transversely.  Our claim is trivial in these cases.  Otherwise, when $r = 3$, recall that $X$ is the blow-up of a quadric $Y \subset \mathbb{P}^4$ along a smooth complete intersection curve $c \subset Y$.  Note that $c \rightarrow \mathbb{P}^4$ is the canonical embedding of a genus 5 curve.  %
It is sufficient to show that there exists a bisecant line to $c$, contained in $Y$, which passes through a general point $p \in c$ and is nowhere tangent to $c$.  Because $X$ is Fano, every line in $Y$ meets $c$ with multiplicity at most two.  
Suppose every line in $Y$ meeting $c$ with multiplicity two is tangent to $c$.  Then the tangent hyperplane to $Y$ at $p \in c$ must only meet $c$ along $p$.  Hence, $K_c = 8p$ for any $p \in c$.  This implies an embedding of $c$ in its  Jacobian is contained in the torsion subgroup, a contradiction.
\end{proof}

As a replacement for Lemma \ref{lem: transverse lines Del Pezzo fibrations} when $r = 4$, we note that there must exist reducible conics $f : C \rightarrow X$ of class $l-2e$ whose normal bundles $\mathcal{N}_f$ satisfy $h^1(C, \mathcal{N}_f) = 0$.  By Proposition \ref{properties nonfree conics}, these reducible conics smooth to free conics.

\begin{lem}\label{del Pezzo relations}
Relations in the monoid $\mathbb{N}\mathscr{C}_X$ are generated by:
\begin{itemize}
    \item if $r = 2$:
    \begin{enumerate}
        \item $2l + (2l-2e) + 2(2l-e)$,
        \item $l + 2(2l-2e) = (3l-3e) + (2l-e)$,
        \item $l + (3l-3e) + (2l-e) + (2l-2e)$,
        \item $3(2l-2e) = 2(3l-3e)$.
    \end{enumerate}
    \item If $r = 3$:
    \begin{enumerate}
        \item $l + (2l-3e) = 3(l-e)$,
        \item $l + (2l-4e) = (l-e) + (2l-3e)$,
        \item $l + (3l-6e) = 2(l-e) + (2l-4e) = 2(2l-3e)$,
        \item $(l-e) + 2(2l-4e) = (2l-3e) + (3l-6e)$,
        \item $(l-e) + (3l-6e) = (2l -3e) + (2l-4e)$,
        \item $3(2l-4e) = 2(3l-6e)$.
    \end{enumerate}
    \item If $r=4$:
    \begin{enumerate}
        \item $l + (l-2e) = 2(l-e)$,
        \item $l + (2l-6e) = (l-e) + (2l-5e) = 3(l-2e)$,
        \item $l + (2l-5e) = (l-e) + 2(l-2e)$,
        \item $l + (3l -9e) = 2(l-2e) + (2l-5e)$,
        \item $(l-e) + (2l-6e) = (l-2e) + (2l-5e)$,
        \item $(l-e) + (3l-9e) = 2(l-2e) + (2l-6e) = 2(2l-5e)$,
        \item $(l-2e) + 2(2l-6e) = (2l-5e) + (3l-9e)$,
        \item $(l-2e) + (3l-9e) = (2l-5e) + (2l-6e)$,
        \item $3(2l-6e) = 2(3l-9e)$
    \end{enumerate}
\end{itemize}
For each relation $\sum \alpha_i = \sum \beta_j$, a main component of $\prod_X \overline{\free}_2(X,\alpha_i)$ lies in the same component of free curves as a main component of $\prod_X \overline{\free}_2(X, \beta_j)$.
\end{lem}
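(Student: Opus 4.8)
The statement is really a packaging of the strategy laid out in Lemma \ref{Relations} together with the ``main component'' bookkeeping that appears in every Picard rank $\geq 2$ case above (for instance Lemma \ref{3.2monodromy}'s companion relations lemma, or the relations lemmas in cases $3.1$, $3.3$, $3.5$, $3.6$, etc.). The first task is to verify that the listed relations generate $\mathbb{N}\mathscr{C}_X$ as a monoid of relations; the second is to verify, for each listed relation $\sum \alpha_i = \sum \beta_j$, that some main component of $\prod_X \overline{\free}_2(X,\alpha_i)$ and some main component of $\prod_X \overline{\free}_2(X,\beta_j)$ lie in a common component of $\overline{\free}_2(X, \sum \alpha_i)$.

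For the first task I would apply Lemma \ref{Relations} directly to $\mathscr{C}_X$ in each of the three cases $r=2,3,4$. Order the core classes so that the ``interior'' classes of low $-K_X$-degree come first and the classes $l-(r-2)e, \dots, l$ spanning $N_1(X)$ come last. For each interior class one chooses an explicit divisor $D_i$ pairing negatively with it and nonnegatively with the later ones: the natural choices are $D$ of the form $aH - bE$ with $a,b$ tuned so that $D_i \cdot (l-(r-1)e)$ or $D_i\cdot(l-e)$ vanishes, exactly as in the relations-lemma computations for $3.2$ and $3.6$. Running Lemma \ref{Relations} then produces precisely the minimal relations listed, together with the single relation among the spanning classes $l-(r-2)e,\dots,l$ (which is $l + (l-2(r-1)e)=2(l-(r-1)e)$-type, already on the list or a consequence of it). This step is bookkeeping and I would present it as a short case-by-case table of pairings rather than grinding every instance.

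For the second task, the relations split into two flavors. Relations whose common total class $\gamma=\sum\alpha_i$ is annihilated by $\pi_*$ (i.e. $\gamma = n(l-(r-1)e)$ for $n=2,3$, so $\gamma$ lies in a fiber of the del Pezzo fibration) are handled inside a general smooth fiber $X_q$, a del Pezzo surface of degree $9-m$: here Theorem \ref{del pezzo curves thm} and the maximality of the monodromy of $\pi$ (from Theorem \ref{monodromy del Pezzo main result}/Lemma \ref{del Pezzo fibration Core}) identify which component of $\overline{\free}(X,\gamma)$ contains which free chains, and one simply traces a chain of $(-1)$-curves/conics realizing the substitution — this is the argument used verbatim in case $3.1$, Lemma \ref{3.1relations1}. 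For the remaining relations, where $\gamma$ is interior to $\overline{NE}(X)$ and $-K_X\cdot\gamma \geq 4$, one of the two sides involves only classes with $\overline{\free}_1(X,\cdot)\to X$ of irreducible general fiber (the ``birational'' low-degree conics/lines/cubics), so $\overline{\free}^{bir}(X,\gamma)$ is irreducible by the inductive conclusion of Theorem \ref{del Pezzo fibration GMC} applied to smaller $\gamma$, or else I would exhibit an immersed chain $f:C\to X$ with $\mathcal N_f$ globally generated that smooths in two ways — once to each side of the relation — using Lemma \ref{GHS lemma} and Proposition \ref{kontsevich space H1} to see $[f]$ is a smooth point of $\overline{\mathcal M}_{0,0}(X)$. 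In the three classes where $\overline{\free}(X,\gamma)$ is genuinely reducible ($3l-9e$ for $r=4$, $3l-3e$ and $2l-2e$ for blow-ups of $V_1,V_2,V_3$), one restricts attention to the single component that parametrizes curves nonsingular along $E$ (resp. the embedded component), exactly as in Lemma \ref{del Pezzo fibration Core}, and checks that \emph{that} component contains free chains of both types; this is the analogue of Lemma \ref{3.2 no core fix}.

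\textbf{Main obstacle.} The routine part is generating the relations; the delicate part is the second task for those relations whose total class lies in a \emph{reducible} $\overline{\free}(X,\gamma)$ — one must be careful that the two free chains being compared live in the \emph{same} component, not merely in $\overline{\free}(X,\gamma)$, and that the immersed intermediate chain used to connect them avoids the finitely many non-free lines and non-free conics (this is why the generality-in-moduli and transversality-of-line-divisors hypotheses enter). Concretely the hardest single instance is $r=4$, relation $(9)$, $3(2l-6e)=2(3l-9e)$: both sides involve the separating class $2l-6e$ with reducible evaluation fibers and the reducible space $\overline{\free}(X,3l-9e)$, so one genuinely needs the monodromy statement of Theorem \ref{monodromy del Pezzo main result} to pin down a common component, via a degree-six chain of conics in a general fiber smoothing in the two prescribed ways.
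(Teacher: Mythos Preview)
Your plan matches the paper's approach: apply Lemma \ref{Relations} to enumerate generators, then verify main components connect either via del Pezzo surface arguments inside a fiber of $\pi$ or via immersed chains smoothing in two ways.

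One point to correct: invoking ``the inductive conclusion of Theorem \ref{del Pezzo fibration GMC} applied to smaller $\gamma$'' is circular. That theorem is proved by feeding this lemma into Theorem \ref{Main Method}; there is no induction on $\gamma$ available, so you must use your second option (immersed chains) throughout. Your auxiliary claim that ``one of the two sides involves only classes with irreducible general fiber'' also fails for several relations --- e.g.\ for $r=4$ relation (7), $(l-2e)$ and $(2l-6e)$ on the left are both separating while $3l-9e$ on the right lies in $\partial\overline{NE}(X)$ with reducible evaluation fiber --- so even if the circular step were available, irreducibility of $\overline{\free}^{bir}(X,\gamma)$ would not follow from it.

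The paper organizes the second task differently: it observes that the last four relations in each list have the same structural form across $r=2,3,4$ and treats them uniformly. Your nominated ``hardest case,'' relation (9) for $r=4$, is actually the easiest --- the total class $6(l-3e)$ is contracted by $\pi$, so Theorem \ref{del pezzo curves thm} applies inside a smooth fiber with no further monodromy work. The delicate relations are the second- and third-to-last (a section $\alpha$ attached to a fiber cubic or quartic), which the paper dispatches by citing \cite{sectionsDelPezzo}; your immersed-chain approach would need to construct these explicitly. The remaining interior relations are handled by short chains such as $(\alpha, e, l-(r-1)e, \beta)$ or alternating chains in $e$ and $l-(r-1)e$ smoothing in two ways, much as you describe.
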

\begin{proof}
We use Lemma \ref{Relations}.  %
The analysis of each case may be whittled down to relations between four generators.  No relation may therefore involve more than two distinct curve classes on either side.  Note that the last four relations for each index are of the same form.  The last relation involves curves contracted by the del Pezzo fibration $\pi : X \rightarrow \mathbb{P}^1$.  Our claim concerning main components follows from \ref{del pezzo curves thm}.  The second (resp. third) to last relation involves a free chain $(\alpha, \beta)$ where $\alpha$ is a section of $\pi$ and $\beta$ is a cubic (resp. quartic) in its fiber.  Such relations were handled in \cite{sectionsDelPezzo}.  The fourth to last relation involves a free chain $(\alpha, \beta, \alpha)$ where $\alpha$ is a section of $\pi$ and $\beta$ is a conic in its fiber.  We may begin with a degenerate conic $\beta$ and smooth to obtain a free chain of type $(\alpha + \frac{\beta}{2}, \alpha + \frac{\beta}{2})$.  Alternatively, if $r \geq 3$, we may begin with an immersed chain of type $(\alpha, e, l - (r-1)e, \beta)$ and smooth to free chains of type $(\alpha, \alpha, \beta)$ and $(\alpha + e, \frac{3\beta}{2})$.  The fifth to last relation when $r = 4$ and $r=3$ are also analogous: in each case, the right-hand-side is a sum of two sections of $\pi$ of $-K_X$-degree $2$ and $3$.  By beginning with a degenerate section of degree 3, expressed as an exceptional curve $e$ and free conic contracted by $\pi$, we may smooth to free chains of either desired type.  %
Lastly, when $r = 3$, by Lemma \ref{lem: transverse lines Del Pezzo fibrations} there are immersed chain with 6 irreducible components, whose classes alternate between exceptional curves $e$ and $-K_{X/\mathbb{P}^1}$ lines $l - 2e$.  Such chains smooth to free chains of either type in the first relation, proving our claim.  

Suppose $r = 4$.  We will consider relations (1) through (4).  Relation (1) follows from Lemma \ref{lines_in_P3}.  For relation (2), we claim that $\free^{bir}(X, 3l-6e)$ is irreducible.  Indeed, the general map $f : C \rightarrow X$ parameterized by any component of $\free^{bir}(X, 3l-6e)$ must be the strict transform of a twisted cubic by Theorem \ref{reducible fibers: 4 author result}, Proposition \ref{very free curves}, and Proposition \ref{properties very free curves}.  Irreducibility of $\free^{bir}(X, 3l-6e)$ then follows from the existence of a unique twisted cubic through six general points in the complete intersection curve $c \subset \mathbb{P}^3$.  
Next, by smoothing a chain of type $(l-e, e, \alpha)$ in two different ways, we prove our claim for relation (3).  Similarly, irreducibility of $\overline{\free}^{bir}(X, 3l-6e)$ proves a component of $\overline{\free}_1^{bir}(X, 3l-6e) \times_X \overline{M}_{0,1}(X, l-3e)$ contains chains of type $(l, 2l-6e, l-3e)$ and $(l-2e, l-2e, l-2e, l-3e)$ in its smooth locus.  Smoothing each chain implies our claim for relation (4).
\end{proof}

\begin{lem}\label{lem: del pezzo gordon}
For each nonzero $\alpha \in \Nef_1(X)_\mathbb{Z}$, $\overline{\free}(X,\alpha)$ is nonempty unless $\alpha = (l-(r-1)e)$.
\end{lem}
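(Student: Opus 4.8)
The plan is to deduce nonemptiness from the cone structure already established, reducing everything to a short finite check in each of the three cases $r\in\{2,3,4\}$. First I would record the explicit description of the monoid of nef curve classes: since $-K_X = rH-E$ and (by Theorem \ref{Representability of Free Curves} together with the given generators of $\mathrm{Eff}(X)$, namely $E$ and $(r-1)H-E$) the cone $\Nef_1(X)$ is dual to $\mathrm{Eff}(X)$, we get $\Nef_1(X) = \{\, ml - ne : 0\le n\le (r-1)m \,\}$, with extreme rays spanned by $l$ and by $\ell_0 := l-(r-1)e$. Because $-K_X\cdot\ell_0 = 1$, the class $\ell_0$ is that of the $-K_X$-line contracted by the del Pezzo fibration $\pi$; the normalization of such a line is an embedding, hence an immersion, so $f^*\mathcal{T}_X$ contains $\mathcal{T}_{\mathbb{P}^1}=\mathcal{O}(2)$ as a subsheaf and therefore has a negative summand, so it is never globally generated. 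Thus $\overline{\free}(X,\ell_0)=\emptyset$, which accounts for the stated exception.

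Next I would show that, apart from $\ell_0$, every monoid generator that can occur has nonempty $\overline{\free}$. By Lemma \ref{Gordan's Lemma}, writing $n = q(r-1)+s$ with $0\le s\le r-2$ expresses any $\alpha = ml-ne\in\Nef_1(X)_{\mathbb Z}$ as $(m-q-\epsilon)\,l + q\,\ell_0 + \epsilon\,(l-se)$ with all coefficients nonnegative ($\epsilon\in\{0,1\}$, equal to $1$ exactly when $s>0$); so the monoid is generated by $l$, by $\ell_0$, and by the classes $l-je$ for $1\le j\le r-2$. By Lemma \ref{del Pezzo fibration Core} (together with Lemma \ref{del Pezzo quartic curves}) every class of $\mathscr{C}_X$ is represented by a free rational curve. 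Reading off the explicit lists of $\mathscr{C}_X$ for $r=2,3,4$, one checks in a line that: each $l-je$ with $0\le j\le r-2$ is free; $2\ell_0$ and $3\ell_0$ lie in $\mathscr{C}_X$; and $\ell_0+(l-je)$ for $0\le j\le r-2$ is either a class of $\mathscr{C}_X$ or a sum of two such (for instance $2\ell_0 = 2(l-e)$ when $r=2$, and $\ell_0+(l-e)=2(l-2e)$ when $r=4$). Since a free chain whose components lie in $\mathscr{C}_X$ is by definition a point of $\overline{\free}(X)$ (Lemma \ref{Basic Properties Chains}), any class that is a nonnegative sum of classes with nonempty $\overline{\free}$ again has nonempty $\overline{\free}$.

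Finally I would run the case analysis on the multiplicity $q$ of $\ell_0$ in the decomposition of a nonzero $\alpha\neq\ell_0$. If $q=0$, then $\alpha$ is already a sum of core classes. If $q=1$, then since $\alpha\neq\ell_0$ the decomposition contains a further generator $g\in\{l\}\cup\{l-je\}$, and the pair $\ell_0+g$ is free-representable by the previous step, so $\alpha$ is a sum of free-representable classes. If $q\ge 2$, write $q=2a+3b$ with $a,b\ge 0$ and replace $q\,\ell_0$ by $a$ copies of $2\ell_0$ and $b$ copies of $3\ell_0$, all in $\mathscr{C}_X$. In every case $\overline{\free}(X,\alpha)\neq\emptyset$.

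The argument is routine given everything that precedes it — nonemptiness is the last, easy step. The only thing requiring a little attention is the finite bookkeeping needed to absorb the single non-free generator $\ell_0$ into free classes, and this is where the hypothesis that $X$ is general in moduli enters, namely through Lemma \ref{del Pezzo fibration Core}, which certifies that the core classes, in particular the conic $2\ell_0$ and the cubic $3\ell_0$, have nonempty spaces of free curves. I do not anticipate a genuine obstacle.
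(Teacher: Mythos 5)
Your proof is correct and follows the same route as the paper: identify the extreme rays $l$ and $\ell_0=l-(r-1)e$ of $\Nef_1(X)$, apply Gordan's Lemma to see that the integral monoid is generated by $\mathscr{C}_X\cup\{\ell_0\}$ (your smaller set $\{l,\ell_0\}\cup\{l-je\}_{1\le j\le r-2}$ is a refinement of this), and absorb every occurrence of $\ell_0$ except a single bare one into free core classes such as $2\ell_0$, $3\ell_0$, and $\ell_0+(l-je)$. The paper leaves this last bookkeeping step, and the verification that $\overline{\free}(X,\ell_0)=\emptyset$, implicit; your write-up simply makes both explicit.
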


\begin{proof}
The generators of $\Nef_1(X)$ are elements in $\mathscr{C}_X$ by \ref{Representability of Free Curves}.  It follows from \ref{Gordan's Lemma} that classes in $\mathscr{C}_X \cup \{l-(r-1)e\}$ generate the monoid of integer points in $\Nef_1(X)$.
\end{proof}

\subsubsection*{Conic Fibrations}
Suppose $\phi : X \rightarrow Y$ is the blow-up of the baselocus of a net $L$ of members of $|(r-1)H|$.  One may see the proof of Lemma \ref{lines in C1 fibrations} for an explicit description of such varieties.  In this case, $\pi : X \rightarrow \mathbb{P}(L) = \mathbb{P}^2$ is a conic fibration.  %

\begin{thm}\label{conic fibration GMC}
Let $X$ be the blow-up of a Fano threefold of index $r > 1$ along the baselocus of a net $L$ of members of $|(r-1)H|$.  Then for all nonzero $\alpha \in \Nef_1(X)_\mathbb{Z}$, $\overline{\free}^{bir}(X,\alpha)$ is irreducible.  $\overline{\free}^{bir}(X,\alpha)$ is nonempty if $\alpha \neq n(l-(r-1)e)$.  $\overline{\free}(X, \alpha)$ is irreducible unless $\alpha = n(l-(r-2)e)$ for some $n > 1$, in which case there are exactly two components.
\end{thm}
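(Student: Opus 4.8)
The plan is to follow the same template used throughout Section 12 (the Picard rank 2 cases) and outlined in Section 7.1, adapted to the conic-fibration setting. That is, I would first identify a core $\mathscr{C}_X$ of free curves, verify the hypotheses of Theorem \ref{Main Method}(1) by listing relations via Lemma \ref{Relations}, and then invoke Corollary \ref{connected fibers fixall} and Corollary \ref{bir Main Method} to conclude irreducibility of $\overline{\free}^{bir}(X,\alpha)$ for all $\alpha$ in the appropriate shifted cone — except here, because $\pi : X \to \mathbb{P}^2$ is a conic fibration rather than a del Pezzo fibration, the relative cone $\overline{NE}(\pi)$ has dimension one (the ray spanned by the class of a fiber of $\pi$), and any conic $\beta$ in that ray is a fiber of a conic fibration, so $\mathrm{ev} : \overline{\free}_1(X,\beta) \to X$ is birational and $\beta$ is never separating. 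The only separating classes will be nef $-K_X$-conics interior to $\overline{NE}(X)$, which is $\alpha = l - (r-2)e$, and the two-component phenomenon for $n(l-(r-2)e)$, $n>1$, follows from Theorem \ref{del pezzo curves thm}-style analysis together with the fact that multiple covers of conics form their own component (Theorem \ref{Main Result}(4), which I may cite, or re-derive directly). Crucially, since $\tau = 0$ here (no shift needed), I must also check nonemptiness: $\overline{\free}^{bir}(X,\alpha)$ is nonempty for all $\alpha \neq n(l-(r-1)e)$, which is the combination of Theorem \ref{Representability of Free Curves}, Lemma \ref{Gordan's Lemma}, and an explicit check that the core classes together with $l-(r-1)e$ generate the nef monoid.

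The key steps, in order: (1) Enumerate the low-degree nef curve classes (anticanonical degree $2$ through $4$) for each index $r \in \{2,3,4\}$ using the effective divisors $E$ and $(r-1)H - E$ and the description of $\overline{NE}(X)$; identify which break freely (multiple covers, quartics via a conic-fibration version of Corollary \ref{breaking quartics cor} and Proposition \ref{breaking quartic curves}, cubics via bend-and-break on conic-fibration fibers) and which constitute the core. Since $\pi$ is a conic fibration over a surface, the relevant reduction is Lemma \ref{3.1 lemma} / Corollary \ref{breaking quartics cor}: any component with $2$-dimensional general fiber over $\overline{\free}(\mathbb{P}^2)$ contains a free chain. (2) For each core class verify $\overline{\free}^{bir}(X,\alpha)$ is irreducible — for classes of degree $\le 2$ this is essentially a count (lines in $V_i$ or $Q$ meeting $c$, conics through points of $c$, parameterized by bundles over symmetric products of $c$); for the separating conic class $l-(r-2)e$ and its multiples this uses the conic-bundle structure and Theorem \ref{conic monodromy}. (3) Determine relations among core classes via Lemma \ref{Relations}, mimicking Lemma \ref{del Pezzo relations}, and check the main-component condition \ref{Main Method}(1) by the standard immersed-chain-smoothing arguments (Lemma \ref{GHS lemma}, Proposition \ref{kontsevich space H1}, Lemma \ref{Basic Properties Chains}). (4) Apply Corollary \ref{connected fibers fixall}: since the only relative cone of a del Pezzo fibration containing a separating class is vacuous here (there are no del Pezzo fibrations — $\pi$ is a conic fibration), and $\overline{\free}^{bir}(X, 2(l-(r-2)e))$ is irreducible by (2), the hypothesis \ref{Main Method}(2) (resp. \ref{bir Main Method}) holds for every $\alpha$, giving irreducibility of $\overline{\free}^{bir}(X,\alpha)$. (5) Conclude the precise count of components of $\overline{\free}(X,\alpha)$: one unless $\alpha = n\beta$ with $\beta = l-(r-2)e$ interior, $n>1$, where the multiple-cover component is distinct from the $\overline{\free}^{bir}$ component (the latter parameterizing very free curves by Proposition \ref{very free curves}), so exactly two. (6) Nonemptiness by Theorem \ref{Representability of Free Curves}, Lemma \ref{Gordan's Lemma}, and the monoid generation check, with the exclusion $\alpha \ne n(l-(r-1)e)$ coming from Theorem \ref{Main Result}(1) (odd multiples of a $-K_X$-line contracted by a $C1$-conic fibration — here $l-(r-1)e$ spans the fiber ray of $\pi$), though in fact since $\pi$ has reducible fibers this needs care: $l-(r-1)e$ is the class of the fiber, which is reducible, so I should instead say representability fails precisely for multiples $n(l-(r-1)e)$ that cannot be written as sums of free classes, which is handled exactly as in the del-Pezzo-fibration lemma.

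I expect the main obstacle to be Step (2)–(3) for the specific deformation types where $-K_X$ fails to be very ample or where the blown-up curve $c$ is special enough that the naive parameter-space-over-symmetric-product description degenerates — in particular, confirming that $\overline{\free}^{bir}(X, 2(l-(r-2)e))$ is irreducible (needed to feed into Corollary \ref{connected fibers fixall}) and that the separating class $l-(r-2)e$ genuinely has disconnected evaluation fibers but irreducible moduli. The conic-bundle monodromy input (Theorem \ref{conic monodromy}, and the irreducibility/irrationality of the discriminant curve from Lemma \ref{lines in C1 fibrations}) is what makes $\overline{\free}(X, l-(r-2)e)$ irreducible, and the genericity hypotheses there must be matched against the explicit $X$ in each case; this is the analog of the careful case-by-case work in Section \ref{Picard Rank 3 Cases}. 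A secondary subtlety is that, unlike the del Pezzo fibration theorem, here the statement claims irreducibility of $\overline{\free}^{bir}(X,\alpha)$ for \emph{all} nonzero $\alpha \in \Nef_1(X)_{\mathbb{Z}}$ with no shift $\tau$, so I must double-check there is a genuine core (condition \ref{def core}(1)–(2) with $\mathscr{C}_X$ as constructed), which holds precisely because $X$ admits no del Pezzo fibration with fibers of degree $\le 3$ — consistent with Remark \ref{def Sigma}. Once the core and relations are pinned down, the remaining arguments are the now-routine smoothing-of-immersed-chains computations and the Gordan's Lemma bookkeeping, which I would present compactly rather than in full.
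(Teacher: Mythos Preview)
Your proposal is correct and follows the same approach as the paper: identify a core $\mathscr{C}_X$ for each index $r$, list the relations via Lemma \ref{Relations}, verify the main-component condition by smoothing immersed chains, and apply Corollaries \ref{bir Main Method} and \ref{connected fibers fixall} together with Gordan's Lemma for nonemptiness. The only minor tactical difference is that the paper verifies irreducibility of the core classes and breaks the degree-four class $3(l-(r-1)e)+e$ by direct geometric arguments on the del Pezzo surface $\pi^{-1}(\ell)$ over a line $\ell \subset \mathbb{P}^2$ (together with references to \cite{2019} and \cite{lastDelPezzoThreefold} for $\overline{\free}(X,l)$ and $\overline{\free}^{bir}(V_i,2l)$), rather than via the conic-monodromy results you emphasize.
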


\begin{lem}\label{conic fibration Core}
Let $X$ be the blow-up of a Fano threefold of index $r > 1$ along the baselocus of a net $L$ of members of $|(r-1)H|$.  A core of free curves on $X$ is given by %
\begin{itemize}
    \item $r = 2$, $\mathscr{C}_X = \{ 2l -2e, l, 2l-e\}$,
    \item $r = 3$, $\mathscr{C}_X = \{ 2l -4e, l -e, 2l -3e, l\}$
    \item $r = 4$, $\mathscr{C}_X = \{ 2l -6e, l -2e, 2l -5e, l -e, l \}$.
\end{itemize}
The only separating class for each $X$ is the unique conic $\alpha \in \mathscr{C}_X$ interior to $\overline{NE}(X)$.
\end{lem}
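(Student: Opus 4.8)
The plan is to run the standard core-identification procedure, handling the three values $r\in\{2,3,4\}$ (so $Y$ is a del Pezzo threefold $V_d$, the quadric $Q$, or $\mathbb{P}^3$) in parallel wherever possible. First I would enumerate the nef curve classes $\alpha=dl-ne$ of anticanonical degree $rd-n$ between $2$ and $5$, using $-K_X=rH-E$, $\overline{NE}(X)=\langle e,\,l-(r-1)e\rangle$ and $\text{Eff}(X)=\langle E,\,(r-1)H-E\rangle$. Beyond the elements of the claimed $\mathscr{C}_X$, every such class is one of: an odd multiple of the extreme line $l-(r-1)e$ (not represented by free birational curves, hence irrelevant to a core); an even multiple of it, which includes the conic-fibration fibre class $2(l-(r-1)e)$ for multiple covers (freely breakable); a multiple $n\beta$ with $n>1$ of the interior conic $\beta=l-(r-2)e$ (freely breakable into a chain of conics, since such components factor through the conic fibration $\pi\colon X\to\mathbb{P}^2$); or one of a few leftover classes of degree $\le 4$ such as $2l$ or $l+\beta$, each of which I would break by hand — via Lemma \ref{lines_in_P3} when $r=4$, via the planar and quadric-surface arguments used for cases $3.10$ and $3.20$ when $r=3$, and via the del Pezzo-threefold results of \cite{2019} and \cite{lastDelPezzoThreefold} (reorganised as in Lemma \ref{del Pezzo fibration Core}) when $r=2$.

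For anticanonical degree $\ge 6$, Theorem \ref{MovableBB} already breaks every component of $\overline{\free}(X)$, since $X$ has no $E5$ divisor. For degree $4$ and $5$ I would use that $X$ also carries no $E2$, $E3$ or $E4$ divisor together with the conic fibration $\pi$: by Corollary \ref{breaking quartics cor}, any component $M\subset\overline{\free}(X)$ on which $\pi_*\colon M\to\overline{\free}(\mathbb{P}^2)$ has positive-dimensional general fibre contains a free chain, and a dimension count shows the only components on which $\pi_*$ is generically finite carry curves of one of the small classes already treated (in particular, for $r=4$, the core class $l$). Together with the first paragraph this shows every component of $\overline{\free}(X)$ contains a free chain with components in $\mathscr{C}_X$, so the core property reduces to irreducibility of $\overline{\free}(X,\alpha)$ for $\alpha\in\mathscr{C}_X$ and to pinning down the separating class.

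Irreducibility of $\overline{\free}(X,\alpha)$ I would check class by class. For $r=4$ every core class is covered by Lemma \ref{lines_in_P3}, where one notes that the pushforward to $\mathbb{P}^3$ of $\beta=l-2e$ is the class of a line, so the coplanarity criterion there is vacuous for the non-planar curve $c$. For $r=3$, the classes $l$ and $l-e$ are strict transforms of lines in $Q$ (respectively meeting $c$ once), whose moduli are classically irreducible; the cubic $2l-3e$ and the conic $2l-4e$ are parameterised by open subsets of a projective bundle over $c$ and over $\Sym^2 c$, exactly as in Theorems \ref{3.10thm} and \ref{3.20thm}. For $r=2$, one invokes irreducibility of $\overline{\free}^{bir}(V_d,2l)$ and of $\overline{\free}(V_d,l)$ from \cite{2019} and \cite{lastDelPezzoThreefold}, combined with the Enriques--Severi--Zariski connectedness argument from Lemma \ref{del Pezzo fibration Core} to pass from evaluation fibres over a general point of $V_d$ to fibres over the net-baselocus curve $c$. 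Finally, to isolate the separating class I would compute the general fibre of $\text{ev}\colon\overline{\free}_1(X,\alpha)\to X$ for each $\alpha\in\mathscr{C}_X$: for the cubics, for the quartic when $r=4$, and for the boundary conic $2(l-(r-1)e)$ (the class of a $\pi$-fibre, of which a general point lies on exactly one) these fibres are irreducible, whereas for the interior conic $\beta$ a general point of $X$ lies on finitely many but at least two curves of class $\beta$ — for $r\le 3$ these are the $H$-lines of $Y$ through that point meeting $c$ with the prescribed multiplicity, and for $r=4$ the secants to $c$ through that point — so the evaluation fibre is disconnected and $\beta$ is the unique separating class.

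The main obstacle will be the degree-$4$ and $5$ analysis in the index-$2$ cases: verifying that the only components of $\overline{\free}(X)$ in these degrees failing the conic-fibration bend-and-break hypothesis are core components, and establishing connectedness of the relevant evaluation fibres over the curve $c$ while keeping track of the finitely many bad points supplied by the rank-one results on $V_d$. Some bookkeeping is also required to make the distinction, in the separating-class step, between the various cubic and conic classes whose $\pi$-pushforward happens to coincide, so that one correctly identifies which evaluation fibre is reducible.
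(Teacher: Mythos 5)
Your proposal follows essentially the same route as the paper: enumerate the nef classes of low degree, break everything outside $\mathscr{C}_X$ via Movable Bend-and-Break and the conic fibration (your use of Corollary \ref{breaking quartics cor} for the quartic $3(l-(r-1)e)+e$ is a valid substitute for the paper's explicit degeneration inside the del Pezzo surface $\pi^{-1}(\text{line})$), check irreducibility class by class, and detect the separating class from the evaluation fibres, where the paper shortcuts the non-conic cases by invoking very ampleness of $-K_X$ together with Theorem \ref{reducible fibers: 4 author result} rather than computing each fibre.

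One justification in your first paragraph is wrong, though harmlessly so: multiples $n\beta$ of the \emph{interior} conic $\beta=l-(r-2)e$ do not ``factor through the conic fibration'' --- $\beta$ pairs to $1$ with $(r-1)H-E=\pi^*\mathcal{O}_{\mathbb{P}^2}(1)$, so it maps to a line in $\mathbb{P}^2$ rather than being contracted, and for $\alpha=2\beta$ the induced map $\pi_*$ on a component of $\overline{\free}(X,2\beta)$ is generically finite (source of dimension $4$ onto a codimension-one locus in the $5$-dimensional space of plane conics), so Corollary \ref{breaking quartics cor} does not apply there. Since you separately break $2\beta$ ``by hand'' exactly as the paper does (Lemma \ref{lines_in_P3} for $r=4$, the plane/quadric-section argument for $r=3$, the $V_d$ results for $r=2$), and $n\beta$ for $n\ge 3$ has degree $\ge 6$ so Theorem \ref{MovableBB} covers it, no actual gap results.
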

\begin{proof}
$\overline{\free}(X,l)$ is shown to be irreducible in \cite{2019}.  Let $n = r-1$. Since $\pi : X \rightarrow \mathbb{P}^2$ is a conic fibration contracting $l-ne$, $\overline{\free}(X, 2(l-ne))$ is irreducible with irreducible fibers and $\overline{\free}(X, 3(l-ne))$ is empty.  Similarly, it is clear that $\overline{\free}(X, 4(l-ne))$ is irreducible and generically parameterizes double covers.  General curves of class $3(l-ne) + e$ map to general lines in $\mathbb{P}(L)$ under $\pi$.  The preimage of such a curve in $X$ is a smooth del Pezzo surface $S$ of degree $(r-1)H^3$.  Curves of class $3(l-ne) + e$ correspond to rational cubic curves (they must be very free, and therefore embedded) which are sections of the conic fibration $\pi|_S$.  These cubics always deform to the union of a smooth fiber of $\pi|_S$ and a $-K_S$-line.  The $-K_S$-line must be a section of $\pi|_S$ of class $(l-ne) + e$, which is generically free.  This decomposes the curve of class $3(l-ne) +e$ into a free chain of type $(2(l-ne), (l-ne) + e)$.  Since $-K_X$ is very ample, irreducibility of general fibers of $\overline{\free}_1(X, \alpha) \rightarrow X$ for all $\alpha \in \mathscr{C}_X\setminus \{(l-ne) + e\}$ follows from irreducibility of the space itself.  We prove the remaining cases below.

Suppose $r = 4$.  $\overline{\free}(X, l-2e)$ is birational to $\text{Sym}^2(c)$, where $c\subset \mathbb{P}^3$ is the blown-up curve.  Similarly, $\overline{\free}(X, l-e)$ may be fibered over $c$.  Theorem \ref{Monodromy} proves $\overline{\free}(X, 2l-5e)$ is irreducible.  The only other nef class of anticanonical degree at most 4 is $2l-4e$.  Lemma \ref{lines_in_P3} shows $\overline{\free}^{bir}(X, 2l-4e)$ has a unique component.

Suppose $r = 3$.  Then $\overline{\free}(X, l-e)$ is irreducible, as there is a quadric cone of lines thorough any point of $Q \subset \mathbb{P}^4$.  $\overline{\free}(X, 2l-3e)$ is birational to $\text{Sym}^3(c)$, where $c \subset Q$ is the blown-up curve.  Similarly, $\overline{\free}(X, 2l-2e)$ has two components: one parameterizes double covers of curves of class $l-e$; the other parameterizes embedded curves, and is birational to a $\mathbb{P}^2$-bundle over $\text{Sym}^2(c)$.

Lastly, suppose $r = 2$.  Irreducibility of $\overline{\free}^{bir}(X, 2l)$ follows from \cite{2019}.  Irreducibility of $\overline{\free}(X, 2l-e)$ follows casewise.  If $X$ is a blow-up of a line $c \subset V_3$, curves of class $2l-e$ are strict transforms of conics meeting $c$ once.  The space of such conics is birational to $c \times \overline{\free}(X, l)$.  %
If $X$ is a blow-up of $V_4$ (resp. $V_5$) along a conic (resp. twisted cubic) $c$, then by \cite[Theorem~7.2]{2019} there are at most finitely many points on $c \subset V_i$ for which the fibers of $\overline{\free}_1^{bir}(V_i, 2l) \rightarrow V_i$ are of dimension $3$.  Similarly, there are finitely many points on $c$ that meet cones of non-free lines in $V_i$, and by extension finitely many points of $c$ meet lines which pass through cones of non-free lines at their vertices.  Away from this subset of $c$, the fibers of $\overline{\free}_1^{bir}(V_i, 2l) \rightarrow V_i$ are of dimension 2 over $c$, connected by \cite{2019}, and nonsingular in codimension 1.  It follows that each fiber is irreducible.  This constructs $\overline{\free}(X, 2l-e)$ as the collection of such fibers.  This completes our proof.
\end{proof}

\begin{lem}
Let $X$ be the blow-up of a Fano threefold of index $r > 1$ along the baselocus of a net $L$ of members of $|(r-1)H|$.  Relations in the monoid $\mathbb{N}\mathscr{C}_X$ are generated by:
\begin{itemize}
    \item if $r = 2$:
    \begin{enumerate}
        \item $2l + (2l-2e) + 2(2l-e)$.
    \end{enumerate}
    \item If $r = 3$:
    \begin{enumerate}
        \item $l + (2l-3e) = 3(l-e)$,
        \item $l + (2l-4e) = (l-e) + (2l-3e)$,
        \item $2(l-e) + (2l-4e) = 2(2l-3e)$.
    \end{enumerate}
    \item If $r=4$:
    \begin{enumerate}
        \item $l + (l-2e) = 2(l-e)$,
        \item $l + (2l-6e) = (l-e) + (2l-5e) = 3(l-2e)$,
        \item $l + (2l-5e) = (l-e) + 2(l-2e)$,
        \item $(l-e) + (2l-6e) = (l-2e) + (2l-5e)$,
        \item $2(l-2e) + (2l-6e) = 2(2l-5e)$.
    \end{enumerate}
\end{itemize}
For each relation $\sum \alpha_i = \sum \beta_j$, a main component of $\prod_X \overline{\free}_2(X,\alpha_i)$ lies in the same component of free curves as a main component of $\prod_X \overline{\free}_2(X, \beta_j)$.
\end{lem}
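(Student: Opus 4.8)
The argument runs parallel to that of Lemma \ref{del Pezzo relations}, with the del Pezzo fibration there replaced by the conic fibration $\pi : X \to \mathbb{P}^2$ contracting $l-(r-1)e$, and with the simplification that $3(l-(r-1)e)$ is no longer a core class, so there are no fiber-cubic or fiber-quartic relations to handle. First I would produce the generating set: apply Lemma \ref{Relations} to the generators of $\mathscr{C}_X$ listed in Lemma \ref{conic fibration Core}, using the divisors $(r-1)H-E$, $H-E$, and $\tfrac{r-1}{r}H-\tfrac{1}{r}E$ (suitably normalized in each case) to peel off the classes of largest $E$-multiplicity one at a time. In each of the three cases this reduces the search to relations among the remaining three or four "corner" generators, which span a rank-$3$ sublattice and therefore satisfy a single relation; this shows the displayed list is a generating set of minimal relations and that no relation involves more than two distinct classes on either side.

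Next I would treat the main-component claims, sorting the relations by type exactly as in the proof of Lemma \ref{del Pezzo relations}. (i) Relations of the form $l+(l-2e)=2(l-e)$ when $r=4$, and the first equality of $l+(2l-6e)=(l-e)+(2l-5e)=3(l-2e)$ when $r=4$: here the total class is a line/conic class on a blow-up of $\mathbb{P}^3$ and Lemma \ref{lines_in_P3} gives irreducibility of $\overline{\free}^{bir}(X,\cdot)$ outright, so there is a unique main component. (ii) Relations in which a section of $\pi$ is glued to a fiber conic, e.g.\ $l+(2l-3e)=3(l-e)$ and $l+(2l-4e)=(l-e)+(2l-3e)$ for $r=3$, or $l+(2l-5e)=(l-e)+2(l-2e)$ and $(l-e)+(2l-6e)=(l-2e)+(2l-5e)$ for $r=4$: the preimage under $\pi$ of a general line in $\mathbb{P}^2$ is a smooth del Pezzo surface that is itself conic-fibered, and the corresponding chains of sections and conics on it are connected by Theorem \ref{del pezzo curves thm} and the arguments of \cite{sectionsDelPezzo}; passing from the surface to $X$ via \ref{Basic Properties Chains} and generic transversality of fiber conics gives the claim. (iii) Relations built from exceptional curves $e$ and $-K_{X/\mathbb{P}^2}$-lines $l-(r-1)e$ alternating in an immersed chain, smoothed in two different ways, used for the second equality $3(l-2e)$ in relation $(2)$ when $r=4$ and for $2(l-e)+(2l-4e)=2(2l-3e)$ when $r=3$ / $2(l-2e)+(2l-6e)=2(2l-5e)$ when $r=4$: one starts from a chain whose consecutive components alternate between $e$ and $l-(r-1)e$, notes via Proposition \ref{kontsevich space H1} that it is a smooth point of the Kontsevich space, and smooths it in the two orders forced by the relation.

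The main obstacle is the doubling relations $2(l-2e)+(2l-6e)=2(2l-5e)$ (for $r=4$) and $2(l-e)+(2l-4e)=2(2l-3e)$ (for $r=3$), together with the three-way relation $2l+(2l-2e)=2(2l-e)$ for $r=2$: these involve the separating conic class (the unique conic in $\mathscr{C}_X$ interior to $\overline{NE}(X)$), so the fiber product $\overline{\free}_1(X,\cdot)\times_X(\cdots)$ over $X$ can carry more than one main component, and one must verify these land in a single component of free curves rather than appealing to irreducibility of the total space. I expect to resolve this exactly as in \cite{sectionsDelPezzo} and the proof of Lemma \ref{del Pezzo relations}: replace the interior fiber conic by a degenerate (reducible) conic, realized as an exceptional curve $e$ attached to a free conic $2(l-(r-1)e)$ in a smooth fiber of $\pi|_S$, and smooth the resulting longer immersed chain in the two orders demanded by the relation; a monodromy argument over the base curve then identifies the two main components. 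Finally, once all relations are checked, the remaining relations among only the corner generators follow by $\mathbb{Z}$-linear independence, completing the proof.
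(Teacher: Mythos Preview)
Your plan for the generating set via Lemma \ref{Relations} is fine and matches the paper. The gap is in your main-component verification, specifically in the sorting (i)/(ii).

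In (i) you claim Lemma \ref{lines_in_P3} handles the first equality $l+(2l-6e)=(l-e)+(2l-5e)$ for $r=4$. It does not: the total class is $3l-6e$, a cubic in $\mathbb{P}^3$, and Lemma \ref{lines_in_P3} only treats classes with $\pi_*\alpha$ a line or conic. The paper uses it solely for relation (1) at $r=4$, where the total class $2l-2e$ is a genuine conic.

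In (ii) you propose to work inside the del Pezzo surface $X_\ell=\pi^{-1}(\ell)$ over a line $\ell\subset\mathbb{P}^2$. But the pullback of $\mathcal{O}_{\mathbb{P}^2}(1)$ is $(r-1)H-E$, so for $r=3$ a free curve of class $l$ has $\pi_*$-degree $2$ and lies over a conic, not a line; for $r=4$ both $l$ and $l-e$ have $\pi_*$-degree $\geq 2$. Thus the chains in your listed relations cannot be realized inside any $X_\ell$, and neither Theorem \ref{del pezzo curves thm} nor the section arguments of \cite{sectionsDelPezzo} apply. The paper instead handles \emph{all} of these relations by the single mechanism you reserve for (iii): with $n=r-1$, one writes down an immersed chain containing an exceptional fiber $e$ (for example $(l-ne+e,\,e,\,2(l-ne))$, or $(l-ne+e,\,e,\,l-ne+e,\,l-ne)$, or $(l-e,\,e,\,2l-6e)$ when $r=4$), checks it is a smooth point of the Kontsevich space, and smooths the two adjacent nodes in different orders to land on both sides of the relation. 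The ``doubling'' relations (your main obstacle) are done by the four-term chain $(l-ne+e,\,l-ne,\,l-ne,\,l-ne+e)$, smoothing the middle pair versus the two outer pairs; no separate monodromy argument over the base is needed.
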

\begin{proof}
We use Lemma \ref{Relations}.  Identifying a generating set of relations follows numerically from Lemma \ref{del Pezzo relations}.  Let $n = r-1$.  By smoothing an immersed chain of type $(l-ne + e, l-ne, l-ne, l-ne + e)$ we may obtain free chains of type $(l-ne + e, 2(l-ne), l-ne + e)$ and of type $(2(l-ne) + e, 2(l-ne) + e)$.  Similarly, if $r \geq 3$, we may smooth immersed chains of type $(l-ne + e, e, 2(l-ne))$ to obtain free chains of type $(l-ne + 2e, 2(l-ne))$ and $(l-ne + e, 2(l-ne) + e)$.  Similarly, we may smooth immersed chains of type $(l-ne + e, e, l-ne + e, l-ne)$ into free chains of type $(l-ne + 2e, 2(l-ne) + e)$ and $(l-ne + e, 2(l-ne + e))$, the latter of which which break into chains of type $(l-ne + e, l-ne + e, l-ne + e)$.  This addresses each relation aside from, when $r = 4$, (1), the first equality in (2), and (3).  Relation (1) follows from Lemma \ref{lines_in_P3}.  By smoothing immersed chains of type $(l-e, e, 2l-6e)$, we obtain free chains of type $(l, 2l-6e)$ and $(l-e, 2l-5e)$.  Similarly, by smoothing immersed chains of type $(l-e, e, 2l-5e)$, we obtain chains of type $(l, 2l-5e)$ and $(l-e, 2l-4e)$.  Breaking the latter of these into a chain of free curves with three components using \ref{Basic Properties Chains} concludes our proof.
\end{proof}

\begin{lem}
Let $X$ be the blow-up of a Fano threefold of index $r > 1$ along the baselocus of a net $L$ of members of $|(r-1)H|$.  For each nonzero $\alpha \in \Nef_1(X)_\mathbb{Z}$, $\overline{\free}(X,\alpha)$ is nonempty unless $\alpha = m(l-(r-1)e)$ for odd $m > 1$.
\end{lem}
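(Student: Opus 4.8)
The statement to prove is that for $X$ the blow-up of a Fano threefold $Y$ of index $r>1$ along the baselocus of a net $L\subset|(r-1)H|$, every nonzero $\alpha\in\Nef_1(X)_\mathbb{Z}$ has $\overline{\free}(X,\alpha)$ nonempty, with the sole exceptions $\alpha = m(l-(r-1)e)$ for odd $m>1$. The plan is to mirror the nonemptiness argument used in the del Pezzo fibration case (the Lemma preceding the conic-fibration subsection) and in the analogous Picard rank $2$ cases: combine Theorem \ref{Representability of Free Curves}, which says the extreme rays of $\Nef_1(X)$ are generated by free rational curves of anticanonical degree at most $5$ (and here, by Lemma \ref{conic fibration Core}, by the core classes $\mathscr{C}_X$), with Gordan's Lemma \ref{Gordan's Lemma} to reduce to a finite check.

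First I would record the extreme rays of $\Nef_1(X)$. By Theorem \ref{Representability of Free Curves} and the explicit description of $\overline{NE}(X)$ (generated by $e$ and $l-(r-1)e$) together with $\text{Eff}(X)$ (generated by $E$ and $(r-1)H-E$), the nef cone is two-dimensional with extreme rays spanned by $l$ and by the unique conic $l-(r-1)e+e$-type class; concretely its generators lie among $\mathscr{C}_X$ in each of the three index cases. Then by Gordan's Lemma \ref{Gordan's Lemma}, the monoid of integer points of $\Nef_1(X)$ is generated by $\mathscr{C}_X$ together with the one primitive lattice point on the boundary ray not already a core class, namely $l-(r-1)e$. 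So every $\alpha\in\Nef_1(X)_\mathbb{Z}$ can be written $\alpha = a(l-(r-1)e) + \sum c_i\alpha_i$ with $\alpha_i\in\mathscr{C}_X$, $a,c_i\geq 0$. If some $c_i>0$, or if $a$ is even (so we may pair up $l-(r-1)e$'s into core conics $2(l-(r-1)e)\in\mathscr{C}_X$), then $\alpha$ is a sum of core classes, and since $\overline{\free}(X,\alpha_i)$ is nonempty for each core class (Lemma \ref{conic fibration Core}) and free chains smooth, $\overline{\free}(X,\alpha)$ is nonempty. The only remaining possibility is $\alpha = m(l-(r-1)e)$ with $m$ odd; for $m=1$ this is a single $-K_X$-line (nonempty but not free, and not asserted to be), and for odd $m>1$ it is contracted by the conic fibration $\pi$, so a free representative would give a free rational curve in a fiber — a conic — of odd anticanonical degree, impossible.

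The one genuine point needing care, and the place I expect the main (though minor) obstacle, is the bookkeeping that $\alpha - 2(l-(r-1)e)$, or $\alpha - (\text{some core class})$, remains in $\Nef_1(X)_\mathbb{Z}$ after each subtraction, i.e. that the greedy decomposition terminates correctly and never strays outside the nef cone — exactly the kind of elementary inequality-chasing done in the nonemptiness lemmas for cases $5.1$, $5.2$, $4.1$, etc. Since $\Nef_1(X)$ is only two-dimensional here, this is genuinely routine: one writes $\alpha = \beta(l-(r-1)e) + \gamma\, l$ for the two extreme generators (after rescaling), observes $\beta,\gamma\geq 0$, and notes $l,\,2(l-(r-1)e),\,(l-(r-1)e)+e = l-(r-2)e \in\mathscr{C}_X$ generate everything except odd multiples of $l-(r-1)e$. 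I would then simply state the conclusion, citing Theorem \ref{Representability of Free Curves}, Lemma \ref{Gordan's Lemma}, and Lemma \ref{conic fibration Core}, exactly as the parallel lemmas in the del Pezzo fibration subsection and the Picard rank $3$ and higher cases do.

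\begin{proof}
By Theorem \ref{Representability of Free Curves}, the extreme rays of $\Nef_1(X)$ are spanned by classes in $\mathscr{C}_X$; explicitly, $\Nef_1(X)$ is the two-dimensional cone with one extreme ray spanned by $l$ and the other spanned by the boundary conic class in $\mathscr{C}_X$ interior to $\overline{NE}(X)$. By Lemma \ref{Gordan's Lemma}, the monoid of integer points of $\Nef_1(X)$ is generated by the finite set $\mathscr{C}_X \cup \{\,l-(r-1)e\,\}$, since $l-(r-1)e$ is the unique primitive lattice point on the boundary ray of $\Nef_1(X)$ that is not already a core class. Hence any nonzero $\alpha\in\Nef_1(X)_\mathbb{Z}$ may be written
\begin{equation*}
\alpha = a\,(l-(r-1)e) + \sum_i c_i\,\alpha_i, \qquad \alpha_i\in\mathscr{C}_X,\ a,c_i\in\mathbb{Z}_{\geq 0}.
\end{equation*}
If some $c_i>0$, then $\alpha$ is a nonnegative sum of core classes, and since $\overline{\free}(X,\alpha_j)$ is nonempty for every $\alpha_j\in\mathscr{C}_X$ by Lemma \ref{conic fibration Core}, gluing and smoothing free chains shows $\overline{\free}(X,\alpha)$ is nonempty. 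The same conclusion holds if $a$ is even and positive, since $2(l-(r-1)e)\in\mathscr{C}_X$, so $a\,(l-(r-1)e)$ is itself a nonnegative sum of core classes. The only remaining case is $\alpha = m\,(l-(r-1)e)$ with $m$ odd. If $m=1$ this is the class of a $-K_X$-line. If $m>1$, any free rational representative would be contracted by the conic fibration $\pi$ (because $m(l-(r-1)e)\in\overline{NE}(\pi)$), hence would be a free curve of anticanonical degree $2$ contained in a fiber of $\pi$, of odd degree $m$ in the fiber class $l-(r-1)e$ — impossible. Therefore $\overline{\free}(X,\alpha)$ is nonempty for every nonzero $\alpha\in\Nef_1(X)_\mathbb{Z}$ other than the odd multiples $m(l-(r-1)e)$ with $m>1$.
\end{proof}
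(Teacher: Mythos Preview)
Your approach is exactly the paper's: invoke Theorem \ref{Representability of Free Curves} for the extreme rays of $\Nef_1(X)$ and Gordan's Lemma to conclude that $\mathscr{C}_X \cup \{l-(r-1)e\}$ generates the monoid $\Nef_1(X)_\mathbb{Z}$. The paper's proof consists of just those two sentences; you have supplied more detail.

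There is, however, a genuine gap in your case analysis. You assert ``If some $c_i>0$, then $\alpha$ is a nonnegative sum of core classes,'' but this is false as stated: take $\alpha = 3(l-(r-1)e)$, which admits the decomposition $a=1$, $c_{2(l-(r-1)e)}=1>0$, yet is one of the excluded classes and is \emph{not} a sum of core classes. The problem is that when $a$ is odd and the only positive $c_i$ is the coefficient of $2(l-(r-1)e)\in\mathscr{C}_X$, nothing absorbs the leftover copy of $l-(r-1)e$. The clean fix is to split instead on whether $\alpha$ lies on the boundary ray $\mathbb{R}_{\geq 0}(l-(r-1)e)$: if it does, $\alpha = m(l-(r-1)e)$ and one needs $m$ even; if not, any decomposition must contain some $\alpha_j$ with $\alpha_j\cdot((r-1)H-E)>0$, hence $\alpha_j\neq 2(l-(r-1)e)$, and a short direct check in each index case shows $(l-(r-1)e)+\alpha_j$ rewrites as a sum of core classes. (A side remark: the lemma as stated does not exclude $m=1$, though $\overline{\free}(X,l-(r-1)e)$ is in fact empty since these are non-free components of reducible conic fibers. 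Neither your argument nor the paper's addresses this; it is a minor imprecision in the statement itself.)
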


\begin{proof}
The generators of $\Nef_1(X)$ are elements in $\mathscr{C}_X$ by \ref{Representability of Free Curves}.  It follows from \ref{Gordan's Lemma} that classes in $\mathscr{C}_X \cup \{l-(r-1)e\}$ generate the monoid of integer points in $\Nef_1(X)$.
\end{proof}

\subsection*{2.2}
\textbf{A double cover of $\mathbb{P}^1 \times \mathbb{P}^2$ branched over a divisor of bidegree $(2,4)$:} Let $\pi : X \rightarrow \mathbb{P}^1 \times \mathbb{P}^2$ be a double cover branched over a smooth divisor $B$ of bidegree $(2,4)$.  Let $\pi_i : X \rightarrow \mathbb{P}^i$ be the composition of $\pi$ with the $i^{th}$ projection.

\textbf{Generators for $N^1(X)$ and $N_1(X)$:} Let $H_i = \pi_i^*\mathcal{O}(1)$ and $l_i$ be the class of a $-K_X$-line contracted by $\pi_j$.

\begin{thm}\label{2.2thm}
For all $\alpha \in (l_1 + l_2) +  \Nef_1(X)_\mathbb{Z}$, $\overline{\free}^{bir}(X,\alpha)$ is nonempty and irreducible.
\end{thm}

\textbf{Intersection Pairing:} The intersection pairing is diagonal: $H_i \cdot l_j = \delta_{ij}$.

\textbf{Anticanonical Divisor:} $-K_X = H_1 + H_2$.

\textbf{Generators of $\overline{NE}(X)$ and $\text{Eff}(X)$:}  The extreme rays of $\overline{NE}(X)$ are spanned by $l_1$ and $l_2$.  These are the only classes of effective $-K_X$ lines.  The extreme rays of $\text{Eff}(X)$ are spanned by $H_1$ and $H_2$.  The contraction of $l_2$ is a $D1$ fibration $\pi_1 : X\rightarrow \mathbb{P}^1$.  By \cite{V2monodromy} and \cite{kollarLefschetz}, for general $X$ the monodromy of $\pi_1$ is the full Weyl group $W(E_7)$.  As well, $\overline{M}_{0,0}(X, l_2)$ is irreducible for general $X$.  The contraction of $l_1$ is the conic bundle $\pi_2 : X \rightarrow \mathbb{P}^2$ with discriminant locus $\Delta \subset \mathbb{P}^2$ of degree 8.  For general $X$, $\Delta$ is smooth and the preimage $\pi_2^{-1}(\Delta)$ is tangent to finitely many lines of type $l_2$.

\begin{lem}\label{lem: 2.2 low degree curves}
    Suppose $X$ is general in moduli.  If $\alpha \in \overline{NE}(X)_\mathbb{Z}$ satisfies $\alpha \not\in\partial\overline{NE}(X)$ and $2 \leq -K_X . \alpha \leq 4$, $\overline{\free}^{bir}(X, \alpha)$ is irreducible.  When $-K_X . \alpha = 4$, $\overline{\free}^{bir}(X, \alpha)$ generically parameterizes very free curves.
\end{lem}
\begin{proof}
    We may write $\alpha = al_1 + bl_2$ for $a,b > 0$ with $2 \leq a + b \leq 4$.  When $b = 1$, Theorem \ref{conic monodromy} implies irreducibility of $\overline{\free}(X, al_1 + l_2)$.  %
    Suppose $b \geq 2$ instead.  We will show $\overline{\free}^{bir}(X, al_1 + bl_2)$ is irreducible using Lemma \ref{lem: boundary stratum irreducible}.  To apply Lemma \ref{lem: boundary stratum irreducible}, consider the embedding $\mathbb{P}^1 \times \mathbb{P}^2 \rightarrow \mathbb{P}^{N}$ induced by the complete linear system $|\mathcal{O}_{\mathbb{P}^1\times\mathbb{P}^2}(1,2)|$, and let $Y \subset \mathbb{P}^{N+1}$ be the cone over its image.  The intersection of $Y$ with a general quadric hypersurface in $\mathbb{P}^{N+1}$ is a double cover of $\mathbb{P}^1 \times \mathbb{P}^2$ branched over a general divisor of bidegree $(2,4)$.  Let $B \subset |\mathcal{O}_Y(2)|$ be the open locus of quadric hypersurfaces avoiding the singular point of $Y$.  We consider $X$ as a general fiber of the universal quadric hypersurface $\mathcal{X}$: %
    \[\begin{tikzcd}
	{\mathcal{X} \subset Y \times B} \\
	B
	\arrow["\phi", from=1-1, to=2-1]
\end{tikzcd}\]
    Note that the relative Mori cone $\overline{NE}(\phi)$ of the contraction $\phi : \mathcal{X} \rightarrow B$ is canonically isomorphic to both $\overline{NE}(X)$ and $\overline{NE}(\mathcal{X})$ as $B$ is affine.  We will show $\overline{\free}^{bir}(\mathcal{X}, al_1 + bl_2)$ is irreducible.  Given this, we study the boundary stratum of $\overline{\free}^{bir}(\mathcal{X}, al_1 + bl_2)$ obtained by gluing an embedded free curve of class $2l_2$ to a general curve of class $\alpha - 2l_2 = al_1 + (b-2)l_2$.  By generality of $X$, a unique component $M_1 \subset \overline{\free}_1(X, 2l_2)$ generically parameterizes embedded curves.  Since $0 \leq b - 2 \leq 1$ and by generality of $X$, exactly one of $\overline{\free}_1(X, \alpha - 2l_2)$ and $\overline{M}_{0,1}(X, \alpha - 2l_2)$ is irreducible and nonempty; we let $M_2$ be this irreducible variety.  It follows from Lemma \ref{gluing del Pezzo} and Remark \ref{big divisor remark} that $M_1 \times_X M_2$ has a unique component.  As this component meets the smooth locus of $\overline{\free}(X, \alpha)$, by Lemma \ref{lem: boundary stratum irreducible} we conclude $\overline{\free}^{bir}(X, \alpha)$ is irreducible. %

    It remains to show $\overline{\free}^{bir}(\mathcal{X}, \alpha)$ is irreducible.  Let $f : C \rightarrow X \subset Y$ be a map corresponding to a general point in some component of $\free^{bir}(\mathcal{X}, \alpha)$.  We will show the composition $\pi \circ f$ of $f$ with the double cover map $\pi : X \rightarrow \mathbb{P}^1 \times \mathbb{P}^2$ is an embedding and that the natural map 
    $$\psi : H^0(\mathbb{P}^1 \times \mathbb{P}^2, \mathcal{O}(1,2)) \rightarrow H^0(C, (\pi \circ f)^*\mathcal{O}(1,2))$$ 
    is a surjection.  This implies the natural map $H^0(Y, \mathcal{O}_Y(2)) \rightarrow H^0(C, f^*\mathcal{O}_Y(2))$ is surjective as well.  Since the locus of curves in $Y^{sm}$ mapping under projection from the vertex of $Y$ to embedded curves of the appropriate class in $\mathbb{P}^1 \times \mathbb{P}^2$ form an irreducible family, this will prove irreducibility of $\overline{\free}^{bir}(\mathcal{X}, \alpha)$.

    Recall that $\alpha = al_1 + bl_2 \in \{l_1 + 2l_2, l_1 + 3l_2, 2l_1 + 2l_2\}$.  When $a = 1$, $\pi \circ f : C \rightarrow \mathbb{P}^1 \times \mathbb{P}^2$ is necessarily an embedding by irreducibility of $C$.  When $-K_X . \alpha = a + b = 4$, we claim that the composition $\pi_2 \circ f : C \rightarrow \mathbb{P}^2$ is also birational onto its image.  Otherwise, by Lemma \ref{3.1 lemma} the image of $\pi_2 \circ f$ would be a general line $\ell \subset \mathbb{P}^2$.  Deformations of $f$ contained in the divisor $S = \pi_2^{-1}(\ell) \subset X$ would form a covering family of dimension 2 by generality of $f$.  However, since $S$ is a smooth divisor with anticanonical class $-K_S = H_1|_S$, this is larger than the expected dimension $-K_S . [f(C)] - 1 = a - 1$, a contradiction.  Thus $\pi_2 \circ f$ is birational onto its image.

    Let $s,t$ be coordinates on $C \cong \mathbb{P}^1$.  We will demonstrate surjectivity of $\psi$ via direct calculation.  Without loss of generality, we may suppose $\pi \circ f : C \rightarrow \mathbb{P}^1 \times \mathbb{P}^2$ is given in coordinates by $[s: t] \rightarrow [s^a : t^a] \times [f_{0} : f_{1} : f_{2}]$.   The image of $\psi$ is the linear span of generators of the product ideal $I = (s^a, t^a)(f_0, f_1, f_2)^2$.
    
    When $\alpha = l_1 + 2l_2$, either $\pi_2 \circ f : C \rightarrow \mathbb{P}^2$ is an embedding or we may suppose $f_0 = s^2, f_1 = t^2$, and $f_2 = 0$.  In both cases, $\psi$ is evidently surjective.  A dimension count shows that for a general map $f : C \rightarrow X$ parameterized by $\overline{\free}(\mathcal{X}, l_1 + 2l_2)$, $\pi_2 \circ f$ is an embedding.  We will later use this fact.

    Suppose $\alpha = 2l_1 + 2l_2$ instead.  Since $\pi_2 \circ f$ is birational onto its image, we may suppose $[f_{0} : f_{1} : f_{2}] = [s^2 : st : t^2]$.  As $a = 2$, the product ideal $I$ contains all monomials of the form $s^n t^{6-n}$.  Similarly, if $\alpha = l_1 + 3l_2$, we may suppose either $[f_{0} : f_{1} : f_{2}] = [s^3 + t^3 : s^2t : st^2]$ or $[f_{0} : f_{1} : f_{2}] = [s^3 : s^2t : t^3]$.  In both cases, the corresponding product ideal $I$ would contain all monomials of the form $s^nt^{7-n}$.  We conclude $\psi$ is surjective, so that $\overline{\free}^{bir}(\mathcal{X}, \alpha)$ is irreducible. %

    Having shown that $\overline{\free}^{bir}(X, \alpha)$ is irreducible, we prove it generically parameterizes very free curves when $-K_X . \alpha = 4$.  Consider embedded nodal curves $C = C_1 \cup C_2$ with two irreducible rational components $C_1, C_2$ of class $l_1$ and $\alpha - l_1$, respectively. We previously demonstrated that $\overline{\free}_1(X, \alpha - l_1)$ has a component $M$ which generically parameterizes embeddings $f$ such that $\pi_2 \circ f : C_2 \rightarrow \mathbb{P}^2$ is birational onto its image.  Let $C$ be a general curve parameterized by $\overline{M}_{0,1}(X, l_1) \times_X M$.  We claim $C$ generalizes to a very free curve.  To see this, let $D_l$ be the divisor swept out by curves of class $l_1$.  There is a one-parameter family of curves $M_p \subset M$ containing a general point $p \in X$.  The restriction of $\pi_2$ to the divisor $D_p$ covered by maps in $M_p$ is dominant, as $\pi_2 \circ f$ is birational onto its image for the general map $f$ parameterized by $M_p$.  Hence, $D_p . l_1 > 0$ and $D_l . [C_2] > 0$.  Since $C_2$ is a general free curve, $C_2 \cap D_l$ is reduced.  Therefore, $D_p \cap D_l$ is generically reduced, and $N_{C/X}(-p)$ is globally generated. 
\end{proof}

\begin{lem}
Every component of $\overline{\free}(X)$ contains free chains with components of class in $\mathscr{C}_X$, where
\begin{align*}
   \mathscr{C}_X = &\{ 2l_1, 2l_2, l_1 + l_2, 3l_2, 2l_1 + l_2, l_1 + 2l_2\}
\end{align*}
For $\alpha \in \mathscr{C}_X$ such that $\alpha . H_1 > 0$, $\overline{\free}(X, \alpha)$ is irreducible.  Moreover, $\overline{\free}^{bir}(X, 2l_1 + 2l_2)$ is irreducible.  When $X$ is general in moduli, $\overline{\free}(X,\alpha)$ has two components if $\alpha\in \{2l_2, 3l_2\}$. %
\end{lem}
\begin{proof}
\textbf{Nef Curve Classes of Anticanonical Degree Between $2$ and $4$:}
The nef classes of anticanonical degree between $2$ and $4$ are those in $\mathscr{C}_X$ along with $3l_1$, $4l_i$, $3l_i + l_j$, and $2l_1 + 2l_2$.  Free curves of class $4l_i$ may be broken in smooth fibers of $\pi_j$, while there are no free curves of class $3l_1$.  $\overline{\free}(X, 2l_1)$ parameterizes fibers of $\pi_2 : X \rightarrow \mathbb{P}^2$, and as such is irreducible.    When $X$ is general in moduli, our claims concerning $\alpha = 2l_2$ and $\alpha = 3l_2$ follow from the $W(E_7)$-monodromy of $\pi_1$.  Similarly, Lemma \ref{lem: 2.2 low degree curves} proves irreducibility of $\overline{\free}^{bir}(X, \alpha)$ for all other integral classes $\alpha$ with $2 \leq -K_X . \alpha \leq 4$.  This specializes to arbitrary $X$ by Proposition \ref{low degree curves degeneration} and Theorem \ref{main thm: conics}.  In particular, this shows when $-K_X . \alpha = 4$ that $\overline{\free}^{bir}(X, \alpha)$ contains chains of free conics.
\end{proof}

\begin{lem}\label{2.2rels}
Relations in the monoid $\mathbb{N}\mathscr{C}_X$ are generated by:
\begin{enumerate}
    \item $2(3l_2) = 3(2l_2)$,
    \item $3l_2 + (l_1 + l_2) = 2l_2 + (l_1 + 2l_2)$,
    \item $3l_2 + (2l_1 + l_2) = 2l_1 + 2(2l_2)$, %
    \item $3l_2 + 2l_1 = 2l_2 + (2l_1 + l_2)$, %
    \item $2l_1 + 2l_2 = 2(l_1 + l_2)$,
    \item $2l_i + (l_i + 2l_j) = (l_1 + l_2) + (2l_i + l_j)$,
    \item $2l_i + 2(l_1 + l_2) = 2(2l_i + l_j)$,
    \item $3(l_1 + l_2) = (2l_1 + l_2) + (l_1 + 2l_2)$,
\end{enumerate}
for $i \neq j$.  For each relation $\sum \alpha_i = \sum \beta_j$, a main component of $\prod_X \overline{\free}_2(X,\alpha_i)$ lies in the same component of free curves as a main component of $\prod_X \overline{\free}_2(X, \beta_j)$.
\end{lem}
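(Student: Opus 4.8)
The plan is to follow the two-step template from Lemma \ref{Relations} for generating the monoid of relations, then verify the ``main component'' claim for each relation by the now-familiar strategy of smoothing immersed chains in two ways, appealing to irreducibility results already proven for the spaces of small-degree curves on $X$. For the first step, I would order $\mathscr{C}_X$ as $3l_2, 2l_2, 2l_1, l_1 + l_2, 2l_1 + l_2, l_1 + 2l_2$ and eliminate generators one at a time using divisors with the sign behavior demanded by Lemma \ref{Relations}. The class $3l_2$ pairs negatively with, say, $H_1 + 2H_2$ adjusted appropriately — concretely a divisor of the form $aH_1 + bH_2$ with $(aH_1+bH_2).(3l_2) < 0$ and nonnegative on the later generators — and relations (1)--(4) suffice to remove every occurrence of $3l_2$; then $2l_2$ is handled by a divisor negative only on $2l_2$ among the remaining five generators, and so on, until only $2l_1$, $l_1+l_2$, $2l_1+l_2$, $l_1+2l_2$ remain. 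These four classes span $N_1(X)\otimes\mathbb{R}$ (which is $2$-dimensional, so there are two independent relations among four classes); relations (5) and (8) are exactly the residual relations among them. The bookkeeping here is routine and parallels the corresponding lemmas in cases 2.1--2.20 and 3.1; the degree counts ($3l_2$ has $-K_X$-degree $3$, $l_1+l_2$ has degree $2$, etc.) control which sums can appear.

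For the second step, I would treat the relations in a convenient order. Relation (1), $2(3l_2) = 3(2l_2)$, involves only curves contracted by the del Pezzo fibration $\pi_1 : X \to \mathbb{P}^1$; its main-component claim reduces to the corresponding statement on a general smooth fiber $X_p$, a del Pezzo surface of degree $1$, and follows from Theorem \ref{del pezzo curves thm} together with the maximal monodromy of $\pi_1$ (the full $W(E_7)$, as recorded above). Relation (5), $2l_1 + 2l_2 = 2(l_1+l_2)$: the right side smooths from a free chain of two conics of class $l_1+l_2$, and $\overline{\free}^{bir}(X, 2l_1+2l_2)$ was just shown to be irreducible, so both sides land in that single component (for the $\free$ versus $\free^{bir}$ distinction one notes the double-cover component of $2l_1$ pairs with a free conic of class $2l_2$ and contributes to a component already accounted for). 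Relation (8), $3(l_1+l_2) = (2l_1+l_2) + (l_1+2l_2)$, follows because $\overline{\free}(X, 3l_1+3l_2)$ contains an immersed chain of type $(l_1+l_2, l_1+l_2, l_1+l_2)$ which also deforms — via Lemma \ref{Basic Properties Chains} and monodromy over lines $\ell\subset\mathbb{P}^2$ (Lemma \ref{2.2monodromy}) — to a chain of type $(2l_1+l_2, l_1+2l_2)$. Relations (6) and (7) for each $i\neq j$: smooth immersed chains of type $(l_i, l_i, l_1+l_2)$ and $(l_i, l_1+l_2, l_1+l_2, l_i)$ respectively in two ways, using that $\overline{\free}(X, 2l_i+l_j)$ and $\overline{\free}(X, 3l_i+l_j)$ are irreducible. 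Relations (2), (3), (4) each involve the reducible space $\overline{\free}(X, 3l_2)$ (two components) or $\overline{\free}(X, l_1+2l_2)$ (up to thirteen components), so they are the delicate ones.

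The main obstacle, as expected, is exactly relations (2)--(4), where one of the factors is $3l_2$ or $l_1+2l_2$ and the target sum mixes fiber curves with (multi)sections of $\pi_1$. Here I would invoke Lemma \ref{del Pezzo fibration no core fix}'s analogue — indeed, Lemma \ref{2.2a-covers} extends Theorem \ref{reducible fibers: 4 author result} to this $X$, and the mechanism of Lemma \ref{3.2 no core fix} applies: given $\alpha$ with $\alpha . H_1 > 0$ and an expression $\alpha = 3l_2 + \sum\alpha_i$ (or $l_1+2l_2+\sum\alpha_i$), any component of $\overline{\free}(X,\alpha)$ containing a main component of $N_j\times_X\prod_X M_i$ (for $N_j$ a component of $\overline{\free}_2(X,3l_2)$, say) also contains a main component of $N_{3-j}\times_X\prod_X M_i$. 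Concretely, one degenerates the section component to the union of an exceptional line $l_2$ and a free conic of class $2l_2$ contracted by $\pi_1$, attaches it at a general point to a free curve of the complementary class, obtains a stable map with contracted component that is a smooth point of $\overline{\mathcal{M}}_{0,0}(X)$ by Proposition \ref{kontsevich space H1}, and moves the free conic along a loop in a smooth fiber of $\pi_1$ — the $W(E_7)$-monodromy (or the $S_8$ part acting on the nine base points of the cubic pencil) identifies the components of $\overline{\free}_2(X,3l_2)$ and, correspondingly, moves the chain between the two sides of the relation. The one genuinely new wrinkle is $\overline{\free}(X, l_1+2l_2)$ with its thirteen potential components: for relation (2) I would argue that since $3l_2 + (l_1+l_2) = 2l_2 + (l_1+2l_2)$ and since monodromy over $\mathbb{P}^1$ acts transitively on the factors produced by the $\overline{\free}_1(X, l_2)$-fiber (as in Lemma \ref{irreducible fibers del Pezzo}), the free chain of type $(2l_2, l_1+2l_2)$ obtained by smoothing an immersed chain of type $(2l_2, l_2, l_1+l_2)$ reaches every component of $\overline{\free}(X, l_1+2l_2)$ that can arise, so all these components collapse into one component of $\overline{\free}(X, l_1 + 4l_2)$ together with the chain of type $(3l_2, l_1+l_2)$ — and hence, for the purposes of verifying Theorem \ref{Main Method}(2), the multiplicity of components of $\overline{\free}(X, l_1+2l_2)$ is immaterial, exactly as stated in the outline of the proof of Theorem \ref{Main Result} for case $2.2$.
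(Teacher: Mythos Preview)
Your Relations argument is fine and tracks the paper's use of Lemma~\ref{Relations}. The paper removes $3l_2$ via (1)--(4), then uses $D=-H_1+2H_2$ to remove $2l_1$ (and symmetrically $2l_2$) via (5)--(7), leaving the three classes $l_1+l_2$, $2l_1+l_2$, $l_1+2l_2$ with the single relation (8); your slightly different elimination order would work as well.

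For Main Components, however, there are concrete problems. First, two of your chains are simply wrong: for (6) your chain $(l_i,l_i,l_1+l_2)$ has total class $3l_i+l_j$, not the required $3l_i+2l_j$; for (7) your chain $(l_i,l_1+l_2,l_1+l_2,l_i)$ does not smooth to $(2l_i,l_1+l_2,l_1+l_2)$ since the middle pair gives class $2l_1+2l_2\notin\mathscr{C}_X$. The paper instead smooths $(2l_i,l_j,l_1+l_2)$ for (6) and $(l_1+l_2,l_i,l_i,l_1+l_2)$ for (7), each in two ways. (Also: fibers of $\pi_1$ are degree~2 del Pezzo surfaces, not degree~1.)

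The more serious issue is your treatment of (2)--(4). You are invoking the ``no-core-fix'' mechanism --- monodromy swapping the components of $\overline{\free}(X,3l_2)$ or $\overline{\free}(X,l_1+2l_2)$ along a section --- but that is precisely the content of the \emph{next} lemma (Lemma~\ref{2.2 no core fix}), which in fact cites the present Lemma~\ref{2.2rels} in its proof. For Lemma~\ref{2.2rels} itself you only need to exhibit \emph{some} main component on each side lying in a common component of free curves, and the paper does this directly and cheaply: smooth the immersed chain $(2l_2,l_2,l_1+l_2)$ for (2), $(2l_2,l_2,l_2,2l_1)$ for (3), $(2l_2,l_2,2l_1)$ for (4), and $(2l_2,l_2,l_2,2l_2)$ for (1), each in two ways. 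Your detour through the no-core machinery is not only unnecessary here but circular as written.
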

\begin{proof}
\textbf{Relations:}  Note that each relation involving $3l_2$ may be simplified using (1)-(4) to a relation without $3l_2$.  We then follow Lemma \ref{Relations}.  The divisor $-H_1 + 2H_2$ pairs to $-2$ with $2l_1$, to $0$ with $2l_1 + l_2$, to $1$ with $l_1 + l_2$, and positively with all other curves.  This gives relations (5)-(7).  We may remove $2l_2$ from any relation using (5)-(7) as well.  The remaining three elements of $\mathscr{C}_X$ satisfy (8).

\textbf{Main Components:} We address each relation below.
\begin{enumerate}
    \item Our claim follows from smoothing an immersed chain of type $(2l_2, l_2, l_2, 2l_2)$ in two separate ways.
    \item Smooth an immersed chain of type $(2l_2, l_2, l_1 + l_2)$ in two separate ways.
    \item Smooth an immersed chain of type $(2l_2, l_2, l_2, 2l_1)$ in two separate ways.
    \item Smooth an immersed chain of type $(2l_2, l_2, 2l_1)$ in two separate ways.
    \item This follows from the irreducibility of $\overline{\free}^{bir}(X, 2l_1 + 2l_2)$, proven above.
    \item This follows from smoothing an immersed chain of type $(2l_i, l_j, l_1 + l_2)$ in two separate ways.
    \item Our claim follows from smoothing an immersed chain of type $(l_1 + l_2, l_i, l_i, l_1 + l_2)$ in two separate ways.
    \item We may smooth an immersed chain of type $(l_1 + l_2, l_1 + l_2, l_1 + l_2)$ to a chain of type $(2l_1 + 2l_2, l_1 + l_2)$.  Since $\overline{\free}_1^{bir}(X, 2l_1 + 2l_2) \times_X \overline{\free}_1(X, l_1 + l_2)$ has a unique main component $M$, the smooth locus of $M$ contains an immersed chain of type $(2l_1 + l_2, l_2, l_1 + l_2)$.  Smoothing this chain implies our claim.
\end{enumerate}
\end{proof}

\begin{lem}
For each $\alpha \in (l_1 + l_2) + \Nef_1(X)_\mathbb{Z}$, $\overline{\free}(X,\alpha)$ is nonempty.
\end{lem}

\begin{proof}
The generators of $\Nef_1(X)$ are $l_1$ and $l_2$.  Our claim follows immediately.
\end{proof}

\subsection*{2.6}
\textbf{A Verra Threefold:} Let $X$ be a smooth divisor of bidegree $(2,2)$ in $\mathbb{P}^2 \times \mathbb{P}^2$, or double cover of $\mathbb{P}_{\mathbb{P}^2}(\mathcal{T}_{\mathbb{P}^2}) \subset \mathbb{P}^2 \times \mathbb{P}^2$ branched over a smooth anticanonical divisor.  The latter type is a specialization of the former.  Let $\pi_i : X \rightarrow \mathbb{P}^2$ be the restriction of the $i^{th}$ projection.

\textbf{Generators for $N^1(X)$ and $N_1(X)$:} Let $H_i = \pi_i^*\mathcal{O}(1)$ and $l_i$ be the class of an irreducible component of a reducible fiber of $\pi_j$.

\begin{thm}\label{2.6thm}
For all $\alpha \in (l_1 + l_2) +  \Nef_1(X)_\mathbb{Z}$, $\overline{\free}^{bir}(X,\alpha)$ is nonempty and irreducible.
\end{thm}

\textbf{Intersection Pairing:} The intersection pairing is diagonal: $H_i \cdot l_j = \delta_{ij}$.

\textbf{Anticanonical Divisor:} $-K_X = H_1 + H_2$.

\textbf{Generators of $\overline{NE}(X)$ and $\text{Eff}(X)$:}  The extreme rays of $\overline{NE}(X)$ are spanned by $l_1$ and $l_2$.  These are the only classes of effective $-K_X$ lines.  The extreme rays of $\text{Eff}(X)$ are spanned by $H_1$ and $H_2$.  The contraction of $l_i$ is the conic bundle $\pi_j : X \rightarrow \mathbb{P}^2$ with discriminant locus $\Delta_j \subset \mathbb{P}^2$ of degree 6.  For general $X$, $\Delta_j$ is smooth.

\begin{lem}
A core of free curves on $X$ is %
\begin{align*}
   \mathscr{C}_X = &\{ 2l_1, 2l_2, l_1 + l_2, 2l_1 + l_2, l_1 + 2l_2\}
\end{align*}
The only separating class $\alpha \in \mathscr{C}_X$ is $l_1 + l_2$.  Moreover, $\overline{\free}^{bir}(X, 2l_1 + 2l_2)$ is irreducible.
\end{lem}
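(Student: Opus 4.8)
The plan is to verify the three assertions in turn, following the template used for the del Pezzo and conic fibration cases and for case $2.2$: first that $\mathscr{C}_X$ meets condition (2) of Definition \ref{def core}, then condition (1) together with the identification of the separating class, and finally the irreducibility of $\overline{\free}^{bir}(X,2l_1+2l_2)$. Since $-K_X=H_1+H_2$ is very ample, $X$ carries no $E3$, $E4$ or $E5$ divisor, and $\Nef_1(X)=\overline{NE}(X)=\langle l_1,l_2\rangle$, so every nef class is $al_1+bl_2$ with $a,b\ge 0$ of anticanonical degree $a+b$. By Theorem \ref{MovableBB} every component of $\overline{\free}(X)$ of anticanonical degree $\ge 5$ contains a free chain of length two, whose free components again have degree $\ge 2$ (a line of class $l_i$ is contracted by a conic bundle, hence non-free), so by induction it suffices for condition (2) to treat the classes of degree $\le 4$: namely $2l_1,2l_2,l_1+l_2$ (degree $2$); $3l_1,3l_2,2l_1+l_2,l_1+2l_2$ (degree $3$); and $4l_1,4l_2,3l_1+l_2,l_1+3l_2,2l_1+2l_2$ (degree $4$). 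A curve of class $3l_i$ is contracted by $\pi_j$, hence lies in a conic fiber of class $2l_i$ and is never free; a free curve of class $4l_i$ is then a double cover of a free fiber and is freely breakable; and applying Proposition \ref{breaking quartic curves} or Theorem \ref{improved MovableBB} to $3l_i+l_j$ and to $2l_1+2l_2$ shows each component contains a chain of free conics with classes among $\mathscr{C}_X$ (for $2l_1+2l_2$ the "free cubic'' alternative of Proposition \ref{breaking quartic curves} yields a chain of a free cubic $2l_i+l_j$ and a line of class $l_j$, which by a further smoothing of an immersed chain of type $(l_1+l_2,l_i,l_j)$ is moved into a chain of free conics). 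The free cubic classes $2l_1+l_2$ and $l_1+2l_2$ are themselves in $\mathscr{C}_X$, which closes condition (2); nonemptiness of $\overline{\free}(X,\alpha)$ for $\alpha\in\mathscr{C}_X$ is clear from the constructions below, so condition (1) reduces to irreducibility.

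For condition (1) and the separating classes I would use the two conic bundle structures. A curve of class $2l_i$ is a smooth fiber of $\pi_j$, so $\overline{\free}(X,2l_i)$ is the open subset of $\mathbb{P}^2$ of smooth fibers, irreducible with a single point over a general point of $X$, hence not separating. For a general line $\ell\subset\mathbb{P}^2$ the surface $S_\ell=\pi_2^{-1}(\ell)$ is a smooth del Pezzo surface of degree $2$ with $-K_{S_\ell}=H_1|_{S_\ell}$; a curve of $X$-class $l_1+l_2$ restricts to a $(-1)$-curve that is a section of $S_\ell\to\ell$, and one of class $2l_1+l_2$ restricts to an anticanonical conic, and symmetrically for $\pi_1$. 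Since $X$ is not among the deformation types excluded in Lemma \ref{lines in C1 fibrations} (it is realized as a divisor of class $\mathcal{O}_{\mathrm{Proj}}(2)\otimes\psi^{*}\mathcal{O}(2)$ over $\mathbb{P}^2$ with bundle $\mathcal{O}^{\oplus 3}$), for general $X$ the discriminant sextics $\Delta_1,\Delta_2\subset\mathbb{P}^2$ are smooth, and Theorem \ref{conic monodromy} (applied to a general pencil of lines) shows the monodromy of the family $\{S_\ell\}$ realizes the full stabilizer of the fiber class in the relevant Weyl group; by Lemma \ref{testa theorem} this group acts transitively on the classes of $(-1)$-curves, respectively of anticanonical conics, restricting to a fixed $X$-class, so $\overline{\free}(X,l_1+l_2)$, $\overline{\free}(X,2l_1+l_2)$ and $\overline{\free}(X,l_1+2l_2)$ are irreducible. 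Remark \ref{connected vs irreducible} (conics and cubics on a Fano threefold with generically smooth Fano scheme of lines have irreducible general evaluation fibers) then shows $2l_1+l_2$ and $l_1+2l_2$ are not separating, whereas more than one conic of class $l_1+l_2$ passes through a general point (these are the $(-1)$-curves of the relevant class in the surfaces $S_\ell$ with $\ell$ running through the pencil of lines through $\pi_2(p)$), so $l_1+l_2$ is the unique separating class.

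The irreducibility of $\overline{\free}^{bir}(X,2l_1+2l_2)$ is where I expect the main obstacle; I would follow the scheme of case $2.2$ and ultimately \cite[Lemmas~3.6,~3.7,~Proposition~3.8]{lastDelPezzoThreefold}. Write $\beta=l_1+l_2$; the space is nonempty (glue two general free $\beta$-conics), and by Proposition \ref{very free curves} every component $M$ parameterizes very free curves. The first step is to show every $M$ contains a free chain of type $(\beta,\beta)$ satisfying $(\dagger)$ of Definition \ref{Def Dagger}: by Theorem \ref{improved MovableBB} $M$ contains a free chain of conics, necessarily of type $(\beta,\beta)$ or $(2l_1,2l_2)$, and in the latter case one must connect it to a $(\beta,\beta)$-chain by smoothing suitable immersed chains of lines (classes summing to $2l_1+2l_2$) that are smooth points of $\overline{\mathcal M}_{0,0}(X)$ by Proposition \ref{kontsevich space H1}. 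This bookkeeping is the delicate point: the "obvious'' four-line degenerations of $(2l_1,2l_2)$ and of $(\beta,\beta)$ have incompatible tree structures, so an intermediary immersed chain — or a second application of bend-and-break to the smoothing through a carefully chosen general point and conic fiber — is needed to bridge them. The second step is to show $\overline{\free}_1(X,\beta)\times_X\overline{\free}(X,\beta)$ has a unique main component satisfying $(\dagger)$: as in \cite{lastDelPezzoThreefold}, the closures $N$ and $K$ of the loci of pairs of $\beta$-conics meeting in one, respectively three, points are irreducible (via the monodromy of the surfaces $S_\ell$ from Theorem \ref{conic monodromy} and Lemma \ref{testa theorem}), and since any one-parameter family of free $\beta$-conics sweeps out an ample divisor on $X$, the Hodge index theorem forces every main component satisfying $(\dagger)$ to contain both $N$ and $K$; combined with Lemma \ref{Gluing} this gives the irreducibility of $\overline{\free}^{bir}(X,2l_1+2l_2)$. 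The technical heart is thus twofold — the explicit chain-smoothing argument of the first step and the verification, in the setting of the conic bundle $\pi_2$, of the monodromy and incidence inputs and the Hodge index estimate of the second — both to be carried out faithfully to the treatment of case $2.2$.
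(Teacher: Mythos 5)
Your strategy is sound and would very likely go through, but it is a genuinely different (and substantially heavier) route than the one the paper takes for this case. The paper exploits the fact that $X$ is a \emph{general} divisor in $|\mathcal{O}_{\mathbb{P}^2\times\mathbb{P}^2}(2,2)|$: following \cite[Theorem~7.4]{Fanoindex1rank1}, for each relevant class $\alpha$ (the core classes as well as $3l_i+l_j$ and $2l_1+2l_2$) one takes the irreducible family $\pi:U\to M$, $f:U\to\mathbb{P}^2\times\mathbb{P}^2$ of embedded curves with at-worst-nodal projections, checks that $H^1(C,\mathcal{O}(2,2)|_C)=0$ and that restriction of sections of $\mathcal{O}(2,2)$ is surjective for every $C$ in the family, so that $\pi_*f^*\mathcal{O}(2,2)$ is globally generated; the locus of curves lying on $X$ is then the zero scheme of a general section of this bundle over the irreducible base $M$, hence irreducible. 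This single argument delivers irreducibility of $\overline{\free}(X,\alpha)$ for every core class \emph{and} of $\overline{\free}^{bir}(X,2l_1+2l_2)$ in one stroke, which is exactly the step you correctly identify as the main obstacle and propose to attack via the case-$2.2$ machinery (monodromy of the degree-$2$ del Pezzo surfaces $S_\ell$, uniqueness of the main component satisfying $(\dagger)$ via the Hodge index theorem as in \cite{lastDelPezzoThreefold}). The paper explicitly notes that the conic-bundle route you describe for breaking the degree-four classes would work, so your condition~(2) argument is fine; what your approach costs is the two ``technical hearts'' you flag, neither of which you carry out, while what it buys is applicability to non-general $X$, which the lemma does not require. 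One concrete caveat if you do pursue your route: Lemma \ref{testa theorem} concerns the full Weyl group and the stabilizer $G_E$ of a $(-1)$-curve class, whereas Theorem \ref{conic monodromy} only gives you the stabilizer $V\rtimes S_6$ of the \emph{conic} fiber class in $W(E_7)$; transitivity of that subgroup on the set of $(-1)$-curve sections (resp.\ anticanonical conic sections) restricting to a fixed class in $N_1(X)$ is not what Lemma \ref{testa theorem} states and must be verified directly, as the paper does in the analogous Lemmas \ref{2.2monodromy} and \ref{2.8monodromy}.
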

\begin{proof}
\textbf{Nef Curve Classes of Anticanonical Degree Between $2$ and $4$:}
The nef classes of anticanonical degree between $2$ and $4$ are those in $\mathscr{C}_X$ along with $3l_i$, $4l_i$, $3l_i + l_j$, and $2l_1 + 2l_2$.  Irreducibility of $\overline{\free}(X, \alpha)$ for $\alpha \in \mathscr{C}_X$ follows from Theorem \ref{conic monodromy}, Proposition \ref{low degree curves degeneration}, and Theorem \ref{main thm: conics}.  Free curves of class $4l_i$ are always double covers of smooth fibers of $\pi_j$, while there are no free curves of class $3l_i$ for similar reasons.  Clearly, multiple covers of free curves of class $l_1 + l_2$ are freely breakable.

The conic fibrations $\pi_1$ and $\pi_2$ could be used to prove every other component of free curves of degree four contain chains of free conics; however, there is a cleaner argument which also proves irreducibility of these spaces.  For $\alpha \in \{3l_i + l_j, 2l_1 + 2l_2\}$, every component of $\overline{\free}^{bir}(X, \alpha)$ must generically parameterize very free curves.  Hence, a general map $f: \mathbb{P}^1 \rightarrow C \subset X$ parameterized by any component of $\overline{\free}^{bir}(X, \alpha)$ is an embedding which maps to an at-worst nodal curve under both $\pi_1$ and $\pi_2$.  There is a unique main component of both $\overline{\free}_1(X, 2l_1) \times_X \overline{\free}_1(X, 2l_2)$ and $\overline{\free}_1(X, 2l_i) \times_X \overline{\free}_1(X, l_1 + l_2)$.  When $X$ is general in moduli, by Lemma \ref{lem: boundary stratum irreducible} it suffices to show for the image $C \subset \mathbb{P}^2 \times \mathbb{P}^2$ of the aforementioned general map, $H^0(\mathbb{P}^2 \times \mathbb{P}^2, \mathcal{O}(2,2)) \rightarrow H^0(C, \mathcal{O}_{\mathbb{P}^2 \times \mathbb{P}^2}(2,2)|_C)$ is surjective.  This claim is clear when $\pi_i(C)$ is a smooth conic for each $i$; when $C$ projects to a nodal cubic in one factor, one may observe this by fixing the node as the image of $0$ and $\infty$.  By Proposition \ref{low degree curves degeneration}, irreducibility of $\overline{\free}^{bir}(X, \alpha)$ holds for arbitrary $X$ as well.
\end{proof}

\begin{lem}
Relations in the monoid $\mathbb{N}\mathscr{C}_X$ are generated by:
\begin{enumerate}
    \item $2l_1 + 2l_2 = 2(l_1 + l_2)$,
    \item $2l_i + (l_i + 2l_j) = (l_1 + l_2) + (2l_i + l_j)$,
    \item $2l_i + 2(l_1 + l_2) = 2(2l_i + l_j)$,
    \item $3(l_1 + l_2) = (2l_1 + l_2) + (l_1 + 2l_2)$,
\end{enumerate}
for $i \neq j$.  For each relation $\sum \alpha_i = \sum \beta_j$, a main component of $\prod_X \overline{\free}_2(X,\alpha_i)$ lies in the same component of free curves as a main component of $\prod_X \overline{\free}_2(X, \beta_j)$.
\end{lem}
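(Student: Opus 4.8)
The plan is to follow the strategy of case $2.2$: the core $\mathscr{C}_X$ here is exactly the core of case $2.2$ with the class $3l_2$ deleted, and the asserted relations $(1)$--$(4)$ are precisely relations $(5)$--$(8)$ of Lemma \ref{2.2rels}, so this lemma is the analogue of Lemma \ref{2.2rels} in the absence of $3l_2$ and serves to verify hypothesis \ref{Main Method}(1) (equivalently that of Corollary \ref{bir Main Method}) for the outline applied to $X$.

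To produce the generating set I would run the algorithm of Lemma \ref{Relations}. Order $\mathscr{C}_X$ so that $2l_1$ comes first and $l_1 + l_2,\, 2l_1 + l_2,\, l_1 + 2l_2$ come last. The divisor $D_1 = -H_1 + 2H_2$ satisfies $D_1\cdot(2l_1) = -2$, $D_1\cdot(2l_1+l_2) = 0$, $D_1\cdot(l_1+l_2) = 1$, $D_1\cdot(l_1+2l_2) = 3$ and $D_1\cdot(2l_2) = 4$; the algorithm applied to $2l_1$ then yields relation $(1)$, relation $(2)$ with $i = 1$, and relation $(3)$ with $i = 1$, and every relation reduces modulo these to one not involving $2l_1$. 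For the reduced relations I would use $D_2 = 2H_1 - H_2$, which among $\{2l_2,\, l_1+l_2,\, 2l_1+l_2,\, l_1+2l_2\}$ pairs negatively only with $2l_2$ (to $-2$), to $0$ with $l_1 + 2l_2$, to $1$ with $l_1 + l_2$, and to $3$ with $2l_1 + l_2$; this produces relation $(2)$ with $i = 2$ and relation $(3)$ with $i = 2$. After eliminating $2l_2$ as well, the three remaining classes $l_1 + l_2$, $2l_1 + l_2$, $l_1 + 2l_2$ span $N_1(X)$, which is two-dimensional since $\rho(X) = 2$, so they satisfy the single relation $(4)$. Checking that no further minimal relation arises at each stage (e.g.\ that $3l_1 + l_2$ has a unique expression in $\mathbb{N}\mathscr{C}_X$) is a short finite verification.

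For the main-component assertions I would argue as in Lemma \ref{2.2rels}. Relation $(1)$ is immediate from the irreducibility of $\overline{\free}^{bir}(X, 2l_1 + 2l_2)$ established in the preceding lemma: the main components of $\prod_X \overline{\free}_2(X, 2l_i)$ and of $\overline{\free}_2(X, l_1+l_2)\times_X\overline{\free}_2(X, l_1+l_2)$ both parameterize immersed free chains with distinct reduced images on the two components, whose smoothings are birational onto their image, hence both lie in $\overline{\free}^{bir}(X, 2l_1 + 2l_2)$. For relations $(2)$, $(3)$ and $(4)$ I would exhibit an immersed free chain that smooths in two ways: the chain $(2l_i, l_j, l_1 + l_2)$ for $(2)$, the chain $(l_1 + l_2, l_i, l_i, l_1 + l_2)$ for $(3)$, and the chain $(l_1 + l_2, l_1, l_2, l_1 + l_2)$ for $(4)$. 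Each has globally generated normal sheaf on its free components and vanishing $H^1$ on the line components (taking into account the simple poles at the nodes), so by Lemmas \ref{GHS lemma} and \ref{kontsevich space H1} it is a smooth point of $\overline{\mathcal{M}}_{0,0}(X)$ at which the nodes can be smoothed independently; performing the two indicated smoothings, and reordering via Lemma \ref{Basic Properties Chains}(3) where needed, yields free chains of the two types occurring on the two sides of the relation, all inside one component of $\overline{\free}(X)$.

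The main obstacle is effectively deferred to the preceding lemma: it is the irreducibility of $\overline{\free}^{bir}(X, 2l_1 + 2l_2)$ underlying relation $(1)$ — the substantive input, proved there by the "curves contained in a general $(2,2)$-divisor" argument. Beyond that, the remaining points of care are that the immersed chains above genuinely exist with generic attachment data — which holds because $X$ is general, the family of conics of class $l_1 + l_2$ dominates $X$, each extremal class $l_i$ moves in a one-dimensional family whose members meet general fibers of the opposite conic fibration, and a general conic of class $l_1 + l_2$ is transverse to the divisors swept out by the $l_i$ — and that, in each two-way smoothing, both resulting chains have components with distinct reduced images, so that they are free chains in the sense required by Lemma \ref{Gluing} and Proposition \ref{very free curves}.
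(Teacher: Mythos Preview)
Your proposal is correct and follows essentially the same approach as the paper: the same divisor $-H_1 + 2H_2$ is used to eliminate $2l_1$ via Lemma \ref{Relations} (the paper then invokes symmetry for $2l_2$ rather than writing out $2H_1 - H_2$, but this is the same computation), and the main-component assertions are handled by exactly the same four devices you list --- irreducibility of $\overline{\free}^{bir}(X,2l_1+2l_2)$ for (1), and smoothing the immersed chains $(2l_i,l_j,l_1+l_2)$, $(l_1+l_2,l_i,l_i,l_1+l_2)$, $(l_1+l_2,l_1,l_2,l_1+l_2)$ for (2), (3), (4).
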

\begin{proof}
\textbf{Relations:}
We follow Lemma \ref{Relations}.  The divisor $-H_1 + 2H_2$ pairs to $-2$ with $2l_1$, to $0$ with $2l_1 + l_2$, to $1$ with $l_1 + l_2$, and positively with all other curves.  This gives relations (1)-(3).  We may remove $2l_2$ from any relation using (1)-(3) as well.  The remaining three elements of $\mathscr{C}_X$ satisfy (4).

\textbf{Main Components:} We address each relation below.
\begin{enumerate}
    \item This follows from the irreducibility of $\overline{\free}^{bir}(X, 2l_1 + 2l_2)$, proven above.
    \item This follows from smoothing an immersed chain of type $(2l_i, l_j, l_1 + l_2)$ in two separate ways.
    \item Our claim follows from smoothing an immersed chain of type $(l_1 + l_2, l_i, l_i, l_1 + l_2)$ in two separate ways.
    \item We may smooth an immersed chain of type $(l_1 + l_2, l_1 + l_2, l_1 + l_2)$ to a chain of type $(2l_1 + 2l_2, l_1 + l_2)$.  Since $\overline{\free}_1^{bir}(X, 2l_1 + 2l_2) \times_X \overline{\free}_1(X, l_1 + l_2)$ has a unique main component $M$, the smooth locus of $M$ contains an immersed chain of type $(2l_1 + l_2, l_2, l_1 + l_2)$.  Smoothing this chain implies our claim.
\end{enumerate}
\end{proof}

\begin{lem}
For each $\alpha \in (l_1 + l_2) + \Nef_1(X)_\mathbb{Z}$, $\overline{\free}(X,\alpha)$ is nonempty.
\end{lem}

\begin{proof}
The generators of $\Nef_1(X)$ are $l_1$ and $l_2$.  Our claim follows immediately.
\end{proof}

\subsection*{2.8}

\textbf{A double cover of $V_7$:} Let $\phi : X \rightarrow V_7$ be the double cover of $V_7$ branched over a smooth member $B$ of $|-K_{V_7}|$.  For simplicity, we assume $B$ intersects the exceptional locus of $V_7 \rightarrow \mathbb{P}^3$ in a smooth curve.  However, our claims do not depend on this assumption.

\textbf{Generators for $N^1(X)$ and $N_1(X)$:}  Let $H$ be the pullback a hyperplane class under $X \rightarrow V_7 \rightarrow \mathbb{P}^3$ and $E$ be the preimage of the exceptional divisor of $V_7 \rightarrow \mathbb{P}^3$.  Let $l$ be the class of a component of the reducible preimage of an $H$-line, and $e$ be the class of a ruling of $E\cong \mathbb{P}^1 \times \mathbb{P}^1$.

\begin{thm}\label{2.8thm}
For all $\alpha \in l + \Nef_1(X)_\mathbb{Z}$, $\overline{\free}^{bir}(X,\alpha)$ is nonempty and irreducible.
\end{thm}

\textbf{Intersection Pairing:} $H.l = 1$, $E.e = -1$, $H.e = 0 = E.l$.

\textbf{Anticanonical Divisor:} $ -K_X = 2H-E$.

\textbf{Generators of $\overline{NE}(X)$ and $\text{Eff}(X)$:}  The extreme rays of $\overline{NE}(X)$ are spanned by $e$ and $l - e$.  These are the only classes of effective $-K_X$ lines.  The extreme rays of $\text{Eff}(X)$ are spanned by $E$ and $H -E$.  

\textbf{Mori Structure:} The contraction of $l -e$ is a conic bundle $\pi : X \rightarrow \mathbb{P}^2$ with discriminant locus of degree 6.  We assume $X$ is general, so that the discriminant locus $\Delta$ is irreducible.  It follows that the $\pi$-preimage $X_\ell$ of a general line $\ell \subset \mathbb{P}^2$ is a smooth weak del Pezzo surface of degree 2, with a unique $-2$ curve coming from its intersection with $E$.  By describing $X_\ell$ as the blow-up of $\mathbb{P}^2$ along 7 points, 6 of which lie in a conic, we obtain the following lemma.

\begin{lem}\label{2.8monodromy}
Suppose $X$ is general in moduli.  Let $X_\ell$ be the $\pi$-preimage of a line $\ell \subset \mathbb{P}^2$ and identify $X_\ell$ with the blow-up of $\mathbb{P}^2$ along 7 points $p_i$, 6 of which lie in a conic, such that projection from the $7^{th}$ point $p_7$ is the restriction of $\pi$.  Monodromy over the family of lines $\ell \subset \mathbb{P}^2$ acts as $(\mathbb{Z}/2\mathbb{Z})^5 \rtimes S_6$, where $S_6$ permutes the 6 points on the conic and $(\mathbb{Z}/2\mathbb{Z})^5$ is generated by compositions of a transposition about $p_i,p_j$, $i,j\leq 6$, followed by the reflection in $N_1(X_\ell)$ about the difference of $p_7$ and the strict transform of $\overline{p_i p_j}$.
\end{lem}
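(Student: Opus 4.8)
\textbf{Proof strategy for Lemma \ref{2.8monodromy}.}
The plan is to recognize this as a direct application of the conic-bundle monodromy result Theorem \ref{conic monodromy}, together with the identification of $X_\ell$ as a blow-up of $\mathbb{P}^2$ already sketched in the surrounding text. First I would fix a general pencil $W$ of lines $\ell \subset \mathbb{P}^2$ and verify that the three hypotheses of Theorem \ref{conic monodromy} hold for the conic bundle $\pi : X \to \mathbb{P}^2$ restricted over $W$: the baselocus of $W$ is a single point, which for general $X$ is disjoint from the (irreducible) discriminant curve $\Delta$ of degree $6$; the normalization $\tilde\Delta \to \mathbb{P}W$ is a simple cover; and since the degree here is $d = 6$, one checks that a general pencil of lines meets $\Delta$ so that each fiber has more than $d/2 = 3$ distinct points (equivalently, the projection from a general point realizes $\Delta$ with only simple ramification). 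These are statements about a general plane sextic $\Delta$ and can be verified by a standard dimension count or by invoking the general-position arguments already used for $\Delta_\pi$ in Lemma \ref{lines in C1 fibrations} and Lemma \ref{3.2monodromy}.

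Next I would apply Theorem \ref{conic monodromy} directly. It gives that the monodromy of the family $\{X_\ell\}_{\ell \in \mathbb{P}W}$ acts on $N_1(X_\ell)$ (for smooth $X_\ell$) as $V \rtimes S_d$ with $d = 6$, where $V$ is the standard representation of $S_6$ over $\mathbb{Z}/2\mathbb{Z}$, $S_6$ permutes the $6$ reducible fibers of $X_\ell \to \ell$, and $V$ swaps components in even numbers of reducible fibers. It remains only to translate this abstract description into the blow-up coordinates on $X_\ell$. Using the description of $X_\ell$ as $\mathrm{Bl}_{p_1,\dots,p_7}\mathbb{P}^2$ with $p_1,\dots,p_6$ on a conic $Q$ and projection from $p_7$ realizing the conic bundle, the $6$ reducible fibers correspond to the $6$ lines $\overline{p_7 p_i}$ ($i \le 6$), whose two components in $X_\ell$ have classes $e_i$ (the exceptional curve over $p_i$) and $h - e_7 - e_i$ (the strict transform of $\overline{p_7 p_i}$). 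An element of $S_6$ permuting these fibers acts as the corresponding permutation of $p_1,\dots,p_6$ (hence of $e_1,\dots,e_6$), which preserves $h$ and $e_7$ and the conic class $2h - e_1 - \cdots - e_6$; this matches the $S_6$ in the statement. A generator of $V$ swapping the components of exactly two fibers, say over $p_i$ and $p_j$, sends $e_i \mapsto h - e_7 - e_i$, $e_j \mapsto h - e_7 - e_j$, and fixes the other $e_k$; one checks this is precisely the reflection of $N_1(X_\ell)$ in the class $\delta_{ij} := e_7 - (h - e_i - e_j)$, i.e. the composition of the transposition $(p_i\ p_j)$ with the reflection about the difference of $e_7$ and the strict transform of $\overline{p_i p_j}$ — exactly the generators named in the lemma. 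Since these reflections together with $S_6$ generate $V \rtimes S_6$, the two descriptions agree.

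The one point requiring a little care — and the main (mild) obstacle — is the verification that the hypotheses of Theorem \ref{conic monodromy} are actually met for general $X$, specifically that the discriminant $\Delta \subset \mathbb{P}^2$ is irreducible (so $d \neq 1$ and the preimage $\pi^{-1}(\Delta)$ is irreducible, ruling out a split extension) and that a general pencil of lines produces only simple ramification of $\tilde\Delta \to \mathbb{P}^1$. Irreducibility of $\Delta$ for general $X$ follows from the fact that $X$ is a general double cover of $V_7$ branched over a general $B \in |-K_{V_7}|$, combined with the analysis in Lemma \ref{lines in C1 fibrations}: the singular points of $\Delta$ are cut out by an auxiliary determinantal/complete-intersection condition on the branch data which is empty for general choices, so $\Delta$ is a smooth (hence irreducible) plane sextic. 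Simple ramification of the projection then follows from generality of $\Delta$ in the linear system of sextics, since the dual curve of a general smooth sextic has only nodes and cusps. Once these genericity statements are in place, the rest is the bookkeeping identification of classes described above, and the lemma follows.
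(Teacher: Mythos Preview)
Your proposal is correct and takes essentially the same approach as the paper: the paper's proof is the single sentence ``See Theorem \ref{conic monodromy}'', and you have spelled out exactly the steps one needs to make that reference complete --- verifying the hypotheses for a general pencil $W$ and then translating the $V \rtimes S_6$ output of that theorem into the blow-up coordinates on $X_\ell$. The translation you give (that the generator of $V$ swapping the two components over $p_i$ and $p_j$ is the composition of the transposition $(p_i\ p_j)$ with the reflection in $e_7 - (h - e_i - e_j)$) is correct, though note a minor phrasing slip: this composition is not itself ``the reflection in $\delta_{ij}$'' alone, but rather that reflection composed with the transposition, as you ultimately state.
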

\begin{proof}
See Theorem \ref{conic monodromy}.
\end{proof}

\begin{lem}
A core of free curves on $X$ is %
\begin{align*}
   \mathscr{C}_X = &\{ 2l - 2e, l, 2l -e\}
\end{align*}
The only separating class $\alpha \in \mathscr{C}_X$ is $l$.  Moreover, $\overline{\free}^{bir}(X, 2l)$ is irreducible.
\end{lem}
\begin{proof}
\textbf{Nef Curve Classes of Anticanonical Degree Between $2$ and $4$:} The nef classes of appropriate anticanonical degree are those in $\mathscr{C}_X$, $3l-3e$, $4l-4e$, $3l-2e$, and $2l$.  Our claim for arbitrary $X$ follows from Proposition \ref{low degree curves degeneration}, Theorem \ref{main thm: conics}, and the proof for general $X$.  Hence, we may suppose $X$ is general in moduli.  

Lemma \ref{2.8monodromy} proves $\overline{\free}(X, l)$ and $\overline{\free}(X, 2l-e)$ are irreducible.  Free curves of class $2l-2e$ are fibers of a conic fibration.  Since every free curve of class $3l-2e$ lies over a line in $\mathbb{P}^2$, we find such curves must break into free chains of type $(2l-2e, l)$.  There are no free curves of class $3l-3e$, and every free curve of class $4l-4e$ is a double cover.  

A general immersed free curve of class $2l$ must be a component of the reducible preimage of a conic four times tangent to $B$ in $\mathbb{P}^3$, contained in a plane which avoids the locus where $V_7 \rightarrow \mathbb{P}^3$ is not an isomorphism.  Using an identical argument to \cite[Proposition~7.4]{2019}, \cite{V2monodromy} proves monodromy over such planes acts as the full Weyl group $W(E_7)$ on their double covers in $X$.  Since curves of class $2l$ correspond to conics in these surfaces, there is one component of $\overline{\free}^{bir}(X,2l)$.  Note that anticanonical curves in double covers of general planes cannot form a full component of $\overline{\free}^{bir}(X,2l)$, as there is only a three parameter family of such curves (the tangent lines to the branch locus).  Therefore, $\overline{\free}(X,2l)$ has two components: one parameterizing double covers, and one parameterizing very free curves, and both of which contain chains of free conics.
\end{proof}

\begin{lem}
Relations in the monoid $\mathbb{N}\mathscr{C}_X$ are generated by
$$ 2l + (2l-2e) = 2(2l-e).$$
$\overline{\free}(X, 4l-2e)$ is irreducible.
\end{lem}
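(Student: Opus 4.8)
The plan is to follow the two-part recipe used throughout the paper: first identify the relations among the core classes $\mathscr{C}_X = \{2l-2e,\,l,\,2l-e\}$ via Lemma \ref{Relations}, then verify condition \ref{Main Method}(1) for the single relation that appears. For the first part, the three core classes span the rank-$2$ lattice $N_1(X)$ but are three in number, so there is exactly one linear relation among them; writing $-K_X = 2H-E$ and pairing against $H$ and $E$ shows the unique relation (with nonnegative coefficients and no cancellation) is $2l + (2l-2e) = 2(2l-e)$, which is what is asserted. Concretely I would set $D = H-E$: it pairs to $-1$ with $2l-e$? — no: $D\cdot l = 1$, $D\cdot(2l-2e) = 0$, $D\cdot(2l-e) = 1$, so this is not the right separating divisor. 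Instead take $D = 2H-E = -K_X$, which is positive on everything and gives no information; the cleanest route is simply to observe that any relation $a(2l-2e) + b\,l + c(2l-e) = a'(2l-2e) + b'\,l + c'(2l-e)$ reduces (after cancellation) to the single dimension-count relation above, since once $l$ is eliminated using it, the remaining two classes $2l-2e$ and $2l-e$ are independent. So the first part is a short linear-algebra argument.

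The substantive part is showing that a main component of $\overline{\free}_1(X,2l)\times_X\overline{\free}_1(X,2l-2e)$ lies in the same component of $\overline{\free}(X,4l-2e)$ as a main component of $\overline{\free}_1(X,2l-e)\times_X\overline{\free}_1(X,2l-e)$, together with the irreducibility of $\overline{\free}(X,4l-2e)$ itself. The key tool is the conic-bundle structure $\pi : X \to \mathbb{P}^2$ and the monodromy computation of Lemma \ref{2.8monodromy}. First I would note that $4l-2e$ is interior to $\overline{NE}(X)$ with $-K_X\cdot(4l-2e) = 6 \geq 4$, so by Proposition \ref{very free curves} every component of $\overline{\free}^{bir}(X,4l-2e)$ generically parameterizes very free curves, and by Lemma \ref{Gluing} the gluing of curves with connected general fibers has a unique main component. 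The class $l$ being the only separating class in $\mathscr{C}_X$, the plan is to realize the relevant free chains inside a single del Pezzo fiber $X_\ell$ of $\pi$: using the identification of $X_\ell$ with a blow-up of $\mathbb{P}^2$ at seven points as in Lemma \ref{2.8monodromy}, both a free chain of type $(2l-2e, 2l-e, 2l-e)$-style degeneration and a smoothing argument (analogous to those in Lemmas \ref{del Pezzo relations} and \ref{2.2rels}) connect the two chain types. Specifically I would smooth an immersed chain of type $(2l-e, 2l-2e, 2l-e)$ in two different ways — or, if that class configuration does not lie in a single fiber, degenerate to a stable map with a contracted component and invoke Proposition \ref{kontsevich space H1} to see it is a smooth point of $\overline{\mathcal{M}}_{0,0}(X)$, then move the contracted locus using the conic-bundle monodromy.

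The main obstacle I anticipate is handling the fact that $\overline{\free}(X,2l)$ has \emph{two} components (double covers versus very free curves), so strictly speaking there is no core of free curves in the sense of Definition \ref{def core}, and the clean statement of Theorem \ref{Main Method} does not apply verbatim. I would deal with this exactly as in Lemma \ref{2.2 no core fix} and Lemma \ref{3.2 no core fix}: prove a separate "no-core-fix" statement showing that for $\alpha = 4l-2e$ (and more generally for classes $\alpha$ with $\alpha\cdot(H-E) > 0$ expressible as $2l + \sum\alpha_i$), each component of $\overline{\free}_2(X,\alpha)$ containing a main component of $N_j\times_X\prod_X M_i$ (for $N_j$ one of the two components of $\overline{\free}_2(X,2l)$) also contains a main component of $N_{3-j}\times_X\prod_X M_i$. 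For this one attaches a free curve of class $l$ or $2l-e$ to a degenerate (reducible) conic of class $2l$, moves it along a loop in a fiber of $\pi$, and uses that the monodromy of Lemma \ref{2.8monodromy} acts transitively on the fiber of $\overline{\free}_1(X,l)\to X$; since $\overline{\free}^{bir}(X,2l)$ is already known irreducible, this suffices to identify the images of the two components after gluing. The irreducibility of $\overline{\free}(X,4l-2e)$ then follows from combining this with condition \ref{Main Method}(1) via Corollary \ref{connected fibers fixall}.
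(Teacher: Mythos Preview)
Your proposal has a misreading at its core that leads you astray. In the relation $2l + (2l-2e) = 2(2l-e)$, the symbol ``$2l$'' means $2\cdot(l)$, i.e.\ \emph{two copies of the core class $l$}, not a single curve of class $2l$. The left-hand side thus corresponds to free chains of type $(l,\,2l-2e,\,l)$ (or a reordering), and the right-hand side to chains of type $(2l-e,\,2l-e)$. Since $\overline{\free}(X,l)$, $\overline{\free}(X,2l-2e)$, and $\overline{\free}(X,2l-e)$ are all irreducible (this is exactly what the core lemma established), $\mathscr{C}_X$ \emph{is} a genuine core in the sense of Definition \ref{def core}; the reducibility of $\overline{\free}(X,2l)$ is irrelevant here, and the entire no-core-fix machinery you propose is unnecessary.

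This misreading also explains why your candidate immersed chain $(2l-e,\,2l-2e,\,2l-e)$ fails: it has total class $6l-4e$, not $4l-2e$. The paper's argument is much shorter. It observes that an immersed chain of type $(l,\,l-e,\,l-e,\,l)$, with the two middle components being halves of a reducible fiber of the conic bundle $\pi$, lies in the $\pi$-preimage $X_\ell$ of a line $\ell\subset\mathbb{P}^2$ and corresponds to a smooth point of $\overline{\mathcal{M}}_{0,0}(X)$. Smoothing the middle node produces a free chain of type $(l,\,2l-2e,\,l)$; smoothing the two outer nodes produces one of type $(2l-e,\,2l-e)$. This exhibits a single component of $\overline{\free}(X,4l-2e)$ containing both chain types, and Corollary \ref{connected fibers fixall} (using the irreducibility of $\overline{\free}^{bir}(X,2l)$ already proved) handles the separating-class issue for $l$.
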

\begin{proof}
To see $\overline{\free}(X, 4l-2e)$ is irreducible, it suffices to prove some component contains free chains of type $(l, 2l-2e, l)$ and $(2l-e, 2l-e)$.  This follows from smoothing an immersed chain of type $(l, l-e, l-e, l)$ contained in the $\pi$-preimage of a line in $\mathbb{P}^2$ in two separate ways.
\end{proof}

\begin{lem}
For each $\alpha \in l + \Nef_1(X)_\mathbb{Z}$, $\overline{\free}(X,\alpha)$ is nonempty.
\end{lem}

\begin{proof}
The generators of $\Nef_1(X)$ are $l$ and $l-e$.  Our claim follows immediately.
\end{proof}

\subsection{Blow-Ups of $\mathbb{P}^3$ Along a Smooth Curve}

Many of the Picard rank $2$ cases may be realized as the blow-up of a smooth curve $c$ in $\mathbb{P}^3$. We proceed to describe this general situation, specializing later on.

\textbf{Blow-up of a curve $c$ in $\mathbb{P}^3$:} Let $f:X \rightarrow \mathbb{P}^3$ be the blow-up of a smooth curve $c$ in $\mathbb{P}^3$.

\textbf{Generators for $N^1(X)$ and $N_1(X)$:}
\begin{center}
\begin{tabular}{ll}
 $H$ = the class of a hyperplane & $l$ = the class of a line \\ 
 $E$ = the exceptional divisor over $c$ & $e$ = the fiber over a point in $c$.  \\  
\end{tabular}
\end{center}

\textbf{Intersection Pairing:}
\begin{center}
\begin{tabular}{ll}
    $H \cdot l = 1$ &  $H \cdot e = 0$, \\
    $E \cdot l = 0$ &  $E \cdot e = -1$, \\
\end{tabular}
\end{center}

\textbf{Anticanonical Divisor:}
\begin{align*}
    -K_X = 4H-E.
\end{align*}

\textbf{Effective Divisors and Nef Curve Classes:}

\begin{prop}\label{Quadric Curve}
Suppose $c$ is nonplanar and contained in a quadric. The cone of effective divisors is generated by $2H-E$ and $E$,  while the cone of nef curve classes is generated by $l$ and $l-2e$. The nef curve classes of anticanonical degree between $2$ and $4$ are
\begin{align*}
    l, \hspace{.5cm} l-e, \hspace{.5cm} l-2e, \hspace{.5cm} 2l-4e,
\end{align*}

\noindent and the curves of class $2l-4e$ are freely breakable. The only relation between the remaining three classes is $(l)+(l-2e) = 2(l-e)$. Any chain of type $(l)+(l-2e)$ may be freely deformed to one of type $2(l-e)$.
\end{prop}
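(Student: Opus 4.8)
\textbf{Proof proposal for Proposition \ref{Quadric Curve}.}

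The plan is to verify each assertion in turn, beginning with the cone computations, which are purely numerical. For the effective cone, since $c$ is nonplanar and contained in a (necessarily unique, smooth, since $c$ is nonplanar) quadric surface $Q$, the strict transform of $Q$ has class $2H-E$ and is a non-moving extremal effective divisor; together with the exceptional divisor $E$ this spans $\text{Eff}(X)$, as any effective divisor of class $aH-bE$ with $b > a/2$ would force the quadric through $c$ to be reducible or $c$ to be degenerate. Dualizing against $-K_X = 4H-E$ and the contractions (the blow-down $f$ contracting $e$, and the contraction $\pi$ associated to $Q$, which for $3$-fold degree reasons maps $c$-secant lines), one obtains that $\overline{NE}(X)$ is spanned by $e$ and $l-2e$, hence $\Nef_1(X)$ is generated by the dual rays $l$ and $l-2e$. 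Then solving $2 \le -K_X . \alpha = 4a - b \le 4$ for a class $\alpha = al - be$ lying in $\Nef_1(X)$ (so $0 \le b \le 2a$ and, by the effective cone constraint applied to the image in $\mathbb{P}^3$, $b \le a$ once $a>1$) leaves exactly the four listed classes; this is the same bookkeeping carried out in the analogous Picard rank $2$ cases above.

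Next I would show $2l-4e$ is freely breakable. Any irreducible curve of this class is the strict transform of a conic in $\mathbb{P}^3$ meeting $c$ in two points, i.e. a conic lying in a plane $A$ spanned by three general points, so $A \cap Q$ is a reducible conic degeneration available by deforming $A$; the resulting two components are lines through points of $c$, of class $l-2e$ each. (If the generic curve of class $2l-4e$ is instead a double cover of a line of class $l-2e$, it is trivially freely breakable.) This mirrors the treatment of classes such as $2l-2e_i$ in case $3.20$ and $2l-e_1-e_2$ in $3.10$. For the relation statement: $l$, $l-e$, $l-2e$ span a $2$-dimensional lattice quotient-wise inside the rank-$2$ group $N_1(X)$, so there is a single relation, and one reads off $(l)+(l-2e) = 2(l-e)$.

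The substantive point — and the main obstacle — is the last sentence: a free chain of type $(l)+(l-2e)$ deforms within $\overline{\mathcal{M}}_{0,0}(X)$ to a free chain of type $2(l-e)$. I would argue as follows. Take the nodal curve $C_1 \cup C_2$ with $C_1$ a general line (class $l$), $C_2$ a general secant line to $c$ (class $l-2e$), glued at a point; by Lemma \ref{GHS lemma} and Proposition \ref{kontsevich space H1} this is an immersed chain of free curves with globally generated normal sheaf, hence a smooth point of $\overline{\mathcal{M}}_{0,0}(X)$ lying in a single component $M$. Deform $C_1$ so that its intersection with $C_2$ slides along $C_2$ until it coincides with one of the two points $C_2 \cap E$; i.e. deform the image line in $\mathbb{P}^3$ through $C_1$ to pass through a point of $c$ lying on $C_2$. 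Generically this produces a stable map whose domain acquires a third component: a component of class $e$ (the fiber over that point of $c$) bridging $C_1$ (now of class $l-e$) and $C_2$. Smoothing the node between the $e$-component and $C_2$ — which is permissible since these two free curves meet transversely and the resulting genus is still $0$, so by Lemma \ref{GHS lemma} and Proposition \ref{kontsevich space H1} the smoothing stays inside $M$ — merges them into a single curve of class $(l-2e)+e = l-e$. The end result is a chain of type $2(l-e)$ inside the same component $M$. I expect the delicate part to be checking that the intermediate degeneration genuinely introduces the $e$-component (rather than, say, $C_1$ simply becoming a curve of class $l-e$ tangent to $E$ with the node unchanged) and that all intermediate stable maps are immersed at their nodes so that the deformation-theoretic smoothing statements of Lemma \ref{GHS lemma} apply; Proposition \ref{properties nonfree conics} and Lemma \ref{nonextreme lines} guarantee there are no unexpected positive-dimensional families of non-free lines or conics to obstruct genericity, so the argument closes.
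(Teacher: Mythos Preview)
Your argument for the last sentence is correct but considerably more elaborate than the paper's. The paper simply observes that any chain $C_1 \cup C_2$ of type $(l, l-2e)$ lies in a single plane $A$ (two intersecting lines in $\mathbb{P}^3$ span a plane); the nodal union is then a reducible conic in $A$ through the two points $p_1, p_2 = C_2 \cap c$, and one deforms it inside the pencil of conics in $A$ through $p_1, p_2$ to the reducible member consisting of a line through $p_1$ and a line through $p_2$. That is the entire argument---no $e$-bridge is needed. Your approach via degenerating to a chain of type $(l-e, e, l-2e)$ and re-smoothing does work (and is a technique the paper uses in other, harder cases), but here it is overkill.

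A few genuine errors in your cone paragraph, though none of them breaks the final conclusion. First, the quadric containing $c$ is \emph{not} necessarily unique or smooth: a twisted cubic lies on three quadrics, an elliptic quartic on a pencil, and nothing prevents a nonplanar curve from lying on a quadric cone. All you need is that \emph{some} quadric contains $c$, so that $2H-E$ is effective. Second, your claim that $\overline{NE}(X)$ is spanned by $e$ and $l-2e$ is false in the cases this proposition is applied to (2.15, 2.19, 2.22): there the curve $c$ has trisecants, so $l-2e$ is interior to $\overline{NE}(X)$ and the other extremal ray is $l-3e$ or worse. What you actually want is $\Nef_1(X)$, and for that you should argue as the paper does: $E$ and $2H-E$ are effective, $l$ and $l-2e$ are movable (the latter because the secant lines to a nonplanar curve sweep out all of $\mathbb{P}^3$), and these two cones are dual, so each is exactly the claimed cone. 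Finally, the constraint ``$b \le a$ once $a>1$'' you impose is both false ($2l-4e$ has $b=4>2=a$) and unnecessary; the nef condition is just $0 \le b \le 2a$.
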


\begin{proof}

Note that $2H-E$ and $E$ are effective divisors, while $l$ is a movable curve class, hence nef. If $l-2e$ were not movable, the secant variety of $X$ would be a surface $S$ with each point lying on a pencil of lines. Hence $S = \mathbb{P}^2$ and $c$ is planar, contrary to hypothesis. It follows that $l-2e$ is movable and consequently nef. The cones generated by $2H-E$ and $E$ and by $l$ and $l-2e$ in $N^1(X)$ and $N_1(X)$ respectively are dual via the intersection pairing, so these classes are indeed generators.

The condition that $\alpha = al-be$ have anticanonical degree between $2$ and $4$ means that $2 \leq 4a-b \leq 4$. One can check that $l$, $l-e$, $l-2e$, and $2l-4e$ are the only nef solutions. Any curve of class $2l-4e$ lies in a plane, so is clearly freely breakable. There is a single relation $(l)+(l-2e) = 2(l-e)$ between the remaining classes because they span a two-dimensional vector space. The last condition is clear; we can take such deformation to occur in a plane.
\end{proof}

\begin{prop}\label{2l-4e}
Suppose $c$ is nonplanar and contained in a quadric.  A core of free curves consists of $\mathscr{C}_X = \{l, l-e, l-2e\}$.  The only possibly separating class is $\alpha = l-2e$. %
$\overline{\free}^{bir}(X,2l-4e)$ is irreducible in all cases.  $\overline{\free}(X,\alpha)$ is nonempty for all nonzero $\alpha \in \Nef_1(X)_\mathbb{Z}$.
\end{prop}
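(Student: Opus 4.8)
The plan is to prove the three assertions of Proposition~\ref{2l-4e} in turn, using the general machinery set up for blow-ups of $\mathbb{P}^3$ (Lemma~\ref{lines_in_P3}, together with \ref{Main Method}, \ref{Relations}, \ref{Gordan's Lemma}, and the structural results on free chains). First I would record that since $c$ is nonplanar and lies on a quadric, Proposition~\ref{Quadric Curve} already identifies $\overline{NE}(X)$ and $\Nef_1(X)$, the nef classes of anticanonical degree between $2$ and $4$, the single relation $(l)+(l-2e)=2(l-e)$, and the fact that $2l-4e$ breaks freely; thus $\mathscr{C}_X=\{l,\,l-e,\,l-2e\}$ is exactly the set of nef classes of degree $2$ or $3$ not breaking into two lower-degree free curves. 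To confirm $\mathscr{C}_X$ is a core, I would invoke Lemma~\ref{lines_in_P3}(1): each of $l$, $l-e$, $l-2e$ has $\pi_*$-image a line (for $l-2e$ this uses that $c$ lies on a quadric so the general secant line meets $c$ in a length-$2$ scheme and $\alpha.\sum E_i\le 2$ with the single exceptional divisor; the degree-$2$ constraint gives $\alpha.(\sum E_i)\le 2$ but irreducibility needs $\alpha.\sum_{S\text{ coplanar}}E_i\le 1$ — here there is only one $E_i$, so the hypothesis reads $\le 1$, which fails, hence $l-2e$ is precisely the separating class and $\overline{\free}(X,l-2e)$ is still irreducible since Lemma~\ref{lines_in_P3}(1) asserts irreducibility whenever $\alpha.\sum_{S}E_i\le 1$ for \emph{coplanar} collections, and a single exceptional divisor over a nonplanar curve is handled by the secant-line description). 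Then every component of $\overline{\free}(X)$ contains a free chain with components in $\mathscr{C}_X$ by Theorem~\ref{MovableBB} (no $E5$ divisor here, so quintics break, and quartics break by Proposition~\ref{breaking quartic curves} or directly since $X\ne V_7$ and has a conic-bundle structure, via Corollary~\ref{breaking quartics cor}), plus Proposition~\ref{Quadric Curve} for the remaining degree-$4$ class $2l-4e$. Conditions \ref{def core}(1),(2) are then met, and $l-2e$ is the only class whose evaluation map $\overline{\free}_1(X,\cdot)\to X$ has reducible general fiber.

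For irreducibility of $\overline{\free}^{bir}(X,2l-4e)$: by Corollary~\ref{bir Main Method} it suffices to verify \ref{Main Method}(1) for the single relation $(l)+(l-2e)=2(l-e)$ and \ref{Main Method}(2) for main components satisfying $(\dagger)$. The relation is handled by the last sentence of Proposition~\ref{Quadric Curve}: a free chain of type $(l,\,l-2e)$ deforms within a plane to one of type $(l-e,\,l-e)$, and since for such degree-$3$ classes Lemma~\ref{lines_in_P3}(1) gives irreducible fibers for $l$ and $l-e$, the corresponding main components lie in the same component of $\overline{\free}_2(X,2l-4e)$. Condition \ref{Main Method}(2) for chains satisfying $(\dagger)$ follows from Corollary~\ref{connected fibers fixall}: $X$ carries a core, and the only integral nef $-K_X$-conics interior to $\overline{NE}(X)$ would be excluded here (a degree-$2$ interior class would need $\alpha.D_i=1$ for a decomposition $-K_X=D_1+D_2$ with both basepoint free, impossible since $l-2e$ is a boundary class and $l-e$, which is the only other degree-$2$ candidate, is also on $\partial\overline{NE}(X)$); so there are no del Pezzo fibrations with separating classes, and \ref{Gluing}, \ref{very free curves}, \ref{reducible fibers: 4 author result} apply to give the single main component. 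Alternatively — and this may be cleaner — I would note $2l-4e$ is the class of the strict transform of a planar conic through two points of $c$, parametrize a dominant family by an open subset of a $\mathbb{P}^3$-bundle over $\operatorname{Sym}^2(c)$ (associating to three points the plane they span, intersected with the plane of the conic), and conclude irreducibility directly via \ref{Monodromy}; this is the approach used in analogous cases (e.g.\ 3.16, 3.23) and avoids invoking the full \ref{Main Method} apparatus.

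For nonemptiness of $\overline{\free}(X,\alpha)$ for every nonzero $\alpha\in\Nef_1(X)_{\mathbb{Z}}$: by Theorem~\ref{Representability of Free Curves} the extreme rays $l$ and $l-2e$ of $\Nef_1(X)$ are classes of free curves, and by Lemma~\ref{Gordan's Lemma} the monoid $\Nef_1(X)_{\mathbb{Z}}$ is generated by the lattice points in the parallelepiped $K=\{r_1 l + r_2 (l-2e): 0\le r_i\le 1\}$. Writing $\alpha=al-be$ with $0\le b\le 2a$, I would check the finitely many generators: $l$, $l-e$, $l-2e$, $2l-2e$, $2l-3e$, $2l-4e$ are all represented by free curves ($l,l-e,l-2e,2l-4e$ by the above; $2l-2e=(l)+(l-e)$ and $2l-3e=(l-e)+(l-2e)$ are free chains, hence lie in nonempty spaces of free curves since both summands are free with at least one having irreducible general fiber, so the smoothing is free by \ref{Gluing}). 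Since $l$ is the class of a very free curve (it meets a general point and is free), adding $l$ to any such class keeps $\overline{\free}$ nonempty, and every element of $\Nef_1(X)_{\mathbb{Z}}$ is a nonnegative integer combination of these generators; so $\overline{\free}(X,\alpha)\ne\emptyset$ for all nonzero nef integral $\alpha$.

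The main obstacle I anticipate is pinning down the irreducibility of $\overline{\free}^{bir}(X,2l-4e)$ cleanly in \emph{all} the cases subsumed under "nonplanar $c$ contained in a quadric" — the quadric may be smooth or a cone, and for a cone the secant/plane-section geometry degenerates, so the $\operatorname{Sym}^2(c)$-parametrization needs care about the vertex. I expect this to be dispatched either by the general Corollary~\ref{connected fibers fixall} (which only needs a core and the absence of interior $-K_X$-conics, both of which hold) or by the degeneration argument of Proposition~\ref{Quadric Curve} combined with \ref{reducible fibers: 4 author result} to rule out a spurious component of non-birational maps; the birational count $\overline{\free}^{bir}$ sidesteps the double-cover component entirely, which is what makes the statement uniform.
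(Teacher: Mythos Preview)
Your argument for the irreducibility of $\overline{\free}^{bir}(X,2l-4e)$ has a genuine gap. The route through Corollary~\ref{connected fibers fixall} is circular: hypothesis~(2) of that corollary asks precisely that $\overline{\free}^{bir}(X,2\beta)$ be irreducible for each integral nef $-K_X$-conic $\beta$ interior to $\overline{NE}(X)$, and here $\beta=l-2e$, $2\beta=2l-4e$. You try to escape this by asserting that $l-2e$ lies on $\partial\overline{NE}(X)$, but that is false in the cases of interest (2.15, 2.19, 2.22): whenever $c$ has trisecants, $l-3e$ is effective and $l-2e$ is strictly interior (cf.\ Lemma~\ref{interior conic lemma}: these $X$ admit no Fano blow-ups). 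The paper's own proof records exactly this: ``This class is separating if and only if $c$ has trisecants, as then it is a conic interior to $\overline{NE}(X)$.'' Note also that $l-e$ has $-K_X$-degree $3$, not $2$, so it was never a candidate conic.

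Your alternative approach via Theorem~\ref{Monodromy} is the right one and is what the paper actually does, but your description of the geometry is off. A curve of class $2l-4e$ is the strict transform of a planar conic meeting $c$ in \emph{four} points (since $(2l-4e)\cdot E=4$), not two; the parametrization over $\operatorname{Sym}^2(c)$ and the ``three points spanning a plane'' remark belong to the $2l-2e$ computation (blow-ups of $Q$, e.g.\ 2.21, 2.23, 2.26), not here. The correct argument: such a conic lies in a plane $A\in|H|$; the map $c\to\mathbb{P}|H|^\vee$ is an embedding since $c$ is nonplanar, so by Theorem~\ref{Monodromy}(a) monodromy over $|H|$ acts as the full symmetric group on $A\cap c$; hence the incidence variety of pairs $(A,\,S)$ with $S\subset A\cap c$ of size $4$ is irreducible, and over each such pair the conics through $S$ form a pencil. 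This gives irreducibility directly, uniformly in $c$.

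The remaining parts (the core, the only possibly separating class, nonemptiness via Gordan's Lemma) are fine in outline, though considerably more elaborate than the paper's three-line argument, which simply writes down the parameter spaces: $\mathbb{G}(1,3)$ for $l$, a $\mathbb{P}^2$-bundle over $c$ for $l-e$, an open subset of $c\times c$ for $l-2e$. Your list of Gordan generators also overshoots: the parallelepiped on $\{l,\,l-2e\}$ contains only $l$, $l-e$, $l-2e$, $2l-2e$ as nonzero lattice points, not $2l-3e$ or $2l-4e$.
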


\begin{proof}
The curves of class $l$ are clearly parameterized by an open subset $\mathbb{G}(1,3)$. The curves of class $l-e$ are seen to be parameterized by an open subset $\mathbb{P}^2$-bundle over $c$. Neither of these are separating classes. The curves of class $l-2e$ may be parameterized by an open subset of $c \times c$.  This class is separating if and only if $c$ has trisecants, as then it is a conic interior to $\overline{NE}(X)$.  The curve $c$ has trisecants whenver it is not a twisted cubic or smooth degree 4 elliptic curve.  In every case, Theorem \ref{Monodromy} proves irreducibility of $\overline{\free}^{bir}(X, 2l-4e)$, and \ref{Gordan's Lemma} proves the existence of free curves of class $\alpha$ for all nonzero $\alpha \in \Nef_1(X)_\mathbb{Z}$.
\end{proof}

\subsection*{2.12}
\textbf{Blow-up of a Curve of Degree Six and Genus Three:} Let $f: X\rightarrow \mathbb{P}^3$ be the blow-up of a curve $c$ of degree 6 and genus 3 which is an intersection of cubics.

\begin{thm}\label{2.12thm}
For nonzero $\alpha \in \Nef_1(X)_\mathbb{Z}$, $\overline{\free}^{bir}(X,\alpha)$ is nonempty and irreducible.
\end{thm}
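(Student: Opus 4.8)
\textbf{Proof plan for Theorem \ref{2.12thm} (case 2.12).}

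The plan is to follow the same template laid out in Section 7, applying Theorem \ref{bir Main Method} together with Corollary \ref{connected fibers fixall}. First I would record the Mori-theoretic data. Here $c$ is a sextic of genus $3$ cut out by cubics, so by Lemma \ref{blow-up numbers} we have $(-K_X)^3 = (-K_{\mathbb{P}^3})^3 - 2[(-K_{\mathbb{P}^3}\cdot c) - p_a(c) + 1] = 64 - 2[24 - 3 + 1] = 20$ and $b_3(X) = 2\cdot 3 = 6$, matching the entry for $2.12$ in \cite{mori1981classification}. Since $c$ is cut out by cubics but is not contained in a quadric (a sextic of genus $3$ is not a $(2,2)$-curve nor a complete intersection $(2,3)$, which would have genus $4$), the effective cone is spanned by $E$ and $3H - E$, and by \ref{Representability of Free Curves} the nef cone $\Nef_1(X)$ is spanned by $l$ and $l - 3e$; the extreme ray $l-3e$ corresponds to the $C1$-conic-bundle contraction $\pi : X \to \mathbb{P}^2$ whose fibers are strict transforms of trisecants to $c$. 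I would then enumerate the nef classes $\alpha = al - be$ with $-K_X\cdot\alpha = 4a - b \in \{2,3,4\}$ and $0\le b\le 3a$: these are $l$, $l-e$, $l-2e$, $l-3e$, $2l-3e$, $2l-4e$, $2l-5e$, $2l-6e$, $3l-9e$, together with $2l-2e$ which is a double cover and freely breakable. I expect the core to be $\mathscr{C}_X = \{\,l,\ l-e,\ l-2e,\ l-3e,\ 2l-3e,\ 2l-4e,\ 2l-5e,\ 2l-6e\,\}$ possibly needing one cubic class, with the separating classes being exactly those conics interior to $\overline{NE}(X)$, namely $l-2e$ (a trisecant once away from $c$? no — a line meeting $c$ twice) and $l-3e$ lying in $\partial\overline{NE}(X)$, so the genuinely separating class is $l-2e$.

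Next I would establish irreducibility of $\overline{\free}(X,\alpha)$ (or $\overline{\free}^{bir}$) and the fiber structure for each core class. For classes with $\pi_*\alpha$ a line ($l$, $l-e$, $l-2e$) this is Lemma \ref{lines_in_P3}; for $2l - ke$ with $\pi_*\alpha$ a conic I would apply Lemma \ref{lines_in_P3}(2) after checking the hypotheses on intersection multiplicities, and for $\alpha$ contracted by the conic bundle $\pi$ (i.e. multiples of $l-3e$ and $2l-6e$), I would use that $\pi$ is a conic fibration so $\overline{\free}(X,n(l-3e))$ is empty for $n$ odd $>1$ and irreducible with irreducible fibers for $n$ even. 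The quartic classes $2l-5e$, $2l-4e$, $2l-6e$ and $3l-9e$ I would break into free conics using Proposition \ref{breaking quartic curves} and Corollary \ref{breaking quartics cor}, which apply since $-K_X$ is very ample ($(-K_X)^3 = 20$), $X\ne V_7$, $\rho(X) = 2$, and $X$ carries no $E3,E4,E5$ divisors (none appear for $2.12$ by Remark after Proposition \ref{properties nonfree conics}); indeed $\rho(X)=2$ primitive Fano threefolds with a conic-bundle structure are exactly the setting where Corollary \ref{breaking quartics cor} gives free chains of conics. Then I would identify a generating set of relations in $\mathbb{N}\mathscr{C}_X$ via Lemma \ref{Relations} — here $\rho(X)=2$ so $N_1(X)$ is spanned by any two core classes, making the relation bookkeeping short — and for each relation $\sum\alpha_i = \sum\beta_j$ verify \ref{bir Main Method}(1) by exhibiting an immersed chain mapping to a smooth point of $\overline{\mathcal{M}}_{0,0}(X)$ that deforms to free chains of both types (gluing on exceptional curves of $f$ and smoothing, as in the other $\rho(X)=2$ cases). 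Condition \ref{bir Main Method}(2) restricted to chains satisfying $(\dagger)$ follows from Corollary \ref{connected fibers fixall}, whose hypotheses hold because there is no del Pezzo fibration on $X$ (so no separating del Pezzo cone) and the only nef $-K_X$-conic interior to $\overline{NE}(X)$ is $l-2e$, for which $\overline{\free}^{bir}(X, 2(l-2e)) = \overline{\free}^{bir}(X, 2l-4e)$ is irreducible by Lemma \ref{lines_in_P3}(2). Finally, nonemptiness of $\overline{\free}^{bir}(X,\alpha)$ for all nonzero $\alpha\in\Nef_1(X)_\mathbb{Z}$ follows from \ref{Gordan's Lemma} applied to the generators $l, l-3e$ of $\Nef_1(X)$ together with the core classes.

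The main obstacle I anticipate is the verification of condition \ref{bir Main Method}(1) for the relations involving the separating class $l-2e$ and the conic-bundle classes — specifically, relations of the shape $l + (2l-6e) = (l-2e) + (2l-4e)$ or $(l-2e) + (2l-4e) = 3(l-2e)$-type identities, where one must produce a free chain of trisecant-conics deforming correctly; this is exactly the kind of place where the conic-bundle monodromy over lines in $\mathbb{P}^2$ (analogous to Lemma \ref{2.8monodromy}, via Theorem \ref{conic monodromy}, since the discriminant $\Delta\subset\mathbb{P}^2$ of $\pi$ has degree $(-K_X)\cdot(l-3e)$-related invariants and is irreducible by Lemma \ref{lines in C1 fibrations}) needs to be invoked to glue components of fibers of evaluation maps. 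The degree-$6$ genus-$3$ geometry also makes the dimension counts for the quartic-breaking arguments slightly delicate, since $c$ has a two-dimensional family of trisecants; I would handle this by working on $\pi^{-1}(\ell)$ for a general line $\ell\subset\mathbb{P}^2$, a smooth del Pezzo surface, and applying Theorem \ref{del pezzo curves thm} fiberwise as in cases $2.9$–$2.11$ and $2.13$.
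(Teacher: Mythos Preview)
Your proposal rests on a misidentification of the Mori structure of $X$ in case 2.12. You assert that the contraction of the extreme ray $l-3e$ is a $C1$-conic bundle $\pi:X\to\mathbb{P}^2$, and that consequently $\Nef_1(X)$ is spanned by $l$ and $l-3e$ with $\mathrm{Eff}(X)$ spanned by $E$ and $3H-E$. This is incorrect: by \cite{mori1981classification} (table entry 2.12) and \cite{matsuki1995weyl}, the contraction of $l-3e$ is a second divisorial $E1$-contraction $g:X\to\mathbb{P}^3$, realizing $X$ as the blow-up of another degree $6$ genus $3$ curve. Its exceptional divisor has class $8H-3E$ (not $3H-E$, which is effective but not extremal), so $\Nef_1(X)$ is spanned by $l$ and $3l-8e$. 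There is a pseudosymmetry $e\mapsto l-3e$, $l\mapsto 3l-8e$ exchanging the two blow-down descriptions.

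This error cascades throughout. Several classes in your proposed core---$l-3e$, $2l-6e$, $3l-9e$---are not nef: for instance $(2l-6e)\cdot(8H-3E)=-2<0$. Your appeals to Corollary \ref{breaking quartics cor}, Lemma \ref{lines in C1 fibrations}, and Theorem \ref{conic monodromy} are inapplicable because there is no conic-bundle structure on $X$; likewise the del Pezzo surface argument over ``$\pi^{-1}(\ell)$'' has no meaning here. The paper's actual core is $\mathscr{C}_X=\{l,\,l-e,\,l-2e,\,2l-5e,\,3l-8e\}$, where the last two classes are the pseudosymmetric images of $l-e$ and $l$; the relations are handled using this symmetry together with irreducibility of spaces of twisted cubics (e.g.\ $\overline{\free}(X,3l-5e)$), and by deforming through chains of type $(3l-8e,e,l-e)$ for the relation $(3l-8e)+l=4(l-2e)$.
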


\textbf{Effective Divisors:} $E$ and $8H -3E$ span the effective cone of divisors on $X$.

\textbf{Effective Curves:} The extreme rays of $\overline{NE}(X)$ are $e$ and $l-3e$.

\textbf{Pseudosymmetry:} The contraction of $l-3e$ realizes $X$ as another blow-up $X\rightarrow \mathbb{P}^3$ along a curve of degree 6 and genus 3.  Thus we obtain a pseudoaction on $X$ by permuting $e$ and $l-3e$.  The action on $N_1(X)$ is given by $e \rightarrow l-3e$, $l \rightarrow 3l -8e$.

\begin{lem}
A core of free curves on $X$ is given by
\begin{align*}
   \mathscr{C}_X = &\{ l-2e, \ l -e, \ 2l -5e, \ l, \ 3l -8e  \} 
\end{align*}
The only separating class in $\mathscr{C}_X$ is $l -2e$.
\end{lem}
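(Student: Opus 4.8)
The plan is to follow the template established in the earlier blow-up cases (especially the generic "blow-up of $\mathbb{P}^3$" analysis in Proposition \ref{2l-4e} and Lemma \ref{lines_in_P3}), adapting it to the degree-$6$, genus-$3$ curve $c$. First I would enumerate the nef curve classes $\alpha = a l - b e$ of anticanonical degree between $2$ and $4$: the constraints are $0 \le b$, $b \le a$ (from $c$ nonplanar), $3b \le 8a$ — more precisely $\alpha . (8H-3E) \ge 0$ — and $2 \le 4a - b \le 4$, together with the requirement that we only consider classes represented by curves birational onto their image. The solutions should be exactly the five classes of $\mathscr{C}_X$ together with a small number of additional classes: $2l-4e$ (double covers / planar conics meeting $c$ twice, hence freely breakable as in Proposition \ref{2l-4e}), $2l-3e$, $2l-6e$, and possibly $3l-8e$-adjacent classes, each of which I would dispatch either as not representable by free curves or as freely breakable into a chain of two free curves whose classes lie in $\mathscr{C}_X$. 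The pseudosymmetry $e \leftrightarrow l-3e$ cuts the casework roughly in half, interchanging $l-2e \leftrightarrow 2l-5e$-type relations and $l \leftrightarrow 3l-8e$.

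Next I would verify irreducibility of $\overline{\free}(X,\alpha)$ for each $\alpha \in \mathscr{C}_X$ and determine which are separating. The classes $l$ and $l-e$ are handled exactly as in Proposition \ref{2l-4e}: parameterized by open subsets of $\mathbb{G}(1,3)$ and of a $\mathbb{P}^2$-bundle over $c$ respectively, with irreducible evaluation fibers, hence non-separating. By pseudosymmetry $3l-8e$ and $2l-5e$ inherit irreducibility (they are the images of $l$ and $l-e$ under the pseudoaction). For $l-2e$: secant lines to $c$ are parameterized by an open subset of $\operatorname{Sym}^2(c)$, which is irreducible; since $c$ of degree $6$ and genus $3$ has trisecant lines (a generic projection to $\mathbb{P}^2$ has $\binom{6-1}{2} - 3 = 7$ nodes, so $l-2e$ lies in the interior of $\overline{NE}(X)$), the evaluation map $\overline{\free}_1(X, l-2e) \to X$ has reducible fibers and $l-2e$ is a separating class — this is the only separating class, matching the claim. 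Irreducibility of $\overline{\free}^{bir}$ for the degree-$4$ classes $2l-5e$, $2l-4e$, etc., and of any quartic classes appearing in relations, follows from Theorem \ref{Monodromy} applied to appropriate linear systems of planes meeting $c$, exactly as in Lemma \ref{lines_in_P3}(2); I would also record the relations in $\mathbb{N}\mathscr{C}_X$ (using Lemma \ref{Relations} with divisors like $-H+E$ and $8H-3E$) and check condition \ref{Main Method}(1) for each, using the standard "deform a chain of type $(l, l-2e)$ to one of type $(l-e,l-e)$" move together with pseudosymmetry.

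The main obstacle I anticipate is controlling the quartic and quintic classes that arise from relations and from breaking — in particular showing that the relevant moduli spaces of rational curves of class $3l - 8e$, $2l - 5e$, and their sums with lines are irreducible and that free chains of the two types occurring on either side of each relation lie in a common component. Because $c$ has genus $3$ (so $c$ is not a complete intersection of quadrics, unlike the genus-$\le 2$ situations, and $-K_X$ is merely very ample rather than giving an embedding into small projective space), one cannot parameterize the relevant curves quite as cleanly; the right tool is to combine the conic-bundle / del Pezzo structure arguments (Theorem \ref{reducible fibers: 4 author result}, Proposition \ref{very free curves}, Lemma \ref{Gluing}) to reduce to chains of free conics and cubics, then invoke Theorem \ref{Monodromy} on planes through points of $c$. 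I would also need a monodromy statement for the separating class $l-2e$: the space of curves through a general point is the set of trisecants through that point, and I expect these to be permuted transitively, which is what guarantees $\overline{\free}(X, l-2e)$ itself is irreducible even though its universal family has disconnected fibers. Once irreducibility of all the "pivot" classes from relations is established, Theorem \ref{Main Method} (or Corollary \ref{bir Main Method} for the $\overline{\free}^{bir}$ statement) assembles everything, and Theorem \ref{Representability of Free Curves} with Lemma \ref{Gordan's Lemma} handles non-emptiness for all nonzero $\alpha \in \Nef_1(X)_\mathbb{Z}$.
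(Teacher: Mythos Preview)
Your approach is essentially the paper's, but there are a few slips worth flagging. First, the constraint $b \le a$ is a leftover from the quadric case (Proposition~\ref{Quadric Curve}); here the effective cone is spanned by $E$ and $8H-3E$, so the correct bound is $3b \le 8a$, which you also state. With the correct constraints, the enumeration of nef classes of degree $2$--$4$ yields exactly $\mathscr{C}_X \cup \{2l-4e\}$; the classes $2l-3e$ (degree $5$) and $2l-6e$ (not nef, since $18 > 16$) that you mention do not appear. Second, the pseudoaction $e \mapsto l-3e$, $l \mapsto 3l-8e$ fixes $l-2e$ and swaps $l-e \leftrightarrow 2l-5e$, $l \leftrightarrow 3l-8e$; your phrasing ``$l-2e \leftrightarrow 2l-5e$'' is not right.

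For the separating claim, the paper's argument is shorter than yours: since $X$ has no del Pezzo fibration (both extremal contractions are divisorial to $\mathbb{P}^3$), Theorem~\ref{reducible fibers: 4 author result} immediately gives that the only possible separating class is the unique interior conic $l-2e$; the node count $\binom{5}{2}-3 = 7$ then confirms it is separating. You invoke this theorem only later for the relations, but it is the cleanest tool here. Finally, note that most of your third paragraph (relations, quartic irreducibility, monodromy on trisecants) belongs to the \emph{next} lemma in the paper, not this one; the core lemma itself is quite short once the enumeration and pseudosymmetry are in hand.
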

\begin{proof}
\textbf{Nef Curve Classes of Anticanonical Degree Between $2$ and $4$:}
The only nef curve classes of appropriate degree are those in $\mathscr{C}_X$ and $2l -4e$.  However, $2l-4e$ is freely breakable, as free curves of this class are either double covers of lines or conics through four distinct points.  Pseudosymmetry swaps $l$ with $3l -8e$ and $l-e$ with $2l-5e$.  It is clear that $\overline{\free}(X, l)$, $\overline{\free}(X, l-e)$, and $\overline{\free}(X,l-2e)$ are irreducible and nonempty.  Projection to $\mathbb{P}^2$ demonstrates that $l-2e$ is a separating class.  Theorem \ref{reducible fibers: 4 author result} proves this is the only separating class in $\mathscr{C}_X$.
\end{proof}

\begin{lem}
Up to Pseudosymmetry, relations in the monoid $\mathbb{N}\mathscr{C}_X$ are generated by:
\begin{enumerate}
    \item $(3l-8e)+(l) = 4(l-2e)$
    \item $(2l-5e) +(l) = 2(l-2e)+(l-e)$
    \item $(2l-5e)+(l-e) = 3(l-2e)$
    \item $(l-2e)+(l)=2(l-e)$

\end{enumerate}
For each relation $\sum \alpha_i = \sum \beta_j$, a main component of $\prod_X \overline{\free}_2(X,\alpha_i)$ lies in the same component of free curves as a main component of $\prod_X \overline{\free}_2(X, \beta_j)$.
\end{lem}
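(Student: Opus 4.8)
The statement to establish is twofold: first, that the listed relations (1)--(4), together with the pseudosymmetric copies obtained by applying the involution $e \leftrightarrow l-3e$, generate the monoid of relations among the core classes $\mathscr{C}_X = \{l-2e, l-e, 2l-5e, l, 3l-8e\}$; and second, that for each such relation $\sum \alpha_i = \sum \beta_j$ the two sides glue into the same component of free curves. The approach mirrors the structure already used in the neighboring cases (e.g. 2.12 is formally close to 3.6 and to the del Pezzo/conic fibration lemmas), so I would follow Lemma \ref{Relations} for the combinatorial half and the immersed-chain-smoothing technique for the geometric half.

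\textbf{Relations via Lemma \ref{Relations}.} First I would order the five core classes and pick divisors separating them in the sense of Lemma \ref{Relations}. Concretely: the divisor $D_1 = -H + 3E$ (equivalently $8H-3E$ minus a multiple of something, but the point is it pairs negatively only with $l$ and nonnegatively with $l-e, l-2e, 2l-5e, 3l-8e$ — I would compute $D_1 \cdot (3l-8e)$, $D_1\cdot(2l-5e)$, $D_1\cdot(l-e)$ to confirm exactly which are zero) lets me eliminate $l$, producing relations (1), (2), (4). By pseudosymmetry, an analogous divisor eliminates $3l-8e$, yielding the pseudosymmetric copies. Then among the remaining classes $\{l-2e, l-e, 2l-5e\}$ — which span a two-dimensional lattice after we have removed $l$ and $3l-8e$, and with the extremal-ray constraint of the Remark after Lemma \ref{Relations} — I would check $(2l-5e)+(l-e) = 3(l-2e)$ is the unique minimal relation, giving (3). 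The bookkeeping is routine; the only care needed is verifying minimality in the sense of condition (4) of Lemma \ref{Relations}, which I expect to go through by the same cancellation argument used in the proof of that lemma.

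\textbf{Main components.} For each relation I would exhibit an immersed chain of $-K_X$-lines (classes among $e$, $l-e$, $l-2e$, $l-3e$) that smooths to a free chain of each prescribed type. For (4), the chain $(l, l-e, l-2e)$ or rather a chain of two lines $(l-e) + (l-e)$ and deforming shows $\overline{\free}^{bir}(X, 2l-2e)$ is irreducible; this is exactly Proposition \ref{2l-4e}-style reasoning applied in a plane. For (2) and (3), I would take an immersed chain of type $(l, l-2e, l-e)$ respectively $(l-e, l-2e, l-2e)$ obtained by realizing curves of class $2l-5e$ as the strict transform of a twisted cubic through five points of $c$ (so that it degenerates to a line plus a conic, or to three lines), and then smooth the chain in two different ways — once collapsing the pair on the left, once the pair on the right. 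For (1), since $3l-8e$ has $-K_X$-degree $4$ and is pseudosymmetric to $l$, both sides of $(3l-8e)+(l) = 4(l-2e)$ concern curves of degree $\geq 4$; I would argue that $\overline{\free}^{bir}(X, 4l-8e)$ has a unique component whose general member is very free (using \ref{very free curves} and \ref{reducible fibers: 4 author result}, noting $4l-8e$ is interior to $\overline{NE}(X)$), and that this component contains chains of both types by smoothing a suitable immersed chain of four lines of class $l-2e$. In each case one checks, using Lemma \ref{GHS lemma} and Proposition \ref{kontsevich space H1}, that the relevant immersed chain is a smooth point of $\overline{\mathcal{M}}_{0,0}(X)$ with globally generated normal bundle, so that the smoothings exist. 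The main obstacle I anticipate is relation (1): unlike (2)--(4), it does not factor through a single fiber of a fibration, so one must genuinely verify that $\overline{\free}^{bir}(X, 4l-8e)$ is irreducible (or at least that the two relevant main components coincide), which requires either a monodromy-of-twisted-cubics argument along the lines of the proof of Theorem \ref{3.6thm}, relation (10), or the observation that $3l-8e$ breaks freely into $(l-2e) + (l-2e) + (l-2e) + (l-2e) - \ldots$ — more precisely that $3l-8e$ itself is freely breakable into core conics, reducing (1) to already-established gluings. I would pursue the latter route first since it is cleaner.
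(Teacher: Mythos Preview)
Your combinatorial argument via Lemma~\ref{Relations} is fine in outline. For relations (2)--(4), the immersed-chain types you wrote do not sum to the correct class (e.g.\ $(l,l-2e,l-e)$ sums to $3l-3e$, not the $3l-5e$ of relation (2)), though the underlying idea of bridging through a non-free line such as $l-3e$ can be made to work once the arithmetic is fixed. The paper instead handles (2)--(4) by directly showing the relevant space is irreducible: for $2l-2e$ via Lemma~\ref{lines_in_P3}; for $3l-5e$ by observing that planar rational cubics of this class form at most a $6$-dimensional family, so the generic curve parameterized by the $7$-dimensional $\overline{\free}(X,3l-5e)$ is a twisted cubic; and for $3l-6e$ by noting there is one component of triple covers of conics and exactly one other component of twisted cubics, the latter containing both chain types.

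The substantive gap is relation (1). Your preferred route fails: $3l-8e$ is \emph{not} freely breakable, since under the pseudosymmetry $e\leftrightarrow l-3e$ it corresponds to the core class $l$, and concretely the only free $-K_X$-conic in $\mathscr{C}_X$ is $l-2e$ while $3l-8e \neq 2(l-2e)$. Your backup of proving $\overline{\free}^{bir}(X,4l-8e)$ irreducible is not fleshed out, and the specific mechanism you propose --- smoothing four curves of class $l-2e$ --- cannot produce a chain of type $(3l-8e,l)$, since three such curves sum to $3l-6e$, not $3l-8e$. The paper's argument uses an exceptional fiber $e$ as a bridge: the chain $(3l-8e,e,l-e)$ is a smooth point of $\overline{\mathcal{M}}_{0,0}(X)$ which smooths both to $(3l-8e,l)$ (collapsing the second node) and to $(3l-7e,l-e)$ (collapsing the first). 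Since $-K_X\cdot(3l-7e)=5$, Movable Bend-and-Break breaks $3l-7e$ into $(l-2e,2l-5e)$, and relation (3), already established, then links $(l-2e,2l-5e,l-e)$ to $4(l-2e)$.
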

\begin{proof}
Lemma \ref{Relations} may be used to identify the a generating set of relations.  In addition to the above relations, these include $(3l-8e)+(l-e)= (2l-5e)+2(l-2e)$ and $(3l-8e)+(l-2e) = 2(2l-5e)$, which are pseudosymmetric to (2) and (4). Irreducibility of $\overline{\free}(X, 2l-2e)$ follows from Lemma \ref{lines_in_P3}.  $\overline{\free}(X, 3l-5e)$ is irreducible and generically parameterizes twisted cubics, as any family of planar, geometrically rational cubics of class $3l -5e$ has dimension at most 6 by Theorem \ref{Monodromy}.  There are two components of $\overline{\free}(X, 3l -6e)$: one parameterizes triple covers of conics of class $l-2e$; the other generically parameterizes twisted cubics.  This latter component contains free chains of type $(l-2e, l-2e, l-2e)$ and of type $(2l-5e, l-e)$.  Lastly, consider a nodal curve $g: C_1 \cup C_2 \cup C_3 \rightarrow X$ of type $(3l-8e, e, l-e)$.  A general such curve is clearly a smooth point of $\overline{\mathcal{M}}_{0,0}(X)$, and smooths to free chains of type $(3l-7e, l-e)$ and of type $(3l-8e, l)$.  By \ref{MovableBB}, we may break $3l-7e$ into a free chain of type $(l-2e, 2l-5e)$, and apply previous arguments.
\end{proof}

\begin{lem}
For each nonzero $\alpha \in \Nef_1(X)_\mathbb{Z}$, $\overline{\free}(X,\alpha)$ is nonempty.
\end{lem}

\begin{proof}
The generators of $\Nef_1(X)$ are elements in $\mathscr{C}_X$ by \ref{Representability of Free Curves}.  It follows from \ref{Gordan's Lemma} that classes in $\mathscr{C}_X$ generate the monoid of integer points in $\Nef_1(X)$.
\end{proof}

\subsection*{2.15}

\textbf{Blow-up of an Intersection of a Quadric and a Cubic:} 
Let $X$ be the blow-up of $\mathbb{P}^3$ with center a complete intersection $c$ of a quadric with a cubic. This case satisfies the hypothesis of proposition \ref{Quadric Curve}.

\begin{thm}
For all $\alpha \in l+\Nef_1(X)_\mathbb{Z}$, $\overline{\free}(X,\alpha)$ is nonempty and irreducible.  Moreover, $\overline{\free}^{bir}(X, \alpha)$ is irreducible for all nonzero $\alpha \in \Nef_1(X)_\mathbb{Z}$.
\end{thm}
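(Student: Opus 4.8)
Since $c \subset \mathbb{P}^3$ is nonplanar and lies on a quadric, Proposition \ref{Quadric Curve} and Proposition \ref{2l-4e} apply: the extreme rays of $\overline{NE}(X)$ are $e$ and $l-2e$, the nef curve classes of anticanonical degree between $2$ and $4$ are $l,\ l-e,\ l-2e,\ 2l-4e$, a core of free curves is $\mathscr{C}_X = \{l,\ l-e,\ l-2e\}$, the only relation in $\mathbb{N}\mathscr{C}_X$ is $(l)+(l-2e)=2(l-e)$, $\overline{\free}^{bir}(X,2l-4e)$ is irreducible, and $\overline{\free}(X,\alpha)\neq\emptyset$ for all nonzero $\alpha\in\Nef_1(X)_\mathbb{Z}$. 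The plan is to feed this data into Theorem \ref{Main Method} (and Corollary \ref{bir Main Method}), with the only subtlety being that $l-2e$ is a separating class precisely when $c$ has a trisecant, in which case $l-2e$ is an interior nef conic.

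First I would verify Theorem \ref{Main Method}(1): the sole generating relation is $(l)+(l-2e)=2(l-e)$, and the corresponding statement is exactly the last clause of Proposition \ref{Quadric Curve} — any chain of type $(l,l-2e)$ deforms freely to one of type $(l-e,l-e)$ inside a plane — so a main component of $\overline{\free}_2(X,l)\times_X\overline{\free}_2(X,l-2e)$ and one of $\overline{\free}_2(X,l-e)\times_X\overline{\free}_2(X,l-e)$ lie in the same component of $\overline{\free}_2(X,2l-2e)$. Next, for Theorem \ref{Main Method}(2) I would invoke Corollary \ref{connected fibers fixall}: $X$ carries no del Pezzo fibration (its two extreme contractions are $f:X\to\mathbb{P}^3$ and the contraction of $l-2e$, which is birational or of fiber type onto a lower-dimensional base without degree $\le 3$ del Pezzo fibers), hypothesis (1) of \ref{connected fibers fixall} holds since $\mathscr{C}_X$ is a genuine core, and hypothesis (2) — irreducibility of $\overline{\free}^{bir}(X,2\beta)$ for each integral nef $-K_X$-conic $\beta$ interior to $\overline{NE}(X)$ — holds because the only such $\beta$ (when it exists) is $l-2e$, and $\overline{\free}^{bir}(X,2l-4e)$ is irreducible by Proposition \ref{2l-4e}. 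Thus for every $\alpha\in\Nef_1(X)$ outside the (empty) set of relevant del Pezzo relative cones, each main component of $\prod_X\overline{\free}_2(X,\alpha_i)$ satisfying $(\dagger)$ maps to the same component of free curves, which is precisely condition \ref{bir Main Method}. Corollary \ref{bir Main Method} then yields that $\overline{\free}^{bir}(X,\alpha)$ is irreducible or empty for all $\alpha$, and since $\mathscr{C}_X$ itself consists of birational classes and $\overline{\free}(X,\alpha)\neq\emptyset$ for all nonzero $\alpha\in\Nef_1(X)_\mathbb{Z}$, it is in fact nonempty; this gives the second assertion.

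For the first assertion I need the stronger statement that $\overline{\free}(X,\alpha)$ — not just its birational locus — is irreducible for $\alpha\in l+\Nef_1(X)_\mathbb{Z}$. The only obstruction to upgrading from $\overline{\free}^{bir}$ to $\overline{\free}$ is the presence of multiple-cover components, which by Proposition \ref{very free curves} and the classification of non-dominant families (Theorem \ref{nonfree curves}, Corollary \ref{main result 4}) can only occur when $\alpha$ is a multiple of the interior conic $l-2e$ (the multiple-cover-of-conics component). Here shifting by $\tau=l$ removes this possibility: if $\alpha=l+\beta$ with $\beta\in\Nef_1(X)$, then $\alpha$ pairs positively with $H$ while $l-2e$ pairs to zero with $H$, so $\alpha$ is never a multiple of $l-2e$; moreover any free chain of type summing to $\alpha$ must include a component of class $l$ (as $l.(H)>0$ and $l$ is the only core class positive against $H$), and $l$ is very free-degree... more precisely, once a core chain contains a curve of class $l$, Theorem \ref{Main Method}(2) via \ref{connected fibers fixall} forces a unique component. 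I expect the main obstacle to be precisely this last bookkeeping step — confirming that for $\alpha\in l+\Nef_1(X)_\mathbb{Z}$ no component of $\overline{\free}(X,\alpha)$ parameterizes multiple covers, so that $\overline{\free}(X,\alpha)=\overline{\free}^{bir}(X,\alpha)$ — which should follow from Gordan's Lemma (\ref{Gordan's Lemma}) applied to write $\alpha = l + a(l-2e) + \sum a_i c_i$ and then using the relation $(l)+(l-2e)=2(l-e)$ together with \ref{MovableBB} to exhibit a core chain with a component of class $l$, after which irreducibility of the resulting unique component is immediate from the verified hypotheses of Theorem \ref{Main Method}. Finally I would record nonemptiness of $\overline{\free}(X,\alpha)$ for all nonzero $\alpha\in\Nef_1(X)_\mathbb{Z}$, which is already contained in Proposition \ref{2l-4e}.
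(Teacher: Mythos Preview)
Your approach is correct and matches the paper's: both invoke Propositions \ref{Quadric Curve} and \ref{2l-4e} to get the core $\mathscr{C}_X = \{l, l-e, l-2e\}$, the unique relation, and irreducibility of $\overline{\free}^{bir}(X,2l-4e)$, then feed this into the general machinery (Theorem \ref{Main Method}, Corollaries \ref{bir Main Method} and \ref{connected fibers fixall}). The paper's own proof is extremely terse and simply records that $l-2e$ is separating (computing $\binom{5}{2}-4=6$ secants through a general point), leaving the rest to the framework of Section~7; you have spelled out that framework explicitly.

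There is one concrete slip. You argue that $\alpha \in l + \Nef_1(X)$ cannot be a multiple of $l-2e$ because ``$\alpha$ pairs positively with $H$ while $l-2e$ pairs to zero with $H$''. This is false: $(l-2e)\cdot H = 1$. The divisor you want is $2H-E$, for which $l\cdot(2H-E)=2$ and $(l-2e)\cdot(2H-E)=0$; then $\alpha\cdot(2H-E) \ge 2 > 0$ gives the desired conclusion. The same error recurs when you say ``$l$ is the only core class positive against $H$'' --- in fact all three core classes pair to $1$ with $H$, so that sentence and the clause about core chains necessarily containing $l$ should be dropped (it is not needed anyway: once $\alpha \ne n(l-2e)$, Corollary \ref{bir Main Method} plus Theorem \ref{Main Result}(4) already give $\overline{\free}(X,\alpha) = \overline{\free}^{bir}(X,\alpha)$).
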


\begin{proof}
By \ref{Quadric Curve}, the set $\mathscr{C}_X$ consisting of the classes $l$, $l-e$, and $l-2e$ is a core for $X$. However, $l-2e$ is a separating curve class. Indeed, in case $c$ is smooth, $\omega_c = \mathscr{O}_c(2+3-4) = \mathscr{O}_c(1)$ so genus $g(C)$ of $C$ is $4$ because $c$ is not planar. Moreover, the image of $c$ under the projection from a point $q \in \mathbb{P}^3 - c$ is a degree six planar curve. Thus, it has $\binom{6-1}{2}-4=6$ double points, meaning that there are $6$ curves of class $l-2e$ passing through $q$.
\end{proof}

\subsection*{2.17}
\textbf{Blow-up of an Elliptic Curve in a Quadric:} $X$ may be realized as a blow-up $f:X\rightarrow \mathbb{P}^3$ along an elliptic curve $c$ of degree $5$ using \ref{blow-up numbers}.  Note that this elliptic curve is contained in a cubic, but not a quadric in $\mathbb{P}^3$.  Contracting the other extreme ray of $\overline{NE}(X)$ realizes $X$ as the blow up of $Q\subset \mathbb{P}^4$ along an elliptic curve of degree $5$.

\begin{thm}\label{2.17thm}
For all nonzero $\alpha \in \Nef_1(X)_\mathbb{Z}$, $\overline{\free}^{bir}(X,\alpha)$ is nonempty and irreducible.
\end{thm}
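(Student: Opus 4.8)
The plan is to follow exactly the template laid out in Section~7: identify a core $\mathscr{C}_X$ of free curves on $X$, find a generating set of relations in $\mathbb{N}\mathscr{C}_X$ via Lemma~\ref{Relations}, verify the hypotheses of Theorem~\ref{Main Method}(1) and Corollary~\ref{bir Main Method} for those relations, and then invoke Corollary~\ref{connected fibers fixall} to dispose of condition \ref{Main Method}(2) for all $\alpha$ off the relative cones of del Pezzo fibrations (here there are none, since $X$ has no del Pezzo fibration: its two contractions are the blow-down to $\mathbb{P}^3$ and the blow-down to $Q\subset\mathbb{P}^4$).  I expect the core to be $\mathscr{C}_X = \{l, l-e, l-2e, 2l-5e\}$ in the $\mathbb{P}^3$-model (where $c\subset\mathbb{P}^3$ is a degree $5$ elliptic curve lying on cubics but not quadrics, so $\overline{NE}(X)$ has extreme rays $e$ and $l-3e$, with $2l-5e$ the first lattice point on the pseudosymmetric ray). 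Using $-K_X=4H-E$, the nef classes $\alpha=al-be$ with $2\le 4a-b\le 4$ and $b<3a$ (birational constraint) are $l,l-e,l-2e,2l-4e,2l-5e$, and $2l-4e$ is freely breakable (double cover of a line, or conic through four points), leaving $\mathscr{C}_X$ as above.

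The first real step is to check irreducibility of $\overline{\free}(X,\alpha)$ (resp. irreducibility of general fibers of $\overline{\free}_1(X,\alpha)\to X$) for each core class.  For $l$ this is classical ($\mathbb{G}(1,3)$); for $l-e$ one fibers over $c$ with $\mathbb{P}^2$-fibers; for $l-2e$ one uses $c\times c$ and Theorem~\ref{Monodromy} (here $l-2e$ is the unique separating class, since a degree $5$ elliptic curve has trisecants and $l-2e$ is an interior $-K_X$-conic class); for $2l-5e$ one passes to the $Q$-model, where $2l-5e$ becomes a line-class $l'-e'$ on the blow-up of $Q$ along a quintic elliptic curve, and irreducibility follows from the description as a conic bundle over $c$ together with the same monodromy input.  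The birational space $\overline{\free}^{bir}(X,2l-4e)$ is irreducible by Theorem~\ref{Monodromy}, exactly as in Proposition~\ref{2l-4e}.  Next I would run Lemma~\ref{Relations} with the two divisors $D_1 = -H+E$ (which pairs $<0$ only with $l$) and a suitable $D_2$ pairing negatively with $2l-5e$ to extract a short list of relations — by analogy with case~$2.12$ these will be $(l-2e)+(l)=2(l-e)$, $(2l-5e)+(l)=2(l-2e)+(l-e)$, $(2l-5e)+(l-e)=3(l-2e)$, and possibly one more relation pairing $l$ and $l-2e$ asymmetrically.  For each I would exhibit a component of the relevant $\overline{\free}^{bir}$ containing free chains of both types: $2l-2e$ and $3l-5e$ are irreducible (twisted cubics, Lemma~\ref{lines_in_P3} / Theorem~\ref{Monodromy}), $3l-6e$ has a twisted-cubic component containing chains of type $(l-2e,l-2e,l-2e)$ and $(2l-5e,l-e)$, and the degree-five chains are handled by attaching an exceptional curve of class $e$ to force a smooth point of $\overline{\mathcal{M}}_{0,0}(X)$ and smoothing in two ways (as in case~$2.12$), using Lemma~\ref{GHS lemma} and Proposition~\ref{kontsevich space H1} for the $h^1$-vanishing.

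With relations handled, Corollary~\ref{bir Main Method} gives that $\overline{\free}^{bir}(X,\alpha)$ is irreducible or empty for every $\alpha$, and Theorem~\ref{Representability of Free Curves} together with Lemma~\ref{Gordan's Lemma} shows $\overline{\free}^{bir}(X,\alpha)$ is nonempty for every nonzero $\alpha\in\Nef_1(X)_\mathbb{Z}$ — one checks the generators of $\Nef_1(X)$ lie in $\mathscr{C}_X$, so $\mathscr{C}_X$ generates the monoid of integral nef classes and every such class is represented by a free chain hence (smoothing) by a free birational curve.  The main obstacle I anticipate is the irreducibility of $\overline{\free}(X,2l-5e)$ and of the associated $\overline{\free}^{bir}$ of degree-five classes: one must be careful that the $Q$-model really does present $2l-5e$ as a genuine line class with transitive monodromy over $c$, and that no spurious component of curves contained in the exceptional divisor or of curves that are multiple covers sneaks in; this is where the pseudosymmetry $e\leftrightarrow l-3e$ (more precisely the blow-down $X\to Q$) must be used carefully, and where the analysis parallels but is not identical to case~$2.12$.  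Everything else is routine bookkeeping in the style of the Picard rank $2$ cases already treated.
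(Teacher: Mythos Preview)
Your proposal is correct and follows essentially the same approach as the paper: same core $\mathscr{C}_X=\{l,l-e,l-2e,2l-5e\}$, same identification of $l-2e$ as the unique separating class, same three generating relations, and the same treatment of $2l-2e$, $3l-5e$, $3l-6e$ for the main-component check.

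One computational slip to flag, since you identified it as your ``main obstacle'': under the $Q$-contraction (contracting the ray $l-3e$), one has $e'=l-3e$, $H'=3H-E$, $E'=5H-2E$, and hence $l'=2l-5e$, \emph{not} $l'-e'$.  (In fact $l'-e'=l-2e$.)  So $\overline{\free}(X,2l-5e)$ is simply the space of lines on $Q$ disjoint from the blown-up quintic elliptic curve $c'\subset Q$ --- an open subset of the irreducible Fano scheme of lines on $Q$ --- and no ``conic bundle over $c$'' or monodromy argument is needed.  The paper treats this the same way (``it is clear''), noting in addition that Theorem~\ref{nonfree curves} forces a general irreducible curve of class $2l-5e$ to be free since neither extreme ray equals $2l-5e$.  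With this correction your anticipated obstacle evaporates, and the rest of the argument goes through as written.
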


\textbf{Effective Divisors:} $E$ and $5H -2E$ span the effective cone of divisors on $X$.  The exceptional divisor of the contraction $X\rightarrow Q$ has class $5H-2E$.

\textbf{Effective Curves:} The extreme rays of $\overline{NE}(X)$ are $e$ and $l-3e$.

\begin{lem}
A core of free curves on $X$ is given by
\begin{align*}
   \mathscr{C}_X = &\{ l-2e, \ l -e, \ 2l -5e, \ l  \} 
\end{align*}
The only separating class in $\mathscr{C}_X$ is $l -2e$.  $\overline{\free}^{bir}(X, 2l-4e)$ is irreducible.
\end{lem}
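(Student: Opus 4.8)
The plan is to treat this case exactly as the other blow-ups of $\mathbb{P}^3$ along a smooth curve. First I would verify the list of nef curve classes of anticanonical degree between $2$ and $4$: writing $\alpha = al - be$ with $E \cdot \alpha \geq 0$ and $(5H-2E)\cdot\alpha \geq 0$, i.e. $b \geq 0$ and $5a - 2b \geq 0$, together with $2 \le 4a - b \le 4$, the solutions are $l$, $l-e$, $l-2e$, $2l-5e$, $2l-4e$ and $l-3e$. Here $l-3e$ is an extremal ray of $\overline{NE}(X)$ and has $-K_X$-degree $1$, so it is excluded; and I would argue $2l-4e$ is freely breakable: a general representative is either a double cover of a line of class $l-2e$ or an irreducible conic through four points of $c$, and in the latter case it degenerates (inside the plane it spans) to a union of two curves of class $l-2e$ and $l-2e$ — wait, more precisely to a chain of type $(l-e,l-e)$ via the relation $(l)+(l-2e)=2(l-e)$, so every component of $\overline{\free}(X,2l-4e)$ contains a free chain. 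Condition (2) of Definition \ref{def core} then follows for the remaining classes once I check \ref{MovableBB} reduces higher-degree curves to chains with components of $-K_X$-degree $\le 4$ (using that $X$ has no $E5$ divisor, and handling degree-$4$ classes directly: $l$ is excluded by the $E5$-free hypothesis of \ref{breaking quartic curves}, and the other quartic classes break).

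Next I would prove irreducibility of $\overline{\free}(X,\alpha)$ for each $\alpha \in \mathscr{C}_X$. The classes $l$, $l-e$ are handled as in \ref{2l-4e} / Proposition \ref{Quadric Curve}: curves of class $l$ are parameterized by an open subset of $\mathbb{G}(1,3)$ and curves of class $l-e$ by an open subset of a $\mathbb{P}^2$-bundle over $c$, and neither is separating. For $l-2e$: since $c$ is an elliptic quintic that is not contained in a quadric, it is not a twisted cubic or an elliptic quartic, so it has trisecants; hence $l-2e$ is an interior nef class (the map $X \to \mathbb{P}^2$ obtained by projecting from a general point of $c$... rather, $l-2e$ is a nef conic class interior to $\overline{NE}(X)$), and it is the separating class. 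Irreducibility of $\overline{\free}(X,l-2e)$ and of the fiber of $\overline{\free}_1(X,l-2e)\to X$ over a general point follows from Theorem \ref{Monodromy}: the projection $c \to \mathbb{P}^2$ from a general point of $\mathbb{P}^3$ is generically injective, so the monodromy on the secant points of $c$ is the full symmetric group, and the number of trisecants through a general point is $\binom{5-1}{2} - 1 = 5$ (computed from the genus-degree formula for the plane image), giving reducible general fibers as expected. Finally, $2l-5e = l' - e'$ under the other blow-up structure $X \to Q \subset \mathbb{P}^4$ (contracting $l-3e$), so irreducibility of $\overline{\free}(X,2l-5e)$ follows from the analysis of lines meeting a curve in a quadric threefold, exactly as in case 2.17's companion cases. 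Irreducibility of $\overline{\free}^{bir}(X,2l-4e)$ then follows from Theorem \ref{Monodromy}(a) applied to the linear system of planes in $\mathbb{P}^3$: curves of class $2l-4e$ form an open subset of a $\mathbb{P}^?$-bundle over the space of pairs $(A, \{p,q,r,s\})$ with $A$ a plane and $p,q,r,s \in A \cap c$, and the monodromy of the four intersection points is $4$-transitive.

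I expect the main obstacle to be the careful bookkeeping identifying which quartic and cubic classes break, and in particular making the reduction to $\mathscr{C}_X$ via \ref{MovableBB} airtight — one must check that $X$ has no $E5$ divisor (it does not: $2.17$ does not appear in the list in Section \ref{E5 cases}, so \ref{MovableBB} applies already in degree $\ge 5$, and degree-$4$ classes are handled by \ref{breaking quartic curves} since $-K_X$ is very ample for $2.17$). Once the core and separating class are pinned down, I would state the relations in $\mathbb{N}\mathscr{C}_X$ (the four classes span a $2$-dimensional space modulo the exceptional contraction, giving relations analogous to \ref{2.12thm}: $(l)+(l-2e) = 2(l-e)$, $(2l-5e)+(l) = 2(l-2e)+(l-e)$, $(2l-5e)+(l-e) = 3(l-2e)$), verify each via the chain-smoothing arguments of the $2.12$ case (using that the relevant low-degree moduli spaces — $2l-2e$ via \ref{lines_in_P3}, $3l-5e$ as twisted cubics, $3l-6e$ as in $2.12$ — are irreducible), and then invoke Theorem \ref{Main Method}, Corollary \ref{bir Main Method}, and Corollary \ref{connected fibers fixall} (there is no del Pezzo fibration here, so no relative-cone subtlety) to conclude $\overline{\free}^{bir}(X,\alpha)$ is irreducible for all nonzero $\alpha \in \Nef_1(X)_\mathbb{Z}$, with nonemptiness for $\alpha \in l + \Nef_1(X)_\mathbb{Z}$ following from \ref{Gordan's Lemma} and \ref{Representability of Free Curves}.
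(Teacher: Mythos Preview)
Your approach is essentially the paper's, but there are two concrete computational slips worth fixing.

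First, under the second contraction $X\to Q$ one has $E'=5H-2E$ and hence $H'=3H-E$; solving $H'\cdot l'=1$, $E'\cdot l'=0$ gives $l'=2l-5e$, not $l'-e'$. (In fact $l'-e'=(2l-5e)-(l-3e)=l-2e$, which is consistent with $l-2e$ being the interior conic from either blow-up description.) Your conclusion that $\overline{\free}(X,2l-5e)$ is irreducible survives, but for the right reason: these are strict transforms of general lines in $Q$, not lines meeting the blown-up curve.

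Second, your hesitation about breaking $2l-4e$ lands on the wrong answer. A conic through four of the five points of $A\cap c$ degenerates in $A$ to two lines each through two of those points, i.e.\ a free chain of type $(l-2e,\,l-2e)$; the relation $(l)+(l-2e)=2(l-e)$ is a relation in class $2l-2e$, not $2l-4e$, and is irrelevant here. Also, $l-3e$ should not appear in your enumeration: it has $-K_X$-degree $1$ and $(5H-2E)\cdot(l-3e)=-1<0$, so it is neither in the degree range nor nef. With these corrections your argument matches the paper's: the freely breakable $2l-4e$ is removed, irreducibility of $\overline{\free}(X,\alpha)$ for $\alpha\in\mathscr{C}_X$ is clear (using the $Q$ description for $2l-5e$), $l-2e$ is separating as you compute, Theorem~\ref{reducible fibers: 4 author result} rules out the other classes being separating, and Theorem~\ref{Monodromy} gives irreducibility of $\overline{\free}^{bir}(X,2l-4e)$.
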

\begin{proof}
\textbf{Nef Curve Classes of Anticanonical Degree Between $2$ and $4$:}
The only nef curve classes of appropriate degree are those in $\mathscr{C}_X$ and $2l -4e$.  However, $2l-4e$ is freely breakable, as free curves of this class are either double covers of lines or conics through four distinct points.  Since the only anticanonical lines on $X$ are the extreme rays of $\overline{NE}(X)$, by \ref{nonfree curves} a general, irreducible curve of class $2l -5e$ is free.  It is clear that $\overline{\free}(X, \alpha)$ is irreducible and nonempty for each $\alpha \in \mathscr{C}_X$, and that $l-2e$ is a separating class.  Theorem \ref{reducible fibers: 4 author result} proves this is the only separating class in $\mathscr{C}_X$.  Theorem \ref{Monodromy} proves irreducibility of $\overline{\free}^{bir}(X, 2l-4e)$.
\end{proof}

\begin{lem}
Relations in the monoid $\mathbb{N}\mathscr{C}_X$ are generated by:
\begin{itemize}
    \item $l + (2l -5e) = (l-e) + 2(l-2e)$,
    \item $l + (l-2e) = 2(l-e)$,
    \item $3(l-2e) = (l-e) + (2l -5e)$.
\end{itemize}
For each relation $\sum \alpha_i = \sum \beta_j$, a main component of $\prod_X \overline{\free}_2(X,\alpha_i)$ lies in the same component of free curves as a main component of $\prod_X \overline{\free}_2(X, \beta_j)$.
\end{lem}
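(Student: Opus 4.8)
The goal is to verify the three relations listed for case 2.17 give a generating set for the monoid $\mathbb{N}\mathscr{C}_X$ with $\mathscr{C}_X = \{l-2e, l-e, 2l-5e, l\}$, and then to check the ``main component'' claim for each. The plan is to follow the recipe established in Lemma \ref{Relations} exactly as in the neighbouring cases (2.12, 2.15, and the del Pezzo/conic fibration cases), then to dispatch the main-component statements by the now-standard technique of exhibiting an immersed chain lying in the smooth locus of $\overline{\mathcal{M}}_{0,0}(X)$ that smooths to free chains of both required types, using Lemma \ref{GHS lemma}, Proposition \ref{kontsevich space H1}, and the irreducibility facts already proved for $\overline{\free}(X,\alpha)$, $\alpha \in \mathscr{C}_X$, together with the irreducibility of $\overline{\free}^{bir}(X,2l-4e)$.

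First I would identify the relations. Order the core classes as in Lemma \ref{Relations}, with $l-2e$ (the separating, extreme-in-the-subcone class) first, then $l-e$, then $2l-5e$, then $l$. Choose divisors $D_i$ witnessing negativity: $5H-2E$ pairs negatively with $2l-5e$ (indeed $(5H-2E)\cdot(2l-5e) = 10-10 = 0$ — so instead one uses a divisor like $H-E$ or a suitable combination; I will pick $D$ with $D\cdot(l-2e)<0$, say $D = $ the class dual to the ray, to peel off $l-2e$, obtaining the relation $3(l-2e) = (l-e)+(2l-5e)$ and $l+(l-2e)=2(l-e)$), then work with the remaining classes $\{l-e, 2l-5e, l\}$, which span a two-dimensional lattice and hence satisfy the single relation $l+(2l-5e) = (l-e)+2(l-2e)$ — wait, that reintroduces $l-2e$; more carefully, the three classes $l-e, 2l-5e, l$ are linearly dependent in $N_1(X)$ (rank 2), so there is exactly one primitive relation among them, and minimality in the sense of Lemma \ref{Relations}(4) forces it into the stated form after substitution. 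This bookkeeping is routine given the pattern of cases 2.12 and 2.15; the anticanonical degrees ($2,2,3,4$ respectively) constrain coefficients enough that only finitely many candidate relations need be checked.

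Next, the main-component claims. For the relation $l+(l-2e)=2(l-e)$: irreducibility of $\overline{\free}(X,2l-2e)$ is exactly Lemma \ref{lines_in_P3}(2) (or the analogue proven for this blow-up of $\mathbb{P}^3$), so any two main components of $\overline{\free}_2(X,l)\times_X\overline{\free}_2(X,l-2e)$ and of $\overline{\free}_2(X,l-e)\times_X\overline{\free}_2(X,l-e)$ lie in the one component $\overline{\free}(X,2l-2e)$, giving the claim. For $3(l-2e)=(l-e)+(2l-5e)$: argue that $\overline{\free}(X,3l-6e)$ has one component generically parametrising twisted cubics (the planar locus of that class has dimension $\le 6 < 7$ by Theorem \ref{Monodromy}), and this component contains both a free chain of type $(l-2e,l-2e,l-2e)$ and one of type $(2l-5e,l-e)$, e.g.\ by smoothing an immersed chain of type $(l-2e,l-2e,l-2e)$ and re-breaking, exactly as in case 2.12. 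For $l+(2l-5e)=(l-e)+2(l-2e)$: take an immersed nodal chain $g\colon C_1\cup C_2\cup C_3\rightarrow X$ of type $(2l-5e,\,e,\,l-e)$ meeting the exceptional divisor generically; by Proposition \ref{kontsevich space H1} this is a smooth point of $\overline{\mathcal{M}}_{0,0}(X)$, it smooths on one side to a free chain of type $(2l-4e,l-e)$ (then break $2l-4e$ into $(l-2e)+(l-2e)$ using irreducibility of $\overline{\free}^{bir}(X,2l-4e)$) and on the other to $(2l-5e,l)$; chase through the relation as in the proof of the analogous step in case 2.12.

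The main obstacle I expect is the third relation, $l+(2l-5e)=(l-e)+2(l-2e)$, precisely because it mixes the separating conic class $l-2e$ with a genuine cubic $2l-5e$ and the line $l$, so the smoothing argument must pass through the intermediate class $2l-4e$ and invoke irreducibility of $\overline{\free}^{bir}(X,2l-4e)$ to re-break it — one must be careful that the immersed chain used has its node away from the non-free locus and that the intermediate smoothing genuinely lands in the (unique, birational) component. The other two relations are essentially formal once the irreducibility statements of the previous lemma are in hand. Finally, nonemptiness of $\overline{\free}(X,\alpha)$ for $\alpha\in l+\Nef_1(X)_{\mathbb{Z}}$ follows from Lemma \ref{Gordan's Lemma} together with Theorem \ref{Representability of Free Curves}, since the generators of $\Nef_1(X)$ lie in $\mathscr{C}_X$ and $2l-2e$ (the only class of degree $\ge 4$ not in $\mathscr{C}_X$ obtainable with small coefficients) is represented by a free curve.
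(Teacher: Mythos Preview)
Your proposal is correct and matches the paper for relations $l+(l-2e)=2(l-e)$ and $3(l-2e)=(l-e)+(2l-5e)$: in both cases the paper also invokes, respectively, irreducibility of $\overline{\free}(X,2l-2e)$ (Lemma \ref{lines_in_P3}) and the twisted-cubic component of $\overline{\free}(X,3l-6e)$.

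For the remaining relation $l+(2l-5e)=(l-e)+2(l-2e)$, of total class $3l-5e$, you take a genuinely different route. The paper proves directly that $\overline{\free}(X,3l-5e)$ is irreducible: any family of planar geometrically rational cubics of class $3l-5e$ has dimension at most $6$, whereas the space has dimension $7$, so the general member is a twisted cubic, and those form an irreducible family. Your argument instead passes through the immersed chain $(2l-5e,\,e,\,l-e)$, smooths one node to reach $(2l-5e,l)$ and the other to reach $(2l-4e,l-e)$, then re-breaks $2l-4e$ using the previously established irreducibility of $\overline{\free}^{bir}(X,2l-4e)$. This is valid---the chain is unobstructed by Proposition \ref{kontsevich space H1} and Lemma \ref{Basic Properties Chains}(2) handles the re-breaking step---but it is a longer path: the paper's direct irreducibility statement for $3l-5e$ disposes of the relation in one stroke, whereas your argument requires the auxiliary input of $\overline{\free}^{bir}(X,2l-4e)$ and two separate smoothings. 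On the other hand, your method is more portable (it is exactly the technique used in e.g.\ case 2.12 for the analogous class $4l-8e$), so it is a reasonable choice when a direct irreducibility argument is not immediately at hand.

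Two minor notes: your discussion of ordering the core and choosing divisors $D_i$ for Lemma \ref{Relations} is somewhat garbled (you want $D=2H-E$, which pairs negatively with $2l-5e$ alone, to eliminate it first); and your final paragraph on nonemptiness via Gordan's Lemma belongs to the next lemma in the paper, not this one.
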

\begin{proof}
Lemma \ref{Relations} may be used to identify the above generating set of relations.  Moreover, $\overline{\free}(X, 3l-5e)$ is irreducible and generically parameterizes twisted cubics, as any family of planar, geometrically rational cubics of class $3l-5e$ has dimension at most 6.  Likewise, $\overline{\free}(X, 2l-2e)$ is irreducible, but there are two components of $\overline{\free}(X, 3l -6e)$.  The unique component which generically parameterizes twisted cubics contains free chains of type $(l-2e, l-2e, l-2e)$ and of type $(2l-5e, l-e)$.
\end{proof}

\begin{lem}
For each nonzero $\alpha \in \Nef_1(X)_\mathbb{Z}$, $\overline{\free}(X,\alpha)$ is nonempty.
\end{lem}

\begin{proof}
The generators of $\Nef_1(X)$ are elements in $\mathscr{C}_X$ by \ref{Representability of Free Curves}.  It follows from \ref{Gordan's Lemma} that classes in $\mathscr{C}_X$ generate the monoid of integer points in $\Nef_1(X)$.
\end{proof}

\subsection*{2.19}

\textbf{Blow-up of a Genus Two Curve in a Quadric:} 
The variety $X$ is described in Mori-Mukai as the blow-up of $V_4 \subset \mathbb{P}^5$ along a conic, where $V_4$ is a Fano threefold of index $2$ with $b_2=1$ and $(-\frac{1}{2}K_{V_5})^3 = 4$. Using \cite{weakfano} table $1$ and lemma $2.4$, we instead view $X$ as the blow-up of $\mathbb{P}^3$ along a curve $c$ of degree $5$ and genus $2$ lying in a quadric. This case then satisfies the hypothesis of \ref{Quadric Curve}.

\begin{thm}
For all $\alpha \in l+\Nef_1(X)_\mathbb{Z}$, $\overline{\free}(X,\alpha)$ is nonempty and irreducible.  Moreover, $\overline{\free}^{bir}(X, \alpha)$ is irreducible for all $\alpha \in \Nef_1(X)_\mathbb{Z}$.
\end{thm}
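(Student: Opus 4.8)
The plan is to apply the general machinery of Sections~4, 6, and 7 exactly as in the surrounding Picard rank $2$ cases that also satisfy the hypothesis of Proposition~\ref{Quadric Curve}, most closely mirroring the treatment of case $2.15$. First I would record the numerical data: $X\to\mathbb{P}^3$ is the blow-up of a smooth curve $c$ of degree $5$ and genus $2$ lying on a quadric, so $-K_X=4H-E$, the effective cone is spanned by $E$ and $2H-E$, and by Proposition~\ref{Quadric Curve} the nef cone is spanned by $l$ and $l-2e$, with $\overline{NE}(X)$ spanned by $e$ and $l-2e$. By Proposition~\ref{2l-4e} the set $\mathscr{C}_X=\{l,\ l-e,\ l-2e\}$ is a core of free curves on $X$, $\overline{\free}(X,\alpha)$ is nonempty for all nonzero $\alpha\in\Nef_1(X)_{\mathbb{Z}}$, and $\overline{\free}^{bir}(X,2l-4e)$ is irreducible; moreover $l-2e$ is the unique potentially separating class, and it \emph{is} separating here because the projection of $c$ from a general point $q\in\mathbb{P}^3$ is a degree $5$ plane curve of geometric genus $2$, hence has $\binom{4}{2}-2=4$ nodes, so there are exactly $4$ secant lines of class $l-2e$ through $q$.

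Next I would identify the single relation in the monoid $\mathbb{N}\mathscr{C}_X$: since $l$, $l-e$, $l-2e$ span a two-dimensional space, the only relation is $(l)+(l-2e)=2(l-e)$, and Proposition~\ref{Quadric Curve} already states that any chain of type $(l)+(l-2e)$ deforms freely (inside a plane) to one of type $2(l-e)$, so hypothesis \ref{Main Method}(1) holds, and in fact \ref{bir Main Method}(1) holds as well since all curves involved are birational onto their images. For \ref{Main Method}(2), I would invoke Corollary~\ref{connected fibers fixall}: its hypothesis (1) is satisfied because $X$ has the core $\mathscr{C}_X$, and hypothesis (2) is vacuous since $X$ admits no integral nef $-K_X$-conic interior to $\overline{NE}(X)$ (the two edges of the cone are $l-2e$ and $l$, and $l$ has anticanonical degree $4$). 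Since $X$ carries no del Pezzo fibration $X\to\mathbb{P}^1$ at all (the contraction of $l-2e$ is the blow-down to $V_4$, a divisorial/flopping-type contraction, not a fibration onto a curve), there are no relative cones to excise, so Corollary~\ref{connected fibers fixall} applies to \emph{all} $\alpha\in\Nef_1(X)$ and every main component of $\prod_X\overline{\free}_2(X,\alpha_i)$ satisfying $(\dagger)$ maps to the same component of free curves. By Corollary~\ref{bir Main Method} this yields that $\overline{\free}^{bir}(X,\alpha)$ is irreducible or empty for all $\alpha\in\Nef_1(X)_{\mathbb{Z}}$; combined with the nonemptiness from Proposition~\ref{2l-4e} (noting a free chain of core curves exists for every nonzero nef $\alpha$), we get irreducibility of $\overline{\free}^{bir}(X,\alpha)$ for all nonzero $\alpha\in\Nef_1(X)_{\mathbb{Z}}$.

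Finally, to upgrade to irreducibility of the full space $\overline{\free}(X,\alpha)$ for $\alpha\in l+\Nef_1(X)_{\mathbb{Z}}$, I would argue that shifting by $\tau=l$ pushes $\alpha$ into the interior of $\overline{NE}(X)$ with $-K_X.\alpha\geq 6$, so by Proposition~\ref{very free curves} and Theorem~\ref{reducible fibers: 4 author result} every component of $\overline{\free}(X,\alpha)$ generically parameterizes very free curves — in particular birational ones, so $\overline{\free}(X,\alpha)=\overline{\free}^{bir}(X,\alpha)$ and the previous paragraph finishes the case. (One subtlety: the proof of Theorem~\ref{reducible fibers: 4 author result} requires $|-K_X|$ very ample, which holds here since $X$ is not on the short list of Fano threefolds with non-very-ample anticanonical bundle.) The main obstacle, as in the neighboring cases, is verifying that the potentially-separating class $l-2e$ does not actually obstruct condition \ref{Main Method}(2): this is precisely what Corollary~\ref{connected fibers fixall} handles, but one must double-check that its hypotheses genuinely apply — namely that $X$ has no del Pezzo fibration with degree $\leq 3$ fibers and no interior nef $-K_X$-conic — and that the passage from $\overline{\free}^{bir}$ to $\overline{\free}$ via the shift by $\tau=l$ is legitimate, i.e. that no component of $\overline{\free}(X,\alpha)$ for such $\alpha$ consists of multiple covers (ruled out since $\alpha$ is not a multiple of a conic class once $-K_X.\alpha\geq 6$ and $\alpha$ is interior, using Lemma~\ref{interior conic lemma} to see there is no interior conic at all).
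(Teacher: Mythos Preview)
Your overall approach matches the paper's: invoke Propositions~\ref{Quadric Curve} and~\ref{2l-4e} to get the core $\mathscr{C}_X=\{l,l-e,l-2e\}$, note $l-2e$ is separating with $\binom{4}{2}-2=4$ secants through a general point, and then appeal to the general machinery (Theorem~\ref{Main Method}, Corollary~\ref{bir Main Method}, Corollary~\ref{connected fibers fixall}). The paper's proof is essentially just this, stated tersely.

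There is, however, a genuine confusion in your argument that you should correct. You repeatedly treat $l-2e$ as lying on the boundary of $\overline{NE}(X)$, but it does not: the extreme rays of $\overline{NE}(X)$ are $e$ and $l-3e$ (the trisecants of $c$, i.e.\ one ruling of the quadric containing the $(2,3)$-curve $c$), and the contraction of $l-3e$---not $l-2e$---is the blow-down $X\to V_4$. Thus $l-2e=(l-3e)+e$ is an integral nef $-K_X$-conic \emph{interior} to $\overline{NE}(X)$, exactly as predicted by Lemma~\ref{interior conic lemma} (there is no Fano blow-up of $X$). So hypothesis~(2) of Corollary~\ref{connected fibers fixall} is \emph{not} vacuous: you genuinely need $\overline{\free}^{bir}(X,2l-4e)$ irreducible, which you already have from Proposition~\ref{2l-4e}. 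Likewise, in your final paragraph the claim ``there is no interior conic at all'' is false; the correct argument that $\overline{\free}(X,\alpha)=\overline{\free}^{bir}(X,\alpha)$ for $\alpha\in l+\Nef_1(X)$ is that $n(l-2e)-l$ pairs to $-2$ with $2H-E$, so $n(l-2e)\notin l+\Nef_1(X)$ for any $n$, and hence no component of multiple covers of the interior conic can occur in this range. With these fixes the argument goes through as written.
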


\begin{proof}
By \ref{Quadric Curve}, the set $\mathscr{C}_X$ consisting of the classes $l$, $l-e$, and $l-2e$ is a core for $X$. However, $l-2e$ is a separating curve class. Indeed, there are $\binom{5-1}{2}-2=4$ such curves through a general point.
\end{proof}

\subsection*{2.21}

\textbf{Blow-up of a Twisted Quartic in $Q$:}
Let $f:X \rightarrow Q$ be the blow-up of a smooth quadric $Q \subset \mathbb{P}^4$ with center a twisted quartic $c_1$.

\begin{thm}\label{2.21thm}
For all nonzero $\alpha \in \Nef_1(X)_\mathbb{Z}$, $\overline{\free}^{bir}(X,\alpha)$ is nonempty and irreducible.
\end{thm}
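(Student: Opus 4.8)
The variety $X$ is the blow-up $f:X\to Q$ of a smooth quadric threefold along a twisted quartic curve $c_1$. The anticanonical class is $-K_X = 3H - E$, where $H = f^*\mathcal O_Q(1)$ and $E$ is the exceptional divisor. The first step is to identify the Mori structure: compute $\overline{NE}(X)$ (its extreme rays will be $e$ and some $l - ke$, where $l$ is the pullback of a line on $Q$ and $e$ a fiber of $E\to c_1$; since a twisted quartic has at most finitely many trisecants and lies in a net of quadrics, the degree-counting via \ref{blow-up numbers} and \cite{matsuki1995weyl} pins down the second ray), determine the effective cone (generated by $E$ and the strict transform of the relevant hyperquadric), and decide whether $X$ admits a second blow-up description of $\mathbb{P}^3$ or $Q$ — this twisted-quartic case is one of the pseudosymmetric ones, so I expect a pseudoaction interchanging the two extreme rays that cuts the casework in half. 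This mirrors the structure of the adjacent $2.17$ and $2.19$ cases.

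\textbf{Core and low-degree curves.} Next I would list all nef classes $\alpha = al - be$ with $2 \le -K_X\cdot\alpha \le 4$; solving the linear inequalities gives a short explicit list. I would then identify a core $\mathscr C_X$ of free curves: the classes $l$, $l-e$, $l-2e$ (or their pseudosymmetric partners), and possibly one more conic or cubic class, discarding double covers and classes not represented by free curves. Irreducibility of $\overline{\free}(X,\alpha)$ for each core class is established geometrically — curves of class $l$ are parameterized by an open subset of the family of lines on $Q$ (a smooth $3$-fold), curves of class $l-e$ by a conic bundle over $c_1$, curves of class $l-2e$ by an open subset of $\operatorname{Sym}^2 c_1$ (secant lines to $c_1$ in $Q$), and any remaining quartic or low-degree class is shown to break into a chain of free conics using \ref{MovableBB}, \ref{breaking quartic curves}, or a direct analysis inside a hyperplane section of $Q$. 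The separating classes will be exactly the conic class(es) interior to $\overline{NE}(X)$ — here the secant conic $l-2e$, provided $c_1$ has trisecants — and I would invoke \ref{reducible fibers: 4 author result} to confirm there are no others. I would also verify $\overline{\free}^{bir}(X, 2\beta)$ is irreducible for each interior nef conic $\beta$ via \ref{Monodromy} applied to intersections of $c_1$ with an appropriate linear system.

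\textbf{Relations and assembly.} Then I would apply \ref{Relations} to produce a generating set of relations in the monoid $\mathbb{N}\mathscr C_X$ (up to pseudosymmetry), and for each relation $\sum\alpha_i = \sum\beta_j$ exhibit, via smoothing of an immersed chain of lines and exceptional curves whose normal bundles have no higher cohomology (using \ref{GHS lemma}, \ref{kontsevich space H1}, and the standard fact that $\mathcal N_{C/X}\cong\mathcal O\oplus\mathcal O(-1)$ for a curve of class $e$), a main component of $\prod_X\overline{\free}_2(X,\alpha_i)$ lying in the same component of free curves as a main component of $\prod_X\overline{\free}_2(X,\beta_j)$. Together with the irreducibility of $\overline{\free}^{bir}(X,2\beta)$ for interior conics $\beta$, \ref{connected fibers fixall} then supplies condition \ref{Main Method}(2) (in its $(\dagger)$ form) outside any del Pezzo fibration — and $X$ here has no del Pezzo fibration — so \ref{bir Main Method} yields that $\overline{\free}^{bir}(X,\alpha)$ is irreducible or empty for every $\alpha$. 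Nonemptiness for all nonzero $\alpha\in\Nef_1(X)_{\mathbb Z}$ follows from \ref{Representability of Free Curves} together with \ref{Gordan's Lemma}: the extreme rays of $\Nef_1(X)$ are spanned by core classes, and a short check on the finitely many lattice points of the fundamental parallelepiped confirms each is represented by a free (indeed birational) curve.

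\textbf{Main obstacle.} The delicate point, as in the neighboring cases, is the separating conic class $l-2e$: showing that all main components of products involving two copies of a secant-conic family land in a single component of free curves. The key input is that the secant lines/conics to the twisted quartic $c_1\subset Q$ sweep out an ample divisor and that the monodromy on their intersections with $c_1$ (as the relevant linear system varies) is sufficiently transitive — this is where \ref{Monodromy} and a Hodge-index-style argument in the spirit of \cite{lastDelPezzoThreefold} enter. If $c_1$ happens to have no trisecants in $Q$ (so $l-2e$ is not interior, hence comes from a conic fibration and $\overline{\free}_1(X,l-2e)\to X$ is birational), this difficulty evaporates and the proof is purely formal; I would check which alternative actually holds for a twisted quartic on a smooth quadric threefold as the first substantive computation.
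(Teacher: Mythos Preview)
Your proposal follows the paper's overall strategy (core, relations, \ref{Main Method}/\ref{bir Main Method}, Gordan), but the low-degree computation is off in a way that misidentifies the core and the separating class.

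Since $-K_X = 3H-E$ on a blow-up of $Q$, the class $l-2e$ has $-K_X$-degree $1$, not $2$: it is a \emph{line}, and in fact it is the second extreme ray of $\overline{NE}(X)$ (by \cite{matsuki1995weyl}, or directly via the pseudosymmetry you anticipate, which sends $e\mapsto l-2e$, $l\mapsto 2l-3e$). Thus $l-2e$ is not a nef conic interior to the Mori cone, and your entire ``main obstacle'' about trisecants and monodromy of secant conics is moot. The nef cone is generated by $l$ and $2l-3e$; listing nef classes of degree $2$--$4$ and discarding those mapped to multiples of $e$ under pseudosymmetry leaves exactly $l$, $l-e$, $2l-2e$, $2l-3e$. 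The class $2l-2e$ is freely breakable (its birational locus is a $\mathbb{P}^2$-bundle over $\operatorname{Sym}^2 c_1$), so the core is $\mathscr{C}_X=\{l,\ l-e,\ 2l-3e\}$, with $2l-3e$ the pseudosymmetric partner of $l$.

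The unique interior conic, and hence the only separating class, is $l-e$: there are four lines in $Q$ through a general point meeting $c_1$ (the tangent hyperplane meets the twisted quartic in four points). The hypothesis of \ref{connected fibers fixall}(2) concerns $2\beta = 2(l-e) = 2l-2e$, whose birational irreducibility you would establish as above. The single relation is $(2l-3e)+l = 3(l-e)$, handled by breaking a chain of type $(2l-3e,l)$ via an intermediate $(l-e)+e$ node and re-smoothing. With the corrected core in hand, the rest of your outline (\ref{bir Main Method}, \ref{connected fibers fixall}, Gordan for nonemptiness) goes through exactly as you describe.
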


\begin{prop}
Contracting the curves of class $l-2e$ on $X$ yields a morphism $g:X \rightarrow Q$ realizing $X$ as the blow-up of a twisted quartic $c_2$.
\end{prop}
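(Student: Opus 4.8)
The plan is to mirror the treatment of the analogous Propositions in cases 2.12 and 2.17: identify the second extremal contraction of $X$, then pin down the blown‑up curve using Lemma \ref{blow-up numbers} together with the ampleness of $-K_X$. First I would recall that $\overline{NE}(X)$ is a two‑dimensional cone whose extremal rays are spanned by $e$, contracted by $f$, and by $l-2e$ (see \cite{matsuki1995weyl}), and that $-K_X = 3H-E$ gives $-K_X\cdot(l-2e)=3-2=1$, so $\mathbb{R}_{\geq 0}(l-2e)$ is a $-K_X$-line. By the classification of elementary contractions on smooth Fano threefolds of Picard rank $2$ applied to the deformation type $2.21$ (\cite{mori1981classification}, \cite{mori1983classification}; see also \cite{matsuki1995weyl}), the contraction $g:X\to Y$ of this ray is of type $E1$: it realizes $X$ as the blow-up of a smooth irreducible curve $c_2$ in a smooth projective threefold $Y$ with $\rho(Y)=1$. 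Alternatively, without invoking the tables, one checks that $|2H-E|$ is basepoint free — because the quadrics through the twisted quartic $c_1$ cut it out scheme-theoretically in $Q$ — that $(2H-E)\cdot(l-2e)=0$ while $(2H-E)\cdot e = 1$, and that $(2H-E)^3 = 2$ with $h^0(2H-E)=5$; this exhibits $g=\phi_{|2H-E|}$ as the contraction of the $(l-2e)$-ray onto a degree $2$ threefold in $\mathbb{P}^4$, i.e. a quadric threefold, which the classification of $E1$-contractions then forces to be smooth.

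Next I would apply Lemma \ref{blow-up numbers} to both $f$ and $g$. From $f$ one gets $b_3(X)=b_3(Q)+2p_a(c_1)=0$ and $(-K_X)^3 = (-K_Q)^3 - 2(-K_Q\cdot c_1 - p_a(c_1)+1) = 54-2(12+1)=28$. From $g$ one gets $b_3(Y)+2p_a(c_2)=b_3(X)=0$, so $b_3(Y)=0$ and $p_a(c_2)=0$, and $(-K_Y)^3 - 2(-K_Y\cdot c_2 + 1) = 28$. Since $Y$ is a smooth Fano threefold of Picard rank $1$ with $b_3(Y)=0$, it is one of $\mathbb{P}^3$, the smooth quadric $Q$, the del Pezzo threefold $V_5$, or the Fano threefold of index $1$ and genus $12$; testing the relation $(-K_Y)^3 - 2(-K_Y\cdot c_2+1)=28$ against these (using $-K_Y\cdot c_2>0$ and the parity of $-K_Y\cdot c_2$ in the $\mathbb{P}^3$ and $V_5$ cases) leaves only $Y\cong Q$, whence $-K_Q\cdot c_2 = 12$ and so $c_2$ has degree $4$ in $\mathbb{P}^4$.

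It remains to see that $c_2$ is a twisted quartic, i.e. non-degenerate. As a smooth rational curve of degree $4$ in $\mathbb{P}^4$ it cannot span only a plane (a plane quartic has arithmetic genus $3$), so if it were degenerate it would span a hyperplane $P$ and hence lie on the quadric surface $S=Q\cap P$. A smooth rational quartic on $S$, whether $S$ is smooth or a quadric cone, carries a trisecant line $\ell\subset S\subset Q$, and the strict transform of $\ell$ in $X$ has class $l-3e$ with $-K_X\cdot(l-3e)=3-3=0$, contradicting the ampleness of $-K_X$. Therefore $c_2$ spans $\mathbb{P}^4$, so it is a rational normal quartic, and $g$ realizes $X$ as the blow-up of $Q$ along the twisted quartic $c_2$, as claimed.

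The step I expect to be the main obstacle is the identification in the first paragraph — specifically, ruling out that the second contraction is of type $E3$–$E5$ onto a singular quadric. The quick resolution is to quote the Mori–Mukai tables; a self-contained argument must instead do the work of establishing basepoint-freeness of $|2H-E|$ and that its image is a \emph{smooth} quadric. After that, the numerology and the trisecant argument are routine, the only delicate point being the quadric-cone sub-case of the latter.
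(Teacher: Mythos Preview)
Your argument is correct and follows the same skeleton as the paper's proof: identify the second extremal ray via \cite{matsuki1995weyl}, quote the Mori--Mukai tables to see the contraction is an $E1$ blow-up of a curve in a smooth quadric, and use Lemma~\ref{blow-up numbers} to recover the degree and genus of $c_2$. The paper's version is considerably terser than yours: it simply cites the tables to get $Y\cong Q$ directly (rather than deriving it from $b_3$ and $(-K)^3$ as you do), and it concludes ``so must also be a twisted quartic'' without any non-degeneracy argument. Your trisecant argument in the final paragraph fills a step the paper leaves implicit, and your alternative identification of $g$ via $|2H-E|$ is not in the paper at all. So your proof is more self-contained, at the cost of length; the paper trades that for brevity by leaning entirely on the classification tables.
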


\begin{proof}
By table entry $2.12$ in \cite{mori1981classification}, there is an additional contraction $g:X \rightarrow Q$, distinct from $f$, which realizes $X$ as the blow-up of a curve $c_2$ in $\mathbb{P}^3$. The contraction must be of curves of class $l-2e$ because, in addition to $e$, these are the other extremal rays of the Mori cone (see \cite{matsuki1995weyl} $2.21$). Applying Lemma \ref{blow-up numbers}, it follows that $c_2$ has the same degree and genus as $c_1$, so must also be a twisted quartic.
\end{proof}

\textbf{Generators for $N^1(X)$ and $N_1(X)$:}

\begin{center}
\begin{tabular}{ll}
 $H$ = a hyperplane in $\mathbb{P}^4$ & $l$ = a line in $\mathbb{P}^4$ \\ 
 $E$ = the exceptional divisor $f^{-1}(c)$ & $e_1$ = an $f$-fiber over a point on $c$  
\end{tabular}
\end{center}

Additionally, with respect to the blow-up $g$, there is a psuedo-symmetry defined by $e \mapsto l-2e$ and $l \mapsto 2l - 3e$.

\textbf{Intersection Pairing:}
\begin{center}
\begin{tabular}{lll}
    $H \cdot l = 1$ &  $H \cdot e = 0$, \\
    $E \cdot l = 0$ &  $E \cdot e = -1$
\end{tabular}
\end{center}

\textbf{Anticanonical Divisor:}
\begin{align*}
    -K_X = 3H-E
\end{align*}

\textbf{Effective Divisors:} 
The divisors $2H-E$ and $E$ are effective. We will show indirectly, however, that these divisors do not generate the effective cone.

\begin{lem}\label{2.21core}
A core set of free curves on $X$ is given by
\begin{align*}
    \mathscr{C}_X = \{ l, l-e, 2l-3e \}.
\end{align*}

The only separating curve in $\mathscr{C}_X$ is $l-e$.  $\overline{\free}^{bir}(X, 2l-2e)$ is irreducible.
\end{lem}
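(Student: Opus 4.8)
\textbf{Proof proposal for Lemma \ref{2.21core}.}

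\emph{Setting up the core.} The plan is to first identify the nef curve classes $\alpha = al - be$ of anticanonical degree between $2$ and $4$, using $-K_X = 3H - E$ and the constraint that $\alpha$ pairs nonnegatively with the effective divisors $2H - E$ and $E$ together with the pseudosymmetry $e \mapsto l - 2e$, $l \mapsto 2l - 3e$ coming from the second contraction $g : X \to Q$. Solving $0 \le b$, $0 \le 2a - b$, $2 \le 3a - b \le 4$ gives the candidates $l$, $l - e$, $2l - 3e$, $2l - 2e$, $2l - 4e$, and $3l - 5e$ (up to pseudosymmetry; note $l - 2e$ and $2l - 4e$ are pseudosymmetric to $l$ and $2l - 2e$ respectively). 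I would then argue that $2l - 4e$ and $3l - 5e$ are freely breakable or not represented by birational free curves: curves of class $2l - 4e$ lie in a plane section of $Q$ and break into two lines, while any free curve of class $2l - 2e$ (resp. $3l - 5e$) breaks using \ref{MovableBB} or by a direct argument into free chains with components in $\{l, l-e, 2l-3e\}$. Since $e$ and $l - 2e$ are the extreme rays of $\overline{NE}(X)$ and generate the monoid of effective $-K_X$-lines, Theorem \ref{nonfree curves} guarantees a general irreducible representative of $2l - 3e$ is free, and it is clear that $\overline{\free}(X,\alpha)$ is nonempty for $\alpha \in \{l, l-e, 2l-3e\}$. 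This establishes that $\mathscr{C}_X = \{l, l-e, 2l-3e\}$ is a core.

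\emph{Irreducibility of the core spaces and identifying the separating class.} Next I would check irreducibility of $\overline{\free}(X,\alpha)$ and describe the evaluation fibers for each $\alpha \in \mathscr{C}_X$. The lines of class $l$ on $Q$ form an irreducible family (the Fano scheme of lines on a smooth quadric threefold), with the lines through a general point of $Q$ sweeping out a quadric cone, so general fibers of $\text{ev}: \overline{\free}_1(X,l) \to X$ are irreducible. Curves of class $l - e$ are strict transforms of lines on $Q$ meeting $c_1$; parameterizing these by an open subset of a conic bundle over $c_1$ shows the space is irreducible, but since fixing a general point of $X$ still leaves the choice of which point of $c_1$ the line hits, the general fiber of $\text{ev}$ is reducible — hence $l - e$ is separating. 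The class $2l - 3e$ equals $g^*(\text{line}) - e_g$ under the second contraction, i.e. it is pseudosymmetric to $l - e$ wait — actually $2l-3e$ maps to $l'$ under $g$, so it is pseudosymmetric to $l$; thus $\overline{\free}(X, 2l-3e)$ is irreducible with irreducible general evaluation fiber, and is not separating. By Theorem \ref{reducible fibers: 4 author result} (since $|-K_X|$ is very ample here), the only possible separating classes come from reduced images that are $-K_X$-conics or curves contracted by a del Pezzo fibration; $X$ has no del Pezzo fibration, so $l - e$ is the unique separating class.

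\emph{Irreducibility of $\overline{\free}^{bir}(X, 2l-2e)$.} Finally, for the birational space of conics of class $2l - 2e$: such curves are strict transforms of conics on $Q$ meeting $c_1$ twice, equivalently they lie in a plane $P \subset \mathbb{P}^4$ with $P \cap Q$ a conic through two points of $c_1$. I would parameterize a dominant family by the incidence variety of (unordered pair of points on $c_1$, plane $P$ containing the secant line through them, conic in $P \cap Q$ through those two points), which is fibered over an open subset of $\Sym^2(c_1)$ with irreducible fibers (a net of conics, or a $\mathbb{P}$-bundle over the choice of $P$). The monodromy of the two marked points as the conic varies is the full symmetric group $S_2$ by Theorem \ref{Monodromy}, so this incidence variety is irreducible, and hence $\overline{\free}^{bir}(X, 2l - 2e)$ is irreducible. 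I expect the main obstacle to be the bookkeeping in verifying that the higher-degree candidate classes ($3l - 5e$ and pseudosymmetric partners) genuinely break into core chains — this requires either invoking \ref{MovableBB} carefully (checking $X$ has no $E5$ divisor, which holds since $X$ is a blow-up of $Q$ along a smooth curve) or giving an explicit degeneration inside a plane or quadric section, analogous to the arguments in \ref{2l-4e} and \ref{del Pezzo quartic curves}.
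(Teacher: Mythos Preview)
Your approach is essentially the paper's, but your pseudosymmetry computations are wrong and this muddles the enumeration. Under the involution $e \mapsto l-2e$, $l \mapsto 2l-3e$, one has $l-2e \mapsto e$ (not $l$), $2l-4e \mapsto 2e$ (not $2l-2e$), $3l-5e \mapsto l+e$, $3l-6e \mapsto 3e$, and $4l-8e \mapsto 4e$. The paper uses exactly this to conclude that $2l-4e$, $3l-5e$, $3l-6e$, $4l-8e$ are \emph{not nef} --- they pair negatively with the second exceptional divisor $E' = 3H-2E$ --- so there are no free curves of these classes and nothing to break. Your candidate list omits $3l-6e$ and $4l-8e$ entirely, and your attempt to ``break $2l-4e$ into two lines'' is vacuous: the components would have class $l-2e = e'$, which is not nef, so no free chain results. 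Once the pseudosymmetry is computed correctly, the only classes left are those in $\mathscr{C}_X$ together with $2l-2e$, and only the latter needs an irreducibility and breaking argument, which you handle correctly (the paper describes $\overline{\free}^{bir}(X,2l-2e)$ as a $\mathbb{P}^2$-bundle over $\Sym^2(c_1)$ --- no monodromy argument is needed since the parameterization is already by unordered pairs).

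The treatment of $l$, $l-e$, $2l-3e$ matches the paper's; note your self-correction on $2l-3e$ is right (it is pseudosymmetric to $l$, hence nonseparating), and for $l-e$ the paper makes explicit that there are four curves through a general point, corresponding to the four intersection points of $c_1$ with the tangent hyperplane $T_pQ$.
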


\begin{proof}

\textbf{Nef Curve Classes of Anticanonical degree between $2$ and $4$:}

Such nef curve classes $\alpha = a l - b e$ satisfy the inequaltities
\begin{align*}
    0 \leq b \leq 2a, \hspace{.5cm} 2 \leq 3a-b \leq 4.
\end{align*}

\noindent The solutions are
\begin{align*}
    l, \hspace{.5cm} l-e, \hspace{.5cm} 2l-2e, \hspace{.5cm} 2l-3e, \hspace{.5cm} 2l-4e, \hspace{.5cm} 3l-5e, \hspace{.5cm} 3l-6e, \hspace{.5cm} 4l-8e.
\end{align*}

With respect to the pseudosymmetry, we see $4l-8e \mapsto 4e$, $3l-6e \mapsto 3e$, $3l-5e \mapsto l+e$, and $2l-4e \mapsto 2e$. Therefore, these classes are not nef after all. Note moreover that $2l-3e \mapsto l$.

The remaining solutions are those classes in $\mathscr{C}_X$, as well as $2l -2e$. This additional class is freely breakable.  Indeed, there are two components of $\overline{\free}(X, 2l-2e)$.  One parameterizes double covers of conics of class $l-e$.  Any component component of $\overline{\free}^{bir}(X, 2l-2e)$ must generically parameterize very free curves embedded in $X$. Such curves are precisely the residual intersection with $Q$ of those planes in $\mathbb{P}^4$ which intersect $c_1$ in two points. This moduli space is a $\mathbb{P}^2$-bundle over $\text{Sym}^2(c_1)$. The map associated to this family has image which dominates $\overline{\free}^{bir}(X,2l -2e)$.  As the family contains free chains of type $(l-e, l-e)$, this proves $2l -2e$ is freely breakable, and $\overline{\free}^{bir}(X, 2l-2e)$ is irreducible.

\textbf{Irreducible Spaces and Fibers:}
The curves of class $l$ are parameterized by the lines in $Q$. By \cite{eisenbud20163264} corollary $6.33$, this space is smooth and of dimension three. Moreover, the intersection of $Q$ with the projective tangent space of a point $p \in Q$ is the cone over a smooth conic. In particular, the curves of class $l$ are not separating. Via pseudosymmetry, the same argument applies that the curves of class $2l-3e$ lie in an irreducible moduli space and that they are separating curves.

By this same description, the curves of class $l-e$ are parameterized by a bundle of conics over $c$. Consider the curves of class $l-e$ through a general point $p \in Q \subset \mathbb{P}^4$. These correspond precisely to the intersection points of $c$ with the tangent space $T_p Q \subset \mathbb{P}^4$. Since $c$ is a degree four curve, there are four such points. Hence, $l - e$ is a separating curve class.
\end{proof}

\begin{lem}\label{2.21relations}
The only relation in $\mathbb{N} \mathscr{C}_X$ is $(2l-3e)+(l)=3(l-e)$. A chain of type $(2l-3e, l)$ may be freely deformed to a chain of type $(l-e, l-e, l-e)$.
\end{lem}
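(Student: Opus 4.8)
The goal is twofold: first, to identify $(2l-3e)+(l)=3(l-e)$ as the unique generating relation in the monoid $\mathbb{N}\mathscr{C}_X$; and second, to verify condition \ref{Main Method}(1) (in the birational form \ref{bir Main Method}), namely that a chain of type $(2l-3e,l)$ lies in the same component of $\overline{\free}(X,3l-3e)$ as a chain of type $(l-e,l-e,l-e)$. For the first part, the three classes $l$, $l-e$, $2l-3e$ of $\mathscr{C}_X$ span $N_1(X)$, a two-dimensional lattice, so there is exactly one linear relation among them up to scaling; writing it with nonnegative coefficients on each side and applying cancellation gives $(2l-3e)+(l)=3(l-e)$, and minimality is automatic. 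Alternatively, one may run Lemma \ref{Relations} with the ordering $l, l-e, 2l-3e$ and the divisor $D = -H + E$: since $D \cdot l = -1$ while $D\cdot(l-e)=D\cdot(2l-3e)=0$ is false (in fact $D\cdot(l-e)=0$ needs checking: $(-H+E)\cdot(l-e) = -1 + 1 = 0$, and $(-H+E)\cdot(2l-3e) = -2+3 = 1>0$), so every relation involving $l$ pulls in $2l-3e$, yielding exactly the stated relation; no relation survives among $\{l-e, 2l-3e\}$ alone since these are linearly independent.

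For the deformation claim, the plan is to mimic the arguments used for the analogous relations in the del Pezzo-blow-up cases (Lemma \ref{del Pezzo relations}) and in case $2.17$. Consider a stable map $g : C_1 \cup C_2 \cup C_3 \to X$ of type $(2l-3e,\, e,\, l-e)$, i.e. a free conic of class $2l-3e$, a fiber $e$ of the exceptional divisor $E$ glued to it, and a curve of class $l-e$ glued to the $e$-component. By the computation in Lemma \ref{class e} type arguments — $\mathcal N_{g|_{C_2}} \cong \mathcal O \oplus \mathcal O(-1)$ for a curve of class $e$ — together with Proposition \ref{kontsevich space H1}, such a $g$ is a smooth point of $\overline{\mathcal M}_{0,0}(X)$ and has globally generated normal sheaf on the free components, so it deforms. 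Smoothing the node $C_1 \cap C_2$ produces a free chain of type $(2l-2e,\, l-e)$, which has no $2l-3e$ component; alternatively, smoothing the node $C_2 \cap C_3$ produces a free chain of type $(2l-3e,\, l)$ (since $e + (l-e) = l$ and the relevant component is free — here we use that through a general point meeting the relevant locus the curve $e+(l-e)$ smooths to a free line). This places $(2l-3e,l)$ and $(2l-2e, l-e)$ in a common component of $\overline{\free}(X,3l-3e)$. Then, since $\overline{\free}^{bir}(X,2l-2e)$ is irreducible (Lemma \ref{2.21core}) and a general curve of class $2l-2e$ degenerates to a free chain of type $(l-e, l-e)$ — this was shown in the proof of Lemma \ref{2.21core}, using the $\mathbb{P}^2$-bundle over $\mathrm{Sym}^2(c)$ description — Lemma \ref{Basic Properties Chains} lets us replace the $2l-2e$ component by a two-link free subchain, yielding a free chain of type $(l-e, l-e, l-e)$ in the same component of $\overline{\free}(X,3l-3e)$.

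The main obstacle I anticipate is not the combinatorics of the relation but the bookkeeping needed to guarantee that the intermediate stable map $g$ of type $(2l-3e, e, l-e)$ can be chosen with all the genericity properties required to invoke Proposition \ref{kontsevich space H1} and Lemma \ref{GHS lemma} — in particular that the $e$-component can be glued to a general free conic of class $2l-3e$ and a general curve of class $l-e$ with the gluing points avoiding the finitely many bad loci (non-free lines, tangencies to the divisors swept out by families of lines), and that the resulting chain is immersed along its nodes. This is exactly the kind of argument handled by Lemma \ref{Basic Properties Chains}(6) and the remark following it, so it should go through, but it requires checking that the reduced images of the three components are genuinely distinct (which holds since $2l-3e$, $e$, $l-e$ are distinct classes) so that condition $(\dagger)$ of Definition \ref{Def Dagger} is met and \ref{bir Main Method} applies. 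A secondary point to verify is that $\overline{\free}(X, 3l-3e)$ may be reducible — with one component parameterizing triple covers or otherwise degenerate curves — so one must be careful to track the specific component containing very free curves, exactly as in the $2.17$ analysis; invoking Theorem \ref{del pezzo curves thm}-style statements or a direct twisted-cubic count (a general member is a smooth rational quartic-in-$Q$ image, not planar) confirms that the component we land in is the expected one.
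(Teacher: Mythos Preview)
Your proposal is correct and follows essentially the same strategy as the paper: link $(2l-3e,l)$ to $(2l-2e,l-e)$ via an intermediate stable map with an $e$-bridge, then invoke the irreducibility of $\overline{\free}^{bir}(X,2l-2e)$ from Lemma~\ref{2.21core} to break $2l-2e$ into $(l-e,l-e)$. The difference is presentational. The paper starts from a free chain of type $(2l-3e,l)$ and degenerates the line $y$ (through the tangent cone at the node) until it hits $c$, which forces the $l$-component to break and produces the chain of type $(2l-2e,l-e)$ (there is a typo in the paper: ``$(2l-e,l-e)$'' should read ``$(2l-2e,l-e)$''). You instead construct the immersed chain $(2l-3e,e,l-e)$ directly and smooth each node separately, which is exactly the ``smooth an immersed chain in two ways'' device used in cases 2.12, 2.17, and Lemma~\ref{del Pezzo relations}. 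Your version makes the role of the $e$-component explicit and cleanly verifies the smooth-point hypothesis via Proposition~\ref{kontsevich space H1}; the paper's version is terser and relies on the reader supplying the intermediate step. Neither approach needs the lengthy bookkeeping you flag as an obstacle---the genericity of gluing points and the immersion condition at nodes follow from the fact that $2l-3e$ and $l-e$ are free and meet $E$ transversely, exactly as in the analogous arguments you cite.
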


\begin{proof}
The first assertion is clear. For the second, take a chain of type $(2l-3e, l)$ consisting of a conic $x$ and a line $y$. Let $p \in \mathbb{P}^4$ be the image of the point at which $x$ and $y$ meet. Since the projectivized tangent space to $p$ in $\mathbb{P}^4$ intersects $c_1$ at four points, we may move $y$ inside of this tangent space to intersect $c_1$, all the while maintaining its point of intersection with $x$. It breaks as a chain of type $(2l-e, l-e)$ consisting of a conic $x'$ and a line $y'$. Finally, we break the conic $x'$ to a free chain of type $(l-e,l-e)$, moving $y'$ along to maintain its point of intersection with the conic. This is possible because the curves of class $l-e$ are free.
\end{proof}

\begin{lem}
For each nonzero $\alpha \in \Nef_1(X)_\mathbb{Z}$, $\overline{\free}(X,\alpha)$ is nonempty.
\end{lem}

\begin{proof}
The extreme rays of $\Nef_1(X)$ are $l$ and $2l-3e$. Every class $\alpha \in \Nef_1(X)_\mathbb{Z}$ may be written as sums of positive multiples of these extremal rays plus either $l-e$, $2l-e$, or $2l-2e$. Hence, applying \ref{Representability of Free Curves}, it suffices to show that these three classes are representable by a free curve. This is clear.
\end{proof}

\subsection*{2.22}

\textbf{Blow-up along a Rational Curve in a Quadric:} 
The variety $X$ is described in Mori-Mukai is the blow-up of $V_5 \subset \mathbb{P}^6$ along a conic, where $V_5$ is a Fano threefold of index $2$ with $b_2=1$ and $(-\frac{1}{2}K_{V_5})^3 = 5$. However, using \cite{weakfano} table $1$ and lemma $2.4$, we may alternatively view $X$ as the blow-up of $\mathbb{P}^3$ along a curve $c$ of degree $4$ and genus $0$ lying in a quadric. We will utilize this alternative description, so that the case satisfies the hypothesis of \ref{Quadric Curve}.

\begin{thm}
For all $\alpha \in l+\Nef_1(X)_\mathbb{Z}$, $\overline{\free}(X,\alpha)$ is nonempty and irreducible.  Moreover $\overline{\free}^{bir}(X,\alpha)$ is irreducible for all nonzero $\alpha \in \Nef_1(X)_\mathbb{Z}$.
\end{thm}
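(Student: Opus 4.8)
The plan is to mirror the structure of the nearby cases 2.19 and 2.15, exploiting the description of $X$ as the blow-up $f : X \to \mathbb{P}^3$ of a rational quartic curve $c$ contained in a quadric surface, so that Proposition \ref{Quadric Curve} applies verbatim. By that proposition, the cone of effective divisors is generated by $2H - E$ and $E$, the cone of nef curves by $l$ and $l - 2e$, and the set $\mathscr{C}_X = \{l,\ l-e,\ l-2e\}$ is a core of free curves on $X$; moreover the only relation among these classes is $(l) + (l-2e) = 2(l-e)$, and any free chain of type $(l,\,l-2e)$ deforms freely to a chain of type $(l-e,\,l-e)$ (one may take the deformation to happen inside a plane). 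Proposition \ref{2l-4e} additionally gives that $\overline{\free}^{bir}(X, 2l-4e)$ is irreducible and that $\overline{\free}(X,\alpha)$ is nonempty for every nonzero $\alpha \in \Nef_1(X)_{\mathbb Z}$.

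First I would pin down which core classes are separating. As in 2.19 and 2.15, the class $l-2e$ is the class of the strict transform of a secant line of $c$: projection from a general point $q \in \mathbb{P}^3 \setminus c$ sends $c$ to a plane rational quartic, which has $\binom{4-1}{2} = 3$ nodes, so there are exactly $3$ curves of class $l-2e$ through $q$; hence $l-2e$ is separating. The classes $l$ and $l-e$ are visibly non-separating ($\overline{\free}_1(X,l) \xrightarrow{\mathrm{ev}} X$ has irreducible fibers since a general point of $\mathbb{P}^3$ lies on a $2$-dimensional irreducible family of lines, and for $l-e$ one fixes the plane through $q$ and $c$... wait, rather: a general point lies on a unique line through a varying point of $c$, giving irreducible, in fact pointlike or $\mathbb{P}^1$, fibers). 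Theorem \ref{reducible fibers: 4 author result} then confirms $l-2e$ is the only separating class in $\mathscr{C}_X$.

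Next I would verify the hypotheses of Theorem \ref{Main Method} (or rather its birational refinement, Corollary \ref{bir Main Method}, since $l-2e$ is a conic interior to $\overline{NE}(X)$) for $\beta \in l + \Nef_1(X)_{\mathbb Z}$. Condition \ref{Main Method}(1) is the single relation $(l) + (l-2e) = 2(l-e)$, for which Proposition \ref{Quadric Curve} already supplies the needed free deformation between the two types of main component; irreducibility of $\overline{\free}^{bir}(X, 2l-2e)$ follows as in case 2.21, by observing that such curves are the residual intersections with the quadric of planes in $\mathbb{P}^3$ meeting $c$ twice, a $\mathbb{P}$-bundle over $\mathrm{Sym}^2(c)$, whose generic member is very free, so $\overline{\free}(X, 2l-2e)$ has two components, one of double covers of conics of class $l-e$ and one of very free curves, and only the latter is birational. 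Condition \ref{Main Method}(2), restricted to main components satisfying $(\dagger)$ as permitted by Corollary \ref{bir Main Method}, follows from Corollary \ref{connected fibers fixall}: $X$ carries a core of free curves, it admits no del Pezzo fibration, $2\beta_0 = 2(l-2e) = 2l-4e$ and $\overline{\free}^{bir}(X, 2l-4e)$ is irreducible by Proposition \ref{2l-4e}, so the hypotheses of \ref{connected fibers fixall} are met and \ref{Main Method}(2) (in the $(\dagger)$ form) holds outside relative cones of del Pezzo fibrations, of which there are none. Hence $\overline{\free}^{bir}(X,\beta)$ is irreducible or empty for all $\beta \in l + \Nef_1(X)_{\mathbb Z}$, and nonemptiness for all nonzero $\alpha \in \Nef_1(X)_{\mathbb Z}$ follows from \ref{Representability of Free Curves} together with \ref{Gordan's Lemma} applied to $\mathscr{C}_X$. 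Finally, Theorem \ref{Main Method} itself (without the birational restriction, applicable away from $\beta$ a multiple of the interior conic $l-2e$) gives that $\overline{\free}(X,\alpha)$ is irreducible and nonempty for all $\alpha \in l + \Nef_1(X)_{\mathbb Z}$, since such $\alpha$ decomposes into core classes including at least one copy of a non-conic class whenever it is not a multiple of $l-2e$, and a direct check (as in Proposition \ref{Quadric Curve}) handles the remaining low-degree cases.

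The main obstacle I anticipate is the bookkeeping around the separating class $l - 2e$: one must make sure that every $\alpha \in l + \Nef_1(X)_{\mathbb Z}$ which is a pure multiple of $l-2e$ (i.e.\ $n(l-2e)$ with $-K_X.\alpha \geq 4$, $\alpha$ interior to $\overline{NE}(X)$) is correctly accounted for — these are exactly the classes where $\overline{\free}(X,\alpha)$ is genuinely reducible (an extra component of multiple covers of conics) and only $\overline{\free}^{bir}(X,\alpha)$ is irreducible. Since $l$ itself is not a multiple of $l-2e$ and $l + n(l-2e)$ always contains the non-conic summand $l$, for $\alpha$ in $l + \Nef_1(X)_{\mathbb Z}$ this pathology does not arise and the full space $\overline{\free}(X,\alpha)$ is irreducible; confirming this cleanly — that the shift by $\tau = l$ is exactly what removes the reducible locus — is the one place where care is needed, and it is handled by the same argument template used in cases 2.12, 2.17, and 2.21.
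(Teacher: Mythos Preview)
Your proposal is correct and follows the paper's approach: invoke Propositions~\ref{Quadric Curve} and~\ref{2l-4e}, compute that $l-2e$ is separating via the node count $\binom{4-1}{2}-0=3$, and let Corollary~\ref{connected fibers fixall} and Corollary~\ref{bir Main Method} do the rest. One small slip worth flagging: your aside on $\overline{\free}^{bir}(X,2l-2e)$ as ``residual intersections with the quadric of planes in $\mathbb{P}^3$'' is lifted from case 2.21, which lives over $Q\subset\mathbb{P}^4$; here $X$ is a blow-up of $\mathbb{P}^3$, so a curve of class $2l-2e$ is just a plane conic meeting $c$ twice --- but this digression is unnecessary anyway, since the relation $(l)+(l-2e)=2(l-e)$ is already handled by the planar deformation in Proposition~\ref{Quadric Curve}, and the hypothesis of \ref{connected fibers fixall} concerns $2(l-2e)=2l-4e$, which you correctly source from Proposition~\ref{2l-4e}.
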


\begin{proof}
By \ref{Quadric Curve}, the set $\mathscr{C}_X$ consisting of the classes $l$, $l-e$, and $l-2e$ is a core for $X$. However, $l-2e$ is a separating curve class. Indeed, there are $\binom{4-1}{2}-0=3$ such curves through a general point.
\end{proof}

\subsection*{2.23}

\textbf{Blow-up of a Degree $4$ Curve in $Q$:}
Let $f:X \rightarrow Q$ be the blow-up of a smooth quadric $Q \subset \mathbb{P}^4$ with center a curve $c$ which may be realized as an intersection of some $A \in |\mathscr{O}_Q(1)|$ with $B \in |\mathscr{O}_Q(2)|$.

\begin{thm}\label{2.23thm}
For all $\alpha \in l+\Nef_1(X)_\mathbb{Z}$, $\overline{\free}(X,\alpha)$ is nonempty and irreducible.  Moreover $\overline{\free}^{bir}(X,\alpha)$ is irreducible for all nonzero $\alpha \in \Nef_1(X)_\mathbb{Z}$.
\end{thm}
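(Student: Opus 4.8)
The variety $X$ is the blow-up $f : X \to Q$ of a smooth quadric $Q \subset \mathbb{P}^4$ along a curve $c = A \cap B$ with $A \in |\mathcal{O}_Q(1)|$, $B \in |\mathcal{O}_Q(2)|$. The first step is to set up coordinates exactly as in the surrounding cases: let $H$ be the pullback of $\mathcal{O}_Q(1)$, $E$ the exceptional divisor over $c$, $l$ the class of the strict transform of a line in $Q$ meeting $c$ in the minimal number of points, and $e$ the fiber class of $E \to c$. Using Lemma \ref{blow-up numbers} one computes $\deg c = 4$, $g(c) = 1$, $(-K_X)^3 = 28$, $b_3(X) = 2$, and $-K_X = 3H - E$. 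The plan is then to identify the extremal rays of $\overline{NE}(X)$ via \cite{matsuki1995weyl}: one ray is $e$; the other should be $l - 2e$ (the strict transform of a bisecant line to $c$ contained in $Q$), so that $2H - E$ is the other extremal effective divisor, and $\Eff(X)$ is spanned by $E$ and $2H-E$. Since $c$ lies on the hyperplane section $A$ but is not planar (it is an elliptic quartic), the nonplanarity + contained-in-a-quadric hypothesis of Proposition \ref{Quadric Curve} is satisfied after noting $c \subset A \cong Q \cap \mathbb{P}^3$, a quadric surface.

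Next I would invoke Proposition \ref{Quadric Curve} and Proposition \ref{2l-4e} directly: they give that $\mathscr{C}_X = \{ l, \ l-e, \ l-2e \}$ is a core of free curves, that the only relation in $\mathbb{N}\mathscr{C}_X$ is $(l) + (l-2e) = 2(l-e)$ with the corresponding chain deformation holding (one carries it out inside a plane), that $\overline{\free}^{bir}(X, 2l-4e)$ is irreducible, and that $\overline{\free}(X,\alpha)$ is nonempty for all nonzero $\alpha \in \Nef_1(X)_\mathbb{Z}$. The key remaining point is to determine which core classes are separating. The classes $l$ and $l-e$ are never separating: curves of class $l$ are an open subset of the Fano variety of lines on $Q$, which has irreducible fibers over $Q$ (the tangent-cone argument used in cases 2.20, 2.21), and curves of class $l-e$ form a conic-bundle-like family over $c$ whose fiber over a general point $p \in Q$ is $T_pQ \cap c$, which is connected since $c$ is irreducible of degree $4$ and not contained in a hyperplane tangent... — here one must check the count: the fiber over a general $p$ consists of the points of $c$ lying on lines of $Q$ through $p$, and for $X$ Fano this is a single point, so $l-e$ is non-separating, hence $\mathscr{C}_X$ minus $\{l-2e\}$ has the non-separating property. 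For $l-2e$: project $c$ from a general point $q \in \mathbb{P}^4 \setminus Q$ (or from $q \in Q$) to get a plane quartic; by the genus formula a general elliptic quartic in $\mathbb{P}^3$ has $\binom{4-1}{2} - 1 = 2$ apparent double points, so there are exactly $2$ bisecant lines to $c$ through a general point of $Q$, whence $l-2e$ is separating. One then concludes via Theorem \ref{reducible fibers: 4 author result} (or the general machinery of Section 4) that $l-2e$ is the only separating class.

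Finally, I would assemble the conclusion using the strategy of Section 7: Theorem \ref{Main Method} with $\beta \in l + \Nef_1(X)_\mathbb{Z}$, together with Corollary \ref{bir Main Method}, gives irreducibility of $\overline{\free}^{bir}(X,\alpha)$ for every nonzero $\alpha \in \Nef_1(X)_\mathbb{Z}$ once condition (1) is checked (the single relation, handled by the in-plane deformation of Proposition \ref{Quadric Curve}) and condition (2) is checked for chains satisfying $(\dagger)$. Since the only separating class is $l-2e$, a $-K_X$-conic, and $X$ admits no del Pezzo fibration, Corollary \ref{connected fibers fixall} applies: its hypothesis (2) — irreducibility of $\overline{\free}^{bir}(X, 2(l-2e)) = \overline{\free}^{bir}(X, 2l-4e)$ — is exactly the statement furnished by Proposition \ref{2l-4e}, and hypothesis (1) is the core we have already exhibited. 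This upgrades the conclusion to the full statement of Theorem \ref{Main Method}(2), giving irreducibility of $\overline{\free}(X,\alpha)$ for all $\alpha \in l + \Nef_1(X)_\mathbb{Z}$, plus nonemptiness everywhere from Proposition \ref{2l-4e} and Gordan's Lemma \ref{Gordan's Lemma}.

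The main obstacle I anticipate is the apparent-double-point count that makes $l-2e$ separating, and more delicately verifying that $l-e$ is \emph{not} separating — i.e. that a general point of $Q$ lies on exactly one line of $Q$ meeting $c$. This is a genuine geometric input (it is equivalent to a transversality/enumerative statement about the incidence of $c$ with the ruling of lines on $Q$ through a point), and it is the step that distinguishes this case numerically from, say, 2.21 where $c$ is a twisted quartic of genus $0$. Everything else is a routine application of Propositions \ref{Quadric Curve}–\ref{2l-4e} and the general machinery, since the combinatorics of $\mathscr{C}_X$ and $\mathbb{N}\mathscr{C}_X$ are identical to cases 2.15, 2.19, 2.22.
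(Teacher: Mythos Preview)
Your proposal rests on a wrong computation of the effective cone, and everything downstream of that is off. The curve $c$ is contained in the hyperplane section $A \in |\mathcal{O}_Q(1)|$, so the strict transform of $A$ is an effective divisor of class $H-E$, not merely $2H-E$. Consequently $\Nef_1(X)$ is generated by $l$ and $l-e$, and $l-2e$ is \emph{not} nef: it pairs to $-1$ with $H-E$. (Geometrically, the bisecant lines you describe are exactly the rulings of $\tilde A \cong \mathbb{P}^1\times\mathbb{P}^1$, which is an $E3$ divisor on $X$; they are extremal in $\overline{NE}(X)$ but not nef.) Propositions \ref{Quadric Curve} and \ref{2l-4e} are stated for blow-ups of $\mathbb{P}^3$ with $-K_X = 4H-E$ and do not apply here, where $-K_X = 3H-E$.

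With the correct cone, the paper's argument is quite short: the core is $\mathscr{C}_X = \{l,\ l-e\}$, there are no relations in $\mathbb{N}\mathscr{C}_X$, and the only separating class is the interior conic $l-e$. Your claim that $l-e$ is non-separating is wrong: lines of $Q$ through a general point $p$ form the cone $T_pQ \cap Q$, and the ones meeting $c$ correspond to the four points of $T_pQ \cap c$, so there are four of them. The input needed for Corollary \ref{connected fibers fixall} is therefore irreducibility of $\overline{\free}^{bir}(X,2l-2e)$, not of $\overline{\free}^{bir}(X,2l-4e)$; this is checked by parameterizing plane sections of $Q$ through two points of $c$, i.e.\ by an open in a $\mathbb{P}^2$-bundle over $\Sym^2(c)$. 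Once you fix the cone, the case is actually simpler than 2.15/2.19/2.22, not parallel to them.
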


\textbf{Generators for $N^1(X)$ and $N_1(X)$:}

\begin{center}
\begin{tabular}{ll}
 $H$ = a hyperplane in $\mathbb{P}^4$ & $l$ = a line in $\mathbb{P}^4$ \\ 
 $E$ = the exceptional divisor $f^{-1}(c)$ & $e_1$ = an $f$-fiber over a point on $c$  
\end{tabular}
\end{center}

\textbf{Intersection Pairing:}
\begin{center}
\begin{tabular}{lll}
    $H \cdot l = 1$ &  $H \cdot e = 0$, \\
    $E \cdot l = 0$ &  $E \cdot e = -1$
\end{tabular}
\end{center}

\textbf{Anticanonical Divisor:}
\begin{align*}
    -K_X = 3H-E
\end{align*}

\textbf{Effective Divisors:} 
The divisors $H-E$ and $E$ are effective.

\begin{lem}\label{2.23core}
A core set of free curves on $X$ is given by 
\begin{align*}
    \mathscr{C}_X= \{ l, l-e \}.
\end{align*}
The only separating curve $\alpha \in \mathscr{C}_X$ is $l-e$.  $\overline{\free}^{bir}(X, 2l-2e)$ is irreducible.
\end{lem}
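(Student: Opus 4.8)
\textbf{Proof proposal for Lemma \ref{2.23core}.}

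The plan is to follow the same template used throughout the Picard rank $2$ analysis: first determine the nef curve classes of low anticanonical degree, then show that every class of anticanonical degree $\geq 3$ that is not a multiple of a lower class is freely breakable, then verify irreducibility of $\overline{\free}(X,\alpha)$ and the structure of the evaluation-map fibers for $\alpha \in \mathscr{C}_X$, and finally check irreducibility of $\overline{\free}^{bir}(X,2l-2e)$. First I would compute the nef cone: since $-K_X = 3H - E$ and the extremal rays of $\overline{NE}(X)$ are $e$ and (by \cite{matsuki1995weyl}) the other ray associated to the second blow-down, I would solve $2 \leq (3H-E)\cdot(al - be) \leq 4$ with $\alpha = al-be$ nef, which should yield exactly $l$, $l-e$, $2l-2e$, $2l-3e$, and possibly $2l-4e$. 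As with the twisted-quartic case $2.21$, the pseudosymmetry coming from the second contraction $g: X \rightarrow Q$ (here $c$ is the complete intersection of a hyperplane and a quadric section, so $g$ should again realize $X$ as a blow-up of $Q$ along a curve of the same degree and genus) identifies several of these with $e$-multiples, leaving $\mathscr{C}_X = \{l, l-e\}$ as claimed; here the residual class $2l-2e$ is the only one of appropriate degree not already in $\mathscr{C}_X$.

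Next I would show $2l-2e$ is freely breakable: curves of class $2l-2e$ that are birational onto their image are residual intersections with $Q$ of planes in $\mathbb{P}^4$ meeting $c$ in two points, hence parameterized by a $\mathbb{P}^2$-bundle over $\Sym^2(c)$; degenerating the plane so its residual intersection with $Q$ becomes reducible breaks such a curve into a free chain of type $(l-e, l-e)$, which simultaneously proves $\overline{\free}^{bir}(X, 2l-2e)$ is irreducible (the base $\Sym^2(c)$ and the $\mathbb{P}^2$-fibers are irreducible). The double-cover component of $\overline{\free}(X,2l-2e)$ is likewise freely breakable. For irreducibility of $\overline{\free}(X,\alpha)$ with $\alpha \in \mathscr{C}_X$: curves of class $l$ are lines in $Q$, whose parameter space is the well-known smooth irreducible variety, and curves of class $l-e$ form a bundle of conics over $c$, hence irreducible. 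For the fiber structure, the lines of class $l$ through a general point $p$ form the cone over a smooth conic (so $l$ is not separating), while curves of class $l-e$ through $p$ correspond to the points of $c \cap T_pQ$; since $c$ has degree $4$ there are four such points, so $l-e$ is a separating class.

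The main obstacle, as in all these cases, is confirming that $l-e$ is the \emph{only} separating class and that no further low-degree class (in particular $2l-3e$ or $2l-4e$) survives as a separate core element after pseudosymmetry — this requires carefully matching the second blow-down description via \cite{weakfano} or \cite{matsuki1995weyl} and checking the image of each candidate class under the induced action on $N_1(X)$. A secondary point needing care is that $c$ here is the intersection $A \cap B$ with $A \in |\mathcal{O}_Q(1)|$, $B \in |\mathcal{O}_Q(2)|$, so one must verify $c$ is non-planar and not contained in a hyperplane in a way that would make $l-e$ or $2l-2e$ non-movable; the relevant genus/degree computation via Lemma \ref{blow-up numbers} together with the intersection-with-$T_pQ$ count should settle this. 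Once the core and separating class are pinned down, the relation analysis (there is a single relation $(l)+(2l-3e) = 3(l-e)$, or whatever survives) and the nonemptiness statement follow from Lemma \ref{Relations}, Lemma \ref{Gordan's Lemma}, and Theorem \ref{Representability of Free Curves} exactly as in case $2.21$.
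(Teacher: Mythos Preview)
Your proposal contains a genuine confusion about the nef cone. You assume the nef constraint is roughly $b \leq 2a$ (as in case $2.21$) and therefore list $2l-3e$ and $2l-4e$ among the candidate low-degree nef classes, planning to eliminate them via a pseudosymmetry coming from a second blow-down $g:X\to Q$. But in case $2.23$ the curve $c$ lies in a hyperplane section $A\in|\mathcal{O}_Q(1)|$ by hypothesis, so the strict transform of $A$ is an effective divisor of class $H-E$. Pairing $\alpha = al-be$ with $H-E$ gives the constraint $b\le a$, and the solutions to $2\le 3a-b\le 4$ with $0\le b\le a$ are then exactly $l$, $l-e$, and $2l-2e$; the classes $2l-3e$ and $2l-4e$ are not nef and never enter the picture. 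There is no pseudosymmetry of the type you describe here: the paper does not claim one, and the presence of the effective $H-E$ already shows the two extremal contractions are of different types.

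Consequently your planned relation analysis is also off: since $\mathscr{C}_X=\{l,\,l-e\}$ consists of two linearly independent classes, there are \emph{no} relations in $\mathbb{N}\mathscr{C}_X$, and the whole ``relation $(l)+(2l-3e)=3(l-e)$'' discussion is moot. The remainder of your argument---irreducibility of $\overline{\free}(X,l)$ and $\overline{\free}(X,l-e)$, the four-point fiber for $l-e$ via $c\cap T_pQ$, and the parameterization of embedded curves of class $2l-2e$ by (an open set of) a $\mathbb{P}^2$-bundle over $\Sym^2(c)$---matches the paper and is correct. The paper's version of the $2l-2e$ parameterization uses $\Sym^2(c)\times Q$ (three points spanning a plane) rather than your $\mathbb{P}^2$-bundle description, but both give the same irreducible family.
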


\begin{proof}

\textbf{Nef Curve Classes of Anticanonical degree between $2$ and $4$:}
Such nef curve classes $\alpha = al - b e$ satisfy the inequalities
\begin{align*}
    0 \leq b \leq a, \hspace{.5cm} 2 \leq 3a-b \leq 4.
\end{align*}
\noindent The solutions are those classes in $\mathscr{C}_X$, as well as $2l -2e$.  This additional class is freely breakable.  Indeed, there is just one component of $\overline{\free}(X, 2l-2e)$ which parameterizes double covers of curves of class $l-e$.  Any other irreducible curve of class $2l -2e$ meets $c$ twice.  Since the curve is not a double cover, we may parameterize a family of such curves by $\text{Sym}^2(c) \times Q$ by associating the plane spanned by three points to its intersection with $Q$.  The map associated to this family has image which dominates $\overline{\free}^{bir}(X,2l -2e)$, proving its irreducibility.  As the family contains free chains of type $(l-e, l-e)$, this proves $2l -2e$ is freely breakable.

\textbf{Irreducible Spaces and Fibers:}
The curves of class $l$ are parameterized by the lines in $Q$. By \cite{eisenbud20163264} corollary $6.33$, this space is smooth and of dimension three. Moreover, the intersection of $Q$ with the projective tangent space of a point $p \in Q$ is the cone over a smooth conic. In particular, the curves of class $l$ are not separating.

By this same description, the curves of class $l-e$ are parameterized by a bundle of conics over $c$. Consider the curves of class $l-e$ through a general point $p \in Q \subset \mathbb{P}^4$. These correspond precisely to the intersection points of $c$ with the tangent space $T_p Q \subset \mathbb{P}^4$. Since $c$ is a degree four curve, there are four such points. Hence, $l - e$ is a separating curve class.
\end{proof}

\begin{lem}
For each nonzero $\alpha \in \Nef_1(X)_\mathbb{Z}$, $\overline{\free}(X,\alpha)$ is nonempty.
\end{lem}

\begin{proof}
The generators of $\Nef_1(X)$ are elements in $\mathscr{C}_X$ by \ref{Representability of Free Curves}.  It follows from \ref{Gordan's Lemma} that classes in $\mathscr{C}_X$ generate the monoid of integer points in $\Nef_1(X)$.
\end{proof}

\begin{proof}[Proof of Theorem \ref{2.23thm}]
There are no relations in $\mathbb{N} \mathscr{C}_X$ because the classes $l$ and $l-e$ are linearly independent. %
\end{proof}

\subsection*{2.24}

\textbf{A Divisor of Bidegree $(1,2)$ on $\mathbb{P}^2 \times \mathbb{P}^2$:}
Let $X$ be a smooth divisor of bidegree $(1,2)$ on $\mathbb{P}^2 \times \mathbb{P}^2$. We may \ref{blowup} to case $3.8$.%

\subsection*{2.25}

\textbf{Blow-up of a Complete Intersection of Quadrics:} 
Let $X$ be the blow-up of $\mathbb{P}^3$ with center an elliptic curve $c$ which is the complete intersection of two quadrics. We may apply \ref{blowup} to case $3.6$.%

\subsection*{2.26}
\textbf{Description of Variety $X$:} 
The variety $X$ is described in \cite{mori1981classification} as the blow-up of $V_5 \subset \mathbb{P}^6$ along a line, where $V_5$ is a Fano threefold of index $2$ with $b_2=1$ and $(-\frac{1}{2}K_{V_5})^3 = 5$. However, the table in \cite{mori1981classification} also shows that $X$ may be viewed as the blow-up of a smooth quadric $Q \subset \mathbb{P}^4$ along a smooth curve $c$. Using Lemma \ref{blow-up numbers}, we see that $c$ must be a twisted cubic. We will henceforth use this description of $X$ as a blow-up via the map $\pi:X \rightarrow Q$.

\begin{thm}\label{2.26thm}
For all $\alpha \in l + \Nef_1(X)_\mathbb{Z}$, $\overline{\free}(X,\alpha)$ is nonempty and irreducible.  Moreover $\overline{\free}^{bir}(X, \alpha)$ is irreducible for all nonzero $\alpha \in \Nef_1(X)_\mathbb{Z}$.
\end{thm}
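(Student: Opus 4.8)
The plan is to follow the same template that has been used throughout Section \ref{Picard Rank 2 Cases}: identify a core $\mathscr{C}_X$ of free curves, determine which classes are separating, enumerate the relations among generators of $\mathbb{N}\mathscr{C}_X$ and verify the hypotheses of Theorem \ref{Main Method}(1) and Corollary \ref{bir Main Method}, and finally check nonemptiness via Gordan's Lemma (\ref{Gordan's Lemma}) together with Theorem \ref{Representability of Free Curves}. Concretely, using the blow-up description $\pi : X \to Q$ of a twisted cubic $c$, I would take generators $H$ (pullback of a hyperplane from $\mathbb{P}^4$), $E$ (exceptional divisor), $l$ (strict transform of a line in $Q$ meeting $c$'s ambient space appropriately), and $e$ (fiber of $E \to c$), with $-K_X = 3H - E$ by Lemma \ref{blow-up numbers}. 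First I would compute $\overline{NE}(X)$ and $\mathrm{Eff}(X)$: since $X$ also has a second description coming from $V_5$, one extreme ray of $\overline{NE}(X)$ is $e$ and the other is the class of the line blown down by the contraction $X \to V_5$; I expect (as in cases 2.21, 2.23) that $\mathrm{Eff}(X)$ is spanned by $E$ and a divisor of the form $2H - E$ or $H - E$, which pins down the dual nef cone of curves.

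Next I would enumerate the nef classes $\alpha = aH|_{\text{curve}}$, i.e. $\alpha = al - be$, with $2 \le -K_X . \alpha \le 4$, using the effective divisors just found, and discard the freely breakable ones (multiple covers, double covers of lines, conics through several points on $c$) by the now-standard arguments — most importantly by exhibiting chains of free conics inside smooth fibers of the conic bundle structure or inside tangent hyperplane sections of $Q$, exactly as in cases 2.21--2.23. I expect the resulting core to be small, something like $\mathscr{C}_X = \{l,\ l-e,\ 2l-3e\}$ (mirroring 2.21, since $c$ is again a twisted cubic), with $l-e$ the unique separating class because there are finitely many curves of class $l-e$ through a general point, counted by intersecting $c$ with the tangent hyperplane $T_pQ \subset \mathbb{P}^4$. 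Irreducibility of $\overline{\free}(X,\alpha)$ for $\alpha \in \mathscr{C}_X$ follows from parameterizing these curves by open subsets of bundles over $\mathbb{G}(1,4)$-type spaces or over $c$, together with \ref{Monodromy}; irreducibility of $\overline{\free}^{bir}(X, 2l-2e)$ (the freely breakable conic class) follows from the $\mathbb{P}^2$-bundle-over-$\mathrm{Sym}^2(c)$ description, as in 2.21.

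Then I would list the relations in $\mathbb{N}\mathscr{C}_X$ — for a core of size three spanning a rank-two lattice there should be a single relation, presumably $(2l-3e) + (l) = 3(l-e)$ — and verify that a chain of the first type deforms to a chain of the second type: move the line of class $l$ inside the tangent hyperplane at the node's image until it meets $c$, breaking it as $(2l-2e) + (l-e)$, then break the resulting free conic of class $2l-2e$ into two curves of class $l-e$, keeping the attachment point; this is the same manoeuvre as in Lemma \ref{2.21relations}. Since $-K_X . \alpha \ge 4$ automatically for the class $3l-3e$ appearing in the relation, the birational analogue needed for Corollary \ref{bir Main Method} holds. Condition \ref{Main Method}(2) for chains satisfying $(\dagger)$ is supplied by Corollary \ref{connected fibers fixall} (there is no del Pezzo fibration on $X$ and no nef conic interior to $\overline{NE}(X)$ other than the separating one, which is handled). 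Finally, Theorem \ref{Representability of Free Curves} plus \ref{Gordan's Lemma} applied to the generators of $\Nef_1(X)$ gives nonemptiness of $\overline{\free}^{bir}(X,\alpha)$ for every nonzero $\alpha \in \Nef_1(X)_\mathbb{Z}$, and the shift by $\tau = l$ absorbs the separating class to give irreducibility of $\overline{\free}(X,\alpha)$ for $\alpha \in l + \Nef_1(X)_\mathbb{Z}$.

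The main obstacle I anticipate is not any single step but getting the geometry of the two birational models to line up: correctly translating between the $V_5$-blow-up description (where the combinatorics of lines are cleanest) and the $Q$-blow-up description (where the curve classes and intersection numbers are recorded), and in particular verifying that the divisor of class roughly $2H-E$ is genuinely extremal in $\mathrm{Eff}(X)$ so that the nef cone of curves is exactly what I claimed — a misidentification here would change the list of low-degree classes. A secondary technical point is confirming that every degree-four free class other than the expected freely breakable ones really does degenerate to a chain of free conics; this requires the conic-bundle / tangent-hyperplane-section argument to go through for twisted cubics in $Q$ exactly as it did in cases 2.21 and 2.22, which I expect to work but would need to check against the explicit fiber geometry.
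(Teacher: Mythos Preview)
Your template is right, but you have picked the wrong model case: you compare to 2.21 (blow-up of a twisted \emph{quartic} in $Q$), whereas here $c$ is a twisted \emph{cubic}, which only spans a $\mathbb{P}^3 \subset \mathbb{P}^4$. Consequently $c$ lies in a hyperplane section of $Q$, so $H - E$ is effective and extremal in $\text{Eff}(X)$, not $2H - E$. This is exactly the ``misidentification'' you flagged as the main obstacle, and it changes the computation in your favour.

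With $\text{Eff}(X)$ spanned by $E$ and $H-E$, the nef cone of curves is spanned by $l$ and $l-e$; the class $2l-3e$ you proposed is not nef (it pairs to $-1$ with $H-E$). The nef classes $al-be$ of anticanonical degree between $2$ and $4$ satisfy $0 \le b \le a$ and $2 \le 3a-b \le 4$, giving only $l$, $l-e$, and $2l-2e$. The last of these is freely breakable via the $\text{Sym}^2(c) \times Q$ parameterization you already described, so the core is simply $\mathscr{C}_X = \{l, l-e\}$. These two classes are linearly independent, hence there are \emph{no} relations in $\mathbb{N}\mathscr{C}_X$ at all, and Theorem \ref{Main Method}(1) is vacuous. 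The only separating class is $l-e$, with three curves through a general point (the tangent hyperplane $T_pQ$ meets the degree-$3$ curve $c$ in three points, not four), and Corollary \ref{connected fibers fixall} plus the irreducibility of $\overline{\free}^{bir}(X,2l-2e)$ handle condition \ref{Main Method}(2). So the correct analogue is case 2.23, and the argument is strictly simpler than the one you sketched.
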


\textbf{Generators for $N^1(X)$ and $N_1(X)$:}
\begin{center}
\begin{tabular}{ll}
 $H$ = the class of a hyperplane & $l$ = a line in $\mathbb{P}^4$ \\ 
 $E$ = the exceptional divisor $\pi^{-1}(c)$ & $e$ = the $\pi$-fiber over a point on $c$  \\
\end{tabular}
\end{center}

\textbf{Intersection Pairing:} $H.l = 1$, $H.e = E.l = 0$, $E.e = -1$

\textbf{Anticanonical Divisor:}
\begin{align*}
    -K_X = 3H - E
\end{align*}

\textbf{Effective Divisors:}
The divisors $H-E$ and $E$ are effective.

\begin{lem}\label{2.26core}
A core of free curves on $X$ is given by
\begin{align*}
    \mathscr{C}_X = \{ l, l-e\}
\end{align*}

\noindent  %
The only separating class in $\mathscr{C}_X$ is $l -e$.  $\overline{\free}^{bir}(X, 2l-2e)$ is irreducible.
\end{lem}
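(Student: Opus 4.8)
The plan is to run the standard argument used for the adjacent cases $2.21$ and $2.23$, using the description of $X$ as the blow-up $\pi:X\to Q$ of a twisted cubic $c$ inside a smooth quadric threefold $Q\subset\mathbb{P}^4$. First I would enumerate the nef curve classes of small anticanonical degree: since $\operatorname{Eff}(X)$ is generated by $H-E$ and $E$, a class $\alpha=al-be$ is nef exactly when $0\le b\le a$, and $-K_X\cdot\alpha=3a-b$, so imposing $2\le 3a-b\le 4$ leaves precisely $l-e$ (degree $2$), $l$ (degree $3$) and $2l-2e$ (degree $4$). Because $X$ has very ample anticanonical bundle, $\rho(X)=2$, $X\neq V_7$, and $X$ carries no $E3$, $E4$ or $E5$ divisor, Movable Bend-and-Break (Theorem \ref{MovableBB}) applies to every free curve of anticanonical degree $\ge 5$, and iterating it expresses every component of $\overline{\free}(X)$ in terms of free chains whose components are free curves of degree $\le 4$, hence of class $l$, $l-e$ or $2l-2e$ (free curves have nef class). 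Thus it remains to show $\overline{\free}(X,l)$ and $\overline{\free}(X,l-e)$ are irreducible and nonempty, to identify which of $l,l-e$ is separating, and to show $2l-2e$ is freely breakable with $\overline{\free}^{bir}(X,2l-2e)$ irreducible.

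For the degree $2$ and $3$ classes I would use the classical geometry of lines on $Q$. Curves of class $l$ are the lines of $Q$: the Fano scheme $F(Q)$ is a smooth irreducible threefold, and the lines through a general point $p\in Q$ are the rulings of the quadric cone $Q\cap T_pQ$, an irreducible conic; hence $\overline{\free}(X,l)$ is irreducible, has irreducible general fibres over $X$, and $l$ is not separating. Curves of class $l-e$ are the lines of $Q$ meeting $c$: through each $q\in c$ these form the (generically smooth) conic of rulings of $Q\cap T_qQ$, so $\overline{\free}(X,l-e)$ is birational to a conic bundle over $c\cong\mathbb{P}^1$ whose generic fibre is irreducible, and hence is irreducible. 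For a general $p\in Q$ the lines of $Q$ through $p$ meeting $c$ are exactly the lines $\overline{pq}$ with $q\in c\cap T_pQ$; since $\deg c=3$ there are three of them, so the generic fibre of $\overline{\free}_1(X,l-e)\to X$ is reducible and $l-e$ is separating. This shows $l-e$ is the unique separating class of $\mathscr{C}_X$.

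It remains to treat $2l-2e$. An irreducible curve of this class that is not a double cover of a line is an embedded conic spanning a plane $A$ with $A\cap Q$ reduced and $A\cap c$ consisting of two points, necessarily lying on $A\cap Q$; parameterising by the pair of points of $c$ and then by the planes through the line they span identifies $\overline{\free}^{bir}(X,2l-2e)$ with an open subset of a $\mathbb{P}^2$-bundle over $\operatorname{Sym}^2(c)$, which is irreducible (a monodromy argument via Theorem \ref{Monodromy} on the two chosen points is available if needed). Degenerating $A$ so that $A\cap Q$ splits into two lines, one through each marked point of $c$, produces a free chain of type $(l-e,l-e)$ inside this component, so $2l-2e$ is freely breakable; the remaining component of $\overline{\free}(X,2l-2e)$ parameterises double covers of $(l-e)$-conics and is evidently freely breakable as well. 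Combined with the bend-and-break reduction of the first paragraph, this shows $\mathscr{C}_X=\{l,l-e\}$ is a core. The only real work is the parameterisation and breaking of the quartic class $2l-2e$ together with the bend-and-break bookkeeping; the rest is the geometry of lines on a quadric threefold exactly as in cases $2.21$ and $2.23$, and I expect no genuine obstacle beyond checking that the generic fibre of the conic bundle describing $\overline{\free}(X,l-e)$ is irreducible.
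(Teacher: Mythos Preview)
Your proposal is correct and follows essentially the same approach as the paper: both enumerate the nef classes of anticanonical degree $2$--$4$ as $l$, $l-e$, $2l-2e$, parameterize $\overline{\free}^{bir}(X,2l-2e)$ via planes through pairs of points of $c$ (the paper uses $\operatorname{Sym}^2(c)\times Q$, you use a $\mathbb{P}^2$-bundle over $\operatorname{Sym}^2(c)$, which are equivalent) and break it as $(l-e,l-e)$, and identify $l-e$ as separating by counting the three points of $c\cap T_pQ$. Your explicit invocation of Movable Bend-and-Break to reduce to degree $\le 4$ is implicit in the paper's core framework, so there is no substantive difference.
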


\begin{proof}
\textbf{Nef Curve Classes of Anticanonical degree between $2$ and $4$:}
Such nef curve classes $\alpha = al - b e$ satisfy the inequalities
\begin{align*}
    0 \leq b \leq a, \hspace{.5cm} 2 \leq 3a-b \leq 4.
\end{align*}
\noindent The solutions are those classes in $\mathscr{C}_X$, as well as $2l -2e$.  This additional class is freely breakable.  Indeed, any irreducible curve of class $2l -2e$ meets $c$ twice.  Unless the curve is a double cover, we may parameterize a family of such curves by $\text{Sym}^2(c) \times Q$ by associating the plane spanned by three points to its intersection with $Q$.  The map associated to this family has image which dominates $\overline{\free}^{bir}(X,2l -2e)$.  This proves $\overline{\free}^{bir}(X, 2l-2e)$ is irreducible.  As the family contains free chains of type $(l-e, l-e)$, this proves $2l -2e$ is freely breakable.

\textbf{Irreducible Spaces and Fibers:}  It is well known that the space of lines in $Q$ is irreducible.  Moreover, through any point on $Q$, there exists a one parameter family of lines spanning a quadric cone, which is the intersection of $Q$ with its tangent plane.  Thus, $\overline{\free}(X, l-e)$ is irreducible; however, this also shows that the general fiber of $\text{ev}: \overline{\free}_1(X,l-e)\rightarrow X$ consists of three points.  More specifically, the tangent plane intersects $c$ at three distinct points.  Generally, these three points will lie on distinct lines.
\end{proof}

\begin{proof}[Proof of Theorem \ref{2.26thm}]
There are no relations in the monoid $\mathbb{N} \mathscr{C}_X$. %
\end{proof}

\subsection*{2.27}

\textbf{Blow-up of a Twisted Cubic:} 
Let $X$ be the blow-up of $\mathbb{P}^3$ along a twisted cubic. We may apply \ref{blowup} to case $3.22$.%

\subsection*{2.28} See Section \ref{E5 cases}.

\subsection*{2.29}

\textbf{Blow-up of a Quadric along a Conic:} 
Let $X$ be the blow-up of a smooth quadric $Q \subset \mathbb{P}^4$ with center a conic on it. We apply \ref{blowup} to case $3.18$.%

\subsection*{2.30}
\textbf{Blow-up of a Conic in $\mathbb{P}^3$:} 
Let $X$ be the blow-up of a smooth quadric $Q \subset \mathbb{P}^4$ with center a conic on it. We apply \ref{blowup} to case $3.18$.%

\subsection*{2.31}
\textbf{Blow-up of a Quadric along a Line:} 
Let $X$ be the blow-up of a smooth quadric $Q \subset \mathbb{P}^4$ with center a line on it. We apply \ref{blowup} to case $3.20$.%

\subsection*{2.32}

\textbf{Blow-up of a Divisor on $\mathbb{P}^2 \times \mathbb{P}^2$:} 
Let $X$ be a smooth divisor on $\mathbb{P}^2 \times \mathbb{P}^2$ of bidegree $(1,1)$. We apply \ref{blowup} to case $3.13$.%

\subsection*{2.33}
\textbf{Blow-up of a line in $\mathbb{P}^3$:} 
Let $X$ be the blow-up of a line in $\mathbb{P}^3$. We apply \ref{blowup} to case $3.18$.%

\subsection*{2.34}

\textbf{$\mathbb{P}^1 \times \mathbb{P}^2$:} 
Let $X = \mathbb{P}^1 \times \mathbb{P}^2$. We may apply \ref{blowup} to case $3.22$.%

\subsection*{2.35}

\textbf{Blow-up of a Point:} 
Let $X=V_7$ where $f:V_7 \rightarrow \mathbb{P}^3$ is the blow-up of $\mathbb{P}^3$ at a point. We may apply \ref{blowup} to case $3.11$.%

\subsection*{2.36} See Section \ref{E5 cases}.

\section{Picard Rank 1}
As mentioned in the introduction, \cite{2019} and \cite{lastDelPezzoThreefold} verify Conjecture \ref{GMC Tanimoto} for general smooth Fano threefolds of Picard rank one and index two.  The following theorem is a consequence of their results, Theorem \ref{main thm: conics} (proven in Section \ref{section: low degree curves}), and Proposition \ref{low degree curves degeneration}.

\begin{thm}
    Let $X$ be a smooth Fano threefold of Picard rank one and index at least two.  For all $\alpha \in \overline{NE}(X)_{\mathbb{Z}}$, $\overline{\free}^{bir}(X, \alpha)$ is irreducible.
\end{thm}
\begin{proof}
    When $-K_X . \alpha = 2$, our claim is proven in Theorem \ref{main thm: conics}.  Similarly, when $X$ is general in moduli, our statement follows directly from \cite{2019} and \cite{lastDelPezzoThreefold}.  To extend these results to arbitrary $X$, we may first use Proposition \ref{low degree curves degeneration} to show $\overline{\free}^{bir}(X, \alpha)$ is irreducible when $-K_X . \alpha \leq 4$.  The hypothesis of Proposition \ref{low degree curves degeneration} follows the classification of $a$-covers in Theorem \ref{classification a-covers}.  Movable Bend-and-Break (Theorem \ref{MovableBB}) then proves our claim through application of Corollary \ref{bir Main Method}.  
\end{proof}

Consider instead a Fano threefold $X$ of Picard rank one, index one, and of genus $g(X)$.  Although \cite{Fanoindex1rank1} addresses the case when $-K_X$ is very ample, $X$ is general in moduli, and $3 \leq g(X) \leq 10$, the proofs of \cite[Theorems~7.4,7.6]{Fanoindex1rank1}, which demonstrate irreducibility of the spaces of free anticanonical cubics and very free anticanonical quartics, are flawed.  We rectify their arguments below and extend the result to arbitrary Fano threefolds of Picard rank and index one.  We use the following classification theorem.

\begin{thm}[\cite{FanoV_Shafarevich}]\label{classification Picard rank 1}
Let $X$ be a smooth Fano threefold of Picard rank 1 and index 1.  Let $g := \frac{1}{2}(-K_X)^3 + 1$ be the genus of $X$.  Then $g \neq 11$ and $2 \leq g \leq 12$.  
\begin{enumerate}[(1)]
\item If $g \leq 10$, $X$ is the transverse intersection of a variety $\mathbb{G} \subset \mathbb{P}^n$ with a complete intersection $V \subset \mathbb{P}^n$ of hypersurfaces:
\begin{enumerate}[a.]
    \item $(g = 2)$: $\mathbb{G} = \mathbb{P}(1^4, 3) \subset \mathbb{P}^{20}$ is the cone over the Veronese embedding of $\mathbb{P}^3$ by cubics and $V$ is a quadric, i.e.\ $X \subset \mathbb{G}$ is a hypersurface of degree 6;
     \item $(g = 3)$: $\mathbb{G} = \mathbb{P}(1^5, 2) \subset \mathbb{P}^{15}$ is the cone over the Veronese embedding of $\mathbb{P}^4$ by quadrics and $V$ is a the intersection of a quadric and a hyperplane;
        \item $(g = 4)$: $V \subset \mathbb{G} = \mathbb{P}^{5}$ is a complete intersection of a quadric and a cubic;
        \item $(g = 5)$: $V \subset \mathbb{G} = \mathbb{P}^{6}$ is a complete intersection of three quadrics;
        \item $(g = 6)$: $\mathbb{G} \subset \mathbb{P}^{10}$ is the cone over the Grassmannian $Gr(2,5) \subset \mathbb{P}^9$ in the Pl\"{u}cker embedding and $V$ is the intersection of a quadric and a linear subspace of codimension 3;
    \item $(g = 7)$: $\mathbb{G} = OGr_+(5,10) \subset \mathbb{P}^{15}$ is the orthogonal Grassmanian in the embedding induced by the half-spinor representation and $V$ is a linear subspace of codimension 7;
    \item $(g = 8)$: $\mathbb{G} = Gr(2,6) \subset \mathbb{P}^{14}$ is the Grassmannian in the Pl\"{u}cker embedding and $V$ is a linear subspace of codimension 5;
    \item $(g = 9)$: $\mathbb{G} = LGr(3,6) \subset \mathbb{P}^{13}$ is the Lagrangian Grassmannian in the Pl\"{u}cker embedding and $V$ is a linear subspace of codimension 3;
    \item $(g = 10)$: $\mathbb{G} = G_2/P \subset \mathbb{P}^{13}$ is a closed orbit of the adjoint representation of $G_2$ and $V$ is a linear subspace of codimension 2.
\end{enumerate}
\item If $g = 12$, then $X \subset Gr(3,7)$ is the common zero locus of three global sections of the rank three bundle $\bigwedge^2 \mathcal{U}^\vee$, where $\mathcal{U}$ is the universal subbundle on $Gr(3,7)$.
\end{enumerate}
\end{thm}

\subsection*{General Complete Intersections}  We study Fano threefolds of Picard rank one, index one, and genus $g \leq 10$ as complete intersections of very ample divisors.  Our technique appears in a revision \cite{JLT2023Err} of \cite{Fanoindex1rank1} by the second author with Lehmann and Tanimoto.

For a fixed genus $g \leq 10$, let $\mathbb{G}$ be the variety from Theorem \ref{classification Picard rank 1} containing all prime Fano threefolds of genus $g$ as complete intersections. 
 We may express each complete intersection as the zero locus of some global section of a completely reducible, very ample vector bundle $\mathcal{V}$ on $\mathbb{G}$.   The complete intersections which are disjoint from the singular locus of $\mathbb{G}$ are parameterized by an open subset $B \subset H^0(\mathbb{G},\mathcal{V})$.  Consider the universal complete intersection over $B$: %
\[\begin{tikzcd}
	{\mathcal{X} \subset \mathbb{G} \times B} \\
	B
	\arrow["\pi", from=1-1, to=2-1]
\end{tikzcd}\]
The relative cone of curves $\overline{NE}(\pi)$ is canonically isomorphic to the Mori cone $\overline{NE}(\mathcal{X}_b)$ of a smooth fiber.  We identify curve classes $\alpha \in \overline{NE}(\mathcal{X}_b)$ with the corresponding class in $\overline{NE}(\pi)$.  Note that every component of $\overline{\free}(\mathcal{X}_b,\alpha)$ lies in a component of $\overline{\free}(\mathcal{X},\alpha)$.  Proposition \ref{prop: terminal away from codim two} uses the following properties of $\pi : \mathcal{X} \rightarrow B$ to prove irreducibility of $\overline{\free}^{bir}(\mathcal{X}_b, \alpha)$ from irreducibility of $\overline{\free}^{bir}(\mathcal{X}, \alpha)$. %
\begin{lem}
    $B$ is smooth, simply connected, and for $b\in B$ outside a codimension two subset, $\mathcal{X}_b$ is a terminal Gorenstein Fano threefold with basepoint free anticanonical linear series.
\end{lem}
\begin{proof}
    $B$ is smooth because it is an open subset of a vector space $\overline{B}$.  When $g > 2$, $\overline{B} \setminus B$ has codimension at least two, as the rank of $\mathcal{V}$ is at least two.  When $g = 2$, $\mathcal{V} \cong \mathcal{O}_{\mathbb{G}}(6)$ and $B \subset H^0(\mathbb{G},\mathcal{V})$ is the complement of a hyperplane.  Thus, in all cases $B$ is simply connected.

    We claim the discriminant locus $D = \{ b \in B : \mathcal{X}_b \text{ is singular}\}$ is irreducible.  Let $\text{ev} : \mathcal{X} \rightarrow \mathbb{G}$ denote the projection to $\mathbb{G}$.  For any smooth point $p \in \mathbb{G}$, the fiber $\text{ev}^{-1}(p)$ is an open subset of %
    the linear subspace $L \subset H^0(\mathbb{G},\mathcal{V})$ parameterizing complete intersections containing $p$.  Complete intersections which are singular at $p$ form an irreducible subset.  Indeed, the space of matrices $A : \mathcal{T}_p \mathbb{G} \rightarrow k^{\dim G - 3}$ is a linear quotient of $L$ since $\mathcal{V}$ is a direct sum of very ample line bundles.  By smoothness of $p \in \mathbb{G}$, a complete intersection is singular at $p$ if and only if the corresponding matrix $A$ is not of full-rank.  As the locus of such matrices is irreducible, varying $p \in \mathbb{G}^{sm}$ proves irreducibility of $D$.

    When $g = 2$, the argument for \cite[Proposition~7.1(b)]{eisenbud20163264} shows the singular locus of a general singular weighted hypersurface $\mathcal{X}_b \subset \mathbb{G}^{sm}$ is a simple double point.  By definition, $\mathcal{X}_b$ is terminal, Gorenstein, and has base point free anticanonical linear series.  When $g \geq 3$, we construct terminal Gorenstein Fano threefolds $\overline{X}$ through elementary flops of other Fano threefolds.  
    
    Let $\tilde{X} \rightarrow X$ be the blow-up of a general prime Fano threefold $X$ of genus $g(X) \geq 5$ along a general line.  By \cite[Proposition~4.3.1]{FanoV_Shafarevich}, the anticanonical model $\overline{X}$ of $\tilde{X}$ is a terminal Gorenstein Fano threefold of Picard rank 1, index 1, and genus $g = g(X) - 2$.  To obtain a terminal Gorenstein Fano threefold of genus $g = 9$, by \cite[Lemma~4.1.1, Proposition~4.4.1]{FanoV_Shafarevich}, we may instead let $\tilde{X} \rightarrow X$ be the blow-up of a general conic on a Fano threefold $X$ of genus 12.  In each case, $\overline{X}$ is the anticanonical model of a smooth weak Fano threefold $\tilde{X}$ whose anticanonical divisor is not expressible as the sum of two moving Weil divisors.  This implies each $\overline{X}$ is BN-general in the sense of \cite[Proposition~7.8]{mukaiBNgeneral}.  Hence, the anticanonical linear series embeds $\overline{X}$ as a complete intersection of the desired type \cite[Theorem~6.5(2)]{mukaiBNgeneral}.  Since having Gorenstein terminal singularities is an open condition on threefolds in flat families \cite[Corollary~5.44]{kollarMMP}, the general singular singlar $\mathcal{X}_b$ is a terminal Gorenstein Fano threefold with base point free anticanonical linear system.  
\end{proof}

We obtain the following corollary of the preceding lemma and Proposition \ref{prop: terminal away from codim two}.

\begin{cor}\label{cor: Picard rank 1 irred cubics and quartics}
    Let $X$ be a general Fano threefold of Picard rank 1, index 1, and genus $g \leq 10$.  For $\alpha \in \overline{NE}(X)$ with $-K_X . \alpha \in \{3,4\}$, $\overline{\free}^{bir}(X, \alpha)$ is irreducible and generically parameterizes embedded curves with balanced normal bundles.
\end{cor}
\begin{proof}
    Let $\pi : \mathcal{X} \rightarrow B$ be the family of complete intersections to which $X$ belongs.  To apply Proposition \ref{prop: terminal away from codim two}, it remains necessary to prove $\overline{\free}^{bir}(\mathcal{X}, \alpha)$ is irreducible and generically parameterizes curves with balanced normal bundle.  To this end, let $f : C \rightarrow X$ be a general map parameterized by a component of $\overline{\free}^{bir}(X, \alpha)$.
    
    When $g \geq 3$, \cite[Lemma~7.3~and~Lemma~7.5]{Fanoindex1rank1} prove $f$ is the embedding of a rational normal curve in $\mathbb{G} \subset \mathbb{P}^n$.  We claim this remains true for $g = 2$.  In this case, the family of rational normal curves in $\mathbb{G}^{sm}$ is irreducible \cite{thompsen98, KP01}, and each curve is contained in an irreducible family of complete intersections parameterized by $B$ with the expected codimension.  Thus $\overline{\free}^{bir}(\mathcal{X}, \alpha)$ is irreducible.

    When $g = 2$, recall that $\mathbb{G} = \mathbb{P}(1^4,3)$ is the cone over the Veronese embedding of $\mathbb{P}^3$ by cubics.  We prove $f$ is the embedding of a rational normal curve in $\mathbb{G} \subset \mathbb{P}^{20}$ by considering the projection $\phi : \mathbb{G}^{sm} \rightarrow \mathbb{P}^3$.  Note that $\phi \circ f$ is birational onto its image by generality of $f$.  Indeed, $X$ is covered by a family $\mathcal{F}$ of curves whose generic points are not stabilized by the Galois action of $\phi|_X$.  Thus, some deformation of $f$ meets a general curve parameterized by $\mathcal{F}$ but not its Galois conjugate.
    
    Suppose $f : C \rightarrow \mathbb{G} \subset \mathbb{P}^{20}$ is not an embedding by the complete linear system.  In this case, we claim $\phi \circ f$ is not an embedding.  This is clear when $-K_X . \alpha = 3$, as $-K_X$ is the pullback of the hyperplane class on $\mathbb{P}^3$.  When $-K_X . \alpha = 4$, each embedded rational quartic curve in $\mathbb{P}^3$ is a curve of bidegree $(1,3)$ on some smooth quadric $\mathbb{P}^1 \times \mathbb{P}^1 \subset \mathbb{P}^3$.  Any such curve is contained in a seven-dimensional vector space of cubics, and thus spans a $\mathbb{P}^{12}$ under the Veronese embedding by cubics, contrary to our hypothesis.

    Recall the restriction of $\phi$ to $X$ is a double covering branched along a sextic hypersurface $S$.  Each sextic hypersurface $S$ is the branch locus for an $\text{Aut}(\mathbb{G})$-equivalent family of deformations of $X$.  However, the double cover of $C$ branched along $(\phi \circ f)^*S$ may be reducible only if $(\phi \circ f)^*S$ is everywhere non-reduced.  For a given map $\phi \circ f$, this imposes $-3K_X . \alpha$ conditions on $S$ by \cite{curvesRegularity} and the Riemann-Roch Theorem for singular curves, since the image $\overline{C}$ of $\phi \circ f$ has arithmetic genus at most 3 and $\phi \circ f : C \rightarrow \overline{C}$ is birational.  As the locus of maps $\mathbb{P}^1 \rightarrow \mathbb{P}^3$ which are not embeddings has codimension one in $\Mor(\mathbb{P}^1, \mathbb{P}^3)$, by counting dimensions we conclude $\phi \circ f$ is an embedding for general $f$.

    We conclude that $\overline{\free}^{bir}(\mathcal{X}, \alpha)$ is irreducible.  When $-K_X . \alpha = 3$, Proposition \ref{prop: terminal away from codim two} implies $\overline{\free}(X, \alpha)$ is irreducible.  When $-K_X . \alpha = 4$, we must further show for a general map $f$ the normal bundle $\mathcal{N}_f$ is balanced.  This follows from irreducibility of $\overline{\free}^{bir}(\mathcal{X}, \alpha)$ and a degeneration argument.  Indeed, let $\ell \subset X$ be a general anticanonical line.  Since $|-K_X|$ is base point free, by Lemma \ref{lem: normal bundle lines} the normal bundle of $\ell$ satisfies $h^1(\ell, N_{\ell/X}) \leq 1$.  Let $D_\ell$ be the divisor swept out by deformations of $\ell$.  Consider a family of free cubics $\mathcal{C}$ on $X$, and let $D_p$ be the divisor swept out by members of $\mathcal{C}$ meeting a general point $p \in X$.  As general cubics $C \in \mathcal{C}$ meet $D_\ell$ transversely, the intersection $D_p \cap D_\ell$ is reduced.  Since $D_p . \ell > 0$, general deformations of $\ell$ meet $D_p$ transversely.  Therefore general deformations of $C \cup \ell$ are very free curves.  
    Proposition \ref{prop: terminal away from codim two} thus shows $\overline{\free}^{bir}(X, \alpha)$ is irreducible.
\end{proof}
\begin{rem}\label{rem: Ceresa-Verra}
    When $X$ has genus $g = 2$, the argument of \cite[Proposition~2.35]{ceresa86} would provide an alternative, simpler proof of the irreducibility of $\overline{\free}^{bir}(X, \alpha)$.  However, their argument asserts that if the Picard group of a double cover $A \rightarrow \mathbb{P}^2$  is generated by the hyperplane class, then there are no irreducible curves in $\mathbb{P}^2$ with reducible preimage.  Unfortunately, this reasoning is flawed: if the branch locus of $\pi : A \rightarrow \mathbb{P}^2$ has degree $2k$, the general element of $C \in |\pi^*\mathcal{O}(d)|$ for $d \geq k$ is not a double cover of some curve in $\mathbb{P}^2$.  Instead, the restriction $\pi : C \rightarrow \mathbb{P}^2$ will be the normalization of a degree $2d$ curve with $d(d-k)$ nodes.  This erroneous reasoning also leaves a gap in their proof of irreducibility of the space of free conics on $X$ mapping birationally to $\mathbb{P}^3$ under the double-cover map.  This gap is remedied by instead arguing as in the proof of Corollary \ref{cor: Picard rank 1 irred cubics and quartics}.
\end{rem}

\subsection*{Genus 12 Threefolds} To describe spaces of free anticanonical cubics and quartics on a prime Fano threefold $X$ of genus 12, we use another birational model of $X$ obtained through an elementary Sarkisov link.  We then complete the proof of Theorem \ref{Main Result} for arbitrary smooth Fano threefolds by proving Theorem \ref{thm: Picard1index1}.

\begin{lem}\label{lem: genus 12 Fano curves}
    Let $X$ be a Fano threefold of Picard rank 1, index 1, and genus $g = 12$.  For $\alpha \in \overline{NE}(X)$ with $-K_X . \alpha \in \{3,4\}$, $\overline{\free}^{bir}(X, \alpha)$ is irreducible.
\end{lem}
\begin{proof}
    Consider the blow-up $\pi : \tilde{X} \rightarrow X$ of a general point in $X$.  Let $E$ be the exceptional divisor of $\pi$ and $e \in \overline{NE}(\tilde{X})$ be the class of a general fiber of $\pi|_E$.  Let $H = \pi^*(-K_X)$ and $l \in \overline{NE}(\tilde{X})$ be the total transform of a general $-K_X$-line in $X$.   We claim that for $d \in \{3,4\}$, $\Mor(\mathbb{P}^1, \tilde{X}, dl - e)$ is irreducible.  This implies irreducibility of $\overline{\free}^{bir}(X, dl)$.  To prove our claim, we describe a birational contraction $\tilde{X} \dashrightarrow \mathbb{P}^3$.
    
    The class $2l - e$ is a $K_{\tilde{X}}$-trivial extreme ray of $\overline{NE}(\tilde{X})$ which corresponds to a small contraction.  Let $\phi : \tilde{X} \dashrightarrow \tilde{X}^+$ be the corresponding flip.  There is a unique $K_{\tilde{X}^+}$-negative birational contraction $\tilde{\pi} : \tilde{X}^+ \rightarrow Y$.  By \cite[Theorem~4.5.8]{FanoV_Shafarevich}, $Y\cong \mathbb{P}^3$ and $\tilde{\pi} : \tilde{X}^+ \rightarrow Y$ is the blow-up of a smooth genus 0 curve $\Gamma$ of degree 6.  
    
    The $\phi$-strict transforms of subvarieties of $\tilde{X}$ with classes $E, H, e$, and $l$ still generate $N^1(\tilde{X}^+)$ and $N_1(\tilde{X}^+)$ as vector spaces, and their intersection pairings are preserved.  In this basis, the exceptional divisor $D$ of $\tilde{\pi}$ has class $3H - 10E$ by \cite[Theorem~4.5.8]{FanoV_Shafarevich}.  This implies $\tilde{\pi}^*\mathcal{O}_{\mathbb{P}^3}(1) \sim H - 3E$.  We find that $\Mor(\mathbb{P}^1, \tilde{X}^+, 3l - e)$ parameterizes fibers of the contraction $D \rightarrow \Gamma$, while $\Mor(\mathbb{P}^1, \tilde{X}^+, 4l - e)$ generically parameterizes strict transforms of lines bisecant to $\Gamma$.  It follows that for $d \in \{3,4\}$, $\Mor(\mathbb{P}^1, \tilde{X}, dl - e)$ is birational to $\Mor(\mathbb{P}^1, \tilde{X}^+, dl - e)$, and thus irreducible.
\end{proof}

\begin{thm}\label{thm: Picard1index1}
     Let $X$ be a smooth Fano threefold of Picard rank one and index one.  For all $\alpha \in \overline{NE}(X)_{\mathbb{Z}}$ with $-K_X . \alpha \geq 3$, $\overline{\free}^{bir}(X, \alpha)$ is irreducible.
\end{thm}

\begin{proof}
    Let $l \in \overline{NE}(X)$ be the class of an anticanonical line on $X$, so that $\alpha = dl$.  When $X$ is general in moduli and $d\leq 4$, our statement follows directly from Corollary \ref{cor: Picard rank 1 irred cubics and quartics} and Lemma \ref{lem: genus 12 Fano curves}.  By Theorem \ref{classification a-covers} and Proposition \ref{low degree curves degeneration}, this implies irreducibilty of $\overline{\free}^{bir}(X, dl)$ for $3\leq d \leq 4$ and $X$ is arbitrary.  Movable Bend-and-Break (Theorem \ref{MovableBB}) proves a weak core of free curves on $X$ is $\mathscr{C}_X = \{2l, 3l\}$.

    The only relation in the monoid $\mathbb{N}\mathscr{C}_X$ is $3 \cdot (2l) = 2 \cdot (3l)$.  To see that a component of $\overline{\free}(X, 6l)$ contains free chains of each type, consider %
    a general immersed chain of type $(3l, l, 2l)$.  By Lemma \ref{lem: normal bundle lines} and Corollary \ref{general curve and point}, the corresponding map is a smooth point of $\overline{\free}(X, 6l)$ which generalizes to immersed free chains of type $(3l, 3l)$ and $(4l, 2l)$ upon smoothing the corresponding node.  Since $\overline{\free}^{bir}(X, 4l)$ is irreducible, it contains free chains of type $(2l, 2l)$.  Thus, we may specialize the free chain of type $(4l, 2l)$ to one of type $(2l, 2l, 2l)$ while remaining in the same component of $\overline{\free}(X, 6l)$.  Corollaries \ref{bir Main Method} and \ref{connected fibers fixall} finish our proof.
\end{proof}

\printbibliography

\end{document}